\newcommand{\titolofont}{\sffamily}
\def\clap#1{\hbox to 0pt{\hss#1\hss}}
\newcommand{\TS}{\rule{0pt}{2.6ex}}
\newcommand{\BS}{\rule[-1.2ex]{0pt}{0pt}}
  \newenvironment{pdfpic}{}{}
\newcommand*\cleardoublepageeven{\clearpage\if@twoside
  \ifodd\c@page \hbox{}\newpage\if@twocolumn\hbox{}%
  \newpage\fi\fi\fi}
\newcommand{\clearemptydoublepage}{\clearpage{\pagestyle{empty}\cleardoublepage}}
\newcommand{\clearemptydoublepageeven}{\clearpage{\pagestyle{empty}\cleardoublepageeven}}
\newcommand{\tc}{\,:\,}                             
\newcommand{\id}{\mathrm{id}}                       
\newcommand{\chr}{\chi}                             
\DeclareMathOperator{\supp}{\mathrm{supp}}          
\newcommand{\R}{\mathbb{R}}                         
\newcommand{\N}{\mathbb{N}}                         
\newcommand{\Z}{\mathbb{Z}}                         
\newcommand{\C}{\mathbb{C}}                         
\newcommand{\Q}{\mathbb{Q}}                         
\newcommand{\T}{\mathbb{T}}                         
\newcommand{\D}{\mathcal{D}}                        
\newcommand{\E}{\mathcal{E}}                        
\newcommand{\Sz}{\mathcal{S}}                       
\newcommand{\loc}{\mathrm{loc}}                     
\newcommand{\Cv}{Cv}                                
\newcommand{\Diff}{\mathfrak{D}}                    
\newcommand{\dstar}{\circ}                          
\newcommand{\dtwist}{\bullet}                       
\newcommand{\dconj}{\diamond}                       
\newcommand{\LA}{\mathrm{L}}
\newcommand{\la}{\mathrm{l}}
\newcommand{\RA}{\mathrm{R}}
\newcommand{\ra}{\mathrm{r}}
\newcommand{\lie}{\mathfrak}                        
\DeclareMathOperator{\Ad}{\mathrm{Ad}}              
\DeclareMathOperator{\ad}{\mathrm{ad}}              
\DeclareMathOperator{\Aut}{\mathrm{Aut}}            
\DeclareMathOperator{\tr}{\mathrm{tr}}              
\DeclareMathOperator{\Span}{\mathrm{span}}          
\DeclareMathOperator{\spec}{\mathrm{Spec}}          
\newcommand{\Bdd}{\mathcal{B}}                      
\newcommand{\HS}{\mathrm{HS}}							          
\newcommand{\UEnA}{\mathrm{U}}                      
\newcommand{\ptimes}{\mathop{\hat\otimes_\pi}}      
\newcommand{\htimes}{\mathop{\hat\otimes_\mathrm{H}}}        
\newcommand{\etimes}{\mathop{\hat\otimes_\epsilon}} 
\newcommand{\Gelf}{\mathcal{G}}                     
\newcommand{\Four}{\mathcal{F}}                     
\newcommand{\Kern}{\mathcal{K}}                     
\newcommand{\GS}{\mathfrak{G}}                      
\newcommand{\VV}{\mathcal{V}}                       
\newcommand{\WW}{\mathcal{W}}                       
\newcommand{\HH}{\mathcal{H}}                       
\newcommand{\Alg}{\mathcal{A}}                      
\newcommand{\JJ}{\mathcal{J}}                       
\newcommand{\II}{\Gamma^1}                          
\newcommand{\IS}{\Gamma^2}                          
\newcommand{\PP}{\mathcal{P}}                       
\newcommand{\HP}[2]{\textup{(I$_{{#1},{#2}}$)}}     
\newcommand{\HPK}[2]{\textup{(J$_{{#1},{#2}}$)}}    
\newcommand{\strong}{\mathrm{strong}}               
\newcommand{\evmap}{\vartheta}
\renewcommand{\ell}{\varrho}
\numberwithin{equation}{section}
\newtheorem{thm}{Theorem}[section]
\newtheorem{prp}[thm]{Proposition}
\newtheorem{lem}[thm]{Lemma}
\newtheorem{cor}[thm]{Corollary}
\newtheorem{thmi}{Theorem}
\theoremstyle{remark}
\newtheorem{rem}{Remark}[section]
\title{Algebras of differential operators\\
and joint spectral multipliers\\
on homogeneous Lie groups}
\author{Alessio Martini}
\begin{document}
\pagestyle{empty}
\pagenumbering{alph}
\begin{titlepage}
\begin{center}
\includegraphics[width=2.0cm]{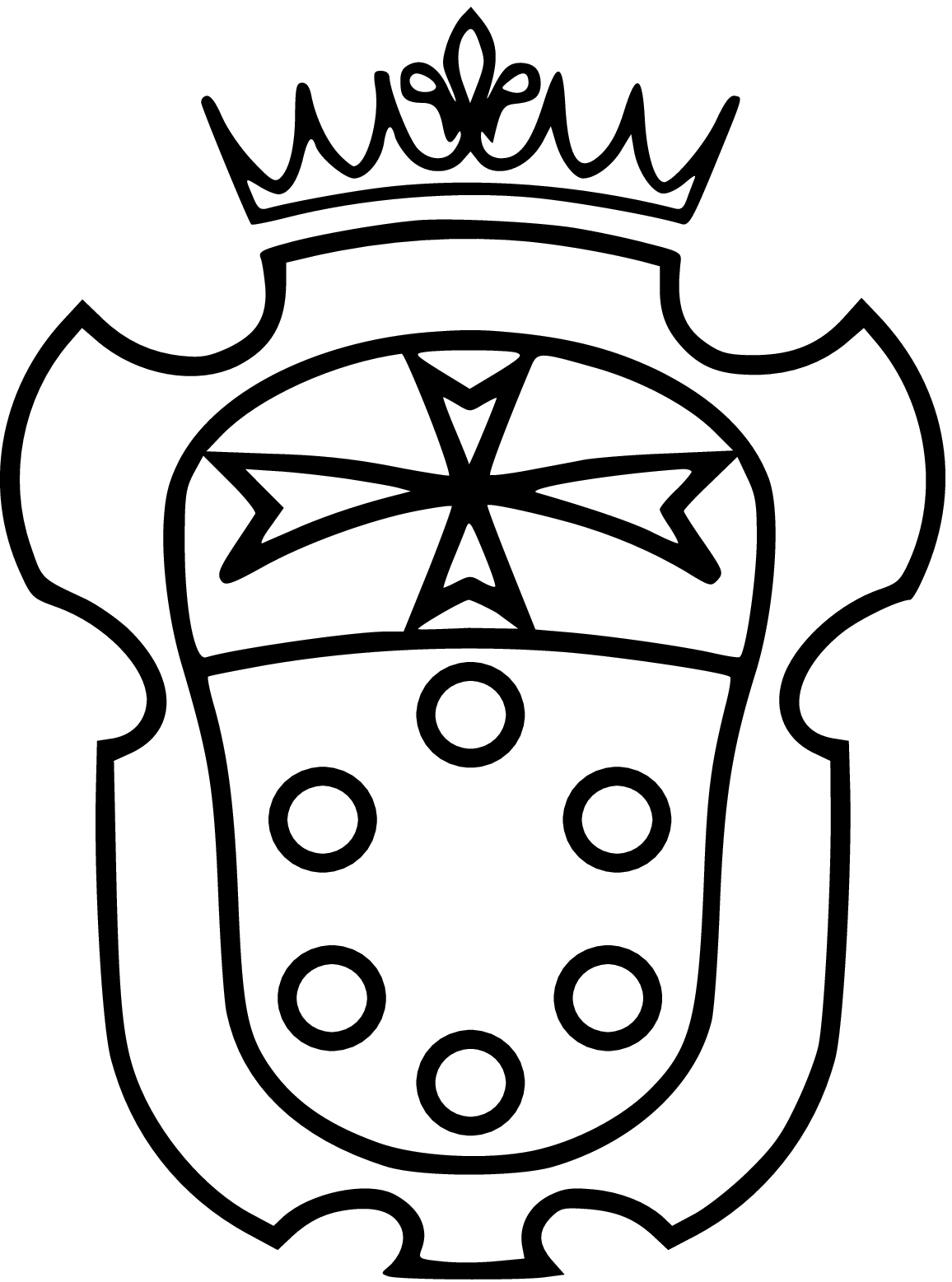}\\
\vspace{0.3cm}

\textsc{\large Scuola Normale Superiore}\\
\vspace{0.05cm}
\textsc{\normalsize Pisa}\\
\vspace{2.0cm}

\textsc{\LARGE Tesi di Perfezionamento}\\
\vspace{0.2cm}
\textsc{\LARGE in Matematica}\\
\vspace{0.2cm}
\textsc{\large triennio 2006-2009}

\vspace{5cm}

\Huge{\textbf{\titolofont Algebras of differential\\operators on Lie groups\\and spectral multipliers}}\\
\vspace{1.5cm}

{\titolofont \Large candidato\\} \vspace{-0.37cm} \textbf{{\titolofont\LARGE Alessio Martini}}\\
\vspace{0.3cm}

{\titolofont \Large relatore}\\ \vspace{-0.37cm} \textbf{{\titolofont\LARGE prof.\ Fulvio Ricci}}
\end{center}

\newpage
\thispagestyle{empty} \phantom{}
\end{titlepage}

\section*{Abstract}

Let $(X,\mu)$ be a measure space, and let $L_1,\dots,L_n$ be (possibly unbounded) self-adjoint operators on $L^2(X,\mu)$, which commute strongly pairwise, i.e., which admit a joint spectral resolution $E$ on $\R^n$.
A joint functional calculus is then defined via spectral integration: for every Borel function $m : \R^n \to \C$,
\[m(L) = m(L_1,\dots,L_n) = \int_{\R^n} m(\lambda) \,dE(\lambda)\]
is a normal operator on $L^2(X,\mu)$, which is bounded if and only if $m$ --- called the \emph{joint spectral multiplier} associated to $m(L)$ --- is ($E$-essentially) bounded. However, the abstract theory of spectral integrals does not tackle the following problem: to find conditions on the multiplier $m$ ensuring the boundedness of $m(L)$ on $L^p(X,\mu)$ for some $p \neq 2$.

We are interested in this problem when the measure space is a connected Lie group $G$ with a right Haar measure, and $L_1,\dots,L_n$ are left-invariant differential operators on $G$. In fact, the question has been studied quite extensively in the case of a single operator, namely, a sublaplacian or a higher-order analogue. On the other hand, for multiple operators, only specific classes of groups and specific choices of operators have been considered in the literature.

Suppose that $L_1,\dots,L_n$ are formally self-adjoint, left-invariant differential operators on a connected Lie group $G$, which commute pairwise (as operators on smooth functions). Under the assumption that the algebra generated by $L_1,\dots,L_n$ contains a weighted subcoercive operator --- a notion due to \cite{ter_elst_weighted_1998}, including positive elliptic operators, sublaplacians and Rockland operators --- we prove that $L_1,\dots,L_n$ are (essentially) self-adjoint and strongly commuting on $L^2(G)$. Moreover, we perform an abstract study of such a system of operators, in connection with the algebraic structure and the representation theory of $G$, similarly as what is done in the literature for the algebras of differential operators associated with Gelfand pairs.

Under the additional assumption that $G$ has polynomial volume growth, weighted $L^1$ estimates are obtained for the convolution kernel of the operator $m(L)$ corresponding to a compactly supported multiplier $m$ satisfying some smoothness condition. The order of smoothness which we require on $m$ is related to the degree of polynomial growth of $G$. Some techniques are presented, which allow, for some specific groups and 
operators, to lower the smoothness requirement on the multiplier.

In the case $G$ is a homogeneous Lie group and $L_1,\dots,L_n$ are homogeneous operators, a multiplier theorem of Mihlin-H\"ormander type is proved, extending the result for a single operator of \cite{christ_multipliers_1991} and \cite{mauceri_vectorvalued_1990}. Further, a product theory is developed, by considering several homogeneous groups $G_j$, each of which with its own system of operators; a non-conventional use of transference techniques then yields a multiplier theorem of Marcinkiewicz type, not only on the direct product of the $G_j$, but also on other (possibly non-homogeneous) groups, containing homomorphic images of the $G_j$. Consequently, for certain non-nilpotent groups of polynomial growth and for some distinguished sublaplacians, we are able to improve the general result of \cite{alexopoulos_spectral_1994}.

\cleardoublepage\pagenumbering{roman}
\pagestyle{plain}
\setcounter{tocdepth}{1}
\tableofcontents

\chapter*{Introduction}
\addcontentsline{toc}{chapter}{Introduction}

\section*{Fourier and spectral multipliers}

\paragraph{Fourier multipliers.}
One of the classical problems of harmonic analysis is the study of operators of the form
\[T_m : f \mapsto \Four^{-1} (m \, \Four f),\]
where $\Four$ is the Fourier transform on $\R^n$, and $m : \R^n \to \C$ is a suitable (measurable) function, which is called the \emph{Fourier multiplier} associated to the operator $T_m$. In fact, the Fourier transform intertwines the operator $T_m$ with the operator of multiplication by $m$.

Since the Fourier transform is (modulo a constant factor) an isometry of $L^2(\R^n)$, boundedness of $T_m$ on $L^2(\R^n)$ is equivalent to (essential) boundedness of the multiplier $m$, and
\[\|T_m\|_{2 \to 2} = \|m\|_{\infty}.\]
On the other hand, boundedness of $T_m$ on $L^p(\R^n)$ for $p \neq 2$ is much more difficult to characterize.

Sufficient conditions for boundedness of $T_m$ on $L^p$ can be expressed in terms of smoothness conditions on the multiplier $m$. For instance, given a family of dilations
\[\delta_t(x_1,\dots,x_n) = (t^{\lambda_1} x_1,\dots,t^{\lambda_n} x_n)\]
on $\R^n$ (for some $\lambda_1,\dots,\lambda_n > 0$), we say that $m$ satisfies a \emph{Mihlin-H\"ormander condition}\footnote{Starting from the work of Marcinkiewicz \cite{marcinkiewicz_sur_1939} about multipliers of Fourier series, Mihlin \cite{mihlin_multipliers_1956}, \cite{mihlin_fourier_1957} obtained the sufficient condition
\begin{equation*}\label{eq:mihlin}
\sup_{\alpha \in \{0,1\}^n} \sup_{x \neq 0} |x|^{|\alpha|} |\partial^\alpha m(x)| < \infty
\tag{**}
\end{equation*}
for the operator $T_m$ to be bounded on $L^p$ for $1 < p < \infty$ (an account of the results of Marcinkiewicz and Mihlin can be found in the appendix of \cite{mihlin_multidimensional_1965}). Subsequently, H\"ormander \cite{hrmander_estimates_1960} reduced to $\lfloor n/2 \rfloor + 1$ the maximal order of derivatives of $m$ to be controlled, and replaced the supremum with an $L^2$ norm, thus obtaining substantially the same condition as \eqref{eq:mihlinhoermander} in the case of isotropic dilations $\delta_t(x) = tx$. The non-isotropic case was eventually considered by Kr\'ee \cite{kre_sur_1966} and Fabes and Rivi\`ere \cite{fabes_singular_1966}. An extensive bibliography about Fourier multipliers in the 1960s may be found in \S2.2.4 of \cite{triebel_interpolation_1978} and in Chapter IV of \cite{stein_singular_1970}.} of order $s \in \R$, adapted to the dilations $\delta_t$, if
\begin{equation*}\label{eq:mihlinhoermander}
\sup_{t > 0} \| (m \circ \delta_t) \eta \|_{H^s(\R^n)} < \infty,
\tag{*}
\end{equation*}
where $\eta$ is a non-negative smooth cut-off function supported on an annulus of $\R^n$ centered at the origin, and
\[\| f \|_{H^s(\R^n)}^2 = \int_{\R^n} |\Four f(\xi)|^2 \,(1 + |\xi|)^{2s} \,d\xi\]
is the $L^2$ Sobolev norm of fractional order $s$. Correspondingly, we have the Mihlin-H\"ormander multiplier theorem:

\begin{thmi}\label{thmi:euclmihlin}
If $m$ satisfies a Mihlin-H\"ormander condition of order 
\[s > \frac{n}{2},\]
adapted to some family of dilations on $\R^n$, then $T_m$ is of weak type $(1,1)$ and bounded on $L^p(\R^n)$ for $1 < p < \infty$.
\end{thmi}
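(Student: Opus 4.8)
The plan is to realize $T_m$ as a Calder\'on-Zygmund operator on the space of homogeneous type $(\R^n,\rho,dx)$, where $\rho$ is a quasi-norm homogeneous with respect to the dilations $\delta_t$ (so $\rho(\delta_t\xi)=t\,\rho(\xi)$ and $|\{\rho<r\}|\sim r^Q$, with $Q=\lambda_1+\dots+\lambda_n$ the homogeneous dimension), and then to invoke the general theory. Concretely, I would prove the weak type $(1,1)$ bound; then $L^p$ for $1<p\le 2$ follows by interpolation with the trivial $L^2$ estimate, and $2\le p<\infty$ by duality (note $T_m^*=T_{\bar m}$, and $\bar m$ satisfies \eqref{eq:mihlinhoermander} whenever $m$ does). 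The $L^2$ estimate itself is immediate: since $s>n/2$, the Sobolev embedding $H^s(\R^n)\hookrightarrow L^\infty(\R^n)$ together with the fact that the dilates $\delta_{2^j}(\supp\eta)$, $j\in\Z$, cover $\R^n\setminus\{0\}$ forces $m\in L^\infty$ with $\|m\|_\infty\lesssim A$, where $A$ is the supremum in \eqref{eq:mihlinhoermander}; hence $\|T_m\|_{2\to2}=\|m\|_\infty\lesssim A$ by Plancherel.

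The core of the argument is a Littlewood-Paley decomposition adapted to $\delta_t$. I would fix a smooth $\psi$ supported in a Euclidean annulus with $\sum_{j\in\Z}\psi(\delta_{2^{-j}}\xi)=1$ for $\xi\neq0$, set $m_j=m\cdot(\psi\circ\delta_{2^{-j}})$ and $K_j=\Four^{-1}m_j$, so that $m=\sum_j m_j$ a.e.\ and $K_j(x)=2^{jQ}\kappa_j(\delta_{2^j}x)$ with $\kappa_j=\Four^{-1}\bigl((m\circ\delta_{2^j})\,\psi\bigr)$. By \eqref{eq:mihlinhoermander} (after checking that different annular cut-offs give equivalent conditions) the multipliers $(m\circ\delta_{2^j})\,\psi$ have $H^s$-norm uniformly $\lesssim A$; since the $(1+|\cdot|)^s$-weighted $L^2$-norm on the physical side is equivalent to the $H^s$-norm on the frequency side, this gives $\|(1+|z|)^s\kappa_j(z)\|_{L^2_z}\lesssim A$ uniformly in $j$, and likewise — multiplying by the smooth compactly supported symbol $\xi_k$ — a uniform bound $\lesssim A$ for the weighted $L^2$-norm of $\partial_k\kappa_j$. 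Changing variables and applying Cauchy-Schwarz then yields, uniformly in $j$,
\[
\int_{\rho(x)>R}|K_j(x)|\,dx\ \lesssim\ A\min\bigl(1,(2^jR)^{-\epsilon}\bigr),\qquad \|\partial_{x_k}K_j\|_1\ \lesssim\ 2^{j\lambda_k}A
\]
for some $\epsilon>0$; here it is crucial that the weight $(1+|\cdot|)^s$ is the \emph{Euclidean} one, so that $\int_{\rho(z)>R}(1+|z|)^{-2s}\,dz$ is finite and decays like a positive power of $R$ exactly when $2s>n$ — this is where the hypothesis enters in its sharp form.

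It then remains to verify H\"ormander's integral condition $\sup_{y\neq0}\int_{\rho(x)>C_0\rho(y)}|K(x-y)-K(x)|\,dx\lesssim A$, where $K=\sum_j K_j$ is a locally integrable function on $\R^n\setminus\{0\}$ representing $T_m$ off the diagonal (the low-frequency tail $\sum_{j<0}K_j$ being uniformly bounded since $\|m_j\|_1\lesssim\|m\|_\infty 2^{jQ}$, and the high-frequency tail being summable by the estimate above). For indices with $2^j\rho(y)\le1$ one bounds $|K_j(x-y)-K_j(x)|$ by the mean value theorem and uses $\|\partial_{x_k}K_j\|_1\lesssim2^{j\lambda_k}A$ together with $|y_k|\lesssim\rho(y)^{\lambda_k}$, getting a contribution $\lesssim(2^j\rho(y))^{\lambda_{\min}}A$ that sums in a geometric series; for $2^j\rho(y)>1$ one discards the difference and bounds $\int|K_j|$ over the relevant regions $\{\rho\gtrsim\rho(y)\}$ by $A\,(2^j\rho(y))^{-\epsilon}$ (using the quasi-triangle inequality for $\rho$), which again sums geometrically. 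With both the H\"ormander condition and the $L^2$-bound in hand, the Calder\'on-Zygmund theory on $(\R^n,\rho,dx)$ gives the weak type $(1,1)$ estimate, and the $L^p$ statement follows as above.

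The main obstacle will be the mismatch between the anisotropic geometry of the pieces $K_j$ — each essentially concentrated on the box $\prod_k\{|x_k|\lesssim2^{-j\lambda_k}\}$, over which the Calder\'on-Zygmund decomposition has to be run with respect to $\rho$, whose ``dimension'' is $Q$ and not $n$ — and the isotropic smoothness measured by $H^s(\R^n)$ in \eqref{eq:mihlinhoermander}: the point is to squeeze out of the Euclidean weighted $L^2$-bounds precisely the summability that keeps the threshold at $n/2$ rather than $Q/2$.
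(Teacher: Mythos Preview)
Your argument is correct: this is the standard direct proof of the anisotropic Mihlin--H\"ormander theorem, and the key observation you isolate --- that the Euclidean weight $(1+|z|)^{-2s}$ is integrable precisely when $2s>n$, regardless of the homogeneous dimension $Q$ --- is exactly what pins the threshold at $n/2$.

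However, the paper does not give its own proof of this statement. Theorem~A is stated in the introduction as a classical result (the footnote traces it to Mihlin, H\"ormander, Kr\'ee, and Fabes--Rivi\`ere), and is only recovered at the very end, in the subsection ``The abelian case'' of the Applications section, as a special case of the general Theorem~\ref{thm:mihlinhoermander}. That general theorem shares the same Calder\'on--Zygmund skeleton as yours --- dyadic decomposition of the multiplier (Lemma~\ref{lem:spectraldecomposition}), verification of the H\"ormander integral condition for the kernel (Proposition~\ref{prp:singularkernel}), and then Theorem~\ref{thm:singularintegral} --- but the weighted $L^1$ estimates for the dyadic pieces (Corollary~\ref{cor:usefulestimates}) are obtained very differently: through the Fourier-series technique of Lemma~\ref{lem:fourierdecomposition}, heat-kernel bounds for a weighted subcoercive operator, and interpolation against the abstract Plancherel formula of Theorem~\ref{thm:plancherel}. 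In the abelian case the paper then notes that the Plancherel measure $\sigma$ is (a constant times) Lebesgue measure, hence locally $n$-bounded, which feeds into hypothesis \HPK{n/2}{n} and gives the threshold $n/2$.

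Your route is far more elementary for $\R^n$: you exploit directly that the Euclidean Fourier transform exchanges the $H^s$ norm of the multiplier with the weighted $L^2$ norm of the kernel, bypassing the heat-kernel and interpolation machinery entirely. What the paper's approach buys is generality --- the same proof works verbatim on any homogeneous Lie group with a homogeneous weighted subcoercive system --- at the cost of a much heavier infrastructure that is overkill for the Euclidean case.
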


A multi-variate analogue of the previous condition is the following: we say that $m$ satisfies a \emph{Marcinkiewicz condition}\footnote{The original result of Marcinkiewicz \cite{marcinkiewicz_sur_1939} was about Fourier series. The expression ``Marcinkiewicz multiplier theorem'' refers commonly to a transposition of Marcinkiewicz's result into the non-periodic setting, which is stated in \S XI.11.31 of \cite{dunford_linear_1963} (see also Theorem IV.6' of \cite{stein_singular_1970}, or Theorem~5.2.4 of \cite{grafakos_classical_2008}), and involves $L^1$ norms of mixed partial derivatives of the multiplier $m$. One feature of this condition on $m$ is its invariance by independent dilations of the components of $\R^n$; this feature disappears in Mihlin's pointwise condition \eqref{eq:mihlin}, but is recovered, e.g., by Lizorkin \cite{lizorkin_multipliers_1963} with the condition
\begin{equation*}\label{eq:lizorkin}
\sup_{\alpha \in \{0,1\}^n} \sup_{x_1,\dots,x_n \neq 0} |x_1^{\alpha_1} \cdots x_n^{\alpha_n} \, \partial^\alpha m(x)| < \infty.
\tag{\dag\dag}
\end{equation*}
Condition \eqref{eq:marcinkiewicz}, involving a multi-parameter $L^2$ Sobolev norm, is stated in \cite{carbery_homogeneous_1995} as the hypothesis of a ``multiparameter version of the H\"ormander-Marcinkiewicz multiplier theorem''. Subsequently, conditions such as \eqref{eq:marcinkiewicz} and \eqref{eq:lizorkin} have been referred to as ``Marcinkiewicz (\mbox{-type}) conditions'' in the context of joint spectral multipliers for left-invariant differential operators on Heisenberg-type groups (see \cite{mller_marcinkiewicz_1995}, \cite{mller_marcinkiewicz_1996}, \cite{fraser_marcinkiewicz_1997}, \cite{veneruso_marcinkiewicz_2000}, \cite{fraser_convolution_2001}, \cite{fraser_fold_2001}).} of order $\vec{s} = (s_1,\dots,s_n) \in \R^n$ if
\begin{equation*}\label{eq:marcinkiewicz}
\sup_{t_1,\dots,t_n > 0} \| m_{\vec{t}} \,\, \eta \otimes \dots \otimes \eta \|_{S^{\vec{s}} H(\R^n)} < \infty,
\tag{\dag}
\end{equation*}
where
\[m_{\vec{t}} \, (x_1,\dots,x_n) = m(t_1 x_1,\dots,t_n x_n),\]
$\eta$ is a smooth cut-off function as before, but on $\R^1$, and
\[\| f \|_{S^{\vec{s}} H(\R^n)}^2 = \int_{\R^n} |\Four f(\xi)|^2 \,(1 + |\xi_1|)^{2s_1} \cdots (1 + |\xi_n|)^{2s_n} \,d\xi\]
is the multi-parameter $L^2$ Sobolev norm of order $\vec{s}$. Then we have

\begin{thmi}\label{thmi:euclmarcinkiewicz}
If $m$ satisfies a Marcinkiewicz condition of order $\vec{s}$, with
\[s_1 > \frac{1}{2}, \quad \dots, \quad s_n > \frac{1}{2},\]
then $T_m$ is bounded on $L^p(\R^n)$ for $1 < p < \infty$.
\end{thmi}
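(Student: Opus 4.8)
The plan is to run a multi-parameter Littlewood--Paley argument, reducing everything to a single kernel estimate that converts the Sobolev condition \eqref{eq:marcinkiewicz} (with each $s_j>1/2$) into a domination by an iterated Hardy--Littlewood maximal operator. Fix $\psi\in C_c^\infty(\R\setminus\{0\})$ with $\sum_{k\in\Z}\psi(2^{-k}\cdot)\equiv1$ on $\R\setminus\{0\}$, and for $\vec k=(k_1,\dots,k_n)\in\Z^n$ put $\psi_{\vec k}=\psi(2^{-k_1}\cdot)\otimes\cdots\otimes\psi(2^{-k_n}\cdot)$; write $\Delta_{\vec k}$ for the Fourier multiplier with symbol $\psi_{\vec k}$ and $\widetilde\Delta_{\vec k}$ for the analogous operator with a slightly enlarged symbol $\widetilde\psi_{\vec k}$ that is identically $1$ on $\supp\psi_{\vec k}$. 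Iterating over the $n$ coordinates the one-variable Littlewood--Paley inequalities and their $\ell^2$-valued (Fefferman--Stein) forms yields, for $1<p<\infty$,
\[\|f\|_p\lesssim\Big\|\Big(\sum_{\vec k}|\Delta_{\vec k}f|^2\Big)^{1/2}\Big\|_p,\qquad\Big\|\Big(\sum_{\vec k}|\widetilde\Delta_{\vec k}f|^2\Big)^{1/2}\Big\|_p\lesssim\|f\|_p,\]
together with the vector-valued maximal inequality $\big\|(\sum_{\vec k}|M_1\cdots M_n h_{\vec k}|^2)^{1/2}\big\|_p\lesssim\big\|(\sum_{\vec k}|h_{\vec k}|^2)^{1/2}\big\|_p$, where $M_j$ is the Hardy--Littlewood maximal operator acting in the variable $x_j$.

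Since $\psi_{\vec k}\,m$ is supported where $\psi_{\vec k}$ is, we have $\Delta_{\vec k}T_m f=T_{\psi_{\vec k}m}\,\widetilde\Delta_{\vec k}f$, so the first inequality gives $\|T_m f\|_p\lesssim\big\|(\sum_{\vec k}|T_{\psi_{\vec k}m}\widetilde\Delta_{\vec k}f|^2)^{1/2}\big\|_p$. The heart of the matter is the pointwise bound, uniform in $\vec k$,
\[|T_{\psi_{\vec k}m}\,g(x)|\le C\,A\,M_1\cdots M_n|g|(x),\qquad A:=\sup_{\vec t>0}\big\|m_{\vec t}\;\eta\otimes\cdots\otimes\eta\big\|_{S^{\vec s}H(\R^n)}.\]
To prove it, write $T_{\psi_{\vec k}m}=D_{\vec k}^{-1}T_\phi D_{\vec k}$ for the natural anisotropic dilation $D_{\vec k}$ (which commutes with $M_1\cdots M_n$), so that it suffices to treat $\phi:=(\psi_{\vec k}m)\circ\delta_{\vec k}$ in place of $\psi_{\vec k}m$; here $\phi$ is supported in a fixed box where $|\xi_j|\sim1$, and $\|\phi\|_{S^{\vec s}H}\lesssim A$ by \eqref{eq:marcinkiewicz} and the (standard) independence of that condition from the choice of cut-off. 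Now, by definition of the norm, $\|\phi\|_{S^{\vec s}H}^2=\int_{\R^n}|K_\phi(y)|^2\prod_j(1+|y_j|)^{2s_j}\,dy$ with $K_\phi=\Four\phi$ the convolution kernel of $T_\phi$; splitting $\R^n$ into the product dyadic shells $\{|y_j|\sim 2^{l_j}\,\forall\,j\}$ and using Cauchy--Schwarz on each, the hypothesis $s_j>1/2$ — which makes $\int(1+t)^{-2s_j}\,dt<\infty$ — yields $\int_{\{|y_j|\sim 2^{l_j}\,\forall\,j\}}|K_\phi(y)|\,dy\lesssim A\prod_j2^{-\varepsilon\,l_j^{+}}$, where $\varepsilon=\min_j(s_j-\tfrac12)>0$. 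Convolving $|g|$ with the part of $|K_\phi|$ on such a shell is at most a constant times its mass times the average of $|g|$ over the corresponding box, hence $\le C A\prod_j2^{-\varepsilon l_j^{+}}\,M_1\cdots M_n|g|(x)$; summing the geometric series in $\vec l$ gives the claim.

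Combining the two steps,
\[\|T_m f\|_p\lesssim A\,\Big\|\Big(\sum_{\vec k}|M_1\cdots M_n\widetilde\Delta_{\vec k}f|^2\Big)^{1/2}\Big\|_p\lesssim A\,\Big\|\Big(\sum_{\vec k}|\widetilde\Delta_{\vec k}f|^2\Big)^{1/2}\Big\|_p\lesssim A\,\|f\|_p\]
by the vector-valued maximal inequality and the second Littlewood--Paley inequality, which is the assertion (with $A<\infty$ by hypothesis). The main obstacle is the kernel estimate of the third paragraph: one must read \eqref{eq:marcinkiewicz} as a weighted $L^2$ bound on the rescaled convolution kernel and exploit its product structure so that the surplus $s_j-1/2>0$ \emph{in each variable separately} produces summable decay — the borderline exponent $1/2$ being exactly half the dimension of each frequency line. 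The multi-parameter Littlewood--Paley and Fefferman--Stein inequalities used as black boxes are classical, obtained by iterating their one-variable counterparts. Alternatively, the theorem follows from Theorem~\ref{thmi:euclmihlin} by iterating its $L^p(\R^{n-1})$-valued extension one variable at a time, once one checks that \eqref{eq:marcinkiewicz} restricts to uniform Mihlin--H\"ormander conditions on the slices.
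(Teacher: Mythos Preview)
Your overall architecture --- multi-parameter Littlewood--Paley decomposition, a scale-invariant kernel estimate, and a vector-valued maximal inequality --- is exactly the skeleton of the paper's argument (the paper recovers this theorem as the abelian case of Theorem~\ref{thm:marcinkiewicz}). The gap is in the sentence ``Convolving $|g|$ with the part of $|K_\phi|$ on such a shell is at most a constant times its mass times the average of $|g|$ over the corresponding box.'' That claim is false: knowing only $\|K_\phi\chi_{\text{shell}}\|_1$ does not control $\int_{\text{shell}}|g(x-y)|\,|K_\phi(y)|\,dy$ by the $L^1$-mass times an average of $|g|$, since $K_\phi$ may concentrate on a tiny portion of the shell where $|g(x-\cdot)|$ happens to be large. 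Concretely, in one variable, functions $K$ with $\int|K(y)|^2(1+|y|)^{2s}\,dy\le1$ (for $\tfrac12<s<\tfrac32$) need not admit any integrable radially decreasing majorant, so the pointwise bound $|T_\phi g|\le C A\,M_1\cdots M_n|g|$ does not follow from the weighted $L^2$ control you have on $K_\phi$.

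The standard repair --- and what the paper does in the proof of Theorem~\ref{thm:marcinkiewicz} --- is to apply Cauchy--Schwarz \emph{before} shelling: $|T_\phi g(x)|^2\le A^2\int|g(x-y)|^2\prod_j(1+|y_j|)^{-2s_j}\,dy$, which is $A^2$ times a convolution of $|g|^2$ with a product of integrable radially decreasing one-variable weights. This gives $|T_\phi g(x)|^2\le CA^2\,\pi_1(w_1)\cdots\pi_n(w_n)(|g|^2)(x)$ for specific self-adjoint convolutions $\pi_j(w_j)$. Because the resulting estimate involves $|g|^2$ rather than $|g|$, the Fefferman--Stein step you wrote no longer applies directly (the $\ell^1$-case of Fefferman--Stein fails); instead one works for $p>2$, takes the $L^{p/2}$ norm of the squared Littlewood--Paley sum, dualizes, moves the self-adjoint convolutions onto the dual function, and only then bounds by the iterated maximal operator acting on a single function in $L^{(p/2)'}$. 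The range $1<p<2$ is then obtained by self-adjointness. Your closing remark about iterating a vector-valued version of Theorem~\ref{thmi:euclmihlin} is a legitimate alternative route, but it is not the one the paper takes.
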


The first result is proved via the Calder\'on-Zygmund theory of singular integral operators, while the second exploits also Littlewood-Paley theory. Their applications include a-priori estimates and regularity results for partial differential equations (see, e.g., \cite{hrmander_analysis_1990}). Notice that, in both theorems, the regularity threshold (i.e., the minimum number of derivatives to be controlled in order to obtain boundedness on $L^p$ for $1 < p < \infty$) is half the dimension of the environment space: in the former, the threshold is $n/2$, which is half of $\dim \R^n$, whereas in the latter it is $1/2$ in each component, that is half the dimension of each factor $\R^1$ of $\R^n$.

Generalizations of this kind of results are given at least in two directions. First of all, one can consider a setting where some sort of Fourier transform is defined, e.g., locally compact groups\footnote{See \cite{coifman_analyse_1971} for $SU_2$, \cite{rubin_multipliersrigid_1976} for the plane motion group, \cite{de_michele_heisenberg_1979} for the Heisenberg group; see also \cite{weiss_estimates_1972} for central multipliers on a compact Lie group.} or symmetric spaces. In particular, for non-compact symmetric spaces, Clerc and Stein \cite{clerc_multipliers_1974} proved that --- in contrast with the Euclidean case, where a finite order of differentiability of the multiplier is sufficient --- a necessary condition\footnote{For other results in the context of non-compact symmetric spaces, including sufficient conditions for boundedness on $L^p$, see \cite{anker_fourier_1990}, \cite{giulini_multipliers_1997} and references therein.} for a function $m$ to define a bounded operator $T_m$ on $L^p$ for some $p \neq 2$ is the existence of a holomorphic extension of $m$.

\paragraph{Spectral multipliers.}
The second direction develops from the observation that, if $P : \R^n \to \R$ is a polynomial, then the differential operator 
\[L = P(-i\partial_1,\dots,-i\partial_n)\]
is (essentially) self-adjoint on $L^2(\R^n)$, thus admits a spectral resolution
\[L = \int_{\R} \lambda \,dE(\lambda),\]
and moreover, for a bounded Borel function $m : \R \to \C$, we have
\[T_{m \circ P} = m(L) = \int_{\R} m(\lambda) \,dE(\lambda);\]
an easy consequence of Theorem~\ref{thmi:euclmihlin} is then the following

\begin{thmi}\label{thmi:euclmihlinop}
Let $L = P(-i\partial_1,\dots,-i\partial_n)$, where $P : \R^n \to \R$ is a polynomial which is homogeneous with respect to some family of dilations on $\R^n$ and nowhere null off the origin. If $m : \R \to \C$ satisfies a Mihlin-H\"ormander condition of order
\[s > \frac{n}{2},\]
then $m(L)$ is of weak type $(1,1)$ and bounded on $L^p(\R^n)$ for $1 < p < \infty$.
\end{thmi}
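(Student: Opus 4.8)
The plan is to reduce the statement to the scalar Mihlin-H\"ormander multiplier theorem, Theorem~\ref{thmi:euclmihlin}, by showing that the Euclidean Fourier multiplier $m \circ P$ satisfies a Mihlin-H\"ormander condition on $\R^n$ of the same order $s$. First I would record two elementary facts about $P$. Being real, continuous, and nowhere zero off the origin, $P$ has constant sign there, and --- after possibly replacing $m(\lambda)$ by $m(-\lambda)$ and $P$ by $-P$ --- we may assume $P \ge 0$, so that the spectrum of $L$ lies in $[0,\infty)$ and, as observed above, $m(L) = T_{m\circ P}$. Moreover, if $\delta_t(\xi) = (t^{\lambda_1}\xi_1,\dots,t^{\lambda_n}\xi_n)$ are the dilations with respect to which $P$ is homogeneous, say of degree $d$, then differentiating $P\circ\delta_t = t^d P$ at $t=1$ gives the Euler-type identity $\sum_j \lambda_j\,\xi_j\,\partial_j P(\xi) = d\, P(\xi)$, which forces $\nabla P(\xi)\neq 0$ whenever $P(\xi)\neq 0$; hence $P$ restricts to a submersion of $\R^n\setminus\{0\}$ onto $(0,\infty)$.

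Fix a non-negative smooth cut-off $\eta$ supported on a $\delta_t$-homogeneous annulus, as in the definition of the Mihlin-H\"ormander condition on $\R^n$. Then $P(\supp\eta)$ is a compact subset of $(0,\infty)$, and choosing $\chr\in C_c^\infty((0,\infty))$ equal to $1$ on a neighbourhood of it we have, for every $t>0$,
\[
(m\circ P)\circ\delta_t \cdot \eta \;=\; \bigl((m(t^d\,\cdot)\,\chr)\circ P\bigr)\,\eta .
\]
The whole matter is thus reduced to the \emph{composition estimate}
\[
\bigl\| (g\circ P)\,\eta \bigr\|_{H^s(\R^n)} \;\lesssim\; \| g \|_{H^s(\R)}
\qquad\text{for every } g\in H^s(\R) \text{ with } \supp g\subseteq\supp\chr,
\]
with an implied constant depending only on $P$, $\eta$ and $\chr$: applying it with $g = m(t^d\,\cdot)\,\chr$ bounds $\sup_{t>0}\|(m\circ P)\circ\delta_t \cdot \eta\|_{H^s(\R^n)}$ by $\sup_{r>0}\|m(r\,\cdot)\,\chr\|_{H^s(\R)}$, which is finite because $m$ satisfies a scalar Mihlin-H\"ormander condition of order $s$ (the condition being independent, up to equivalence, of the admissible cut-off). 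The composition estimate itself I would prove by a finite partition of unity on $\supp\eta$: near each of its points the submersion $P$ is straightened to a coordinate projection by a smooth change of variables with derivatives controlled on the relevant compact set (so that pullback preserves $H^s$ up to constants), which reduces the inequality to the one-variable fact that $\|g(\xi_1)\,\psi(\xi)\|_{H^s(\R^n)}\lesssim \|g\|_{H^s(\R)}$ for fixed $\psi\in C_c^\infty(\R^n)$ --- valid because $H^s(\R)$ is a multiplication algebra for $s>1/2$, and $s>n/2\ge 1/2$.

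The main difficulty will be ensuring the \emph{uniformity in $t$} of the composition estimate: as $t\to 0$ or $t\to\infty$ the function $m(t^d\,\cdot)\,\chr$ carries arbitrarily high frequencies, and the argument must yield a constant independent of $g$ --- this is precisely what lets the \emph{scale-invariant} condition on $\R$ pass to the scale-invariant one on $\R^n$. It is guaranteed by the fact that the straightening diffeomorphisms, the partition of unity and the cut-off $\chr$ are all chosen once and for all, independently of $g$; alternatively, one may first use the $\delta_t$-homogeneity of $P$ to rescale and so deal only with frequencies of size $\sim 1$. Granting the composition estimate, $m\circ P$ satisfies a Mihlin-H\"ormander condition of order $s>n/2$ adapted to $\delta_t$, and Theorem~\ref{thmi:euclmihlin} gives that $m(L)=T_{m\circ P}$ is of weak type $(1,1)$ and bounded on $L^p(\R^n)$ for $1<p<\infty$.
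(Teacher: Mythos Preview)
Your proposal is correct and follows essentially the same route as the paper: reduce to Theorem~\ref{thmi:euclmihlin} by showing that the composite $m\circ P$ satisfies a Mihlin--H\"ormander condition on $\R^n$ of the same order, via a change-of-variable estimate exploiting that $P$ is a submersion off the origin. The paper treats Theorem~\ref{thmi:euclmihlinop} as an ``easy consequence'' of Theorem~\ref{thmi:euclmihlin} and later formalizes precisely this mechanism in Proposition~\ref{prp:besovchange} and Proposition~\ref{prp:homogeneouschange} (with the remark following the latter noting that for a target of dimension $1$ there is no loss of regularity). One small comment: your justification of $\|g(\xi_1)\psi(\xi)\|_{H^s(\R^n)}\lesssim\|g\|_{H^s(\R)}$ via the multiplication-algebra property of $H^s(\R)$ is a bit indirect --- the estimate actually holds for all $s\ge 0$ by Leibniz for integer $s$ and interpolation, which is closer to how the paper proceeds in Lemma~\ref{lem:elementarychange}.
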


The function $m$ defining the operator $m(L)$ is called a \emph{spectral multiplier} for $L$. A typical example for the operator $L$ is the Laplacian $-(\partial_1^2 + \dots + \partial_n^2)$, which corresponds to the polynomial $P(x) = x_1^2 + \dots + x_n^2$, and therefore to radial Fourier multipliers $m \circ P$.

In a setting where an analogue $L$ of the Laplacian is defined, the spectral theorem gives a functional calculus for $L$ such that, if $m : \R \to \C$ is a bounded Borel function, then $m(L)$ is a bounded operator on $L^2$; therefore one can again look for sufficient conditions on the spectral multiplier $m$ ensuring boundedness of $m(L)$ on $L^p$ for some $p \neq 2$.

This question has been explored particularly in the case of a connected Lie group $G$ with a hypoelliptic left-invariant self-adjoint\footnote{Since we consider left-invariant differential operators on $G$, it is understood that the measure defining the Lebesgue spaces $L^p(G)$ is a right Haar measure.} differential operator $L$, typically a sublaplacian. Notice that, for a sublaplacian $L$ on a connected Lie group $G$, a general result of Stein (see \cite{stein_topics_1970}, \S IV.6) yields boundedness of the operator $m(L)$ on $L^p(G)$ for $1 < p < \infty$ whenever the multiplier $m$ is of Laplace transform type; this rather strong condition on the multiplier --- it implies analyticity --- can be considerably weakened on specific classes of groups.

For a homogeneous sublaplacian $L$ on a stratified Lie group $G$, there is a result analogous to Theorem~\ref{thmi:euclmihlinop} due to\footnote{This problem has a long history: starting from Theorem~6.25 of \cite{folland_hardy_1982}, due to Hulanicki and Stein, the smoothness requirement on the multiplier has been lowered and lowered (see \cite{mauceri_maximal_1987}, \cite{de_michele_mulipliers_1987}), culminating with the above-mentioned result.} Christ \cite{christ_multipliers_1991} and Mauceri and Meda \cite{mauceri_vectorvalued_1990}, where a condition of order
\[s > \frac{Q}{2},\]
i.e., half the homogeneous dimension $Q$ of the stratified group $G$, is required. This result in general is not sharp: although on $\R^n$ (with isotropic dilations) it is $Q= n$, on more general stratified groups $G$ the homogeneous dimension $Q$ is greater than the topological dimension $\dim G$, and in the case of a Heisenberg-type group $G$ it has been proved (by M\"uller and Stein \cite{mller_spectral_1994} and Hebisch \cite{hebisch_multiplier_1993} for a sublaplacian, and by Hebisch and Zienkiewicz \cite{hebisch_multiplier_1995} for a more general positive Rockland operator) that a condition of order
\[s > \frac{\dim G}{2}\]
is sufficient.

An analogous result, due to Alexopoulos \cite{alexopoulos_spectral_1994}, is known for a sublaplacian $L$ on a connected Lie group $G$ of polynomial volume growth. In this case, the condition on the multiplier is expressed in terms of an $L^\infty$ Besov norm, and the regularity threshold involves both a local dimension $Q_0$ (related to the Carnot-Carath\'eodory distance associated to the sublaplacian $L$) and a dimension at infinity $Q_\infty$ (that is, the degree of polynomial growth of $G$), so that a condition of order
\[s > \frac{\max\{Q_0,Q_\infty\}}{2}\]
is sufficient; notice that, for a homogeneous sublaplacian on a stratified group, it is $Q_0 = Q_\infty = Q$. An extension of this result to higher-order operators is contained in \cite{duong_plancherel-type_2002} (where generalizations to different settings, such as fractals, are also discussed). Moreover, in the case of a distinguished sublaplacian on the compact group $G = SU_2$, for which $Q_0 = 4 > 3 = \dim G$, Cowling and Sikora \cite{cowling_spectral_2001} have shown that a condition of order
\[s > \frac{\dim G}{2}\]
(with an $L^2$ Sobolev norm) is sufficient.

In the context of Lie groups with exponential volume growth, two different behaviours may appear: in some cases (similarly to what happens on non-compact symmetric spaces) a multiplier $m$ defining a bounded operator $m(L)$ on $L^p$ for some $p \neq 2$ necessarily extends to a holomorphic function (see \cite{christ_spectral_1996}, \cite{ludwig_sub-laplacians_2000}), whereas in other cases a finite order of differentiability is sufficient to ensure boundedness on $L^p(G)$ for $1 < p < \infty$ (see \cite{hebisch_subalgebra_1993}, \cite{cowling_spectral_1994}); the distinction between the two behaviours is somehow connected with the symmetry of the Banach $*$-algebra $L^1(G)$.

\paragraph{Joint spectral multipliers.}
Suppose now that, instead of a single operator, we have a family $L_1,\dots,L_n$ of (essentially) self-adjoint operators, which moreover commute strongly pairwise, i.e., admit a joint spectral resolution $E$ on $\R^n$:
\[L_j = \int_{\R^n} \lambda_j \,dE(\lambda)\]
for $j=1,\dots,n$. This defines a joint functional calculus for $L_1,\dots,L_n$, which yields, for every bounded Borel $m : \R^n \to \C$, a bounded operator
\[m(L) = m(L_1,\dots,L_n) = \int_{\R^n} m(\lambda) \,dE(\lambda)\]
on $L^2$; the function $m$ is said to be a \emph{joint spectral multiplier} for the system $L_1,\dots,L_n$, and again one can look for sufficient conditions on $m$ ensuring boundedness of $m(L)$ on $L^p$ for some $p \neq 2$.

Notice that Fourier multipliers on $\R^n$ fall into this context: in fact, a Fourier multiplier on $\R^n$ can be thought of as a joint spectral multiplier for the system $-i\partial_1,\dots,-i\partial_n$ of differential operators on $\R^n$. However, for instance, on a non-commutative Lie group, two left-invariant differential operators do not necessarily commute, so that joint spectral multipliers can be considered only for particular systems of left-invariant differential operators.

Certainly, if the environment space is a direct product, and each $L_j$ operates on a different factor, then strong commutativity is ensured; this case is considered in \cite{sikora_multivariable_2009}, where results of Mihlin-H\"ormander type are obtained, and applications to products of fractals are given.

Another extensively studied context is that of Heisenberg-type groups:
\begin{itemize}
\item Mauceri \cite{mauceri_zonal_1981} obtained a result of Mihlin-H\"ormander type for joint functions of the sublaplacian $L$ and the central derivative $-iT$ on Heisenberg groups;
\item M\"uller, Ricci and Stein \cite{mller_marcinkiewicz_1995}, \cite{mller_marcinkiewicz_1996} proved results of Marcinkiewicz type for the sublaplacian $L$ and the central derivatives $-iT_1,\dots,-iT_n$ on Heisenberg-type groups;
\item Fraser \cite{fraser_marcinkiewicz_1997}, \cite{fraser_fold_2001}, \cite{fraser_convolution_2001} and Veneruso \cite{veneruso_marcinkiewicz_2000} obtained results of Marcinkiewicz type for the partial sublaplacians $L_1,\dots,L_n$ and the central derivative $-iT$ on Heisenberg groups.
\end{itemize}
In some of these works, which are concerned with specific groups and operators, sharp thresholds are obtained.

However, to our knowledge, in the literature there is no result for multiple operators with the same generality as in the above-presented results for a single operator.

\section*{Our results}

One of the aims of this work is to partially fill the mentioned gap in the literature, by obtaining joint spectral multiplier results in some generality. Therefore, one of the problems which we had to face was that of identifying the right object to be studied, i.e., a class of systems of left-invariant differential operators on a connected Lie group for which joint multiplier theorems were likely to hold. The desiderata were, on one hand, the applicability of the spectral theorem for defining the joint functional calculus, and consequently the self-adjointness on $L^2(G)$ and the strong commutativity of the operators under consideration; on the other hand, the validity of some analogue of the ``heat kernel estimates'', which are an essential tool in the proof of multiplier theorems for a single operator. 

Our solution involves the \emph{weighted subcoercive operators} of ter Elst and Robinson \cite{ter_elst_weighted_1998}, which are a rather wide class of left-invariant differential operators on connected Lie groups, including positive elliptic operators, sublaplacians, and also positive Rockland operators on homogeneous groups. Correspondingly, we define a system $L_1,\dots,L_n$ of left-invariant differential operators on a connected Lie group $G$ to be a \emph{weighted subcoercive system} if the $L_j$ are formally self-adjoint and pairwise commuting, and if moreover the algebra generated by them contains a weighted subcoercive operator. We then prove that, if $L_1,\dots,L_n$ is a weighted subcoercive system on the group $G$, then the operators $L_j$ are (essentially) self-adjoint on $L^2(G)$ and commute strongly pairwise; moreover, the presence of a weighted subcoercive operator in the algebra generated by $L_1,\dots,L_n$ provides, by the results of \cite{ter_elst_weighted_1998}, the required heat kernel estimates.

A somehow abstract study of these weighted subcoercive systems is subsequently carried out, exploring their relationships with the algebraic structure and the representation theory of the environment group, analogously as what is done in the literature for the algebras of differential operators associated with Gelfand pairs. In particular, we show that, to every weighted subcoercive system $L_1,\dots,L_n$ on a group $G$, a Plancherel measure on the joint $L^2$ spectrum of the operators can be associated, so that the $L^2$ norm of the convolution kernel of the operator $m(L)$ on $G$ coincides with the $L^2$ norm of the multiplier $m$ with respect to the Plancherel measure.

Next, for a weighted subcoercive system $L_1,\dots,L_n$ on a Lie group $G$ with polynomial growth, we obtain weighted $L^1$ estimates for the convolution kernel of the operator $m(L)$ corresponding to a joint multiplier $m$ with compact support, satisfying some smoothness condition. This implies in particular that the convolution kernel corresponding to a smooth and compactly supported multiplier $m$ is integrable, so that the operator $m(L)$ is bounded on $L^p(G)$ for $1 \leq p \leq \infty$; in fact, this result is shown to hold also for multipliers $m$ in the Schwartz class of rapidly decreasing smooth functions.

The weighted estimates, together with the Calder\'on-Zygmund singular integral theory, then yield, in the case of a homogeneous group $G$ with homogeneous operators $L_1,\dots,L_n$, a joint multiplier theorem of Mihlin-H\"ormander type, with a condition of order
\[s > \frac{Q_G}{2} + \frac{n-1}{q}\]
in terms of an $L^q$ Besov norm with $q \in \left[2,\infty\right]$, where $Q_G$ is the degree of polynomial growth of $G$. Notice that this result extends the multiplier theorem of \cite{christ_multipliers_1991} and \cite{mauceri_vectorvalued_1990} for a sublaplacian on a stratified group.

For a homogeneous group $G$ with a positive Rockland operator $L$, the homogeneous dimension of $G$ corresponds to the local dimension $Q_0$ associated to $L$ as in \cite{alexopoulos_spectral_1994}, whereas $Q_G = Q_\infty$ is the dimension at infinity, and it is always $Q_0 \geq Q_G$, with equality if $G$ is stratified. Since most of the results in the literature about spectral multipliers on homogeneous Lie groups are stated in the context of stratified groups, the different role of these two quantities generally remains in the shade. However, on a fixed nilpotent Lie group $G$, there may be several homogeneous structures --- as in $\R^n$ there are plenty of families of non-isotropic dilations --- with possibly different homogeneous dimensions, whereas the degree of polynomial growth $Q_G$ is intrinsic to the algebraic structure of $G$. In our multiplier theorem, differently from \cite{alexopoulos_spectral_1994}, but analogously as in Theorem~\ref{thmi:euclmihlin} on $\R^n$, the homogeneous dimension $Q_0$ does not appear in the regularity threshold, which depends only on $Q_G$.

Some methods are presented which allow, for certain groups and systems of operators, to improve the multiplier theorem by lowering the regularity threshold. In particular, the technique of Hebisch and Zienkiewicz \cite{hebisch_multiplier_1995} is extended to our multi-variate setting, enabling us to replace, e.g, in the case of a Heisenberg-type group $G$, the degree $Q_G$ of polynomial growth with the topological dimension $\dim G$ in the threshold. The summand $(n-1)/q$ in the threshold can be lowered too, through an analysis of the Plancherel measure associated to a specific weighted subcoercive system, which sometimes can be explicitly computed.

Finally, a sort of product theory is developed, yielding a multiplier theorem of Marcinkiewicz type involving several homogeneous groups $G_j$, each of which with its own weighted subcoercive system. A non-conventional use of transference techniques allows us to obtain this multiplier theorem not only on the direct product of the $G_j$, but also on other groups (containing homomorphic images of the $G_j$) which need not be nilpotent. In this way, we are also able to improve, for certain non-nilpotent groups of polynomial growth (including the plane motion group, the oscillator groups and the diamond groups) and for some distinguished sublaplacians, the general result of \cite{alexopoulos_spectral_1994}.

\section*{Structure of the work}

This thesis is divided into five chapters. The first two chapters are of introductory character, and are aimed to establish the language which is used throughout the work.

Specifically, the first chapter is a brief summary of the basic definitions and results related to Lie groups, Lie algebras and translation-invariant differential operators. Several classes of Lie groups are considered, focusing particularly on nilpotent and homogeneous Lie groups. Moreover, some of the results of ter Elst and Robinson \cite{ter_elst_weighted_1998} about weighted subcoercive operators are discussed and slightly amplified.

The second chapter introduces the instruments used to measure smoothness of functions, and particularly of joint multipliers, i.e., Sobolev and Besov spaces on $\R^n$. Special attention is payed to Sobolev and Besov spaces with dominating mixed smoothness, which allow to prescribe different orders of differentiability on different directions. Mihlin-H\"ormander and Marcinkiewicz conditions are then studied abstractly (i.e., without immediate reference to multiplier theorems), by investigating their behaviour under a change of variables, along with mutual implications.

The remaining three chapters contain the above-mentioned results. Namely, in the third chapter weighted subcoercive systems of operators on a connected Lie group are introduced, together with the associated Plancherel measure, and relationships with the algebraic structure and the representation theory of the group are examined. In the fourth chapter, weighted estimates for convolution kernels of operators in the functional calculus are obtained, and a few examples are worked out. In the fifth chapter, our two multiplier theorems of Mihlin-H\"ormander and Marcinkiewicz type are proved, and some applications are discussed.

Finally, an appendix collects some known results about spectral integration and Banach $*$-algebras which are extensively used in the work.

\section*{Acknowledgements}

My deepest gratitude goes to my supervisor, prof.\ Fulvio Ricci, for his great expertise and helpfulness, for his ability in posing the right questions, and for his enormous patience, without which the present work would not have been possible.

I would also like to thank A.\ Carbonaro, M.\ Cowling, E.\ David-Guillou and C.\ Molitor-Braun for interesting discussions, A.~F.~M.\ ter Elst for the exchange of comments on his work, and L.\ Ambrosio, P.\ Ciatti, G.~M.~Dall'Ara and G.~Mauceri for pointing out some references to the literature.

Last, but not least, I thank my family and my friends for their support, their listening ears and their encouragement.

\clearpage
\section*{List of commonly used notation}
\addcontentsline{toc}{chapter}{List of commonly used notation}

\begin{tabular}{@{}lll}
\TS $\N$              & natural numbers (including $0$) & \\
\TS $\Z$, $\Q$, $\R$, $\C$ & integral, rational, real and complex numbers & \\
\TS $\R^+$            & positive real numbers (excluding $0$) & \\ 
\TS $\T$              & $1$-dimensional torus & \\
\TS $S^n$             & $n$-dimensional sphere & \\
\\
\TS $C(X)$            & continuous (complex-valued) functions & \\
\TS $C_b(X)$          & bounded continuous functions & \\
\TS $C_0(X)$          & continuous functions vanishing at infinity & \\
\TS $C_c(X)$          & compactly supported continuous functions & \\
\TS $C_{lub}(G)$      & left uniformly continuous and bounded functions & \\
\TS $C_{rub}(G)$      & right uniformly continuous and bounded functions & \\
\TS $C_{ub}(G)$       & left and right uniformly continuous and bounded functions & \\
\TS $\E(X)$           & smooth functions & \\
\TS $\E'(X)$          & compactly supported distributions & \\
\TS $\D(X)$           & compactly supported smooth functions & \\
\TS $\D'(X)$          & distributions & \\
\TS $\Sz(G)$          & Schwartz functions & \\
\TS $\Sz'(G)$         & tempered distributions & \\
\TS $L^p(X,\mu)$      & Lebesgue spaces & \\
\TS $L^p(G)$          & Lebesgue spaces with respect to a right Haar measure & \\
\TS $L^{p;\infty}(G)$ & smooth functions which are in $L^p(G)$ & \\
                      & together with all their left-invariant derivatives & \\
\TS $C^\infty_0(G)$   & smooth functions which vanish at infinity & \\
                      & together with all their left-invariant derivatives & \\
\TS $W^{k,p}(\R^n)$   & classical $L^p$ Sobolev spaces & \\
\TS $H^s(\R^n)$       & $L^2$ Sobolev spaces of fractional order & \\
\TS $B_{p,q}^s(\R^n)$ & Besov spaces & \\
\TS $S_{p,q}^{\vec{s}}B(\R^{\vec{n}})$ & Besov spaces with dominating mixed smoothness & \\
\\
\TS $\LA_x$, $\RA_x$  & left and right regular representations & \\
\TS $\Diff(G)$        & left-invariant differential operators & \\
\TS $D^+$             & formal adjoint of $D$ & \\
\\
\TS $|\cdot|_p$       & $p$-norm on $\R^n$ & \\
\TS $|\cdot|_\delta$  & homogeneous norm with respect to the dilations $\delta_t$ & \\
\TS $Q_\delta$        & homogeneous dimension with respect to the dilations $\delta_t$ & \\
\TS $Q_G$             & degree of polynomial growth (dimension at infinity) & \\
\TS $\dim G$          & topological dimension & \\
\end{tabular}

\clearemptydoublepage
\pagenumbering{arabic}
\pagestyle{fancy}
\chapter{Differential operators on Lie groups}

This chapter is an introduction to our main objects of study and to the environment in which they live.

Starting from general Lie groups, we then consider more restrictive conditions (type I, amenability, polynomial growth, ...), eventually reaching nilpotent and homogeneous Lie group, on which the fundamental class of Rockland operators is defined.

In the last part, we move back to general, by showing how homogeneous Lie groups and Rockland operators may serve as a model for studying some invariant differential operators (the so-called weighted subcoercive operators) on general Lie groups.

\section{Lie groups and differential operators}

Here we recall the basic definitions and results involving Lie groups, Lie algebras and differential operators, which will be freely used in the following. This brief exposition also allows us to set out the notation.

Most of the theory presented here for Lie groups is in fact valid in the more general context of locally compact topological groups; for these results, we refer mainly to the treatises of Hewitt and Ross \cite{hewitt_abstract_1979} and Folland \cite{folland_course_1995}. For the parts which instead are specific to Lie groups, our primary references are the books of Helgason \cite{helgason_differential_1962}, \cite{helgason_differential_1978}, \cite{helgason_groups_1984} and Varadarajan \cite{varadarajan_lie_1974}. A discussion of distributions on smooth manifolds and Lie groups, including the Schwartz kernel theorem, can be found in the treatise of Dieudonn\'e \cite{dieudonn_treatise_analysis_1972}, \cite{dieudonn_treatise_analysis_1974}, \cite{dieudonn_treatise_analysis_1988}; however, most results are straightforward generalizations of the corresponding results for $\R^n$ given, e.g., in the book of Tr\`eves \cite{treves_topological_1967}.

Notice that, since we focus mainly on left-invariant differential operators, the ``standard'' measure on a Lie group will be a right Haar measure, and the definition of convolution of functions will be formulated accordingly. In some of the references, the choice is instead for right-invariant operators and/or left Haar measures, so that the results summarized here might take a slightly different form in those works.

\subsection{Lie groups and their Lie algebras}
In the following, a \emph{Lie group}\index{Lie group} will be a second-countable smooth manifold $G$ endowed with a group structure such that the maps
\[G \times G \ni (x,y) \mapsto xy \in G, \qquad G \ni x \mapsto x^{-1} \in G\]
(multiplication and inversion) are smooth. We set
\[\la_g(x) = gx, \qquad \ra_g(x) = x g^{-1}\]
(left and right translations) and correspondingly
\[\LA_g f = f \circ \la_{g}^{-1}, \qquad \RA_g f = f \circ \ra_g^{-1}\]
(left and right regular representations).

On a Lie group, we have a \emph{left uniform structure}, generated by entourages of the form
\[\{(x,y) \in G \times G \tc x^{-1}y \in U\}\]
where $U$ is a neighborhood of the identity $e \in G$, and a \emph{right uniform structure}, generated by entourages of the form
\[\{(x,y) \in G \times G \tc yx^{-1} \in U\}.\]
These two structures do not necessarily coincide (they do coincide, e.g., when $G$ is compact or abelian), however both structures are compatible with the topology of $G$. Correspondingly, we have the notions of \emph{left (right) uniformly continuous}\index{function!uniformly continuous} (complex-valued) functions on $G$; it can be shown that $f : G \to \C$ is left (right) uniformly continuous if and only if
\[\lim_{g \to e} \|\RA_g f - f\|_\infty = 0 \qquad \text{(}\lim_{g \to e} \|\LA_g f - f\|_\infty = 0\text{).}\]

A left-invariant vector field on $G$ is a (smooth) vector field $X$ such that
\[d(\la_g)_x(X_x) = X_{gx} \qquad\text{for all $g,x \in G$}\]
or, equivalently, thinking of $X$ as an operator on (smooth) functions,
\[X \LA_g = \LA_g X \qquad\text{for all $g \in G$.}\]
Right-invariant vector fields are defined analogously, using right translations in place of left translations. From the definition, it is clear that a left- (or right-) invariant vector field is uniquely determined by its value at the identity $e \in G$. Moreover, left- (right-) invariant vector fields constitute a Lie subalgebra of the Lie algebra of smooth vector fields on $G$.

The Lie algebra\index{Lie algebra!of a Lie group} $\lie{g}$ of the Lie group $G$ is the Lie subalgebra of left-invariant vector fields on $G$; as we said before, $\lie{g}$ can be identified with the tangent space $T_e G$ of $G$ at the identity, so that in particular $\lie{g}$ is finite-dimensional and its dimension coincides with the topological dimension $\dim G$ of the Lie group.

If $\phi : G \to G'$ is a Lie group homomorphism (i.e., $\phi$ is smooth and a group homomorphism), then it maps left-invariant vector fields on $G$ to left-invariant vector fields on $G'$; the correspondence $\phi' : \lie{g} \to \lie{g}'$ thus established is a Lie algebra homomorphism, which is called the \emph{derivative} of $\phi$.

The map
\[\la_x \circ \ra_x : G \ni g \mapsto x g x^{-1} \in G\]
is an (inner) automorphism of $G$; its derivative is then an automorphism of $\lie{g}$, which is denoted by $\Ad(x)$. The map $\Ad : G \to \Aut(\lie{g})$ is a Lie group homomorphism, which is called the \emph{adjoint action} of $G$ on its Lie algebra $\lie{g}$. The derivative of $\Ad$, denoted by $\ad$, is the adjoint action of the Lie algebra $\lie{g}$ on itself, which maps $\lie{g}$ onto the inner derivations of $\lie{g}$:
\[\ad(X)(Y) = [X,Y].\]

\subsection{Exponential map}
The \emph{exponential map}\index{exponential map}
\[\exp : \lie{g} \to G\]
is a smooth map which is uniquely determined by the condition that
\begin{equation}\label{eq:1parsg}
\R \ni t \mapsto \exp(tX) \in G
\end{equation}
gives the flow curve of $X$ through the identity ($\exp(0) = e$). The exponential map is a local diffeomorphism at $0$ (its differential $d(\exp)_0 : \lie{g} \to \lie{g}$ is the identity map). The curves \eqref{eq:1parsg} are $1$-parameter subgroups of $G$ and in fact every $1$-parameter subgroup of $G$ can be written in this form.

In exponential coordinates, the product law of $G$ can be written as a power series in a neighborhood of the identity, through the \emph{Baker-Campbell-Hausdorff formula}\index{Baker-Campbell-Hausdorff formula}:
\begin{equation}\label{eq:bch}
\exp(X) \exp(Y) = \exp(b(X,Y)),
\end{equation}
where
\[b(X,Y) = X + Y + \sum_{n \geq 2} b_n(X,Y),\]
$b_n(X,Y)$ is a linear combination of $n$-times-iterated commutators of $X,Y$ (whose coefficients do not depend on the Lie group $G$) and the series converges for sufficiently small $X,Y$ (see \cite{varadarajan_lie_1974}, Theorem~2.15.4).

\subsection{Translation-invariant differential operators}
A (smooth) differential operator $D$ on $G$ (with complex coefficients) is said to be left-invariant\index{differential operator!left- (right-) invariant} if
\[\LA_g D = D \LA_g \qquad\text{for all $g \in G$.}\]
Right-invariant differential operators are defined analogously.

Left-invariant vector fields are left-invariant differential operators of order $1$ and in fact they generate the whole algebra of left-invariant differential operators; more precisely, the algebra $\Diff(G)$ of left-invariant differential operators on $G$ can be identified with the universal enveloping algebra\index{Lie algebra!universal enveloping algebra of} $\UEnA(\lie{g}_\C)$ of the complexification of $\lie{g}$.

\subsection{Haar measures}
A \emph{left (right) Haar measure}\index{Haar measure} on $G$ is a non-null regular positive Borel measure which is invariant under left (right) translations. Since a Lie group $G$ is a locally compact group, a left (right) Haar measure on $G$ exists and is unique, up to multiplication by a positive constant. A Haar measure on a Lie group is smooth, i.e., in every local coordinate system it is absolutely continuous with respect to the Lebesgue measure, with a density function which is smooth and nowhere null.

Let $\mu_G$ be a right Haar measure on $G$. For every $x \in G$, the push-forward $\la_x(\mu_G)$ is still a right Haar measure, so that
\[\la_x(\mu_G) = \Delta_G(x) \mu_G\]
for some $\Delta_G(x) > 0$. The function $\Delta_G : G \to \R^+$ is called the \emph{modular function} of the group $G$ and is a (smooth) homomorphism of Lie groups, which can be expressed in terms of the adjoint action $\Ad$ of $G$ on $\lie{g}$ as
\[\Delta_G(x) = \left|\det \Ad(x^{-1})\right|.\]
(cf.\ \cite{helgason_differential_1962}, \S X.1.1). If $\nu_G(A) = \mu_G(A^{-1})$, then $\nu_G$ is a left Haar measure on $G$. In fact $\nu_G = \Delta_G \mu_G$, i.e., $\nu_G$ is absolutely continuous with respect to $\mu_G$, with density $\Delta_G$.

$G$ is said to be \emph{unimodular}\index{Lie group!unimodular} if $\Delta_G \equiv 1$. In a unimodular group, a right Haar measure is also a left Haar measure, and vice versa. Compact groups and abelian groups are unimodular; more generally, if the left and right uniform structures of $G$ coincide, then $G$ is unimodular (see \cite{hewitt_abstract_1979}, \S 19.28).

\subsection{Function spaces}
In the following, let a right Haar measure $\mu_G$ be fixed on $G$, so that expressions of the form
\[\int_G f(x) \,dx\]
will be always understood as integrals with respect to $\mu_G$. The chosen measure determines also the Lebesgue spaces $L^p(G) = L^p(G,\mu_G)$ for $1 \leq p \leq \infty$; if $G$ is not unimodular, these spaces differ from the corresponding Lebesgue spaces $L^p(G,\Delta_G \mu_G)$ with respect to the left Haar measure $\Delta_G \mu_G$. For a Borel function $f : G \to \C$, we set for short
\[\|f\|_p = \|f\|_{L^p(G)} = \|f\|_{L^p(G,\mu_G)}, \qquad \|f\|_{\hat p} = \|f\|_{L^p(G,\Delta_G \mu_G)}.\]
We will also use the notation $C(G)$ for (the space of) continuous functions on $G$, $C_b(G)$ for bounded continuous functions, $C_{lub}(G)$ for left uniformly continuous and bounded functions, $C_{rub}(G)$ for right uniformly continuous and bounded functions, $C_0(G)$ for continuous functions vanishing at infinity, $C_c(G)$ for compactly supported continuous functions, $\E(G)$ for smooth functions, $\D(G)$ for compactly supported smooth functions. Moreover, $\D'(G)$ will denote the space of distributions on $G$, whereas $\E'(G)$ will be the space of compactly supported distributions. As usual, the space $L^1_\loc(G)$ of locally $\mu_G$-integrable functions on $G$ is naturally embedded into $\D'(G)$, by identifying a function $f \in L^1_\loc(G)$ with the Radon measure $f \mu_G$.

We will use the notation
\begin{equation}\label{eq:pairing}
\langle f, g \rangle = \int_G f \, \overline{g} \,d\mu_G,
\end{equation}
with the obvious extension when one of $f,g$ is a distribution\footnote{In fact, if distributions are defined as conjugate-linear functionals on the space of test functions, then the pairing $\langle f, g \rangle$ for $f \in \D'(G)$ and $g \in \D(G)$ is simply the evaluation of $f$ on $g$, whereas \eqref{eq:pairing} determines the embedding of $L^1_\loc(G)$ in $\D'(G)$.}. In particular, the pairing $\langle \cdot, \cdot \rangle$ will always be linear in the first argument and conjugate-linear in the second, in order to extend the inner product of $L^2(G)$.

\subsection{Formal adjoint of a differential operator}
The \emph{formal adjoint}\index{differential operator!formal adjoint of} of a (smooth) differential operator $D$ on $G$ (with respect to the right Haar measure $\mu_G$) is the differential operator $D^+$ on $G$ such that
\[\langle D^+ \phi, \psi \rangle = \langle \phi, D \psi\rangle \qquad \text{for all $\phi,\psi \in \D(G)$.}\]
Existence and uniqueness of the formal adjoint are guaranteed by the properties of smoothness of the measure $\mu_G$.

If $D$ is left- (right-) invariant, then also $D^+$ is, and the map $D \mapsto D^+$ is an involutive conjugate-linear anti-automorphism of the algebra $\Diff(G)$, which makes it into a $*$-algebra. If $X \in \lie{g}$ is a left-invariant vector field, then its flow is given by right translations, so that the measure $\mu$ is invariant under the flow of $X$, but this implies that
\[\langle X \phi, \psi \rangle + \langle \phi, X \psi \rangle = 0 \qquad\text{for all $\phi,\psi \in \D(G)$,}\]
i.e., $X^+ = -X$.

A differential operator $D$ on $G$ is said to be \emph{formally self-adjoint} if $D^+ = D$; in particular, if $X \in \lie{g}$, then $-iX$ is formally self-adjoint.

\subsection{Strongly continuous representations, unitary representations, smooth vectors}
A \emph{representation}\index{representation!of a Lie group} $\pi$ of $G$ on a topological vector space $\VV$ is a map from $G$ to the bounded linear operators on $\VV$ such that
\[\pi(e) = \id_\VV, \qquad \pi(xy) = \pi(x) \pi(y);\]
we will always suppose that a representation is \emph{strongly continuous}, i.e., for every $v \in \VV$, the map
\[G \ni x \mapsto \pi(x) v \in \VV\]
is continuous. A \emph{unitary representation} of $G$ is a representation $\pi$ of $G$ on a Hilbert space $\HH$, such that the operators $\pi(x)$ for $x \in G$ are unitary operators on $\HH$.

If $\pi$ is a representation of $G$ on a Fr\'echet space $\VV$ and $\phi \in L^1(G)$ has compact support, then the expression
\begin{equation}\label{eq:representationoffunctions}
\pi(\phi) v = \int_G \pi(x^{-1}) v \, \phi(x) \,dx \qquad\text{for $v \in \VV$}
\end{equation}
defines a bounded linear operator $\pi(\phi)$ on $\VV$. If $\pi$ is a \emph{uniformly bounded} representation (i.e., $\sup_{x \in G} p(\pi(x) v) < \infty$ for all seminorms $p$ of $\VV$ and all $v \in \VV$), then the definition of $\pi(\phi)$ can be extended to all $\phi \in L^1(G)$. Notice that, if $\pi$ is unitary, then $\|\pi(\phi)\| \leq \|\phi\|_1$.

Given a representation $\pi$ of $G$ on a Fr\'echet space $\VV$, we say that $v \in \VV$ is a \emph{smooth vector} of the representation $\pi$ if the map
\[G \ni x \mapsto \pi(x)v \in \VV\]
is smooth. The set $\VV^\infty$ of the smooth vectors of $\pi$ is a dense linear subspace of $\VV$ and moreover, if $\phi \in \D(G)$, then $\pi(\phi) v \in \VV^\infty$ for all $v \in \VV$; in fact, by a theorem of Dixmier and Malliavin's (see \cite{dixmier_factorisations_1978}, Th\'eor\`eme~3.3), the space $\VV^\infty$ coincides with the \emph{G\aa rding space} of the representation, i.e., the set of finite sums of vectors of the form $\pi(\phi) v$ for $v \in \VV$, $\phi \in \D(G)$.

For every $v \in \VV^\infty$, we can consider the differential of $\pi(\cdot) v$ at the identity $e \in G$, which will be denoted by $d\pi(\cdot)v$. It can be shown that, if $X \in \lie{g}$ and $v \in \VV^\infty$, then also $d\pi(X)v \in \VV^\infty$, so that $d\pi(X) : v \mapsto d\pi(X)v$ can be considered as a linear operator on $\VV^\infty$. Since
\[d\pi([X,Y]) = d\pi(X) d\pi(Y) - d\pi(Y) d\pi(X),\]
then $d\pi$ can be extended to a homomorphism of unital algebras from $\Diff(G)$ to the linear operators on $\VV^\infty$, and one can prove that
\[d\pi(D) \pi(\phi) v = \pi(D\phi) v\]
for all $v \in \VV$, $\phi \in \D(G)$ and $D \in \Diff(G)$. If $\pi$ is a unitary representation, it can be shown that, for all $D \in \Diff(G)$,
\begin{equation}\label{eq:adjointrepresentation}
d\pi(D^+) \subseteq d\pi(D)^*,
\end{equation}
and in particular a formally self-adjoint operator $D$ is mapped to a symmetric (densely defined) operator on $\VV$.

\subsection{Regular representations.}\label{subsection:regularrepresentations}
The \emph{(right) regular representation}\index{representation!of a Lie group!regular} of $G$ on $L^p(G)$ ($1 \leq p < \infty$) is the representation $\pi$ given by $\pi(g) = \RA_g$. Since $\mu_G$ is right-invariant, for every $g \in G$, $\pi(g)$ is an isometry of $L^p(G)$; in particular, if $p = 2$, $\pi$ is a unitary representation. An element $f \in L^p(G)$ is a smooth vector of $\pi$ if and only if, for all $D \in \Diff(G)$, $Df \in L^p(G)$ (where $Df$ is meant in the sense of distributions); moreover, in this case $d\pi(D) f = Df$.

We denote by $L^{p;\infty}(G)$ the space of smooth vectors of the regular representation of $G$ on $L^p(G)$; more generally, for a measurable function $u : G \to \C$, we denote by $L^{p;\infty}(G, u(x) \,dx)$ the Fr\'echet space of the functions $f$ on $G$ which belong to the weighted Lebesgue space $L^p(G,u(x)\,dx)$ together with all their right-invariant derivatives.

An extension of the previous considerations is obtained by observing that a homomorphism $\gamma : G \to G'$ of Lie groups induces\index{representation!of a Lie group!induced by a homomorphism}, by composition, a representation $\pi$ of $G$ on $L^p(G')$ given by $\pi(g) = \RA_{\gamma(g)}$. Since the derivative $\gamma' : \lie{g} \to \lie{g'}$ of $\gamma$ is a Lie algebra homomorphism, it extends to a homomorphism of unital algebras $\gamma' : \Diff(G) \to \Diff(G')$. We then have that an element $f \in L^p(G')$ is a smooth vector of $\pi$ if and only if $\gamma'(D)f \in L^p(G')$ for all $D \in \Diff(G)$, and in this case $d\pi(D) f = \gamma'(D) f$.

Up to this point, for every representation $\pi$ of $G$ on a Fr\'echet space $\VV$, we have always considered the operators $d\pi(D)$ for $D \in \Diff(G)$ as defined on the common domain $\VV^\infty$. In fact, for some particular representations, we can restrict to certain subspaces $\WW$ of $\VV^\infty$ without losing essentially any information:

\begin{prp}
Let $\pi$ be a representation of $G$ on a Fr\'echet space $\VV$, and let $\WW$ be a linear subspace of $\VV^\infty$, which is dense in $\VV$ and such that $\pi(\phi) \WW \subseteq \WW$ for all $\phi \in \D(G)$. Then, for all $D \in \Diff(G)$,
\[\overline{d\pi(D)|_\WW} = \overline{d\pi(D)}.\]
\end{prp}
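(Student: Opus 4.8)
The plan is to establish the equality by showing that the closures in $\VV \times \VV$ of the graphs of $d\pi(D)|_\WW$ and of $d\pi(D)$ (the latter with domain $\VV^\infty$) coincide; this is exactly the assertion $\overline{d\pi(D)|_\WW} = \overline{d\pi(D)}$. Since $\WW \subseteq \VV^\infty$, the graph of $d\pi(D)|_\WW$ is contained in that of $d\pi(D)$, so only the reverse inclusion of closures requires work: it suffices to prove that, for every $v \in \VV^\infty$, the pair $(v, d\pi(D)v)$ is a limit of pairs $(u, d\pi(D)u)$ with $u \in \WW$ --- equivalently, that one can find $u \in \WW$ with $u$ arbitrarily close to $v$ and $d\pi(D)u$ arbitrarily close to $d\pi(D)v$ in $\VV$.

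To produce such $u$ I would use the description of $\VV^\infty$ as the G\aa rding space furnished by the quoted Dixmier--Malliavin theorem: fix $v \in \VV^\infty$ and write $v = \sum_{i=1}^N \pi(\phi_i) v_i$ with $\phi_i \in \D(G)$ and $v_i \in \VV$. Since $\WW$ is dense in the Fr\'echet space $\VV$, choose for each $i$ a sequence $(w_i^{(k)})_k$ in $\WW$ with $w_i^{(k)} \to v_i$, and set $u^{(k)} = \sum_{i=1}^N \pi(\phi_i) w_i^{(k)}$. Because $\WW$ is a linear subspace with $\pi(\phi)\WW \subseteq \WW$ for all $\phi \in \D(G)$, each $u^{(k)}$ lies in $\WW$ (and hence in $\VV^\infty$). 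The two required convergences are then just continuity statements: since each $\pi(\phi_i)$ is continuous on $\VV$, one gets $u^{(k)} \to \sum_i \pi(\phi_i)v_i = v$; and since $D\phi_i \in \D(G)$, the identity $d\pi(D)\pi(\phi_i)w = \pi(D\phi_i)w$ recorded above (valid for all $w \in \VV$), together with the linearity of $d\pi(D)$, gives $d\pi(D)u^{(k)} = \sum_i \pi(D\phi_i)w_i^{(k)} \to \sum_i \pi(D\phi_i)v_i = \sum_i d\pi(D)\pi(\phi_i)v_i = d\pi(D)v$. Hence $(v, d\pi(D)v) = \lim_k (u^{(k)}, d\pi(D)u^{(k)})$ lies in the closure of the graph of $d\pi(D)|_\WW$, and the two closures coincide.

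I do not expect a genuine obstacle once the Dixmier--Malliavin factorization is invoked: everything reduces to the stability of $\WW$ under the operators $\pi(\phi)$ and to the continuity on $\VV$ of the \emph{fixed} operators $\pi(\phi_i)$ and $\pi(D\phi_i)$, the latter being exactly what turns the a priori delicate convergence of $d\pi(D)u^{(k)}$ into a triviality; the only care needed is to keep the formula $d\pi(D)\pi(\phi)w = \pi(D\phi)w$ and the linearity of $d\pi(D)$ on $\VV^\infty$ straight. If one wished to avoid citing Dixmier--Malliavin, a self-contained variant would take a Dirac approximate identity $(\phi_k)$ in $\D(G)$: then $\pi(\phi_k)v \to v$ for every $v \in \VV$ by strong continuity, and for $v \in \VV^\infty$ an integration by parts transferring the derivatives from $\phi_k$ onto the smooth orbit map $x \mapsto \pi(x^{-1})v$ (on which a left-invariant vector field $X$ acts, in the variable $x$, by $-\,d\pi(X)$) gives $d\pi(D)\pi(\phi_k)v = \pi(D\phi_k)v \to d\pi(D)v$; combining this with the density of $\WW$, the continuity of $\pi(\phi_k)$ and $\pi(D\phi_k)$, and a diagonal choice again yields the claim.
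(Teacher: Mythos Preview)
Your proof is correct. The paper does not give its own argument here: it simply cites Nelson--Stinespring, Theorem~1.1, so your write-up actually supplies what the paper outsources.

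The approach you take via Dixmier--Malliavin is, however, necessarily different from the cited reference, since Nelson and Stinespring's paper predates the Dixmier--Malliavin factorization theorem by nearly two decades. The original argument proceeds more along the lines of your ``self-contained variant'' with approximate identities (and some care with the Fr\'echet topology on $\VV^\infty$). What your main approach buys is conceptual cleanliness: once $v \in \VV^\infty$ is written as a finite sum $\sum_i \pi(\phi_i)v_i$, the whole proof reduces to the continuity on $\VV$ of the \emph{fixed} bounded operators $\pi(\phi_i)$ and $\pi(D\phi_i)$, with no need to control an approximate identity or to argue about the smooth Fr\'echet structure on $\VV^\infty$. The trade-off is that you invoke a deeper external result; the approximate-identity route is more elementary but slightly more delicate, particularly in justifying $\pi(D\phi_k)v \to d\pi(D)v$ for smooth $v$, which amounts to knowing that $\pi(\phi_k) \to \id$ strongly on $\VV^\infty$ with its own topology.
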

\begin{proof}
See \cite{nelson_representation_1959}, Theorem~1.1.
\end{proof}

In particular, if $\pi$ is the representation induced by a Lie group homomorphism $\gamma : G \to G'$ on $L^p(G')$ ($1 \leq p < \infty$), then we can take $\WW = \D(G')$. This will be understood in the following, without further mention.

\subsection{Convolution}
\index{convolution|(}Let $M(G)$ denote the space of complex finite regular Borel measures on $G$, i.e., the dual of the space $C_0(G)$ of continuous functions on $G$ vanishing at infinity. For every $\sigma,\tau \in M(G)$, the \emph{convolution}\index{convolution} of $\sigma$ and $\tau$ is the unique element $\sigma * \tau \in M(G)$ such that
\[\int_G f \,d(\sigma * \tau) = \int_G \int_G f(xy) \,d\sigma(x) \,d\tau(y) \qquad\text{for all $f \in C_0(G)$.}\]
Convolution, together with the involution $\sigma \mapsto \sigma^*$ defined by
\[\sigma^*(E) = \overline{\sigma(E^{-1})},\]
makes $M(G)$ into a Banach $*$-algebra.

The space $L^1(G)$ can be identified with the subspace of $M(G)$ of absolutely continuous measures with respect to the right Haar measure $\mu$. In fact, $L^1(G)$ is a closed $*$-subalgebra of $M(G)$, where involution is given by
\[f^*(x) = \Delta_G(x) \overline{f(x^{-1})},\]
whereas convolution $f*g$ is given by
\[f*g(x) = \int_G f(xy^{-1}) g(y) \,dy = \int_G f(y^{-1}) g(yx) \,dy \qquad\text{for $\mu_G$-a.e.\ $x \in G$,}\]
or, equivalently, by
\begin{equation}\label{eq:convolutionregularrepresentation}
f*g = \int_G g(y) \RA_{y^{-1}} f \,dy = \int_G f(y^{-1}) \LA_{y^{-1}} g \,dy.
\end{equation}

These various formulations allow one to extend the definition of convolution to other spaces of functions, and also to distributions (see \cite{dieudonn_treatise_analysis_1972}, \S17.11). For instance, we have \emph{Young's inequalities}\index{Young's inequalities}:
\[\|(f \Delta_G^{-1/q'}) * g\|_r \leq \|f\|_p \|g\|_q\]
for $p,q,r \in \left[1,\infty\right]$ with $1 + 1/r = 1/p + 1/q$ (see \cite{klein_sharp_1978}, Lemma~2.1). For $r = \infty$, this inequality gives
\[\|f * g\|_\infty \leq \|f\|_{\hat p} \|g\|_{p'};\]
in fact, if $f \in L^p(G,\Delta_G \mu)$ and $g \in L^{p'}(G,\mu)$, then $f * g$ is continuous and bounded on $G$, and if moreover $1 < p < \infty$, then $f * g \in C_0(G)$ (see \cite{hewitt_abstract_1979}, Theorem~20.16).

Other (continuous) inclusions involving convolution are the following:
\[\D(G) * \D'(G) \subseteq \E(G), \qquad \E(G) * \E'(G) \subseteq \E(G), \qquad \E'(G) * \D'(G) \subseteq \D'(G).\]
The ``smoothing'' property of convolution, i.e., the fact that convolution has at least the (differential) regularity of each of its factors, is due to the behaviour of convolution under translations:
\[\LA_x(f * g) = (\LA_x f) * g, \qquad \RA_x(f * g) = f * (\RA_x g) \qquad\text{for all $x \in G$,}\]
which implies that, for every left-invariant differential operator $D$ and right-invariant differential operator $D'$ on $G$,
\[D(f * g) = f * Dg, \qquad D'(f * g) = (D'f) * g.\]
Finally, we have that
\[\supp (f*g) \subseteq \overline{(\supp f) \cdot (\supp g)},\]
and the following duality relations hold:
\[\langle f * g, h \rangle = \langle f, h * g^* \rangle = \langle g, (f^* \Delta_G^{-1}) * h \rangle.\]

By comparing \eqref{eq:representationoffunctions} and \eqref{eq:convolutionregularrepresentation}, we see that, for a fixed $f \in L^1(G)$, the map $\phi \mapsto \phi * f$ is the linear operator associated to $f$ via the regular representation:
\[\phi * f = \RA(f) \phi.\]
Associativity of convolution then gives $\RA(f * g) = \RA(g) \RA(f)$. More generally, for a (uniformly bounded) representation $\pi$ of $G$, we have
\[\pi(f * g) = \pi(g) \pi(f)\]
(i.e., $\pi$ is an anti-representation of $L^1(G)$) and
\[\pi(\RA_x f) = \pi(x) \pi(f);\]
moreover, if $\pi$ is unitary, then
\[\pi(f^*) = \pi(f)^*.\]
\index{convolution|)}

\subsection{Approximate identities}\label{subsection:approximateidentities}
The convolution algebra $L^1(G)$ is not unital, unless $G$ is discrete (cf.\ \cite{hewitt_abstract_1979}, Theorem~20.25). However, a valid and widely used replacement for an identity element of $L^1(G)$ is given by the following statement, which summarizes well known results (cf.\ \cite{grafakos_classical_2008}, \S1.2.4).

\begin{prp}\label{prp:approximateidentity}
Let $G$ be a Lie group and $A$ be a directed set. Suppose that $(\eta_\alpha)_{\alpha \in A}$ is a family of elements of $L^1(G)$ such that:
\begin{itemize}
\item $\limsup_{\alpha \in A} \|\eta_\alpha \|_1 < \infty$;
\item $\lim_{\alpha \in A} \int_{G \setminus U} |\eta_\alpha(x)| \,dx = 0$ for all neighborhoods $U$ of the identity $e \in G$;
\item $\lim_{\alpha \in A} \int_G \eta_\alpha(x) \,dx = c$ for some $c \in \C$.
\end{itemize}
Then, for all $f \in C_b(G)$,
\[\lim_{\alpha \in A} f * \eta_\alpha = c f \qquad\text{uniformly on compacta.}\]
Moreover, for every uniformly bounded representation $\pi$ of $G$ on a Fr\'echet space $\VV$ and every $v \in \VV$,
\[\lim_{\alpha \in A} \pi(\eta_\alpha) v = c v \qquad\text{in $\VV$.}\]
In particular, for all $f \in C_{lub}(G)$,
\[\lim_{\alpha \in A} f * \eta_\alpha = c f \qquad\text{uniformly,}\]
and, if $1 \leq p < \infty$, for all $f \in L^p(G)$,
\[\lim_{\alpha \in A} f * \eta_\alpha = c f \qquad\text{in $L^p(G)$.}\]
\end{prp}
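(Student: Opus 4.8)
The plan is to reduce everything to the first conclusion --- convergence $f * \eta_\alpha \to cf$ uniformly on compacta for $f \in C_b(G)$ --- and then leverage the density of nice vectors plus the uniform boundedness hypothesis to get the statements for general representations. First I would establish the pointwise estimate: using the formula $f*\eta_\alpha(x) = \int_G f(xy^{-1}) \eta_\alpha(y)\,dy$, I would write
\[
f*\eta_\alpha(x) - c\,f(x) = \int_G \bigl(f(xy^{-1}) - f(x)\bigr)\eta_\alpha(y)\,dy + f(x)\Bigl(\int_G \eta_\alpha(y)\,dy - c\Bigr).
\]
The second term tends to $0$ by the third bullet, uniformly on compacta since $f$ is bounded. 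For the first term, fix a compact set $K$ and $\varepsilon > 0$; by right uniform continuity of $f$ on the compactum $K$ (here one uses that $f\in C_b(G)$ restricted near $K$ behaves uniformly continuously with respect to the right uniform structure --- more carefully, one picks a neighborhood $U$ of $e$ so that $|f(xy^{-1}) - f(x)| < \varepsilon$ for $x\in K$, $y\in U$), split the integral over $U$ and over $G\setminus U$. Over $U$ the integrand is bounded by $\varepsilon |\eta_\alpha(y)|$, contributing at most $\varepsilon \limsup_\alpha\|\eta_\alpha\|_1$; over $G\setminus U$ it is bounded by $2\|f\|_\infty |\eta_\alpha(y)|$, and $\int_{G\setminus U}|\eta_\alpha|\to 0$ by the second bullet. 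This gives the first conclusion.

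Next, for the representation statement, I would first treat a dense set of good vectors and then pass to the limit. By the Dixmier--Malliavin theorem quoted in the excerpt, or more elementarily, the Gårding vectors $\pi(\phi)w$ with $\phi\in\D(G)$, $w\in\VV$ are dense in $\VV$. For such a vector, $\pi(\eta_\alpha)\pi(\phi)w = \pi(\phi * \eta_\alpha)w$ using the anti-representation property $\pi(f*g)=\pi(g)\pi(f)$; and since $\phi * \eta_\alpha \to c\,\phi$ holds in $L^1(G)$ --- which follows from the first conclusion applied with $f=\phi\in C_{lub}(G)\subseteq C_b(G)$, together with the support and smoothing properties of convolution to control the $L^1$ norm (or more directly, a Minkowski-integral argument on $\phi*\eta_\alpha = \int_G \eta_\alpha(y)\RA_{y^{-1}}\phi\,dy$ using strong continuity of $\RA$ on $L^1$) --- and since for unitary or uniformly bounded $\pi$ the map $L^1(G)\ni f\mapsto \pi(f)w$ is continuous, we get $\pi(\eta_\alpha)\pi(\phi)w \to c\,\pi(\phi)w$ in $\VV$. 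Finally, for arbitrary $v\in\VV$ pick a Gårding vector $u$ with $p(v-u)$ small for the relevant seminorm $p$; then
\[
p\bigl(\pi(\eta_\alpha)v - cv\bigr) \leq p\bigl(\pi(\eta_\alpha)(v-u)\bigr) + p\bigl(\pi(\eta_\alpha)u - cu\bigr) + |c|\,p(u-v),
\]
and the first term is controlled by the uniform boundedness hypothesis $\sup_\alpha p(\pi(\eta_\alpha)\cdot)<\infty$ on the $L^1$-operator (which follows from $\limsup\|\eta_\alpha\|_1<\infty$ together with $\|\pi(f)\|\le\|f\|_1$ in the unitary case, or the analogous uniform bound in the uniformly bounded case). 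An $\varepsilon/3$ argument finishes it.

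The special cases are then immediate: $C_{lub}(G)\subseteq C_b(G)$ and on $C_{lub}(G)$ the uniform continuity is global, so the splitting argument above gives uniform (not merely locally uniform) convergence; and for $f\in L^p(G)$ with $1\le p<\infty$, this is exactly the representation statement applied to the regular representation $\pi(g)=\RA_g$ on $L^p(G)$, which is isometric (hence uniformly bounded) and strongly continuous as recorded in Section~\ref{subsection:regularrepresentations}.

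The main obstacle I expect is the bookkeeping around the \emph{right} uniform structure: because $\mu_G$ is a right Haar measure, the convolution formula naturally produces increments $f(xy^{-1}) - f(x)$, so one must be careful that the relevant continuity is right uniform continuity, and that $C_b(G)$ functions are right uniformly continuous on compacta (which is automatic --- continuous functions on a manifold are uniformly continuous on compact sets for any compatible uniformity). A secondary technical point is justifying $\phi*\eta_\alpha\to c\phi$ in $L^1$-norm rather than merely uniformly on compacta; the cleanest route is the vector-valued integral $\phi*\eta_\alpha=\int_G\eta_\alpha(y)\RA_{y^{-1}}\phi\,dy$ combined with strong continuity of translation on $L^1(G)$, again splitting near and far from $e$. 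None of this is deep, but it is where one could slip on a left/right convention.
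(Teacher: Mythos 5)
The paper gives no proof of this proposition (it is stated as a summary of well-known results, with a reference to Grafakos), so I am judging your argument on its own. The first part is correct: the decomposition into $\int_G (f(xy^{-1})-f(x))\eta_\alpha(y)\,dy$ plus $f(x)(\int_G\eta_\alpha - c)$, the near/far splitting, and the compactness argument for the needed neighborhood $U$ all work. One terminological slip: with the paper's conventions ($\RA_g f(x)=f(xg)$, right Haar measure), the relevant increment $f(xy^{-1})-f(x)$ is controlled by \emph{left} uniform continuity, which is exactly why the global statement is asserted for $C_{lub}(G)$ rather than $C_{rub}(G)$; your local argument via continuity of $(x,y)\mapsto f(xy^{-1})$ at $(K,e)$ is fine regardless.

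The representation part is where you have a real (though fixable) gap, and also where you take an unnecessarily long route. In the $\varepsilon/3$ step you need $p(\pi(\eta_\alpha)(v-u))$ to be small \emph{uniformly in $\alpha$} once $u$ is close to $v$; the estimate $p(\pi(\eta_\alpha)z)\leq\|\eta_\alpha\|_1\sup_{x}p(\pi(x)z)$ reduces this to continuity of the seminorm $z\mapsto\sup_x p(\pi(x)z)$, i.e.\ to \emph{equicontinuity} of the family $\{\pi(x)\}_{x\in G}$. The hypothesis of uniform boundedness only gives pointwise boundedness; you must invoke Banach--Steinhaus (Fr\'echet spaces are barrelled) to upgrade it. In the Banach/unitary case this is invisible, which is why your parenthetical glosses over it. Moreover, the whole detour through G\aa rding vectors is avoidable and slightly awkward, since density of the G\aa rding space is itself normally proved \emph{using} approximate identities: the direct computation
\[
\pi(\eta_\alpha)v-cv=\int_G\bigl(\pi(x^{-1})v-v\bigr)\eta_\alpha(x)\,dx+\Bigl(\int_G\eta_\alpha-c\Bigr)v,
\]
followed by the same near/far splitting (strong continuity of $\pi$ gives $p(\pi(x^{-1})v-v)<\varepsilon$ for $x$ near $e$; uniform boundedness controls the tail), proves the representation statement for every $v\in\VV$ in one stroke, and then $C_{lub}(G)$ and $L^p(G)$ follow by specializing to the right regular representations exactly as you say. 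Your $L^1$-convergence argument for $\phi*\eta_\alpha\to c\phi$ via the vector-valued integral is correct and is in fact this same direct argument in disguise; the alternative you mention first (deducing $L^1$ convergence from locally uniform convergence plus ``support and smoothing properties'') would not work, since the $\eta_\alpha$ need not be compactly supported.
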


A family $(\eta_\alpha)_{\alpha \in A}$ satisfying the hypotheses of the previous proposition with $c = 1$ will be called an \emph{approximate identity}\index{approximate identity} on $G$ (along the directed set $A$). Approximate identities exist in great abundance: if one takes a basis $\{U_n\}_{n \in \N}$ of neighborhoods of $e \in G$, and if one chooses non-negative functions $\eta_n \in L^1(G)$ with $\supp \eta_n \subseteq U_n$ and $\int_G \eta_n \,d\mu = 1$, then it is easily checked that $(\eta_n)_{n \in \N}$ is an approximate identity (for $n \to +\infty$). In particular, if the $\eta_n$ are chosen in $\D(G)$, then the corresponding approximate identity gives a ``uniform'' method of approximation by smooth vectors/functions.

\subsection{Direct products}\label{subsection:directproducts}
Suppose that $G_1,\dots,G_n$ are Lie groups. Then the differential structure of direct product of smooth manifolds and the algebraic structure of direct product of groups are compatible, and define a structure of Lie group on
\[G^\times = G_1 \times \dots \times G_n.\]
Correspondingly, the Lie algebra $\lie{g}^\times$ of the product is canonically identified with the direct product (or the direct sum) of the Lie algebras of the factors:
\[\lie{g}^\times \cong \lie{g}_1 \times \dots \times \lie{g}_n \cong \lie{g}_1 \oplus \dots \oplus \lie{g}_n,\]
and this in turn gives a canonical identification of the universal enveloping algebra of $(\lie{g}^\times)_{\C}$ with the algebraic tensor product of the universal enveloping algebras of $(\lie{g}_1)_\C,\dots,(\lie{g}_n)_\C$:
\[\Diff(G^\times) \cong \Diff(G_1) \otimes \dots \otimes \Diff(G_n)\]
(see \cite{dieudonn_treatise_analysis_1974}, \S19.7.2).

If $\mu_{G_1}, \dots, \mu_{G_n}$ are right Haar measures on the factors, then their product
\[\mu_{G^\times} = \mu_{G_1} \times \dots \times \mu_{G_n}\]
is a right Haar measure on $G^\times$. Consequently, the algebraic tensor product $L^p(G_1) \otimes \dots \otimes L^p(G_n)$ is identified with a subspace of $L^p(G^\times)$, and
\[\|f_1 \otimes \dots \otimes f_n\|_{L^p(G^\times)} = \|f_1\|_{L^p(G_1)} \cdots \|f_n\|_{L^p(G_n)};\]
moreover, for $1 \leq p < \infty$, the algebraic tensor product is dense in $L^p(G^\times)$, so that $L^p(G^\times)$ is the completion of $L^p(G_1) \otimes \dots \otimes L^p(G_n)$ with respect to this norm (see \cite{defant_tensor_1993}, \S7.2 and \S15.10, Corollary~2). In particular, for $p=1$ we have a projective tensor product
\[L^1(G^\times) \cong L^1(G_1) \ptimes \cdots \ptimes L^1(G_n),\]
whereas for $p = 2$ we have a Hilbert tensor product
\[L^2(G^\times) \cong L^2(G_1) \htimes \cdots \htimes L^2(G_n),\]
so that the inner product in $L^2(G^\times)$ satisfies
\[\langle f_1 \otimes \dots \otimes f_n, g_1 \otimes \dots \otimes g_n \rangle = \langle f_1, g_1 \rangle \cdots \langle f_n, g_n \rangle.\]
Moreover, for the Banach spaces of continuous functions vanishing at infinity, and for the Fr\'echet spaces of continuous functions (with the topology of uniform convergence on compacta) we have injective tensor products:
\[C_0(G^\times) \cong C_0(G_1) \etimes \cdots \etimes C_0(G_n), \qquad C(G^\times) \cong C(G_1) \etimes \cdots \etimes C(G_n).\]

About smooth functions, we have that $\D(G_1) \otimes \dots \otimes \D(G_n)$ is dense in $\D(G^\times)$, and also in $\E(G^\times)$ (cf.\ \S17.10.2 of \cite{dieudonn_treatise_analysis_1972}, or Theorem~39.2 of \cite{treves_topological_1967}). Moreover, the LF-space $\D(G_j)$ and the Fr\'echet space $\E(G_j)$ are nuclear (cf.\ \cite{treves_topological_1967}, proof of Corollary of Theorem~51.5), and we have
\[\D(G^\times) \cong \D(G_1) \ptimes \cdots \ptimes \D(G_n) = \D(G_1) \etimes \cdots \etimes \D(G_n),\]
\[\E(G^\times) \cong \E(G_1) \ptimes \cdots \ptimes \E(G_n) = \E(G_1) \etimes \cdots \etimes \E(G_n).\]

The action of a left-invariant vector field $(X_1,\dots,X_n) \in \lie{g}_1 \times \dots \times \lie{g_n}$ on smooth functions on $G^\times = G_1 \times \dots \times G_n$ is determined by
\[(X_1,\dots,X_n) (f_1 \otimes \dots \otimes f_n) = (X_1 f_1) \otimes \dots \otimes (X_n f_n).\]
More generally, if $D \in \Diff(G_j)$, and $D^\sharp$ is the image of $D$ via the derivative of the canonical inclusion $G_j \to G^\times$, then
\[D^\sharp (f_1 \otimes \dots \otimes f_n) = f_1 \otimes \dots \otimes f_{j-1} \otimes (D f_j) \otimes f_{j+1} \otimes \dots \otimes f_n.\]
In this case, we say that $D^\sharp$ is a differential operator \emph{along the $j$-th factor} of the product $G^\times = G_1 \times \dots \times G_n$, which corresponds to the differential operator $D$ on $G_j$.

A consequence of nuclearity of the spaces of smooth functions is the \emph{Schwartz kernel theorem}\index{Schwartz kernel theorem|(}: for every continuous linear map
\[T : \D(G_1) \to \D'(G_2),\]
there exists a unique $K \in \D'(G_1 \times G_2)$ such that, for all $f \in \D(G_1)$ and $g \in \D(G_2)$,
\[\langle T f, g \rangle = \langle K, \overline{f} \otimes g\rangle\]
(see Theorem~51.7 of \cite{treves_topological_1967}, or \S23.9.2 of \cite{dieudonn_treatise_analysis_1988}). Although this result (together with most of the previous results about function spaces) is valid for more general (second countable) smooth manifolds, the following particular instance --- which will be of great use in the following --- is specific to the algebraic structure of a Lie group $G$:

\begin{thm}\label{thm:schwartzkerneld}
For every continuous linear operator $T : \D(G) \to \D'(G)$ which is left-invariant, i.e.,
\[T \LA_x = \LA_x T \qquad \text{for all $x \in G$,}\]
there exists a unique $k \in \D'(G)$, called the \emph{(convolution) kernel} of $T$, such that
\begin{equation}\label{eq:convolutionkernel}
T \phi = \phi * k \qquad\text{for all $\phi \in \D(G)$.}
\end{equation}
In particular, such an operator maps $\D(G)$ into $\E(G)$.
\end{thm}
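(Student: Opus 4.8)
The plan is to deduce this left-invariant version from the general Schwartz kernel theorem for manifolds quoted just above (the case $T : \D(G_1) \to \D'(G_2)$), by exploiting the group structure to turn the bivariate kernel into a genuine convolution kernel. First I would apply the general kernel theorem to $T$ itself, obtaining a unique $K \in \D'(G \times G)$ with $\langle Tf, g\rangle = \langle K, \overline{f} \otimes g \rangle$ for all $f, g \in \D(G)$. The key point is to understand how left-invariance of $T$, i.e.\ $T \LA_x = \LA_x T$, constrains $K$: translating both arguments of $K$ simultaneously by $x$ should leave $K$ invariant. Concretely, the substitution $y \mapsto xy$, $z \mapsto xz$ on $G \times G$ preserves the product Haar measure only up to the modular factors, but combined with the identity $\LA_x(f*k) = (\LA_x f)*k$ this should force $K$ to be supported, in a suitable sense, along the "relative position" $y^{-1}z$. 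The cleanest way to make this precise is to push $K$ forward under the diffeomorphism $(y,z) \mapsto (y, y^{-1}z)$ of $G \times G$: left-invariance of $T$ translates into the transformed distribution being of the form $1 \otimes k$ for a unique $k \in \D'(G)$, where $1$ denotes the constant function (or rather the right Haar measure) in the first variable.

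Having extracted $k$, the second step is to verify that $T\phi = \phi * k$ for all $\phi \in \D(G)$. Here I would test against an arbitrary $g \in \D(G)$ and unwind the definitions: $\langle T\phi, g\rangle = \langle K, \overline{\phi} \otimes g\rangle$, and after the change of variables this becomes an iterated integral which, using the convolution formula $\phi * k(x) = \langle k, (\LA_x \check{\phi}\,)\,\text{(with modular corrections)}\rangle$ and the duality relations $\langle f*g, h\rangle = \langle f, h*g^*\rangle$ recalled earlier, matches $\langle \phi * k, g\rangle$. Some care is needed with the involution and the modular function $\Delta_G$ since $G$ need not be unimodular, but these are bookkeeping issues rather than conceptual ones; the pairing conventions and Young-type continuity statements recorded above handle the convergence of the integrals. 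Uniqueness of $k$ follows from uniqueness of $K$ (a distinct $k$ would give a distinct transformed kernel, hence a distinct $K$), or alternatively directly: if $\phi * k = 0$ for all $\phi \in \D(G)$, then testing against an approximate identity $(\eta_\alpha) \subseteq \D(G)$ and using Proposition~\ref{prp:approximateidentity} (in the form $\eta_\alpha * k \to k$ in $\D'(G)$) gives $k = 0$.

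For the final assertion that $T$ maps $\D(G)$ into $\E(G)$: once $T\phi = \phi * k$ is established, this is immediate from the smoothing inclusion $\D(G) * \D'(G) \subseteq \E(G)$ recorded in the convolution subsection, together with the continuity of that map.

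I expect the main obstacle to be the first step --- correctly formulating and proving that left-invariance of $T$ forces the kernel $K$ to "depend only on $y^{-1}z$." The naive statement is intuitively obvious but has to be phrased distributionally (one cannot literally restrict $K$ to a subset), and the non-unimodularity of $G$ means the change-of-variables Jacobians carry modular factors that must be tracked so that the final formula comes out as $\phi * k$ rather than $\phi * k$ twisted by some power of $\Delta_G$. Getting the conventions to line up with the particular convolution formula \eqref{eq:convolutionregularrepresentation} and the pairing \eqref{eq:pairing} fixed in this paper is the only genuinely delicate part; everything else is a routine consequence of the general kernel theorem and the properties of convolution already listed.
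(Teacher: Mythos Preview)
Your approach is correct and is precisely the route the paper intends: the theorem is stated immediately after the general Schwartz kernel theorem as a ``particular instance\ldots specific to the algebraic structure of a Lie group $G$,'' with no further proof given, so the paper relies on exactly the reduction you outline --- apply the bivariate kernel theorem, use left-invariance to show $K$ depends only on the relative coordinate, and read off $k$. Your identification of the modular-function bookkeeping as the only delicate point is accurate, and the final smoothness claim via $\D(G) * \D'(G) \subseteq \E(G)$ is exactly what the paper records.
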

\index{Schwartz kernel theorem|)}

\section{Some classes of Lie groups}

The following diagram (which is an abridged version of \cite{palmer_banach_2001}, Diagram~3, p.\ 1487) shows the inclusions between some categories of connected Lie groups:
\[
\xymatrix@!C@C=-35pt@R=10pt{
                       &                                          & \text{nilpotent} \ar[ld]\ar[rd]  &                                 &              \\
                       & \text{polynomial growth} \ar[ld]\ar[rdd] &                                  & \text{CCR} \ar[ldd]\ar[rdd]     &              \\
\text{hermitian} \ar[d]&                                          &                                  &                                 &              \\
\text{amenable}        &                                          & \text{unimodular}                &                                 & \text{type I}
}
\]
Although the multiplier theorems of Chapter~\ref{chapter:multipliers} are essentially limited to homogeneous nilpotent Lie groups, most of the results about commutative algebras of differential operators of Chapter~\ref{chapter:plancherel} require weaker hypotheses.

In this section we summarize the main definitions and results about the classes of Lie groups which will be considered in the following (except for nilpotent and homogeneous Lie groups, to which \S\ref{section:nilpotent} is committed). In doing so, we also briefly discuss irreducible representations, functions of positive type, transference methods, Schwartz functions and tempered distributions.

\subsection{Irreducible representations and functions of positive type}\label{subsection:positivetype}
Let $G$ be a Lie group. If $\pi$ is a unitary representation of $G$ on a Hilbert space $\HH$, then an element $v \in \HH$ is called a \emph{cyclic vector} for the representation $\pi$ if
\[\Span \{\pi(x)v \tc x \in G\}\]
is dense in $G$; the representation $\pi$ is said to be \emph{irreducible}\index{representation!of a Lie group!irreducible} if $\HH \neq \{0\}$ and every non-null element of $\HH$ is a cyclic vector for $\pi$.

If $\pi_1$ and $\pi_2$ are unitary representations of $G$ on Hilbert spaces $\HH_1$ and $\HH_2$ respectively, then $\pi_1$ is said to be \emph{(unitarily) equivalent} to $\pi_2$ if there exists an isometric isomorphism $U : \HH_1 \to \HH_2$ which \index{intertwining operator}intertwines $\pi_1$ and $\pi_2$, i.e.,
\[U \pi_1(x) = \pi_2(x) U \qquad\text{for all $x \in G$.}\]

A \emph{function of positive type}\index{function!of positive type} on $G$ is a continuous function $\phi : G \to \C$ such that, for every choice of $n \in \N$, $x_1,\dots,x_n \in G$ and $\xi_1,\dots,\xi_n \in \C$,
\[\sum_{j,k=1}^n \phi(x_k^{-1} x_j) \xi_j \overline{\xi_k} \geq 0;\]
in fact, every function $\phi$ of positive type is bounded, and we have
\[\|\phi\|_\infty = \phi(e), \qquad \phi(x^{-1}) = \overline{\phi(x)}.\]
For every unitary representation $\pi$ of $G$ on a Hilbert space $\HH$ and every $v \in \HH$, the diagonal coefficient
\[\phi_{\pi,v}(x) = \langle \pi(x) v, v\rangle_\HH\]
is a function of positive type on $G$; in fact, every function of positive type can be written in this form (for suitable $\pi$ and $v$). For $j=1,2$, if $\pi_j$ is a unitary representation of $G$ on $\HH_j$ and $v_j \in \HH_j$ is a cyclic vector for $\pi_j$, then we have $\phi_{\pi_1,v_1} = \phi_{\pi_2,v_2}$ if and only if there exists a (unique) isometric isomorphism $U : \HH_1 \to \HH_2$ which intertwines $\pi_1$ and $\pi_2$ and maps $v_1$ to $v_2$ (see \cite{folland_course_1995}, \S3.3).

The set $\PP$ of the functions of positive type on $G$ is a weakly-$*$ closed convex cone in $L^\infty(G)$. In particular, the subset $\PP_0 = \{ \phi \in \PP \tc \phi(e) \leq 1\}$ is convex and weakly-$*$ compact, and its extreme points are $0$ and the extreme points of $\PP_1 = \{ \phi \in \PP \tc \phi(e) = 1\}$, i.e., the functions of the form $\phi_{\pi,v}$ for $\pi$ irreducible and $v$ of unit norm (\cite{folland_course_1995}, Theorem~3.25 and Lemma~3.26). From the Krein-Milman theorem, it follows that the convex hull of the extreme points of $\PP_1$ is weakly-$*$ dense in $\PP_1$ (\cite{folland_course_1995}, Theorem~3.27). Moreover, the weak-$*$ topology induced by $L^\infty(G)$ on $\PP_1$ coincides with the topology of uniform convergence on compacta induced by $C(G)$ (\cite{folland_course_1995}, Theorem~3.31).

A consequence of the previous results is the \emph{Gelfand-Raikov theorem}, i.e., the fact that the irreducible unitary representations of $G$ separate points on $G$: for every $x,y \in G$, if $x \neq y$ then there exists an irreducible unitary representation $\pi$ of $G$ such that $\pi(x) \neq \pi(y)$ (\cite{folland_course_1995}, Theorem~3.34). There is a corresponding result for $L^1(G)$, which reads: if $f \in L^1(G)$ is not null, then there exists an irreducible unitary representation $\pi$ of $G$ such that $\pi(f) \neq 0$. In particular, the expression
\[\|f\|_{*} = \sup_\pi \|\pi(f)\|,\]
(where $\pi$ can indifferently range either over the unitary representations of $G$, or over the irreducible ones) defines a norm on $L^1(G)$ (\cite{folland_course_1995}, Proposition~7.1 and Corollary~7.2); the completion $C^*(G)$ of the $*$-algebra $L^1(G)$ with respect to this norm is in fact a C$^*$-algebra, which is called the \emph{group C$^*$-algebra}\index{Lie group!group C$^*$-algebra of} of $G$.

\subsection{CCR groups and type I groups, Plancherel formula}\label{subsection:typeI}
A Lie group $G$ is said to be \emph{CCR}\index{Lie group!CCR} (which stands for ``completely continuous representations'')
if, for every irreducible unitary representation $\pi$ of $G$ and every $f \in L^1(G)$, the operator $\pi(f)$ is compact. Compact groups and abelian groups are CCR, because their irreducible representations are finite-dimensional.

Let $\pi$ be a unitary representation of a Lie group $G$ on a Hilbert space $\HH$, and let $I(\pi)$ be the algebra of bounded linear operators on $\HH$ which \index{intertwining operator}intertwine $\pi$ with itself; the representation $\pi$ is said to be \emph{primary} if the center of $I(\pi)$ consists of scalar multiples of the identity. The group $G$ is said to be \emph{type I}\index{Lie group!type I} if every primary representation $\pi$ of $G$ is the direct sum of copies of some irreducible representation.

Let $\widehat G$ denote the set of equivalence classes of irreducible unitary representations of the Lie group $G$. For every function $\phi$ of positive type which is an extremal point of $\PP_1$, there is an irreducible unitary representation $\pi_\phi$ (unique up to equivalence) for which $\phi$ is a diagonal coefficient; if the set of extreme points of $\PP_1$ is endowed with the weak-$*$ topology of $L^\infty(G)$, the quotient topology induced on $\widehat G$ via the map $\phi \mapsto [\pi_\phi]$ is called the \emph{Fell topology} on $\widehat G$. It can be shown that $G$ is CCR if and only if the Fell topology is $T_1$, and that $G$ is type I if and only if the Fell topology is $T_0$ (see \cite{dixmier_algebras_1982}, Propositions~3.1.6 and 3.4.11, \S4.7.15, Theorem~9.1 and \S13.9.4; see also \cite{folland_course_1995}, \S7.2); in particular, CCR groups are type I.

One of the consequences of the type I condition involves irreducible representations of direct products: if $G_1$ and $G_2$ are Lie groups, and at least one of them is type I, then a unitary representation $\pi$ of $G_1 \times G_2$ is irreducible if and only if $\pi$ is equivalent to $\pi_1 \otimes \pi_2$ for some irreducible unitary representations $\pi_1$ of $G_1$ and $\pi_2$ of $G_2$ (see \cite{folland_course_1995}, \S7.3).

If the Lie group $G$ is unimodular and type I, then there exists a unique positive regular Borel measure on $\widehat G$, which is called the \emph{group Plancherel measure}\index{Plancherel measure!group Plancherel measure} of $G$, such that (with a slight abuse of notation with respect to equivalence classes of irreducible representations)
\[\|f\|_2^2 = \int_{\widehat G} \|\pi(f)\|_{\HS}^2 \,d\pi \qquad\text{for every $f \in L^1 \cap L^2(G)$}\]
(see \cite{dixmier_algebras_1982}, Theorem~18.8.2, and also \cite{folland_course_1995}, \S7.5).

\subsection{Amenable groups and transference}\label{subsection:amenability}
There are many equivalent definitions of amenability for a Lie group (or more generally, a locally compact topological group), for which we refer to \cite{greenleaf_invariant_1969} and \cite{pier_amenable_1984}. For instance, one can say that a Lie group $G$ is \emph{amenable}\index{Lie group!amenable} if it admits a left-invariant mean on $L^\infty(G)$, i.e., a linear functional $M : L^\infty(G) \to \C$ such that
\[M(\LA_x f) = M f \qquad\text{for all $f \in L^\infty(G)$ and $x \in G$,}\]
\[Mf \geq 0 \qquad\text{if $f \geq 0$, and}\qquad M 1 = 1.\]

A characterization of amenability is related to convolution operators. For $1 \leq p < \infty$, let $\Cv^p(G)$ the set of the distributions $u \in \D'(G)$ such that the associated left-invariant operator
\[\phi \mapsto \phi * u\]
extends to a bounded operator on $L^p(G)$. By identifying left-invariant operators with their kernels (cf.\ Theorem~\ref{thm:schwartzkerneld}), $\Cv^p(G)$ inherits a structure of Banach algebra from the algebra of bounded linear operators on $L^p(G)$. Moreover, by Young's inequality, we have that $L^1(G) \subseteq \Cv^p(G)$, with continuous inclusion, and in fact
\[\|f\|_{\Cv^1} = \|f\|_{1},\]
whereas for $1 < p < \infty$ we only know 
\begin{equation}\label{eq:cvl1}
\|f\|_{\Cv^p} \leq \|f\|_1.
\end{equation}
Then we have that, for all $p \in \left]1,\infty\right[$, $G$ is amenable if and only if
\[\|f\|_{\Cv^p} = \|f\|_1 \qquad\text{for all $f \in L^1(G)$ with $f \geq 0$}\]
(see \cite{pier_amenable_1984}, Theorem~9.6).

Another characterization of amenability can be given in terms of unitary representations. Recall that, for any unitary representations $\pi$ and $\pi'$ of $G$, $\pi$ is said to be \emph{weakly contained} in $\pi'$ if
\[\|\pi(f)\| \leq \|\pi'(f)\| \qquad\text{for all $f \in L^1(G)$.}\]
Then $G$ is amenable if and only if every irreducible unitary representation of $G$ is weakly contained in the right regular representation of $G$ on $L^2(G)$ (see \cite{greenleaf_invariant_1969}, \S3.5, and also \cite{pier_amenable_1984}, Theorem~8.9).

The last characterization can be rephrased in terms of the $C^*(G)$-norm: the group $G$ is amenable if and only if
\[\|f\|_{*} = \|f\|_{\Cv^2} \qquad\text{for all $f \in L^1(G)$,}\]
so that, in this case, $C^*(G)$ is isometrically isomorphic to the closure of $L^1(G)$ in $\Cv^2(G)$.

This property of amenable groups --- i.e., the fact that the norm of a convolution operator on $L^2(G)$ dominates the norm of the corresponding operator in every unitary representation --- can be extended to more general representations (and operators) by \emph{transference methods}\index{transference} (see \cite{coifman_transference_1976}, and also the appendix of \cite{cowling_herzs_1997}, for a discussion of the main ideas and contributions). For instance, we have

\begin{thm}\label{thm:transference}
Let $X$ be a $\sigma$-finite measure space, $\pi$ be a (strongly continuous) uniformly bounded representation of an amenable group $G$ on $L^p(X)$, where $1 < p < \infty$, and set
\[c_\pi = \sup_{x \in G} \|\pi(x)\|_{p \to p}.\]
Then, for every $f \in L^1(G)$, we have
\[\|\pi(f)\|_{p \to p} \leq c_\pi^2 \|f\|_{\Cv^p(G)}.\]
\end{thm}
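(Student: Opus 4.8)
The plan is to reduce the statement to the classical transference principle of Coifman and Weiss by exploiting amenability of $G$ through a Følner-type averaging argument. First I would fix $f \in L^1(G)$ and, by a standard approximation, reduce to the case where $f$ is continuous with compact support, say $\supp f \subseteq K$ for some compact $K \subseteq G$; the general case follows since both sides of the desired inequality are continuous in the $L^1$ norm of $f$ (the right-hand side by \eqref{eq:cvl1}, the left-hand side because $\pi$ is uniformly bounded and $\pi$ is an anti-representation of $L^1(G)$ with $\|\pi(g)\|_{g \mapsto g} \le c_\pi$).

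The core of the argument is the following averaging scheme. By amenability, $G$ admits a family of measurable sets $(F_\alpha)$ of finite positive Haar measure that are asymptotically invariant under left translation by compact sets (a Følner net). Given $F = F_\alpha$ and $g \in L^{p'}(X)$, $h \in L^p(X)$ with unit norms, one considers, for $x \in G$, the functions $\tilde g_x = \chi_F(x)\, \pi(x^{-1}) g$ and similarly $\tilde h_x$, and estimates the bilinear form $\langle \pi(f) h, g \rangle$ against the convolution of $f$ with these transplanted functions on $G \times X$. Concretely, one writes
\[
\mu_G(F) \, \langle \pi(f) h, g \rangle = \int_F \langle \pi(f) h, g \rangle \, dx,
\]
inserts $\pi(x)\pi(x^{-1}) = \id$ appropriately, uses the identity $\pi(\RA_x f) = \pi(x)\pi(f)$ together with $\pi(f) = \int_G \pi(y^{-1}) f(y)\, dy$, and recognizes the inner integral as a convolution $(F_g * f)(x)$ on $G$ evaluated against $F_h$, where $F_g(x) = \chi_F(x)\, \langle \pi(x^{-1}) g, \cdot \rangle$-type objects living in $L^{p'}(G \times X)$ and $L^p(G \times X)$ respectively. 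Applying the definition of $\|f\|_{\Cv^p(G)}$ on the $G$-factor (convolution acts only in the group variable) and Hölder in the $X$-variable, one obtains a bound of the form
\[
\mu_G(F) \, |\langle \pi(f) h, g\rangle| \le c_\pi^2 \, \|f\|_{\Cv^p(G)} \, \|F_g\|_{L^{p'}} \|F_h\|_{L^p} + (\text{error}),
\]
where the two factors $c_\pi$ arise from transplanting $g$ and $h$ via $\pi(x^{-1})$ and estimating their norms, the norms $\|F_g\|_{L^{p'}}, \|F_h\|_{L^p}$ are comparable to $\mu_G(F)^{1/p'}$ and $\mu_G(F)^{1/p}$ (hence their product is $\mu_G(F)$), and the error term involves the Haar measure of the symmetric difference $(K^{-1} F) \triangle F$, which is $o(\mu_G(F))$ by the Følner property since $\supp f$ is contained in the fixed compact $K$.

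Dividing through by $\mu_G(F_\alpha)$ and letting $\alpha \to \infty$ kills the error term and yields $|\langle \pi(f) h, g\rangle| \le c_\pi^2 \|f\|_{\Cv^p(G)}$ for all unit-norm $h, g$, which is the claim. The main obstacle is the bookkeeping in the averaging step: one must be careful that the convolution in the transplanted picture genuinely acts only in the $G$-variable (so that the $\Cv^p(G)$-norm, rather than some joint norm, controls it), and one must verify that the uniform bound $c_\pi$ survives the passage through the integral $\pi(f) = \int \pi(y^{-1}) f(y)\, dy$ without accumulating an extra factor — this is why exactly two powers of $c_\pi$ appear, one for each of the two slots $g$ and $h$. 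The reduction to compactly supported $f$ is what makes the Følner error controllable; without it one would need a uniform-in-$f$ version of the Følner property, which is not available. Everything else — the density of $C_c(G)$ in $L^1(G)$, Young's inequality \eqref{eq:cvl1}, and the anti-representation property $\pi(f*g) = \pi(g)\pi(f)$ — is supplied by the material recalled above.
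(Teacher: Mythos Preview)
The paper does not actually prove this statement: its proof consists solely of a citation to Coifman--Weiss \cite{coifman_transference_1976}, Theorem~2.4, and Berkson--Paluszy\'nski--Weiss \cite{berkson_transference_1996}, Theorem~2.7. Your outline is precisely the classical Coifman--Weiss transference argument contained in those references --- reduction to compactly supported $f$, transplanting test functions to $G \times X$ via the representation, applying the $\Cv^p(G)$ bound in the group variable, and absorbing the boundary error through the F{\o}lner property --- so your approach coincides with the one the paper invokes.
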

\begin{proof}
See \cite{coifman_transference_1976}, Theorem~2.4, and also \cite{berkson_transference_1996}, Theorem~2.7.
\end{proof}

By strengthening the hypothesis on the representation, it is possible to obtain transference results also for the so-called \emph{maximal operators}. Recall that a bounded linear operator $P$ on $L^p(X)$ is said to be \emph{positivity preserving} if $Pf \geq 0$ whenever $f \geq 0$.

\begin{thm}\label{thm:transferencemaximal}
Under the hypotheses of Theorem~\ref{thm:transference}, suppose moreover that, for every $x \in X$, there exists a positivity preserving operator $P_x$ on $L^p(X)$ such that
\[|\pi(x) f| \leq P_x |f| \qquad\text{for all $f \in L^p(X)$,}\]
and that
\[c_P = \sup_{x \in G} \|P_x\|_{p \to p} < \infty.\]
Let $\{f_j\}_{j \in \N}$ be a sequence in $L^1(G)$, and set
\[M\phi = \sup_{j \in \N} |\phi * f_j| \qquad\text{for $\phi \in L^p(G)$,}\]
\[M_\pi \eta = \sup_{j \in \N} |\pi(f_j)\eta| \qquad\text{for $\eta \in L^p(X)$}.\]
If $M$ is bounded on $L^p(G)$, i.e.,
\[\|M \phi\|_p \leq C_M \|\phi\|_p \qquad\text{for all $\phi \in L^p(G)$,}\]
then $M_\pi$ is bounded on $L^p(X)$, and we have
\[\|M_\pi \eta\|_p \leq c_\pi c_P C_M \|\eta\|_p \qquad\text{for all $\eta \in L^p(X)$.}\]
\end{thm}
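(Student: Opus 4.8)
The plan is to run the transference scheme of \cite{coifman_transference_1976} that already underlies Theorem~\ref{thm:transference}, the only genuinely new ingredient being that the positivity preserving majorants $P_u$ ($u\in G$) are exactly what allows the pointwise supremum over $j$ to survive the transference.

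\textbf{Reductions.} First I would reduce to the case in which only finitely many $f_j$ occur and each $f_j$ is compactly supported. The first reduction is immediate: once $\|\sup_{j\le N}|\pi(f_j)\eta|\|_p\le c_\pi c_P C_M\|\eta\|_p$ is known for every $N$, the asserted inequality follows by monotone convergence in $N$. For the second, approximate each $f_j$ in $L^1(G)$ by elements of $C_c(G)$; since $\|\pi(g)\eta\|_p\le c_\pi\|g\|_1\|\eta\|_p$, passing to a common subsequence gives simultaneous a.e.\ convergence $\pi(f_j^{(m)})\eta\to\pi(f_j)\eta$ for all $j\le N$, hence a.e.\ convergence of the finite maxima, and Fatou's lemma closes the step. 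At the outset I would also record the joint measurability of $(u,x)\mapsto(\pi(u)\eta)(x)$ on $G\times X$, which follows from strong continuity of $\pi$.

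\textbf{The transference step.} Fix $f_1,\dots,f_N\in C_c(G)$ with $\supp f_j\subseteq K$ compact, fix $\eta\in L^p(X)$, and set $F^x(u)=(\pi(u)\eta)(x)$. Using \eqref{eq:representationoffunctions} and $\pi(uw^{-1})=\pi(u)\pi(w^{-1})$ one gets the basic identity
\[(F^x*f_j)(u)=\bigl(\pi(u)\,\pi(f_j)\eta\bigr)(x).\]
Hence, for any relatively compact $V\subseteq G$, putting $\phi_x:=F^x\cdot\mathbf{1}_{VK^{-1}}$ (which lies in $L^p(G)$ for a.e.\ $x$, by the measurability above and Fubini), the cut-off is harmless for $u\in V$, so $(M\phi_x)(u)=\sup_j|\pi(u)\pi(f_j)\eta|(x)$ for $u\in V$. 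On the other hand, writing $\pi(f_j)\eta=\pi(u^{-1})\bigl(\pi(u)\pi(f_j)\eta\bigr)$ and applying the majorant bound followed by the monotonicity of $P_{u^{-1}}$,
\[(M_\pi\eta)(x)\le\Bigl(P_{u^{-1}}\,\bigl[x'\mapsto(M\phi_{x'})(u)\bigr]\Bigr)(x)\qquad\text{for a.e.\ }u\in V.\]
Taking the $L^p(X)$-norm in $x$ (using $\|P_{u^{-1}}\|_{p\to p}\le c_P$), raising to the $p$-th power, integrating over $u\in V$, and using Fubini together with the boundedness of $M$ gives
\[\mu_G(V)\,\|M_\pi\eta\|_p^p\le c_P^p\int_X\|M\phi_{x'}\|_{L^p(G)}^p\,dx'\le(c_PC_M)^p\int_X\|\phi_{x'}\|_{L^p(G)}^p\,dx',\]
and since $\int_X\|\phi_{x'}\|_{L^p(G)}^p\,dx'=\int_{VK^{-1}}\|\pi(v)\eta\|_p^p\,dv\le c_\pi^p\,\mu_G(VK^{-1})\,\|\eta\|_p^p$, dividing by $\mu_G(V)$ yields
\[\|M_\pi\eta\|_p^p\le(c_\pi c_P C_M)^p\,\frac{\mu_G(VK^{-1})}{\mu_G(V)}\,\|\eta\|_p^p.\]
Finally I would let $V$ run along a F\o lner net for $G$ (available since $G$ is amenable), so that $\mu_G(VK^{-1})/\mu_G(V)\to1$; this gives the bound for finitely many compactly supported $f_j$, and the reductions above complete the argument.

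\textbf{Where the difficulty lies.} The skeleton is the familiar ``average over a F\o lner set'' device, so nothing beyond Theorem~\ref{thm:transference} is conceptually new except the treatment of the supremum: the hard part is precisely the inequality $(M_\pi\eta)(x)\le(P_{u^{-1}}[(M\phi_\cdot)(u)])(x)$, which works only because a positivity preserving operator dominates the absolute value of $\pi(u^{-1})$ and commutes with pointwise suprema in the favourable direction --- exactly what the extra hypothesis is tailored to provide, and where the factor $c_P$ enters. The remaining points --- the joint measurability of $(u,x)\mapsto(\pi(u)\eta)(x)$, the Fubini bookkeeping, and (if one does not presuppose $G$ unimodular) making sure it is the right translates $VK^{-1}$ rather than $KV$ that the F\o lner condition must control --- are routine but all have to be in place for the constant $c_\pi c_P C_M$ to come out exactly as stated.
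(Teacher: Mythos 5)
Your proof is correct. The paper gives no argument of its own for this theorem — it simply cites Berkson--Gillespie (Theorem~2.11) — and your write-up is exactly the standard Coifman--Weiss transference scheme for maximal operators that that reference carries out, including the two points that actually matter: the monotonicity of the positivity preserving majorants $P_{u^{-1}}$, which is what lets the pointwise supremum over $j$ pass through the transference, and the right-translate F\o lner condition $\mu_G(VK^{-1})/\mu_G(V)\to 1$ appropriate to the right Haar measure.
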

\begin{proof}
See \cite{berkson_transference_1996}, Theorem~2.11.
\end{proof}

\subsection{Hermitian groups}
A Lie group $G$ will be called \emph{hermitian}\index{Lie group!hermitian} if the Banach $*$-algebra $L^1(G)$ is hermitian (see \S\ref{subsection:hermitianalgebras}). Thanks to Raikov's criterion, this property can be stated in terms of the $C^*(G)$-norm: namely, if $\rho(f)$ denotes the spectral radius of an element $f$ of $L^1(G)$, then $G$ is hermitian if and only if
\[\|f\|_* = \sqrt{\rho(f^* f)} \qquad\text{for all $f \in L^1(G)$.}\]
(see also \cite{palmer_classes_1978}, p.\ 695). It is known that 
connected hermitian Lie groups are amenable (\cite{palmer_banach_2001}, Theorem~12.5.18(e)).

\subsection{Volume growth and connected moduli}
Let $G$ be a connected Lie group. Then, for every compact neighborhood $V$ of the identity of $G$, we have that
\[V^n \subseteq V^{n+1} \text{ for all $n \in \N \setminus \{0\}$, and } \bigcup_{n \in \N \setminus \{0\}} V^n = G.\]
If $\mu_G$ is a (left or right) Haar measure on $G$, we can then consider the growth of the increasing sequence of real numbers $\mu_G(V^n)$ as an index of the \emph{volume growth} of the group $G$. In fact, by a result of Guivarc'h (\cite{guivarch_croissance_1973}, Th\'eor\`eme~II.3; see also \cite{jenkins_growth_1973}), we know that the volume growth of a connected Lie group $G$ can be either \emph{strictly polynomial}\index{Lie group!of polynomial growth}, i.e.,
\[\mu_G(V^n) \asymp n^{Q_G}\]
for some $Q_G \in \N$ (which is called the \emph{degree of growth}\index{degree of (polynomial) growth}), or \emph{exponential}, i.e.,
\[e^{\alpha n} \lesssim \mu_G(V^n) \lesssim e^{\beta n}\]
for some real $\beta \geq \alpha > 0$. Notice that this classification (and also the degree $Q_G$ in the case of polynomial growth) does not depend on the choice of the compact neighborhood $V$ of the identity, or even on the (left or right) Haar measure $\mu_G$.

Another way to express the volume growth of a Lie group $G$ is via a \emph{connected modulus}\index{connected modulus}, i.e., a continuous proper function $|\cdot| : G \to \left[0,+\infty\right[$ which is symmetric, subadditive and separating, i.e., for every $x,y \in G$,
\[|x^{-1}| = |x|, \qquad |xy| \leq |x|+|y|,\]
\[|x| = 0 \qquad\text{if and only if}\quad x = e,\]
and which moreover satisfies the following condition: there exist constants $C_1,C_2 > 0$ such that, for every $x \in G$ with $|x| \geq C_1$, there exist elements
\[e = x_0, x_1, \dots, x_{n-1}, x_n = x\]
in $G$ with $|x_j x_{j-1}^{-1}| \leq C_1$ for $j=1,\dots,n$ and $n \leq C_2 |x|$.

To every connected modulus $|\cdot|$ on $G$, we associate a left-invariant distance $d_l$ and a right-invariant distance $d_r$ on $G$, which are defined by
\[d_l(x,y) = |x^{-1} y|, \qquad d_r(x,y) = |y x^{-1}|;\]
each of these distances induces on $G$ the same topology as the manifold structure, and moreover
\[|x| = d_l(e,x) = d_r(e,x).\]

Examples of connected moduli are those associated to the so-called Carnot-Carath\'eodory distances on $G$ (see \cite{varopoulos_analysis_1992}, \S III.3.3), and also to the more general control distances considered in the following \S\ref{section:weightedsubcoercive}.

If $|\cdot|$ and $|\cdot|'$ are both connected moduli on $G$, then they are equivalent at infinity (cf.\ \cite{varopoulos_analysis_1992}, Proposition~III.4.2), i.e., there exists $C > 0$ such that
\[C^{-1} |x| \leq |x|' \leq C |x| \quad\text{when $|x| \geq 1$ or $|x|' \geq 1$.}\]
Moreover, if $|\cdot|$ is a connected modulus, then the growth of the (Haar) measure of the associated balls $B_r = \{ x \in G \tc |x| < r\}$ corresponds to the volume growth of the group: namely, if $G$ has polynomial growth, then
\[\mu_G(B_r) \asymp r^{Q_G} \qquad\text{for $r$ large}\]
(where $Q_G \in \N$ is the degree of growth previously defined), whereas, if $G$ has exponential growth, then
\[e^{\alpha r} \lesssim \mu_G(B_r) \lesssim e^{\beta r} \qquad\text{for $r$ large,}\]
for some real $\beta \geq \alpha > 0$.

Connected Lie groups with polynomial growth have many of the previously examined properties: they are in fact unimodular (\cite{guivarch_croissance_1973}, Lemme I.3), hermitian (see \cite{ludwig_class_1979}), and therefore amenable (see also \cite{guivarch_croissance_1973}, p.~338). Notice however that there exist connected Lie groups with polynomial growth which are not type I (see, e.g., \cite{baggett_representations_1978} or \cite{cowling_plancherel_1978}).

\subsection{Schwartz functions and tempered distributions}\label{subsection:schwartz}

In the context of connected Lie groups with polynomial growth, it is possible to define the class of \emph{Schwartz functions}\index{function!Schwartz} in a natural way\footnote{About the definition of Schwartz functions for other classes of Lie groups, see, e.g., \cite{schweitzer_dense_1993} and \cite{david-guillou_schwartz_2010}.}. Let $G$ be a connected Lie group with polynomial growth, $|\cdot|_G$ be a connected modulus, and set
\[\langle x \rangle_G = 1 + |x|_G.\]
Then, for every $p \in [1,\infty]$, the seminorms
\[\eta_{p,D,k}(f) = \| \langle \cdot \rangle_G^k D f \|_p  \qquad\text{for $D \in \Diff(G)$ and $k \in \N$}\]
define a Fr\'echet structure on a linear subspace $\Sz(G)$ of $\E(G)$, which contains $\D(G)$, and which in fact does not depend on $p \in [1,\infty]$ (see \cite{schweitzer_dense_1993}, Theorems~1.2.21 and 6.8). Moreover, $\Sz(G)$ is a translation-invariant $*$-subalgebra of $L^1(G)$ (\cite{schweitzer_dense_1993}, Theorems~1.3.13 and 1.4.3), and is also a nuclear Fr\'echet space (\cite{schweitzer_dense_1993}, Theorem~6.24). Therefore, if we define the class $\Sz'(G)$ of \emph{tempered distributions} as the dual of $\Sz(G)$, then we have the (continuous) inclusions
\[\Sz(G) * (\Sz(G) + \E'(G)) \subseteq \Sz(G), \qquad \Sz(G) * \Sz'(G) \subseteq \E(G) \cap \Sz'(G).\]
Moreover, analogously as in \S\ref{subsection:directproducts}, there is a version of the Schwartz kernel theorem involving Schwartz functions and tempered distributions, whose translation-invariant version is

\begin{thm}\label{thm:schwartzkernels}\index{Schwartz kernel theorem}
For every bounded linear operator $T : \Sz(G) \to \Sz'(G)$ which is left-invariant, there exists a unique $k \in \Sz'(G)$ such that
\[T \phi = \phi * k \qquad\text{for all $\phi \in \Sz(G)$.}\]
In particular, such an operator maps $\Sz(G)$ into $\E(G) \cap \Sz'(G)$.
\end{thm}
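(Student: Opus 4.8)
The plan is to reduce the statement to the general (non-invariant) Schwartz kernel theorem, exactly as one does for the $\D/\D'$ version in Theorem~\ref{thm:schwartzkerneld}. Since $G$ has polynomial growth it is unimodular, so $\Delta_G\equiv 1$ and all modular factors in the convolution and duality formulas of the preceding sections drop out; I use this throughout without further comment. First, by nuclearity of $\Sz(G)$ and the identification $\Sz(G\times G)\cong\Sz(G)\ptimes\Sz(G)$ (obtained as in \S\ref{subsection:directproducts}), the general Schwartz kernel theorem provides a unique $K\in\Sz'(G\times G)$ with $\langle T\phi,\psi\rangle=\langle K,\overline{\phi}\otimes\psi\rangle$ for all $\phi,\psi\in\Sz(G)$.

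Next I would convert left-invariance of $T$ into a symmetry of $K$. Expanding $\langle T(\LA_x\phi),\psi\rangle=\langle\LA_x(T\phi),\psi\rangle$, using $\overline{\LA_x\phi}=\LA_x\overline{\phi}$ together with the (unimodular) identity $\langle\LA_x u,\psi\rangle=\langle u,\LA_{x^{-1}}\psi\rangle$, and then invoking density of algebraic tensor products in $\Sz(G\times G)$, one obtains $(\LA_x\otimes\LA_x)K=K$ for every $x\in G$: the kernel is invariant under the \emph{diagonal} left regular action. I would then transport $K$ by the diffeomorphism $\Psi\colon(u,v)\mapsto(u,u^{-1}v)$, whose inverse is $(u,w)\mapsto(u,uw)$. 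This $\Psi$ is a ``tempered'' diffeomorphism --- it maps $\Sz(G\times G)$ onto itself, because the group operations are polynomially bounded with respect to a connected modulus --- and, since $\int_G\!\int_G f(u,uw)\,du\,dw=\int_G\!\int_G f(u,v)\,du\,dv$ by unimodularity, the induced action on distributions carries no Jacobian factor. Under $\Psi$ the diagonal action $(u,v)\mapsto(xu,xv)$ becomes $(u,w)\mapsto(xu,w)$, so the transported distribution $\widetilde K$ is invariant under left translation in the \emph{first} variable alone.

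The heart of the matter is then the structural fact that such a $\widetilde K$ equals $\mu_G\otimes k$ for a unique $k\in\Sz'(G)$. Infinitesimally, $\LA_x$-invariance in the first slot is equivalent to $\widetilde K$ being annihilated by every right-invariant vector field along the first factor; since a basis of $\lie{g}$ yields a frame of such fields spanning the first-factor tangent space at every point, $\widetilde K$ has vanishing first-variable derivatives in local coordinates, hence ``does not depend on $u$''. Making this rigorous in the tempered category --- rather than merely in $\D'$ --- is, I expect, the main obstacle: one needs a Schwartz-space Poincar\'e-type lemma on $G\times G$ (every Schwartz function with zero integral over the first factor lies in the closed span of right-invariant first-factor derivatives), after which $k$ may be exhibited by pairing $\widetilde K$ against $\eta\otimes\psi$ for a fixed $\eta\in\D(G)$ with $\int_G\eta=1$ and checking independence of $\eta$. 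Everything else is bookkeeping.

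Finally I would undo $\Psi$: a direct change of variables shows that the kernel $K$ corresponding to $\mu_G\otimes k$ satisfies $\langle K,\overline{\phi}\otimes\psi\rangle=\langle k,\phi^{*}*\psi\rangle=\langle\phi*k,\psi\rangle$ (using $(\phi^{*}*\psi)(w)=\int_G\overline{\phi(u)}\,\psi(uw)\,du$ and the duality relation $\langle\phi*k,\psi\rangle=\langle k,\phi^{*}*\psi\rangle$), so $T\phi=\phi*k$ in $\Sz'(G)$ for every $\phi\in\Sz(G)$. Uniqueness of $k$ is immediate: if $\phi*k=0$ for all $\phi\in\Sz(G)$, pick an approximate identity $(\eta_\alpha)$ in $\D(G)\subseteq\Sz(G)$ (cf.\ \S\ref{subsection:approximateidentities}); since it is also an approximate identity for the module $\Sz(G)$, one has $\eta_\alpha^{*}*\psi\to\psi$ in $\Sz(G)$, hence $\eta_\alpha*k\to k$ in $\Sz'(G)$, forcing $k=0$. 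The final assertion is then just the inclusion $\Sz(G)*\Sz'(G)\subseteq\E(G)\cap\Sz'(G)$ recalled in \S\ref{subsection:schwartz}. As an alternative route, one may instead restrict $T$ to $\D(G)$ and apply Theorem~\ref{thm:schwartzkerneld} to get $k\in\D'(G)$ with $T\phi=\phi*k$ on $\D(G)$, and then run the same structural lemma on the (automatically available) $\Sz'(G\times G)$-extension of that kernel to see that $k$ is in fact tempered and that the identity persists on all of $\Sz(G)$; the two routes meet at the same lemma.
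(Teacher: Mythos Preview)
The paper does not actually prove this theorem: it is stated without proof, introduced only by the phrase ``analogously as in \S\ref{subsection:directproducts}, there is a version of the Schwartz kernel theorem involving Schwartz functions and tempered distributions, whose translation-invariant version is\ldots'' (and the $\D/\D'$ analogue, Theorem~\ref{thm:schwartzkerneld}, is likewise stated without proof). Your approach---deducing the left-invariant statement from the abstract Schwartz kernel theorem for the nuclear space $\Sz(G)$, then untwisting the diagonal invariance of the $\Sz'(G\times G)$-kernel via the change of variables $(u,v)\mapsto(u,u^{-1}v)$---is precisely the standard argument the paper is implicitly invoking, and your sketch is correct. You also single out the one genuinely nontrivial step in the tempered (as opposed to $\D'$) category, namely the lemma that a distribution in $\Sz'(G\times G)$ invariant under left translation in the first variable alone has the form $\mu_G\otimes k$ with $k\in\Sz'(G)$; the paper takes this for granted.
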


\section{Nilpotent and homogeneous groups, Rockland operators}\label{section:nilpotent}
Nilpotent Lie groups are, in a sense, the slightest non-abelian generalization of $\R^n$: in fact, a (connected, simply connected) nilpotent Lie group is a real vector space, endowed with a polynomial group law, which is ``eventually abelian'' (i.e., iterated commutators of sufficiently high order equal the identity\footnote{Notice that, by a result of Lazard \cite{lazard_nilpotence_1955}, every polynomial group law on $\R^n$ is nilpotent.}). Due to these facts, most of the classical instruments and techniques of analysis on Euclidean spaces can be also applied to nilpotent Lie groups. On the other hand, the non-commutative algebraic structure brings completely new phenomena, which may be non-trivial at all (e.g., the theory of irreducible representations).

Homogeneous Lie groups are nilpotent Lie groups with a fixed family of automorphic dilations. This homogeneous structure allows to define a particularly interesting class of left-invariant differential operators, the so-called Rockland operators; moreover, dilations are an essential tool in the proof of the multiplier theorems of Chapter~\ref{chapter:multipliers}.

Our main references for nilpotent and homogeneous Lie groups are the books of Corwin and Greenleaf \cite{corwin_representations_1990}, Folland and Stein \cite{folland_hardy_1982}, and Goodman \cite{goodman_nilpotent_1976}. Moreover, a rich source of examples is the list \cite{nielsen_unitary_1983} of all the nilpotent Lie groups of dimension up to $6$ (with their irreducible representations).

\subsection{Nilpotent Lie groups}
A Lie algebra $\lie{g}$ is \emph{nilpotent}\index{Lie algebra!nilpotent} if the \emph{descending central series}\index{descending central series}
\[\lie{g}_{[1]} = \lie{g}, \qquad \lie{g}_{[n+1]} = [\lie{g},\lie{g}_{[n]}]\]
is eventually null. If $n$ is such that $\lie{g}_{[n]} \neq 0$ and $\lie{g}_{[n+1]} = 0$, then $\lie{g}$ is said to be $n$-step. In a nilpotent Lie algebra, the formal power series $b(X,Y)$ in the Baker-Campbell-Hausdorff formula \eqref{eq:bch} becomes a finite sum, which defines a polynomial group law $\lie{g} \times \lie{g} \to \lie{g}$, such that the identity is $0$ and the inverse of $v \in \lie{g}$ is $-v$; with this structure, $\lie{g}$ becomes a Lie group, whose Lie algebra is isomorphic to $\lie{g}$ itself, in such a way that the exponential map becomes the identity map.

More generally, if $G$ is a connected Lie group such that its Lie algebra $\lie{g}$ is nilpotent, and if we consider on $\lie{g}$ the group law previously defined, then the exponential map $\exp : \lie{g} \to G$ becomes a Lie group homomorphism, and in fact $\exp$ is a universal covering map of $G$; in particular, if $G$ is simply connected, then $\exp$ is an isomorphism.

A \emph{nilpotent Lie group}\index{Lie group!nilpotent} is a Lie group whose Lie algebra is nilpotent; unless otherwise specified, we always suppose that a nilpotent Lie group is connected and simply connected. Therefore, if $G$ is a nilpotent Lie group, then $G$ can be identified with its Lie algebra $\lie{g}$ via the exponential map. In this way, the Lie subgroups of $G$ are identified with the Lie subalgebras of $\lie{g}$ (so that in particular they are linear subspaces of $\lie{g}$ and are closed). Moreover, a Lie group homomorphism between nilpotent Lie group is identified with its derivative, which is a Lie algebra homomorphism and in particular it is a linear map.

If $G$ is nilpotent, then the push-forward of the Lebesgue measure on $\lie{g}$ via the exponential map is both left- and right-invariant, so that it is a left and right Haar measure; in particular, nilpotent Lie groups are unimodular.

In fact, every nilpotent Lie group has polynomial growth; more precisely:

\begin{prp}\label{prp:nilpotentgrowth}
Suppose that $G$ is $n$-step and let $V_j$ be a supplement of $\lie{g}_{[j+1]}$ in $\lie{g}_{[j]}$ for $j=1,\dots,n$. Choose moreover norms $|\cdot|_j$ on the $V_j$ and set
\begin{equation}\label{eq:growthnorm}
|x| = \sum_{j=1}^n |x_j|_j^{1/j},
\end{equation}
where $x = x_1 + \dots + x_n$ is the decomposition of $x \in \lie{g} = V_1 \oplus \dots \oplus V_n$. Then every connected modulus on $G$ is equivalent, in the large, to $|\cdot|$. In particular, $G$ has polynomial growth of degree
\[Q_G = \sum_{j=1}^n j \dim V_j = \sum_{j=1}^n \dim \lie{g}_{[j]}.\]
\end{prp}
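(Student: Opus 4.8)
The plan is to compare the function $|\cdot|$ of \eqref{eq:growthnorm} with one conveniently chosen connected modulus, and to compute directly the Lebesgue measure of its sublevel sets. Once $|\cdot|$ is known to be equivalent in the large to a single connected modulus on $G$, the equivalence at infinity of any two connected moduli --- recorded above --- gives the first assertion by transitivity; and since $G$, being nilpotent (connected and simply connected), is identified with $\lie g$ via $\exp$ and carries Lebesgue measure as Haar measure, a two-sided estimate $\mu_G(\{v : |v| \le r\}) \asymp r^{Q}$ with $Q = \sum_{j=1}^n j\dim V_j$ will pin down the growth. As reference modulus I would take the control (Carnot--Carath\'eodory) distance $\rho$ attached to a basis of $V_1$ viewed as left-invariant vector fields; this is a connected modulus, by the examples quoted just after the definition of connected modulus, once one checks that $V_1$ bracket-generates $\lie g$. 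The latter holds because $V_1$ is a supplement of $\lie g_{[2]} = [\lie g,\lie g]$ in $\lie g$, so the subalgebra $\lie h$ generated by $V_1$ satisfies $\lie h + [\lie g,\lie g] = \lie g$; reducing modulo the centre $\lie g_{[n]}$ and inducting on the nilpotency step gives $\lie h + \lie g_{[n]} = \lie g$, whence $[\lie g,\lie g] = [\lie h,\lie h] \subseteq \lie h$ and so $\lie g_{[n]} \subseteq [\lie g,\lie g] \subseteq \lie h$, i.e.\ $\lie h = \lie g$.

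The technical heart is the \emph{ball--box estimate}
\[
\rho(e,\exp v) \asymp |v| \qquad\text{whenever } |v| \ge 1 .
\]
For the upper bound I would write $\exp v$ as a product of boundedly many exponentials of elements of $V_1$ and of iterated group commutators of such exponentials. Decomposing $v = v_1 + \dots + v_n$ with $v_j \in V_j \subseteq \lie g_{[j]}$ and peeling off the top layer first --- since $\lie g_{[n]}$ is central, $\exp(v_1 + \dots + v_n) = \exp(v_1 + \dots + v_{n-1})\exp(v_n)$ by \eqref{eq:bch}, and $\exp(v_n)$ is a product of boundedly many $n$-fold group commutators of $V_1$-exponentials of radius $\asymp |v_n|_n^{1/n}$ --- and then descending through the layers by induction on the nilpotency step, one bounds the corresponding word length, hence $\rho(e,\exp v)$, by $\lesssim \sum_j |v_j|_j^{1/j} = |v|$. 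For the lower bound one shows that a horizontal curve of length $T$ issuing from $e$ reaches only points $\exp v$ with $|v_j|_j \lesssim T^j$; this follows from a direct iterated-integration estimate along horizontal curves using the nilpotent bracket relations (equivalently, by comparison with the associated graded homogeneous group, on which the control distance is homogeneous of degree one under the natural dilations, so the estimate is immediate). I expect this ball--box estimate to be the main obstacle; the remaining steps are routine.

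Granting it, $\{v : |v| \le r\}$ is squeezed between the boxes $\prod_j\{|v_j|_j \le (r/n)^j\}$ and $\prod_j\{|v_j|_j \le r^j\}$, each of Lebesgue measure $\asymp r^{Q}$ with $Q = \sum_{j=1}^n j\dim V_j$; since $\rho$-balls and $|\cdot|$-balls coincide up to the equivalence at infinity just established, and Haar measure is Lebesgue measure, $\mu_G(B^\rho_r) \asymp r^{Q}$ for $r$ large. Thus --- recovering Guivarc'h's theorem in the nilpotent case --- $G$ has polynomial growth, necessarily of degree $Q_G = Q$, and by transitivity every connected modulus on $G$ is equivalent in the large to $|\cdot|$. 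Finally, from $\dim V_j = \dim\lie g_{[j]} - \dim\lie g_{[j+1]}$ and summation by parts (with $\lie g_{[n+1]} = 0$),
\[
\sum_{j=1}^n j\dim V_j = \sum_{j=1}^n j\bigl(\dim\lie g_{[j]} - \dim\lie g_{[j+1]}\bigr) = \sum_{j=1}^n \dim\lie g_{[j]},
\]
which is the asserted formula.
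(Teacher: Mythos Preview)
Your approach is correct and is essentially the route taken in the reference the paper cites: the paper itself gives no argument beyond ``Cf.\ the proofs of \cite{guivarch_croissance_1973}, Th\'eor\`eme~II.1 and Lemme~II.1'', and Guivarc'h's proof there proceeds exactly by establishing the two-sided ball--box comparison between the word/Carnot--Carath\'eodory modulus and the quasi-norm \eqref{eq:growthnorm}, together with the direct volume computation of the boxes. Your reduction to a single connected modulus via the equivalence-at-infinity of all connected moduli, your verification that $V_1$ bracket-generates $\lie g$, and the summation-by-parts identity are all fine; the only substantive work you have left implicit is precisely Guivarc'h's Lemme~II.1 (the ball--box estimate), which you correctly flag as the main obstacle.
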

\begin{proof}
Cf.\ the proofs of \cite{guivarch_croissance_1973}, Th\'eor\`eme~II.1 and Lemme~II.1.
\end{proof}

In particular, on a nilpotent Lie group we can consider the classes $\Sz(G)$ and $\Sz'(G)$ of Schwartz functions\index{function!Schwartz} and tempered distributions which has been defined for groups of polynomial growth; in fact, it can be shown that these classes coincide, in exponential coordinates, with the ``classical'' spaces of Schwartz functions and tempered distributions on $\lie{g}$ (see also \cite{ludwig_algebre_1995}).

About Schwartz functions, we also recall the following result from the representation theory of nilpotent Lie groups, which implies that a nilpotent Lie group is CCR.

\begin{thm}
Let $\pi$ be an irreducible unitary representation of $G$ on a Hilbert space $\HH$. If $f \in \Sz(G)$, then $\pi(f)$ is a trace class operator on $\HH$. If $f \in L^1(G)$, then $\pi(f)$ is a compact operator on $\HH$.
\end{thm}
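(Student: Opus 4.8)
The plan is to obtain the trace-class statement from the Kirillov description of $\widehat{G}$, and then to derive the compactness statement by an approximation argument. Since $G$ is connected, simply connected and nilpotent, the orbit method (see \cite{corwin_representations_1990} or \cite{goodman_nilpotent_1976}) yields that every irreducible unitary representation $\pi$ of $G$ is unitarily equivalent to a monomial representation $\pi_\lambda = \mathrm{Ind}_M^G \chi_\lambda$, where $\lambda \in \lie{g}^*$, $\lie{m}$ is a polarization subordinate to $\lambda$, $M = \exp \lie{m}$, and $\chi_\lambda(\exp X) = e^{i\lambda(X)}$. Fixing a strong Malcev basis of $\lie{g}$ through $\lie{m}$, one identifies $G/M$ with $\R^k$ (with $k = \tfrac{1}{2}(\dim G - \dim G_\lambda)$) via a polynomial cross-section, and realises $\pi_\lambda$ on $L^2(\R^k) \cong L^2(G/M)$; in these coordinates the multiplication of $G$, the section $\R^k \to G$ and the associated cocycle are all polynomial maps.

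For $f \in \Sz(G)$ the operator $\pi_\lambda(f)$ is then an integral operator on $L^2(\R^k)$, and its kernel $K_f$ is obtained from $f$ by composition with the aforementioned polynomial maps together with an integration of the $\lie{m}$-variables against the character $\chi_\lambda$. Since in exponential coordinates $\Sz(G)$ coincides with the classical Schwartz space on $\lie{g}$, and since the Schwartz class is stable both under composition with polynomial maps and under partial integration, one gets $K_f \in \Sz(\R^k \times \R^k)$. An integral operator on $L^2(\R^k)$ with Schwartz kernel is trace class: for instance, writing $H = -\Delta + |u|^2$ for the harmonic oscillator and choosing $m > k$, one has $\pi_\lambda(f) = \bigl(\pi_\lambda(f)\,H^m\bigr)\, H^{-m}$, the product of a Hilbert--Schmidt operator (its kernel is $H^m_v K_f(u,v) \in \Sz \subseteq L^2$) and a trace-class operator. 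Conjugating back, $\pi(f)$ is trace class.

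For the second statement, recall that $\Sz(G)$ is dense in $L^1(G)$ (it contains $\D(G)$, which is dense in $L^1$). Given $f \in L^1(G)$, choose $f_j \in \Sz(G)$ with $\|f - f_j\|_1 \to 0$; since $\pi$ is unitary, $\|\pi(f) - \pi(f_j)\|_{\HH \to \HH} \leq \|f - f_j\|_1 \to 0$, and each $\pi(f_j)$ is trace class by the first part, hence compact. As the operator-norm limit of compact operators, $\pi(f)$ is compact. The crux of the whole argument is the first statement, and specifically the claim that the kernel of $\pi_\lambda(f)$ genuinely lies in $\Sz(\R^{2k})$: this requires the explicit coordinate description of the induced representation --- one needs the Malcev basis adapted to $\lie{m}$ so that the $M$-cosets are parametrised linearly and the section and cocycle become polynomial --- together with the stability of the Schwartz class under polynomial substitutions and partial integration, both of which ultimately rest on the polynomiality of the group law on a nilpotent group. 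The remaining steps are routine functional analysis.
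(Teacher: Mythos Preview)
Your proof is correct and follows the standard route that the cited references take; the paper itself supplies no argument beyond pointing to \cite{kirillov_unitary_1962}, Theorem~7.3 and Corollary, and \cite{corwin_representations_1990}, Theorem~4.2.1. Your sketch is essentially the proof in Corwin--Greenleaf: realise $\pi$ via the orbit method as an induced representation on $L^2(\R^k)$ in polynomial (Malcev) coordinates, show that for $f \in \Sz(G)$ the resulting integral kernel lies in $\Sz(\R^{2k})$, conclude trace class, and then get compactness for $L^1$ by density.
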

\begin{proof}
See \cite{kirillov_unitary_1962}, Theorem 7.3 and Corollary, or also \cite{corwin_representations_1990}, Theorem 4.2.1.
\end{proof}

\subsection{Homogeneous Lie groups}\label{subsection:homogeneousgroups}
An expanding automorphism on a Lie algebra $\lie{g}$ is a Lie algebra automorphism $\xi$ of $\lie{g}$ which is diagonalizable and whose eigenvalues are all greater than $1$; such an automorphism can be written in the form $\xi = e^A$, where $A$ is a diagonalizable derivation of $\lie{g}$ whose eigenvalues are all positive.

Starting from such a derivation $A$, we define a family of \emph{dilations}\index{dilations}
\[\delta_t = e^{A \log t} \qquad \text{for $t > 0$,}\]
which are automorphisms of $\lie{g}$ which share the eigenspaces $W_\lambda$:
\[\delta_t(v) = t^\lambda v \qquad\text{for $v \in W_\lambda$.}\]
The map $t \mapsto \delta_t$ is a homomorphism $\R^+ \to \Aut(\lie{g})$.

A \emph{homogeneous Lie algebra}\index{Lie algebra!homogeneous} is a Lie algebra with a fixed family of dilations. The automorphic dilations $\delta_t$ of a homogeneous Lie algebra $\lie{g}$ extend to automorphisms $\delta_t$ of the universal enveloping algebra $\UEnA(\lie{g}_\C)$, which are simultaneously diagonalizable. An element $D \in \UEnA(\lie{g}_\C)$ is said to be homogeneous of degree $\lambda$ if
\[\delta_t(D) = t^\lambda D \qquad\text{for all $t > 0$.}\]
The \emph{homogeneous dimension}\index{homogeneous dimension} of the homogeneous Lie algebra $\lie{g}$ is the sum $Q_\delta$ of the degrees of the elements a homogeneous basis of $\lie{g}$, i.e., $Q_\delta = \tr A$.

By replacing the derivation $A$ with $cA$ for some $c > 0$, we can rescale all the degrees in $\lie{g}$ and in $\UEnA(\lie{g}_\C)$, and also the homogeneous dimension $Q_\delta$, by the same factor $c$; in this case, we say that the new homogeneous structure on $\lie{g}$, obtained by rescaling, is \emph{equivalent} to the original one. In the following, unless otherwise specified, we always suppose that the least eigenvalue of the derivation $A$ defining a family of dilations is not less than $1$. Under this hypothesis, the degree of a non-constant homogeneous element of $\UEnA(\lie{g}_\C)$ is not less than $1$, and moreover $Q_\delta \geq \dim \lie{g}$.

A family of automorphic dilations $\delta_t = e^{A \log t}$ on a Lie algebra $\lie{g}$ determines a direct-sum decomposition
\begin{equation}\label{eq:gradation}
\lie{g} = W_{\lambda_1} \oplus \dots \oplus W_{\lambda_k},
\end{equation}
for some $k \in \N$ and real numbers $\lambda_k > \dots > \lambda_1 \geq 1$ --- which are called \emph{weights} of the homogeneous structure --- such that, if we set $W_\lambda = 0$ whenever $\lambda \notin \{\lambda_1,\dots,\lambda_k\}$, then
\begin{equation}\label{eq:gradation2}
[W_\lambda,W_{\lambda'}] \subseteq W_{\lambda+\lambda'} \qquad\text{for all $\lambda,\lambda' \geq 1$}\
\end{equation}
(the spaces $W_\lambda$ are the eigenspaces of the derivation $A$, and $\lambda_1,\dots,\lambda_k$ are its eigenvalues). Vice versa, such a direct-sum decomposition determines a family of automorphic dilations $\delta_t$ on $\lie{g}$ by setting
\[\delta_t(v) = t^\lambda v \qquad \text{for $v \in W_\lambda$.}\]
The existence of such a decomposition implies that a homogeneous Lie algebra is nilpotent. Notice that, conversely, not every nilpotent Lie algebra admits expanding automorphisms (see \cite{dyer_nilpotent_1970}).

In the case the weights $\lambda_1,\dots,\lambda_k$ are all integers, then the direct-sum decomposition is called a \emph{gradation} of $\lie{g}$, and a \emph{graded Lie algebra} is a Lie algebra with a fixed gradation (i.e., with a fixed homogeneous structure with integral weights).

In general, if $\lambda_1,\dots,\lambda_k$ have a common rational multiple, then $\lie{g}$ has an equivalent homogeneous structure which is graded. If this is not the case, it is nevertheless possible to re-index the subspaces $W_\lambda$ of the decomposition \eqref{eq:gradation}, replacing $\lambda_1,\dots,\lambda_n$ with positive integers, in such a way that \eqref{eq:gradation2} continues to hold (see \cite{miller_parametrices_1980}, Proposition~1.1). This means that, if a (nilpotent) Lie algebra admits automorphic dilations, then it admits also a gradation. However, it should be noticed that this re-indexing may give an homogeneous structure not equivalent to the original one, and in general it does not preserve the homogeneous decomposition of the universal enveloping algebra.

A \emph{stratification} of a Lie algebra $\lie{g}$ is a gradation $\lie{g} = W_1 \oplus \dots \oplus W_n$ such that $W_1$ generates $\lie{g}$ as a Lie algebra, i.e., such that
\[[W_1,W_j] = W_{j+1}\]
for $j=1,\dots,n-1$. A \emph{stratified Lie algebra}\index{Lie algebra!stratified} is a Lie algebra with a fixed stratification (i.e., it is a graded Lie algebra whose associated gradation is a stratification). There exist Lie algebras which have expanding automorphisms (and therefore admit a gradation), but which are not stratifiable, i.e., they do not admit a stratification (see, e.g., the group $G_{6,23}$ in \S\ref{subsection:example3step}).

A \emph{homogeneous} (resp.\ \emph{graded}, \emph{stratified}) \emph{Lie group}\index{Lie group!homogeneous}\index{Lie group!stratified}\index{Lie group!graded} is a connected, simply connected Lie group $G$ whose Lie algebra $\lie{g}$ is homogeneous (resp.\ graded, stratified). If $G$ is a homogeneous Lie group, then the automorphic dilations $\delta_t$ on $\lie{g}$ corresponds, via the exponential map, to automorphisms of $G$, which will be also denoted by $\delta_t$. The Haar measure $\mu_G$ on $G$ is homogeneous with respect to this family of dilations:
\[\mu_G(\delta_t(B)) = t^{Q_\delta} \mu_G(B) \qquad \text{for all Borel $B \subseteq G$ and $t > 0$.}\]

A \emph{homogeneous norm}\index{homogeneous norm} on a homogeneous Lie group $G$ with dilations $\delta_t$ is a continuous function $|\cdot|_\delta : G \to \left[0,+\infty\right[$ such that, for all $x \in G$:
\begin{itemize}
\item $|x|_\delta = 0$ if and only if $x$ is the identity of $G$;
\item $|x^{-1}|_\delta = |x|_\delta$;
\item $|\delta_t(x)|_\delta = t|x|_\delta$ for all $t > 0$.
\end{itemize}
A homogeneous norm $|\cdot|_\delta$ is quasi-subadditive:
\begin{equation}\label{eq:quasisubadditive}
|xy|_\delta \leq c (|x|_\delta + |y|_\delta) \qquad \text{for all $x,y \in G$,}
\end{equation}
for some constant $c > 0$; moreover, two homogeneous norms $|\cdot|_\delta$, $|\cdot|_\delta'$ on the same homogeneous Lie group $G$ are equivalent:
\[C^{-1} |x|_\delta \leq |x|_\delta' \leq C |x|_\delta \qquad\text{for all $x \in G$,}\]
for some constant $C \geq 1$ (see \cite{goodman_filtrations_1977}, \S3,  or \cite{goodman_nilpotent_1976}, \S1.2). In particular, each homogeneous norm on $G$ is equivalent to the following ones:
\begin{equation}\label{eq:modelhomogeneousnorms}
|x|_{\delta,1} = \sum_{j=1}^k |x_j|_j^{1/\lambda_j}, \qquad |x|_{\delta,\infty} = \max_{1 \leq j \leq k} |x_j|_j^{1/\lambda_j},
\end{equation}
where $x = x_1 + \dots + x_k$ is the decomposition of $x \in \lie{g}$ according to \eqref{eq:gradation}, and $|\cdot|_j$ is a norm on $W_{\lambda_j}$. On the other hand, on every homogeneous group there exists a homogeneous norm which is subadditive (i.e., \eqref{eq:quasisubadditive} holds with $c=1$) and smooth off the origin (see \cite{hebisch_smooth_1990}).

Notice that, for every nilpotent Lie group $G$, the function $|\cdot|$ given by \eqref{eq:growthnorm} can be considered as a homogeneous norm, if $G$ is thought of as an abelian group (with the additive structure of $\lie{g}$) with a suitable gradation (defined by the supplementaries $V_j$); the homogeneous dimension associated to this (abelian) gradation coincides with the degree $Q_G$ of polynomial growth of $G$ (with respect to the original algebraic structure). If $G$ is stratified, then (for a suitable choice of the $V_j$) the function $|\cdot|$ is also a homogeneous norm with respect to the original (possibly non-abelian) structure of $G$, so that the degree of polynomial growth of $G$ coincides with its homogeneous dimension; moreover, every subadditive homogeneous norm on $G$ is also a connected modulus on $G$. For a general homogeneous Lie group, we have the following (cf.\ also \cite{jenkins_dilations_1979}):

\begin{prp}\label{prp:quasiequivalence}
Let $G$ be a homogeneous Lie group, with dilations $\delta_t$ and homogeneous dimension $Q_\delta$, and let $|\cdot|_\delta$ be a homogeneous norm on $G$. Let $|\cdot| : G \to \left[0,+\infty\right[$ be defined as in \eqref{eq:growthnorm}, and let $Q_G$ be the degree of polynomial growth of $G$.
\begin{itemize}
\item[(i)] One has $Q_\delta \geq Q_G$, with equality if and only if $G$ is stratified\footnote{We remark that ``stratified'' for a homogeneous group means not only that the Lie algebra of the group admits a stratification (which is the meaning of ``stratifiable''), but also that the fixed homogeneous structure on the Lie algebra yields a stratification.}.
\item[(ii)] There exist $a,b,c > 0$ such that
\begin{equation}\label{eq:quasiequivalence}
c^{-1} |x|^a_\delta \leq |x| \leq c |x|_\delta^b
\end{equation}
for $x \in G$ large (i.e., off a compact neighborhood of the identity). The same inequality holds with $|\cdot|$ replaced by any connected modulus on $G$. Moreover, we can take $a = b = 1$ if and only if $G$ is stratified.
\end{itemize}
\end{prp}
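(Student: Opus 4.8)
The plan is to establish the two parts separately, part~(i) resting on the compatibility between the descending central series of $\lie{g}$ and the eigenspace decomposition \eqref{eq:gradation} of the dilations, and part~(ii) on a comparison of two ``homogeneous norms'' attached to (generally different) dilation structures on the vector space $\lie{g}$.

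\emph{Part (i).} Each $\lie{g}_{[j]}$ is a characteristic ideal, hence invariant under every automorphism of $\lie{g}$, in particular under the (semisimple) operators $\delta_t$; thus $\lie{g}_{[j]}=\bigoplus_\lambda(\lie{g}_{[j]}\cap W_\lambda)$. Moreover, since every weight is $\geq1$ and $[W_\lambda,W_{\lambda'}]\subseteq W_{\lambda+\lambda'}$, an immediate induction yields $\lie{g}_{[j]}\subseteq\bigoplus_{\lambda\geq j}W_\lambda$, i.e.\ $\lie{g}_{[j]}\cap W_\lambda=0$ for $\lambda<j$. Feeding this together with the identity $Q_G=\sum_{j\geq1}\dim\lie{g}_{[j]}$ of Proposition~\ref{prp:nilpotentgrowth} into one computation,
\[
Q_G=\sum_{j\geq1}\dim\lie{g}_{[j]}\leq\sum_{j\geq1}\sum_{\lambda\geq j}\dim W_\lambda=\sum_\lambda\lfloor\lambda\rfloor\dim W_\lambda\leq\sum_\lambda\lambda\dim W_\lambda=Q_\delta,
\]
so $Q_\delta\geq Q_G$, with equality precisely when (a) $\lie{g}_{[j]}=\bigoplus_{\lambda\geq j}W_\lambda$ for all $j$ and (b) every weight is an integer. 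It then remains to match (a) and (b) with being stratified (the case $\lie{g}=\{0\}$ being trivial). If $G$ is stratified, the weights are $1,\dots,n$ and an induction using $[W_1,W_j]=W_{j+1}$ gives $\lie{g}_{[j]}=W_j\oplus\cdots\oplus W_n$, so (a) and (b) hold. Conversely, from (a) and (b): the least weight is $1$, since otherwise (a) with $j=2$ would force $[\lie{g},\lie{g}]=\lie{g}$, impossible for a nonzero nilpotent algebra; then, as the descending central series strictly decreases while nonzero, (a) forbids gaps, so $W_j\neq0$ for $j=1,\dots,\lambda_k=:n$; and comparing the $W_{j+1}$-components of the two sides of $\lie{g}_{[j+1]}=[\lie{g},\lie{g}_{[j]}]$, using $[W_i,W_l]\subseteq W_{i+l}$, gives $W_{j+1}\subseteq[W_1,W_j]$, whence $[W_1,W_j]=W_{j+1}$ and the homogeneous structure is a stratification.

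\emph{Part (ii).} Endow $\lie{g}$ with a Euclidean structure in which the derivation $A$ (with $\delta_t=e^{A\log t}$) is self-adjoint, so that $R^{\lambda_1}\|v\|\leq\|\delta_R v\|\leq R^{\lambda_k}\|v\|$ for $R\geq1$, with $\lambda_1\geq1$. The ``sphere'' $\{\,|x|_\delta=1\,\}$ is compact and avoids the origin, so on it $\|\cdot\|$ lies between two positive constants; writing a general $x$ with $|x|_\delta=R\geq1$ as $x=\delta_R u$ with $u$ in that sphere, we find $\|x\|$ trapped between constant multiples of $R$ and of $R^{\lambda_k}$. On the other hand a direct estimate on \eqref{eq:growthnorm} gives $\|x\|^{1/n}\lesssim|x|\lesssim\|x\|$ once $\|x\|$ is large ($n$ the nilpotency step). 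Chaining these produces $c^{-1}|x|_\delta^{1/n}\leq|x|\leq c\,|x|_\delta^{\lambda_k}$ for $x$ outside a compact neighbourhood of $e$, and, since every connected modulus is equivalent at infinity to $|\cdot|$ (Proposition~\ref{prp:nilpotentgrowth}), the same bound (with adjusted $c$) holds with $|\cdot|$ replaced by an arbitrary connected modulus. For the final assertion, if $G$ is stratified one may choose $V_j=W_j$ in \eqref{eq:growthnorm}, making $|\cdot|$ literally one of the model homogeneous norms \eqref{eq:modelhomogeneousnorms} for $\delta_t$, hence equivalent to $|\cdot|_\delta$; as any other admissible choice alters $|\cdot|$ only up to equivalence at infinity, $a=b=1$ is then permissible. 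Conversely, if $c^{-1}|x|_\delta\leq|x|\leq c\,|x|_\delta$ for $x$ large, then $|x|\asymp|x|_\delta$ at infinity, and comparing the Haar volumes of $\{\,|x|<r\,\}$ (which grow like $r^{Q_G}$ by Proposition~\ref{prp:nilpotentgrowth}) and of $\{\,|x|_\delta<r\,\}$ (equal to $r^{Q_\delta}\mu_G(\{\,|x|_\delta<1\,\})$ by homogeneity of Haar measure) forces $Q_G=Q_\delta$, hence stratification by part~(i).

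\emph{Main obstacle.} The delicate point is the equality analysis in part~(i): once the two structural facts about $\lie{g}_{[j]}$ are at hand, $Q_G\leq Q_\delta$ is bookkeeping, but reconstructing the full bracket relations $[W_1,W_j]=W_{j+1}$ --- especially ruling out gaps in the weights --- from the saturation of the chain of inequalities is where the argument has teeth. Part~(ii) is then comparatively routine, being a compactness estimate followed by a volume count.
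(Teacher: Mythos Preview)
Your proof is correct and follows essentially the same approach as the paper. In part~(i) you reproduce the paper's chain of inequalities verbatim; your equality analysis is more explicit than the paper's (which simply notes that $W_s\subseteq\lie{g}_{[s]}$ and concludes that $W_1$ generates), but the content is the same. In part~(ii) you insert an auxiliary Euclidean norm and a compactness argument on the unit $|\cdot|_\delta$-sphere, whereas the paper compares the explicit formulas \eqref{eq:growthnorm} and \eqref{eq:modelhomogeneousnorms} directly; both routes are straightforward and the volume-comparison argument for the converse is identical.
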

\begin{proof}
(i) Decompose $\lie{g}$ as in \eqref{eq:gradation}. Notice that the subspaces $\lie{g}_{[s]}$ composing the descending central series are characteristic ideals of $\lie{g}$; since the dilations $\delta_t$ are automorphisms, the $\lie{g}_{[s]}$ are homogeneous. A homogeneous element of $\lie{g}_{[s]}$, being the sum of $s$-fold iterated commutators of homogeneous elements of $\lie{g}$, has a homogeneity degree which must be the sum of $n$ of the homogeneity degrees $\lambda_1 < \dots < \lambda_k$ of the elements of $\lie{g}$; since all these degrees are not less than $1$, the sum is not less than $n$, therefore $\lie{g}_{[s]} \cap W_\lambda = \{0\}$ if $\lambda < s$, so that
\begin{equation}\label{eq:homogeneouscontainment}
\lie{g}_{[s]} \subseteq \bigoplus_{\lambda \geq s} W_\lambda.
\end{equation}
In particular, if $G$ is $n$-step,
\begin{equation}\label{eq:homogeneousdimchain}
Q_G = \sum_{s = 1}^n \dim \lie{g}_{[s]} \leq \sum_{s=1}^n \sum_{\lambda \geq s} \dim W_\lambda \leq \sum_{j=1}^k \lfloor \lambda_j \rfloor \dim W_{\lambda_j} \leq Q_\delta.
\end{equation}

We already know that, if $G$ is stratified, then $Q_G = Q_\delta$. Conversely, if $Q_G = Q_\delta$, then all the inequalities in \eqref{eq:homogeneousdimchain} must be equalities; this means first of all that the degrees $\lambda_1,\dots,\lambda_k$ are integers --- i.e., $G$ is graded --- and secondly that the inclusion \eqref{eq:homogeneouscontainment} is an equality, so that $W_s \subseteq \lie{g}_{[s]}$, but then necessarily $W_1$ generates $\lie{g}$ --- i.e., $G$ is stratified.

(ii) By the explicit definition of $|\cdot|$ in Proposition~\ref{prp:nilpotentgrowth} and the equivalence of $|\cdot|_\delta$ with one of the homogeneous norms in \eqref{eq:modelhomogeneousnorms}, the inequality \eqref{eq:quasiequivalence} follows easily. Since moreover, by Proposition~\ref{prp:nilpotentgrowth}, $|\cdot|$ is equivalent in the large to any connected modulus on $G$, clearly a connected modulus can replace $|\cdot|$ in \eqref{eq:quasiequivalence}.

If $G$ is stratified, then $|\cdot|_{\delta}$ is (modulo equivalence of homogeneous norms) of the form \eqref{eq:growthnorm}, with a choice of the supplements $V_j$ possibly different to the one defining $|\cdot|$; by Proposition~\ref{prp:nilpotentgrowth}, $|\cdot|_\delta$ is equivalent in the large to any connected modulus, and then also to $|\cdot|$. Conversely, since
\[\mu_G(\{x \in G \tc |x| < r\}) \sim r^{Q_G}, \qquad \mu_G(\{x \in G \tc |x|_\delta < r\}) \sim r^{Q_\delta}\]
for $r$ large, if \eqref{eq:quasiequivalence} holds with $a = b = 1$, then necessarily $Q_G = Q_\delta$, and the conclusion follows by (i).
\end{proof}

On a homogeneous Lie group $G$ with a homogeneous norm $|\cdot|_\delta$, the Schwartz space $\Sz(G)$ can be characterized as follows: a function $f \in \E(G)$ belongs to the Schwartz class\index{function!Schwartz} if and only if, for some $p \in \left[1,\infty\right]$,
\[(1+|\cdot|_\delta)^m D f \in L^p(G)\]
for all $m \in \N$ and $D \in \Diff(G)$.

\subsection{Rockland operators}
Let $G$ be a homogeneous Lie group with dilations $\delta_t$. A \emph{Rockland operator}\index{differential operator!Rockland} on $G$ is a $\delta_t$-homogeneous left-invariant differential operator $D \in \Diff(G)$ such that, for every non-trivial unitary representation $\pi$ of $G$, $d\pi(D)$ is injective (as an operator on the smooth vectors of the representation).

Notice that, in the abelian case ($G = \R^n$), left-invariant differential operators coincide with constant-coefficient differential operators, i.e., operators of the form
\[p\left(-i\frac{\partial}{\partial x_1},\dots,-i\frac{\partial}{\partial x_n}\right)\]
for some polynomial $p$ with complex coefficients, which is called the \emph{symbol} of the operator. In this case, a differential operator with symbol $p$ is Rockland if and only if $p$ is $\delta_t$-homogeneous and such that
\[p(x) \neq 0 \qquad\text{for $x \neq 0$.}\]
If the dilations are isotropic ($\delta_t(x) = tx$), the notion of Rockland operator then coincides with that of elliptic homogeneous operator with constant coefficients on $\R^n$.

It can be shown that the existence of a Rockland operator on a homogeneous Lie group $G$ implies that the degrees of homogeneity have a common rational multiple, so that we can suppose (modulo rescaling) that $G$ is graded (see \cite{miller_parametrices_1980} and \cite{ter_elst_spectral_1997}). This allows us to state in the context of homogeneous groups the Helffer-Nourrigat theorem (see \cite{helffer_caracterisation_1979}, and also \cite{helffer_hypoellipticit_1985}), which is a characterization of Rockland operators in terms of hypoellipticity. Recall that a differential operator $L$ on $G$ is said to be \emph{hypoelliptic}\index{differential operator!hypoelliptic} if, for every $u \in \D'(G)$ and every open set $\Omega \subseteq G$,
\[(Lu)|_\Omega \in \E(\Omega) \quad\Longrightarrow\quad u|_\Omega \in \E(\Omega).\]

\begin{thm}
Let $G$ be a homogeneous Lie group, and let $L \in \Diff(G)$.
\begin{itemize}
\item If $L$ is homogeneous, then $L$ is Rockland if and only if it is hypoelliptic.
\item If the principal part of $L$ (with respect to the homogeneous structure) is Rockland, then $L$ is hypoelliptic.
\end{itemize}
\end{thm}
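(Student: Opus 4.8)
The plan is to reduce both assertions to the construction of a \emph{parametrix}, and then to invoke the Helffer-Nourrigat circle of ideas for the hard half. Recall that a left-invariant operator on $G$ is hypoelliptic as soon as it admits a left parametrix --- a left-invariant operator $P$ whose convolution kernel (Theorem~\ref{thm:schwartzkerneld}) is smooth off the identity and for which $PL-\id$ is a smoothing operator --- and, conversely, for a $\delta_t$-homogeneous $L$ of homogeneity degree $\lambda$, hypoellipticity produces (by a dilation-and-cut-off argument) a left fundamental solution $k\in\Sz'(G)$ that is smooth on $G\setminus\{e\}$ and homogeneous of degree $\lambda-Q_\delta$, whereupon $\phi\mapsto\phi*k$ is such a parametrix. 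So for the first bullet it suffices to match the existence of this homogeneous fundamental solution with the Rockland condition.

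\textbf{Hypoellipticity $\Rightarrow$ Rockland.} Suppose $L$ is homogeneous but \emph{not} Rockland, say $d\pi(L)v=0$ for some nontrivial irreducible unitary representation $\pi$ and some $0\neq v\in\HH_\pi^\infty$. Since $L$ is $\delta_t$-homogeneous, $d\pi_t(L)=t^{\lambda}d\pi(L)$ for $\pi_t:=\pi\circ\delta_t$, so the entire curve $\{\pi_t\}_{t>0}$ of (generically inequivalent) irreducibles annihilates the corresponding vectors; superposing the matrix coefficients $x\mapsto\langle\pi_t(x)v,w\rangle$ over $t$ (and, if needed, over a neighbourhood of $[\pi]$ in the orbit space, via Kirillov theory) produces a distribution $u$ on $G$ with $Lu=0$ but $u\notin\E(G)$, contradicting hypoellipticity. (For $G=\R^n$ this is just the fact that an operator whose symbol vanishes at a nonzero point is not hypoelliptic.) Hence hypoellipticity forces the Rockland condition.

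\textbf{Rockland $\Rightarrow$ hypoellipticity.} This is the core, and essentially all the work lies here. The target is the a priori estimate
\[\|u\|_{s+\lambda}\ \lesssim\ \|Lu\|_{s}+\|u\|_{s_0}\qquad(u\in\D(G))\]
in a left-invariant $L^2$-Sobolev scale on $G$ adapted to the dilations, from which hypoellipticity follows by a routine localization-and-bootstrap argument. One decomposes the regular representation of the nilpotent group $G$ into irreducibles (Plancherel/Kirillov theory) and reduces to a representation-by-representation bound $\|w\|\lesssim\|d\pi(L)w\|$ on $\HH_\pi^\infty$ for nontrivial $\pi$ which is \emph{uniform} over $\widehat G$; the Rockland hypothesis gives this bound for each individual $\pi$, and homogeneity of $L$ pins the Sobolev loss at exactly $\lambda$ and legitimizes the scaling reductions. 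The uniformity over the dual is the main obstacle: upgrading the \emph{pointwise} injectivity of $d\pi(L)$ to a \emph{uniform} bound is exactly the substance of the Helffer-Nourrigat theorem, obtained by an induction on $\dim G$ that exploits the dilation invariance of the problem and a ``nilpotentization'' of the representations lying at the boundary of the orbit space.

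\textbf{Lower-order perturbations.} For the second bullet, write $L=L_0+R$ with $L_0$ the $\delta_t$-principal part of $L$ (homogeneous of the top degree $\lambda$) and $R$ a sum of left-invariant operators homogeneous of degrees $<\lambda$. By hypothesis $L_0$ is Rockland, so by the first bullet it is hypoelliptic, hence satisfies the estimate above with $L_0$ in place of $L$. Since each summand of $R$ loses strictly fewer than $\lambda$ derivatives, $\|Ru\|_{s}\lesssim\|u\|_{s+\lambda-\varepsilon}$ for some $\varepsilon>0$, which interpolates against $\|u\|_{s_0}$ and is absorbed into the left-hand side of the estimate for $L_0$. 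Thus $L$ satisfies the same a priori estimate and is hypoelliptic; equivalently, a parametrix for $L_0$ produces one for $L$ via a Neumann series, the error terms improving because $R$ is of lower order.
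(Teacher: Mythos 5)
The paper does not prove this theorem at all: it is quoted as the Helffer--Nourrigat theorem with a pointer to \cite{helffer_caracterisation_1979} and \cite{helffer_hypoellipticit_1985}, so there is no internal proof to compare against. Judged as a self-contained argument, your proposal reproduces the correct architecture of the known proof but contains two genuine gaps. The first is in ``Rockland $\Rightarrow$ hypoelliptic'': you reduce everything to a bound $\|w\|\lesssim\|d\pi(L)w\|$ that must be \emph{uniform} over $\widehat G$, and then state that obtaining this uniformity ``is exactly the substance of the Helffer--Nourrigat theorem.'' That uniformity is the entire difficulty of the statement being proved; naming the induction on $\dim G$ and the nilpotentization of boundary representations does not supply it. As written, the hard half of the theorem is assumed rather than proved.

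The second gap is that ``hypoelliptic $\Rightarrow$ Rockland'' does not work as you have set it up. If $d\pi(L)v=0$ with $v\in\HH^\infty$, then each matrix coefficient $x\mapsto\langle\pi(x)v,w\rangle$ is a \emph{smooth} function annihilated by $L$ (smoothness of the vector $v$ is precisely smoothness of its coefficients), so the mere existence of such null solutions is no obstruction to hypoellipticity --- every hypoelliptic operator admits an abundance of smooth null solutions. You would need to show that the superposition $u=\int\langle\pi_t(\cdot)v,w\rangle\,\chi(t)\,dt$ is genuinely non-smooth at some point, and no reason for this is offered; convergence to a distribution, commutation of $L$ with the integral, and the failure of smoothness all require argument. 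The standard route for this direction is different: hypoellipticity plus a closed-graph argument yields a subelliptic a priori estimate on a fixed compact set, and one tests that estimate against vectors of the form $\pi_t(\phi)v$ as $t\to\infty$, using homogeneity to track the scaling of both sides and contradict $d\pi(L)v=0$. Your perturbation argument for the second bullet is sound in outline, modulo the localization of the maximal estimate (commutators with cut-offs), which is routine once the first bullet is in place; note also that the homogeneous fundamental solution you invoke exists in the stated homogeneous form only when the degree of $L$ is less than $Q_\delta$, with logarithmic corrections otherwise.
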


Several regularity results have been proved for this class of operators, e.g., heat kernel estimates (see \cite{auscher_positive_1994}). We will consider these properties in the more general framework of weighted subcoercive operators.

\section{Weighted subcoercive operators}\label{section:weightedsubcoercive}

As we have just seen, the class of Rockland operators contains, as a particular case, the constant-coefficient homogeneous elliptic operators on $\R^n$. The notion of (strongly) elliptic operator can in fact be extended to manifolds, and most of the properties valid in $\R^n$ can be obtained also in this wider setting, where $\R^n$ plays the role of ``local approximation'', and constant-coefficient operators are somehow the model case. In particular, for connected Lie groups, a thorough analysis of left-invariant elliptic operators can be found in \cite{robinson_elliptic_1991}, where ``Gaussian'' estimates are obtained for the heat kernel of a positive elliptic operator, in terms of an invariant Riemannian metric.

On the other hand, on manifolds, and in particular on Lie groups, non-commutativity of vector fields naturally arises, and makes it possible to consider classes of differential operators which, although not elliptic, share nevertheless some of the good properties of elliptic operators. For instance, if one considers a system $X_1,\dots,X_n$ of vector fields which satisfy the \emph{H\"ormander condition} (i.e., for some $m \in \N$, the span of $X_1,\dots,X_n$ and their iterated commutators up to order $m$ is the whole tangent space in each point of the manifold), then the associated \emph{sublaplacian}\index{sublaplacian}
\[-(X_1^2 + \dots + X_n^2)\]
is hypoelliptic (see \cite{hrmander_hypoelliptic_1967}). Moreover, in the case of a left-invariant sublaplacian on a connected Lie group, good estimates for the heat kernel can also be proved (see, e.g., \cite{varopoulos_analysis_1992}), where now the Riemannian metric is replaced by a Carnot-Carath\'eodory (or subriemannian, or control) distance associated to the system $X_1,\dots,X_n$ (see also \cite{nagel_balls_1985}).

At some point (see \cite{folland_applications_1977}, \cite{rothschild_hypoelliptic_1976}, \cite{goodman_nilpotent_1976}), it has been realized that the study of such differential operators could be performed by the use of a ``local approximation'' in terms of left-invariant homogeneous operators on a homogeneous nilpotent Lie group, which may be a simpler setting than the original one, but at the same time allows to preserve some of the non-commutative structure (in contrast to the Euclidean local approximation used for elliptic operators).

There are several approaches for defining such a local approximation. Here we will consider the procedure of \emph{contraction} of a Lie algebra $\lie{g}$, which produces a homogeneous Lie algebra $\lie{g}_*$ with the same linear dimension as $\lie{g}$; notice that a Lie algebra $\lie{g}$ admits several non-equivalent contractions, and one of them is Euclidean (i.e., abelian and isotropic). For the study of left-invariant differential operators on connected Lie groups, this technique has been exploited, among others (see \cite{nagel_harmonic_1990}, \cite{hebisch_estimatessemigroups_1992}), by ter Elst and Robinson \cite{ter_elst_weighted_1998}, who have introduced the notion of \emph{weighted subcoercive operator}.

As we will see, a positive left-invariant differential operator on a connected Lie group $G$ is weighted subcoercive if and only if it is ``locally Rockland'', i.e., it corresponds on a suitable contraction of the Lie algebra $\lie{g}$ to an operator whose principal part is Rockland. In particular, every elliptic positive left-invariant differential operator on $G$ is weighted subcoercive (with respect to the Euclidean contraction), but also every left-invariant sublaplacian on $G$ is weighted subcoercive (with respect to a different, possibly non-commutative contraction).

In this section, we summarize and slightly amplify some of the results of \cite{ter_elst_weighted_1998} on weighted subcoercive operators, in particular the Gaussian heat kernel estimates, which are crucial for our multiplier theorems.

\subsection{Weighted algebraic bases and contraction of a Lie algebra}\label{subsection:noncommutativemultiindex}
An \emph{algebraic basis}\index{Lie algebra!algebraic basis of|(} of a Lie algebra $\lie{g}$ is a system $A_1,\dots,A_d$ of linearly independent elements of $\lie{g}$ which generate $\lie{g}$ as a Lie algebra. A \emph{weighted algebraic basis} is an algebraic basis $A_1,\dots,A_d$ together with an assignment of a weight $w_j \in \left[1,+\infty\right[$ to each $A_j$ ($j=1,\dots,d$).

Fix a weighted algebraic basis on $\lie{g}$. We introduce now a notation, which is analogous to the multi-index notation for partial derivatives on $\R^n$, but which takes care of the non-commutative structure. Let $J(d)$ be the set of finite sequences of elements of $\{1,\dots,d\}$, and $J_+(d)$ be the subset of non-empty sequences. For every $\alpha = (\alpha_1,\dots,\alpha_k) \in J(d)$, let $|\alpha|$ denote the length $k$ of $\alpha$, and set
\[\|\alpha\| = \sum_{j=1}^k w_{\alpha_j},\]
\[A^\alpha = A_{\alpha_1} A_{\alpha_2} \cdots A_{\alpha_n} \text{ (as an element of $\UEnA(\lie{g})$),}\]
\[A_{[\alpha]} = [[\dots[A_{\alpha_1},A_{\alpha_2}],\dots],A_{\alpha_k}] \qquad\text{if $\alpha \in J_+(d)$.}\]

The fixed weighted algebraic basis defines an (increasing) \emph{filtration} on $\lie{g}$:
\[F_\lambda = \Span \{A_{[\alpha]} \tc \alpha \in J_+(d), \, \|\alpha\| \leq \lambda\} \qquad \text{for $\lambda \in \R$;}\]
we have in fact
\[[F_{\lambda},F_{\mu}] \subseteq F_{\lambda+\mu}, \qquad F_\lambda = \bigcap_{\mu > \lambda} F_\mu, \qquad \bigcup_{\lambda \in \R} F_\lambda = \lie{g}.\]
We can then consider the \emph{associated homogeneous Lie algebra}: in fact, such a filtration determines a finite set of weights $\lambda_1,\dots,\lambda_k$, with
\[1 \leq \lambda_1 < \dots < \lambda_k,\]
defined by the condition $F_{\lambda_j} \neq \bigcup_{\mu < {\lambda_j}} F_\mu$ for $j=1,\dots,k$; if we put
\[F_\lambda^- = \bigcup_{\mu < {\lambda}} F_\mu, \qquad W_\lambda = F_\lambda / F_\lambda^-,\]
then
\[\lie{g}_* = \bigoplus_{\lambda \in \R} W_\lambda = W_{\lambda_1} \oplus \dots \oplus W_{\lambda_k}\]
is a homogeneous Lie algebra, with weights $\lambda_1, \dots, \lambda_k$. The homogeneous Lie algebra $\lie{g}_*$, which has the same dimension as $\lie{g}$, is said to be the \emph{contraction}\index{Lie algebra!contraction of} of $\lie{g}$ with respect to the fixed weighted algebraic basis.

The weighted algebraic basis is called a \emph{reduced basis}\index{Lie algebra!reduced basis of} of $\lie{g}$ if\footnote{Our definition of reduced basis is more restrictive than the definition given in \S2 of \cite{ter_elst_weighted_1998}, where it is only required that $A_j \notin F_{w_j}^-$; however, without our restriction, the fundamental Lemma~2.2 of \cite{ter_elst_weighted_1998}, which allows to extend the reduced basis to a linear basis compatible with the associated filtration $F_\lambda$, is false, as it is shown by the following example. On the free $3$-step nilpotent Lie algebra on two generators, defined by
\[[X_1,X_2] = Y, \qquad [X_1,Y] = T_1, \qquad [X_2,Y] = T_2,\]
the weighted algebraic basis $X_1,X_2,Y+T_1,T_1,T_2$, with weights $1,1,3,3,3$, is reduced according to \cite{ter_elst_weighted_1998}, but it not compatible with the associated filtration, and cannot be extended since it is already a linear basis.}
\begin{equation}\label{eq:reducedbasis}
\Span \{ A_j \tc w_j = \lambda\} \cap F_\lambda^- = \{0\} \qquad\text{for all $\lambda$.}
\end{equation}
Given a weighted algebraic basis, it is always possible to remove some elements from it, in order to obtain a reduced basis of $\lie{g}$ which defines the same filtration.

Notice that, if the Lie algebra $\lie{g}$ is homogeneous as in \eqref{eq:gradation}, every algebraic basis $A_1,\dots,A_d$ of $\lie{g}$ made of homogeneous elements, with the weights equal to the respective homogeneity degrees, is a reduced basis; such a basis is said to be \emph{adapted}\index{Lie algebra!homogeneous!adapted basis of} to the homogeneous structure of $\lie{g}$. In this case we have $F_\lambda = W_\lambda \oplus F_\lambda^-$, so that in particular the corresponding homogeneous contraction $\lie{g}_*$ is canonically isomorphic to $\lie{g}$.\index{Lie algebra!algebraic basis of|)}

Consider now a general Lie algebra $\lie{g}$, with a fixed reduced basis. Then the weights $w_1,\dots,w_d$ of the basis are among the weights $\lambda_1,\dots,\lambda_k$ of the filtration; moreover, if $\bar A_j$ is the element of the quotient $W_{w_j}$ corresponding to $A_j \in V_{w_j}$, then $\bar A_1,\dots,\bar A_k$ is an (adapted) reduced basis of $\lie{g}_*$, with the same weights $w_1,\dots,w_k$ (cf.\ \cite{ter_elst_weighted_1998}, Lemma~2.2 and Proposition~3.1). As we will see in the following, the correspondence between the two reduced bases $A_1,\dots,A_k$ of $\lie{g}$ and $\bar A_1,\dots,\bar A_k$ of $\lie{g}_*$ determines in turn a correspondence between $\UEnA(\lie{g}_\C)$ and $\UEnA((\lie{g}_*)_\C)$, i.e., between invariant differential operators on a Lie group and invariant differential operators on a contraction.

\subsection{Control distance}\label{subsection:controldistance}

Let $G$ be a connected Lie group, with Lie algebra $\lie{g}$. Fix a reduced basis $A_1,\dots,A_k$ of $\lie{g}$, with weights $w_1,\dots,w_k$. For $s \in \{0,\infty,*\}$ and $\varepsilon > 0$, let $C_s(\varepsilon)$ be the set of absolutely continuous curves $\gamma : [0,1] \to G$ such that
\[\gamma'(t) = \sum_{j=1}^k \phi_j(t) \, A_j|_{\gamma(t)} \qquad\text{for a.e.\ $t \in [0,1]$,}\]
where
\begin{equation}\label{eq:mincondition}
|\phi_j(t)| < \begin{cases}
\varepsilon^{w_j} & \text{if $s = 0$,}\\
\varepsilon       & \text{if $s = \infty$,}\\
\min\{\varepsilon,\varepsilon^{w_j}\} & \text{if $s = *$,}
\end{cases} \qquad\text{for $t \in [0,1]$, $j=1,\dots,k$;}
\end{equation}
for $x,y \in G$, we define then
\[d_s(x,y) = \inf \{\varepsilon > 0 \tc \exists \gamma \in C_s(\varepsilon) \text{ with } \gamma(0) = x, \, \gamma(1) = y\}.\]

It is not difficult to show that $d_0$, $d_\infty$ and $d_*$ are left-invariant distances on $G$, compatible with the topology of $G$. In fact, $d_\infty$ is the classical ``unweighted'' Carnot-Carath\'eodory distance associated to the H\"ormander system $A_1,\dots,A_k$ (cf.\ \cite{varopoulos_analysis_1992}, \S III.4), whereas $d_0$ is a ``weighted'' Carnot-Carath\'eodory distance (similar to the ones studied in \cite{nagel_balls_1985}). Moreover, for $x,y \in G$, we have
\[d_0(x,y) \leq 1 \quad\iff\quad d_\infty(x,y) \leq 1 \quad\iff\quad d_*(x,y) \leq 1,\]
and the same holds with strict inequalities. Finally,
\[d_*(x,y) = \begin{cases}
d_0(x,y) &\text{for $d_*(x,y) \leq 1$,}\\
d_\infty(x,y) &\text{for $d_*(x,y) \geq 1$.}
\end{cases}\]
We call $d_*$ the \emph{control distance} associated to the fixed reduced basis.\footnote{Notice that the definition of the control distance by ter Elst and Robinson in \S6 of \cite{ter_elst_weighted_1998} (see also \cite{ter_elst_weighted_1994}) is different from the one given here, and coincides with our distance $d_0$. Their definition has the advantage that, in the case of a homogeneous group with an adapted basis, the modulus $|\cdot|_0$ induced by $d_0$ is a homogeneous norm; on the other hand, this shows that in general $|\cdot|_0$ is not a connected modulus (a homogeneous norm on a homogeneous group is a connected modulus if and only if the group is stratified, see Proposition~\ref{prp:quasiequivalence}). Nevertheless, in the whole papers \cite{auscher_positive_1994}, \cite{ter_elst_weighted_1994}, \cite{ter_elst_weighted_1998} it is understood that $|\cdot|_0$ is a connected modulus.

By a careful examination of the proofs, one sees that the specific properties of $d_0$ are used only for small distances, whereas in the large only connectedness is used. Therefore, our modified definition of the control distance $d$ fixes the problem (as it has been confirmed to us by ter Elst in a private communication). As a side-effect, since $d_* \geq d_0$ everywhere, the heat kernel estimates obtained with this modification (see Theorem~\ref{thm:robinsonterelst}(e)) are stronger than the ones claimed by ter Elst and Robinson (which are therefore true \emph{a posteriori}).}.

The control distance $d_*$ induces a \emph{control modulus}\index{control modulus} $|\cdot|_*$ on $G$, given by
\[|g|_* = d_*(e,g),\]
which is in fact a connected modulus (since $d_*$ coincides with $d_\infty$ in the large). Moreover, if $B_r$ denotes the $d_*$-ball with radius $r$ centered at the identity of $G$, then we have
\[\mu(B_r) \sim r^{Q_*} \qquad\text{for $r \leq 1$,}\]
where $Q_*$ is the homogeneous dimension of the contraction $\lie{g}_*$ (see \cite{ter_elst_weighted_1998}, Proposition~6.1). On the other hand, since $|\cdot|_*$ is a connected modulus, the growth rate of $\mu(B_r)$ for $r$ large coincides with the (intrinsic) volume growth of the group $G$; in particular, if $G$ has polynomial growth of degree $Q_G$, then
\[\mu(B_r) \sim r^{Q_G} \qquad\text{for $r \geq 1$.}\]

\subsection{Weighted subcoercive forms and operators}

Let $G$ be a connected Lie group, and fix a reduced basis $A_1,\dots,A_d$ of its Lie algebra $\lie{g}$, with weights $w_1,\dots,w_d$. In this context, a \emph{form} is an element of the free (non-commutative associative unital) algebra over $\C$ on $d$ indeterminates $X_1,\dots,X_d$; in other words, a form is a function $C : J(d) \to \C$ null off a finite subset of $J(d)$, which can be thought of as the non-commutative polynomial
\[\sum_{\alpha \in J(d)} C(\alpha) X^\alpha.\]
The \emph{degree} of the form $C$ is the number
\[\max \{\|\alpha\| \tc \alpha \in J(d),\,C(\alpha) \neq 0\}.\]
If $C$ is a form of degree $m$, its \emph{principal part} is the form $P : J(d) \to \C$ which is given by the sum of the terms of $C$ of degree $m$:
\[P(\alpha) = \begin{cases}
C(\alpha) &\text{if $\|\alpha\| = m$,}\\
0 &\text{otherwise.}
\end{cases}\]
A form is said to be \emph{homogeneous} if it equals its principal part. The \emph{adjoint} of a form $C$ is the form $C^+$ defined by
\[C^+(\alpha) = (-1)^{|\alpha|} \overline{C(\alpha_*)},\]
where $\alpha_* = (\alpha_k,\dots,\alpha_1)$ if $\alpha = (\alpha_1,\dots,\alpha_k)$.

To each form $C$, we associate the differential operator $d\RA_G(C) \in \Diff(G)$ defined by
\[d\RA_G(C) = \sum_{\alpha \in J(d)} C(\alpha) A^\alpha.\]
Notice that, by definition, we have clearly
\[d\RA_G(C^+) = d\RA_G(C)^+.\]
More generally, if $\pi$ is a representation of $G$, we define
\[d\pi(C) = d\pi(d\RA_G(C)) = \sum_{\alpha \in J(d)} C(\alpha) d\pi(A)^\alpha.\]

If $\pi$ is a representation of $G$ on a Banach space $\VV$, we define seminorms and norms on (subspaces of) $\VV$ by
\[N_{\pi,s}(x) = \max_{\substack{\alpha \in J(d)\\ \|\alpha\| = s}} \|dU(X^\alpha) x\|_\VV,\]
\[\|x\|_{\pi,s} = \max_{\substack{\alpha \in J(d)\\ \|\alpha\| \leq s}} \|dU(X^\alpha) x\|_\VV,\]
for $s \in \R$, $s \geq 0$. If $\pi$ is the right regular representation of $G$ on $L^p(G)$, we use the alternative notation $N_{p;s}$, $\|\cdot\|_{p;s}$.

A form $C$ of degree $m$ is said to be \emph{weighted subcoercive} on $G$ if $m/w_i \in 2\N$ for $i=1,\dots,d$ and if moreover the corresponding operator satisfies a local \emph{G\aa rding inequality}: there exist $\mu > 0$, $\nu \in \R$ and an open neighborhood $V$ of the identity $e \in G$ such that
\[\Re \langle \phi, d\RA_G(C) \phi\rangle \geq \mu (N_{2;m/2}(\phi))^2 - \nu \|\phi\|_2^2\]
for all $\phi \in \D(G)$ with $\supp \phi \subseteq V$. In this case, the operator $d\RA_G(C)$ is called a \emph{weighted subcoercive operator}\index{differential operator!weighted subcoercive}.

Let $\lie{g}_*$ be the homogeneous contraction of $\lie{g}$, and $G_*$ be the connected, simply connected Lie group corresponding to $\lie{g}_*$. Since $A_1,\dots,A_d$ induces a reduced weighted algebraic basis $\bar A_1,\dots,\bar A_d$ on $\lie{g}_*$ (with the same weights), we can associate to a form $C$ both a differential operator $d\RA_G(C)$ on $G$ and a differential operator $d\RA_{G_*}(C)$ on $G_*$, which is a homogeneous group: in some sense, $d\RA_{G_*}(C)$ is the ``local counterpart'' of the operator $d\RA_G(C)$. The next theorem, which is the fundamental result of ter Elst and Robinson \cite{ter_elst_weighted_1998}, clarifies the relationship between the two operators, and substantiates the ideas previously described.

\begin{thm}\label{thm:robinsonterelst}
Let $C$ be a form of degree $m$, whose principal part is $P$, such that $m/w_i \in 2\N$ for $i=1,\dots,d$. The following are equivalent:
\begin{itemize}
\item[(i)] $C$ is a weighted subcoercive form on $G$;
\item[(ii)] $P$ is a weighted subcoercive form on $G$;
\item[(iii)] $C$ is a weighted subcoercive form on $G_*$;
\item[(iv)] $d\RA_{G_*}(P + P^+)$ is a positive Rockland operator on $G_*$;
\item[(v)] there are constants $\mu > 0$, $\nu \in \R$ such that, for every unitary representation $\pi$ of $G$ on a Hilbert space $\HH$,
\[\Re \langle x, d\pi(C) x \rangle \geq \mu \|x\|_{\pi,m/2}^2 - \nu \|x\|_\HH^2\]
for all $x \in \HH^\infty$;
\item[(vi)] there is a constant $\mu > 0$ such that, for every unitary representation $\pi$ of $G_*$ on a Hilbert space $\HH$,
\[\Re \langle x, d\pi(P) x \rangle \geq \mu (N_{\pi,m/2}(x))^2\]
for all $x \in \HH^\infty$.
\end{itemize}
Moreover, if these conditions are satisfied, for every representation $\pi$ of $G$ on a Banach space $\VV$, we have:
\begin{itemize}
\item[(a)] the closure of $d\pi(C)$ generates a continuous semigroup $\{S_t\}_{t \geq 0}$ on $\VV$;
\item[(b)] for $t > 0$, $S_t(\VV) \subseteq \VV^\infty$, and moreover
\[\VV^\infty = \bigcap_{n = 1}^\infty D(\overline{d\pi(C)}^n);\]
\item[(c)] if $\pi$ is unitary, then $\overline{d\pi(C)} = d\pi(C^+)^*$;
\item[(d)] there exists a representation-independent kernel $k_t \in L^{1;\infty} \cap C_0^\infty(G)$ (for $t > 0$) such that
\[d\pi(X^\alpha) S_t x = \pi(A^\alpha k_t) x = \int_G (A^\alpha k_t)(g) \pi(g^{-1})x \,dg\]
for all $\alpha \in J(d)$, $t > 0$, $x \in \VV$;
\item[(e)] the kernel satisfies the following ``Gaussian'' estimates: for all $\alpha \in J(d)$ there exist $b,c,\omega > 0$ such that
\[|A^\alpha k_t(g)| \leq c t^{-\frac{Q_* + \|\alpha\|}{m}} e^{\omega t} e^{-b \left(\frac{|g|_*^m}{t}\right)^{1/(m-1)}}\]
for all $t >0$ and $g \in G$, where $Q_*$ is the homogeneous dimension of $\lie{g}_*$ and $|\cdot|_*$ is the control modulus;
\item[(f)] for all $\rho \geq 0$, the map $t \mapsto k_t$ is continuous $\left]0,\infty\right[ \to L^{1;\infty}(G, e^{\rho |x|_*} \,dx)$ and, for all $\alpha \in J(d)$, there exist $c,\omega > 0$ such that
\[\|A^\alpha k_t\|_{L^1(G, e^{\rho |x|_*} \,dx)} \leq c t^{-\frac{\|\alpha\|}{m}} e^{\omega t};\]
\item[(g)] the function
\[k(t,x) = \begin{cases}
0 &\text{for $t \leq 0$,}\\
k_t(x) &\text{for $t > 0$,}
\end{cases}\]
on $\R \times G$ satisfies
\[\left( \frac{\partial}{\partial t} + d\RA_G(C) \right) k = \delta\]
in the sense of distributions, where $\delta$ is the Dirac delta at the identity of $\R \times G$.
\end{itemize}
\end{thm}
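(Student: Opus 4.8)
The plan is to regard this statement as essentially that of ter Elst and Robinson \cite{ter_elst_weighted_1998}, and to reconstruct the architecture of their proof while recording the minor adjustments forced by our use of the control distance $d_*$ (rather than their $d_0$) in part~(e). The whole result rests on two pillars: the equivalence of the six conditions (i)--(vi), which reduces weighted subcoercivity on $G$ to the assertion that the symmetrised principal part $P+P^+$ yields a positive Rockland operator on the homogeneous contraction $G_*$; and the heat-kernel theory of positive Rockland operators on homogeneous groups.

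\emph{The equivalences.} After the algebraic normalisation of \cite{miller_parametrices_1980}, \cite{ter_elst_spectral_1997} — which lets one assume $G_*$ graded — the block (ii)$\Leftrightarrow$(iv)$\Leftrightarrow$(vi) is Rockland theory itself: by the Helffer--Nourrigat characterisation \cite{helffer_caracterisation_1979} a homogeneous positive operator on a graded group is hypoelliptic iff Rockland, and by \cite{auscher_positive_1994}, \cite{folland_hardy_1982} such an operator generates a semigroup obeying Gaussian bounds and the coercivity (vi). The remaining block (i)$\Leftrightarrow$(ii)$\Leftrightarrow$(iii) — that neither the lower-order terms of $C$ nor passing between $G$ and $G_*$ affects weighted subcoercivity — is the technical core of \cite{ter_elst_weighted_1998}: one realises $\lie{g}_*$ as a limit of dilated copies of $\lie{g}$ and transports the local G\aa rding inequality through this limit. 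Finally (v) and (vi) follow from the preceding by expanding $d\pi(C)$ through the enveloping-algebra identification of \S\ref{subsection:noncommutativemultiindex} and invoking density of the G\aa rding space. This contraction-limit step is the genuine obstacle I expect: it is not formal, and it is precisely where the deep hypoellipticity theory enters.

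\emph{Semigroups and kernels, (a)--(d).} Once the coercivity (v) (resp.\ (vi)) is known in every unitary representation, resolvent estimates of Lumer--Phillips type give that $\overline{d\pi(C)}$ generates a continuous (in fact holomorphic) semigroup $\{S_t\}_{t \ge 0}$, which is (a) for unitary $\pi$ and, after the kernel has been extracted, for arbitrary Banach $\VV$ as well; (b) follows because $d\RA_G(C)$ controls all monomials $A^\alpha$ with $\|\alpha\|\le m/2$, so iterating the domain identification yields $\VV^\infty=\bigcap_n D(\overline{d\pi(C)}^{\,n})$; and (c), i.e.\ $\overline{d\pi(C)} = d\pi(C^+)^*$ for unitary $\pi$, is immediate from \eqref{eq:adjointrepresentation} applied to $C$ and $C^+$ together with the semigroup generation for both forms. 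For (d) the key point is representation-independence: taking $\pi$ the right regular representation on $L^2(G)$, $S_t$ is right convolution by some $k_t$; the smoothing $S_t(L^2)\subseteq L^{2;\infty}$ together with its $L^1$ analogue gives $k_t\in L^{1;\infty}\cap C_0^\infty(G)$; then an approximation argument through Proposition~\ref{prp:approximateidentity} shows that in an arbitrary $\pi$ the closure of $d\pi(C)$ generates $\pi(k_t)$, so the kernel is universal, and $d\pi(X^\alpha)S_t = \pi(A^\alpha k_t)$ follows by differentiating the convolution identity.

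\emph{Estimates, (e)--(g).} On $G_*$ the Rockland heat kernel satisfies a Gaussian bound in a homogeneous norm, i.e.\ in $d_0$; transporting it to $G$ via the rescaling of (i)$\Leftrightarrow$(iii), together with a finite-speed-of-propagation (Davies--Gaffney) argument, yields the on-diagonal factor $t^{-(Q_*+\|\alpha\|)/m}$ — coming from $\mu(B_r)\sim r^{Q_*}$ for $r\le1$, cf.\ \S\ref{subsection:controldistance} — and off-diagonal decay in $d_0$. Since, as noted in \S\ref{subsection:controldistance}, $d_0$ and $d_*$ differ only in the large, where the arguments use nothing beyond connectedness, the same proofs deliver (e) with the control modulus $|\cdot|_*$, a strengthening because $|\cdot|_*\ge|\cdot|_0$ pointwise. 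Integrating (e) against $e^{\rho|x|_*}\,dx$ and splitting at $|x|_*=1$ — using $\mu(B_r)\sim r^{Q_*}$ for $r\le1$, $\mu(B_r)\sim r^{Q_G}$ for $r\ge1$, and the fact that the superexponential factor $e^{-b(|x|_*^m/t)^{1/(m-1)}}$ dominates $e^{\rho|x|_*}$ — gives the weighted bound in (f), with continuity of $t\mapsto k_t$ into $L^{1;\infty}(G,e^{\rho|x|_*}\,dx)$ following by dominated convergence with the same majorant. Lastly (g) merely restates the semigroup equation $\partial_t k_t = -d\RA_G(C)k_t$ for $t>0$, together with the fact that $k_t$ converges to the Dirac mass at the identity of $G$ as $t\to0^+$ (Proposition~\ref{prp:approximateidentity} applied to $(k_t)$), which is exactly $(\partial_t + d\RA_G(C))k = \delta$ in $\D'(\R\times G)$.
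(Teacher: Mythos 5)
Your overall strategy coincides with the paper's: the equivalences (i)--(vi) and parts (a)--(e), (g) are taken wholesale from ter Elst and Robinson (Theorems 1.1, 7.2, 8.2, 9.2, 10.1 and Corollary 8.3 of \cite{ter_elst_weighted_1998}), and the only piece requiring new work is upgrading their statement of (f) --- continuity of $t \mapsto k_t$ into $L^1(G, e^{\rho|x|_*}\,dx)$ and the unweighted bound --- to continuity into $L^{1;\infty}(G, e^{\rho|x|_*}\,dx)$ with the weighted $L^1$ estimate. Your derivation of the weighted bound in (f) by integrating the Gaussian estimate (e), using the local volume growth $\mu(B_r) \sim r^{Q_*}$ and the fact that the super-exponential factor absorbs $e^{\rho|x|_*}$ at the cost of an $e^{\omega t}$, is exactly what the paper does (the paper splits the radial integral at $r = (2\sigma/b)^{m-1}t$ for $t \geq 1$ rather than at $|x|_* = 1$, but this is cosmetic).

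The one step that does not go through as you state it is the continuity assertion in (f). Dominated convergence requires pointwise convergence $A^\alpha k_t(x) \to A^\alpha k_{t_0}(x)$ for a.e.\ $x$ as $t \to t_0$, i.e.\ some joint regularity of $k$ in $(t,x)$; but in this paper that regularity (smoothness of $k$ off the identity of $\R \times G$) is only established \emph{afterwards}, in Corollary~\ref{cor:hypoelliptic}, and its proof there uses part (f). So your argument is either circular or rests on an unproved pointwise continuity. The paper avoids this by using the semigroup identity $A^\alpha k_{t+s} = k_t * (A^\alpha k_s)$: since $A^\alpha k_s$ lies in $L^1(G, e^{\rho|x|_*}\,dx)$ by (e), and $t \mapsto k_t$ is already known (from \cite{ter_elst_weighted_1998}) to be continuous into $L^1(G, e^{\rho|x|_*}\,dx)$, Young's inequality for the submultiplicative weight $e^{\rho|x|_*}$ gives continuity of $t \mapsto A^\alpha k_t$ into the weighted $L^1$ space without any pointwise information. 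You should replace the dominated-convergence step by this convolution argument.
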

\begin{proof}
This theorem is a summary of results of \cite{ter_elst_weighted_1998}.

More specifically: the equivalence of (i), (ii), (iii) and (iv) follows from Theorem~10.1 of \cite{ter_elst_weighted_1998}; (v) implies (i) by choosing as $\pi$ the regular representation on $L^2(G)$; vice versa, (iii) implies (v) by Theorem~9.2 of \cite{ter_elst_weighted_1998}; (vi) corresponds to (v) with $C = P$, $\nu = 0$, $G = G_*$, so that (vi) implies that $P$ is weighted subcoercive on $G_*$, but then, by the previous equivalences, we get (i)-(iv); conversely, (vi) can be obtained from (v) with $C = P$, $G = G_*$ by the use of homogeneity and dilations, but in fact it is obtained directly from (iii) in the proof of Lemma~5.1 of \cite{ter_elst_weighted_1998}.

The remaining statements are contained in Theorems~1.1, 7.2, 8.2 and Corollary 8.3 of \cite{ter_elst_weighted_1998}, except for (f), since in Theorem~7.2 of \cite{ter_elst_weighted_1998} it is only stated that the map $t \mapsto k_t$ is continuous $\left]0,\infty\right[ \to L^1(G, e^{\rho |x|_*} \,dx)$. However, by integration of the Gaussian estimates (e), we for all $\alpha \in J(d)$ we obtain
\[\begin{split}
\|A^\alpha k_t\|_{L^1(G, e^{\rho |x|_*} \,dx)} &\leq c t^{-\frac{Q_* + \|\alpha\|}{m}} e^{\omega t} \int_0^\infty e^{-b \left(\frac{r^m}{t}\right)^{1/(m-1)}} e^{\sigma r} r^{Q_*-1} \,dr \\
&= c t^{-\frac{\|\alpha\|}{m}} e^{\omega t} \int_0^\infty e^{-br^{m/(m-1)}} e^{\sigma t^{1/m} r} r^{Q_*-1} \,dr
\end{split}\]
for some $b,c,\omega,\sigma > 0$; for $t \leq 1$, the last integral is less than
\[\int_0^\infty e^{-br^{m/(m-1)}} e^{\sigma r} r^{Q_*-1} \,dr,\]
which is finite, whereas, for $t \geq 1$, the integral is less than
\begin{multline*}
\int_0^\infty e^{-b \left(\frac{r^m}{t}\right)^{1/(m-1)}} e^{\sigma r} r^{Q_*-1} \,dr = \int_0^{(2\sigma/b)^{m-1} t} + \int_{(2\sigma/b)^{m-1} t}^\infty \\
\leq \int_0^{(2\sigma/b)^{m-1} t} e^{\sigma r} r^{Q_*-1} \,dr + \int_0^\infty e^{-\sigma r} r^{Q_*-1} \,dr \leq c' e^{\sigma' t},
\end{multline*}
for some $c',\sigma' > 0$, and putting all together we obtain the required estimates. Moreover, by the semigroup property, we have
\begin{equation}\label{eq:semigroup}
A^{\alpha} (k_{t+s}) = k_t * (A^\alpha k_s)
\end{equation}
and, since $A^\alpha k_s \in L^1(G, e^{\rho |x|_*} \,dx)$ by the Gaussian estimates (e), the required continuity follows from the properties of convolution.
\end{proof}

\begin{cor}\label{cor:hypoelliptic}
With the notation of the previous theorem, if $C$ is a weighted subcoercive form on $G$, then the function $k(t,x) = k_t(x)$ is smooth off the identity of $\R \times G$ and the operator $d\RA_G(C)$ is hypoelliptic.
\end{cor}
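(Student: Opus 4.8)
The plan is to derive both assertions from Theorem~\ref{thm:robinsonterelst}: first establish the smoothness of $k$ off the identity of $\R \times G$ using the Gaussian bounds (e) together with the evolution equation (g), and then obtain hypoellipticity of $d\RA_G(C)$ by the standard parametrix argument for left-invariant operators on $\R \times G$, followed by a slicing trick.

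\emph{Smoothness of $k$.} On $\{t \le 0\}$ we have $k \equiv 0$, while on $\{t > 0\}$ part (d) gives $k_t \in C_0^\infty(G)$. Away from the Dirac mass, (g) reads $\partial_t k_t = -d\RA_G(C) k_t$, and since $\partial_t$ commutes with each $A^\alpha$, an easy induction shows that for $t > 0$ every derivative $\partial_t^j A^\beta k$ is a finite linear combination of the functions $A^\gamma k_t$ with $\gamma \in J(d)$ and $\|\gamma\| \le \|\beta\| + jm$; by (d) and (f) these are jointly continuous, so $k$ is smooth on $\{t > 0\} \times G$. It remains to glue across $\{t=0\}$ away from $e$. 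Fix a compact $K \subseteq G \setminus\{e\}$, so that $|g|_* \ge \varepsilon$ for some $\varepsilon > 0$ and all $g \in K$. Feeding the above reduction into the Gaussian estimates (e) (and taking the minimum of the finitely many constants $b$ and the maximum of the finitely many constants $\omega$ involved), for each $j,\beta$ there are $C, N, b, \omega > 0$ such that
\[
\bigl| \partial_t^j A^\beta k(t,x) \bigr| \le C\, t^{-N} e^{\omega t}\, e^{-b (\varepsilon^m/t)^{1/(m-1)}} \xrightarrow[t\to 0^+]{} 0
\qquad\text{for } x \in K,
\]
because $m \ge 2$ and the exponential decay beats the polynomial blow-up. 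Hence, in local coordinates around any point of $\{0\}\times K$, all derivatives of $k|_{\{t>0\}}$ extend continuously by $0$; combined with $k \equiv 0$ on $\{t \le 0\}$, a standard gluing argument gives $k \in C^\infty(\R \times K^\circ)$. Since $K$ is arbitrary, $k$ is smooth off $(0,e)$, which is the first assertion.

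\emph{Hypoellipticity.} By (g), $k$ is a fundamental solution, smooth off the identity, of the left-invariant differential operator $\partial_t + d\RA_G(C)$ on the Lie group $\R\times G$; by the classical parametrix argument --- localize a putative solution with a cutoff $\chi$ equal to $1$ near the point in question, so that $(\partial_t + d\RA_G(C))(\chi u)$ is a compactly supported smooth function plus a compactly supported distribution with singular support away from that point, then convolve with $k$ and use the inclusions $\D * \D' \subseteq \E$, $\E' * \D' \subseteq \D'$ and the support relations for convolutions recalled above --- the operator $\partial_t + d\RA_G(C)$ is hypoelliptic on $\R \times G$. Finally, if $d\RA_G(C) u = f$ on an open $\Omega \subseteq G$ with $f \in \E(\Omega)$, then the distribution $u \otimes 1$ on $\R\times\Omega$ satisfies $(\partial_t + d\RA_G(C))(u \otimes 1) = f \otimes 1 \in \E(\R\times\Omega)$, so $u \otimes 1 \in \E(\R\times\Omega)$ and therefore $u = (u\otimes 1)|_{\{0\}\times\Omega} \in \E(\Omega)$; thus $d\RA_G(C)$ is hypoelliptic.

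\emph{Main obstacle.} The only genuine work lies in the first step: one must check that not merely the spatial derivatives $A^\alpha k_t$ but also the time derivatives of $k$ tend to $0$ faster than any power of $t$ as $t \to 0^+$, uniformly away from the identity --- this is precisely where one trades $\partial_t$ for the $A^\alpha$ via (g) and then invokes the Gaussian bounds (e). Once $k$ is known to be smooth off $(0,e)$, both the parametrix argument and the slicing are entirely standard.
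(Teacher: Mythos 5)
Your argument for the smoothness of $k$ off the identity is essentially the paper's: trade $\partial_t$ for spatial derivatives via the evolution equation (g), observe that all the resulting distributional derivatives are (jointly) continuous functions by (d) and (f) — the paper makes the joint continuity precise via Sobolev embedding, turning continuity of $t \mapsto k_t$ into $L^{1;\infty}(G)$ into continuity into $\E(G)$ — and then glue across $\{t=0\}$ away from $e$ using the Gaussian bounds (e). That half is fine.

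The hypoellipticity step, however, has a genuine gap. On a noncommutative group, the fundamental solution relation $(\partial_t + d\RA_G(C))k = \delta$ together with left-invariance only gives $P(w*k) = w*(Pk) = w$, i.e.\ right-convolution by $k$ is a \emph{right} inverse of $P = \partial_t + d\RA_G(C)$. This is what you need for local solvability, not for hypoellipticity. Your parametrix argument requires the \emph{left}-inverse identity $(Pw)*k = w$ (modulo smoothing) for $w = \chi u \in \E'(\R\times G)$, and this is not the same statement: from $P(\chi u) = \chi f + v$ you cannot conclude $\chi u = (\chi f + v)*k$, only that the two sides differ by a solution of $Ph = 0$ — which is exactly what you do not yet know to be smooth. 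Unwinding the duality $\langle f*g, h\rangle = \langle f, h*g^*\rangle$, the identity $(Pw)*k = w$ is equivalent to $k^*$ being a fundamental solution of the formal adjoint $-\partial_t + d\RA_G(C^+)$, and \emph{that} is an additional input: it follows from Theorem~\ref{thm:robinsonterelst}(c), which identifies $k_t^*$ as the heat kernel of the weighted subcoercive operator $d\RA_G(C^+)$. This is precisely why the paper constructs the reflected kernel $\tilde k(t,x) = k_{-t}^*(x)$, checks that it satisfies $(-\partial_t + d\RA_G(C^+))\tilde k = \delta$ and is smooth off the identity (by applying the first part of the corollary to the form $C^+$), and only then invokes the two-sided parametrix argument of Treves, Theorem~52.1. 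Your proof needs this adjoint fundamental solution made explicit; once it is, the cutoff-and-convolve mechanism and the final slicing $u \mapsto 1\otimes u$ go through as you describe.
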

\begin{proof}
From Theorem~\ref{thm:robinsonterelst}(g) we deduce that, for every $r \in \N \setminus \{0\}$, the distribution
\[(\partial_t^r - (- d\RA_G(C))^r) k\]
is supported in the origin of $\R \times G$. In particular, if $\phi \in \D(\left]0,+\infty\right[)$ and $\psi \in \D(G)$, we get
\[\langle (\partial_t^r - (- d\RA_G(C))^r) k, \phi \otimes \psi \rangle = 0,\]
which can be rewritten as
\[(-1)^r \int_0^\infty \langle k_t, \psi \rangle \, \overline{\phi^{(r)}(t)} \,dt = \int_0^\infty \langle (-d\RA_G(C))^r k_t, \psi \rangle \, \overline{\phi(t)} \,dt.\]
Since both $t \mapsto k_t$ and $t \mapsto (-d\RA_G(C))^r k_t$ are continuous $\left]0,+\infty\right[ \to L^1(G)$ by Theorem~\ref{thm:robinsonterelst}(f), this identity holds also for all $\psi \in C_0(G)$. In other words, for all $\psi \in C_0(G)$, the $r$-th distributional derivative of the function
\[t \mapsto \langle k_t, \psi \rangle\]
on $\left]0,+\infty\right[$ is the map
\[t \mapsto \langle (-d\RA_G(C))^r k_t, \psi \rangle;\]
since all these derivatives are continuous, the function $t \mapsto \langle k_t, \psi \rangle$ is smooth on $\left]0,+\infty\right[$, so that also the map $t \mapsto k_t$ is smooth $\left]0,+\infty\right[ \to L^1(G)$. But then from \eqref{eq:semigroup} it follows easily that, for all $\alpha \in J(d)$, the map $t \mapsto A^\alpha k_t$ is smooth $\left]0,+\infty\right[ \to L^1(G)$, i.e., that $t \mapsto k_t$ is smooth $\left]0,+\infty\right[ \to L^{1;\infty}(G)$. By Sobolev's embedding, we then get that $t \mapsto k_t$ is smooth $\left]0,+\infty\right[ \to \E(G)$; this gives that $k$ is smooth on $\left]0,+\infty\right[ \times G$, and the Gaussian estimates of Theorem~\ref{thm:robinsonterelst}(e) show that $k$ can be extended smoothly by zero to the whole $\R \times G \setminus \{(0,e)\}$.

Notice that $k_t^*$ is the kernel of $d\RA_G(C^+)$, which is also a weighted subcoercive operator. If we put
\[\tilde k(t,x) = \begin{cases}
0 &\text{if $t \geq 0$},\\
k_{-t}^* &\text{if $t \leq 0$,}
\end{cases}\]
we then have that $\tilde k$ is smooth on $\R \times G \setminus \{(0,e)\}$ and satisfies
\[(-\partial_t + d\RA_G(C^+)) \tilde k = \delta\]
in the sense of distributions. By arguing analogously as in the proof of Theorem~52.1 of \cite{treves_topological_1967}, we obtain that $\partial_t + d\RA_G(C)$ is hypoelliptic on $\R \times G$, and the hypoellipticity of $d\RA_G(C)$ on $G$ follows immediately.
\end{proof}

\begin{cor}\label{cor:kernelapproximateidentity}
With the notation of Theorem~\ref{thm:robinsonterelst}, if $C$ is a weighted subcoercive form on $G$, then, for every $D \in \Diff(G)$, $\beta \geq 0$ and every neighborhood $U$ of the identity $e \in G$,
\begin{equation}\label{eq:kerneloutofaneighborhood}
\lim_{t \to 0^+} t^{-\beta} \int_{G \setminus U} |Dk_t(x)| \,dx = 0.
\end{equation}
Moreover, $(k_t)_{t > 0}$ is an approximate identity on $G$ for $t \to 0^+$.
\end{cor}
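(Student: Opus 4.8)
The plan is to read off the pointwise bound \eqref{eq:kerneloutofaneighborhood} directly from the Gaussian estimates of Theorem~\ref{thm:robinsonterelst}(e), and then to verify the three defining properties of an approximate identity listed in Proposition~\ref{prp:approximateidentity}.

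To prove \eqref{eq:kerneloutofaneighborhood}, I would first expand $D = \sum_{\alpha} c_\alpha A^\alpha$ as a finite linear combination --- possible because $A_1,\dots,A_d$ generate $\lie{g}$, so the monomials $A^\alpha$ span $\Diff(G) = \UEnA(\lie{g}_\C)$ --- reducing everything to a bound on each $|A^\alpha k_t|$, with the constants $b,\omega$ and an exponent $N \geq \|\alpha\|$ taken uniformly over the finitely many $\alpha$ involved (possible since Theorem~\ref{thm:robinsonterelst}(e) supplies constants for each fixed $\alpha$). Since $|\cdot|_*$ is a connected modulus, hence continuous with $|e|_* = 0$, there is $\delta \in \left]0,1\right]$ with $\{x \tc |x|_* < \delta\} \subseteq U$, so that $G \setminus U \subseteq \{x \tc |x|_* \geq \delta\}$. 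On that set, for $t \leq 1$, I would rewrite
\[\left( \frac{|g|_*^m}{t} \right)^{1/(m-1)} = |g|_*^{m/(m-1)} \, t^{-1/(m-1)}\]
and split this exponent into two equal halves, estimating one half below by $\frac{1}{2}\delta^{m/(m-1)} t^{-1/(m-1)}$ (using $|g|_* \geq \delta$) and the other by $\frac{1}{2}|g|_*^{m/(m-1)}$ (using $t^{-1/(m-1)} \geq 1$). Integrating the resulting estimate over $G \setminus U$, and using that $\int_G e^{-(b/2)|x|_*^{m/(m-1)}} \, dx < \infty$ --- the $d_*$-balls have at most exponential volume growth while $m/(m-1) > 1$ because $m \geq 2$, which is exactly the integration already carried out in the proof of Theorem~\ref{thm:robinsonterelst}(f) --- one arrives at a bound of the form
\[t^{-\beta} \int_{G \setminus U} |D k_t(x)| \, dx \leq C \, t^{-\beta - (Q_* + N)/m} \, e^{\omega t} \, e^{-(b/2) \delta^{m/(m-1)} t^{-1/(m-1)}}\]
valid for $t \leq 1$. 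The exponential factor decays faster than any power of $t$ as $t \to 0^+$, so the right-hand side tends to $0$, proving \eqref{eq:kerneloutofaneighborhood}.

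For the approximate identity statement I would check the three conditions of Proposition~\ref{prp:approximateidentity} with $c = 1$, the index set being $\left]0,\infty\right[$ directed so that $t \to 0^+$. The bound $\|k_t\|_1 \leq c \, e^{\omega t}$ from Theorem~\ref{thm:robinsonterelst}(f) (empty $\alpha$, $\rho = 0$) gives $\limsup_{t \to 0^+} \|k_t\|_1 < \infty$; the second condition is \eqref{eq:kerneloutofaneighborhood} with $D = \id$ and $\beta = 0$. The one point that does not follow from the heat-kernel estimates --- and so the step to be careful about --- is the mass condition $\int_G k_t \to 1$. For this I would feed the trivial one-dimensional unitary representation $\pi_0$ of $G$ (covered by the theorem, with $d\pi_0 \equiv 0$ on $\lie{g}$) into the identity of Theorem~\ref{thm:robinsonterelst}(d) with $\alpha$ the empty sequence: this gives $\int_G k_t(g) \, dg = \pi_0(k_t) = S_t$, where $\{S_t\}_{t \geq 0}$ is the (one-dimensional) semigroup attached to $\pi_0$ by Theorem~\ref{thm:robinsonterelst}(a); since $S_0 = \id$ and this semigroup is continuous at $t = 0$, we conclude $\int_G k_t(g) \, dg \to 1$ as $t \to 0^+$, which completes the verification.
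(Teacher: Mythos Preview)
Your proof is correct and follows essentially the same route as the paper's: both arguments read off the decay \eqref{eq:kerneloutofaneighborhood} from the Gaussian estimate (e) by splitting the exponent $b(|g|_*^m/t)^{1/(m-1)}$ into a part that beats any power of $t$ and a part that remains integrable over $G$, and both obtain the mass condition by plugging the trivial representation into (d). The only cosmetic differences are that the paper passes to radial coordinates before splitting and then invokes dominated convergence, and that it computes the mass explicitly as $e^{-tc}$ with $c = d\pi(C)1$ rather than appealing to continuity of the one-dimensional semigroup at $t=0$.
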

\begin{proof}
If $R > 0$ is such that
\[\{x \in G \tc |x|_* < R \} \subseteq U,\]
then, by Theorem~\ref{thm:robinsonterelst}(e), for $t \leq 1$ we have
\[t^{-\beta} \int_{G \setminus U} |Dk_t(x)| \,dx \leq c t^{-\gamma} \int_R^{+\infty} e^{-b(r^m/t)^{1/(m-1)}} e^{\sigma r} \,dr\]
for some $c,b,\sigma,\gamma > 0$. On the other hand, for $t \leq 1$ and $r \geq R$,
\[t^{-\gamma} e^{-b(r^m/t)^{1/(m-1)}} e^{\sigma r} \leq e^{-b(r^{\frac{m}{m-1}} - R^{\frac{m}{m-1}}) + \sigma r} e^{- \gamma \log t -b R^{\frac{m}{m-1}} t^{-\frac{1}{m-1}} },\]
where the first factor on the right-hand side is integrable on $\left]R,+\infty\right[$ and does not depend on $t$, whereas the second factor is infinitesimal for $t \to 0^+$ and does not depend on $r$; the limit \eqref{eq:kerneloutofaneighborhood} then follows by dominated convergence.

In particular, we have
\[\lim_{t \to 0^+} \int_{G \setminus U} |k_t(x)| \,dx = 0,\]
and moreover, by Theorem~\ref{thm:robinsonterelst}(f), the norms $\|k_t\|_1$ are uniformly bounded for $t$ small. Finally, if $\pi$ is the trivial representation of $G$ on $\C$ and if
\[c = d\pi(C) 1,\]
then by Theorem~\ref{thm:robinsonterelst}(d) we have
\[\int_G h_t(x) \,dx = \pi(h_t) 1 = e^{-tc},\]
which tends to $1$ as $t \to 0^+$.
\end{proof}

\subsection{Examples}

Theorem~\ref{thm:robinsonterelst}, which contains the announced relationship between Rockland operators and weighted subcoercive operators, however does not imply immediately that every positive Rockland operator is weighted subcoercive. Nevertheless, this is true:

\begin{prp}\label{prp:rocklandwsub}
Suppose that $G$ is a homogeneous Lie group. Then there exists an adapted basis $A_1,\dots,A_d$ of $\lie{g}$ such that
\[\Span\{A_1,\dots,A_d\} \cap [\lie{g},\lie{g}] = 0.\]
With respect to such a basis, for every positive Rockland operator $L$ of degree $m$, there exists a homogeneous form of degree $m$ such that $d\RA_G(P) = L$; moreover, such a form is weighted subcoercive.
\end{prp}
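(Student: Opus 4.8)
The proof splits naturally into constructing the basis and then verifying weighted subcoercivity of the associated form, the latter reducing, via Theorem~\ref{thm:robinsonterelst}, to a statement about the contraction of $\lie{g}$ --- which for an adapted basis is canonically $\lie{g}$ itself. For the basis, the plan is to start from the commutator ideal. Since $[\lie{g},\lie{g}]$ is characteristic, it is $\delta_t$-invariant, hence homogeneous with respect to the decomposition \eqref{eq:gradation}: $[\lie{g},\lie{g}] = \bigoplus_\lambda ([\lie{g},\lie{g}]\cap W_\lambda)$. Fix a homogeneous complement $V = \bigoplus_\lambda V_\lambda$, so $W_\lambda = V_\lambda \oplus ([\lie{g},\lie{g}]\cap W_\lambda)$, and let $A_1,\dots,A_d$ be a basis of $V$ obtained by joining bases of the $V_\lambda$, with weight $w_i = \lambda$ when $A_i \in V_\lambda$ (all $w_i \geq 1$ by the standing normalization). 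The point is that $V$ maps onto $\lie{g}/[\lie{g},\lie{g}]$, and in a nilpotent Lie algebra any subspace complementary to $[\lie{g},\lie{g}]$ generates the whole algebra, by the usual induction on the descending central series: if $\lie{h}$ denotes the generated subalgebra, then $\lie{h} + \lie{g}_{[j]} = \lie{g}$ implies $\lie{h} + \lie{g}_{[j+1]} = \lie{g}$. Hence $A_1,\dots,A_d$ is an algebraic basis; being homogeneous with weights equal to the respective homogeneity degrees, it is an adapted (reduced) basis, and $\Span\{A_1,\dots,A_d\} = V$ meets $[\lie{g},\lie{g}]$ in $\{0\}$. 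Moreover $V \to \lie{g}/[\lie{g},\lie{g}]$ is an isomorphism, so the images $\bar A_1,\dots,\bar A_d$ form a basis of the abelianization, homogeneous for the induced dilations and with the same weights $w_1,\dots,w_d$; this is used below.

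Now fix a positive Rockland operator $L$ of degree $m$. First I would produce a homogeneous form $P$ of degree $m$ with $d\RA_G(P) = L$. The image of $d\RA_G$ in $\Diff(G) = \UEnA(\lie{g}_\C)$ is the associative subalgebra generated by $A_1,\dots,A_d$; since these Lie-generate $\lie{g}$, this subalgebra is all of $\UEnA(\lie{g}_\C)$, so $L = d\RA_G(C)$ for some form $C$. Writing $C = \sum_s C^{(s)}$ for its decomposition into homogeneous parts (the part collecting the terms with $\|\alpha\| = s$), each $d\RA_G(C^{(s)})$ is $\delta_t$-homogeneous of degree $s$, because $\delta_t(A^\alpha) = t^{\|\alpha\|} A^\alpha$; comparing with the simultaneous eigenspace decomposition of $\UEnA(\lie{g}_\C)$ under the $\delta_t$ and using that $L$ is homogeneous of degree $m$, every $d\RA_G(C^{(s)})$ with $s \neq m$ must vanish, so $P := C^{(m)}$ does the job (it is nonzero, since $d\RA_G(P) = L \neq 0$, hence of form-degree exactly $m$).

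It remains to check that $P$ is weighted subcoercive, which requires $m/w_i \in 2\N$ for all $i$ together with a G\aa rding inequality. For the first condition I would restrict $L$ to the one-dimensional unitary representations of $G$: these factor through the abelianization $\lie{g}/[\lie{g},\lie{g}]$, on which $L$ acts as the homogeneous constant-coefficient operator with symbol $\sum_{\|\alpha\| = m} P(\alpha)\,\bar A^\alpha$ in the coordinates $\bar A_1,\dots,\bar A_d$ of weights $w_1,\dots,w_d$ --- this is where the hypothesis on the basis enters. The Rockland condition forces this symbol to be non-vanishing off the origin, while positivity of $L$ --- which also yields $L^+ = L$, via \eqref{eq:adjointrepresentation} and polarization --- forces the symbol to be nonnegative. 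Restricting it to the $i$-th coordinate axis annihilates every monomial except a possible pure power $\bar A_i^{\,k}$ with $k w_i = m$; non-vanishing forces such a term to occur, so $m/w_i = k \in \N$, and nonnegativity forces $k$ even. I expect this passage --- precisely the implication ``$L$ positive $\Rightarrow$ its symbol on the abelianization is nonnegative'' --- to be the main obstacle, since it needs a short high-frequency / approximate-identity argument (or an appeal to the literature on Rockland operators) rather than being purely formal.

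Finally, for the G\aa rding inequality I would invoke Theorem~\ref{thm:robinsonterelst}. Since the basis is adapted, $\lie{g}_*$ is canonically isomorphic to $\lie{g}$, and under the induced identification $\UEnA((\lie{g}_*)_\C) \cong \UEnA(\lie{g}_\C)$ the operator $d\RA_{G_*}(P)$ corresponds to $d\RA_G(P) = L$. Using $d\RA_{G_*}(P^+) = d\RA_{G_*}(P)^+$ and $L^+ = L$, we get $d\RA_{G_*}(P + P^+) = 2L$, a positive Rockland operator on $G_* \cong G$. Thus, taking $C = P$ (whose principal part is $P$) and with $m/w_i \in 2\N$ already established, condition (iv) of Theorem~\ref{thm:robinsonterelst} holds; by the implication (iv)$\,\Rightarrow\,$(i), $P$ is a weighted subcoercive form on $G$, and in particular $d\RA_G(P) = L$ is a weighted subcoercive operator, which completes the plan.
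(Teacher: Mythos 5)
Your plan is sound and, as far as I can tell, complete in all but one explicitly flagged step. Note that the paper does not actually prove this proposition: it defers entirely to \cite{ter_elst_spectral_1997} (Lemmata 2.2, 2.4 and Theorem 2.5), so what you have written is in effect a reconstruction of that cited argument rather than an alternative to anything in the text. The skeleton is exactly right: a homogeneous complement $V$ of the (characteristic, hence $\delta_t$-invariant) ideal $[\lie{g},\lie{g}]$ yields an adapted --- hence reduced --- algebraic basis, since any complement of the commutator ideal in a nilpotent Lie algebra generates it; the homogeneous form $P$ is extracted by sorting a preimage of $L$ under $d\RA_G$ into $\|\cdot\|$-homogeneous pieces and using that $\delta_t$ acts diagonally on $\UEnA(\lie{g}_\C)$; the evenness $m/w_i\in 2\N$ is read off the restriction of the abelianized symbol to the coordinate axes (this is precisely where $\Span\{A_i\}\cap[\lie{g},\lie{g}]=0$ is needed, so that the $\bar A_i$ coordinatize all characters of $G$); and weighted subcoercivity then follows from the implication (iv)$\Rightarrow$(i) of Theorem~\ref{thm:robinsonterelst}, since for an adapted basis $G_*\cong G$ canonically and $P+P^+$ realizes $2L$.

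The one point you leave open --- that positivity of $L$ on $L^2(G)$ forces the symbol $p(\xi)=d\chi_\xi(L)$ to be nonnegative --- is indeed the only non-formal step, and your proposed fix is the right one. Be aware that the tempting shortcut via amenability and weak containment is circular here, since transferring positivity through $C^*(G)$ in the form $f=g^**g$ would require the semigroup or functional calculus that Theorem~\ref{thm:robinsonterelst} is meant to deliver. The elementary route works: test against $u_R(x)=\chi_\xi(x)\,\psi(\delta_{R^{-1}}(x))$, note $A_j\chi_\xi=i\xi(\bar A_j)\chi_\xi$ while each derivative falling on the envelope contributes $O(R^{-w_j})$, and let $R\to\infty$ to get $\langle Lu_R,u_R\rangle=(p(\xi)+o(1))\|u_R\|_2^2$. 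Two cosmetic remarks: the symbol carries factors of $i$, i.e.\ $p(\xi)=\sum_{\|\alpha\|=m}P(\alpha)(i\xi)^\alpha$, which does not affect your parity argument but should be tracked when asserting that an odd pure power is incompatible with $p\geq0$; and the non-vanishing of the pure-power coefficient $P((i,\dots,i))$ is itself forced by the Rockland condition on the axis, which you correctly use but should state as such.
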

\begin{proof}
See \cite{ter_elst_spectral_1997}, Lemmata 2.2 and 2.4, and Theorem 2.5; see also \cite{ter_elst_weighted_1998}, Example 4.4.
\end{proof}

On the other hand, it is easy to check that, for every choice of an algebraic basis $A_1,\dots,A_d$ of a Lie algebra $\lie{g}$, the assignment of weights all equal to $1$ always gives a reduced basis, and that the corresponding contraction $\lie{g}_*$ is stratified. In particular, the sublaplacian\index{sublaplacian} $L = -(A_1^2 + \dots + A_d^2)$ corresponds to a homogeneous sublaplacian on the stratified contraction, which is Rockland, and therefore $L$ is weighted subcoercive. Moreover, if $A_1,\dots,A_d$ linearly generate $\lie{g}$, then the contraction $\lie{g}_*$ is Euclidean (abelian and isotropic), and it is not difficult to prove that positive elliptic operators are weighted subcoercive with respect to this contraction.

\clearemptydoublepage
\chapter{Smoothness conditions}\label{chapter:conditions}

While the previous chapter was devoted to the ``group side'' --- i.e., to the notions and results related to Lie groups and differential operators --- the present chapter focuses on the ``spectral side'' --- i.e., on the environment where the (joint) multiplier function is defined. Although one could consider the joint spectrum of a system of operators as an abstract space (namely, the Gelfand spectrum of some commutative C$^*$-algebra), generally an embedding of the spectrum into some $\R^n$ will be fixed, so that the multiplier will be thought of as a Borel function $\R^n \to \C$. Consequently, in order to impose smoothness conditions on the multiplier, we can use all the machinery of classical real and harmonic analysis, particularly the \index{transform!Fourier!on $\R^n$}Fourier transform
\[\Four f(\xi) = \int_{\R^n} f(x) \, e^{-i x \cdot \xi}  \,dx.\]

Smoothness conditions will be in fact expressed in terms of \emph{Besov spaces}. This choice is certainly not new in the context of spectral multiplier theorems (see, e.g., \cite{de_michele_mulipliers_1987} or \cite{alexopoulos_spectral_1994}). Besov spaces $B_{p,q}^s(\R^n)$ allow us to consider smoothness conditions of fractional order $s \in \R$ and of $L^p$ flavour for $1 \leq p \leq \infty$; in particular, for $p=2$, they include the classical Sobolev (or Bessel potential) spaces, whereas for $p=\infty$ they include the H\"older spaces. Moreover, they have a very good behaviour under (real and complex) interpolation, which is a particularly useful tool in lowering regularity thresholds of multiplier theorems.

In order to formulate conditions of Marcinkiewicz type, we need a way of requiring different orders of smoothness independently on the different factors of a product
\[\R^{\vec n} = \R^{n_1} \times \dots \times \R^{n_\ell}.\]
This is achieved by considering Besov spaces with \emph{dominating mixed smoothness}\footnote{For spaces with dominating mixed smoothness, we prefer the notation $S^{\vec{s}}_{p,q} B(\R^{\vec{n}})$, with an $S$ prefixed, to the notation $B_{p,q}^{\vec{s}}(\R^{\vec{n}})$, since the latter is used, in our main references, to denote a different class of spaces, the anisotropic spaces (see, e.g., \cite{triebel_theory_1983}, \S10.1, or \cite{schmeisser_topics_1987}, \S2.2.2).} $S^{\vec{s}}_{p,q} B(\R^{\vec{n}})$, in which the order of smoothness is a vector $\vec s = (s_1,\dots,s_\ell)$ of real numbers (the idea is that, if $s_1,\dots,s_\ell > 0$, the maximum order of differentiability $s_1+ \dots + s_\ell$ is reached only by mixed derivatives).

Whereas an enormous amount of literature exists about Besov spaces, their dominating-mixed-smoothness variant has not been treated to the same extent, and usually not in the generality which is required here; moreover, not every property of classical Besov spaces has a straightforward multi-parameter extension (e.g., Besov spaces of dominating mixed smoothness are less flexible with respect to interpolation). For these reasons, in the following, a sketch of the proofs of the main properties of Besov spaces will be given, so that their (possible) generalization to the dominating-mixed-smoothness case will be apparent. It is interesting to notice that some properties of Besov spaces (e.g., the lifting properties) follow from an elementary version of a Fourier multiplier theorem.

Finally, conditions of Mihlin-H\"ormander and Marcinkiewicz type (i.e., local Besov conditions which are invariant with respect to some family of dilations) will be introduced, and their change-of-variable properties and mutual implications will be studied.

First of all, however, we need to recall some definitions and results about interpolation of Banach spaces, and some elementary properties of the classical Sobolev spaces.

Notice that $\R^n$ is a nilpotent Lie group, therefore the contents of the previous chapter apply, particularly the notions about families of dilations and homogeneous norms. On the other hand, here we do not need the involved non-commutative version of the multi-index notation for translation-invariant derivatives described in \S\ref{subsection:noncommutativemultiindex}, thus we adhere to the simpler traditional commutative notation.

\section{Interpolation}

This section is simply a collection of well-known results from interpolation theory. For a more extensive presentation see, e.g., \cite{bergh_interpolation_1976}, \cite{triebel_interpolation_1978}, \cite{brudny_interpolation_1991}.

\subsection{Banach couples and interpolation functors}
A \emph{Banach couple}\index{Banach couple} is a pair $\overline{X} = (X_0,X_1)$ of Banach spaces, such that there exists a Hausdorff topological vector space $V$ in which both $X_0,X_1$ are continuously included; in this case, we can define
\[\Delta(\overline{X}) = X_0 \cap X_1, \qquad \Sigma(\overline{X}) = X_0 + X_1\]
as subspaces of $V$, which are in fact Banach spaces with norms
\[\|x\|_{\Delta(\overline{X})} = \max\{\|x\|_{X_0},\|x\|_{X_1}\}, \qquad \|x\|_{\Sigma(\overline{X})} = \inf_{x = x_0 + x_1} (\|x_0\|_{X_0} + \|x_1\|_{X_1}).\]

A morphism of Banach couples $T : \overline{X} \to \overline{Y}$ is a pair $(T_0,T_1)$ of bounded linear maps $T_j : X_j \to Y_j$, whose restrictions to $\Delta(\overline{X})$ coincide, and define then a bounded linear map $T_{\Delta} : \Delta(\overline{X}) \to \Delta(\overline{Y})$. In this case, the maps $T_0,T_1$ can be pasted together to obtain a bounded linear map $T_{\Sigma} : \Sigma(\overline{X}) \to \Sigma(\overline{Y})$.

An \emph{intermediate space} for a Banach couple $\overline{X}$ is a Banach space $X$ such that
\[\Delta(\overline{X}) \subseteq X \subseteq \Sigma(\overline{X}),\]
with continuous inclusions.

An \emph{interpolation functor}\index{interpolation!functor} is a functor $F$ from the category of Banach couples to the category of Banach spaces such that:
\begin{itemize}
\item for all Banach couples $\overline{X}$, $F(\overline{X})$ is an intermediate space for $\overline{X}$;
\item for all morphisms $T : \overline{X} \to \overline{Y}$ of Banach couples, we have
\[F(T) = T_{\Sigma}|_{F(\overline{X})}.\]
\end{itemize}
In other words, an interpolation functor $F$ is a choice of an intermediate space $F(\overline{X})$ for each Banach couple $\overline{X}$, such that, for every morphism of Banach couples $T : \overline{X} \to \overline{Y}$, the restriction $T_\Sigma|_{F(\overline{X})}$ is bounded $F(\overline{X}) \to F(\overline{Y})$.

For $\theta \in \left[0,1\right]$, an interpolation functor $F$ is said to be \emph{exact of exponent $\theta$} if, for every morphism $T : \overline{X} \to \overline{Y}$ of Banach couples,
\[\|F(T)\|_{F(\overline{X}) \to F(\overline{Y})} \leq \|T_0\|_{X_0 \to Y_0}^{1-\theta} \|T_1\|_{X_1 \to Y_1}^\theta.\]

\subsection{Retracts}
Suppose that $T : \overline{X} \to \overline{Y}$ and $S : \overline{Y} \to \overline{X}$ are morphisms of Banach couples such that $S \circ T = \id_{\overline{X}}$ (in this case, we say that $\overline{X}$ is a \emph{retract}\index{retract} of $\overline{Y}$ via the maps $T$ and $S$). Then, for every interpolation functor $F$ and every intermediate space $X$ of $\overline{X}$, we have that $F(\overline{X}) = A$ (with equivalent norms) if and only if the maps
\[T_\Sigma|_A : A \to F(\overline{Y}) \qquad\text{and}\qquad S_\Sigma|_{F(\overline{Y})} : F(\overline{Y}) \to A\]
are continuous; in other words, $F(\overline{X})$ is the (unique) retract of $F(\overline{Y})$ via (restrictions of) the maps $T$ and $S$ (cf.\ \cite{bergh_interpolation_1976}, Theorem~6.4.2, and \cite{triebel_interpolation_1978}, \S1.2.4).

\subsection{Real interpolation}
The \emph{real interpolation method}\index{interpolation!real} gives one of the most important examples of interpolation functors. Recall that, for a Banach couple $\overline{X}$, the \emph{Peetre functional} is defined by
\[K(t,x;\overline{X}) = \inf_{x = x_0 + x_1} (\|x\|_{X_0} + t \|x||_{X_1}).\]
$K(t,\cdot;\overline{X})$ is an equivalent norm on $\Sigma(\overline{X})$ for every $t > 0$. We then set
\[\|x\|_{\overline{X}_{\theta,q}} = \left( \int_0^{+\infty} (t^{-\theta} K(t,x;\overline{X}))^q \,\frac{dt}{t} \right)^{1/q}\]
for $0 < \theta < 1$, $1 \leq q < \infty$, and also
\[\|x\|_{\overline{X}_{\theta,\infty}} = \sup_{t > 0} t^{-\theta} K(t,x;\overline{X})\]
for $0 \leq \theta \leq 1$, $q = \infty$. By defining
\[\overline{X}_{\theta,q} = \{ x \in \Sigma(\overline{X}) \tc \|x\|_{\overline{X}_{\theta,q}} < \infty\},\]
we get that $\overline{X}_{\theta,q}$ is an intermediate space for $\overline{X}$, and moreover the correspondence
\[\overline{X} \mapsto \overline{X}_{\theta,q}\]
gives an exact interpolation functor of exponent $\theta$ (\cite{bergh_interpolation_1976}, Theorem 3.1.2).

\subsection{Complex interpolation}
The other important example of interpolation functors is given by the \emph{complex interpolation method}\index{interpolation!complex}. Set 
\[S = \{\theta + it \in \C \tc 0 \leq \theta \leq 1, \, t \in \R\}\]
and, for a Banach couple $\overline{X}$, let $\mathcal{F}(\overline{X})$ be the set of the continuous and bounded functions $f : S \to \Sigma(\overline{X})$ which are holomorphic in the interior of $S$ and such that, for $\theta=0,1$, the function $t \mapsto f(\theta + it)$ is continuous $\R \to X_\theta$ and vanishes at infinity; in fact, $\mathcal{F}(\overline{X})$ is a Banach space, with norm
\[\|f\|_{\mathcal{F}(\overline{X})} = \max\left\{\sup_{t \in \R} \|f(it)\|_{X_0}, \, \sup_{t \in \R} \|f(1+it)\|_{X_1}\right\}.\]
Then, for $\theta \in [0,1]$, the space
\[\overline{X}_{[\theta]} = \{f(\theta) \tc f \in \mathcal{F}(\overline{X})\},\]
with norm
\[\|x\|_{\overline{X}_{[\theta]}} = \inf \{ \|f\|_{\mathcal{F}(\overline{X})} \tc f(\theta) = a\},\]
is a Banach space, which is an intermediate space for the couple $\overline{X}$; moreover, the correspondence
\[\overline{X} \mapsto \overline{X}_{[\theta]}\]
gives an exact interpolation functor of exponent $\theta$ (\cite{bergh_interpolation_1976}, Theorem~4.1.2).

\subsection{Interpolation of Lebesgue spaces}

In the following, equalities between Banach spaces are meant in the sense of equivalence of norms.

The basic result about interpolation of Lebesgue spaces is

\begin{thm}\label{thm:lebesgueinterpolation}
If $(U,\mu)$ is a measure space, $1 \leq p,p_0,p_1 \leq \infty$, $0 < \theta < 1$, and
\[\frac{1}{p} = \frac{1-\theta}{p_0} + \frac{\theta}{p_1},\]
then
\[(L^{p_0}(U,\mu),L^{p_1}(U,\mu))_{\theta,p} = (L^{p_0}(U,\mu),L^{p_1}(U,\mu))_{[\theta]} = L^p(U,\mu).\]
\end{thm}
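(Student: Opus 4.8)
The plan is to prove the two asserted identities separately, each as a two-sided estimate $\|f\|_{L^p} \asymp \|f\|_X$ between the $L^p$-norm and the norm of the relevant interpolation space $X$; since equalities of Banach spaces are here understood up to equivalence of norms, the implied constants (allowed to depend on $p_0,p_1,\theta$) are harmless. First I would reduce to the case $p_0 < p_1$ (the case $p_0 = p_1$ being trivial): then $1/p$ lies strictly between $1/p_1$ and $1/p_0$, so $p_0 < p < p_1$ and in particular $p_0 < \infty$. Writing $X_j = L^{p_j}(U,\mu)$ and $\overline{X} = (X_0,X_1)$, the splitting $f = f\chi_{\{|f|>1\}} + f\chi_{\{|f|\le1\}}$ shows $L^p(U,\mu) \subseteq X_0 + X_1 = \Sigma(\overline{X})$, so both statements are meaningful.

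\emph{Real interpolation.} The first step is Holmstedt's estimate for the Peetre functional: writing $f^*$ for the decreasing rearrangement of $f$ on $(0,\mu(U))$ and setting $\kappa = (1/p_0 - 1/p_1)^{-1}$, one has
\[
K(t,f;\overline{X}) \asymp \Bigl(\int_0^{t^\kappa} f^*(s)^{p_0}\,ds\Bigr)^{1/p_0} + t\,\Bigl(\int_{t^\kappa}^\infty f^*(s)^{p_1}\,ds\Bigr)^{1/p_1}
\]
for all $t>0$ (with the second summand replaced by $t\,f^*(t^\kappa)$ when $p_1=\infty$). The $\lesssim$ part comes from decomposing $f$ at the level $f^*(t^\kappa)$, putting the large part into $X_0$ and the small part into $X_1$ and computing the two norms by rearrangement (with the usual care about the level sets of $f$); the $\gtrsim$ part follows from the subadditivity $f^*(s_0+s_1)\le f_0^*(s_0)+f_1^*(s_1)$, valid for any decomposition $f=f_0+f_1$, applied with $s_0=s_1=t^\kappa/2$. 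Inserting this into the definition of $\|f\|_{\overline{X}_{\theta,p}}$ and substituting $t=u^{1/\kappa}$ — using $\theta/\kappa = 1/p_0-1/p$ and $(1-\theta)/\kappa = 1/p-1/p_1$ — reduces $\overline{X}_{\theta,p}=L^p$ to the estimates
\[
\int_0^\infty \Bigl(\frac1u\int_0^u f^*(s)^{p_0}\,ds\Bigr)^{p/p_0}\!du \;\asymp\; \int_0^\infty f^*(u)^p\,du \;\asymp\; \int_0^\infty \Bigl(\frac1u\int_u^\infty f^*(s)^{p_1}\,ds\Bigr)^{p/p_1}\!du .
\]
In both, the lower bound is immediate from monotonicity of $f^*$; the upper bound on the left is the classical Hardy inequality (note $p/p_0>1$), and the one on the right follows, despite $p/p_1<1$, from the dyadic bound $\frac1u\int_u^\infty g \le \sum_{k\ge0}2^k g(2^k u)$ for decreasing $g$, the inequality $(\sum a_k)^{p/p_1}\le\sum a_k^{p/p_1}$, and summation of a geometric series. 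Since the middle term equals $\|f^*\|_p^p=\|f\|_p^p$, we get $\overline{X}_{\theta,p}=L^p$.

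\emph{Complex interpolation.} For the contractive inclusion $L^p\subseteq\overline{X}_{[\theta]}$, given $f\in L^p$ with $\|f\|_p=1$ I would exhibit, for each $\varepsilon>0$, the analytic family
\[
F_\varepsilon(z)(x) = e^{\varepsilon(z^2-\theta^2)}\,|f(x)|^{\,p\left(\frac{1-z}{p_0}+\frac{z}{p_1}\right)}\,\frac{f(x)}{|f(x)|}
\]
(set to $0$ where $f(x)=0$). One verifies that $F_\varepsilon\in\mathcal{F}(\overline{X})$: it is holomorphic in the interior of the strip, and, being pointwise dominated by $e^{\varepsilon}\bigl(|f|^{p/p_0}+|f|^{p/p_1}\bigr)\in\Sigma(\overline{X})$, bounded and continuous into $\Sigma(\overline{X})$; moreover $\|F_\varepsilon(it)\|_{p_0}=e^{-\varepsilon(t^2+\theta^2)}\le1$ and $\|F_\varepsilon(1+it)\|_{p_1}=e^{\varepsilon(1-t^2-\theta^2)}\le e^{\varepsilon}$, the Gaussian factor providing precisely the decay at infinity on the boundary lines required by the definition of $\mathcal{F}(\overline{X})$ (which the naive family would fail). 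As $F_\varepsilon(\theta)=f$, this gives $\|f\|_{\overline{X}_{[\theta]}}\le e^{\varepsilon}$, and $\varepsilon\to0$ closes the inclusion. For the reverse inclusion $\overline{X}_{[\theta]}\subseteq L^p$ I would run the complex-interpolation analogue of the Riesz--Thorin argument: given $F\in\mathcal{F}(\overline{X})$ with $F(\theta)=f$ and a finitely supported simple $g$ with $\|g\|_{p'}=1$ (such functions norm $L^p$, since $1<p<\infty$ here), build by the same recipe with conjugate exponents an auxiliary analytic family $G$ with $G(\theta)=g$ and $\|G(is)\|_{p_0'},\|G(1+is)\|_{p_1'}\le1$ for all $s\in\R$, and apply Hadamard's three-lines theorem to the bounded holomorphic function $z\mapsto\int_U F(z)\,\overline{G(\bar z)}\,d\mu$ to get $|\langle f,g\rangle|\le\|F\|_{\mathcal{F}(\overline{X})}$; taking the infimum over $F$ and the supremum over $g$ yields $\|f\|_{L^p}\le\|f\|_{\overline{X}_{[\theta]}}$. (When $p_0,p_1<\infty$, one may instead invoke the duality $(\overline{X}_{[\theta]})'=(\overline{X'})_{[\theta]}$ together with the inclusion already proved, applied to the conjugate exponents.)

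I expect the main obstacle to be Holmstedt's $K$-functional estimate and the Hardy-type inequalities behind the real part, together with the care needed to make the boundary decay in $\mathcal{F}(\overline{X})$ genuinely hold in the complex part (the Gaussian regularisation) and to handle the non-reflexive endpoint $p_1=\infty$.
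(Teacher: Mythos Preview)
Your proposal is correct and follows the classical route (Holmstedt's $K$-functional formula plus Hardy inequalities for the real method; the Riesz--Thorin analytic family with Gaussian regularisation for the complex method). The paper, however, does not prove this theorem at all: it simply cites Theorems~5.1.1 and 5.2.1 of Bergh--L\"ofstr\"om, treating the result as a well-known fact from interpolation theory. So you have supplied a genuine proof where the paper only gives a reference; your argument is essentially the one found in that reference.

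One minor point worth flagging: when $p_1=\infty$, the verification that $t\mapsto F_\varepsilon(1+it)$ is \emph{continuous into $L^\infty$} (as required by the definition of $\mathcal{F}(\overline{X})$ used in the paper) is not automatic from your pointwise-domination argument, since $|f(x)|^{-ipt/p_0}$ need not vary continuously in $L^\infty$ as $t$ varies. The standard fix is to first take $f$ simple (so $|f|$ assumes finitely many values and continuity into $L^\infty$ is clear), obtain the estimate for such $f$, and then pass to general $f\in L^p$ by density, using completeness of $\overline{X}_{[\theta]}$. You may want to make this explicit.
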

\begin{proof}
See \cite{bergh_interpolation_1976}, Theorems~5.1.1 and 5.2.1.
\end{proof}

For weighted Lebesgue spaces, we have the following extension of the Stein-Weiss interpolation theorem.

\begin{thm}\label{thm:steinweiss}
Let $(U,\mu)$ be a measure space and $w_0,w_1: U \to \C$ be measurable and non-negative. If $1 \leq p,p_0,p_1 < \infty$, $0 < \theta < 1$, and
\[\frac{1}{p} = \frac{1-\theta}{p_0} + \frac{\theta}{p_1}, \qquad w = w_0^{p(1-\theta)/p_0} w_1^{p\theta/p_1},\]
then
\[(L^{p_0}(U,w_0 \mu),L^{p_1}(U,w_1 \mu))_{\theta,p} = (L^{p_0}(U,w_0 \mu),L^{p_1}(U,w_1 \mu))_{[\theta]} = L^p(U,w \mu).\]
\end{thm}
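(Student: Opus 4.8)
The plan is to carry the weighted statement back to the unweighted one of Theorem~\ref{thm:lebesgueinterpolation} as far as possible, and to supply the genuinely weighted content by the classical Stein--Weiss analytic-family argument. Write $\overline{A}=\bigl(L^{p_0}(U,w_0\mu),\,L^{p_1}(U,w_1\mu)\bigr)$. Two preliminary reductions are available. First, on $\{w_0w_1=0\}$ the weight $w=w_0^{p(1-\theta)/p_0}w_1^{p\theta/p_1}$ vanishes (the exponents being positive), and one checks directly that this set contributes nothing to any of the three spaces, so one may assume $0<w_0,w_1<\infty$ $\mu$-a.e. Second --- and optionally --- one may replace $\mu$ by the measure $w_0\mu$: since $p(1-\theta)/p_0-1=-p\theta/p_1$ (because $p(1-\theta)/p_0+p\theta/p_1=1$), the triple $(w_0,w_1,w)$ becomes $\bigl(1,\,w_1/w_0,\,(w_1/w_0)^{p\theta/p_1}\bigr)$, which is again of the required form, so one may also assume $w_0\equiv 1$.

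For the complex method I would establish the two continuous inclusions $L^p(U,w\mu)\hookrightarrow\overline{A}_{[\theta]}$ and $\overline{A}_{[\theta]}\hookrightarrow L^p(U,w\mu)$ with norm bounded by $1$. For the first, take $f$ with $\|f\|_{L^p(w\mu)}=1$, put $1/p(z)=(1-z)/p_0+z/p_1$ (so $p(\theta)=p$), and consider
\[
F(z)=e^{i\arg f}\,|f|^{p/p(z)}\,w^{1/p(z)}\,w_0^{-(1-z)/p_0}\,w_1^{-z/p_1}.
\]
Since $p/p(z)$, $1/p(z)$ and the exponents of $w_0,w_1$ are affine in $z$, each $x$-slice of $F$ is entire; and using $w^{1/p}=w_0^{(1-\theta)/p_0}w_1^{\theta/p_1}$ together with $\Re\bigl(1/p(it)\bigr)=1/p_0$ and $\Re\bigl(1/p(1+it)\bigr)=1/p_1$, one computes $F(\theta)=f$ and $|F(it)|^{p_0}w_0=|f|^pw=|F(1+it)|^{p_1}w_1$, so that the boundary norms of $F$ in $L^{p_0}(w_0\mu)$ and $L^{p_1}(w_1\mu)$ are both exactly $1$. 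To make $F$ an admissible element of $\mathcal{F}(\overline{A})$ one first restricts to bounded simple $f$ supported where $w_0,w_1$ are bounded above and below (a dense class in $L^p(w\mu)$), then multiplies $F$ by the Gaussian factor $e^{\varepsilon(z^2-\theta^2)}$ to obtain decay along the boundary lines, and finally lets $\varepsilon\to 0$ and removes the restriction using completeness of $\overline{A}_{[\theta]}$.

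For the reverse inclusion I would argue by duality: for $1<p<\infty$ the dual of $L^p(U,w\mu)$ under the pairing $\int_U f\,\overline{g}\,d\mu$ is $L^{p'}(U,w^{1-p'}\mu)$, so it is enough to bound $\bigl|\int_U f\,\overline{g}\,d\mu\bigr|$ when $f=G(\theta)$ with $\|G\|_{\mathcal{F}(\overline{A})}\le M$ and $\|g\|_{L^{p'}(w^{1-p'}\mu)}\le 1$; building a second analytic family $H$ with $H(\theta)=g$ and boundary norms $\le 1$ in the dual couples $L^{p_j'}(w_j^{1-p_j'}\mu)$, the scalar function $z\mapsto\int_U G(z)\,\overline{H(\overline{z})}\,d\mu$ is holomorphic and, by H\"older, bounded by $M$ on the two boundary lines, whence $\bigl|\int_U f\,\overline{g}\,d\mu\bigr|\le M$ by the three-lines lemma; the degenerate case $p=1$ (which forces $p_0=p_1=1$) is handled directly with $(L^1(w\mu))^{*}=L^\infty(\mu)$. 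The real-method equality $\overline{A}_{\theta,p}=L^p(U,w\mu)$ I would obtain by a weighted version of the $K$-functional computation underlying Theorem~\ref{thm:lebesgueinterpolation}: splitting $f$ along the level sets of $|f|$, with the threshold depending on the ratio $w_1/w_0$ and on $t$, yields two-sided bounds for $K(t,f;\overline{A})$ whose insertion into $\|\cdot\|_{\overline{A}_{\theta,p}}$ recovers $\|f\|_{L^p(w\mu)}$ up to constants; alternatively one may simply invoke the Stein--Weiss theorem from the cited references.

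I expect the main obstacle to be analytic rather than algebraic: the exponent bookkeeping in the family $F$ is essentially forced once one insists on $F(\theta)=f$ and on matching boundary norms, but verifying that $z\mapsto F(z)$ is continuous and bounded into $\Sigma(\overline{A})$, holomorphic in the open strip, and vanishing at infinity along the boundary --- for merely measurable $f$ and $w_j$ --- is where the real work lies, and it is exactly what the reduction to bounded simple functions, the Gaussian convergence factor, and the density/completeness limiting argument are for; the same care is needed on the dual side. A secondary point deserving a remark is that the coincidence of the real and complex interpolation spaces in this theorem is a feature of the Lebesgue scale and has to be proved, not assumed.
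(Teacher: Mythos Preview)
Your argument is correct in outline and is essentially the standard proof found in the references the paper cites; note, however, that the paper itself gives no proof at all --- it simply refers to \cite{bergh_interpolation_1976}, Theorems~5.4.1, 5.5.1 and 5.5.3, and \cite{triebel_interpolation_1978}, \S1.18.5. Your analytic-family construction for the complex method and the $K$-functional/duality arguments are exactly what those sources do, so you have in effect reconstructed the cited proofs rather than taken a different route.
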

\begin{proof}
See \cite{bergh_interpolation_1976}, Theorems~5.4.1, 5.5.1 and 5.5.3, or \cite{triebel_interpolation_1978}, \S1.18.5.
\end{proof}

More extensive results hold for vector-valued sequence spaces. For every Banach space $A$, for $1 \leq p \leq \infty$, $s \in \R$, we set
\[l^p_s(A) = l^p_s(\N;A) = \{ (a_m)_m \in A^{\N} \tc \| (2^{ms} \|a_m\|_A)_m \|_p < \infty \}.\]

\begin{thm}\label{thm:sequenceinterpolation}
Let $1 \leq p_0, p_1, p \leq \infty$, $s_0,s_1 \in \R$, $0 < \theta < 1$.
\begin{itemize}
\item[(i)] For every Banach space $A$, if $s_0 \neq s_1$ then we have
\[(l^{p_0}_{s_0}(A),l^{p_1}_{s_1}(A))_{\theta,p} = l^p_{(1-\theta)s_0 + \theta s_1}(A).\]
\item[(ii)] For every Banach couple $(A_0,A_1)$, if $p_0,p_1 < \infty$ and
\[\frac{1}{p} = \frac{1-\theta}{p_0} + \frac{\theta}{p_1},\]
then
\[(l^{p_0}_{s_0}(A_0),l^{p_1}_{s_1}(A_1))_{\theta,p} = l^p_{(1-\theta)s_0 + \theta s_1}((A_0,A_1)_{\theta,p}).\]
\item[(iii)] For every Banach couple $(A_0,A_1)$, if $p_0 < \infty$ and
\[\frac{1}{p} = \frac{1-\theta}{p_0} + \frac{\theta}{p_1},\]
then
\[(l^{p_0}_{s_0}(A_0),l^{p_1}_{s_1}(A_1))_{[\theta]} = l^p_{(1-\theta)s_0 + \theta s_1}((A_0,A_1)_{[\theta]}).\]
\end{itemize}
\end{thm}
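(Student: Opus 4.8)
The plan is to reduce everything to a single "master" statement about vector-valued sequence spaces and then specialize. The cleanest route is to observe that each $l^p_s(A)$ is, via the isometry $(a_m)_m \mapsto (2^{ms} a_m)_m$, nothing but the unweighted space $l^p(A)$; so after conjugating by this weight one is asking about interpolation of $l^{p_0}(A_0)$ and $l^{p_1}(A_1)$ with a \emph{shift} in the weight exponent, and the shift is exactly $(1-\theta)s_0+\theta s_1$ because the weight on the interpolation space is the geometric mean $2^{m((1-\theta)s_0+\theta s_1)}$ of the two weights (this is the discrete analogue of the Stein--Weiss weight $w = w_0^{p(1-\theta)/p_0} w_1^{p\theta/p_1}$, which here collapses since the exponents in the weight cancel the $1/p_j$ factors). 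Thus all three parts are really instances of the vector-valued Stein--Weiss / sequence-space interpolation results already available in Bergh--L\"ofstr\"om and Triebel; the only genuine content beyond citing Theorems \ref{thm:steinweiss} (adapted to counting measure) is checking that the weight bookkeeping produces the claimed index $(1-\theta)s_0+\theta s_1$ and that the hypotheses ($p_0,p_1<\infty$ for (ii), $p_0<\infty$ for (iii), $s_0\neq s_1$ for (i)) are precisely what the cited theorems require.

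First I would prove (i): fix a single Banach space $A$ and note that $l^{p_0}_{s_0}(A)$ and $l^{p_1}_{s_1}(A)$ form a Banach couple (both embed into $A^{\N}$ with the product topology). Here both "underlying" spaces are the same $A$, so the $K$-functional of a sequence $a=(a_m)_m$ can be computed termwise and estimated against the scalar weight sequences; the condition $s_0\neq s_1$ guarantees that the two geometric sequences $2^{ms_0},2^{ms_1}$ are genuinely transverse, which is what makes the real interpolation of the weighted $l^p$'s behave and pins down the limiting index. This is the content of \S1.18.x of Triebel or the vector-valued extension of \cite{bergh_interpolation_1976}, Theorem~5.6.1; I would simply cite it after recording the weight computation. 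For (ii), I would invoke the vector-valued Stein--Weiss theorem for the real method (Triebel \cite{triebel_interpolation_1978}, \S1.18.1 gives $l^p$-valued interpolation, and the weighted version follows by the same reduction), with $U=\N$, counting measure, $w_j(m)=2^{mp_j s_j}$; then $w(m) = w_0(m)^{p(1-\theta)/p_0} w_1(m)^{p\theta/p_1} = 2^{mp((1-\theta)s_0+\theta s_1)}$, and Theorem~5.6.2 of \cite{bergh_interpolation_1976} (or the retract argument) yields $l^p_{(1-\theta)s_0+\theta s_1}((A_0,A_1)_{\theta,p})$. For (iii) I would do the same with the complex method, using that $(l^{p_0}(A_0),l^{p_1}(A_1))_{[\theta]} = l^p((A_0,A_1)_{[\theta]})$ when $p_0<\infty$ (\cite{bergh_interpolation_1976}, Theorem~5.1.2 in the vector-valued form), and again transporting the weight by conjugation.

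The main obstacle — really the only place care is needed — is the weight arithmetic in the complex case and the role of the finiteness hypotheses. For the complex method one does not get a clean formula unless one of $p_0,p_1$ is finite (the dual/Fatou argument in the construction of $\mathcal F(\overline X)$ breaks for $p_0=p_1=\infty$), which is exactly why (iii) assumes $p_0<\infty$; and the conjugation-by-weight isometry must be checked to be a morphism of Banach couples in \emph{both} directions so that the retract principle (the "Retracts" subsection above) applies and transfers the interpolation identity from the unweighted to the weighted spaces. Once the diagram
\[
(l^{p_0}_{s_0}(A_0),l^{p_1}_{s_1}(A_1)) \xrightarrow{\ \times 2^{-ms_0},\,\times 2^{-ms_1}\ } (l^{p_0}(A_0),l^{p_1}(A_1)) \xrightarrow{\ \times 2^{ms},\,\times 2^{ms}\ } \cdots
\]
is set up and one verifies it composes to the identity on the left couple for the appropriate $s$, all three statements drop out of the corresponding unweighted results. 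I do not expect any part to require a new idea beyond this reduction; the proof is a citation plus a half-page of weight bookkeeping, and I would present it as such, flagging only that part (i) needs $s_0\neq s_1$ and parts (ii)--(iii) need the stated finiteness of the $p_j$.
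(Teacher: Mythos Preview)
Your approach is essentially the paper's: the paper's proof is a bare citation to \cite{bergh_interpolation_1976}, Theorems~5.6.1, 5.6.2 and 5.6.3, and to \cite{triebel_interpolation_1978}, \S\S1.18.1--1.18.2, and you land on exactly these references.

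One caution about your heuristic reduction, though: the diagram you sketch does \emph{not} define a morphism of Banach couples. The map $(a_m)_m \mapsto (2^{-ms_0}a_m)_m$ on $l^{p_0}_{s_0}(A_0)$ and the map $(a_m)_m \mapsto (2^{-ms_1}a_m)_m$ on $l^{p_1}_{s_1}(A_1)$ do not agree on the intersection when $s_0\neq s_1$, so the retract principle as you invoke it does not apply, and you cannot reduce to the unweighted couple $(l^{p_0}(A_0),l^{p_1}(A_1))$ this way. The cited theorems in Bergh--L\"ofstr\"om treat the weighted case directly (via explicit computation of the $K$-functional for (i), and via the appropriate vector-valued machinery for (ii) and (iii)), so the reduction is unnecessary; if you want to keep the weight-bookkeeping exposition, just present it as motivation for the index $(1-\theta)s_0+\theta s_1$ rather than as an actual retract argument.
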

\begin{proof}
See \cite{bergh_interpolation_1976}, Theorems 5.6.1, 5.6.2 and 5.6.3, or \cite{triebel_interpolation_1978}, \S\S1.18.1 and 1.18.2.
\end{proof}

\section{Sobolev spaces}

\subsection{Basic properties}

For $1 \leq p \leq \infty$, $k \in \N$, the \emph{$L^p$ Sobolev space of order $k$} is defined by
\[W^{k,p}(\R^n) = \{ f \in L^p(\R^n) \tc \partial^\alpha f \in L^p(\R^n) \text{ for $|\alpha| \leq k$} \}\]
(where derivatives $\partial^\alpha f$ are meant in the sense of distributions). $W^{k,p}(\R^n)$ is a Banach space, with norm
\[\|f\|_{W^{k,p}} = \sum_{|\alpha| \leq k} \|\partial^\alpha f\|_{p}.\]

For $p=2$, $W^{k,p}(\R^n)$ is (modulo equivalence of norms) an Hilbert space, which can be characterized in terms of the Fourier transform. In fact, if we set
\[\langle x \rangle = (1 + |x|_2^2)^{1/2},\]
where $|\cdot|_2$ is the Euclidean norm on $\R^n$, and, for $s \in \R$, we introduce the \emph{Bessel-potential space}
\[H^s(\R^n) = \{f \in \Sz'(\R^n) \tc \Four f \in L^2(\R^n,\langle \xi \rangle^{2s} \,d\xi) \},\]
then $H^s(\R^n)$ is a Hilbert space, with inner product
\[\langle f,g\rangle_{H^s} = \int_{\R^n} \hat f(\xi) \, \overline{\hat g(\xi)} \, \langle \xi \rangle^{2s} \,d\xi,\]
and moreover, by the properties of the Fourier transform, it is easily seen that
\[W^{k,2}(\R^n) = H^k(\R^n) \qquad\text{for $k \in \N$.}\]
That is why, for a general $s \in \R$, the space $H^s(\R^n)$ is also called $L^2$ Sobolev space of fractional order $s$.

Since the spaces $H^s(\R^n)$ correspond, via the Fourier transform, to weighted $L^2$ spaces, from Theorem~\ref{thm:steinweiss} we get immediately
\begin{prp}\label{prp:sobolevinterpolation}
For $s_0,s_1 \in \R$, $0 < \theta < 1$, we have
\[(H^{s_0}(\R^n),H^{s_1}(\R^n))_{[\theta]} = (H^{s_0}(\R^n),H^{s_1}(\R^n))_{\theta,2} = H^{(1-\theta)s_0 + \theta s_1}(\R^n).\]
\end{prp}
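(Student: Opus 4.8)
The statement to prove is Proposition~\ref{prp:sobolevinterpolation}: for $s_0, s_1 \in \R$, $0 < \theta < 1$,
\[(H^{s_0}(\R^n), H^{s_1}(\R^n))_{[\theta]} = (H^{s_0}(\R^n), H^{s_1}(\R^n))_{\theta,2} = H^{(1-\theta)s_0 + \theta s_1}(\R^n).\]

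The key observation, which the text itself flags ("Since the spaces $H^s(\R^n)$ correspond, via the Fourier transform, to weighted $L^2$ spaces, from Theorem~\ref{thm:steinweiss} we get immediately"), is that the Fourier transform $\Four$ is an isometric isomorphism — up to a normalizing constant — between $H^s(\R^n)$ and the weighted Lebesgue space $L^2(\R^n, \langle \xi \rangle^{2s}\, d\xi)$. This is essentially the definition of $H^s$ given just above in the excerpt. So the plan is: transport everything through $\Four$, apply the Stein–Weiss interpolation theorem (Theorem~\ref{thm:steinweiss}) to the weighted $L^2$ spaces, and transport back.

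The reasoning is straightforward. First, observe that $\Four$ is a bijective morphism of Banach couples
\[\Four : (H^{s_0}(\R^n), H^{s_1}(\R^n)) \to (L^2(\R^n, w_0\,d\xi), L^2(\R^n, w_1\,d\xi)),\]
where $w_j(\xi) = \langle \xi \rangle^{2s_j}$, with $\Four^{-1}$ as inverse morphism; in particular each $H^{s_j}$ is a retract of — in fact isomorphic to — the corresponding weighted $L^2$ space. Since interpolation functors are, well, functors, they commute with this isomorphism: $(H^{s_0}, H^{s_1})_{\theta,2} \cong (L^2(w_0\,d\xi), L^2(w_1\,d\xi))_{\theta,2}$ and similarly for $[\theta]$, with equivalence of norms. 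Next, apply Theorem~\ref{thm:steinweiss} with $p_0 = p_1 = p = 2$: since $\tfrac{1}{2} = \tfrac{1-\theta}{2} + \tfrac{\theta}{2}$ holds for every $\theta$, the hypotheses are met, and the conclusion gives
\[(L^2(\R^n, w_0\,d\xi), L^2(\R^n, w_1\,d\xi))_{\theta,2} = (L^2(\R^n, w_0\,d\xi), L^2(\R^n, w_1\,d\xi))_{[\theta]} = L^2(\R^n, w\,d\xi),\]
where $w = w_0^{2(1-\theta)/2} w_1^{2\theta/2} = w_0^{1-\theta} w_1^{\theta} = \langle \xi \rangle^{2((1-\theta)s_0 + \theta s_1)}$. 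Finally, pulling back through $\Four^{-1}$ identifies $L^2(\R^n, \langle\xi\rangle^{2((1-\theta)s_0+\theta s_1)}\,d\xi)$ with $H^{(1-\theta)s_0 + \theta s_1}(\R^n)$, which closes the chain of equalities (as equivalences of norms).

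There is essentially no obstacle here — this is a routine application of Theorem~\ref{thm:steinweiss}. The only point deserving a word of care is making the functoriality step precise: one should note that $\Four$ and $\Four^{-1}$ restrict to bounded maps on each endpoint space and hence constitute morphisms of Banach couples, so that by the general principle on retracts/isomorphisms (or directly from the functor axioms in \S1.2, namely that $F(T) = T_\Sigma|_{F(\overline X)}$ for a morphism $T$) the interpolation spaces are carried isomorphically. Given the level of detail elsewhere in the excerpt, it suffices to invoke Theorem~\ref{thm:steinweiss} together with this transport-of-structure remark, which is exactly what the sentence preceding the proposition already announces.
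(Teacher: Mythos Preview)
Your proof is correct and follows exactly the approach the paper intends: the sentence preceding the proposition already says that the result follows immediately from Theorem~\ref{thm:steinweiss} via the Fourier-transform identification of $H^s$ with a weighted $L^2$ space, and you have simply spelled out those details.
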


Some elementary embedding results for Sobolev spaces are collected in the following

\begin{prp}\label{prp:sobolevembedding}
\begin{itemize}
\item[(i)] If $s > n/2$, then
\[\|\Four f\|_1 \leq C_s \|f\|_{H^s}.\]
\item[(ii)] If $k \in \N$ and $s > k+n/2$, then, for every $f \in H^s(\R^n)$, we have $f \in C^k_0(\R^n)$ and
\[\|f\|_{C^k_0} \leq C_{k,s} \|f\|_{H^s}.\]
\item[(iii)] The continuous embedding $W^{n,1}(\R^n) \subseteq C_b(\R^n)$ holds.
\end{itemize}
\end{prp}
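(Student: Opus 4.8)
The plan is to establish the three embeddings via the Fourier transform and elementary integral estimates, each part essentially reducing to an application of Cauchy--Schwarz or H\"older together with the finiteness of a suitable integral of the weight $\langle\xi\rangle^{-2s}$ (or $\langle\xi\rangle^{s}$).

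For part (i), I would write, for $f\in H^s(\R^n)$,
\[
\|\Four f\|_1 = \int_{\R^n} |\Four f(\xi)|\,d\xi = \int_{\R^n} |\Four f(\xi)|\,\langle\xi\rangle^{s}\,\langle\xi\rangle^{-s}\,d\xi
\le \left(\int_{\R^n} \langle\xi\rangle^{-2s}\,d\xi\right)^{1/2} \|f\|_{H^s},
\]
by Cauchy--Schwarz, and the first integral is finite precisely because $2s > n$, i.e.\ $s > n/2$; this yields the constant $C_s$.

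For part (ii), the idea is to combine (i) with the fact that differentiation corresponds to multiplication by a polynomial on the Fourier side. For a multi-index $\alpha$ with $|\alpha|\le k$, one has $\Four(\partial^\alpha f)(\xi) = (i\xi)^\alpha \Four f(\xi)$, and $|(i\xi)^\alpha| \le |\xi|^{|\alpha|} \le \langle\xi\rangle^{k}$, so $\|\Four(\partial^\alpha f)\|_1 \le \int \langle\xi\rangle^{k}|\Four f(\xi)|\,d\xi$; applying Cauchy--Schwarz as in (i) but now against the weight $\langle\xi\rangle^{k-s}$, this is bounded by $\big(\int \langle\xi\rangle^{2(k-s)}\,d\xi\big)^{1/2}\|f\|_{H^s}$, which is finite since $2(s-k) > n$. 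Hence $\Four(\partial^\alpha f)\in L^1(\R^n)$ for all $|\alpha|\le k$, so by Fourier inversion $\partial^\alpha f$ is (equal a.e.\ to) a continuous function that tends to $0$ at infinity (the latter is the Riemann--Lebesgue lemma applied to $\Four(\partial^\alpha f)\in L^1$, giving $\partial^\alpha f = \Four^{-1}(\Four\partial^\alpha f)$ continuous and vanishing at infinity). Thus $f\in C^k_0(\R^n)$ with $\|f\|_{C^k_0} = \sum_{|\alpha|\le k}\|\partial^\alpha f\|_\infty \le \sum_{|\alpha|\le k}\|\Four(\partial^\alpha f)\|_1 \le C_{k,s}\|f\|_{H^s}$.

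For part (iii), given $f\in W^{n,1}(\R^n)$, the goal is to show $f$ agrees a.e.\ with a bounded continuous function. One route is to estimate the Fourier transform pointwise: for each multi-index $\alpha$, $|\xi^\alpha|\,|\Four f(\xi)| = |\Four(\partial^\alpha f)(\xi)| \le \|\partial^\alpha f\|_1 \le \|f\|_{W^{n,1}}$; summing over the multi-indices $\alpha$ with each component in $\{0,1\}$ (so $|\alpha|\le n$) and using $\prod_{j=1}^n(1+|\xi_j|) \le C\sum_{\alpha\in\{0,1\}^n}|\xi^\alpha|$ gives $\prod_j(1+|\xi_j|)\,|\Four f(\xi)| \lesssim \|f\|_{W^{n,1}}$, hence $|\Four f(\xi)| \lesssim \|f\|_{W^{n,1}}\prod_j(1+|\xi_j|)^{-1}$, which is integrable on $\R^n$. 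Therefore $\Four f\in L^1(\R^n)$, and Fourier inversion together with Riemann--Lebesgue shows $f\in C_0(\R^n)\subseteq C_b(\R^n)$ with $\|f\|_\infty \le \|\Four f\|_1 \lesssim \|f\|_{W^{n,1}}$.

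I do not expect a genuine obstacle here: the only mildly delicate point is the bookkeeping in part (iii) --- making sure the product $\prod_j(1+|\xi_j|)$ is controlled by the finitely many monomials $\xi^\alpha$, $\alpha\in\{0,1\}^n$ --- and, throughout, the standard care that the a priori distributional identities ($f\in\Sz'$, $\Four f$ a function) are legitimate, which is immediate since $L^1$ and $L^p$ functions are tempered distributions and the estimates above show $\Four f\in L^1$ in each case so that inversion applies.
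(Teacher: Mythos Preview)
Your arguments for (i) and (ii) are correct and coincide with the paper's: (i) is exactly the Cauchy--Schwarz/H\"older splitting against $\langle\xi\rangle^{-s}$, and (ii) is the paper's ``Riemann--Lebesgue plus (i)'' spelled out in detail.

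For (iii) your proof is correct but genuinely different from the paper's. The paper argues by the fundamental theorem of calculus: for $f\in\D(\R^n)$ one writes
\[
f(x_1,\dots,x_n)=\int_{-\infty}^{x_1}\cdots\int_{-\infty}^{x_n}\partial_1\cdots\partial_n f(t)\,dt_1\cdots dt_n,
\]
hence $\|f\|_\infty\le\|\partial_1\cdots\partial_n f\|_1\le\|f\|_{W^{n,1}}$, and then passes to general $f\in W^{n,1}$ by density of $\D$ (via an approximate identity). Your route instead stays on the Fourier side, bounding $|\Four f(\xi)|$ pointwise by $\|f\|_{W^{n,1}}\prod_j(1+|\xi_j|)^{-1}$ (your inequality is in fact an equality, $\prod_j(1+|\xi_j|)=\sum_{\alpha\in\{0,1\}^n}|\xi^\alpha|$), so $\Four f\in L^1$ and Fourier inversion applies. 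Your approach is uniform with (i)--(ii) and even yields the slightly stronger conclusion $f\in C_0(\R^n)$; the paper's approach is more elementary in that it avoids invoking Fourier inversion for this part and uses only the single mixed derivative $\partial_1\cdots\partial_n f$.
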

\begin{proof}
(i) follows immediately from H\"older's inequality. From the Riemann-Lebesgue lemma and (i), we get easily (ii). Finally, if $f \in \D(\R^n)$, then
\[f(x_1,\dots,x_n) = \int_{-\infty}^{x_1} \cdots \int_{-\infty}^{x_n} \partial_1 \cdots \partial_n f(t_1,\dots,t_n) \,dt_1 \dots dt_n,\]
so that
\[\|f\|_\infty \leq \|f\|_{W^{n,1}},\]
and (iii) follows by a density argument (cf.\ Proposition~\ref{prp:approximateidentity}).
\end{proof}

\subsection{Sobolev spaces with dominating mixed smoothness}

If $\vec n = (n_1,\dots,n_\ell)$, for $l=1,\dots,\ell$, let $\partial_{l,1},\dots,\partial_{l,n_l}$ denote the canonical basis of translation-invariant vector fields on the $l$-th factor of
\[\R^{\vec{n}} = \R^{n_1} \times \dots \times \R^{n_\ell}.\]
Partial derivatives on $\R^{\vec{n}}$ will be denoted by a multi-multi-index notation: if $\vec{\alpha} = (\alpha_1,\dots,\alpha_\ell) \in \N^{\vec{n}} = \N^{n_1} \times \dots \times \N^{n_\ell}$, then we set
\[\partial^{\vec{\alpha}} = \partial_1^{\alpha_1} \cdots \partial_n^{\alpha_n},\]
where in turn
\[\partial_l^{\alpha_l} = \partial_{l,1}^{\alpha_{l,1}} \cdots \partial_{l,n_l}^{\alpha_{l,n_l}}.\]

For $1 \leq p \leq \infty$, $\vec{k} \in \N^\ell$, we then define the $L^p$ Sobolev space with dominating mixed smoothness of order $\vec{k}$ as
\[S^{\vec{k},p} W(\R^{\vec{n}}) = \{ f \in L^p(\R^{\vec{n}}) \tc \partial^{\vec\alpha} f \in L^p(\R^{\vec{n}}) \text{ for $|\alpha_1| \leq k_1, \dots, |\alpha_\ell| \leq k_\ell$}\}.\]
Moreover, for $\vec{s} \in \R^\ell$, we introduce the notation
\[\langle \xi \rangle^{\vec{s}} = \langle \xi_1 \rangle^{s_1} \cdots \langle \xi_\ell \rangle^{s_\ell} \qquad\text{for $\xi = (\xi_1,\dots,\xi_\ell) \in \R^{\vec{n}}$,}\]
and then we define the $L^2$ Sobolev (or Bessel-potential) space with dominating mixed smoothness of fractional order $\vec{s}$ as
\[S^{\vec{s}} H(\R^{\vec{n}}) = \{f \in \Sz'(\R^{\vec{n}}) \tc \Four f \in L^2(\R^{\vec{n}}, \langle \xi \rangle^{2\vec{s}} \,d\xi)\}.\]

Analogously as before, the $S^{\vec k,p}(\R^{\vec{n}})$ are Banach spaces, the $S^{\vec{s}}H(\R^{\vec{n}})$ are Hilbert spaces, and
\[S^{\vec{k},2} W(\R^{\vec{n}}) = S^{\vec{k}} H(\R^{\vec{n}}) \qquad\text{for $\vec{k} \in \N^\ell$.}\]
Moreover, the space $S^{\vec{s}} H(\R^{\vec{n}})$ can be characterized as a Hilbert tensor product of Bessel-potential spaces on the factors of $\R^{\vec{n}}$:
\begin{equation}\label{eq:sobolevtensorproduct}
S^{\vec{s}}H(\R^{\vec{n}}) \cong H^{s_1}(\R^{n_1}) \htimes \cdots \htimes H^{s_\ell}(\R^{n_\ell}).
\end{equation}

\section{Besov spaces}

We now introduce the class of Besov spaces, which contains and extends the previously considered scale of $L^2$ Sobolev spaces with fractional order.

Our main references for Besov spaces are the books by Peetre \cite{peetre_new_1976}, Bergh and L\"ofstr\"om \cite{bergh_interpolation_1976}, Triebel \cite{triebel_interpolation_1978}, \cite{triebel_spaces_1978}, \cite{triebel_theory_1983}, \cite{triebel_structure_2001}, Edmunds and Triebel \cite{edmunds_function_1996}, Runst and Sickel \cite{runst_sobolev_1996}. In fact, results about Besov spaces are somewhat scattered through the literature. Moreover, there are several equivalent definitions of Besov spaces, which use quite different techniques (Lipschitz-H\"older conditions, atomic decompositions, wavelets...).

Here we use the characterization of the Besov spaces $B_{p,q}^s(\R^n)$ by dyadic decompositions via the Fourier transform. Since we are only concerned with the ranges $1 \leq p \leq \infty$, $1 \leq q \leq \infty$, and since moreover we are not interested in the simultaneous introduction of Triebel-Lizorkin spaces, the exposition can be kept to quite an elementary level.

About Besov spaces with dominating mixed smoothness, the standard (and almost unique) reference is the monograph by Schmeisser and Triebel \cite{schmeisser_topics_1987}; a survey with recent developments can be found in \cite{schmeisser_recent_2007}.

\subsection{Basic definitions, interpolation and embeddings between Besov spaces}\label{subsection:besovbasic}
Let $(\phi_0,\phi_1) \in \D(\R^n)$ be non-negative, and set
\[\phi_k(\xi) = \phi_1(2^{1-k} \xi) \qquad\text{for $\xi \in \R^n$, $k > 1$.}\]
The pair $(\phi_0,\phi_1)$ is said to be \emph{admissible} if $\supp \phi_1 \subseteq \R^n \setminus\{0\}$ and
\[\bigcup_{k \geq 0} \{\phi_k \neq 0\} = \R^n;\]
the system $(\phi_k)_{k \in \N}$ is said to be \emph{generated} by the pair $(\phi_0,\phi_1)$.

If $(\phi_0,\phi_1)$ is an admissible pair, then we define, for $1 \leq p,q \leq \infty$, $s \in \R$, the \emph{Besov space} $B_{p,q}^s(\R^n)$ as the set of tempered distributions $f \in \Sz'(\R^n)$ such that the quantity
\[\|f\|_{B_{p,q}^s} = \|(2^{ks} \|(\Four^{-1} \phi_k) * f\|_p)_k\|_{l^q}\]
is finite. Using Proposition~\ref{prp:sobolevembedding}(i), partition-of-unity techniques and the fact that the quantity $\|\Four^{-1} \psi\|_1$ is invariant by dilations of $\psi$, it is not difficult to prove that, by replacing $(\phi_0,\phi_1)$ with another admissible pair, we obtain an equivalent norm on $B_{p,q}^s(\R^n)$.

One way of obtaining an admissible pair is by starting from a non-negative $\phi \in \D(\R^n)$ such that
\[\supp \phi \subseteq \{ \xi \tc 2^{-1} \leq |\xi| \leq 2 \},\]
(where $|\cdot|$ is some norm on $\R^n$) and
\[\sum_{k \in \Z} \phi(2^k \xi) = 1 \qquad\text{for every $\xi \neq 0$,}\]
and then setting
\[\phi_k(\xi) = \phi(2^{-k} \xi) \quad\text{for $k > 0$, and}\quad \phi_0 = 1 - \sum_{k > 0} \phi_k.\]
Thus, we have $\phi_k \in \D(\R^n)$,
\[0 \leq \phi_k \leq 1 \qquad\text{for $k \geq 0$, and}\qquad \sum_{k \geq 0} \phi_k \equiv 1;\]
\begin{equation}\label{eq:besovsupports}
\supp \phi_k \subseteq \begin{cases}
\{ \xi \tc |\xi| \leq 2 \} &\text{for $k = 0$,}\\
\{ \xi \tc 2^{k-1} \leq |\xi| \leq 2^{k+1} \} &\text{for $k > 0$,}
\end{cases}
\end{equation}
so that in particular $(\phi_0,\phi_1)$ is admissible (a pair obtained in this way will be called a \emph{standard admissible pair} with respect to the norm $|\cdot|$). Moreover, if we set
\[\tilde \phi_k = \begin{cases}
\phi_0 + \phi_1 &\text{if $k = 0$,}\\
\phi_{k-1} + \phi_k + \phi_{k+1} &\text{if $k > 0$,}
\end{cases}\]
then we have
\[\phi_k \tilde \phi_k = \phi_k \qquad\text{for $k \geq 0$,}\]
\begin{equation}\label{eq:fl1boundedness}
\sup_{k \geq 0} \| \Four^{-1} \phi_k \|_1 < \infty \qquad\text{and}\qquad \sup_{k \geq 0} \|\Four^{-1} \tilde\phi_k \|_1 < \infty.
\end{equation}
With this choice of an admissible pair, it is not too difficult to obtain the following properties:
\begin{enumerate}
\item for $f \in \Sz(\R^n)$ (resp., $f \in \Sz'(\R^n)$), we have
\begin{equation}\label{eq:besovdecomposition}
f = \sum_{k \geq 0} \phi_k f \qquad\text{and}\qquad f = \sum_{k \geq 0} (\Four^{-1} \phi_k) * f,
\end{equation}
with convergence in $\Sz(\R^n)$ (resp., in $\Sz'(\R^n)$);
\item for $1 \leq p,q \leq \infty$, $s \in \R$, the following H\"older-type inequality holds:
\[| \langle f, g \rangle | \leq C_{p,q,s} \|f\|_{B_{p,q}^s} \|g\|_{B_{p',q'}^{-s}}\]
for $f \in \Sz'(\R^n)$, $g \in \Sz(\R^n)$;
\item for $1 \leq p,q \leq \infty$, $s \in \R$, we have the continuous inclusions
\[\Sz(\R^n) \subseteq B_{p,q}^s(\R^n) \subseteq \Sz'(\R^n);\]
\item $B_{p,q}^s(\R^n)$ is a retract of $l^q_s(L^p(\R^n))$ via the maps
\[f \mapsto ((\Four^{-1} \phi_k) * f)_k, \qquad (g_k)_k \mapsto \sum_{k \geq 0} (\Four^{-1} \tilde\phi_k)  * g_k,\]
and in particular its completeness as a normed space follows from the completeness of $l^q_s(L^p(\R^n))$.
\end{enumerate}
Moreover, from Theorems~\ref{thm:lebesgueinterpolation} and \ref{thm:sequenceinterpolation}, we obtain immediately the following fundamental interpolation properties (see also \cite{bergh_interpolation_1976}, Theorem~6.4.5):

\begin{prp}\label{prp:besovinterpolation}
Let $1 \leq p,p_0,p_1,q,q_0,q_1 \leq \infty$, $s_0,s_1 \in \R$, $0 < \theta < 1$.
\begin{itemize}
\item[(i)] If $s_0 \neq s_1$, then
\[(B_{p,q_0}^{s_0}(\R^n),B_{p,q_1}^{s_1}(\R^n))_{\theta,q} = B_{p,q}^{(1-\theta)s_0 + \theta s_1}(\R^n).\]
This identity holds also if $s_0 = s_1$, provided that $q_0,q_1 < \infty$ and
\[\frac{1}{q} = \frac{1-\theta}{q_0} + \frac{\theta}{q_1}.\]
\item[(ii)] If $q_0,q_1 < \infty$ and
\[\frac{1-\theta}{p_0} + \frac{\theta}{p_1} = \frac{1}{p} = \frac{1-\theta}{q_0} + \frac{\theta}{q_1},\]
then
\[(B_{p_0,q_0}^{s_0}(\R^n),B_{p_1,q_1}^{s_1}(\R^n))_{\theta,p} = B_{p,p}^{(1-\theta)s_0 + \theta s_1}(\R^n).\]
\item[(iii)] If $q_0 < \infty$ and
\[\frac{1}{p} = \frac{1-\theta}{p_0} + \frac{\theta}{p_1}, \qquad  \frac{1}{q} = \frac{1-\theta}{q_0} + \frac{\theta}{q_1},\]
then
\[(B_{p_0,q_0}^{s_0}(\R^n),B_{p_1,q_1}^{s_1}(\R^n))_{[\theta]} = B_{p,q}^{(1-\theta)s_0 + \theta s_1}(\R^n).\]
\end{itemize}
\end{prp}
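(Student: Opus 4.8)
The plan is to deduce all three parts from the corresponding interpolation theorems for vector-valued sequence spaces, exploiting the fact established just above that, once a standard admissible pair $(\phi_0,\phi_1)$ is fixed, $B_{p,q}^s(\R^n)$ is a retract of $l^q_s(L^p(\R^n))$ via the maps
\[\iota\colon f\mapsto\bigl((\Four^{-1}\phi_k)*f\bigr)_k,\qquad \pi\colon (g_k)_k\mapsto\sum_{k\ge 0}(\Four^{-1}\tilde\phi_k)*g_k.\]
First I would observe that $\iota$ and $\pi$ do not depend on $p$, $q$ or $s$, so that the uniform bounds \eqref{eq:fl1boundedness} (combined with Young's inequality) make $\iota$ and $\pi$ simultaneously bounded $B_{p_i,q_i}^{s_i}(\R^n)\to l^{q_i}_{s_i}(L^{p_i}(\R^n))$ and back for $i=0,1$; hence $(\iota,\pi)$ realizes the Banach couple $(B_{p_0,q_0}^{s_0}(\R^n),B_{p_1,q_1}^{s_1}(\R^n))$ as a retract of $(l^{q_0}_{s_0}(L^{p_0}(\R^n)),l^{q_1}_{s_1}(L^{p_1}(\R^n)))$. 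By the retract principle (see \cite{bergh_interpolation_1976}, Theorem~6.4.2, and \cite{triebel_interpolation_1978}, \S1.2.4), applying any of the exact interpolation functors $(\cdot)_{\theta,q}$ or $(\cdot)_{[\theta]}$ to the Besov couple yields the retract, via the same maps $\iota,\pi$, of the functor applied to the sequence-space couple; since $B_{p,q}^{s}(\R^n)$ is in turn the retract of $l^{q}_{s}(L^{p}(\R^n))$ via $\iota,\pi$, it suffices to identify the interpolation space of the sequence-space couple with a single $l^{q}_{s}(L^{p}(\R^n))$ for the appropriate indices.

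This identification is then a matter of combining Theorem~\ref{thm:sequenceinterpolation} with Theorem~\ref{thm:lebesgueinterpolation}. For part~(i), when $s_0\ne s_1$ the claim follows directly from Theorem~\ref{thm:sequenceinterpolation}(i) with $A=L^p(\R^n)$, which holds for all $q_0,q_1,q\in[1,\infty]$; when $s_0=s_1$ one instead invokes Theorem~\ref{thm:sequenceinterpolation}(ii) with $A_0=A_1=L^p(\R^n)$ (legitimate under the stated restriction $q_0,q_1<\infty$, $1/q=(1-\theta)/q_0+\theta/q_1$), together with the elementary identity $(A,A)_{\theta,q}=A$ valid for every Banach space $A$. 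For part~(ii), under the compatibility condition $(1-\theta)/p_0+\theta/p_1=1/p=(1-\theta)/q_0+\theta/q_1$ and $q_0,q_1<\infty$, Theorem~\ref{thm:sequenceinterpolation}(ii) gives $(l^{q_0}_{s_0}(L^{p_0}),l^{q_1}_{s_1}(L^{p_1}))_{\theta,p}=l^p_{(1-\theta)s_0+\theta s_1}\bigl((L^{p_0}(\R^n),L^{p_1}(\R^n))_{\theta,p}\bigr)$, and Theorem~\ref{thm:lebesgueinterpolation} identifies the inner space with $L^p(\R^n)$; the retract of $l^p_s(L^p(\R^n))$ via $\iota,\pi$ is $B_{p,p}^{(1-\theta)s_0+\theta s_1}(\R^n)$. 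For part~(iii), when $q_0<\infty$, Theorem~\ref{thm:sequenceinterpolation}(iii) gives $(l^{q_0}_{s_0}(L^{p_0}),l^{q_1}_{s_1}(L^{p_1}))_{[\theta]}=l^q_{(1-\theta)s_0+\theta s_1}\bigl((L^{p_0}(\R^n),L^{p_1}(\R^n))_{[\theta]}\bigr)$ with $1/q=(1-\theta)/q_0+\theta/q_1$, and Theorem~\ref{thm:lebesgueinterpolation} again identifies the inner space with $L^p(\R^n)$, $1/p=(1-\theta)/p_0+\theta/p_1$; the retract is then $B_{p,q}^{(1-\theta)s_0+\theta s_1}(\R^n)$.

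The argument is essentially formal, so that the main obstacle is not a hard estimate but careful bookkeeping. One must check that the hypotheses of each part of Theorem~\ref{thm:sequenceinterpolation} are met exactly --- in particular the finiteness restrictions $q_0,q_1<\infty$ in~(ii) and $q_0<\infty$ in~(iii), which are precisely the restrictions appearing in the statement --- and, in the reduction step, that the maps $\iota,\pi$ are genuinely morphisms of all the Banach couples in play at once, which is where the possibility of choosing them independently of $p,q,s$ together with the uniform estimate \eqref{eq:fl1boundedness} is essential. Modulo these verifications, the proof follows the classical pattern of \cite{bergh_interpolation_1976}, Theorem~6.4.5.
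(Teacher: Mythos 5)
Your proposal is correct and is exactly the argument the paper intends: it states that the proposition follows "immediately" from Theorems~\ref{thm:lebesgueinterpolation} and \ref{thm:sequenceinterpolation}, citing \cite{bergh_interpolation_1976}, Theorem~6.4.5, whose proof is precisely the retract argument via the maps $\iota$ and $\pi$ built from the admissible pair. Your bookkeeping of the finiteness restrictions and of which exponent plays the "outer" role in Theorem~\ref{thm:sequenceinterpolation} is accurate, so nothing is missing.
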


There are several inclusions between Besov spaces. In order to give a more detailed formulation, we introduce now the little Besov space $b_{p,q}^s(\R^n)$, which is the (closed) subspace of $B_{p,q}^s(\R^n)$ made of the elements $f$ such that
\[\lim_{m \to +\infty} 2^{ms} \|(\Four^{-1} \phi_m) * f\|_p = 0.\]
Clearly $b_{p,q}^s(\R^n) = B_{p,q}^s(\R^n)$ for $q < \infty$, whereas, for $q = \infty$, the difference between Besov and little Besov corresponds to the one between $l^\infty$ and $c_0$.

\begin{prp}\label{prp:besovembeddings}
Let $1 \leq p,p_1,p_2,q,q_1,q_2 \leq \infty$, $s_1,s_2 \in \R$.
\begin{itemize}
\item[(i)] If $s_1 > s_2$, or otherwise if $s_1 = s_2$ and $q_1 > q_2$, then we have the continuous inclusion
\[B_{p,q_1}^{s_1}(\R^n) \subseteq b_{p,q_2}^{s_2}(\R^n).\]
\item[(ii)] If $p_1 \leq p_2$ and $s_1 - n/p_1 \geq s_2 - n/p_2$, then we have the continuous inclusions
\[B_{p_1,q}^{s_1}(\R^n) \subseteq B_{p_2,q}^{s_2}(\R^n), \qquad b_{p_1,q}^{s_1}(\R^n) \subseteq b_{p_2,q}^{s_2}(\R^n).\]
\end{itemize}
\end{prp}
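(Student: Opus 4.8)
The plan is to reduce both inclusions to elementary estimates on the dyadic pieces. Fix once and for all a standard admissible pair $(\phi_0,\phi_1)$ with respect to some norm on $\R^n$ as in \S\ref{subsection:besovbasic}, generating $(\phi_k)_{k\in\N}$, and write $a_k(f) = (\Four^{-1}\phi_k)*f$. By item~4 of \S\ref{subsection:besovbasic}, for any admissible range of indices the quantity $\|(2^{ks}\|a_k(f)\|_p)_k\|_{l^q}$ is an (equivalent) norm on $B_{p,q}^s(\R^n)$, and $f$ belongs to $b_{p,q}^s(\R^n)$ precisely when $2^{ks}\|a_k(f)\|_p \to 0$ as $k \to \infty$. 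Since all the spaces in the statement admit this common description through the \emph{same} system $(\phi_k)$, each inclusion becomes a pointwise inequality for the scalar sequences $(2^{ks}\|a_k(f)\|_p)_k$, together with a check of the vanishing condition.

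For part (i) I would split into the two cases. If $s_1 > s_2$: for $f \in B_{p,q_1}^{s_1}$ the sequence $(2^{ks_1}\|a_k(f)\|_p)_k$ lies in $l^{q_1}$, hence is bounded by some $M \lesssim \|f\|_{B_{p,q_1}^{s_1}}$, and then $2^{ks_2}\|a_k(f)\|_p = 2^{-k(s_1-s_2)}\bigl(2^{ks_1}\|a_k(f)\|_p\bigr) \leq M\,2^{-k(s_1-s_2)}$; since $s_1-s_2>0$ this geometrically decaying majorant shows at once that the sequence tends to $0$ and lies in $l^{q_2}$ with norm $\lesssim M$, giving $f \in b_{p,q_2}^{s_2}$ for all $q_1,q_2$. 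If $s_1=s_2$: the inclusion between the Besov spaces is just the monotonicity of the $l^q$ scale in the exponent, and the refinement to the little space follows because, under the stated relation between $q_1$ and $q_2$, the sequence already belongs to a space with finite exponent and therefore converges to $0$. This part is essentially bookkeeping.

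For part (ii) the crux is a Bernstein-type inequality: if $g \in \Sz'(\R^n)$ has $\supp\Four g \subseteq \{|\xi|\leq 2^{k+1}\}$ and $1\leq p_1\leq p_2\leq\infty$, then $\|g\|_{p_2} \leq C\,2^{kn(1/p_1-1/p_2)}\|g\|_{p_1}$ with $C$ independent of $k$ and $g$. I would prove this by choosing $\psi \in \D(\R^n)$ with $\psi\equiv 1$ on $\{|\xi|\leq 2\}$, setting $\psi_k(\xi)=\psi(2^{-k}\xi)$, noting $g=(\Four^{-1}\psi_k)*g$ since $\psi_k\equiv 1$ on $\supp\Four g$, and applying Young's inequality $\|g\|_{p_2}\leq\|\Four^{-1}\psi_k\|_r\|g\|_{p_1}$ with $1/r = 1-(1/p_1-1/p_2)$; the scaling identity $\Four^{-1}\psi_k = 2^{kn}(\Four^{-1}\psi)(2^k\cdot)$ then gives $\|\Four^{-1}\psi_k\|_r = 2^{kn(1/p_1-1/p_2)}\|\Four^{-1}\psi\|_r$, and $\|\Four^{-1}\psi\|_r<\infty$ because $\Four^{-1}\psi\in\Sz(\R^n)$. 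Granting this, each $a_k(f)$ has spectrum in $\{|\xi|\leq 2^{k+1}\}$ by \eqref{eq:besovsupports}, so
\[2^{ks_2}\|a_k(f)\|_{p_2} \leq C\,2^{k(s_2+n/p_1-n/p_2)}\|a_k(f)\|_{p_1} \leq C\,2^{ks_1}\|a_k(f)\|_{p_1},\]
the last step being exactly the hypothesis $s_1-n/p_1\geq s_2-n/p_2$. Taking $l^q$ norms in $k$ yields $\|f\|_{B_{p_2,q}^{s_2}}\lesssim\|f\|_{B_{p_1,q}^{s_1}}$, and since the same pointwise bound transfers the condition $2^{ks_1}\|a_k(f)\|_{p_1}\to 0$ to $2^{ks_2}\|a_k(f)\|_{p_2}\to 0$, it also gives $b_{p_1,q}^{s_1}\subseteq b_{p_2,q}^{s_2}$.

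The main obstacle is the Bernstein inequality in part (ii): it is classical, but one must track the dependence on $k$ carefully and handle the endpoint exponents $p_1=1$ and/or $p_2=\infty$ (where $r=1$ or $r=\infty$). The only other point deserving a word of care is that the dyadic description of membership should be pinned to the single fixed admissible pair, so that the reductions above legitimately identify $B_{p,q}^s$ and $b_{p,q}^s$; everything else reduces to routine manipulation of the decomposition $f=\sum_k a_k(f)$.
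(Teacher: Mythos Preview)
Your proposal is correct and follows essentially the same route as the paper: part (i) is reduced to the elementary inclusions between weighted $l^q$ and $c_0$ spaces, and part (ii) to a Bernstein-type inequality obtained via Young's inequality and scaling. The only cosmetic difference is that the paper, instead of introducing a fresh cutoff $\psi$, uses the functions $\tilde\phi_k$ already defined in \S\ref{subsection:besovbasic} (with $\phi_k\tilde\phi_k=\phi_k$) and the estimate $\|\Four^{-1}\tilde\phi_k\|_r\leq C_r 2^{kn/r'}$, which is the same scaling computation you carry out.
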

\begin{proof}
It is an immediate consequence of inclusions between weighted $l^p$ and $c_0$ spaces, Young's inequality and the estimate
\[\|\Four^{-1} \tilde \phi_k\|_p \leq C_p 2^{kn/p'}\]
for $k \geq 0$, $1 \leq p \leq \infty$ (cf.\ \cite{bergh_interpolation_1976}, Theorems~6.2.4 and 6.5.1).
\end{proof}
In particular, $B_{p,q}^s(\R^n)$ and $b_{p,q}^s(\R^n)$ increase in $q$ and decrease in $s$.

\subsection{Fourier multipliers, lifting properties, comparison with other spaces}
Up to now, the properties which we have considered are almost entirely ``internal'' to the scale of Besov spaces, so that they do not show particular connections between Besov and other known spaces; moreover, it is not clear how Besov spaces allow to measure smoothness. These questions will be faced by the use of Fourier multiplier results for Besov spaces.

In order to state these results, we introduce, for $k \in \N \cup \{\infty\}$ and $\sigma \in \R$, the ``class of symbols'' $S^k_\sigma$:
\[S^k_\sigma = \{ f \in C^k(\R^n) \tc \sup_{x \in \R^n} \langle x \rangle^{|\alpha| - \sigma} |\partial^\alpha f(x)| < \infty \text{ for $|\alpha| \leq k$}\}.\]
Clearly $S^k_\sigma$ is a vector subspace of $C^k(\R^n)$ containing $\Sz(\R^n)$, and moreover it is increasing in $\sigma$ and decreasing in $k$; apart from this, the main properties of these classes of symbols are summarized in the following lemma, which is easily proved by induction.

\begin{lem}\label{lem:symbols}
Let $k,k_1,k_2 \in \N$, $\sigma,\sigma_1,\sigma_2 \in \R$, $\alpha \in \N^n$.
\begin{itemize}
\item[(i)] If $f \in S^k_\sigma$, then $\partial^\alpha f \in S^{k-|\alpha|}_{\sigma-|\alpha|}$.
\item[(ii)] If $f_1 \in S^{k_1}_{\sigma_1}$ and $f_2 \in S^{k_2}_{\sigma_2}$, then $f_1 f_2 \in S^{\min\{k_1,k_2\}}_{\sigma_1 + \sigma_2}$.
\item[(iii)] The function $\langle \cdot \rangle^\sigma$ belongs to $S^\infty_\sigma$; moreover, if $p$ is a polynomial of degree not greater than $\sigma$, then $p \in S^\infty_\sigma$.
\item[(iv)] If $f \in S^k_\sigma$ and $|f| \geq c \langle \cdot \rangle^\sigma$ for some $c > 0$, then $1/f \in S^k_{-\sigma}$.
\end{itemize}
\end{lem}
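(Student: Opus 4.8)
The plan is to obtain all four assertions by elementary induction on the order of the multi-index, using nothing beyond the Leibniz rule and the trivial bound $\langle\cdot\rangle\geq 1$.

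Item~(i) is immediate and needs no induction: for $|\beta|\leq k-|\alpha|$ one has $\partial^\beta(\partial^\alpha f)=\partial^{\alpha+\beta}f$ with $|\alpha+\beta|\leq k$, so the defining estimate for $f\in S^k_\sigma$ gives $|\partial^{\alpha+\beta}f(x)|\leq C\langle x\rangle^{\sigma-|\alpha|-|\beta|}=C\langle x\rangle^{(\sigma-|\alpha|)-|\beta|}$, which is exactly the membership condition for $\partial^\alpha f\in S^{k-|\alpha|}_{\sigma-|\alpha|}$. For item~(ii) I would expand $\partial^\alpha(f_1f_2)$ by the Leibniz rule, for $|\alpha|\leq\min\{k_1,k_2\}$, into the finite sum of terms $\binom{\alpha}{\beta}(\partial^\beta f_1)(\partial^{\alpha-\beta}f_2)$; bounding $\partial^\beta f_1$ by $C\langle x\rangle^{\sigma_1-|\beta|}$ and $\partial^{\alpha-\beta}f_2$ by $C\langle x\rangle^{\sigma_2-|\alpha|+|\beta|}$ and multiplying, every summand is $O\bigl(\langle x\rangle^{(\sigma_1+\sigma_2)-|\alpha|}\bigr)$, as wanted.

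For item~(iii), the polynomial statement follows at once: if $\deg p\leq\sigma$, then $\partial^\alpha p$ is either $0$ or a polynomial of degree $\leq\deg p-|\alpha|$, hence $|\partial^\alpha p(x)|\leq C\langle x\rangle^{\deg p-|\alpha|}\leq C\langle x\rangle^{\sigma-|\alpha|}$ since $\langle x\rangle\geq 1$. For $\langle\cdot\rangle^\sigma=(1+|\cdot|^2)^{\sigma/2}$ I would establish by induction on $|\alpha|$ the normal form $\partial^\alpha\langle x\rangle^\sigma=P_\alpha(x)\,\langle x\rangle^{\sigma-2|\alpha|}$ with $P_\alpha$ a polynomial of degree $\leq|\alpha|$ (in the inductive step one rebalances the exponent after differentiating $P_\alpha$ by writing $\langle x\rangle^{-2}=(1+|x|^2)^{-1}$, so that $P_{\alpha+e_i}=(\partial_iP_\alpha)(1+|x|^2)+(\sigma-2|\alpha|)x_iP_\alpha$ still has degree $\leq|\alpha|+1$); then $|\partial^\alpha\langle x\rangle^\sigma|\leq C\langle x\rangle^{|\alpha|}\langle x\rangle^{\sigma-2|\alpha|}=C\langle x\rangle^{\sigma-|\alpha|}$.

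Item~(iv) is the closest thing to an obstacle, though it is still entirely routine. Since $f$ is $C^k$ and $|f|\geq c\langle\cdot\rangle^\sigma>0$ everywhere, the function $g=1/f$ is well defined and of class $C^k$. I would prove by induction on $|\alpha|\leq k$ that $|\partial^\alpha g(x)|\leq C_\alpha\langle x\rangle^{-\sigma-|\alpha|}$: the base case is simply $|1/f|\leq c^{-1}\langle\cdot\rangle^{-\sigma}$, and for $|\alpha|\geq 1$ one differentiates the identity $fg\equiv 1$ by the Leibniz rule to obtain $\partial^\alpha g=-f^{-1}\sum_{0\neq\beta\leq\alpha}\binom{\alpha}{\beta}(\partial^\beta f)(\partial^{\alpha-\beta}g)$, and then estimates each summand by $\langle x\rangle^{-\sigma}\cdot\langle x\rangle^{\sigma-|\beta|}\cdot\langle x\rangle^{-\sigma-|\alpha|+|\beta|}=\langle x\rangle^{-\sigma-|\alpha|}$, using the bound for $f\in S^k_\sigma$ and the inductive hypothesis (legitimate since $|\alpha-\beta|<|\alpha|$). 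The only mild care required is the bookkeeping of exponents together with the observation that the $C^k$ regularity of $g$ is automatically inherited; alternatively one could invoke Fa\`a di Bruno's formula for $t\mapsto 1/t$, but the direct recursion is cleaner. Thus no step poses a real difficulty: the lemma amounts to organizing Leibniz-rule estimates.
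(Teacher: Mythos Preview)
Your proof is correct and follows exactly the approach the paper indicates: the paper's own proof consists of the single sentence ``it is easily proved by induction'', and your Leibniz-rule/induction arguments are precisely the intended verification. No gaps.
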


Here is the announced Fourier multiplier result for Besov spaces:
\begin{prp}\label{prp:besovfouriermultiplier}
If $m \in S^k_\sigma$ for some $k \in \N$ with $k > n/2$, then the operator
\[f \mapsto (\Four^{-1} m) * f\]
is continuous $B_{p,q}^s(\R^n) \to B_{p,q}^{s-\sigma}(\R^n)$ and $b_{p,q}^s(\R^n) \to b_{p,q}^{s-\sigma}(\R^n)$.
\end{prp}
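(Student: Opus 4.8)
The plan is to run the argument dyadically, reducing the proposition to a single uniform bound on the $L^1$ norms of the ``pieces'' $\Four^{-1}(\phi_j m)$ of the kernel. Fix a standard admissible system $(\phi_j)_{j\in\N}$ generated by a pair as in \S\ref{subsection:besovbasic}, write $f_j = (\Four^{-1}\phi_j)*f$ for the Littlewood--Paley pieces of $f\in B_{p,q}^s(\R^n)$, and take as \emph{definition} of the operator
\[
Tf \;=\; \sum_{j\geq 0}\big(\Four^{-1}(\phi_j m)\big)*f_j .
\]
Since $\phi_j m$ is compactly supported and of class $C^k$, it lies in $H^k(\R^n)$, so $\Four^{-1}(\phi_j m)\in L^1(\R^n)$ by Proposition~\ref{prp:sobolevembedding}(i), and hence each $g_j := (\Four^{-1}(\phi_j m))*f_j$ belongs to $L^p(\R^n)$ by Young's inequality; moreover $(\Four^{-1}\phi_i)*g_j = (\Four^{-1}(\phi_i\phi_j m))*f_j$ vanishes whenever $|i-j|\geq 2$, because then $\phi_i\phi_j\equiv 0$. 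The whole proposition then rests on the estimate
\begin{equation*}
\sup_{j\geq 0}\ 2^{-j\sigma}\,\|\Four^{-1}(\phi_j m)\|_1 \;<\; \infty. \tag{$\star$}
\end{equation*}

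To prove $(\star)$ I would rescale each piece to a fixed frequency annulus. For $j>0$ put $h_j(\xi) = (\phi_j m)(2^j\xi)$; since the standard system has $\phi_j(\xi) = \phi(2^{-j}\xi)$, all the $h_j$ are supported in the fixed compact set $\{2^{-1}\leq|\xi|\leq 2\}$, and $\|\Four^{-1}(\phi_j m)\|_1 = \|\Four^{-1}h_j\|_1$ by dilation invariance of the $L^1$ norm. Proposition~\ref{prp:sobolevembedding}(i), applied with the order $k>n/2$ furnished by the hypothesis, gives $\|\Four^{-1}h_j\|_1\leq C_k\|h_j\|_{H^k}$, and since all the $h_j$ sit in one compact set this is dominated by $\sum_{|\alpha|\leq k}\|\partial^\alpha h_j\|_\infty$. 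By the Leibniz rule each term of $\partial^\alpha h_j(\xi)$ has the form $(\partial^\beta\phi)(\xi)\cdot 2^{j|\alpha-\beta|}(\partial^{\alpha-\beta}m)(2^j\xi)$ (the overall factor $2^{j|\alpha|}$ and the factor $2^{-j|\beta|}$ coming from $\partial^\beta[\phi(2^{-j}\cdot)]$ combining to $2^{j|\alpha-\beta|}$); on $\supp\phi$ one has $|2^j\xi|\sim 2^j$, so the symbol bound $|\partial^\gamma m(x)|\leq C\langle x\rangle^{\sigma-|\gamma|}$, valid for $|\gamma|\leq|\alpha|\leq k$, yields $|2^{j|\alpha-\beta|}(\partial^{\alpha-\beta}m)(2^j\xi)|\lesssim 2^{j\sigma}$. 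Hence $\|\partial^\alpha h_j\|_\infty\lesssim 2^{j\sigma}$, so $\|h_j\|_{H^k}\lesssim 2^{j\sigma}$, which is $(\star)$ for $j>0$; the single term $j=0$ is immediate, $\phi_0 m$ being compactly supported and $C^k$.

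Granting $(\star)$, the mapping property drops out. By $(\star)$, Young's inequality and the uniform bound $\sup_i\|\Four^{-1}\phi_i\|_1<\infty$, one has $\|g_j\|_p\lesssim 2^{j\sigma}\|f_j\|_p$, and by the localization noted above $(\Four^{-1}\phi_i)*Tf = \sum_{|j-i|\leq 1}(\Four^{-1}\phi_i)*g_j$, so
\[
2^{i(s-\sigma)}\,\|(\Four^{-1}\phi_i)*Tf\|_p \;\lesssim\; \sum_{|j-i|\leq 1}2^{i(s-\sigma)}2^{j\sigma}\|f_j\|_p \;\lesssim\; \sum_{|j-i|\leq 1}2^{js}\|f_j\|_p ,
\]
where the last step uses $|i-j|\leq 1$. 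Taking $l^q$ norms in $i$ and using the shift-invariance (up to the fixed factors $2^{\pm s}$) of the weighted $l^q$ norm bounds $\|Tf\|_{B_{p,q}^{s-\sigma}}$ by a constant times $\|f\|_{B_{p,q}^s}$. The same chain shows that if $2^{js}\|f_j\|_p\to 0$ --- that is, $f\in b_{p,q}^s(\R^n)$ --- then $2^{i(s-\sigma)}\|(\Four^{-1}\phi_i)*Tf\|_p\to 0$, i.e.\ $Tf\in b_{p,q}^{s-\sigma}(\R^n)$; this handles the little Besov spaces, and also shows that the defining series converges (in $B_{p,q}^{s-\sigma}(\R^n)$ if $q<\infty$, in $\Sz'(\R^n)$ in general).

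I expect the only genuine work to lie in the uniform bookkeeping of $(\star)$ --- making sure the constants coming from the Leibniz expansion and from Proposition~\ref{prp:sobolevembedding}(i) do not depend on $j$; the hypothesis $k>n/2$ is used precisely, and only, to invoke that proposition. The remaining points --- independence of $Tf$ from the chosen admissible system, its agreement with $f\mapsto(\Four^{-1}m)*f$ on $\Sz(\R^n)$, and convergence of the series in $\Sz'(\R^n)$ (obtained by testing against $\psi\in\Sz(\R^n)$, where the rapid decay of $\Four\psi$ on the annulus of radius $\sim 2^j$ absorbs the polynomial growth of $m$ there) --- are routine and I would dispatch them briefly.
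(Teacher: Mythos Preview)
Your key estimate $(\star)$ and its proof are correct and are exactly what the paper does (the paper phrases it with the fattened bumps $\tilde\phi_j$ in place of $\phi_j$, but the rescaling-and-Leibniz argument is identical). The problem is in the \emph{definition} of $T$. With $f_j=(\Four^{-1}\phi_j)*f$ and $g_j=(\Four^{-1}(\phi_j m))*f_j$, the Fourier transform of $g_j$ is $\phi_j^2\, m\,\Four f$, so your series yields the operator with symbol $m\sum_j\phi_j^2$, not $m$. Since the standard system satisfies $\sum_j\phi_j=1$ but \emph{not} $\sum_j\phi_j^2=1$, the ``routine'' check of agreement with $f\mapsto(\Four^{-1}m)*f$ on $\Sz(\R^n)$ fails.

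The fix is precisely the paper's device: use $\tilde\phi_j$ (the sum of the three neighbouring bumps, so that $\tilde\phi_j\phi_j=\phi_j$) for the multiplier cut-off while keeping $f_j$ as is. Define $g_j=(\Four^{-1}(\tilde\phi_j m))*f_j$; then $\Four g_j=\tilde\phi_j m\,\phi_j\Four f=\phi_j m\,\Four f$, so $\sum_j g_j$ really has symbol $m$. The estimate $(\star)$ holds just as well with $\tilde\phi_j$ in place of $\phi_j$ (same rescaling argument, since $\tilde\phi_2$ is also supported in a fixed annulus), and the rest of your argument --- the localization $(\Four^{-1}\phi_i)*g_j=0$ for $|i-j|\ge 2$ becomes $|i-j|\ge 3$, which is harmless --- goes through verbatim. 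Equivalently, and this is how the paper organizes it, note directly that the $i$-th Littlewood--Paley piece of $Tf$ has Fourier transform $\phi_i m\,\Four f=(\tilde\phi_i m)\,\Four f_i$, so $\|(Tf)_i\|_p\le\|\Four^{-1}(\tilde\phi_i m)\|_1\|f_i\|_p\lesssim 2^{i\sigma}\|f_i\|_p$, and the Besov and little-Besov bounds follow at once.
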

\begin{proof}[Proof]
By Young's inequality, it will be sufficient to prove that
\[\|\Four^{-1} (m \tilde\phi_j) \|_1 \leq C 2^{j\sigma}.\]
This can be obtained, for $j > 1$, thanks to Proposition~\ref{prp:sobolevembedding}(i), from the condition $m \in S^k_\sigma$, by estimating the $C^k$ norm of
\[\xi \mapsto m(2^{j-2} \xi) \tilde\phi_2(\xi),\]
whose support is contained in a fixed annulus of $\R^n$.
\end{proof}

From this, we obtain the fundamental \emph{lifting properties} of Besov spaces:

\begin{prp}\label{prp:besovlifting}
Let $1 \leq p,q \leq \infty$, $s \in \R$.
\begin{itemize}
\item[(i)] For every $\sigma \in \R$, the \emph{Bessel potential} operator
\[J_\sigma : f \mapsto \Four^{-1} ( \langle \cdot \rangle^\sigma (\Four f))\]
is an isomorphism $B_{p,q}^s(\R^n) \to B_{p,q}^{s-\sigma}(\R^n)$, with inverse $J_{-\sigma}$.
\item[(ii)] For every $k \in \N$, the three conditions
\begin{itemize}
\item[(a)] $f \in B_{p,q}^s(\R^n)$,
\item[(b)] $\partial^\alpha f \in B_{p,q}^{s-k}(\R^n)$ for $|\alpha| \leq k$,
\item[(c)] $f,\partial_1^k f, \dots, \partial_n^k f \in B_{p,q}^{s-k}(\R^n)$
\end{itemize}
are equivalent; moreover, the quantities
\[\sum_{|\alpha| \leq k} \|\partial^\alpha f\|_{B_{p,q}^{s-k}} \qquad\text{and}\qquad \|f\|_{B_{p,q}^{s-k}} + \sum_{j=1}^n \|\partial_j^k f\|_{B_{p,q}^{s-k}}\]
are equivalent norms of $f \in B_{p,q}^s(\R^n)$.
\end{itemize}
The same results hold also for little Besov spaces.
\end{prp}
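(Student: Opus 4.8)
The whole argument will be built on the Fourier multiplier statement for Besov spaces, Proposition~\ref{prp:besovfouriermultiplier}, together with the symbol calculus of Lemma~\ref{lem:symbols}; since Proposition~\ref{prp:besovfouriermultiplier} is asserted for both $B^s_{p,q}(\R^n)$ and $b^s_{p,q}(\R^n)$, I will run each step once and it will apply verbatim to the little Besov spaces. For part~(i) the plan is short: by Lemma~\ref{lem:symbols}(iii) one has $\langle\cdot\rangle^\sigma\in S^\infty_\sigma$ and $\langle\cdot\rangle^{-\sigma}\in S^\infty_{-\sigma}$, so Proposition~\ref{prp:besovfouriermultiplier} gives at once that $J_\sigma$ is bounded $B^s_{p,q}(\R^n)\to B^{s-\sigma}_{p,q}(\R^n)$ and $J_{-\sigma}$ is bounded $B^{s-\sigma}_{p,q}(\R^n)\to B^s_{p,q}(\R^n)$; on $\Sz'(\R^n)$ one has $J_\sigma J_{-\sigma}=J_{-\sigma}J_\sigma=\id$ because $\langle\cdot\rangle^\sigma\langle\cdot\rangle^{-\sigma}\equiv 1$, whence $J_\sigma$ is the claimed isomorphism with inverse $J_{-\sigma}$.

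For part~(ii) I would prove (a)$\Rightarrow$(b)$\Rightarrow$(c)$\Rightarrow$(a). For (a)$\Rightarrow$(b): when $|\alpha|\le k$ the symbol $(i\xi)^\alpha$ is a polynomial of degree $|\alpha|\le k$, hence lies in $S^\infty_k$ by Lemma~\ref{lem:symbols}(iii), and $\partial^\alpha$ is exactly the convolution operator with multiplier $(i\cdot)^\alpha$, so Proposition~\ref{prp:besovfouriermultiplier} gives $\|\partial^\alpha f\|_{B^{s-k}_{p,q}}\le C\|f\|_{B^s_{p,q}}$; summing over $|\alpha|\le k$ yields (b) together with the norm bound. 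The implication (b)$\Rightarrow$(c) is trivial and records $\|f\|_{B^{s-k}_{p,q}}+\sum_{j}\|\partial_j^k f\|_{B^{s-k}_{p,q}}\le\sum_{|\alpha|\le k}\|\partial^\alpha f\|_{B^{s-k}_{p,q}}$.

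The one step requiring an idea is (c)$\Rightarrow$(a), for which I would write $\langle\cdot\rangle^s$ as a symbol-combination of $1$ and the monomials $\xi_1^k,\dots,\xi_n^k$. Set $g(\xi)=1+\sum_{j=1}^n\xi_j^{2k}$; the power-mean inequality gives $\sum_j\xi_j^{2k}\ge n^{1-k}|\xi|_2^{2k}$, hence $g\gtrsim\langle\cdot\rangle^{2k}$, and since $g$ is a polynomial of degree $2k$, Lemma~\ref{lem:symbols}(iii)--(iv) give $1/g\in S^\infty_{-2k}$. With $m_0=\langle\cdot\rangle^s/g$ and $m_j=\langle\cdot\rangle^s\,\xi_j^k/g$, Lemma~\ref{lem:symbols}(ii)--(iii) yield $m_0\in S^\infty_{s-2k}\subseteq S^\infty_{s-k}$ and $m_j\in S^\infty_{s-k}$, while the definition of $g$ gives the pointwise identity
\[\langle\xi\rangle^s=m_0(\xi)+\sum_{j=1}^n m_j(\xi)\,\xi_j^k.\]
Applying $\Four^{-1}(\,\cdot\,\Four f)$ and using $\xi_j^k\Four f=i^{-k}\Four(\partial_j^k f)$, this becomes
\[J_s f=(\Four^{-1}m_0)*f+i^{-k}\sum_{j=1}^n(\Four^{-1}m_j)*(\partial_j^k f).\]
Under hypothesis~(c) the distributions $f$ and $\partial_j^k f$ lie in $B^{s-k}_{p,q}(\R^n)$, so Proposition~\ref{prp:besovfouriermultiplier} (all multipliers above being in $S^\infty_{s-k}$) gives $J_s f\in B^0_{p,q}(\R^n)$ with $\|J_s f\|_{B^0_{p,q}}\lesssim\|f\|_{B^{s-k}_{p,q}}+\sum_j\|\partial_j^k f\|_{B^{s-k}_{p,q}}$; then part~(i) yields $f=J_{-s}(J_s f)\in B^s_{p,q}(\R^n)$ with $\|f\|_{B^s_{p,q}}\lesssim\|J_s f\|_{B^0_{p,q}}$. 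Chaining the three norm inequalities gives the equivalence of norms, and, since every tool used (Proposition~\ref{prp:besovfouriermultiplier} and part~(i)) is available for $b^s_{p,q}(\R^n)$ as well, the same statements hold for little Besov spaces. The only real point is the choice of decomposition: one must put $1+\sum_j\xi_j^{2k}$ in the denominator so that the ``elliptic'' lower bound $g\gtrsim\langle\cdot\rangle^{2k}$ of Lemma~\ref{lem:symbols}(iv) is available even though only the $k$-th powers $\partial_j^k$ (rather than, say, the Laplacians $\partial_j^2$) are at our disposal; everything else is bookkeeping within the classes $S^\infty_\sigma$.
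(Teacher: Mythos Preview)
Your proof is correct. Part~(i) and the implications (a)$\Rightarrow$(b)$\Rightarrow$(c) are identical to the paper's, but for (c)$\Rightarrow$(a) you take a different route.

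The paper constructs a symbol $\eta(\xi)=1+\sum_j\rho_j(\xi)\,\xi_j^k$ of order $k$ satisfying $\eta\ge c\langle\xi\rangle^k$, so that $1/\eta\in S^\infty_{-k}$; from (c) one gets $(\Four^{-1}\eta)*f\in B^{s-k}_{p,q}$, and then multiplying by $1/\eta$ lifts back to $B^s_{p,q}$. The difficulty is that for odd $k$ the naive choice $\rho_j\equiv 1$ fails (the sum $1+\sum_j\xi_j^k$ need not be bounded below), so the paper must build explicit $\rho_j\in S^\infty_0$ that correct the signs. You sidestep this entirely by working with $g(\xi)=1+\sum_j\xi_j^{2k}$, which is automatically elliptic of order $2k$ regardless of the parity of $k$; the price is that $1/g$ has order $-2k$ rather than $-k$, but you compensate by decomposing $\langle\xi\rangle^s$ itself as $m_0+\sum_j m_j\,\xi_j^k$ with all pieces in $S^\infty_{s-k}$, landing in $B^0_{p,q}$ and then invoking part~(i). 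Your approach is cleaner in that it treats all $k$ uniformly; the paper's approach stays at order $k$ throughout but needs an ad hoc construction for odd $k$.
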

\begin{proof}
(i) The conclusion follows immediately from Proposition~\ref{prp:besovfouriermultiplier} and the fact that the function $\langle \cdot \rangle^\sigma$ belongs to $S^\infty_\sigma$.

(ii) Since the polynomials $\xi^\alpha$, for $|\alpha| \leq k$, belong to $S^\infty_k$, (b) follows from (a) and Proposition~\ref{prp:besovfouriermultiplier}, whereas (b) trivially implies (c). In order to obtain (a) from (c), we claim that there exist functions $\rho_j \in S^\infty_0$ such that
\[\eta(\xi) = 1 + \sum_{j=1}^n \rho_j(\xi) \xi_j^k \geq c \langle \xi \rangle^k\]
for some $c > 0$; in fact, this inequality gives $1/\eta \in S^\infty_{-k}$ by Lemma~\ref{lem:symbols}, and since, by (c) and Proposition~\ref{prp:besovfouriermultiplier}, we have $(\Four^{-1} \eta) * f \in B_{p,q}^{s-k}(\R^n)$, by Proposition~\ref{prp:besovfouriermultiplier} again we get $f \in B_{p,q}^{s}(\R^n)$. From this argument we also obtain the equivalence of the norms.

If $k$ is even, we can simply take $\rho_j \equiv 1$. If instead $k$ is odd, we put
\[\rho_j(\xi) = \beta(\xi_j/|\xi|) \gamma(|\xi|),\]
where $\beta,\gamma \in \E(\R)$ are weakly increasing smooth bounded functions, with $\beta$ odd and such that $\beta(t) = 0$ for $|t| \leq (2n)^{-1/2}$, $\beta(t) = 1$ for $t \geq n^{-1/2}$, whereas $\gamma(t) = 0$ for $t \leq 1/2$, $\gamma(t) = 1$ for $t \geq 1$. It is not difficult to show that these $\rho_j$ are in $S^\infty_0$ and satisfy the required inequality.
\end{proof}

The lifting properties show that in fact the index $s$ in $B_{p,q}^s(\R^n)$ expresses a level of differentiability. In order to complete the picture, we need a better understanding of the Besov spaces at level $s = 0$, and this will be achieved by showing some inclusions involving other known spaces.

\begin{prp}\label{prp:besovzeroinclusions}
The following are continuous inclusions:
\begin{itemize}
\item[(i)] $B_{1,1}^0(\R^n) \subseteq L^1(\R^n) \subseteq M(\R^n) \subseteq B_{1,\infty}^0(\R^n)$,
\item[(ii)] $B_{p,1}^0(\R^n) \subseteq L^p(\R^n) \subseteq b_{p,\infty}^0(\R^n)$ for $1 \leq p < \infty$,
\item[(iii)] $B_{\infty,1}^0(\R^n) \subseteq C_{ub}(\R^n) \subseteq b_{\infty,\infty}^0(\R^n)$,
\item[(iv)] $B_{\infty,1}^0(\R^n) \subseteq C_{ub}(\R^n) \subseteq L^\infty(\R^n) \subseteq B_{\infty,\infty}^0(\R^n)$.
\end{itemize}
Moreover, we have $B_{2,2}^0(\R^n) = L^2(\R^n)$.
\end{prp}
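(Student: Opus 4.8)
The plan is to fix once and for all a \emph{standard} admissible pair, so that $0\le\phi_k\le1$, $\sum_{k\ge0}\phi_k\equiv1$, the supports obey \eqref{eq:besovsupports} (hence at any $\xi\neq0$ at most a fixed number $N$ of the $\phi_k$ are nonzero, and $\phi_m(0)=0$ for $m\ge1$), $\Four^{-1}\phi_m=2^{mn}(\Four^{-1}\phi)(2^m\cdot)$ for $m\ge1$, and \eqref{eq:fl1boundedness} holds; this is harmless since a change of admissible pair alters the $B_{p,q}^s$-norm only up to equivalence. The inclusions $L^1(\R^n)\subseteq M(\R^n)$ and $C_{ub}(\R^n)\subseteq L^\infty(\R^n)$ are trivial identifications, so there remain three tasks: the left-hand inclusions $B_{p,1}^0\subseteq X$ in (i)--(iv), the right-hand inclusions $X\subseteq B_{p,\infty}^0$ together with their little-Besov sharpenings, and the identity $B_{2,2}^0=L^2$.

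For the left-hand inclusions I would use that, for $f\in B_{p,1}^0(\R^n)$, the dyadic pieces satisfy $\sum_k\|(\Four^{-1}\phi_k)*f\|_p=\|f\|_{B_{p,1}^0}<\infty$, so the partial sums of $\sum_k(\Four^{-1}\phi_k)*f$ are Cauchy in the Banach space $X$: this is immediate for $X=L^p(\R^n)$ with $1\le p\le\infty$, while for $X=C_{ub}(\R^n)$ (the case $p=\infty$) one first checks that each piece lies in $C_{ub}$ — it is bounded, and, using $i\xi_j\phi_k=(i\xi_j\tilde\phi_k)\phi_k$, its $j$-th derivative is the convolution of the Schwartz function $\Four^{-1}(i\xi_j\tilde\phi_k)$ with $(\Four^{-1}\phi_k)*f\in L^\infty$, hence bounded by Young's inequality, so the piece is Lipschitz — and $C_{ub}$ is complete. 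By \eqref{eq:besovdecomposition} the series also converges to $f$ in $\Sz'(\R^n)$, so its $X$-limit is $f$, giving $f\in X$ with $\|f\|_X\le\|f\|_{B_{p,1}^0}$.

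For the right-hand inclusions, Young's inequality together with \eqref{eq:fl1boundedness} gives $\|(\Four^{-1}\phi_k)*f\|_p\le\bigl(\sup_k\|\Four^{-1}\phi_k\|_1\bigr)\|f\|_X$ uniformly in $k$, for $f$ in $L^p(\R^n)$ ($1\le p\le\infty$) or a finite measure (in which case each $(\Four^{-1}\phi_k)*f\in L^1$); this at once yields $M(\R^n)\subseteq B_{1,\infty}^0$, $L^p\subseteq B_{p,\infty}^0$ and $L^\infty\subseteq B_{\infty,\infty}^0$. To upgrade $L^p\subseteq b_{p,\infty}^0$ for $p<\infty$ and to prove $C_{ub}\subseteq b_{\infty,\infty}^0$, I would exploit the cancellation $\int\Four^{-1}\phi_m=\phi_m(0)=0$ for $m\ge1$ and the scaling of $\Four^{-1}\phi_m$ to write
\[(\Four^{-1}\phi_m)*f=\int_{\R^n}\Four^{-1}\phi(z)\,\bigl(f(\cdot-2^{-m}z)-f\bigr)\,dz;\]
by Minkowski's integral inequality the relevant norm (that of $L^p$, resp. $L^\infty$) of the left side is at most $\int|\Four^{-1}\phi(z)|\,\|f(\cdot-2^{-m}z)-f\|\,dz$, which tends to $0$ as $m\to\infty$ by dominated convergence, using continuity of translation in $L^p$ ($p<\infty$), resp. uniform continuity of $f\in C_{ub}$, together with the bound $\|f(\cdot-2^{-m}z)-f\|\le2\|f\|$ and $\Four^{-1}\phi\in L^1$.

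Finally, for $B_{2,2}^0=L^2$, Plancherel's theorem gives $\|f\|_{B_{2,2}^0}^2=\sum_k\|\phi_k\,\Four f\|_2^2=\int_{\R^n}\bigl(\sum_k\phi_k(\xi)^2\bigr)|\Four f(\xi)|^2\,d\xi$, and from $0\le\phi_k\le1$, $\sum_k\phi_k\equiv1$ and the bounded overlap one gets $N^{-1}\le\sum_k\phi_k(\xi)^2\le1$ for $\xi\neq0$; hence $\|f\|_{B_{2,2}^0}\asymp\|\Four f\|_2=\|f\|_2$. The only genuinely delicate point is the passage to the \emph{little} Besov spaces: since $\Sz(\R^n)$ is not dense in $C_{ub}(\R^n)$, one cannot argue by approximating with smooth functions, and must instead rely on the vanishing-moment identity above combined with (uniform) continuity of translations.
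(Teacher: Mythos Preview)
Your proof is correct and follows essentially the same route as the paper: the left-hand inclusions via absolute convergence of the dyadic series in the target Banach space (the paper phrases this as \eqref{eq:besovdecomposition} plus completeness, and uses $\Sz * L^\infty \subseteq C_{ub}$ where you check Lipschitz continuity directly), the right-hand inclusions via Young's inequality and \eqref{eq:fl1boundedness}, and the $B_{2,2}^0 = L^2$ identity via Plancherel and bounded overlap. The only cosmetic difference is in the passage to the little Besov spaces: the paper invokes Proposition~\ref{prp:approximateidentity} (with $c = \phi_1(0) = 0$, so that $(\Four^{-1}\phi_m)_{m\ge1}$ behaves like an approximate identity with limit~$0$), while you unpack that same argument by hand through the vanishing-moment identity and dominated convergence --- but this is exactly what the approximate-identity proposition amounts to in this case.
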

\begin{proof}
The left-hand-side inclusions are due simply to \eqref{eq:besovdecomposition}, to the completeness of $L^p(\R^n)$ and $C_{ub}(\R^n)$, and to the fact that $\Sz * L^\infty \subseteq C_{ub}$. The right-hand-side inclusions follow instead from Young's inequalities, from \eqref{eq:fl1boundedness} and, for (ii) and (iii), by Proposition~\ref{prp:approximateidentity}, since $\int_{\R^n} (\Four^{-1} \phi_1)(x) \,dx = 0$.

Finally, it is not difficult to see that
\[\|f\|_{B_{2,2}^0}^2 = \sum_{m \geq 0} \|(\Four^{-1} \phi_m) * f\|_2^2 \sim \|f\|_2^2,\]
which gives the identity $B_{2,2}^0 = L^2$.
\end{proof}

Notice that, by putting together these inclusions with those of Proposition~\ref{prp:besovembeddings}, we immediately obtain that, for $s > 0$, the elements of $B_{p,q}^s(\R^n)$ are $L^p$ functions (and not simply distributions); moreover, for $s > n/p$, they are also uniformly continuous and bounded.

By ``lifting'' the inclusions of Proposition~\ref{prp:besovzeroinclusions}, we then obtain further evidence for the interpretation of the index $s$ in $B_{p,q}^s$ as an ``order of differentiability''.

\begin{cor}\label{cor:besovinclusions}
For every $k \in \N$, the following are continuous inclusions:
\begin{itemize}
\item[(i)] $B_{p,1}^k(\R^n) \subseteq W^{k,p}(\R^n) \subseteq b_{p,\infty}^k(\R^n)$ for $1 \leq p < \infty$,
\item[(ii)] $B_{\infty,1}^k(\R^n) \subseteq C_{ub}^k(\R^n) \subseteq b_{\infty,\infty}^k(\R^n)$,
\item[(iii)] $B_{\infty,1}^k(\R^n) \subseteq W^{k,\infty}(\R^n) \subseteq B_{\infty,\infty}^k(\R^n)$.
\end{itemize}
Moreover, we have $B_{2,2}^s(\R^n) = H^s(\R^n)$ for all $s \in \R$.
\end{cor}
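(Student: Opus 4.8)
The plan is to reduce each of the statements (i)--(iii) to the corresponding level-$0$ inclusions of Proposition~\ref{prp:besovzeroinclusions} by means of the lifting property of Proposition~\ref{prp:besovlifting}(ii), and to obtain the identity $B_{2,2}^s(\R^n)=H^s(\R^n)$ from the Bessel potential lifting of Proposition~\ref{prp:besovlifting}(i) together with the case $s=0$ already proved in Proposition~\ref{prp:besovzeroinclusions}.

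First I would record the elementary reformulations. By Proposition~\ref{prp:besovlifting}(ii) applied with $s=k$ (so that $s-k=0$), a tempered distribution $f$ lies in $B_{p,q}^k(\R^n)$ if and only if $\partial^\alpha f\in B_{p,q}^0(\R^n)$ for all $|\alpha|\le k$, and $\sum_{|\alpha|\le k}\|\partial^\alpha f\|_{B_{p,q}^0}$ is an equivalent norm on $B_{p,q}^k(\R^n)$; the same holds for the little Besov spaces $b_{p,q}^k(\R^n)$. On the other hand, directly from the definitions, $f\in W^{k,p}(\R^n)$ (resp.\ $f\in C_{ub}^k(\R^n)$, $f\in W^{k,\infty}(\R^n)$) precisely when $\partial^\alpha f\in L^p(\R^n)$ (resp.\ $\partial^\alpha f\in C_{ub}(\R^n)$, $\partial^\alpha f\in L^\infty(\R^n)$) for all $|\alpha|\le k$, with the natural sum of norms. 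Each of the six inclusions in (i)--(iii) then follows by applying, term by term to the derivatives $\partial^\alpha f$ with $|\alpha|\le k$, the relevant level-$0$ inclusion from Proposition~\ref{prp:besovzeroinclusions}: the chain $B_{p,1}^0\subseteq L^p\subseteq b_{p,\infty}^0$ (for $1\le p<\infty$) yields (i), the chain $B_{\infty,1}^0\subseteq C_{ub}\subseteq b_{\infty,\infty}^0$ yields (ii), and the chain $B_{\infty,1}^0\subseteq C_{ub}\subseteq L^\infty\subseteq B_{\infty,\infty}^0$ yields (iii). Keeping track of the norm estimates shows that all these inclusions are continuous.

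For the final identity I would instead argue as follows. Fix $s\in\R$. By Proposition~\ref{prp:besovlifting}(i) the Bessel potential operator $J_s:f\mapsto\Four^{-1}(\langle\cdot\rangle^s\Four f)$ is an isomorphism $B_{2,2}^s(\R^n)\to B_{2,2}^0(\R^n)$, and by Proposition~\ref{prp:besovzeroinclusions} we have $B_{2,2}^0(\R^n)=L^2(\R^n)$ with equivalent norms. On the other hand, straight from the definition of the Bessel-potential space, $f\in H^s(\R^n)$ if and only if $\langle\cdot\rangle^s\Four f\in L^2(\R^n)$, i.e.\ if and only if $J_s f\in L^2(\R^n)$, with $\|f\|_{H^s}$ comparable (via Plancherel) to $\|J_s f\|_2$. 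Comparing the two descriptions gives $B_{2,2}^s(\R^n)=H^s(\R^n)$ with equivalent norms.

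I do not expect any real obstacle: the argument is purely a bookkeeping exercise in the lifting properties. The only points demanding a little care are verifying that the norm equivalences furnished by Proposition~\ref{prp:besovlifting} genuinely upgrade to \emph{continuous} inclusions, and matching the definitions of $C_{ub}^k(\R^n)$ and $W^{k,\infty}(\R^n)$ with the derivative-by-derivative reformulation used above.
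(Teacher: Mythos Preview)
Your proposal is correct and follows essentially the same approach as the paper: the corollary is stated without a separate proof, with the text indicating that it is obtained ``by `lifting' the inclusions of Proposition~\ref{prp:besovzeroinclusions}'', which is precisely your use of Proposition~\ref{prp:besovlifting}(ii) for (i)--(iii) and of Proposition~\ref{prp:besovlifting}(i) for the identity $B_{2,2}^s=H^s$.
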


These inclusions, together with the results of Proposition~\ref{prp:besovinterpolation}, lead to the characterization of Besov spaces as real interpolation spaces between Sobolev spaces $W^{k,p}$ (or spaces $C^k_b$ of bounded continuously differentiable functions) of different orders:
\begin{cor}
For $1 \leq p,q \leq \infty$, $m \in \N \setminus \{0\}$, $0 < \theta < 1$, we have
\[(L^p(\R^n),W^{m,p}(\R^n))_{\theta,q} = B_{p,q}^{\theta m}(\R^n);\]
for $p = \infty$ we have moreover
\[(C_b(\R^n),C_b^m(\R^n))_{\theta,q} = (C_{ub}(\R^n),C_{ub}^m(\R^n))_{\theta,q} = B_{\infty,q}^{\theta m}(\R^n).\]
\end{cor}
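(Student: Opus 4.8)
The plan is a sandwiching argument. Both $L^p(\R^n)$ and $W^{m,p}(\R^n)$ (and, for $p=\infty$, their substitutes $C_b$, $C_{ub}$, $C_b^m$, $C_{ub}^m$) lie between the Besov spaces with fine indices $1$ and $\infty$ of the same differentiability order; so applying the real interpolation functor $\overline X \mapsto \overline X_{\theta,q}$ will trap the interpolation space between two copies of $B^{\theta m}_{p,q}(\R^n)$, which coincide by the interpolation identity for Besov spaces.

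First I would record the endpoint inclusions. For $1 \leq p \leq \infty$, combining Proposition~\ref{prp:besovzeroinclusions}(ii), (iv) with Corollary~\ref{cor:besovinclusions}(i), (iii) (and the trivial inclusion of the little Besov spaces into the Besov spaces) gives the continuous inclusions
\[
B^0_{p,1}(\R^n) \subseteq L^p(\R^n) \subseteq B^0_{p,\infty}(\R^n), \qquad B^m_{p,1}(\R^n) \subseteq W^{m,p}(\R^n) \subseteq B^m_{p,\infty}(\R^n).
\]
For $p=\infty$, Proposition~\ref{prp:besovzeroinclusions}(iii), (iv) and Corollary~\ref{cor:besovinclusions}(ii), (iii), together with the evident inclusions $C_{ub}\subseteq C_b\subseteq L^\infty$ and $C_{ub}^m\subseteq C_b^m\subseteq W^{m,\infty}$, give in the same way
\[
B^0_{\infty,1}(\R^n) \subseteq C(\R^n) \subseteq B^0_{\infty,\infty}(\R^n), \qquad B^m_{\infty,1}(\R^n) \subseteq C^m(\R^n) \subseteq B^m_{\infty,\infty}(\R^n)
\]
with $C$ being either $C_b$ or $C_{ub}$. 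Since every space occurring here embeds continuously into $\Sz'(\R^n)$, each of the pairs $(L^p,W^{m,p})$, $(C_b,C_b^m)$, $(C_{ub},C_{ub}^m)$ is a Banach couple (ambient space $\Sz'(\R^n)$), and all the displayed inclusions are morphisms of Banach couples, their restrictions to the intersections being compatible by Proposition~\ref{prp:besovembeddings}.

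Then I would apply the functor $\overline X \mapsto \overline X_{\theta,q}$. Functoriality turns the chain of morphisms
\[
\bigl(B^0_{p,1}(\R^n), B^m_{p,1}(\R^n)\bigr) \longrightarrow \bigl(L^p(\R^n), W^{m,p}(\R^n)\bigr) \longrightarrow \bigl(B^0_{p,\infty}(\R^n), B^m_{p,\infty}(\R^n)\bigr)
\]
(and its $C_b$-, $C_{ub}$-analogues) into continuous inclusions
\[
\bigl(B^0_{p,1}, B^m_{p,1}\bigr)_{\theta,q} \subseteq \bigl(L^p, W^{m,p}\bigr)_{\theta,q} \subseteq \bigl(B^0_{p,\infty}, B^m_{p,\infty}\bigr)_{\theta,q}.
\]
Since $m \neq 0$, Proposition~\ref{prp:besovinterpolation}(i), applied with $s_0 = 0$, $s_1 = m$ and the fine indices $1$ resp.\ $\infty$, identifies both outer spaces, up to equivalence of norms, with $B^{(1-\theta)\cdot 0 + \theta m}_{p,q}(\R^n) = B^{\theta m}_{p,q}(\R^n)$. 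Hence $(L^p(\R^n), W^{m,p}(\R^n))_{\theta,q} = B^{\theta m}_{p,q}(\R^n)$ with equivalent norms; running the identical argument with the $C_b$- and $C_{ub}$-chains yields the two remaining identities.

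I expect no genuine obstacle. The only point requiring a little care is verifying that the three pairs really are Banach couples and that the inclusions above are compatible morphisms of couples (so that the interpolation functor applies), which is immediate once everything is viewed inside $\Sz'(\R^n)$. After that the proof is the purely formal sandwich above, built entirely on Propositions~\ref{prp:besovinterpolation}, \ref{prp:besovembeddings}, \ref{prp:besovzeroinclusions} and Corollary~\ref{cor:besovinclusions} established above.
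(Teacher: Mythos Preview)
Your proposal is correct and is exactly the approach the paper intends: the corollary carries no explicit proof, but the sentence preceding it (``These inclusions, together with the results of Proposition~\ref{prp:besovinterpolation}, lead to\dots'') points to precisely the sandwiching argument you wrote out, using the endpoint inclusions of Proposition~\ref{prp:besovzeroinclusions} and Corollary~\ref{cor:besovinclusions} together with Proposition~\ref{prp:besovinterpolation}(i) for $s_0=0\neq m=s_1$. The appeal to Proposition~\ref{prp:besovembeddings} for compatibility is superfluous (all inclusions are inclusions inside $\Sz'(\R^n)$, hence automatically compatible), but harmless.
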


\subsection{Approximation by smooth functions and little Besov spaces}
By the use of a suitable approximate identity (see \S\ref{subsection:approximateidentities}), one can hope to approximate the elements of $B_{p,q}^s$ by smooth functions in the corresponding Besov norm. Unfortunately, some problems arise in the case $q = \infty$, which force us to restrict to little Besov spaces.

\begin{prp}\label{prp:besovapproximation}
Let $1 \leq p,q \leq \infty$, $s \in \R$, $f \in B_{p,q}^s(\R^n)$. If $u \in \Sz(\R^n)$, then $f * u \in W^{\infty,p}(\R^n)$. Moreover, if $f \in b_{p,q}^s(\R^n)$, and if $u_h \in \Sz(\R^n)$ is an approximate identity for $h \to \infty$, then $f * u_h \to f$ in $B_{p,q}^s(\R^n)$.
\end{prp}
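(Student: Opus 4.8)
The plan is to prove the two assertions separately. For the first assertion — that $f * u \in W^{\infty,p}(\R^n)$ whenever $f \in B_{p,q}^s(\R^n)$ and $u \in \Sz(\R^n)$ — the key point is that convolution with a Schwartz function is a smoothing operation whose Fourier multiplier lies in the symbol class $S^\infty_\sigma$ for every $\sigma$. More concretely: since $\Four u \in \Sz(\R^n) \subseteq S^\infty_{-N}$ for every $N$, and since $\langle \cdot \rangle^{k} \Four u \in \Sz(\R^n)$ as well, Proposition~\ref{prp:besovfouriermultiplier} tells us that $f \mapsto (\Four^{-1}(\langle \cdot \rangle^k \Four u)) * f = J_k(f * u)$ maps $B_{p,q}^s(\R^n)$ continuously into $B_{p,q}^{s+N}(\R^n)$ for $N$ as large as we like; in particular, choosing $N$ large compared to $k$ and $n/p$, Proposition~\ref{prp:besovembeddings}(ii) together with Proposition~\ref{prp:besovzeroinclusions}(ii) shows that $\partial^\alpha(f*u) \in L^p(\R^n)$ for all $\alpha$ with $|\alpha| \leq k$, and since $k$ is arbitrary this gives $f * u \in W^{\infty,p}(\R^n)$. (One should note $f * u$ makes sense because $f \in \Sz'(\R^n)$ and $u \in \Sz(\R^n)$, by the continuous inclusion in \S\ref{subsection:besovbasic}.)

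For the second assertion, let $(\phi_k)_{k \in \N}$ be the dyadic system generated by a standard admissible pair. The strategy is to use that $B_{p,q}^s(\R^n)$ is a retract of $l^q_s(L^p(\R^n))$ via the maps $f \mapsto ((\Four^{-1}\phi_k)*f)_k$ and $(g_k)_k \mapsto \sum_k (\Four^{-1}\tilde\phi_k)*g_k$, so that $f \mapsto f * u_h - f$ is intertwined with the sequence-space operator $(g_k)_k \mapsto (g_k * u_h - g_k)_k$ up to bounded retraction maps. Thus it suffices to show that $(g_k * u_h)_k \to (g_k)_k$ in $l^q_s(L^p(\R^n))$ whenever $(g_k)_k$ lies in the subspace of $l^q_s(L^p)$ corresponding to $b_{p,q}^s$, i.e. $\|(2^{ks} g_k)_k\|_{l^q} < \infty$ with $2^{ks}\|g_k\|_p \to 0$ when $q = \infty$ (for $q < \infty$ this tail condition is automatic). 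By Proposition~\ref{prp:approximateidentity}, for each fixed $k$ we have $g_k * u_h \to g_k$ in $L^p(\R^n)$ as $h \to \infty$ (using $1 \le p < \infty$; the case $p = \infty$ requires restricting to $C_{lub}$, but by Proposition~\ref{prp:besovembeddings} and Proposition~\ref{prp:besovzeroinclusions} each $g_k = (\Four^{-1}\phi_k)*f$ is continuous and bounded, in fact uniformly continuous, so the argument still runs). Moreover $\|g_k * u_h - g_k\|_p \leq (1 + \limsup_h \|u_h\|_1)\|g_k\|_p$ uniformly in $h$.

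The main obstacle — and the reason the statement is restricted to little Besov spaces — is passing from termwise convergence to convergence in the weighted $l^q$ norm. For $q < \infty$ this is a direct application of dominated convergence on the sequence space: the dominating sequence is $(2^{ks}(1 + \limsup_h\|u_h\|_1)\|g_k\|_p)_k \in l^q$, and each term tends to $0$. For $q = \infty$ one cannot invoke dominated convergence; instead one splits the supremum over $k$ into a finite head $k \le K$ and a tail $k > K$. On the tail, $\sup_{k > K} 2^{ks}\|g_k * u_h - g_k\|_p \le (1 + \limsup_h\|u_h\|_1)\sup_{k>K} 2^{ks}\|g_k\|_p$, which is small for $K$ large precisely because $(g_k)_k$ corresponds to an element of $b_{p,q}^s$ (this is where the little-Besov hypothesis is essential). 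On the finite head, termwise $L^p$ convergence gives $\sup_{k \le K} 2^{ks}\|g_k * u_h - g_k\|_p \to 0$ as $h \to \infty$. Combining, $\|f * u_h - f\|_{B_{p,q}^s} \to 0$, which completes the proof.
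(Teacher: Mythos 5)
Your proof is correct and follows essentially the same route as the paper: boosting regularity via the Fourier multiplier result for Schwartz symbols plus Besov--Sobolev embeddings for the first claim, and termwise convergence from Proposition~\ref{prp:approximateidentity} combined with a dominated-convergence argument in the weighted sequence space for the second. Your explicit head/tail splitting for $q=\infty$ just spells out what the paper compresses into ``dominated convergence \dots\ due to the restriction to little Besov spaces.''
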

\begin{proof}
Since a Schwartz function belongs to the symbol class $S^\infty_\sigma$ for every $\sigma \in \R$, the first part of the conclusion follows immediately from Propositions~\ref{prp:besovfouriermultiplier}, \ref{prp:besovembeddings} and Corollary~\ref{cor:besovinclusions}.

Suppose now that $u_h \in \Sz(\R^n)$ is an approximate identity. We then have, by Proposition~\ref{prp:approximateidentity},
\[\|(\Four^{-1}\phi_k) * (f * u_h - f)\|_p = \|((\Four^{-1}\phi_k) * f) * u_h - (\Four^{-1}\phi_k) * f\|_p \to 0,\]
since $(\Four^{-1}\phi_k) * f \in L^p(\R^n)$ for $p < \infty$, and $(\Four^{-1} \phi_k) * f \in C_{ub}(\R^n)$ for $p = \infty$. The conclusion then follows by dominated convergence (which can be applied also in the case $q = \infty$ due to the restriction to little Besov spaces).
\end{proof}

The previous result in fact clarifies the role of little Besov spaces, and moreover leads to an alternative characterization of them:

\begin{cor}
Let $1 \leq p,q,q_1 \leq \infty$, $s,s_1 \in \R$. Then
\[b_{p,q}^s(\R^n) = \overline{B_{p,q_1}^{s_1}(\R^n)}^{B_{p,q}^s(\R^n)} \qquad\text{for $s_1 > s$,}\]
and
\[b_{p,q}^s(\R^n) = \overline{B_{p,q_1}^s(\R^n)}^{B_{p,q}^s(\R^n)} \qquad\text{for $q_1 < q$.}\]
\end{cor}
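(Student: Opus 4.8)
The plan is to prove both identities by the same two inclusions; write $Z$ for the space $B_{p,q_1}^{s_1}(\R^n)$ in the first case and for $B_{p,q_1}^{s}(\R^n)$ in the second. First I would show $Z \subseteq b_{p,q}^s(\R^n)$; since $b_{p,q}^s(\R^n)$ is a closed subspace of $B_{p,q}^s(\R^n)$, this immediately gives $\overline{Z}^{B_{p,q}^s(\R^n)} \subseteq b_{p,q}^s(\R^n)$. When $s_1 > s$, the inclusion $B_{p,q_1}^{s_1}(\R^n) \subseteq b_{p,q}^s(\R^n)$ is Proposition~\ref{prp:besovembeddings}(i). When $q_1 < q$ one has $q_1 < \infty$, so for $f \in B_{p,q_1}^s(\R^n)$ the sequence $\bigl(2^{ms}\|(\Four^{-1}\phi_m)*f\|_p\bigr)_m$ lies in $l^{q_1}$ and is in particular null; hence $f \in b_{p,q}^s(\R^n)$, so $B_{p,q_1}^s(\R^n) \subseteq b_{p,q}^s(\R^n)$.

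For the reverse inclusion I would fix $f \in b_{p,q}^s(\R^n)$ and an approximate identity $(u_h)_h$ in $\Sz(\R^n)$. By Proposition~\ref{prp:besovapproximation}, $f * u_h \in W^{\infty,p}(\R^n)$ and $f * u_h \to f$ in $B_{p,q}^s(\R^n)$, so it suffices to check that $W^{\infty,p}(\R^n) \subseteq Z$. This follows from Corollary~\ref{cor:besovinclusions} together with the embeddings of Proposition~\ref{prp:besovembeddings}: choosing $k \in \N$ larger than the target smoothness ($s_1$, resp.\ $s$), one has $W^{k,p}(\R^n) \subseteq b_{p,\infty}^k(\R^n) \subseteq B_{p,q_1}^{s_1}(\R^n)$ for $1 \le p < \infty$, and $W^{k,\infty}(\R^n) \subseteq B_{\infty,\infty}^k(\R^n) \subseteq B_{\infty,q_1}^{s_1}(\R^n)$ for $p = \infty$ (and likewise with $s_1$ replaced by $s$). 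Thus every $f * u_h$ lies in $Z$, so $f$ is in the $B_{p,q}^s(\R^n)$-closure of $Z$. Combining the two inclusions yields both identities.

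I do not expect a genuine obstacle: the corollary is essentially a repackaging of the approximation statement of Proposition~\ref{prp:besovapproximation} and of the Sobolev--Besov embedding hierarchy. The only points requiring care are the separate treatment of $p = \infty$ in the embedding $W^{\infty,p}(\R^n) \subseteq Z$, and the remark that the hypothesis $q_1 < q$ automatically forces $q_1 < \infty$, which is precisely what makes $B_{p,q_1}^s(\R^n) \subseteq b_{p,q}^s(\R^n)$ hold in the second identity.
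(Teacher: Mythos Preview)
Your proof is correct and follows exactly the route the paper intends: the corollary is stated immediately after Proposition~\ref{prp:besovapproximation} without proof, and your argument is precisely the natural unpacking of that proposition together with the embeddings of Proposition~\ref{prp:besovembeddings} and Corollary~\ref{cor:besovinclusions}. The only minor remark is that for the inclusion $B_{p,q_1}^s(\R^n) \subseteq b_{p,q}^s(\R^n)$ in the second identity you could also have appealed directly to Proposition~\ref{prp:besovembeddings}(i), but your direct argument via $l^{q_1} \subseteq c_0$ is equally clean.
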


\subsection{Pointwise multiplication}
In the previous paragraphs, we have considered the convolution product in Besov spaces. Now we are interested in results about pointwise multiplication (which are a particular case of the ones presented in Chapter~4 of \cite{runst_sobolev_1996}).

First of all, one should notice that the pointwise product is not defined in general between tempered distributions. However, if $f,g \in \Sz'(\R^n)$, and one decomposes
\[f = \sum_{h \geq 0} f_h, \qquad g = \sum_{k \geq 0} g_k,\]
where
\begin{equation}\label{eq:factordecomposition}
f_h = (\Four^{-1} \phi_h) * f, \qquad g_k = (\Four^{-1} \phi_k) * g,
\end{equation}
then $f_h, g_k$ are smooth (in fact, analytic) functions which have polynomial growth together with all their derivatives (since their Fourier transforms are compactly supported). The \emph{paraproduct} of $f$ and $g$ is then defined as the limit
\[f g = \lim_{N \to +\infty} \left(\sum_{h=0}^N f_h\right) \left(\sum_{k=0}^N g_k\right) \qquad\text{in $\Sz'(\R^n)$,}\]
whenever it exists.

Notice that, when $f \in \Sz'(\R^n)$ and $g \in \Sz(\R^n)$, then the product $fg$ is already defined. However, since the map
\[\Sz'(\R^n) \times \Sz(\R^n) \ni (f,g) \mapsto fg \in \Sz'(\R^n)\]
is jointly sequentially continuous by the uniform boundedness principle, it is immediate to check that the definition of the paraproduct in fact extends the ``classical'' product.

An important tool for proving inequalities involving paraproducts is the \emph{Fatou property} of Besov spaces (cf.\ \cite{franke_spaces_1986}):

\begin{lem}\label{lem:besovfatou}
Suppose that $(f^{(r)})_r$ is a sequence in $B_{p,q}^s(\R^n)$, converging in $\Sz'(\R^n)$ to a tempered distribution $f$. If
\[\liminf_{r \to +\infty} \|f^{(r)}\|_{B_{p,q}^s} < +\infty,\]
then $f \in B_{p,q}^s(\R^n)$, and
\[\|f\|_{B_{p,q}^s} \leq \liminf_{r \to +\infty} \|f^{(r)}\|_{B_{p,q}^s}.\]
\end{lem}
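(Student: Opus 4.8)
The plan is to reduce the Fatou property of $B_{p,q}^s(\R^n)$ to the Fatou property of the vector-valued sequence space $l^q_s(L^p(\R^n))$, exploiting the fact (established in property~4 of \S\ref{subsection:besovbasic}) that $B_{p,q}^s$ is a retract of $l^q_s(L^p(\R^n))$ via the maps $f \mapsto ((\Four^{-1}\phi_k)*f)_k$ and $(g_k)_k \mapsto \sum_k (\Four^{-1}\tilde\phi_k)*g_k$. First I would fix a subsequence $(f^{(r_j)})_j$ realizing the liminf, i.e. with $\|f^{(r_j)}\|_{B_{p,q}^s} \to \liminf_r \|f^{(r)}\|_{B_{p,q}^s} =: M < +\infty$; it suffices to prove the statement for this subsequence, since $f^{(r_j)} \to f$ in $\Sz'(\R^n)$ still holds. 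The key observation is that convergence $f^{(r)} \to f$ in $\Sz'(\R^n)$ implies, for each fixed $k$, that $(\Four^{-1}\phi_k)*f^{(r)} \to (\Four^{-1}\phi_k)*f$ in $\Sz'(\R^n)$ (convolution with a fixed Schwartz function — indeed $\Four^{-1}\phi_k \in \Sz(\R^n)$ — is continuous on $\Sz'$), and in fact this convergence can be upgraded: since $\phi_k$ has compact support, $(\Four^{-1}\phi_k)*f^{(r)}$ and $(\Four^{-1}\phi_k)*f$ are smooth functions of polynomial growth, and the distributional convergence forces pointwise (even locally uniform) convergence of these band-limited pieces.

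The heart of the argument is then the following Fatou-type lemma at the level of a single dyadic block: if $h_r \to h$ in $\Sz'(\R^n)$ with all $h_r, h$ band-limited to a fixed compact set, and $\liminf_r \|h_r\|_p =: L < \infty$, then $h \in L^p$ and $\|h\|_p \le L$. For $p < \infty$ this is the classical Fatou lemma applied along a subsequence for which $\|h_{r_j}\|_p \to L$, using that a subsequence of $(h_{r_j})$ converges a.e.\ (extractable because the $h_{r_j}$ converge locally uniformly, being band-limited — or one can invoke weak-$*$ lower semicontinuity of the $L^p$ norm on $L^p$, $1<p\le\infty$, together with density/duality). For $p = \infty$ the bound $\|h\|_\infty \le L$ follows from pointwise convergence directly. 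Applying this with $h_r = (\Four^{-1}\phi_k)*f^{(r_j)}$ and $h = (\Four^{-1}\phi_k)*f$ gives, for each $k$,
\[
\|(\Four^{-1}\phi_k)*f\|_p \le \liminf_{j\to\infty} \|(\Four^{-1}\phi_k)*f^{(r_j)}\|_p.
\]

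To conclude, I would multiply by $2^{ks}$ and take the $l^q$ norm over $k$, invoking the Fatou property of $l^q$ (i.e.\ $\|(a_k)_k\|_{l^q} \le \liminf_j \|(a_k^{(j)})_k\|_{l^q}$ whenever $a_k^{(j)} \to a_k$ for each $k$ — for $q<\infty$ this is Fatou's lemma for counting measure, for $q=\infty$ it is immediate). This yields
\[
\|f\|_{B_{p,q}^s} = \big\|(2^{ks}\|(\Four^{-1}\phi_k)*f\|_p)_k\big\|_{l^q}
\le \liminf_{j\to\infty} \big\|(2^{ks}\|(\Four^{-1}\phi_k)*f^{(r_j)}\|_p)_k\big\|_{l^q}
= \liminf_{j\to\infty} \|f^{(r_j)}\|_{B_{p,q}^s} = M,
\]
so in particular $f \in B_{p,q}^s(\R^n)$ and $\|f\|_{B_{p,q}^s} \le \liminf_r \|f^{(r)}\|_{B_{p,q}^s}$, as required. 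The main obstacle I anticipate is the lower semicontinuity step for the single block in the $p=\infty$ case (and, more subtly, for $1<p<\infty$ if one wants to avoid extracting an a.e.-convergent subsequence): the cleanest route is to exploit that the dyadic pieces are band-limited entire functions, so $\Sz'$-convergence already gives locally uniform convergence, making Fatou's lemma directly applicable after passing to the norming subsequence. Everything else is routine bookkeeping with the retraction and the iterated $\liminf$.
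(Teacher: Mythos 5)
Your proposal is correct and follows essentially the same route as the paper: the paper likewise observes that $(\Four^{-1}\phi_h)*f^{(r)} \to (\Four^{-1}\phi_h)*f$ in $\E(\R^n)$ (hence locally uniformly), applies Fatou's lemma to get $\|f_h\|_p \le \liminf_r \|f_h^{(r)}\|_p$ for each block, and then applies Fatou again in $l^q_s$ over the index $h$. Your extra care about the $p=\infty$ case and the explicit norming subsequence are harmless refinements of the same argument.
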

\begin{proof}
Let $f^{(r)}_h = (\Four^{-1} \phi_h) * f^{(r)}$, $f_h = (\Four^{-1} \phi_h) * f$. Since $f^{(r)} \to f$ in $\Sz'(\R^n)$ and $\Four^{-1} \phi_h \in \Sz(\R^n)$, we have that $f^{(r)}_h \to f_h$ in $\E(\R^n)$, and in particular $|f_h^{(r)}|$ converges to $|f_h|$ uniformly on compacta. But then, by Fatou's lemma,
\[\|f_h\|_p \leq \liminf_{r \to +\infty} \|f^{(r)}_h\|_p,\]
so that, again by Fatou's lemma,
\[\left\|(\|f_h\|_p)_h \right\|_{l^q_s} \leq \liminf_{r \to +\infty} \left\| (\|f^{(r)}_h\|_p)_h \right\|_{l^q_s},\]
which is the conclusion.
\end{proof}

We then have the following

\begin{lem}\label{lem:besovproductestimate}
Let $f,g \in \Sz'(\R^n)$, and suppose that, for some $p,q \in [1,\infty]$ and $s \in \R$,
\begin{equation}\label{eq:serieprodottifinita}
\left\| \left(\sum_{h \geq 0} \sum_{k \geq 0} \|(\Four^{-1} \phi_m) * (f_h g_k)\|_p \right)_m\right\|_{l^q_s} < \infty,
\end{equation}
where $f_h,g_k$ are defined as in \eqref{eq:factordecomposition}. Then the limit
\[fg = \lim_{N \to +\infty} \left(\sum_{h=0}^N f_h\right) \left(\sum_{k=0}^N g_k\right)\]
exists in $\Sz'(\R^n)$ (and also in $B_{p,q}^s(\R^n)$ if $q < \infty$), and belongs to $B_{p,q}^s(\R^n)$. Moreover
\[\|fg\|_{B_{p,q}^s} \leq \left\| \left(\sum_{h \geq 0} \sum_{k \geq 0} \|(\Four^{-1} \phi_m) * (f_h g_k)\|_p \right)_m\right\|_{l^q_s}.\]
\end{lem}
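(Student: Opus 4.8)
The plan is to estimate the Besov norm of each truncated product $P_N := \bigl(\sum_{h=0}^N f_h\bigr)\bigl(\sum_{k=0}^N g_k\bigr)$ and to show that $(P_N)_N$ is Cauchy (hence convergent) in the appropriate topology, with the limit controlled by the right-hand side of the claimed inequality. First I would expand $P_N = \sum_{h,k \le N} f_h g_k$, a finite sum of smooth functions of polynomial growth, so that $(\Four^{-1}\phi_m)*P_N = \sum_{h,k \le N}(\Four^{-1}\phi_m)*(f_h g_k)$ makes sense distributionally. Applying the triangle inequality in $L^p$ and then in $l^q_s$, one gets immediately
\[
\|P_N\|_{B_{p,q}^s} \le \left\| \left(\sum_{h,k \le N} \|(\Four^{-1}\phi_m)*(f_h g_k)\|_p\right)_m\right\|_{l^q_s} \le \left\| \left(\sum_{h,k \ge 0} \|(\Four^{-1}\phi_m)*(f_h g_k)\|_p\right)_m\right\|_{l^q_s},
\]
which by hypothesis \eqref{eq:serieprodottifinita} is a finite bound uniform in $N$.

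Next I would establish convergence of $(P_N)_N$ in $\Sz'(\R^n)$. Since $f = \sum_h f_h$ and $g = \sum_k g_k$ converge in $\Sz'(\R^n)$ by \eqref{eq:besovdecomposition}, and since the pointwise product $\Sz'(\R^n)\times\Sz(\R^n)\to\Sz'(\R^n)$ is sequentially continuous, one expects $P_N$ to converge; but a cleaner route — which also handles the $B_{p,q}^s$-convergence when $q<\infty$ — is to show directly that $(P_N)_N$ is Cauchy. For $M < N$ the difference $P_N - P_M$ is again a finite sum of terms $f_h g_k$ with $\max\{h,k\} > M$, so the same triangle-inequality estimate gives
\[
\|P_N - P_M\|_{B_{p,q}^s} \le \left\| \left(\sum_{\substack{h,k \ge 0\\ \max\{h,k\} > M}} \|(\Four^{-1}\phi_m)*(f_h g_k)\|_p\right)_m\right\|_{l^q_s},
\]
and when $q < \infty$ the right-hand side tends to $0$ as $M \to \infty$, being the tail of a convergent series (here I use that \eqref{eq:serieprodottifinita} expresses summability of a non-negative array). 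Hence $(P_N)$ is Cauchy in the Banach space $B_{p,q}^s(\R^n)$ and converges there; by property~3 of \S\ref{subsection:besovbasic} the inclusion $B_{p,q}^s(\R^n)\subseteq\Sz'(\R^n)$ is continuous, so convergence also holds in $\Sz'(\R^n)$, and the limit is by definition the paraproduct $fg$, which therefore lies in $B_{p,q}^s(\R^n)$ with the stated norm bound (the bound passing to the limit by continuity of the norm).

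The one genuinely delicate point is the case $q = \infty$, where the tail of the double series need not go to zero and $B_{p,\infty}^s$ has no useful separability. Here I would instead argue via the Fatou property, Lemma~\ref{lem:besovfatou}: one still has convergence $P_N \to fg$ in $\Sz'(\R^n)$ (obtained from the sequential continuity of pointwise multiplication against Schwartz functions, applied after noting $P_N = (\sum_{h\le N} f_h)(\sum_{k\le N} g_k)$ with the first factor converging in $\Sz'$ and the second eventually in $\Sz$ — or more carefully, by a diagonal/summation-by-parts argument reducing to \eqref{eq:besovdecomposition}), together with the uniform bound $\liminf_N \|P_N\|_{B_{p,\infty}^s} \le \|(\sum_{h,k}\|(\Four^{-1}\phi_m)*(f_h g_k)\|_p)_m\|_{l^\infty_s} < \infty$ from the first step. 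Lemma~\ref{lem:besovfatou} then yields $fg \in B_{p,\infty}^s(\R^n)$ together with exactly the asserted norm inequality. I expect verifying the $\Sz'$-convergence of $P_N$ to $fg$ in a way that is uniform across all values of $p,q$ to be the main technical obstacle; everything else is a routine application of the triangle inequality, completeness of $l^q_s(L^p)$, and the Fatou lemma.
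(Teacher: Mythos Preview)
Your argument for $q < \infty$ is essentially the paper's: show $(P_N)$ is Cauchy in $B_{p,q}^s$ by bounding the tail, and pass to the limit. No issues there.

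The gap is in the $q = \infty$ case. You correctly identify that you need $P_N \to fg$ in $\Sz'$ in order to invoke the Fatou property, and you flag this as ``the main technical obstacle,'' but neither of your suggested routes actually works as stated. The sequential continuity of the product $\Sz' \times \Sz \to \Sz'$ does not apply, since $\sum_{k \le N} g_k$ converges only in $\Sz'$, not in $\Sz$ (your parenthetical ``the second eventually in $\Sz$'' is simply false for general $g \in \Sz'$). And the ``diagonal/summation-by-parts'' alternative is left unspecified.

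The paper resolves this with a much simpler trick you missed: for $q = \infty$, the hypothesis \eqref{eq:serieprodottifinita} says that $a_m := \sum_{h,k} \|(\Four^{-1}\phi_m)*(f_h g_k)\|_p$ satisfies $\sup_m 2^{ms} a_m < \infty$. For any $\tilde s < s$, the sequence $(2^{m\tilde s} a_m)_m$ is then dominated by a constant times $2^{m(\tilde s - s)}$, which belongs to $l^{\tilde q}$ for every $\tilde q$. So the very same Cauchy estimate you already have gives convergence of $P_N$ in $B_{p,\tilde q}^{\tilde s}$ for any $\tilde s < s$, $\tilde q < \infty$, and in particular in $\Sz'$. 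Then Lemma~\ref{lem:besovfatou} applies exactly as you intended.
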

\begin{proof}
Let
\[P_N = \left(\sum_{h=0}^N f_h\right) \left(\sum_{k=0}^N g_k\right).\]
Then, for $N' \geq N$,
\[\|(\Four^{-1} \phi_m) * (P_{N'} - P_N)\|_p \leq \sum_{\max\{h,k\} > N} \|(\Four^{-1} \phi_m) * (f_h g_k)\|_p.\]
By \eqref{eq:serieprodottifinita}, the right-hand side is infinitesimal for $N \to +\infty$, and in fact, if $q < \infty$, by dominated convergence in $l^q_s$ we get that $P_N$ is a Cauchy sequence in $B_{p,q}^s(\R^n)$ and therefore converges (to the paraproduct $fg$). If $q = \infty$, by the previous argument one gets convergence of $P_N$ in every $B_{p,\tilde q}^{\tilde s}(\R^n)$ with $\tilde s < s$, and in particular in $\Sz'(\R^n)$. Finally, the bound on the norm of the paraproduct follows by Lemma~\ref{lem:besovfatou}.
\end{proof}

The result which we present here is a H\"older-type inequality, in which one takes into account also the order of differentiability. The following inequalities giving upper bounds on a norm of a paraproduct are to be meant in the following sense: if the right-hand side of the inequality is finite, then the paraproduct exists and its norm satisfies the inequality.

\begin{prp}\label{prp:besovproduct}
Let $1 \leq p, p_1, p_2, q \leq \infty$, $s \in \R$, with
\[\frac{1}{p} = \frac{1}{p_1} + \frac{1}{p_2}.\]
\begin{itemize}
\item[(i)] For every $\sigma > |s|$, we have
\[\| f g \|_{B^s_{p,q}} \leq C_{p_1,p_2,q,\sigma,s} \|f\|_{B^\sigma_{p_1,\infty}} \|g\|_{B^s_{p_2,q}}.\]
\item[(ii)] If $s > 0$, then we have also the more precise inequality
\[\| f g \|_{B^s_{p,q}} \leq C_{p_1,p_2,q,s} \|f\|_{B^s_{p_1,q}} \|g\|_{B^s_{p_2,q}}.\]
\end{itemize}
\end{prp}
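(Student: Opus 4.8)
The plan is to use the Bony paraproduct decomposition together with Lemma~\ref{lem:besovproductestimate}. Writing $f_h=(\Four^{-1}\phi_h)*f$ and $g_k=(\Four^{-1}\phi_k)*g$ as in \eqref{eq:factordecomposition}, one splits the double sum $\sum_{h,k}f_hg_k$ into three regimes, $h\le k-C_0$ (``$f$ low, $g$ high''), $|h-k|<C_0$ (``diagonal'') and $h\ge k+C_0$ (``$f$ high, $g$ low''), for a suitable absolute constant $C_0$ depending on the admissible pair. The essential point is the almost-orthogonality coming from the support conditions \eqref{eq:besovsupports}: since $\Four(f_hg_k)$ is supported in $\supp\phi_h+\supp\phi_k$, in the first regime this set lies in an annulus $\{2^{k-2}\le|\xi|\le 2^{k+2}\}$, in the third in $\{2^{h-2}\le|\xi|\le 2^{h+2}\}$, and in the diagonal regime in a ball $\{|\xi|\le 2^{k+C_0}\}$; hence $(\Four^{-1}\phi_m)*(f_hg_k)$ vanishes unless $m$ is within $O(1)$ of $k$ (resp. of $h$, resp. $m\lesssim k$). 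Combining this with Young's inequality, the uniform bound \eqref{eq:fl1boundedness}, and H\"older's inequality with $1/p=1/p_1+1/p_2$, each block $(\Four^{-1}\phi_m)*(f_hg_k)$ is estimated by $\|f_h\|_{p_1}\|g_k\|_{p_2}$ with the indicated frequency localization, and it remains to feed these into the $l^q_s$-norm appearing in \eqref{eq:serieprodottifinita}.

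For the low-high regime, after summing over $h\le k-C_0$ one needs $\sum_h\|f_h\|_{p_1}\lesssim\|f\|_{B^\sigma_{p_1,\infty}}$, which holds for $\sigma>0$ (note $\sigma>|s|\ge 0$) thanks to the embedding $B^\sigma_{p_1,\infty}(\R^n)\subseteq B^0_{p_1,1}(\R^n)$ of Proposition~\ref{prp:besovembeddings}; since only $m$ with $|m-k|=O(1)$ contribute, the remaining $l^q_m$-sum collapses to $\|g\|_{B^s_{p_2,q}}$. In part (ii) one instead keeps the full norm, using $\sum_h\|f_h\|_{p_1}=\sum_h 2^{-hs}\bigl(2^{hs}\|f_h\|_{p_1}\bigr)\lesssim\|f\|_{B^s_{p_1,q}}$ via H\"older in $l^{q'}$ and $l^q$ (here $s>0$ makes $\sum_h 2^{-hsq'}$ finite). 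The high-low regime is symmetric, summing instead $\sum_{k\le h-C_0}\|g_k\|_{p_2}$: for $s>0$ this is $\lesssim\|g\|_{B^s_{p_2,q}}$ by the same H\"older trick, while in part (i) for $s<0$ the partial sum grows only like $2^{-hs}\|g\|_{B^s_{p_2,\infty}}$, which is exactly absorbed by the factor $2^{ms}$ with $m\approx h$, leaving $\|f_h\|_{p_1}\lesssim 2^{-h\sigma}\|f\|_{B^\sigma_{p_1,\infty}}$ which lies in $l^q_0$ since $\sigma>0$ (for the borderline $s=0$ a harmless polynomial factor in the index appears from H\"older and is again killed by $2^{-h\sigma}$). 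In the diagonal regime one is reduced to a sum of the form $2^{ms}\sum_{k\ge m-C_0}\|f_k\|_{p_1}\|g_k\|_{p_2}$; writing $\|f_k\|_{p_1}\|g_k\|_{p_2}=2^{-2ks}\bigl(2^{ks}\|f_k\|_{p_1}\bigr)\bigl(2^{ks}\|g_k\|_{p_2}\bigr)$, the condition $s>0$ (for (ii)), respectively $\sigma+s>0$ when one uses $\|f_k\|_{p_1}\le 2^{-k\sigma}\|f\|_{B^\sigma_{p_1,\infty}}$ (for (i)), makes $2^{ms}2^{-2ks}$ summable over $k\ge m$ with geometric gain $2^{-ms}$, so the $l^q_m$-norm is controlled by $\sup_k\bigl(2^{ks}\|f_k\|_{p_1}\bigr)\cdot\bigl\|(2^{ks}\|g_k\|_{p_2})_k\bigr\|_{l^q}$ (or the analogue with $\|f\|_{B^\sigma_{p_1,\infty}}$), which in turn is bounded by the product of the two Besov norms because $l^q\hookrightarrow l^\infty$.

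Once the three contributions are all shown finite, Lemma~\ref{lem:besovproductestimate} guarantees both that the paraproduct $fg$ exists (in $\Sz'(\R^n)$, and in $B^s_{p,q}(\R^n)$ when $q<\infty$) and that its norm obeys the asserted bound. I expect the only genuine difficulty to be organizational: tracking the ranges of the frequency index $m$ in each of the three regimes and, in part (i), handling uniformly the cases $s>0$, $s=0$, $s<0$ in the low-frequency block of the ``slow'' factor. No idea beyond the standard paraproduct calculus is required, and the sharper estimate (ii) is precisely the case where all three blocks can be treated symmetrically in $f$ and $g$, which is made possible exactly by the hypothesis $s>0$.
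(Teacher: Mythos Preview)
Your proposal is correct and follows essentially the same approach as the paper: both reduce, via Lemma~\ref{lem:besovproductestimate}, H\"older's inequality and \eqref{eq:fl1boundedness}, to weighted sequence inequalities governed by the support constraint from \eqref{eq:besovsupports}, and then estimate three regimes. The only difference is organizational---you use the symmetric Bony split (low--high, diagonal, high--low), while the paper groups the index set $I_m$ into an asymmetric three-piece cover $I_{m,1}\cup I_{m,2}\cup I_{m,3}$ and states the sequence inequalities abstractly before proving them; the underlying estimates (in particular the use of $\sigma+s>0$ for the diagonal block in (i) and of $s>0$ for the symmetric treatment in (ii)) are the same.
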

\begin{proof}
Define $f_h,g_k$ as in \eqref{eq:factordecomposition}, and notice that
\[\supp \Four(f_h g_k) \subseteq \supp \phi_h + \supp \phi_k.\]
Therefore, if
\begin{multline*}
I_m = \{(h,k) \in \N^2 \tc m-2 \leq \max\{h,k\} \leq m+2 \text{ and } |h-k| \geq 3\} \\
\cup \{(h,k) \in \N^2 \tc m-2 \leq \max\{h,k\} \text{ and } |h-k| \leq 2\},
\end{multline*}
then by \eqref{eq:besovsupports} it is easy to see that $(\Four^{-1} \phi_m) * (f_h g_k) = 0$ for $(h,k) \notin I_m$. Consequently, by Lemma~\ref{lem:besovproductestimate}, H\"older's inequality and \eqref{eq:fl1boundedness}, we have
\[\|fg\|_{B_{p,q}^s} \leq \left\| \left(\sum_{(h,k) \in I_m} \|f_h\|_{p_1} \|g_k\|_{p_2} \right)_m\right\|_{l^q_s}.\]

(i) The conclusion will follow from the inequality
\[ \left\| \left(\sum_{(h,k) \in I_m} a_h b_k\right)_m \right\|_{l^q_s} \leq C_{q,\sigma,s} \| (a_h)_h \|_{l^\infty_\sigma} \| (b_k)_k \|_{l^q_s}\]
for sequences $(a_h)_{h \in \N}$, $(b_k)_{k \in \N}$ of non-negative real numbers.

In fact
\[I_m \subseteq I_{m,1} \cup I_{m,2} \cup I_{m,3},\]
where
\begin{align*}
I_{m,1} &= \{(h,k) \tc h \geq m-2 \text{ and } 0 \leq k \leq h\},\\
I_{m,2} &= \{(h,k) \tc m-2 \leq k \leq m+2 \text{ and } 0 \leq h \leq k-3\},\\
I_{m,3} &= \{(h,k) \tc k \geq m-2 \text{ and } k-2 \leq h \leq k-1\}.
\end{align*}
Since $\sigma > |s|$, we may choose $\tilde\sigma$ such that
\[\max\{0,-s\} < \tilde\sigma < \min\{\sigma,\sigma-s\},\]
and we have
\[\begin{split}
\sum_{(h,k) \in I_{m,1}} a_h b_k &\leq \sum_{h \geq (m-2)_+} 2^{-h(\sigma-\tilde\sigma)} 2^{h\sigma} a_h \sum_{k = 0}^h 2^{-k\tilde\sigma} b_k \\
&\leq C_{\sigma,s} 2^{-m(\sigma-\tilde\sigma)} \|(a_h)_h\|_{l^\infty_\sigma} \|(b_k)_k\|_{l^1_{-\tilde\sigma}},
\end{split}\]
so that
\begin{multline*}
\left\|\left(\sum_{(h,k) \in I_{m,1}} a_h b_k\right)_m\right\|_{l^q_s} \leq C_{\sigma,s} \|(a_h)_h\|_{l^\infty_\sigma} \|(b_k)_k\|_{l^1_{-\tilde\sigma}} \left\| (2^{-m(\sigma-\tilde\sigma-s)})_m \right\|_{l^q} \\
\leq C_{q,\sigma,s} \|(a_h)_h\|_{l^\infty_\sigma} \|(b_k)_k\|_{l^q_{s}}.
\end{multline*}
On the other hand,
\[\sum_{(h,k) \in I_{m,2}} a_h b_k \leq \|(a_h)_h\|_{l^1} \sum_{k=-2}^2 b_{m+k}\]
(where $b_{-2} = b_{-1} = 0$), thus
\begin{multline*}
\left\|\left(\sum_{(h,k) \in I_{m,2}} a_h b_k\right)_m\right\|_{l^q_s} \leq \|(a_h)_h\|_{l^1} \sum_{k=-2}^2 \|(b_{m+k})_m\|_{l^q_s} \\
\leq C_{q,\sigma,s} \|(a_h)_h\|_{l^\infty_\sigma} \|(b_k)_k\|_{l^q_s}.
\end{multline*}
Finally, by H\"older's inequality and since $\sigma + s > 0$,
\[\begin{split}
\sum_{(h,k) \in I_{m,3}} a_h b_k &\leq 4 \sum_{k \geq (m-2)_+} 2^{-k(\sigma+s)} 2^{ks} b_k \sum_{h=k-2}^{k-1} 2^{h\sigma} a_h \\
&\leq C_{q,\sigma,s} 2^{-m(\sigma+s)} \|(a_h)_h\|_{l^\infty_\sigma} \|(b_k)\|_{l^q_s},
\end{split}\]
so that again, since $\sigma > 0$,
\[\left\|\left(\sum_{(h,k) \in I_{m,3}} a_h b_k\right)_m\right\|_{l^q_s} \leq C_{q,\sigma,s} \|(a_h)_h\|_{l^\infty_\sigma} \|(b_k)\|_{l^q_s}.\]
By putting all together, we obtain the claimed inequality.

(ii) The conclusion will follow from the inequality
\[ \left\| \left(\sum_{(h,k) \in I_m} a_h b_k\right)_m \right\|_{l^q_s} \leq C_{q,s} \| (a_h)_h \|_{l^q_s} \| (b_k)_k \|_{l^q_s}\]
for sequences $(a_h)_{h \in \N}$, $(b_k)_{k \in \N}$ of non-negative real numbers.
Set
\[I_{m,+} = I_m \cap \{(h,k) \tc h > k\}, \qquad I_{m,-} = I_m \cap \{(h,k) \tc h \leq k\}.\]
Since $s > 0$, we may choose $t$ such that $0 < t < s$, therefore
\[\begin{split}
\sum_{(h,k) \in I_{m,+}} a_h b_k &\leq \sum_{h \geq \max\{m-2,1\}} 2^{-ht} 2^{ht} a_h \sum_{k = 0}^{h-1} b_k \\
&\leq C_{q,s} 2^{-mt} \|(b_k)_k\|_{l^1_0} \left(\sum_{h \geq \max\{m-2,1\}} (2^{ht} a_h)^q \right)^{1/q},
\end{split}\]
so that, for $q < \infty$,
\begin{multline*}
\sum_{m \geq 0} \left(2^{ms} \sum_{(h,k) \in I_{m,+}} a_h b_k\right)^{q} \\
\leq  C_{q,s} \|(b_k)_k\|_{l^1_0}^q \sum_{m \geq 0} 2^{qm(s-t)}  \sum_{h \geq \max\{m-2,1\}} (2^{ht} a_h)^q \\
\leq C_{q,s} \|(b_k)_k\|_{l^1_0}^q \sum_{h \geq 1} 2^{hsq} a_h^q,
\end{multline*}
which implies
\[ \left\| \left(\sum_{(h,k) \in I_{m,+}} a_h b_k\right)_m \right\|_{l^q_s} \leq C_{q,s} \| (a_h)_h \|_{l^q_s} \| (b_k)_k \|_{l^q_s}.\]
The case $q = \infty$ and the sum for $(h,k) \in I_{m,-}$ are handled analogously.
\end{proof}

Proposition~\ref{prp:besovproduct} has as a consequence a sort of ``continuous inclusion'' between Besov spaces, different from the previously obtained ones, which holds under the hypothesis of compact support.

\begin{cor}\label{cor:besovcompactsupportinclusion}
For every compact $K \subseteq \R^n$, $s \in \R$, $1 \leq p_1,p_2,q \leq \infty$ with $p_1 \leq p_2$, we have
\[\|f\|_{B_{p_1,q}^s} \leq C_{K,p_1,p_2,q,s} \|f\|_{B_{p_2,q}^s} \qquad\text{if $\supp f \subseteq K$.}\]
\end{cor}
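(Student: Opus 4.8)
The plan is to reduce the claim to the pointwise-multiplication estimate of Proposition~\ref{prp:besovproduct}(i), by multiplying $f$ by a fixed cutoff adapted to $K$. First I would fix $\chi \in \D(\R^n)$ with $\chi \equiv 1$ on an open neighbourhood of $K$; such a $\chi$ depends only on $K$. Since $\supp f \subseteq K$, we have $\chi f = f$ in $\Sz'(\R^n)$, and because one of the two factors here is a Schwartz function, this ``classical'' product coincides with the paraproduct (as observed just after the definition of the paraproduct). Hence it suffices to bound $\|\chi f\|_{B^s_{p_1,q}}$.

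Next I would introduce the exponent $r \in [1,\infty]$ defined by $1/r = 1/p_1 - 1/p_2$; this is a legitimate index precisely because $p_1 \leq p_2$, and it satisfies $1/p_1 = 1/r + 1/p_2$. Fixing any $\sigma > |s|$ (say $\sigma = |s| + 1$), Proposition~\ref{prp:besovproduct}(i) applied with $(p,p_1,p_2) = (p_1, r, p_2)$ yields
\[\|\chi f\|_{B^s_{p_1,q}} \leq C_{r,p_2,q,\sigma,s}\,\|\chi\|_{B^\sigma_{r,\infty}}\,\|f\|_{B^s_{p_2,q}}.\]
Finally, since $\chi \in \D(\R^n) \subseteq \Sz(\R^n) \subseteq B^\sigma_{r,\infty}(\R^n)$ by the inclusions recorded in \S\ref{subsection:besovbasic}, the quantity $\|\chi\|_{B^\sigma_{r,\infty}}$ is finite and depends only on $\chi$ (hence on $K$), on $r$ (hence on $p_1,p_2$), and on $\sigma$ (hence on $s$). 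Absorbing it together with $C_{r,p_2,q,\sigma,s}$ into one constant $C_{K,p_1,p_2,q,s}$ gives the assertion.

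There is no genuine obstacle in this argument: the only points that deserve a moment's care are that $r$ is an admissible Besov index — which is exactly where the hypothesis $p_1 \leq p_2$ is used — and that the paraproduct $\chi f$ really reduces to the ordinary product $f$, so that Proposition~\ref{prp:besovproduct}(i) may be applied with the left-hand side being $\|f\|_{B^s_{p_1,q}}$; both are routine. One could alternatively use Proposition~\ref{prp:besovproduct}(ii) in the range $s > 0$, but part (i) already covers all $s \in \R$ and keeps the proof uniform.
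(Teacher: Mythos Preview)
Your argument is correct and coincides with the paper's own proof: both fix a cutoff $\eta\in\D(\R^n)$ equal to $1$ on $K$, introduce the auxiliary exponent $p_3$ with $1/p_1=1/p_2+1/p_3$ (using $p_1\le p_2$), and apply Proposition~\ref{prp:besovproduct} to the product $\eta f=f$. Your write-up is simply more explicit about invoking part~(i) and about the paraproduct coinciding with the classical product.
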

\begin{proof}
Since $p_1 \leq p_2$, we can find $p_3 \in [1,\infty]$ such that $1/p_1 = 1/p_2 + 1/p_3$. Therefore, it is sufficient to choose a function $\eta \in \D(\R^n)$ such that $\eta|_K \equiv 1$, and to apply Proposition~\ref{prp:besovproduct} to the product $\eta f$, noticing that it coincides with $f$ when $\supp f \subseteq K$.
\end{proof}

The aforementioned threshold $n/p$ on the order $s$ to ensure continuity of the elements of $B_{p,q}^s$ can be shown to be optimal.

\begin{prp}\label{prp:besovoptimalembedding}
Let $1 \leq p,q \leq \infty$, $s \in \R$. Suppose that there exist a constant $C > 0$, a point $x \in \R^n$ and an open neighborhood $U$ of $x$ such that, for every $f \in \D(\R^n)$ with $\supp f \subseteq U$, we have
\[|f(x)| \leq C \|f\|_{B_{p,q}^s}.\]
Then either $s \geq n/p$ and $q = 1$, or $s > n/p$, and in particular $B_{p,q}^s(\R^n)$ is continuously embedded in $C_{ub}(\R^n)$.
\end{prp}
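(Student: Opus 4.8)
The plan is to prove the contrapositive: assuming the pointwise estimate $|f(x)|\le C\|f\|_{B_{p,q}^s}$ for all $f\in\D(\R^n)$ with $\supp f\subseteq U$, but that we are in neither of the two listed cases --- i.e.\ either $s<n/p$, or $s=n/p$ with $q>1$ --- I would construct a sequence $f_N\in\D(\R^n)$, all supported in $U$, with $|f_N(x)|/\|f_N\|_{B_{p,q}^s}\to\infty$. First I would reduce to $x=0$: translations commute with each convolution operator $(\Four^{-1}\phi_k)*(\cdot)$ and preserve $L^p$-norms, so the Besov norm is translation invariant; after also translating $U$ so that $0\in U$, fix a cut-off $\eta\in\D(\R^n)$ with $\eta\equiv1$ near $0$ and $\supp\eta\subseteq U$. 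The role of $\eta$ is a localization step: by Proposition~\ref{prp:besovproduct}(i) (applied with $p_1=\infty$, $p_2=p$ and any $\sigma>|s|$, using $\eta\in\D\subseteq B_{\infty,\infty}^\sigma$), for every $g\in\Sz(\R^n)$ the function $f:=\eta g$ lies in $\D(\R^n)$, is supported in $U$, has $f(0)=g(0)$, and satisfies $\|f\|_{B_{p,q}^s}\le C_\eta\|g\|_{B_{p,q}^s}$ with $C_\eta$ independent of $g$. Hence it is enough to exhibit Schwartz functions $g$ with $g(0)/\|g\|_{B_{p,q}^s}$ arbitrarily large.

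Next I would work with a standard admissible system $(\phi_k)_k$, so that $\Four^{-1}\phi_k=2^{(k-1)n}(\Four^{-1}\phi_1)(2^{k-1}\,\cdot)$ for $k\ge1$; this gives $(\Four^{-1}\phi_k)(0)=c\,2^{kn}$ with $c=2^{-n}(\Four^{-1}\phi_1)(0)>0$ (positive because $\phi_1\ge0$, $\phi_1\not\equiv0$) and $\|\Four^{-1}\phi_k\|_p\sim 2^{kn/p'}$. Since $\phi_j\phi_k\ne0$ only for $|j-k|\le1$, the computation of the Besov norm of $\Four^{-1}\phi_k$ collapses to three dyadic levels and yields, using \eqref{eq:besovsupports} and \eqref{eq:fl1boundedness}, $\|\Four^{-1}\phi_k\|_{B_{p,q}^s}\sim 2^{ks}\|\Four^{-1}\phi_k\|_p\sim 2^{k(s+n-n/p)}$. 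In the case $s<n/p$ I would then take $g=\Four^{-1}\phi_k$ and read off
\[
\frac{g(0)}{\|g\|_{B_{p,q}^s}}\;\sim\;2^{k(n/p-s)}\;\longrightarrow\;\infty\qquad(k\to\infty),
\]
contradicting the hypothesis; so $s\ge n/p$ is forced.

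For the critical case $s=n/p$ I would rule out $q>1$ (including $q=\infty$) by a lacunary construction. Choose indices $m_1<m_2<\cdots$ with $m_{k+1}-m_k\ge3$, so the supports of $\phi_{m_1},\phi_{m_2},\dots$ are pairwise disjoint, and for positive reals $a_k$ put $g_N=\sum_{k=1}^N a_k\,\Four^{-1}\phi_{m_k}$. Disjointness of the supports makes the Besov norm split as a weighted $l^q$-sum over the blocks, giving $g_N(0)=c\sum_{k=1}^N a_k 2^{m_k n}$ and $\|g_N\|_{B_{p,q}^{n/p}}\sim\|(a_k 2^{m_k n})_{k\le N}\|_{l^q}$ (here $s+n-n/p=n$). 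Setting $b_k=a_k 2^{m_k n}$ and choosing $b_k=1/k$ (or $b_k=1$ if $q=\infty$), one has $\sum_{k\le N}b_k\to\infty$ while $\sup_N\|(b_k)_{k\le N}\|_{l^q}<\infty$ because $q>1$; therefore, with $f_N=\eta g_N$,
\[
\frac{|f_N(0)|}{\|f_N\|_{B_{p,q}^{n/p}}}\;\gtrsim\;\frac{\sum_{k\le N}b_k}{\|(b_k)_{k\le N}\|_{l^q}}\;\longrightarrow\;\infty\qquad(N\to\infty),
\]
a contradiction. This shows $s=n/p$ forces $q=1$, which is the claimed dichotomy.

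The remaining ``in particular'' is then immediate from Propositions~\ref{prp:besovembeddings} and \ref{prp:besovzeroinclusions}: if $s>n/p$ then $B_{p,q}^s\subseteq B_{\infty,q}^{s-n/p}\subseteq b_{\infty,1}^0\subseteq C_{ub}$, while if $s=n/p$, $q=1$ then $B_{p,1}^{n/p}\subseteq B_{\infty,1}^0\subseteq C_{ub}$. I expect the main obstacle to be purely technical: carefully justifying the two norm comparisons (for $\Four^{-1}\phi_k$ and for the lacunary sum $g_N$) from the disjointness or near-disjointness of the $\phi_j$-supports and the uniform bounds \eqref{eq:fl1boundedness}, together with checking that the $\eta$-truncation costs only a constant independent of $N$; the rest is routine.
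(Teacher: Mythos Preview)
Your argument is correct. The localization step via $\eta$ and Proposition~\ref{prp:besovproduct} is exactly what the paper does too, but from that point on the two proofs diverge: the paper uses translation invariance to upgrade the hypothesis to a global bound $\|f\|_\infty \le C\|f\|_{B_{p,q}^s}$ for all $f\in\Sz(\R^n)$, observes that this forces the closure of $\Sz(\R^n)$ in $B_{p,q}^s(\R^n)$ to sit inside $L^\infty(\R^n)$, and then simply invokes Theorem~1 in \S2.6.2 of \cite{triebel_spaces_1978} to read off the dichotomy. Your route is instead a direct, self-contained construction of counterexamples --- the single bump $\Four^{-1}\phi_k$ in the subcritical case, and the lacunary superposition in the critical case $s=n/p$, $q>1$ --- which is precisely the kind of computation that Triebel's theorem is encoding. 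Your approach has the advantage of being elementary and not relying on an external reference; the paper's is shorter but opaque about where the threshold actually comes from. The technical verifications you flag (the norm comparisons for $\Four^{-1}\phi_k$ and for $g_N$) are routine given the support separation \eqref{eq:besovsupports} and the bounds \eqref{eq:fl1boundedness}, as you anticipate.
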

\begin{proof}
By choosing $\eta \in \D(\R^n)$ with $\supp \eta \subseteq U$ and $\eta(x) = 1$, we obtain that, for all $f \in \Sz(\R^n)$,
\[|f(x)| = |(\eta f)(x)| \leq C \|\eta f\|_{B_{p,q}^s} \leq C_{\eta,p,q,s} \|f\|_{B_{p,q}^s},\]
by Proposition~\ref{prp:besovproduct}. Since the Besov norms are invariant by translations, it follows easily that, by possibly enlarging the constant $C$,
\[\|f\|_\infty \leq C \|f\|_{B_{p,q}^s}\]
for all $f \in \Sz(\R^n)$. This proves in particular that the closure of $\Sz(\R^n)$ in $B_{p,q}^s(\R^n)$ is contained in $L^\infty(\R^n)$. The conclusion then follows by Theorem~1 in \S2.6.2 of \cite{triebel_spaces_1978}.
\end{proof}

\subsection{Change of variables}

In the following, by the use of elementary Sobolev embeddings and interpolation, we will obtain change-of-variable properties for Besov spaces of positive order (see also \cite{triebel_theory_1983}, \S2.10).

\begin{lem}\label{lem:elementarychange}
Let $\Omega \subseteq \R^{n_1}$ be open, $\Phi : \Omega \to \R^{n_2}$ be smooth, $s \in \N$, $\psi \in \D(\Omega)$.
\begin{itemize}
\item[(i)] For every $f \in C^s_b(\R^{n_2})$,
\[\| (f \circ \Phi) \psi \|_{C^s_b(\R^{n_1})} \leq C_{\Phi,\psi,s} \|f\|_{C^s_b(\R^{n_2})}.\]
\item[(ii)] Suppose that the differential of $\Phi$ has constant rank $r$. Then, for every $f \in W^{s+n_2-r,1}(\R^{n_2})$,
\[\| (f \circ \Phi) \psi \|_{W^{s,1}(\R^{n_1})} \leq C_{\Phi,\psi,s} \|f\|_{W^{s+n_2-r,1}(\R^{n_2})}.\]
\end{itemize}
\end{lem}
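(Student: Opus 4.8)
The plan is to prove the two statements by a direct computation via the chain rule, followed in part (ii) by a change-of-variables in the domain. For part (i), the key observation is that the derivatives of $(f \circ \Phi)\psi$ up to order $s$ can be expanded, by the Faà di Bruno formula (or simply by iterating the chain and Leibniz rules), as a finite sum of terms, each of which is a product of a bounded smooth coefficient (depending only on $\Phi$, $\psi$ and their derivatives up to order $s$, all of which are bounded on the compact $\supp\psi$) times a factor of the form $(\partial^\beta f)\circ\Phi$ with $|\beta| \le s$. Hence $\|\partial^\alpha((f\circ\Phi)\psi)\|_\infty \le C_{\Phi,\psi,s}\sum_{|\beta|\le s}\|\partial^\beta f\|_\infty$ for all $|\alpha|\le s$, which is exactly the claimed $C^s_b$ bound. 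The only point to be careful about is that the coefficients are genuinely bounded: this is where we use that $\psi \in \D(\Omega)$ has compact support, so all the $\Phi$-derivatives that appear are evaluated on the compact set $\supp\psi \subseteq \Omega$ and are therefore bounded there.

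For part (ii), I would first expand $\partial^\alpha((f\circ\Phi)\psi)$ for $|\alpha| \le s$ exactly as above, obtaining a finite sum of terms $c_{\alpha,\beta}(x)\,((\partial^\beta f)\circ\Phi)(x)$ with $|\beta| \le s$ and $c_{\alpha,\beta} \in \D(\Omega)$. The $L^1(\R^{n_1})$ norm of such a term is bounded by $\|c_{\alpha,\beta}\|_\infty \int_{\supp\psi} |(\partial^\beta f)(\Phi(x))|\,dx$, so the whole matter reduces to the estimate
\[
\int_{\supp\psi} |g(\Phi(x))|\,dx \le C_{\Phi,\psi}\,\|g\|_{W^{n_2-r,1}(\R^{n_2})}
\]
for $g = \partial^\beta f$, using $\|\partial^\beta f\|_{W^{n_2-r,1}} \le \|f\|_{W^{|\beta|+n_2-r,1}} \le \|f\|_{W^{s+n_2-r,1}}$. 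To prove this pull-back inequality, I would use the constant-rank hypothesis: by the rank theorem, every point of $\supp\psi$ has a neighborhood in which, after smooth changes of coordinates in source and target, $\Phi$ becomes the linear projection $(y_1,\dots,y_{n_1}) \mapsto (y_1,\dots,y_r,0,\dots,0)$. Covering $\supp\psi$ by finitely many such charts and using a subordinate partition of unity, one reduces to the model case $\Phi(y) = (y_1,\dots,y_r)$ on a box, where the integral $\int |g(y_1,\dots,y_r)|\,dy_1\cdots dy_{n_1}$ over a bounded box equals (up to the finite measure of the remaining $y_{r+1},\dots,y_{n_1}$ directions) $\int |g(z_1,\dots,z_r)|\,dz$ over a bounded subset of $\R^r$; and $\|g(\cdot)\|_{L^1(\text{bounded }\subseteq\R^r)} \le C\,\|g\|_{W^{n_2-r,1}(\R^{n_2})}$ follows from the one-dimensional fundamental-theorem-of-calculus estimate $\|g\|_{L^1(\R^r \times K')} \le C_{K'}\|g\|_{W^{n_2-r,1}(\R^{n_2})}$ integrated in the $n_2-r$ transverse directions (the same idea as in Proposition~\ref{prp:sobolevembedding}(iii), applied in the $n_2-r$ ``missing'' variables).

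The main obstacle is the pull-back inequality in part (ii): one must handle the fact that $\Phi$ need not be injective and its image is an $r$-dimensional submanifold, so a naïve change of variables is unavailable. The constant-rank assumption is precisely what makes this tractable, via the local normal form and a partition of unity; the bookkeeping of charts and cut-offs is routine but is the step that requires genuine care. (One should also note the degenerate cases $r = n_2$, where $n_2 - r = 0$ and the statement reduces to an ordinary change of variables by a submersion, and $r = n_1$, where $\Phi$ is an immersion and the transverse integration is trivial.) Everything else is the formal Faà di Bruno / Leibniz expansion, for which boundedness of the coefficients on $\supp\psi$ is automatic.
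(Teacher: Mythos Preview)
Your proposal is correct and follows essentially the same route as the paper: chain-rule expansion with coefficients supported in $\supp\psi$, then for (ii) the rank theorem plus a finite cover and partition of unity to reduce to the model projection, followed by the Sobolev embedding $W^{n_2-r,1}\hookrightarrow C_b$ (Proposition~\ref{prp:sobolevembedding}(iii)) in the $n_2-r$ transverse variables. One small imprecision: the displayed inequality $\|g\|_{L^1(\R^r\times K')}\le C_{K'}\|g\|_{W^{n_2-r,1}}$ is trivially true and is not what you need; the relevant estimate is the trace bound $\int_{K}|g(y',0)|\,dy'\le C\|g\|_{W^{n_2-r,1}}$, obtained by applying the pointwise embedding $|g(y',0)|\le \|g(y',\cdot)\|_{W^{n_2-r,1}(\R^{n_2-r})}$ and then integrating in $y'$ (after first multiplying by a cutoff in the target chart, as the paper does, so that $g$ is compactly supported and the embedding applies).
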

\begin{proof}
Let $K = \supp \psi$. Notice that, if $f$ is modified outside the compact $\Psi(K)$, then $(f \circ \Phi) \psi$ does not change. Therefore, by density arguments (see Proposition~\ref{prp:approximateidentity}), we may suppose that $f \in \D(\R^{n_2})$.

Let $\Phi = (\Phi_1,\dots,\Phi_{n_2})$ and, for $l=1,2$, let $\partial_{l,1},\dots,\partial_{l,n_l}$ be a basis of (invariant vector fields) of $\R^{n_l}$. Then we have, for $j=1,\dots,n_1$,
\[\partial_{1,j}(f \circ \Phi) = \sum_{k=1}^{n_2} ((\partial_{2,k} f) \circ \Phi) \cdot \partial_{1,j} \Phi_k,\]
so that, inductively, for every $\alpha \in \N^{n_1}$,
\[\partial_1^\alpha((f \circ \Phi) \psi) = \sum_{\beta \tc |\beta| \leq |\alpha|} ((\partial_2^\beta f) \circ \Phi) \cdot \Psi_{\alpha,\beta},\]
where the $\Psi_{\alpha,\beta}$ are linear combinations of products of derivatives of the functions $\Phi_1,\dots,\Phi_n,\psi$, with supports contained in $K = \supp \psi$. Since $K$ is compact, we then get
\[\|\partial_1^\alpha ((f \circ \Phi) \psi) \|_\infty \leq C_{\Phi,\psi,\alpha} \sum_{\beta \tc |\beta| \leq |\alpha|} \|\partial_2^\beta f\|_\infty,\]
which gives immediately (i), and also
\[\|\partial_1^\alpha ((f \circ \Phi) \psi) \|_1 \leq C_{\Phi,\psi,\alpha} \sum_{\beta \tc |\beta| \leq |\alpha|} \|((\partial_2^\beta f) \circ \Phi) \chr_K\|_1.\]
Notice that, in the case $\Phi$ is a diffeomorphism with its image, (ii) follows immediately from the last inequality by a change of variable in the integrals on the right-hand side, since $\det d\Phi$ is bounded on the compact $K$.

Suppose now that the differential of $\Phi$ has constant rank $r \in \N$. Then, by the rank theorem (see \cite{lee_smooth_2003}, Theorem~7.8), for all $x \in \Omega$ we can find a coordinate chart $\phi_x : U_x \to \R^{n_1}$ of $\Omega$ centered at $x$ and a coordinate chart $\zeta_x : V_x \to \R^{n_2}$ of $\R^{n_2}$ centered at $\Phi(x)$ such that $\Phi(U_x) \subseteq V_x$ and
\[\zeta_x \circ \Phi \circ \phi_x^{-1}(t_1,\dots,t_{n_1}) = (t_1,\dots,t_r,0,\dots,0)\]
for all $(t_1,\dots,t_{n_1}) \in \phi_x(U_x)$. If $a_x > 0$ is such that $[-a_x,a_x]^{n_1} \subseteq \phi_x(U_x)$, then, by compactness of $K$, we can find a finite subset $\{x_1,\dots,x_k\}$ of $\Omega$ such that
\[K \subseteq \phi_{x_1}^{-1}(\left]-a_{x_1},a_{x_1}\right[^{n_1}) \cup \dots \cup \phi_{x_k}^{-1}(\left]-a_{x_k},a_{x_k}\right[^{n_1}).\]
Choose moreover $\eta_j \in \D(\zeta_{x_j}(V))$ such that $\eta_j|_{[-a_{x_j},a_{x_j}]^r \times \{0\}} \equiv 1$. 

For every $g \in \D(\R^{n_2})$, if $g_j = (g \circ \zeta_{x_j}^{-1})\eta_j$, then, by Proposition~\ref{prp:sobolevembedding}(iii),
\[|g_j(y',0)| \leq \|g_j(y',\cdot)\|_{W^{n_2-r,1}(\R^{n_2-r})}\]
for $j=1,\dots,k$ and $y' \in \R^{r}$, thus
\[\begin{split}
\|(g \circ \Phi) \chr_K\|_1 &\leq \sum_{j=1}^k \int_{\phi_{x_j}^{-1}([-a_{x_j},a_{x_j}]^{n_1})} |g(\Phi(x))| \,dx \\
&\leq C_{\Phi,\psi}  \sum_{j=1}^k \int_{[-a_{x_j},a_{x_j}]^{n_1}} |g(\zeta_{x_j}^{-1}(t_1,\dots,t_r,0,\dots,0))| \,dt_1 \dots dt_{n_1} \\
&= C_{\Phi,\psi}  \sum_{j=1}^k (2a_{x_j})^{n_1 - r} \int_{[-a_{x_j},a_{x_j}]^{r}} |g_j(y',0)| \,dy' \\
&\leq C_{\Phi,\psi}  \sum_{j=1}^k \sum_{|\gamma| \leq n_2 - r} \int_{\R^{n_2}} |\partial_2^\gamma g_j(y)| \,dy \leq C_{\Phi,\psi} \sum_{|\gamma| \leq n_2 - r} \|\partial_2^\gamma g\|_1,
\end{split}\]
where the last inequality follows from (ii) applied to the diffeomorphism $\zeta_{x_j}^{-1}$ and to the smooth function $\eta_j$.

Putting all together, we get
\[\|\partial_1^\alpha ((f \circ \Phi) \psi) \|_2 \leq C_{\Phi,\psi,\alpha} \sum_{\beta \tc |\beta| \leq |\alpha| + n_2 - r} \|\partial_2^\beta f\|_2,\]
which gives (ii) in the general case.
\end{proof}

\begin{prp}\label{prp:besovchange}
Let $\Omega \subseteq \R^{n_1}$ be open, $\Phi : \Omega \to \R^{n_2}$ be smooth, $\psi \in \D(\Omega)$, $s > 0$, $p,q \in [1,\infty]$. If either $p = \infty$ or the differential of $\Phi$ has constant rank $r \in \N$, then,
\[\| (f \circ \Phi) \psi \|_{B^s_{p,q}(\R^{n_1})} \leq C_{\Phi,\psi,p,q,s} \|f\|_{B^{s+ (n_2-r)/p}_{p,q}(\R^{n_2})}.\]
for all $f \in B^{s+ (n_2-r)/p}_{p,q}(\R^{n_2})$.
\end{prp}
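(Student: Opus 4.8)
The plan is to reduce everything to the elementary integer-order estimates of Lemma~\ref{lem:elementarychange} and then to recover the whole Besov scale by interpolation, in two directions: first in the smoothness index $s$ at the endpoints $p=1$ and $p=\infty$, then in $p$. Throughout, the operator $T_\Phi\colon f\mapsto(f\circ\Phi)\psi$ is to be regarded as initially defined on $\D(\R^{n_2})$, where $f\circ\Phi$ makes literal sense, and the asserted inequality is then extended by density with the usual convention (finiteness of the right-hand side guarantees that $T_\Phi f$ exists and obeys the bound); for $p=\infty$ no density step is even needed, since $s>0$ and $p=\infty$ force $B^s_{\infty,q}(\R^{n_2})\subseteq C_{ub}(\R^{n_2})$.

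First I would settle the two endpoints for arbitrary $s>0$ and $q\in[1,\infty]$. For $p=\infty$: Lemma~\ref{lem:elementarychange}(i) makes $T_\Phi$ bounded $C^k_b(\R^{n_2})\to C^k_b(\R^{n_1})$ for every $k\in\N$, so composing with the sandwich $B^k_{\infty,1}\subseteq C^k_b\subseteq B^k_{\infty,\infty}$ (Corollary~\ref{cor:besovinclusions}, plus Proposition~\ref{prp:besovzeroinclusions} at level $0$) makes $T_\Phi$ bounded $B^k_{\infty,1}(\R^{n_2})\to B^k_{\infty,\infty}(\R^{n_1})$; real interpolation between two distinct integers $k_0<k_1$, with Proposition~\ref{prp:besovinterpolation}(i) applied on both source and target, then gives $T_\Phi\colon B^s_{\infty,q}(\R^{n_2})\to B^s_{\infty,q}(\R^{n_1})$ for all $s>0$. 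No hypothesis on $\Phi$ enters here, matching the fact that $(n_2-r)/p$ vanishes at $p=\infty$. For $p=1$ one argues analogously from Lemma~\ref{lem:elementarychange}(ii): using the constant-rank hypothesis it gives $T_\Phi\colon W^{k+n_2-r,1}(\R^{n_2})\to W^{k,1}(\R^{n_1})$, the sandwich $B^{k+n_2-r}_{1,1}\subseteq W^{k+n_2-r,1}$ and $W^{k,1}\subseteq B^k_{1,\infty}$ (Corollary~\ref{cor:besovinclusions}(i), Proposition~\ref{prp:besovzeroinclusions}(i) at level $0$) upgrades this to $B^{k+n_2-r}_{1,1}(\R^{n_2})\to B^k_{1,\infty}(\R^{n_1})$, and interpolating between two integer values of $k$ (Proposition~\ref{prp:besovinterpolation}(i): the source indices $k+n_2-r$ are distinct, as are the target indices $k$) yields $T_\Phi\colon B^{\sigma+n_2-r}_{1,q}(\R^{n_2})\to B^\sigma_{1,q}(\R^{n_1})$ for all $\sigma>0$ and $q\in[1,\infty]$.

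Next I would interpolate in $p$. For fixed $s>0$ and $q<\infty$, the two endpoint results exhibit $T_\Phi$ as a morphism of the Banach couple $\bigl(B^{s+n_2-r}_{1,q}(\R^{n_2}),B^s_{\infty,q}(\R^{n_2})\bigr)$ into $\bigl(B^s_{1,q}(\R^{n_1}),B^s_{\infty,q}(\R^{n_1})\bigr)$; applying the complex method $[\theta]$ with $\theta=1/p'$ and Proposition~\ref{prp:besovinterpolation}(iii) (legitimate since $q<\infty$) turns the source into $B^{(1-\theta)(s+n_2-r)+\theta s}_{p,q}=B^{s+(n_2-r)/p}_{p,q}$ (because $1-\theta=1/p$) and the target into $B^s_{p,q}$, which is exactly the claim for $1\le p<\infty$, $q<\infty$. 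The remaining case $q=\infty$, $1\le p<\infty$ is obtained by one further real interpolation: fix a finite exponent, say $2$, take $0<s_0<s<s_1$, use the case just proved to get $T_\Phi\colon B^{s_j+(n_2-r)/p}_{p,2}(\R^{n_2})\to B^{s_j}_{p,2}(\R^{n_1})$ for $j=0,1$, and apply $(\cdot)_{\theta,\infty}$ with $(1-\theta)s_0+\theta s_1=s$, invoking Proposition~\ref{prp:besovinterpolation}(i) on both sides.

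The routine ingredients --- Lemma~\ref{lem:elementarychange} itself, the chain-rule bookkeeping, the density reduction --- are already available or standard; the part that needs care is the interpolation bookkeeping: checking at each step that $T_\Phi$ is genuinely a morphism of \emph{compatible} Banach couples (common ambient space $\Sz'$, matching restrictions to the intersection), tracking how the smoothness gain $n_2-r$ transforms --- in particular realizing the shifted scale through the sandwiched Besov spaces so that Proposition~\ref{prp:besovinterpolation}(i) applies with distinct smoothness indices --- and noticing that the complex method of Proposition~\ref{prp:besovinterpolation}(iii) does not reach $q=\infty$, which is precisely why the separate last step is needed.
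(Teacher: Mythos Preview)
Your proof is correct and follows essentially the same route as the paper: first settle $p=1$ and $p=\infty$ from Lemma~\ref{lem:elementarychange} via the Besov--Sobolev sandwich and real interpolation in $s$ (Proposition~\ref{prp:besovinterpolation}(i)), then reach $1<p<\infty$ with $q<\infty$ by complex interpolation (Proposition~\ref{prp:besovinterpolation}(iii)) between those endpoints, and finally handle $q=\infty$ by one more real interpolation. The only cosmetic difference is that the paper allows $s_0\ne s_1$ in the complex-interpolation step (choosing $s_0,s_1>0$ with $s=(1-\theta)s_0+\theta s_1$), whereas you take $s_0=s_1=s$; both choices yield the same source and target spaces.
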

\begin{proof}
Thanks to the inclusions of Corollary~\ref{cor:besovinclusions}, for $p=1$ and for $p=\infty$ the conclusion follows immediately from Lemma~\ref{lem:elementarychange} by real interpolation (see Proposition~\ref{prp:besovinterpolation}(i)). For $1 < p < \infty$ and $1 \leq q < \infty$, we then use complex interpolation (see Proposition~\ref{prp:besovinterpolation}(iii)) with
\[p_0 = 1, \qquad p_1 = \infty, \qquad q_0 = q_1 = q, \qquad \theta = (p-1)/p,\]
by choosing $s_0,s_1 > 0$ such that $s = (1-\theta)s_0 + \theta s_1$. Finally, for the remaining cases with $q = \infty$, we use real interpolation again.
\end{proof}

\subsection{Local conditions, spaces on manifolds}

By Proposition~\ref{prp:besovproduct}, if $m \in B^s_{p,q}(\R^n)$ and $\psi \in \D(\R^n)$, then $m \psi \in B^s_{p,q}(\R^n)$. This suggests a way of introducing a ``local'' version of Besov spaces: for every open $\Omega \subseteq \R^n$, $s \in \R$, $1 \leq p,q \leq \infty$, let $B^s_{p,q,\loc}(\Omega)$ be the set of distributions $m \in \D'(\Omega)$ such that
\[m \psi \in B^s_{p,q}(\R^n) \qquad\text{for all $\psi \in \D(\Omega)$}\]
(where $m\psi$, which in principle is in $\E'(\Omega)$, is extended by zero to an element of $\E'(\R^n)$). Clearly, if $s \geq 0$, the elements of $B_{p,q,\loc}^s(\R^n)$ are functions. Moreover, via partition-of-unity techniques, it is not difficult to prove the following

\begin{prp}\label{prp:besovloc}
Let $s \in \R$, $1 \leq p,q \leq \infty$, $\Omega \subseteq \R^n$ be open, and let $\{\psi_\alpha\}_{\alpha \in A} \subseteq \D(\Omega)$ such that
\[\psi_\alpha \geq 0 \qquad\text{and}\qquad \bigcup_\alpha \{\psi_\alpha \neq 0\} = \Omega.\]
For all $m \in \D'(\Omega)$, we have that $m \in B_{p,q,\loc}^s(\R^n)$ if and only if
\[m \psi_\alpha \in B_{p,q}^s(\R^n) \qquad\text{for all $\alpha \in A$.}\]
\end{prp}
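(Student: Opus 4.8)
The plan is to prove the two implications separately. The forward implication is essentially trivial: if $m \in B_{p,q,\loc}^s(\R^n)$, then by the very definition of the local space, $m\psi \in B_{p,q}^s(\R^n)$ for every $\psi \in \D(\Omega)$, and in particular for each $\psi_\alpha$ in the given family. So the content of the statement is entirely in the converse: assuming $m\psi_\alpha \in B_{p,q}^s(\R^n)$ for all $\alpha \in A$, we must show $m\psi \in B_{p,q}^s(\R^n)$ for an arbitrary $\psi \in \D(\Omega)$.

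For the converse, first I would reduce to a finite subfamily. Fix $\psi \in \D(\Omega)$ and set $K = \supp \psi$, a compact subset of $\Omega$. Since $\bigcup_\alpha \{\psi_\alpha \neq 0\} = \Omega \supseteq K$ is an open cover of the compact set $K$, there are finitely many indices $\alpha_1,\dots,\alpha_N$ with $K \subseteq \bigcup_{i=1}^N \{\psi_{\alpha_i} \neq 0\}$. On the compact set $K$ the continuous function $\sigma = \sum_{i=1}^N \psi_{\alpha_i}$ is then strictly positive (each $\psi_{\alpha_i} \geq 0$, and at every point of $K$ at least one summand is nonzero), so there is $c>0$ with $\sigma \geq c$ on $K$. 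I would then choose a cut-off $\chi \in \D(\Omega)$ with $\chi \equiv 1$ on a neighbourhood of $K$ and $\supp\chi$ contained in the open set $\{\sigma > c/2\}$ (shrinking if necessary so that this set is still inside $\Omega$); the function $\theta = \chi/\sigma$ is then well-defined and smooth with compact support in $\Omega$, and $\theta\sigma \equiv 1$ on a neighbourhood of $K$. Consequently
\[
m\psi \;=\; (m\psi)\,\theta\sigma \;=\; \sum_{i=1}^N \bigl( m\,\psi_{\alpha_i}\bigr)\,(\psi\,\theta),
\]
using that $\psi \cdot \theta \sigma = \psi$ because $\theta\sigma = 1$ on $\supp\psi = K$. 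Here $\psi\theta \in \D(\R^n)$ and each $m\psi_{\alpha_i} \in B_{p,q}^s(\R^n)$ by hypothesis.

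It now remains to observe that multiplication by a fixed test function preserves $B_{p,q}^s(\R^n)$: this is exactly Proposition~\ref{prp:besovproduct}(i), applied with $f = \psi\theta \in \D(\R^n) \subseteq B_{p_1,\infty}^\sigma(\R^n)$ for any $\sigma > |s|$ and any $p_1$, and $g = m\psi_{\alpha_i} \in B_{p,q}^s(\R^n)$, taking $p_2 = p$ so that $1/p_1 + 1/p_2 = 1/p$ forces $p_1 = \infty$ (and a Schwartz, hence test, function lies in every $B_{\infty,\infty}^\sigma$). Thus each term $(m\psi_{\alpha_i})(\psi\theta)$ lies in $B_{p,q}^s(\R^n)$, and a finite sum of such terms does too, giving $m\psi \in B_{p,q}^s(\R^n)$. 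Since $\psi \in \D(\Omega)$ was arbitrary, $m \in B_{p,q,\loc}^s(\R^n)$. The only mildly delicate point — the ``main obstacle'', such as it is — is the bookkeeping with the cut-offs to produce a smooth partition-of-unity-type function $\theta$ with $\theta\sigma = 1$ near $\supp\psi$ and everything still supported inside $\Omega$; once that is in place, the Besov part is an immediate invocation of Proposition~\ref{prp:besovproduct}. The final remark about $s \geq 0$ implying that elements are functions follows from Proposition~\ref{prp:besovembeddings} together with Proposition~\ref{prp:besovzeroinclusions}, as already noted in the text preceding the statement.
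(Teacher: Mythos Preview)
Your proof is correct and is precisely the partition-of-unity argument the paper alludes to in lieu of a written proof. The construction of $\theta = \chi/\sigma$ with $\theta\sigma \equiv 1$ near $\supp\psi$, followed by the invocation of Proposition~\ref{prp:besovproduct}(i) with $p_1 = \infty$, is the intended route; the only superfluous remark is the final sentence about $s \geq 0$, which concerns the comment preceding the statement rather than the Proposition itself.
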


By Proposition~\ref{prp:besovchange}, the local spaces $B_{p,q,\loc}^s(\R^n)$ are invariant by diffeomorphisms, at least for $s > 0$, so that one can define, via local coordinates and partitions of unity, $B_{p,q,\loc}^s(M)$ for every smooth manifold $M$. For $p=q=2$, we have in particular the local $L^2$ Sobolev space $H^s_\loc(M)$, which can be clearly considered also for $s = 0$ (since $H^0 = L^2$).

In the case of a compact manifold $M$, we set $H^s(M) = H^s_\loc(M)$. In fact, since $M$ admits a finite atlas, we have that $H^s(M)$ is a Banach space, with norm
\[\|m\|_{H^s(M)} = \max_j \| (m \eta_j) \circ \psi_j^{-1}\|_{H^s(\R^n)},\]
where $\{(U_j,\psi_j)\}_j$ is a finite atlas for $M$ and $\{\eta_j\}_j$ is a partition of unity subordinated to the open cover $\{U_j\}_j$. Moreover, if we choose non-negative functions $\tilde \eta_j \in \D(M)$ with $\supp \tilde\eta_j \subseteq U_j$, such that $\tilde\eta_j \eta_j = \eta_j$, then the space $H^s(M)$ is a retract of $H^s(\R^n) \times \dots \times H^s(\R^n)$ via the maps
\[m \mapsto ( (m\eta_j) \circ \psi_j^{-1} )_j \qquad\text{and}\qquad (f_j)_j \mapsto \sum_j (f_j \circ \psi_j) \tilde \eta_j.\]
This immediately gives interpolation properties for $H^s(M)$:

\begin{prp}\label{prp:compactsobolevinterpolation}
For every compact manifold $M$, $s_0,s_1 \geq 0$, $0 < \theta <1$, we have
\[(H^{s_0}(M),H^{s_1}(M))_{[\theta]} = (H^{s_0}(M),H^{s_1}(M))_{\theta,2} = H^{(1-\theta)s_0 + \theta s_1}(M).\]
\end{prp}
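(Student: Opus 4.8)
The plan is to reduce the statement for the compact manifold $M$ to the corresponding statement on $\R^n$ (Proposition~\ref{prp:sobolevinterpolation}), using the retract structure that has just been described. Recall that $H^s(M)$ is a retract of the finite product $H^s(\R^n) \times \dots \times H^s(\R^n)$ via the explicit maps
\[\iota : m \mapsto \bigl( (m\eta_j) \circ \psi_j^{-1} \bigr)_j, \qquad \rho : (f_j)_j \mapsto \sum_j (f_j \circ \psi_j)\, \tilde\eta_j,\]
with $\rho \circ \iota = \id_{H^s(M)}$. The first thing to check is that $(\iota,\rho)$ is in fact a morphism of Banach couples for the couple $\bigl(H^{s_0}(M), H^{s_1}(M)\bigr)$ into $\bigl((H^{s_0}(\R^n))^k, (H^{s_1}(\R^n))^k\bigr)$: this amounts to the boundedness of $m \mapsto (m\eta_j)\circ\psi_j^{-1}$ from $H^{s_i}(M)$ to $(H^{s_i}(\R^n))^k$ and of $(f_j)_j \mapsto \sum_j (f_j\circ\psi_j)\tilde\eta_j$ from $(H^{s_i}(\R^n))^k$ to $H^{s_i}(M)$, for both $i=0,1$. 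These boundedness statements are essentially the definition of the norm on $H^s(M)$ together with Proposition~\ref{prp:besovproduct} (pointwise multiplication by a test function) and Proposition~\ref{prp:besovchange} (invariance under the diffeomorphisms $\psi_j$, $\psi_j^{-1}$, valid since $s_i \geq 0$; the case $s_i = 0$ is handled directly since $H^0 = L^2$ and composition with a diffeomorphism is bounded on $L^2_{\loc}$).

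Next I would invoke the abstract retract principle recorded in the excerpt (the subsection on retracts, following \cite{bergh_interpolation_1976}, Theorem~6.4.2, and \cite{triebel_interpolation_1978}, \S1.2.4): if $\overline{X}$ is a retract of $\overline{Y}$ via $T,S$, then for any interpolation functor $F$ one has $F(\overline{X}) = A$ (with equivalent norms) precisely when $T_\Sigma|_A : A \to F(\overline{Y})$ and $S_\Sigma|_{F(\overline{Y})} : F(\overline{Y}) \to A$ are bounded. Apply this with $F$ equal to the complex functor $[\theta]$ and then the real functor $(\cdot)_{\theta,2}$, $\overline{Y} = \bigl((H^{s_0}(\R^n))^k,(H^{s_1}(\R^n))^k\bigr)$, and $A = H^{(1-\theta)s_0+\theta s_1}(M)$. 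By Proposition~\ref{prp:sobolevinterpolation} one has $F(H^{s_0}(\R^n),H^{s_1}(\R^n)) = H^{(1-\theta)s_0+\theta s_1}(\R^n)$ for $F = [\theta]$ and $F = (\cdot)_{\theta,2}$ alike, and interpolation commutes with finite direct sums, so $F(\overline{Y}) = \bigl(H^{(1-\theta)s_0+\theta s_1}(\R^n)\bigr)^k$. The maps $\iota$ and $\rho$ are bounded $H^{(1-\theta)s_0+\theta s_1}(M) \to \bigl(H^{(1-\theta)s_0+\theta s_1}(\R^n)\bigr)^k$ and back by the same arguments as above (multiplication by test functions, change of variables). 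The retract principle then yields $F\bigl(H^{s_0}(M),H^{s_1}(M)\bigr) = H^{(1-\theta)s_0+\theta s_1}(M)$ for both functors simultaneously, which is the claim.

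The main obstacle I anticipate is purely bookkeeping rather than conceptual: one must be careful that all the multiplication-by-cutoff and change-of-variable estimates apply uniformly down to $s=0$, since the hypothesis is $s_0,s_1\geq 0$ (not $>0$), and Proposition~\ref{prp:besovchange} as stated requires $s>0$. At $s=0$ one cannot cite that proposition directly, so the endpoint needs the separate, elementary remark that $H^0 = L^2$ and that multiplication by a fixed test function and composition with a fixed diffeomorphism (with Jacobian bounded above and below on the relevant compact set) are bounded operators on $L^2$. Once this endpoint is dispatched, everything else is a mechanical application of the retract lemma, and the equality of the complex and real interpolation spaces on $M$ is inherited verbatim from the equality on $\R^n$ in Proposition~\ref{prp:sobolevinterpolation}. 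It is also worth noting that $H^s(M)$ being independent of the choice of finite atlas and subordinate partition of unity (again a consequence of the change-of-variable and multiplier properties) should be remarked, so that the statement is well posed before interpolation is invoked.
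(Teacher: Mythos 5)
Your proposal is correct and is exactly the argument the paper intends: the proposition is presented as an immediate consequence of the retract structure of $H^s(M)$ inside $H^s(\R^n)\times\dots\times H^s(\R^n)$ described just before it, combined with Proposition~\ref{prp:sobolevinterpolation} and the abstract retract principle. Your extra care at the endpoint $s=0$ (where $H^0=L^2$ and Proposition~\ref{prp:besovchange} does not apply directly) is a reasonable refinement of the same route, not a different one.
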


Among compact manifolds, we have the $n$-dimensional torus $\T^n$. In this case, the Sobolev spaces $H^s(\T^n)$ can be characterized in terms of Fourier series:

\begin{prp}\label{prp:sobolevtorus}
For every $s \geq 0$ and Borel $m : \T^n \to \C$, we have that $m \in H^s(\T^n)$ if and only if
\[\left(\sum_{k \in \N^n} |\hat f(k)|^2 (1+|k|^2)^{s}\right)^{1/2} < \infty.\]
In fact, the left-hand side of the inequality gives an equivalent norm on $H^s(\T^n)$.
\end{prp}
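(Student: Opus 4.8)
Identify $\T^n$ with $\R^n/(2\pi\Z)^n$ and let $\pi\colon \R^n\to\T^n$ be the covering projection, so that a Borel $m$ on $\T^n$ pulls back to a function $\tilde m=m\circ\pi$ on $\R^n$ that is periodic with respect to $(2\pi\Z)^n$, with Fourier expansion $\tilde m=\sum_{k\in\Z^n}\hat m(k)\,e^{ik\cdot x}$ in $\Sz'(\R^n)$ (meaningful as soon as $(\hat m(k))_k$ grows polynomially, which holds in both cases under consideration, $m\in H^s(\T^n)\subseteq L^2$ and $\sum_k|\hat m(k)|^2\langle k\rangle^{2s}<\infty$). The idea is to reduce the statement --- both the membership assertion and the norm equivalence implicit in $\|\cdot\|_{H^s(\T^n)}$ --- to $H^s(\R^n)$ by means of a partition of unity adapted to $\pi$. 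First I would fix $\rho\in\D(\R^n)$, $\rho\ge0$, with $\sum_{j\in\Z^n}\rho(x-2\pi j)\equiv1$ (obtained by normalising an arbitrary nonnegative bump by its periodisation), and then decompose $\rho=\sum_{b\in B}\rho_b$ with $B$ finite and each $\rho_b\in\D(\R^n)$ supported in a cube of diameter $<2\pi$, so that $\pi$ restricted to that cube is a coordinate chart of $\T^n$. The periodisations $\bar\rho_b$ of the $\rho_b$ then form a finite smooth partition of unity on $\T^n$ subordinate to a finite atlas, and by the definition of $H^s(\T^n)$ given above (as a retract of a finite product of copies of $H^s(\R^n)$), together with Proposition~\ref{prp:besovloc}, one has $m\in H^s(\T^n)$ if and only if $g_b:=\rho_b\tilde m\in H^s(\R^n)$ for every $b\in B$ (each $g_b$ extended by zero off its cube), and moreover $\|m\|_{H^s(\T^n)}^2\asymp\sum_{b\in B}\|g_b\|_{H^s(\R^n)}^2$.

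The core of the argument is the equivalence $\sum_{b\in B}\|g_b\|_{H^s(\R^n)}^2\asymp\sum_{k\in\Z^n}|\hat m(k)|^2\langle k\rangle^{2s}$, which rests on two dual identities: on the one hand $\Four g_b(\xi)=\sum_{k\in\Z^n}\hat m(k)\,\Four\rho_b(\xi-k)$ (expand $\tilde m$ in its Fourier series inside the product; legitimate since $g_b$ has compact support), and on the other hand $\hat m(k)=c\sum_{b\in B}\Four g_b(k)$ for $k\in\Z^n$ and a fixed dimensional constant $c$ (Poisson summation: summing the $(2\pi\Z)^n$-periodisations of the $g_b$ recovers $\tilde m$, and the $k$-th Fourier coefficient of the periodisation of $g_b$ is $\Four g_b(k)$). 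For the direction ``$\Leftarrow$'' I would estimate, for each $b$,
\[
\|g_b\|_{H^s(\R^n)}^2=\int_{\R^n}|\Four g_b(\xi)|^2\langle\xi\rangle^{2s}\,d\xi,
\]
first pulling the weight inside the sum via $\langle\xi\rangle^{s}\le 2^{s/2}\langle\xi-k\rangle^{s}\langle k\rangle^{s}$ (valid since $s\ge0$), thereby turning each term into $|\hat m(k)|\langle k\rangle^{s}$ times the rapidly decaying function $\Psi_b(\xi-k):=\langle\xi-k\rangle^{s}|\Four\rho_b(\xi-k)|$, then applying Cauchy--Schwarz in $k$ against $\sum_k\Psi_b(\xi-k)\le C$ and integrating $\Psi_b$ in $\xi$; this gives $\|g_b\|_{H^s(\R^n)}^2\lesssim\sum_k|\hat m(k)|^2\langle k\rangle^{2s}$, and summing over the finite $B$ concludes. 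For ``$\Rightarrow$'', since each $g_b$ is supported in a fixed bounded set one may pick $\varphi\in\D(\R^n)$ with $\varphi\equiv1$ there, so that $\Four g_b=\Four\varphi*\Four g_b$; the same weighted Cauchy--Schwarz manoeuvre, now using the Schwartz decay of $\Four\varphi$ to get $\sum_{k\in\Z^n}\langle k-\eta\rangle^{s}|\Four\varphi(k-\eta)|\le C$ uniformly in $\eta$, yields the sampling inequality $\sum_{k\in\Z^n}|\Four g_b(k)|^2\langle k\rangle^{2s}\lesssim\|g_b\|_{H^s(\R^n)}^2$; hence $\sum_k|\hat m(k)|^2\langle k\rangle^{2s}\le C\sum_{b}\sum_k|\Four g_b(k)|^2\langle k\rangle^{2s}\lesssim\sum_{b}\|g_b\|_{H^s(\R^n)}^2$.

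Combining these two estimates with the equivalence $\|m\|_{H^s(\T^n)}^2\asymp\sum_{b}\|g_b\|_{H^s(\R^n)}^2$ from the first paragraph gives simultaneously the membership characterisation and the fact that $\big(\sum_k|\hat m(k)|^2\langle k\rangle^{2s}\big)^{1/2}$ is an equivalent norm on $H^s(\T^n)$. The construction of the adapted partition of unity and the bookkeeping with charts (and with the normalisation constant $c$) are routine; the point requiring genuine care is the passage between the sequence $(\Four g_b(k))_{k\in\Z^n}$ and the integral $\|g_b\|_{H^s(\R^n)}$ in \emph{both} directions, i.e.\ the two Cauchy--Schwarz/sampling estimates above, where the compact support of $g_b$ (which controls the oscillation of $\Four g_b$) and the restriction $s\ge0$ (so that $\langle\cdot\rangle^{s}$ is submultiplicative up to a constant) are exactly what make the argument work.
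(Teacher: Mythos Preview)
Your proof is correct, but takes a genuinely different route from the paper's. The paper argues in two steps: first, for integer $s\in\N$, one has the elementary Parseval identity
\[
\left(\sum_{k} |\hat m(k)|^2 (1+|k|^2)^{s}\right)^{1/2} \sim \max_{|\alpha|\le s}\|D^\alpha m\|_{L^2(\T^n)},
\]
which immediately matches the definition of $H^s(\T^n)$ via charts (since the chart maps are isometries of $L^2$ and intertwine $D^\alpha$ with $\partial^\alpha$); second, for non-integer $s$, one interpolates, using that both sides interpolate in the same way (Proposition~\ref{prp:sobolevinterpolation} for the weighted $\ell^2$ side, Proposition~\ref{prp:compactsobolevinterpolation} for $H^s(\T^n)$).

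Your argument instead handles all $s\ge0$ in one shot by a direct Poisson-summation/sampling computation: localise by a periodic partition of unity, pass to $H^s(\R^n)$, and then compare $\|g_b\|_{H^s(\R^n)}$ with $\sum_k|\hat m(k)|^2\langle k\rangle^{2s}$ via the two Cauchy--Schwarz estimates using Peetre's inequality $\langle\xi\rangle^s\lesssim\langle\xi-k\rangle^s\langle k\rangle^s$ (which is where $s\ge0$ enters). This is more self-contained---it does not invoke the interpolation machinery of \S2.1, in particular not the retract argument behind Proposition~\ref{prp:compactsobolevinterpolation}---at the cost of being longer and requiring more explicit Fourier-side bookkeeping. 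The paper's route is shorter precisely because that interpolation infrastructure has already been set up.
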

\begin{proof}
For $s \in \N$, by the properties of Fourier series it is easy to see that
\[\left(\sum_{k \in \N^n} |\hat f(k)|^2 (1+|k|^2)^{s}\right)^{1/2} \sim \max_{\alpha \tc |\alpha| \leq s} \|D^\alpha f\|_{L^2(\T^n)},\]
where $D_1,\dots,D_n$ is a basis of translation-invariant vector fields on $\T^n$. From this and the previous results, the conclusion follows easily for $s \in \N$.

For a general $s \geq 0$, it is then sufficient to use interpolation (see Propositions~\ref{prp:sobolevinterpolation} and \ref{prp:compactsobolevinterpolation}).
\end{proof}

\subsection{Trace theorems}
Trace theorems are related to what happens to Sobolev or Besov spaces (and norms) when one considers the restriction (i.e., the trace) of a function to a subspace, or more generally to a submanifold.

Notice that Proposition~\ref{prp:besovchange} contains a trace theorem: if $M$ is a smooth $k$-dimensional submanifold of $\R^n$, $1 \leq p,q \leq \infty$, $s > (n-k)/p$, then
\[f \in B_{p,q}^s(\R^n) \quad\Rightarrow\quad f|_M \in B_{p,q,\loc}^{s-(n-k)/p}(M).\]

The following is a much more refined result, which allows to consider traces on subsets of $\R^n$ which are far from being submanifolds.

\begin{thm}\label{thm:triebeltrace}
Let $\mu$ be a positive regular Borel measure on $\R^n$ with compact support, such that, for some $d > 0$,
\[\mu(B(x,r)) \leq C r^d\]
for all $x \in \R^n$ and $r > 0$. If $s > (n-d)/2$, then the identity operator on $\Sz(\R^n)$ extends to a bounded operator $H^s(\R^n) \to L^2(\mu)$.
\end{thm}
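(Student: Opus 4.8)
The plan is to reduce the statement to a single band‑limited estimate against $\mu$ and then to sum the Littlewood--Paley pieces of $f$. Since $\mu$ is compactly supported and satisfies $\mu(B(x,r))\le C r^d$, the measure $\mu$ is finite, so $\int_{\R^n}|f|^2\,d\mu$ is well defined for every $f\in\Sz(\R^n)$; as $\Sz(\R^n)$ is dense in $H^s(\R^n)$, it suffices to prove the estimate $\|f\|_{L^2(\mu)}\le C_s\|f\|_{H^s(\R^n)}$ for $f\in\Sz(\R^n)$ and then extend by continuity. Fix a standard admissible pair $(\phi_0,\phi_1)$, put $f_k=(\Four^{-1}\phi_k)*f$, so that $\Four f_k$ is supported in $\{\,|\xi|\le 2^{k+1}\,\}$, the series $\sum_{k\ge 0}f_k$ converges to $f$ in $\Sz(\R^n)$ (hence uniformly on $\supp\mu$, so also in $L^2(\mu)$), and, recalling $H^s(\R^n)=B_{2,2}^s(\R^n)$ (Corollary~\ref{cor:besovinclusions}), $\|f\|_{H^s}^2\asymp\sum_{k\ge 0}2^{2ks}\|f_k\|_2^2$. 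The crucial point is the inequality
\begin{equation}\tag{$\star$}
\|g\|_{L^2(\mu)}^2\le C\,2^{k(n-d)}\|g\|_{L^2(\R^n)}^2\qquad\text{whenever }\supp\Four g\subseteq\{\,|\xi|\le 2^{k+1}\,\}.
\end{equation}

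Granting $(\star)$, apply it to each $g=f_k$; by the triangle inequality in $L^2(\mu)$ and then Cauchy--Schwarz in $k$,
\[\|f\|_{L^2(\mu)}\le\sum_{k\ge 0}\|f_k\|_{L^2(\mu)}\le C\sum_{k\ge 0}2^{k(n-d)/2}\|f_k\|_2=C\sum_{k\ge 0}2^{k((n-d)/2-s)}\bigl(2^{ks}\|f_k\|_2\bigr)\le C\Bigl(\sum_{k\ge 0}2^{2k((n-d)/2-s)}\Bigr)^{1/2}\|f\|_{H^s},\]
and the geometric series converges precisely because $s>(n-d)/2$. This yields the desired bound.

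There remains $(\star)$. Cover the compact set $\supp\mu$ by a family of cubes $\{Q_i\}_i$ of side $2^{-k}$, of bounded overlap; the growth hypothesis gives $\mu(Q_i)\le C\,2^{-kd}$ for each $i$. For a band‑limited $g$ as in $(\star)$ pick $\Psi\in\Sz(\R^n)$ with $\widehat\Psi\equiv 1$ on $\{\,|\xi|\le 2\,\}$ and set $\Psi_k(x)=2^{kn}\Psi(2^k x)$, so that $g=g*\Psi_k$; inserting the weight $(1+2^k|x-y|)^{\pm N}$ and using Cauchy--Schwarz together with the rapid decay of $\Psi$, one obtains, for every $N>n$ and all $x\in Q_i$, the mean‑value bound
\[|g(x)|^2\le C_N\,2^{kn}\int_{\R^n}|g(y)|^2\bigl(1+2^k\operatorname{dist}(y,Q_i)\bigr)^{-N}\,dy.\]
Summing over $i$,
\[\int_{\R^n}|g|^2\,d\mu=\sum_i\int_{Q_i}|g|^2\,d\mu\le\sum_i\mu(Q_i)\,\sup_{Q_i}|g|^2\le C_N\,2^{k(n-d)}\int_{\R^n}|g(y)|^2\Bigl(\sum_i\bigl(1+2^k\operatorname{dist}(y,Q_i)\bigr)^{-N}\Bigr)dy,\]
and, since the $Q_i$ have side $2^{-k}$ and bounded overlap, $\sum_i(1+2^k\operatorname{dist}(y,Q_i))^{-N}\le C$ uniformly in $y$ and $k$ once $N>n$; this proves $(\star)$.

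The main obstacle is exactly $(\star)$ with the \emph{sharp} exponent $n-d$: the trivial route, namely the Nikolskii inequality $\|g\|_\infty\le C\,2^{kn/2}\|g\|_2$ combined with $\mu(\R^n)<\infty$, yields only the exponent $n$ and hence the suboptimal threshold $s>n/2$. The gain of the factor $2^{-kd}$ requires one to exploit the $d$‑dimensional growth of $\mu$ at \emph{all} scales down to $2^{-k}$, which is precisely what the covering by cubes of side $2^{-k}$ plus the mean‑value inequality for band‑limited functions achieves; it is this gain that lowers the threshold to $s>(n-d)/2$. Everything else --- density of $\Sz(\R^n)$ in $H^s(\R^n)$, finiteness of $\mu$, and the elementary weighted Cauchy--Schwarz argument behind the mean‑value bound --- is routine and uses only material already recorded in the excerpt.
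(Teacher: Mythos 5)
Your proof is correct, but it takes a genuinely different route from the paper, which does not argue directly at all: there the statement is obtained by quoting Corollary~9.8(ii) of \cite{triebel_structure_2001}, after identifying $H^s(\R^n)$ with the Triebel--Lizorkin space $F^s_{22}(\R^n)$. What you do instead is reprove the special case $p=q=2$ from scratch: the Littlewood--Paley decomposition $f=\sum_k f_k$ (convergent in $\Sz(\R^n)$ by \eqref{eq:besovdecomposition}, with the support information \eqref{eq:besovsupports}), the identification $\|f\|_{H^s}^2\asymp\sum_k 2^{2ks}\|f_k\|_2^2$ from Corollary~\ref{cor:besovinclusions}, and above all the band-limited estimate $(\star)$, which is a Plancherel--P\'olya-type mean-value inequality localized to a cover of $\supp\mu$ by cubes of side $2^{-k}$; the sharp gain $2^{-kd}$ comes exactly from pairing the scale of the cubes with the scale of the frequency band, and your verification of the uniform bound $\sum_i(1+2^k\operatorname{dist}(y,Q_i))^{-N}\le C$ for $N>n$ (counting $O(j^{n-1})$ cubes at distance $\approx j2^{-k}$) is the standard and correct way to close it. What each approach buys: the citation gives the result in the full $F^s_{pq}$ scale, including the endpoint-type refinements Triebel proves, at the cost of importing heavy machinery; your argument is elementary, self-contained within the tools already developed in Chapter~2, and makes transparent why the threshold is $(n-d)/2$ (the trivial Nikolskii route gives only $n/2$, as you note). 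It also shows that compact support of $\mu$ is not really used beyond guaranteeing finiteness, since the cube-sum estimate is uniform over all of $\R^n$. The only point worth stating explicitly when writing this up is that the cover $\{Q_i\}$ can simply be taken to be the dyadic grid of side $2^{-k}$, so that "bounded overlap" is automatic and the constants in $(\star)$ are independent of $k$ and of $\mu$ except through the constant $C$ in the growth hypothesis.
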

\begin{proof}
It is a particular case of \cite{triebel_structure_2001}, Corollary 9.8(ii), since the Sobolev space $H^s(\R^n)$ coincides with the Triebel-Lizorkin space $F_{22}^s(\R^n)$ (cf.\ \S1.2 of \cite{triebel_structure_2001}).
\end{proof}

Let $A \subseteq \R^n$ be open and $d \geq 0$. We say that a positive regular Borel measure $\mu$ on $A$ is \emph{locally $d$-bounded}\index{locally $d$-bounded measure} if, for every compact $K \subseteq A$ and $\varepsilon > 0$, there exist $C, \bar r > 0$ such that
\[\mu(B(x,r)) \leq C r^{d-\varepsilon} \qquad\text{for $x \in K$ and $0 < r \leq \bar r$.}\]
Notice that local $0$-boundedness is an empty condition.

\begin{cor}\label{cor:triebeltrace}
Let $\mu$ be a locally $d$-bounded positive regular Borel measure on an open $A \subseteq \R^n$. If $s > (n-d)/2$, then, for every compact $K \subseteq A$,
\[\|f\|_{L^2(\mu)} \leq C_{K,s} \|f\|_{H^s(\R^n)}\]
for every $f \in \D(\R^n)$ with $\supp f \subseteq K$.
\end{cor}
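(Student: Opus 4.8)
The plan is to reduce to Theorem~\ref{thm:triebeltrace} by replacing $\mu$ with a suitable truncation, at the cost of trading an arbitrarily small amount of the exponent $d$ for the \emph{global} volume bound that the cited theorem requires. So I first fix a compact neighbourhood $K'$ of $K$ with $K' \subseteq A$ (such a $K'$ exists since $A$ is open and $K$ compact), and I set $\tilde\mu = \mu|_{K'}$, a positive regular Borel measure supported in the compact set $K'$.

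The degenerate case $d = 0$ is handled directly: the hypothesis on $\mu$ is then vacuous and $s > n/2$, so Proposition~\ref{prp:sobolevembedding}(ii) gives $\|f\|_\infty \leq C_s \|f\|_{H^s(\R^n)}$ for $f \in \D(\R^n)$, whence $\|f\|_{L^2(\mu)}^2 \leq \mu(K) \|f\|_\infty^2 \leq C_s^2\, \mu(K)\, \|f\|_{H^s(\R^n)}^2$, with $\mu(K) < \infty$ by regularity. Assume then $d > 0$, and choose $\varepsilon \in (0,d)$ small enough that $s > (n - d + \varepsilon)/2$, which is possible since $s > (n-d)/2$; put $d' = d - \varepsilon \in (0,d)$, so $s > (n-d')/2$.

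Next I verify that $\tilde\mu$ satisfies a global bound $\tilde\mu(B(x,r)) \leq C\, r^{d'}$ for all $x \in \R^n$ and $r > 0$. By local $d$-boundedness applied to the compact $K'$ and the chosen $\varepsilon$, there are constants $C_0, \bar r > 0$ with $\mu(B(y,r)) \leq C_0\, r^{d'}$ whenever $y \in K'$ and $0 < r \leq \bar r$. If $0 < r \leq \bar r/2$ and $B(x,r) \cap K' \neq \emptyset$, pick $y \in B(x,r)\cap K'$; then $B(x,r) \subseteq B(y,2r)$, so $\tilde\mu(B(x,r)) \leq \mu(B(y,2r)) \leq C_0 (2r)^{d'}$; and if $B(x,r)\cap K' = \emptyset$ the left-hand side is $0$. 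For $r > \bar r/2$ one simply bounds $\tilde\mu(B(x,r)) \leq \mu(K') \leq \mu(K')(\bar r/2)^{-d'} r^{d'}$, with $\mu(K') < \infty$ by regularity. Combining the two ranges yields the global bound.

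Finally, Theorem~\ref{thm:triebeltrace} applied to $\tilde\mu$ (with $d'$ in place of $d$, using $s > (n-d')/2$) shows that the identity on $\Sz(\R^n)$ extends to a bounded operator $H^s(\R^n) \to L^2(\tilde\mu)$; in particular $\|f\|_{L^2(\tilde\mu)} \leq C_{K,s}\|f\|_{H^s(\R^n)}$ for all $f \in \D(\R^n) \subseteq \Sz(\R^n)$, the constant depending on $K$ through $K'$, $\bar r$, $C_0$. For $f$ with $\supp f \subseteq K \subseteq K'$ one has $\int_{\R^n} |f|^2\,d\mu = \int_K |f|^2\,d\mu = \int_K |f|^2\,d\tilde\mu = \int_{\R^n} |f|^2\,d\tilde\mu$, so $\|f\|_{L^2(\mu)} = \|f\|_{L^2(\tilde\mu)} \leq C_{K,s}\|f\|_{H^s(\R^n)}$, which is the claim. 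The only non-routine point is the passage from the \emph{local} hypothesis on $\mu$ to the \emph{global} bound on $\tilde\mu$ required by Theorem~\ref{thm:triebeltrace}; this is precisely where the (harmless) $\varepsilon$-loss in the exponent enters.
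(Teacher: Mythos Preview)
Your proof is correct and follows essentially the same route as the paper: restrict $\mu$ to a compact set, trade an $\varepsilon$ in the exponent to obtain the global ball bound required by Theorem~\ref{thm:triebeltrace}, and apply that theorem. The differences are cosmetic---you restrict to the neighbourhood $K'$ and use a center-shift argument for small balls, while the paper restricts to $K$ itself and uses a distance condition to force the center into $K'$; your explicit treatment of the degenerate case $d=0$ is a small bonus.
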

\begin{proof}
Let $K \subseteq A$ be compact and $\varepsilon > 0$. Choose a compact neighborhood $K' \subseteq A$ of $K$, and let $C,\bar r > 0$ such that
\[\mu(B(x,r)) \leq C r^{d-\epsilon} \qquad\text{for $x \in K'$ and $0 < r \leq \bar r$.}\]
Let moreover $\bar r' = \min\{\bar r, d(A \setminus \mathring{K}', K)\}$, $C' = \max\{C, \mu(K)/(\bar r')^{d-\epsilon}\}$.

The identity
\[\mu_K(E) = \mu(E \cap K)\]
defines a positive regular Borel measure $\mu_K$ on $\R^n$, which coincides with $\mu$ on $K$, and with $\supp \mu_K \subseteq K$. Moreover
\[\mu_K(B(x,r)) \leq C' r^{d-\varepsilon} \qquad\text{for every $r > 0$ and $x \in \R^n$,}\]
by construction. By Theorem~\ref{thm:triebeltrace}, we then have
\[\|f\|_{L^2(\mu)} = \|f\|_{L^2(\mu_K)} \leq C_{K,s} \|f\|_{H^s(\R^n)}\]
for $s > (n-d)/2 + \varepsilon/2$ and $f \in \D(\R^n)$ with $\supp f \subseteq K$. Since $\varepsilon > 0$ was arbitrary, the conclusion follows.
\end{proof}

In the following, we will be particularly interested in measures which are homogeneous with respect to some family of dilations $\delta_t$ on $\R^n$. This property yields a sort of polar-coordinate decomposition of the measure.

\begin{prp}\label{prp:radialcoordinates}
Let $\sigma$ be a regular Borel measure on $\R^{n}$ such that
\[\sigma(\delta_{t}(A)) = t^Q \sigma(A),\]
for some $Q > 0$. Let $|\cdot|_\delta$ be a $\delta_t$-homogeneous norm and set
\[S = \{ \lambda \in \R^{n} \tc |\lambda|_\delta = 1\}.\]
Then $S$ is compact and there exists a regular Borel measure $\tau$ on $S$ such that
\[\int_{\R^{n}} f \,d\sigma = \int_S \int_{\left]0,+\infty\right[} f(\delta_{t}(\omega)) \,t^{Q-1} \,dt \,d\tau(\omega)\]
for all measurable $f : \R^{n} \to \C$ which are nonnegative or $L^1(\sigma)$.
\end{prp}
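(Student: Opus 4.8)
The plan is to realise $\sigma$, on $\R^n\setminus\{0\}$, as the push-forward of an explicit product measure under the natural ``polar'' parametrisation of $\R^n\setminus\{0\}$ by $\left]0,+\infty\right[\times S$. I begin with two preliminary remarks. First, $S=\{\lambda\tc|\lambda|_\delta=1\}$ is compact: it is closed, and it is bounded because $|\cdot|_\delta$ is proper, being equivalent to one of the model homogeneous norms \eqref{eq:modelhomogeneousnorms}. Second, $\sigma(\{0\})=0$: since $\{0\}=\delta_t(\{0\})$, homogeneity gives $\sigma(\{0\})=t^Q\sigma(\{0\})$ for every $t>0$, and choosing $t\neq1$ forces $\sigma(\{0\})=0$; hence every integral against $\sigma$ may be computed over $\R^n\setminus\{0\}$.

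Next I introduce $\Psi:\left]0,+\infty\right[\times S\to\R^n\setminus\{0\}$, $\Psi(t,\omega)=\delta_t(\omega)$. Using the identity $|\delta_t(x)|_\delta=t|x|_\delta$ one checks that $\Psi$ is a bijection, with inverse $\lambda\mapsto(|\lambda|_\delta,\delta_{|\lambda|_\delta^{-1}}(\lambda))$, and that both maps are continuous (joint continuity of $(t,x)\mapsto\delta_t(x)$, and continuity and positivity of $|\cdot|_\delta$ off the origin); so $\Psi$ is a homeomorphism. Let $\nu$ be the regular Borel measure on $X:=\left]0,+\infty\right[\times S$ defined by $\nu(A)=\sigma(\Psi(A))$, that is, the push-forward of $\sigma|_{\R^n\setminus\{0\}}$ along $\Psi^{-1}$. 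Since $\Psi\circ m_s=\delta_s\circ\Psi$, where $m_s(t,\omega)=(st,\omega)$, the homogeneity of $\sigma$ becomes the scaling relation $\nu(m_s(A))=s^Q\nu(A)$ for all $s>0$ and all Borel $A\subseteq X$.

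The crux is to deduce from this scaling relation that $\nu$ is a product measure. I define a Borel measure $\tau$ on $S$ by $\tau(B)=Q\,\nu(\left]0,1\right]\times B)$ — finite, since $\{|\lambda|_\delta\leq1\}$ is compact and $\sigma$ is a regular Borel measure — and let $\mu_0$ be the measure on $X$ with $d\mu_0(t,\omega)=t^{Q-1}\,dt\,d\tau(\omega)$. I claim $\nu=\mu_0$. They agree on every ``rectangle'' $\left]a,b\right]\times B$ with $0<a\leq b$ and $B\subseteq S$ Borel: from $\left]0,r\right]\times B=m_r(\left]0,1\right]\times B)$ and the scaling relation one gets $\nu(\left]0,r\right]\times B)=r^Q\,\nu(\left]0,1\right]\times B)$, hence, subtracting,
\[\nu(\left]a,b\right]\times B)=(b^Q-a^Q)\,\nu(\left]0,1\right]\times B)=\left(\int_a^b t^{Q-1}\,dt\right)\tau(B)=\mu_0(\left]a,b\right]\times B).\]
These rectangles form a $\pi$-system generating the Borel $\sigma$-algebra of $X$ and containing the exhausting sequence $\left]1/n,n\right]\times S$, on which $\nu$ is finite; the uniqueness theorem for $\sigma$-finite measures then yields $\nu=\mu_0$ on all Borel subsets of $X$.

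Finally, for $f:\R^n\to\C$ measurable and nonnegative, the change-of-variables formula for the homeomorphism $\Psi$ and Tonelli's theorem for $\mu_0$ give
\[\int_{\R^n}f\,d\sigma=\int_{\R^n\setminus\{0\}}f\,d\sigma=\int_X(f\circ\Psi)\,d\nu=\int_S\int_{\left]0,+\infty\right[}f(\delta_t(\omega))\,t^{Q-1}\,dt\,d\tau(\omega),\]
and the case $f\in L^1(\sigma)$ follows by first applying this to $|f|$ (to legitimate Fubini) and then to the positive and negative parts of the real and imaginary parts of $f$. The only genuinely non-routine step is the passage from the scaling identity for $\nu$ to its product structure; the $\pi$-system computation above is the clean way to handle it, the one point requiring a little care being the finiteness of $\tau$, i.e.\ the local finiteness of $\sigma$ near the origin, which is ensured by regularity of $\sigma$ together with compactness of $\{|\lambda|_\delta\leq1\}$.
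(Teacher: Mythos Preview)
Your proof is correct and follows essentially the same plan as the paper's: pass to polar coordinates via the homeomorphism $\R^n\setminus\{0\}\cong\left]0,\infty\right[\times S$, and use the homogeneity of $\sigma$ to recognise the push-forward as a product measure. The only notable difference is in the execution of that last step: the paper first renormalises to the dilation-invariant measure $|\lambda|_\delta^{-Q}\,d\sigma$, takes a logarithm to pass to $S\times\R$, and then invokes uniqueness of Haar measure on $\R$ to identify each slice measure as a multiple of Lebesgue; you instead stay in multiplicative coordinates and compute $\nu$ directly on half-open rectangles, concluding via a $\pi$-system uniqueness argument. Your route is slightly more self-contained (no appeal to Haar measure), at the cost of making explicit the $\sigma$-finiteness check that the paper leaves implicit.
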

\begin{proof}
By the hypothesis on $\sigma$, the measure $\sigma'$ on $\R^{n}$ given by 
\[d\sigma'(\lambda) = |\lambda|_\delta^{-Q} \,d\sigma(\lambda)\]
is dilation-invariant.

Since $|\cdot|_\delta$ is a homogeneous norm, $S$ is a compact subset of $\R^{n} \setminus \{0\}$ and the map
\begin{equation}\label{eq:polarmap}
\R^{n} \setminus \{0\} \ni \lambda \mapsto (\delta_{|\lambda|_\delta^{-1}}(\lambda), \log_2 |\lambda|_\delta) \in S \times \R.
\end{equation}
is a homeomorphism, whose inverse is
\[S \times \R \ni (\omega,t) \mapsto \delta_{2^t}(\omega) \in \R^{n} \setminus \{0\}.\]
Let $\sigma''$ be the push-forward of $\sigma'$ on $S \times \R$ via this homeomorphism; then $\sigma''$ is a regular Borel measure on $S \times \R$, which is invariant by translations in the second coordinate.

Let now $B \subseteq S$ a Borel set and consider the measure $\sigma_B(C) = \sigma(B \times C)$ on $\R$ (it is the push-forward on $\R$ of the restriction of $\sigma''$ to $B \times \R$ via the canonical projection). Since $\sigma_B$ is invariant by translations and finite on bounded sets (being $S$ compact), it is a multiple of Lebesgue measure, i.e.\ there exists $c_B \geq 0$ such that $d\sigma_B(t) = c_B \,dt$.

In particular, $\sigma''(B \times [0,1]) = \sigma_B([0,1]) = c_B$. This means that $B \mapsto c_B$ is in fact a measure on $S$ (it is the push-forward on $S$ of the restriction of $\sigma''$ to $S \times [0,1]$ via the canonical projection), let us call it $\tau$. We have thus obtained that
\[\sigma''(B \times C) = \sigma_B(C) = c_B |C| = \tau(B) |C|,\]
which means that $\sigma''$ is the product measure of the regular Borel measure $\tau$ on $S$ and of Lebesgue measure on $\R$.

Putting all together, we easily obtain the conclusion.
\end{proof}

\begin{cor}\label{cor:radialcoordinates}
Under the hypotheses of Proposition~\ref{prp:radialcoordinates}, the measure $\sigma$ is locally $1$-bounded on $\R^n \setminus \{0\}$.
\end{cor}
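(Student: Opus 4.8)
\section*{Proof proposal}

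The plan is to combine the polar-coordinate decomposition of $\sigma$ provided by Proposition~\ref{prp:radialcoordinates} with the observation that a homogeneous norm on a homogeneous group, although only assumed continuous, is in fact locally Lipschitz on the complement of the origin. Morally, near a point $x \neq 0$ the measure $\sigma$ can concentrate only in the radial direction, and in that direction it is comparable to one-dimensional Lebesgue measure, which is exactly what pins the local dimension down to $1$ (in fact one even gets $\sigma(B(x,r)) \leq C r$, without any $\varepsilon$-loss).

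Concretely, fix a compact $K \subseteq \R^n \setminus \{0\}$ and an $\varepsilon > 0$. Since the statement is local, one may apply Proposition~\ref{prp:radialcoordinates} with a conveniently chosen $\delta_t$-homogeneous norm --- e.g.\ $|\cdot|_{\delta,\infty}$ of \eqref{eq:modelhomogeneousnorms}, or the norm of \cite{hebisch_smooth_1990} which is smooth off the origin --- so that $|\cdot|_\delta$ is locally Lipschitz on $\R^n \setminus \{0\}$ (the decomposition and the associated $S$, $\tau$ depend on this choice, but the balls $B(x,r)$ do not). A finite-subcover argument over $K$ then produces constants $M, \bar r_0 > 0$ such that $\big| |y|_\delta - |x|_\delta \big| \leq M |y-x|$ whenever $x \in K$ and $|y-x| \leq \bar r_0$. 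Shrinking $\bar r \leq \bar r_0$ so that in addition $\{z \tc d(z,K) \leq \bar r\} \subseteq \R^n \setminus \{0\}$ and $\min_{x \in K} |x|_\delta - M\bar r > 0$, one obtains: for $x \in K$ and $0 < r \leq \bar r$, every point of $B(x,r)$ has homogeneous norm in the interval $I_x = \left]|x|_\delta - Mr,\, |x|_\delta + Mr\right[$, of length $2Mr$, which lies in a fixed compact $[t_0,T] \subseteq \left]0,+\infty\right[$.

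Now plug $f = \chr_{B(x,r)}$ into the formula of Proposition~\ref{prp:radialcoordinates}: since $|\delta_t(\omega)|_\delta = t$ for $\omega \in S$, the inner integral is supported on $t \in I_x$, whence
\[\sigma(B(x,r)) \leq \int_S \Bigl( \int_{I_x} t^{Q-1} \,dt \Bigr) d\tau(\omega) \leq \tau(S)\, \Bigl(\sup_{[t_0,T]} t^{Q-1}\Bigr)\, 2Mr =: C_K\, r,\]
and $\tau(S) < \infty$ because $\tau$ is a regular Borel measure on the compact set $S$. As $C_K r \leq C_K r^{1-\varepsilon}$ for $r \leq 1$, this is precisely local $1$-boundedness. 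The only point that requires genuine work is the local Lipschitz property of the homogeneous norm away from the origin; with $|\cdot|_{\delta,\infty}$ it follows by noting that near a point $\lambda_0 = \sum_j x_j^0 \neq 0$ the maximum defining $|\lambda|_{\delta,\infty} = \max_j |x_j|_j^{1/\lambda_j}$ is attained only at indices $j$ with $x_j^0 \neq 0$, and for those $j$ the map $x_j \mapsto |x_j|_j^{1/\lambda_j}$ is Lipschitz near $x_j^0$ (being the composition of the Lipschitz norm $|\cdot|_j$, which stays bounded away from $0$ there, with the smooth function $t \mapsto t^{1/\lambda_j}$); with the norm of \cite{hebisch_smooth_1990} it is immediate. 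Beyond making the uniform-in-$K$ Lipschitz estimate precise, I do not expect any real obstacle.
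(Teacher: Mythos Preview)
Your argument is correct and is precisely the intended route: the paper states the result as an immediate corollary of the polar decomposition, and your proof makes explicit the one nontrivial ingredient (local Lipschitz continuity of a suitable homogeneous norm away from the origin) needed to bound the radial integral by a constant times $r$. The verification that $|\cdot|_{\delta,\infty}$ is locally Lipschitz on $\R^n\setminus\{0\}$ is clean and accurate; alternatively, since the paper allows the choice of homogeneous norm in Proposition~\ref{prp:radialcoordinates}, invoking a smooth homogeneous norm (as you also suggest) sidesteps even that check.
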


\subsection{Spaces with dominating mixed smoothness}
We now introduce the Besov spaces $S^{\vec s}_{p,q}B(\R^{\vec n})$ with dominating mixed smoothness. Notice that the order $\vec s$ of differentiability is a vector, whereas on the contrary the parameters $p,q$ are still scalar.

Let $(\phi_0^{(l)},\phi_1^{(l)})$ be an admissible pair on $\R^{n_l}$, for $l=1,\dots,\ell$, and extend it as before to a sequence $(\phi_k^{(l)})_k$; moreover, set
\[\phi_{\vec{m}} = \phi^{(1)}_{m_1} \otimes \dots \otimes \phi^{(\ell)}_{m_\ell}.\]
Then $S^{\vec s}_{p,q}B(\R^{\vec n})$ is defined as the set of the $f \in \Sz'(\R^{\vec n})$ such that the norm
\[\|f\|_{S^{\vec{s}}_{p,q}B} = ( ( 2^{\vec{m} \cdot \vec{s}} \|(\Four^{-1} \phi_{\vec{m}}) * f\|_p )_{\vec m} )_{l^q}\]
is finite; if moreover
\[\lim_{|\vec m| \to \infty} 2^{\vec{m} \cdot \vec{s}} \|(\Four^{-1} \phi_{\vec{m}}) * f\|_p = 0,\]
then we say that $f \in S^{\vec{s}}_{p,q}b(\R^{\vec{n}})$.

Most of the arguments used and the properties found for Besov spaces extend in a multi-variate fashion to these new spaces. In particular, $S^{\vec{s}}_{p,q}B(\R^{\vec{n}})$ is a Banach space, with continuous inclusions
\[\Sz(\R^{\vec{n}}) \subseteq S^{\vec{s}}_{p,q}B(\R^{\vec{n}}) \subseteq \Sz'(\R^{\vec{n}}),\]
and it is a retract of the multi-parameter $L^p$-valued weighted sequence space
\begin{equation}\label{eq:multiparametersequencespace}
l^q_{\vec{s}}(L^p(\R^{\vec{n}})) = l^q_{\vec{s}}(\N^\ell; L^p(\R^{\vec{n}})) = l^q_{s_1}(l^q_{s_2}(\dots(l^q_{s_\ell}(L^p(\R^{\vec{n}}))))).
\end{equation}

Here we find an important difference between the one-variate and the multi-variate cases: Besov spaces with dominating mixed smoothness have a less rich interpolation theory. By applying iteratively Theorem~\ref{thm:sequenceinterpolation}, and then Theorem~\ref{thm:lebesgueinterpolation}, to the space \eqref{eq:multiparametersequencespace}, we obtain the following

\begin{prp}\label{prp:multibesovinterpolation}
Let $1 \leq p,p_0,p_1 \leq \infty$, $1 \leq q,q_0,q_1 \leq \infty$, $0 < \theta < 1$, $\vec{s}_0,\vec{s}_1 \in \R^\ell$.
\begin{itemize}
\item[(i)] If $q_0,q_1 < \infty$ and
\[\frac{1}{q} = \frac{1-\theta}{q_0} + \frac{\theta}{q_1},\]
then
\[(S^{\vec{s}_0}_{p,q_0}B(\R^{\vec{n}}),S^{\vec{s}_1}_{p,q_1}B(\R^{\vec{n}}))_{\theta,q} = S^{(1-\theta)\vec{s}_0 + \theta \vec{s}_1}_{p,q}B(\R^{\vec{n}}).\]
\item[(ii)] If $q_0,q_1 < \infty$ and
\[\frac{1-\theta}{p_0} + \frac{\theta}{p_1} = \frac{1}{p} = \frac{1-\theta}{q_0} + \frac{\theta}{q_1},\]
then
\[(S_{p_0,q_0}^{\vec{s}_0}B(\R^{\vec{n}}),S_{p_1,q_1}^{\vec{s}_1}B(\R^{\vec{n}})_{\theta,p} = S_{p,p}^{(1-\theta)\vec{s}_0 + \theta \vec{s}_1}B(\R^{\vec{n}}).\]
\item[(iii)] If $q_0 <\infty$ and
\[\frac{1}{p} = \frac{1-\theta}{p_0} + \frac{\theta}{p_1}, \qquad \frac{1}{q} = \frac{1-\theta}{q_0} + \frac{\theta}{q_1},\]
then
\[(S_{p_0,q_0}^{\vec{s}_0}B(\R^{\vec{n}}),S_{p_1,q_1}^{\vec{s}_1}B(\R^{\vec{n}})_{[\theta]} = S_{p,q}^{(1-\theta)\vec{s}_0 + \theta \vec{s}_1}B(\R^{\vec{n}}).\]
\end{itemize}
\end{prp}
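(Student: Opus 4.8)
\emph{Proof strategy.} The plan is to reduce the three identities to interpolation statements for the multi-parameter weighted vector-valued sequence spaces, and then to iterate Theorems~\ref{thm:sequenceinterpolation} and~\ref{thm:lebesgueinterpolation}, exactly as announced just before the statement.

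First I would put in place the retract picture in the multi-parameter setting, parallel to \S\ref{subsection:besovbasic}. Starting from standard admissible pairs on each factor $\R^{n_l}$, form the tensor systems $\phi_{\vec m} = \phi^{(1)}_{m_1}\otimes\dots\otimes\phi^{(\ell)}_{m_\ell}$ and the corresponding ``fattened'' systems $\tilde\phi_{\vec m}$, so that $\phi_{\vec m}\tilde\phi_{\vec m}=\phi_{\vec m}$. Since $\Four^{-1}$ of a tensor product factors and $\|g\otimes h\|_1=\|g\|_1\|h\|_1$, the multi-parameter analogue of \eqref{eq:fl1boundedness} holds, and Young's inequality then shows that
\[\iota : f \mapsto \bigl((\Four^{-1}\phi_{\vec m})*f\bigr)_{\vec m}, \qquad \pi : (g_{\vec m})_{\vec m} \mapsto \sum_{\vec m}(\Four^{-1}\tilde\phi_{\vec m})*g_{\vec m}\]
exhibit $S^{\vec s}_{p,q}B(\R^{\vec n})$ as a retract of $l^q_{\vec s}(L^p(\R^{\vec n}))$ with $\pi\circ\iota=\id$. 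The crucial point is that $\iota$ and $\pi$ are fixed convolution operators, independent of $\vec s,p,q$; hence, for any two parameter choices they are simultaneously morphisms of the corresponding Banach couples, and by the retract principle for interpolation functors (\cite{bergh_interpolation_1976}, Theorem~6.4.2; \S1.2.4 of \cite{triebel_interpolation_1978}) it suffices to prove the three identities with every $S^{\vec t}_{r,u}B(\R^{\vec n})$ replaced by $l^u_{\vec t}(L^r(\R^{\vec n}))$.

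Next I would use the nested structure \eqref{eq:multiparametersequencespace}, which presents $l^u_{\vec t}(L^r(\R^{\vec n}))$ as an $\ell$-fold iterated vector-valued weighted sequence space over the base $L^r(\R^{\vec n})$, and ``peel off'' these $\ell$ outer layers one at a time. For~(i), each peeling applies Theorem~\ref{thm:sequenceinterpolation}(ii) with interpolation exponent $q$ (legitimate since $q_0,q_1<\infty$ and $1/q=(1-\theta)/q_0+\theta/q_1$), turning a couple $\bigl(l^{q_0}_{t_{j,0}}(B_0),l^{q_1}_{t_{j,1}}(B_1)\bigr)$ into $l^q_{(1-\theta)t_{j,0}+\theta t_{j,1}}\bigl((B_0,B_1)_{\theta,q}\bigr)$; after $\ell$ steps the base couple $\bigl(L^p(\R^{\vec n}),L^p(\R^{\vec n})\bigr)$ remains, and $(A,A)_{\theta,q}=A$ (from $K(t,a;A,A)=\min\{1,t\}\|a\|_A$, the ensuing $t$-integral converging for $0<\theta<1$), so one lands on $l^q_{(1-\theta)\vec s_0+\theta\vec s_1}(L^p(\R^{\vec n}))$. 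For~(ii) I would run the same scheme but with interpolation exponent $p$ and the relation $1/p=(1-\theta)/q_0+\theta/q_1$, ending at the base couple $\bigl(L^{p_0}(\R^{\vec n}),L^{p_1}(\R^{\vec n})\bigr)$, which equals $L^p(\R^{\vec n})$ by Theorem~\ref{thm:lebesgueinterpolation} since $1/p=(1-\theta)/p_0+\theta/p_1$; this yields $l^p_{(1-\theta)\vec s_0+\theta\vec s_1}(L^p(\R^{\vec n}))$, the retract model of $S^{(1-\theta)\vec s_0+\theta\vec s_1}_{p,p}B(\R^{\vec n})$. For~(iii) the argument is identical, with the complex method and Theorem~\ref{thm:sequenceinterpolation}(iii) in place of~(ii) (needing only $q_0<\infty$ and $1/q=(1-\theta)/q_0+\theta/q_1$ at each layer), the base step being $\bigl(L^{p_0}(\R^{\vec n}),L^{p_1}(\R^{\vec n})\bigr)_{[\theta]}=L^p(\R^{\vec n})$ again by Theorem~\ref{thm:lebesgueinterpolation}.

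I do not expect any genuine conceptual obstacle; the work is essentially bookkeeping. The two points needing care are: (a) checking that $\iota,\pi$ are uniformly bounded across the whole family of Banach couples, which follows at once from the tensor-product form of $\Four^{-1}\phi_{\vec m}$ together with the multi-parameter version of \eqref{eq:fl1boundedness}; and (b) verifying, at each peeling step, that the iterated structure \eqref{eq:multiparametersequencespace} is preserved under the operations $(B_0,B_1)\mapsto(B_0,B_1)_{\theta,q}$ and $(B_0,B_1)\mapsto(B_0,B_1)_{[\theta]}$ applied to the inner layers, and that the hypotheses fed into Theorem~\ref{thm:sequenceinterpolation} at each step are exactly those assumed in (i)--(iii), in particular the finiteness of the sequence exponents. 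It is precisely the need to apply Theorem~\ref{thm:sequenceinterpolation} $\ell$ times in succession — where part~(i), the version valid for all exponents when $s_0\neq s_1$, is of no help at the inner layers because there the two coefficient vectors $\vec s_0,\vec s_1$ generally differ in several components — that accounts for the more restrictive ranges of $p,q$ here, compared with the one-parameter Proposition~\ref{prp:besovinterpolation}.
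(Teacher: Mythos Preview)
Your proposal is correct and follows essentially the same approach as the paper: use the retract structure of $S^{\vec s}_{p,q}B(\R^{\vec n})$ inside $l^q_{\vec s}(L^p(\R^{\vec n}))$, then iterate Theorem~\ref{thm:sequenceinterpolation} through the $\ell$ nested layers of \eqref{eq:multiparametersequencespace}, finishing with Theorem~\ref{thm:lebesgueinterpolation} (or the trivial identity $(A,A)_{\theta,q}=A$) at the base. Your discussion of why Theorem~\ref{thm:sequenceinterpolation}(i) is unavailable at the inner layers, and hence why the parameter ranges are more restrictive than in Proposition~\ref{prp:besovinterpolation}, is also on point.
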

This result is certainly less versatile than Proposition~\ref{prp:besovinterpolation}, but will be sufficient to our aims. See \cite{schmeisser_spaces_2004}, \S4.1 (and particularly Lemma~2), for an argument against the possibility of obtaining an interpolation result as flexible as in the one-variate case.

The continuous embeddings between multi-variate Besov spaces, instead, are perfectly analogous to the one-variate case.
\begin{prp}\label{prp:multibesovembeddings}
Let $1 \leq p,p_1,p_2,q,q_1,q_2 \leq \infty$, $\vec s_1,\vec s_2 \in \R^\ell$.
\begin{itemize}
\item[(i)] If $\vec s_1 > \vec s_2$, or otherwise if $\vec s_1 \geq \vec s_2$ and $q_1 > q_2$, then we have the continuous inclusion
\[S_{p,q_1}^{s_1}B(\R^n) \subseteq S_{p,q_2}^{s_2}b(\R^n).\]
\item[(ii)] If $p_1 \leq p_2$, $\vec s_1 - \vec n/p_1 \geq \vec s_2 - \vec n/p_2$ and $q_1 \geq q_2$, then we have the continuous inclusions
\[S_{p_1,q}^{s_1}B(\R^n) \subseteq S_{p_2,q}^{s_2}B(\R^n), \qquad S_{p_1,q}^{s_1}b(\R^n) \subseteq S_{p_2,q}^{s_2}b(\R^n).\]
\end{itemize}
\end{prp}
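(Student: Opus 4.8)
The plan is to mimic the one-variate argument of Proposition \ref{prp:besovembeddings}, replacing the single dyadic index by a multi-index $\vec m \in \N^\ell$ and the single decay estimate $\|\Four^{-1}\tilde\phi_k\|_p \leq C_p 2^{kn/p'}$ by its tensor-product counterpart. Since $S^{\vec s}_{p,q}B(\R^{\vec n})$ is a retract of the multi-parameter weighted sequence space \eqref{eq:multiparametersequencespace} via the maps $f \mapsto ((\Four^{-1}\phi_{\vec m}) * f)_{\vec m}$ and $(g_{\vec m})_{\vec m} \mapsto \sum_{\vec m}(\Four^{-1}\tilde\phi_{\vec m}) * g_{\vec m}$ — where $\tilde\phi_{\vec m} = \tilde\phi^{(1)}_{m_1}\otimes\dots\otimes\tilde\phi^{(\ell)}_{m_\ell}$ — it suffices to establish the corresponding inclusions at the level of the sequence spaces and then transport them through the retraction, exactly as in the proof of Proposition \ref{prp:besovembeddings}.

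For part (i), the inclusion reduces, via the retract, to the elementary fact that weighted $l^q$ (and $c_0$) spaces over $\N^\ell$ satisfy $l^{q_1}_{\vec s_1}(\N^\ell; A) \subseteq l^{q_2}_{\vec s_2}(\N^\ell; A)$ (with values into $b$, i.e.\ the $c_0$-variant, on the right) whenever $\vec s_1 > \vec s_2$ componentwise, or $\vec s_1 \geq \vec s_2$ and $q_1 > q_2$; in the strict case one gains a geometrically decaying factor $\prod_l 2^{-m_l(s_{1,l}-s_{2,l})}$ which is summable over $\N^\ell$, and in the equal-order case one uses monotonicity of $l^q$ norms in $q$ together with the observation that the tail of an $l^{q_1}$ sequence lies in $c_0$. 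For part (ii), one first handles the case $p_1 = p_2$, which again is a pure sequence-space statement ($\vec s_1 \geq \vec s_2$, $q_1 \geq q_2$); then, for $p_1 < p_2$, one applies Young's inequality componentwise to $(\Four^{-1}\phi^{(l)}_{m_l})$ together with the estimate
\[
\|\Four^{-1}\tilde\phi^{(l)}_{m_l}\|_{r_l} \leq C 2^{m_l n_l/r_l'}, \qquad \tfrac1{p_2} = \tfrac1{p_1}+\tfrac1{\vec r}\ \text{(componentwise)},
\]
which produces precisely the tensor-product weight $\prod_l 2^{m_l n_l(1/p_1 - 1/p_2)}$ needed to convert an $L^{p_1}$-norm of a dyadic block into an $L^{p_2}$-norm times this factor; the hypothesis $\vec s_1 - \vec n/p_1 \geq \vec s_2 - \vec n/p_2$ is exactly what makes the resulting shift in the weight nonpositive, so the sequence-space inclusion of part (ii) applies.

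The main obstacle, such as it is, lies in bookkeeping rather than in any genuinely new idea: one must be careful that the Fourier supports of the blocks $(\Four^{-1}\phi_{\vec m}) * f$ are tensor-product annuli (products of annuli in each factor) so that the ``almost orthogonality'' relation $\phi_{\vec m}\tilde\phi_{\vec m} = \phi_{\vec m}$ and the uniform bounds $\sup_{\vec m}\|\Four^{-1}\phi_{\vec m}\|_1 < \infty$, $\sup_{\vec m}\|\Four^{-1}\tilde\phi_{\vec m}\|_1 < \infty$ (inherited as products of the one-variate bounds \eqref{eq:fl1boundedness}) are available; and that in the equal-order subcase of (i) the passage to the little space $S^{\vec s_2}_{p,q_2}b$ genuinely needs $q_1 > q_2$ (or $\vec s_1 > \vec s_2$ strictly), paralleling the role of $q_1 > q_2$ in Proposition \ref{prp:besovembeddings}(i). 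Once the sequence-space inclusions are in place, the retraction argument closes the proof verbatim as in the one-variate case, and I would simply refer to \cite{schmeisser_topics_1987} for the details omitted here.
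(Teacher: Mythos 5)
Your overall route --- pass to the multi-parameter sequence spaces $l^q_{\vec{s}}(\N^\ell;L^p(\R^{\vec{n}}))$ via the retraction $f \mapsto ((\Four^{-1}\phi_{\vec m})*f)_{\vec m}$, and convert $L^{p_1}$-norms of dyadic blocks into $L^{p_2}$-norms by Young's inequality against $\Four^{-1}\tilde\phi_{\vec m}$ with the tensorized bound $\prod_l 2^{m_l n_l(1/p_1-1/p_2)}$ --- is exactly the intended multi-variate transcription of the proof of Proposition~\ref{prp:besovembeddings}, and for part (ii) and for the strictly-ordered case $\vec s_1 > \vec s_2$ of part (i) it closes without difficulty. (One small slip: the Young relation for $L^{p_1} * L^{r} \subseteq L^{p_2}$ is $1+1/p_2 = 1/p_1 + 1/r$, i.e.\ $1/r' = 1/p_1 - 1/p_2$, not $1/p_2 = 1/p_1 + 1/r$; the weight you end up with is nevertheless the correct one.)

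The genuine problem is the equal-order subcase of (i). The sequence-space fact you invoke --- monotonicity of $l^q$ norms in $q$ together with the observation that an $l^{q_1}$ sequence with $q_1<\infty$ lies in $c_0$ --- yields $l^{q_1}_{\vec{s}}(\N^\ell;A) \subseteq l^{q_2}_{\vec{s}}(\N^\ell;A)$, with values in the $c_0$-variant on the right, precisely when $q_1 < q_2$: one needs $\|\cdot\|_{l^{q_2}} \leq \|\cdot\|_{l^{q_1}}$, which holds for $q_1 \leq q_2$, plus $q_1<\infty$ to land in $c_0$. With $q_1 > q_2$ and $\vec s_1 = \vec s_2$ the claimed inclusion is false (take $q_1=\infty$, $q_2=1$: $l^\infty \not\subseteq l^1$), so your assertion that ``the passage to the little space genuinely needs $q_1>q_2$'' contradicts the very lemma you use to establish it. The hypothesis must be read as $q_1 < q_2$ --- consistently with the remark following Proposition~\ref{prp:besovembeddings} that these spaces increase in $q$, and with \cite{bergh_interpolation_1976}, Theorem~6.2.4 --- and your write-up should correct the inequality rather than reproduce it; note also that when $\vec s_1 \geq \vec s_2$ with some components equal, the gain $\prod_l 2^{-m_l(s_{1,l}-s_{2,l})}$ fails to decay in those directions, so the $l^q$ monotonicity (not summability of the gain) is what carries that case, again forcing $q_1 \le q_2$. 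A last cosmetic point: in part (ii) only a single exponent $q$ appears in the conclusion, so the hypothesis $q_1 \geq q_2$ there is idle and needs no argument.
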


The Fourier multiplier results obtained in the one-variate case can be certainly extended to this context, at least for multipliers in factorized form (i.e., when the multiplier is the tensor product of symbols belonging to suitable classes on the factors $\R^{n_l}$ of $\R^{\vec{n}}$). In particular, we obtain easily this multi-variate version of the lifting properties:

\begin{prp}\label{prp:multibesovlifting}
Let $1 \leq p,q \leq \infty$, $s \in \R$.
\begin{itemize}
\item[(i)] For every $\vec{\sigma} \in \R^\ell$, the \emph{multi-parameter Bessel potential} operator
\[J_{\vec{\sigma}} : f \mapsto \Four^{-1} ( \langle \cdot \rangle^{\vec{\sigma}} (\Four f))\]
is an isomorphism $S_{p,q}^{\vec{s}}B(\R^{\vec{n}}) \to S_{p,q}^{\vec{s}-\vec{\sigma}}B(\R^{\vec{n}})$, with inverse $J_{-\vec{\sigma}}$.
\item[(ii)] For every $\vec{k} \in \N$, the three conditions
\begin{itemize}
\item[(a)] $f \in S_{p,q}^{\vec{s}}B(\R^{\vec{n}})$,
\item[(b)] $\partial^{\vec{\alpha}} f \in S_{p,q}^{\vec{s}-\vec{k}}B(\R^{\vec{n}})$ for $|\alpha_1| \leq k_1,\dots,|\alpha_\ell| \leq k_\ell$,
\end{itemize}
are equivalent; moreover, the quantity
\[\sum_{|\alpha_1| \leq k_1} \cdots \sum_{|\alpha_\ell| \leq k_\ell} \|\partial^{\vec{\alpha}} f\|_{S_{p,q}^{\vec{s}-\vec{k}}B}\]
is an equivalent norm of $f \in S_{p,q}^{\vec{s}}B(\R^{\vec{n}})$.
\end{itemize}
The same results hold also for little Besov spaces.
\end{prp}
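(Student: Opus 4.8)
The plan is to run the one-variate argument of Proposition~\ref{prp:besovlifting} in a factorized form, replacing the scalar Fourier multiplier result of Proposition~\ref{prp:besovfouriermultiplier} by its tensor-product analogue on $\R^{\vec{n}}$. So the first step is to record the following multi-parameter multiplier statement: if $m = m_1 \otimes \dots \otimes m_\ell$ with each $m_l \in S^{k_l}_{\sigma_l}(\R^{n_l})$ for some integer $k_l > n_l/2$, then $f \mapsto (\Four^{-1} m) * f$ is bounded $S^{\vec{s}}_{p,q}B(\R^{\vec{n}}) \to S^{\vec{s}-\vec{\sigma}}_{p,q}B(\R^{\vec{n}})$ and $S^{\vec{s}}_{p,q}b(\R^{\vec{n}}) \to S^{\vec{s}-\vec{\sigma}}_{p,q}b(\R^{\vec{n}})$, where $\vec{\sigma} = (\sigma_1,\dots,\sigma_\ell)$. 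This is proved exactly as in Proposition~\ref{prp:besovfouriermultiplier}: the dyadic pieces factor, $m\,\tilde\phi_{\vec{m}} = (m_1 \tilde\phi^{(1)}_{m_1}) \otimes \dots \otimes (m_\ell \tilde\phi^{(\ell)}_{m_\ell})$, so $\|\Four^{-1}(m\tilde\phi_{\vec{m}})\|_1 = \prod_{l=1}^\ell \|\Four^{-1}(m_l \tilde\phi^{(l)}_{m_l})\|_1 \leq C\, 2^{\vec{m}\cdot\vec{\sigma}}$ by the one-variate estimate already used there; one then concludes via Young's inequality and the fact that $S^{\vec{s}}_{p,q}B(\R^{\vec{n}})$ is a retract of the weighted sequence space \eqref{eq:multiparametersequencespace}.

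Granting this, part (i) is immediate, since $\langle\cdot\rangle^{\vec{\sigma}} = \langle\xi_1\rangle^{\sigma_1} \otimes \dots \otimes \langle\xi_\ell\rangle^{\sigma_\ell}$ with $\langle\cdot\rangle^{\sigma_l} \in S^\infty_{\sigma_l}(\R^{n_l})$ by Lemma~\ref{lem:symbols}(iii); boundedness of $J_{\vec{\sigma}}$ and $J_{-\vec{\sigma}}$ then follows, and these are mutually inverse because $\langle\cdot\rangle^{\vec{\sigma}}\langle\cdot\rangle^{-\vec{\sigma}} \equiv 1$ and the Fourier transform turns convolution into multiplication. For the implication (a)$\Rightarrow$(b) in part (ii), I would apply the multiplier statement to the symbol $\xi^{\vec{\alpha}} = \xi_1^{\alpha_1} \otimes \dots \otimes \xi_\ell^{\alpha_\ell}$, a tensor product of polynomials of degrees $|\alpha_l| \leq k_l$, hence of elements of $S^\infty_{k_l}(\R^{n_l})$ by Lemma~\ref{lem:symbols}(iii); this gives $\partial^{\vec{\alpha}} f \in S^{\vec{s}-\vec{k}}_{p,q}B(\R^{\vec{n}})$ with the corresponding norm bound.

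The substantive step is (b)$\Rightarrow$(a). In each factor $\R^{n_l}$ I would invoke the construction from the proof of Proposition~\ref{prp:besovlifting}(ii): there exist $\rho_{l,1},\dots,\rho_{l,n_l} \in S^\infty_0(\R^{n_l})$ such that $\eta_l(\xi_l) := 1 + \sum_{j=1}^{n_l} \rho_{l,j}(\xi_l)\,\xi_{l,j}^{k_l} \geq c_l \langle\xi_l\rangle^{k_l}$ for some $c_l > 0$, so that $1/\eta_l \in S^\infty_{-k_l}(\R^{n_l})$ by Lemma~\ref{lem:symbols}(iv). Put $\eta = \eta_1 \otimes \dots \otimes \eta_\ell$. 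Multiplying out, $\eta$ is a finite sum of tensor-product symbols of the form $(\tilde\rho_1 \otimes \dots \otimes \tilde\rho_\ell)\,\xi^{\vec{\alpha}}$, where each $\tilde\rho_l$ is either the constant $1$ or one of the $\rho_{l,j}$ (so $\tilde\rho_1 \otimes \dots \otimes \tilde\rho_\ell$ is a tensor product of $S^\infty_0$ symbols) and the associated $\vec{\alpha}$ satisfies $|\alpha_l| \leq k_l$ for every $l$. Hence, applying the multiplier statement (first with the $S^\infty_0$-tensor symbol, then to $\xi^{\vec{\alpha}}$ as in the previous paragraph) together with (b), one gets $(\Four^{-1}\eta) * f \in S^{\vec{s}-\vec{k}}_{p,q}B(\R^{\vec{n}})$. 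Finally, since $1/\eta = (1/\eta_1) \otimes \dots \otimes (1/\eta_\ell)$ with $1/\eta_l \in S^\infty_{-k_l}(\R^{n_l})$, a last application of the multiplier statement yields $f = (\Four^{-1}(1/\eta)) * \big((\Four^{-1}\eta) * f\big) \in S^{\vec{s}}_{p,q}B(\R^{\vec{n}})$; tracking operator norms along the way gives the claimed equivalent norm, and the whole argument goes through verbatim with $b$ in place of $B$.

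I expect the routine part to be the multi-index repetition of the sequence-space estimates, which is identical in spirit to the one-variate case. The only genuinely new bookkeeping --- and the main, though elementary, obstacle --- is the expansion of $\eta = \eta_1 \otimes \dots \otimes \eta_\ell$ and the verification that each resulting term is a tensor-product Fourier multiplier of the appropriate mixed order applied to a mixed derivative $\partial^{\vec{\alpha}} f$ with $|\alpha_l| \leq k_l$ \emph{for each $l$ separately}: it is precisely here that one must check the dominating-mixed structure is respected factorwise rather than only in total order.
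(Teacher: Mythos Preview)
The proposal is correct and takes essentially the same approach as the paper, which simply remarks that the result follows from the tensor-product extension of the Fourier multiplier statement in Proposition~\ref{prp:besovfouriermultiplier}; you have fleshed out exactly these details, including the key factorwise expansion of $\eta = \eta_1 \otimes \dots \otimes \eta_\ell$ needed for (b)$\Rightarrow$(a).
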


Consequently, it is not difficult to obtain the inclusions corresponding to Corollary~\ref{cor:besovinclusions}:

\begin{prp}\label{prp:multibesovinclusions}
For every $\vec{k} \in \N^\ell$, the following are continuous inclusions:
\begin{itemize}
\item[(i)] $S_{p,1}^{\vec{k}} B(\R^{\vec{n}}) \subseteq S^{\vec{k},p} W(\R^{\vec{n}}) \subseteq S_{p,\infty}^{\vec{k}} B(\R^n)$ for $1 \leq p \leq \infty$,
\item[(ii)] $S_{\infty,1}^{\vec{k}} B(\R^{\vec{n}}) \subseteq S^{\vec{k}} C_{ub}(\R^{\vec{n}}) \subseteq S_{\infty,\infty}^{\vec{k}} B(\R^{\vec{n}})$.
\end{itemize}
Moreover, we have $S_{2,2}^{\vec{s}}B(\R^{\vec{n}}) = S^{\vec{s}} H(\R^{\vec{n}})$ for all $\vec{s} \in \R$.
\end{prp}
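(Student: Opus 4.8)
The plan is to establish Proposition~\ref{prp:multibesovinclusions} by transferring the one-variate results (Corollary~\ref{cor:besovinclusions} and Proposition~\ref{prp:besovzeroinclusions}) to the dominating-mixed-smoothness setting, exploiting the tensor-product/iterated structure of the norms rather than redoing the Littlewood--Paley estimates from scratch. The key observation is that the defining norm of $S^{\vec{s}}_{p,q}B(\R^{\vec{n}})$ is built from the same dyadic pieces $(\Four^{-1}\phi_{\vec m})*f$ with $\phi_{\vec m} = \phi^{(1)}_{m_1}\otimes\dots\otimes\phi^{(\ell)}_{m_\ell}$, and the weight $2^{\vec m\cdot\vec s}$ factorizes over the indices; likewise the retraction onto the iterated sequence space \eqref{eq:multiparametersequencespace} is available by Proposition~\ref{prp:multibesovlifting} and the surrounding discussion.

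First I would prove the identity $S_{2,2}^{\vec s}B(\R^{\vec n}) = S^{\vec s}H(\R^{\vec n})$. Arguing exactly as in the last display of the proof of Proposition~\ref{prp:besovzeroinclusions}, Plancherel's theorem gives
\[\|f\|_{S_{2,2}^{\vec s}B}^2 = \sum_{\vec m}\, 2^{2\vec m\cdot\vec s}\,\|(\Four^{-1}\phi_{\vec m})*f\|_2^2 = \sum_{\vec m}\,2^{2\vec m\cdot\vec s}\int |\Four f(\xi)|^2\,|\phi_{\vec m}(\xi)|^2\,d\xi,\]
and since on the support of $\phi_{\vec m}$ one has $2^{m_l}\sim\langle\xi_l\rangle$ for each $l$, while $\sum_{\vec m}|\phi_{\vec m}|^2$ is bounded above and below, the right-hand side is comparable to $\int|\Four f(\xi)|^2\langle\xi\rangle^{2\vec s}\,d\xi = \|f\|_{S^{\vec s}H}^2$, using the definition of $\langle\xi\rangle^{\vec s}$ from \S\ref{subsection:directproducts}. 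This also recovers $S^{\vec k,2}W = S^{\vec k}H$ for $\vec k\in\N^\ell$, already noted in the text, via the Fourier characterization of mixed Sobolev spaces.

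Next I would treat the embeddings (i) and (ii) by induction on the number $\ell$ of factors, peeling off one variable at a time. For the left-hand inclusions $S^{\vec k}_{p,1}B\subseteq S^{\vec k,p}W$ and $S^{\vec k}_{\infty,1}B\subseteq S^{\vec k}C_{ub}$: the $\ell=1$ case is Corollary~\ref{cor:besovinclusions}(i)--(ii); for the inductive step, view $S^{\vec s}_{p,q}B(\R^{\vec n})$ as a Besov space in the last variable valued in the Banach space $S^{(s_1,\dots,s_{\ell-1})}_{p,q}B(\R^{(n_1,\dots,n_{\ell-1})})$ (this is the content of the iterated description \eqref{eq:multiparametersequencespace}), apply the one-variate inclusion with $q=1$ in that variable, and invoke the inductive hypothesis inside. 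The convergence of the decomposition $f=\sum_{\vec m}(\Four^{-1}\phi_{\vec m})*f$ in the relevant norm when $q=1$ (needed to identify the limit as an $L^p$ resp.\ $C_{ub}$ function) follows from the multi-variate analogue of \eqref{eq:besovdecomposition}, which is part of the retract structure. The right-hand inclusions $S^{\vec k,p}W\subseteq S^{\vec k}_{p,\infty}B$ and $S^{\vec k}C_{ub}\subseteq S^{\vec k}_{\infty,\infty}B$ are handled the same way, using the $q=\infty$ endpoint of Corollary~\ref{cor:besovinclusions} together with the multi-variate boundedness estimates $\sup_{\vec m}\|\Four^{-1}\phi_{\vec m}\|_1<\infty$ and $\sup_{\vec m}\|\Four^{-1}\tilde\phi_{\vec m}\|_1<\infty$ (tensor products of the one-variate bounds \eqref{eq:fl1boundedness}), after applying the multi-parameter lifting of Proposition~\ref{prp:multibesovlifting}(ii) to reduce to the level $\vec s=0$.

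The main obstacle I anticipate is making the inductive ``fix all but one variable'' argument rigorous: one must check that $S^{\vec s}_{p,q}B(\R^{\vec n})$ genuinely coincides (with equivalent norms) with the vector-valued Besov space $B^{s_\ell}_{p,q}(\R^{n_\ell}; S^{(s_1,\dots,s_{\ell-1})}_{p,q}B(\R^{(n_1,\dots,n_{\ell-1})}))$, i.e.\ that the one-variate Littlewood--Paley theory of \S\ref{subsection:besovbasic} goes through verbatim with scalars replaced by an arbitrary Banach space and that this operation composes correctly across the $\ell$ factors; since the Fourier-multiplier estimates used there rest only on Young's inequality and Proposition~\ref{prp:sobolevembedding}(i), which are insensitive to the target Banach space, this is true but requires care, especially for the $p=\infty$ and $q=\infty$ endpoints where density fails (hence the passage to little Besov spaces $S^{\vec k}_{p,\infty}b$ on the right-hand sides, exactly as in the one-variate statements). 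Once this vector-valued reduction is in place, the proposition follows by a short induction, and I would simply indicate this and refer to \cite{schmeisser_topics_1987} for the technical underpinnings, in keeping with the expository tone of the chapter.
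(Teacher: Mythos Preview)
The paper gives no explicit proof here: it presents the proposition as an immediate consequence of the multi-variate lifting (Proposition~\ref{prp:multibesovlifting}), the implicit argument being to reduce to $\vec k = 0$ by lifting and then repeat the level-$0$ proof of Proposition~\ref{prp:besovzeroinclusions} directly in multi-variables (the decomposition~\eqref{eq:besovdecomposition}, Young's inequality, and the tensored bounds $\sup_{\vec m}\|\Four^{-1}\phi_{\vec m}\|_1<\infty$ from~\eqref{eq:fl1boundedness}). Your Plancherel computation for $S^{\vec s}_{2,2}B = S^{\vec s}H$ and your treatment of the right-hand inclusions match this.

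For the left-hand inclusions your inductive vector-valued route is more circuitous than the paper's, and the obstacle you flag is genuine: the identification of $S^{\vec s}_{p,q}B(\R^{\vec n})$ with $B^{s_\ell}_{p,q}(\R^{n_\ell}; S^{(s_1,\dots,s_{\ell-1})}_{p,q}B)$ with \emph{equivalent} norms fails for $p \neq q$, because it requires interchanging an $l^q_{\vec m'}$ norm with an $L^p_{x_\ell}$ norm; the iterated description~\eqref{eq:multiparametersequencespace} concerns only the sequence space $l^q_{\vec s}(L^p)$, not the Besov space itself. What does hold, by Minkowski's inequality for $q=1$ and $p\geq 1$, is the one-sided embedding $S^{\vec k}_{p,1}B \hookrightarrow B^{k_\ell}_{p,1}(\R^{n_\ell}; S^{\vec k'}_{p,1}B)$, and that direction is all your induction needs---so the argument can be salvaged once you drop the claim of equivalence. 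Still, this is unnecessarily delicate: the paper's direct route (lifting plus the level-$0$ decomposition argument, exactly as you outline for the right-hand side) handles both inclusions uniformly and avoids vector-valued Besov theory altogether.
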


The result on pointwise multiplication can also be generalized to this setting:

\begin{prp}\label{prp:multibesovproduct}
Let $1 \leq p, p_1, p_2, q \leq \infty$, $\vec{s} \in \R^\ell$, with
\[\frac{1}{p} = \frac{1}{p_1} + \frac{1}{p_2}.\]
\begin{itemize}
\item[(i)] For every $\vec{\sigma} \in \R^\ell$ with $\sigma_l > |s_l|$ for $l=1,\dots,\ell$, we have
\[\| f g \|_{S^{\vec{s}}_{p,q}B} \leq C_{p_1,p_2,q,\vec{\sigma},\vec{s}} \|f\|_{S^{\vec{\sigma}}_{p_1,\infty}B} \|g\|_{S^{\vec{s}}_{p_2,q}B}.\]
\item[(ii)] If $\vec{s} > 0$, then we also have the more precise inequality
\[\| f g \|_{S^{\vec{s}}_{p,q}B} \leq C_{p_1,p_2,q,\vec{s}} \|f\|_{S^{\vec{s}}_{p_1,q}B} \|g\|_{S^{\vec{s}}_{p_2,q}B}.\]
\end{itemize}
\end{prp}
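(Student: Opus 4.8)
The plan is to mimic the one-variate proof of Proposition~\ref{prp:besovproduct}, using the factored dyadic decomposition and replacing the scalar weighted sequence space $l^q_s$ by the iterated space $l^q_{\vec s} = l^q_{s_1}(l^q_{s_2}(\cdots l^q_{s_\ell}))$. Concretely, I would fix standard admissible pairs $(\phi_0^{(l)},\phi_1^{(l)})$ on the factors $\R^{n_l}$, extend them to systems $(\phi^{(l)}_k)_k$, set $\phi_{\vec m}=\phi^{(1)}_{m_1}\otimes\dots\otimes\phi^{(\ell)}_{m_\ell}$, and decompose $f=\sum_{\vec h}f_{\vec h}$, $g=\sum_{\vec k}g_{\vec k}$ with $f_{\vec h}=(\Four^{-1}\phi_{\vec h})*f$ and $g_{\vec k}=(\Four^{-1}\phi_{\vec k})*g$. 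Since $\Four(f_{\vec h}g_{\vec k})$ is supported in $\supp\phi_{\vec h}+\supp\phi_{\vec k}$, and this set --- like $\supp\phi_{\vec m}$ --- factors over the $\ell$ coordinate blocks, the one-dimensional support bookkeeping of Proposition~\ref{prp:besovproduct} shows that $(\Four^{-1}\phi_{\vec m})*(f_{\vec h}g_{\vec k})=0$ unless $(h_l,k_l)\in I_{m_l}$ for every $l=1,\dots,\ell$, where $I_m\subseteq\N^2$ is the index set introduced there. Writing $I_{\vec m}=I_{m_1}\times\dots\times I_{m_\ell}$, and using the dominating-mixed-smoothness versions of Lemmas~\ref{lem:besovfatou} and \ref{lem:besovproductestimate} (whose proofs carry over verbatim, with $l^q_s$ replaced by $l^q_{\vec s}$), together with H\"older's inequality $\|f_{\vec h}g_{\vec k}\|_p\le\|f_{\vec h}\|_{p_1}\|g_{\vec k}\|_{p_2}$ on $\R^{\vec n}$ and the uniform bound $\|\Four^{-1}\phi_{\vec m}\|_1=\prod_l\|\Four^{-1}\phi^{(l)}_{m_l}\|_1\le C$, the existence of the paraproduct together with both claimed estimates reduces to the sequential inequality
\[
\left\| \left( \sum_{(\vec h,\vec k)\in I_{\vec m}} a_{\vec h}\, b_{\vec k} \right)_{\vec m} \right\|_{l^q_{\vec s}(\N^\ell)} \le C\, \|(a_{\vec h})\|_{X}\, \|(b_{\vec k})\|_{l^q_{\vec s}(\N^\ell)}
\]
for non-negative scalars $a_{\vec h}, b_{\vec k}$, with $X=l^\infty_{\vec\sigma}(\N^\ell)$ in case (i) and $X=l^q_{\vec s}(\N^\ell)$ in case (ii).

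I would prove this sequence inequality by induction on $\ell$, the base case $\ell=1$ being exactly the two scalar estimates established inside the proof of Proposition~\ref{prp:besovproduct}. For the inductive step, single out the first coordinate block: write $\vec h=(h_1,h'')$, $\vec k=(k_1,k'')$, $\vec m=(m_1,m'')$, $\vec s=(s_1,\vec s'')$, $\vec\sigma=(\sigma_1,\vec\sigma'')$, and use $l^q_{\vec s}(\N^\ell)=l^q_{s_1}(l^q_{\vec s''}(\N^{\ell-1}))$. Since $I_{\vec m}$ factors, for each fixed $(h_1,k_1)$ let $V_{m''}(h_1,k_1)$ be the sum of the $a_{(h_1,h'')}b_{(k_1,k'')}$ over all $(h'',k'')$ with $(h_l,k_l)\in I_{m_l}$ for $l=2,\dots,\ell$; as a sequence in $m''$ this has precisely the $(\ell-1)$-variate shape, so the inductive hypothesis bounds its $l^q_{\vec s''}$-norm by $C\,\alpha_{h_1}\beta_{k_1}$, where $\alpha_{h_1}=\|(a_{(h_1,h'')})_{h''}\|_{X''}$, $\beta_{k_1}=\|(b_{(k_1,k'')})_{k''}\|_{l^q_{\vec s''}}$, and $X''\in\{l^\infty_{\vec\sigma''},\,l^q_{\vec s''}\}$ accordingly --- crucially, this bound does not depend on $m_1$. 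Summing over $(h_1,k_1)\in I_{m_1}$ and applying the triangle inequality for the norm $l^q_{\vec s''}$ (valid since $q\ge1$), then taking the $l^q_{s_1}$-norm over $m_1$ and invoking the scalar one-variate inequality for the sequences $(\alpha_{h_1})$ and $(\beta_{k_1})$, yields the bound $C\,\|(\alpha_{h_1})\|_{X_1}\,\|(\beta_{k_1})\|_{l^q_{s_1}}$ with $X_1\in\{l^\infty_{\sigma_1},\,l^q_{s_1}\}$ as appropriate; since $\|(\alpha_{h_1})\|_{X_1}=\|(a_{\vec h})\|_{X}$ and $\|(\beta_{k_1})\|_{l^q_{s_1}}=\|(b_{\vec k})\|_{l^q_{\vec s}}$ (all $l^\infty$- and $l^q$-norms iterate to the norm on the product index set), the induction closes. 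The running hypotheses $\sigma_l>|s_l|$ in case (i), resp.\ $s_l>0$ in case (ii), for each $l$, are exactly what is needed to apply the one-variate inequality in the singled-out variable at each stage.

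I expect the main difficulty to be bookkeeping rather than a genuinely new estimate: one must check carefully that the factorization of the Fourier supports forces $(\vec h,\vec k)\in I_{\vec m}$ block by block, that the $(\ell-1)$-variate estimate is legitimately applicable with the first-block indices frozen (so that in case (i) the ``$a$''-side norm remains an $l^\infty$ norm throughout the iteration), and that the triangle/Minkowski step is used in exactly the form just described. To make the reduction in the first paragraph rigorous, the dominating-mixed-smoothness analogues of the Fatou property (Lemma~\ref{lem:besovfatou}), of the convergence criterion for paraproducts (Lemma~\ref{lem:besovproductestimate}), and of the retract description underlying the completeness of $S^{\vec s}_{p,q}B(\R^{\vec n})$ should be recorded beforehand; none of these presents any obstacle, but they are needed in order to quote the reduction.
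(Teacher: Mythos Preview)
Your proposal is correct and matches the paper's approach essentially verbatim: the paper reduces the statement to the same multi-variate sequence inequality over the factored index set $I_{\vec m}=I_{m_1}\times\dots\times I_{m_\ell}$, and then proves it by ``plugging the one-variate inequalities into themselves'' via the nested identifications $l^q_{\vec s}=l^q_{s_1}(l^q_{s_2}(\dots))$ and $l^\infty_{\vec\sigma}=l^\infty_{\sigma_1}(l^\infty_{\sigma_2}(\dots))$, which is precisely your induction on $\ell$. Your explicit recording of the multi-variate Fatou/convergence lemmas is more careful than the paper's terse ``analogously as in the proof of Proposition~\ref{prp:besovproduct}'', but the substance is identical.
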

\begin{proof}
Analogously as in the proof of Proposition~\ref{prp:besovproduct}, the problem is reduced to verifying some inequalities involving sequences $(a_{\vec{h}})_{\vec{h} \in \N^\ell}$, $(b_{\vec{k}})_{\vec{k} \in \N^\ell}$ of non-negative reals:
\[ \left\| \left(\sum_{(h_1,k_1) \in I_m} \cdots \sum_{(h_\ell,k_\ell) \in I_m} a_{\vec{h}} b_{\vec{k}}\right)_{\vec{m}} \right\|_{l^q_{\vec{s}}} \leq C_{p_1,p_2,q,\vec{\sigma},\vec{s}} \| (a_{\vec{h}})_{\vec{h}} \|_{l^\infty_{\vec{\sigma}}} \| (b_{\vec{k}})_{\vec{k}} \|_{l^q_{\vec{s}}}\]
for (i), and
\[ \left\| \left(\sum_{(h_1,k_1) \in I_m} \cdots \sum_{(h_\ell,k_\ell) \in I_m} a_{\vec{h}} b_{\vec{k}} \right)_{\vec{m}} \right\|_{l^q_{\vec{s}}} \leq C_{p_1,p_2,q,\vec{s}} \| (a_{\vec{h}})_{\vec{h}} \|_{l^q_{\vec{s}}} \| (b_{\vec{k}})_{\vec{k}} \|_{l^q_{\vec{s}}}\]
for (ii).

For $\ell = 1$, the inequalities are true by the proof of Proposition~\ref{prp:besovproduct}. It is then sufficient to plug these inequalities into themselves, by exploiting the fact that
\[l^q_{\vec{s}} = l^q_{s_1}(l^q_{s_2}(\dots(l^q_{s_\ell}) \dots)), \qquad l^\infty_{\vec{\sigma}} = l^\infty_{\sigma_1}(l^\infty_{\sigma_2}(\dots(l^\infty_{\sigma_\ell}) \dots)),\]
to obtain recursively the inequalities for a general $\ell$.
\end{proof}

\begin{cor}\label{cor:multibesovcompactsupportinclusion}
For every compact $K \subseteq \R^{\vec{n}}$, $\vec{s} \in \R$, $1 \leq p_1,p_2,q \leq \infty$ with $p_1 \leq p_2$, we have
\[\|f\|_{S_{p_1,q}^{\vec{s}}B} \leq C_{K,p_1,p_2,q,{\vec{s}}} \|f\|_{S_{p_2,q}^{\vec{s}}B} \qquad\text{if $\supp f \subseteq K$.}\]
\end{cor}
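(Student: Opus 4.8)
The plan is to mimic verbatim the proof of Corollary~\ref{cor:besovcompactsupportinclusion}, replacing the one-variate pointwise multiplication estimate by its dominating-mixed-smoothness counterpart, Proposition~\ref{prp:multibesovproduct}. Since $p_1 \leq p_2$, one has $1/p_1 - 1/p_2 \geq 0$, hence there exists $p_3 \in [1,\infty]$ with $1/p_1 = 1/p_3 + 1/p_2$. I would then fix a cut-off $\eta \in \D(\R^{\vec n})$ with $\eta \equiv 1$ on a neighbourhood of $K$, so that $\eta f = f$ whenever $\supp f \subseteq K$; this is the only place where the compact-support hypothesis enters.

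Next I would choose $\vec\sigma \in \R^\ell$ with $\sigma_l > |s_l|$ for $l = 1,\dots,\ell$ (which is legitimate irrespective of the signs of the $s_l$, so the argument covers all $\vec s \in \R^\ell$), and apply Proposition~\ref{prp:multibesovproduct}(i) to the product $\eta f$, with the r\^ole of the proposition's first factor played by $\eta$ (exponent $p_3$, smoothness $\vec\sigma$), that of the second factor by $f$ (exponent $p_2$, smoothness $\vec s$), and target exponent $p_1 = $ the proposition's ``$p$''. This gives
\[
\|f\|_{S^{\vec s}_{p_1,q}B} = \|\eta f\|_{S^{\vec s}_{p_1,q}B} \leq C_{p_3,p_2,q,\vec\sigma,\vec s}\, \|\eta\|_{S^{\vec\sigma}_{p_3,\infty}B}\, \|f\|_{S^{\vec s}_{p_2,q}B}.
\]
Since $\eta \in \D(\R^{\vec n}) \subseteq \Sz(\R^{\vec n}) \subseteq S^{\vec\sigma}_{p_3,\infty}B(\R^{\vec n})$ with continuous inclusions, the quantity $\|\eta\|_{S^{\vec\sigma}_{p_3,\infty}B}$ is a finite constant depending only on $\eta$ (hence on $K$), $p_3$ and $\vec\sigma$; absorbing it, together with $C_{p_3,p_2,q,\vec\sigma,\vec s}$, into a single constant $C_{K,p_1,p_2,q,\vec s}$ yields the assertion.

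There is essentially no real obstacle here: the argument is a direct transcription of the scalar case, and the work has already been done in establishing Proposition~\ref{prp:multibesovproduct} in the required generality (arbitrary $\vec s$, and $q = \infty$ permitted for the first factor). The only points deserving a line of justification are that $\D$-functions belong to every $S^{\vec\sigma}_{p,q}B(\R^{\vec n})$ with finite norm (which follows from the embedding $\Sz(\R^{\vec n}) \subseteq S^{\vec\sigma}_{p,q}B(\R^{\vec n})$ recorded above) and that the exponent $p_3$ fixed by $1/p_3 = 1/p_1 - 1/p_2$ indeed lies in $[1,\infty]$, including the boundary case $p_1 = p_2$ where $p_3 = \infty$.
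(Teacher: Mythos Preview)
The proposal is correct and takes essentially the same approach as the paper: the corollary is stated without proof immediately after Proposition~\ref{prp:multibesovproduct}, and is meant to follow by exactly the argument you give, namely a direct transcription of the proof of Corollary~\ref{cor:besovcompactsupportinclusion} with Proposition~\ref{prp:multibesovproduct}(i) in place of Proposition~\ref{prp:besovproduct}.
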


Proposition~\ref{prp:besovapproximation} about approximation by smooth functions is immediately extended to the multi-parameter context:

\begin{prp}\label{prp:multibesovapproximation}
Let $1 \leq p,q \leq \infty$, $\vec{s} \in \R^\ell$, $f \in S_{p,q}^{\vec{s}}B(\R^{\vec{n}})$. If $u \in \Sz(\R^{\vec{n}})$, then $f * u \in W^{\infty,p}(\R^{\vec{n}})$. Moreover, if $f \in S_{p,q}^{\vec{s}}b(\R^{\vec{n}})$, and if $u_h \in \Sz(\R^{\vec{n}})$ is an approximate identity for $h \to \infty$, then $f * u_h \to f$ in $S_{p,q}^{\vec{s}}B(\R^{\vec{n}})$.
\end{prp}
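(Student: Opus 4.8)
The plan is to carry the proof of Proposition~\ref{prp:besovapproximation} over almost verbatim, replacing each one-variate ingredient by its multi-parameter counterpart, with the tensorised dyadic system $(\phi_{\vec m})_{\vec m\in\N^\ell}$ playing the role of $(\phi_k)_k$ and the weighted lattice space $l^q_{\vec s}(\N^\ell;L^p(\R^{\vec n}))$ playing the role of $l^q_s(L^p(\R^n))$.

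For the first assertion it suffices to prove that $f*v\in L^p(\R^{\vec n})$ for every $v\in\Sz(\R^{\vec n})$: applying this with $v=\partial^{\vec\alpha}u$ (again a Schwartz function) gives $\partial^{\vec\alpha}(f*u)=f*\partial^{\vec\alpha}u\in L^p(\R^{\vec n})$ for all multi-multi-indices $\vec\alpha$, i.e. $f*u\in W^{\infty,p}(\R^{\vec n})$. Now $\Four v\in\Sz(\R^{\vec n})$ obeys product-type symbol estimates of every order $\vec\sigma\in\R^\ell$, so, arguing as in the proof of Proposition~\ref{prp:besovfouriermultiplier} on the factors of $\R^{\vec n}$ (reducing a general, non-factorised $v$ to tensor-product symbols through the density of $\Sz(\R^{n_1})\otimes\dots\otimes\Sz(\R^{n_\ell})$ in $\Sz(\R^{\vec n})$), the map $g\mapsto g*v=(\Four^{-1}\Four v)*g$ is bounded $S^{\vec s}_{p,q}B(\R^{\vec n})\to S^{\vec t}_{p,q}B(\R^{\vec n})$ for every $\vec t$. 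Taking $\vec t$ componentwise positive and invoking Propositions~\ref{prp:multibesovembeddings}(i) and \ref{prp:multibesovinclusions}(i) we get $f*v\in S^{\vec 0}_{p,1}B(\R^{\vec n})\subseteq L^p(\R^{\vec n})$, as wanted.

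For the second assertion, fix $f\in S^{\vec s}_{p,q}b(\R^{\vec n})$ and an approximate identity $u_h\in\Sz(\R^{\vec n})$. For each $\vec m$ the block $(\Four^{-1}\phi_{\vec m})*f$ lies in $L^p(\R^{\vec n})$ if $p<\infty$ and in $C_{ub}(\R^{\vec n})$ if $p=\infty$ (since $\Sz*L^\infty\subseteq C_{ub}$), so Proposition~\ref{prp:approximateidentity} gives
\[
(\Four^{-1}\phi_{\vec m})*(f*u_h-f)=\big((\Four^{-1}\phi_{\vec m})*f\big)*u_h-(\Four^{-1}\phi_{\vec m})*f\longrightarrow 0
\]
as $h\to\infty$, in $L^p(\R^{\vec n})$ (resp. uniformly), hence $2^{\vec m\cdot\vec s}\|(\Four^{-1}\phi_{\vec m})*(f*u_h-f)\|_p\to 0$ for every $\vec m$. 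Moreover, since $\limsup_h\|u_h\|_1<\infty$, Young's inequality yields
\[
2^{\vec m\cdot\vec s}\|(\Four^{-1}\phi_{\vec m})*(f*u_h-f)\|_p\le\big(1+\sup_h\|u_h\|_1\big)\,2^{\vec m\cdot\vec s}\|(\Four^{-1}\phi_{\vec m})*f\|_p=:c_{\vec m},
\]
and $(c_{\vec m})_{\vec m}\in l^q$ because $f\in S^{\vec s}_{p,q}B(\R^{\vec n})$, while $c_{\vec m}\to 0$ as $|\vec m|\to\infty$ because $f\in S^{\vec s}_{p,q}b(\R^{\vec n})$. Dominated convergence in $l^q_{\vec s}(\N^\ell;L^p(\R^{\vec n}))$ — legitimate also for $q=\infty$ precisely because of the $c_0$-type bound, i.e. because of the restriction to little Besov spaces — then gives $\|f*u_h-f\|_{S^{\vec s}_{p,q}B}\to 0$.

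The multi-index book-keeping and the multi-parameter Young inequality are routine; the two places that need care are the multi-parameter Fourier multiplier estimate used in the first part for a non-factorised Schwartz $u$ (this is where the product/Marcinkiewicz flavour of admissible symbols for $S^{\vec s}_{p,q}B$ enters, and where the reduction to tensor-product symbols has to be justified), and the fact that dominated convergence over the lattice $\N^\ell$ survives in the endpoint $q=\infty$ — for which the decay to $0$ supplied by the little-Besov hypothesis is exactly what is required. I expect the first of these to be the only genuine obstacle.
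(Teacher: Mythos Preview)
Your proposal is correct and mirrors the paper's approach exactly: the paper states this proposition without proof, simply noting that Proposition~\ref{prp:besovapproximation} ``is immediately extended to the multi-parameter context'', and your argument carries out precisely this extension.

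One remark on the point you flag as the ``only genuine obstacle''. The reduction to tensor-product Schwartz functions via density is in fact unnecessary. A Schwartz function $u\in\Sz(\R^{\vec n})$ automatically satisfies product-type symbol estimates: since $\xi^{\vec\alpha}\partial^{\vec\beta}\Four u$ is bounded for every $\vec\alpha,\vec\beta$, one has
\[
|\partial^{\vec\beta}\Four u(\xi)|\le C_{\vec\beta,\vec N}\prod_{l=1}^{\ell}\langle\xi_l\rangle^{-N_l}
\]
for all $\vec N$, and this is exactly what is needed to run the proof of Proposition~\ref{prp:besovfouriermultiplier} factor by factor and obtain $\|\Four^{-1}((\Four u)\,\tilde\phi_{\vec m})\|_1\le C\,2^{\vec m\cdot\vec\sigma}$ for arbitrary $\vec\sigma$. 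The paper's cautious phrasing ``at least for multipliers in factorized form'' is more restrictive than necessary; the multi-parameter multiplier result holds for any multiplier with product-type symbol bounds, and Schwartz functions have them. So the concern you raise dissolves on closer inspection, and no density argument is required.
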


A comparison between Besov spaces with dominating mixed smoothness and classical Besov spaces on $\R^{\vec{n}}$ is given by the following

\begin{prp}\label{prp:besovmultibesov}
Let $\vec{s} > 0$, $1 \leq p,q \leq \infty$. Then
\[B^{s_1 + \dots + s_\ell}_{p,q}(\R^{\vec{n}}) \subseteq S^{\vec{s}}_{p,q}B(\R^{\vec{n}}) \subseteq B_{p,q}^{\min \vec{s}}(\R^{\vec{n}})\]
with continuous inclusions.
\end{prp}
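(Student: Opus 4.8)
Write $\sigma = s_1 + \dots + s_\ell$ and $\underline{s} = \min\{s_1,\dots,s_\ell\} > 0$; the plan is to compare the dyadic decomposition defining $S^{\vec s}_{p,q}B(\R^{\vec n})$ with those of the classical spaces $B^\sigma_{p,q}(\R^{\vec n})$ and $B^{\underline s}_{p,q}(\R^{\vec n})$. First I would fix on each factor $\R^{n_l}$ a standard admissible pair with respect to a norm $|\cdot|_l$, generating the system $(\phi^{(l)}_k)_k$, and on $\R^{\vec n}$ a standard admissible pair with respect to the norm $|\xi| = \max_l |\xi_l|_l$, generating $(\phi_j)_j$; since passing to another admissible pair only changes the norms by equivalence, this is no loss of generality. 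With these choices $\supp \phi_{\vec m} \subseteq \prod_l \supp \phi^{(l)}_{m_l}$ lies, by \eqref{eq:besovsupports}, in the classical dyadic shell $\{|\xi| \sim 2^{M(\vec m)}\}$, where $M(\vec m) := \max_l m_l$, while $\supp \phi_j \cap \supp \phi_{\vec m} \neq \emptyset$ forces $|j - M(\vec m)| \le 2$. Denoting by $\tilde\phi_M$ the ``fattened'' classical piece $\sum_{|j - M| \le 2} \phi_j$, so that $\tilde\phi_M \equiv 1$ on $\supp\phi_{\vec m}$ whenever $M(\vec m) = M$, and using $\sup_{\vec m} \|\Four^{-1}\phi_{\vec m}\|_1 \le \prod_l \sup_k \|\Four^{-1}\phi^{(l)}_k\|_1 < \infty$ by \eqref{eq:fl1boundedness} applied factorwise, Young's inequality yields the two uniform bounds
\[
\|(\Four^{-1}\phi_{\vec m}) * f\|_p \le C \, \|(\Four^{-1}\tilde\phi_{M(\vec m)}) * f\|_p, \qquad \|(\Four^{-1}\phi_j) * (\Four^{-1}\phi_{\vec m}) * f\|_p \le C \, \|(\Four^{-1}\phi_{\vec m}) * f\|_p .
\]

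For $B^\sigma_{p,q}(\R^{\vec n}) \subseteq S^{\vec s}_{p,q}B(\R^{\vec n})$, set $c_j = 2^{j\sigma} \|(\Four^{-1}\phi_j)*f\|_p$, so $\|(c_j)_j\|_{l^q} = \|f\|_{B^\sigma_{p,q}}$. Grouping the sum over $\vec m$ according to $M = M(\vec m)$, estimating $\vec m \cdot \vec s \le M\sigma$ (legitimate since $s_l \ge 0$), and using the first bound above, one reduces to the elementary estimate
\[
\sum_{\vec m \in \{0,\dots,M\}^\ell} 2^{q\,\vec m\cdot\vec s} = \prod_{l=1}^\ell \sum_{m=0}^M 2^{q m s_l} \le C \, 2^{q M \sigma},
\]
where each geometric series is comparable to its top term \emph{because $s_l > 0$}. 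This gives $2^{M\sigma}\|(\Four^{-1}\tilde\phi_M)*f\|_p \le C \sum_{|j-M|\le 2} c_j$, and summing $q$-th powers over $M$ (using the $l^q$ triangle inequality over the finitely many shifts) yields $\|f\|_{S^{\vec s}_{p,q}B} \le C\|f\|_{B^\sigma_{p,q}}$; the case $q = \infty$ is identical with suprema.

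For $S^{\vec s}_{p,q}B(\R^{\vec n}) \subseteq B^{\underline s}_{p,q}(\R^{\vec n})$, set $d_{\vec m} = 2^{\vec m\cdot\vec s}\|(\Four^{-1}\phi_{\vec m})*f\|_p$, so $\|(d_{\vec m})_{\vec m}\|_{l^q} = \|f\|_{S^{\vec s}_{p,q}B}$. Using the multi-variate analogue of the reproducing identity \eqref{eq:besovdecomposition} and the support considerations, $(\Four^{-1}\phi_j)*f = \sum_{\vec m \in A_j} (\Four^{-1}\phi_j)*(\Four^{-1}\phi_{\vec m})*f$ with $A_j = \{\vec m : |M(\vec m) - j| \le 2\}$, so by the second uniform bound
\[
2^{j\underline s}\|(\Four^{-1}\phi_j)*f\|_p \le C \sum_{\vec m \in A_j} 2^{j\underline s - \vec m\cdot\vec s}\, d_{\vec m}.
\]
It then suffices to check that the kernel $K_{j,\vec m} = 2^{j\underline s - \vec m\cdot\vec s}\,\mathbf 1_{A_j}(\vec m)$ is bounded $l^q(\N^\ell)\to l^q(\N)$ for every $q \in [1,\infty]$, which I would obtain from a Schur-type estimate: on one hand $\sum_{\vec m} K_{j,\vec m} \le 2^{j\underline s}\sum_{M(\vec m)\ge j-2} 2^{-\vec m\cdot\vec s} \le C\,2^{j\underline s}2^{-(j-2)\underline s}$ (splitting $\{M(\vec m)\ge j-2\}$ into the $\ell$ half-spaces $\{m_{l_0}\ge j-2\}$ and summing the resulting geometric series, which converge since $s_l > 0$), uniformly in $j$; on the other hand $\sum_j K_{j,\vec m} \le 2^{-\vec m\cdot\vec s}\sum_{|j-M(\vec m)|\le 2} 2^{j\underline s} \le C\,2^{-\vec m\cdot\vec s}2^{M(\vec m)\underline s} \le C$, using that only five values of $j$ occur and that $\vec m\cdot\vec s \ge M(\vec m)\,\underline s$. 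This gives $\|f\|_{B^{\underline s}_{p,q}} \le C\|f\|_{S^{\vec s}_{p,q}B}$.

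The one real difficulty — and the only point where the hypothesis $\vec s > 0$ is genuinely needed — is that each classical dyadic level $M$ corresponds to about $M^{\ell-1}$ multi-indices $\vec m$ with $M(\vec m) = M$, so a naive triangle-inequality estimate loses a polynomial factor in $M$ in both inclusions. The geometric decay supplied by $s_l > 0$, in the form of the summations $\sum_{m \le M} 2^{q m s_l} \sim 2^{q M s_l}$ and $\sum_{m \ge j} 2^{-m s_l} \sim 2^{-j s_l}$, is precisely what absorbs this loss and makes the above estimates (equivalently, the Schur bounds) uniform. As a consistency check, for $p = q = 2$ everything reduces to the pointwise inequalities $\prod_l \langle \xi_l \rangle^{s_l} \le \langle \xi \rangle^\sigma$ and $\langle \xi \rangle^{\underline s} \le C \prod_l \langle \xi_l \rangle^{s_l}$ between the weights appearing in $S^{\vec s} H$, $H^\sigma$, $H^{\underline s}$.
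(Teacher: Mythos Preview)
Your proof is correct and, for the first inclusion $B^{\sigma}_{p,q} \subseteq S^{\vec s}_{p,q}B$, essentially identical to the paper's: both group the multi-indices $\vec m$ by $M = \max_l m_l$, use that $\phi_{\vec m}$ is supported where a fattened classical piece at level $M$ equals $1$, and estimate $\sum_{\max \vec m = M} 2^{q\,\vec m\cdot\vec s} \le C\,2^{qM\sigma}$ via the factorwise geometric series (this is exactly where $s_l > 0$ enters).

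For the second inclusion the two arguments diverge somewhat. The paper first reduces, via the trivial monotonicity $S^{\vec s}_{p,q}B \subseteq S^{(\underline s,\dots,\underline s)}_{p,q}B$, to the case of equal components $s_1=\dots=s_\ell=\underline s$; it then observes that $\psi_M := \sum_{\max\vec k = M}\phi_{\vec k}$ is itself the system generated by an admissible pair on $\R^{\vec n}$ (because the cumulative sums $\psi_0+\dots+\psi_M$ are tensor products of the factor cumulative sums, hence have the right dyadic scaling), and finishes with H\"older's inequality in the $\vec k$-sum at fixed $M$, using $\sum_{\max\vec k = M} 2^{-q'\,\vec k\cdot\vec s} \le C\,2^{-q' M\underline s}$. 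You instead keep a general $\vec s$, write $(\Four^{-1}\phi_j)*f$ as the sum over $\{\vec m:|M(\vec m)-j|\le 2\}$ of the mixed pieces, and run a Schur test on the kernel $2^{j\underline s - \vec m\cdot\vec s}\mathbf 1_{A_j}(\vec m)$. Both routes rest on the same geometric decay; yours is a bit more direct (no reduction, no new admissible pair), while the paper's has the structural payoff of exhibiting $(\psi_0,\psi_1)$ as a legitimate admissible pair on the product space, which is occasionally useful elsewhere.
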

\begin{proof}
For $l=1,\dots,\ell$, choose a standard admissible pair $(\phi_0^{(l)},\phi_1^{(l)})$ on $\R^{n_l}$ with respect to the norm $|\cdot|_\infty$,
let $(\phi^{(l)}_{k_l})_{k_l \in \N}$ be the corresponding generated system, and set
\[\phi_{\vec{k}} = \phi_{k_1}^{(1)} \otimes \dots \otimes \phi_{k_\ell}^{(\ell)}.\]
Moreover take an admissible pair $(\eta_0,\eta_1)$ on $\R^{\vec{n}}$ such that
\[\{ \xi \in \R^{\vec{n}} \tc |\xi|_\infty \leq 2 \} \subseteq \{\eta_0 = 1\}, \quad \{ \xi \in \R^{\vec{n}} \tc 1 \leq |\xi|_\infty \leq 4 \} \subseteq \{\eta_1 = 1\},\]
and again extend it to a sequence $(\eta_m)_{m \in \N}$. Then, for $m = \max\vec{k}$, we have
\[\phi_{\vec{k}} \, \eta_{m} = \phi_{\vec{k}},\]
thus also, by Young's inequality,
\[\|(\Four^{-1} \phi_{\vec{k}}) * f\|_p \leq C \| (\Four^{-1} \eta_{m}) * f \|_p .\]
Therefore, for $q < \infty$,
\[ \sum_{\vec{k} \in \N^\ell} 2^{\vec{k} \cdot \vec{s} q} \| (\Four^{-1}\phi_{\vec{k}}) * f \|_p^q \leq C^q \sum_{m \in \N} \left(\sum_{\vec{k} \tc m = \max{\vec{k}}} 2^{\vec{k} \cdot \vec{s} q} \right) \| (\Four^{-1} \eta_m) * f\|_p^q\]
and, since
\[ \sum_{\vec{k} \tc m = \max{\vec{k}}} 2^{\vec{k} \cdot \vec{s} q} \leq \sum_{l=1}^\ell 2^{m s_l q} \prod_{l' \neq l} \sum_{k=0}^m 2^{k s_{l'} q} \leq C_{\vec{s},q} 2^{m(s_1+\dots+s_\ell) q},\]
the inclusion $B^{s_1 + \dots + s_\ell}_{p,q}(\R^{\vec{n}}) \subseteq S^{\vec{s}}_{p,q}B(\R^{\vec{n}})$ follows; the case $q=\infty$ is analogous.

For the second inclusion, without loss of generality (see Proposition~\ref{prp:multibesovinclusions}) we may suppose $s_* = s_1 = \dots = s_\ell = \min\vec{s}$. Notice now that, if we set
\[\psi_m = \sum_{\vec{k} \tc m = \max \vec{k}} \phi_{\vec{k}},\]
then $(\psi_0,\psi_1)$ is an admissible pair on $\R^{\vec{n}}$, whose generated system is $(\psi_m)_{m \in \N}$ (the scaling properties may be seen by considering the sums $\psi_0+ \dots+ \psi_m$, which can be written as tensor products of corresponding sums on the factors of $\R^{\vec{n}}$). For $1 < q < \infty$, since
\begin{multline*}
\| (\Four^{-1} \psi_m) * f\|_p \leq \sum_{\vec{k} \tc m = \max \vec{k}} \| (\Four^{-1} \phi_{\vec{k}}) * f\|_p \\
\leq \left( \sum_{\vec{k} \tc m = \max \vec{k}} 2^{-\vec{k} \cdot \vec{s} q'} \right)^{1/q'} \left( \sum_{\vec{k} \tc m = \max \vec{k}} 2^{\vec{k} \cdot \vec{s} q} \|(\Four^{-1} \phi_{\vec{k}}) * f\|_p^q \right)^{1/q},
\end{multline*}
and
\[ \sum_{\vec{k} \tc m = \max{\vec{k}}} 2^{- \vec{k} \cdot \vec{s} q'} \leq \sum_{l=1}^\ell 2^{- m s_* q'} \prod_{l' \neq l} \sum_{k=0}^m 2^{- k s_* q'} \leq C_{\vec{s},q} 2^{-m s_*  q'},\]
we get
\[ (2^{m s_*} \| (\Four^{-1} \psi_m) * f\|_p)^q \leq C_{\vec{s},q} \sum_{\vec{k} \tc m = \max \vec{k}} 2^{\vec{k} \cdot \vec{s} q} \|(\Four^{-1} \phi_{\vec{k}}) * f\|_p^q,\]
and the conclusion follows; the cases $q=1$ and $q=\infty$ are analogous.
\end{proof}

\subsection{Characterization in terms of differences}

We need some notation. For $l=1,\dots,\ell$, $y_l \in \R^{n_l}$, $m \in \N$, set
\[\Delta_{l,y_l}^{m} f(x_1,\dots,x_\ell) = \sum_{k=0}^m (-1)^k \binom{m}{k} f(x_1,\dots,x_{l-1},x_l + ky_l, x_{l+1}, \dots, x_\ell);\]
moreover, for $J = \{l_1,\dots,l_v\} \subseteq \{1,\dots,\ell\}$, $y \in \prod_{l \in J} \R^{n_l}$, set
\[\Delta_{J,y}^{\vec m} f = \Delta_{l_1,y_{l_1}}^{m_{l_1}} \cdots \Delta_{l_v,y_{l_v}}^{m_{l_v}} f,\]
and, for $t \in \left]0,+\infty\right[^J$,
\[\omega_{J,p}^{\vec m}(t,f) = \sup_{|y_{l_1}|_\infty < t_{\ell_1}} \cdots \sup_{|y_{l_v}|_\infty < t_{l_v}} \|\Delta_{J,y}^{\vec m} f\|_p.\]
Then we have (cf. \cite{schmeisser_topics_1987}, \S2.3.4)

\begin{prp}\label{prp:multibesovdifferences}
Suppose that $\vec{m} \in \N^\ell$, $0 < \vec{s} < \vec{m}$, $p,q \in [1,\infty]$. Then
\[\|f\|_p + \sum_{\emptyset \neq J \subseteq \{1,\dots,\ell\}} \left( \int_{\left]0,+\infty\right[^{J}} \left( \frac{\omega_{J,p}^{\vec m}(t,f)}{\prod_{l \in J} t_l^{s_l}} \right)^q \,\prod_{l \in J} \frac{dt_l}{t_l} \right)^{1/q}\]
is an equivalent norm on $S^{\vec{s}}_{p,q}B(\R^{\vec{n}})$. Moreover, a function $f \in L^p(\R^{\vec{n}})$ such that the above quantity is finite belongs to $S^{\vec{s}}_{p,q}B(\R^{\vec{n}})$.
\end{prp}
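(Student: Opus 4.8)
The plan is to follow the classical scheme for the difference characterization of Besov spaces (\cite{schmeisser_topics_1987}, \S2.3.4), carried out directly in the multi-parameter setting, exploiting that the iterated Littlewood--Paley decomposition $\phi_{\vec m}=\phi^{(1)}_{m_1}\otimes\dots\otimes\phi^{(\ell)}_{m_\ell}$ and the finite-difference operators $\Delta^{m_l}_{l,y_l}$ all factor over the coordinate blocks $l=1,\dots,\ell$, and that difference operators acting on distinct blocks commute. Writing $f_{\vec k}=(\Four^{-1}\phi_{\vec k})*f$ (well defined for $f\in L^p\subseteq\Sz'$, since $\Four^{-1}\phi_{\vec k}\in\Sz$) and $A(f)$ for the quantity in the statement, I would prove the two inequalities $A(f)\lesssim\|f\|_{S^{\vec s}_{p,q}B}$ and $\|f\|_{S^{\vec s}_{p,q}B}\lesssim A(f)$ separately; the second one also yields the final assertion, since it shows that finiteness of $A(f)$ already forces the dyadic norm to be finite, whence $f$ lies in the space.

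For the upper bound, I would first establish the one-block estimate $\|\Delta^{m_l}_{l,y_l}f_{\vec k}\|_p\le C\min\{1,(2^{k_l}|y_l|_\infty)^{m_l}\}\|f_{\vec k}\|_p$: the factor $1$ is the trivial bound $\|\Delta^{m_l}_{l,y_l}g\|_p\le 2^{m_l}\|g\|_p$, and the factor $(2^{k_l}|y_l|_\infty)^{m_l}$ comes from Taylor's formula in the $l$-th block together with a Bernstein-type inequality for functions band-limited to an annulus of scale $2^{k_l}$ (obtained as in the proof of Proposition~\ref{prp:besovfouriermultiplier}, since a monomial $\xi_l^\beta$ with $|\beta|\le m_l$ times the localizing cutoff has, after rescaling by $2^{-k_l}$, an $L^1$ inverse Fourier transform bounded uniformly in $k_l$). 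As the $\Delta^{m_l}_{l,y_l}$ for distinct $l$ commute, this gives $\omega^{\vec m}_{J,p}(t,f_{\vec k})\le C\prod_{l\in J}\min\{1,(2^{k_l}t_l)^{m_l}\}\,\|f_{\vec k}\|_p$ for every $J$. Summing over $\vec k$, inserting the weights $t_l^{-s_l}$ and integrating against $\prod_{l\in J}dt_l/t_l$, the constraint $0<s_l<m_l$ makes the one-dimensional kernels $t_l\mapsto t_l^{-s_l}\min\{1,(2^{k_l}t_l)^{m_l}\}$ summable at both ends; an $\ell$-fold iterated discrete Hardy (Young) inequality on the iterated weighted sequence space \eqref{eq:multiparametersequencespace} then bounds the $J$-integral by $\|(2^{\vec k\cdot\vec s}\|f_{\vec k}\|_p)_{\vec k}\|_{l^q}=\|f\|_{S^{\vec s}_{p,q}B}$. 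The term $\|f\|_p=\|\sum_{\vec k}f_{\vec k}\|_p$ is controlled the same way since $s_l>0$ for all $l$, and summing over the finitely many $J$ gives $A(f)\lesssim\|f\|_{S^{\vec s}_{p,q}B}$.

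For the lower bound I need the converse Bernstein-type estimate $\|f_{\vec k}\|_p\le C\,\omega^{\vec m}_{J(\vec k),p}\big((c\,2^{-k_l})_{l\in J(\vec k)},f\big)$, where $J(\vec k)=\{l:k_l\ge1\}$ (together with $\|f_{\mathbf 0}\|_p\le C\|f\|_p$). I would obtain it blockwise: for $k\ge1$ the cutoff $\phi^{(l)}_k$ vanishes near the $\xi_l$-origin, so one may divide it by a high power of $|\xi_l|^2$; expressing $|\xi_l|^{2N}$ as a combination of monomials $\xi_l^\beta$ and inverting each against the symbol $(e^{iy_l\cdot\xi_l}-1)^{m_l}$ of $\Delta^{m_l}_{l,y_l}$ with $|y_l|_\infty\sim 2^{-k}$ produces, on the relevant annulus, Fourier multipliers of the type controlled by Proposition~\ref{prp:besovfouriermultiplier}, so that $\Four^{-1}\phi^{(l)}_k*(\cdot)$ can be written as a superposition of the $\Delta^{m_l}_{l,y_l}$ ($|y_l|_\infty\lesssim 2^{-k}$) followed by convolution with a kernel of uniformly bounded $L^1$ norm. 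Applying this in each block of $J(\vec k)$ and using again the commutation of the difference operators gives the claimed inequality; then, grouping the multi-indices $\vec k$ according to $J(\vec k)$, multiplying by $2^{\vec k\cdot\vec s}$ (which reduces to $\prod_{l\in J(\vec k)}2^{k_l s_l}$), and comparing each geometric sum over $\{k_l\ge1\}$ with the integral $\int_0^\infty t_l^{-s_l q}(\cdot)\,dt_l/t_l$ (monotonicity of $\omega^{\vec m}_{J,p}$ in each $t_l$, the doubling bound $\omega^{\vec m}_{J,p}(2t)\le C\omega^{\vec m}_{J,p}(t)$, and Tonelli, all exponents being equal to $q$) reproduces exactly the sum over nonempty subsets $J\subseteq\{1,\dots,\ell\}$ in $A(f)$, while $J=\emptyset$ contributes the term $\|f\|_p$.

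The step I expect to be the main obstacle is the converse Bernstein estimate $\|f_{\vec k}\|_p\lesssim\omega^{\vec m}_{J(\vec k),p}$ — the construction of the auxiliary multipliers inverting the difference symbols, uniformly across dyadic scales — together with the combinatorial bookkeeping that turns the grouping of $\vec k$ by $J(\vec k)$ into precisely the sum over subsets $J$; the upper bound, the iterated Hardy inequalities, and the discretization of the $t$-integrals are routine and parallel the one-parameter case. Throughout, the formal interchanges of sums and integrals can be justified first on the dense subclass $\Sz(\R^{\vec n})$ (using Proposition~\ref{prp:multibesovapproximation}) and then passed to the limit, or simply carried out by Tonelli on the nonnegative quantities involved.
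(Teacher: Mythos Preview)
Your proposal is correct and follows essentially the same approach as the paper, which simply states that the equivalence of norms is ``a (notationally involved) multi-variate analogue of the proof of Theorem~6.2.5 of \cite{bergh_interpolation_1976}''; your blockwise Bernstein/converse-Bernstein estimates combined with iterated Hardy--Young inequalities are precisely that analogue. For the second assertion the paper invokes the Fatou property (Lemma~\ref{lem:besovfatou}) applied to the partial sums $\sum_{|\vec m|\le N}(\Four^{-1}\phi_{\vec m})*f$, whereas you observe that the lower-bound argument already works directly for any $f\in L^p$; both routes are valid and equally short.
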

\begin{proof}
The proof of the equivalence of the norms is a (notationally involved) multi-variate analogue of the proof of Theorem~6.2.5 of \cite{bergh_interpolation_1976}. The second assertion follows easily by approximation via the decomposition
\[f = \sum_{\vec m \in \Z^\ell} (\Four^{-1} \phi_{\vec{m}}) * f\]
and Fatou's property.
\end{proof}

About finite differences, we also recall an elementary result which will be useful in the following.

\begin{lem}\label{lem:differencesobolev}
For $\vec{m} \in \N^\ell$, $J \subseteq \{1,\dots,\ell\}$, $y \in \prod_{l \in J} \R^{n_l}$, $1 \leq p \leq \infty$, we have
\[\|\Delta_{J,y}^{\vec{m}} f\|_p \leq C_{J,\vec{m},p} \|f\|_{S^{\vec{m},p}W} \prod_{l\in J} |y_l|_\infty^{m_l} \]
\end{lem}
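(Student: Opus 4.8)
The plan is to reduce the multi-parameter statement to an iterated application of the one-dimensional (or rather, one-factor) case, so the key observation is that the operator $\Delta_{J,y}^{\vec m}$ factors as the composition $\Delta_{l_1,y_{l_1}}^{m_{l_1}} \cdots \Delta_{l_v,y_{l_v}}^{m_{l_v}}$ over $J = \{l_1,\dots,l_v\}$, with the factors acting on distinct groups of variables and hence commuting. Consequently it suffices to prove, for a single index $l$ and a single difference operator $\Delta_{l,y_l}^{m_l}$, the estimate
\[
\|\Delta_{l,y_l}^{m_l} g\|_p \leq C_{m_l,p} \, |y_l|_\infty^{m_l} \, \max_{|\alpha_l| \leq m_l} \|\partial_l^{\alpha_l} g\|_p
\]
for $g \in L^p(\R^{\vec n})$ with the relevant derivatives in $L^p$, and then chain these $v \leq \ell$ estimates together, absorbing the intermediate mixed derivatives into $\|f\|_{S^{\vec m,p}W}$ at each stage.

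First I would establish the one-factor bound. By a density argument (Proposition~\ref{prp:approximateidentity}), it is enough to treat $g \in \D(\R^{\vec n})$, or even $g \in \Sz(\R^{\vec n})$. For such $g$, Taylor's formula with integral remainder in the variables of the $l$-th factor expresses the first-order difference $\Delta_{l,y_l}^{1} g(x) = g(x+ \mathbf{e}_l y_l) - g(x)$ as an integral of first derivatives along the segment, and iterating $m_l$ times (or, equivalently, applying the classical identity expressing the $m_l$-th finite difference as an $m_l$-fold iterated integral of the $m_l$-th directional derivative) gives a pointwise representation of $\Delta_{l,y_l}^{m_l} g(x)$ as an average, over a bounded region of parameters of size $O(|y_l|_\infty^{m_l})$, of the quantities $\partial_l^{\beta} g$ with $|\beta| = m_l$ evaluated at translated points. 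Taking $L^p$ norms and using Minkowski's integral inequality together with translation-invariance of $\|\cdot\|_p$ yields the stated one-factor estimate.

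Then the composition step: writing $J = \{l_1,\dots,l_v\}$ and applying the one-factor estimate successively in $l_1,\dots,l_v$, each application pulls out a factor $C_{m_{l_j},p}|y_{l_j}|_\infty^{m_{l_j}}$ and replaces $g$ by (a maximum over) $\partial_{l_j}^{\alpha_{l_j}} g$ with $|\alpha_{l_j}| \leq m_{l_j}$; since the difference operators for distinct $l_j$ act on disjoint variables and commute with the derivatives in the other factors, after all $v$ steps one is left with $\max \|\partial^{\vec\alpha} f\|_p$ over $\vec\alpha$ with $|\alpha_{l_j}| \leq m_{l_j}$ for $l_j \in J$ and $\alpha_l = 0$ for $l \notin J$, which is dominated by $\|f\|_{S^{\vec m,p}W}$. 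Collecting constants gives $C_{J,\vec m,p}$. I do not expect any genuine obstacle here; the only mildly delicate point is bookkeeping — making sure the density/approximation argument is legitimate for $p=\infty$ as well (which it is, using $C_{ub}$ in place of $L^\infty$ exactly as in Proposition~\ref{prp:besovzeroinclusions}) and that the iterated Minkowski inequality is applied in the correct order — but this is routine rather than substantive.
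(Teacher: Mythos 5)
Your proposal is correct and follows essentially the same route as the paper: the paper also writes the first difference as $\Delta_h f = |h|_\infty\,\mu_h * \partial_{-h/|h|_\infty} f$ with $\mu_h$ the unit-mass averaging measure over the segment (i.e.\ the fundamental theorem of calculus), iterates this identity over the various increments, and concludes by Young's inequality — which is exactly your Minkowski-plus-translation-invariance step. The factor-by-factor bookkeeping and the remark about $p=\infty$ are fine but not needed beyond what the convolution formulation already gives.
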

\begin{proof}
Let $\Delta_h f(x) = f(x) - f(x+h)$. If $\mu_h$ is the measure defined by
\[\int_{\R^{\vec{n}}} \phi \,d\mu_h = \int_0^1 f \phi(-th) \,dt,\]
then the fundamental theorem of calculus gives
\[\Delta_h f = \partial_{-h} \mu_h * f = |h|_\infty \, \mu_h * \partial_{-h/|h|_\infty} f,\]
where $\partial_v$ denotes the derivative corresponding to the vector $v$. The conclusion follows by repeated applications of the previous identity (with different choices of the increment $h$) and by Young's inequality (since the $\mu_h$ have mass $1$).
\end{proof}

\section{Mihlin-H\"ormander conditions}

\subsection{Pointwise conditions}
Fix a system of dilations $\{\delta_t\}_{t > 0}$ on $\R^n$. Let $Q_\delta$ be the associated homogeneous dimension, and $|\cdot|_\delta$ be a homogeneous norm.

Choose a basis $\partial_1,\dots,\partial_n$ of (translation-invariant vector fields on) $\R^n$ diagonalizing the dilations $\delta_t$. If $\lambda_j$ is the homogeneity degree of $\partial_j$, then the basis $\partial_1,\dots,\partial_n$ can be thought of as an adapted basis (with weights $\lambda_1,\dots,\lambda_n$) of the homogeneous Lie algebra $\R^n$ with dilations $\delta_t$; correspondingly, for a multi-index $\alpha = (\alpha_1,\dots,\alpha_n) \in \N^n$, we have a notion of homogeneous degree
\[\|\alpha\| = \alpha_1 \lambda_1+ \dots + \alpha_n \lambda_n\]
analogous to the one introduced in \S\ref{subsection:noncommutativemultiindex} for non-commutative multi-indices.

We then say that a function $m : \R^n \to \C$ satisfies a \emph{pointwise Mihlin-H\"ormander condition} of order $s \in \N$ (adapted to the fixed system of dilations) if $m \in C^s(\R^n \setminus \{0\})$ and, for all $\alpha \in J(n)$ with $|\alpha| \leq s$,
\[|\partial^\alpha m(x)| \leq C_\alpha |x|_\delta^{-\|\alpha\|} \qquad\text{for $x \in \R^n \setminus \{0\}$,}\]
for some $C_\alpha > 0$. In fact, if we put, for $m \in C^s(\R^n \setminus \{0\})$,
\[\|m\|_{M_\delta C^s} = \max_{\alpha \tc |\alpha| \leq s} \sup_{x \neq 0} |x|_\delta^{\|\alpha\|} |\partial^\alpha m(x)|,\]
then $m$ satisfies a pointwise Mihlin-H\"ormander condition of order $s$ if and only if $\|m\|_{M_\delta C^s} < \infty$. We will also say that $m$ satisfies a pointwise Mihlin-H\"ormander condition of infinite order if $m \in C^\infty(\R^n \setminus \{0\})$ and $\|m\|_{M_\delta C^s} < \infty$ for all $s \in \N$.

It is not difficult to check that the previously defined conditions do not depend on the choice of the homogeneous norm, and also on the choice of the adapted basis of $\R^n$, but only on the fixed system of dilations. We also have, for every $s \in \N$,
\[\|m \circ \delta_t\|_{M_\delta C^s} = \|m\|_{M_\delta C^s} \qquad\text{for all $t > 0$},\]
so that the pointwise Mihlin-H\"ormander conditions are dilation-invariant.

In the case $n=1$, there is a unique choice (up to parameter-rescaling) for a system of dilations, which therefore needs not be specified; in this case, we will use the notation $\| \cdot \|_{M_* C^k}$.

Clearly
\[\| \cdot \|_{M_\delta C^{s_1}} \leq \| \cdot \|_{M_\delta C^{s_2}} \qquad\text{for $s_1 \leq s_2$,}\]
so that a pointwise Mihlin-H\"ormander condition of order $s$ implies conditions of all the orders smaller than $s$. We also have that $\| \cdot \|_{M_\delta C_0}$ coincides with the norm of the supremum over $\R^n \setminus \{0\}$, so that a function $m$ satisfying a pointwise Mihlin-H\"ormander condition of any order is necessarily a bounded continuous function on $\R^n \setminus \{0\}$; on the other hand, its derivatives are allowed to diverge at $0$, with a precise order depending on the order of the derivative, and correspondingly they have to vanish at infinity. In any case, the quantity $\|\cdot\|_{M_\delta C^s}$ majorizes the $C^s$-norm on every fixed compact subset of $\R^n \setminus \{0\}$.

\subsection{Besov conditions}
We introduce now a (possibly weaker) version of Mihlin-H\"ormander conditions, in which derivatives are controlled in terms of Besov norms; in fact, this allows us to be more precise, since we admit also conditions of fractional order. The Mihlin-H\"ormander conditions that we are going to introduce are local Besov conditions on $\R^n \setminus \{0\}$, with an added uniformity with respect to the fixed system of dilations.

For $1 \leq p,q \leq \infty$, $s \in \R$, $\phi \in \D(\R^n \setminus \{0\})$, we set
\[\|m\|_{M_\delta^\phi B_{p,q}^s} = \sup_{t > 0} \|(m \circ \delta_t) \phi\|_{B^s_{p,q}}.\]
We will be interested in this quantity when $\phi$ satisfies the following conditions:
\begin{equation}\label{eq:mhadm}
\phi \geq 0, \qquad \bigcup_{t > 0} \delta_{t^{-1}}(\{\phi \neq 0\}) = \R^n \setminus \{0\}.
\end{equation}
There is also a ``discrete'' version: for $a > 0$, we set
\[\|m\|_{M_\delta^{\phi,a} B_{p,q}^s} = \sup_{j \in \Z} \|(m \circ \delta_{2^{a j}}) \phi\|_{B_{p,q}^s}.\]
In this case, we will require a more restrictive condition on $\phi$:
\begin{equation}\label{eq:mhdiscadm}
\phi \geq 0, \qquad \bigcup_{j \in \Z} \delta_{2^{-a j}}(\{\phi \neq 0\}) = \R^n \setminus \{0\}.
\end{equation}

\begin{prp}\label{prp:mhequivalence}
\begin{itemize}
\item[(i)] If $\phi \in \D(\R^n \setminus \{0\})$ satisfies \eqref{eq:mhadm}, then it satisfies \eqref{eq:mhdiscadm} for sufficiently small $a > 0$.

\item[(ii)] Suppose that both $\phi_1,\phi_2 \in \D(\R^n \setminus \{0\})$ satisfy \eqref{eq:mhadm}. Then, for every $s \geq 0$, the norms
\[\|\cdot\|_{M_\delta^{\phi_1} B_{p,q}^s} \qquad\text{and}\qquad \|\cdot\|_{M_\delta^{\phi_2} B_{p,q}^s}\]
are equivalent.

\item[(iii)] Suppose that $\phi \in \D(\R^n \setminus \{0\})$ and $a > 0$ satisfy \eqref{eq:mhdiscadm}. Then, for every $s \geq 0$, the norms
\[\|\cdot\|_{M_\delta^{\phi} B_{p,q}^s} \qquad\text{and}\qquad \|\cdot\|_{M_\delta^{\phi,a} B_{p,q}^s}\]
are equivalent.
\end{itemize}
\end{prp}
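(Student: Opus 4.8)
This is a pure compactness statement and does not involve Besov norms. I would introduce the open set $\mathcal{O}=\{(x,s)\in(\R^n\setminus\{0\})\times\left]0,\infty\right[\tc\delta_s(x)\in\{\phi\neq 0\}\}$, which is open as the preimage of the open set $\{\phi\neq 0\}$ under the continuous map $(x,s)\mapsto\delta_s(x)$. Condition \eqref{eq:mhadm} says exactly that, for every $x$ in the sphere $S=\{x\tc|x|_\delta=1\}$ (compact, by Proposition~\ref{prp:radialcoordinates}), the fibre $\mathcal{O}_x=\{s>0\tc(x,s)\in\mathcal{O}\}$ is nonempty. Choosing $s_x\in\mathcal{O}_x$ and using openness of $\mathcal{O}$, each $x\in S$ has a neighbourhood $V_x$ and an $\varepsilon_x>0$ with $V_x\times\{s\tc|\log(s/s_x)|<\varepsilon_x\}\subseteq\mathcal{O}$; extracting a finite subcover $V_{x_1},\dots,V_{x_N}$ of $S$ and setting $\varepsilon=\min_k\varepsilon_{x_k}$, one gets that $\log\mathcal{O}_x$ contains an interval of length $2\varepsilon$ for every $x\in S$. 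Writing a general $y\neq 0$ as $y=\delta_{|y|_\delta}(x)$ with $x\in S$, one has $\delta_{2^{aj}}(y)\in\{\phi\neq 0\}\iff aj\log 2+\log|y|_\delta\in\log\mathcal{O}_x$, so the arithmetic progression on the left meets $\log\mathcal{O}_x$ as soon as $a\log 2<2\varepsilon$; hence \eqref{eq:mhdiscadm} holds for all sufficiently small $a$.

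\textbf{Plan for part (ii).} By symmetry it suffices to prove $\|m\|_{M_\delta^{\phi_2}B_{p,q}^s}\lesssim\|m\|_{M_\delta^{\phi_1}B_{p,q}^s}$, i.e.\ to bound $\|(m\circ\delta_t)\phi_2\|_{B_{p,q}^s}$ uniformly in $t>0$. Since $\supp\phi_2$ is compact and is covered by the dilates $\delta_{r^{-1}}(\{\phi_1\neq 0\})$, finitely many sets $\delta_{r_i^{-1}}(\{\phi_1>\varepsilon\})$, $i=1,\dots,M$, cover $\supp\phi_2$ for a suitable $\varepsilon>0$; take a partition of unity $\{\rho_i\}\subseteq\D(\R^n\setminus\{0\})$ subordinate to this cover with $\sum_i\rho_i\equiv 1$ on $\supp\phi_2$. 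On $\supp\rho_i$ one has $\phi_1\circ\delta_{r_i}>\varepsilon$, so $\rho_i=\tilde\rho_i\,(\phi_1\circ\delta_{r_i})$ with $\tilde\rho_i=\rho_i/(\phi_1\circ\delta_{r_i})\in\D(\R^n\setminus\{0\})$, and therefore $(m\circ\delta_t)\phi_2$ is the finite sum over $i$ of the terms $\phi_2\,\tilde\rho_i\cdot\bigl((m\circ\delta_{t/r_i})\phi_1\bigr)\circ\delta_{r_i}$, using the elementary identity $(m\circ\delta_t)(\phi_1\circ\delta_{r_i})=\bigl((m\circ\delta_{t/r_i})\phi_1\bigr)\circ\delta_{r_i}$. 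For each $i$ we have $\|(m\circ\delta_{t/r_i})\phi_1\|_{B_{p,q}^s}\le\|m\|_{M_\delta^{\phi_1}B_{p,q}^s}$ (as $t/r_i>0$), and applying the change-of-variables estimate of Proposition~\ref{prp:besovchange} to the \emph{fixed} diffeomorphism $\delta_{r_i}$ with the \emph{fixed} cutoff $\phi_2\tilde\rho_i$ (here $n_1=n_2$ and the rank is maximal, so there is no loss of derivatives) bounds the $i$-th term by $C_i\|m\|_{M_\delta^{\phi_1}B_{p,q}^s}$ with $C_i$ independent of $t$ and $m$. Summing over $i$ gives the claim. For $s=0$, where Proposition~\ref{prp:besovchange} is not directly available, one replaces it by the elementary facts that $f\mapsto f\circ\delta_{r_i}$ is bounded on $B_{p,q}^0(\R^n)$ and that multiplication by a test function is bounded on $B_{p,q}^0(\R^n)$ (Proposition~\ref{prp:besovproduct}(i) with $\sigma>0$).

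\textbf{Plan for part (iii).} The inequality $\|m\|_{M_\delta^{\phi,a}B_{p,q}^s}\le\|m\|_{M_\delta^{\phi}B_{p,q}^s}$ is immediate, since $\{2^{aj}\tc j\in\Z\}\subseteq\left]0,\infty\right[$. For the converse, given $t>0$ write $t=2^{aj_0}\lambda$ with $j_0\in\Z$ and $\lambda\in[1,2^a)$, so that $(m\circ\delta_t)\phi=\bigl((m\circ\delta_{2^{aj_0}})(\phi\circ\delta_{\lambda^{-1}})\bigr)\circ\delta_\lambda$. As $\lambda$ runs through the compact set $[1,2^a]$, the function $\phi\circ\delta_{\lambda^{-1}}$ has support in a fixed compact $K_0\subseteq\R^n\setminus\{0\}$ and stays in a bounded subset of $\D(\R^n\setminus\{0\})$. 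Covering $K_0$ by finitely many $\delta_{2^{-aj_i}}(\{\phi>\varepsilon\})$ (possible by \eqref{eq:mhdiscadm}) and repeating the partition-of-unity decomposition of part (ii) with $\phi$ playing the role of $\phi_1$ and $\phi\circ\delta_{\lambda^{-1}}$ that of $\phi_2$, one writes $(m\circ\delta_{2^{aj_0}})(\phi\circ\delta_{\lambda^{-1}})$ as a finite sum of terms $(\phi\circ\delta_{\lambda^{-1}})\tilde\rho_i\cdot\bigl((m\circ\delta_{2^{a(j_0-j_i)}})\phi\bigr)\circ\delta_{2^{aj_i}}$; crucially, the dilation parameters $2^{a(j_0-j_i)}$ now all lie on the lattice, so each factor $\|(m\circ\delta_{2^{a(j_0-j_i)}})\phi\|_{B_{p,q}^s}$ is $\le\|m\|_{M_\delta^{\phi,a}B_{p,q}^s}$. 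Estimating each term (Proposition~\ref{prp:besovchange} for the fixed diffeomorphism $\delta_{2^{aj_i}}$, then Proposition~\ref{prp:besovproduct}(i) for the multiplication by the uniformly bounded cutoff $(\phi\circ\delta_{\lambda^{-1}})\tilde\rho_i$), summing over $i$, and finally composing with $\delta_\lambda$ — after which the result is supported in the fixed compact $\supp\phi$, so Proposition~\ref{prp:besovchange} applies once more with a fixed cutoff — yields $\|(m\circ\delta_t)\phi\|_{B_{p,q}^s}\lesssim\|m\|_{M_\delta^{\phi,a}B_{p,q}^s}$ uniformly in $t$.

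\textbf{Main point to watch.} Everything above, except one ingredient, is bookkeeping with the composition rule $\delta_a\circ\delta_b=\delta_{ab}$ and with where each cutoff is supported. The genuinely delicate point is the \emph{uniformity} of the constants in part (iii): the diffeomorphism $\delta_\lambda$ and several of the cutoffs vary with $\lambda\in[1,2^a]$, and one needs the bounds coming from Propositions~\ref{prp:besovchange} and~\ref{prp:besovproduct} to be uniform over this compact family. This is routine — the constant in Proposition~\ref{prp:besovchange} depends only on finitely many derivatives of the map and of the cutoff, which stay bounded over the family; alternatively, since every diffeomorphism occurring here is a linear dilation, it suffices to invoke the elementary fact that $f\mapsto f\circ\delta_\lambda$ acts on $B_{p,q}^s(\R^n)$ with operator norm bounded uniformly for $\lambda$ in any compact subset of $\left]0,\infty\right[$, which follows directly from the Fourier-analytic definition of the Besov norm.
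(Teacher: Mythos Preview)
Your proposal is correct; all three parts go through as you outline them, and the ingredient you single out as delicate---uniform boundedness of $f\mapsto f\circ\delta_\lambda$ on $B_{p,q}^s$ for $\lambda$ in a compact interval---is precisely what the paper isolates as Lemma~\ref{lem:continuousdilations} and proves directly from the dyadic definition.

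The paper organises things differently, and a bit more economically. Rather than proving (ii) and (iii) by separate hands-on decompositions, it extracts a single key inequality (Lemma~\ref{lem:mhequivalence}): for any $\psi\in\D(\R^n\setminus\{0\})$ and any $\eta$ satisfying \eqref{eq:mhdiscadm},
\[\sup_{t>0}\|(m\circ\delta_t)\psi\|_{B_{p,q}^s}\lesssim\sup_{j\in\Z}\|(m\circ\delta_{2^{aj}})\eta\|_{B_{p,q}^s}.\]
Both (ii) and (iii) then follow from this one lemma by trivial chaining. The proof of the lemma avoids your $\lambda$-dependent cutoffs by an auxiliary-function trick: choose $\chi\in\D(\R^n\setminus\{0\})$ with $\chi\equiv 1$ on $\bigcup_{t\in[2^{-a},2^a]}\supp(\psi\circ\delta_t)$, so that $(m\circ\delta_t)\psi=((m\chi)\circ\delta_t)\psi$ for all such $t$; then Lemma~\ref{lem:continuousdilations} plus Proposition~\ref{prp:besovproduct}(i) give $\|(m\circ\delta_t)\psi\|\lesssim\|m\chi\|$ uniformly in $t\in[2^{-a},2^a]$, and a \emph{fixed} partition of unity decomposes $\chi$ via discrete dilates of $\eta$. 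This separates the ``continuous dilation'' and ``discrete decomposition'' steps cleanly, and since Proposition~\ref{prp:besovproduct}(i) and Lemma~\ref{lem:continuousdilations} hold for all $s\in\R$, no separate treatment of $s=0$ is needed. Your more direct route buys self-containedness at the cost of tracking the $\lambda$-uniformity explicitly; the paper's route buys cleanliness at the cost of an extra auxiliary lemma.
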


The proof will be given after some preliminary results. Let $\lambda_1,\dots,\lambda_n$ be the weights of the elements of the adapted basis $\partial_1,\dots,\partial_n$, so that $Q_\delta = \sum_j \lambda_j$. In the following, we will use coordinates $(x_1,\dots,x_n)$ on $\R^n$ associated to the basis $\partial_1,\dots,\partial_n$, so that
\[\delta_t(x_1,\dots,x_n) = (t^{\lambda_1} x_1,\dots,t^{\lambda_n} x_n).\]
In analogy with the isotropic case, it is immediate to prove that
\begin{equation}\label{eq:fourierhomogeneous}
\Four (f \circ \delta_t) = t^{-Q_\delta} (\Four f) \circ \delta_{t^{-1}}.
\end{equation}
for all $f \in \Sz'(\R^n)$ and $t > 0$.

\begin{lem}\label{lem:continuousdilations}
Let $s \in \R$, $1 \leq p,q \leq \infty$. Then, for every $T > 1$,
\[\sup_{T^{-1} \leq t \leq T} \|f \circ \delta_t\|_{B_{p,q}^s} \leq C_{T,p,q,s} \|f\|_{B_{p,q}^s}.\]
\end{lem}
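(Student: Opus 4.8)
The plan is to reduce the claim to a Fourier multiplier statement and then invoke the continuity of the finitely many symbols $m \circ \delta_t$ viewed, via the scaling identity \eqref{eq:fourierhomogeneous}, as convolutions against a compact family of kernels. Concretely, fix a standard admissible pair $(\phi_0,\phi_1)$ and the associated $(\tilde\phi_k)_{k \in \N}$ as in \S\ref{subsection:besovbasic}. For $f \in \Sz'(\R^n)$ and $t \in [T^{-1},T]$, the Besov norm of $f \circ \delta_t$ is computed from the pieces $(\Four^{-1}\phi_k) * (f \circ \delta_t)$. Using \eqref{eq:fourierhomogeneous}, one has $\phi_k \cdot \Four(f \circ \delta_t) = t^{-Q_\delta}\,\phi_k \cdot \big((\Four f)\circ\delta_{t^{-1}}\big) = t^{-Q_\delta}\,\big((\phi_k \circ \delta_t)\cdot \Four f\big)\circ \delta_{t^{-1}}$, so that, taking inverse Fourier transforms and undoing the dilation,
\[
(\Four^{-1}\phi_k) * (f \circ \delta_t) = \big( (\Four^{-1}(\phi_k \circ \delta_t)) * f \big) \circ \delta_t.
\]
Hence $\|(\Four^{-1}\phi_k) * (f \circ \delta_t)\|_p = t^{-Q_\delta/p}\,\| (\Four^{-1}(\phi_k\circ\delta_t)) * f \|_p$, and it remains to compare $(\Four^{-1}(\phi_k \circ \delta_t)) * f$ with $(\Four^{-1}\phi_k)*f$ uniformly in $t$.

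The key step is to absorb the dilated cutoff $\phi_k \circ \delta_t$ into a neighbouring piece: since $t \in [T^{-1},T]$, a dilation by $\delta_t$ moves the annular support \eqref{eq:besovsupports} of $\phi_k$ by a bounded number of dyadic scales, so there is a fixed integer $N = N(T,\delta)$ with $\phi_k \circ \delta_t$ supported in $\{|\xi| \le 2\}$ for $k=0$ and in $\{2^{k-1-N} \le |\xi| \le 2^{k+1+N}\}$ for $k>0$; in particular $\phi_k \circ \delta_t = (\phi_k \circ \delta_t)\cdot \Psi_k$, where $\Psi_k = \sum_{|j| \le N+1}\tilde\phi_{k+j}$ (with the convention $\tilde\phi_j = 0$ for $j<0$) satisfies $\Psi_k \equiv 1$ on the support of $\phi_k\circ\delta_t$. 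Therefore $(\Four^{-1}(\phi_k\circ\delta_t))*f = (\Four^{-1}(\phi_k\circ\delta_t)) * \big((\Four^{-1}\Psi_k)*f\big)$, and by Young's inequality
\[
\| (\Four^{-1}(\phi_k\circ\delta_t)) * f \|_p \le \|\Four^{-1}(\phi_k\circ\delta_t)\|_1 \sum_{|j|\le N+1}\|(\Four^{-1}\tilde\phi_{k+j})*f\|_p.
\]
The crucial uniform bound is $\sup_{t \in [T^{-1},T]}\sup_{k\ge 0}\|\Four^{-1}(\phi_k\circ\delta_t)\|_1 < \infty$: for $k \ge 1$ this follows because $\phi_k \circ \delta_t(\xi) = \phi_1(\delta_{2^{1-k}}\delta_t\,\xi)$ up to the $k=0$ exception, and $\|\Four^{-1}\psi\|_1$ is invariant under dilations of $\psi$ (as used already in \S\ref{subsection:besovbasic}), reducing everything to the single function $\phi_1 \circ \delta_{t 2^{1-k}}$, whose $C^k$-norm — hence, by Proposition~\ref{prp:sobolevembedding}(i), its $\Four^{-1}$-$L^1$-norm — is controlled uniformly over the compact range of the remaining dilation parameter; the $k=0$ term is handled directly.

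Putting these together, $\|(\Four^{-1}\phi_k)*(f\circ\delta_t)\|_p \le C_{T,p}\, t^{-Q_\delta/p}\sum_{|j|\le N+1}\|(\Four^{-1}\tilde\phi_{k+j})*f\|_p$; multiplying by $2^{ks}$, noting $2^{ks} \le 2^{|s|(N+1)}2^{(k+j)s}$ for $|j|\le N+1$, and taking the $l^q$-norm in $k$ gives, after a harmless index shift, $\|f\circ\delta_t\|_{B_{p,q}^s} \le C_{T,p,q,s}\|f\|_{B_{p,q}^s}$ with a constant independent of $t \in [T^{-1},T]$ (here one also uses that $\tilde\phi_k$ is an admissible-type system so that $\sum_{|j|\le N+1}\|(\Four^{-1}\tilde\phi_{k+j})*f\|_p$ summed against $2^{ks}$ is comparable to $\|f\|_{B_{p,q}^s}$, exactly as in the retract argument of \S\ref{subsection:besovbasic}). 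Taking the supremum over $t$ finishes the proof. I expect the only genuinely delicate point to be the uniform $L^1$-bound on $\Four^{-1}(\phi_k\circ\delta_t)$; everything else is bookkeeping with dyadic supports and Young's inequality, and the non-isotropic dilations cause no essential trouble once one observes that the $\Four^{-1}$-$L^1$-norm is dilation-invariant.
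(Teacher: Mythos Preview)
Your proof is correct and follows essentially the same route as the paper: both reduce, via \eqref{eq:fourierhomogeneous}, to controlling $(\Four^{-1}(\phi_k \circ \delta_t)) * f$ by Young's inequality, using that $\phi_k \circ \delta_t$ has support in a boundedly shifted dyadic annulus and that $\|\Four^{-1}\psi\|_1$ is invariant under isotropic dilations; the only difference is packaging --- the paper introduces a new admissible pair $(\tilde\phi_0,\tilde\phi_1)$ with $\tilde\phi_j\equiv 1$ on $\bigcup_{t}\supp(\phi_j\circ\delta_{t^{-1}})$, while you sum over a finite window of the existing $\tilde\phi_{k+j}$. One notational slip to clean up: when you write ``$\phi_1 \circ \delta_{t2^{1-k}}$'' you conflate the non-isotropic $\delta_s$ with isotropic scaling --- the correct identity is $\phi_k\circ\delta_t=(\phi_1\circ\delta_t)(2^{1-k}\,\cdot)$, so isotropic dilation invariance gives $\|\Four^{-1}(\phi_k\circ\delta_t)\|_1=\|\Four^{-1}(\phi_1\circ\delta_t)\|_1$ with $t$ genuinely confined to $[T^{-1},T]$.
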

\begin{proof}
Let $(\phi_0,\phi_1)$ be an admissible pair as in \S\ref{subsection:besovbasic}. Then it is not difficult to find another admissible pair $(\tilde\phi_0,\tilde\phi_1)$ such that
\[\bigcup_{T^{-1} \leq t \leq T} \supp (\phi_j \circ \delta_{t^{-1}}) \subseteq \{\tilde\phi_j = 1\}\]
for $j=0,1$. Then, by \eqref{eq:fourierhomogeneous}, Young's inequality and the invariance of $\|\Four^{-1} \eta\|_1$ by dilations of $\eta$, it is not difficult to show that the Besov norm of $f \circ \delta_t$ with respect to $(\phi_0,\phi_1)$ is controlled (uniformly in $t \in [T^{-1},T]$) by the Besov norm of $f$ with respect to $(\tilde \phi_0,\tilde \phi_1)$.
\end{proof}

\begin{lem}\label{lem:coveringcondition}
Let $\eta \in \D(\R^n \setminus \{0\})$, $\eta \geq 0$, such that
\begin{equation}\label{eq:coveringcondition}
\bigcup_{t > 0} \delta_{t^{-1}}(\{\eta \neq 0\}) = \R^n \setminus \{0\}.
\end{equation}
Then there exists $a > 0$ such that
\[\bigcup_{j \in \Z} \delta_{2^{-a j}}(\{\eta \neq 0\}) = \R^n \setminus \{0\}.\]
\end{lem}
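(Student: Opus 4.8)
The plan is to reduce the statement, via a ``polar coordinates'' change of variables, to an elementary fact about open subsets of $\R$, and then to invoke compactness of the unit sphere of a homogeneous norm. Set $U = \{\eta \neq 0\}$, an open set whose closure is contained in the compact set $\supp \eta \subseteq \R^n \setminus \{0\}$. Fix a homogeneous norm $|\cdot|_\delta$ and let $S = \{\lambda \in \R^n : |\lambda|_\delta = 1\}$; as in the proof of Proposition~\ref{prp:radialcoordinates}, $S$ is compact and the map
\[
\R^n \setminus \{0\} \ni \lambda \mapsto \bigl( \delta_{|\lambda|_\delta^{-1}}(\lambda),\ \log_2 |\lambda|_\delta \bigr) \in S \times \R
\]
is a homeomorphism with inverse $(\omega,r) \mapsto \delta_{2^r}(\omega)$. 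For $\omega \in S$ I would introduce the open set $V_\omega = \{ r \in \R : \delta_{2^r}(\omega) \in U \} \subseteq \R$. Since $\delta_{2^r}(\delta_{2^{r_0}}(\omega)) = \delta_{2^{r+r_0}}(\omega)$ and $r + r_0$ ranges over all of $\R$ as $r$ does, the hypothesis \eqref{eq:coveringcondition} is readily seen to be equivalent to the assertion that $V_\omega \neq \emptyset$ for every $\omega \in S$; and the conclusion to be proved is equivalent to the existence of $a > 0$ with $V_\omega + a\Z = \R$ for every $\omega \in S$, because a point $\lambda = \delta_{2^{r_0}}(\omega)$ lies in $\bigcup_{j \in \Z} \delta_{2^{-aj}}(U)$ precisely when $r_0 + aj \in V_\omega$ for some $j \in \Z$.

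The core step is then to produce a single $a > 0$ that works uniformly in $\omega$, and this is where compactness of $S$ enters, through a tube-lemma argument. For each $\omega_0 \in S$ I would choose $r_0 \in V_{\omega_0}$ and, using that $U$ is open and $(\omega,r) \mapsto \delta_{2^r}(\omega)$ is continuous, find $\varepsilon_{\omega_0} > 0$ and an open neighbourhood $N_{\omega_0}$ of $\omega_0$ in $S$ such that $\delta_{2^r}(\omega) \in U$ whenever $\omega \in N_{\omega_0}$ and $|r - r_0| < \varepsilon_{\omega_0}$; hence $V_\omega$ contains an open interval of length $2\varepsilon_{\omega_0}$ for every $\omega \in N_{\omega_0}$. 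The open cover $\{N_{\omega_0}\}_{\omega_0 \in S}$ of the compact space $S$ admits a finite subcover $N_{\omega_1}, \dots, N_{\omega_m}$, and I would then set $a = \min_{1 \le i \le m} \varepsilon_{\omega_i} > 0$. Any $\omega \in S$ belongs to some $N_{\omega_i}$, so $V_\omega$ contains an open interval $J$ of length $2\varepsilon_{\omega_i} \ge 2a > a$; since an open interval of length strictly greater than $a$ satisfies $J + a\Z = \R$ (the translates overlap), we get $V_\omega + a\Z = \R$, which is what the equivalences of the first paragraph require.

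I do not expect a serious obstacle: the only delicate point is the uniform choice of $a$, which is exactly what the finite subcover provides, and everything else is routine verification — namely that the two covering conditions correspond, under the polar change of variables, to ``$V_\omega \neq \emptyset$'' and to ``$V_\omega + a\Z = \R$'' respectively, and that an open interval of length exceeding $a$ tiles $\R$ under translation by $a\Z$. The one thing to be careful about is to keep the interval length strictly bigger than $a$ rather than merely $\ge a$, so that the translated \emph{open} intervals genuinely overlap and cover all of $\R$, endpoints included.
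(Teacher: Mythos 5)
Your proof is correct and follows essentially the same route as the paper's: pass to polar coordinates via the homeomorphism of Proposition~\ref{prp:radialcoordinates}, use continuity of $\eta$ and compactness of $S$ to extract a finite cover by products of neighbourhoods and open intervals, and take $a$ strictly smaller than the minimal interval length so that the $a\Z$-translates of each interval overlap and cover $\R$. The only difference is that you spell out the reduction to the sets $V_\omega$ and the final tiling step, which the paper leaves implicit.
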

\begin{proof}
Let $|\cdot|_\delta$ be a $\delta_t$-homogeneous norm, set $S = \{ x \tc |x|_\delta = 1 \}$, and let $\Phi : \R^n \setminus \{0\} \to S \times \R$ be the homeomorphism defined by \eqref{eq:polarmap}. From \eqref{eq:coveringcondition} it follows that, for all $\omega \in S$, there exists $t \in \R$ such that $\eta(\Phi^{-1}(\omega,t)) \neq 0$. By continuity of $\eta$ and compactness of $S$, we can find a finite open cover $U_1,\dots,U_k$ of $S$ and open intervals $I_1,\dots,I_k$ of $\R$ such that
\[\Phi^{-1}\left(\bigcup_l (U_l \times I_l) \right) \subseteq \{\eta \neq 0\}.\]
In order to conclude, it is then sufficient to take $a > 0$ less than the minimum of the lengths of the intervals $I_1,\dots,I_k$.
\end{proof}

\begin{lem}\label{lem:mhequivalence}
Let $\eta \in \D(\R^n \setminus \{0\})$ and $a > 0$ be as in Lemma~\ref{lem:coveringcondition}. Let moreover $\psi \in \D(\R^n \setminus \{0\})$, and $s \in \R$. Then there is $C > 0$ such that, for every measurable function $m : \R^n \to \C$,
\begin{equation}\label{eq:discretization}
\sup_{t > 0} \|(m \circ \delta_t) \psi\|_{B_{p,q}^s} \leq C \sup_{j \in \Z} \| (m \circ \delta_{2^{a j}}) \eta\|_{B^s_{p,q}}.
\end{equation}
Moreover, if the right-hand side in \eqref{eq:discretization} is finite, then $m \in B^s_{p,q,\loc}(\R^n \setminus \{0\})$.
\end{lem}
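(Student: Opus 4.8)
The plan is to exploit a standard partition-of-unity/covering argument adapted to the polar decomposition \eqref{eq:polarmap}, combined with the scaling identity \eqref{eq:fourierhomogeneous}, the quasi-dilation-invariance of Besov norms from Lemma~\ref{lem:continuousdilations}, and the multiplier-type pointwise-multiplication bound of Proposition~\ref{prp:besovproduct}. First I would observe that $\psi \in \D(\R^n \setminus \{0\})$ has compact support $K$ in $\R^n \setminus \{0\}$, so by \eqref{eq:coveringcondition} and compactness there is a finite set $t_1,\dots,t_N > 0$ with $K \subseteq \bigcup_{i} \delta_{t_i}(\{\eta \neq 0\})$. Fixing a smooth partition of unity $\chi_1,\dots,\chi_N$ on a neighbourhood of $K$ subordinate to this cover, I can write, for any $t > 0$,
\[
(m \circ \delta_t)\psi = \sum_{i=1}^N (m \circ \delta_t) \psi \chi_i = \sum_{i=1}^N \big[(m \circ \delta_{t t_i}) \circ \delta_{t_i^{-1}}\big]\,(\psi\chi_i),
\]
and since $\psi\chi_i$ is supported where $\delta_{t_i^{-1}}(\cdot)$ lands in $\{\eta \neq 0\}$, one can insert a fixed auxiliary $\tilde\eta \in \D(\R^n\setminus\{0\})$ with $\tilde\eta \equiv 1$ on $\supp\eta$ and write $(m\circ\delta_{tt_i})\circ\delta_{t_i^{-1}} = [(m\circ\delta_{tt_i})\tilde\eta]\circ\delta_{t_i^{-1}}$ on the relevant region. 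Thus, by Lemma~\ref{lem:continuousdilations} (to absorb the fixed dilations $\delta_{t_i^{-1}}$) and Proposition~\ref{prp:besovproduct}(i) (to absorb multiplication by the fixed Schwartz-class cutoffs $\psi\chi_i$, picking $\sigma > |s|$), it follows that
\[
\|(m\circ\delta_t)\psi\|_{B^s_{p,q}} \le C \sum_{i=1}^N \|(m\circ\delta_{tt_i})\tilde\eta\|_{B^s_{p,q}} \le C' \sup_{\tau > 0}\|(m\circ\delta_\tau)\tilde\eta\|_{B^s_{p,q}}.
\]

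It then remains to pass from a supremum over all $\tau > 0$ to the discrete supremum over $\tau = 2^{aj}$, $j \in \Z$, with the \emph{same} $\eta$. Here I would write an arbitrary $\tau > 0$ as $\tau = 2^{aj} u$ with $j \in \Z$ and $u$ in the compact interval $[1,2^a]$, so that $(m\circ\delta_\tau)\tilde\eta = [(m\circ\delta_{2^{aj}}) \circ \delta_u]\tilde\eta$. After again replacing $m\circ\delta_{2^{aj}}$ by $(m\circ\delta_{2^{aj}})\eta$ on the support of $\delta_{u^{-1}}(\supp\tilde\eta)$ — which is legitimate provided $\tilde\eta$ was chosen with support small enough that $\bigcup_{1 \le u \le 2^a}\delta_{u^{-1}}(\supp\tilde\eta) \subseteq \{\eta \ne 0\}$, a choice available by the covering property of $\eta$ of Lemma~\ref{lem:coveringcondition} together with a compactness argument over $u \in [1,2^a]$ — one applies Lemma~\ref{lem:continuousdilations} once more (with $T = 2^a$) and Proposition~\ref{prp:besovproduct}(i) once more to conclude
\[
\|(m\circ\delta_\tau)\tilde\eta\|_{B^s_{p,q}} \le C\, \|(m\circ\delta_{2^{aj}})\eta\|_{B^s_{p,q}} \le C \sup_{k \in \Z}\|(m\circ\delta_{2^{ak}})\eta\|_{B^s_{p,q}}.
\]
Chaining the two displays gives \eqref{eq:discretization}.

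For the final assertion, if the right-hand side of \eqref{eq:discretization} is finite, then by the estimate just proved $\sup_{t>0}\|(m\circ\delta_t)\psi\|_{B^s_{p,q}} < \infty$ for every $\psi \in \D(\R^n\setminus\{0\})$; taking $t = 1$ shows $m\psi \in B^s_{p,q}(\R^n)$ for all such $\psi$, which by the definition of $B^s_{p,q,\loc}$ (and Proposition~\ref{prp:besovloc}, using that $\{\psi \ne 0\}$ can be taken to cover $\R^n \setminus \{0\}$) means precisely $m \in B^s_{p,q,\loc}(\R^n\setminus\{0\})$. The only slightly delicate point — the ``main obstacle'' — is the careful bookkeeping of supports: one must choose the auxiliary cutoff $\tilde\eta$ (and the partition of unity) so that after the bounded family of dilations $\delta_{t_i^{-1}}$ and $\delta_{u^{-1}}$ the supports still sit inside $\{\eta \ne 0\}$, which is where the covering hypothesis on $\eta$ from Lemma~\ref{lem:coveringcondition} and a compactness argument in the dilation parameter are genuinely used; everything else is a routine combination of Lemma~\ref{lem:continuousdilations} and Proposition~\ref{prp:besovproduct}.
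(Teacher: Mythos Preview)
Your overall strategy --- partition of unity adapted to the covering by dilates of $\{\eta \neq 0\}$, combined with Lemma~\ref{lem:continuousdilations} and Proposition~\ref{prp:besovproduct}(i) --- is exactly the paper's, and the tools you invoke are the right ones. However, your two-step reduction through the auxiliary cutoff $\tilde\eta$ has a genuine inconsistency that you flag as ``delicate'' but do not actually resolve.

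In your first step you require $\tilde\eta \equiv 1$ on $\supp\eta$; in your second step you require $\bigcup_{1 \le u \le 2^a} \delta_{u^{-1}}(\supp\tilde\eta) \subseteq \{\eta \neq 0\}$. Already at $u = 1$ the second condition forces $\supp\tilde\eta \subseteq \{\eta \neq 0\}$, while the first forces $\supp\tilde\eta \supseteq \supp\eta = \overline{\{\eta \neq 0\}}$; these are incompatible for any nonzero $\eta \in \D$. Even if you relax the first requirement to $\tilde\eta \equiv 1$ only on the compact set $K_0 = \bigcup_i \delta_{t_i^{-1}}(\supp(\psi\chi_i)) \subset \{\eta \neq 0\}$, for the second step you would need $K_0 \subseteq \bigcap_{u \in [1,2^a]} \delta_u(\{\eta \neq 0\})$, and nothing in Lemma~\ref{lem:coveringcondition} guarantees that this intersection contains $K_0$ (or is even nonempty): that lemma only ensures that the \emph{discrete} dilates $\delta_{2^{-aj}}(\{\eta \neq 0\})$ cover $\R^n \setminus \{0\}$, not that $\{\eta \neq 0\}$ is stable under the continuous family $\delta_{u^{-1}}$ for $u \in [1,2^a]$.

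The paper avoids this by organizing the argument differently. It first fixes $\chi \in \D(\R^n \setminus \{0\})$ with $\chi \equiv 1$ on $\bigcup_{t \in [2^{-a},2^a]} \supp(\psi \circ \delta_t)$, so that $(m \circ \delta_t)\psi = ((m\chi)\circ\delta_t)\psi$ for all $t$ in that compact interval; then it covers $\supp\chi$ by finitely many \emph{discrete} dilates $\delta_{2^{aj}}(\{\eta \neq 0\})$, $j \in I$, and writes $\chi = \phi \sum_{j \in I} \eta \circ \delta_{2^{-aj}}$ with $\phi \in \D(\R^n \setminus \{0\})$. This passes directly from $\|m\chi\|_{B^s_{p,q}}$ to $\sum_{j \in I} \|(m\circ\delta_{2^{aj}})\eta\|_{B^s_{p,q}}$ without any intermediate $\tilde\eta$, so no conflicting support conditions arise. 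The final step --- replacing $m$ by $m \circ \delta_{2^k}$ to pass from $t \in [2^{-a},2^a]$ to all $t > 0$ --- is then automatic since the right-hand side is invariant under this substitution. Note also that the paper establishes $m \in B^s_{p,q,\loc}(\R^n \setminus \{0\})$ \emph{first} (directly from Proposition~\ref{prp:besovloc} and the finiteness of the discrete supremum), which is what legitimizes the subsequent applications of Proposition~\ref{prp:besovproduct} to products like $m\chi$; you defer this to the end, which leaves those applications formally unjustified. Your argument can be repaired by following this organization.
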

\begin{proof}
If the right-hand side of \eqref{eq:discretization} is infinite, then there is nothing to prove.

Let us suppose therefore that the right-hand side is finite. In particular
\[\|m (\eta \circ \delta_{2^{-a j}})\|_{B_{p,q}^s} < \infty \qquad\text{for all $j \in \Z$,}\]
so that, by Proposition~\ref{prp:besovloc}, $m \in B_{p,q,\loc}^s(\R^n \setminus \{0\})$.

Let $\chi \in \D(\R^n \setminus \{0\})$ such that
\[\bigcup_{t \in [2^{-a},2^a]} \supp (\psi \circ \delta_t) \subseteq \{ \chi = 1 \}.\]
Then, for $t \in [2^{-a},2^a]$, we have $(\chi \circ \delta_t) \psi = \psi$, so that
\[\sup_{t \in [2^{-a},2^a]} \|(m \circ \delta_t) \psi \|_{B_{p,q}^s} = \sup_{t \in [2^{-a},2^a]} \|((m \chi) \circ \delta_t) \psi\|_{B_{p,q}^s} \leq C_{p,q,s,\psi,a} \|m \chi\|_{B_{p,q}^s}\]
by Lemma~\ref{lem:continuousdilations} and Proposition~\ref{prp:besovproduct}.

On the other hand, since $\supp \chi$ is compact, there is a finite $I \subseteq \Z$ such that
\[\supp \chi \subseteq \bigcup_{j \in I} \{\eta \circ \delta_{2^{-a j}} \neq 0\}.\]
Let
\[\phi = \frac{\chi}{\sum_{j \in I} \eta \circ \delta_{2^{-a j}}}.\]
Then $\phi \in \D(\R^n \setminus \{0\})$ and $\chi = \psi \sum_{j \in I} \eta \circ \delta_{2^{-a j}}$, so that, by Proposition~\ref{prp:besovproduct},
\[\|m \chi\|_{B_{p,q}^s} \leq C_\phi \left\|m \sum_{j \in I} \eta \circ \delta_{2^{-a j}}\right\|_{B_{p,q}^s} \leq C_{\phi,I} \sum_{j \in I} \| (m \circ \delta_{2^{a j}}) \eta \|_{B_{p,q}^s}.\]

Putting all together, we get
\[\sup_{t \in [2^{-a},2^a]} \| (m \circ \delta_t) \psi \|_{B_{p,q}^s} \leq \tilde C_\psi \sup_{j \in Z} \|(m \circ \delta_{2^{a j}}) \eta\|_{B_{p,q}^s}.\]
By replacing $m$ with $m \circ \delta_{2^k}$ for $k \in \Z$ in the last inequality, the right-hand side is not changed, and the conclusion follows.
\end{proof}

\begin{proof}[Proof of Proposition~\ref{prp:mhequivalence}]
(i) is simply Lemma~\ref{lem:coveringcondition}.

About (ii), if $a > 0$ is such that both $\phi_1,\phi_2$ satisfy \eqref{eq:mhdiscadm} (the existence of this $a$ follows from (i)),
then we get immediately from Lemma~\ref{lem:mhequivalence} that
\[\|\cdot\|_{M_\delta^{\phi_1} B_{p,q}^s} \leq C \|\cdot\|_{M_\delta^{\phi_2,a} B_{p,q}^s}, \qquad \|\cdot\|_{M_\delta^{\phi_2} B_{p,q}^s} \leq C \|\cdot\|_{M_\delta^{\phi_1,a} B_{p,q}^s},\]
but obviously we have also
\[\|\cdot\|_{M_\delta^{\phi_1,a} B_{p,q}^s} \leq \|\cdot\|_{M_\delta^{\phi_1} B_{p,q}^s}, \qquad \|\cdot\|_{M_\delta^{\phi_2,a} B_{p,q}^s} \leq \|\cdot\|_{M_\delta^{\phi_2} B_{p,q}^s},\]
and these inequalities together give the conclusion.

Finally, for (iii), the inequality
\[\|\cdot\|_{M_\delta^\phi B_{p,q}^s} \leq C \|\cdot\|_{M_\delta^{\phi,a} B_{p,q}^s}\]
follows from Lemma~\ref{lem:mhequivalence}, whereas the opposite inequality is trivial.
\end{proof}

In the following, we will denote by $\|\cdot\|_{M_\delta B_{p,q}^s}$ one of the equivalent quantities listed in Proposition~\ref{prp:mhequivalence}. We then say that a function $m : \R^n \to \C$ satisfies a \emph{$B_{p,q}^s$ Mihlin-H\"ormander condition} (adapted to the system of dilations $\delta_t$) if $\|m\|_{M_\delta B_{p,q}^s} < \infty$. Again, for $n=1$, we will use the notation $\|\cdot\|_{M_* B_{p,q}^s}$.

From the definition, it is clear that $\|\cdot\|_{M_\delta B_{p,q}^s}$ is dilation-invariant. Moreover, from Lemma~\ref{lem:mhequivalence} it follows that, if $m$ satisfies a $B_{p,q}^s$ Mihlin-H\"ormander condition, then $m \in B_{p,q,\loc}^s(\R^n \setminus \{0\})$. In the following, we will see that, for $s$ sufficiently large, a function $m \in B_{p,q}^s(\R^n)$ with compact support satisfies a $B_{p,q}^s$ Mihlin-H\"ormander condition; the threshold on $s$ is given in terms of the \emph{normalized homogeneous dimension}
\[\tilde Q_\delta = Q_\delta/\min\{\lambda_1,\dots,\lambda_n\}.\]
Notice that $\tilde Q_\delta \geq n$.

\begin{prp}\label{prp:besovlargedilations}
Let $1 \leq p,q \leq \infty$, $s > \tilde Q_\delta/p$, and $\eta \in \D(\R^n)$. Then
\[\sup_{0 < t \leq 1} \| (f \circ \delta_t) \eta \|_{B_{p,q}^s} \leq C_{\eta,p,q,s} \|f\|_{B_{p,q}^s}.\]
\end{prp}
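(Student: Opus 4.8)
The plan is to reduce the statement, via the multiplication estimate of Proposition~\ref{prp:besovproduct}, to a uniform bound on the Besov norm of $f \circ \delta_t$ localised to a fixed compact set, and then to handle the localised norm through the retract description of Besov spaces together with the scaling identity \eqref{eq:fourierhomogeneous}. First I would fix a function $\eta' \in \D(\R^n)$ with $\eta'|_{\supp \eta} \equiv 1$; then for $0 < t \le 1$ we have $\delta_t(\supp \eta') \subseteq \delta_1(\supp \eta') = \supp \eta'$ (using that $\delta_t$ contracts a compact neighbourhood of the origin as $t$ decreases, since all weights $\lambda_j \ge 1$), so $(f \circ \delta_t)\eta = ((f\eta'') \circ \delta_t)\eta$ for a suitable $\eta'' \in \D(\R^n)$ equal to $1$ on the relevant region. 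By Proposition~\ref{prp:besovproduct}(i) it then suffices to bound $\|(f\eta'')\circ\delta_t\|_{B^s_{p,q}}$ uniformly, i.e.\ we may assume $f$ itself has compact support, contained in a fixed ball $B$, and must show $\sup_{0<t\le 1}\|f\circ\delta_t\|_{B^s_{p,q}} \le C\|f\|_{B^s_{p,q}}$.

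Next I would expand $f = \sum_{k\ge 0}(\Four^{-1}\phi_k)*f$ with a standard admissible pair and use \eqref{eq:fourierhomogeneous}: since $\Four(f\circ\delta_t) = t^{-Q_\delta}(\Four f)\circ\delta_{t^{-1}}$, the dyadic piece $(\Four^{-1}\phi_k)*(f\circ\delta_t)$ corresponds, after undoing the dilation, to $((\Four^{-1}(\phi_k\circ\delta_t))*f)\circ\delta_t$. The Fourier support of $\phi_k\circ\delta_t$ for $t\le 1$ is contained in $\{|\xi|_\delta \lesssim 2^k\}$, but, crucially, because the dilations are non-isotropic, a single dyadic annulus in the \emph{isotropic} norm can meet $\delta_t$-images of at most boundedly many $\phi_k$-annuli once $t \le 1$ — here the normalised homogeneous dimension $\tilde Q_\delta$ enters. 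I would write $\phi_k\circ\delta_t = \sum_{|l - (\text{index})|\le c}(\text{pieces localised to isotropic annuli }2^l)$ and estimate, via Young's inequality and Proposition~\ref{prp:sobolevembedding}(i), the $L^1$ norms of the relevant inverse Fourier transforms; the support of $f$ being compact lets me trade spatial localisation for the needed room. Summing in $\ell^q$, the loss is controlled by $\sum 2^{-l(s p - Q_\delta/\min\lambda_j)(\cdots)}$-type series, which converge precisely when $s > \tilde Q_\delta/p = Q_\delta/(p\min\{\lambda_1,\dots,\lambda_n\})$, because the extra $t^{-Q_\delta}$ factor from \eqref{eq:fourierhomogeneous}, after being distributed over roughly $2^{k/\min\lambda_j - k}$ many isotropic scales, costs exactly $Q_\delta/\min\lambda_j$ derivatives measured in $L^p$.

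The main obstacle I anticipate is the bookkeeping of the two different dyadic decompositions — the one adapted to the dilations $\delta_t$ (hidden inside $f\circ\delta_t$) versus the standard isotropic one defining the Besov norm on the left — and making the constant genuinely uniform in $t \in (0,1]$ rather than merely locally uniform. The clean way to do this is: handle the range $t \in [1/2,1]$ by Lemma~\ref{lem:continuousdilations}, and for $t \in (0,1/2]$ write $t = 2^{-j}s$ with $j \ge 1$ an integer and $s \in [1/2,1]$, so that $f\circ\delta_t = (f\circ\delta_s)\circ\delta_{2^{-j}}$; then one only needs the estimate for the \emph{integer dyadic} dilations $\delta_{2^{-j}}$, which interact cleanly with the dyadic frequency decomposition and where the shift in the frequency index is exactly $-j$ in the $\delta$-adapted sense, producing the convergent geometric series above. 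Combining the compact-support reduction, the dyadic-dilation reduction, and the two-decomposition estimate yields the claim; tracking that the exponent condition is sharp at $\tilde Q_\delta/p$ is the only delicate point.
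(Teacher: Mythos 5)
Your first reduction contains a fatal gap for $p<\infty$. You discard the cut-off $\eta$ by writing $(f\circ\delta_t)\eta=((f\eta'')\circ\delta_t)\eta$ and applying Proposition~\ref{prp:besovproduct}, claiming it suffices to prove $\sup_{0<t\le 1}\|g\circ\delta_t\|_{B^s_{p,q}}\le C\|g\|_{B^s_{p,q}}$ for compactly supported $g$. That statement is false: already at the level of the $L^p$ norm one has $\|g\circ\delta_t\|_p=t^{-Q_\delta/p}\|g\|_p\to\infty$ as $t\to 0^+$, and since $B^0_{p,1}\subseteq L^p\subseteq B^0_{p,\infty}$ (Proposition~\ref{prp:besovzeroinclusions}) the Besov norm inherits this blow-up. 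The cut-off $\eta$ is precisely what compensates the Jacobian factor $t^{-Q_\delta}$: the dilated function $f\circ\delta_t$ spreads its mass over a region of volume $\sim t^{-Q_\delta}$, and multiplying by $\eta$ restricts it to a fixed compactum, so that $\|(f\circ\delta_t)\eta\|_p\le C_\eta\|f\|_\infty\le C\|f\|_{B^s_{p,q}}$ (this already uses $s>\tilde Q_\delta/p\ge n/p$ to embed $B^s_{p,q}$ into $L^\infty$). No product estimate of the form $\|uv\|\le\|u\|\cdot\|v\|$ can recover this, because the factor being dropped is exactly the one carrying the gain. The spatial compact support of $f$ does not help either: it is the support of the \emph{test function} $\eta$, not of $f$, that matters. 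Your subsequent Fourier-side bookkeeping is also too sketchy to verify, but since it is built on the false reduction it cannot be salvaged as written; note also that any correct mechanism must explain where the full strength of $s>\tilde Q_\delta/p$ (as opposed to $s>n/p$) is consumed, and in your outline this is only gestured at.

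For contrast, the paper keeps $\eta$ throughout and argues on the physical side: it takes $m-1<s<m$ (non-integer $s$, the rest by interpolation) and uses the difference characterization of $B^s_{p,q}$ (Proposition~\ref{prp:multibesovdifferences}). The $L^p$ term is handled as above via $s>n/p$. For the modulus of continuity, the Leibniz rule for finite differences plus H\"older's inequality gives $\|\Delta^m_y((f\circ\delta_t)\eta)\|_p\le C\sum_{k=0}^m|y|^{m-k}\|\Delta^k_y(f\circ\delta_t)\|_{p_k}$ with freely chosen exponents $p_k\ge p$; the scaling $\|\Delta^k_y(f\circ\delta_t)\|_{p_k}=t^{-Q_\delta/p_k}\|\Delta^k_{\delta_t(y)}f\|_{p_k}$ together with $|\delta_t(y)|_\infty\le t^{\lambda_*}|y|_\infty$ and the choice $p_k=ps/k$ for $k<m$, $p_m=p$, makes the exponent $\lambda_*k-Q_\delta/p_k=\frac{\lambda_*k}{s}\bigl(s-\tilde Q_\delta/p\bigr)$ positive — this is exactly where the hypothesis enters — so every term with $k<m$ is absorbed, and the $k=m$ term is dominated by the modulus of continuity of $f$ itself. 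If you want to pursue a Fourier-side proof, you would have to run the two dyadic decompositions while keeping the multiplication by $\eta$ inside every frequency block, which is considerably more delicate than your outline suggests.
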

\begin{proof}
Without loss of generality, we may suppose that $s$ is not an integer (the missing values can be recovered a posteriori by interpolation, see Proposition~\ref{prp:besovinterpolation}), thus there exists $m \in \N$ such that $m-1 < s < m$.

By Proposition~\ref{prp:multibesovdifferences}, we have to estimate
\begin{equation}\label{eq:differencesnorm}
\| (f \circ \delta_t) \eta \|_p + \left( \int_0^1 \left(\frac{\omega_p^m(r, (f \circ \delta_t) \eta)}{r^s}\right)^q \frac{dr}{r} \right)^{1/q}
\end{equation}
(in fact, the ``missing'' integral on $\left]1,+\infty\right[$ in the latter summand is easily controlled by the former).

The former summand in \eqref{eq:differencesnorm} is immediately majorized by H\"older's inequality and embeddings (see Proposition~\ref{prp:besovembeddings}), since $\eta$ is compactly supported and $s > \tilde Q_\delta/p \geq n/p$:
\[ \| (f \circ \delta_t) \eta \|_p \leq C_{\eta,p} \| f \circ \delta_t \|_\infty = C_{\eta,p} \|f\|_\infty \leq C_{\eta,p,q,s} \|f\|_{B_{p,q}^s}.\]

For the latter summand, notice first that, by the Leibniz rule for finite differences, H\"older's inequality, Lemma~\ref{lem:differencesobolev} and the fact that $\eta \in \D(\R^n)$, we have
\[ \| \Delta_y^m ((f \circ \delta_t) \eta) \|_p \leq C_{\eta,m,p,p_0,\dots,p_m} \sum_{k=0}^m |y|^{m-k} \| \Delta_y^k (f \circ \delta_t) \|_{p_k}\]
for any choice of $p_0,\dots,p_k \geq p$; since
\[ \| \Delta_y^k (f \circ \delta_t) \|_{p_k} = t^{-Q_\delta/p_k}  \| \Delta_{\delta_t(y)}^k f \|_{p_k} \qquad\text{and}\qquad |\delta_t(y)|_\infty \leq t^{\lambda_*} |y|_\infty\]
for $t \leq 1$, where $\lambda_* = \min\{\lambda_1,\dots,\lambda_n\}$, we then get also
\[ \omega_p^m(r,(f \circ \delta_t) \eta) \leq C_{\eta,m,p,p_0,\dots,p_m} \sum_{k=0}^m r^{m-k} t^{-Q_\delta/p_k} \omega_{p_k}^k (t^{\lambda_*} r, f).\]
Choose now $p_m = p$, and $p_k = ps/k$ for $k < m$. Then, for $k < p$, we have
\[p_k > p, \quad k - \frac{n}{p_k} = \frac{k}{s} \left( s - \frac{n}{p} \right) < s - \frac{n}{p}, \quad  \lambda_* k - \frac{Q_\delta}{p_k} = \frac{\lambda_* k}{s} \left( s - \frac{\tilde Q_\delta}{p} \right) > 0,\]
so that, by Lemma~\ref{lem:differencesobolev} and the embeddings $B_{p,q}^s \subseteq B_{p_k,1}^k \subseteq W^{k,p_k}$ (see Proposition~\ref{prp:besovembeddings} and Corollary~\ref{cor:besovinclusions}),
\[r^{m-k} t^{-Q_\delta/p_k} \omega_{p_k}^k (t^{\lambda_*} r, f) \leq C_{k,p_k} r^m t^{\lambda_* k -Q_\delta/p_k} \|f\|_{W^{k,p_k}} \leq C_{p,q,s} r^m \|f\|_{B_{p,q}^s},\]
thus
\[\frac{r^{m-k} t^{-Q_\delta/p_k} \omega_{p_k}^k (t^{\lambda_*} r, f)}{r^s} \leq C_{p,q,s} r^{m-s} \|f\|_{B_{p,q}^s}.\]
For $k=m$, instead,
\[\frac{t^{-Q_\delta/p} \omega_{p}^m (t^{\lambda_*} r, f)}{r^s} = t^{\lambda_* s -Q_\delta/p} \frac{\omega_{p}^m (t^{\lambda_*} r, f)}{(t^{\lambda_*}r)^s} \leq \frac{\omega_{p}^m (t^{\lambda_*} r, f)}{(t^{\lambda_*}r)^s}.\]
Putting all together, the latter summand in \eqref{eq:differencesnorm} is majorized by
\[ C_{\eta,p,q,s} \left( \|f\|_{B_{p,q}^s} \left(\int_0^1 (r^{m-s})^q \frac{dr}{r}\right)^{1/q} + \left(\int_0^{\lambda_*} \left(\frac{\omega_{p}^m (r, f)}{r^s}\right)^q \frac{dr}{r} \right)^{1/q} \right),\]
and the conclusion follows from Proposition~\ref{prp:multibesovdifferences}.
\end{proof}

\begin{cor}
Let $K \subseteq \R^n$ be compact; then, for $1 \leq p,q \leq \infty$, $s > \tilde Q_\delta$,
\[\| f \|_{M_\delta B_{p,q}^s} \leq C_{p,q,s,K} \| f \|_{B_{p,q}^s}\]
for all $f \in B_{p,q}^s(\R^n)$ with $\supp f \subseteq K$.
\end{cor}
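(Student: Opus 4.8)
The plan is to deduce the statement directly from Proposition~\ref{prp:besovlargedilations}, using a fixed admissible cut-off function to pass from the local-plus-uniform Mihlin--H\"ormander quantity to a single Besov norm. First I would fix, as in \eqref{eq:mhadm}, a nonnegative $\phi \in \D(\R^n \setminus \{0\})$ satisfying the covering condition, so that by Proposition~\ref{prp:mhequivalence} we may compute $\|f\|_{M_\delta B_{p,q}^s}$ as $\sup_{t>0} \|(f \circ \delta_t)\phi\|_{B_{p,q}^s}$. Since $f$ is supported in the compact set $K$, only finitely many ``scales'' contribute: choose a homogeneous norm $|\cdot|_\delta$ and let $R$ be such that $K \subseteq \{|x|_\delta \leq R\}$ and $\supp\phi \subseteq \{r_0 \leq |x|_\delta \leq r_1\}$ for some $0 < r_0 < r_1$. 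Then $(f \circ \delta_t)\phi = 0$ unless $\delta_t(\supp\phi)$ meets $K$, i.e.\ unless $t \leq R/r_0 =: T$; for the remaining range $0 < t \leq T$ we estimate uniformly.

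Next I would split the range $0 < t \le T$ according to whether $t$ is small or bounded away from $0$. For $t$ in a compact subinterval $[\epsilon,T]$ with $\epsilon>0$, Lemma~\ref{lem:continuousdilations} (applied after a fixed rescaling $\delta_{t} = \delta_{t/T} \circ \delta_T$ so as to land in an interval of the form $[T^{-1},T']$) together with Proposition~\ref{prp:besovproduct} gives $\|(f \circ \delta_t)\phi\|_{B_{p,q}^s} \le C_{p,q,s,K,\phi}\,\|f\|_{B_{p,q}^s}$, where I also use that $f = f\eta$ for a fixed $\eta \in \D(\R^n)$ with $\eta|_K \equiv 1$, so that multiplication by $\phi \circ (\text{dilation})$ can be absorbed into a bounded pointwise-multiplication estimate on a compactly supported function. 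For the small-$t$ range $0 < t \le \epsilon$, I would apply Proposition~\ref{prp:besovlargedilations}: writing $(f \circ \delta_t)\phi = ((f\eta) \circ \delta_t)\,\phi = (f \circ \delta_t)\,(\eta \circ \delta_t)\,\phi$ and noting $\eta \circ \delta_t$ is uniformly bounded in $C^\infty$ on $\supp\phi$ for $t \le 1$, Proposition~\ref{prp:besovproduct} reduces matters to controlling $\sup_{0 < t \le 1}\|(f \circ \delta_t)\,\tilde\eta\|_{B_{p,q}^s}$ for a fixed $\tilde\eta \in \D(\R^n)$, which is exactly the content of Proposition~\ref{prp:besovlargedilations} since $s > \tilde Q_\delta \ge \tilde Q_\delta/p$.

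Combining the two ranges yields $\sup_{t>0}\|(f\circ\delta_t)\phi\|_{B_{p,q}^s} \le C_{p,q,s,K}\|f\|_{B_{p,q}^s}$, which is the claim. The only mildly delicate point — the ``main obstacle'' — is bookkeeping the dependence of constants on $K$, $\phi$ and $\eta$ while making sure the pointwise-multiplication estimates of Proposition~\ref{prp:besovproduct} are applied in the regime $\sigma > |s|$ (for part (i)) or $s > 0$ (for part (ii)); since here $s > \tilde Q_\delta \ge n \ge 1 > 0$, part (ii) of Proposition~\ref{prp:besovproduct} applies cleanly, and the factorizations above keep all cut-off factors in a fixed bounded set of $\D(\R^n)$, so no uniformity is lost. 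Everything else is routine, and $q$ plays no role beyond being carried along.
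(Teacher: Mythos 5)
Your argument is correct and follows essentially the same route as the paper: use the compact support of $f$ to discard large dilation parameters, then invoke Proposition~\ref{prp:besovlargedilations}. The paper avoids your intermediate-range case entirely by choosing the cut-off $\phi$ with $\supp \phi \cap \bigcup_{t > 1} \delta_{t^{-1}}(K) = \emptyset$, so that $(f \circ \delta_t)\phi = 0$ for all $t > 1$ and Proposition~\ref{prp:besovlargedilations} applies directly to the whole remaining range $0 < t \leq 1$.
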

\begin{proof}
Choose $\phi \in \D(\R^n \setminus \{0\})$ satisfying \eqref{eq:mhadm} and such that
\[\supp \phi \cap \bigcup_{t > 1} \delta_{t^{-1}}(K) = \emptyset.\]
Then, if $\supp f \subseteq K$, for $t > 1$ we have $(f \circ \delta_t) \phi = 0$, thus
\[\|f\|_{M_\delta^\phi B_{p,q}^s} = \sup_{0 < t \leq 1} \| (f \circ \delta_t) \phi \|_{B_{p,q}^s},\]
and the conclusion follows by Proposition~\ref{prp:besovlargedilations}.
\end{proof}

\subsection{Comparison}
We now show some implications among the Mihlin-H\"ormander conditions previously introduced. First of all, from Proposition~\ref{prp:besovembeddings} and Corollary~\ref{cor:besovcompactsupportinclusion}, we get immediately

\begin{prp}
Let $1 \leq p_1,p_2,q_1,q_2 \leq \infty$, $s_1,s_2 \in \R$. Then we have an inequality
\[\|\cdot\|_{M_\delta B_{p_2,q_2}^{s_2}} \lesssim \|\cdot\|_{M_\delta B_{p_1,q_1}^{s_1}}\]
in each of the following cases:
\begin{itemize}
\item $p_1 \leq p_2$, $s_1 - n/p_1 > s_2 - n/p_2$;
\item $p_1 \leq p_2$, $s_1 - n/p_1 = s_2 - n/p_2$, $q_1 \geq q_2$;
\item $p_1 \geq p_2$, $s_1 = s_2$, $q_1 \geq q_2$.
\end{itemize}
\end{prp}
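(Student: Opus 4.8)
The plan is to fix once and for all an admissible cut-off $\phi \in \D(\R^n \setminus \{0\})$ satisfying \eqref{eq:mhadm}; by Proposition~\ref{prp:mhequivalence} we then have $\|m\|_{M_\delta B_{p,q}^s} \sim \sup_{t > 0} \|(m \circ \delta_t)\phi\|_{B_{p,q}^s}$ for every $m$, both for the source and the target indices. Since each function $(m \circ \delta_t)\phi$ is supported in the fixed compact set $K = \supp \phi \subseteq \R^n \setminus \{0\}$, independently of $t$, the statement reduces to the following: for every $f \in \D'(\R^n)$ with $\supp f \subseteq K$ one has $\|f\|_{B_{p_2,q_2}^{s_2}} \leq C_{K} \|f\|_{B_{p_1,q_1}^{s_1}}$; applying this to $f = (m \circ \delta_t)\phi$ and taking the supremum over $t > 0$ gives the desired inequality. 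The constant $C_K$ may of course depend on the other parameters as well.

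It then remains to chain together the known embeddings between Besov spaces, recalling that $b_{p,q}^s(\R^n) \subseteq B_{p,q}^s(\R^n)$, so that the embeddings of Proposition~\ref{prp:besovembeddings}(i) into little Besov spaces yield in particular embeddings into the corresponding Besov spaces. In the first case, since $s_1 - n/p_1 > s_2 - n/p_2$ I may choose $\varepsilon > 0$ with $s_1 - \varepsilon - n/p_1 \geq s_2 - n/p_2$; Proposition~\ref{prp:besovembeddings}(i) (with the common exponent $p_1$, using $s_1 > s_1 - \varepsilon$) gives $B_{p_1,q_1}^{s_1} \subseteq B_{p_1,q_2}^{s_1-\varepsilon}$, and Proposition~\ref{prp:besovembeddings}(ii) gives $B_{p_1,q_2}^{s_1-\varepsilon} \subseteq B_{p_2,q_2}^{s_2}$. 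In the second case, if $q_1 > q_2$ then Proposition~\ref{prp:besovembeddings}(i) with equal smoothness yields $B_{p_1,q_1}^{s_1} \subseteq B_{p_1,q_2}^{s_1}$, after which Proposition~\ref{prp:besovembeddings}(ii) reaches $B_{p_2,q_2}^{s_2}$; if $q_1 = q_2$ then (ii) applies at once. In the third case, Corollary~\ref{cor:besovcompactsupportinclusion} --- with the roles of its two Lebesgue exponents exchanged, which is legitimate because here $p_2 \leq p_1$ --- gives $\|f\|_{B_{p_2,q'}^{s_1}} \leq C_K \|f\|_{B_{p_1,q'}^{s_1}}$ for $\supp f \subseteq K$ and every $q' \in [1,\infty]$; combining this with the passage from $q_1$ to $q_2$ furnished by Proposition~\ref{prp:besovembeddings}(i) at equal $p_1$ and equal smoothness (when $q_1 > q_2$; trivial when $q_1 = q_2$) completes this case.

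The only genuinely non-formal point is the third case: globally the spaces $B^{s_1}_{p_1,q}(\R^n)$ and $B^{s_1}_{p_2,q}(\R^n)$ are not nested when $p_1 > p_2$, so one cannot conclude from abstract inclusions on all of $\R^n$, and it is essential that the cut-off $\phi$ be compactly supported so that Corollary~\ref{cor:besovcompactsupportinclusion} is available. Once this is noted, the whole argument is a composition of embeddings uniform in the dilation parameter $t$, and passing to the supremum over $t$ is immediate.
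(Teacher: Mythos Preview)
Your proof is correct and is exactly the approach the paper takes: the paper simply states that the proposition follows immediately from Proposition~\ref{prp:besovembeddings} and Corollary~\ref{cor:besovcompactsupportinclusion}, and you have supplied the routine details of how those embeddings are chained in each of the three cases. Your observation that the third case genuinely relies on the compact support of $\phi$ (via Corollary~\ref{cor:besovcompactsupportinclusion}) is the only non-formal point, and you handle it correctly.
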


Moreover, a comparison between pointwise and Besov conditions is given by the following

\begin{prp}\label{prp:pointwisebesovlmihlin}
Let $s \in \N$. Then
\[\|\cdot\|_{M_\delta B_{\infty,\infty}^{s}} \lesssim \|\cdot\|_{M_\delta C^s} \lesssim \|\cdot\|_{M_\delta B_{\infty,1}^s}.\]
\end{prp}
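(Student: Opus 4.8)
The plan is to prove the two inequalities in Proposition~\ref{prp:pointwisebesovlmihlin} separately, reducing each to a local (fixed-compact-support) statement in a single dyadic annulus, by exploiting the dilation-invariance of all three quantities $\|\cdot\|_{M_\delta B_{\infty,\infty}^s}$, $\|\cdot\|_{M_\delta C^s}$, $\|\cdot\|_{M_\delta B_{\infty,1}^s}$. Concretely, fix a $\phi \in \D(\R^n \setminus \{0\})$ satisfying \eqref{eq:mhadm}, with $\supp \phi$ contained in an annulus $\{a \leq |x|_\delta \leq b\}$; then for each of the three ``$M_\delta$'' norms the defining supremum over $t > 0$ (or over $j \in \Z$, using Proposition~\ref{prp:mhequivalence}) is a supremum of the appropriate norm of $(m \circ \delta_t)\phi$, a function supported in the \emph{fixed} compact $\supp\phi$. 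So it suffices to show, with constants depending only on $K = \supp\phi$, $n$, $s$: first, $\|g\|_{B_{\infty,\infty}^s} \lesssim_K \|g\|_{C^s}$ for $g$ supported in $K$ (applied to $g = (m\circ\delta_t)\phi$); and second, $\|(m\circ\delta_t)\phi\|_{C^s} \lesssim_K \|(m\circ\delta_t)\psi\|_{B_{\infty,1}^s}$ for a slightly larger cutoff $\psi$ equal to $1$ on $\supp\phi$.

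For the first inequality, $\|\cdot\|_{M_\delta B_{\infty,\infty}^s} \lesssim \|\cdot\|_{M_\delta C^s}$: I would use that $C^s_b(\R^n) = C^s(\R^n) \cap (\text{bounded derivatives})$ embeds continuously into $B_{\infty,\infty}^s(\R^n)$ for $s \in \N$. This is exactly the right-hand inclusion in Corollary~\ref{cor:besovinclusions}(iii) composed with the trivial inclusion $C^s_b \subseteq W^{s,\infty}$; alternatively it follows from the real-interpolation characterization $(C_b,C_b^m)_{\theta,\infty} = B_{\infty,\infty}^{\theta m}$ together with $C^k_b \hookrightarrow B_{\infty,\infty}^k$ for integer $k$. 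One must observe that the quantity $\|m\|_{M_\delta C^s}$ controls, for each fixed $t$, the full $C^s_b$-norm of $(m\circ\delta_t)\phi$ with a constant independent of $t$: indeed by the Leibniz/chain rule $\partial^\alpha((m\circ\delta_t)\phi)$ is a sum of terms $t^{\|\beta\|}((\partial^\beta m)\circ\delta_t)\,\partial^\gamma\phi$ with $|\beta| \leq |\alpha| \leq s$, and on $\supp\phi$ one has $|((\partial^\beta m)\circ\delta_t)(x)| \leq C_\beta |\delta_t(x)|_\delta^{-\|\beta\|} = C_\beta t^{-\|\beta\|}|x|_\delta^{-\|\beta\|} \lesssim_K C_\beta t^{-\|\beta\|}$ using $|x|_\delta \asymp 1$ on $K$; the powers of $t$ cancel, giving a $t$-uniform $C^s_b$ bound. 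Taking the supremum over $t$ and invoking the embedding yields the claim.

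For the second inequality, $\|\cdot\|_{M_\delta C^s} \lesssim \|\cdot\|_{M_\delta B_{\infty,1}^s}$: here the key fact is the left-hand inclusion $B_{\infty,1}^s(\R^n) \subseteq C^s_{ub}(\R^n)$ from Corollary~\ref{cor:besovinclusions}(ii), i.e. $\|g\|_{C^s_b} \lesssim \|g\|_{B_{\infty,1}^s}$. I would pick $\psi \in \D(\R^n \setminus \{0\})$ with $\psi \equiv 1$ on a neighborhood of $\supp\phi$, so that $\psi$ also satisfies \eqref{eq:mhadm} and $(m\circ\delta_t)\phi = ((m\circ\delta_t)\psi)\cdot\phi$. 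Then $\|(m\circ\delta_t)\phi\|_{C^s_b} \lesssim_\phi \|(m\circ\delta_t)\psi\|_{C^s_b} \lesssim \|(m\circ\delta_t)\psi\|_{B_{\infty,1}^s}$, and taking the supremum over $t$ gives $\|m\|_{M_\delta^\phi C^s} \lesssim \|m\|_{M_\delta^\psi B_{\infty,1}^s}$; finally Proposition~\ref{prp:mhequivalence}(ii) says $\|\cdot\|_{M_\delta^\psi B_{\infty,1}^s} \asymp \|\cdot\|_{M_\delta B_{\infty,1}^s}$, and $\|\cdot\|_{M_\delta^\phi C^s}$ is comparable to $\|\cdot\|_{M_\delta C^s}$ by an analogous pointwise-conditions version of that equivalence (or directly: the $M_\delta C^s$ norm is, up to constants depending on the annulus, the supremum over $t$ of the $C^s_b$-norm of $(m\circ\delta_t)\phi$, as computed above, going in both directions).

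The only mildly delicate point — and the one I would be most careful about — is the bookkeeping that makes the dilation parameter $t$ drop out uniformly: one needs the homogeneous norm to be bounded above and below on the fixed annulus $\supp\phi$, and one needs the explicit form of $\partial^\alpha(m\circ\delta_t)$ in the adapted coordinates, namely $\partial^\alpha(m\circ\delta_t) = t^{\|\alpha\|}(\partial^\alpha m)\circ\delta_t$, to see the cancellation $t^{\|\alpha\|}\cdot t^{-\|\alpha\|} = 1$. No deep analytic obstacle is expected; everything reduces to the embeddings already recorded in Corollary~\ref{cor:besovinclusions} together with the standard Leibniz-rule manipulation on a compact set.
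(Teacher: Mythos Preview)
Your approach is correct in spirit and, for the first inequality, essentially identical to the paper's: both reduce by dilation-invariance to a single annulus and invoke the embedding $C^s_b \hookrightarrow B^s_{\infty,\infty}$ (the paper cites Corollary~\ref{cor:besovinclusions}(ii), you cite (iii); both work).

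For the second inequality the paper is more direct, and your final step has a small loose end. The paper chooses a cutoff $\phi$ with the \emph{stronger} property that $\bigcup_{t>0}\delta_{t^{-1}}(\{\phi=1\}\mathring{\,}) = \R^n\setminus\{0\}$; then for any $x\neq 0$ it picks $t$ with $\delta_t(x)\in\{\phi=1\}\mathring{\,}$, so \emph{all} derivatives of $\phi$ vanish there and the Leibniz expansion of $\partial^\alpha((m\circ\delta_{t^{-1}})\phi)$ at $\delta_t(x)$ collapses to the single term $t^{-\|\alpha\|}(\partial^\alpha m)(x)\cdot 1$. This immediately gives $|x|_\delta^{\|\alpha\|}|\partial^\alpha m(x)| \leq C\|(m\circ\delta_{t^{-1}})\phi\|_{C^s_b} \leq C\|(m\circ\delta_{t^{-1}})\phi\|_{B^s_{\infty,1}}$, and no auxiliary $\psi$ is needed. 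Your route instead introduces $\psi\equiv 1$ on $\supp\phi$, bounds $\|(m\circ\delta_t)\phi\|_{C^s_b}\lesssim\|(m\circ\delta_t)\psi\|_{B^s_{\infty,1}}$, and then needs the reverse comparison $\|m\|_{M_\delta C^s}\lesssim\sup_t\|(m\circ\delta_t)\phi\|_{C^s_b}$. You claim this follows from the Leibniz computation ``going in both directions,'' but with only condition~\eqref{eq:mhadm} on $\phi$ that direction does \emph{not} follow: at a generic point of $\supp\phi$ the Leibniz expansion mixes derivatives of $m$ with nonzero derivatives of $\phi$, and you cannot isolate $\partial^\alpha m$. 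The fix is exactly the paper's trick --- take $\phi$ equal to $1$ on an open annulus from the outset --- after which your argument goes through (and the auxiliary $\psi$ becomes unnecessary).
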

\begin{proof}
For the first inequality, since $\|\cdot\|_{M_\delta C^s}$ is $\delta_t$-invariant, it is sufficient to prove that, for $\eta \in \D(\R^n \setminus \{0\})$,
\[\|m \eta\|_{B_{\infty,\infty}^s} \leq C_{s,\eta} \|m\|_{M_\delta C^s},\]
but this follows easily from Corollary~\ref{cor:besovinclusions}(ii).

For the second inequality, choose $\phi \in \D(\R^n \setminus \{0\})$ such that
\[\phi \geq 0 \qquad\text{and}\qquad \bigcup_{t > 0} \delta_{t^{-1}}(\{\phi = 1\}\mathring{\,}) = \R^n \setminus \{0\}.\]
Let $\alpha \in \N^n$ such that $|\alpha| \leq s$, and let $x \in \R^n \setminus \{0\}$. Then there is $t > 0$ such that $\delta_t(x) \in \{\phi = 1\}\mathring{}$, hence
\[
|x|_\delta^{\|\alpha\|} |\partial^\alpha m(x)| = |\delta_t(x)|_\delta^{\|\alpha\|} |\partial^\alpha (m \circ \delta_{t^{-1}}) (\delta_t(x))| \leq C_{s,\phi} \|\partial^\alpha ((m \circ \delta_{t^{-1}}) \phi)\|_\infty,
\]
since $\phi(\delta_t(x)) = 1$, whereas $\partial^\beta \phi(\delta_t(x)) = 0$ for $|\beta| > 0$. Therefore, again by Corollary~\ref{cor:besovinclusions}(ii),
\[|x|_\delta^{\|\alpha\|} |\partial^\alpha m(x)| \leq C_{\phi,s} \|(m \circ \delta_{t^{-1}}) \phi\|_{B_{\infty,1}^s} \leq C_{\phi,s} \|m\|_{M_\delta B_{\infty,1}^s},\]
and the conclusion follows since $x \in \R^n \setminus \{0\}$ and $\alpha$ with $|\alpha| \leq s$ were arbitrary.
\end{proof}

\subsection{Change of variables}
We consider now two spaces $\R^{n_1}$ and $\R^{n_2}$, with systems of dilations $\{\delta_{1,t}\}_{t > 0}$ and $\{\delta_{2,t}\}_{t > 0}$ respectively. A map $\Phi : \R^{n_1} \to \R^{n_2}$ will be said \emph{homogeneous} (with respect to these systems of dilations) if
\begin{equation}\label{eq:maphomogeneous}
\Phi \circ \delta_{1,t} = \delta_{2,t} \circ \Phi \qquad\text{for all $t > 0$.}
\end{equation}
Notice that, if $\Phi : \R^{n_1} \setminus \{0\} \to \R^{n_2}$ is continuous and satisfies \eqref{eq:maphomogeneous}, then it can be extended to a continuous homogeneous map $\R^{n_1} \to \R^{n_2}$ by setting $\Phi(0) = 0$.

\begin{lem}
A continuous homogeneous map $\Phi : \R^{n_1} \to \R^{n_2}$ is proper (i.e., $\Phi^{-1}(K)$ is compact for every compact $K \subseteq \R^{n_2}$) if and only if
\[\Phi(x) = 0 \qquad\Longrightarrow\qquad x = 0\]
for every $x \in \R^{n_1}$. In this case, moreover, for every $\delta_{2,t}$-homogeneous norm $|\cdot|_{\delta_2}$ on $\R^{n_2}$, we have that $|\Phi(\cdot)|_{\delta_2}$ is a $\delta_{1,t}$-homogeneous norm on $\R^{n_1}$.
\end{lem}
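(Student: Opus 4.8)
The plan is to distil the lemma into a single two-sided comparison between $|\Phi(\cdot)|_{\delta_2}$ and $|\cdot|_{\delta_1}$, and to read off both the properness dichotomy and the homogeneous-norm assertion from it. The only ingredients needed are the defining properties of homogeneous norms, the polar-coordinate homeomorphism \eqref{eq:polarmap}, compactness of the unit sphere, and the behaviour of the dilations as $t\to 0^+$ and $t\to\infty$.

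First I would record the elementary fact that $\Phi(0)=0$: by \eqref{eq:maphomogeneous} one has $\Phi(0)=\Phi(\delta_{1,t}(0))=\delta_{2,t}(\Phi(0))$ for every $t>0$, and since $\delta_{2,t}(v)\to 0$ as $t\to 0^+$ for every $v\in\R^{n_2}$, this forces $\Phi(0)=0$; hence $0\in\Phi^{-1}(\{0\})$ unconditionally. Assume now the condition ``$\Phi(x)=0\Rightarrow x=0$''. Fix $\delta_{i,t}$-homogeneous norms $|\cdot|_{\delta_i}$ and consider the compact unit sphere $S=\{x\in\R^{n_1}\tc|x|_{\delta_1}=1\}$. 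The map $x\mapsto|\Phi(x)|_{\delta_2}$ is continuous and, on $S$, bounded above by some $C$ and below by some $c>0$ (strict positivity because $\Phi(x)\neq 0$ for $x\neq 0$, and $|\cdot|_{\delta_2}$ vanishes only at $0$). Writing an arbitrary $x\neq 0$ via \eqref{eq:polarmap} as $x=\delta_{1,|x|_{\delta_1}}(\omega)$ with $\omega\in S$, and using \eqref{eq:maphomogeneous} together with degree-one homogeneity of $|\cdot|_{\delta_2}$, I obtain
\[
c\,|x|_{\delta_1}\ \le\ |\Phi(x)|_{\delta_2}\ \le\ C\,|x|_{\delta_1}\qquad\text{for all }x\in\R^{n_1}.
\]
From the lower bound, properness is immediate: a compact $K\subseteq\R^{n_2}$ is $|\cdot|_{\delta_2}$-bounded, say by $R$, so $\Phi^{-1}(K)$ lies in the $|\cdot|_{\delta_1}$-ball of radius $R/c$; being also closed by continuity of $\Phi$, and since balls of a homogeneous norm are compact (a consequence of \eqref{eq:polarmap} and compactness of $S$), $\Phi^{-1}(K)$ is compact. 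The same two-sided estimate shows that $|\Phi(\cdot)|_{\delta_2}$ is continuous, vanishes exactly at the identity, and satisfies $|\Phi(\delta_{1,t}(x))|_{\delta_2}=|\delta_{2,t}(\Phi(x))|_{\delta_2}=t\,|\Phi(x)|_{\delta_2}$, so it is a $\delta_{1,t}$-homogeneous norm on $\R^{n_1}$.

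For the converse, suppose $\Phi$ is proper. Then $\Phi^{-1}(\{0\})$ is compact; by \eqref{eq:maphomogeneous} it is stable under every $\delta_{1,t}$, hence a cone. If it contained a point $x_0\neq 0$ it would contain $\delta_{1,t}(x_0)$ for all $t>0$, and $|\delta_{1,t}(x_0)|_{\delta_1}=t\,|x_0|_{\delta_1}\to\infty$ as $t\to\infty$ would contradict boundedness; therefore $\Phi^{-1}(\{0\})=\{0\}$.

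The only genuinely substantive step is the passage from boundedness on the compact sphere $S$ to the global comparison, which is exactly where the homogeneity hypothesis \eqref{eq:maphomogeneous} and the polar decomposition \eqref{eq:polarmap} enter; everything else (the limiting behaviour of dilations, compactness of homogeneous-norm balls, and the verification of the homogeneous-norm axioms) is routine, so I do not anticipate any real obstacle beyond keeping the polar-coordinate bookkeeping straight.
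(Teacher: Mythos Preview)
Your proof is correct and follows essentially the same route as the paper. The only difference is cosmetic: where you derive the two-sided estimate $c\,|x|_{\delta_1}\le|\Phi(x)|_{\delta_2}\le C\,|x|_{\delta_1}$ explicitly from compactness of the unit sphere, the paper first observes that $|\Phi(\cdot)|_{\delta_2}$ satisfies the axioms of a $\delta_{1,t}$-homogeneous norm and then invokes the known equivalence of homogeneous norms from \S\ref{subsection:homogeneousgroups} to obtain the same comparison (and hence properness). One minor remark: neither you nor the paper checks the symmetry axiom $|\Phi(-x)|_{\delta_2}=|\Phi(x)|_{\delta_2}$, which is formally part of the definition of a homogeneous norm; this is harmless for how the lemma is used (only the equivalence with $|\cdot|_{\delta_1}$ matters downstream), but you may want to note it.
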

\begin{proof}
Notice that, since $\Phi$ is homogeneous, the preimage of $\{0\}$ via $\Phi$ must be dilation-invariant. This means that, if $\Phi^{-1}(\{0\})$ contains elements other than $0$, then it is an unbounded subset of $\R^{n_1}$. In particular, if $\Phi$ is proper, then necessarily $\Phi^{-1}(\{0\}) \subseteq \{0\}$.

Vice versa, if $\Phi^{-1}(\{0\}) \subseteq \{0\}$, we have in fact that $\Phi^{-1}(\{0\}) = \{0\}$, since $0$ is the only dilation-invariant element in $\R^{n_2}$. Therefore, if $|\cdot|_{\delta_2}$ is a $\delta_{2,t}$-homogeneous norm on $\R^{n_2}$, by continuity of $\Phi$ we have immediately that $|\Phi(\cdot)|_{\delta_2}$ is a $\delta_{1,t}$-homogeneous norm on $\R^{n_1}$, and the fact that $\Phi$ is proper follows easily from the equivalence of homogeneous norms on $\R^{n_1}$.
\end{proof}

Let $\partial_{j,1},\dots,\partial_{j,n_j}$ be an adapted basis of $\R^{n_j}$ with dilations $\delta_{j,t}$ for $j=1,2$.

\begin{prp}\label{prp:uniformhomogeneouschange}
Suppose that $\Phi : \R^{n_1} \to \R^{n_2}$ is a continuous proper homogeneous map which is smooth off the origin. Then, for all $s \in \N$,
\[\|m \circ \Phi\|_{M_{\delta_1} C^s} \leq C_{s,\Phi} \|m\|_{M_{\delta_2} C^s}\]
for all Borel $m : \R^{n_2} \to \C$.
\end{prp}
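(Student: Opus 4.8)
The plan is to reduce the statement about the dilation-invariant $C^s$ Mihlin--H\"ormander norm to a purely local statement near the unit sphere, exploiting the homogeneity of $\Phi$. Fix a $\delta_{2,t}$-homogeneous norm $|\cdot|_{\delta_2}$ on $\R^{n_2}$; by the preceding lemma, $|\cdot|_{\delta_1} := |\Phi(\cdot)|_{\delta_2}$ is a $\delta_{1,t}$-homogeneous norm on $\R^{n_1}$. Since $\|\cdot\|_{M_{\delta_1}C^s}$ does not depend on the choice of homogeneous norm, we may compute it using this particular one. The key observation is that $\|m\circ\Phi\|_{M_{\delta_1}C^s}$ and $\|m\|_{M_{\delta_2}C^s}$ are both invariant under the respective dilations, and $\Phi$ intertwines the dilations; so it suffices to control, for $x$ in a fixed compact subset of $\R^{n_1}\setminus\{0\}$ (say the shell $\{1/2 \le |x|_{\delta_1} \le 2\}$), the quantities $|\partial^\alpha(m\circ\Phi)(x)|$ for $|\alpha|\le s$.

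The main computation is the chain rule for higher derivatives (Fa\`a di Bruno): for $|\alpha|\le s$,
\[
\partial^\alpha(m\circ\Phi)(x) = \sum_{|\beta|\le|\alpha|} ((\partial^\beta m)\circ\Phi)(x)\cdot P_{\alpha,\beta}(x),
\]
where $P_{\alpha,\beta}$ is a universal polynomial expression in the partial derivatives of the components $\Phi_1,\dots,\Phi_{n_2}$ of order at most $|\alpha|$. Since $\Phi$ is smooth off the origin, the functions $P_{\alpha,\beta}$ are smooth on $\R^{n_1}\setminus\{0\}$, hence bounded on the shell $\{1/2\le|x|_{\delta_1}\le 2\}$; moreover, on this shell $|\Phi(x)|_{\delta_2}=|x|_{\delta_1}$ is bounded above and below, so $\Phi(x)$ ranges over a fixed compact subset $K$ of $\R^{n_2}\setminus\{0\}$. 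Therefore for $x$ in the shell,
\[
|\partial^\alpha(m\circ\Phi)(x)| \le C_{s,\Phi} \max_{|\beta|\le s}\ \sup_{y\in K} |(\partial^\beta m)(y)| \le C_{s,\Phi,K}\,\|m\|_{M_{\delta_2}C^s},
\]
the last inequality because, as noted in the text, $\|\cdot\|_{M_{\delta_2}C^s}$ majorizes the $C^s$-norm on any fixed compact subset of $\R^{n_2}\setminus\{0\}$.

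To finish, I would pass from the shell estimate back to the full homogeneous norm. Given any $x\in\R^{n_1}\setminus\{0\}$, pick $t>0$ with $\delta_{1,t}(x)$ in the shell; applying the shell estimate to the dilated function $m\circ\delta_{2,t^{-1}}$ (whose $M_{\delta_2}C^s$ norm equals that of $m$, and which satisfies $(m\circ\delta_{2,t^{-1}})\circ\Phi = (m\circ\Phi)\circ\delta_{1,t^{-1}}$ by homogeneity of $\Phi$) and unwinding the homogeneity-degree bookkeeping (each $\partial^\alpha$ scales by $t^{\|\alpha\|}$ and $|x|_{\delta_1}^{\|\alpha\|}$ by $t^{-\|\alpha\|}$) yields
\[
|x|_{\delta_1}^{\|\alpha\|}|\partial^\alpha(m\circ\Phi)(x)| \le C_{s,\Phi}\,\|m\|_{M_{\delta_2}C^s}
\]
for all such $x$ and $|\alpha|\le s$, which is exactly $\|m\circ\Phi\|_{M_{\delta_1}C^s}\le C_{s,\Phi}\|m\|_{M_{\delta_2}C^s}$. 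The only mild technical point — the ``hard part'' such as it is — is organizing the Fa\`a di Bruno bookkeeping so that the homogeneity degrees match up cleanly; this is routine once one notes that $P_{\alpha,\beta}$ is $\delta_{1,t}$-homogeneous of degree $\|\alpha\|-\|\beta\|$ in the appropriate sense (a consequence of $\Phi$ being homogeneous), so in fact the shell-to-cone passage is automatic and no separate density or smoothing argument is needed. A density argument over $m$ is unnecessary here since $m$ is only assumed Borel and the $C^s$ norm of $m$ on compacta is finite whenever the right-hand side is.
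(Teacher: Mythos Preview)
Your proof is correct and follows essentially the same route as the paper's: chain rule/Fa\`a di Bruno, the choice $|\cdot|_{\delta_1}=|\Phi(\cdot)|_{\delta_2}$, a compact-set estimate (you use a shell, the paper uses the unit sphere $\{|\Phi(x)|_{\delta_2}=1\}$), and then the rescaling via homogeneity of $\Phi$ and dilation-invariance of $\|\cdot\|_{M_{\delta_2}C^s}$. One harmless slip in your aside: the coefficients $P_{\alpha,\beta}$ are $\delta_{1,t}$-homogeneous of degree $\|\beta\|-\|\alpha\|$, not $\|\alpha\|-\|\beta\|$ (check the first-order case $P_{(j),(k)}=\partial_{1,j}\Phi_k$), but since you don't actually use this and instead rescale as the paper does, it doesn't affect the argument.
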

\begin{proof}
As in the proof of Lemma~\ref{lem:elementarychange}, we get
\begin{equation}\label{eq:derivativescomposition}
\partial_1^\alpha(m \circ \Phi) = \sum_{\beta \tc |\beta| \leq |\alpha|} ((\partial_2^\beta m) \circ \Phi) \cdot \Psi_{\alpha,\beta},
\end{equation}
where $\Psi_{\alpha,\beta}$ are linear combinations of products of derivatives of the components of $\Phi$. Since $\Phi$ is continuous, proper and homogeneous, if $|\cdot|_{\delta_2}$ is a homogeneous norm on $\R^{n_2}$, then $|\cdot|_{\delta_1} = |\Phi(\cdot)|_{\delta_2}$ is a homogeneous norm on $\R^{n_1}$; moreover, the set $K = \{ x \in \R^{n_1} \tc |\Phi(x)|_{\delta_2} = 1\}$ is compact. We then have, for every $\alpha$ with $|\alpha| \leq s$ and every $x \in K$,
\[|\partial_1^\alpha(m \circ \Phi)(x)| \leq C_{\alpha,\Phi} \max_{\beta \tc |\beta| \leq |\alpha|} |\Phi(x)|_{\delta_2}^{\|\beta\|} |(\partial_2^\beta m)(\Phi(x))|  \leq C_{s,\Phi} \|m\|_{M_{\delta_2} C^s},\]
by the equivalence of homogeneous norms. Thus, for every $x \in \R^{n_1} \setminus \{0\}$, since $\delta_{1,t^{-1}}(x) \in K$ for $t = |\Phi(x)|_{\delta_2}$, we have
\begin{multline*}
|x|_{\delta_1}^{\|\alpha\|} |\partial_1^\alpha(m \circ \Phi)(x)| = |\partial_1^\alpha(m \circ \Phi \circ \delta_{1,t})(\delta_{t^{-1}}(x))| \\
= |\partial_1^\alpha(m \circ \delta_{2,t} \circ \Phi)(\delta_{t^{-1}}(x))| \leq C_{s,\Phi} \|m \circ \delta_{2,t}\|_{M_{\delta_2} C^s} = C_{s,\Phi} \|m\|_{M_{\delta_2} C^s},
\end{multline*}
by the previous inequality applied to $m \circ \delta_{2,t}$. Since $x \in \R^{n_1} \setminus \{0\}$ and $\alpha$ with $|\alpha| \leq s$ were arbitrary, we get the conclusion.
\end{proof}

We look now for a similar result about the Besov conditions.

\begin{prp}\label{prp:homogeneouschange}
Suppose that $\Phi : \R^{n_1} \to \R^{n_2}$ is a continuous proper homogeneous map which is smooth off the origin, $s > 0$, $p,q \in [1,\infty]$. If either $p=\infty$, or $d\Phi$ has constant rank $r \in \N$ on $\R^{n_1} \setminus \{0\}$, then
\[\|m \circ \Phi\|_{M_{\delta_1} B_{p,q}^s} \leq C_{p,q,s,\Phi} \|m\|_{M_{\delta_2} B_{p,q}^{s+(n_2-r)/p}}\]
for all Borel $m : \R^{n_2} \to \C$.
\end{prp}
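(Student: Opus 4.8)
The plan is to reduce the statement about Mihlin-H\"ormander conditions to the change-of-variable result for ordinary Besov spaces, namely Proposition~\ref{prp:besovchange}, by using the discretized description of the $M_\delta B^s_{p,q}$ norm from Proposition~\ref{prp:mhequivalence} and Lemma~\ref{lem:mhequivalence}. First I would fix a homogeneous norm $|\cdot|_{\delta_2}$ on $\R^{n_2}$ and observe that, since $\Phi$ is continuous, proper and homogeneous, $|\cdot|_{\delta_1} = |\Phi(\cdot)|_{\delta_2}$ is a $\delta_{1,t}$-homogeneous norm on $\R^{n_1}$; this lets me choose cut-off functions $\phi_1 \in \D(\R^{n_1}\setminus\{0\})$ and $\phi_2 \in \D(\R^{n_2}\setminus\{0\})$ satisfying \eqref{eq:mhadm} with respect to the two dilation structures, together with a $\phi_2$ whose ``enlargement'' is $\equiv 1$ on a neighbourhood of $\Phi(\supp\phi_1)$ — more precisely I would arrange that there is $\psi_2 \in \D(\R^{n_2}\setminus\{0\})$ with $\psi_2 \equiv 1$ on $\bigcup_{t\in[T^{-1},T]}\delta_{2,t}(\Phi(\supp\phi_1))$ for a suitable $T > 1$, so that $(m\circ\delta_{2,t})\circ\Phi = ((m\psi_2)\circ\delta_{2,t})\circ\Phi$ on $\supp\phi_1$ for $t$ in that range.

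The key computational step uses the homogeneity relation \eqref{eq:maphomogeneous}: for every $t > 0$,
\[
(m \circ \Phi \circ \delta_{1,t}) \, \phi_1 = (m \circ \delta_{2,t} \circ \Phi)\,\phi_1 = \bigl((m \circ \delta_{2,t}) \circ \Phi\bigr)\,\phi_1.
\]
Thus $\|(m\circ\Phi\circ\delta_{1,t})\phi_1\|_{B^s_{p,q}}$ is, for each $t$, the Besov norm of a function of the form $(g\circ\Phi)\phi_1$ where $g = m\circ\delta_{2,t}$. Now I would apply Proposition~\ref{prp:besovchange} with $\psi = \phi_1$ (and, in the case $p<\infty$, the constant-rank hypothesis on $d\Phi$ away from the origin, which holds on a neighbourhood of $\supp\phi_1$): this gives
\[
\|(g \circ \Phi)\phi_1\|_{B^s_{p,q}(\R^{n_1})} \leq C_{\Phi,\phi_1,p,q,s} \, \|g\,\chi\|_{B^{s+(n_2-r)/p}_{p,q}(\R^{n_2})}
\]
for any $\chi \in \D(\R^{n_2})$ that is $\equiv 1$ on the compact set $\Phi(\supp\phi_1)$; here I would take $\chi$ supported in $\R^{n_2}\setminus\{0\}$ (possible since $\Phi$ is proper, so $0 \notin \Phi(\supp\phi_1)$) and satisfying \eqref{eq:mhadm}-type covering, or rather I would absorb $\chi$ into a finite sum of dilates of $\phi_2$ exactly as in the proof of Lemma~\ref{lem:mhequivalence}, using Proposition~\ref{prp:besovproduct} to pass from $\|g\chi\|_{B^{s'}_{p,q}}$ to a bounded multiple of $\sup_{t>0}\|(m\circ\delta_{2,t})\phi_2\|_{B^{s'}_{p,q}} = \|m\|_{M_{\delta_2}B^{s'}_{p,q}}$ with $s' = s+(n_2-r)/p$. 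Taking the supremum over $t > 0$ on the left then yields $\|m\circ\Phi\|_{M_{\delta_1}B^s_{p,q}} \leq C \|m\|_{M_{\delta_2}B^{s'}_{p,q}}$, which is the claim.

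I expect the main obstacle to be bookkeeping rather than anything deep: one must make sure that all the cut-offs stay away from the origin (so that Proposition~\ref{prp:besovproduct} and the local structure of the Mihlin-H\"ormander norm apply) while still covering $\R^{n_1}\setminus\{0\}$ in the dilation-orbit sense required by \eqref{eq:mhadm}, and one must verify that the constant-rank hypothesis on $d\Phi$, assumed on $\R^{n_1}\setminus\{0\}$, transfers to a neighbourhood of the fixed compact $\supp\phi_1$ so that Proposition~\ref{prp:besovchange} is literally applicable there (this is automatic since $\supp\phi_1$ is a compact subset of $\R^{n_1}\setminus\{0\}$, on which $d\Phi$ is smooth of constant rank). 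A minor point is the handling of the two regimes $0<t\le 1$ and $t\ge 1$ in the supremum: by homogeneity of the norm $\|\cdot\|_{M_{\delta_2}B^{s'}_{p,q}}$ under $m \mapsto m\circ\delta_{2,2^k}$ one reduces, exactly as at the end of the proof of Lemma~\ref{lem:mhequivalence}, to $t$ ranging over a fixed bounded interval, where Lemma~\ref{lem:continuousdilations} controls everything uniformly. No genuinely new estimate is needed; the proof is an assembly of Proposition~\ref{prp:besovchange}, Proposition~\ref{prp:besovproduct}, Lemma~\ref{lem:continuousdilations} and the discretization machinery already developed for the Mihlin-H\"ormander norms.
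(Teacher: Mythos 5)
Your proof is correct and follows the same route as the paper's: commute $\Phi$ past the dilations by homogeneity and reduce to Proposition~\ref{prp:besovchange}. The paper short-circuits essentially all of your cutoff bookkeeping by taking the test function on $\R^{n_1}$ to be the pullback $\eta \circ \Phi$ of a test function $\eta \in \D(\R^{n_2} \setminus \{0\})$ (this pullback satisfies \eqref{eq:mhadm} precisely because $\Phi$ is continuous, proper and homogeneous), so that
\[ ((m \circ \Phi) \circ \delta_{1,t})\,(\eta \circ \Phi) = \bigl(((m \circ \delta_{2,t})\,\eta) \circ \Phi\bigr)\,\psi \]
for a fixed $\psi \in \D(\R^{n_1}\setminus\{0\})$ with $(\eta\circ\Phi)\psi = \eta\circ\Phi$; the right-hand side is already in the exact form covered by Proposition~\ref{prp:besovchange} with $f = (m\circ\delta_{2,t})\eta$ being precisely the quantity whose supremum over $t$ defines $\|m\|_{M_{\delta_2}^{\eta}B^{s+(n_2-r)/p}_{p,q}}$, so no absorption of an auxiliary cutoff $\chi$ into dilates of $\phi_2$, no separate localization $\psi_2$, and no splitting of the range of $t$ are needed.
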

\begin{proof}
Let $\eta \in \D(\R^{n_2} \setminus \{0\})$, $\eta \geq 0$, be such that
\[\bigcup_{t > 0} \delta_{2,t^{-1}}(\{\eta \neq 0\}) = \R^{n_2} \setminus \{0\}.\]
Since $\Phi$ is a continuous proper homogeneous map which is smooth off the origin, we have $\eta \circ \Phi \in \D(\R^{n_1} \setminus \{0\})$, $\eta \circ \Phi \geq 0$ and
\[\bigcup_{t > 0} \delta_{1,t^{-1}}(\{\eta \circ \Phi \neq 0\}) = \R^{n_1} \setminus \{0\}.\]
If we choose $\psi \in \D(\R^{n_1})$ such that $(\eta \circ \Phi) \psi = \eta \circ \Phi$, then we have
\[ ((m \circ \Phi) \circ \delta_{1,t}) (\eta \circ \Phi) = (((m \circ \delta_{2,t}) \eta) \circ \Phi) \psi \]
and the conclusion follows easily from Proposition~\ref{prp:besovchange}.
\end{proof}

\begin{rem}
If $\Phi: \R^{n_1} \to \R^{n_2}$ is a homogeneous map which is smooth off the origin, then the rank of $d\Phi$ on $\R^{n_1} \setminus \{0\}$ is at least $1$. Therefore, when $n_2 = 1$, there is no loss of regularity in the comparison given by Proposition~\ref{prp:homogeneouschange}.
\end{rem}

We extend the definition of homogeneous map in order to include maps which are defined only on subsets of $\R^{n_1}$; namely, if $\Sigma \subseteq \R^{n_1}$, we say that a map $\Phi : \Sigma \to \R^{n_2}$ is homogeneous if $\Sigma$ is $\delta_{1,t}$-invariant and $\Phi$ satisfies \eqref{eq:maphomogeneous}. Moreover, we say that a map $\Phi : \Sigma \subseteq \R^{n_1} \to \R^{n_2}$ is smooth if there exists some open $\Omega \subseteq \R^{n_1}$ containing $\Sigma$ such that $\Phi$ extends to a smooth map $\Omega \to \R^{n_2}$ (cf.\ Lemma~2.27 of \cite{lee_smooth_2003}).

\begin{prp}\label{prp:homogeneousextension}
Let $\Sigma \subseteq \R^{n_1}$ be closed, and let $\Phi : \Sigma \to \R^{n_2}$ be continuous, homogeneous and such that $\Phi^{-1}(0) \subseteq \{0\}$. Suppose moreover that either $\Phi$ is not surjective, or $n_2 \geq n_1$. Then there exists an extension $\tilde \Phi : \R^{n_1} \to \R^{n_2}$ of $\Phi$ which is continuous, homogeneous and proper. If moreover $\Phi|_{\Sigma \setminus \{0\}}$ is smooth, then the extension $\tilde\Phi$ can be taken to be smooth off the origin.
\end{prp}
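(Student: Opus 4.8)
The plan is to reduce the problem to a standard ambient extension statement, handled separately for the continuous and the smooth cases, and to use the radial/polar decomposition of $\R^{n_1}$ furnished by a homogeneous norm to control properness.

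First I would fix a $\delta_{1,t}$-homogeneous norm $|\cdot|_{\delta_1}$ on $\R^{n_1}$ and a $\delta_{2,t}$-homogeneous norm $|\cdot|_{\delta_2}$ on $\R^{n_2}$, and set $S_1 = \{x \in \R^{n_1} \tc |x|_{\delta_1} = 1\}$. Since $\Sigma$ is closed and $\delta_{1,t}$-invariant, $\Sigma \cap S_1$ is a compact subset of $S_1$, and via the polar homeomorphism of the type in \eqref{eq:polarmap} the map $\Phi$ is completely determined by its restriction $\phi_0 = \Phi|_{\Sigma \cap S_1}$, which is continuous $\Sigma \cap S_1 \to \R^{n_2} \setminus \{0\}$ (using $\Phi^{-1}(0) \subseteq \{0\}$ and $0 \notin S_1$). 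Conversely, any continuous extension $\tilde\phi_0 : S_1 \to \R^{n_2} \setminus \{0\}$ of $\phi_0$ yields, by radial extension $\tilde\Phi(x) = \delta_{2,|x|_{\delta_1}} (\tilde\phi_0(x / \text{(normalization)}))$ and $\tilde\Phi(0) = 0$, a continuous homogeneous map $\R^{n_1} \to \R^{n_2}$ with $\tilde\Phi^{-1}(0) = \{0\}$; properness of $\tilde\Phi$ then follows from the previous lemma (the one preceding this proposition), since $|\tilde\Phi(\cdot)|_{\delta_2}$ is a $\delta_{1,t}$-homogeneous norm on $\R^{n_1}$. So the whole problem is: extend a continuous map $\phi_0 : \Sigma \cap S_1 \to \R^{n_2} \setminus \{0\}$ from a compact subset of the sphere $S_1$ to all of $S_1$, staying in $\R^{n_2} \setminus \{0\}$ — and, when $\phi_0$ is smooth, keeping the extension smooth.

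For the continuous case, I would invoke the Tietze extension theorem to extend the $\R^{n_2}$-valued map $\phi_0$ to a continuous $S_1 \to \R^{n_2}$; the issue is that this naive extension may hit $0$. Here the hypothesis ``$\Phi$ is not surjective or $n_2 \geq n_1$'' enters. If $\Phi$ is not surjective, then $|\Phi(\cdot)|_{\delta_2}$ omits some direction, i.e. the image $\phi_0(\Sigma \cap S_1)$ misses some ray; after a homogeneous ``radial-retraction'' deformation of $\R^{n_2} \setminus \{0\}$ onto the unit sphere $S_2$ followed by a stereographic-type identification of $S_2$ minus a point with $\R^{n_2 - 1}$, one extends a map into a contractible (indeed convex) target and then transports back — the missing ray guarantees the retraction is globally defined on the image. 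If instead $n_2 \geq n_1$, a generic-perturbation / transversality argument works: a continuous (resp. smooth) Tietze extension can be perturbed, keeping it fixed on the compact set $\Sigma \cap S_1$ where it is already nonzero, so that it avoids $\{0\}$, because the target has dimension $n_2 \geq n_1 = \dim S_1 + 1 > \dim S_1$, so a generic map $S_1 \to \R^{n_2}$ misses the single point $0$. For the smooth refinement, I would replace Tietze by the smooth Whitney extension theorem (cf. the reference \cite{lee_smooth_2003} already cited for the notion of a smooth map on a subset), extending $\phi_0$ from $\Sigma \cap S_1$ to a smooth map on a neighborhood and then to all of $S_1$ by a partition of unity, and then apply Sard's theorem / parametric transversality to push the extension off $0$ without disturbing it on $\Sigma \cap S_1$. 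Finally, radially re-homogenizing as above produces the desired $\tilde\Phi$, smooth off the origin.

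The main obstacle is the avoidance of $0$: a bare-hands Tietze or Whitney extension has no reason to be nonvanishing, and the two alternative hypotheses must be shown to be exactly what is needed — non-surjectivity gives a deformation retraction of the relevant part of $\R^{n_2} \setminus \{0\}$ onto a convex set (so obstruction vanishes for free), while $n_2 \geq n_1$ gives a dimension count making $0$ a removable value by genericity. I would be careful that the perturbation is supported away from $\Sigma \cap S_1$ (so the extension property is preserved) and that, in the smooth case, Sard's theorem is applied to an auxiliary evaluation map so that the perturbed extension is transverse to $\{0\}$, hence (by dimension) misses it entirely. Everything else — the polar-coordinate bookkeeping, checking homogeneity of $\tilde\Phi$, and deducing properness from the preceding lemma — is routine.
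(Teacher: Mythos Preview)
Your proposal is correct and follows the same overall architecture as the paper: reduce via polar coordinates to an extension problem on the unit sphere $S_1$, handle the two hypotheses separately, and use Whitney-type approximation for the smooth refinement. The paper's organization differs in one respect worth noting: rather than extending $\phi_0$ as a map into $\R^{n_2}\setminus\{0\}$ and then arguing that the extension can be pushed off $0$, the paper first decomposes $\Phi|_{\Sigma\cap S_1}$ into a radial part $\Phi_r : \Sigma\cap S_1 \to \R^+$ and an angular part $\Phi_\omega : \Sigma\cap S_1 \to S^{n_2-1}$ (using smooth homogeneous norms whose unit spheres coincide with the Euclidean spheres), and extends each piece separately. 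The radial part extends by Tietze into $\R^+$; the angular part extends into $S^{n_2-1}$ either via stereographic projection when a point is missed, or by a classical sphere-extension theorem (Hocking--Young) when $n_2\ge n_1$. Reassembling gives an extension that is automatically nonvanishing, so no genericity or transversality step is needed.

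Your route trades that decomposition for a perturbation argument. This is fine, but two small points deserve care. First, ``transversality'' is a smooth notion; in the purely continuous case with $n_2\ge n_1$ you should invoke a dimension-theoretic fact (e.g.\ simplicial approximation and general position, or the result you would find in Hocking--Young) rather than literal transversality. Second, the perturbation avoiding $0$ must be supported away from $\Sigma\cap S_1$, so you need to argue via a bump function on a neighborhood where $|\psi|$ is already bounded below---you flag this, but it is the step that requires the most bookkeeping in your version and is entirely bypassed by the paper's radial/angular split. Finally, for the smooth case you should take the homogeneous norms $|\cdot|_{\delta_j}$ to be smooth off the origin, so that $S_1$ is a smooth hypersurface and the polar reconstruction preserves smoothness; the paper arranges this explicitly via the implicit function theorem.
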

\begin{proof}
By the implicit function theorem, for $j=1,2$ we can find a smooth homogeneous norm $|\cdot|_{\delta_j}$ on $\R^{n_j}$ such that
\[\{ x \in \R^{n_j} : |x|_{\delta_j} = 1\} = S^{n_j-1} = \{ x \in \R^{n_j} : |x|_2 = 1\},\]
and set $\nu_j(x) = \delta_{j,|x|_{\delta_2}^{-1}}(x)$. Let moreover, for $x \in \Sigma \cap S^{n_1-1}$,
\[\Phi_r(x) = |\Phi(x)|_{\delta_2}, \qquad \Phi_\omega(x) = \nu_2(\Phi(x)).\]

By the Tietze-Urysohn extension theorem (see \cite{bourbaki_topology2}, \S IX.4.2), the map $\Phi_r : \Sigma \cap S^{n_1-1} \to \R^+$ can be extended to a continuous map $\Psi_r : S^{n_1-1} \to \R^+$. On the other hand, a continuous extension $\Psi_\omega : S^{n_1-1} \to S^{n_2-1}$ of the map $\Phi_\omega : \Sigma \cap S^{n_1-1} \to S^{n_2-1}$ exists too: if $\Phi$ is not surjective, then by homogeneity also $\Phi_\omega$ is not, so that $\Phi_\omega$ can be thought of as valued in $\R^{n_2-1}$ modulo homeomorphisms, and the Tietze-Urysohn extension theorem can be applied componentwise; if $n_2 \geq n_1$, then the extension is given by Theorem~6-45 of \cite{hocking_topology_1988}. Moreover, if $\Phi$ is smooth off the origin, then the extensions $\Psi_r$ and $\Psi_\omega$ can be taken to be smooth by the Whitney approximation theorem (see \cite{lee_smooth_2003}, Theorem~10.21).

Finally, if $\tilde\Phi : \R^{n_1} \to \R^{n_2}$ is defined by
\[\tilde\Phi(x) = \begin{cases}
0 &\text{if $x = 0$,}\\
\delta_{2,|x|_{\delta_1} \Psi_r(\nu_1(x))}(\Psi_\omega(\nu_1(x)))  &\text{otherwise.}
\end{cases},\]
then it is easily checked that $\tilde\Phi$ is the looked-for extension of $\Phi$.
\end{proof}

\section{Marcinkiewicz conditions}

Marcinkiewicz conditions are the multi-variate analogue of Mihlin-H\"ormander conditions.

We will need some notation. Fix $\ell \in \N$, $\vec{n} \in \N^\ell$. For $l = 1,\dots,\ell$, let $\{\delta_{l,t}\}_{t > 0}$ be a system of dilations on $\R^{n_l}$, with homogeneous norm $|\cdot|_{\delta_l}$ and homogeneous dimension $Q_l$. We introduce a multi-parameter system of dilations on $\R^{\vec{n}}$ as follows:
\[\daleth_{\vec{t}}(x_1,\dots,x_\ell) = (\delta_{1,t_1}(x_1),\dots,\delta_{\ell,t_\ell}(x_\ell)),\]
and, as a particular case, also a one-parameter system of dilations on $\R^{\vec n}$:
\[\delta_t(x) = \daleth_{(t,\dots,t)}(x).\]
The latter system of dilations defines a homogeneous structure on $\R^{\vec{n}}$, with homogeneous dimension $Q = \sum_l Q_l$ and homogeneous norm $|\cdot|_\delta$ defined by
\[|(x_1,\dots,x_\ell)|_{\delta} = \max \{|x_1|_{\delta_1}, \dots, |x_\ell|_{\delta_\ell}\}.\]
Moreover, if $\partial_{l,1},\dots,\partial_{l,n_l}$ is an adapted basis of $\R^{n_l}$ for $l=1,\dots,\ell$, then the concatenation
\[\partial_{1,1}, \dots, \partial_{1,n_1}, \dots, \partial_{\ell,1}, \dots \partial_{\ell,n_\ell}\]
is an adapted basis of $\R^{\vec{n}}$.

The \emph{pointwise Marcinkiewicz condition} of order $\vec k \in \N^\ell$ (adapted to the multi-parameter dilations $\daleth_{\vec t}$) is defined by the finiteness of the ``norm''
\[\|m\|_{M_\daleth S^{\vec{k}}C} = \max_{|\alpha_1| \leq k_1,\dots,|\alpha_\ell| \leq k_\ell} \sup_{x_1 \neq 0, \dots, x_\ell \neq 0} |x_1|_{\delta_1}^{\|\alpha_1\|} \cdots |x_\ell|_{\delta_\ell}^{\|\alpha_\ell\|} |\partial_1^{\alpha_1} \dots \partial_\ell^{\alpha_l} m(x)|.\]
It is not difficult to see that
\[\|m \circ \daleth_{\vec t}\|_{M_\daleth S^{\vec{k}}C} = \|m\|_{M_\daleth S^{\vec{k}}C} \qquad\text{for all $t_1,\dots,t_n > 0$.}\]
Moreover, from the definition it is clear that the derivatives of a function satisfying a uniform Marcinkiewicz condition are allowed to diverge not only at the origin of $\R^{\vec{n}}$, but also on the set
\[\{(x_1,\dots,x_\ell) \in \R^{\vec{n}} \tc |x_1|_{\delta_1} \cdots |x_\ell|_{\delta_l} = 0\}.\]

In order to introduce the Besov version of the Marcinkiewicz condition, we need test functions $\phi_l \in \D(\R^{n_l} \setminus \{0\})$ satisfying \eqref{eq:mhadm} for $l = 1,\dots,\ell$. We then set
\[\phi = \phi_1 \otimes \dots \otimes \phi_\ell\]
and define, for $\vec{s} \in \R^\ell$, $1 \leq p,q \leq \infty$,
\[\|m\|_{M_\daleth^\phi S_{p,q}^{\vec{s}}B} = \sup_{t_1,\dots,t_n > 0} \|(m \circ \daleth_{\vec{t}}) \phi\|_{S_{p,q}^{\vec{s}}B}.\]
If moreover $a_l > 0$ is such that $\phi_l$ and $a_l$ satisfy \eqref{eq:mhdiscadm} for $l = 1,\dots,\ell$, then we introduce also the discrete version
\[\|m\|_{M_\daleth^{\phi,\vec{a}} S_{p,q}^{\vec{s}}B} = \sup_{j_1,\dots,j_n \in \Z} \|(m \circ \daleth_{(2^{a_1 j_1},\dots,2^{a_\ell j_\ell})}) \phi\|_{S_{p,q}^{\vec{s}}B}.\]
As in the one-parameter case, for every $\vec{s} \in \R^\ell$, $1\leq p,q\leq \infty$, all the previously defined ``norms'' are equivalent and their finiteness define the \emph{$S_{p,q}^{\vec{s}}B$ Marcinkiewicz condition} adapted to the multi-parameter dilations $\daleth_{\vec t}$.

When $n_1 = \dots = n_\ell = 1$, there is (essentially) one system of $\ell$-parameter dilations, which therefore needs not be specified; in this case, we will use the notation $\| \cdot \|_{M_* S^{\vec{k}}C}$ or $\|\cdot\|_{M_* S^{\vec{s}}_{p,q}B}$.

For the comparison of pointwise and Besov Marcinkiewicz conditions, the following multi-variate versions of the comparison results for Mihlin-H\"ormander conditions are easy to prove.

\begin{prp}
Let $1 \leq p_1,p_2,q_1,q_2 \leq \infty$, $\vec{s}_1,\vec{s}_2 \in \R^\ell$. Then we have an inequality
\[\|\cdot\|_{M_\daleth S_{p_2,q_2}^{\vec{s}_2}B} \lesssim \|\cdot\|_{M_\daleth S_{p_1,q_1}^{\vec{s}_1}B}\]
in each of the following cases:
\begin{itemize}
\item $p_1 \leq p_2$, $\vec{s}_1 - \vec{n}/p_1 > \vec{s}_2 - \vec{n}/p_2$;
\item $p_1 \leq p_2$, $\vec{s}_1 - \vec{n}/p_1 = \vec{s}_2 - \vec{n}/p_2$, $q_1 \geq q_2$;
\item $p_1 \geq p_2$, $\vec{s}_1 = \vec{s}_2$, $q_1 \geq q_2$.
\end{itemize}
\end{prp}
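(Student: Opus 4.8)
The statement to be proved is the multi-variate comparison result for Marcinkiewicz conditions, asserting that $\|\cdot\|_{M_\daleth S_{p_2,q_2}^{\vec{s}_2}B} \lesssim \|\cdot\|_{M_\daleth S_{p_1,q_1}^{\vec{s}_1}B}$ in three listed cases. The plan is to reduce this to the corresponding embedding and compact-support inclusion results for the Besov spaces with dominating mixed smoothness, exactly as the one-variate analogue (the un-numbered Proposition preceding Proposition~\ref{prp:pointwisebesovlmihlin}) was derived from Proposition~\ref{prp:besovembeddings} and Corollary~\ref{cor:besovcompactsupportinclusion}. Here the relevant tools are Proposition~\ref{prp:multibesovembeddings} for the continuous inclusions among the $S^{\vec s}_{p,q}B(\R^{\vec n})$ spaces and Corollary~\ref{cor:multibesovcompactsupportinclusion} for the compact-support inclusion when $p_1 \leq p_2$.

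The key steps: first, recall that, by Proposition~\ref{prp:besovproduct} (more precisely its multi-variate analogue, or simply applying it factor by factor) applied to the fixed test function $\phi = \phi_1 \otimes \dots \otimes \phi_\ell$, the quantity $\|(m \circ \daleth_{\vec t}) \phi\|_{S^{\vec s}_{p,q}B}$ only depends on $m$ through its restriction to a fixed compact subset of $\R^{\vec n}$ (namely a neighborhood of $\supp\phi$). Thus in each of the three cases we may apply a fixed-support inclusion to the distribution $(m \circ \daleth_{\vec t})\phi$, with constants uniform in $\vec t$, and then take the supremum over $\vec t_1,\dots,\vec t_\ell > 0$. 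In the first two cases ($p_1 \leq p_2$ with the stated relations between $\vec s_1 - \vec n/p_1$ and $\vec s_2 - \vec n/p_2$), I would invoke Proposition~\ref{prp:multibesovembeddings}(ii): for $p_1 \leq p_2$, if $\vec s_1 - \vec n/p_1 \geq \vec s_2 - \vec n/p_2$ and $q_1 \geq q_2$ we get $S^{\vec s_1}_{p_1,q_1}B \subseteq S^{\vec s_2}_{p_2,q_2}B$, while in the strict case $\vec s_1 - \vec n/p_1 > \vec s_2 - \vec n/p_2$ one first passes through an intermediate exponent (e.g. using the embedding into a $b$-space from Proposition~\ref{prp:multibesovembeddings}(i) to absorb the gap in $\vec q$), so no restriction on $q_1,q_2$ is needed. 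In the third case ($p_1 \geq p_2$, $\vec s_1 = \vec s_2$, $q_1 \geq q_2$), the $\vec q$-monotonicity of $S^{\vec s}_{p,q}B$ handles the index $q$, while Corollary~\ref{cor:multibesovcompactsupportinclusion} (applied with the roles reversed, since now the smaller integrability exponent is $p_2$) provides $\|f\|_{S^{\vec s}_{p_2,q_2}B} \lesssim_K \|f\|_{S^{\vec s}_{p_1,q_2}B}$ for $f$ supported in the fixed compact $K$.

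Putting these together: in each case, for every $\vec t$ we bound $\|(m\circ\daleth_{\vec t})\phi\|_{S^{\vec s_2}_{p_2,q_2}B}$ by a constant (depending only on $K$, the exponents, and $\phi$, but not on $\vec t$ or $m$) times $\|(m\circ\daleth_{\vec t})\phi\|_{S^{\vec s_1}_{p_1,q_1}B}$, and then take the supremum over $\vec t$ on both sides to obtain the claimed inequality $\|m\|_{M_\daleth S^{\vec s_2}_{p_2,q_2}B} \lesssim \|m\|_{M_\daleth S^{\vec s_1}_{p_1,q_1}B}$. One should use the equivalence of the various defining norms of the Marcinkiewicz condition (the discrete and continuous versions, established in the one-parameter setting and extended analogously) so that it does not matter which representative of $\|\cdot\|_{M_\daleth S^{\vec s}_{p,q}B}$ we work with.

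The main obstacle, such as it is, is bookkeeping rather than any genuine difficulty: one must be careful that the compact $K$ (a neighborhood of $\supp\phi$) can be chosen once and for all, uniformly in $\vec t$, which is immediate because $\phi$ itself is fixed and the dilation $\daleth_{\vec t}$ acts on $m$, not on $\phi$; and one must track which of Proposition~\ref{prp:multibesovembeddings}(i)--(ii) versus Corollary~\ref{cor:multibesovcompactsupportinclusion} applies in each of the three regimes, in particular recognizing that the third regime genuinely needs the compact-support hypothesis (it is false globally on $\R^{\vec n}$). The degenerate cases $q_i = \infty$ require passing through little Besov spaces, exactly as in the one-variate Proposition, but since Proposition~\ref{prp:multibesovembeddings} already states the inclusions for both $B$ and $b$ scales, this causes no trouble.
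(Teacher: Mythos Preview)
Your proposal is correct and follows exactly the route the paper intends: the paper gives no explicit proof but states that this is the straightforward multi-variate analogue of the one-parameter comparison, which in turn is derived ``immediately'' from the Besov embeddings and the compact-support inclusion; you invoke precisely Proposition~\ref{prp:multibesovembeddings} and Corollary~\ref{cor:multibesovcompactsupportinclusion} applied to the fixed-support function $(m\circ\daleth_{\vec t})\phi$ and take the supremum over $\vec t$. One small remark: the appeal to a product-type result is unnecessary, since $\supp\bigl((m\circ\daleth_{\vec t})\phi\bigr)\subseteq\supp\phi$ directly, so the compact $K$ is simply $\supp\phi$.
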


\begin{prp}
Let $\vec{s} \in \N^\ell$. Then
\[\|\cdot\|_{M_\daleth S_{\infty,\infty}^{\vec{s}}B} \lesssim \|\cdot\|_{M_\daleth S^{\vec{s}}C} \lesssim \|\cdot\|_{M_\daleth S_{\infty,1}^{\vec{s}}B}.\]
\end{prp}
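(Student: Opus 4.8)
The plan is to transpose the proof of Proposition~\ref{prp:pointwisebesovlmihlin} to the multi-parameter setting, carrying out each step componentwise and exploiting the factorized form $\phi = \phi_1 \otimes \dots \otimes \phi_\ell$ of the test functions entering the definition of the Besov Marcinkiewicz norms. Since $\vec s \in \N^\ell$, write $\vec s = \vec k$. The ingredients are: the inclusions $S_{\infty,1}^{\vec k} B(\R^{\vec n}) \subseteq S^{\vec k} C_{ub}(\R^{\vec n}) \subseteq S_{\infty,\infty}^{\vec k} B(\R^{\vec n})$ of Proposition~\ref{prp:multibesovinclusions}(ii); the Leibniz rule for the mixed derivatives $\partial_1^{\alpha_1} \cdots \partial_\ell^{\alpha_\ell}$; and the homogeneity identity
\[\partial_1^{\alpha_1} \cdots \partial_\ell^{\alpha_\ell}(f \circ \daleth_{\vec t}) = t_1^{\|\alpha_1\|} \cdots t_\ell^{\|\alpha_\ell\|}\,(\partial_1^{\alpha_1} \cdots \partial_\ell^{\alpha_\ell} f) \circ \daleth_{\vec t},\]
which follows at once, factor by factor, from the one-parameter relations $\partial_{l,j}(f \circ \delta_{l,t}) = t^{\lambda_{l,j}} (\partial_{l,j} f) \circ \delta_{l,t}$ and $|\delta_{l,t}(x_l)|_{\delta_l} = t\,|x_l|_{\delta_l}$.

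For the first inequality, recall that $\|\cdot\|_{M_\daleth S^{\vec k} C}$ is $\daleth_{\vec t}$-invariant and that $\|\cdot\|_{M_\daleth S_{\infty,\infty}^{\vec k} B}$ equals, up to equivalence (by the multi-parameter analogue of Proposition~\ref{prp:mhequivalence}), $\sup_{\vec t > 0}\|(m \circ \daleth_{\vec t})\phi\|_{S_{\infty,\infty}^{\vec k} B}$ for a fixed $\phi = \phi_1 \otimes \dots \otimes \phi_\ell$ with $\phi_l \in \D(\R^{n_l}\setminus\{0\})$ satisfying \eqref{eq:mhadm}; so it is enough to prove $\|m\phi\|_{S_{\infty,\infty}^{\vec k} B} \leq C\|m\|_{M_\daleth S^{\vec k} C}$. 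By the inclusion $S^{\vec k}C_{ub} \subseteq S_{\infty,\infty}^{\vec k} B$ this reduces to bounding $\|\partial_1^{\alpha_1} \cdots \partial_\ell^{\alpha_\ell}(m\phi)\|_\infty$ for $|\alpha_l| \leq k_l$. Expanding by the Leibniz rule, each summand is supported on the fixed compact set $\supp\phi_1 \times \dots \times \supp\phi_\ell \subseteq (\R^{n_1}\setminus\{0\}) \times \dots \times (\R^{n_\ell}\setminus\{0\})$, on which each $|x_l|_{\delta_l}$ is bounded above and bounded away from $0$; since $\prod_{l} |x_l|_{\delta_l}^{\|\beta_l\|}\,|\partial_1^{\beta_1}\cdots\partial_\ell^{\beta_\ell}m(x)| \leq \|m\|_{M_\daleth S^{\vec k} C}$ whenever all $x_l \neq 0$, boundedness of the remaining factor $\prod_l |x_l|_{\delta_l}^{-\|\beta_l\|}$ on that compact set shows each summand has sup norm $\leq C_{\vec k,\phi}\|m\|_{M_\daleth S^{\vec k} C}$, which gives the claim.

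For the second inequality, I would choose $\phi_l \in \D(\R^{n_l}\setminus\{0\})$, $\phi_l \geq 0$, with $\bigcup_{t > 0}\delta_{l,t^{-1}}(\{\phi_l = 1\}\mathring{\,}) = \R^{n_l}\setminus\{0\}$, and set $\phi = \phi_1 \otimes \dots \otimes \phi_\ell$. Given $\vec\alpha$ with $|\alpha_l| \leq k_l$ and $x = (x_1,\dots,x_\ell)$ with all $x_l \neq 0$, pick for each $l$ a $t_l > 0$ with $\delta_{l,t_l}(x_l) \in \{\phi_l = 1\}\mathring{\,}$, and put $\vec t = (t_1,\dots,t_\ell)$, $\vec{t}^{-1} = (t_1^{-1},\dots,t_\ell^{-1})$. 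Then $\phi \equiv 1$ on a neighbourhood of $\daleth_{\vec t}(x)$ while every positive-order derivative of $\phi$ vanishes there, so the Leibniz expansion of $\partial_1^{\alpha_1}\cdots\partial_\ell^{\alpha_\ell}\bigl((m \circ \daleth_{\vec{t}^{-1}})\phi\bigr)$ collapses at that point to $\partial_1^{\alpha_1}\cdots\partial_\ell^{\alpha_\ell}(m \circ \daleth_{\vec{t}^{-1}})$; combining this with the homogeneity identity and $|x_l|_{\delta_l}^{\|\alpha_l\|} = t_l^{-\|\alpha_l\|}\,|\delta_{l,t_l}(x_l)|_{\delta_l}^{\|\alpha_l\|}$, so that all powers of $t_l$ cancel, yields
\[\prod_{l=1}^\ell |x_l|_{\delta_l}^{\|\alpha_l\|}\,\bigl|\partial_1^{\alpha_1}\cdots\partial_\ell^{\alpha_\ell}m(x)\bigr| \leq C_{\vec k,\phi}\,\bigl\|\partial_1^{\alpha_1}\cdots\partial_\ell^{\alpha_\ell}\bigl((m \circ \daleth_{\vec{t}^{-1}})\phi\bigr)\bigr\|_\infty,\]
since $|\delta_{l,t_l}(x_l)|_{\delta_l}$ stays in the bounded set $\{\,|y_l|_{\delta_l} : y_l \in \supp\phi_l\,\}$. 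The inclusion $S_{\infty,1}^{\vec k} B \subseteq S^{\vec k}C_{ub}$ then bounds the right-hand side by $C\|(m \circ \daleth_{\vec{t}^{-1}})\phi\|_{S_{\infty,1}^{\vec k} B} \leq C\|m\|_{M_\daleth S_{\infty,1}^{\vec k} B}$, and taking the supremum over all admissible $x$ and $\vec\alpha$ gives $\|m\|_{M_\daleth S^{\vec k} C} \leq C\|m\|_{M_\daleth S_{\infty,1}^{\vec k} B}$. I do not expect a genuine obstacle: this is the tensorial counterpart of the one-parameter case, and the only delicate points are the bookkeeping of the multi-multi-index Leibniz rule against the factorized $\phi$ and checking — exactly as in Proposition~\ref{prp:pointwisebesovlmihlin} — that the powers $t_l^{\|\alpha_l\|}$ produced by homogeneity are matched precisely by the factors $t_l^{-\|\alpha_l\|}$ arising from the dilation of $|x_l|_{\delta_l}$.
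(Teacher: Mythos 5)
Your proof is correct and is exactly the expected argument: the paper omits the proof, remarking only that the multi-variate comparison results "are easy to prove," and your componentwise transposition of Proposition~\ref{prp:pointwisebesovlmihlin} — using the inclusions of Proposition~\ref{prp:multibesovinclusions}(ii), the Leibniz rule against the factorized cut-off $\phi_1 \otimes \dots \otimes \phi_\ell$, and the per-factor cancellation of the powers $t_l^{\|\alpha_l\|}$ — is precisely how that remark is meant to be filled in. The bookkeeping is handled correctly in both inequalities, so nothing further is needed.
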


What is more interesting is a comparison between Mihlin-H\"ormander conditions adapted to the dilations $\delta_t$ and Marcinkiewicz conditions adapted to the multi-parameter dilations $\daleth_{\vec{t}}$. For the pointwise conditions, there is the following simple result.

\begin{prp}
For $\vec{s} \in \N^\ell$, we have
\[\|m\|_{M_{\daleth} S^{\vec{s}}C} \leq \|m\|_{M_\delta C^{s_1+\dots+s_\ell}}.\]
\end{prp}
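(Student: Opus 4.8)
The statement to prove is the inequality
\[\|m\|_{M_{\daleth} S^{\vec{s}}C} \leq \|m\|_{M_\delta C^{s_1+\dots+s_\ell}}\]
for $\vec{s} \in \N^\ell$. The plan is to unwind both ``norms'' directly from their definitions and compare the relevant quantities pointwise. Recall that $\|m\|_{M_\delta C^k}$ is the supremum over multi-indices $\beta$ with $|\beta| \leq k$ and points $x \neq 0$ of $|x|_\delta^{\|\beta\|}|\partial^\beta m(x)|$, where $|\cdot|_\delta$ is the one-parameter homogeneous norm with $|(x_1,\dots,x_\ell)|_\delta = \max_l |x_l|_{\delta_l}$; on the other hand, $\|m\|_{M_\daleth S^{\vec{s}}C}$ is the supremum over multi-multi-indices $(\alpha_1,\dots,\alpha_\ell)$ with $|\alpha_l| \leq s_l$ for each $l$ and over points with all $x_l \neq 0$ of $\prod_l |x_l|_{\delta_l}^{\|\alpha_l\|} \cdot |\partial_1^{\alpha_1}\cdots\partial_\ell^{\alpha_\ell} m(x)|$.

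First I would fix such an $(\alpha_1,\dots,\alpha_\ell)$ and a point $x = (x_1,\dots,x_\ell)$ with all $x_l \neq 0$, and set $\alpha$ to be the concatenated multi-index on $\R^{\vec n}$; since the concatenation of the adapted bases is an adapted basis of $\R^{\vec n}$, the operator $\partial_1^{\alpha_1}\cdots\partial_\ell^{\alpha_\ell}$ is exactly $\partial^\alpha$ in the concatenated coordinates, $|\alpha| = \sum_l |\alpha_l| \leq \sum_l s_l$, and the homogeneous degree decomposes as $\|\alpha\| = \sum_l \|\alpha_l\|$ (each $\|\cdot\|$ taken with respect to the appropriate weights). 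Thus $x \neq 0$ (since at least one, in fact all, components are nonzero) and, by the definition of $\|m\|_{M_\delta C^{s_1+\dots+s_\ell}}$ applied with the multi-index $\alpha$ of length $|\alpha| \leq s_1+\dots+s_\ell$,
\[|x|_\delta^{\|\alpha\|}\,|\partial^\alpha m(x)| \leq \|m\|_{M_\delta C^{s_1+\dots+s_\ell}}.\]
The second step is the elementary comparison between the two weight factors: since $|x|_\delta = \max_l |x_l|_{\delta_l} \geq |x_l|_{\delta_l}$ for every $l$, and all exponents $\|\alpha_l\| \geq 0$, we have
\[\prod_{l=1}^\ell |x_l|_{\delta_l}^{\|\alpha_l\|} \leq \prod_{l=1}^\ell |x|_\delta^{\|\alpha_l\|} = |x|_\delta^{\|\alpha\|}.\]
Combining the two displays gives $\prod_l |x_l|_{\delta_l}^{\|\alpha_l\|}\,|\partial_1^{\alpha_1}\cdots\partial_\ell^{\alpha_\ell} m(x)| \leq \|m\|_{M_\delta C^{s_1+\dots+s_\ell}}$, and taking the supremum over all admissible $(\alpha_1,\dots,\alpha_\ell)$ and all $x$ with all components nonzero yields the claim. (One should also note the harmless point that $m \circ \delta_{t^{-1}}$ smoothness off the origin, which is what the left-hand ``norm'' requires on $\R^{\vec n}\setminus\{|x_1|_{\delta_1}\cdots|x_\ell|_{\delta_\ell}=0\}$, is implied by $m \in C^{s}$ off the origin of $\R^{\vec n}$, since that larger singular set contains the origin.)

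There is essentially no hard part here; the only thing to be careful about is bookkeeping of the two multi-index notations — making sure that the concatenated basis really is adapted (stated in the excerpt just above the proposition) so that $\partial_1^{\alpha_1}\cdots\partial_\ell^{\alpha_\ell} = \partial^\alpha$ and $\|\alpha\| = \sum_l\|\alpha_l\|$, and that $|\alpha| \leq \sum_l s_l$ follows from $|\alpha_l| \leq s_l$. Everything else is the inequality $\max_l a_l \geq a_l$ raised to nonnegative powers. If one wanted, the same argument can be phrased as: the pointwise Marcinkiewicz weight is dominated by the pointwise Mihlin--H\"ormander weight of the one-parameter structure on every multi-index block, while the set of admissible multi-indices for order $\vec s$ maps into the set of admissible multi-indices for order $s_1+\dots+s_\ell$ under concatenation.
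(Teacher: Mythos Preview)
Your proof is correct and follows essentially the same approach as the paper's proof: both rest on the two observations that $|\vec{\alpha}| \leq s_1+\dots+s_\ell$ whenever $|\alpha_l|\leq s_l$ for each $l$, and that $\prod_l |x_l|_{\delta_l}^{\|\alpha_l\|} \leq |x|_\delta^{\|\vec{\alpha}\|}$ since $|x|_\delta = \max_l |x_l|_{\delta_l}$. Your write-up simply spells out the bookkeeping more explicitly than the paper does.
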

\begin{proof}
If $|\alpha_l| \leq s_l$ for $l=1,\dots,\ell$, then $|\vec{\alpha}| \leq s_1 + \dots + s_\ell$, and moreover
\[|x_1|_{\delta_1}^{\|\alpha_1\|} \cdots |x_\ell|_{\delta_\ell}^{\|\alpha_\ell\|} \leq |x|_{\delta}^{\|\vec{\alpha}\|}\]
for $x \in \R^{\vec{n}}$. From this, the conclusion follows immediately.
\end{proof}

For the comparison of integral conditions, we need an extra hypothesis, involving the normalized homogeneous dimensions $\tilde Q_l$ of the factors $\R^{n_l}$.

\begin{prp}\label{prp:mihlinmarcinkiewicz}
If $2 \leq p \leq \infty$ and $s_l > \tilde Q_l/p$ for $l=1,\dots,\ell$, then
\[\|m\|_{M_\daleth S_{p,p}^{\vec{s}}B} \leq C_{p,\vec{s}} \|m\|_{M_\delta B_{p,p}^{s_1 + \dots + s_\ell}}.\]
\end{prp}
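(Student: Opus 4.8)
The plan is to deduce the statement from the one-variate comparison between Mihlin-H\"ormander and Besov conditions (essentially Proposition~\ref{prp:besovlargedilations} and the discretized equivalences of Proposition~\ref{prp:mhequivalence}), applied on each factor $\R^{n_l}$, and then to assemble the pieces using the tensor/retract structure of the dominating-mixed-smoothness spaces. Fix test functions $\phi_l \in \D(\R^{n_l} \setminus \{0\})$ satisfying \eqref{eq:mhadm}, set $\phi = \phi_1 \otimes \dots \otimes \phi_\ell$, and fix $\eta \in \D(\R^{\vec n})$ that equals $1$ on a neighborhood of $\supp\phi$ (so $\eta\phi = \phi$). By Proposition~\ref{prp:mhequivalence} we may work with the discrete norms $\|\cdot\|_{M_\daleth^{\phi,\vec a}S_{p,p}^{\vec s}B}$ and $\|\cdot\|_{M_\delta^{\eta,a}B_{p,p}^{s_1+\dots+s_\ell}}$ for suitable $a, a_l > 0$; after a preliminary common refinement of the $a_l$ we may even assume $a_1 = \dots = a_\ell = a$. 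So the task reduces to bounding, uniformly in $\vec j = (j_1,\dots,j_\ell) \in \Z^\ell$,
\[
\|(m \circ \daleth_{(2^{a j_1},\dots,2^{a j_\ell})})\,\phi\|_{S_{p,p}^{\vec s}B(\R^{\vec n})}
\]
by a constant times $\sup_{k \in \Z}\|(m \circ \delta_{2^{ak}})\,\eta\|_{B_{p,p}^{s_1+\dots+s_\ell}(\R^{\vec n})}$.

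First I would reduce to the diagonal case $j_1 = \dots = j_\ell$. Given a general $\vec j$, write $k = \max_l j_l$ and factor $\daleth_{(2^{a j_1},\dots,2^{a j_\ell})} = \daleth_{(2^{a(j_1-k)},\dots,2^{a(j_\ell-k)})} \circ \delta_{2^{ak}}$, so that $m \circ \daleth_{(2^{a j_1},\dots,2^{a j_\ell})} = (m \circ \delta_{2^{ak}}) \circ \daleth_{2^{-a \vec r}}$ with $\vec r = (k-j_1,\dots,k-j_\ell) \in \N^\ell$ and at least one component zero. Multiplying by $\phi$ and using $\eta$-localization as in the proof of Proposition~\ref{prp:besovchange}, it suffices to control, uniformly in $\vec r \in \N^\ell$, the map $g \mapsto (g \circ \daleth_{2^{-a\vec r}})\,\phi$ on $S_{p,p}^{\vec s}B(\R^{\vec n})$ with source norm $\|g\eta\|_{B_{p,p}^{s_1+\dots+s_\ell}}$. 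Here I would invoke a dominating-mixed-smoothness version of Proposition~\ref{prp:besovlargedilations}: on each factor $\R^{n_l}$, contracting by $\delta_{l,2^{-a r_l}}$ (with $r_l \geq 0$) is uniformly bounded $B_{p,q}^{s_l}(\R^{n_l}) \to B_{p,q}^{s_l}(\R^{n_l})$ after localization, precisely because $s_l > \tilde Q_l/p$; taking tensor products via Proposition~\ref{prp:besovmultibesov} (the inclusion $B_{p,p}^{s_1+\dots+s_\ell}(\R^{\vec n}) \subseteq S_{p,p}^{\vec s}B(\R^{\vec n})$) and the retract description of $S_{p,p}^{\vec s}B$ over the sequence space $l^p_{\vec s}(L^p)$, the contraction $\daleth_{2^{-a\vec r}}$ becomes a diagonal-in-$\vec m$ operator whose action is uniformly bounded on $l^p_{\vec s}$ under exactly the condition $s_l > \tilde Q_l/p$ for each $l$. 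The hypothesis $p \geq 2$ enters because the tensor-product identification $S_{2,2}^{\vec s}H \cong H^{s_1}\htimes\dots\htimes H^{s_\ell}$ together with $B_{p,p}^\sigma = H^\sigma$ for $p=2$ makes the $p=2$ endpoint transparent, and for $p > 2$ one interpolates (or uses $B_{p,p}^\sigma(\R^n) \subseteq B_{p,p}^\sigma$ embeddings) — alternatively, for general $p \in [2,\infty]$ the factor-wise difference-characterization argument of Proposition~\ref{prp:besovlargedilations} goes through verbatim with $Q_\delta/p \le \tilde Q_\delta/p$.

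Once the diagonal case is in hand, the remaining step is just the diagonal comparison: $(m \circ \delta_{2^{ak}})\,\phi = ((m\circ\delta_{2^{ak}})\,\eta)\,\phi$, and by Proposition~\ref{prp:multibesovproduct}(i) (pointwise multiplication by the fixed $\phi \in \D$, with $1/p = 1/p + 1/\infty$) this is controlled in $S_{p,p}^{\vec s}B$ by $\|(m\circ\delta_{2^{ak}})\,\eta\|_{S_{p,p}^{\vec s}B}$, which in turn, by Proposition~\ref{prp:besovmultibesov}, is controlled by $\|(m\circ\delta_{2^{ak}})\,\eta\|_{B_{p,p}^{s_1+\dots+s_\ell}}$. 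Taking the supremum over $k \in \Z$ gives $\|m\|_{M_\delta^{\eta,a}B_{p,p}^{s_1+\dots+s_\ell}}$, hence $\|m\|_{M_\delta B_{p,p}^{s_1+\dots+s_\ell}}$ up to equivalence.

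I expect the main obstacle to be the uniform-in-$\vec r$ boundedness of the anisotropic contraction $g \mapsto (g\circ\daleth_{2^{-a\vec r}})\phi$ on the dominating-mixed-smoothness scale — i.e.\ establishing the multi-parameter analogue of Proposition~\ref{prp:besovlargedilations}. The one-variate proof there uses the difference-characterization (Proposition~\ref{prp:multibesovdifferences}) and the embeddings $B_{p,q}^s \subseteq W^{k,p_k}$ together with a careful choice of the intermediate exponents $p_k = ps/k$, where the inequality $s_l > \tilde Q_l/p$ (rather than merely $> n_l/p$) is exactly what makes the exponent $\lambda_{*,l} k - Q_l/p_k$ positive so that the contraction gains rather than loses. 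In the multi-parameter setting one must run this on each factor and then recombine, which is bookkeeping-heavy but not conceptually new; the cleanest route is to phrase everything on the retract $l^p_{\vec s}(L^p(\R^{\vec n}))$ and reduce to a purely sequence-space estimate, plugging the one-variate estimate into itself across the iterated structure $l^p_{\vec s} = l^p_{s_1}(\dots(l^p_{s_\ell}))$, exactly as in the proof of Proposition~\ref{prp:multibesovproduct}.
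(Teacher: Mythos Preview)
Your overall architecture matches the paper's proof exactly: factor $\daleth_{\vec t} = \daleth_{\vec t/r} \circ \delta_r$ with $r = |\vec t|_\infty$ (your discrete version with $k = \max_l j_l$ is equivalent), establish a uniform bound for the multi-parameter contraction $g \mapsto (g\circ\daleth_{\vec t'})\phi$ on $S^{\vec s}_{p,p}B$ for $|\vec t'|_\infty \le 1$ (this is Lemma~\ref{lem:dilationtensorproduct}), use an $\eta$-localization to insert $g\eta$ in place of $g$ (Lemma~\ref{lem:mihlinmarcinkiewicz}), and finish with Proposition~\ref{prp:besovmultibesov}. So the strategy is correct.

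There is however a genuine gap in your choice of $\eta$. Requiring $\eta \equiv 1$ merely on a neighbourhood of $\supp\phi$ does \emph{not} give $(\eta\circ\daleth_{2^{-a\vec r}})\phi = \phi$ uniformly in $\vec r \in \N^\ell$ with $\min_l r_l = 0$: as the components $r_l > 0$ grow, $\daleth_{2^{-a\vec r}}(\supp\phi)$ collapses towards the coordinate hyperplanes $\{x_l = 0\}$, well outside any compact neighbourhood of $\supp\phi \subseteq \prod_l(\R^{n_l}\setminus\{0\})$. What you actually need (and what the paper does) is $\eta \in \D(\R^{\vec n}\setminus\{0\})$ with $\eta \equiv 1$ on an annulus $\{a \le |x|_\delta \le b\}$ containing all such contractions; the key observation is that the constraint $\min_l r_l = 0$ keeps at least one factor undilated, so the image stays inside this annulus. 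This is easily fixed, but as written your localization step fails.

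On the contraction estimate itself, the paper's Lemma~\ref{lem:dilationtensorproduct} is proved more cleanly than your sketch suggests: the endpoints $p=2$ and $p=\infty$ are handled directly, and complex interpolation gives the intermediate range. You have the $p=2$ endpoint right (Hilbert tensor product plus factor-wise Proposition~\ref{prp:besovlargedilations}). But the $p=\infty$ endpoint is far simpler than re-running the full machinery of Proposition~\ref{prp:besovlargedilations}: since $|\daleth_{\vec t}(y)|_\infty \le |y|_\infty$ for $|\vec t|_\infty \le 1$, one has $\omega^{\vec m}_{J,\infty}(r, f\circ\daleth_{\vec t}) \le \omega^{\vec m}_{J,\infty}(r,f)$ immediately, and the difference characterization (Proposition~\ref{prp:multibesovdifferences}) finishes in one line. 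Your proposed alternatives --- iterating the one-variate argument across the retract $l^p_{\vec s}(L^p)$, or extending Proposition~\ref{prp:besovlargedilations} ``verbatim'' to the mixed-smoothness scale --- would require substantially more work and are not how the paper proceeds.
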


The proof will be given after some preliminary lemmata. The first is a result analogous to Proposition~\ref{prp:besovlargedilations}.

\begin{lem}\label{lem:dilationtensorproduct}
Let $\phi_l \in \D(\R^{n_l} \setminus \{0\})$ for $l=1,\dots,\ell$, and let $\phi = \phi_1 \otimes \dots \otimes \phi_\ell$. Then, for $p \in [2,\infty]$ and $\vec{s} \in \R^\ell$ such that $s_l > \tilde Q_l/p$ for $l=1,\dots,\ell$,
\[\sup_{\vec{t} > 0, \, |\vec{t}|_\infty \leq 1} \|(m \circ \daleth_{\vec{t}}) \phi\|_{S_{p,p}^{\vec{s}}B} \leq C_{\phi,\vec{s},p} \|m\|_{S_{p,p}^{\vec{s}}B}.\]
\end{lem}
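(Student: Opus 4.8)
The plan is to mimic the proof of Proposition~\ref{prp:besovlargedilations}, but in the dominating-mixed-smoothness setting, using the difference characterization of Proposition~\ref{prp:multibesovdifferences} together with the fact that $\phi = \phi_1 \otimes \dots \otimes \phi_\ell$ is a tensor product, so that the multi-parameter dilation $\daleth_{\vec t}$ acts on each factor $\R^{n_l}$ separately. Since $p \geq 2$, the space $S^{\vec s}_{p,p}B(\R^{\vec n})$ embeds into $S^{\vec{s},p}W$-type spaces via Proposition~\ref{prp:multibesovinclusions}, and the condition $s_l > \tilde Q_l / p$ is exactly what makes the relevant exponents of $t_l$ nonnegative when $t_l \leq 1$, so that no blow-up occurs as $\vec t \to 0$.

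First I would reduce, by Proposition~\ref{prp:multibesovinterpolation}(i) (interpolating in $\vec s$ along a single parameter $q=p$), to the case where each $s_l$ is non-integer, say $m_l - 1 < s_l < m_l$ with $\vec m \in \N^\ell$. Then, by Proposition~\ref{prp:multibesovdifferences}, the norm $\|(m \circ \daleth_{\vec t}) \phi\|_{S^{\vec s}_{p,p}B}$ is equivalent to
\[\|(m \circ \daleth_{\vec t})\phi\|_p + \sum_{\emptyset \neq J \subseteq \{1,\dots,\ell\}} \left( \int_{\left]0,+\infty\right[^J} \left( \frac{\omega^{\vec m}_{J,p}(r,(m \circ \daleth_{\vec t})\phi)}{\prod_{l \in J} r_l^{s_l}} \right)^p \prod_{l \in J} \frac{dr_l}{r_l} \right)^{1/p}.\]
The $L^p$ term is handled directly: since $\phi$ has compact support and $p \geq 2 \geq 1$, H\"older's inequality and the embedding $S^{\vec s}_{p,p}B \subseteq L^\infty$ (which holds because $s_l > \tilde Q_l / p \geq n_l / p$ for each $l$, cf.\ Propositions~\ref{prp:multibesovembeddings} and \ref{prp:multibesovinclusions}) give $\|(m \circ \daleth_{\vec t})\phi\|_p \leq C_\phi \|m \circ \daleth_{\vec t}\|_\infty = C_\phi \|m\|_\infty \leq C_{\phi,\vec s,p}\|m\|_{S^{\vec s}_{p,p}B}$. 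For each nonempty $J$, I would use the multi-variate Leibniz rule for finite differences to write $\Delta^{\vec m}_{J,y}((m \circ \daleth_{\vec t})\phi)$ as a sum of terms of the form $(\Delta^{\vec k}_{J,y}(m \circ \daleth_{\vec t})) \cdot (\text{bounded, compactly supported factor coming from differences of }\phi)$ for multi-indices $\vec k$ with $k_l \leq m_l$; then change variables $y_l \mapsto \delta_{l,t_l}(y_l)$ in each coordinate, using $\Delta^{k_l}_{l,y_l}(f \circ \delta_{l,t_l}) = (\Delta^{k_l}_{l,\delta_{l,t_l}(y_l)} f) \circ \delta_{l,t_l}$, the scaling $\|g \circ \delta_{l,t_l}\|_{p} = t_l^{-Q_l/p}\|g\|_p$ applied coordinatewise, and $|\delta_{l,t_l}(y_l)|_\infty \leq t_l^{\lambda_{l,*}} |y_l|_\infty$ for $t_l \leq 1$, where $\lambda_{l,*}$ is the smallest weight of $\delta_{l,t}$.

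After this substitution, each summand of the $J$-integral is bounded by an expression in which, for the coordinates $l \in J$ where $k_l = m_l$, one gets exactly $\omega^{m_l}_{l,p}(t_l^{\lambda_{l,*}} r_l, \cdot)$ with a harmless prefactor $t_l^{\lambda_{l,*} s_l - Q_l/p} = t_l^{\lambda_{l,*}(s_l - \tilde Q_l/p)} \leq 1$ (using $t_l \leq 1$ and $s_l > \tilde Q_l/p$); and for the coordinates where $k_l < m_l$, one chooses, as in the proof of Proposition~\ref{prp:besovlargedilations}, intermediate exponents $p_{l,k_l} = p s_l / k_l > p$ and invokes the embeddings $S^{\vec s}_{p,p}B \subseteq$ (mixed Sobolev spaces) $\subseteq W^{k_l,p_{l,k_l}}$ in that coordinate together with Lemma~\ref{lem:differencesobolev}, the balance condition $\lambda_{l,*} k_l - Q_l / p_{l,k_l} = \tfrac{\lambda_{l,*} k_l}{s_l}(s_l - \tilde Q_l/p) > 0$ making the $t_l$-power nonnegative and contributing only a factor $r_l^{m_l}$. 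One then integrates: in the $k_l = m_l$ coordinates the integral $\int_0^\infty (\omega^{m_l}_{l,p}(t_l^{\lambda_{l,*}}r_l,f)/r_l^{s_l})^p \, dr_l/r_l$ becomes, after $r_l \mapsto t_l^{-\lambda_{l,*}}r_l$, precisely (a piece of) the $S^{\vec s}_{p,p}B$ norm of $f$, while in the $k_l < m_l$ coordinates $\int_0^1 (r_l^{m_l - s_l})^p \, dr_l/r_l < \infty$ converges since $m_l > s_l$. Collecting all $2^\ell - 1$ choices of $J$ and all the Leibniz terms gives the stated bound.

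The main obstacle I expect is purely bookkeeping: organizing the multi-variate Leibniz expansion of $\Delta^{\vec m}_{J,y}((m\circ\daleth_{\vec t})\phi)$ and the accompanying tensor-product structure of $S^{\vec s}_{p,p}B$ so that the coordinatewise estimates can genuinely be applied independently in each of the $\ell$ directions --- in particular, justifying that the mixed embedding $S^{\vec s}_{p,p}B(\R^{\vec n}) \hookrightarrow W^{k_l, p_{l,k_l}}$ \emph{in the $l$-th variable only} (with the other variables retaining full smoothness) is available, which should follow by applying the one-variable estimate inside the iterated $\ell^p$-sum defining the dominating-mixed-smoothness norm (exactly the recursive plugging-in used in the proof of Proposition~\ref{prp:multibesovproduct}). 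The restriction $p \geq 2$ enters only through these embeddings and the $L^\infty$-control, just as in the one-parameter Proposition~\ref{prp:mihlinmarcinkiewicz} it will be inherited from this lemma.
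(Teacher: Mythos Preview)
Your approach is genuinely different from the paper's, and the obstacle you flag at the end is more than bookkeeping --- it is the heart of the matter. The paper does \emph{not} attempt a direct multi-parameter version of the proof of Proposition~\ref{prp:besovlargedilations}. Instead it interpolates: since the interpolation functors are exact, it suffices to bound the operator $T_{\vec t}: m \mapsto (m \circ \daleth_{\vec t})\phi$ uniformly for $|\vec t|_\infty \leq 1$ at the two endpoints $p=2$ and $p=\infty$ (Proposition~\ref{prp:multibesovinterpolation}(iii)). For $p=2$, the identification $S^{\vec s}_{2,2}B(\R^{\vec n}) = B^{s_1}_{2,2}(\R^{n_1}) \htimes \cdots \htimes B^{s_\ell}_{2,2}(\R^{n_\ell})$ of \eqref{eq:sobolevtensorproduct} makes $T_{\vec t}$ the Hilbert tensor product of the one-parameter operators $m_l \mapsto (m_l \circ \delta_{l,t_l})\phi_l$, so its norm is the product of the individual norms, each bounded by Proposition~\ref{prp:besovlargedilations}. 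For $p=\infty$, one first absorbs $\phi$ via Proposition~\ref{prp:multibesovproduct}, and then the difference characterization of Proposition~\ref{prp:multibesovdifferences} trivializes: since $|\daleth_{\vec t}(y)|_\infty \leq |y|_\infty$ for $|\vec t|_\infty \leq 1$, one has $\omega^{\vec m}_{J,\infty}(r, f \circ \daleth_{\vec t}) \leq \omega^{\vec m}_{J,\infty}(r,f)$ directly, with no change of variables or Leibniz expansion needed.

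The difficulty with your direct route is that after Leibniz you want to assign, coordinate by coordinate, different H\"older exponents $p_{l,k_l}$ in those directions where $k_l < m_l$. But $\omega^{\vec m}_{J,p}$ is a single $L^p$ norm of a joint multi-difference, not a product of coordinatewise norms, so there is no obvious way to ``change $p$ in the $l$-th variable only'' without a tensor-product description of $S^{\vec s}_{p,p}B$ --- which is precisely what is available at $p=2$ (Hilbert tensor product) and implicit at $p=\infty$ (sup norms factor), but not for intermediate $p$. The recursive plugging-in from the proof of Proposition~\ref{prp:multibesovproduct} works because all exponents stay equal to $q$; here they would not. This is also why the paper's Remark following the lemma points to \cite{sickel_tensor_2009} for a tensor-product characterization at $p=1$ as the route to extending the range. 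Your strategy might be pushed through with substantially more work on mixed-norm embeddings, but the interpolation argument avoids the issue entirely.
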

\begin{proof}
Let $T_{\vec{t}}$ be the linear operator
\[m \mapsto (m \circ \daleth_{\vec{t}}) \phi;\]
our aim is to obtain a uniform bound on the $S^{\vec{s}}_{p,p}B \to S^{\vec{s}}_{p,p}B$ norms of the operators $T_{\vec{t}}$ for $|\vec t|_\infty \leq 1$. Since the complex interpolation functors are exact, we need to consider only the cases $p=2$ and $p=\infty$, because the intermediate cases $2 < p < \infty$ follow by interpolation (see Proposition~\ref{prp:multibesovinterpolation}(iii)).

For $p = 2$, by \eqref{eq:sobolevtensorproduct} we have
\[S^{\vec{s}}_{2,2}B(\R^{\vec{n}}) = B_{2,2}^{s_1}(\R^{n_1}) \htimes \cdots \htimes B_{2,2}^{s_\ell}(\R^{n_\ell}),\]
and moreover the map $T_{\vec{t}}$ is the tensor product of the maps
\[T_{l,t_l} : m \mapsto (m \circ \delta_{l,t_l}) \phi_l\]
for $l=1,\dots,\ell$, so that the norm of $T_{\vec{t}}$ is the product of the norms of the $T_{l,t_l}$, which are uniformly bounded for $|\vec{t}|_\infty \leq 1$ by Proposition~\ref{prp:besovlargedilations}. 

For $p=\infty$, instead, we immediately get rid of $\phi$,
\[\|(f \circ \daleth_{\vec{t}}) \phi\|_{S_{\infty,\infty}^{\vec{s}}B} \leq C_{\vec{s},\phi} \| f \circ \daleth_{\vec{t}} \|_{S_{\infty,\infty}^{\vec{s}}B}\]
by Proposition~\ref{prp:multibesovproduct}, and then we use the characterization of the $S_{\infty,\infty}^{\vec{s}}B$ norm given by Proposition~\ref{prp:multibesovdifferences}:
\[\| f \circ \daleth_{\vec{t}} \|_{S_{\infty,\infty}^{\vec{s}}B} \sim \|f \circ \daleth_{\vec{t}} \|_\infty + \sum_{\emptyset \neq J \subseteq \{1,\dots,\ell\}} \sup_{r \in \left]0,+\infty\right[^{J}} \frac{\omega_{J,\infty}^{\vec m}(r,f \circ \daleth_{\vec{t}} )}{\prod_{l \in J} r_l^{s_l}}.\]
The first summand is simply $\|f\|_\infty$. For the second one, since $|\daleth_{\vec{t}}(y)|_\infty \leq |y|_\infty$ for $|\vec{t}|_\infty \leq 1$, we easily get $\omega_{J,\infty}^{\vec{m}}(r, f \circ \daleth_{\vec{t}}) \leq \omega_{J,\infty}^{\vec{m}}(r,f)$. The conclusion then follows again by Proposition~\ref{prp:multibesovdifferences}.
\end{proof}

\begin{rem}
Through a characterization of $S_{1,1}^{\vec{s}}B(\R^{\vec{n}})$ as a tensor product (cf.\ \cite{sickel_tensor_2009}), one could extend the previous result (and consequently Proposition~\ref{prp:mihlinmarcinkiewicz}) to the whole range $1 \leq p \leq \infty$.
\end{rem}

\begin{lem}\label{lem:mihlinmarcinkiewicz}
Let $b > a > 0$. For $l=1,\dots,\ell$, let $\phi_l \in \D(\R^{n_l} \setminus \{0\})$ be such that $\supp \phi_l \subseteq \{x_l \in \R^{n_l} \tc a \leq |x_l|_\infty \leq b\}$, and set $\phi = \phi_1 \otimes \dots \otimes \phi_\ell$. Let moreover $\eta \in \D(\R^{\vec n} \setminus \{0\})$ be such that $\eta|_{\{x \tc a \leq |x|_\infty \leq b\}} \equiv 1$. If $p \in [2,\infty]$ and $s_l > \tilde Q_l/p$ for $l=1,\dots,\ell$, then
\[\sup_{\vec t > 0, \, |\vec t|_\infty = 1} \|(m \circ \daleth_{\vec t}) \phi\|_{S_{p,p}^{\vec{s}}B} \leq C_{\phi,\eta,\vec{s},p} \|m \eta\|_{S_{p,p}^{\vec{s}}B}.\]
\end{lem}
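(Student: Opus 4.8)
The plan is to reduce the asserted estimate to Lemma~\ref{lem:dilationtensorproduct} by exploiting the support condition on $\phi$. The key point is that, for every $\vec t$ with $|\vec t|_\infty = 1$, the cut-off $\eta$ equals $1$ on $\daleth_{\vec t}(\supp \phi)$, so that
\[
(m \circ \daleth_{\vec t})\,\phi = ((m\eta) \circ \daleth_{\vec t})\,\phi
\]
as Borel functions on $\R^{\vec n}$. Granting this, the left-hand side of the inequality equals $\sup_{|\vec t|_\infty = 1} \|((m\eta) \circ \daleth_{\vec t})\,\phi\|_{S_{p,p}^{\vec{s}}B}$, which is dominated by $\sup_{|\vec t|_\infty \le 1} \|((m\eta) \circ \daleth_{\vec t})\,\phi\|_{S_{p,p}^{\vec{s}}B}$ (since $|\vec t|_\infty = 1$ forces $t_l \le 1$ for all $l$), and the latter is $\le C_{\phi,\eta,\vec s,p}\,\|m\eta\|_{S_{p,p}^{\vec{s}}B}$ by Lemma~\ref{lem:dilationtensorproduct} applied with $f = m\eta$; the hypotheses $p \in [2,\infty]$ and $s_l > \tilde Q_l/p$ needed there are exactly those assumed here, and $m\eta$ is a compactly supported Borel function, so (as elsewhere) the inequality is read in the sense that finiteness of the right-hand side gives finiteness of the left.

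It remains to verify the pointwise identity. Let $x = (x_1,\dots,x_\ell) \in \supp \phi$, so that $|x_l|_\infty \in [a,b]$ for $l=1,\dots,\ell$, and let $\vec t$ satisfy $|\vec t|_\infty = 1$, i.e.\ $t_l \le 1$ for all $l$ while $t_{l_0} = 1$ for some $l_0$. Since the homogeneity exponents of the dilations $\delta_{l,t}$ are positive, $0 < t_l \le 1$ forces each coordinate of $\delta_{l,t_l}(x_l)$ to have absolute value no larger than the corresponding coordinate of $x_l$; hence $|\delta_{l,t_l}(x_l)|_\infty \le |x_l|_\infty \le b$. On the other hand $|\delta_{l_0,1}(x_{l_0})|_\infty = |x_{l_0}|_\infty \ge a$. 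Therefore
\[
a \le \max_{1 \le l \le \ell} |\delta_{l,t_l}(x_l)|_\infty = |\daleth_{\vec t}(x)|_\infty \le b,
\]
so $\eta(\daleth_{\vec t}(x)) = 1$, and consequently $(m\circ\daleth_{\vec t})(x)\,\phi(x) = m(\daleth_{\vec t}(x))\,\eta(\daleth_{\vec t}(x))\,\phi(x) = ((m\eta)\circ\daleth_{\vec t})(x)\,\phi(x)$. For $x \notin \supp \phi$ both sides vanish, so the identity holds on all of $\R^{\vec n}$.

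I do not expect any serious obstacle here: the only delicate step is the elementary geometric fact just used, namely that a partial dilation $\daleth_{\vec t}$ with parameters in $(0,1]$ does not enlarge the coordinatewise $\ell^\infty$ norm, while the component with parameter $1$ keeps it bounded below — together these place $\daleth_{\vec t}(\supp \phi)$ inside the region $\{a \le |\cdot|_\infty \le b\}$ where $\eta \equiv 1$. All the analytic content (the uniform boundedness of the dilated multiplication operators on $S_{p,p}^{\vec{s}}B$) is already packaged in Lemma~\ref{lem:dilationtensorproduct}.
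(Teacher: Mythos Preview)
Your proof is correct and follows essentially the same route as the paper: both arguments establish the pointwise identity $(\eta \circ \daleth_{\vec t})\,\phi = \phi$ (equivalently, $\daleth_{\vec t}(\supp\phi) \subseteq \{a \le |\cdot|_\infty \le b\}$) when $|\vec t|_\infty = 1$, and then invoke Lemma~\ref{lem:dilationtensorproduct} with $m\eta$ in place of $m$. Your verification of the geometric containment is slightly more explicit than the paper's, but the idea and the analytic input are identical.
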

\begin{proof}
Notice that, for $\vec t > 0$,
\[\supp (\phi \circ \daleth_{\vec t}^{-1}) \subseteq \prod_{l=1}^{\ell} \{x_l \tc t_l a \leq |x_l| \leq t_l b\}.\]
If $|\vec t|_\infty = 1$, then $t_l \leq 1$ for all $l$ and $t_l = 1$ for at least one $l$, so that
\[\supp (\phi \circ \daleth_{\vec t}^{-1}) \subseteq \{x \tc a \leq |x| \leq b\}.\]
Therefore
\[\eta (\phi \circ \daleth_{\vec t}^{-1}) = \phi \circ \daleth_{\vec t}^{-1},\]
that is,
\[(\eta \circ \daleth_{\vec t}) \phi = \phi,\]
and the conclusion follows immediately by Lemma~\ref{lem:dilationtensorproduct}.
\end{proof}

\begin{proof}[Proof of Proposition~\ref{prp:mihlinmarcinkiewicz}]
Choose $\phi_l \in \D(\R^{n_l} \setminus \{0\})$ and $\eta \in \D(\R^{\vec n} \setminus \{0\})$ satisfying \eqref{eq:mhadm} and the hypotheses of Lemma~\ref{lem:mihlinmarcinkiewicz}.

For $\vec{t} > 0$, let $r = |\vec{t}|_\infty$, so that $|r^{-1} \vec{t}|_\infty = 1$. Since
\[m \circ \daleth_{\vec{t}} = (m \circ \delta_r) \circ \daleth_{r^{-1} \vec{t}},\]
then
\[\|(m \circ \daleth_{\vec{t}}) \phi\|_{S_{p,p}^{\vec{s}}B} \leq C_{\phi,\eta,\vec{s},p} \|(m \circ \delta_r) \eta\|_{S_{p,p}^{\vec{s}}B} \leq C_{\phi,\eta,\vec{s},p} \|m\|_{M_\delta B_{p,p}^{s_1+\dots+s_n}}\]
by Lemma~\ref{lem:mihlinmarcinkiewicz} and Proposition~\ref{prp:besovmultibesov}. Since $\vec t > 0$ is arbitrary, the conclusion follows.
\end{proof}

Let $\epsilon_t$ be a system of dilations on $\R^d$. Proposition~\ref{prp:mihlinmarcinkiewicz}, combined with Proposition~\ref{prp:homogeneouschange}, gives in particular the following implication between Mihlin-H\"ormander conditions on $\R^d$ and Marcinkiewicz conditions on $\R^{\vec{n}}$.

\begin{cor}\label{cor:mihlinmarcinkiewiczsum}
Let $\Phi : \R^{\vec{n}} \to \R^d$ be a continuous proper map which is smooth off the origin and such that
\[\Phi \circ \delta_t = \epsilon_t \circ \Phi \qquad\text{for $t > 0$.}\]
If $p \in [2,\infty]$, $s_l > \tilde Q_l/p$ for $l=1,\dots,\ell$, and either $p=\infty$ or $d\Phi$ has constant rank $r \in \N$ off the origin, then
\[\|m \circ \Phi\|_{M_{\daleth} S^{\vec{s}}_{p,p}B} \leq C_{\Phi,p,\vec{s}} \|m\|_{M_\epsilon B^{s_1+\dots+s_\ell+(d-r)/p}_{p,p}}.\]
\end{cor}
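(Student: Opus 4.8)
The plan is to derive the asserted inequality by composing two multiplier-comparison results already established in the excerpt: Proposition~\ref{prp:mihlinmarcinkiewicz}, which passes from a one-parameter Mihlin-H\"ormander condition on $\R^{\vec n}$ to the multi-parameter Marcinkiewicz condition on $\R^{\vec n}$, and Proposition~\ref{prp:homogeneouschange}, which transfers a Mihlin-H\"ormander condition through a proper homogeneous change of variables. Since both ingredients are in hand, the argument reduces to applying them in the right order and checking that their hypotheses coincide with those of the statement.

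First I would apply Proposition~\ref{prp:mihlinmarcinkiewicz} to the Borel function $m \circ \Phi$ on $\R^{\vec n}$. Because $2 \leq p \leq \infty$ and $s_l > \tilde Q_l/p$ for $l = 1,\dots,\ell$ by hypothesis, this gives
\[\|m \circ \Phi\|_{M_\daleth S_{p,p}^{\vec s} B} \leq C_{p,\vec s}\, \|m \circ \Phi\|_{M_\delta B_{p,p}^{s_1 + \dots + s_\ell}},\]
where $\delta_t = \daleth_{(t,\dots,t)}$ is the one-parameter system of dilations on $\R^{\vec n}$.

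Next I would bound the right-hand side by means of Proposition~\ref{prp:homogeneouschange}, applied to the map $\Phi : \R^{\vec n} \to \R^d$ with exponents $(p,q) = (p,p)$ and $s = s_1 + \dots + s_\ell$. The hypotheses are satisfied: $\Phi$ is continuous, proper, and smooth off the origin, and it is homogeneous with respect to $\delta_t$ on $\R^{\vec n}$ and $\epsilon_t$ on $\R^d$ by assumption; moreover $s_1 + \dots + s_\ell > 0$ since each $s_l > 0$; and finally either $p = \infty$ or $d\Phi$ has constant rank $r \in \N$ off the origin. Hence
\[\|m \circ \Phi\|_{M_\delta B_{p,p}^{s_1 + \dots + s_\ell}} \leq C_{\Phi,p,\vec s}\, \|m\|_{M_\epsilon B_{p,p}^{s_1 + \dots + s_\ell + (d-r)/p}},\]
and chaining the two displayed inequalities yields precisely the claimed bound.

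There is essentially no serious obstacle here; the only point demanding care is the bookkeeping of the two homogeneous structures carried by $\R^{\vec n}$ — the one-parameter dilations $\delta_t$ and the multi-parameter dilations $\daleth_{\vec t}$ — and the verification that the properness and constant-rank conditions on $\Phi$ appearing in Proposition~\ref{prp:homogeneouschange} are exactly those imposed in the statement of the corollary.
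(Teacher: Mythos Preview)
Your proof is correct and follows exactly the route indicated by the paper: the text immediately preceding the corollary states that it is obtained by combining Proposition~\ref{prp:mihlinmarcinkiewicz} with Proposition~\ref{prp:homogeneouschange}, which is precisely what you do.
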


A similar implication, but with a different homogeneity condition on the map $\Phi$, is considered in the following

\begin{prp}\label{prp:mihlinmarcinkiewiczproduct}
Set
\[X = \{x \in \R^{\vec{n}} \tc |x_1|_{\delta_1} \cdots |x_\ell|_{\delta_\ell} = 0\},\]
and let $\Phi : \R^{\vec{n}} \to \R^d$ be a continuous map which is smooth off $X$ and such that $\Phi^{-1}(0) = X$,
\[\Phi \circ \daleth_{\vec{t}} = \epsilon_{t_1 \cdots t_\ell} \circ \Phi \qquad\text{for $\vec{t} > 0$.}\]
If $p,q \in [1,\infty]$, $\vec{s} > 0$, and either $p = \infty$ or $d\Phi$ has constant rank $r \in \N$ outside $X$, then
\[\|m \circ \Phi\|_{M_{\daleth} S^{\vec{s}}_{p,q}B} \leq C_{\Phi,p,q,\vec{s}} \|m\|_{M_\epsilon B^{s_1+\dots+s_\ell+(d-r)/p}_{p,q}}.\]
\end{prp}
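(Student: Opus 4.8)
The plan is to follow the scheme of the proof of Proposition~\ref{prp:homogeneouschange}, with the twisted multi-parameter homogeneity $\Phi\circ\daleth_{\vec t}=\epsilon_{t_1\cdots t_\ell}\circ\Phi$ in place of the one-parameter one, and with the comparison between dominating-mixed-smoothness and isotropic Besov spaces (Proposition~\ref{prp:besovmultibesov}) supplying the passage from $S^{\vec s}_{p,q}B$ to the isotropic space $B^{s_1+\dots+s_\ell}_{p,q}$. First I would fix the test function: choose $\phi_l\in\D(\R^{n_l}\setminus\{0\})$ with $\phi_l\geq 0$ satisfying \eqref{eq:mhadm} for $l=1,\dots,\ell$, and set $\phi=\phi_1\otimes\dots\otimes\phi_\ell$. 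Then $\supp\phi=\supp\phi_1\times\dots\times\supp\phi_\ell$ is a compact subset of $\R^{\vec n}\setminus X$, so $\Phi$ is smooth on a neighbourhood of $\supp\phi$; since $\Phi$ is continuous with $\Phi^{-1}(0)=X$, the set $K:=\Phi(\supp\phi)$ is a compact subset of $\R^d\setminus\{0\}$, and I would pick $\eta\in\D(\R^d\setminus\{0\})$ with $\eta\equiv 1$ on $K$. By the multi-parameter analogue of Proposition~\ref{prp:mhequivalence} (all the relevant norms being equivalent), it then suffices to bound $\sup_{\vec t>0}\|((m\circ\Phi)\circ\daleth_{\vec t})\,\phi\|_{S^{\vec s}_{p,q}B}$ by a constant multiple of $\|m\|_{M_\epsilon B^{\sigma}_{p,q}}$, where $\sigma=s_1+\dots+s_\ell+(d-r)/p$.

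The homogeneity of $\Phi$ gives, for $\vec t>0$ and $\tau:=t_1\cdots t_\ell$,
\[
(m\circ\Phi)\circ\daleth_{\vec t}=(m\circ\epsilon_\tau)\circ\Phi,\qquad\text{hence}\qquad
((m\circ\Phi)\circ\daleth_{\vec t})\,\phi=\big(((m\circ\epsilon_\tau)\,\eta)\circ\Phi\big)\,\phi,
\]
the last equality because $\eta\circ\Phi\equiv 1$ on $\supp\phi$. I would then estimate the right-hand side in two moves. Since $\vec s>0$, Proposition~\ref{prp:besovmultibesov} yields $\|f\|_{S^{\vec s}_{p,q}B}\leq C\,\|f\|_{B^{s_1+\dots+s_\ell}_{p,q}(\R^{\vec n})}$; and applying Proposition~\ref{prp:besovchange} on the open set $\Omega=\R^{\vec n}\setminus X$ (where $\Phi$ is smooth and, when $p<\infty$, $d\Phi$ has constant rank $r$), with $n_1=\dim\R^{\vec n}$, $n_2=d$, smoothness order $s_1+\dots+s_\ell>0$ and test function $\phi\in\D(\Omega)$, one gets $\|(h\circ\Phi)\,\phi\|_{B^{s_1+\dots+s_\ell}_{p,q}(\R^{\vec n})}\leq C\,\|h\|_{B^{\sigma}_{p,q}(\R^d)}$ for every $h\in B^{\sigma}_{p,q}(\R^d)$. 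Taking $h=(m\circ\epsilon_\tau)\,\eta$, which lies in $B^{\sigma}_{p,q}(\R^d)$ because the Marcinkiewicz hypothesis on $m$ forces $m\in B^{\sigma}_{p,q,\loc}(\R^d\setminus\{0\})$ (cf.\ Lemma~\ref{lem:mhequivalence}), we obtain
\[
\|((m\circ\Phi)\circ\daleth_{\vec t})\,\phi\|_{S^{\vec s}_{p,q}B}\leq C\,\|(m\circ\epsilon_\tau)\,\eta\|_{B^{\sigma}_{p,q}(\R^d)}.
\]

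It remains to bound the last quantity uniformly in $\tau>0$. Since $\eta$ is compactly supported off the origin, Lemma~\ref{lem:mhequivalence} (applied with $\psi=\eta$ and a fixed $\chi\in\D(\R^d\setminus\{0\})$ satisfying \eqref{eq:mhdiscadm} for the $\epsilon$-dilations) together with the equivalence of $M_\epsilon$-norms in Proposition~\ref{prp:mhequivalence} gives $\sup_{\tau>0}\|(m\circ\epsilon_\tau)\,\eta\|_{B^{\sigma}_{p,q}}\leq C\,\|m\|_{M_\epsilon B^{\sigma}_{p,q}}$. Combining this with the previous display and taking the supremum over $\vec t>0$ yields $\|m\circ\Phi\|_{M_\daleth S^{\vec s}_{p,q}B}\leq C\,\|m\|_{M_\epsilon B^{\sigma}_{p,q}}$, which is the claim. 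I do not expect a substantial obstacle: the hard analysis — the change-of-variables estimate and the comparison of dominating-mixed-smoothness with isotropic Besov norms — is already packaged in Propositions~\ref{prp:besovchange} and~\ref{prp:besovmultibesov}. The points needing care are bookkeeping: verifying that $\supp\phi$ sits inside $\R^{\vec n}\setminus X$, the region where $\Phi$ is smooth (here the tensor-product form of $\phi$ and the identity $\Phi^{-1}(0)=X$ are used), and tracking the hypothesis $\vec s>0$ correctly through the two applications of the cited propositions.
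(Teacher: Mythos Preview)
Your proposal is correct and follows essentially the same route as the paper: choose a tensor-product cutoff $\phi$ supported off $X$, use the homogeneity $\Phi\circ\daleth_{\vec t}=\epsilon_{t_1\cdots t_\ell}\circ\Phi$ to reduce to $((m\circ\epsilon_\tau)\eta)\circ\Phi$ times $\phi$, then apply Proposition~\ref{prp:besovmultibesov} followed by Proposition~\ref{prp:besovchange}, and take suprema. The only cosmetic difference is that the paper chooses its cutoff on $\R^d$ (your $\eta$, the paper's $\psi$) to be nonnegative and to satisfy \eqref{eq:mhadm}, so that the final supremum over $\tau$ is directly one of the equivalent $M_\epsilon$-norms, whereas you invoke Lemma~\ref{lem:mhequivalence} for that last step; both are fine.
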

\begin{proof}
For $l=1,\dots,\ell$, let $\phi_l \in \D(\R^{n_l} \setminus \{0\})$ satisfying \eqref{eq:mhadm}, and set
\[\phi = \phi_1 \otimes \dots \otimes \phi_\ell.\]
Since $\supp \phi$ is compact and disjoint from $\{x \in \R^{\vec{n}} \tc |x_1|_{\delta_1} \cdots |x_\ell|_{\delta_\ell} = 0\}$, we can find a nonnegative $\psi \in \D(\R^d \setminus \{0\})$ such that
\[\Phi(\supp \phi) \subseteq \{\psi = 1\};\]
in particular $\psi$ satisfies \eqref{eq:mhadm} too (with respect to the dilations $\epsilon_t$), and moreover
\[(\psi \circ \Phi) \phi = \phi.\]
Therefore, for $\vec{t} > 0$,
\begin{multline*}
\| ((m \circ \Phi) \circ \daleth_{\vec{t}}) \phi \|_{S^{\vec{s}}_{p,q}B} \leq C_{\vec{s},p,q} \| (((m \circ \epsilon_{t_1 \cdots t_\ell}) \psi) \circ \Phi) \phi \|_{B_{p,q}^{s_1+\dots+s_\ell}} \\
\leq \| (m \circ \epsilon_{t_1 \cdots t_\ell}) \psi \|_{B_{p,q}^{s_1+\dots+s_\ell + (d-r)/p}}
\end{multline*}
by Propositions~\ref{prp:besovmultibesov} and \ref{prp:besovchange}, and the conclusion follows by taking the suprema.
\end{proof}

For instance, if we take $n_1=\dots=n_\ell=d=1$, then the map
\[(x_1,\dots,x_\ell) \mapsto x_1^2 + \dots + x_\ell^2\]
satisfies the hypotheses of Corollary~\ref{cor:mihlinmarcinkiewiczsum}, whereas
\[(x_1,\dots,x_\ell) \mapsto x_1\cdots x_\ell\]
satisfies the hypotheses of Proposition~\ref{prp:mihlinmarcinkiewicz}. Notice that, by Proposition~\ref{prp:homogeneousextension}, the map
\[(x_1,\dots,x_\ell) \mapsto x_1 + \dots + x_\ell\]
can be considered too, if one is only interested in its restriction to $\left[0,+\infty\right[^\ell$.

\clearemptydoublepage
\chapter{Commutative algebras of differential operators}\label{chapter:plancherel}

Here we come to the heart of our work, i.e., the study of the properties of a joint functional calculus for a finite system of pairwise commuting, left-invariant differential operators on a connected Lie group. This will be the subject also of the following two chapters.

The present chapter is foundational and algebraic in character: on the one hand, we give --- following Nelson and Stinespring \cite{nelson_representation_1959} --- sufficient conditions for the existence of the functional calculus (in terms of a joint spectral resolution), and we introduce the language which is systematically used in the following; on the other hand, we study the relationships between the joint functional calculus and the algebraic structure of the group (representations, automorphisms...). It turns out that the choice of a particular family of pairwise commuting left-invariant differential operators extracts, from the general operator-valued non-commutative Fourier analysis of the Lie group, some scalar-valued commutative portion, which shares several features with the classical Euclidean Fourier theory.

Some of the results of this chapter are multi-variate analogues of results for a single operator, especially a Laplacian or a sublaplacian, which can be found in the literature; we refer mainly to the works of Hulanicki \cite{hulanicki_subalgebra_1974}, \cite{hulanicki_commutative_1975}, \cite{hulanicki_functional_1984}, Christ \cite{christ_multipliers_1991}, Ludwig and M\"uller \cite{ludwig_sub-laplacians_2000}.

Another source of inspiration for this chapter is the theory of Gelfand pairs on Lie groups, where the commutative algebra of differential operators to be considered is determined by the action of a compact group of automorphisms. In fact, at the end of the chapter, we show how Gelfand pairs (at least, those in semidirect-product form) fit into our wider framework.

Although the next chapters will focus on groups with polynomial growth, and especially homogeneous groups, here we try and not use hypotheses on the groups which are not really necessary, thus obtaining results which have quite a general character.

\section{Joint spectral resolution}
In the following, $G$ will be a connected Lie group.

\begin{lem}\label{lem:quasirockland}
Let $D,L \in \Diff(G)$ and suppose that $L$ is weighted subcoercive and formally self-adjoint. Then, for some $\bar r \in \N$, we have that, for all $r \geq \bar r$, $L^r + D$ is weighted subcoercive.
\end{lem}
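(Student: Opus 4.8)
The plan is to represent $L$ and $D$ by forms relative to a fixed reduced basis and then to feed everything into the equivalences of Theorem~\ref{thm:robinsonterelst}. Fix a reduced basis $A_1,\dots,A_d$ of $\lie{g}$, with weights $w_1,\dots,w_d$, relative to which $L$ is weighted subcoercive, and let $G_*$ be the associated homogeneous contraction. Since the monomials $A^\alpha$, $\alpha\in J(d)$, span $\Diff(G)$, choose forms $C_L,C_D$ with $d\RA_G(C_L)=L$, $d\RA_G(C_D)=D$, where $C_L$ is weighted subcoercive of some degree $m_L\ge 1$ (so $m_L/w_i\in2\N$); let $n_D$ be the degree of $C_D$. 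Because $d\RA_G$ is linear and $d\RA_G(C_L^+)=L^+=L$, replacing $C_L$ by $(C_L+C_L^+)/2$ we may assume $C_L^+=C_L$ without changing $L$; this preserves the degree and, by the equivalence (ii)$\Leftrightarrow$(iv) of Theorem~\ref{thm:robinsonterelst} applied to principal parts, preserves weighted subcoercivity. As taking adjoints of forms preserves homogeneous degree, the principal part $P_L$ of $C_L$ then satisfies $P_L^+=P_L$ (and $P_L\ne0$).

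Next I pin down the principal part of the form to be studied. The free associative algebra on $X_1,\dots,X_d$ is an integral domain, $[0,+\infty)$-graded by $\|\cdot\|$; hence for every $r$ the power $C_L^r$ has degree $r m_L$ and principal part $P_L^r\ne0$. Fix $\bar r\in\N$ with $\bar r\, m_L>n_D$. For $r\ge\bar r$ set $C:=C_L^r+C_D$: then $d\RA_G(C)=L^r+D$, the degree of $C$ equals $r m_L$ (the lower-order summand $C_D$ contributes nothing in top degree), the principal part of $C$ is $P:=P_L^r$, and $r m_L/w_i=r(m_L/w_i)\in2\N$, so $C$ satisfies the hypotheses of Theorem~\ref{thm:robinsonterelst}.

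By the equivalences (i)$\Leftrightarrow$(ii)$\Leftrightarrow$(iv) of that theorem, $C$ is weighted subcoercive on $G$ if and only if $d\RA_{G_*}(P+P^+)$ is a positive Rockland operator on $G_*$. Put $R:=d\RA_{G_*}(P_L)$; since $d\RA_{G_*}$ is an algebra homomorphism and $(P_L^r)^+=(P_L^+)^r=P_L^r$, this operator is $2R^r$. Moreover $P_L$ is a homogeneous weighted subcoercive form on $G$ (as $C_L$ is), so Theorem~\ref{thm:robinsonterelst}(iv) applied to $P_L$ gives that $d\RA_{G_*}(P_L+P_L^+)=2R$ is a positive Rockland operator on $G_*$; hence so is $R$, homogeneous of degree $m_L$, with $R=R^+$ and $\overline{d\pi(R)}\ge0$ self-adjoint for every unitary representation $\pi$ of $G_*$. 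It thus suffices to show that $R^r$ is a positive Rockland operator on $G_*$: homogeneity of degree $r m_L$ is clear from $\delta_t(R^r)=t^{r m_L}R^r$; for a non-trivial unitary $\pi$, the operator $d\pi(R^r)=d\pi(R)^r$ is injective on $\HH^\infty$ as a composition of injective maps; and $\overline{d\pi(R^r)}=\overline{d\pi(R)}^r\ge0$, because by Theorem~\ref{thm:robinsonterelst}(b) the common domain $\HH^\infty=\bigcap_{n\ge1}D(\overline{d\pi(R)}^n)$ is a core for the non-negative self-adjoint operator $\overline{d\pi(R)}^r$ obtained from $\overline{d\pi(R)}$ by the Borel functional calculus, on which it agrees with $d\pi(R)^r$. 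Therefore $2R^r$ is a positive Rockland operator, so $P=P_L^r$ and hence $C$ are weighted subcoercive on $G$; that is, $L^r+D=d\RA_G(C)$ is weighted subcoercive for every $r\ge\bar r$.

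Structurally this is a direct application of the ter Elst--Robinson equivalences; the two steps that need real care are the bookkeeping ensuring that the principal part of $C_L^r+C_D$ is exactly $P_L^r$ once $r m_L>n_D$ (in the non-commutative free algebra, with possibly non-integer weights), and the stability of the class of positive Rockland operators under taking powers --- for the latter the crucial input is the domain/core identification $\overline{d\pi(R^r)}=\overline{d\pi(R)}^r$, furnished by Theorem~\ref{thm:robinsonterelst}(b).
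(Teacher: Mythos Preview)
Your proof is correct and follows essentially the same route as the paper's: fix a reduced basis, represent $L$ and $D$ by forms $C_L$ (made self-adjoint) and $C_D$, observe that for $r$ large the principal part of $C_L^r+C_D$ is $P_L^r$, and then invoke the equivalence (i)$\Leftrightarrow$(iv) of Theorem~\ref{thm:robinsonterelst}. The paper's version is terser---it simply asserts that ``$P^r$ is Rockland too'' and applies the theorem---whereas you spell out both the integral-domain argument guaranteeing $P_L^r\ne0$ and, more notably, the positivity of $R^r$ (which condition~(iv) does in fact require but the paper leaves implicit); your use of Theorem~\ref{thm:robinsonterelst}(b) to identify $\overline{d\pi(R^r)}$ with the functional-calculus power $\overline{d\pi(R)}^r$ is the right way to close that small gap.
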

\begin{proof}
Fix a reduced weighted algebraic basis of $\lie{g}$ with respect to which the operator $L$ is weighted subcoercive. Then there exists a weighted subcoercive form $C$ such that $d\RA_G(C) = L$, and also a form $B$ such that $d\RA_G(B) = D$. In fact, since $L^+ = L$, we can suppose that $C^+ = C$.

Let then $P$ be the principal part of $C$, so that, by Theorem~\ref{thm:robinsonterelst}, $d\RA_{G_*}(P)$ is Rockland. By definition, this implies that, for every $r \in \N \setminus \{0\}$, $P^r$ is Rockland too. Notice now that, if $r$ is sufficiently large so that $P^r$ has degree greater than that of $B$, then the principal part of $C^r + B$ is $P^r$ and this implies, by Theorem~\ref{thm:robinsonterelst} again, that $L^r + D = d\RA_G(C^r + B)$ is weighted subcoercive.
\end{proof}

\begin{prp}\label{prp:commutativealgebra}
Let $\Alg$ be a commutative unital subalgebra of $\Diff(G)$ closed by formal adjunction and containing a weighted subcoercive operator. Then, for every unitary representation $\pi$ of $G$, we have
\begin{equation}\label{eq:determinazioneaggiunto}
\overline{d\pi(D)} = d\pi(D^+)^* \qquad\text{for all $D \in \Alg$;}
\end{equation}
moreover, the operators $\overline{d\pi(D)}$ for $D \in \Alg$ are normal and commute strongly pairwise.
\end{prp}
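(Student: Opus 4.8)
The plan is to fix a unitary representation $\pi$ of $G$ on $\HH$ and to combine the semigroup and kernel estimates of Theorem~\ref{thm:robinsonterelst} with Lemma~\ref{lem:quasirockland} and Nelson's theory of analytic vectors. First I would produce a convenient operator in $\Alg$: starting from the given weighted subcoercive $L_0\in\Alg$, pass to a self-adjoint form representing $\tfrac12(L_0+L_0^+)$; by Theorem~\ref{thm:robinsonterelst}(ii),(iv) this operator is again weighted subcoercive, by Theorem~\ref{thm:robinsonterelst}(c) its closure is self-adjoint, and by Theorem~\ref{thm:robinsonterelst}(v) it is bounded below by a constant independent of $\pi$. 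Adding a suitable real constant (which stays in $\Alg$ and remains weighted subcoercive, the principal part being unchanged) I obtain $L\in\Alg$, formally self-adjoint, with $\Delta:=\overline{d\pi(L)}\ge\id$ self-adjoint; moreover Theorem~\ref{thm:robinsonterelst}(b) gives $\HH^\infty=\bigcap_n D(\Delta^n)=C^\infty(\Delta)$.

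Next I would prove the adjoint identity \eqref{eq:determinazioneaggiunto}. The inclusion $\overline{d\pi(D)}\subseteq d\pi(D^+)^*$ is automatic from \eqref{eq:adjointrepresentation} (which also makes $d\pi(D)$ closable). For the reverse, take $v\in D(d\pi(D^+)^*)$ and approximate it by $v_\varepsilon:=e^{-\varepsilon\Delta}v\in\HH^\infty$. Since $L$ commutes with both $D$ and $D^+$ in $\Alg$, the operators $d\pi(D),d\pi(D^+)$ commute with $\Delta$ on $\HH^\infty$, hence with $e^{-\varepsilon\Delta}$ on $\HH^\infty$ (a short differentiation argument in $\varepsilon$, or the spectral theorem); testing against $u\in\HH^\infty$ and moving $e^{-\varepsilon\Delta}$ and $D^+$ across the pairing yields
\[
d\pi(D)\,e^{-\varepsilon\Delta}v=e^{-\varepsilon\Delta}\,d\pi(D^+)^*v .
\]
As $\varepsilon\to0$ we get $v_\varepsilon\to v$ and $d\pi(D)v_\varepsilon\to d\pi(D^+)^*v$, so closedness of $\overline{d\pi(D)}$ gives $v\in D(\overline{d\pi(D)})$ with $\overline{d\pi(D)}v=d\pi(D^+)^*v$; applying the same to $D^+$ also gives $\overline{d\pi(D)}^*=\overline{d\pi(D^+)}$.

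Then comes the heart of the matter: normality and strong commutativity. Here Lemma~\ref{lem:quasirockland} is used to produce analytic vectors. For any formally self-adjoint $E\in\Alg$, both $L^r+E$ and $L^r-E$ are weighted subcoercive for $r$ large, and Theorem~\ref{thm:robinsonterelst}(v) applied to both yields a form bound $\pm\,d\pi(E)\le C\,\Delta^r$ on $\HH^\infty$ (using $\Delta\ge\id$). Taking $E=D^+D$ gives $\|d\pi(D)v\|\le C_D\|\Delta^{m_D}v\|$ on $\HH^\infty$, and because $d\pi(D)$ commutes with $\Delta$ there, iterating yields $\|d\pi(D)^kv\|\le C_D^{\,k}\|\Delta^{m_Dk}v\|$. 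Hence every vector of the dense, $\Delta$-invariant and $\Alg$-invariant domain $\mathcal D_0:=\bigcup_nE^L([0,n])\HH\subseteq\HH^\infty$ (with $E^L$ the spectral measure of $\Delta$) is an entire vector for every $d\pi(D)$, $D\in\Alg$. For formally self-adjoint $D_1,D_2\in\Alg$, the symmetric operators $d\pi(D_1),d\pi(D_2)$ thus commute on a common dense invariant domain of analytic vectors, so Nelson's theorem (\cite{nelson_representation_1959}) gives that $\overline{d\pi(D_1)},\overline{d\pi(D_2)}$ are self-adjoint and commute strongly. For general $D\in\Alg$, write $D=D_1+iD_2$ with $D_1,D_2\in\Alg$ formally self-adjoint; strong commutativity makes $T:=\overline{d\pi(D_1)}+i\overline{d\pi(D_2)}$ a closed normal operator, and a pairing argument as in the previous step shows $T\subseteq d\pi(D^+)^*=\overline{d\pi(D)}\subseteq T$, so $\overline{d\pi(D)}=T$ is normal. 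Finally, for $D,D'\in\Alg$ the four self-adjoint operators coming from the real and imaginary parts of $D,D'$ are pairwise strongly commuting, so $\overline{d\pi(D)}$ and $\overline{d\pi(D')}$ are functions of their joint spectral resolution and strongly commute.

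The step I expect to be the main obstacle is exactly the passage from ``pairwise commuting on a common core'' to ``strongly commuting'': this is false in general, and bridging it requires the common core $\mathcal D_0$ of analytic vectors. Producing $\mathcal D_0$ hinges on upgrading the G\aa rding-type form inequalities of Theorem~\ref{thm:robinsonterelst}(v) (made available for $L^r\pm E$ by Lemma~\ref{lem:quasirockland}) to the pointwise bounds $\|d\pi(D)^kv\|\le C_D^{\,k}\|\Delta^{m_Dk}v\|$, and this domination argument — together with the verification that $d\pi(D)$ genuinely commutes with $\Delta$ (hence with its bounded functional calculus) on $\HH^\infty$ — is the portion demanding the most care; everything else is either automatic (the ``easy'' adjoint inclusion) or a standard consequence of the spectral theorem.
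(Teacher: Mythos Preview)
Your argument is correct in substance, but it follows a different route from the paper's. The paper avoids analytic vectors entirely and instead leans on two specific results of Nelson--Stinespring \cite{nelson_representation_1959}: for \eqref{eq:determinazioneaggiunto}, their Lemma~2.3 reduces the identity to showing that $d\pi(D^+D)$ is essentially self-adjoint, which follows from Lemma~\ref{lem:quasirockland} (writing $D^+D$ as the difference of two weighted subcoercive operators $L^{2r}+D^+D$ and $L^{2r}$, each with essentially self-adjoint image by Theorem~\ref{thm:robinsonterelst}(c)) together with the argument of their Corollary~2.4. Strong commutativity is then obtained not via analytic vectors but by applying the same Corollary~2.4 to the set $\mathcal Q=\{D^2:D=D^+\in\Alg\}$: for $A,B\in\mathcal Q$ the operators $d\pi(A),d\pi(B),d\pi((1+A)(1+B))$ are essentially self-adjoint and $d\pi(A+B+AB)\ge 0$, which forces $\overline{d\pi(A)},\overline{d\pi(B)}$ to commute strongly. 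A general $\overline{d\pi(D)}$ is then realized as a joint spectral function of four elements of $\mathcal Q$ via the identity $D_j=(D_j+\tfrac12)^2-D_j^2-\tfrac14$.

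Your analytic-vector approach is more self-contained and makes the mechanism transparent, but two of the steps you flag as routine deserve care. First, the commutation of $d\pi(D)$ with $e^{-\varepsilon\Delta}$ on $\HH^\infty$ must come from the differentiation argument (uniqueness for the abstract Cauchy problem $g'=-\Delta g$, $g(0)=0$), not from the spectral theorem, which would be circular at that stage. Second, $\Alg$-invariance of $\mathcal D_0=\bigcup_n E^L([0,n])\HH$ is not immediate from commutativity on $\HH^\infty$: you need the growth bound $\|\Delta^k d\pi(D)v\|=\|d\pi(D)\Delta^k v\|\le C_D\,n^{m_D+k}\|v\|$ and then the spectral characterization $E^L([0,n])\HH=\{w:\limsup_k\|\Delta^k w\|^{1/k}\le n\}$ to conclude $d\pi(D)v\in E^L([0,n])\HH$. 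With these two points made explicit, your proof goes through; the paper's route simply trades these verifications for direct citations.
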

\begin{proof}
Let $L \in \Alg$ be weighted subcoercive. Since $\Alg$ is closed by formal adjunction, by replacing $L$ with $(L + L^+)/2$, we can suppose that $L$ is formally self-adjoint (see Theorem~\ref{thm:robinsonterelst}).

Let $D \in \Alg$. By \eqref{eq:adjointrepresentation} and Lemma~2.3 of \cite{nelson_representation_1959}, in order to prove \eqref{eq:determinazioneaggiunto} it is sufficient to show that $d\pi(D^+ D)$ is essentially self-adjoint. However, by Lemma~\ref{lem:quasirockland}, it is possible to find $r \in \N$ sufficiently large so that both $A = L^{2r}$ and $C = L^{2r} + D^+ D$ are weighted subcoercive, which implies by Theorem~\ref{thm:robinsonterelst}(c) that $d\pi(A)$ and $d\pi(C)$ are essentially self-adjoint. The conclusion that $d\pi(D^+ D) = d\pi(C) - d\pi(A)$ is essentially self-adjoint then follows as in the proof of Corollary~2.4 of \cite{nelson_representation_1959}.

From \eqref{eq:determinazioneaggiunto} it follows that, for every formally self-adjoint $D \in \Alg$, $d\pi(D)$ is essentially self-adjoint. Let now
\[\mathcal{Q} = \{D^2 \tc D = D^+ \in \Alg\}.\]
For all $A,B \in \mathcal{Q}$, we have that $A, B, (1+A)(1+B)$ are formally self-adjoint elements of $\Alg$, so that $d\pi(A), d\pi(B), d\pi((1+A)(1+B))$ are essentially self-adjoint, and moreover $d\pi(A + B + AB)$ is positive (notice that $AB \in \mathcal{Q}$); this implies, as in the proof of Corollary~2.4 of \cite{nelson_representation_1959}, that $\overline{d\pi(A)}$ and $\overline{d\pi(B)}$ commute strongly.

In order to conclude, it will be sufficient to show that every operator of the form $\overline{d\pi(D)}$ for some $D \in \Alg$ is the joint function of some of the operators $\overline{d\pi(A)}$ for $A \in \mathcal{Q}$. In fact, let $D = D_1 + i D_2$, where
\[D_1 = (D+D^+)/2,\qquad D_2 = (D-D^+)/{2i}\]
are both formally self-adjoint elements of $\Alg$. Then
\[D_1^2, (D_1+1/2)^2, D_2^2, (D_2+1/2)^2\]
are all elements of $\mathcal{Q}$, and we can consider the joint spectral resolution $E$ on $\R^4$ of the corresponding operators in the representation $\pi$ (see \S\ref{section:spectraltheory}). We then have, for $j=1,2$,
\[d\pi(D_j) = d\pi((D_j+1/2)^2 - D_j^2 - 1/4) \subseteq \int_{\R^4} f_j \,dE,
\]
where $f_j(\lambda_{1,1},\lambda_{1,2},\lambda_{2,1},\lambda_{2,2}) = \lambda_{j,2}- \lambda_{j,1} - 1/4$, so that also
\[d\pi(D) \subseteq \int_{\R^4} (f_1 + if_2) \,dE, \qquad d\pi(D^+) \subseteq \int_{\R^4} (f_1 - if_2) \,dE;\]
by passing to the adjoints in the second inclusion and using \eqref{eq:determinazioneaggiunto}, we then get
\[\overline{d\pi(D)} = \int_{\R^4} (f_1 + if_2) \,dE,\]
and we are done.
\end{proof}

A system $L_1,\dots,L_n \in \Diff(G)$ will be called a \emph{weighted subcoercive system}\index{system of differential operators!weighted subcoercive} if $L_1,\dots,L_n$ are formally self-adjoint and pairwise commuting, and if moreover the unital subalgebra of $\Diff(G)$ generated by $L_1,\dots,L_n$ contains a weighted subcoercive operator (with respect to some reduced basis of $\lie{g}$). From the previous proposition and the spectral theorem (see \S\ref{section:spectraltheory}) we then have immediately

\begin{cor}\label{cor:commutativealgebra}
Let $L_1,\dots,L_n \in \Diff(G)$ be a weighted subcoercive system. For every unitary representation $\pi$ of $G$, the operators $\overline{d\pi(L_1)},\dots,\overline{d\pi(L_n)}$ admit a joint spectral resolution $E$ on $\R^n$ and, for every polynomial $p \in \C[X_1,\dots,X_n]$,
\begin{equation}\label{eq:polyweighted}
\overline{d\pi(p(L_1,\dots,L_n))} = \int_{\R^n} p \,dE.
\end{equation}
\end{cor}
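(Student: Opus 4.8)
The plan is to deduce the statement directly from Proposition~\ref{prp:commutativealgebra} together with the spectral theorem for strongly commuting self-adjoint operators recalled in \S\ref{section:spectraltheory}.

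First I would let $\Alg$ be the unital subalgebra of $\Diff(G)$ generated by $L_1,\dots,L_n$, and verify that it satisfies the hypotheses of Proposition~\ref{prp:commutativealgebra}. It is commutative since the $L_j$ commute pairwise; it contains a weighted subcoercive operator by the very definition of a weighted subcoercive system; and it is closed under formal adjunction, because $D \mapsto D^+$ is a conjugate-linear anti-automorphism of $\Diff(G)$, so on the commutative algebra $\Alg$ — which is generated by the formally self-adjoint elements $L_1,\dots,L_n$ — it merely conjugates the scalar coefficients of a polynomial expression in the $L_j$. Hence Proposition~\ref{prp:commutativealgebra} applies: for the given unitary representation $\pi$, the operators $\overline{d\pi(D)}$ with $D \in \Alg$ are normal and commute strongly pairwise, and moreover \eqref{eq:determinazioneaggiunto} holds on $\Alg$.

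Specializing to $D = L_1,\dots,L_n$ and using $L_j^+ = L_j$, \eqref{eq:determinazioneaggiunto} gives $\overline{d\pi(L_j)} = d\pi(L_j)^*$, so each $\overline{d\pi(L_j)}$ is self-adjoint, and we have just seen that they commute strongly pairwise. By the spectral theorem for finite families of strongly commuting self-adjoint operators (see \S\ref{section:spectraltheory}) there is then a joint spectral resolution $E$ on $\R^n$ with $\overline{d\pi(L_j)} = \int_{\R^n} \lambda_j \,dE(\lambda)$ for $j = 1,\dots,n$, which is the first assertion. For the second, fix $p \in \C[X_1,\dots,X_n]$ and set $D = p(L_1,\dots,L_n) \in \Alg$. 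On the common invariant domain $\HH^\infty$ of smooth vectors, $d\pi(D)$ is the corresponding polynomial in the operators $d\pi(L_j)$; combining the inclusions $d\pi(L_j) \subseteq \int_{\R^n} \lambda_j \,dE$ with the standard calculus for composing and adding spectral integrals of a joint resolution, one obtains $d\pi(D) \subseteq \int_{\R^n} p \,dE$, and hence $\overline{d\pi(D)} \subseteq \int_{\R^n} p \,dE$ since spectral integrals are closed. For the reverse inclusion, note $D^+ = \bar p(L_1,\dots,L_n)$ with $\bar p$ the polynomial having the conjugate coefficients of $p$, so the same argument yields $d\pi(D^+) \subseteq \int_{\R^n} \bar p \,dE$; passing to adjoints and using \eqref{eq:determinazioneaggiunto} together with $\bigl(\int_{\R^n} \bar p \,dE\bigr)^* = \int_{\R^n} p \,dE$ gives $\overline{d\pi(D)} = d\pi(D^+)^* \supseteq \int_{\R^n} p \,dE$. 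The two inclusions together establish \eqref{eq:polyweighted}.

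Since the substantive work is carried out in Proposition~\ref{prp:commutativealgebra}, I do not anticipate a real obstacle: the only delicate point is the domain bookkeeping in the last step, namely checking that $\int_{\R^n} p \,dE$ coincides with the closure of the polynomial expression in the $d\pi(L_j)$ on $\HH^\infty$ and that $\int_{\R^n}\bar p\,dE$ has adjoint $\int_{\R^n} p\,dE$ — both of which are instances of the elementary properties of spectral integration for a joint resolution collected in the appendix.
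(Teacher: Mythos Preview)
Your proposal is correct and follows the same route as the paper: verify that the algebra $\Alg$ generated by $L_1,\dots,L_n$ meets the hypotheses of Proposition~\ref{prp:commutativealgebra}, invoke that proposition to obtain strong commutativity and \eqref{eq:determinazioneaggiunto}, and then appeal to the spectral theorem in \S\ref{section:spectraltheory}. The inclusion-then-adjoint argument you give for \eqref{eq:polyweighted} is exactly the device used at the end of the proof of Proposition~\ref{prp:commutativealgebra}, so there is nothing new here beyond spelling out details the paper leaves implicit.
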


The sign of closure for operators of the form \eqref{eq:polyweighted} will be generally omitted in the following.

\section{Kernel transform and Plancherel measure}\label{section:plancherel}

Let $L_1,\dots,L_n$ be a weighted subcoercive system on $G$. By applying Corollary~\ref{cor:commutativealgebra} to the (right) regular representation on $L^2(G)$, we obtain a joint spectral resolution $E$ of $L_1,\dots,L_n$. In particular, for every $f \in L^\infty(\R^n,E)$, we can consider the operator
\[f(L) = f(L_1,\dots,L_n) = \int_{\R^n} f\, dE,\]
which is a bounded left-invariant linear operator on $L^2(G)$, so that by Theorem~\ref{thm:schwartzkerneld} it admits a kernel $\breve f \in \Cv^2(G)$:
\[f(L)u = u * \breve f \qquad\text{for all $u \in \D(G)$.}\]
In place of $\breve f$, we use also the notation $\Kern_L f$. The correspondence
\[\Kern_L : f \mapsto \Kern_L f\]
will be called the \emph{kernel transform}\index{transform!kernel} associated to the weighted subcoercive system $L_1,\dots,L_n$.

\begin{lem}\label{lem:composition}
\begin{itemize}
\item[(a)] $\Kern_L$ is an isometric embedding of $L^\infty(\R^n,E)$ into $\Cv^2(G)$; in particular, for all $f \in L^\infty(\R^n,E)$,
\[\|\breve f\|_{\Cv^2} = \|f\|_{L^\infty(\R^n,E)}, \qquad \breve {\overline{f}} = (\breve f)^*.\]
\item[(b)] If $f, g \in L^\infty(\R^n,E)$ and $\breve g \in L^2(G)$, then
\[(fg)\breve{} = f(L) \breve g;\]
in particular, if $\breve g \in \D(G)$, then
\[(fg)\breve{} = \breve g * \breve f.\]
\item[(c)] If $f, g \in L^\infty(\R^n,E)$ and $g(\lambda) = \lambda_j f(\lambda)$ for some $j \in \{1,\dots,n\}$, then
\[\breve g = L_j \breve f\]
in the sense of distributions.
\end{itemize}
\end{lem}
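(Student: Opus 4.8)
The three assertions concern the algebraic compatibility of the kernel transform $\Kern_L$ with, respectively, the $C^*$-structure of $\Cv^2(G)$, convolution, and the action of the operators $L_j$. The guiding principle throughout will be to reduce each identity to a statement about spectral integrals and then pass to kernels via the uniqueness part of the Schwartz kernel theorem (Theorem~\ref{thm:schwartzkerneld}), using density of $\D(G)$ in $L^2(G)$ and the boundedness of the operators involved.

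For (a), the isometry $\|\breve f\|_{\Cv^2} = \|f\|_{L^\infty(\R^n,E)}$ is immediate: $\|\breve f\|_{\Cv^2}$ is by definition the operator norm of $u \mapsto u * \breve f = f(L) u$ on $L^2(G)$, and by the spectral theorem (see \S\ref{section:spectraltheory}) this equals the $E$-essential supremum of $f$. Injectivity then follows, so $\Kern_L$ is an isometric embedding. For the identity $\breve{\overline f} = (\breve f)^*$, I would use that passing to the Hilbert-space adjoint corresponds to complex conjugation in the functional calculus, i.e.\ $\overline f(L) = f(L)^*$; since the kernel of the adjoint of a left-invariant operator with kernel $k$ is $k^*$ (this is the duality relation $\langle f * g, h\rangle = \langle f, h * g^*\rangle$ recorded in the convolution subsection, which shows that the adjoint of $u \mapsto u*k$ is $u \mapsto u * k^*$), the two kernels coincide by uniqueness.

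For (b), the key observation is that $(fg)(L) = f(L) g(L) = g(L) f(L)$ by the multiplicativity of the spectral functional calculus. Evaluating both sides on $u \in \D(G)$: the left side gives $u * (fg)\breve{}$, while $f(L) g(L) u = f(L)(u * \breve g)$. If $\breve g \in L^2(G)$, then $u * \breve g \in L^2(G)$ (since $u \in \D(G) \subseteq L^1(G)$, by Young's inequality, being careful with the modular function, or more simply because $u * \breve g = \RA(\breve g) u$ and $\breve g \in \Cv^2$), and applying $f(L)$ to it is exactly $f(L)\breve g$ precomposed with left translation by the relevant group elements — more precisely, $f(L)(u * \breve g) = u * (f(L)\breve g)$ because $f(L)$ is left-invariant and convolution on the left commutes with left-invariant operators (the identity $D(f*g) = f*Dg$ extended to $D = f(L)$, which holds since $\LA_x(u * \breve g) = (\LA_x u) * \breve g$ and $f(L)$ commutes with $\LA_x$). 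Hence $u * (fg)\breve{} = u * (f(L)\breve g)$ for all $u \in \D(G)$, and uniqueness of the kernel gives $(fg)\breve{} = f(L)\breve g$. When moreover $\breve g \in \D(G)$, then $f(L)\breve g = \breve g * \breve f$ by the very definition of $\breve f$ as the kernel of $f(L)$, yielding $(fg)\breve{} = \breve g * \breve f$.

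Part (c) is the special case of (b) with $g(\lambda) = \lambda_j f(\lambda)$, combined with Corollary~\ref{cor:commutativealgebra}: the coordinate function $\lambda_j$ corresponds under the spectral calculus to the operator $L_j$, so $g(L) = L_j f(L)$ as operators (on the appropriate domain). Evaluating on $u \in \D(G)$ gives $u * \breve g = L_j(u * \breve f) = u * (L_j \breve f)$, where the last equality again uses that $L_j \in \Diff(G)$ is left-invariant and $D(u * \breve f) = u * D\breve f$. By uniqueness of the kernel, $\breve g = L_j \breve f$ in the sense of distributions. \textbf{The main subtlety} I anticipate is the careful justification, in part (b), that applying the bounded operator $f(L)$ commutes with left-convolution in the distributional/$L^2$ sense when $\breve g$ is merely in $L^2(G)$ rather than smooth; this requires either invoking the general principle that a bounded left-invariant operator on $L^2(G)$ commutes with $u * (\cdot)$ for $u \in \D(G)$ (which follows from $\LA_x$-equivariance and strong continuity), or approximating $\breve g$ suitably. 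Everything else is a routine translation between the spectral and the convolution-kernel pictures.
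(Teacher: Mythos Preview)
Your proposal is correct and follows essentially the same approach as the paper's proof: for each part you reduce the identity to an equality of operators via the spectral calculus, test against $u \in \D(G)$, and invoke uniqueness of the convolution kernel. The paper's argument is terser (it simply writes the chain $u * (fg)\breve{} = (fg)(L) u = f(L) g(L) u = f(L) (u * \breve g) = u * (f(L) \breve g)$ and justifies the last step by ``$f(L)$ is left-invariant and $\breve g \in L^2(G)$''), but the content is the same, and the ``subtlety'' you flag about commuting $f(L)$ with $u * (\cdot)$ is precisely what that one-line justification is addressing.
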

\begin{proof}
(a) The conclusion follows immediately from the properties of the spectral integral (see \S\ref{section:spectralintegral}).

(b) Let $u \in \D(G)$; then
\[u * (fg)\breve{} = (fg)(L) u = f(L) g(L) u = f(L) (u * \breve g) = u * (f(L) \breve g)\]
since $f(L)$ is left-invariant and $\breve g \in L^2(G)$.

(c) Let $u \in \D(G)$; then
\[u * \breve g = g(L) u = L_j f(L) u = L_j (u * \breve f) = u * (L_j \breve f)\]
by the properties of convolution.
\end{proof}

The fact that the algebra generated by $L_1,\dots,L_n$ contains a weighted subcoercive operator implies that the kernel transform $\Kern_L$ satisfies more refined properties, which resemble those of an (inverse) Fourier transform.

In fact, by definition and formal self-adjointness, we can find a polynomial $p_*$ with real coefficients such that $p_*(L)$ is a weighted subcoercive operator. By replacing $p_*$ with $p_*^{2r}$ for some large $r \in \N$, we may suppose that $p_* \geq 0$ on $\R^n$ and that moreover, if we set
\[p_0(\lambda) = p_*(\lambda) + \sum_{j=1}^n \lambda_j^2 + 1,\]
\[p_k(\lambda) = p_0(\lambda) + \lambda_k \qquad\text{for $k=1,\dots,n$,}\]
then $p_0(L),p_1(L),\dots,p_n(L)$ are all weighted subcoercive (see Lemma~\ref{lem:quasirockland}). Notice that the polynomials $p_0,p_1,\dots,p_n$ are all strictly positive on $\R^n$ and
\[\lim_{\lambda \to \infty} p_k(\lambda) = +\infty \qquad\text{for $k=0,\dots,n$.}\]

\begin{lem}\label{lem:Jdensity}
The subalgebra of $C_0(\R^n)$ generated by the functions
\[e^{-p_0}, e^{-p_1}, \dots, e^{-p_n}.\]
is a dense $*$-subalgebra of $C_0(\R^n)$.
\end{lem}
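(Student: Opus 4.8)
The plan is to apply the Stone--Weierstrass theorem for the locally compact Hausdorff space $\R^n$, in the version for subalgebras of $C_0(\R^n)$. Let $\mathcal{A}$ denote the (not necessarily unital) subalgebra of $C_0(\R^n)$ generated by $e^{-p_0}, e^{-p_1}, \dots, e^{-p_n}$. First I would check that $\mathcal{A}$ is indeed contained in $C_0(\R^n)$: each $p_k$ is a real-valued polynomial with $p_k(\lambda) \to +\infty$ as $\lambda \to \infty$, so $e^{-p_k} \in C_0(\R^n)$, and $C_0(\R^n)$ is an algebra, hence all products and linear combinations stay in $C_0(\R^n)$. Next, $\mathcal{A}$ is a $*$-subalgebra: since the $p_k$ are real, each generator $e^{-p_k}$ is real-valued, so $\mathcal{A}$ is closed under complex conjugation.

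The remaining hypotheses of Stone--Weierstrass are that $\mathcal{A}$ separates points of $\R^n$ and vanishes nowhere (i.e., for every $\lambda \in \R^n$ there is some $g \in \mathcal{A}$ with $g(\lambda) \neq 0$). Vanishing nowhere is immediate: $e^{-p_0}(\lambda) > 0$ for every $\lambda$, since $p_0$ is finite everywhere. For point separation, suppose $\lambda, \mu \in \R^n$ with $\lambda \neq \mu$; then $\lambda_k \neq \mu_k$ for some index $k \in \{1,\dots,n\}$. Consider the pair $e^{-p_0}$ and $e^{-p_k} = e^{-p_0} e^{-\lambda_k}$ --- more precisely, the function $\lambda \mapsto e^{-p_k(\lambda)}/e^{-p_0(\lambda)} = e^{-\lambda_k}$ is the ratio of two generators, but ratios are not in $\mathcal{A}$, so instead I would argue directly: if $e^{-p_0}$ did not separate $\lambda$ and $\mu$, then $p_0(\lambda) = p_0(\mu)$, and if in addition $e^{-p_k}$ did not separate them, then $p_k(\lambda) = p_k(\mu)$, whence $\lambda_k = p_k(\lambda) - p_0(\lambda) = p_k(\mu) - p_0(\mu) = \mu_k$, a contradiction. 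Hence either $e^{-p_0}$ or $e^{-p_k}$ separates $\lambda$ from $\mu$, so $\mathcal{A}$ separates points.

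With all hypotheses verified, the Stone--Weierstrass theorem for $C_0$ of a locally compact Hausdorff space (see, e.g., the standard references on $C^*$-algebras or functional analysis) gives that $\mathcal{A}$ is dense in $C_0(\R^n)$ in the uniform norm, which is the assertion. There is essentially no obstacle here; the only point requiring a moment's care is the separation argument, where one must avoid the temptation to form the quotient $e^{-p_k}/e^{-p_0}$ (which leaves the algebra) and instead use the simple observation that the difference $p_k - p_0$ recovers the coordinate function $\lambda_k$. One should also record explicitly that $\mathcal{A}$ need not be unital --- the constant function $1$ is not in $C_0(\R^n)$ --- which is precisely why the $C_0$-version of Stone--Weierstrass (with the ``vanishes nowhere'' hypothesis in place of ``contains the constants'') is the appropriate tool.
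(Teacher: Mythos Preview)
Your proof is correct and follows essentially the same approach as the paper: verify the generators are real-valued (hence the algebra is a $*$-subalgebra), note that $e^{-p_0}$ vanishes nowhere, use the identity $p_k - p_0 = \lambda_k$ to establish point separation, and conclude by the Stone--Weierstrass theorem for $C_0(\R^n)$. Your extra remarks on why the non-unital version of Stone--Weierstrass is needed and on avoiding the quotient $e^{-p_k}/e^{-p_0}$ are helpful commentary but not additional content.
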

\begin{proof}
Since the functions $e^{-p_0}, e^{-p_1}, \dots, e^{-p_n}$ are real valued, the algebra generated by them is a $*$-subalgebra of $C_0(\R^n)$.

Notice that $e^{-p_0}$ is nowhere null. Moreover, if $\lambda,\lambda' \in \R^n$ and $\lambda \neq \lambda'$, then we have two cases: either
\[e^{-p_0(\lambda)} \neq e^{-p_0(\lambda')},\]
or $e^{-p_0(\lambda)} = e^{-p_0(\lambda')}$, but in this case, if $k \in \{1,\dots,n\}$ is such that $\lambda_k \neq \lambda'_k$,
\[e^{-p_k(\lambda)} = e^{-p_0(\lambda)} e^{-\lambda_k} \neq e^{-p_0(\lambda')} e^{-\lambda'_k} = e^{-p_k(\lambda')}.\]
The conclusion then follows immediately by the Stone-Weierstrass theorem.
\end{proof}

Let now $\JJ_L$ be the subalgebra of $C_0(\R^n)$ generated by the functions of the form $e^{-q}$, where $q$ is a non-negative polynomial on $\R^n$ such that $q(L)$ is a weighted subcoercive operator on $G$ and $\lim_{\lambda \to \infty} q(\lambda) = +\infty$. Set moreover
\[C_0(L) = C_0(L_1,\dots,L_n) = \{\breve f \tc f \in C_0(\R^n)\}.\]
Finally, let $\Sigma$ be the joint spectrum\index{spectrum!joint spectrum} of $L_1,\dots,L_n$, i.e., the support of $E$.

\begin{prp}\label{prp:Jdensity}
$C_0(L)$ is a sub-$C^*$-algebra of $\Cv^2(G)$, which is isometrically isomorphic to $C_0(\Sigma)$ via the kernel transform. Moreover
\[\Kern_L(\JJ_L) = \{\breve f \tc f \in \JJ_L\}\]
is a dense $*$-subalgebra of $C_0(L)$.
\end{prp}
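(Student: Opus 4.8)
The plan is to first record that the kernel transform is not merely an isometry but a $*$-homomorphism. Multiplicativity $(\Kern_L f)\cdot(\Kern_L g) = \Kern_L(fg)$ for $f,g \in L^\infty(\R^n,E)$ follows from the functional-calculus identity $f(L)g(L) = (fg)(L)$ (a property of the spectral integral, see \S\ref{section:spectralintegral}) together with the fact that composition of left-invariant operators corresponds to the product of $\Cv^2(G)$; compatibility with involution is exactly $\breve{\overline f} = (\breve f)^*$ from Lemma~\ref{lem:composition}(a). Hence $\Kern_L : L^\infty(\R^n,E) \to \Cv^2(G)$ is an isometric $*$-homomorphism.

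Next I would pass from $L^\infty(\R^n,E)$ to $C_0(\Sigma)$. For $f \in C_0(\R^n)$, since $\Sigma = \supp E$, the $E$-essential supremum of $f$ equals $\sup_{\lambda \in \Sigma}|f(\lambda)|$, and two continuous functions agreeing on $\Sigma$ agree $E$-almost everywhere, so $\Kern_L f$ depends only on $f|_\Sigma$. Because $\Sigma$ is a closed subset of $\R^n$, the restriction map $C_0(\R^n) \to C_0(\Sigma)$ is a continuous surjection (Tietze--Urysohn for locally compact Hausdorff spaces). Combining these observations, $\Kern_L$ descends to a well-defined isometric $*$-isomorphism $C_0(\Sigma) \to C_0(L)$, $g \mapsto \Kern_L \tilde g$ (with $\tilde g$ any extension of $g$ in $C_0(\R^n)$), whose image is $C_0(L)$ by definition. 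Since $C_0(\Sigma)$ is complete, so is $C_0(L)$; thus $C_0(L)$ is closed in $\Cv^2(G)$, i.e.\ a sub-$C^*$-algebra, isometrically $*$-isomorphic to $C_0(\Sigma)$.

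Finally, for the density statement: $\JJ_L$ is a $*$-subalgebra of $C_0(\R^n)$ because its generators $e^{-q}$ are real-valued. The polynomials $p_0,p_1,\dots,p_n$ constructed just before Lemma~\ref{lem:Jdensity} are admissible in the definition of $\JJ_L$ — each is non-negative, tends to $+\infty$ at infinity, and $p_k(L)$ is weighted subcoercive (Lemma~\ref{lem:quasirockland}) — so $e^{-p_0},\dots,e^{-p_n} \in \JJ_L$, whence $\JJ_L$ contains the subalgebra generated by them, which by Lemma~\ref{lem:Jdensity} is dense in $C_0(\R^n)$; therefore $\JJ_L$ is dense in $C_0(\R^n)$. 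Applying the continuous surjection $C_0(\R^n) \to C_0(\Sigma)$ and the isometric isomorphism $C_0(\Sigma) \cong C_0(L)$ from the previous paragraph, $\Kern_L(\JJ_L)$ is dense in $C_0(L)$, and it is a $*$-subalgebra since $\JJ_L$ is one and $\Kern_L$ is a $*$-homomorphism. The only point requiring genuine care is the transition from $L^\infty(\R^n,E)$ to $C_0(\Sigma)$: one must handle the non-injectivity of $\Kern_L$ on $C_0(\R^n)$ by factoring through restriction to $\Sigma$, and invoke (correctly) the coincidence of the $E$-essential supremum with the ordinary supremum over $\Sigma$ for continuous functions together with surjectivity of $C_0(\R^n) \to C_0(\Sigma)$; everything else is routine bookkeeping on top of Lemmas~\ref{lem:composition} and \ref{lem:Jdensity}.
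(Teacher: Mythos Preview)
Your proof is correct and follows essentially the same approach as the paper: the paper simply cites \S\ref{subsection:integralgelfand} for the first part (which contains precisely the Tietze--Urysohn argument and identification of $E[C_0(\R^n)]$ with $C_0(\supp E)$ that you spell out) and Lemma~\ref{lem:Jdensity} for the density claim. You have unpacked these references in detail, and your observation that $e^{-p_0},\dots,e^{-p_n}\in\JJ_L$ (so that Lemma~\ref{lem:Jdensity} yields density of $\JJ_L$ itself) is exactly the implicit step the paper intends.
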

\begin{proof}
For the first part, see \S\ref{subsection:integralgelfand}. The second part follows immediately from Lemma~\ref{lem:Jdensity}.
\end{proof}

The results on weighted subcoercive operators and their heat kernels then imply that the elements of $\Kern_L(\JJ_L)$ are particularly well-behaved. The next proposition, which shows a sort of commutativity between joint functional calculus of $L_1,\dots,L_n$ and unitary representations of $G$, is a multi-variate analogue of Proposition~2.1 of \cite{ludwig_sub-laplacians_2000}.

\begin{prp}\label{prp:Jl1}
For every $f \in \JJ_L$, we have $\breve f \in L^{1;\infty}(G) \cap C^\infty_0(G)$ and moreover, for every unitary representation $\pi$ of $G$,
\[\pi(\breve f) = f(d\pi(L_1),\dots,d\pi(L_n)).\]
If $G$ is amenable, the last identity holds for every $f \in C_0(\R^n)$ with $\breve f \in L^1(G)$.
\end{prp}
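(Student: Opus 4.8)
The strategy is to first establish the conclusion for the "Gaussian-type" generators $e^{-q}$ with $q(L)$ weighted subcoercive, where the heat kernel estimates of Theorem~\ref{thm:robinsonterelst} apply directly, and then to propagate it to the whole algebra $\JJ_L$ by the multiplicativity of the kernel transform (Lemma~\ref{lem:composition}(b)) and of the functional calculus. Finally, for amenable $G$, I would pass to a general $f \in C_0(\R^n)$ with $\breve f \in L^1(G)$ by a density/approximation argument.

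\emph{Step 1: the generators.} Let $q$ be a non-negative polynomial such that $L' = q(L)$ is weighted subcoercive, with $q(\lambda) \to +\infty$ as $\lambda \to \infty$, so that $f = e^{-q} \in C_0(\R^n)$. By Corollary~\ref{cor:commutativealgebra}, in the regular representation we have $\overline{d\RA_G(L')} = q(L_1,\dots,L_n)$, hence by the spectral theorem the bounded operator $e^{-q(L)} = e^{-t L'}|_{t=1}$ is precisely the time-$1$ element of the semigroup $\{S_t\}_{t\geq 0}$ generated (in the regular representation) by $\overline{d\RA_G(L')}$; its convolution kernel is therefore $\breve f = k_1$, the heat kernel of $L'$. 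By Theorem~\ref{thm:robinsonterelst}(d) this kernel lies in $L^{1;\infty}\cap C_0^\infty(G)$, and for \emph{every} unitary representation $\pi$ of $G$, the semigroup generated by $\overline{d\pi(L')}$ satisfies $S_t^\pi = \pi(k_t)$; evaluating at $t=1$ and using $\overline{d\pi(L')} = q(d\pi(L_1),\dots,d\pi(L_n))$ (again Corollary~\ref{cor:commutativealgebra}, applied to $\pi$, together with Proposition~\ref{prp:commutativealgebra} which gives that $d\pi(L_1),\dots,d\pi(L_n)$ are essentially self-adjoint and strongly commuting) gives $\pi(\breve f) = e^{-q}(d\pi(L_1),\dots,d\pi(L_n)) = f(d\pi(L))$, as desired.

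\emph{Step 2: the algebra $\JJ_L$.} A general $f \in \JJ_L$ is, by definition, a finite linear combination of finite products $f_1 \cdots f_r$ of generators $f_i = e^{-q_i}$ of the above type. Since $L^{1;\infty}(G)\cap C_0^\infty(G)$ is closed under convolution (the smoothing property of convolution) and under linear combinations, and since by Lemma~\ref{lem:composition}(b) (with the factors' kernels in $\D(G)\subseteq L^{1;\infty}\cap C_0^\infty$, or more precisely using that the heat kernels are Schwartz so that the relevant convolutions are licit) we have $(f_1\cdots f_r)\breve{} = \breve f_r * \cdots * \breve f_1$, it follows that $\breve f \in L^{1;\infty}(G)\cap C_0^\infty(G)$. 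For the representation identity: since $\pi$ is an anti-representation of $L^1(G)$ and each $\breve f_i \in L^1(G)$, we get $\pi(\breve f_r * \cdots * \breve f_1) = \pi(\breve f_1)\cdots\pi(\breve f_r) = f_1(d\pi(L))\cdots f_r(d\pi(L)) = (f_1\cdots f_r)(d\pi(L))$ by Step~1 and the multiplicativity of the joint functional calculus of the strongly commuting operators $d\pi(L_1),\dots,d\pi(L_n)$; linearity extends this to all of $\JJ_L$.

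\emph{Step 3: amenable $G$.} Let $f \in C_0(\R^n)$ with $\breve f \in L^1(G)$. By Lemma~\ref{lem:Jdensity} (or Proposition~\ref{prp:Jdensity}) there is a sequence $f_k \in \JJ_L$ with $f_k \to f$ uniformly, hence $\breve f_k \to \breve f$ in $\Cv^2(G)$ by Lemma~\ref{lem:composition}(a). For amenable $G$ one has $\|\cdot\|_{\Cv^2} = \|\cdot\|_{*}$ on $L^1(G)$, and more to the point every unitary $\pi$ is weakly contained in the regular representation, so $\|\pi(g)\| \leq \|g\|_{\Cv^2}$ for $g \in L^1(G)$; thus $\pi(\breve f_k) \to \pi(\breve f)$ in operator norm. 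On the other hand $f_k \to f$ uniformly forces $f_k(d\pi(L)) \to f(d\pi(L))$ in operator norm by the spectral theorem (continuous functional calculus is isometric for the sup-norm over the joint spectrum). Combining with Step~2, $\pi(\breve f) = f(d\pi(L))$. \textbf{The main obstacle} is the bookkeeping in Step~1–2 needed to rigorously identify $\breve f$ with the heat kernel in every representation simultaneously and to justify the convolution identities for the (non-compactly-supported but Schwartz, or merely $L^{1;\infty}$) kernels; once the representation-independence of the heat kernel in Theorem~\ref{thm:robinsonterelst}(d) is invoked, the rest is essentially formal.
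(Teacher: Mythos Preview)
Your proposal is correct and follows essentially the same three-step structure as the paper's proof: generators via Theorem~\ref{thm:robinsonterelst}(d), extension to $\JJ_L$ by multiplicativity (Lemma~\ref{lem:composition}, convolution, functional calculus), and the amenable case via uniform approximation from $\JJ_L$ together with weak containment. The only minor wrinkle you correctly flag is that Lemma~\ref{lem:composition}(b) as stated gives the convolution identity $(fg)\breve{} = \breve g * \breve f$ only for $\breve g \in \D(G)$, whereas the heat kernels are merely in $L^{1;\infty}\cap C_0^\infty(G)$; but since both factors lie in $L^1(G)$, the identity follows from the anti-representation property $\RA(g_1 * g_2) = \RA(g_2)\RA(g_1)$ on $L^1(G)$, which is indeed routine bookkeeping.
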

\begin{proof}
Suppose first that $f$ is one of the generators $e^{-q}$ of $\JJ_L$. Then, by Corollary~\ref{cor:commutativealgebra} and the properties of the spectral integral (see \S\ref{subsection:pushforwardspectral}),
\[e^{-q}(d\pi(L_1),\dots,d\pi(L_n)) = e^{-d\pi(q(L))},\]
Since $q(L)$ is weighted subcoercive, we obtain from Theorem~\ref{thm:robinsonterelst}(d) that $\Kern_L(e^{-q}) \in L^{1;\infty} \cap C^\infty_0(G)$ and $e^{-q}(d\pi(L_1,\dots,L_n)) = \pi(\Kern_L(e^{-q}))$. The result is easily extended to every $f \in \JJ_L$ by Lemma~\ref{lem:composition}, the properties of convolution and those of the spectral integral.

Suppose now that $G$ is amenable, $f \in C_0(\R^n)$ and $\breve f \in L^1(G)$. By Proposition~\ref{prp:Jdensity}, we can find a sequence $f_j \in \JJ_L$ which converges uniformly to $f$ on $\R^n$. This implies in particular, by the properties of the spectral integral, that
\[f_j(d\pi(L_1),\dots,d\pi(L_n)) \to f(d\pi(L_1),\dots,d\pi(L_n))\]
in the operator norm, but also that $\breve f_j \to \breve f$ in $\Cv^2(G)$. Since $G$ is amenable, the representation $\pi$ is weakly contained in the regular representation (see \S\ref{subsection:amenability}), so that also
\[\pi(\breve f_j) \to \pi(\breve f)\]
in the operator norm. But then the conclusion follows immediately from the first part of the proof.
\end{proof}

We are now going to exploit the good properties of the kernels in $\Kern_L(\JJ_L)$ to obtain a Plancherel formula for the kernel transform $\Kern_L$. It should be noticed that, in the context of commutative Banach $*$-algebras, a general abstract argument yielding this kind of results is available (see \cite{loomis_introduction_1953}, \S26J). However, we believe that additional insight is provided by the explicit construction presented below, which follows essentially \cite{christ_multipliers_1991}, with some modifications due to our multi-variate and possibly non-unimodular setting.

\begin{prp}\label{prp:compactlysupported}
If $f \in L^\infty(\R^n,E)$ is compactly supported, then
\[\breve f \in L^{2;\infty} \cap C^\infty_0(G).\]
\end{prp}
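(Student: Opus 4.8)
The plan is to reduce the statement about an arbitrary compactly supported $f \in L^\infty(\R^n,E)$ to the case of the well-behaved generators $e^{-q} \in \JJ_L$, using the composition rules of Lemma~\ref{lem:composition} together with the fact that $\Kern_L(\JJ_L)$ consists of Schwartz-type kernels (Proposition~\ref{prp:Jl1}). The crucial observation is that, if $f$ has compact support, say $\supp f \subseteq \{\lambda : |\lambda| \leq R\}$, then for any polynomial $q$ with $q(L)$ weighted subcoercive and $\lim_{\lambda\to\infty} q(\lambda) = +\infty$, we can write $f = g \cdot e^{-q}$ where $g(\lambda) = f(\lambda) e^{q(\lambda)}$ is still bounded (since $f$ is bounded and $e^q$ is bounded on the compact support of $f$) and compactly supported, hence $g \in L^\infty(\R^n,E)$. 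By Lemma~\ref{lem:composition}(b), since $\Kern_L(e^{-q}) \in L^{1;\infty} \cap C^\infty_0(G)$ by Proposition~\ref{prp:Jl1}, and in particular $\Kern_L(e^{-q}) \in L^2(G)$ because $L^{1;\infty}(G) \subseteq L^2(G)$ (this follows from the Gaussian estimates or simply from $L^1 \cap C_0^\infty \subseteq L^2$), we obtain $\breve f = (g \cdot e^{-q})\breve{} = g(L)\Kern_L(e^{-q})$.

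The second step is to verify that $g(L)\Kern_L(e^{-q})$ lies in $L^{2;\infty} \cap C^\infty_0(G)$. For this I would exploit that $g(L)$ is a bounded operator on $L^2(G)$ commuting with all left-invariant differential operators in the algebra generated by $L_1,\dots,L_n$, and more importantly that the smoothing properties of $\Kern_L(e^{-q})$ are inherited. Concretely, to control the left-invariant derivatives $A^\alpha \breve f$, note that $A^\alpha$ need not lie in the algebra generated by the $L_j$, so instead I would use a different factorisation: for each multi-index $\alpha$, apply Lemma~\ref{lem:composition}(b) with a generator of sufficiently high ``order'', i.e.\ write $\breve f = g_N(L) \Kern_L(e^{-N p_0})$ for large $N$, where $p_0$ is the strictly positive polynomial introduced before Lemma~\ref{lem:Jdensity}; since $\Kern_L(e^{-Np_0}) = \Kern_L(e^{-p_0}) * \cdots * \Kern_L(e^{-p_0})$ ($N$ factors) by Lemma~\ref{lem:composition}(b), and each factor is in $L^{1;\infty} \cap C^\infty_0(G)$, the convolution of a bounded-on-$L^2$ operator applied to such a kernel stays in $L^{2;\infty}$: indeed $A^\alpha(g_N(L)\Kern_L(e^{-Np_0})) = g_N(L)(A^\alpha \Kern_L(e^{-Np_0}))$ because $g_N(L)$ is left-invariant, and $A^\alpha \Kern_L(e^{-Np_0}) = \Kern_L(e^{-p_0}) * \cdots * (A^\alpha \Kern_L(e^{-p_0}))$ is again in $L^1 \cap L^2 \cap C_0^\infty(G)$ by the convolution inclusions and the smoothing property of convolution. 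Applying the bounded operator $g_N(L)$ to an $L^2$ function yields an $L^2$ function, so every left-invariant derivative of $\breve f$ is in $L^2(G)$, i.e.\ $\breve f \in L^{2;\infty}(G)$. The fact that $\breve f \in C^\infty_0(G)$ then follows from $\breve f \in C_0(L) \subseteq C_0(G)$ (already $\breve f \in C_0(\R^n)\breve{}$ since $f$, being compactly supported and bounded, is in $C_0(\R^n)$ after the factorisation — or more directly, $\breve f \in L^{2;\infty}(G)$ combined with Sobolev embedding on the group gives smoothness, and the $C_0$ decay comes from membership in $C_0(L)$), or simply from combining $L^{2;\infty}(G) \subseteq C^\infty(G)$ with the decay.

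The main obstacle I anticipate is the book-keeping needed to pass left-invariant derivatives $A^\alpha$ through the operator $g_N(L)$ and onto the explicit heat-type kernel: one must be careful that $A^\alpha$ does \emph{not} commute with $g_N(L)$ as an operator identity in the naive sense, but rather that $g_N(L)$, being a left-invariant operator (its convolution kernel is $\Kern_L g_N$, assuming $g_N$ bounded), commutes with \emph{left-invariant} differential operators, which is exactly what $A^\alpha$ is — so in fact the commutation $A^\alpha \circ g_N(L) = g_N(L) \circ A^\alpha$ holds on the appropriate domains. A second, more technical point is justifying that $g_N(L)$ applied to the $L^2$ kernel $A^\alpha\Kern_L(e^{-Np_0})$ produces an $L^2$ function: this is immediate from boundedness of $g_N(L)$ on $L^2(G)$, which holds provided $g_N \in L^\infty(\R^n,E)$, and $g_N = f e^{Np_0}$ is bounded on the compact set $\supp f$ and vanishes outside it. One should also double-check that the identification $\breve f = g_N(L)\Kern_L(e^{-Np_0})$ is legitimate via Lemma~\ref{lem:composition}(b), which requires $\Kern_L(e^{-Np_0}) \in L^2(G)$ — true since it is in $L^{1;\infty} \cap C_0^\infty(G)$ and these functions have Gaussian decay, hence are in every $L^p$. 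Modulo these routine verifications, the argument is a direct application of the composition lemma and the ter~Elst--Robinson kernel estimates packaged in Theorem~\ref{thm:robinsonterelst}.
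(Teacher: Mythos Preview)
Your overall strategy---factor $f = g \cdot e^{-q}$ and invoke Lemma~\ref{lem:composition}(b)---matches the paper's, and your argument that $\breve f \in L^2(G)$ is correct. But there is a genuine gap when you try to control the derivatives $A^\alpha\breve f$. You assert that ``$g_N(L)$, being a left-invariant operator, commutes with left-invariant differential operators,'' and use this to write $A^\alpha(g_N(L)u) = g_N(L)(A^\alpha u)$. This is false on a non-abelian group: left-invariance means commuting with left \emph{translations}, not with other left-invariant operators. Concretely, both $g_N(L)$ and $A^\alpha$ act by right convolution (with kernels $\breve g_N$ and a derivative of $\delta_e$ respectively), and two right-convolution operators commute only when their kernels commute under convolution---which generically fails when $G$ is non-abelian. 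Since $A^\alpha$ is \emph{not} in the commutative algebra generated by $L_1,\dots,L_n$, you cannot push it through $g_N(L)$.

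The paper's remedy is a short bootstrap that avoids any operator commutation. After writing $f = g\,\xi_1$ with $\xi_1 = e^{-p_*}$ and deducing $\breve f = g(L)\breve\xi_1 \in L^2(G)$, observe that $g$ is \emph{also} compactly supported, so the identical argument gives $\breve g \in L^2(G)$. Now use commutativity of \emph{pointwise} multiplication to flip the factorisation: $f = \xi_1 \cdot g$, hence by Lemma~\ref{lem:composition}(b) (this time with $\breve g \in L^2$) one gets $\breve f = \xi_1(L)\breve g = \breve g * \breve\xi_1$. In this convolution the smooth heat kernel $\breve\xi_1 \in L^{1;\infty}\cap C_0^\infty(G)$ sits on the \emph{right}, so the standard identity $D(u*v)=u*(Dv)$ for left-invariant $D$ lands the derivative exactly where you want it: $A^\alpha\breve f = \breve g * (A^\alpha\breve\xi_1)$, which lies in $L^2(G)$ (and in $C_0(G)$) by Young's inequality since $\breve g\in L^2$ and $A^\alpha\breve\xi_1\in L^1$.
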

\begin{proof}
Let $\xi_t = e^{-tp_*}$ for $t > 0$, so that $\breve \xi_t \in L^{1;\infty}(G) \cap C_0^\infty(G)$.

Since $f$ is compactly supported, $f = g \,\xi_1$ with $g = f/\xi_1 \in L^\infty(\R^n,E)$, so that
\[\breve f = g(L) \breve \xi_{1} \in L^2(G)\]
by Lemma~\ref{lem:composition}. Analogously, being $g$ compactly supported, also $\breve g \in L^2(G)$, but then
\[\breve f = \xi_1(L) \breve g = \breve g * \breve \xi_1 \in L^{2;\infty} \cap C^\infty_0(G),\]
by Lemma~\ref{lem:composition} and properties of convolution.
\end{proof}

Thus we have plenty of kernels $\breve f$ which are in $L^2(G)$; as we are going see, the $L^2$-norm can be interpreted as a certain operator norm of a convolution operator. Recall that $\|\cdot\|_{\hat 2}$ is the norm of the Lebesgue space $L^2(G,\Delta_G \mu_G)$ with respect to the left Haar measure; correspondingly, we denote by $\|\cdot\|_{\hat 2 \to \infty}$ the operator norm from $L^2(G,\Delta_G\mu_G)$ to $L^\infty(G)$.

\begin{lem}\label{lem:kernelnorm}
For all $f \in L^\infty(E)$, we have $\breve f \in L^2(G)$ if and only if
\[\|f(L)\|_{\hat 2 \to \infty} < \infty,\]
and in this case $\|\breve f\|_2 = \|f(L)\|_{\hat 2 \to \infty}$.
\end{lem}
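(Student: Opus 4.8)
The plan is to identify the $L^2$ norm of the kernel $\breve f$ with the operator norm $\|f(L)\|_{\hat 2 \to \infty}$ by exploiting the factorization of $f(L)$ through convolution. The key observation is that, for $u \in \D(G)$, we have $f(L)u = u * \breve f$, and by the duality relations for convolution together with the formula $\|g * h\|_\infty \leq \|g\|_{\hat p}\|h\|_{p'}$ (with $p = 2$), the map $u \mapsto u * \breve f$ is bounded $L^2(G,\Delta_G\mu_G) \to L^\infty(G)$ precisely when $\breve f \in L^2(G)$, with the operator norm equal to $\|\breve f\|_2$.

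First I would handle the direction where $\breve f \in L^2(G)$. In this case, for $u \in \D(G)$, $|u * \breve f(x)| = |\int_G u(y^{-1})\,\breve f(yx)\,dy|$; after a change of variables this is $|\int_G u(xy^{-1})\,\Delta_G(y)^{-1}\,\breve f(y)\,\ldots|$ — more cleanly, one uses the duality $\langle u * \breve f, \delta_x\rangle$ and the Cauchy--Schwarz inequality in the form $\|u * \breve f\|_\infty \leq \|u\|_{\hat 2}\|\breve f\|_2$ (this is Young's inequality at the endpoint $r = \infty$, $p = 2$, as recalled in the excerpt). Hence $f(L)$ extends to a bounded operator $L^2(G,\Delta_G\mu_G) \to L^\infty(G)$ with $\|f(L)\|_{\hat 2 \to \infty} \leq \|\breve f\|_2$. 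For the reverse inequality and for the converse direction, I would test against an approximate identity: take $(\eta_\alpha)$ an approximate identity in $\D(G)$, so that $\eta_\alpha \to \delta_e$; then $\eta_\alpha * \breve f \to \breve f$ in the appropriate sense, and more usefully, evaluating $\langle f(L)u, v\rangle$ for suitable $u,v$ and letting them concentrate at a point recovers the value of $\breve f$. Concretely, since $(u * \breve f)(e) = \int_G u(y^{-1})\breve f(y)\,dy = \langle \breve f, (u^*)^{\sim}\rangle$-type pairing, one sees that the functional $u \mapsto (u*\breve f)(e)$ on $L^2(G,\Delta_G\mu_G)$ has norm $\|\breve f\|_2$ whenever it is bounded, which by Riesz representation forces $\breve f \in L^2(G)$ with $\|\breve f\|_2 \leq \|f(L)\|_{\hat 2 \to \infty}$.

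The cleanest route is probably to use the pairing $\langle f*g, h\rangle = \langle f, h * g^*\rangle$: for $\phi, \psi \in \D(G)$, $\langle f(L)\phi, \psi\rangle = \langle \phi * \breve f, \psi\rangle = \langle \breve f, \phi^* * \psi \rangle$ (using associativity and the duality relations in the convolution subsection, taking care of modular factors since $G$ need not be unimodular). As $\phi$ runs over an approximate identity and $\psi$ over $\D(G)$, the elements $\phi^* * \psi$ approximate $\psi$ in every reasonable topology, and one extracts $|\langle \breve f, \psi\rangle| \leq \|f(L)\|_{\hat 2\to\infty}\,\liminf_\alpha \|\phi_\alpha\|_{\hat 2}\,\|\psi\|_2$. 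Choosing the $\phi_\alpha$ with $\|\phi_\alpha\|_{\hat 2} \to 1$ (normalized approximate identity, which is possible since $\Delta_G$ is continuous and $\equiv 1$ at $e$) yields $|\langle \breve f, \psi\rangle| \leq \|f(L)\|_{\hat 2 \to \infty}\|\psi\|_2$ for all $\psi \in \D(G)$, hence $\breve f \in L^2(G)$ with the stated norm bound. Combined with the first direction, equality follows.

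The main obstacle I anticipate is bookkeeping with the modular function: since $G$ is not assumed unimodular, the two Lebesgue spaces $L^2(G,\mu_G)$ and $L^2(G,\Delta_G\mu_G)$ genuinely differ, and the convolution and duality identities carry $\Delta_G$ factors (e.g. $f^*(x) = \Delta_G(x)\overline{f(x^{-1})}$, and $\langle f*g,h\rangle = \langle g, (f^*\Delta_G^{-1})*h\rangle$). One must check that the endpoint Young inequality $\|g*h\|_\infty \leq \|g\|_{\hat 2}\|h\|_2$ is used with the correct weight on the correct factor, and that the normalization of the approximate identity is compatible. This is entirely routine but requires care; everything else is a direct consequence of the Schwartz kernel theorem (Theorem~\ref{thm:schwartzkerneld}), the basic convolution identities, and Proposition~\ref{prp:approximateidentity}.
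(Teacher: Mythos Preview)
Your first direction (Young's inequality at the endpoint $r=\infty$, $p=2$) is correct and matches the paper. For the converse, your first paragraph gestures at the right idea---the linear functional $u \mapsto (u*\breve f)(e)$ together with Riesz representation---and this is exactly what the paper does. But your ``cleanest route'' via an approximate identity does not work, and this is a genuine gap: an approximate identity $(\phi_\alpha)$ concentrating at $e$ (which is what you need for $(\phi_\alpha^*\Delta_G^{-1})*\psi \to \psi$) necessarily has $\|\phi_\alpha\|_{\hat 2}\to\infty$, not $\to 1$. Bounded $L^1$ norm plus shrinking support forces the $L^2$ norm to blow up; the continuity of $\Delta_G$ at $e$ is irrelevant. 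In addition, the pairing $\langle f(L)\phi_\alpha,\psi\rangle$ is bounded by $\|f(L)\phi_\alpha\|_\infty\|\psi\|_1$, not $\|\psi\|_2$, so even if the $\hat 2$-norm difficulty were somehow circumvented you would only recover $\breve f\in L^\infty$.

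The paper avoids any limiting procedure: for $\phi\in\D(G)$ set $\check\phi(x)=\phi(x^{-1})$; then $\check\phi*\breve f\in\E(G)$ (since $\D*\D'\subseteq\E$), so evaluation at $e$ is legitimate, and one has directly
\[
|(\check\phi*\breve f)(e)| \leq \|f(L)\|_{\hat 2\to\infty}\,\|\check\phi\|_{\hat 2} = \|f(L)\|_{\hat 2\to\infty}\,\|\phi\|_2,
\]
the last equality by the substitution $x\mapsto x^{-1}$, which exchanges $\mu_G$ and $\Delta_G\mu_G$. Riesz then produces $k\in L^2(G)$ with $(\check\phi*\breve f)(e)=\int\phi\,k\,d\mu_G$, and unwinding the convolution at $e$ shows $\breve f=k$. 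This is precisely the point-evaluation idea you mentioned first; develop that one and drop the approximate-identity paragraph.
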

\begin{proof}
If $\breve f \in L^2(G)$, then, by Young's inequality, $\|f(L)\|_{\hat 2 \to \infty} \leq \|\breve f\|_2 < \infty$.

Vice versa, suppose that $\|f(L)\|_{\hat 2 \to \infty} < \infty$. For $\phi \in \D(G)$, if
\[\check\phi(x) = \phi(x^{-1}),\]
then also $\check\phi \in \D(G)$, so that
\[f(L)\check\phi = \check\phi * \breve f\]
is continuous on $G$, therefore
\[|f(L)\check\phi(e)| \leq \|f(L)\check\phi\|_\infty \leq \|f(L)\|_{\hat 2 \to \infty} \|\phi\|_2.\]
This means that the map
\[\phi \mapsto f(L)\check\phi(e)\]
extends to a bounded linear functional on $L^2(G)$, thus there exists $k \in L^2(G)$ such that
\[f(L)\check\phi(e) = \int_G \phi(x) k(x) \,dx \qquad\text{for all $\phi \in \D(G)$,}\]
i.e.,
\[\phi * \breve f(e) = \int_G \phi(x^{-1}) k(x) \,dx \qquad\text{for all $\phi \in \D(G)$,}\]
but this, by the definition of convolution, means that the distribution $\breve f$ coincides with $k \in L^2(G)$. Moreover, since the norm of the linear functional is $\|k\|_2$, we have
\[\|\breve f\|_2 = \sup_{0 \neq \phi \in \D(G)} \frac{|f(L)\check\phi(e)|}{\|\phi\|_2} \leq \|f(L)\|_{\hat 2 \to \infty},\]
and we are done.
\end{proof}

We are now able to obtain a Plancherel formula for the kernel transform.

\begin{thm}\label{thm:plancherel}
The identity
\[\sigma(A) = \|E(A)\|_{\hat 2 \to \infty}^2 \qquad\text{for all Borel $A \subseteq \R^n$}\]
defines a regular Borel measure on $\R^n$ with support $\Sigma$, whose negligible sets coincide with those of $E$ and such that, for all $f \in L^\infty(E)$,
\[\int_{\R^n} |f|^2 \,d\sigma = \|f(L)\|_{\hat 2 \to \infty}^2 = \|\breve f\|_2^2.\]
\end{thm}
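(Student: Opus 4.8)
## Proof plan for Theorem~\ref{thm:plancherel}

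The plan is to define $\sigma$ via the prescription $\sigma(A) = \|E(A)\|_{\hat2\to\infty}^2$ and then to verify, in order, that (i) $\sigma$ is a well-defined (possibly infinite) Borel measure, (ii) its null sets coincide with those of the spectral measure $E$, (iii) it integrates $|f|^2$ to $\|\breve f\|_2^2$, and (iv) it is regular with support exactly $\Sigma$. First I would observe that, by Lemma~\ref{lem:kernelnorm}, for every Borel $A$ we have $\|E(A)\|_{\hat2\to\infty} = \|\Kern_L(\chr_A)\|_2$, with the convention that both sides are $+\infty$ simultaneously; in particular $\sigma(A) = \|\breve{\chr_A}\|_2^2$. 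Countable additivity of $\sigma$ then follows from the countable additivity of $E$ together with the Pythagorean-type orthogonality of the kernels: if $A = \bigsqcup_k A_k$ is a disjoint union, then $\chr_A = \sum_k \chr_{A_k}$ in $L^\infty(\R^n,E)$, the operators $E(A_k)$ have mutually orthogonal ranges, and hence (when the kernels lie in $L^2(G)$) the $\breve{\chr_{A_k}}$ are mutually orthogonal in $L^2(G)$ by Lemma~\ref{lem:composition}(b) — indeed $\langle \breve{\chr_{A_j}}, \breve{\chr_{A_k}}\rangle = \langle (\chr_{A_j}\chr_{A_k})(L)\,\text{(something)},\dots\rangle$; more cleanly, $\Kern_L$ restricted to the compactly supported (or $L^2$-kernel) functions is, up to the identification of Lemma~\ref{lem:kernelnorm}, an isometry into $L^2(G)$ intertwining pointwise multiplication by $\chr_{A_k}$ with $E(A_k)$, so orthogonality of the ranges transfers. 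The subtlety with possibly infinite values is handled by a standard truncation: one checks the identity first on sets of finite $\sigma$-measure, then passes to suprema.

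The heart of the argument is step (iii), the Plancherel identity, and this is where I expect the main work. The natural route: for $f \in L^\infty(\R^n,E)$ with compact support, Proposition~\ref{prp:compactlysupported} gives $\breve f \in L^{2;\infty}\cap C_0^\infty(G)$, and Lemma~\ref{lem:kernelnorm} gives $\|\breve f\|_2 = \|f(L)\|_{\hat2\to\infty}$. It remains to identify $\|f(L)\|_{\hat2\to\infty}^2$ with $\int |f|^2\,d\sigma$. For simple functions $f = \sum_k c_k\chr_{A_k}$ with the $A_k$ disjoint of finite measure, $f(L) = \sum_k c_k E(A_k)$ has kernel $\sum_k c_k \breve{\chr_{A_k}}$, and by the orthogonality just established $\|\breve f\|_2^2 = \sum_k |c_k|^2 \|\breve{\chr_{A_k}}\|_2^2 = \sum_k |c_k|^2 \sigma(A_k) = \int|f|^2\,d\sigma$. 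A monotone-convergence / density argument (approximating a general bounded compactly supported $f$ in the appropriate sense, using that $\Kern_L$ is isometric from $L^\infty(E)$ into $\Cv^2$, Lemma~\ref{lem:composition}(a), so uniform approximants have kernels converging in $\Cv^2$, combined with the $L^2$ bound) then extends the identity to all bounded compactly supported $f$, and finally monotone convergence in both $\int|f|^2\,d\sigma$ and $\|\breve f\|_2^2$ (taking $f\chr_{\{|\lambda|\le R\}}$, $R\to\infty$) extends it to arbitrary $f \in L^\infty(E)$, with the understanding that both sides may be $+\infty$. The delicate point to get right is the interchange of the two a priori different "infinite" behaviours — that $\|f(L)\|_{\hat2\to\infty} = \infty$ exactly when $\int|f|^2\,d\sigma = \infty$ — which is precisely what Lemma~\ref{lem:kernelnorm} was designed to supply, so I would lean on it throughout rather than re-proving it.

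For (ii), a Borel set $A$ is $E$-null iff $E(A) = 0$ iff $\|E(A)\|_{\hat2\to\infty} = 0$ iff $\sigma(A) = 0$; the only thing to check is that $E(A) = 0$ is equivalent to $\|E(A)\|_{\hat2\to\infty} = 0$, i.e.\ that a nonzero left-invariant bounded operator on $L^2(G)$ cannot have vanishing $\hat2\to\infty$ operator norm, which follows because its kernel $\breve{\chr_A} \in \Cv^2$ is then a distribution with $\|\breve{\chr_A}\|_{\hat2\to\infty} = 0$, forcing $\breve{\chr_A} = 0$ as in the proof of Lemma~\ref{lem:kernelnorm} (test against $\check\phi$), hence $E(A)=0$. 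Finally, regularity: $\sigma$ is a Borel measure on $\R^n$ which is finite on compact sets — because for compact $K$, $\breve{\chr_K} \in L^2(G)$ by Proposition~\ref{prp:compactlysupported}, so $\sigma(K) = \|\breve{\chr_K}\|_2^2 < \infty$ — and a locally finite Borel measure on the locally compact second-countable space $\R^n$ is automatically regular. That $\supp\sigma = \Sigma = \supp E$ is then immediate from (ii), since the two measures have the same null sets and hence the same (closed) support. I expect no serious obstacle beyond the careful bookkeeping of $\sigma$-finite truncations in step (iii); everything else is assembly of the cited lemmas.
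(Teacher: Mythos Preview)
Your proposal is correct and follows essentially the same route as the paper: both establish $\sigma(A) = \|\breve\chr_A\|_2^2$ via Lemma~\ref{lem:kernelnorm}, prove countable additivity through the orthogonality $\breve\chr_{A_j} = E(A_j)\breve\chr_{A_j} \perp E(A_k)\breve\chr_{A_k} = \breve\chr_{A_k}$, invoke Proposition~\ref{prp:compactlysupported} for local finiteness and hence regularity, and partition $\R^n$ into relatively compact pieces for the general Plancherel identity. The one place the paper is slightly slicker than your simple-function approximation is step~(iii): it observes that on any set $A$ of finite measure the restriction of $\sigma$ agrees with the scalar spectral measure $\langle E(\cdot)\breve\chr_A,\breve\chr_A\rangle$, so that $\int_A |f|^2\,d\sigma = \|f(L)\breve\chr_A\|_2^2 = \|\breve f\|_2^2$ drops out in one line from the spectral integral formula, bypassing the density argument entirely.
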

\begin{proof}
Clearly $\sigma(\emptyset) = 0$. Moreover, $\sigma$ is monotone: if $A \subseteq A'$ are Borel subsets of $\R^n$ and $\sigma(A') < \infty$, then, by Lemma~\ref{lem:kernelnorm}, $\breve\chr_{A'} \in L^2(G)$, so that, by Lemma~\ref{lem:composition}, also
\[\breve\chr_A = E(A) \breve\chr_{A'} \in L^2(G) \qquad\text{and}\qquad \|\breve \chr_A\|_2 \leq \|\breve\chr_{A'}\|_2,\]
i.e., $\sigma(A) \leq \sigma(A')$.

We now prove that $\sigma$ is finitely additive. Let then $A, B \subseteq \R^n$ be disjoint Borel sets. If $\sigma(A) = \infty$ or $\sigma(B) = \infty$, then by monotonicity we also have $\sigma(A \cup B) = \infty$ and we are done. Suppose instead that $\sigma(A) < \infty$ and $\sigma(B) < \infty$. Then, by Lemma~\ref{lem:kernelnorm}, both $\breve\chr_{A},\breve\chr_{B} \in L^2(G)$, but
\[E(A \cup B) = E(A) + E(B),\]
so that clearly
\[\breve\chr_{A \cup B} = \breve\chr_A + \breve\chr_B,\]
which implies that $\breve\chr_{A\cup B} \in L^2(G)$, and moreover, by Lemma~\ref{lem:kernelnorm},
\[\sigma(A \cup B) = \|\breve\chr_{A \cup B}\|_2^2 = \|\breve\chr_A\|_2^2 + \|\breve\chr_B\|_2^2 = \sigma(A) + \sigma(B),\]
since $\breve\chr_A = E(A)\breve\chr_A \perp E(B)\breve\chr_B = \breve\chr_B$ in $L^2(G)$ by Lemma~\ref{lem:composition}.

Finite additivity implies that, if $A_j$ ($j \in \N$) are pairwise disjoint Borel subsets of $\R^n$ and $A = \bigcup_j A_j$, then
\[\sum_j \sigma(A_j) \leq \sigma(A).\]
In particular, if the sum on the left-hand side diverges, then we have an equality. Suppose instead that the left-hand side sum converges. Then, by Lemmata~\ref{lem:kernelnorm} and \ref{lem:composition}, we have that the $\breve\chr_{A_j}$ are pairwise orthogonal elements of $L^2(G)$, and that their sum converges in $L^2(G)$ to some $k \in L^2(G)$ such that
\[\|k\|_2^2 = \sum_j \sigma(A_j).\]
But then, if $u \in \D(G)$, we have that, on one hand, by Lemma~\ref{lem:kernelnorm},
\[\sum_j u * \breve\chr_{A_j} = u * k \qquad\text{in $C_b(G)$,}\]
and, on the other hand,
\[\sum_j u * \breve\chr_{A_j} = \sum_j E(A_j) u = E(A)u \qquad\text{in $L^2(G)$,}\]
which gives, by uniqueness of limits, $E(A) u = u * k$. Thus $\breve\chr_A = k \in L^2(G)$ and
\[\sigma(A) = \|k\|_2^2 = \sum_j \sigma(A_j).\]

It is immediate from the definition that a Borel subset of $\R^n$ is $\sigma$-negligible if and only if it is $E$-negligible; in particular $\supp \sigma = \supp E = \Sigma$.

By Proposition~\ref{prp:compactlysupported}, $\sigma(A) = \|\chr_A(L)\|_{\hat 2 \to \infty}^2 = \|\breve\chr_A\|_2^2$ is finite if $A \subseteq \R^n$ is relatively compact. We can then conclude, by Theorem~2.18 of \cite{rudin_real_1974}, that $\sigma$ is regular.

Notice now that, for all Borel $A \subseteq \R^n$ with $\sigma(A) < \infty$, $\sigma$ coincides with the measure $\langle E(\cdot) \breve \chr_A, \breve \chr_A\rangle$ on the subsets of $A$: in fact, for all Borel $B \subseteq \R^n$,
\[\langle E(B) \breve\chr_A , \breve\chr_A \rangle = \|\breve\chr_{A \cap B}\|_2^2 = \sigma(A \cap B)\]
by Lemmata~\ref{lem:kernelnorm} and \ref{lem:composition}. In particular, for all $f \in L^\infty(E)$ with $\supp f \subseteq A$,
\[\int_{\R^n} |f|^2 \,d\sigma = \int_{\R^n} |f(\lambda)|^2 \,\langle E(d\lambda) \breve\chr_A, \breve\chr_A\rangle = \|f(L) \breve\chr_A\|_2^2 = \|\breve f\|_2^2 = \|f(L)\|_{\hat 2 \to \infty}^2\]
by the properties of the spectral integrals and Lemmata~\ref{lem:kernelnorm} and \ref{lem:composition}.

Take now a countable partition of $\R^n$ made of relatively compact Borel subsets $A_j$ ($j \in \N$). Then, for every $f \in L^\infty(\R^n,E)$, analogously as before we obtain\
\[\|f(L)\|_{\hat 2 \to \infty}^2 = \sum_j \|E(A_j) f(L)\|_{\hat 2 \to \infty}^2 = \sum_j \|\Kern_L(f \chr_{A_j})\|_2^2,\]
and putting all together we get the conclusion.
\end{proof}

The measure $\sigma$ of the previous proposition is called the \emph{Plancherel measure}\index{Plancherel measure!for a weighted subcoercive system} associated to the system $L_1,\dots,L_n$. Notice that
\[L^\infty(\R^n,E) = L^\infty(\sigma).\]

We show now that the estimates (for small times) on the heat kernel of weighted subcoercive operators give information on the behaviour at infinity of the Plancherel measure. Recall that $|\cdot|_2$ denotes the Euclidean norm.

\begin{prp}\label{prp:plancherelpolynomialgrowth}
The Plancherel measure $\sigma$ on $\R^n$ associated to a weighted subcoercive system $L_1,\dots,L_n$ has (at most) polynomial growth at infinity.
\end{prp}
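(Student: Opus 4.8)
The plan is to express the Plancherel measure of a large ball in terms of the $L^2$-norm of a kernel that can be produced from the heat semigroup of one of the weighted subcoercive operators in the algebra, and then to exploit the Gaussian estimates of Theorem~\ref{thm:robinsonterelst}(e) to bound that $L^2$-norm polynomially in the radius. Concretely, let $p_*$ be a (real, non-negative) polynomial such that $L = p_*(L_1,\dots,L_n)$ is weighted subcoercive, with heat kernels $k_t$; upon replacing $p_*$ with $p_*^{2r}$ and adding $\sum_j \lambda_j^2 + 1$ as in the construction preceding Lemma~\ref{lem:Jdensity}, I may assume $p_* > 0$ on $\R^n$, $p_*(\lambda) \to +\infty$ as $\lambda \to \infty$, and moreover that $p_*(\lambda) \gtrsim |\lambda|_2^c$ for some $c > 0$ and $|\lambda|_2$ large (this growth comes for free from the $\sum_j \lambda_j^2$ summand). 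The key observation is that for a Borel set $A$ with $\sigma(A) < \infty$, using $\chr_A = (\chr_A \, e^{t p_*}) \, e^{-t p_*}$ and Lemma~\ref{lem:composition} together with Lemma~\ref{lem:kernelnorm}, one has
\[
\sigma(A) = \|\Kern_L \chr_A\|_2^2 = \|(\chr_A e^{tp_*})(L) \,\breve{\xi_t}\|_2^2 \leq \Big(\sup_{\lambda \in A} e^{t p_*(\lambda)}\Big)^2 \|k_t\|_2^2,
\]
where $\xi_t = e^{-t p_*}$, $\breve{\xi_t} = k_t \in L^{1;\infty} \cap C_0^\infty(G)$, and I have used that $(\chr_A e^{tp_*})(L)$ has operator norm $\sup_{\lambda\in A} e^{tp_*(\lambda)}$ by Lemma~\ref{lem:composition}(a) and that convolution against a bounded multiplier operator does not increase the $L^2$ norm of the kernel.

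Next I take $A = A_R := \{\lambda \in \R^n : |\lambda|_2 \leq R\}$ for $R$ large, so that $\sup_{\lambda \in A_R} p_*(\lambda) \leq C R^N$ for some fixed $N \in \N$ (the degree of $p_*$). The freedom of choice of $t$ is crucial: I will take $t = t(R) = R^{-N}$, which makes $\sup_{\lambda\in A_R} e^{t p_*(\lambda)} \leq e^{C}$, a constant independent of $R$. It then remains to estimate $\|k_t\|_2$ for this small value of $t$. By the Gaussian estimates of Theorem~\ref{thm:robinsonterelst}(e) (with $\alpha$ empty, $\|\alpha\|=0$), for $0 < t \leq 1$,
\[
|k_t(g)| \leq c \, t^{-Q_*/m} e^{\omega t} e^{-b(|g|_*^m/t)^{1/(m-1)}},
\]
where $m$ is the degree of $L$, $Q_*$ the homogeneous dimension of the contraction $\lie{g}_*$, and $|\cdot|_*$ the control modulus. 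Squaring and integrating over $G$, using the volume estimate $\mu(B_r) \sim r^{Q_*}$ for $r \leq 1$ and $\mu(B_r) \sim r^{Q_G}$ for $r \geq 1$ (from \S\ref{subsection:controldistance}), a routine computation (the same kind already carried out in the proof of Theorem~\ref{thm:robinsonterelst}(f) for the $L^1$ case) gives $\|k_t\|_2^2 \leq C \, t^{-Q_*/m}$ for $0 < t \leq 1$, up to an $e^{\omega' t}$ factor that is $\leq e^{\omega'}$ on this range. Plugging in $t = R^{-N}$ yields $\|k_t\|_2^2 \leq C R^{N Q_*/m}$, hence $\sigma(A_R) \leq C' R^{N Q_*/m}$ for all $R \geq 1$, i.e., $\sigma$ has at most polynomial growth at infinity.

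The main obstacle — really the only point requiring care — is the choice of $t$ as a function of $R$ and making sure that both factors ($\sup_{A_R} e^{tp_*}$ and $\|k_t\|_2$) are controlled simultaneously; the Gaussian estimate is only available, in the sharp small-time form, for $t \leq 1$, so one must check $R^{-N} \leq 1$ (true for $R \geq 1$) and absorb the $e^{\omega t}$ factors, which is harmless on $0 < t \leq 1$. A secondary point is the elementary integral bound $\|k_t\|_2^2 \lesssim t^{-Q_*/m}$ for small $t$: this is obtained by the substitution $r = t^{1/m}\rho$ in $\int_0^\infty e^{-2b(r^m/t)^{1/(m-1)}} r^{Q_*-1}\,dr$ exactly as in the displayed computation in the proof of Theorem~\ref{thm:robinsonterelst}, splitting the radial integral at $r=1$ and using the two volume-growth regimes; I would not grind through it in detail. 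Everything else is a direct application of Lemmata~\ref{lem:composition}, \ref{lem:kernelnorm} and Theorem~\ref{thm:plancherel}.
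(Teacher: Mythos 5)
Your proof is correct and follows essentially the same route as the paper's: both reduce $\sigma$ of a large set to $\bigl(\sup e^{tp_*}\bigr)^2\|k_t\|_2^2$ via the Plancherel theorem, choose $t$ inversely proportional to the maximum of $p_*$ on the set, and invoke the small-time heat kernel estimates of Theorem~\ref{thm:robinsonterelst}(e,f) to get $\|k_t\|_2^2\lesssim t^{-Q_*/m}$. The only (cosmetic) differences are that the paper obtains the key inequality by the pointwise domination $\chr_{\{p_*\le r\}}\le e\,e^{-p_*/r}$ rather than your operator-norm factorization, and works with sublevel sets of $p_*$ before passing to Euclidean balls at the end.
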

\begin{proof}
If $\xi_t(\lambda) = e^{-t p_*(\lambda)}$, then, for every $r > 0$,
\[\sigma(\{p \leq r\}) = \|\chr_{\{p \leq r\}}\|_{L^2(\sigma)}^2 \leq e^2 \|\xi_{1/r}\|_{L^2(\sigma)}^2 = e^2 \|\breve \xi_{1/r}\|_{L^2(G)}^2.\]
Since $\breve \xi_t$ is the heat kernel of the operator $p(L_1,\dots,L_n)$, Theorem~\ref{thm:robinsonterelst}(e,f) gives, for large $r$,
\[\sigma(\{p \leq r\}) \leq C r^{Q_*/m},\]
where $m$ is the degree of $p(L_1,\dots,L_n)$ with respect to a suitable reduced weighted algebraic basis of $\lie{g}$, and $Q_*$ is the homogeneous dimension of the corresponding contraction $\lie{g}_*$. In particular, if $d$ is the degree of the polynomial $p$, we get, for large $a > 0$,
\[\sigma(\{\lambda \tc |\lambda|_2 \leq a\}) \leq \sigma(\{p \leq C(1 + a)^d\}) \leq C (1+a)^{Q_* d/m},\]
which is the conclusion.
\end{proof}

The proof of Proposition~\ref{prp:plancherelpolynomialgrowth} shows that the degree of growth at infinity of the Plancherel measure $\sigma$ is somehow related to the ``local dimension'' $Q_*$ of the group with respect to the control distance associated to the chosen weighted subcoercive operator (see \S\ref{subsection:controldistance}). In \S\ref{section:homogeneity} we will obtain more precise information on the behaviour of $\sigma$ under the hypothesis of homogeneity.

By Theorem~\ref{thm:plancherel}, $\Kern_L|_{L^2 \cap L^\infty(\sigma)}$ extends to an isometry from $L^2(\sigma)$ onto a closed subspace of $L^2(G)$. We give now an alternative characterization of this subspace.

\begin{lem}\label{lem:Jlq}
$\JJ_L$ is dense in $L^q(\sigma)$ for $1 \leq q < \infty$.
\end{lem}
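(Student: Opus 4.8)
The plan is to deduce the density from a uniform–approximation statement in $C_0(\R^n)$, by multiplying through by a fast‑decaying heat‑type function from $\JJ_L$.

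First I would reduce to compactly supported continuous functions. The Plancherel measure $\sigma$ is a regular Borel measure on $\R^n$ (Theorem~\ref{thm:plancherel}) which is finite on relatively compact sets (Proposition~\ref{prp:compactlysupported} together with Theorem~\ref{thm:plancherel}), hence a Radon measure; consequently $C_c(\R^n)$ is dense in $L^q(\sigma)$ for every $q \in [1,\infty)$ (see, e.g., Theorem~3.14 of \cite{rudin_real_1974}). It is therefore enough to approximate an arbitrary $h \in C_c(\R^n)$ by elements of $\JJ_L$ in the $L^q(\sigma)$-norm.

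The key ingredient is the polynomial $p_0$ introduced just before Lemma~\ref{lem:Jdensity}: it is non‑negative on $\R^n$, tends to $+\infty$ at infinity, and $p_0(L)$ is weighted subcoercive, so $e^{-p_0} \in \JJ_L$. Since $p_0(\lambda) \ge |\lambda|_2^2 + 1$ and, by Proposition~\ref{prp:plancherelpolynomialgrowth}, $\sigma$ has at most polynomial growth at infinity, a routine estimate (splitting $\R^n$ into dyadic annuli and using $\sigma(\{|\lambda|_2 < R\}) \lesssim (1+R)^M$) gives $\int_{\R^n} e^{-q p_0}\,d\sigma < \infty$, i.e. $e^{-p_0} \in L^q(\sigma)$ for all $q \in [1,\infty)$. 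Now, given $h \in C_c(\R^n)$, the function $\tilde h = e^{p_0} h$ is again in $C_c(\R^n) \subseteq C_0(\R^n)$; and since $e^{-p_0},\dots,e^{-p_n}$ all lie in $\JJ_L$, Lemma~\ref{lem:Jdensity} shows that $\JJ_L$ is dense in $C_0(\R^n)$, so I can choose $g_j \in \JJ_L$ with $\|g_j - \tilde h\|_\infty \to 0$. As $\JJ_L$ is an algebra containing $e^{-p_0}$, the products $g_j e^{-p_0}$ belong to $\JJ_L$, and from the identity $g_j e^{-p_0} - h = (g_j - \tilde h)\,e^{-p_0}$ one obtains
\[
\|g_j e^{-p_0} - h\|_{L^q(\sigma)} \le \|g_j - \tilde h\|_\infty\,\|e^{-p_0}\|_{L^q(\sigma)} \longrightarrow 0,
\]
which yields the desired approximation.

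None of these steps is delicate: the only point that uses more than soft arguments is the integrability $e^{-p_0} \in L^q(\sigma)$, which rests precisely on the polynomial growth of $\sigma$ — and hence, ultimately, on the presence of a weighted subcoercive operator in the algebra generated by $L_1,\dots,L_n$. I do not expect a genuine obstacle here.
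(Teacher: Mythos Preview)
Your proof is correct and follows essentially the same route as the paper: reduce to $C_c(\R^n)$ by regularity, use that $e^{-p_0}\in L^q(\sigma)$ thanks to polynomial growth, and combine this with the uniform density of $\JJ_L$ in $C_0(\R^n)$. Your pre-multiplication trick $\tilde h = e^{p_0}h$ is a slight streamlining of the paper's argument, which instead multiplies by $e^{-t p_0}$ after approximating and then sends $t\to 0^+$ via monotone convergence; both variants rest on exactly the same ingredients.
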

\begin{proof}
Since $\sigma$ has polynomial growth at infinity (see Proposition~\ref{prp:plancherelpolynomialgrowth}), it is easily seen that the generators of $\JJ_L$ belong to $L^1 \cap L^\infty(\sigma)$, and therefore also $\JJ_L$ is contained (modulo restriction to $\Sigma$) in $L^1 \cap L^\infty(\sigma)$. Since $\sigma$ is a positive regular Borel measure on $\R^n$, in order to prove that the closure of $\JJ_L$ in $L^q(\sigma)$ is the whole $L^q(\sigma)$, it is sufficient to show that $C_c(\R^n)$ is contained in this closure (see \cite{rudin_real_1974}, Theorem~3.14).

Let then $m \in C_c(\R^n)$. By Lemma~\ref{lem:Jdensity}, we can find a sequence $m_k \in \JJ_L$ converging uniformly to $m$, so that $\sup_k \|m_k\|_\infty = C < \infty$. Thus, for every $t > 0$, $m_k e^{-tp_0}$ converges uniformly to $m e^{-tp_0}$, dominated by $ Ce^{-tp_0} \in L^q(\sigma)$, and consequently $m_k e^{-tp_0} \to m e^{-tp_0}$ also in $L^q(\sigma)$; we then have that $m e^{-tp_0}$ is in the closure of $\JJ_L$ in $L^q(\sigma)$ for all $t > 0$, and by monotone convergence also $m$ is in this closure.
\end{proof}

Let $\IS_L$ be the closure of $\Kern_L(\JJ_L)$ in $L^2(G)$.

\begin{prp}\label{prp:Jl2}
$\Kern_L|_{L^2 \cap L^\infty(\sigma)}$ extends to an isometric isomorphism
\[L^2(\sigma) \to \IS_L.\]
\end{prp}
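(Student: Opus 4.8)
The statement to prove is that $\Kern_L|_{L^2 \cap L^\infty(\sigma)}$ extends to an isometric isomorphism $L^2(\sigma) \to \IS_L$, where $\IS_L$ is the closure of $\Kern_L(\JJ_L)$ in $L^2(G)$.

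\medskip

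The plan is to combine the Plancherel formula (Theorem~\ref{thm:plancherel}) with the density result of Lemma~\ref{lem:Jlq}. First, recall that by Theorem~\ref{thm:plancherel} we have $\|\breve f\|_2 = \|f\|_{L^2(\sigma)}$ for every $f \in L^\infty(\sigma)$ with $\breve f \in L^2(G)$; in particular, since by Proposition~\ref{prp:compactlysupported} every compactly supported $f \in L^\infty(\sigma)$ has $\breve f \in L^{2;\infty} \cap C^\infty_0(G)$, the kernel transform $\Kern_L$ restricted to $L^2 \cap L^\infty(\sigma)$ is a linear isometry into $L^2(G)$. Since $L^2 \cap L^\infty(\sigma)$ is dense in $L^2(\sigma)$ (it contains $C_c(\R^n)|_\Sigma$, and $\sigma$ is a regular Borel measure, so one can also argue directly), this isometry extends uniquely to a linear isometry $\widetilde{\Kern_L} : L^2(\sigma) \to L^2(G)$. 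The content of the proposition is then the identification of the range of $\widetilde{\Kern_L}$ with $\IS_L$.

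\medskip

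To identify the range, I would argue by a double inclusion. On one hand, $\JJ_L \subseteq L^2 \cap L^\infty(\sigma)$ (this is noted in the proof of Lemma~\ref{lem:Jlq}, using that $\sigma$ has polynomial growth at infinity by Proposition~\ref{prp:plancherelpolynomialgrowth}), so $\Kern_L(\JJ_L)$ lies in the range of $\widetilde{\Kern_L}$; since the range is closed (being the isometric image of the complete space $L^2(\sigma)$), we get $\IS_L = \overline{\Kern_L(\JJ_L)}^{L^2(G)} \subseteq \widetilde{\Kern_L}(L^2(\sigma))$. On the other hand, by Lemma~\ref{lem:Jlq}, $\JJ_L$ is dense in $L^2(\sigma)$; applying the (continuous, in fact isometric) map $\widetilde{\Kern_L}$, the image $\widetilde{\Kern_L}(L^2(\sigma))$ is contained in the $L^2(G)$-closure of $\widetilde{\Kern_L}(\JJ_L) = \Kern_L(\JJ_L)$, which is $\IS_L$. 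Combining the two inclusions gives $\widetilde{\Kern_L}(L^2(\sigma)) = \IS_L$, so $\widetilde{\Kern_L}$ is an isometric isomorphism onto $\IS_L$, as claimed.

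\medskip

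The only mild subtlety — and the step I would be most careful about — is making sure the extension $\widetilde{\Kern_L}$ genuinely agrees with $\Kern_L$ wherever the latter is already defined as a kernel transform (i.e.\ on all of $L^\infty(\sigma)$ with $L^2$-kernel, not just on the dense subspace used to build the extension). This follows from continuity: if $f \in L^2 \cap L^\infty(\sigma)$, approximate $f$ in $L^2(\sigma)$ by $f \chr_{A_j}$ with $A_j$ relatively compact exhausting $\R^n$; by Lemma~\ref{lem:composition}(b) and the spectral calculus one has $\Kern_L(f\chr_{A_j}) = E(A_j)\breve f \to \breve f$ in $L^2(G)$, while $f\chr_{A_j} \to f$ in $L^2(\sigma)$, so $\widetilde{\Kern_L} f = \breve f$. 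Everything else is a direct invocation of the cited results, so there is no real obstacle here — the proposition is essentially a packaging of Theorem~\ref{thm:plancherel}, Lemma~\ref{lem:Jlq} and Proposition~\ref{prp:compactlysupported}.
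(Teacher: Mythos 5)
Your argument is correct and follows essentially the same route as the paper: the isometry on $L^2 \cap L^\infty(\sigma)$ comes from Theorem~\ref{thm:plancherel}, and the identification of the range with $\IS_L$ comes from the sandwich $\Kern_L(\JJ_L) \subseteq \Kern_L(L^2 \cap L^\infty(\sigma)) \subseteq \IS_L$ via the density statement of Lemma~\ref{lem:Jlq}. The final "subtlety" you raise is actually vacuous, since you built the extension from $L^2 \cap L^\infty(\sigma)$ itself, so agreement there holds by construction.
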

\begin{proof}
By Theorem~\ref{thm:plancherel} and Lemma~\ref{lem:Jlq}, we have
\[\Kern_L(\JJ_L) \subseteq \Kern_L(L^2 \cap L^\infty(\sigma)) \subseteq \overline{\Kern_L(\JJ_L)}^{L^2(G)} = \IS_L,\]
and the conclusion follows.
\end{proof}

We now prove a sort of Riemann-Lebesgue lemma for $\Kern_L^{-1}$.

\begin{prp}\label{prp:riemannlebesgue1}
For every bounded Borel $f : \R^n \to \C$ with $\breve f \in L^1(G)$, we have
\[\|f\|_{L^\infty(\sigma)} \leq \|\breve f\|_1,\]
and moreover
\[\lim_{r \to +\infty} \| f \, \chr_{\{\lambda \tc |\lambda|_2 \geq r\}} \|_{L^\infty(\sigma)} = 0.\]
\end{prp}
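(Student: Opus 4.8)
The first inequality will fall out of what is already in place. Since $\sigma$ and $E$ have the same negligible sets (Theorem~\ref{thm:plancherel}), one has $L^\infty(\R^n,E)=L^\infty(\sigma)$ isometrically, so Lemma~\ref{lem:composition}(a) gives $\|f\|_{L^\infty(\sigma)}=\|f\|_{L^\infty(\R^n,E)}=\|\breve f\|_{\Cv^2}$, and \eqref{eq:cvl1} yields $\|\breve f\|_{\Cv^2}\le\|\breve f\|_1$ whenever $\breve f\in L^1(G)$. This proves the first claim and, in particular, records that $\|f\|_{L^\infty(\sigma)}<\infty$.

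For the decay statement the plan is to reduce to the first inequality by an approximate-identity argument built from the heat semigroup of $p_0(L)$. Recall that $p_0(L)$ is weighted subcoercive, that $p_0(\lambda)\ge|\lambda|_2^2$ for all $\lambda$ (because $p_*\ge0$ and $p_0=p_*+\sum_j\lambda_j^2+1$), and that $e^{-tp_0}\in\JJ_L$ for every $t>0$. I would set $h_t=\Kern_L(e^{-tp_0})\in L^{1;\infty}(G)\cap C_0^\infty(G)$ (Proposition~\ref{prp:Jl1}); this is the heat kernel of the weighted subcoercive operator $p_0(L)$, so by Corollary~\ref{cor:kernelapproximateidentity} the family $(h_t)_{t>0}$ is an approximate identity on $G$ as $t\to0^+$, and hence $\breve f*h_t\to\breve f$ in $L^1(G)$ (Proposition~\ref{prp:approximateidentity}). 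The next and crucial step is the identification
\[\breve f*h_t=\Kern_L\big(f\,e^{-tp_0}\big).\]
To verify it I would take $u\in\D(G)$ and use associativity of convolution in $L^1(G)$ (all of $u,\breve f,h_t$ lie in $L^1(G)$): $u*(\breve f*h_t)=(u*\breve f)*h_t=(f(L)u)*h_t$; since $f(L)u\in L^2(G)$ and the bounded operator $v\mapsto v*h_t$ on $L^2(G)$ agrees on the dense subspace $\D(G)$ with $e^{-tp_0}(L)$, the right-hand side equals $e^{-tp_0}(L)f(L)u=(f\,e^{-tp_0})(L)u=u*\Kern_L(f\,e^{-tp_0})$; as $u\in\D(G)$ was arbitrary, the two distributions coincide. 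Consequently $\Kern_L\big(f(1-e^{-tp_0})\big)=\breve f-\breve f*h_t\to0$ in $L^1(G)$, so by the first inequality $\|f(1-e^{-tp_0})\|_{L^\infty(\sigma)}\to0$ as $t\to0^+$.

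Finally, given $\varepsilon>0$, I would first fix $t>0$ with $\|f(1-e^{-tp_0})\|_{L^\infty(\sigma)}<\varepsilon$, and then, for $r>0$, split $f\,\chr_{\{|\lambda|_2\ge r\}}=f(1-e^{-tp_0})\chr_{\{|\lambda|_2\ge r\}}+f\,e^{-tp_0}\chr_{\{|\lambda|_2\ge r\}}$ and estimate
\[\|f\,\chr_{\{|\lambda|_2\ge r\}}\|_{L^\infty(\sigma)}\le\|f(1-e^{-tp_0})\|_{L^\infty(\sigma)}+\|f\|_{L^\infty(\sigma)}\,e^{-tr^2},\]
using $\|g\chr_A\|_{L^\infty(\sigma)}\le\|g\|_{L^\infty(\sigma)}$, the bound $e^{-tp_0(\lambda)}\le e^{-tr^2}$ on $\{|\lambda|_2\ge r\}$, and the finiteness of $\|f\|_{L^\infty(\sigma)}$. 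Choosing $r$ large enough makes the second summand $<\varepsilon$, which yields the claimed limit.

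The main obstacle is the displayed identification $\breve f*h_t=\Kern_L(f\,e^{-tp_0})$: it is not deep, but it does require keeping careful track of which function spaces the intermediate objects belong to, so that both the associativity of convolution and the passage from $\D(G)$ to $L^2(G)$ in the kernel identity are legitimate. Everything else is elementary, the key point being that the hypothesis $\breve f\in L^1(G)$ lets us exploit the approximate-identity convergence in $L^1$ together with Young's inequality $\|\cdot\|_{\Cv^2}\le\|\cdot\|_1$.
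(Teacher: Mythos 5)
Your proof is correct and follows essentially the same route as the paper: the first inequality via Lemma~\ref{lem:composition}(a) and \eqref{eq:cvl1}, and the decay statement via the approximate identity $\breve\xi_t=\Kern_L(e^{-tp_0})$, the identity $\Kern_L(f e^{-tp_0})=\breve f*\breve\xi_t$, convergence in $L^1(G)$, and a final splitting using that $e^{-tp_0}$ is small where $|\lambda|_2$ is large. Your explicit verification of the convolution identity is a welcome elaboration of a step the paper leaves implicit; otherwise the arguments coincide.
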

\begin{proof}
The inequality follows immediately from Lemma~\ref{lem:composition}(a) and \eqref{eq:cvl1}.

Let $\xi_t = e^{-t p_0}$. Then, by Corollary~\ref{cor:kernelapproximateidentity}, $\breve\xi_t$ is an approximate identity for $t \to 0^+$. In particular, by Proposition~\ref{prp:approximateidentity}, if $\breve f \in L^1(G)$, then
\[\Kern_L(f \xi_t) = \breve f * \breve \xi_t \to \breve f \qquad\text{in $L^1(G)$}\]
for $t \to 0^+$, which implies, by the first inequality, that
\[\lim_{t \to 0^+} \|f (1 - \xi_t) \|_{L^\infty(\sigma)} = 0.\]
Therefore, for every $\varepsilon > 0$, there exists $t > 0$ such that $\|f (1 - \xi_t) \|_{L^\infty(\sigma)} \leq \varepsilon$; since $p_0(\lambda) \to +\infty$ for $\lambda \to \infty$, we may find $r > 0$  such that
\[\|\xi_t \, \chr_{\{\lambda \tc |\lambda|_2 \geq r\}}\|_\infty \leq 1/2,\]
but then necessarily $\|f \, \chr_{\{\lambda \tc |\lambda|_2 \geq r\}}\|_\infty \leq 2\varepsilon$.
\end{proof}

An analogous (and neater) result for $\Kern_L$ is obtained under the additional hypothesis of unimodularity.

\begin{prp}\label{prp:riemannlebesgue2}
Suppose that $G$ is unimodular. If $f \in L^1 \cap L^\infty(\sigma)$, then $\breve f \in C_0(G)$ and
\[\|\breve f\|_\infty \leq \|f\|_{L^1(\sigma)}.\]
\end{prp}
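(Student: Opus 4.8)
The plan is to reduce everything to a factorization of the multiplier together with the Plancherel isometry of Theorem~\ref{thm:plancherel}. Given $f \in L^1 \cap L^\infty(\sigma)$, I would set $g = f\,|f|^{-1/2}\chr_{\{f\neq 0\}}$ and $h = |f|^{1/2}$, so that $g,h$ are Borel functions with $gh = f$ and $|g| = |h| = |f|^{1/2}$ $\sigma$-a.e.; since $f$ is bounded, $g,h \in L^2 \cap L^\infty(\sigma)$, and $\|g\|_{L^2(\sigma)} = \|h\|_{L^2(\sigma)} = \|f\|_{L^1(\sigma)}^{1/2}$. By Theorem~\ref{thm:plancherel} (via Lemma~\ref{lem:kernelnorm}) it then follows that $\breve g, \breve h \in L^2(G)$ with $\|\breve g\|_2 = \|g\|_{L^2(\sigma)}$ and $\|\breve h\|_2 = \|h\|_{L^2(\sigma)}$; also $f \in L^2(\sigma)$ (as $\int|f|^2\,d\sigma \le \|f\|_{L^\infty(\sigma)}\|f\|_{L^1(\sigma)}$), so $\breve f \in L^2(G)$ as well.

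Next I would invoke Lemma~\ref{lem:composition}(b), which gives $\breve f = (gh)\breve{} = g(L)\breve h$, and then identify the operator-theoretic expression $g(L)\breve h$ with the genuine convolution $\breve h * \breve g$ of the two $L^2$ functions $\breve h$, $\breve g$. For this I would approximate $\breve h$ in $L^2(G)$ by test functions $u_j \in \D(G)$: on one hand $g(L)u_j \to g(L)\breve h$ in $L^2(G)$ because $g(L)$ is bounded on $L^2(G)$; on the other hand $g(L)u_j = u_j * \breve g$ by the definition of the convolution kernel (Theorem~\ref{thm:schwartzkerneld}), and here is where unimodularity enters — since $\Delta_G \equiv 1$, Young's inequality for $p = 2$ gives $\|(u_j-\breve h)*\breve g\|_\infty \le \|u_j - \breve h\|_2\,\|\breve g\|_2 \to 0$ with $(u_j-\breve h)*\breve g \in C_0(G)$, so $u_j * \breve g \to \breve h * \breve g$ in $C_0(G)$. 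Comparing the two limits yields $\breve f = g(L)\breve h = \breve h * \breve g$, which in particular lies in $C_0(G)$.

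Finally, the norm bound is immediate from the same unimodular Young inequality: $\|\breve f\|_\infty = \|\breve h * \breve g\|_\infty \le \|\breve h\|_2\,\|\breve g\|_2 = \|h\|_{L^2(\sigma)}\|g\|_{L^2(\sigma)} = \|f\|_{L^1(\sigma)}$. I expect the only genuinely delicate point to be the middle paragraph — namely making rigorous that the bounded operator $g(L)$ applied to the $L^2$ kernel $\breve h$ coincides with the pointwise convolution $\breve h * \breve g$, rather than merely the distributional convolution, and that the resulting object really has a $C_0(G)$ representative; everything else is bookkeeping with the Plancherel isometry and the cited mapping property $L^p * L^{p'} \subseteq C_0(G)$ for $1 < p < \infty$ on unimodular groups.
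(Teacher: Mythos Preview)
Your proof is correct and follows essentially the same route as the paper: factorize $f = gh$ with $|g| = |h| = |f|^{1/2}$, use the Plancherel isometry to get $\breve g, \breve h \in L^2(G)$, and then conclude via Lemma~\ref{lem:composition} and Young's inequality that $\breve f = \breve h * \breve g \in C_0(G)$ with the desired norm bound. Your approximation argument making rigorous the identification $g(L)\breve h = \breve h * \breve g$ is more explicit than the paper, which simply cites Lemma~\ref{lem:composition} for the same step.
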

\begin{proof}
Since $f \in L^1 \cap L^\infty(\sigma)$, then $f = g_1 g_2$ for some Borel $g_1,g_2 : \R^n \to \C$ such that
\[|g_1|^2 = |g_2|^2 = |f|;\]
in particular, $g_1,g_2 \in L^2 \cap L^\infty(\sigma)$. Therefore $\breve g_1, \breve g_2 \in L^2(G)$ by Theorem \ref{thm:plancherel}, but then
\[\breve f = \breve g_1 * \breve g_2\]
by Lemma \ref{lem:composition}, which implies that $\breve f \in C_0(G)$ and
\[\|\breve f\|_\infty \leq \|\breve g_1\|_2 \|\breve g_2\|_2 = \|f\|_{L^1(\sigma)},\]
which is the conclusion.
\end{proof}

\section{Spectrum and eigenfunctions}\label{section:eigenfunctions}

We keep the notation of the previous section. Proposition~\ref{prp:Jl1} shows that there exists some relationship between the $L^2$ spectral theory of $L_1,\dots,L_n$ on $G$ and the spectral theory of the corresponding operators in a unitary representation $\pi$ of $G$. On the other hand, a joint eigenvector of $d\pi(L_1),\dots,d\pi(L_n)$ in a unitary representation $\pi$ yields (as we shall see) a joint eigenfunction of $L_1,\dots,L_n$ on $G$, which in general is smooth and bounded but (unless $G$ is compact) not in $L^2(G)$. We are now going to study such generalized \index{function!joint eigenfunction}joint eigenfunctions of $L_1,\dots,L_n$ and the corresponding generalized joint point spectrum.

\begin{prp}\label{prp:weakstrongeigenfunctions}
Let $\phi \in \D'(G)$ be such that, for some $(\lambda_1,\dots,\lambda_n) \in \C^n$,
\[L_j \phi = \lambda_j \phi \qquad\text{for $j=1,\dots,n$}\]
in the sense of distributions. Then $\phi \in \E(G)$, and the previous equalities hold in the strong sense.
\end{prp}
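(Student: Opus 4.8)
The plan is to reduce the statement to a hypoellipticity result that has already been established. The key observation is that, among the operators $L_1,\dots,L_n$, the algebra they generate contains a weighted subcoercive operator; since the $L_j$ are formally self-adjoint and pairwise commuting, after replacing this operator by a suitable real polynomial combination we may assume there is a non-negative polynomial $p$ on $\R^n$ such that $L = p(L_1,\dots,L_n)$ is weighted subcoercive on $G$. By Corollary~\ref{cor:hypoelliptic}, such an operator $L$ is hypoelliptic.

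First I would apply the eigenvalue equations $L_j\phi = \lambda_j\phi$ to deduce that $\phi$ satisfies $L\phi = p(\lambda_1,\dots,\lambda_n)\phi$ in the sense of distributions (this uses only that the $L_j$ commute as operators on distributions, which holds since they are left-invariant and commute on smooth functions, and the action on $\D'(G)$ is defined by duality through the formal adjoints). Hence $(L - c)\phi = 0$ with $c = p(\lambda_1,\dots,\lambda_n) \in \C$. Now $L - c$ is a left-invariant differential operator whose principal part (with respect to the relevant weighted algebraic basis) coincides with that of $L$, which is Rockland on the contraction $\lie{g}_*$; by the second bullet of the Helffer-Nourrigat theorem, or directly by Corollary~\ref{cor:hypoelliptic} applied to $L$ together with the observation that adding the lower-order term $-c$ does not change hypoellipticity (this is again in the spirit of Lemma~\ref{lem:quasirockland} and Corollary~\ref{cor:hypoelliptic}), the operator $L - c$ is hypoelliptic. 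Therefore $\phi \in \E(G)$: indeed $(L-c)\phi = 0 \in \E(G)$ on all of $G$, so hypoellipticity forces $\phi$ to be smooth everywhere.

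Once we know $\phi \in \E(G)$, the equalities $L_j\phi = \lambda_j\phi$ hold in the classical (strong) sense, because for a smooth function the distributional action of a differential operator agrees with the pointwise one. This completes the argument.

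The main obstacle, and the point requiring a little care, is justifying that $L - c$ (with the additive constant $c$, possibly complex and nonzero) is hypoelliptic, rather than just $L$ itself. The cleanest route is to note that $-d\RA_G(C) + c$, where $d\RA_G(C) = L$, still has the same Rockland principal part, so the Helffer-Nourrigat criterion as quoted in the excerpt (``if the principal part of $L$ is Rockland, then $L$ is hypoelliptic'') applies directly on the homogeneous model, and the transfer to $G$ is exactly what Corollary~\ref{cor:hypoelliptic} does; alternatively, one mimics the proof of Corollary~\ref{cor:hypoelliptic} with the parametrix construction of Treves's Theorem~52.1, observing that a bounded perturbation by a zeroth-order constant does not affect the singular support analysis. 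A secondary subtlety is ensuring the reduction to a single weighted subcoercive $L = p(L_1,\dots,L_n)$ is legitimate: this is precisely Lemma~\ref{lem:quasirockland}, which guarantees that a high enough power of the given weighted subcoercive operator in the algebra, suitably combined, remains weighted subcoercive, and formal self-adjointness lets us take $p$ with real coefficients so that $L^+ = L$.
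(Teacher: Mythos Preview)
Your proposal is correct and follows essentially the same route as the paper: apply the eigenvalue relations to get $p_*(L)\phi = p_*(\lambda)\phi$ for the weighted subcoercive operator $p_*(L)$ from \S\ref{section:plancherel}, and then invoke hypoellipticity of $p_*(L) - p_*(\lambda)$ via Corollary~\ref{cor:hypoelliptic}. Your care about the constant perturbation is well placed; the point is simply that adding a zeroth-order constant does not change the principal part, so the form remains weighted subcoercive and Corollary~\ref{cor:hypoelliptic} applies directly.
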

\begin{proof}
If $\lambda = (\lambda_1,\dots,\lambda_n)$, from the hypothesis we get immediately
\[p_*(L) \phi = p_*(\lambda) \phi.\]
Since $p_*(L) - p_*(\lambda)$ is hypoelliptic by Corollary~\ref{cor:hypoelliptic}, this implies that $\phi \in \E(G)$ and we are done.
\end{proof}

\begin{lem}\label{lem:eigenvectors}
Let $\pi$ be a unitary representation of $G$ on $\HH$. The following are equivalent for $v \in \HH \setminus \{0\}$:
\begin{itemize}
\item[(i)] $v \in \HH^\infty$ and $v$ is a joint eigenvector of $d\pi(L_1),\dots,d\pi(L_n)$;
\item[(ii)] $v$ is a joint eigenvector of the operators $\pi(\breve m)$ for $m \in \JJ_L$.
\end{itemize}
\end{lem}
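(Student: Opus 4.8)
The plan is to read both conditions through the joint spectral resolution $E_\pi$ on $\R^n$ of $\overline{d\pi(L_1)},\dots,\overline{d\pi(L_n)}$ provided by Corollary~\ref{cor:commutativealgebra}, using Proposition~\ref{prp:Jl1} to identify $\pi(\breve m)$ with $\int_{\R^n} m\,dE_\pi$ for $m\in\JJ_L$, and exploiting the particular polynomials $p_0,p_1,\dots,p_n$ of \S\ref{section:plancherel} (recall that $p_k=p_0+(\text{$k$-th coordinate})$, that they are non-negative and tend to $+\infty$, and that $p_0(L),\dots,p_n(L)$ are weighted subcoercive, so that $e^{-p_0},\dots,e^{-p_n}\in\JJ_L$). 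The elementary observation used throughout is that, for a bounded Borel $m$ and $c\in\C$, one has $\bigl(\int m\,dE_\pi\bigr)v=cv$ if and only if $\langle E_\pi(\cdot)v,v\rangle$ is concentrated on $\{m=c\}$, i.e.\ $E_\pi(\{m=c\})v=v$; this follows from $\bigl\|\bigl(\int m\,dE_\pi-c\bigr)v\bigr\|^2=\int|m-c|^2\,d\langle E_\pi(\cdot)v,v\rangle$ and the fact that $E_\pi(\{m=c\})$ is an orthogonal projection.

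For (i)$\Rightarrow$(ii): if $v\in\HH^\infty$ and $d\pi(L_j)v=\lambda_j v$ for all $j$, then $\overline{d\pi(L_j)}v=\int\mu_j\,dE_\pi\,v=\lambda_j v$, so $\langle E_\pi(\cdot)v,v\rangle$ is concentrated on $\{\mu_j=\lambda_j\}$ for each $j$, hence on the single point $\lambda=(\lambda_1,\dots,\lambda_n)$; that is, $E_\pi(\{\lambda\})v=v$. By Proposition~\ref{prp:Jl1}, for every $m\in\JJ_L$ we get $\pi(\breve m)v=\int_{\R^n} m\,dE_\pi\,v=m(\lambda)v$, and since $v\neq0$ this shows that $v$ is a joint eigenvector of the family $\{\pi(\breve m):m\in\JJ_L\}$.

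For (ii)$\Rightarrow$(i): apply the hypothesis to $m=e^{-p_0},\dots,e^{-p_n}\in\JJ_L$, say $\pi(\breve{e^{-p_k}})v=c_k v$. By Proposition~\ref{prp:Jl1}, $\pi(\breve{e^{-p_k}})=\int_{\R^n}e^{-p_k}\,dE_\pi$, which is a bounded positive self-adjoint operator because $e^{-p_k}\geq0$; hence $c_k\geq0$, and in fact $c_k>0$, since $c_k=0$ would force $v$ to be concentrated on $\{e^{-p_k}=0\}=\emptyset$, i.e.\ $v=0$. Writing $c_k=e^{-a_k}$ with $a_k\in\R$, the observation above shows that $\langle E_\pi(\cdot)v,v\rangle$ is concentrated on $\{p_k=a_k\}$ for $k=0,1,\dots,n$. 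Since $p_k=p_0+\mu_k$, any $\mu$ in $\{p_0=a_0\}\cap\bigcap_{k=1}^n\{p_k=a_k\}$ satisfies $\mu_k=a_k-a_0$ for all $k\geq1$, so this intersection is the single point $\lambda=(a_1-a_0,\dots,a_n-a_0)$, and therefore $E_\pi(\{\lambda\})v=v$. This yields $\overline{d\pi(L_j)}v=\int\mu_j\,dE_\pi\,v=\lambda_j v$ for all $j$, and also $\overline{d\pi(p_0(L))}^{\,r}v=\bigl(\int p_0\,dE_\pi\bigr)^{\!r}v=p_0(\lambda)^r v$ for every $r\in\N$, so that $v\in\bigcap_{r\in\N}D\bigl(\overline{d\pi(p_0(L))}^{\,r}\bigr)=\HH^\infty$ by Theorem~\ref{thm:robinsonterelst}(b) applied to the weighted subcoercive operator $p_0(L)$. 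As $v$ is then a smooth vector, $d\pi(L_j)v=\overline{d\pi(L_j)}v=\lambda_j v$, which is condition (i).

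The manipulations with spectral integrals and the verification that $e^{-p_k}\in\JJ_L$ are routine. The one step that genuinely uses the structure of the problem is the passage, in (ii)$\Rightarrow$(i), from simultaneous spectral concentration on $\{p_0=a_0\}$ and on each $\{p_k=a_k\}$ to concentration on a single point of $\R^n$; this works precisely because the $p_k$ were built so that $p_k-p_0$ is the $k$-th coordinate function — the same mechanism that powers Lemma~\ref{lem:Jdensity}. The only other point requiring care is the positivity and non-vanishing of the eigenvalues $c_k$, needed so that $a_k=-\log c_k$ is a well-defined real number; both rest on $e^{-p_k}\geq0$ together with the unitarity of $\pi$.
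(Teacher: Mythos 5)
Your proof is correct and follows essentially the same route as the paper's: (i)$\Rightarrow$(ii) via Proposition~\ref{prp:Jl1} and the spectral calculus, and (ii)$\Rightarrow$(i) by testing against the generators $e^{-p_0},\dots,e^{-p_n}$, using positivity/injectivity to get $c_k>0$, Theorem~\ref{thm:robinsonterelst}(b) for smoothness of $v$, and the relation $p_k-p_0=\lambda_k$ to recover the individual eigenvalues. The only (cosmetic) difference is that you phrase the eigenvalue extraction in terms of concentration of the scalar spectral measure $\langle E_\pi(\cdot)v,v\rangle$ at a single point, whereas the paper argues directly at the operator level.
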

\begin{proof}
Suppose that $v \in \HH^\infty \setminus \{0\}$ is a joint eigenvector of the operators $d\pi(L_1),\dots,d\pi(L_n)$, i.e.,
\[d\pi(L_j) v = c_j v\]
for some $c = (c_1,\dots,c_n) \in \R^n$, $j = 1,\dots,n$. By Proposition~\ref{prp:Jl1} and the properties of the spectral integral (see \S\ref{subsection:spectralunbounded}), for every $m \in \JJ_L$ we have
\[\pi(\breve m) v = m(d\pi(L_1),\dots,d\pi(L_n)) v = m(c) v,\]
which means that $v$ is an eigenvector of $\pi(\breve m)$.

Vice versa, suppose that $v \in \HH \setminus \{0\}$ is an eigenvector of $\pi(\breve m)$ for all $m \in \JJ_L$. Take $m = e^{-p_j}$ for $j=0,\dots,n$; by Proposition~\ref{prp:Jl1}, we have
\[\pi(\breve m) = e^{-p_j(d\pi(L))},\]
so that, by the properties of the spectral integral (see \S\ref{subsection:pushforwardspectral}), $\ker \pi(\breve m) = \{0\}$, therefore
\[\pi(\breve m) v = c v\]
for some $c > 0$. This implies that
\[v = c^{-1} \pi(\breve m) v \in \HH^\infty,\]
by Theorem~\ref{thm:robinsonterelst}(b), and moreover, again by the properties of the spectral integral,
\[p_j(d\pi(L)) v = (\log c) v,\]
that is, $v$ is an eigenvector of $p_j(d\pi(L))$ for $j=0,\dots,n$. Since
\[\lambda_j = p_j(\lambda) - p_0(\lambda) \qquad\text{for $j=1,\dots,n$,}\]
it follows that $v$ is a joint eigenvector of $d\pi(L_1),\dots,d\pi(L_n)$.
\end{proof}

\begin{prp}\label{prp:eigenfunctions}
For a function of positive type $\phi$ on $G$, the following are equivalent:
\begin{itemize}
\item[(i)] $\phi$ is a joint eigenfunction of $L_1,\dots,L_n$ and $\phi(e) = 1$;
\item[(ii)] $\phi$ has the form
\[\phi(x) = \langle \pi(x) v, v \rangle\]
for some unitary representation $\pi$ of $G$ on $\HH$ and cyclic vector $v$ of norm $1$, where $v \in \HH^\infty$ is a joint eigenvector of $d\pi(L_1),\dots,d\pi(L_n)$;
\item[(iii)] $\phi \neq 0$ and, for all $m \in \JJ_L$ and $f \in L^1(G)$,
\[\langle \breve m * f, \phi \rangle = \langle f * \breve m, \phi \rangle = \langle f, \phi \rangle \langle \breve m, \phi \rangle;\]
\item[(iv)] $\phi \neq 0$ and, for all $m \in \JJ_L$,
\[\langle \breve m * \breve m^*, \phi \rangle = |\langle \breve m, \phi \rangle|^2.\]
\end{itemize}
In this case, moreover, the eigenvalue of $L_j$ corresponding to $\phi$ is a real number and coincides with the eigenvalue of $d\pi(L_j)$ corresponding to $v$.
\end{prp}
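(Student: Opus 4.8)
The strategy is to prove a cycle of implications, say (i)$\Rightarrow$(ii)$\Rightarrow$(iii)$\Rightarrow$(iv)$\Rightarrow$(i), exploiting the correspondence between functions of positive type and cyclic unitary representations (from \S\ref{subsection:positivetype}), the smoothing properties of the heat kernels $\breve m$ for $m \in \JJ_L$ (Proposition~\ref{prp:Jl1}), and the equivalence of eigenvector conditions established in Lemma~\ref{lem:eigenvectors}. The statement about the eigenvalues being real and matching those of $d\pi(L_j)$ on $v$ will drop out along the way, once the representation-theoretic picture of (ii) is in place, using that $\phi \in \E(G)$ by Proposition~\ref{prp:weakstrongeigenfunctions} and that $d\pi(L_j)$ is symmetric (hence has real point spectrum) on $\HH^\infty$.

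\textbf{Key steps.} First, for (i)$\Rightarrow$(ii): given a positive-type $\phi$ with $\phi(e)=1$ which is a joint eigenfunction, the GNS-type construction (\cite{folland_course_1995}, \S3.3, as cited in \S\ref{subsection:positivetype}) yields a unitary representation $\pi$ on some $\HH$ with cyclic unit vector $v$ such that $\phi(x) = \langle \pi(x) v, v\rangle$. The point is to check $v \in \HH^\infty$ and that $v$ is a joint eigenvector of $d\pi(L_j)$: one computes, for $m \in \JJ_L$, that $\langle \pi(\breve m) v, \pi(x) v\rangle = \langle \breve m, \LA_{x^{-1}}\overline{\phi}\rangle$-type expressions reduce, via $L_j \phi = \lambda_j \phi$ and the fact that $\breve m$ is a joint function of the $L_j$ (Lemma~\ref{lem:composition}), to a scalar multiple of $\langle v, \pi(x) v\rangle$; by cyclicity this forces $\pi(\breve m) v = m(\lambda) v$, and then Lemma~\ref{lem:eigenvectors} gives $v \in \HH^\infty$ and the joint eigenvector property, with eigenvalue $\lambda_j$ matching. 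For (ii)$\Rightarrow$(iii): with $v \in \HH^\infty$ a joint eigenvector, Proposition~\ref{prp:Jl1} gives $\pi(\breve m) v = m(c) v$ where $c$ is the eigenvalue tuple; since $\pi$ is an anti-representation of $L^1(G)$ and $\pi(\breve m \ast f) = \pi(f)\pi(\breve m)$, $\pi(f \ast \breve m) = \pi(\breve m)\pi(f)$, one evaluates all three pairings against $v$ and uses $\langle \breve h, \phi\rangle = \langle \pi(\breve h) v, v\rangle$ to get the identity; the case $f \in L^1(G)$ (not just $\D(G)$) follows by density and boundedness of $\pi(f)$. For (iii)$\Rightarrow$(iv): specialize $f = \breve m^*$ and use $\breve m^* = \overline{\breve m}$-type relations from Lemma~\ref{lem:composition}(a) together with $\langle \breve m^*, \phi\rangle = \overline{\langle \breve m, \phi\rangle}$ (which holds since $\phi(x^{-1}) = \overline{\phi(x)}$ for positive-type $\phi$). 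Finally (iv)$\Rightarrow$(i): condition (iv) says exactly that the map $m \mapsto \langle \breve m, \phi\rangle$ is multiplicative on the $\ast$-algebra $\Kern_L(\JJ_L)$ in the appropriate sense; realize $\phi$ again via a cyclic representation and translate (iv) into $\|\pi(\breve m) v - \langle \breve m,\phi\rangle v\|^2 = 0$ by expanding the square, so $v$ is an eigenvector of every $\pi(\breve m)$, whence Lemma~\ref{lem:eigenvectors} and then Proposition~\ref{prp:weakstrongeigenfunctions} give that $\phi$ is a smooth joint eigenfunction; $\phi(e)=1$ follows from $\|v\|=1$ (or one normalizes).

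\textbf{Main obstacle.} The delicate point is the bootstrapping in (i)$\Rightarrow$(ii) and in (iv)$\Rightarrow$(i): showing that the cyclic vector $v$ produced by the GNS construction actually lies in $\HH^\infty$ and is a \emph{joint} eigenvector of the $d\pi(L_j)$, rather than merely satisfying weak/distributional eigen-relations. The clean way around this is to never argue directly with the unbounded operators $d\pi(L_j)$, but to route everything through the bounded operators $\pi(\breve m)$, $m \in \JJ_L$, which by Proposition~\ref{prp:Jl1} equal $m(d\pi(L))$: one proves $v$ is a joint eigenvector of these bounded operators (an essentially algebraic manipulation of convolutions and the pairing $\langle\cdot,\phi\rangle$, using cyclicity to cancel $\pi(x)v$), and then invokes Lemma~\ref{lem:eigenvectors} as a black box to return to the infinitesimal picture, with Proposition~\ref{prp:weakstrongeigenfunctions} supplying smoothness of $\phi$ and Corollary~\ref{cor:hypoelliptic} (via $p_*(L)$ hypoelliptic) already built into it. The reality of the eigenvalues is then immediate since $p_j(d\pi(L))$ is positive self-adjoint and $\lambda_j = p_j(\lambda) - p_0(\lambda)$.
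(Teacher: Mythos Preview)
Your overall cycle of implications, the use of the GNS construction, and the reliance on Lemma~\ref{lem:eigenvectors} all match the paper's approach. There is, however, a genuine gap in your (iv)$\Rightarrow$(i) step. You claim that (iv) translates into $\|\pi(\breve m)v - \langle \breve m,\phi\rangle v\|^2 = 0$ by expanding the square; but setting $c = \langle \breve m,\phi\rangle = \langle \pi(\breve m)v,v\rangle$ and using $\|\pi(\breve m)v\|^2 = \langle \breve m * \breve m^*,\phi\rangle = |c|^2$, the expansion gives
\[
\|\pi(\breve m)v - cv\|^2 = |c|^2(\|v\|^2 - 1),
\]
which vanishes only if $\|v\|=1$. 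Condition (iv) alone gives only $\phi\neq 0$, and your parenthetical ``or one normalizes'' fails: (iv) is \emph{not} scale-invariant, since replacing $\phi$ by $\phi/\phi(e)$ turns it into $\langle \breve m * \breve m^*,\tilde\phi\rangle = \phi(e)\,|\langle \breve m,\tilde\phi\rangle|^2$. The paper closes this by taking $m = e^{-tp_*}$ and sending $t\to 0^+$: since $(\breve\xi_t)$ is an approximate identity (Corollary~\ref{cor:kernelapproximateidentity}), one has $\pi(\breve\xi_t)v\to v$, so the identity $\|\pi(\breve m)v\| = |\langle \pi(\breve m)v,v\rangle|$ passes to $\|v\| = \|v\|^2$, forcing $\|v\|=1$. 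Once this is in hand, your expansion (or equivalently the equality case of Cauchy--Schwarz, as the paper phrases it) does give that $\pi(\breve m)v \in \C v$.

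A secondary remark on (i)$\Rightarrow$(ii): your route through the bounded operators $\pi(\breve m)$ is viable but requires establishing $\phi * \breve m = m(\lambda)\phi$ for $m\in\JJ_L$ (essentially Proposition~\ref{prp:eigenvalues}, which comes later), e.g.\ by a semigroup ODE argument on $t\mapsto \langle \breve\xi_t,\psi\rangle$ for the generators $e^{-tp_k}$. The paper instead stays with the unbounded $L_j$: left-invariance propagates $L_j\phi=\lambda_j\phi$ to all $\psi_w(x)=\langle\pi(x)v,w\rangle$ (first for $w=\pi(y)v$, then by cyclicity and limits in $\D'(G)$ for arbitrary $w$), after which Proposition~\ref{prp:weakstrongeigenfunctions} yields smoothness, $v\in\HH^\infty$, and $d\pi(L_j)v=\lambda_jv$ directly.
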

\begin{proof}
(i) $\Rightarrow$ (ii). Since $\phi$ is of positive type and $\phi(e) = 1$, then $\phi$ is of the form
\[\phi(x) = \langle \pi(x) v, v \rangle\]
for some unitary representation $\pi$ of $G$ on $\HH$ and cyclic vector $v$ of norm $1$. From (i) we have
\[L_j \phi = \lambda_j \phi\]
for some $\lambda = (\lambda_1,\dots,\lambda_n) \in \C^n$. Being $L_1,\dots,L_n$ left-invariant, if
\[\phi_y(x) = \langle \pi(y^{-1} x) v, v \rangle = \langle \pi(x) v, \pi(y) v \rangle,\]
then also
\[L_j \phi_y = \lambda_j \phi_y.\]
Since $v$ is cyclic, for all $w \in \HH$ we can find a sequence $(w_n)_n$ in
\[\Span\{\pi(y) v \tc y \in G\}\]
such that $w_n \to w$ in $\HH$; if
\[\psi_n(x) = \langle \pi(x) v, w_n \rangle, \qquad \psi(x) = \langle \pi(x) v, w \rangle,\]
then the $\psi_n$ are linear combinations of the $\phi_y$, so that
\[L_j \psi_n = \lambda_j \psi_n\]
and, passing to the limit, we also have
\[L_j \psi = \lambda_j \psi\]
in the sense of distributions. But then $\psi \in \E(G)$ by Proposition~\ref{prp:weakstrongeigenfunctions}. Since $w \in \HH$ was arbitrary, we conclude that $v \in \HH^\infty$; moreover
\[\langle \lambda_j v, w \rangle = \lambda_j \psi(e) = L_j \psi(e) = \langle d\pi(L_j) v, w \rangle,\]
and again, from the arbitrariness of $w$, we get $d\pi(L_j) v = \lambda_j v$ for $j=1,\dots,n$. Finally, since $d\pi(L_j)$ is self-adjoint, we deduce that $\lambda_j \in \R$.

(ii) $\Rightarrow$ (i). Trivial.

(ii) $\Rightarrow$ (iii). If $m \in \JJ_L$, by Lemma~\ref{lem:eigenvectors}, $\pi(\breve m)^* v = \pi(\breve{\overline{m}}) v = c v$ for some $c \in \C$. Since $\|v\| = 1$, we have
\begin{multline*}
\langle f * \breve m, \phi \rangle = \langle \pi(f * \breve m) v, v \rangle = \langle \pi(\breve m) \pi(f) v, v \rangle = \overline{c} \langle \pi(f) v,v \rangle \\
= \langle \pi(f) v,v \rangle \langle \pi(\breve m) v, v \rangle = \langle f, \phi \rangle \langle \breve m, \phi \rangle.
\end{multline*}
The other identity is proved analogously.

(iii) $\Rightarrow$ (iv). Trivial.

(iv) $\Rightarrow$ (ii). Being of positive type, $\phi$ has the form
\[\phi(x) = \langle \pi(x) v, v \rangle\]
for some unitary representation $\pi$ of $G$ on $\HH$ and cyclic vector $v$. Then (iv) can be equivalently rewritten as
\begin{equation}\label{eq:multiplicativenorm}
\| \pi(\breve m) v \| = |\langle \pi(\breve m) v, v \rangle|
\end{equation}
for all $m \in \JJ_L$. In particular, by taking $m = e^{-tp_*}$, and passing to the limit for $t \to 0^+$, we obtain
\[\|v\| = \|v\|^2\]
(see Corollary~\ref{cor:kernelapproximateidentity} and Proposition~\ref{prp:approximateidentity}), so that $\|v\| = 1$ (since $\phi \neq 0$). Now, for an arbitrary $m \in \JJ_L$, \eqref{eq:multiplicativenorm} implies that $\pi(\breve m) v$ cannot have a component orthogonal to $v$, thus $v$ is an eigenvector of $\pi(\breve m)$, and (ii) follows from Lemma~\ref{lem:eigenvectors}.
\end{proof}

Let $\PP_L$ be the set of the joint eigenfunctions $\phi$ of $L_1,\dots,L_n$ of positive type with $\phi(e) = 1$. For every $\phi \in \PP_L$, by Proposition~\ref{prp:eigenfunctions} there exists a unique $\lambda = (\lambda_1,\dots,\lambda_n) \in \R^n$ such that
\[L_j \phi = \lambda_j \phi;\]
we then define $\evmap_L : \PP_L \to \R^n$ by setting $\evmap_L(\phi) = \lambda$.

\begin{prp}\label{prp:eigenvalues}
For every $m \in \JJ_L$ and $\phi \in \PP_L$, we have
\begin{equation}\label{eq:eigenvalues}
\phi * \breve m = m(\evmap_L(\phi)) \phi \qquad\text{and}\qquad \langle \breve m, \phi \rangle = m(\evmap_L(\phi)).
\end{equation}
If $G$ is amenable, then the previous identities hold also for every $m \in C_0(\R^n)$ such that $\breve m \in L^1(G)$.
\end{prp}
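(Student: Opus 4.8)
The plan is to deduce the two identities in \eqref{eq:eigenvalues} from the characterizations of joint eigenfunctions obtained in Proposition~\ref{prp:eigenfunctions}, and then to extend to the case $m \in C_0(\R^n)$ with $\breve m \in L^1(G)$ by a density argument using amenability, mimicking the proof of Proposition~\ref{prp:Jl1}. First I would fix $\phi \in \PP_L$ and write $\phi(x) = \langle \pi(x) v, v\rangle$ for a unitary representation $\pi$ of $G$ on a Hilbert space $\HH$ with $v \in \HH^\infty$ a joint eigenvector of $d\pi(L_1),\dots,d\pi(L_n)$ of unit norm, whose existence is guaranteed by Proposition~\ref{prp:eigenfunctions}(ii). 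By definition of $\evmap_L$, the eigenvalue of $d\pi(L_j)$ at $v$ is $\lambda_j = (\evmap_L(\phi))_j$. By Lemma~\ref{lem:eigenvectors}, $v$ is then a joint eigenvector of every $\pi(\breve m)$ for $m \in \JJ_L$, and in fact by Proposition~\ref{prp:Jl1} we have $\pi(\breve m) = m(d\pi(L_1),\dots,d\pi(L_n))$, so $\pi(\breve m) v = m(\evmap_L(\phi))\, v$.

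The second identity in \eqref{eq:eigenvalues} is then immediate:
\[
\langle \breve m, \phi\rangle = \langle \pi(\breve m) v, v\rangle = m(\evmap_L(\phi)) \langle v, v\rangle = m(\evmap_L(\phi)).
\]
For the first identity, I would use the translation-covariance of the coefficient function. Recall $\phi * \breve m = \RA(\breve m)\phi$ in the sense of \eqref{eq:convolutionregularrepresentation}; evaluating at $y \in G$ and using $\pi(x^{-1}y\cdot) $-type manipulations (as in the proof of (i)$\Rightarrow$(ii) of Proposition~\ref{prp:eigenfunctions}), one has $\phi * \breve m(y) = \langle \pi(\breve m)\pi(y) v, v\rangle$ (or $\langle \pi(y)v, \pi(\breve{\overline m})v\rangle$, up to the exact bookkeeping of which side the convolution acts on, which I would sort out carefully against the conventions in \S\ref{subsection:regularrepresentations} and the identity $\pi(\RA_x f) = \pi(x)\pi(f)$). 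Since $\pi(y)v$ need not be a smooth vector, I would instead write $\phi * \breve m(y) = \langle \pi(y^{-1})^* \pi(\breve m) v, \ldots\rangle$ so that $\pi(\breve m)$ hits $v$ directly; then $\pi(\breve m) v = m(\evmap_L(\phi)) v$ gives $\phi*\breve m(y) = m(\evmap_L(\phi)) \langle \pi(y) v, v\rangle = m(\evmap_L(\phi))\, \phi(y)$, which is the claimed identity $\phi * \breve m = m(\evmap_L(\phi))\, \phi$.

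For the amenable case, suppose $m \in C_0(\R^n)$ with $\breve m \in L^1(G)$. By Proposition~\ref{prp:Jdensity} (density of $\JJ_L$ in $C_0(\Sigma)$, equivalently Lemma~\ref{lem:Jdensity}) choose $m_k \in \JJ_L$ with $m_k \to m$ uniformly on $\R^n$. Then $\breve m_k \to \breve m$ in $\Cv^2(G)$ by Lemma~\ref{lem:composition}(a); since $G$ is amenable, $\pi$ is weakly contained in the regular representation, so $\pi(\breve m_k) \to \pi(\breve m)$ in operator norm, hence $\langle \breve m_k, \phi\rangle = \langle \pi(\breve m_k) v, v\rangle \to \langle \pi(\breve m) v, v\rangle = \langle \breve m, \phi\rangle$. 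On the other hand $m_k(\evmap_L(\phi)) \to m(\evmap_L(\phi))$ since $\evmap_L(\phi) \in \Sigma$ and convergence is uniform on $\Sigma$; combining with the already-proven identity for $m_k$ gives $\langle \breve m, \phi\rangle = m(\evmap_L(\phi))$. For the convolution identity, the same operator-norm convergence gives $\phi * \breve m_k \to \phi * \breve m$ pointwise (indeed uniformly, via $\|\phi\|_\infty$ and $\|\breve m_k - \breve m\|_{\Cv^2}$ together with boundedness of $\phi$, or more directly since $\pi(\breve m_k)v \to \pi(\breve m)v$ in $\HH$), so passing to the limit in $\phi * \breve m_k = m_k(\evmap_L(\phi))\phi$ yields $\phi * \breve m = m(\evmap_L(\phi))\phi$.

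The main obstacle I anticipate is purely bookkeeping: correctly tracking the conventions (left vs.\ right convolution, the placement of inverses, the conjugate-linearity of $\langle\cdot,\cdot\rangle$ in the second slot, and $\pi$ being an \emph{anti}-representation of $L^1(G)$) so that the coefficient-function computation $\phi*\breve m(y) = m(\evmap_L(\phi))\phi(y)$ comes out with the scalar $m(\evmap_L(\phi))$ rather than its conjugate or $\overline{m}$ evaluated at the eigenvalue. Since $\evmap_L(\phi)$ is real-valued and the $\JJ_L$-generators are real-valued this ambiguity is harmless in the end, but it must be handled with care; there is no serious analytic difficulty beyond what is already packaged in Propositions~\ref{prp:Jl1} and \ref{prp:eigenfunctions} and Lemma~\ref{lem:eigenvectors}.
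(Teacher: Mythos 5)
Your proposal is correct and follows essentially the same route as the paper: write $\phi(x)=\langle\pi(x)v,v\rangle$ with $v$ a joint eigenvector (Proposition~\ref{prp:eigenfunctions}), use Proposition~\ref{prp:Jl1} to get $\pi(\breve m)v=m(\evmap_L(\phi))v$, and the convolution bookkeeping resolves exactly as you suspect, namely $\phi*\breve m(x)=\langle\pi(x)\pi(\breve m)v,v\rangle$ (and the smooth-vector worry is vacuous since $\pi(\breve m)$ is bounded), the paper obtaining the second identity by evaluating the first at $x=e$. For the amenable case your density argument works, though it can be shortcut: the last clause of Proposition~\ref{prp:Jl1} already gives $\pi(\breve m)=m(d\pi(L_1),\dots,d\pi(L_n))$ for $m\in C_0(\R^n)$ with $\breve m\in L^1(G)$, so the same one-line computation applies verbatim.
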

\begin{proof}
Let $\phi(x) = \langle \pi(x) v, v \rangle$ for some unitary representation $\pi$ of $G$ on $\HH$ and cyclic vector $v$ of norm $1$. If $\evmap_L(\phi) = (\lambda_1,\dots,\lambda_n)$, by Proposition~\ref{prp:eigenfunctions} we have
\[d\pi(L_j) v = \lambda_j v,\]
so that, by Proposition~\ref{prp:Jl1} and the properties of the spectral integral,
\[\pi(\breve m) v = m(d\pi(L_1),\dots,d\pi(L_n)) v = m(\evmap_L(\phi)) v.\]
We then have
\[\phi * \breve m(x) = \int_G \langle \pi(x) \pi(y^{-1}) v, v \rangle \breve m(x) \,dx = \langle \pi(x) \pi(\breve m) v, v \rangle = m(\evmap_L(\phi)) \phi(x),\]
which is the former identity of \eqref{eq:eigenvalues}; by evaluating in $x = e$ we also get the latter.
\end{proof}

\begin{cor}\label{cor:continuouseigenvalues}
If $\PP_L$ is endowed with the topology induced by the weak-$*$ topology of $L^\infty(G)$, then the map $\evmap_L : \PP_L \to \R^n$ is continuous.
\end{cor}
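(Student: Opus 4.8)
The plan is to deduce the continuity of $\evmap_L$ from the identity \eqref{eq:eigenvalues} in Proposition~\ref{prp:eigenvalues}, which expresses each coordinate function of $\evmap_L$ as a pairing against a fixed kernel. Concretely, fix a generator $m_0 = e^{-p_0}$ of $\JJ_L$ and, for $j=1,\dots,n$, the generator $m_j = e^{-p_j}$; by the second identity in \eqref{eq:eigenvalues} we have $\langle \breve m_j, \phi\rangle = e^{-p_j(\evmap_L(\phi))}$ for every $\phi \in \PP_L$. Since $\phi \mapsto \langle \breve m_j, \phi\rangle$ is, by definition, continuous on $L^\infty(G)$ with the weak-$*$ topology (because $\breve m_j \in L^{1;\infty}(G) \subseteq L^1(G)$ by Proposition~\ref{prp:Jl1}, and weak-$*$ convergence in $L^\infty(G)$ means testing against $L^1(G)$ functions), the map $\phi \mapsto e^{-p_j(\evmap_L(\phi))}$ is continuous on $\PP_L$ for each $j = 0,1,\dots,n$.

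The second step is to recover $\evmap_L$ itself from these exponentials. Taking logarithms (legitimate since $e^{-p_j(\cdot)} > 0$), the maps $\phi \mapsto p_j(\evmap_L(\phi))$ are continuous for $j=0,\dots,n$, and hence so are the differences $\phi \mapsto p_j(\evmap_L(\phi)) - p_0(\evmap_L(\phi))$. But by the very construction of the polynomials $p_0,p_1,\dots,p_n$ in \S\ref{section:plancherel} we have $p_j(\lambda) - p_0(\lambda) = \lambda_j$, so this difference is exactly the $j$-th coordinate of $\evmap_L(\phi)$. Therefore each coordinate of $\evmap_L$ is continuous on $\PP_L$, which is the assertion.

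One should note a small point of rigor: the identity \eqref{eq:eigenvalues} requires $m \in \JJ_L$ and the kernels $\breve m_j$ to lie in $L^1(G)$; both hold by Proposition~\ref{prp:Jl1}, and no amenability hypothesis is needed since we only use generators of $\JJ_L$. The only mild subtlety — which I do not expect to be a genuine obstacle — is making sure the topology on $\PP_L$ is the subspace topology inherited from the weak-$*$ topology of $L^\infty(G)$, as stated, so that pairing against a fixed $L^1$ function is continuous by definition; with that understood, the argument is entirely formal. The proof is thus short: express the coordinates of $\evmap_L$ as logarithms of weak-$*$-continuous functionals evaluated at $\phi$, using \eqref{eq:eigenvalues} and the relation $p_j - p_0 = \lambda_j$.
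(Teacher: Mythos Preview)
Your proof is correct and follows essentially the same approach as the paper: both use Proposition~\ref{prp:eigenvalues} to write $e^{-p_j(\evmap_L(\phi))} = \langle \breve m_j, \phi\rangle$ with $\breve m_j \in L^1(G)$, note that this is weak-$*$ continuous, and then recover the $j$-th coordinate via the relation $p_j - p_0 = \lambda_j$. The only cosmetic difference is that the paper divides the exponentials to obtain $e^{-\evmap_{L,j}(\phi)}$ before concluding, whereas you take logarithms first and subtract.
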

\begin{proof}
By Proposition~\ref{prp:eigenvalues}, for $j=0,\dots,n$, we have that
\[e^{-p_j(\evmap_L(\phi))} = \langle \Kern_L (e^{-p_j}), \phi \rangle,\]
which is continuous in $\phi$ with respect to the weak-$*$ topology of $L^\infty(G)$. If $\evmap_{L,1},\dots,\evmap_{L,n} : \PP_L \to \R$ are the components of $\evmap_L$, then
\[e^{-\evmap_{L,j}(\phi)} = e^{-p_j(\evmap_L(\phi))}/e^{-p_0(\evmap_L(\phi))}.\]
This shows that the components of $\evmap_L$ are continuous $\PP_L \to \R$, so that also $\evmap_L$ is continuous.
\end{proof}

\begin{prp}\label{prp:eigentopologies}
The topologies on $\PP_L$ induced by the weak-$*$ topology of $L^\infty(G)$, the compact-open topology of $C(G)$ and the topology of $\E(G)$ coincide. Moreover, the map $\evmap_L : \PP_L \to \R^n$ is a continuous, proper and closed map. In particular, the image $\evmap_L(\PP_L)$ is a closed subset of $\R^n$ and its topology as a subspace of $\R^n$ coincides with the quotient topology induced by $\evmap_L$.
\end{prp}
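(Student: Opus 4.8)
The plan is to prove the chain of coincidences of topologies on $\PP_L$ by a sandwich argument, and then to deduce the properties of $\evmap_L$. First I would observe that, on $\PP_L$, the weak-$*$ topology of $L^\infty(G)$ coincides with the compact-open topology of $C(G)$: indeed $\PP_L \subseteq \PP_1$ (functions of positive type with value $1$ at $e$), and by \cite{folland_course_1995}, Theorem~3.31 (quoted in \S\ref{subsection:positivetype}), these two topologies already agree on the whole of $\PP_1$. So it remains to upgrade the compact-open topology of $C(G)$ to the (finer) topology of $\E(G)$. For this I would use the hypoellipticity machinery: fix the weighted subcoercive operator $p_*(L)$ and let $\xi_t = \Kern_L(e^{-tp_*})$, which lies in $L^{1;\infty}(G) \cap C_0^\infty(G)$ by Proposition~\ref{prp:Jl1}. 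For $\phi \in \PP_L$ with $\evmap_L(\phi) = \lambda$, Proposition~\ref{prp:eigenvalues} gives $\phi * \xi_t = e^{-t p_*(\lambda)}\phi$, hence
\[
\phi = e^{t p_*(\lambda)}\,\phi * \xi_t .
\]
Since $p_*(\lambda) = -\log\langle \xi_t,\phi\rangle$ depends continuously on $\phi$ in the compact-open topology (Corollary~\ref{cor:continuouseigenvalues} and its proof), and since $u \mapsto u * \xi_t$ maps $C_b(G)$ continuously into $\E(G)$ when $\xi_t$ is a fixed Schwartz-type kernel (the smoothing property of convolution together with uniform bounds on the derivatives $A^\alpha \xi_t$ from Theorem~\ref{thm:robinsonterelst}(e,f)), it follows that the identity map $(\PP_L, \text{c.o.}) \to (\PP_L, \E(G))$ is continuous; the reverse continuity is trivial since $\E(G) \to C(G)$ is continuous. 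This establishes the three topologies coincide.

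**Continuity, properness, closedness of $\evmap_L$.** Continuity of $\evmap_L$ is exactly Corollary~\ref{cor:continuouseigenvalues}. For properness, I would argue as follows: let $K \subseteq \R^n$ be compact; I claim $\evmap_L^{-1}(K)$ is compact in $\PP_L$. Since $\PP_1$ is weak-$*$ compact and $\PP_L \subseteq \PP_1$, it suffices to show $\evmap_L^{-1}(K)$ is weak-$*$ closed in $\PP_1$, i.e. closed in $\overline{\PP_L}^{\,w*}$ — but in fact I would show directly that $\evmap_L^{-1}(K)$ is closed in $\PP_1$, by checking that any weak-$*$ limit $\phi$ of a net in $\evmap_L^{-1}(K)$ still lies in $\PP_L$ with $\evmap_L(\phi) \in K$. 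Here the characterization (iv) of Proposition~\ref{prp:eigenfunctions} is the key tool: the condition $\langle \breve m * \breve m^*, \phi\rangle = |\langle \breve m, \phi\rangle|^2$ for all $m \in \JJ_L$ is a \emph{closed} condition in the weak-$*$ topology (both sides are weak-$*$ continuous in $\phi$, since $\breve m, \breve m * \breve m^* \in L^1(G)$), so the weak-$*$ limit of eigenfunctions of positive type that is still nonzero is again in $\PP_L$. Nonvanishing at the limit is guaranteed because $\phi(e) = 1$ is preserved (value at $e$ is weak-$*$ continuous on $\PP_1$, or use the compact-open topology). Finally, $\evmap_L(\phi_\alpha) \in K$ for all $\alpha$ and $K$ closed forces $\evmap_L(\phi) \in K$ by continuity. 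Properness of $\evmap_L$ together with the fact that $\R^n$ is locally compact Hausdorff and $\PP_L$ is Hausdorff then yields that $\evmap_L$ is a closed map (a continuous proper map into a locally compact Hausdorff space is closed), so $\evmap_L(\PP_L)$ is closed in $\R^n$ and carries the quotient topology.

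**Main obstacle.** The delicate point is showing that $\PP_L$ is closed inside $\PP_1$ under weak-$*$ limits — equivalently, that a weak-$*$ limit of a net in $\PP_L$ cannot "escape" to something that is of positive type but fails to be a joint eigenfunction, nor degenerate. The characterization (iv) of Proposition~\ref{prp:eigenfunctions} is tailored precisely for this, but one must be careful that the relevant kernels $\breve m$ and $\breve m * \breve m^*$ genuinely lie in $L^1(G)$ (they do, by Proposition~\ref{prp:Jl1} and the convolution properties of $L^{1;\infty}(G)$) so that the pairings are weak-$*$ continuous; and one must verify that the limit is not the zero functional, which follows from $\phi(e) = 1$ being preserved. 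A secondary technical point is the convolution-smoothing estimate $u \mapsto u * \xi_t$ continuous $C_b(G) \to \E(G)$ with the requisite uniformity; this is routine given the Gaussian estimates of Theorem~\ref{thm:robinsonterelst}(e), but it is where the weighted subcoercive hypothesis is actually used to pin down the smooth topology, so it should be stated carefully rather than waved through.
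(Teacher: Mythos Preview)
Your approach to the coincidence of topologies is sound and essentially parallels the paper's: you cite Folland's Theorem~3.31 for weak-$*$ $=$ compact-open on $\PP_1$, while the paper reproves this step by hand using dominated convergence; you upgrade to the $\E(G)$ topology via the smoothing identity $\phi = e^{tp_*(\evmap_L(\phi))}\,\phi * \xi_t$, whereas the paper uses the same identity to obtain equiboundedness of all derivatives and then invokes Ascoli--Arzel\`a. Both routes work.

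However, your argument for \emph{properness} has a genuine gap. You write that ``$\PP_1$ is weak-$*$ compact'': this is false. It is $\PP_0 = \{\phi(e) \le 1\}$ that is weak-$*$ compact; $\PP_1$ is not weak-$*$ closed in $L^\infty(G)$ (e.g.\ $e^{in\cdot} \to 0$ weak-$*$ on $\R$). Consequently, when you pass to a weak-$*$ convergent subnet with limit $\phi \in \PP_0$, the crucial issue is precisely that $\phi$ might be $0$, and your justification ``$\phi(e)=1$ is preserved (value at $e$ is weak-$*$ continuous on $\PP_1$, or use the compact-open topology)'' is circular: both the weak-$*$/compact-open equivalence and continuous point evaluation are only available once you already know the limit lies in $\PP_1$.

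The fix is to \emph{use the constraint} $\evmap_L(\phi_\alpha) \in K$: by Proposition~\ref{prp:eigenvalues}, $\langle \breve m,\phi_\alpha\rangle = m(\evmap_L(\phi_\alpha))$ for $m = e^{-p_*} \in \JJ_L$, and since $p_*$ is continuous and $K$ compact this is bounded below by some $c>0$. As $\breve m \in L^1(G)$, the pairing is weak-$*$ continuous, so $\langle \breve m,\phi\rangle \ge c > 0$ and $\phi \neq 0$. Then characterization~(iv) applies and your argument goes through. The paper avoids this subtlety by working directly in the compact-open topology via Ascoli--Arzel\`a (so $\phi(e)=1$ is immediate) and verifying the eigenfunction property in the sense of distributions, then invoking Proposition~\ref{prp:weakstrongeigenfunctions}.
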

\begin{proof}
Since $G$ is second-countable, the three aforementioned topologies on $\PP_L$ are all metrizable (cf.\ \cite{megginson_introduction_1998}, Corollary~2.6.20). In particular, in order to prove that they coincide, it is sufficient to show that they induce the same notion of convergence of sequences.

Let $(\phi_k)_k$ be a sequence in $\PP_L$. If $(\phi_k)_k$ converges in $\E(G)$, then a fortiori it converges in $C(G)$. Moreover, since $\|\phi_k\|_\infty = 1$ for all $k$, convergence in $C(G)$ implies weak-$*$ convergence in $L^\infty(G)$ by dominated convergence.

Suppose now that $\phi_k \to \phi \in \PP_L$ with respect to the weak-$*$ topology of $L^\infty(G)$. Take $m = e^{-p_*} \in \JJ_L$, so that $m > 0$. By Proposition~\ref{prp:eigenvalues}, for all $D \in \Diff(G)$, we then have
\[D\phi_k = \frac{\phi_k * D \breve m}{m(\evmap_L(\phi_k))}, \qquad D\phi = \frac{\phi * D \breve m}{m(\evmap_L(\phi))};\]
in particular, for every $x \in G$, since $\RA_x D \breve m \in L^1(G)$,
\[D\phi_k(x) = \frac{\langle \RA_x D \breve m, \phi_k \rangle}{m(\evmap_L(\phi_k))} \to \frac{\langle \RA_x D \breve m, \phi \rangle}{m(\evmap_L(\phi))} = D\phi(x)\]
by Corollary~\ref{cor:continuouseigenvalues}. Moreover, again by Corollary~\ref{cor:continuouseigenvalues},
\[m(\evmap_L(\phi_k)) \geq c > 0\]
for some $c$ and all $k$, so that
\[\|D\phi_k\|_\infty \leq c^{-1} \|D\breve m\|_1.\]
This means that, for all $D \in \Diff(G)$, the family $\{D\phi_k\}_k$ is equibounded; but then also, for all $D \in \Diff(G)$, the family $\{D\phi_k\}_k$ is equicontinuous, so that the previously proved pointwise convergence $D\phi_k \to D\phi$ is in fact uniform on compacta. By arbitrariness of $D \in \Diff(G)$, we have then proved that $\phi_k \to \phi$ in $\E(G)$.

Let now $K \subseteq \R^n$ be compact, and take a sequence $(\phi_k)_k$ in $\PP_L$ such that $\evmap_L(\phi_k) \in K$ for all $k$. As before, the sequence $(\phi_k)_k$ is equibounded and equicontinuous, so that, by the Ascoli-Arzel\`a theorem (see \cite{bourbaki_topology2}, \S X.2.5), we can find a subsequence $\phi_{k_h}$ which converges uniformly on compacta to a function $\phi \in C(G)$, and such that moreover $\evmap_L(\phi_{k_h})$ converges to some $\lambda \in K$. It is now easy to show that $\phi$ is of positive type and $\phi(e) = 1$; moreover, for all $\eta \in \D(G)$,
\[\langle L_j \phi, \eta \rangle = \lim_h \langle L_j \phi_{k_h}, \eta \rangle = \lim_h \evmap_{L,j}(\phi_{k_h}) \langle \phi_{k_h}, \eta \rangle = \lambda_j \langle \phi, \eta \rangle,\]
so that
\[L_j \phi = \lambda_j \phi\]
in the sense of distributions, but then, by Proposition~\ref{prp:weakstrongeigenfunctions}, $\phi$ is smooth and is a joint eigenfunction of $L_1,\dots,L_n$, so that $\phi \in \PP_L$. Since $\PP_L$ is metrizable, this shows that $\evmap_L^{-1}(K)$ is compact in $\PP_L$. By the arbitrariness of the compact $K \subseteq \R^n$, we conclude that $\evmap_L$ is proper and closed (see \cite{bourbaki_topology1}, Propositions I.10.1 and I.10.7).
\end{proof}

\begin{prp}\label{prp:eigenextremals}
For $\lambda \in \R^n$, the set $\evmap_L^{-1}(\lambda)$ is a weakly-$*$ compact and convex subset of $L^\infty(G)$, whose extreme points are the ones associated with irreducible representations.
\end{prp}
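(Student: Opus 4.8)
The plan is to identify $\evmap_L^{-1}(\lambda)$, together with the zero function adjoined, with a weakly-$*$ closed face of the cone $\PP_1$ of normalized functions of positive type, and then to invoke the structure theory of $\PP_1$ recalled in \S\ref{subsection:positivetype}. First I would note that $\evmap_L^{-1}(\lambda) \subseteq \PP_1$ by definition, and that by Proposition~\ref{prp:eigenfunctions} a function $\phi \in \PP_1$ lies in $\evmap_L^{-1}(\lambda)$ if and only if it satisfies the system of \emph{linear} constraints
\[\langle \breve m * f, \phi \rangle = \langle f * \breve m, \phi \rangle = m(\lambda) \langle f, \phi \rangle \qquad \text{for all } m \in \JJ_L, \ f \in L^1(G),\]
which, since $\JJ_L$ is countably generated and each $\breve m * f$, $f * \breve m$, $f$ lies in $L^1(G)$, is a weak-$*$ closed condition on $\PP_1$; hence $\evmap_L^{-1}(\lambda) \cup \{0\}$ is a weak-$*$ closed (and convex) subset of the weak-$*$ compact set $\PP_0$, so $\evmap_L^{-1}(\lambda)$ is weak-$*$ compact and convex (it is also closed in $\PP_1$, which is what matters). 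Alternatively, one can simply invoke Proposition~\ref{prp:eigentopologies}: $\evmap_L$ is continuous and $\PP_L$ is closed in $\PP_1$ with the weak-$*$ topology, and $\{\lambda\}$ is compact, so $\evmap_L^{-1}(\lambda)$ is a closed subset of the weak-$*$ compact $\PP_0$, hence compact; convexity is immediate since the defining eigenvalue equations $L_j \phi = \lambda_j \phi$ and the normalization $\phi(e) = 1$ are affine.

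The substantive part is the identification of the extreme points. I would argue that $\phi$ is an extreme point of $\evmap_L^{-1}(\lambda)$ if and only if it is an extreme point of $\PP_1$, i.e.\ (by Theorem~3.25 of \cite{folland_course_1995}, as recalled in \S\ref{subsection:positivetype}) of the form $\phi_{\pi,v}$ with $\pi$ irreducible. One direction is easy: if $\phi \in \evmap_L^{-1}(\lambda)$ is extreme in $\PP_1$, it is \emph{a fortiori} extreme in the smaller convex set $\evmap_L^{-1}(\lambda)$. For the converse, suppose $\phi \in \evmap_L^{-1}(\lambda)$ is extreme in $\evmap_L^{-1}(\lambda)$ but write $\phi = t\psi_0 + (1-t)\psi_1$ with $\psi_0, \psi_1 \in \PP_1$, $0 < t < 1$; I must show $\psi_0 = \psi_1 = \phi$. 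The key point is that $\psi_0$ and $\psi_1$ automatically lie in $\evmap_L^{-1}(\lambda)$. To see this, realize $\phi = \langle \pi(\cdot) v, v\rangle$ with $v$ cyclic of norm $1$; the decomposition $\phi = t\psi_0 + (1-t)\psi_1$ corresponds (by the theory of \S\ref{subsection:positivetype}, e.g.\ Theorem~3.25 of \cite{folland_course_1995}) to a positive operator $T$ in the commutant $I(\pi)$ with $0 \le T \le \mathrm{id}$, via $\psi_0(x) = \langle \pi(x) T v, v\rangle / \langle Tv,v\rangle$ and similarly for $\psi_1$ with $\mathrm{id} - T$. Since $v \in \HH^\infty$ is a joint eigenvector of $d\pi(L_1),\dots,d\pi(L_n)$ with eigenvalue $\lambda$ (Proposition~\ref{prp:eigenfunctions}), and $T$ commutes with every $\pi(g)$ hence with every $\pi(\breve m)$ for $m \in \JJ_L$, Lemma~\ref{lem:eigenvectors} shows $Tv$ is again a joint eigenvector of the $\pi(\breve m)$; by Lemma~\ref{lem:eigenvectors} again $Tv \in \HH^\infty$ and it is a joint eigenvector of $d\pi(L_1),\dots,d\pi(L_n)$. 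Its eigenvalue must be $\lambda$ as well, since $T$ commutes with each (closed) $\overline{d\pi(L_j)}$ and $v$ is an eigenvector: $\overline{d\pi(L_j)} Tv = T \overline{d\pi(L_j)} v = \lambda_j Tv$. Hence $\psi_0 \in \evmap_L^{-1}(\lambda)$ (and likewise $\psi_1$), and by extremality of $\phi$ in $\evmap_L^{-1}(\lambda)$ we conclude $\psi_0 = \psi_1 = \phi$. Therefore $\phi$ is extreme in $\PP_1$, i.e.\ is associated with an irreducible representation.

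I would organize the writeup as: (1) compactness and convexity from Proposition~\ref{prp:eigentopologies} and the affineness of the defining relations; (2) the trivial inclusion of extreme-in-$\PP_1$ points among extreme-in-$\evmap_L^{-1}(\lambda)$ points; (3) the commutant/eigenvector argument above for the converse, leaning on Lemma~\ref{lem:eigenvectors} and Proposition~\ref{prp:eigenfunctions}. The main obstacle is step (3): one must make precise the correspondence between convex decompositions of $\phi_{\pi,v}$ and positive operators $T \le \mathrm{id}$ in the commutant $I(\pi)$, and verify cleanly that $T$ commuting with all $\pi(g)$ (equivalently with all $\pi(\breve m)$, $m \in \JJ_L$, by the Gelfand--Raikov density of such kernels — or directly from $T \in I(\pi)$ and $\pi(\breve m) = \int \pi(x^{-1}) \breve m(x)\,dx$) forces $Tv$ to remain in $\HH^\infty$ with the same joint eigenvalue; the rest is a routine application of the extreme-point theory already cited from \cite{folland_course_1995}.
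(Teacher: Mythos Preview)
Your proposal is correct and follows the same overall architecture as the paper: compactness and convexity are obtained from Proposition~\ref{prp:eigentopologies} and the affineness of the constraints, and the identification of extreme points is reduced to showing that $\evmap_L^{-1}(\lambda)$ is a face of $\PP_1$, i.e., that any $\PP_1$-decomposition $\phi = t\psi_0 + (1-t)\psi_1$ of an element $\phi \in \evmap_L^{-1}(\lambda)$ forces $\psi_0,\psi_1 \in \evmap_L^{-1}(\lambda)$.

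The two arguments diverge in how this last fact is established. The paper realizes $\phi_0,\phi_1$ via cyclic representations $(\pi_0,v_0),(\pi_1,v_1)$, forms the direct-sum representation on $\HH_0 \oplus \HH_1$, restricts to the cyclic subspace generated by $v = (\theta_0 v_0, \theta_1 v_1)$, and then uses the canonical projections $P_k : \HH \to \HH_k$, which intertwine $\pi$ and $\pi_k$, to push the eigenvector property of $v$ down to each $v_k$. Your route instead invokes the Radon--Nikodym-type correspondence between positive-type functions dominated by $\phi_{\pi,v}$ and positive operators $T$ in the commutant $I(\pi)$, and then argues that $T$ (or $T^{1/2}$) preserves joint eigenvectors of the $d\pi(L_j)$ because it commutes with every $\pi(\breve m)$; Lemma~\ref{lem:eigenvectors} then closes the loop. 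Your approach is more structural and slightly shorter once the commutant correspondence is granted, but that correspondence is not the content of Theorem~3.25 in \cite{folland_course_1995} (which only identifies the extreme points of $\PP_1$); you should cite the actual domination/commutant result, e.g., Proposition~3.24 there. The paper's explicit direct-sum construction avoids this citation and keeps everything self-contained within the representation-theoretic toolkit already developed in the chapter.
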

\begin{proof}
It is immediate to show that $\evmap_L^{-1}(\lambda)$ is convex, whereas compactness follows from Proposition~\ref{prp:eigentopologies}.

Let $\PP_1$ be the set of functions $\phi$ of positive type on $G$ such that $\phi(e) = 1$. Since $\evmap_L^{-1}(\lambda) \subseteq \PP_1$, in order to conclude
it will be sufficient to show that the extreme points of $\evmap_L^{-1}(\lambda)$ are also extreme points of $\PP_1$ (see \S\ref{subsection:positivetype}).

Suppose then that $\phi \in \evmap_L^{-1}(\lambda)$ is not extreme in $\PP_1$, so that
\[\phi = \theta_0^2 \phi_0 + \theta_1^2 \phi_1\]
for some $\phi_0,\phi_1 \in \PP_1$ different from $\phi$ and some $\theta_0,\theta_1 > 0$ such that $\theta_0^2 + \theta_1^2 = 1$. For $k=0,1$,
\[\phi_k(x) = \langle \pi_k(x) v_k, v_k \rangle \qquad\text{for $k=0,1$,}\]
where $\pi_k$ is a unitary representation of $G$ on $\HH_k$ and $v_k$ is a cyclic vector of norm $1$. If
\[v = (\theta_0 v_0, \theta_1 v_1) \in \HH_0 \oplus \HH_1,\]
\[\HH = \overline{\Span\{(\pi_0 \oplus \pi_1)(x) v \tc x \in G\}},\]
and $\pi$ is the restriction of $\pi_0 \oplus \pi_1$ to $\HH$, then it is easy to see that $v$ is a cyclic vector for $\pi$ and that
\[\phi(x) = \langle \pi(x) v, v \rangle,\]
therefore by Proposition~\ref{prp:eigenfunctions} it follows that $v \in \HH^\infty$ and that $d\pi(L_j) v = \lambda_j v$ for $j=1,\dots,n$.

If $P_k : \HH \to \HH_k$ is the restriction of the canonical projection $\HH_0 \oplus \HH_1 \to \HH_k$, it is immediate to check that $P_k$ intertwines $\pi$ and $\pi_k$, and that $P_k v = \theta_k v_k$; hence, for all $w \in \HH_k$ and $x \in G$,
\[\langle \pi_k(x) v_k, w \rangle = \theta_k^{-1} \langle \pi_k(x) P_k v , w \rangle = \theta_k^{-1} \langle \pi(x) v, P_k^* w \rangle.\]
This identity, together with the arbitrariness of $w \in \HH_k$, shows that $v_k \in \HH_k^\infty$. Moreover, since $P_k$ intertwines $\pi(x)$ and $\pi_k(x)$ for all $x \in G$, it is easy to check that it intertwines also $d\pi(D)$ and $d\pi_k(D)$ for all $D \in \Diff(G)$, therefore
\[d\pi(L_j) v_k = \theta_k^{-1} P_k d\pi(L_j) v = \lambda_j v_k\]
for $j=1,\dots,n$. By Proposition~\ref{prp:eigenfunctions}, this shows that $\phi_0,\phi_1 \in \evmap_L^{-1}(\lambda)$, thus $\phi$ is not even extreme in $\evmap_L^{-1}(\lambda)$.
\end{proof}

The main algebraic and topological properties of the joint eigenfunctions of positive type of $L_1,\dots,L_n$ have been established. Now we are interested in determining the relationships between the generalized joint point spectrum $\evmap_L(\PP_L)$ --- which is, by Propositions~\ref{prp:eigenfunctions} and \ref{prp:eigenextremals}, the union of the joint point spectra of $d\pi(L_1),\dots,d\pi(L_n)$ for all (irreducible) unitary representations $\pi$ of $G$ --- and the joint $L^2(G)$ spectrum $\Sigma$ of $L_1,\dots,L_n$ --- which is canonically identified to the Gelfand spectrum $\GS(C_0(L))$ of the C$^*$-algebra $C_0(L)$ (see \S\ref{subsection:integralgelfand}). In fact, we will see that $\PP_L$ contains information about the joint spectrum of $d\pi(L_1),\dots,d\pi(L_n)$ for every unitary representation $\pi$ of $G$.

\begin{lem}\label{lem:weakcontainment}
Let $\varpi$ be a unitary representation of $G$. Let moreover $\phi$ be a function of positive type, of the form
\[\phi(x) = \langle \pi(x) v, v \rangle\]
for some unitary representation $\pi$ of $G$ on the Hilbert space $\HH$ and cyclic vector $v$ of unit norm. Then the following are equivalent:
\begin{itemize}
\item[(i)] $\pi$ is weakly contained in $\varpi$;
\item[(ii)] $|\langle f, \phi \rangle| \leq \|\varpi(f)\|$ for all $f \in L^1(G)$;
\item[(iii)] $|\langle f, \phi \rangle| \leq C \|\varpi(f)\|$ for some $C > 0$ and all $f \in L^1(G)$.
\end{itemize}
\end{lem}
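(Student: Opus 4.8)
The plan is to prove the equivalence of (i), (ii), (iii) by establishing (i)$\Rightarrow$(ii)$\Rightarrow$(iii)$\Rightarrow$(i), since (ii)$\Rightarrow$(iii) is trivial (take $C=1$). The implication (i)$\Rightarrow$(ii) is also essentially immediate from the definition of weak containment: if $\pi$ is weakly contained in $\varpi$, then $\|\pi(f)\| \leq \|\varpi(f)\|$ for all $f \in L^1(G)$, and since $\|v\|=1$ we have $|\langle f, \phi\rangle| = |\langle \pi(f) v, v\rangle| \leq \|\pi(f)\| \leq \|\varpi(f)\|$. So the real content is the implication (iii)$\Rightarrow$(i).

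For (iii)$\Rightarrow$(i), I would argue as follows. The hypothesis says that the positive linear functional $f \mapsto \langle f, \phi\rangle$ (which, because $\phi$ is of positive type, is a state-like functional on $L^1(G)$) is dominated in absolute value by a constant times the $C^*$-seminorm $f \mapsto \|\varpi(f)\|$. This seminorm is the pullback of the norm of the $C^*$-algebra $C^*_\varpi(G)$ generated by $\varpi(L^1(G))$ — equivalently, the norm of the quotient of $C^*(G)$ by the kernel of the representation $\varpi$ extended to $C^*(G)$. The bound $|\langle f,\phi\rangle| \leq C\|\varpi(f)\|$ means that the functional factors (boundedly) through $C^*_\varpi(G)$, hence extends to a bounded linear functional on $C^*_\varpi(G)$. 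Since the original functional is positive on $L^1(G)$, its extension to $C^*(G)$ is positive, and the factorization through $C^*_\varpi(G)$ shows it descends to a positive functional on $C^*_\varpi(G)$. By the GNS correspondence, a positive functional on $C^*_\varpi(G)$ corresponds to a cyclic representation that is weakly contained in $\varpi$ (since it factors through $C^*_\varpi(G)$, i.e.\ annihilates $\ker \varpi$); but the GNS representation associated to $\phi$ is precisely $\pi$ (with cyclic vector $v$), by the uniqueness part of the GNS construction recalled in \S\ref{subsection:positivetype}. Therefore $\pi$ is weakly contained in $\varpi$, which is (i).

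Thus the key steps, in order, are: (1) dispatch (i)$\Rightarrow$(ii) directly from the definition of weak containment and $\|v\|=1$; (2) observe (ii)$\Rightarrow$(iii) trivially; (3) for (iii)$\Rightarrow$(i), reinterpret the inequality as a factorization of the positive functional $f\mapsto\langle f,\phi\rangle$ through the $C^*$-algebra $C^*_\varpi(G)$; (4) invoke the GNS construction to identify the associated cyclic representation with $\pi$ and conclude weak containment. The main obstacle — really the only non-formal point — is step (3)–(4): one must be a little careful that the bound by a \emph{constant} $C$ (rather than $C=1$) still yields a genuine positive functional on $C^*_\varpi(G)$ and hence a well-defined GNS representation factoring through $\varpi$. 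This is handled by noting that a bounded functional on a $C^*$-algebra which is positive on a dense $*$-subalgebra (here the image of $L^1(G)$) is positive, and that the norm of such a positive functional is attained ``at the identity'' in the multiplier algebra, so no constant obstruction survives; alternatively one simply uses that $|\langle f, \phi\rangle|\le C\|\varpi(f)\|$ forces $\ker\varpi \subseteq \ker(f\mapsto\langle f,\phi\rangle)$, and then the induced map on $C^*_\varpi(G)$ is automatically a positive functional whose GNS representation is weakly contained in $\varpi$.
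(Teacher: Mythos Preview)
Your proof is correct and follows the same overall structure as the paper's: the cycle (i)$\Rightarrow$(ii)$\Rightarrow$(iii)$\Rightarrow$(i), with the first two implications dispatched exactly as you do.

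For (iii)$\Rightarrow$(i), the paper takes a slightly more hands-on route than your GNS/ideal-theoretic argument. Both begin the same way: the bound extends $\phi$ to a positive functional on the sub-C$^*$-algebra of $\Bdd(\widetilde\HH)$ obtained as the closure of $\varpi(L^1(G))$. You then argue abstractly that $\ker\varpi$ (as an ideal in $C^*(G)$) annihilates the GNS representation of $\phi$, which is $\pi$ by uniqueness, giving weak containment. The paper instead applies the Kadison--Cauchy--Schwarz inequality for positive functionals on C$^*$-algebras (Dixmier, Proposition~2.1.5(ii)) directly to this extended functional to obtain
\[
\|\pi(f)\pi(g)v\|^2 = \langle g * f * f^* * g^*, \phi\rangle \leq \|\varpi(f*f^*)\|\,\langle g*g^*,\phi\rangle = \|\varpi(f)\|^2\,\|\pi(g)v\|^2,
\]
and then concludes $\|\pi(f)\|\leq\|\varpi(f)\|$ using that $\{\pi(g)v : g\in L^1(G)\}$ is dense in $\HH$ (cyclicity plus approximate identities). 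Your argument packages this same computation into the general fact that a positive functional on a C$^*$-algebra vanishes on an ideal if and only if its GNS representation does; the paper's version has the modest advantage of citing one concrete inequality rather than invoking the GNS uniqueness and ideal machinery, but the substance is the same.
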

\begin{proof}
(i) $\Rightarrow$ (ii). Since
\[\langle f, \phi \rangle = \langle \pi(f) v, v \rangle\]
and $v$ has unit norm, the conclusion follows immediately from the definition of weak containment.

(ii) $\Rightarrow$ (iii). Trivial.

(iii) $\Rightarrow$ (i). Let $\widetilde\HH$ be the Hilbert space on which $\varpi$ acts. The hypothesis (iii) implies that $\phi$ defines a (positive) continuous functional on the sub-C$^*$-algebra of $\Bdd(\widetilde\HH)$ which is the closure of $\varpi(L^1(G))$. By applying Proposition~2.1.5(ii) of \cite{dixmier_algebras_1982} to this functional, one obtains, for $f,g \in L^1(G)$,
\[\|\pi(f) \pi(g) v\|^2 = \langle g * f* f^* * g^* , \phi \rangle \leq \|\varpi(f*f^*)\| \langle g * g^*, \phi \rangle = \|\varpi(f)\|^2 \|\pi(g)v\|^2.\]
Since $v$ is cyclic and $L^1(G)$ contains an approximate identity, the set
\[\{\pi(g) v \tc g \in L^1(G)\}\]
is a dense subspace of $\HH$, therefore the previously proved inequality gives
\[\|\pi(f)\| \leq \|\varpi(f)\|,\]
and we are done.
\end{proof}

For a unitary representation $\varpi$ of $G$, we denote by $\PP_{L,\varpi}$ the set of the functions $\phi \in \PP_L$ which satisfy the equivalent conditions of Lemma~\ref{lem:weakcontainment}.

\begin{prp}
Let $\varpi$ be a unitary representation of $G$. Then $\PP_{L,\varpi}$ is a closed subset of $\PP_L$. Moreover, for every $\lambda \in \R^n$, $\PP_{L,\varpi} \cap \evmap_L^{-1}(\lambda)$ is compact and convex, and its extreme points are the ones corresponding to irreducible representations.
\end{prp}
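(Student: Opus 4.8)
The goal is to transfer to the set $\PP_{L,\varpi}$ the same structural statements already proved for $\PP_L$ (Proposition~\ref{prp:eigentopologies}) and for $\evmap_L^{-1}(\lambda)$ (Proposition~\ref{prp:eigenextremals}), by checking that the extra constraint ``$\pi_\phi$ is weakly contained in $\varpi$'' is preserved under all the relevant operations: weak-$*$ limits, convex combinations, and the direct-sum construction used in the proof of extremality. First I would prove that $\PP_{L,\varpi}$ is weak-$*$ closed in $\PP_L$: by Lemma~\ref{lem:weakcontainment}(ii), $\phi \in \PP_{L,\varpi}$ exactly when $|\langle f,\phi\rangle| \le \|\varpi(f)\|$ for every $f \in L^1(G)$, and each of these is a closed condition for the weak-$*$ topology of $L^\infty(G)$ (the map $\phi \mapsto \langle f,\phi\rangle$ being weak-$*$ continuous for fixed $f \in L^1(G)$); an arbitrary intersection of closed sets is closed, and intersecting with the closed set $\PP_L$ (closedness of $\PP_L$ inside $\PP_1$ being implicit in Proposition~\ref{prp:eigentopologies}, or rather $\PP_L$ is the ambient space here) gives that $\PP_{L,\varpi}$ is closed in $\PP_L$. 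Consequently $\PP_{L,\varpi} \cap \evmap_L^{-1}(\lambda)$ is a closed subset of the weak-$*$ compact set $\evmap_L^{-1}(\lambda)$ (compact by Proposition~\ref{prp:eigenextremals}), hence itself weak-$*$ compact.

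Next, convexity of $\PP_{L,\varpi} \cap \evmap_L^{-1}(\lambda)$: convexity of $\evmap_L^{-1}(\lambda)$ is Proposition~\ref{prp:eigenextremals}, so it remains to see that $\PP_{L,\varpi}$ is convex, which is immediate from the characterization $|\langle f,\phi\rangle| \le \|\varpi(f)\|$, since this inequality is preserved under convex combinations of $\phi$ by the triangle inequality. (Here one should note that if $\phi = \theta_0^2\phi_0 + \theta_1^2\phi_1$ with $\phi_0,\phi_1 \in \PP_{L,\varpi}\cap\evmap_L^{-1}(\lambda)$ and $\theta_0^2+\theta_1^2 = 1$, then $\phi$ again lies in $\PP_L$ with the same eigenvalue $\lambda$ by the already-quoted convexity of $\evmap_L^{-1}(\lambda)$, so $\phi$ genuinely lies in the intersection.)

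For the extreme points, the plan is to re-run the proof of Proposition~\ref{prp:eigenextremals} and observe that the direct-sum construction there stays inside $\PP_{L,\varpi}$. Concretely, if $\phi \in \PP_{L,\varpi}\cap\evmap_L^{-1}(\lambda)$ is not extreme in that set, one writes $\phi = \theta_0^2\phi_0 + \theta_1^2\phi_1$ with $\phi_0 \ne \phi_1$ both in $\PP_{L,\varpi}\cap\evmap_L^{-1}(\lambda)$; the proof of Proposition~\ref{prp:eigenextremals} then produces representations $\pi_k$ with cyclic unit vectors $v_k$ realizing $\phi_k$, the subrepresentation $\pi \subseteq \pi_0 \oplus \pi_1$ realizing $\phi$, and intertwining projections $P_k$ showing $\phi_0,\phi_1 \in \evmap_L^{-1}(\lambda)$. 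Since $\phi_0,\phi_1 \in \PP_{L,\varpi}$, each $\pi_k$ is weakly contained in $\varpi$ by Lemma~\ref{lem:weakcontainment}(i), hence so is $\pi_0 \oplus \pi_1$ (weak containment being preserved by direct sums — if $\|\pi_k(f)\| \le \|\varpi(f)\|$ for $k=0,1$ then $\|(\pi_0\oplus\pi_1)(f)\| = \max_k\|\pi_k(f)\| \le \|\varpi(f)\|$), and therefore so is the subrepresentation $\pi$; thus $\phi \in \PP_{L,\varpi}$ is not extreme in $\PP_{L,\varpi}\cap\evmap_L^{-1}(\lambda)$ either. This shows the extreme points of $\PP_{L,\varpi}\cap\evmap_L^{-1}(\lambda)$ are extreme in $\PP_1$, hence (as in Proposition~\ref{prp:eigenextremals}, invoking \S\ref{subsection:positivetype}) correspond to irreducible representations $\pi$; and any such $\pi$, being a subrepresentation/weak-containment-subordinate of $\varpi$, automatically satisfies the $\PP_{L,\varpi}$ constraint, so conversely every extreme-point-of-$\PP_1$ lying in $\PP_{L,\varpi}\cap\evmap_L^{-1}(\lambda)$ is extreme there. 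The only mild subtlety — and the place requiring care rather than a genuine obstacle — is checking that weak containment passes to the subrepresentation $\pi$ of $\pi_0\oplus\pi_1$, which is clear since $\|\pi(f)\| \le \|(\pi_0\oplus\pi_1)(f)\|$ for a subrepresentation.
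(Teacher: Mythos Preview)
Your treatment of closedness, compactness and convexity is fine and matches the paper. The argument for the extreme points, however, is circular as written: you assume $\phi$ is \emph{not} extreme in $\PP_{L,\varpi}\cap\evmap_L^{-1}(\lambda)$, take $\phi_0,\phi_1$ already in that set, and then conclude that $\phi$ is not extreme in the very same set. That proves nothing. The contrapositive you need runs the other way: assume $\phi \in \PP_{L,\varpi}\cap\evmap_L^{-1}(\lambda)$ is \emph{not} extreme in the larger set $\evmap_L^{-1}(\lambda)$, so $\phi = (1-\theta)\phi_0 + \theta\phi_1$ with $\phi_0,\phi_1 \in \evmap_L^{-1}(\lambda)$ \emph{not} a priori in $\PP_{L,\varpi}$; the substance is then to prove $\phi_0,\phi_1 \in \PP_{L,\varpi}$.

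Even with the direction fixed, your direct-sum mechanism does not deliver that step. From $\phi \in \PP_{L,\varpi}$ you get that the subrepresentation $\pi \subseteq \pi_0\oplus\pi_1$ realizing $\phi$ is weakly contained in $\varpi$; but you need $\pi_0,\pi_1$ weakly contained in $\varpi$, and weak containment of a subrepresentation does not propagate upward to $\pi_0\oplus\pi_1$ or across the intertwining projections to the $\pi_k$. The paper's fix is a short positivity computation that avoids the representation-theoretic detour entirely: since $\phi_0,\phi_1 \in \PP_1$,
\[
(1-\theta)\,|\langle f,\phi_0\rangle|^2 + \theta\,|\langle f,\phi_1\rangle|^2 \;=\; \langle f*f^*,\phi\rangle \;\le\; \|\varpi(f*f^*)\| \;=\; \|\varpi(f)\|^2,
\]
so $|\langle f,\phi_k\rangle| \le C_k\,\|\varpi(f)\|$ for every $f\in L^1(G)$, and Lemma~\ref{lem:weakcontainment}(iii) gives $\phi_k \in \PP_{L,\varpi}$.
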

\begin{proof}
Condition (ii) of Lemma~\ref{lem:weakcontainment} is a convex and closed condition (with respect to the weak-$*$ topology of $L^\infty(G)$) for every $f \in L^1(G)$. Therefore $\PP_{L,\varpi}$ is closed in $\PP_L$, and moreover, for $\lambda \in \R^n$, since $\evmap_L^{-1}(\lambda)$ is compact and convex (see Proposition~\ref{prp:eigenextremals}), $\PP_{L,\varpi} \cap \evmap_L^{-1}(\lambda)$ is compact and convex too.

In order to conclude, again by Proposition~\ref{prp:eigenextremals}, it is sufficient to show that an extreme point $\phi$ of $\PP_{L,\varpi} \cap \evmap_L^{-1}(\lambda)$ is also extreme in $\evmap_L^{-1}(\lambda)$. Suppose then that $\phi = (1-\theta)\phi_0 + \theta \phi_1$ for some $\phi_0,\phi_1 \in \evmap_L^{-1}(\lambda)$ and $0 < \theta < 1$. For $f \in L^1(G)$, we have
\[(1-\theta) |\langle f, \phi_0\rangle|^2 + \theta |\langle f, \phi_1\rangle|^2 = \langle f * f^* , \phi \rangle \leq \|\varpi(f)\|^2\]
by Lemma~\ref{lem:weakcontainment} and positivity, therefore
\[|\langle f, \phi_0 \rangle| \leq (1-\theta)^{-1/2} \|\varpi(f)\|, \qquad |\langle f, \phi_1 \rangle| \leq \theta^{-1/2} \|\varpi(f)\|,\]
and again by Lemma~\ref{lem:weakcontainment} we obtain $\phi_0,\phi_1 \in \PP_{L,\varpi} \cap \evmap_L^{-1}(\lambda)$.
\end{proof}

\begin{thm}\label{thm:spectrumeigenfunctions}
Let $\pi$ be a unitary representation of $G$ on a Hilbert space $\HH$. Then $\evmap_L(\PP_{L,\pi})$ is the joint spectrum of $d\pi(L_1),\dots,d\pi(L_n)$ on $\HH$.
\end{thm}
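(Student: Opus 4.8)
The plan is to identify the joint spectrum of $d\pi(L_1),\dots,d\pi(L_n)$ with the support of the joint spectral resolution of these operators, and to connect this support with the set $\evmap_L(\PP_{L,\pi})$ of generalized joint eigenvalues associated to eigenfunctions weakly contained in $\pi$. The bridge between the two sides is Proposition~\ref{prp:Jl1}, which shows that $\pi(\breve m) = m(d\pi(L_1),\dots,d\pi(L_n))$ for $m \in \JJ_L$, together with Lemma~\ref{lem:eigenvectors} and Proposition~\ref{prp:eigenfunctions} relating joint eigenvectors of the $d\pi(L_j)$ to eigenvectors of the operators $\pi(\breve m)$ and to eigenfunctions of positive type.

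First I would show the inclusion $\evmap_L(\PP_{L,\pi}) \subseteq \spec(d\pi(L))$, where $\spec(d\pi(L))$ denotes the joint $L^2$-spectrum, i.e.\ the support of the joint spectral measure $E_\pi$ of $d\pi(L_1),\dots,d\pi(L_n)$ on $\HH$. Let $\lambda = \evmap_L(\phi)$ with $\phi \in \PP_{L,\pi}$. Writing $\phi(x) = \langle \pi'(x) v, v\rangle$ for the cyclic representation $\pi'$ with cyclic unit vector $v$ attached to $\phi$, Proposition~\ref{prp:eigenfunctions} gives that $v \in \HH'^\infty$ is a joint eigenvector of $d\pi'(L_j)$ with eigenvalue $\lambda_j$. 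Since $\phi \in \PP_{L,\pi}$, by Lemma~\ref{lem:weakcontainment} $\pi'$ is weakly contained in $\pi$, hence $\|\pi'(f)\| \le \|\pi(f)\|$ for all $f \in L^1(G)$; taking $f = \breve m$ for $m \in \JJ_L$ (which lies in $L^1(G)$ by Proposition~\ref{prp:Jl1}), the operator $m(d\pi'(L)) = \pi'(\breve m)$ has $m(\lambda)$ in its spectrum, and weak containment forces $m(\lambda)$ into the spectrum of $\pi(\breve m) = m(d\pi(L))$. Running this over the generators $e^{-p_0},\dots,e^{-p_n}$ of $\JJ_L$ and using the spectral mapping theorem for the joint spectral calculus of $d\pi(L)$ (the map $\lambda \mapsto (e^{-p_0(\lambda)},\dots,e^{-p_n(\lambda)})$ is a proper homeomorphism onto its image, since the $p_j$ tend to $+\infty$), one concludes $\lambda \in \spec(d\pi(L))$.

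For the reverse inclusion $\spec(d\pi(L)) \subseteq \evmap_L(\PP_{L,\pi})$, I would fix $\lambda \in \spec(d\pi(L))$ and produce an eigenfunction in $\PP_{L,\pi}$ with eigenvalue $\lambda$. Since $\lambda$ is in the support of $E_\pi$, for every neighbourhood $U$ of $\lambda$ we have $E_\pi(U) \ne 0$; picking a shrinking sequence of neighbourhoods and unit vectors $v_k$ in the ranges $E_\pi(U_k)\HH$, the diagonal coefficients $\phi_k(x) = \langle \pi(x) v_k, v_k\rangle$ are functions of positive type with $\phi_k(e) = 1$ and, using $\pi(\breve m) = m(d\pi(L))$ together with the spectral calculus, $\langle \breve m, \phi_k\rangle = \langle m(d\pi(L)) v_k, v_k\rangle \to m(\lambda)$ for every $m \in \JJ_L$; moreover $\|\langle \breve m \ast \breve m^*, \phi_k\rangle - |\langle \breve m, \phi_k\rangle|^2| \to 0$ by the same localization. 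By weak-$*$ compactness of $\PP_1$ (the functions of positive type with $\phi(e) \le 1$, see \S\ref{subsection:positivetype}) we may pass to a weak-$*$ convergent subsequence $\phi_k \to \phi$; the limit satisfies $\phi(e) = 1$ — here one must rule out $\phi = 0$, which follows from $\langle \breve m, \phi_k\rangle \to m(\lambda) \ne 0$ for $m = e^{-p_0}$ say — and the limiting identity $\langle \breve m \ast \breve m^*, \phi\rangle = |\langle \breve m, \phi\rangle|^2$ of Proposition~\ref{prp:eigenfunctions}(iv), so $\phi \in \PP_L$. Since $\|\pi(f)\| $ bounds $|\langle f,\phi_k\rangle| = |\langle \pi(f) v_k, v_k\rangle|$ uniformly, the same holds for $\phi$, so $\phi \in \PP_{L,\pi}$ by Lemma~\ref{lem:weakcontainment}; and $\langle \breve m, \phi\rangle = m(\lambda)$ together with Proposition~\ref{prp:eigenvalues} identifies $\evmap_L(\phi) = \lambda$.

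The main obstacle I expect is the limiting argument in the second inclusion: verifying that the weak-$*$ limit $\phi$ genuinely lies in $\PP_L$ (not merely in $\PP_1$) and that $\evmap_L(\phi) = \lambda$ rather than some degenerate value. The key points are that the quantities $\langle \breve m, \phi\rangle$ and $\langle \breve m \ast \breve m^*, \phi\rangle$ are weak-$*$ continuous in $\phi$ because $\breve m$ and $\breve m \ast \breve m^*$ lie in $L^1(G)$ (Proposition~\ref{prp:Jl1} and the convolution properties), that the nonvanishing of $e^{-p_0}$ on all of $\R^n$ prevents collapse, and that Proposition~\ref{prp:eigenfunctions} characterizes membership in $\PP_L$ purely through such testable identities. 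One should also double-check that the spectral-theoretic localization estimate $\langle \breve m \ast \breve m^*, \phi_k\rangle \to |m(\lambda)|^2$ is uniform enough; this is immediate from $E_\pi(U_k) \to 0$ in the strong sense on the orthogonal complement and continuity of $m$ at $\lambda$.
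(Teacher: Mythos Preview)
Your argument for the inclusion $\spec(d\pi(L)) \subseteq \evmap_L(\PP_{L,\pi})$ is correct and takes a genuinely different route from the paper's. The paper extends the character of the commutative C$^*$-algebra $E_\pi[C_0(\R^n)]$ corresponding to $\lambda$ to a state $\omega$ on all of $\Bdd(\HH)$ (via \cite{dixmier_algebras_1982}, \S2.10), and then reads off $\phi$ as the function of positive type representing the restriction of $\omega\circ\pi$ to $L^1(G)$. Your approach via approximate eigenvectors $v_k \in E_\pi(U_k)\HH$ and weak-$*$ compactness is more concrete and avoids that abstract extension; the use of Proposition~\ref{prp:eigenfunctions}(iv) to recognize the weak-$*$ limit $\phi$ as an element of $\PP_L$ (and incidentally to recover $\phi(e)=1$) is the right move, and your concern about ruling out $\phi=0$ is properly handled by $\langle \Kern_L(e^{-p_0}),\phi\rangle = e^{-p_0(\lambda)}>0$.

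For the inclusion $\evmap_L(\PP_{L,\pi}) \subseteq \spec(d\pi(L))$, however, there is a gap as written. From weak containment you correctly obtain (via the induced $*$-homomorphism of C$^*$-algebras) that $e^{-p_j(\lambda)} \in \spec\bigl(e^{-p_j(d\pi(L))}\bigr)$ for each $j$ separately. But knowing each component lies in the corresponding \emph{individual} spectrum does not imply that the tuple $(e^{-p_0(\lambda)},\dots,e^{-p_n(\lambda)})$ lies in the \emph{joint} spectrum of the operators, and only the latter would let you invert the injective map $\mu\mapsto(e^{-p_0(\mu)},\dots,e^{-p_n(\mu)})$. The fix is to drop the componentwise viewpoint: weak containment gives $|m(\lambda)| = |\langle \breve m,\phi\rangle| \le \|\pi(\breve m)\|$ for \emph{every} $m\in\JJ_L$, so evaluation at $\lambda$ defines a bounded multiplicative functional on the sub-C$^*$-algebra $E_\pi[C_0(\R^n)]$, i.e.\ a point of its Gelfand spectrum $\supp E_\pi$. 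This is precisely the paper's route for this direction.
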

\begin{proof}
Let $E_\pi$ be the joint spectral resolution of $d\pi(L_1),\dots,d\pi(L_n)$. The joint spectrum of $d\pi(L_1),\dots,d\pi(L_n)$, i.e., the support of $E_\pi$, can be identified with the Gelfand spectrum of the C$^*$-algebra $E_\pi[C_0(\R^n)]$ (see \S\ref{subsection:integralgelfand}), i.e., with the closure in $\Bdd(\HH)$ of
\[\{\pi(\breve m) \tc m \in \JJ_L\}\]
(see Lemma~\ref{lem:Jdensity} and Proposition~\ref{prp:Jl1}).

In particular, if $\phi \in \PP_{L,\pi}$, then, by Lemma~\ref{lem:weakcontainment},
\[|\langle \breve m, \phi \rangle| \leq \|\pi(\breve m)\|\]
for all $m \in \JJ_L$, therefore $\phi$ defines a continuous functional on the C$^*$-algebra $E_\pi[C_0(\R^n)]$, which is multiplicative by Proposition~\ref{prp:eigenfunctions}, and thus belongs to the Gelfand spectrum of $E_\pi[C_0(\R^n)]$. Since
\[\langle \breve m, \phi \rangle = m(\evmap_L(\phi))\]
for all $m \in \JJ_L$ (see Proposition~\ref{prp:eigenvalues}), the element of $\supp E_\pi$ corresponding to this functional is $\evmap_L(\phi)$.

Conversely, if $\lambda \in \supp E_\pi$, then we can extend the corresponding character of $E_\pi[C_0(\R^n)]$ to a positive functional $\omega$ of norm $1$ on the whole $\Bdd(\HH)$ (see \cite{dixmier_algebras_1982}, \S2.10). Since $\omega \circ \pi : L^1(G) \to \C$ is linear and continuous, there exists $\phi \in L^\infty(G)$ such that
\[\langle f, \phi \rangle = \omega(\pi(f))\]
for all $f \in L^1(G)$; in fact, since $\omega$ is positive, $\phi$ must be a function of positive type on $G$ (see \cite{folland_course_1995}, \S3.3). Moreover, since $\omega$ extends a character of $E_\pi[C_0(\R^n)]$, it must be
\[\langle \breve m_1 * \breve m_2, \phi \rangle = \langle \breve m_1, \phi \rangle \langle \breve m_2, \phi \rangle\]
for all $m_1,m_2 \in \JJ_L$. Therefore, by Proposition~\ref{prp:eigenfunctions}, $\phi \in \PP_L$, and in fact $\phi \in \PP_{L,\pi}$ since $|\langle f, \phi\rangle| \leq \|\pi(f)\|$ (see Lemma~\ref{lem:weakcontainment}). Finally
\[m(\evmap_L(\phi)) = \langle \breve m, \phi \rangle = \omega(\pi(\breve m)) = m(\lambda)\]
for all $m \in \JJ_L$, by Proposition~\ref{prp:eigenvalues}, since $\omega$ extends the character corresponding to $\lambda$, and consequently $\evmap_L(\phi) = \lambda$.
\end{proof}

\begin{cor}\label{cor:spectrumeigenfunctions}
We have
\[\Sigma \subseteq \evmap_L(\PP_L),\]
with equality when $G$ is amenable.
\end{cor}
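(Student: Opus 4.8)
The plan is to deduce Corollary~\ref{cor:spectrumeigenfunctions} directly from Theorem~\ref{thm:spectrumeigenfunctions}, taking for $\pi$ the (right) regular representation of $G$ on $L^2(G)$. First I would recall that, by definition, $\Sigma$ is the joint $L^2(G)$ spectrum of $L_1,\dots,L_n$, i.e., the support of the joint spectral resolution $E$ obtained by applying Corollary~\ref{cor:commutativealgebra} to the regular representation. By Theorem~\ref{thm:spectrumeigenfunctions} applied to $\pi = \RA$ (the regular representation), this support equals $\evmap_L(\PP_{L,\RA})$. Since $\PP_{L,\RA} \subseteq \PP_L$, we get the inclusion
\[\Sigma = \evmap_L(\PP_{L,\RA}) \subseteq \evmap_L(\PP_L).\]

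For the reverse inclusion under the amenability hypothesis, the key observation is that, when $G$ is amenable, every unitary representation of $G$ is weakly contained in the regular representation $\RA$ (see \S\ref{subsection:amenability}, where this is stated as a characterization of amenability). Now take any $\phi \in \PP_L$; by Proposition~\ref{prp:eigenfunctions}(ii), $\phi$ has the form $\phi(x) = \langle \pi(x) v, v\rangle$ for some unitary representation $\pi$ of $G$ with cyclic unit vector $v$. Since $G$ is amenable, $\pi$ is weakly contained in $\RA$, which is precisely condition (i) of Lemma~\ref{lem:weakcontainment} for the pair $(\RA, \phi)$; hence $\phi \in \PP_{L,\RA}$. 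This shows $\PP_L = \PP_{L,\RA}$ when $G$ is amenable, and therefore
\[\evmap_L(\PP_L) = \evmap_L(\PP_{L,\RA}) = \Sigma,\]
again by Theorem~\ref{thm:spectrumeigenfunctions} applied to the regular representation.

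I do not expect any serious obstacle here: the corollary is essentially a bookkeeping consequence of the (already proved) Theorem~\ref{thm:spectrumeigenfunctions} together with the standard characterization of amenability via weak containment in the regular representation. The only point requiring a little care is to make sure that the representation $\pi$ attached to $\phi$ via Proposition~\ref{prp:eigenfunctions} is genuinely a unitary representation of $G$ (it is, by construction, since $\phi$ is of positive type), so that Lemma~\ref{lem:weakcontainment} and the amenability characterization apply to it. One should also note, for the inclusion $\Sigma \subseteq \evmap_L(\PP_L)$, that it holds unconditionally — it does not use amenability — because it only relies on Theorem~\ref{thm:spectrumeigenfunctions} with $\pi = \RA$ and the trivial inclusion $\PP_{L,\RA} \subseteq \PP_L$.
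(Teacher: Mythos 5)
Your proof is correct and follows exactly the paper's own argument: apply Theorem~\ref{thm:spectrumeigenfunctions} to the regular representation to get $\Sigma = \evmap_L(\PP_{L,\RA}) \subseteq \evmap_L(\PP_L)$, then use the characterization of amenability via weak containment in the regular representation to conclude $\PP_L = \PP_{L,\RA}$. The extra care you take in unwinding Proposition~\ref{prp:eigenfunctions}(ii) and Lemma~\ref{lem:weakcontainment}(i) is sound but not a departure from the paper's route.
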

\begin{proof}
From Theorem~\ref{thm:spectrumeigenfunctions}, we have $\Sigma = \evmap_L(\PP_{L,\RA})$, where $\RA$ denotes the regular representation of $G$ on $L^2(G)$, from which we get the inclusion $\Sigma \subseteq \evmap_L(\PP_L)$. When $G$ is amenable, every representation is weakly contained in the regular representation (see \S\ref{subsection:amenability}), so that $\PP_L = \PP_{L,\RA}$, and we are done.
\end{proof}

Let $\II_L$ be the closure of $\Kern_L(\JJ_L)$ in $L^1(G)$. $\II_L$ is a commutative Banach $*$-subalgebra of $L^1(G)$ --- and also, by Proposition~\ref{prp:Jdensity}, a dense $*$-subalgebra of the $C^*$-algebra $C_0(L)$ --- which is somehow related to the joint $L^1(G)$ spectral theory of the operators $L_1,\dots,L_n$ (cf.\ \cite{hulanicki_subalgebra_1974}, \cite{hulanicki_commutative_1975} for the case of a single operator). We are then interested in comparing the Gelfand spectrum $\GS(\II_L)$ of $\II_L$ with the other two spectra $\evmap_L(\PP_L)$ and $\Sigma$ previously considered.

\begin{prp}
For $\phi \in \PP_L$, denote by $\Lambda(\phi)$ the linear functional
\[\II_L \ni f \mapsto \langle f, \phi \rangle \in \C.\]
Then $\Lambda$ is a continuous map $\PP_L \to \GS(\II_L)$ and it induces on $\PP_L$ the same equivalence relation as $\evmap_L$. In particular, $\Lambda$ induces a continuous injective map $\evmap_L(\PP_L) \to \GS(\II_L)$.
\end{prp}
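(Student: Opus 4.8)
The plan is to establish three things about the map $\Lambda$: that it lands in $\GS(\II_L)$, that it is continuous, and that it factors through $\evmap_L$ inducing a bijection on the image. First I would verify that for each $\phi \in \PP_L$ the functional $\Lambda(\phi) : f \mapsto \langle f, \phi\rangle$ is a nonzero multiplicative linear functional on $\II_L$. Continuity and linearity are clear (it is the restriction of an element of $L^\infty(G)^*$ paired against $L^1(G)$), and it is nonzero because $\langle \Kern_L(e^{-p_*}), \phi\rangle = e^{-p_*(\evmap_L(\phi))} \neq 0$ by Proposition~\ref{prp:eigenvalues}. For multiplicativity, note that $\Kern_L(\JJ_L)$ is dense in $\II_L$ by definition, convolution is jointly continuous on $L^1(G)$, and the pairing with the bounded function $\phi$ is continuous; so it suffices to check $\langle \breve m_1 * \breve m_2, \phi\rangle = \langle \breve m_1, \phi\rangle\langle \breve m_2, \phi\rangle$ for $m_1, m_2 \in \JJ_L$, which is exactly (the first part of) Proposition~\ref{prp:eigenfunctions}(iii), or alternatively follows from $\langle \breve m, \phi\rangle = m(\evmap_L(\phi))$ in Proposition~\ref{prp:eigenvalues} together with the fact that $\Kern_L$ sends pointwise products in $\JJ_L$ to convolutions (Lemma~\ref{lem:composition}(b), since elements of $\JJ_L$ have kernels in $\D(G)$ up to approximation — actually in $L^{1;\infty}\cap C_0^\infty$, so one uses Lemma~\ref{lem:composition}(b) with $\breve g \in L^2$). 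Hence $\Lambda(\phi) \in \GS(\II_L)$.

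Next I would prove continuity of $\Lambda : \PP_L \to \GS(\II_L)$. The Gelfand spectrum carries the weak-$*$ topology (as a subset of $\II_L^*$), and $\PP_L$ carries the topology induced by the weak-$*$ topology of $L^\infty(G)$. So I must show: if $\phi_k \to \phi$ weakly-$*$ in $L^\infty(G)$ with all $\phi_k, \phi \in \PP_L$, then $\langle f, \phi_k\rangle \to \langle f, \phi\rangle$ for every $f \in \II_L$. For $f \in \Kern_L(\JJ_L)$ this is immediate from weak-$*$ convergence (such $f$ lies in $L^1 \subseteq (L^\infty)^*$-predual pairing). For general $f \in \II_L$, approximate $f$ in $L^1$-norm by $f_j \in \Kern_L(\JJ_L)$; since $\|\phi_k\|_\infty = 1$ uniformly, the pairing $\langle \cdot, \phi_k\rangle$ is equicontinuous in $k$, and a standard $3\varepsilon$ argument gives convergence. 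Thus $\Lambda$ is continuous.

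It remains to show that $\Lambda$ induces the same equivalence relation on $\PP_L$ as $\evmap_L$. If $\evmap_L(\phi_1) = \evmap_L(\phi_2) = \lambda$, then by Proposition~\ref{prp:eigenvalues} we have $\langle \breve m, \phi_i\rangle = m(\lambda)$ for all $m \in \JJ_L$, so $\Lambda(\phi_1)$ and $\Lambda(\phi_2)$ agree on the dense subalgebra $\Kern_L(\JJ_L)$ of $\II_L$, hence on all of $\II_L$ by continuity; so $\Lambda(\phi_1) = \Lambda(\phi_2)$. Conversely, if $\Lambda(\phi_1) = \Lambda(\phi_2)$, then in particular $\langle \Kern_L(e^{-p_j}), \phi_1\rangle = \langle \Kern_L(e^{-p_j}), \phi_2\rangle$ for $j = 0, \dots, n$, i.e.\ $e^{-p_j(\evmap_L(\phi_1))} = e^{-p_j(\evmap_L(\phi_2))}$; since $p_j(\lambda) = p_0(\lambda) + \lambda_j$ for $j \geq 1$, taking quotients as in the proof of Corollary~\ref{cor:continuouseigenvalues} recovers each coordinate $\evmap_{L,j}(\phi_1) = \evmap_{L,j}(\phi_2)$, so $\evmap_L(\phi_1) = \evmap_L(\phi_2)$. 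Finally, since by Proposition~\ref{prp:eigentopologies} the topology of $\evmap_L(\PP_L)$ as a subspace of $\R^n$ coincides with the quotient topology induced by $\evmap_L$, the factored map $\evmap_L(\PP_L) \to \GS(\II_L)$ is continuous (it is the one induced by the continuous $\Lambda$ through the quotient), and it is injective by the equality of equivalence relations.

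I expect the only mildly delicate point to be the multiplicativity of $\Lambda(\phi)$ on all of $\II_L$ rather than just on $\Kern_L(\JJ_L)$: one needs that the identity $\langle g * h, \phi\rangle = \langle g, \phi\rangle\langle h, \phi\rangle$, valid for $g, h \in \Kern_L(\JJ_L)$, extends to the $L^1$-closure, which is where joint continuity of convolution on $L^1(G)$ and the uniform bound $\|\phi\|_\infty = 1$ are used; everything else is a routine density-plus-equicontinuity argument combined with Propositions~\ref{prp:eigenfunctions} and \ref{prp:eigenvalues} already in hand.
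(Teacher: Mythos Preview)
Your proof is correct and follows essentially the same approach as the paper's. Two minor remarks: for continuity, the paper simply observes that $\Lambda$ is the restriction map $(L^1(G))^* \to (\II_L)^*$, which is automatically weak-$*$ to weak-$*$ continuous since $\II_L \subseteq L^1(G)$ --- your $3\varepsilon$ argument works but is unnecessary, as every $f \in \II_L$ already lies in $L^1(G)$; and for the reverse implication of the equivalence relation, the paper invokes the general fact that $\JJ_L$ is dense in $C_0(\R^n)$ and hence separates points, rather than recovering the coordinates via the specific functions $e^{-p_j}$.
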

\begin{proof}
From Proposition~\ref{prp:eigenfunctions}, it follows immediately that, if $\phi \in \PP_L$, then the functional $\Lambda(\phi)$ on $\II_L$ is multiplicative, and from Proposition~\ref{prp:eigenvalues} it is easy to see that $\Lambda(\phi)$ is non-null; therefore $\Lambda \in \GS(\II_L)$.

Since the topology of $\PP_L$ is the weak-$*$ topology of $L^\infty(G)$, whereas the topology of $\GS(\II_L)$ is the weak-$*$ topology of the dual of $\II_L$, and the map $\Lambda$ is simply the restriction map from the dual of $L^1(G)$ to the dual of $\II_L$, its continuity is trivial.

Finally, from Proposition~\ref{prp:eigenvalues}, it is clear that, at least on the dense set $\{ \breve m \tc m \in \JJ_L \}$ of $\II_L$, the (continuous) functional $\Lambda(\phi)$ is determined by $\evmap_L(\phi)$, so that
\[\evmap_L(\phi) = \evmap_L(\phi') \qquad\Longrightarrow\qquad \Lambda(\phi) = \Lambda(\phi').\]
On the other hand, if $\Lambda(\phi) = \Lambda(\phi')$, then, again by Proposition~\ref{prp:eigenvalues},
\[m(\evmap_L(\phi)) = m(\evmap_L(\phi')) \qquad\text{for all $m \in \JJ_L$,}\]
and $\evmap_L(\phi) = \evmap_L(\phi')$ follows since the elements of $\JJ_L$ separate the points of $\R^n$ (see Proposition~\ref{prp:Jdensity}).
\end{proof}

\begin{prp}\label{prp:hermitiancharacters}
Suppose that $G$ is hermitian. Then every character of $\II_L$ extends to a character of $C_0(L)$, so that the Gelfand spectra of the two Banach $*$-algebras coincide (also as topological spaces).
\end{prp}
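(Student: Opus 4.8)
The plan is to combine the general theory of hermitian Banach $*$-algebras (as recalled in the appendix, \S\ref{subsection:hermitianalgebras}) with the structural results already established for $\II_L$ and $C_0(L)$. Recall that $\II_L$ is the closure of $\Kern_L(\JJ_L)$ in $L^1(G)$, hence a commutative Banach $*$-subalgebra of $L^1(G)$, while $C_0(L)$ is a commutative $C^*$-algebra, and by Proposition~\ref{prp:Jdensity} the image $\Kern_L(\JJ_L)$ is a dense $*$-subalgebra of both. In particular the inclusion $\II_L \hookrightarrow C_0(L)$ is a norm-decreasing $*$-homomorphism with dense range, so it induces a continuous injective map $\GS(C_0(L)) \to \GS(\II_L)$ on Gelfand spectra; what must be shown is that this map is a bijection (and then automatically a homeomorphism, since $\GS(C_0(L)) = \Sigma$ is locally compact and the spectra behave well under the one-point compactification, or by a direct weak-$*$ continuity argument).

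The heart of the argument is that every character $\chi$ of $\II_L$ is bounded with respect to the $C^*$-norm inherited from $C_0(L)$ (equivalently, from $\Cv^2(G)$ via Lemma~\ref{lem:composition}(a)), and therefore extends by density to a character of $C_0(L)$. For a commutative Banach $*$-algebra this boundedness of all characters is exactly the assertion that the algebra is \emph{hermitian} together with \emph{symmetric}; more precisely, if $B$ is a hermitian commutative Banach $*$-algebra then every character of $B$ is $*$-preserving and satisfies $|\chi(a)|^2 = \chi(a^* a) \le \rho(a^* a)$, where $\rho$ denotes the spectral radius; and $\sqrt{\rho(a^* a)}$ is precisely the $C^*$-norm $\|a\|_*$ on the enveloping $C^*$-algebra. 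So the key step is to prove that $\II_L$ is hermitian. This I would deduce from the hypothesis that $G$ is hermitian: by definition (see \S\ref{subsection:hermitianalgebras} and the discussion of hermitian groups) $L^1(G)$ is a hermitian Banach $*$-algebra, and hermitianness is inherited by closed $*$-subalgebras — or, if one prefers a self-contained route, one checks that for $f = f^* \in \Kern_L(\JJ_L)$ the spectrum of $f$ in $\II_L$ equals its spectrum in $L^1(G)$ (by density and the fact that these elements lie in $C_0(L) \subseteq C^*(G)$), which is real because $L^1(G)$ is hermitian. Then, for a self-adjoint $f \in \II_L$, the spectrum is real, and the standard Shirali–Ford / Raikov argument shows that a self-adjoint element of $\II_L$ has real spectrum, i.e. $\II_L$ is hermitian; being commutative, it is then symmetric, so every character is automatically $*$-preserving.

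Granting that $\II_L$ is hermitian and symmetric, I would finish as follows. Let $\chi \in \GS(\II_L)$. For $a \in \II_L$ we have $|\chi(a)|^2 = \chi(a^* a) \le \rho_{\II_L}(a^* a)$; since self-adjoint elements of $\II_L$ have the same spectrum in $\II_L$ and in $C_0(L)$ (again by density of $\Kern_L(\JJ_L)$, which is simultaneously dense in both, together with the fact that $C_0(L)$ is a $C^*$-algebra so that its spectral radius is its norm), it follows that $\rho_{\II_L}(a^* a) = \|a^* a\|_{C_0(L)} = \|a\|_{C_0(L)}^2$. Hence $|\chi(a)| \le \|a\|_{C_0(L)}$ for all $a \in \II_L$, so $\chi$ extends uniquely to a bounded linear functional on $C_0(L)$, which is multiplicative by continuity and density, i.e. a character of $C_0(L)$. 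This produces the inverse of the natural map $\GS(C_0(L)) \to \GS(\II_L)$, so the two Gelfand spectra coincide as sets; the identification is a homeomorphism because in both cases the topology is the weak-$*$ topology and the restriction map and its inverse are weak-$*$ continuous (the latter because a net of characters of $C_0(L)$ converging pointwise on the dense subalgebra $\Kern_L(\JJ_L)$ and uniformly bounded converges pointwise everywhere).

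The main obstacle I anticipate is the transfer of hermitianness from $L^1(G)$ to the subalgebra $\II_L$, or equivalently the claim that self-adjoint elements $a \in \II_L$ have the \emph{same} spectrum whether computed in $\II_L$, in $L^1(G)$, or in $C_0(L)$. Spectral permanence is delicate for non-closed or non-$C^*$ subalgebras: one has $\spec_{C_0(L)}(a) \subseteq \spec_{\II_L}(a) \subseteq \spec_{L^1(G)}(a) \cup \{0\}$ in general, and the reverse inclusion needs an argument. The clean way is to invoke that a closed $*$-subalgebra of a hermitian Banach $*$-algebra is hermitian (this is where the hypothesis ``$G$ hermitian'' is really used), so $\II_L$ is hermitian and therefore symmetric, and then the equality $\spec_{\II_L}(a) = \spec_{C_0(L)}(a)$ for self-adjoint $a$ follows because in a symmetric algebra the enveloping $C^*$-algebra (here $C_0(L)$) is spectrum-preserving on the hermitian part. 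If the appendix does not literally state ``closed $*$-subalgebras of hermitian algebras are hermitian'', one can instead use Raikov's criterion cited in \S\ref{subsection:hermitianalgebras}: $G$ hermitian means $\|f\|_* = \sqrt{\rho(f^* f)}$ on $L^1(G)$; restrict this to $\II_L$ and note $\|f\|_* = \|f\|_{C_0(L)}$ for $f$ in the dense subalgebra, then pass to the closure. Either way, once hermitianness of $\II_L$ is in hand the rest is routine.
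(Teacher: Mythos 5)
Your overall strategy is the paper's: use hermitianness to make every character $\chi$ of $\II_L$ $*$-preserving, bound $|\chi(f)|^2 = \chi(f^*f)$ by a spectral radius, identify that spectral radius with the norm of $C_0(L)$, and extend by density. But the step on which everything hinges is not justified, and the justifications you offer do not close it.

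The minor issue first: in your main argument you assert that $\rho_{\II_L}(a^*a) = \|a\|_{C_0(L)}^2$ ``by density of $\Kern_L(\JJ_L)$, which is simultaneously dense in both''. Density of a common subalgebra in two different completions gives no spectral permanence at all, as you yourself concede in the last paragraph. What you actually need is much less: spectral \emph{radius} permanence for the \emph{closed} subalgebra $\II_L \subseteq L^1(G)$, which is immediate from $\rho(a)=\lim_n\|a^n\|^{1/n}$ (recalled in \S\ref{subsection:spectrum}); equality of full spectra between $\II_L$ and $C_0(L)$ is never required.

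The genuine gap is the identity $\sqrt{\rho_{L^1(G)}(f^*f)} = \|f\|_{C_0(L)}$, which both of your fallback routes assert without proof (route A by declaring $C_0(L)$ to be the enveloping $C^*$-algebra of $\II_L$, route B by writing $\|f\|_* = \|f\|_{C_0(L)}$). Raikov's criterion for a hermitian group gives only $\sqrt{\rho(f^*f)} = \|f\|_*$, the $C^*(G)$-norm, i.e.\ the supremum of $\|\pi(f)\|$ over \emph{all} unitary representations; the norm of $C_0(L)$ is $\|\cdot\|_{\Cv^2}$, the norm in the \emph{regular} representation alone. The equality $\|\cdot\|_* = \|\cdot\|_{\Cv^2}$ on $L^1(G)$ is precisely a characterization of amenability (\S\ref{subsection:amenability}), and without it your argument only extends characters of $\II_L$ to its closure in $C^*(G)$, which may be a strictly larger $C^*$-algebra than $C_0(L)$ (compare Corollary~\ref{cor:spectrumeigenfunctions}, where $\Sigma = \evmap_L(\PP_L)$ is only asserted under amenability). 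The missing link is that connected hermitian Lie groups are amenable (\cite{palmer_banach_2001}, recalled in the paper); with that in hand, $\sqrt{\rho_{\II_L}(f^*f)} = \sqrt{\rho_{L^1(G)}(f^*f)} = \|f\|_* = \|f\|_{\Cv^2}$, and the extension by density and the identification of the topologies go through exactly as you describe.
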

\begin{proof}
Since $G$ is connected and hermitian, then it is also amenable, so that
\[\|f\|_{\Cv^2} = \sqrt{\rho(f^* f)} \qquad\text{for all $f \in L^1(G)$},\]
where $\rho(f)$ denotes the spectral radius of $f$ in $L^1(G)$. Notice that, since $\II_L$ is a closed subalgebra of $L^1(G)$, for every $f \in \II_L$, the spectral radius of $f$ in $\II_L$ coincides with its spectral radius in $L^1(G)$ (see \S\ref{subsection:spectrum}).
Moreover, since $L^1(G)$ is symmetric by hypothesis, also $\II_L$ is symmetric, so that, for every character $\psi \in \GS(\II_L)$,
\[\psi(f^*) = \overline{\psi(f)} \qquad\text{for all $f \in \II_L$;}\]
since $\psi(f)$ belongs to the spectrum of $f$ for every $f \in \II_L$, we have
\[|\psi(f)|^2 = \psi(f^* f) \leq \rho(f^* f) = \|f\|_{\Cv^2}^2.\]
This shows that every character $\psi \in \GS(\II_L)$ is continuous with respect to the norm of $C_0(L)$, so that it extends by density to a unique character of $C_0(L)$.

Notice that, since $\II_L$ is dense in $C_0(L)$ and the elements of $\GS(C_0(L))$, as functionals on $C_0(L)$, have norms bounded by $1$, it is easy to check that the topologies of $\GS(C_0(L))$ and $\GS(\II_L)$ coincide.
\end{proof}

\begin{cor}
If $G$ is hermitian, then the map
\[\Lambda : \PP_L \to \GS(\II_L)\]
is surjective. In particular, every multiplicative linear functional on $\II_L$ extends to a bounded linear functional $\eta$ on $L^1(G)$ such that
\[\eta(f * g) = \eta(f) \eta(g) \qquad\text{for all $f \in L^1(G)$ and $g \in \II_L$.}\]
\end{cor}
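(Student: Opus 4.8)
The statement to prove is: if $G$ is hermitian, then $\Lambda : \PP_L \to \GS(\II_L)$ is surjective, and consequently every multiplicative linear functional on $\II_L$ extends to a bounded linear functional $\eta$ on $L^1(G)$ with $\eta(f*g) = \eta(f)\eta(g)$ for $f \in L^1(G)$, $g \in \II_L$. The plan is to combine the previous proposition (every character of $\II_L$ extends to a character of $C_0(L)$, and the two Gelfand spectra coincide) with the identification of $\GS(C_0(L))$ with the joint $L^2$ spectrum $\Sigma$, and then with Corollary~\ref{cor:spectrumeigenfunctions}, which gives $\Sigma = \evmap_L(\PP_L)$ since hermitian connected groups are amenable.

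First I would unwind the identifications. By Proposition~\ref{prp:hermitiancharacters}, $\GS(\II_L) \cong \GS(C_0(L))$ as topological spaces, via restriction/extension by density. By Proposition~\ref{prp:Jdensity} (and \S\ref{subsection:integralgelfand}), $C_0(L)$ is isometrically isomorphic to $C_0(\Sigma)$ via the kernel transform, so $\GS(C_0(L))$ is canonically identified with $\Sigma$. On the other hand, the preceding proposition shows that $\Lambda$ factors through a continuous injective map $\bar\Lambda : \evmap_L(\PP_L) \to \GS(\II_L)$; concretely, $\Lambda(\phi)$ is the character of $\II_L$ determined (on the dense subalgebra $\Kern_L(\JJ_L)$) by $\Kern_L m \mapsto m(\evmap_L(\phi))$, by Proposition~\ref{prp:eigenvalues}. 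Now, since $G$ is hermitian and connected, it is amenable (\cite{palmer_banach_2001}, Theorem~12.5.18(e), as noted in \S\ref{subsection:hermitianalgebras}... actually stated in the excerpt), so by Corollary~\ref{cor:spectrumeigenfunctions} we have $\Sigma = \evmap_L(\PP_L)$. Thus the domain of $\bar\Lambda$ equals $\Sigma$, and under all these identifications $\bar\Lambda$ is simply the identity map $\Sigma \to \Sigma$ — in particular it is surjective. This gives surjectivity of $\Lambda$: every $\psi \in \GS(\II_L)$ is of the form $\Lambda(\phi)$ for some $\phi \in \PP_L$.

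For the ``in particular'' part, given a multiplicative linear functional $\psi$ on $\II_L$ — which is automatically continuous and extends to $C_0(L)$ by Proposition~\ref{prp:hermitiancharacters}, hence corresponds to some point $\lambda = \evmap_L(\phi) \in \Sigma$ with $\phi \in \PP_L$ — one takes $\eta(f) = \langle f, \phi\rangle$ for $f \in L^1(G)$. This is a bounded linear functional on $L^1(G)$ since $\phi \in L^\infty(G)$. That $\eta$ restricts to $\psi$ on $\II_L$ is precisely the statement $\eta = \Lambda(\phi) = \psi$. Finally, the multiplicativity relation $\eta(f*g) = \eta(f)\eta(g)$ for $f \in L^1(G)$ and $g \in \II_L$: it suffices to check it for $g = \Kern_L m$ with $m \in \JJ_L$, since these are dense in $\II_L$ and both sides are continuous in $g$; but for such $g$ this is exactly the first identity in Proposition~\ref{prp:eigenfunctions}(iii) (valid for all $f \in L^1(G)$, $m \in \JJ_L$), namely $\langle f * \breve m, \phi\rangle = \langle f, \phi\rangle\langle \breve m, \phi\rangle$.

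The only genuine subtlety — and the step I would be most careful about — is the bookkeeping of the three spectra and their identifications: making sure that ``the character of $\II_L$ associated to $\phi$ via $\Lambda$'' and ``the point $\evmap_L(\phi) \in \Sigma$ via the kernel transform'' really match up under Proposition~\ref{prp:hermitiancharacters}'s extension. This is not deep but must be stated cleanly, using Proposition~\ref{prp:eigenvalues} ($\langle \breve m, \phi\rangle = m(\evmap_L(\phi))$) as the bridge. Everything else is a direct chain of previously established results; no new estimates are needed.
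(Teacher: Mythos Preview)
Your proposal is correct and follows essentially the same route as the paper: extend a given $\psi\in\GS(\II_L)$ to $\GS(C_0(L))\cong\Sigma$ via Proposition~\ref{prp:hermitiancharacters}, use amenability and Corollary~\ref{cor:spectrumeigenfunctions} to realize the corresponding $\lambda\in\Sigma$ as $\evmap_L(\phi)$ for some $\phi\in\PP_L$, then verify $\Lambda(\phi)=\psi$ via Proposition~\ref{prp:eigenvalues} and conclude the multiplicativity of $\eta(\cdot)=\langle\cdot,\phi\rangle$ from Proposition~\ref{prp:eigenfunctions}(iii) by density. The paper does exactly this, only phrasing the first part as a direct element-chase rather than through the abstract identification $\bar\Lambda=\id_\Sigma$ you describe; the content is identical.
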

\begin{proof}
Let $\psi \in \GS(\II_L)$. By Proposition~\ref{prp:hermitiancharacters}, $\psi$ extends to a character of $C_0(L)$, which corresponds to some $\lambda \in \Sigma$. Now, by Corollary~\ref{cor:spectrumeigenfunctions}, there exists $\phi \in \PP_L$ such that $\evmap_L(\phi) = \lambda$, therefore, for every $m \in \JJ_L$, by Proposition~\ref{prp:eigenvalues},
\[\Lambda(\phi)(\breve m) = \langle \breve m, \phi \rangle = m(\evmap_L(\phi)) = m(\lambda) = \psi(\breve m),\]
from which by density we deduce $\Lambda(\phi) = \psi$.

In particular, if $\eta$ denotes the linear functional $f \mapsto \langle f, \phi \rangle$ on $L^1(G)$, then $\eta$ extends $\psi$ and, by Proposition~\ref{prp:eigenfunctions}, for all $m \in \JJ_L$,
\[\eta(f * \breve m) = \eta(f) \, \eta(\breve m) \qquad\text{for all $f \in L^1(G)$,}\]
from which we get the conclusion.
\end{proof}

The results obtained so far about the comparison of spectra are summarized in the following diagram, where dashed arrows denote inclusions subject to the condition indicated in the label:
\[
\xymatrix@C=40pt@M=5pt
{
\evmap_L(\PP_L) \ar@{_{(}-->}@<1ex>[r]^(.55){\text{amenable}} \ar@{_{(}->}@<1ex>[d] & \Sigma \ar@{=}[d] \ar@{_{(}->}@<1ex>[l]\\
\GS(\II_L) \ar@{_{(}-->}@<1ex>[r]^(.45){\text{hermitian}} \ar@{_{(}-->}@<1ex>[u]^{\text{hermitian}} & \GS(C_0(L)) \ar@{_{(}->}@<1ex>[l]
}
\]
The three objects do coincide when $G$ is hermitian, and in particular when $G$ has polynomial growth.

It has been proved that, to every element $\lambda$ of the joint $L^2$ spectrum $\Sigma$ of $L_1,\dots,L_n$, there corresponds a joint eigenvector of $d\pi(L_1),\dots,d\pi(L_n)$ of eigenvalue $\lambda$ in some irreducible unitary representation $\pi$ of $G$. We see now that, for a certain class of groups, in (almost) every irreducible representation $\pi$ an orthonormal basis of joint eigenvectors can be found.

\begin{lem}\label{lem:compactdecomposition}
Let $\pi$ be a unitary representation of $G$ and let $q$ be a non-negative polynomial on $\R^n$ such that the operator $q(L)$ is weighted subcoercive. Set $h = \Kern_L(e^{-q})$ and suppose that $\pi(h)$ is a compact operator on $\HH$. Then there exists a complete orthonormal system of $\HH$ made of joint eigenvectors of $d\pi(L_1),\dots,d\pi(L_n)$, and moreover each joint eigenvalue $\lambda \in \R^n$ of $d\pi(L_1),\dots,d\pi(L_n)$ has finite multiplicity.
\end{lem}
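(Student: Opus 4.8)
The plan is to exploit the fact that $h$ is (up to the identification of Proposition~\ref{prp:Jl1}) the time-$1$ heat kernel of the weighted subcoercive operator $q(L)$, so that $\pi(h)$ is an exceptionally well-behaved operator. First I would note, exactly as in the proof of Proposition~\ref{prp:Jl1}, that $\pi(h) = e^{-q(d\pi(L))}$, where $q(d\pi(L)) = \int_{\R^n} q \, dE_\pi$ and $E_\pi$ is the joint spectral resolution of $d\pi(L_1),\dots,d\pi(L_n)$ furnished by Corollary~\ref{cor:commutativealgebra}. Since $q \geq 0$ and $e^{-q}$ is nowhere zero, $\pi(h) = \int_{\R^n} e^{-q} \, dE_\pi$ is a bounded, positive, self-adjoint, \emph{injective} operator on $\HH$, and by hypothesis it is compact. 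The spectral theorem for compact self-adjoint operators then yields an orthogonal decomposition $\HH = \bigoplus_k V_{\mu_k}$, where the $\mu_k$ are the distinct eigenvalues of $\pi(h)$ — all strictly positive, by injectivity and positivity — and each $V_{\mu_k} = \ker(\pi(h) - \mu_k)$ is finite-dimensional.

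Next I would show that each $V_\mu$ consists of smooth vectors and is invariant under all the $d\pi(L_j)$. Indeed, $\pi(h) = S_1$ is the time-$1$ value of the continuous semigroup generated by $\overline{d\pi(q(L))}$, so Theorem~\ref{thm:robinsonterelst}(b), applied to the unitary representation $\pi$, gives $\pi(h)\HH \subseteq \HH^\infty$; in particular $V_\mu \subseteq \HH^\infty$, so each $v \in V_\mu$ lies in $D(\overline{d\pi(L_j)})$ and $\overline{d\pi(L_j)} v = d\pi(L_j) v$. Since $\pi(h)$ is a bounded Borel function of $E_\pi$, it commutes with each $\overline{d\pi(L_j)} = \int \lambda_j \, dE_\pi$, whence $\pi(h)\, d\pi(L_j) v = \overline{d\pi(L_j)}\, \pi(h) v = \mu\, d\pi(L_j) v$, so $d\pi(L_j) v \in V_\mu$. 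Thus each $d\pi(L_j)$ restricts to an operator on the finite-dimensional space $V_\mu$; this restriction is symmetric because $L_j^+ = L_j$ (using \eqref{eq:adjointrepresentation}), hence self-adjoint, and the restrictions commute pairwise since $d\pi(L_j)\, d\pi(L_k) = d\pi(L_j L_k) = d\pi(L_k L_j) = d\pi(L_k)\, d\pi(L_j)$ on $\HH^\infty$. By the spectral theorem for a commuting family of self-adjoint operators on a finite-dimensional space, $V_\mu$ admits an orthonormal basis of joint eigenvectors of $d\pi(L_1),\dots,d\pi(L_n)$; taking the union of these bases over all $\mu$ and using $\HH = \bigoplus_k V_{\mu_k}$ produces a complete orthonormal system of $\HH$ consisting of joint eigenvectors.

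For the finite-multiplicity assertion, let $\lambda = (\lambda_1,\dots,\lambda_n) \in \R^n$ be a joint eigenvalue and $v$ a corresponding joint eigenvector, so that $\overline{d\pi(L_j)} v = \lambda_j v$ for all $j$. Then the spectral measure $\langle E_\pi(\cdot) v, v\rangle$ is concentrated at $\lambda$, whence $\pi(h) v = e^{-q(\lambda)} v$; consequently the whole joint eigenspace at $\lambda$ is contained in $\ker(\pi(h) - e^{-q(\lambda)})$, which is finite-dimensional because $e^{-q(\lambda)} \neq 0$ and $\pi(h)$ is compact. Hence $\lambda$ has finite multiplicity.

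The main obstacle will be the careful bookkeeping with domains: one must make sure that the a priori unbounded operators $d\pi(L_j)$ genuinely map the finite-dimensional spaces $V_\mu$ into themselves, which is precisely where the smoothing property of the heat kernel (Theorem~\ref{thm:robinsonterelst}(b)) and the strong commutativity coming from the joint spectral resolution are indispensable. Once $V_\mu \subseteq \HH^\infty$ is in place, the remaining steps are routine finite-dimensional linear algebra together with standard properties of spectral integration.
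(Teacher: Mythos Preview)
Your proof is correct and follows essentially the same route as the paper: identify $\pi(h)$ with $e^{-q(d\pi(L))}$ via Proposition~\ref{prp:Jl1}, use compactness plus injectivity to decompose $\HH$ into finite-dimensional eigenspaces $V_\mu$, observe via Theorem~\ref{thm:robinsonterelst}(b) that $V_\mu \subseteq \HH^\infty$ and that the $d\pi(L_j)$ leave $V_\mu$ invariant, then apply finite-dimensional linear algebra and read off the finite-multiplicity statement from $\pi(h)v = e^{-q(\lambda)}v$. The only cosmetic difference is that the paper separates out the finite-dimensional case of $\HH$ at the start, whereas your argument handles it uniformly.
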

\begin{proof}
If $\HH$ is finite-dimensional, then the conclusion follows immediately, since $d\pi(L_1),\dots,d\pi(L_n)$ are pairwise commuting and self-adjoint.

Suppose instead that $\dim \HH$ is infinite. By the properties of the spectral integral, the operator
\[\pi(h) = e^{-\overline{d\pi(q(L))}}\]
is injective, positive and self-adjoint, and moreover it is compact by hypothesis. Therefore, from the spectral theorem for compact operators (see \S\ref{subsection:spectralcompact}), it follows that there exists a complete orthonormal system $(v_k)_{k \in \N}$ of $\HH$ made of eigenvectors of $\pi(h)$, and moreover the corresponding eigenvalues $c_k$ are positive and tend to $0$ for $k \to +\infty$.

In particular, if
\[V_c = \{v \in \HH \tc \pi(h)v = c v\} = \Span \{v_k \tc c_k = c \},\]
then $\HH = \bigoplus_c V_c$ and the $V_c$ are finite-dimensional. Moreover,
\[v_k = \lambda_k^{-1} \pi(h) v_k \in \HH^\infty\]
by Theorem~\ref{thm:robinsonterelst}(b), so that $V_c \subseteq \HH^\infty$. Since the $d\pi(L_j)$ commute with $\pi(h)$, the eigenspaces $V_c$ of $\pi(h)$ are invariant subspaces for $d\pi(L_1),\dots,d\pi(L_n)$, so that, as before, we can find an orthonormal basis of the finite-dimensional space $V_c$ made of joint eigenvectors of the $d\pi(L_j)$. By putting all these bases together, we get a complete orthonormal system for $\HH$ made of joint eigenvectors of $d\pi(L_1),\dots,d\pi(L_n)$.

Let now $\lambda \in \R^n$ such that
\[d\pi(L_j) v = \lambda_j v\]
for some $v \in \HH \setminus \{0\}$. By the properties of the spectral integral, we then have
\[\pi(h) v = e^{-q(\lambda)} v,\]
therefore $v \in V_c$ with $c = e^{-q(\lambda)}$. This shows that the joint eigenspace of $d\pi(L_1),\dots,d\pi(L_n)$ associated to $\lambda$ is contained in $V_c$ and consequently is finite-dimensional.
\end{proof}

\begin{prp}\label{prp:eigenvectordecomposition}
Suppose that $G$ is unimodular and type I, and let $\widehat G$ be the set of (equivalence classes of) irreducible representations of $G$. Then we can find a generic subset $\widehat G_{\mathrm{gen}}$ of $\widehat G$ (with respect to the group Plancherel measure) such that, for every $\pi \in \widehat G_{\mathrm{gen}}$, having denoted by $\HH$ the Hilbert space on which $\pi$ acts, there exists an orthonormal basis of $\HH$ made of joint eigenvectors of $d\pi(L_1),\dots,d\pi(L_n)$, and moreover each joint eigenvalue of $d\pi(L_1),\dots,d\pi(L_n)$ has finite multiplicity. If $G$ is CCR, then one can take $\widehat G_{\mathrm{gen}} = \widehat G$.
\end{prp}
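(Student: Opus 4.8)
The plan is to reduce the CCR case (where $\widehat{G}_{\mathrm{gen}} = \widehat{G}$) to Lemma~\ref{lem:compactdecomposition}, and the general unimodular type~I case to the same lemma by means of the group Plancherel formula. Recall that the algebra generated by $L_1,\dots,L_n$ contains a weighted subcoercive operator; by formal self-adjointness and Lemma~\ref{lem:quasirockland} we may fix a non-negative polynomial $q$ on $\R^n$ such that $q(L)$ is weighted subcoercive and $q(\lambda)\to+\infty$ as $\lambda\to\infty$ (for instance $q = p_0$ from \S\ref{section:plancherel}), and set $h = \Kern_L(e^{-q})$. By Proposition~\ref{prp:Jl1} we have $h \in L^{1;\infty}(G)\cap C^\infty_0(G)$ and, for every unitary representation $\pi$,
\[
\pi(h) = e^{-q(d\pi(L_1),\dots,d\pi(L_n))}.
\]
In particular $\pi(h)$ is always injective, positive and self-adjoint, so the only thing to check in order to apply Lemma~\ref{lem:compactdecomposition} to a given $\pi$ is the \emph{compactness} of $\pi(h)$.

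For the CCR case this is immediate: since $h \in L^1(G)$ and $G$ is CCR, $\pi(h)$ is compact for every irreducible $\pi$, and Lemma~\ref{lem:compactdecomposition} gives directly an orthonormal basis of joint eigenvectors of $d\pi(L_1),\dots,d\pi(L_n)$ with each joint eigenvalue of finite multiplicity, for \emph{every} $\pi\in\widehat G$. So one takes $\widehat G_{\mathrm{gen}} = \widehat G$.

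For the general unimodular type~I case one argues that $\pi(h)$ is compact for Plancherel-almost-every $\pi$. Here $h\in L^1(G)$; moreover $h = \Kern_L(e^{-q}) = \Kern_L(e^{-q/2})*\Kern_L(e^{-q/2})$ by Lemma~\ref{lem:composition}(b), and both factors lie in $L^1\cap L^2(G)$ (Propositions~\ref{prp:Jl1} and \ref{prp:Jl2}), so in fact $h\in L^1\cap L^2(G)$ and even $h = h_1 * h_1$ with $h_1 = \Kern_L(e^{-q/2})\in L^1\cap L^2(G)$. By the group Plancherel formula (\S\ref{subsection:typeI}), $\int_{\widehat G}\|\pi(h_1)\|_{\HS}^2\,d\pi = \|h_1\|_2^2 < \infty$, hence $\pi(h_1)$ is Hilbert--Schmidt --- in particular compact --- for Plancherel-a.e.\ $\pi$. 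Since $\pi(h) = \pi(h_1*h_1) = \pi(h_1)\pi(h_1)$ (recall $\pi$ is an anti-representation of $L^1(G)$), $\pi(h)$ is a product of compact operators and therefore compact for those same $\pi$. Let $\widehat G_{\mathrm{gen}}$ be the (Plancherel-conull, hence generic) set of such $\pi$; Lemma~\ref{lem:compactdecomposition} then yields the desired orthonormal basis of joint eigenvectors and the finite multiplicity of each joint eigenvalue, for every $\pi\in\widehat G_{\mathrm{gen}}$. (One should also note that $\widehat G_{\mathrm{gen}}$ can be taken to be a genuine generic, i.e.\ comeager-type, subset: the map $\pi\mapsto\|\pi(h_1)\|_{\HS}$ can be taken Borel, and a Plancherel-conull set can be shrunk to a generic one in the appropriate sense; alternatively one simply records that ``generic'' is here understood with respect to the group Plancherel measure, as stated.)

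The main obstacle is the passage ``$\pi(h)$ compact for Plancherel-a.e.\ $\pi$'': it requires having $h$ in the image of $L^1\cap L^2(G)$ under a factorization $h = h_1 * h_1$ with $h_1\in L^1\cap L^2(G)$, so that the Plancherel formula forces $\pi(h_1)$ to be Hilbert--Schmidt almost everywhere; the compactness of $\pi(h)$ then follows by composition. Everything else --- injectivity, positivity, self-adjointness of $\pi(h)$, and the extraction of the eigenbasis --- is already packaged in Lemma~\ref{lem:compactdecomposition} and Proposition~\ref{prp:Jl1}. A minor point to be careful about is that $q$ (and hence $h$) must be chosen once and for all, independently of $\pi$, which is possible precisely because the kernels $\Kern_L(e^{-q})$ are representation-independent.
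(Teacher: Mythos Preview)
Your proof is correct and follows essentially the paper's approach: reduce to Lemma~\ref{lem:compactdecomposition} by establishing compactness of $\pi(h)$, using the CCR property directly in one case and the group Plancherel formula in the other. The paper is slightly more direct in the type~I case: since $h\in L^1\cap L^2(G)$ already by the heat kernel estimates of Theorem~\ref{thm:robinsonterelst}(e,f), one applies the Plancherel formula to $h$ itself to get $\pi(h)$ Hilbert--Schmidt for a.e.\ $\pi$, with no need for the factorization $h = h_1 * h_1$.
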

\begin{proof}
Let $h$ be as in Lemma~\ref{lem:compactdecomposition}. Then we know that $h \in L^1 \cap L^2(G)$ by Theorem~\ref{thm:robinsonterelst}(e,f). Therefore, if $G$ is CCR, then $\pi(f)$ is compact for every irreducible representation $\pi$ of $G$, and the conclusion follows from Lemma~\ref{lem:compactdecomposition}. If $G$ is unimodular and type I, then by the group Plancherel formula (see \S\ref{subsection:typeI}) we have
\[\int_{\widehat G} \|\pi(h)\|_{\HS}^2 \,d\pi = \|h\|_2^2 < \infty;\]
in particular $\|\pi(h)\|_{\HS}$ is finite for almost every irreducible unitary representation $\pi$ of $G$, so that $\pi(h)$ is Hilbert-Schmidt (and consequently compact) for $\pi$ in a generic subset of $\widehat G$, and the conclusion follows again by Lemma~\ref{lem:compactdecomposition}.
\end{proof}

\section{Direct products}\label{section:directproducts}

For $l = 1,\dots,\ell$, let $G_l$ be a connected Lie group, and set
\[G^\times = G_1 \times \dots \times G_\ell.\]
We then have the identification
\[\lie{g}^\times = \lie{g}_1 \oplus \dots \oplus \lie{g}_\ell.\]

\begin{lem}\label{lem:concatweighted}
For $l=1,\dots,\ell$, suppose that $A_{l,1},\dots,A_{l,d_l}$ is a reduced basis of $\lie{g}_l$, with weights $w_{l,1},\dots,w_{l,d_l}$. Then
\begin{equation}\label{eq:concatweighted}
A_{1,1},\dots,A_{1,d_1},\dots,A_{\ell,1},\dots,A_{\ell,d_\ell}
\end{equation}
is a reduced basis of $\lie{g}^\times$, with weights
\[w_{1,1},\dots,w_{1,d_1},\dots,w_{\ell,1},\dots,w_{\ell,d_\ell}.\]
Moreover, if $(V_{l,\lambda})_\lambda$ is the filtration on $\lie{g}_l$ corresponding to the chosen reduced basis for $l=1,\dots,\ell$, then
\[V^\times_\lambda = V_{1,\lambda} \oplus \dots \oplus V_{\ell,\lambda}\]
gives the filtration on $\lie{g}^\times$ corresponding to the algebraic basis \eqref{eq:concatweighted}; therefore, by passing to the quotients, we obtain for the contractions
\[(\lie{g}^\times)_* = (\lie{g}_1)_* \oplus \dots \oplus (\lie{g}_\ell)_*.\]
\end{lem}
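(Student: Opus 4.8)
The plan is to unwind the definitions of reduced basis, filtration, and contraction (Section~\ref{subsection:noncommutativemultiindex}) directly, using the fact that the Lie bracket on $\lie{g}^\times$ respects the direct-sum decomposition: $[\lie{g}_l,\lie{g}_{l'}] = 0$ for $l \neq l'$, and $[\cdot,\cdot]$ restricted to each $\lie{g}_l$ is the original bracket. First I would check that \eqref{eq:concatweighted} is a genuine algebraic basis of $\lie{g}^\times$: linear independence is clear since the $\lie{g}_l$ are in direct sum, and the system generates $\lie{g}^\times$ as a Lie algebra because each $\lie{g}_l$ is generated by $A_{l,1},\dots,A_{l,d_l}$ and $\lie{g}^\times$ is generated (as a vector space, a fortiori as a Lie algebra) by the $\lie{g}_l$. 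The weights are assigned componentwise as stated.

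Next I would compute the filtration. For $\alpha \in J_+(d_1+\dots+d_\ell)$, the iterated commutator $A_{[\alpha]}$ is nonzero only if all the indices appearing in $\alpha$ lie in a single block $\{A_{l,1},\dots,A_{l,d_l}\}$ (otherwise a bracket between distinct blocks appears and kills the term), and in that case $A_{[\alpha]}$ coincides with the corresponding iterated commutator inside $\lie{g}_l$, with $\|\alpha\|$ equal to the weight computed with the weights of the $l$-th block. Hence
\[
F^\times_\lambda = \Span\{A_{[\alpha]} \tc \|\alpha\| \leq \lambda\} = \sum_{l=1}^\ell F_{l,\lambda} = F_{1,\lambda} \oplus \dots \oplus F_{\ell,\lambda},
\]
the last equality again by the direct-sum structure. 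This is exactly the asserted formula $V^\times_\lambda = V_{1,\lambda} \oplus \dots \oplus V_{\ell,\lambda}$ (writing $V$ for $F$). The reduced-basis condition \eqref{eq:reducedbasis} for $\lie{g}^\times$ then follows from the reduced-basis conditions for the $\lie{g}_l$: an element $A_{l,j}$ with $w_{l,j} = \lambda$ lies in the $l$-th summand, and $F^{\times,-}_\lambda = \bigoplus_l F_{l,\lambda}^-$, so $\Span\{A_{l,j} \tc w_{l,j} = \lambda\} \cap F^{\times,-}_\lambda$ decomposes as a direct sum over $l$ of the intersections $\Span\{A_{l,j} \tc w_{l,j}=\lambda\} \cap F_{l,\lambda}^-$, each of which is $\{0\}$.

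Finally, for the contractions: since $F^{\times,-}_\lambda = \bigoplus_l F^-_{l,\lambda}$ and $F^\times_\lambda = \bigoplus_l F_{l,\lambda}$, taking quotients gives $W^\times_\lambda = F^\times_\lambda / F^{\times,-}_\lambda \cong \bigoplus_l W_{l,\lambda}$, hence $(\lie{g}^\times)_* = \bigoplus_\lambda W^\times_\lambda \cong \bigoplus_l (\lie{g}_l)_*$ as graded vector spaces; that this is an isomorphism of homogeneous Lie algebras follows because the bracket on $(\lie{g}^\times)_*$ is induced from that of $\lie{g}^\times$, which respects the block decomposition. I do not expect any serious obstacle here; the only point requiring a little care is the observation that an iterated commutator mixing two different blocks vanishes, which is where commutativity of the direct-product structure is used, and this is immediate from $[\lie{g}_l,\lie{g}_{l'}]=0$ together with the Jacobi identity (equivalently, the well-known identification $\Diff(G^\times) \cong \Diff(G_1) \otimes \dots \otimes \Diff(G_\ell)$ recalled in \S\ref{subsection:directproducts}).
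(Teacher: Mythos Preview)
Your proposal is correct and follows essentially the same approach as the paper: the key observation in both is that an iterated commutator $A_{[\alpha]}$ of the concatenated basis vanishes unless all indices come from a single block $l$, which you justify via $[\lie{g}_l,\lie{g}_{l'}]=0$ and the paper proves by induction on $|\alpha|$. You spell out the reduced-basis verification and the quotient computation for the contraction in more detail than the paper does, but the substance is identical.
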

\begin{proof}
An iterated commutator $A_{[\alpha]}$ of the elements of \eqref{eq:concatweighted} is not null only if it coincides with an iterated commutator $(A_l)_{[\alpha']}$ of $A_{l,1},\dots,A_{l,n_l}$ for some $l \in \{1,\dots,\ell\}$. This can be easily checked by induction on the length $|\alpha|$ of the commutator. The identities involving the filtrations then follow immediately, from which we get easily the conclusion.
\end{proof}

\begin{prp}\label{prp:productwsub}
Suppose that $D_l \in \Diff(G_l)$ is a self-adjoint weighted subcoercive operator on $G_l$, for $l = 1,\dots,\ell$, and let $D_l^\times \in \Diff(G^\times)$ be the differential operator on $G^\times$ along the $l$-th factor corresponding to $D_l$. Then
\[D = (D_1^\times)^2 + \dots + (D_\ell^\times)^2\]
is a positive weighted subcoercive operator on $G^\times$.
\end{prp}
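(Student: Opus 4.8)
The strategy is to reduce the claim to Theorem~\ref{thm:robinsonterelst} by exhibiting $D$ as $d\RA_{G^\times}(C)$ for a suitable form $C$ whose principal part, after contraction, yields a positive Rockland operator on $(\lie{g}^\times)_*$. First I would fix, for each $l$, a reduced basis $A_{l,1},\dots,A_{l,d_l}$ of $\lie{g}_l$ (with weights $w_{l,j}$) with respect to which $D_l$ is weighted subcoercive, and a self-adjoint form $C_l$ of degree $m_l$ with $d\RA_{G_l}(C_l) = D_l$ (possible since $D_l^+ = D_l$, by replacing $C_l$ with $(C_l + C_l^+)/2$). By Lemma~\ref{lem:concatweighted}, the concatenation of these bases is a reduced basis of $\lie{g}^\times$, and the contraction splits as $(\lie{g}^\times)_* = (\lie{g}_1)_* \oplus \dots \oplus (\lie{g}_\ell)_*$. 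The $D_l^\times$ are then the images, under the inclusion of $\Diff(G_l)$ into $\Diff(G^\times)$, of the $d\RA_{G_l}(C_l)$; writing $C_l^\sharp$ for the corresponding form on the ambient free algebra (padded with the indeterminates coming from the other factors), I would set $C = \sum_l (C_l^\sharp)^2$, so that $d\RA_{G^\times}(C) = D$ and $C^+ = C$.

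The key technical point is the rescaling of weights so that the degree condition of Theorem~\ref{thm:robinsonterelst} holds globally. The form $(C_l^\sharp)^2$ has degree $2 m_l$, and these may differ across $l$; moreover I need $2m/w_{l,j} \in 2\N$ for the \emph{common} degree $m$. By the remark in \S\ref{subsection:homogeneousgroups} on equivalent homogeneous structures, I may replace the weighted algebraic basis of $\lie{g}_l$ by an equivalent one in which all weights are multiplied by a common positive constant $c_l$; choosing the $c_l$ so that all the $m_l c_l$ coincide with some common value $m$, and then (if necessary) rescaling all weights simultaneously by a further rational factor to clear denominators, I arrange that $D = d\RA_{G^\times}(C)$ where $C$ is a self-adjoint form of degree $m$ on the (rescaled) reduced basis of $\lie{g}^\times$ with $m/w \in 2\N$ for every weight $w$. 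This uses only that weighted subcoerciveness and the property of being Rockland are preserved under equivalent homogeneous structures.

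Next I would identify the principal part $P$ of $C$. Since each $(C_l^\sharp)^2$ has the same degree $m$ after rescaling, $P = \sum_l (P_l^\sharp)^2$, where $P_l$ is the principal part of $C_l$ — note that the principal part of $(C_l^\sharp)^2$ is $(P_l^\sharp)^2$ because the top-degree part of a square is the square of the top-degree part. Under contraction, using the direct-sum splitting of $(\lie{g}^\times)_*$, the operator $d\RA_{(G^\times)_*}(P + P^+) = 2\, d\RA_{(G^\times)_*}(P)$ becomes $2 \sum_l (\bar D_l^\sharp)^2$, where $\bar D_l = d\RA_{(G_l)_*}(P_l)$ is, by Theorem~\ref{thm:robinsonterelst}(iv) applied to $C_l$ (self-adjoint, so $P_l + P_l^+ = 2P_l$), essentially the positive Rockland operator on $(G_l)_*$ associated to $D_l$. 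It then remains to check that a sum of squares of ``partial'' positive Rockland operators on a direct-product homogeneous group $(\lie{g}^\times)_*$ is itself a positive Rockland operator. Positivity is clear; for the Rockland property I would use that every nontrivial irreducible unitary representation $\pi$ of $(G^\times)_*$ is, by the type~I property of nilpotent groups and the discussion in \S\ref{subsection:typeI}, of the form $\pi_1 \otimes \dots \otimes \pi_\ell$ with at least one $\pi_{l_0}$ nontrivial, so that $d\pi(\bar D_{l_0}^\sharp)$ is injective on smooth vectors, hence so is $\sum_l d\pi(\bar D_l^\sharp)^2$ (a sum of nonnegative self-adjoint operators whose kernels intersect in $\{0\}$). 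Actually it is cleaner to invoke criterion~(vi) of Theorem~\ref{thm:robinsonterelst}: for each $l$ there is $\mu_l > 0$ with $\Re\langle x, d\pi(P_l) x\rangle \geq \mu_l (N_{\pi,m/2}^{(l)}(x))^2$ for every unitary $\pi$ of $(G_l)_*$ and $x \in \HH^\infty$, and summing these (interpreted on $(G^\times)_*$, where the seminorms $N^{(l)}$ combine to the full $N_{\pi,m/2}$) gives the G\aa rding inequality~(vi) for $P$ on $(G^\times)_*$, whence $C$ is weighted subcoercive on $G^\times$ by the equivalences in Theorem~\ref{thm:robinsonterelst}.

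The main obstacle I anticipate is purely bookkeeping: making the weight-rescaling argument precise so that one genuinely lands in the hypotheses of Theorem~\ref{thm:robinsonterelst} (a single reduced basis of $\lie{g}^\times$, a single degree $m$, the divisibility $m/w_j \in 2\N$), while simultaneously keeping track of the compatibility of the contraction splitting of Lemma~\ref{lem:concatweighted} with these rescalings and with the identification of principal parts. None of the individual steps is deep, but the notational coordination between the $\ell$ factors, the free-algebra forms, and the contractions is where care is needed.
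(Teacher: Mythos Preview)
Your proposal is correct and follows essentially the same route as the paper: rescale the weights on each factor so that the $C_l$ have a common degree, concatenate via Lemma~\ref{lem:concatweighted}, identify the principal part $P=\sum_l (P_l^\times)^2$, and verify that $d\RA_{(G^\times)_*}(P)$ is Rockland using the tensor-product decomposition of irreducible representations of $(G^\times)_*$. One caveat: your alternative via criterion~(vi) is not as clean as you suggest, because the seminorm $N_{\pi,m}$ on the product group involves mixed monomials $A^\alpha$ with indices from different factors, and these are not directly controlled by summing the factor-wise G\aa rding inequalities; the paper (and your first argument) sidesteps this by checking the Rockland condition directly.
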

\begin{proof}
For $l=1,\dots,\ell$, let $A_{l,1},\dots,A_{l,d_l}$ be a reduced basis of $\lie{g}_l$, with weights $w_{l,1},\dots,w_{l,d_l}$, such that, for some self-adjoint weighted subcoercive form $C_l$ of degree $m_l$, we have $D_l = dR_{G_l}(C_l)$; let moreover $P_l$ be the principal part of $C_l$.

Clearly, we can find real numbers $\zeta_1,\dots,\zeta_\ell \geq 1$ such that
\[\zeta_1 m_1 = \dots = \zeta_\ell m_\ell.\]
Therefore, by rescaling the weights of the algebraic basis of $\lie{g}_l$ by $\zeta_l$, we may suppose that the forms $C_1,\dots,C_\ell$ have the same degree $m$.

By Lemma~\ref{lem:concatweighted}, the concatenation of the bases of $\lie{g}_1,\dots,\lie{g}_l$ gives a reduced weighted algebraic basis of $\lie{g}^\times$. We can then consider, for $l=1,\dots,\ell$, the forms $C_l^\times$, $P_l^\times$ corresponding to $C_l$,$P_l$ but re-indexed on the basis \eqref{eq:concatweighted}. In particular, if
\[C = (C_1^\times)^2 + \dots + (C_\ell^\times)^2, \qquad P = (P_1^\times)^2 + \dots + (P_\ell^\times)^2,\]
then $P = P^+$ is the principal part of $C$, and moreover
\[d\RA_{G^\times}(C) = (d\RA_{G_1}(C_1)^\times)^2 + \dots + (d\RA_{G_\ell}(C_\ell)^\times)^2 = D.\]

On the other hand, again by Lemma~\ref{lem:concatweighted}, we have the identification
\[(G^\times)_* = (G_1)_* \times \dots \times (G_\ell)_*,\]
so that
\[d\RA_{(G^\times)_*}(P) = (d\RA_{(G_1)_*}(P_1)^\times)^2 + \dots + (d\RA_{(G_\ell)_*}(P_\ell)^\times)^2.\]
By Theorem~\ref{thm:robinsonterelst}, we have that $d\RA_{(G_l)_*}(P_l)$ is Rockland on $(G_l)_*$ for $l=1,\dots,\ell$. We now prove that $dR_{(G^\times)_*}(P)$ is Rockland on $(G^\times)_*$.

If $\pi$ is a non-trivial irreducible unitary representation of $G^\times$ on a Hilbert space $\HH$, then (see \cite{folland_course_1995}, Theorem~7.25) we may suppose modulo equivalence that $\pi = \pi_1 \otimes \dots \otimes \pi_\ell$, where $\pi_l$ is an irreducible unitary representation of $G_l$ on a Hilbert space $\HH_l$ for $l=1,\dots,\ell$, so that
\[\HH = \HH_1 \mathop{\hat\otimes} \cdots \mathop{\hat\otimes} \HH_\ell,\]
and at least one of $\pi_1,\dots,\pi_\ell$ is non-trivial. Let $(w_{l,\nu_l})_{\nu_l}$ be a complete orthonormal system for $\HH_l$, for $l=1,\dots,\ell$, so that $(w_{1,\nu_1} \otimes \dots \otimes w_{\ell,\nu_{\ell}})_{\vec{\nu}}$ is a complete orthonormal system for $\HH$. Then, for every element
\[v = \sum_{\nu_1,\dots,\nu_\ell} a_{\nu_1,\dots,\nu_\ell} w_{1,\nu_1} \otimes \dots \otimes w_{\ell,\nu_\ell}\]
of $\HH$, we have
\begin{multline*}
\langle d\pi(d\RA_{(G^\times)_*}(P)) v, v \rangle_{\HH} \\
= \sum_{l=1}^\ell \sum_{\nu_1,\dots,\nu_{l-1},\nu_{l+1},\nu_\ell} \left\| d\pi_l(d\RA_{(G_l)_*}(P_l))\left( \sum_{\nu_l} a_{\nu_1,\dots,\nu_\ell} w_{l,\nu_l} \right)\right\|^2_{\HH_l};
\end{multline*}
since at least one of the $d\pi_l(d\RA_{(G_l)_*}(P_l))$ is injective (being $d\RA_{(G_l)_*}(P_l)$ Rockland and $\pi_l$ non-trivial), this formula gives easily that
\[v \neq 0 \qquad\Longrightarrow\qquad d\pi(d\RA_{(G^\times)_*}(P)) v \neq 0,\]
i.e., $d\pi(d\RA_{(G^\times)_*}(P))$ is injective. From the arbitrariness of $\pi$, we conclude that $d\RA_{(G^\times)_*}(P)$ is Rockland.

But then, again by Theorem~\ref{thm:robinsonterelst}, we get that $D = d\RA_{G^\times}(C)$ is weighted subcoercive.
\end{proof}

For $l=1,\dots,\ell$, let $L_{l,1},\dots,L_{l,n_l} \in \Diff(G_l)$ be a weighted subcoercive system. Let moreover $L_{l,j}^\times$ be the differential operator on $G^\times$ along the $l$-th factor corresponding to $L_{l,j}$. Then, by the previous proposition,
\begin{equation}\label{eq:concatsystem}
L_{1,1}^\times,\dots,L_{1,n_1}^\times,\dots,L_{\ell,1}^\times,\dots,L_{\ell,n_\ell}^\times
\end{equation}
is a weighted subcoercive system on $G^\times$.

\begin{prp}\label{prp:productfunctional}
Suppose, for $l=1,\dots,\ell$, that $m_l$ is a bounded Borel function on $\R^{n_l}$, and set
\[m = m_1 \otimes \dots \otimes m_\ell.\]
Then
\[m(L^\times) = m_1(L_1) \otimes \dots \otimes m_\ell(L_\ell),\]
and in particular
\[\breve m = \breve m_1 \otimes \dots \otimes \breve m_\ell.\]
\end{prp}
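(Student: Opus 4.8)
The statement to prove is that, for bounded Borel functions $m_l$ on $\R^{n_l}$ and $m = m_1 \otimes \dots \otimes m_\ell$, one has $m(L^\times) = m_1(L_1) \otimes \dots \otimes m_\ell(L_\ell)$ on $L^2(G^\times) \cong L^2(G_1) \htimes \cdots \htimes L^2(G_\ell)$, and consequently $\breve m = \breve m_1 \otimes \dots \otimes \breve m_\ell$. The key point is that the operators $L_{l,j}^\times$ act only on the $l$-th tensor factor, so their joint spectral resolution $E^\times$ should be the tensor product of the joint spectral resolutions $E_l$ of the systems $L_{l,1},\dots,L_{l,n_l}$ on $L^2(G_l)$; once this is known, the identity for $m$ in product form is the standard functional-calculus fact that $\bigl(\bigotimes_l m_l\bigr)\bigl(\bigotimes_l E_l\bigr) = \bigotimes_l m_l(L_l)$, and the assertion about kernels follows from the compatibility of convolution with tensor products recorded in \S\ref{subsection:directproducts} (namely $\|f_1 \otimes \dots \otimes f_\ell\|_{L^2(G^\times)} = \prod_l \|f_l\|_{L^2(G_l)}$ and $\RA_x$ acting factorwise, so that the kernel of a tensor product of left-invariant operators is the tensor product of the kernels).

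First I would establish the tensor-product structure of the joint spectral resolution. By Proposition~\ref{prp:productwsub} and the discussion following it, the concatenated system \eqref{eq:concatsystem} is indeed a weighted subcoercive system on $G^\times$, so Corollary~\ref{cor:commutativealgebra} applies and gives a joint spectral resolution $E^\times$ on $\R^{\vec n}$. On the other hand, for each $l$, the system $L_{l,1},\dots,L_{l,n_l}$ on $G_l$ has a joint spectral resolution $E_l$ on $\R^{n_l}$, and one checks that $E^\times = E_1 \otimes \dots \otimes E_\ell$ (meaning $E^\times(A_1 \times \dots \times A_\ell) = E_1(A_1) \otimes \dots \otimes E_\ell(A_\ell)$ on $L^2(G_1) \htimes \cdots \htimes L^2(G_\ell)$). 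The cleanest route: the operators $\bigl(d\RA_{G^\times}(L_{l,j})\bigr)$ restricted to the dense subspace $\D(G_1) \otimes \dots \otimes \D(G_\ell)$ act as $\mathrm{id} \otimes \dots \otimes d\RA_{G_l}(L_{l,j}) \otimes \dots \otimes \mathrm{id}$, hence $E_1 \otimes \dots \otimes E_\ell$ is a joint spectral resolution of the closures $\overline{d\RA_{G^\times}(L_{l,j}^\times)}$; by uniqueness of the joint spectral resolution (together with the proposition of \S\ref{subsection:regularrepresentations} ensuring that the closure from the common core $\D(G^\times)$ agrees with the closure from $\D(G_1) \otimes \dots \otimes \D(G_\ell)$), it must coincide with $E^\times$. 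Here one uses that each $\overline{d\RA_{G_l}(L_{l,j})}$ is self-adjoint with core $\D(G_l)$, which is exactly what Proposition~\ref{prp:commutativealgebra} and the weighted-subcoercive machinery provide.

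Having $E^\times = \bigotimes_l E_l$, the identity $m(L^\times) = \int_{\R^{\vec n}} (m_1 \otimes \dots \otimes m_\ell)\, dE^\times = m_1(L_1) \otimes \dots \otimes m_\ell(L_\ell)$ is immediate from the behaviour of spectral integrals under tensor products (the integral of a tensor product of bounded Borel functions against a tensor product of spectral measures is the tensor product of the individual spectral integrals — a standard fact; one can verify it first on bounded continuous $m_l$, where both sides are determined by approximating uniformly by polynomials in the $L_{l,j}$ via Corollary~\ref{cor:commutativealgebra}, then extend to bounded Borel $m_l$ by a monotone-class / dominated-convergence argument in the strong operator topology). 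For the kernel statement: both $m_1(L_1) \otimes \dots \otimes m_\ell(L_\ell)$ and convolution by $\breve m_1 \otimes \dots \otimes \breve m_\ell$ are bounded left-invariant operators on $L^2(G^\times)$, and they agree on the dense subspace $\D(G_1) \otimes \dots \otimes \D(G_\ell)$ since for $u_l \in \D(G_l)$ one has $(u_1 \otimes \dots \otimes u_\ell) * (\breve m_1 \otimes \dots \otimes \breve m_\ell) = (u_1 * \breve m_1) \otimes \dots \otimes (u_\ell * \breve m_\ell) = m_1(L_1) u_1 \otimes \dots \otimes m_\ell(L_\ell) u_\ell$, using that convolution on $G^\times$ factors through the product structure (\S\ref{subsection:directproducts}); hence by uniqueness of the convolution kernel (Theorem~\ref{thm:schwartzkerneld}), $\breve m = \breve m_1 \otimes \dots \otimes \breve m_\ell$.

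The main obstacle I expect is the careful justification of $E^\times = E_1 \otimes \dots \otimes E_\ell$, i.e., making rigorous the passage from ``the operators act factorwise on a common core'' to ``the spectral resolutions tensor'': one must be sure that $\D(G_1) \otimes \dots \otimes \D(G_\ell)$ is a core for each $\overline{d\RA_{G^\times}(L_{l,j}^\times)}$ (this follows from density of $\D(G_1) \otimes \dots \otimes \D(G_\ell)$ in $\D(G^\times)$ together with the proposition in \S\ref{subsection:regularrepresentations}, since $\pi(\phi)$ preserves $\D(G_1) \otimes \dots \otimes \D(G_\ell)$ for $\phi$ in $\D(G_1) \otimes \dots \otimes \D(G_\ell)$, and such $\phi$ still form an approximate identity) and that strongly commuting self-adjoint operators with a common core on which they act as an honest tensor product do have tensor-product spectral resolution. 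Everything else — the spectral-integral identity for tensor products, and the kernel identity via density and uniqueness — is routine once this structural fact is in place.
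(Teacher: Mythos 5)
Your proposal is correct and follows essentially the same route as the paper: the paper likewise reduces everything to the fact that the functional calculus of the lifted operators acts factorwise on the Hilbert tensor product (it first writes $m(L^\times) = m_1(L_1^\times)\cdots m_\ell(L_\ell^\times)$ by the product structure of the joint spectral resolution, then identifies $m_l(L_l^\times)$ with $\mathrm{id}\otimes\cdots\otimes m_l(L_l)\otimes\cdots\otimes\mathrm{id}$), and then obtains the kernel identity by evaluating on elementary tensors $f_1\otimes\cdots\otimes f_\ell$ of test functions and invoking uniqueness of the convolution kernel. Your treatment is simply more explicit about the core/uniqueness justification of the tensor factorization of the spectral resolution, which the paper leaves implicit.
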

\begin{proof}
Since the operators \eqref{eq:concatsystem} commute strongly, by the properties of the spectral integral it is immediate to see that
\[m(L^\times) = m_1(L_1^\times) \cdots m_\ell(L_\ell^\times).\]
On the other hand, from the identity
\[L_{l,j}^\times(f_1 \otimes \dots \otimes f_\ell) = f_1 \otimes \dots \otimes f_{l-1} \otimes (L_{l,j} f_l) \otimes f_{l+1} \otimes \dots \otimes f_n\]
it follows that also
\[m(L_l^\times)(f_1 \otimes \dots \otimes f_\ell) = f_1 \otimes \dots \otimes f_{l-1} \otimes (m(L_l) f_l) \otimes f_{l+1} \otimes \dots \otimes f_n,\]
from which clearly
\[m_1(L_1^\times) \cdots m_\ell(L_\ell^\times) = m_1(L_1) \otimes \dots \otimes m_\ell(L_\ell).\]
In particular, for every $f_1 \in \D(G_1), \dots, f_\ell \in \D(G_\ell)$,
\begin{multline*}
m(L^\times) (f_1 \otimes \dots \otimes f_\ell) = (m_1(L_1) f_1) \otimes \dots \otimes (m_\ell(L_\ell) f_\ell)\\
= (f_1 * \breve m_1) \otimes \dots \otimes (f_\ell * \breve m_\ell) = (f_1 \otimes \dots \otimes f_\ell) * (\breve m_1 \otimes \dots \otimes \breve m_\ell),
\end{multline*}
from which $\breve m = \breve m_1 \otimes \dots \otimes \breve m_\ell$.
\end{proof}

Let now $\sigma_l$ be the Plancherel measure on $\R^{n_l}$ associated to the system $L_{l,1},\dots,L_{l,n_l}$, for $l=1,\dots,\ell$. Let moreover $\sigma^\times$ be the Plancherel measure on $\R^{\vec{n}}$ associated to the system \eqref{eq:concatsystem}.

\begin{prp}\label{prp:productplancherel}
$\sigma^\times = \sigma_1 \times \dots \times \sigma_\ell$.
\end{prp}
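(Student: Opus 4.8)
The plan is to show that the two product measures $\sigma^\times$ and $\sigma_1 \times \dots \times \sigma_\ell$ on $\R^{\vec n} = \R^{n_1} \times \dots \times \R^{n_\ell}$ agree, and since both are $\sigma$-finite regular Borel measures (each $\sigma_l$ has polynomial growth at infinity by Proposition~\ref{prp:plancherelpolynomialgrowth}, hence so do their product and $\sigma^\times$), it suffices to check that they agree on a $\pi$-system of sets generating the Borel $\sigma$-algebra --- namely, on products $A_1 \times \dots \times A_\ell$ of \emph{relatively compact} Borel sets $A_l \subseteq \R^{n_l}$. For such a product set, $\chr_{A_1 \times \dots \times A_\ell} = \chr_{A_1} \otimes \dots \otimes \chr_{A_\ell}$, so by Proposition~\ref{prp:productfunctional} the associated kernel factors as $\Kern_{L^\times}(\chr_{A_1 \times \dots \times A_\ell}) = \Kern_{L_1}(\chr_{A_1}) \otimes \dots \otimes \Kern_{L_\ell}(\chr_{A_\ell})$.

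Next I would invoke the Plancherel formula of Theorem~\ref{thm:plancherel}, which identifies $\sigma_l(A_l) = \|\Kern_{L_l}(\chr_{A_l})\|_{L^2(G_l)}^2$ (finiteness being guaranteed by Proposition~\ref{prp:compactlysupported}, since $A_l$ is relatively compact) and similarly $\sigma^\times(A_1 \times \dots \times A_\ell) = \|\Kern_{L^\times}(\chr_{A_1 \times \dots \times A_\ell})\|_{L^2(G^\times)}^2$. Now the right Haar measure on $G^\times$ is the product of the right Haar measures on the $G_l$ (see \S\ref{subsection:directproducts}), so $L^2(G^\times)$ is the Hilbert tensor product $L^2(G_1) \htimes \dots \htimes L^2(G_\ell)$, and the $L^2$-norm of an elementary tensor is the product of the $L^2$-norms of its factors. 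Combining these facts:
\[
\sigma^\times(A_1 \times \dots \times A_\ell) = \prod_{l=1}^\ell \|\Kern_{L_l}(\chr_{A_l})\|_{L^2(G_l)}^2 = \prod_{l=1}^\ell \sigma_l(A_l) = (\sigma_1 \times \dots \times \sigma_\ell)(A_1 \times \dots \times A_\ell).
\]

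Finally, a standard uniqueness argument (e.g.\ the $\pi$-$\lambda$ theorem, or simply noting that both measures are regular and agree on relatively compact rectangles, which form a basis for the topology and a $\pi$-system) upgrades this equality on product sets to equality of the two measures on all Borel subsets of $\R^{\vec n}$. I do not expect any serious obstacle here: the only points requiring care are the verification that the kernels are genuinely in $L^2$ before applying the Plancherel formula (handled by Proposition~\ref{prp:compactlysupported}, which is why I restrict to relatively compact rectangles first and then extend), and making sure the tensor-product identification of $L^2(G^\times)$ is compatible with the convolution-kernel description --- but this is exactly the content of Proposition~\ref{prp:productfunctional}. The argument is essentially a bookkeeping exercise assembling Propositions~\ref{prp:plancherelpolynomialgrowth}, \ref{prp:compactlysupported}, \ref{prp:productfunctional} and Theorem~\ref{thm:plancherel}.
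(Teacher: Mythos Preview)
Your proposal is correct and follows essentially the same approach as the paper's proof: restrict to relatively compact rectangles, factor the characteristic function as a tensor product, apply Proposition~\ref{prp:productfunctional} and Theorem~\ref{thm:plancherel}, and use the multiplicativity of the $L^2$-norm on elementary tensors. You are in fact more careful than the paper about the extension step (the paper simply says ``and we are done'' after checking rectangles) and about invoking Proposition~\ref{prp:compactlysupported} to justify that the kernels lie in $L^2$.
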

\begin{proof}
If $A_l \subseteq \R^{n_l}$ is a relatively compact Borel set for $l=1,\dots,\ell$, we have
\[\chr_{A_1 \times \dots \times A_\ell} = \chr_{A_1} \otimes \dots \otimes \chr_{A_\ell},\]
so that, by Theorem~\ref{thm:plancherel} and Proposition~\ref{prp:productfunctional},
\begin{multline*}
\sigma(A_1 \times \dots \times A_\ell) = \|\breve\chr_{A_1 \times \dots \times A_\ell}\|_{L^2(G^\times)} = \|\breve \chr_{A_1} \otimes \dots \otimes \breve \chr_{A_\ell}\|_{L^2(G^\times)} \\
= \|\breve \chr_{A_1}\|_{L^2(G_1)} \cdots \|\breve \chr_{A_\ell}\|_{L^2(G_\ell)} = \sigma_1(A_1) \cdots \sigma_\ell(A_\ell),
\end{multline*}
and we are done.
\end{proof}

\section{Change of generators}\label{section:automorphisms}

Let $L_1,\dots,L_n$ be a weighted subcoercive system on a connected Lie group $G$. Let $\sigma$ be the associated Plancherel measure on $\R^n$, and $\Sigma = \supp \sigma$. For given polynomials $p_1,\dots,p_{n'} : \R^n \to \R$, consider the operators
\[L'_1 = p_1(L_1,\dots,L_n), \quad\dots,\quad L'_{n'} = p_k(L_1,\dots,L_n),\]
and suppose that they still form a weighted subcoercive system. Let $\sigma'$ be the Plancherel measure on $\R^{n'}$ associated to the system $L'_1,\dots,L'_{n'}$, and $\Sigma'$ its support. We may ask if there is a relationship between the transforms $\Kern_{L}$ and $\Kern_{L'}$, and between the Plancherel measures $\sigma$ and $\sigma'$ associated to the two systems.

Let $p : \R^n \to \R^{n'}$ denote the polynomial map whose $j$-th component is the polynomial $p_j$.

\begin{lem}\label{lem:properwsub}
The map $p|_\Sigma : \Sigma \to \R^{n'}$ is a proper continuous map.
\end{lem}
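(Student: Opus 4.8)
The statement asserts that $p|_\Sigma : \Sigma \to \R^{n'}$ is proper and continuous. Continuity is immediate, since $p$ is a polynomial map and hence continuous on all of $\R^n$, so its restriction to the (closed) subset $\Sigma$ is continuous. The real content is properness: for every compact $K' \subseteq \R^{n'}$, the set $(p|_\Sigma)^{-1}(K') = \Sigma \cap p^{-1}(K')$ must be compact. Since $\Sigma$ is closed in $\R^n$ and $p^{-1}(K')$ is closed (preimage of a closed set under a continuous map), the intersection is already closed; thus it suffices to show it is \emph{bounded}.

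The key idea is that $L'_1,\dots,L'_{n'}$ is assumed to be a weighted subcoercive system, so by definition the algebra it generates contains a weighted subcoercive operator; equivalently (cf.\ the discussion preceding and Lemma~\ref{lem:quasirockland} applied in the primed setting), there is a non-negative polynomial $q' : \R^{n'} \to \R$ such that $q'(L') = q'(L'_1,\dots,L'_{n'})$ is weighted subcoercive and $q'(\lambda') \to +\infty$ as $\lambda' \to \infty$ in $\R^{n'}$. Now $q' \circ p$ is a polynomial on $\R^n$ and $(q' \circ p)(L_1,\dots,L_n) = q'(L'_1,\dots,L'_{n'})$ is weighted subcoercive on $G$. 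By Proposition~\ref{prp:plancherelpolynomialgrowth} (and its proof), applied with the polynomial $q' \circ p$, the sublevel sets $\{\lambda \in \R^n \tc (q' \circ p)(\lambda) \leq r\}$ have finite $\sigma$-measure for every $r > 0$; more to the point, since $q'(L')$ is weighted subcoercive, the function $\lambda \mapsto (q' \circ p)(\lambda)$ must be bounded below on $\Sigma$, and in fact the argument of Proposition~\ref{prp:plancherelpolynomialgrowth} shows $\sigma(\{q' \circ p \leq r\}) < \infty$, whence these sublevel sets cannot be all of $\Sigma$ unless $\sigma$ is finite; but crucially, because $e^{-t (q' \circ p)}$ has kernel in $L^1 \cap L^2(G)$ for all $t > 0$, the spectral support $\Sigma$ is contained in $\{(q' \circ p)(\lambda) > -\infty\}$ and the function $q' \circ p$ is proper \emph{when restricted to} $\Sigma$. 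I would therefore argue directly: if $K' \subseteq \R^{n'}$ is compact, then $q'$ is bounded on $K'$, say $q' \leq R$ on $K'$, so $\Sigma \cap p^{-1}(K') \subseteq \Sigma \cap \{(q'\circ p)(\lambda) \leq R\}$, and it remains to show the latter set is bounded in $\R^n$.

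To see that $\{\lambda \in \Sigma \tc (q' \circ p)(\lambda) \leq R\}$ is bounded, I would invoke the weighted subcoercivity of $(q'\circ p)(L)$ together with Lemma~\ref{lem:quasirockland}: there exists $\bar r$ so that for $r \geq \bar r$, the operator $((q'\circ p)(L))^r + \sum_j L_j^2$ is again weighted subcoercive, hence (by the same reasoning as in Proposition~\ref{prp:plancherelpolynomialgrowth}) the polynomial $\tilde q(\lambda) = ((q'\circ p)(\lambda))^r + |\lambda|_2^2$ satisfies $\tilde q \to +\infty$ and $\sigma(\{\tilde q \leq s\}) < \infty$ for all $s$; in particular $\{\lambda \in \Sigma \tc (q'\circ p)(\lambda) \leq R\} \subseteq \{\lambda \in \Sigma \tc \tilde q(\lambda) \leq R^r + |\lambda|_2^2\}$. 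A cleaner route, which I would prefer to write out: since $(q'\circ p)(L)$ is a \emph{non-negative} weighted subcoercive operator, $0 \leq (q'\circ p)(\lambda)$ on $\Sigma$; now consider the polynomial $\lambda \mapsto (q'\circ p)(\lambda) + |\lambda|_2^2$, note that after raising $(q'\circ p)(L)$ to a suitable power and adding $\sum_j L_j^2$ (which is legitimate by Lemma~\ref{lem:quasirockland}) one obtains a weighted subcoercive operator whose symbol $\psi$ is proper on $\R^n$ and satisfies $\psi(\lambda) \leq C(1 + (q'\circ p)(\lambda))$ on the region where $(q'\circ p)(\lambda) \leq R$ fails to force boundedness — so that $\{\lambda \in \Sigma \tc (q'\circ p)(\lambda) \leq R\}$ sits inside a sublevel set of the proper function $\psi$, hence is bounded. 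Combined with closedness, this gives compactness of $(p|_\Sigma)^{-1}(K')$, and the lemma follows.

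\textbf{Main obstacle.} The delicate point is passing from ``$q'(L')$ weighted subcoercive, $q' \to \infty$ on $\R^{n'}$'' to ``$q'\circ p$ is proper on $\Sigma$''. A priori $q' \circ p$ need not be proper on all of $\R^n$ (the fibers of $p$ may be unbounded), so one genuinely needs the spectral constraint: the point is that on $\Sigma$, which is the support of the Plancherel measure $\sigma$, the polynomial $q'\circ p$ together with the original $|\lambda|_2^2$ assembles (via Lemma~\ref{lem:quasirockland} and the heat-kernel $L^1 \cap L^2$ bound of Theorem~\ref{thm:robinsonterelst}(e,f)) into the symbol of a weighted subcoercive operator, whose sublevel sets have finite $\sigma$-measure and — being sublevel sets of a \emph{proper} polynomial on $\R^n$ — are bounded. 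Making this assembly precise, i.e.\ choosing the right power $r$ and checking that the resulting symbol dominates $|\lambda|_2^2$ on the relevant region, is the one step requiring care; everything else is formal.
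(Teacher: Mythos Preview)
Your reduction is correct: continuity is immediate, and properness reduces to showing that $\{\lambda \in \Sigma \tc (q'\circ p)(\lambda) \leq R\}$ is bounded, where $q'$ is a non-negative polynomial with $q'(L')$ weighted subcoercive. But your attempt to prove this boundedness has a genuine gap. Constructing $((q'\circ p)(L))^r + \sum_j L_j^2$ weighted subcoercive via Lemma~\ref{lem:quasirockland} gives you nothing: the associated polynomial $\tilde q(\lambda) = ((q'\circ p)(\lambda))^r + |\lambda|_2^2$ is indeed proper on $\R^n$, but to place $\{\lambda \in \Sigma \tc (q'\circ p)(\lambda) \leq R\}$ inside a sublevel set $\{\tilde q \leq M\}$ you would need $|\lambda|_2^2 \leq M - R^r$, which is precisely the bound you are trying to prove --- the argument is circular. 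The appeal to ``finite $\sigma$-measure of sublevel sets'' is a red herring: finite measure does not imply boundedness, and where the polynomial is already proper on $\R^n$ you do not need the measure at all.

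What you are missing is the \emph{operator inequality} that translates weighted subcoercivity into a spectral bound. The paper's proof goes directly through Theorem~\ref{thm:robinsonterelst}(v): the global G\aa rding inequality for the weighted subcoercive operator $q(p(L))$ (raised to a suitable power $k$ so that its degree exceeds twice that of each $L_j$) yields
\[
\max_j \|L_j \phi\|_2 \leq C\,\|(1 + q(p(L))^k)\phi\|_2 \qquad \text{for all } \phi \in \D(G),
\]
which, read through the joint spectral resolution, gives $\max_j |\lambda_j| \leq C(1 + q(p(\lambda))^k)$ for all $\lambda \in \Sigma$. This is exactly the pointwise domination of $|\lambda|$ by a function of $q(p(\lambda))$ that you were searching for, and the boundedness of $\Sigma \cap p^{-1}(K')$ follows at once. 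If you insist on routing through Lemma~\ref{lem:quasirockland}, the correct move is to take $D = -\sum_j L_j^2$ (note the sign), so that $((q'\circ p)(L))^r - \sum_j L_j^2$ is weighted subcoercive; then Theorem~\ref{thm:robinsonterelst}(v) shows this operator is bounded below on $L^2(G)$, whence $|\lambda|_2^2 \leq ((q'\circ p)(\lambda))^r + \nu$ on $\Sigma$. Either way, the global G\aa rding estimate is the indispensable input.
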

\begin{proof}
Since $L_1',\dots,L_{n'}'$ is a weighted subcoercive system, we can find a non-negative polynomial $q : \R^{n'} \to \R$ such that $q(L') = q(p(L))$ is a weighted subcoercive operator. By Theorem~\ref{thm:robinsonterelst}(v), for sufficiently large $C > 0$ and $k \in \N$, we have that
\[\max_j \|L_j \phi\|_2 \leq C \|(1 + q(p(L))^k) \phi\|_2 \qquad\text{for $\phi \in \D(G)$,}\]
which means, by the spectral theorem, that
\[\max_j |\lambda_j| \leq C(1 + q(p(\lambda))^k) \qquad\text{for $\lambda \in \Sigma$,}\]
since $\Sigma$ is the joint $L^2(G)$-spectrum of $L_1,\dots,L_n$.

Now, if $K \subseteq \R^{n'}$ is compact, then by continuity there exists $M > 0$ such that $q|_K \leq M$, but then
\[\max_j |\lambda_j| \leq C(1 + M^k) \qquad\text{for $\lambda \in \Sigma \cap p^{-1}(K)$,}\]
thus $p^{-1}(K) \cap \Sigma$ is bounded in $\R^n$, and also closed (by continuity of $p$), therefore $p^{-1}(K)$ is compact.
\end{proof}

\begin{prp}\label{prp:pushforward}
For every bounded Borel $m : \R^{n'} \to \C$, we have:
\[m(L') = (m \circ p)(L), \qquad \Kern_{L'} m = \Kern_{L} (m \circ p).\]
Moreover
\[\sigma' = p(\sigma), \qquad \Sigma' = p(\Sigma).\]
\end{prp}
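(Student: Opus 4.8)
The plan is to establish the four identities in sequence, starting from the functional calculus identity $m(L') = (m \circ p)(L)$, since everything else follows from it. For this I would first recall that $L'_j = p_j(L)$ means, by Corollary~\ref{cor:commutativealgebra} (applied to the regular representation, whose joint spectral resolution is $E$), that $L'_j = \int_{\R^n} p_j \, dE$. Thus the $n'$-tuple $(L'_1,\dots,L'_{n'})$ is the image of the $\R^n$-valued spectral measure $E$ under the map $p$, i.e.\ its joint spectral resolution $E'$ on $\R^{n'}$ is the push-forward $p(E) = E \circ p^{-1}$. This is precisely the content of the push-forward rule for spectral integrals (referenced in the excerpt as \S\ref{subsection:pushforwardspectral}): for bounded Borel $m : \R^{n'} \to \C$ one has $\int_{\R^{n'}} m \, d(p(E)) = \int_{\R^n} (m \circ p) \, dE$. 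Hence $m(L') = \int_{\R^{n'}} m \, dE' = \int_{\R^n} (m\circ p) \, dE = (m\circ p)(L)$. One small point to check: that $p$ restricted to the support $\Sigma$ of $E$ lands in (and is proper onto) $\R^{n'}$ with image the support $\Sigma'$ of $E'$ — but this is exactly Lemma~\ref{lem:properwsub}, and properness gives $\supp E' = \overline{p(\supp E)} = p(\Sigma)$ since $p(\Sigma)$ is already closed by properness.

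Next, the kernel identity $\Kern_{L'} m = \Kern_L(m \circ p)$ is immediate from the first identity together with the definition of the kernel transform: both $\Kern_{L'} m$ and $\Kern_L(m\circ p)$ are, by definition, the unique convolution kernel (via Theorem~\ref{thm:schwartzkerneld}) of the operator $m(L') = (m\circ p)(L)$, so they coincide as elements of $\Cv^2(G)$.

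For the Plancherel measure identity $\sigma' = p(\sigma)$, I would use the characterization from Theorem~\ref{thm:plancherel}: for a Borel set $A \subseteq \R^{n'}$, $\sigma'(A) = \|\chr_A(L')\|_{\hat 2 \to \infty}^2 = \|\Kern_{L'}\chr_A\|_2^2$ whenever finite. By the identities already proved, $\Kern_{L'}\chr_A = \Kern_L(\chr_A \circ p) = \Kern_L(\chr_{p^{-1}(A)})$, so $\sigma'(A) = \|\Kern_L \chr_{p^{-1}(A)}\|_2^2 = \sigma(p^{-1}(A)) = (p(\sigma))(A)$. One must handle the case where one side is infinite: monotonicity of $\sigma$ and $\sigma'$ (established in the proof of Theorem~\ref{thm:plancherel}) together with the relation $\chr_A(L') = \chr_{p^{-1}(A)}(L)$ gives that $\sigma'(A) < \infty$ iff $\sigma(p^{-1}(A)) < \infty$, so the equality holds in the extended sense as well; also $\sigma'(A)$ is finite for $A$ relatively compact because $p^{-1}(A)\cap\Sigma$ is relatively compact by Lemma~\ref{lem:properwsub}, which squares with $\sigma'$ being a regular Borel measure. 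Finally $\Sigma' = p(\Sigma)$ follows either from the spectral-support remark above or simply because $\supp \sigma' = \supp p(\sigma) = \overline{p(\supp\sigma)} = p(\Sigma)$, the last step again using properness.

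I do not anticipate a serious obstacle here; the only point requiring care is the bookkeeping of the ``infinite value'' case in the Plancherel identity and making sure properness of $p|_\Sigma$ is invoked correctly so that $p(\Sigma)$ is genuinely closed (otherwise one would only get $\Sigma' \subseteq \overline{p(\Sigma)}$). Everything substantive is already packaged in Corollary~\ref{cor:commutativealgebra}, Theorem~\ref{thm:plancherel}, Lemma~\ref{lem:properwsub}, and the push-forward property of spectral integrals, so the proof is essentially an assembly of these.
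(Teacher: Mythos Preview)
Your proposal is correct and follows essentially the same route as the paper: the functional-calculus identity via the push-forward rule for spectral integrals, the kernel identity by uniqueness, $\sigma' = p(\sigma)$ via Theorem~\ref{thm:plancherel}, and $\Sigma' = p(\Sigma)$ via Lemma~\ref{lem:properwsub}. The only minor difference is in the last step: the paper proves $\Sigma' \subseteq p(\Sigma)$ by a nested-compact argument, whereas you invoke directly that $\supp p(\sigma) = \overline{p(\Sigma)}$ and that properness makes $p(\Sigma)$ closed---which is a bit cleaner and uses the same ingredients.
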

\begin{proof}
The first part of the conclusion follows immediately from the spectral theorem and uniqueness of the convolution kernel. From this, the identity $\sigma' = p(\sigma)$ is easily inferred by Theorem~\ref{thm:plancherel}. In particular,
\[\sigma(\R^n \setminus p^{-1}(\Sigma')) = \sigma'(\R^{n'} \setminus \Sigma') = 0,\]
i.e., by continuity of $p$, $p(\Sigma) \subseteq \Sigma'$.

In order to prove the opposite inclusion, we use the fact that $p|_\Sigma$ is proper (see Lemma~\ref{lem:properwsub}). Take $\lambda' \in \Sigma'$, and let $B_k$ be a decreasing sequence of compact neighborhoods of $p$ in $\R^{n'}$ such that $\bigcap_k B_k = \{p\}$. By definition of support, we then have $\sigma(p^{-1}(B_k)) = \sigma'(B_k) \neq 0$, therefore $p^{-1}(B_k) \cap \Sigma \neq \emptyset$ for all $k$. Since $p|_\Sigma$ is proper, we have a decreasing sequence $p^{-1}(B_k) \cap \Sigma$ of non-empty compacts of $\R^n$, which therefore has a non-empty intersection. If $\lambda$ belongs to this intersection, then clearly $\lambda \in \Sigma$ and moreover $p(\lambda) \in B_k$ for all $k$, that it, $p(\lambda) = \lambda'$.
\end{proof}

A particularly interesting case is when $L_1',\dots,L_{n'}'$ generate the same subalgebra of $\Diff(G)$ as $L_1,\dots,L_n$. In this case, there exists also a polynomial map $q = (q_1,\dots,q_n) : \R^{n'} \to \R^n$ such that
\[L_1 = q_1(L'), \quad\dots, \quad L_n = q_n(L').\]
Notice that in general $p$ and $q$ are not the inverse one of the other: from the spectral theorem, we only deduce that $(q \circ p)|_\Sigma = \id_\Sigma$, $(p \circ q)|_{\Sigma'} = \id_{\Sigma'}$ (in fact, these identities extend to the Zariski-closures of $\Sigma$ and $\Sigma'$). In particular,
\[p|_\Sigma : \Sigma \to \Sigma', \qquad q|_{\Sigma'} : \Sigma' \to \Sigma\]
are homeomorphisms.

Such polynomial changes of variables may be induced by particular automorphisms of the Lie group $G$. Namely, let $\mathcal{O}$ be the unital subalgebra of $\Diff(G)$ generated by $L_1,\dots,L_n$. If $k \in \Aut(G)$, then its derivative $k'$ is an automorphism of $\lie{g}$, therefore it extends to a unique filtered $*$-algebra automorphism of $\Diff(G) \cong \UEnA(\lie{g})$ (which will be still denoted by $k'$); we then say that $\mathcal{O}$ is $k$-invariant if $k(\mathcal{O}) \subseteq \mathcal{O}$, or equivalently, if $k(\mathcal{O}) = \mathcal{O}$ (the equivalence is due to the fact that $k'$ is an injective linear map and the filtration of $\Diff(G)$ is made of finitely dimensional subspaces).

Let $\Aut(G;\mathcal{O})$ denote the (closed) subgroup of $\Aut(G)$ made of the automorphisms $k$ such that $\mathcal{O}$ is $k$-invariant. If $k \in \Aut(G;\mathcal{O})$, then 
\[k'(L_1),\dots,k'(L_n)\]
must be a system of generators of $\mathcal{O}$; therefore, we can choose a polynomial map $p_k = (p_{k,1},\dots,p_{k,n}) : \R^n \to \R^n$ such that $k'(L_j) = p_{k,j}(L)$. 

Notice that, for every $k \in \Aut(G)$, the push-forward via $k$ of the right Haar measure $\mu$ on $G$ is a multiple of $\mu$, and in fact there is a Lie group homomorphism $c : \Aut(G) \to \R^+$ such that
\[k(\mu) = c(k) \mu.\]
In particular, if we set
\[T_k f = f \circ k^{-1}\]
for $k \in \Aut(G)$, then we have immediately

\begin{prp}\label{prp:automorphismskernel}
For $k \in \Aut(G)$, $T_k$ is a multiple of an isometry of $L^2(G)$; more precisely
\[\|T_k f\|_2^2 = c(k)^{-1} \|f\|_2^2.\]
Moreover, for all $D \in \Diff(G)$,
\[k'(D) = T_k D T_k^{-1}.\]
In particular, for every bounded Borel $m : \R^n \to \C$,
\[m(k'(L_1),\dots,k'(L_n)) = T_k m(L_1,\dots,L_n) T_k^{-1},\]
and consequently
\[\Kern_{k'(L)} m = c(k) T_k \Kern_L m.\]
\end{prp}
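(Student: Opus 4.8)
The plan is to establish the four assertions of Proposition~\ref{prp:automorphismskernel} in the order in which they are stated, each following fairly directly from the previous one together with standard facts about automorphisms, Haar measure, and the functional calculus.

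First I would prove the $L^2$-identity. Since $k \in \Aut(G)$, the push-forward $k(\mu)$ is again a right Haar measure on $G$ (because $k$ is a group automorphism, so it intertwines right translations), hence $k(\mu) = c(k)\mu$ for the homomorphism $c : \Aut(G) \to \R^+$ already introduced. A change of variables then gives, for $f \in \D(G)$,
\[\|T_k f\|_2^2 = \int_G |f(k^{-1}x)|^2 \,d\mu(x) = \int_G |f(y)|^2 \,d(k^{-1}(\mu))(y) = c(k)^{-1} \|f\|_2^2,\]
using $k^{-1}(\mu) = c(k)^{-1}\mu$; this extends to all of $L^2(G)$ by density, so $T_k$ is $c(k)^{-1/2}$ times a unitary operator.

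Next I would verify the operator identity $k'(D) = T_k D T_k^{-1}$ for $D \in \Diff(G)$. For a left-invariant vector field $X \in \lie{g}$ this is just the definition of the derivative $k'$ of the automorphism $k$ acting on tangent vectors, expressed at the level of operators on smooth functions; one checks $(T_k X T_k^{-1}) f = $ the vector field obtained by pushing $X$ forward via $k$, which is by definition $k'(X)$. Since $D \mapsto T_k D T_k^{-1}$ is an algebra automorphism of $\Diff(G)$ agreeing with $k'$ on the generators $\lie{g}$, and $k'$ was defined as the unique algebra automorphism of $\UEnA(\lie{g}) \cong \Diff(G)$ extending the Lie algebra automorphism, the two coincide on all of $\Diff(G)$. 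Applying this with $D = L_j$ gives $k'(L_j) = T_k L_j T_k^{-1}$, and since the $k'(L_j)$ are again a weighted subcoercive system (they generate $\mathcal{O}$), they admit a joint functional calculus; because $T_k$ is, up to a scalar, unitary, conjugation by $T_k$ carries the joint spectral resolution $E$ of $L_1,\dots,L_n$ to that of $k'(L_1),\dots,k'(L_n)$, whence $m(k'(L_1),\dots,k'(L_n)) = T_k\, m(L_1,\dots,L_n)\, T_k^{-1}$ for every bounded Borel $m$ by the uniqueness of the functional calculus and the transformation property of spectral integrals under unitary conjugation.

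Finally, for the kernel identity, I would apply both sides of $m(k'(L)) = T_k m(L) T_k^{-1}$ to a test function $\phi \in \D(G)$ and use that convolution transforms under $T_k$ according to $T_k(\psi * h) = c(k)\, (T_k \psi) * (T_k h)$, which follows from the change-of-variables computation in the convolution integral together with $k(\mu) = c(k)\mu$. Writing $m(L)\phi = \phi * \Kern_L m$ and $m(k'(L))\phi = \phi * \Kern_{k'(L)} m$, the identity $\phi * \Kern_{k'(L)} m = T_k\big( (T_k^{-1}\phi) * \Kern_L m \big) = c(k)\, \phi * (T_k \Kern_L m)$ holds for all $\phi \in \D(G)$, and by uniqueness of the convolution kernel (Theorem~\ref{thm:schwartzkerneld}) we conclude $\Kern_{k'(L)} m = c(k)\, T_k \Kern_L m$. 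The only mild subtlety — the main thing to get right rather than a genuine obstacle — is tracking the constant $c(k)$ correctly through the convolution transformation rule and making sure the kernel $\Kern_L m$, a priori only in $\Cv^2(G)$, is still acted on meaningfully by $T_k$; this is fine because $T_k$ manifestly maps distributions to distributions and the identity can first be checked on the dense subalgebra $\Kern_L(\JJ_L)$ of honest functions and then extended.
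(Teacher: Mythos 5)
Your proof is correct and follows essentially the same route as the paper's: change of variables for the $L^2$ identity, checking the conjugation formula on vector fields and extending by the universal property of $\UEnA(\lie{g})$, uniqueness of the joint spectral resolution under conjugation by a multiple of a unitary, and the transformation rule $T_k(\psi * h) = c(k)(T_k\psi)*(T_kh)$ combined with uniqueness of convolution kernels. The constant $c(k)$ is tracked correctly throughout.
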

\begin{proof}
The first equality follows from the change-of-variable formula for a push-forward measure. The second one can be easily proved for vector fields $D \in \lie{g}$ and then extended to general $D \in \Diff(G)$. The third identity follows from the second one and uniqueness of the joint spectral resolution ($T_k$ is a multiple of an isometry, so that conjugation by $T_k$ preserves orthogonal projections). Consequently, we have, for $\phi \in \Diff(G)$,
\[m(k'(L)) \phi = T_k ( (T_k^{-1} \phi) * \Kern_L m) = c(k) \phi * T_k \Kern_L m\]
by the properties of convolution, and the fourth identity follows.
\end{proof}

\begin{cor}\label{cor:automorphismskernel}
If $k \in \Aut(G;\mathcal{O})$, then, for every bounded Borel function $m : \R^n \to \C$,
\[(m \circ p_k)(L_1,\dots,L_n) = T_k m(L_1,\dots,L_n) T_k^{-1},\]
and moreover
\[\Kern_L (m \circ p_k) = c(k) T_k \Kern_L m.\]
\end{cor}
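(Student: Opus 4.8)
The plan is to obtain Corollary~\ref{cor:automorphismskernel} by composing Proposition~\ref{prp:automorphismskernel} with the change-of-generators statement of Proposition~\ref{prp:pushforward}. The only preliminary point to check is that the operators $k'(L_1),\dots,k'(L_n)$ again form a weighted subcoercive system on $G$, so that the spectral integral $m(k'(L_1),\dots,k'(L_n))$ and the kernel transform $\Kern_{k'(L)}$ are available. This is immediate: since $k'$ is a $*$-algebra automorphism of $\Diff(G)$, each $k'(L_j)$ is formally self-adjoint ($k'(L_j)^+ = k'(L_j^+) = k'(L_j)$), the $k'(L_j)$ pairwise commute because $k'$ is an algebra homomorphism, and the unital subalgebra of $\Diff(G)$ they generate equals $k'(\mathcal{O}) = \mathcal{O}$ by the assumption $k \in \Aut(G;\mathcal{O})$, which contains a weighted subcoercive operator by hypothesis on $L_1,\dots,L_n$.

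Next I would identify the two functional calculi. By the very definition of $p_k$ we have $k'(L_j) = p_{k,j}(L_1,\dots,L_n)$ for $j=1,\dots,n$, so the system $k'(L_1),\dots,k'(L_n)$ is obtained from $L_1,\dots,L_n$ precisely through the polynomial change of generators of \S\ref{section:automorphisms}, with $n'=n$ and $p = p_k$. Proposition~\ref{prp:pushforward} then gives, for every bounded Borel $m : \R^n \to \C$,
\[
m(k'(L_1),\dots,k'(L_n)) = (m \circ p_k)(L_1,\dots,L_n), \qquad \Kern_{k'(L)} m = \Kern_L(m \circ p_k).
\]
(Equivalently, one may argue directly: by Corollary~\ref{cor:commutativealgebra} the joint spectral resolution of $k'(L_1),\dots,k'(L_n)$ on $L^2(G)$ is the push-forward of the joint spectral resolution $E$ of $L_1,\dots,L_n$ under $p_k$, whence $\int m\,d(p_k)_*E = \int (m\circ p_k)\,dE$ by the change-of-variables formula for spectral integrals, and the kernel identity follows from uniqueness of the convolution kernel in Theorem~\ref{thm:schwartzkerneld}.)

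Finally I would invoke Proposition~\ref{prp:automorphismskernel} for the automorphism $k$, which yields $m(k'(L_1),\dots,k'(L_n)) = T_k\, m(L_1,\dots,L_n)\, T_k^{-1}$ and $\Kern_{k'(L)} m = c(k)\, T_k \Kern_L m$. Substituting the identifications from the previous step gives
\[
(m\circ p_k)(L_1,\dots,L_n) = T_k\, m(L_1,\dots,L_n)\, T_k^{-1}, \qquad \Kern_L(m\circ p_k) = c(k)\, T_k \Kern_L m,
\]
which is exactly the asserted statement. There is no genuine obstacle here: the content lies entirely in the two propositions being chained, and in the observation — already recorded in Proposition~\ref{prp:automorphismskernel} — that conjugation by the (rescaled isometry) $T_k$ intertwines the joint functional calculi of $L_1,\dots,L_n$ and of $k'(L_1),\dots,k'(L_n)$; the rest is bookkeeping, the mild care being only to ensure, as in the first step, that all the objects attached to the transformed system are defined.
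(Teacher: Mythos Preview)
Your proof is correct and is exactly the argument the paper has in mind: the corollary is stated without proof precisely because it is the immediate combination of Proposition~\ref{prp:pushforward} (identifying $m(k'(L))$ with $(m\circ p_k)(L)$ and $\Kern_{k'(L)}m$ with $\Kern_L(m\circ p_k)$) and Proposition~\ref{prp:automorphismskernel}. Your preliminary verification that $k'(L_1),\dots,k'(L_n)$ is again a weighted subcoercive system is the only point that needed spelling out, and you handled it correctly.
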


The previous results, together with the characterization of Plancherel measure given in Theorem~\ref{thm:plancherel}, give finally

\begin{cor}\label{cor:automorphismsplancherel}
If $k \in \Aut(G;\mathcal{O})$, then
\[p_k(\sigma) = c(k) \sigma, \qquad p_k(\Sigma) = \Sigma.\]
\end{cor}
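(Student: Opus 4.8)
The plan is to combine Corollary~\ref{cor:automorphismskernel} with the characterization of the Plancherel measure given in Theorem~\ref{thm:plancherel}. Observe first that the operators $L'_j := k'(L_j) = p_{k,j}(L_1,\dots,L_n)$, for $j=1,\dots,n$, again form a weighted subcoercive system on $G$: being the images of the $L_j$ under the $*$-algebra automorphism $k'$, they are formally self-adjoint and pairwise commuting, and the unital subalgebra of $\Diff(G)$ that they generate is $k'(\mathcal{O}) = \mathcal{O}$, which contains a weighted subcoercive operator. Hence Lemma~\ref{lem:properwsub} applies with $p = p_k$, so that $p_k|_\Sigma : \Sigma \to \R^n$ is proper; in particular, for every relatively compact Borel $A \subseteq \R^n$ the set $p_k^{-1}(A) \cap \Sigma$ is relatively compact, so that $\chr_A$ and $\chr_A \circ p_k = \chr_{p_k^{-1}(A)}$ are bounded Borel functions which are ($E$-essentially) supported in a compact set.

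First I would establish $p_k(\sigma)(A) = c(k)\,\sigma(A)$ for such $A$. By Proposition~\ref{prp:compactlysupported}, $\breve\chr_A$ and $\breve\chr_{p_k^{-1}(A)}$ lie in $L^2(G)$, while Corollary~\ref{cor:automorphismskernel} (applied to $m = \chr_A$) gives
\[\breve\chr_{p_k^{-1}(A)} = \Kern_L(\chr_A \circ p_k) = c(k)\, T_k\, \breve\chr_A .\]
Taking $L^2(G)$-norms and using $\|T_k f\|_2^2 = c(k)^{-1}\|f\|_2^2$ (Proposition~\ref{prp:automorphismskernel}) together with $\|\breve\chr_B\|_2^2 = \sigma(B)$ (Theorem~\ref{thm:plancherel}) yields
\[\sigma(p_k^{-1}(A)) = \|\breve\chr_{p_k^{-1}(A)}\|_2^2 = c(k)^2\,\|T_k \breve\chr_A\|_2^2 = c(k)\,\|\breve\chr_A\|_2^2 = c(k)\,\sigma(A).\]
Since $p_k$ is continuous, $p_k(\sigma)$ is a Borel measure on $\R^n$; writing $\R^n$ as an increasing union of open balls and using continuity from below of both $p_k(\sigma)$ and $c(k)\sigma$, the identity just proved for relatively compact sets extends to all Borel sets, i.e. $p_k(\sigma) = c(k)\,\sigma$.

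It remains to show $p_k(\Sigma) = \Sigma$. By Proposition~\ref{prp:pushforward}, applied to the weighted subcoercive system $L'_1,\dots,L'_n$ with polynomial map $p = p_k$, the Plancherel measure of $L'$ is $\sigma' = p_k(\sigma)$ and its support is $\Sigma' = p_k(\Sigma)$. But $\sigma' = p_k(\sigma) = c(k)\,\sigma$ by the previous step, so $\Sigma' = \supp \sigma' = \supp\sigma = \Sigma$, and therefore $p_k(\Sigma) = \Sigma$.

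The only genuine work here is the measure-theoretic bookkeeping: one must ensure that the kernels $\breve\chr_A$, $\breve\chr_{p_k^{-1}(A)}$ actually lie in $L^2(G)$ so that Theorem~\ref{thm:plancherel} can be invoked — this is exactly where properness of $p_k|_\Sigma$ is needed — and then upgrade the numerical identity on relatively compact sets to an identity of measures. Everything else is a direct substitution into Corollary~\ref{cor:automorphismskernel} and Proposition~\ref{prp:automorphismskernel}.
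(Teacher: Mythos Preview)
Your proof is correct and follows essentially the same route the paper indicates: combine Corollary~\ref{cor:automorphismskernel} and Proposition~\ref{prp:automorphismskernel} with the characterization of $\sigma$ in Theorem~\ref{thm:plancherel}, then read off $p_k(\Sigma)=\Sigma$ from the equality of supports (your appeal to Proposition~\ref{prp:pushforward} is a convenient way to do this, since that proposition already packages the properness argument for $p_k|_\Sigma$). The only additional detail you supply is the extension from relatively compact Borel sets to arbitrary ones via countable additivity, which is routine.
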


We then have that the restrictions $p_k|_\Sigma$ (which are uniquely determined by $k$) define an action of the group $\Aut(G;\mathcal{O})$ on the spectrum $\Sigma$ by homeomorphisms; more precisely

\begin{prp}
The map
\begin{equation}\label{eq:automorphismsaction}
\Aut(G;\mathcal{O}) \times \Sigma \ni (k,\lambda) \mapsto p_{k^{-1}}(\lambda) \in \Sigma
\end{equation}
is continuous, and defines a continuous (left) action of $\Aut(G;\mathcal{O})$ on $\Sigma$.
\end{prp}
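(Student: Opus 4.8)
The plan is to verify that \eqref{eq:automorphismsaction} is a group action by homeomorphisms and is jointly continuous, treating the algebraic and topological parts separately. For the algebraic part, I would first observe that the assignment $k \mapsto p_k|_\Sigma$ is well defined: although the polynomial map $p_k$ is not unique, its restriction to $\Sigma$ is, since two polynomials agreeing on $\Sigma$ give the same operator $p_k(L_1,\dots,L_n)$, hence (by Corollary~\ref{cor:automorphismskernel}) the same action on kernels, and $\Sigma$ is the joint $L^2$-spectrum. Next I would check the homomorphism property. Given $k_1,k_2 \in \Aut(G;\mathcal{O})$, we have $(k_1 k_2)' = k_1' \circ k_2'$ as automorphisms of $\Diff(G)$, so $(k_1 k_2)'(L_j) = k_1'(k_2'(L_j)) = k_1'(p_{k_2,j}(L)) = p_{k_2,j}(k_1'(L_1),\dots,k_1'(L_n)) = p_{k_2,j}(p_{k_1,1}(L),\dots,p_{k_1,n}(L)) = (p_{k_2} \circ p_{k_1})_j(L)$, where the third equality uses that $k_1'$ is an algebra homomorphism. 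Hence $p_{k_1 k_2}$ and $p_{k_2} \circ p_{k_1}$ agree on $\Sigma$, so $p_{(k_1k_2)^{-1}}|_\Sigma = (p_{k_1}^{-1} \circ p_{k_2}^{-1})|_\Sigma = p_{k_1^{-1}}|_\Sigma \circ p_{k_2^{-1}}|_\Sigma$ after passing to inverses, which is precisely the action axiom $(\lambda \mapsto p_{(k_1 k_2)^{-1}}(\lambda)) = (\lambda \mapsto p_{k_1^{-1}}(p_{k_2^{-1}}(\lambda)))$; together with $p_{e}|_\Sigma = \id_\Sigma$ and Corollary~\ref{cor:automorphismsplancherel} (which gives $p_k(\Sigma) = \Sigma$), this shows each $p_{k^{-1}}|_\Sigma$ is a homeomorphism of $\Sigma$ onto itself with inverse $p_k|_\Sigma$.

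The substantive point is joint continuity of \eqref{eq:automorphismsaction}. I would approach this through the eigenfunction picture developed in \S\ref{section:eigenfunctions}: by Corollary~\ref{cor:spectrumeigenfunctions} (or at least the inclusion $\Sigma \subseteq \evmap_L(\PP_L)$, and the fact that we only need to identify points of $\Sigma$) and Proposition~\ref{prp:eigentopologies}, the topology of $\Sigma$ as a subspace of $\R^n$ is controlled by the map $\evmap_L$, and for $j=0,\dots,n$ the function $\lambda \mapsto e^{-p_j(\lambda)}$ on $\Sigma$ equals $\lambda \mapsto \langle \Kern_L(e^{-p_j}), \phi_\lambda\rangle$ for any $\phi_\lambda \in \PP_L$ with $\evmap_L(\phi_\lambda)=\lambda$ (Proposition~\ref{prp:eigenvalues}). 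The cleaner route, however, is to use that the coordinate functions $\lambda \mapsto \lambda_j$ are continuous on $\Sigma$ by definition, so it suffices to show that for each fixed $j$ the map $(k,\lambda) \mapsto q_j(k,\lambda)$ is continuous, where $q_j(k,\lambda)$ denotes the $j$-th coordinate of $p_{k^{-1}}(\lambda)$. By Corollary~\ref{cor:automorphismskernel}, for a suitable Schwartz-class multiplier such as $m = e^{-tp_*}$ we have $\Kern_L(m \circ p_{k^{-1}}) = c(k^{-1}) T_{k^{-1}} \Kern_L m$, and the right-hand side depends continuously on $k$ in, say, $L^1(G)$ (using that $k \mapsto T_{k^{-1}}$ is strongly continuous on $L^1$, $k \mapsto c(k)$ is continuous, and $\Kern_L m \in L^1(G)$ by Proposition~\ref{prp:Jl1}); then Proposition~\ref{prp:riemannlebesgue1} transfers this to uniform convergence of the multipliers $m \circ p_{k^{-1}}$ on $\Sigma$ at infinity, while on compact subsets of $\Sigma$ one argues via the polynomial nature of the $p_{k,j}$ and the fact that the coefficients of $p_k$ can be chosen to vary continuously with $k$ (since the filtration of $\Diff(G)$ has finite-dimensional pieces and $k \mapsto k'$ is continuous into $\End(\lie g)$, hence into each filtered piece of $\Diff(G)$).

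The main obstacle I anticipate is precisely this last point: making rigorous the claim that the polynomial maps $p_k$ can be selected to depend continuously on $k \in \Aut(G;\mathcal{O})$, given that they are only determined up to the ideal of polynomials vanishing on $\Sigma$. To handle it I would fix, once and for all, a linear section: choose a finite-dimensional subspace $\mathcal{P}$ of $\C[X_1,\dots,X_n]$ (spanned by monomials up to a bounded degree determined by the weights and the degree of $L_1,\dots,L_n$) such that the evaluation map $\mathcal{P} \to \mathcal{O}$, $p \mapsto p(L)$, restricted to an appropriate complement is a linear isomorphism onto its image, which contains all $k'(L_j)$ for $k$ in a neighborhood of the identity; then $k \mapsto p_{k,j} \in \mathcal{P}$ is the composition of the continuous map $k \mapsto k'(L_j) \in \Diff(G)$ with a fixed linear inverse, hence continuous. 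This local statement near the identity, combined with the group-action structure already established and the fact that $\Aut(G;\mathcal{O})$ is a topological group, upgrades to joint continuity of \eqref{eq:automorphismsaction} everywhere, since $(k,\lambda)\mapsto p_{k^{-1}}(\lambda)$ factors through a translate of the local chart. Once continuity of $(k,\lambda)\mapsto p_{k^{-1},j}(\lambda)$ is known for each $j$, continuity of the action into $\Sigma\subseteq\R^n$ is immediate, and we are done.
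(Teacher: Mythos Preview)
Your approach is correct but takes a different route from the paper's proof. The paper argues entirely through the identification of $\Sigma$ with the Gelfand spectrum of the C$^*$-algebra $C_0(L)$: writing $\psi_\lambda(\breve m) = m(\lambda)$ for the corresponding characters, Corollary~\ref{cor:automorphismskernel} gives the functional identity $\psi_{p_k(\lambda)} = c(k)\,\psi_\lambda \circ T_k$, which immediately encodes the action property; continuity then follows because $k \mapsto c(k) T_k u$ is continuous $\Aut(G;\mathcal{O}) \to \Cv^2(G)$ for every $u \in C_0(L)$ (by density of $C_0(L)\cap L^1(G)$ and the fact that $c(k)T_k$ is an isometry of $\Cv^2(G)$), so the action is separately continuous in the weak-$*$ topology, hence jointly continuous since the characters $\psi_\lambda$ are uniformly bounded. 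Your argument instead works directly in coordinates: since $k'$ is a \emph{filtered} automorphism of $\Diff(G)$ and the filtered pieces are finite-dimensional, all the $k'(L_j)$ lie in a fixed finite-dimensional subspace of $\mathcal{O}$, and choosing a linear right inverse of the surjection $\mathcal{P} \ni p \mapsto p(L)$ onto that subspace gives a continuous map $k \mapsto p_k \in \mathcal{P}$, whence joint continuity of $(k,\lambda) \mapsto p_k(\lambda)$ is immediate.

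Two remarks. First, the local-to-global step in your last paragraph is unnecessary: the filtration argument already shows that the finite-dimensional subspace containing all $k'(L_j)$ is uniform over the \emph{entire} group $\Aut(G;\mathcal{O})$, not just a neighborhood of the identity, so your linear section works globally. Second, the middle paragraph (invoking Proposition~\ref{prp:riemannlebesgue1} and convergence of multipliers) is a detour that never quite closes; your actual argument is the linear-section one, and you could drop the earlier digression. The paper's Gelfand-spectrum approach is more conceptual and avoids any choice of section, while yours is more elementary and makes the polynomial dependence on $k$ explicit; both are valid.
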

\begin{proof}
Recall that $\Sigma$ may be identified, as a topological space, with the Gelfand spectrum of the sub-C$^*$-algebra $C_0(L)$ of $\Cv^2(G)$, where $\lambda \in \Sigma$ corresponds to the multiplicative linear functional $\psi_\lambda$ defined by
\[\psi_\lambda(\breve m) = m(\lambda).\]
By Corollary~\ref{cor:automorphismskernel} we then deduce
\[\psi_{p_k(\lambda)} = c(k) \psi_\lambda \circ T_k,\]
which clarifies that \eqref{eq:automorphismsaction} defines a left action on $\Sigma$.

Moreover, since $C_0(L) \cap L^1(G)$ is dense in $C_0(L)$ (see Proposition~\ref{prp:Jdensity}), and since $c(k)T_k$ is an isometry of $\Cv^2(G)$, we obtain easily that
\[k \mapsto c(k) T_k u\]
is continuous for every $u \in \Cv^2(G)$. Therefore, since the topology of the Gelfand spectrum is induced by the weak$^*$ topology, we immediately obtain that \eqref{eq:automorphismsaction} is separately continuous, and also jointly continuous since the $\psi_\lambda$ have uniformly bounded norms.
\end{proof}

The richer the group $\Aut(G;\mathcal{O})$ --- or rather, its quotient by the subgroup of automorphisms which fix each of the generators $L_1,\dots,L_n$ --- is, the more we may deduce about the structure of the spectrum $\Sigma$ and of the Plancherel measure $\sigma$.
An example of this fact is illustrated in the next section.

\section{Homogeneity}\label{section:homogeneity}

Let $G$ be a homogeneous Lie group, with automorphic dilations $\delta_t$ and homogeneous dimension $Q_\delta$. A weighted subcoercive system $L_1,\dots,L_n \in \Diff(G)$ will be called \emph{homogeneous}\index{system of differential operators!weighted subcoercive!homogeneous} if each $L_j$ is $\delta_t$-homogeneous.

In the following, $L_1,\dots,L_n$ will be a homogeneous weighted subcoercive system, with associated Plancherel measure $\sigma$, and $r_j$ will denote the degree of homogeneity of $L_j$, i.e.,
\[\delta_t(L_j) = t^{r_j} L_j.\]
The unital subalgebra of $\Diff(G)$ generated by $L_1,\dots,L_n$ is $\delta_t$-invariant for every $t > 0$. Therefore, if we set
\[D_t f = f \circ \delta_{t^{-1}},\]
and if we denote by $\epsilon_t$ the dilations on $\R^n$ given by
\begin{equation}\label{eq:spectraldilations}
\epsilon_t(\lambda) = (t^{r_1} \lambda_1,\dots,t^{r_n} \lambda_n).
\end{equation}
(which will be said \index{dilations!associated to a weighted subcoercive system}\emph{associated} to the system $L_1,\dots,L_n$), then from Corollaries~\ref{cor:automorphismskernel} and \ref{cor:automorphismsplancherel} we immediately deduce

\begin{prp}\label{prp:plancherelhomogeneous}
For every bounded Borel $m : \R^n \to \C$, we have
\[(m \circ \epsilon_t)(L) = D_t m(L) D_{t^{-1}}, \qquad (m \circ \epsilon_t)\breve{} = t^{-Q_\delta} \breve m \circ \delta_{t^{-1}}.\]
Moreover, the support $\Sigma$ of $\sigma$ is $\epsilon_t$-invariant, and
\[\sigma(\epsilon_t(A)) = t^{Q_\delta} \sigma(A)\]
for all Borel $A \subseteq \R^n$. In particular, $\sigma(\{0\}) = 0$.
\end{prp}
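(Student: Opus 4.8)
The plan is to derive Proposition~\ref{prp:plancherelhomogeneous} from the general change-of-generators machinery of \S\ref{section:automorphisms} by specializing it to the dilation automorphisms. First I would observe that for each $t > 0$ the dilation $\delta_t$ is an automorphism of the homogeneous group $G$, and since every $L_j$ is $\delta_t$-homogeneous of degree $r_j$, the derivative $\delta_t'$ satisfies $\delta_t'(L_j) = t^{r_j} L_j$, which is again a polynomial (in fact linear) expression in $L_1,\dots,L_n$. Hence the unital subalgebra $\mathcal{O}$ generated by $L_1,\dots,L_n$ is $\delta_t$-invariant, so $\delta_t \in \Aut(G;\mathcal{O})$ for every $t > 0$, and the associated polynomial map $p_{\delta_t} : \R^n \to \R^n$ of \S\ref{section:automorphisms} may be taken to be exactly $\epsilon_t$ as defined in \eqref{eq:spectraldilations}. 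Also $T_{\delta_t} f = f \circ \delta_{t^{-1}} = D_t f$ in the notation of this section.

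Next I would compute the scaling constant $c(\delta_t)$ for the push-forward of Haar measure. Since $\mu_G$ is homogeneous with respect to the dilations, $\mu_G(\delta_t(B)) = t^{Q_\delta}\mu_G(B)$ for all Borel $B \subseteq G$; equivalently $\delta_t(\mu_G) = t^{-Q_\delta}\mu_G$, so in the notation of Proposition~\ref{prp:automorphismskernel} we have $c(\delta_t) = t^{-Q_\delta}$. (One should be slightly careful about the direction of the convention for $k(\mu)$, but it is fixed by the displayed formula $k(\mu) = c(k)\mu$ together with the homogeneity of $\mu_G$.) With these identifications, Corollary~\ref{cor:automorphismskernel} applied to $k = \delta_t$ gives directly
\[(m \circ \epsilon_t)(L) = D_t \, m(L) \, D_{t^{-1}}, \qquad \Kern_L(m \circ \epsilon_t) = c(\delta_t)\, T_{\delta_t} \Kern_L m = t^{-Q_\delta}\, \breve m \circ \delta_{t^{-1}},\]
which is the first assertion (the bounded Borel functions on $\R^n$ are exactly $L^\infty(\sigma)$ by the remark following Theorem~\ref{thm:plancherel}, and the kernel transform is defined on all of them).

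For the statement about the Plancherel measure, I would invoke Corollary~\ref{cor:automorphismsplancherel} with $k = \delta_t$: it yields $\epsilon_t(\Sigma) = \Sigma$ and $\epsilon_t(\sigma) = c(\delta_t)\,\sigma = t^{-Q_\delta}\sigma$. To match the stated formula $\sigma(\epsilon_t(A)) = t^{Q_\delta}\sigma(A)$ one just unwinds the definition of push-forward: $(\epsilon_t(\sigma))(A) = \sigma(\epsilon_t^{-1}(A)) = \sigma(\epsilon_{t^{-1}}(A))$, so $\epsilon_t(\sigma) = t^{-Q_\delta}\sigma$ says $\sigma(\epsilon_{t^{-1}}(A)) = t^{-Q_\delta}\sigma(A)$ for all $A$ and all $t>0$, i.e., replacing $t$ by $t^{-1}$, $\sigma(\epsilon_t(A)) = t^{Q_\delta}\sigma(A)$. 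Finally, $\sigma(\{0\}) = 0$ follows because $\epsilon_t(\{0\}) = \{0\}$ for all $t$, hence $\sigma(\{0\}) = t^{Q_\delta}\sigma(\{0\})$ for all $t > 0$ with $Q_\delta > 0$, which forces $\sigma(\{0\}) = 0$ (alternatively, $\sigma(\{0\})$ is finite since $\{0\}$ is compact, by Theorem~\ref{thm:plancherel}).

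I do not expect a genuine obstacle here: the proof is essentially bookkeeping, plugging $k = \delta_t$ into Corollaries~\ref{cor:automorphismskernel} and \ref{cor:automorphismsplancherel}. The one point requiring a little care is getting the exponent of $t$ and the direction of the measure-scaling convention right --- i.e., pinning down $c(\delta_t) = t^{-Q_\delta}$ consistently with the normalization $k(\mu) = c(k)\mu$ used in \S\ref{section:automorphisms} and with $\mu_G(\delta_t(B)) = t^{Q_\delta}\mu_G(B)$ from \S\ref{subsection:homogeneousgroups} --- and then translating $\epsilon_t(\sigma) = c(\delta_t)\sigma$ into the pointwise scaling identity via the definition of push-forward. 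Everything else is immediate.
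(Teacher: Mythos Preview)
Your proposal is correct and follows exactly the same approach as the paper: the paragraph preceding the proposition already records that $\delta_t \in \Aut(G;\mathcal{O})$ with $T_{\delta_t} = D_t$ and $p_{\delta_t} = \epsilon_t$, and the paper's proof is simply the one-line remark that the proposition follows immediately from Corollaries~\ref{cor:automorphismskernel} and \ref{cor:automorphismsplancherel}. Your extra care in pinning down $c(\delta_t) = t^{-Q_\delta}$ and unwinding the push-forward convention to match the displayed scaling identity is exactly the bookkeeping the paper leaves implicit.
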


Since the Plancherel measure $\sigma$ is homogeneous, it admits the decomposition given by Proposition~\ref{prp:radialcoordinates}, and in particular from Corollary~\ref{cor:radialcoordinates} we get

\begin{cor}\label{cor:plancherelhomogeneous}
The Plancherel measure $\sigma$ associated to a homogeneous system $L_1,\dots,L_n$ is locally $1$-bounded on $\R^n \setminus \{0\}$.
\end{cor}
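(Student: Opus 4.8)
The statement (Corollary~\ref{cor:plancherelhomogeneous}) asserts that the Plancherel measure $\sigma$ associated to a homogeneous weighted subcoercive system $L_1,\dots,L_n$ is locally $1$-bounded on $\R^n \setminus \{0\}$. The plan is to identify this as an immediate consequence of the two results that precede it, namely Proposition~\ref{prp:plancherelhomogeneous} (giving the exact dilation-homogeneity $\sigma(\epsilon_t(A)) = t^{Q_\delta}\sigma(A)$ with respect to the dilations $\epsilon_t$ associated to the system) and Corollary~\ref{cor:radialcoordinates} (which says that a measure homogeneous with respect to a family of dilations is locally $1$-bounded off the origin). So the proof is essentially a one-line deduction: apply Corollary~\ref{cor:radialcoordinates} with the dilations $\epsilon_t$ of \eqref{eq:spectraldilations} and with $Q = Q_\delta$, using the homogeneity identity supplied by Proposition~\ref{prp:plancherelhomogeneous}.

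Concretely, I would first recall from Proposition~\ref{prp:plancherelhomogeneous} that $\sigma(\epsilon_t(A)) = t^{Q_\delta}\sigma(A)$ for every Borel $A \subseteq \R^n$ and every $t > 0$, where $\epsilon_t$ is the family of dilations \eqref{eq:spectraldilations} on $\R^n$ and $Q_\delta > 0$ is the homogeneous dimension of $G$. Since $Q_\delta > 0$, this is exactly the hypothesis needed to invoke Proposition~\ref{prp:radialcoordinates}, and hence its Corollary~\ref{cor:radialcoordinates}, which directly yields that $\sigma$ is locally $1$-bounded on $\R^n \setminus \{0\}$. One should only check the minor compatibility point that $\sigma$ is a regular Borel measure on $\R^n$ — this is already established in Theorem~\ref{thm:plancherel} — and that the $\epsilon_t$ indeed form a genuine family of dilations with all exponents $r_j > 0$, which holds because each $L_j$ is a nonzero $\delta_t$-homogeneous left-invariant differential operator (so $r_j \geq 1 > 0$ under the standing convention that the least dilation eigenvalue is at least $1$).

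Since there is really no obstacle here, the only care required is bookkeeping: making sure the roles of the two dilation families ($\delta_t$ on $G$, $\epsilon_t$ on $\R^n$) are not confused, and that the exponent appearing in Corollary~\ref{cor:radialcoordinates} matches $Q_\delta$ rather than, say, the local dimension $Q_*$ that showed up in Proposition~\ref{prp:plancherelpolynomialgrowth}. The proof therefore reads simply as follows.

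\begin{proof}
By Theorem~\ref{thm:plancherel}, $\sigma$ is a regular Borel measure on $\R^n$, and by Proposition~\ref{prp:plancherelhomogeneous} it satisfies
\[\sigma(\epsilon_t(A)) = t^{Q_\delta} \sigma(A) \qquad\text{for all Borel $A \subseteq \R^n$ and $t > 0$,}\]
where $\epsilon_t$ are the dilations \eqref{eq:spectraldilations} associated to the system $L_1,\dots,L_n$ and $Q_\delta > 0$. Thus $\sigma$ satisfies the hypotheses of Proposition~\ref{prp:radialcoordinates} with $Q = Q_\delta$, and the conclusion follows at once from Corollary~\ref{cor:radialcoordinates}.
\end{proof}
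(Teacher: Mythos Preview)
Your proposal is correct and follows exactly the paper's approach: the paper simply notes that since $\sigma$ is homogeneous by Proposition~\ref{prp:plancherelhomogeneous}, the conclusion follows from Corollary~\ref{cor:radialcoordinates}. Your additional remarks on regularity (Theorem~\ref{thm:plancherel}) and the positivity of $Q_\delta$ are fine bookkeeping but not needed beyond what the paper records.
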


The homogeneous system $L_1,\dots,L_n \in \Diff(G)$ will be called a \emph{Rockland system}\index{system of differential operators!Rockland} if the unital subalgebra of $\Diff(G)$ generated by $L_1,\dots,L_n$ contains a Rockland operator. We now prove that every homogeneous weighted subcoercive system is also a Rockland system, with respect to a possibly different family of automorphic dilations of $G$; in doing so, we obtain a characterization of homogeneous weighted subcoercive systems which is more representation-theoretic in character.

\begin{prp}
Let $L_1,\dots,L_n \in \Diff(G)$ be pairwise commuting and formally self-adjoint.
\begin{itemize}
\item[(i)] If $L_1,\dots,L_n$ is a weighted subcoercive system, then, for every non-trivial irreducible unitary representation $\pi$ of $G$ on a Hilbert space $\HH$, the operators $d\pi(L_1), \dots, d\pi(L_n)$ are \emph{jointly injective} on $\HH^\infty$, i.e.,
\[d\pi(L_1) v = \dots = d\pi(L_n) v = 0 \qquad\Longrightarrow\qquad v = 0\]
for all $v \in \HH^\infty$.
\item[(ii)] If, conversely, for every non-trivial irreducible unitary representation $\pi$ of $G$ on a Hilbert space $\HH$, the operators $d\pi(L_1),\dots,d\pi(L_n)$ are jointly injective on $\HH^\infty$, then $L_1,\dots,L_n$ is a Rockland system with respect to every system of dilations on $G$ such that the homogeneity degrees of $L_1,\dots,L_n$ have a common multiple; in fact, such systems of dilations on $G$ do exist, and in particular $L_1,\dots,L_n$ is a homogeneous weighted subcoercive system with respect to the original dilations on $G$.
\end{itemize}
\end{prp}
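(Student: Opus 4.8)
The plan for both parts is to exploit the link, supplied by Theorem~\ref{thm:robinsonterelst}, between weighted subcoercive operators, Rockland operators on the homogeneous contraction, and injectivity of $d\pi$ on smooth vectors; the key auxiliary remark is that, since $L_1,\dots,L_n$ are $\delta_t$-homogeneous, the unital algebra $\Alg$ they generate is $\delta_t$-graded, so that every $\delta_t$-homogeneous component of an element of $\Alg$ again lies in $\Alg$.

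For (i), suppose $v\in\HH^\infty\setminus\{0\}$ satisfies $d\pi(L_j)v=0$ for $j=1,\dots,n$, with $\pi$ a non-trivial irreducible unitary representation. Pick a weighted subcoercive operator $L\in\Alg$; replacing it by $(L+L^+)/2$ we may assume it formally self-adjoint (Theorem~\ref{thm:robinsonterelst}). Decompose $L=\sum_d L^{(d)}$ into $\delta_t$-homogeneous components, all of which lie in $\Alg$; its top component $D$ is $\delta_t$-homogeneous of positive degree and formally self-adjoint (and non-zero, since $L$, being weighted subcoercive, is not a scalar). Reading weighted subcoerciveness of $L$ through a reduced basis \emph{adapted} to $\delta_t$ — for which the associated contraction is $G$ itself and the principal part of a form is its top $\delta_t$-homogeneous part — Theorem~\ref{thm:robinsonterelst}(ii),(iv) shows that $2D=D+D^+$ is a positive Rockland operator on $G$. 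Being a $\delta_t$-homogeneous element of $\Alg$ of positive degree, $D$ is a polynomial in $L_1,\dots,L_n$ without constant term, so that $d\pi(D)v=0$; but $d\pi(D)$ is injective on $\HH^\infty$ for every non-trivial $\pi$ by the very definition of a Rockland operator, whence $v=0$, the desired contradiction.

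For (ii), assume conversely that $d\pi(L_1),\dots,d\pi(L_n)$ are jointly injective on $\HH^\infty$ for every non-trivial irreducible $\pi$, and fix any family of dilations of $G$ making $L_1,\dots,L_n$ homogeneous, of degrees $r_1,\dots,r_n$ admitting a common multiple; after a rescaling we may take a common multiple $\mu$ of $2r_1,\dots,2r_n$. Then
\[D=L_1^{\mu/r_1}+\dots+L_n^{\mu/r_n}\in\Alg\]
is homogeneous of degree $\mu$, formally self-adjoint, and non-negative, since $\langle\phi,D\phi\rangle=\sum_j\|L_j^{\mu/(2r_j)}\phi\|_2^2$ for $\phi\in\D(G)$. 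For non-trivial irreducible $\pi$ and $v\in\HH^\infty$, $d\pi(D)v=0$ gives $\sum_j\|d\pi(L_j)^{\mu/(2r_j)}v\|^2=0$, hence $d\pi(L_j)^{\mu/(2r_j)}v=0$ for all $j$; since each $\overline{d\pi(L_j)}$ is self-adjoint (Corollary~\ref{cor:commutativealgebra}) and $\ker A^k=\ker A$ for self-adjoint $A$, this forces $d\pi(L_j)v=0$ for all $j$, so $v=0$ by hypothesis. Thus $d\pi(D)$ is injective on $\HH^\infty$ for all non-trivial $\pi$, i.e. $D$ is a Rockland operator; hence $L_1,\dots,L_n$ is a Rockland system with respect to these dilations, and therefore — by Proposition~\ref{prp:rocklandwsub} — a weighted subcoercive system. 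The existence of a family of dilations with commensurable degrees (and the statement for the original $\delta_t$) is then obtained by combining the re-indexing/gradation results of Miller \cite{miller_parametrices_1980} with the fact, recalled in the excerpt, that the presence of a Rockland operator in $\Alg$ already forces the homogeneity degrees of $\lie g$, and a fortiori those of $L_1,\dots,L_n$, to be rationally commensurable.

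The main obstacle, in (i), is the reduction to a Rockland operator: one must justify that weighted subcoerciveness of the chosen operator $L$ can be read through a basis adapted to $\delta_t$ (or else run a semigroup argument directly on the contraction $G_*$, transporting the joint null vector $v$ along a rescaling of $\pi$ to a representation of $G_*$ and using the Gaussian bounds of Theorem~\ref{thm:robinsonterelst}(e),(f) to conclude it must vanish); everything downstream of that reduction is immediate. In (ii) the delicate point is instead the existence assertion — producing, when the $r_j$ are a priori incommensurable, a genuine family of dilations of $G$ under which $L_1,\dots,L_n$ remain homogeneous and acquire commensurable degrees — the remainder being the short algebraic computation above together with Corollary~\ref{cor:commutativealgebra} and Proposition~\ref{prp:rocklandwsub}.
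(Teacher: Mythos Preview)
Your argument for (ii) is essentially the paper's: both build the same operator $\Delta=\sum_j L_j^{2M/r_j}$ and verify it is Rockland. One point needs repair: you invoke Corollary~\ref{cor:commutativealgebra} to pass from $d\pi(L_j)^{\mu/(2r_j)}v=0$ to $d\pi(L_j)v=0$, but that corollary assumes $L_1,\dots,L_n$ is already a weighted subcoercive system, which is precisely what (ii) is establishing. The fix is elementary and does not need self-adjointness: for a symmetric operator $A$ on a pre-Hilbert space, $A^{2m}v=0$ gives $\|A^m v\|^2=\langle A^{2m}v,v\rangle=0$, and one descends by induction (the odd case reduces to the even one since $A^kv=0$ implies $A^{k+1}v=0$). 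The existence of dilations with commensurable degrees is handled in the paper exactly as you indicate, via \cite{miller_parametrices_1980}.

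For (i), however, the gap you flag is real and is not a technicality. The weighted subcoercive operator $L\in\Alg$ is weighted subcoercive with respect to \emph{some} reduced basis of $\lie g$, and nothing in the paper lets you transport this to an adapted basis; in particular you cannot invoke Theorem~\ref{thm:robinsonterelst}(iv) to conclude that the $\delta_t$-top component $D$ is Rockland. (Indeed, once the proposition is proved one knows a posteriori that $\Alg$ contains a Rockland operator, so the claim is true --- but using it here is circular.) The paper's argument sidesteps this entirely and never produces a Rockland operator: from Theorem~\ref{thm:robinsonterelst}(v), applied to $p(L)$ with respect to whatever basis makes it weighted subcoercive, one gets for \emph{every} unitary representation the a~priori estimate
\[
\|d\pi(X_k)v\|^2 \leq C\bigl(\|v\|^2 + \|d\pi(p(L))v\|^2\bigr),
\]
where $X_1,\dots,X_d$ are generators of $\lie g$. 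Now feed in the dilated representation $\pi_t=\pi\circ\delta_t$: since $d\pi_t(D)=d\pi(\delta_t(D))$ and the $L_j$ annihilate $v$, one has $d\pi_t(p(L))v=(p\circ\epsilon_t)(0)\,v=p(0)\,v$, while the left side becomes $t^{2\nu_k}\|d\pi(X_k)v\|^2$ (choosing the $X_k$ $\delta_t$-homogeneous of degree $\nu_k$). Letting $t\to\infty$ forces $d\pi(X_k)v=0$ for all $k$, so $v$ is $\pi$-invariant and hence $0$ by irreducibility. This scaling trick is the missing idea; your vague alternative (``transport $v$ along a rescaling of $\pi$ to $G_*$'') is pointing in the right direction, but the actual argument stays on $G$ and uses only the representation-independent estimate.
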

\begin{proof}
(i) Let $p$ be a real polynomial such that $p(L) = p(L_1,\dots,L_n)$ is a weighted subcoercive operator. Choose moreover a system $X_1,\dots,X_d$ of generators of $\lie{g}$ made of $\delta_t$-homogeneous elements, so that $\delta_t(X_k) = t^{\nu_k} X_k$ for some $\nu_k > 0$. From Theorem~\ref{thm:robinsonterelst}(v) we deduce that, possibly by replacing $p$ with some power $p^m$, there exist a constant $C > 0$ such that, for every unitary representation $\pi$ of $G$ on a Hilbert space $\HH$,
\begin{equation}\label{eq:apriori}
\|d\pi(X_k) v\|^2 \leq C ( \|v\|^2 + \|d\pi(p(L)) v\|^2 )
\end{equation}
for $v \in \HH^\infty$, $k=1,\dots,d$.

Fix now a non-trivial irreducible unitary representation $\pi$ of $G$ on a Hilbert space $\HH$, and let $v \in \HH^\infty$ be such that
\[d\pi(L_1) v = \dots = d\pi(L_n) v = 0.\]
For $t > 0$, since $\delta_t \in \Aut(G)$, $\pi_t = \pi \circ \delta_t$ is also a unitary representation of $G$; moreover, it is easily checked that smooth vectors for $\pi_t$ coincide with smooth vectors for $\pi$, and that
\[d\pi_t(D) = d\pi(\delta_t(D)) \qquad\text{for every $D \in \Diff(G)$.}\]
In particular,
\[d\pi_t(p(L)) v = d\pi((p \circ \epsilon_t)(L)) v = p(0) v,\]
thus from \eqref{eq:apriori} applied to the representation $\pi_t$ we get
\[\|d\pi(X_k) v\|^2 \leq t^{-2\nu_k} C (1 + |p(0)|^2) \|v\|^2,\]
and, for $t \to +\infty$, we obtain
\[d\pi(X_1) v = \dots = d\pi(X_d) v = 0.\]
Since $X_1,\dots,X_d$ generate $\lie{g}$, this means that the function $x \mapsto \pi(x) v$ is constant, i.e.,
\[\pi(x) v = v \qquad\text{for all $x \in G$,}\]
but $\pi$ is irreducible and non-trivial, thus $v = 0$.

(ii) By the results of \cite{miller_parametrices_1980} (see in particular Proposition~1.1 and its proof), we can find a gradation on $G$ with respect to which the operators $L_1,\dots,L_n$ are still homogeneous; since in this case the degrees of homogeneity are integers, they must have a common multiple.

Suppose therefore that $\tilde\delta_t$ is a system of automorphic dilations of $G$ such that the degrees $r_1,\dots,r_n$ of $L_1,\dots,L_n$ have a common multiple $M$. Then
\[\Delta = L_1^{2M/r_1} + \dots + L_n^{2M/r_n}\]
is $\tilde\delta_t$-homogeneous of degree $2M$ and belongs to the unital subalgebra of $\Diff(G)$ generated by $L_1,\dots,L_n$. Moreover, for every irreducible unitary representation $\pi$ of $G$ on $\HH$, and for every $v \in \HH^\infty$, we have
\[\langle d\pi(\Delta) v, v \rangle = \|d\pi(L_1)^{M/r_1} v\|^2_\HH + \dots \|d\pi(L_n)^{M/r_n} v\|^2_\HH,\]
so that, if $d\pi(\Delta) v = 0$, then also $d\pi(L_j) v = 0$ for $j=1,\dots,n$, therefore $v = 0$. This proves that $\Delta$ is a Rockland operator.

Notice now that, by Proposition~\ref{prp:rocklandwsub}, $\Delta$ is a weighted subcoercive operator on $G$. Since $L_1,\dots,L_n$ are $\delta_t$-homogeneous, the conclusion follows.
\end{proof}

We now show that a Rockland operator in the unital algebra generated by $L_1,\dots,L_n$ plays the role of ``homogeneous norm'' on the spectral side.

\begin{prp}\label{prp:spectrum}
Let $|\cdot|_\epsilon$ be a $\epsilon_t$-homogeneous norm on $\R^n$. Suppose that, for some polynomial $p$, the operator $p(L)$ is Rockland of degree $r$. Then there exists a constant $C \geq 1$ such that
\[C^{-1} |\lambda|_\epsilon \leq |p(\lambda)|^{1/r} \leq C |\lambda|_\epsilon \qquad\text{for $\lambda \in \Sigma$.}\]
\end{prp}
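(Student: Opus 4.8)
The statement asserts a two-sided polynomial comparison between a fixed $\epsilon_t$-homogeneous norm on $\R^n$ and $|p(\lambda)|^{1/r}$, valid on the joint spectrum $\Sigma$. The natural strategy is to exploit homogeneity to reduce everything to the compact set $S = \{\lambda \in \R^n : |\lambda|_\epsilon = 1\}$, together with the fact that $p(L)$, being Rockland, satisfies appropriate injectivity/coercivity properties which translate, via Theorem~\ref{thm:spectrumeigenfunctions} and Corollary~\ref{cor:spectrumeigenfunctions}, into the non-vanishing of $p$ on $\Sigma \setminus \{0\}$.

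First I would observe that $p(L) = p(L_1,\dots,L_n)$ is itself a weighted subcoercive operator (by Proposition~\ref{prp:rocklandwsub}, since Rockland operators on homogeneous groups are weighted subcoercive), and that its joint $L^2$ spectrum is the image $p(\Sigma)$ by Proposition~\ref{prp:pushforward}. Since $p(L)$ is Rockland of degree $r$ and hence positive after replacing it by $p(L)^+ p(L)$ if needed — in fact here $p$ is real and $p(L)$ formally self-adjoint, so $p(L) \geq 0$ — and since a Rockland operator $d\pi(p(L))$ is injective on $\HH^\infty$ for every non-trivial irreducible $\pi$, Theorem~\ref{thm:spectrumeigenfunctions} gives that $0$ is not a joint eigenvalue of $d\pi(L)$ for non-trivial $\pi$ unless $p$ vanishes there; combining with Corollary~\ref{cor:spectrumeigenfunctions} (the joint spectrum is contained in $\evmap_L(\PP_L)$, with equality for amenable $G$, and homogeneous groups are nilpotent hence amenable), one concludes that $p(\lambda) \neq 0$ for all $\lambda \in \Sigma \setminus \{0\}$. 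Actually, a cleaner route: $p(L)$ Rockland means $p$ is $\epsilon_t$-homogeneous of degree $r$ and, by the abelian-case characterization recalled in the excerpt, this forces — no, wait, $p$ need not be "the symbol" of a Rockland operator on $\R^n$; the right statement is that $p|_\Sigma$ is proper with $p^{-1}(0) \cap \Sigma = \{0\}$, which follows from Lemma~\ref{lem:properwsub} applied to the one-element system $p(L)$, since properness of $p|_\Sigma : \Sigma \to \R$ together with $\epsilon_t$-homogeneity forces the preimage of a bounded set to be bounded, and homogeneity then pins down $p^{-1}(0) \cap \Sigma = \{0\}$ (an $\epsilon_t$-invariant set on which a homogeneous function vanishes, and which is bounded, must be $\{0\}$).

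Granting $p(\lambda) \neq 0$ on $\Sigma \cap S$, the function $\lambda \mapsto |p(\lambda)|^{1/r}$ is continuous and strictly positive on the compact set $\Sigma \cap S$, hence bounded above and below by positive constants there: $c^{-1} \leq |p(\lambda)|^{1/r} \leq c$ for $\lambda \in \Sigma \cap S$, with $|\lambda|_\epsilon = 1$. For general $\lambda \in \Sigma \setminus \{0\}$, write $t = |\lambda|_\epsilon$ and $\omega = \epsilon_{t^{-1}}(\lambda)$, so $\omega \in \Sigma \cap S$ (using $\epsilon_t$-invariance of $\Sigma$ from Proposition~\ref{prp:plancherelhomogeneous}) and $|\omega|_\epsilon = 1$. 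Since $p$ is $\epsilon_t$-homogeneous of degree $r$, $p(\lambda) = p(\epsilon_t(\omega)) = t^r p(\omega)$, whence $|p(\lambda)|^{1/r} = t \,|p(\omega)|^{1/r} = |\lambda|_\epsilon \, |p(\omega)|^{1/r}$, and the two-sided bound on $\Sigma \cap S$ gives $c^{-1}|\lambda|_\epsilon \leq |p(\lambda)|^{1/r} \leq c|\lambda|_\epsilon$. The case $\lambda = 0$ is trivial since $p(0) = 0$ ($p$ being homogeneous of positive degree $r \geq 1$). Taking $C = \max\{c,1\}$ finishes the proof.

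\textbf{Main obstacle.} The only genuinely non-routine point is establishing that $p$ does not vanish on $\Sigma \setminus \{0\}$. The cheapest self-contained argument uses Lemma~\ref{lem:properwsub}: applied to the (one-element, weighted subcoercive) system $p(L)$, it says $p|_\Sigma : \Sigma \to \R$ is proper, so $p^{-1}([-M,M]) \cap \Sigma$ is compact for every $M$; if $p(\lambda_0) = 0$ for some $\lambda_0 \in \Sigma \setminus \{0\}$, then the $\epsilon_t$-orbit $\{\epsilon_t(\lambda_0) : t > 0\} \subseteq \Sigma$ lies in $p^{-1}(\{0\})$ by homogeneity, yet is unbounded (as $t \to \infty$, $|\epsilon_t(\lambda_0)|_\epsilon = t|\lambda_0|_\epsilon \to \infty$), contradicting properness. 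I would double-check that Lemma~\ref{lem:properwsub} indeed applies with $n' = 1$ and $q = $ identity (so that "$q(L')$ weighted subcoercive" reads "$p(L)$ weighted subcoercive", which holds), and that the homogeneity of $p$ — not merely of the operator $p(L)$ — is available; the latter is exactly the hypothesis that $p(L)$ is Rockland "of degree $r$" in the homogeneous structure, i.e. $\delta_t(p(L)) = t^r p(L)$, which via the identification $\Diff(G) \cong \UEnA(\lie{g}_\C)$ and the correspondence with polynomials forces $p \circ \epsilon_t = t^r p$.
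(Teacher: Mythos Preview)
Your argument is correct and takes a somewhat different route from the paper. The paper obtains the lower bound $|\lambda|_\epsilon \lesssim |p(\lambda)|^{1/r}$ directly from the G\aa rding-type inequality of Theorem~\ref{thm:robinsonterelst}(vi): applied to the positive Rockland operator $|p|^{2M/r}(L)$ (with $M$ a common multiple of $r, r_1, \dots, r_n$), it yields $\|L_j^{M/r_j} u\|_2 \leq C \|p(L)^{M/r} u\|_2$, which by the spectral theorem becomes $|\lambda_j|^{1/r_j} \leq C^{1/M} |p(\lambda)|^{1/r}$ on $\Sigma$; the upper bound then follows, as in your argument, from the homogeneity $p \circ \epsilon_t = t^r p$ on $\Sigma$. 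You instead package both bounds into a single compactness argument on $\Sigma \cap S$, the substantive step being the non-vanishing of $p$ on $\Sigma \setminus \{0\}$, which you extract from the properness given by Lemma~\ref{lem:properwsub}. Since that lemma is itself proved via Theorem~\ref{thm:robinsonterelst}(v), both routes rest on the same coercivity; yours is a bit more geometric, the paper's more direct for the lower bound. Two small remarks: the hypothesis does not force $p$ to be real, so to apply Lemma~\ref{lem:properwsub} you should work with $|p|^2$ (a real polynomial with $|p|^2(L)$ positive Rockland of degree $2r$), which changes nothing in the rest of your argument; and the homogeneity $p \circ \epsilon_t = t^r p$ is available only \emph{on $\Sigma$}, not as an identity of polynomials (the map $q \mapsto q(L)$ need not be injective), but that is all you actually use.
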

\begin{proof}
If $M$ is a common multiple of
\[r,r_1,\dots,r_n,\]
it follows from Theorem~\ref{thm:robinsonterelst}(vi) applied to the positive Rockland operator $|p|^{2M/r}(L)$ that there exists $C > 0$ such that
\[\|L_j^{M/r_j} u\|_2 \leq C \|p(L)^{M/r} u\|_2 \]
for all $u \in \D(G)$, $j=1,\dots,n$; therefore, by the properties of the spectral integral, we deduce that
\[|\lambda_j|^{1/r_j} \leq C^{1/M} |p(\lambda)|^{1/r} \qquad\text{for $\lambda \in \Sigma$.}\]
Since
\[|\lambda|_\epsilon \sim \sum_{j=1}^n |\lambda_i|^{1/r_j},\]
the first inequality of the conclusion follows easily.

Finally, notice that
\[(p \circ \epsilon_t)(L) = D_t p(L) D_{t^{-1}} = t^r p(L)\]
since $p(L)$ has degree $r$, so that
\[p \circ \epsilon_t = t^r p \qquad\text{on $\Sigma$}\]
and the second inequality also follows.
\end{proof}

\section{Gelfand pairs}\label{section:gelfandpairs}

Let $G$ be a connected Lie group. In this paragraph, we describe a particular way of obtaining weighted subcoercive systems on $G$, which has been extensively studied in the literature.

Let $K$ be a compact subgroup of $\Aut(G)$. Notice that the homomorphism $c : \Aut(G) \to \R^+$ defined in \S\ref{section:automorphisms} must be identically $1$ on $K$ by compactness, so that the elements of $K$ preserve the Haar measure of $G$. A continuous function $f : G \to \C$ is said to be \emph{$K$-invariant} if
\[T_k f = f \qquad\text{for all $k \in K$;}\]
this definition is naturally extended to other spaces of functions on $G$, and also to distributions. We add a subscript $K$ to the symbol representing a particular space of functions or distributions in order to denote the corresponding (closed) subspace of $K$-invariant elements; for instance, $L^p_K(G)$ denotes the Banach space of $K$-invariant $L^p$ functions on $G$. Since
\[T_k (f * g) = (T_k f) * (T_k g), \qquad T_k(f^*) = (T_k f)^*,\]
it is immediately proved that $L^1_K(G)$ is a Banach $*$-subalgebra of $L^1(G)$.

We also define the projection
\[P_K : f \mapsto \int_K T_k f \,dk,\]
where the integration is with respect to the Haar measure on $K$ with mass $1$; by a suitable notion of integral, this projection is defined on various spaces of functions and distributions on $G$, and on most spaces it is a continuous operator (with unit norm), which maps the whole space onto the subspace of $K$-invariant elements. Moreover
\[P_K(f * (P_K g)) = P_K( (P_K f) * g) = (P_K f) * (P_K g), \qquad P_K(f^*) = (P_K f)^*.\]

Among the left-invariant differential operators on $G$, we can consider those which are $K$-invariant, i.e., which commute with $T_k$ for all $k \in K$. The set $\Diff_K(G)$ of left-invariant $K$-invariant differential operators on $G$ is a $*$-subalgebra of $\Diff(G)$, which is finitely generated since $K$ is compact (cf.\ \cite{helgason_differential_1962}, Corollary~X.2.8 and Theorem~X.5.6). Moreover, $\Diff_K(G)$ contains an elliptic operator (e.g., the Laplace-Beltrami operator associated to a left-invariant $K$-invariant metric on $G$, cf.\ \cite{helgason_groups_1984}, proof of Proposition~IV.2.2). Therefore, if one chooses a finite system of formally self-adjoint generators of $\Diff_K(G)$, the only property which is missing in order to have a weighted subcoercive system is commutativity of $\Diff_K(G)$.

In fact, under these hypotheses, the following properties are equivalent (cf.\ \cite{thomas_infinitesimal_1984}, or \cite{wolf_harmonic_2007}, \S8.3):
\begin{itemize}
\item $\Diff_K(G)$ is a commutative $*$-subalgebra of $\Diff(G)$;
\item $L^1_K(G)$ is a commutative Banach $*$-subalgebra of $L^1(G)$.
\end{itemize}
The latter condition corresponds to the fact that $(G \rtimes K, K)$ is a \emph{Gelfand pair}\index{Gelfand pair}\footnote{The general notion of Gelfand pair is the following: if $S$ is a locally compact group, and $K$ a compact subgroup of $S$, then $(S,K)$ is said to be a Gelfand pair if the (convolution) algebra $L^1(K;S;K)$ of bi-$K$-invariant integrable functions on $S$ is commutative. The study of a Gelfand pair $(S,K)$ involves the $K$-homogeneous space $S/K$: for instance, bi-$K$-invariant functions on $S$ correspond to $K$-invariant functions on $S/K$. In the case $S = G \rtimes K$, the homogeneous space $S/K$ can be identified with $G$; moreover, the convolution in $L^1(K;S;K)$ corresponds to the convolution in $L^1_K(G)$, and most of the notions and results about Gelfand pairs can be rephrased, in this particular case, in terms of the algebraic structure of $G$ (see, e.g., \cite{carcano_commutativity_1987} or \cite{benson_gelfand_1990}). This has to be kept in mind when comparing the results mentioned here with the ones presented in the literature.}. We now summarize in our context some of the main notions and results from the general theory of Gelfand pairs, for which we refer mainly to the expositions of Faraut \cite{faraut_analyse_1982} and Wolf \cite{wolf_harmonic_2007}, but also to the books of Helgason \cite{helgason_differential_1962}, \cite{helgason_differential_1978}, \cite{helgason_groups_1984}. In the following, we always suppose that $L^1_K(G)$ is commutative; consequently, $G$ must be unimodular (cf.\ \cite{helgason_groups_1984}, Theorem~IV.3.1).

The $K$-invariant joint eigenfunctions $\phi$ of the operators in $\Diff_K(G)$ with $\phi(e) = 1$ are called \index{function!spherical}\emph{$K$-spherical functions}. These functions can be equivalently characterized as the continuous and non-null functions $\phi$ on $G$ such that
\[\phi(x) \phi(y) = \int_K \phi(x \, k(y)) \,dk \qquad\text{for all $x,y \in K$.}\]
The set $\GS_K$ of bounded $K$-spherical functions, with the topology induced by the weak-$*$ topology of $L^\infty(G)$, is identified with the Gelfand spectrum $\GS(L^1_K(G))$ of the commutative Banach $*$-algebra $L^1_K(G)$, via the correspondence which associates to a bounded $K$-spherical function $\phi$ the (multiplicative) linear functional
\[f \mapsto \langle f, \phi \rangle\]
on $L^1_K(G)$. According to this identification, the \index{transform!Gelfand}Gelfand transform --- which is also called the \index{transform!Fourier!spherical}\emph{$K$-spherical Fourier transform} --- of an element $f \in L^1_K(G)$ is the function
\[\Gelf_K f : \GS_K \ni \phi \mapsto \langle f, \phi \rangle \in \C.\]
The $K$-spherical Fourier transform $\Gelf_K$ is a continuous homomorphism of Banach algebras $L^1_K(G) \to C_0(\GS_K)$, with unit norm.

If $\phi$ is a function of positive type on $G$, then its projection $P_K \phi$ is of positive type and $K$-invariant. Let $\PP_K$ denote the set of $K$-invariant functions $\phi$ of positive type on $G$ with $\phi(e) = 1$. Then $\PP_K$ is a closed and convex subset of $\PP_1$ (see \S\ref{subsection:positivetype}), whose extreme points are the elements of $\GS_K^+ = \GS_K \cap \PP_K$, i.e., the $K$-spherical functions of positive type; in particular, by the Krein-Milman theorem, the convex hull of $\GS_K^+$ is weakly-$*$ dense in $\PP_K$.

By restricting $K$-spherical transforms to $\GS_K^+$, one obtains that
\[(\Gelf_K (f^*))|_{\GS_K^+} = \overline{(\Gelf_K f)|_{\GS_K^+}},\]
therefore the map $f \mapsto (\Gelf_K f)|_{\GS_K^+}$ is a $*$-homomorphism $L^1_K(G) \to C_0(\GS_K^+)$ with unit norm and dense image. Moreover, there exists a unique positive regular Borel measure $\sigma_K$ on $\GS_K^+$, which is called the \index{Plancherel measure!for a Gelfand pair}\emph{Plancherel measure} of the Gelfand pair $(G \rtimes K, K)$, such that
\[\int_G |f(x)|^2 \,dx = \int_{\GS_K^+} |\Gelf_K f(\phi)|^2 \,d\sigma_K(\phi)\]
for all $f \in L^1_K \cap L^2_K(G)$; further, the map $f \mapsto (\Gelf_K f)|_{\GS_K^+}$ extends to an isomorphism $L^2_K(G) \to L^2(\GS_K^+,\sigma_K)$.

Choose now a finite system $L_1,\dots,L_n$ of formally self-adjoint generators of $\Diff_K(G)$. As we have seen before, the system $L_1,\dots,L_n$ is a weighted subcoercive system on $G$. If the map $\evmap_L$ of \S\ref{section:eigenfunctions} is extended to all the joint eigenfunctions of $L_1,\dots,L_n$, then it is known (see \cite{ferrari_ruffino_topology_2007}) that
\[\evmap_L|_{\GS_K} : \GS_K \to \C^n\]
is a homeomorphism with its image $\evmap_L(\GS_K)$, which is a closed subset of $\C^n$. Notice that
\[\GS_K^+ \subseteq \PP_L\]
(i.e., the $K$-spherical functions of positive type are joint eigenfunctions of positive type of $L_1,\dots,L_n$) and that
\[\evmap_L(\GS_K^+) = \evmap_L(\PP_L)\]
(since the $K$-invariant projection of a joint eigenfunction is still a joint eigenfunction); consequently, for every $\lambda \in \evmap_L(\PP_L)$, there exists a unique element of $\evmap_L^{-1}(\lambda) \cap \PP_L$ which is a $K$-spherical function (cf.\ \cite{helgason_groups_1984}, Proposition~IV.2.4).

The embedding $\evmap_L$ allows us to compare the notions of $K$-spherical transform $\Gelf_K$ and Plancherel measure $\sigma_K$ of the Gelfand pair $(G \rtimes K, K)$ with the notions of kernel transform $\Kern_L$ and Plancherel measure $\sigma$ associated to the weighted subcoercive system $L_1,\dots,L_n$. As a preliminary remark, notice that from Proposition~\ref{prp:automorphismskernel} it follows that, for every bounded Borel $m : \R^n \to \C$, the corresponding kernel $\Kern_L m$ is $K$-invariant.

\begin{prp}\label{prp:gelfandkernel}
Let $f \in L^1_K(G)$. Then there exists $m \in C_0(\R^n)$ such that
\[\Gelf_K f(\phi) = m(\evmap_L(\phi)) \qquad\text{for $\phi \in \GS_K^+$.}\]
For any of such $m$, and for every unitary representation $\pi$ of $G$, we have
\[\pi(f) = m(d\pi(L_1),\dots,d\pi(L_n)),\]
and in particular
\[f = \Kern_L m.\]
\end{prp}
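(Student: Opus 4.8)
The plan is to prove the proposition in three stages: first obtain the existence of $m$, then identify $\pi(f)$ with $m(d\pi(L))$ for an arbitrary unitary representation, and finally specialize to the regular representation to get $f = \Kern_L m$.

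For the first stage, I would use the identification of $\GS_K$ (the bounded $K$-spherical functions) with the Gelfand spectrum $\GS(L^1_K(G))$, together with the homeomorphism $\evmap_L|_{\GS_K} : \GS_K \to \C^n$ onto a closed subset of $\C^n$ (which is cited from \cite{ferrari_ruffino_topology_2007}). Since $\Gelf_K f \in C_0(\GS_K^+)$ by the general Gelfand-pair theory, and since $\evmap_L|_{\GS_K^+}$ is a closed embedding, the function $\Gelf_K f$ pushed forward along $(\evmap_L|_{\GS_K^+})^{-1}$ is a continuous function vanishing at infinity on the closed set $\evmap_L(\GS_K^+) \subseteq \R^n$ (it is a subset of $\R^n$ rather than $\C^n$ because $L_1,\dots,L_n$ are formally self-adjoint, hence have real joint eigenvalues by Proposition~\ref{prp:eigenfunctions}). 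A Tietze-type extension then yields $m \in C_0(\R^n)$ with $\Gelf_K f(\phi) = m(\evmap_L(\phi))$ for $\phi \in \GS_K^+$; any two such $m$ agree on $\evmap_L(\GS_K^+)$, which contains $\Sigma$ when $G$ is amenable (here $G$ is unimodular of polynomial-growth type in the Gelfand-pair setting, hence amenable), so the kernel $\Kern_L m$ is independent of the chosen extension by Lemma~\ref{lem:composition}(a).

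For the second stage, the key identity is $\pi(f) = m(d\pi(L_1),\dots,d\pi(L_n))$. I would argue as in Proposition~\ref{prp:Jl1}: pick a sequence $m_k \in \JJ_L$ (or at least $m_k = \breve{}$-preimages of $\Kern_L(\JJ_L)$) converging uniformly to $m$ on $\R^n$ — possible by Proposition~\ref{prp:Jdensity} combined with a truncation argument since $m \in C_0$. Uniform convergence gives $m_k(d\pi(L)) \to m(d\pi(L))$ in operator norm by the spectral calculus, and $\breve m_k \to \Kern_L m$ in $\Cv^2(G)$ by Lemma~\ref{lem:composition}(a). Since $G$ is amenable, every unitary representation $\pi$ is weakly contained in the regular representation, so $\pi(\breve m_k) \to \pi(\Kern_L m)$ in operator norm as well. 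By Proposition~\ref{prp:Jl1}, $\pi(\breve m_k) = m_k(d\pi(L))$, and passing to the limit yields $\pi(\Kern_L m) = m(d\pi(L))$. To conclude that $\pi(f) = \pi(\Kern_L m)$ for every $\pi$, I would first establish $f = \Kern_L m$ directly and then invoke this.

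For the third stage — identifying $f$ with $\Kern_L m$ — I would take $\pi$ to be the regular representation $\RA$ on $L^2(G)$. On one hand $\RA(f)$ is the convolution operator $\phi \mapsto \phi * f$; on the other hand $m(\RA(L_1),\dots,\RA(L_n)) = m(L_1,\dots,L_n)$ has convolution kernel $\Kern_L m$ by definition of the kernel transform. The content to check is that $\Gelf_K f$ and $m \circ \evmap_L$ agreeing on $\GS_K^+$ forces $\RA(f)$ and $m(L)$ to coincide as bounded operators on $L^2(G)$. This follows from the Plancherel isomorphism $L^2_K(G) \cong L^2(\GS_K^+,\sigma_K)$ intertwining $f \mapsto (\Gelf_K f)|_{\GS_K^+}$ and the comparison of $\sigma_K$ with the Plancherel measure $\sigma$: since $f$ is $K$-invariant and the kernel $\Kern_L m$ is $K$-invariant (Proposition~\ref{prp:automorphismskernel}), and both convolution operators are $L_1,\dots,L_n$-spectral, they are determined by their ``symbol'' on the joint spectrum, which is exactly $m|_\Sigma$ in both cases. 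Equivalently, from $f \in L^1_K(G)$, one shows $f(L)$ makes sense and $\breve{(\Gelf_K f \circ \evmap_L^{-1})} = f$ essentially by the uniqueness in the Plancherel/Gelfand theory, giving $f = \Kern_L m$.

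The main obstacle I anticipate is the rigorous matching of the two Plancherel pictures in the third stage: one must be careful that the measure $\sigma_K$ on $\GS_K^+$ transported via $\evmap_L$ to $\R^n$ is (a constant multiple of, or exactly) the Plancherel measure $\sigma$ of the weighted subcoercive system, so that the two notions of ``function of the operators'' literally coincide rather than merely being abstractly isomorphic. This requires knowing that $\evmap_L$ identifies $\GS_K^+$ with $\Sigma$ as measure spaces, which should follow from the support statement $\Sigma \subseteq \evmap_L(\PP_L) = \evmap_L(\GS_K^+)$ (Corollary~\ref{cor:spectrumeigenfunctions}) plus injectivity of $\evmap_L$ on $\GS_K$ and uniqueness of Plancherel measure (Theorem~\ref{thm:plancherel}), but handling the possibility that $\GS_K^+$ is strictly larger than $\Sigma$ (points of the spherical spectrum not in the $L^2$-support) needs the observation that such points are $\sigma_K$-negligible, exactly as $E$-negligible sets and $\sigma$-negligible sets coincide.
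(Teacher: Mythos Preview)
Your stage 1 and the approximation argument in stage 2 are correct and parallel the paper. The gap is in stage 3. You propose to identify $f$ with $\Kern_L m$ by matching the Gelfand-pair Plancherel measure $\sigma_K$ with the system's Plancherel measure $\sigma$; but in the paper that identification is a \emph{corollary} of the very proposition you are proving, so invoking it here is circular. One could try to establish $\evmap_L(\sigma_K) = \sigma$ independently (via Proposition~\ref{prp:eigenvalues} on $\JJ_L$), yet even then equality of measures does not by itself give $\RA(f) = m(L)$ on all of $L^2(G)$: your argument only controls both operators on $L^2_K(G)$, and the claim that ``both convolution operators are $L_1,\dots,L_n$-spectral'' presupposes that $\RA(f)$ lies in the functional calculus of $L_1,\dots,L_n$ --- which is exactly the content of the proposition. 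To close this route you would still need to show (i) that $m(L)$ restricted to $L^2_K(G) \cong L^2(\sigma_K)$ acts as multiplication by $m \circ \evmap_L$, and (ii) that agreement of the two convolution operators on $\D_K(G)$ forces equality of their kernels (for instance via a $K$-invariant approximate identity). Neither step is in the proposal.

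The paper sidesteps all of this by working in $C^*(G)$ rather than in $L^2(G)$. Your stage-2 approximation yields a $*$-homomorphism $\Phi : C_0(\R^n) \to C^*(G)$ extending $u \mapsto \breve u$ on $\JJ_L$, with $\pi(\Phi(u)) = u(d\pi(L))$ for every unitary $\pi$ (no amenability is needed here, since $\|\breve u\|_* \leq \|u\|_\infty$ for $u \in \JJ_L$ by Proposition~\ref{prp:Jl1}). The remaining task is $f = \Phi(m)$ in $C^*(G)$, which the paper checks by testing against all positive functionals $\omega_\phi$, $\phi \in \PP_1$: both $f$ and $\Phi(m)$ lie in the closure $C^*_K(G)$ of $L^1_K(G)$, and on $C^*_K(G)$ one has $\omega_\phi = \omega_{P_K\phi}$, so it suffices to take $\phi \in \PP_K$; the condition being closed and convex, Krein--Milman reduces to the extreme points $\GS_K^+$, where $\omega_\phi$ is multiplicative on $C^*_K(G)$ and the verification $\omega_\phi(f) = \Gelf_K f(\phi) = m(\evmap_L(\phi)) = \omega_\phi(\Phi(m))$ is immediate from the construction of $m$. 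The Plancherel comparison you were aiming for then falls out as a consequence, not an ingredient.
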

\begin{proof}
Since $\Gelf_K f|_{\GS_K^+} \in C_0(\GS_K^+)$, and since $\evmap_L|_{\GS_K^+}$ is a homeomorphism with its image, which is a closed subset of $\R^n$, then by the Tietze-Urysohn extension theorem we can find $m \in C_0(\R^n)$ extending $(\Gelf_K f) \circ (\evmap_L|_{\GS_K^+})^{-1}$.

By Proposition~\ref{prp:Jl1}, for every $u \in \JJ_L$ and every unitary representation $\pi$ of $G$, we have
\[\pi(\breve u) = u(d\pi(L_1),\dots,d\pi(L_n));\]
therefore the map
\[\JJ_L \ni u \mapsto \breve u \in L^1(G)\]
extends by density (see Proposition~\ref{prp:Jdensity}) to a $*$-homomorphism
\[\Phi : C_0(\R^n) \to C^*(G),\]
and we have
\[\pi(\Phi(u)) = u(d\pi(L_1,\dots,L_n))\]
for all $u \in C_0(\R^n)$ and all unitary representations $\pi$ of $G$. The conclusion will then follow if we prove that
\[f = \Phi(m)\]
as elements of $C^*(G)$.

Recall that every $\phi \in \PP_1$ defines a positive continuous functional $\omega_\phi$ on $C^*(G)$ with unit norm, which extends
\[L^1(G) \ni h \mapsto \langle h, \phi \rangle \in \C;\]
in fact, the norm of any $g \in C^*(G)$ is given by
\[\|g\|_* = \sup_{\phi \in \PP_1} \omega_\phi(g * g^*)\]
(cf.\ \S\ref{subsection:positivetype}). Therefore, in order to conclude, it will be sufficient to show that the set $A$ of the $\phi \in \PP_1$ such that
\[\omega_\phi( (f - \Phi(m)) * (f - \Phi(m))^* ) = 0\]
coincides with the whole $\PP_1$.

Notice that both $f$ and $\Phi(m)$ belong to the closure $C^*_K(G)$ of $L^1_K(G)$ in $C^*(G)$, and it is easily checked that, for $\phi \in \PP_1$ and $g \in C^*_K(G)$,
\[\omega_\phi(g) = \omega_{P_K \phi}(g);\]
consequently, we are reduced to prove that $\PP_K \subseteq A$. In fact, since $A$ is a closed convex subset of $\PP_1$, it is sufficient to prove the inclusion $\GS_K^+ \subseteq A$.

On the other hand, the functionals $\omega_\phi$ for $\phi \in \GS_K^+$ are multiplicative on $L^1_K(G)$, thus they are also multiplicative on $C^*_K(G)$ by continuity, therefore
\[\omega_\phi( (f - \Phi(m)) * (f - \Phi(m))^* ) = |\omega_\phi(f - \Phi(m))|^2 = |\Gelf_K f(\phi) - m(\evmap_L(\phi))|^2 = 0\]
for every $\phi \in \GS_K^+$ (cf.\ Propositions~\ref{prp:eigenfunctions} and \ref{prp:eigenvalues}), and we are done.
\end{proof}

Thus, by applying first $\Gelf_K$ and then $\Kern_L$, we are back at the beginning. The composition of the transforms in reverse order is considered in the following statement, which gives also an improvement of Proposition~\ref{prp:riemannlebesgue1} in this particular context.

\begin{cor}
Let $m : \R^n \to \C$ be a bounded Borel function such that $\breve m \in L^1(G)$. Then $\breve m \in L^1_K(G)$ and
\[\Gelf_K (\Kern_L m) (\phi) = m(\evmap_L(\phi)) \qquad\text{for all $\phi \in \GS_K^+$ with $\evmap_L(\phi) \in \Sigma$.}\]
In particular $m|_\Sigma \in C_0(\Sigma)$.
\end{cor}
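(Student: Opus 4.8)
The plan is to combine the previous proposition (which identifies, for $f \in L^1_K(G)$, the $K$-spherical transform $\Gelf_K f$ with a function $m(\evmap_L(\cdot))$) with the density/approximation machinery already in place. First I would show that $\breve m = \Kern_L m$ is $K$-invariant: this is immediate from Proposition~\ref{prp:automorphismskernel} (or the remark preceding Proposition~\ref{prp:gelfandkernel}), since for every $k \in K$ one has $c(k) = 1$ by compactness, so $T_k \Kern_L m = \Kern_{k'(L)} m = \Kern_L m$ because each $L_j$ is $K$-invariant and hence $k'(L_j) = L_j$. Combined with the hypothesis $\breve m \in L^1(G)$, this gives $\breve m \in L^1_K(G)$.

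Next, by Proposition~\ref{prp:gelfandkernel} applied to $f = \breve m \in L^1_K(G)$, there is some $m' \in C_0(\R^n)$ with $\Gelf_K(\breve m)(\phi) = m'(\evmap_L(\phi))$ for all $\phi \in \GS_K^+$, and moreover $\Kern_L m' = \breve m = \Kern_L m$. Since $\Kern_L$ is injective on $L^\infty(\sigma) = L^\infty(\R^n,E)$ by Lemma~\ref{lem:composition}(a) (it is an isometry), we conclude $m = m'$ as elements of $L^\infty(\sigma)$, i.e.\ $m|_\Sigma = m'|_\Sigma$, which is continuous; in particular $m|_\Sigma \in C(\Sigma)$. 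To upgrade ``$C$'' to ``$C_0$'', recall that $\Sigma \subseteq \evmap_L(\PP_L) = \evmap_L(\GS_K^+)$ and that $\evmap_L|_{\GS_K^+}$ is a homeomorphism onto a closed subset of $\R^n$, so $m'|_{\evmap_L(\GS_K^+)} = (\Gelf_K \breve m)\circ(\evmap_L|_{\GS_K^+})^{-1}$ vanishes at infinity (as $\Gelf_K \breve m \in C_0(\GS_K^+)$); restricting the $C_0$ function $m'$ further to the closed subset $\Sigma$ keeps it in $C_0(\Sigma)$. This also yields the displayed identity: for $\phi \in \GS_K^+$ with $\evmap_L(\phi) \in \Sigma$, we have $\Gelf_K(\Kern_L m)(\phi) = \Gelf_K(\breve m)(\phi) = m'(\evmap_L(\phi)) = m(\evmap_L(\phi))$, the last step because $m$ and $m'$ agree on $\Sigma$.

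The main obstacle I anticipate is the bookkeeping around which set the various transforms ``live on'': $\Gelf_K f$ is naturally a function on $\GS_K^+$, $m$ and $m'$ are functions on all of $\R^n$, and the Plancherel measure $\sigma$ sees only $\Sigma$, while $\evmap_L$ only gives a homeomorphism between $\GS_K^+$ and its image, which contains $\Sigma$ but need not equal it. One must be careful that the identification $m = m'$ can only be asserted $\sigma$-almost everywhere, hence only on $\Sigma$, and that the displayed formula is correspondingly restricted to those $\phi$ with $\evmap_L(\phi) \in \Sigma$; on the rest of $\GS_K^+$ the equality $\Gelf_K(\Kern_L m)(\phi) = m'(\evmap_L(\phi))$ still holds by Proposition~\ref{prp:gelfandkernel}, but $m'$ there need not coincide with the originally given $m$. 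Once this is kept straight, the argument is a short assembly of Proposition~\ref{prp:automorphismskernel}, Proposition~\ref{prp:gelfandkernel}, the injectivity of $\Kern_L$, and the homeomorphism property of $\evmap_L|_{\GS_K^+}$, with the Tietze--Urysohn step already absorbed into Proposition~\ref{prp:gelfandkernel}.
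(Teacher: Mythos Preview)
Your proposal is correct and follows essentially the same approach as the paper: show $\breve m$ is $K$-invariant, apply Proposition~\ref{prp:gelfandkernel} to obtain a function $u \in C_0(\R^n)$ (your $m'$) with $\Kern_L u = \Kern_L m$, and then use injectivity of $\Kern_L$ on $L^\infty(\sigma)$ (equivalently, the spectral theorem) to conclude that $m$ and $u$ agree on $\Sigma$. The only cosmetic difference is that the paper gets $m|_\Sigma \in C_0(\Sigma)$ directly from $u \in C_0(\R^n)$ restricted to the closed set $\Sigma$, rather than routing through the homeomorphism $\evmap_L|_{\GS_K^+}$ as you do; your route works but is slightly longer than necessary.
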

\begin{proof}
We already know that $\breve m$ is $K$-invariant, so that $\breve m \in L^1_K(G)$. Therefore, by Proposition~\ref{prp:gelfandkernel}, we can find $u \in C_0(\R^n)$ such that
\[\Gelf_K \breve m (\phi) = u(\evmap_L(\phi))\]
for all $\phi \in \GS_K^+$, and we have
\[\breve m = \breve u,\]
i.e.,
\[m(L_1,\dots,L_n) = u(L_1,\dots,L_n),\]
which means, by the properties of the spectral integral, that $m$ and $u$ must coincide on the joint spectrum $\Sigma$ of $L_1,\dots,L_n$, and we are done.
\end{proof}

Finally, we compare the Plancherel measures $\sigma$ and $\sigma_K$.

\begin{cor}
We have
\[\sigma = \evmap_L|_{\GS_K^+}(\sigma_K), \qquad \sigma_K = (\evmap_L|_{\GS_K^+})^{-1}(\sigma).\]
\end{cor}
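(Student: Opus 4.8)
The claim is the identification of two Plancherel measures: $\sigma$, attached to the weighted subcoercive system $L_1,\dots,L_n$ via Theorem~\ref{thm:plancherel}, and $\sigma_K$, the Plancherel measure of the Gelfand pair $(G\rtimes K,K)$, transported through the homeomorphism $\evmap_L|_{\GS_K^+}:\GS_K^+\to\evmap_L(\GS_K^+)\subseteq\R^n$. The natural route is to compare the two $L^2$-isometries directly. On the one hand, by Theorem~\ref{thm:plancherel} the kernel transform $\Kern_L$ restricts to an isometric isomorphism $L^2(\sigma)\to\IS_L\subseteq L^2(G)$ (Proposition~\ref{prp:Jl2}); on the other hand, the $K$-spherical Fourier transform $\Gelf_K$ extends to an isometric isomorphism $L^2_K(G)\to L^2(\GS_K^+,\sigma_K)$. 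I plan to show these two isometries are inverse to one another once both sides are identified via $\evmap_L$, which forces $\evmap_L|_{\GS_K^+}(\sigma_K)=\sigma$ (and then the second identity follows from it, since $\evmap_L|_{\GS_K^+}$ is a homeomorphism, hence its inverse pushes $\sigma$ forward to $\sigma_K$).

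\textbf{Key steps.} First I would record, using Proposition~\ref{prp:automorphismskernel} (applied to $k\in K$, for which $c(k)=1$), that for every bounded Borel $m:\R^n\to\C$ the kernel $\Kern_L m$ is $K$-invariant; hence $\IS_L\subseteq L^2_K(G)$. Second, I would use Proposition~\ref{prp:gelfandkernel}: every $f\in L^1_K(G)$ equals $\Kern_L m$ for some $m\in C_0(\R^n)$ with $\Gelf_K f(\phi)=m(\evmap_L(\phi))$ on $\GS_K^+$; restricting $m$ to $\Sigma$ and invoking Proposition~\ref{prp:Jl1}/the $C^*$-algebra isomorphism $C_0(L)\cong C_0(\Sigma)$, $m|_\Sigma$ is determined by $f$. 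Combined with Lemma~\ref{lem:Jl1} and the density statements (Proposition~\ref{prp:Jdensity}, Lemma~\ref{lem:Jlq}, Proposition~\ref{prp:Jl2}), this shows $\IS_L=L^2_K(G)$ and that on this common space the two transforms satisfy $\Gelf_K(\Kern_L m)(\phi)=m(\evmap_L(\phi))$, i.e.\ $\Gelf_K$ is the composition of $\Kern_L^{-1}$ with the map $m\mapsto m\circ\evmap_L|_{\GS_K^+}$. Third, comparing norms: for $f=\Kern_L m$ with $m\in L^2(\sigma)\cap L^\infty(\sigma)$ compactly supported (so $f\in L^2_K(G)$ by Proposition~\ref{prp:compactlysupported}), Theorem~\ref{thm:plancherel} gives $\|f\|_{L^2(G)}^2=\int_{\R^n}|m|^2\,d\sigma$, while the Gelfand--Plancherel theorem gives $\|f\|_{L^2(G)}^2=\int_{\GS_K^+}|\Gelf_K f|^2\,d\sigma_K=\int_{\GS_K^+}|m\circ\evmap_L|^2\,d\sigma_K=\int_{\R^n}|m|^2\,d(\evmap_L|_{\GS_K^+}(\sigma_K))$. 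Fourth, since such $m$ are dense in the relevant function spaces and both $\sigma$ and $\evmap_L|_{\GS_K^+}(\sigma_K)$ are regular Borel measures on $\R^n$ supported on $\evmap_L(\GS_K^+)\supseteq\Sigma$, the equality of the integrals of $|m|^2$ over a measure-determining class of $m$ yields $\sigma=\evmap_L|_{\GS_K^+}(\sigma_K)$; a brief argument (testing on characteristic functions of relatively compact Borel sets, as in the proof of Theorem~\ref{thm:plancherel}) upgrades the norm identity to an identity of measures. Finally, applying the homeomorphism $(\evmap_L|_{\GS_K^+})^{-1}$ and pushing forward gives $\sigma_K=(\evmap_L|_{\GS_K^+})^{-1}(\sigma)$.

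\textbf{Main obstacle.} The one point that needs genuine care is matching the supports: $\Gelf_K$ a priori lives on $\GS_K^+$, whose image under $\evmap_L$ is $\evmap_L(\GS_K^+)=\evmap_L(\PP_L)$, which by Corollary~\ref{cor:spectrumeigenfunctions} contains $\Sigma$ and coincides with it when $G$ is amenable --- and here $G$ \emph{is} amenable, since $L^1_K(G)$ commutative forces $G$ unimodular and $(G\rtimes K,K)$ Gelfand; one must check directly that the Gelfand--Plancherel measure $\sigma_K$ is concentrated on the subset mapping into $\Sigma$. This follows because $\Kern_L m=0$ whenever $m$ vanishes $\sigma$-a.e., so the image of the compactly supported $m\in L^2(\sigma)$ is exactly $\IS_L$, forcing $\sigma_K$ to vanish off $(\evmap_L|_{\GS_K^+})^{-1}(\Sigma)$; once this is established the pushforward identities are immediate. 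The rest is a routine bookkeeping of the isometries already constructed in \S\ref{section:plancherel} and \S\ref{section:gelfandpairs}.
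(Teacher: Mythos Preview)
Your approach is correct but works harder than needed. The paper's proof exploits a fact you overlook: the Plancherel measure $\sigma_K$ of a Gelfand pair is \emph{unique}, being characterized by the identity $\|f\|_2^2=\int_{\GS_K^+}|\Gelf_K f|^2\,d\sigma_K$ for all $f\in L^1_K\cap L^2_K(G)$. So the paper simply sets $\tilde\sigma=(\evmap_L|_{\GS_K^+})^{-1}(\sigma)$, takes an arbitrary $f\in L^1_K\cap L^2_K(G)$, invokes Proposition~\ref{prp:gelfandkernel} to write $f=\breve m$ with $\Gelf_K f=m\circ\evmap_L$, and then Theorem~\ref{thm:plancherel} plus the change-of-variable formula give $\|f\|_2^2=\int|m|^2\,d\sigma=\int|\Gelf_K f|^2\,d\tilde\sigma$. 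Uniqueness of $\sigma_K$ finishes immediately.

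This choice of direction (proving $\sigma_K=(\evmap_L|_{\GS_K^+})^{-1}(\sigma)$ rather than $\sigma=\evmap_L|_{\GS_K^+}(\sigma_K)$) sidesteps precisely the ``main obstacle'' you identified: you never need to argue that $\sigma_K$ is concentrated on $\evmap_L^{-1}(\Sigma)$, nor do you need $\IS_L=L^2_K(G)$, nor do you need the density argument to pass from integrals of $|m|^2$ to equality of measures. Your route works, but the paper's is a two-line proof once you remember that $\sigma_K$ is uniquely determined by its defining property. (Also, your parenthetical that the Gelfand-pair hypothesis forces $G$ amenable is not quite right --- unimodularity is what follows --- though you correctly note that amenability is not actually needed for your support argument.)
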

\begin{proof}
Recall that $\evmap_L|_{\GS_K^+}$ is a homeomorphism with its image, which is a closed subset of $\R^n$ containing the support $\Sigma$ of $\sigma$, thus the two equalities to be proved are equivalent. 

Set $\tilde\sigma = (\evmap_L|_{\GS_K^+})^{-1}(\sigma)$. Then $\tilde\sigma$ is a positive regular Borel measure on $\GS_K^+$. Moreover, if $f \in L^1_K \cap L^2_K(G)$, then by Proposition~\ref{prp:gelfandkernel} there is $m \in C_0(\R^n)$ such that
\[\Gelf_K f(\phi) = m(\evmap_L(\phi)) \qquad\text{for all $\phi \in \GS_K^+$}\]
and
\[f = \breve m.\]
Since $f \in L^2(G)$, by Theorem~\ref{thm:plancherel} we also have $m \in L^2(\sigma)$, and
\[\int_G |f(x)|^2 \,dx = \int_{\R^n} |m|^2 \,d\sigma = \int_{\GS_K^+} |\Gelf_K f|^2 \,d\tilde\sigma\]
by the change-of-variable formula for push-forward measures. By arbitrariness of $f \in L^1_K \cap L^2_K(G)$ and uniqueness of the Plancherel measure of a Gelfand pair, we obtain that $\sigma_K = \tilde \sigma$, and we are done.
\end{proof}

We have thus shown that the study of the algebra $\Diff_K(G)$ of differential operators associated to a Gelfand pair $(G \rtimes K, K)$ fits into the more general setting of weighted subcoercive systems, where in general there is no compact group $K$ of automorphisms which determines the algebra of operators.

In fact, in the context of Gelfand pairs, there are additional tools (e.g., the projection $P_K$) which permit clearer formulations of results, and simplify several proofs. On the other hand, it should be noticed that the hypothesis of Gelfand pair is quite restrictive. Namely, if $L^1_K(G)$ is commutative, then $G$ must be unimodular. Moreover, the algebra $\Diff_K(G)$ contains an elliptic operator, so that its eigenfunctions are analytic (cf.\ \S X.2 of \cite{helgason_groups_1984}), and this is an essential tool in proving some properties of $K$-spherical functions; however in general a weighted subcoercive operator is not analytic hypoelliptic (see, e.g., \cite{helffer_conditions_1982}). Further, if $G$ is solvable, then $G$ must have polynomial growth, and, if $G$ is nilpotent, then $G$ is at most $2$-step (see \cite{benson_gelfand_1990}).

In this last case, which we will refer to as of \emph{nilpotent Gelfand pairs}\index{Gelfand pair!nilpotent}, notice that it is always possible to find a family of automorphic dilations $\delta_t$ on $G$ which commute with the elements of $K$ (in fact, one obtains a $K$-invariant stratification $\lie{g} = V_1 \oplus V_2$ by taking $V_2 = [\lie{g},\lie{g}]$ and $V_1 = V_2^\perp$ with respect to some $K$-invariant inner product on $\lie{g}$). With respect to such dilations, the algebra $\Diff_K(G)$ is a homogeneous $*$-subalgebra of $\Diff(G)$, so that it is possible to choose a system $L_1,\dots,L_n$ of formally self-adjoint generators of $\Diff_K(G)$ which are also $\delta_t$-homogeneous, and we fall into the case of homogeneous weighted subcoercive systems which are considered in the multiplier theorems of Chapter~\ref{chapter:multipliers}.

On the other hand, our results can be applied also to homogeneous groups which are $3$-step or more, and which therefore do not belong to the realm of Gelfand pairs. For instance, in \S\ref{subsection:example3step} we show that, on the free $3$-step nilpotent group $N_{2,3}$ with $2$ generators, endowed with an action of $SO_2$ by automorphisms, although the whole algebra of $SO_2$-invariant operators on $N_{2,3}$ cannot be commutative, yet there exists a non-trivial commutative subalgebra to which our results can be applied.

\clearemptydoublepage
\chapter{Weighted inequalities for kernels}\label{chapter:weighted}

Having established, in the previous chapter, the main properties of the kernel transform $\Kern_L$ associated to a weighted subcoercive system $L_1,\dots,L_n$ on a connected Lie group, here we investigate the problem of integrability of the kernel $\Kern_L m$ in terms of smoothness conditions on the multiplier $m$. The results of this chapter can thus be thought of as weak multiplier theorems, which are preliminary to the more refined results of Chapter~\ref{chapter:multipliers}, where singular integral operators are considered.

In the context of Euclidean Fourier multipliers, the above problem is reduced, by H\"older's inequality, to controlling a weighted $L^2$ norm of the kernel, which corresponds, by the properties of the Fourier transform, to an $L^2$ Sobolev norm of the multiplier. A similar route is followed here: a multi-variate analogue of a long-used Fourier-series-decomposition technique (see, e.g., \S6.B of \cite{folland_hardy_1982}), combined with heat kernel estimates, allows to control a weighted $L^2$ norm of the kernel in terms of a Sobolev (or Besov) norm of the multiplier, provided that the latter is supported in a fixed compactum; in the case of a group with polynomial growth, this yields also control on the integrability of the kernel. Interpolation plays here a fundamental role, as in \cite{mauceri_vectorvalued_1990}, in order to reduce the smoothness requirements on the multiplier.

For non-compactly supported multipliers, we describe a general result, holding in any group with polynomial growth, which is due to Hulanicki \cite{hulanicki_functional_1984} in the case of a single operator: if the multiplier $m$ is a Schwartz function, then the corresponding kernel $\Kern_L m$ is also Schwartz (and in particular is integrable).

Going back to the compactly supported case, further improvements on the smoothness requirements are achieved through a technique due to Hebisch and Zienkiewicz \cite{hebisch_multiplier_1995}, which exploits algebraic properties of the group in a non-trivial way, highlighting a phenomenon which is specific to the non-commutative realm.
 
Finally, some examples of weighted subcoercive systems on specific groups are presented, with explicit computations of the Plancherel measure, and the possibility of giving a ``coordinate-free'' formulation of the so-far obtained results is discussed.

\section{Weighted estimates}\label{section:weightedestimates}

Let $L_1,\dots,L_n$ be a weighted subcoercive system on a connected Lie group $G$, with joint spectral resolution $E$, joint spectrum $\Sigma \subseteq \R^n$, Plancherel measure $\sigma$, and define the polynomials $p_*,p_0,p_1,\dots,p_n$ as in \S\ref{section:plancherel}.
%

Let $p : \R^n \to \R^{1+n}$ be the map whose components are the polynomials $p_0,\dots,p_n$. For $l \in \Z^{1+n}$, set
\[E_l(\lambda) = e^{i l \cdot e^{-p(\lambda)}} - 1 = e^{i (l_0 e^{-p_0(\lambda)} + l_1 e^{-p_1(\lambda)} + \dots + l_n e^{-p_n(\lambda)})} - 1.\]
Then $E_l \in C_0(\R^n)$, and in fact
\[E_l = \sum_{0 \neq k \in \N^{1+n}} \frac{(il_0)^{k_0} \cdots (il_n)^{k_n}}{k_0! \cdots k_n!} e^{-k_0 p_0} \cdots e^{-k_n p_n},\]
with uniform convergence on $\R^n$. This means that, if $h_{\nu,t}$ is the heat kernel of $p_\nu(L)$ for $\nu =0,\dots,n$ (with $h_{\nu,0}$ denoting the Dirac delta at the identity of $G$), then
\begin{equation}\label{eq:convolutionseries}
\breve E_l = \sum_{0 \neq k \in \N^{1+n}} \frac{(il_0)^{k_0} \cdots (il_n)^{k_n}}{k_0! \cdots k_n!} h_{0,k_0} * \cdots * h_{n,k_n}
\end{equation}
in $\Cv^2(G)$.

\begin{lem}\label{lem:El2}
There exists $C > 0$ such that
\[\|\breve E_l\|_2 \leq C |l| \qquad\text{for all $l \in \Z^{1+n}$.}\]
\end{lem}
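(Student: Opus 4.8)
The plan is to use the Plancherel formula for the kernel transform (Theorem~\ref{thm:plancherel}), which identifies $\|\breve E_l\|_2$ with $\|E_l\|_{L^2(\sigma)}$; then the estimate reduces to an elementary pointwise bound on $E_l$ together with the fact that the heat kernels $h_{\nu,1}$ lie in $L^2(G)$. Since $E_l \in C_0(\R^n) \subseteq L^\infty(E)$, Theorem~\ref{thm:plancherel} gives $\|\breve E_l\|_2 = \|E_l\|_{L^2(\sigma)}$, so it suffices to bound the right-hand side by $C|l|$.

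First I would record the pointwise inequality. As $l \in \Z^{1+n}$ and $e^{-p_0(\lambda)},\dots,e^{-p_n(\lambda)}$ are real, the number $l \cdot e^{-p(\lambda)}$ is real; from $|e^{ix}-1| \leq |x|$ for $x \in \R$ and the Cauchy--Schwarz inequality we get
\[
|E_l(\lambda)| = \bigl| e^{i\, l \cdot e^{-p(\lambda)}} - 1 \bigr| \leq \bigl| l \cdot e^{-p(\lambda)} \bigr| \leq |l| \left( \sum_{\nu=0}^n e^{-2 p_\nu(\lambda)} \right)^{1/2}.
\]
Squaring and integrating against $\sigma$ yields
\[
\|E_l\|_{L^2(\sigma)}^2 \leq |l|^2 \sum_{\nu=0}^n \int_{\R^n} e^{-2 p_\nu}\,d\sigma .
\]

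It then remains to check that each summand on the right is finite. By construction $p_\nu(L)$ is a weighted subcoercive operator, so by Proposition~\ref{prp:Jl1} and Theorem~\ref{thm:robinsonterelst}(d) the kernel $\Kern_L(e^{-t p_\nu})$ coincides with the heat kernel $h_{\nu,t}$ of $p_\nu(L)$; applying Theorem~\ref{thm:plancherel} once more with $t=1$ gives
\[
\int_{\R^n} e^{-2 p_\nu}\,d\sigma = \|e^{-p_\nu}\|_{L^2(\sigma)}^2 = \|h_{\nu,1}\|_2^2 .
\]
This is finite because $h_{\nu,1} \in L^1(G)$ by Theorem~\ref{thm:robinsonterelst}(f) and $h_{\nu,1} \in L^\infty(G)$ by the Gaussian estimates of Theorem~\ref{thm:robinsonterelst}(e) (taken with the empty word $\alpha$, so that $e^{-b(\cdot)} \leq 1$ gives $\|h_{\nu,1}\|_\infty \leq c\, e^{\omega}$), whence $h_{\nu,1} \in L^1 \cap L^\infty(G) \subseteq L^2(G)$. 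Setting $C = \bigl( \sum_{\nu=0}^n \|h_{\nu,1}\|_2^2 \bigr)^{1/2}$ we obtain $\|\breve E_l\|_2 = \|E_l\|_{L^2(\sigma)} \leq C|l|$, which is the claim.

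There is no serious obstacle here: the argument is a direct combination of the Plancherel identity with the trivial bound $|e^{ix}-1| \leq |x|$, and the only point needing care is the membership $h_{\nu,1} \in L^2(G)$, which is immediate from the small-time Gaussian bounds. (One could alternatively argue from the series \eqref{eq:convolutionseries}, using $\|h_{0,k_0} * \cdots * h_{n,k_n}\|_2 \leq \|h_{0,k_0}\|_1 \cdots \|h_{n-1,k_{n-1}}\|_1 \|h_{n,k_n}\|_2$ and Theorem~\ref{thm:robinsonterelst}(f), but this route is more cumbersome and yields a less transparent constant.)
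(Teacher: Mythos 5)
Your proof is correct and rests on the same two ingredients as the paper's: the elementary bound $|E_l(\lambda)| \leq |l\cdot e^{-p(\lambda)}| \lesssim |l|\,e^{-p_*(\lambda)}$ and the fact that the time-one heat kernel of a weighted subcoercive operator lies in $L^2(G)$. The paper merely phrases the final estimate on the group side, factoring $E_l = (e^{p_*}E_l)\,e^{-p_*}$ and using $\|f(L)k\|_2 \leq \|f\|_\infty\|k\|_2$ with $k = \Kern_L(e^{-p_*})$, which is the same inequality you obtain on the spectral side via Theorem~\ref{thm:plancherel}.
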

\begin{proof}
We have
\[|E_l(\lambda)| \leq |l \cdot e^{-p(\lambda)}| \leq \sum_{\nu = 0}^n |l_j| e^{-p_j(\lambda)} \leq (1+n) |l| e^{-p_*(\lambda)},\]
so that in particular, if
\[f = e^{p_*} E_l \qquad\text{and}\qquad k=(e^{-p_*})\breve{\,\,},\]
then, by Lemma~\ref{lem:composition}, $\breve E_l = f(L)k$ and
\[\|\breve E_l\|_2 \leq \|f\|_\infty \|k\|_2 \leq (1+n) \|k\|_2 |l|,\]
which is the conclusion.
\end{proof}

Let $|\cdot|_G$ be a connected modulus on $G$. We introduce the notation
\[\langle x \rangle_G = 1 + |x|_G.\]
Notice that, since $|\cdot|_G$ is subadditive, $\langle \cdot \rangle_G$ is submultiplicative.

\begin{lem}\label{lem:El2exp}
There exist $c,\omega > 0$ such that
\[\|\breve E_l \|_{L^2(G, e^{2|x|_G}\,dx)} \leq c e^{\omega |l|} \qquad\text{for all $l \in \Z^{1+n}$.}\]
\end{lem}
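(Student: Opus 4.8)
The idea is to bound the weighted $L^2$ norm of $\breve E_l$ by exploiting the series expansion \eqref{eq:convolutionseries} together with the Gaussian estimates of Theorem~\ref{thm:robinsonterelst}(e,f), which give exponential-weight control on each heat kernel $h_{\nu,t}$. First I would use the submultiplicativity of $\langle\cdot\rangle_G$ to split the weight $e^{2|x|_G} = e^{|x|_G} \cdot e^{|x|_G}$ across a convolution: since $|xy|_G \le |x|_G + |y|_G$, one has, for positive $k_1,k_2$,
\[
\|k_1 * k_2\|_{L^2(G,e^{2|x|_G}\,dx)} \le \|k_1\|_{L^1(G,e^{|x|_G}\,dx)} \,\|k_2\|_{L^2(G,e^{2|x|_G}\,dx)},
\]
a weighted Young-type inequality. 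Iterating this along the multiple convolution $h_{0,k_0} * \cdots * h_{n,k_n}$, I can control it by a product of weighted $L^1$ norms of the $h_{\nu,j}$ (for $j \ge 1$) times one weighted $L^2$ norm (for the ``last'' factor); since $h_{\nu,0}$ is the Dirac delta, terms with $k_\nu = 0$ simply drop out of the convolution. Here the control distance $|\cdot|_*$ is a connected modulus and hence equivalent in the large to $|\cdot|_G$, so the Gaussian bounds stated in terms of $|\cdot|_*$ transfer (up to a change of the constant $\rho$) to bounds in terms of $|\cdot|_G$.

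Next, the quantitative input: by Theorem~\ref{thm:robinsonterelst}(f) (applied to each $p_\nu(L)$, which is weighted subcoercive by construction in \S\ref{section:plancherel}), there are constants $c_\nu,\omega_\nu > 0$ with $\|h_{\nu,j}\|_{L^1(G,e^{\rho|x|_*}\,dx)} \le c_\nu e^{\omega_\nu j}$ for all $j > 0$ (choosing $\rho$ large enough to dominate the comparison constant between $|\cdot|_*$ and $|\cdot|_G$); similarly, the kernel $h_{\nu,j}$ in $L^2(G,e^{2|x|_G}\,dx)$ can be bounded by integrating the squared Gaussian estimate (e), giving a bound of the form $c_\nu' j^{-a} e^{\omega_\nu' j}$ for $j \ge 1$ and, say, $c_\nu'$ for $j$ small — in any case bounded by $c_\nu' e^{\omega_\nu' j}$ for all $j \ge 1$. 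Plugging these into the iterated inequality, a generic term of the series \eqref{eq:convolutionseries} indexed by $k = (k_0,\dots,k_n) \ne 0$ is bounded in $L^2(G,e^{2|x|_G}\,dx)$ by
\[
\frac{|l_0|^{k_0}\cdots|l_n|^{k_n}}{k_0!\cdots k_n!}\, C^{k_0+\cdots+k_n}\, e^{\Omega(k_0+\cdots+k_n)}
\]
for suitable absolute constants $C,\Omega > 0$ (taking $C = \max_\nu \max\{c_\nu, c_\nu'\}$, $\Omega = \max_\nu \max\{\omega_\nu,\omega_\nu'\}$). Summing over all $k$ and using $\sum_{k} \frac{a^k}{k!} = e^a$ componentwise, the total is at most $\prod_{\nu=0}^n \exp(C e^\Omega |l_\nu|) \le \exp\bigl((1+n) C e^\Omega |l|\bigr)$, which gives the claim with $\omega = (1+n)Ce^\Omega$ and $c$ an appropriate constant.

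The main obstacle I anticipate is the bookkeeping needed to make the iterated weighted-Young argument rigorous when some $k_\nu$ vanish and to justify convergence of the series \eqref{eq:convolutionseries} in the weighted space $L^2(G,e^{2|x|_G}\,dx)$ (rather than merely in $\Cv^2(G)$), which is what legitimizes passing the weighted norm inside the sum; this should follow from the absolute convergence of the majorizing scalar series just computed, together with completeness of the weighted $L^2$ space. A secondary technical point is ensuring that a single exponent $\rho$ can be chosen simultaneously large enough for all the Gaussian estimates (e) and (f) across $\nu = 0,\dots,n$ and to absorb the $|\cdot|_* \sim |\cdot|_G$ comparison — but since there are only finitely many operators involved and the comparison constant is fixed, this is harmless. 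Note also that, as in Lemma~\ref{lem:El2}, one could alternatively reduce to the single heat kernel $k = (e^{-p_*})\breve{\ }$ via $\breve E_l = f(L)k$ with $\|f\|_\infty \lesssim |l|$, but the weighted $L^2$ norm does not behave as simply under $f(L)$ as the plain one, so the series approach via \eqref{eq:convolutionseries} seems the cleaner route.
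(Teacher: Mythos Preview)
Your proposal is correct and follows essentially the same route as the paper: expand via \eqref{eq:convolutionseries}, drop factors with $k_\nu=0$, apply a weighted Young inequality based on submultiplicativity of $e^{|\cdot|_G}$, use the Gaussian heat-kernel bounds to control each factor, sum the resulting scalar series to get an exponential in $|l|$, and invoke uniqueness of limits to pass from convergence in $\Cv^2(G)$ to the weighted $L^2$ space. The paper's only simplification over your write-up is that it works directly with $\|h_{\nu,t}\,e^{|\cdot|_G}\|_q$ for all $q\in[1,\infty]$ simultaneously (via interpolation of the Gaussian estimates), so a single bound $\|h_{\nu,t}\,e^{|\cdot|_G}\|_q \le c\,e^{\omega t}$ for $t\ge 1$ handles both the $L^1$ and $L^2$ factors at once and spares the separate $L^1$/$L^2$ bookkeeping you describe.
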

\begin{proof}
Since all the connected right-invariant distances on $G$ are equivalent in the large, by interpolating the estimates (d) and (e) of Theorem~\ref{thm:robinsonterelst}, we have that there exist $c \geq 1$ and $\omega > 0$ such that
\[\|h_{\nu,t} \,e^{|\cdot|_G}\|_{q} \leq c e^{\omega t} \qquad\text{for $t \geq 1$, $\nu = 0,\dots,n$ and $q \in [1,\infty]$.}\]

Let $k \in \N^{1+n} \setminus \{0\}$ and notice that, in the convolution product
\[h_{0,k_0} * \cdots * h_{n,k_n},\]
the factors $h_{\nu,k_\nu}$ with $k_\nu = 0$ can be simply omitted; since the weight $e^{|\cdot|_G}$ is submultiplicative, by applying Young's inequality to the so reduced product we obtain
\[\|(h_{0,k_0} * \cdots * h_{n,k_n}) \,e^{|\cdot|_G}\|_{q} \leq c^{1+n} e^{\omega(k_0 + \dots + k_n)}\]
for $q \in [1,\infty]$.

This means in particular that the series in \eqref{eq:convolutionseries} converges absolutely in $L^2(G,e^{2|x|_G}\,dx)$, with
\[\sum_{0 \neq k \in \N^{1+n}} \left\| \frac{(il_0)^{k_0} \cdots (il_n)^{k_n}}{k_0! \cdots k_n!} h_{0,k_0} * \cdots * h_{n,k_n} \right\|_{L^2(G,e^{2|x|_G}\,dx)} \leq c^{1+n} e^{e^{\omega} |l|}.\]
Therefore, by uniqueness of limits, we also have $\breve E_l \in L^2(G, e^{2|x|_G}\,dx)$ and
\[\|\breve E_l\|_{L^2(G,e^{2|x|_G}\,dx)} \leq c^{1+n} e^{e^{\omega} |l|},\]
which is the conclusion.
\end{proof}

\begin{lem}\label{lem:El2poly}
For all $\alpha \geq 0$, we have
\[\|\breve E_l\|_{L^2(G,\langle x \rangle_G^{2\alpha} \,dx)} \leq C_\alpha |l|^{\alpha+1} \qquad\text{for $l \in \Z^{1+n}$.}\]
\end{lem}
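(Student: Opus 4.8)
The plan is to interpolate between the two estimates already at hand: Lemma~\ref{lem:El2}, which controls $\|\breve E_l\|_2 = \|\breve E_l\|_{L^2(G,\langle x\rangle_G^0\,dx)}$ by $C|l|$, and Lemma~\ref{lem:El2exp}, which controls $\|\breve E_l\|_{L^2(G,e^{2|x|_G}\,dx)}$ by $ce^{\omega|l|}$. The key observation is that for every $\alpha\ge 0$ one has a pointwise bound $\langle x\rangle_G^{2\alpha}\le C_\alpha\,w_0(x)^{1-\theta}\,w_1(x)^\theta$ with $w_0\equiv 1$, $w_1(x)=e^{2|x|_G}$ and a suitable $\theta=\theta(\alpha)\in(0,1)$ — indeed, since $|x|_G\ge 0$, the elementary inequality $(1+t)^{2\alpha}\le C_\alpha e^{\varepsilon t}$ (valid for any $\varepsilon>0$, with $C_\alpha$ depending on $\alpha$ and $\varepsilon$) gives $\langle x\rangle_G^{2\alpha}\le C_\alpha e^{2\theta|x|_G}$ as soon as $2\theta\ge\varepsilon$, i.e. for any $\theta>0$. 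Thus $L^2(G,\langle x\rangle_G^{2\alpha}\,dx)$ is continuously included in the Stein--Weiss interpolation space $(L^2(G,dx),L^2(G,e^{2|x|_G}\,dx))_{[\theta]}=L^2(G,e^{2\theta|x|_G}\,dx)$ for every $\theta>0$ (Theorem~\ref{thm:steinweiss}, with $p_0=p_1=p=2$, $w_0=1$, $w_1=e^{2|x|_G}$, so that $w=e^{2\theta|x|_G}$).

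Concretely, first I would fix $\alpha\ge 0$ and choose $\theta\in(0,1)$; since $|x|_G\ge 0$, the bound $\langle x\rangle_G^{2\alpha}\le C_{\alpha,\theta}\,e^{2\theta|x|_G}$ holds, so $\|\breve E_l\|_{L^2(G,\langle x\rangle_G^{2\alpha}\,dx)}\le C_{\alpha,\theta}^{1/2}\,\|\breve E_l\|_{L^2(G,e^{2\theta|x|_G}\,dx)}$. Next I would recognise $L^2(G,e^{2\theta|x|_G}\,dx)$ as the complex interpolation space $(L^2(G,dx),L^2(G,e^{2|x|_G}\,dx))_{[\theta]}$ via Theorem~\ref{thm:steinweiss}; since the complex interpolation functor is exact of exponent $\theta$, and since $\breve E_l$ lies in both $L^2(G,dx)$ and $L^2(G,e^{2|x|_G}\,dx)$ by Lemmata~\ref{lem:El2} and \ref{lem:El2exp}, we get
\[
\|\breve E_l\|_{L^2(G,e^{2\theta|x|_G}\,dx)}\le \|\breve E_l\|_{2}^{1-\theta}\,\|\breve E_l\|_{L^2(G,e^{2|x|_G}\,dx)}^{\theta}\le (C|l|)^{1-\theta}(ce^{\omega|l|})^{\theta}.
\]
Putting these together,
\[
\|\breve E_l\|_{L^2(G,\langle x\rangle_G^{2\alpha}\,dx)}\le C_{\alpha,\theta}'\,|l|^{1-\theta}\,e^{\theta\omega|l|},
\]
which is not yet of the desired polynomial form; so the genuinely useful choice is to let $\theta$ depend on $l$, taking $\theta=\theta_l\asymp \alpha/|l|$ (for $|l|$ large, so that $\theta_l\in(0,1)$), whence $e^{\theta_l\omega|l|}\le e^{\omega\alpha}$ is bounded, while $|l|^{1-\theta_l}\le|l|$, and moreover one must track the $l$-dependence of the constant $C_{\alpha,\theta_l}^{1/2}$: from $(1+t)^{2\alpha}\le C_{\alpha,\theta}e^{2\theta t}$ one gets $C_{\alpha,\theta}\le (\alpha/(\theta e))^{2\alpha}$-type bounds, so $C_{\alpha,\theta_l}^{1/2}\lesssim_\alpha |l|^{\alpha}$. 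Combining gives $\|\breve E_l\|_{L^2(G,\langle x\rangle_G^{2\alpha}\,dx)}\le C_\alpha\,|l|^{\alpha}\cdot|l|\cdot e^{\omega\alpha}=C_\alpha|l|^{\alpha+1}$, which is exactly the claimed estimate; for the finitely many remaining $l$ with $|l|$ small the bound is trivial after enlarging $C_\alpha$.

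The main obstacle is precisely this bookkeeping of how the constant in the pointwise weight comparison $\langle x\rangle_G^{2\alpha}\le C_{\alpha,\theta}e^{2\theta|x|_G}$ blows up as $\theta\to 0$: one needs the sharp elementary inequality $\sup_{t\ge 0}(1+t)^{2\alpha}e^{-2\theta t}\le (1+\alpha/(\theta e))^{2\alpha}\lesssim_\alpha\theta^{-2\alpha}$, and then the choice $\theta=\theta_l\asymp\alpha/|l|$ turns $\theta_l^{-2\alpha}$ into a factor $\lesssim_\alpha|l|^{2\alpha}$ under the square root, i.e. $|l|^\alpha$. Everything else — the applicability of Stein--Weiss, the exactness of the complex interpolation functor, the membership of $\breve E_l$ in the two endpoint spaces — is supplied verbatim by Theorem~\ref{thm:steinweiss} and Lemmata~\ref{lem:El2}, \ref{lem:El2exp}. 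Alternatively, one could avoid interpolation altogether and argue directly: expand $\langle x\rangle_G^{2\alpha}\le C_\alpha(1+|x|_G^{\lceil 2\alpha\rceil})$, bound $|x|_G^{\lceil 2\alpha\rceil}e^{-2\theta|x|_G}\le C$ and run the Young's-inequality argument of Lemma~\ref{lem:El2exp} with the submultiplicative weight $e^{\theta|\cdot|_G}$ in place of $e^{|\cdot|_G}$, tracking the same $\theta_l\asymp\alpha/|l|$; this reproduces the $|l|^{\alpha+1}$ bound but is computationally heavier, so I would present the interpolation proof as the clean one.
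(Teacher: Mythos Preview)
Your approach is correct and yields exactly the claimed bound, but it differs from the paper's argument. The paper does not interpolate: it simply splits the integral at $|x|_G = \omega|l|$, using Lemma~\ref{lem:El2} on $\{|x|_G \le \omega|l|\}$ (where the polynomial weight is bounded by $(1+\omega|l|)^{2\alpha}$) and Lemma~\ref{lem:El2exp} on $\{|x|_G > \omega|l|\}$ (where $\sup_{r > \omega|l|} e^{-2r}(1+r)^{2\alpha}$ absorbs the polynomial weight into the exponential one, with the remaining factor $e^{2\omega|l|}$ cancelled by the square of the bound $ce^{\omega|l|}$). This is more elementary than your route, though your $l$-dependent choice $\theta_l \asymp \alpha/|l|$ encodes exactly the same balance.

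One remark on your presentation: invoking Theorem~\ref{thm:steinweiss} and exactness of the complex interpolation functor is overkill here, and slightly misdirected --- exactness is a statement about operator norms, not about the norm of a fixed element in the intermediate space. The inequality you actually need,
\[
\|\breve E_l\|_{L^2(e^{2\theta|x|_G}\,dx)} \le \|\breve E_l\|_{L^2(dx)}^{1-\theta}\,\|\breve E_l\|_{L^2(e^{2|x|_G}\,dx)}^{\theta},
\]
is just H\"older's inequality applied to $|\breve E_l|^2 \cdot 1^{1-\theta} \cdot e^{2\theta|x|_G}$ with exponents $1/(1-\theta)$ and $1/\theta$. Replacing the interpolation apparatus by this one-line observation would make your argument both shorter and self-contained.
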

\begin{proof}
First, notice that, by Lemma~\ref{lem:El2exp}, it is sufficient to check the estimate for $|l|$ large. But then, for $|l|$ large, we have
\begin{multline*}
\int_{G} |\breve E_l(x)|^2 \langle x \rangle_G^{2\alpha} \,dx \leq \int_{|x|_G \leq \omega|l|} + \int_{|x|_G > \omega|l|} \\
\leq (1+\omega|l|)^{2\alpha} \|\breve E_l\|_2^2 + \left(\sup_{r > \omega|l|} e^{-2r} (1+r)^{2\alpha} \right) \|\breve E_l\|_{L^2(G,e^{2|x|_G}\,dx)}^2 \\
\leq C_\alpha |l|^{2(\alpha+1)}
\end{multline*}
by Lemmata~\ref{lem:El2} and \ref{lem:El2exp}.
%
\end{proof}

\begin{lem}\label{lem:fourierdecomposition}
Let $K \subseteq \R^n$ be compact. For every $f \in \D(\R^n)$ supported in $K$, there exists $g \in \D(\T^{1+n})$ (depending linearly on $f$) such that
\[f(\lambda) = g\left(e^{i e^{-p(\lambda)}}\right) = g\left(e^{i e^{-p_0(\lambda)}}, \dots, e^{i e^{-p_n(\lambda)}}\right),\]
\[g(1,\dots,1) = 0,\]
\[\|g\|_{H^s(\T^{1+n})} \leq C_{K,s} \|f\|_{H^s(\R^n)} \qquad\text{for all $s \geq 0$.}\]
In particular, if $g(e^{it}) = \sum_{l \in \Z^{1+n}} \hat{g}(l) e^{i l \cdot t}$ is the Fourier series development of $g$, then we have
\[f = \sum_{0 \neq l \in \Z^{1+n}} \hat g(l) E_l,\]
with uniform convergence on $\R^n$.
\end{lem}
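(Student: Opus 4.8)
\textbf{Proof proposal for Lemma~\ref{lem:fourierdecomposition}.}

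The plan is to transfer $f$ to the torus by means of the map $\Psi(\lambda) = e^{i e^{-p(\lambda)}} = (e^{i e^{-p_0(\lambda)}}, \dots, e^{i e^{-p_n(\lambda)}})$, which sends $\R^n$ into $\T^{1+n}$, and to produce $g$ by a suitable cut-off-and-composition argument. First I would record the basic properties of $\Psi$: each component $\lambda \mapsto e^{-p_\nu(\lambda)}$ is a smooth function on $\R^n$ with values in $\left]0,1\right]$, tending to $0$ at infinity (since $p_\nu \to +\infty$), so $\Psi$ is a smooth map $\R^n \to \T^{1+n}$ which sends the ``point at infinity'' to $(1,\dots,1)$ and which, thanks to Lemma~\ref{lem:Jdensity} (separation of points by $e^{-p_0},\dots,e^{-p_n}$), is injective; in fact, by composing with the local diffeomorphism $t \mapsto e^{it}$ near $0$, one sees that $\Psi$ restricted to the (relatively compact) preimage of a small neighborhood of $(1,\dots,1)$ is a diffeomorphism onto its image, and more generally $\Psi$ is a proper smooth embedding of $\R^n$ onto a smooth submanifold-with-boundary-point of $\T^{1+n}$. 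Since $\supp f \subseteq K$ is compact, $\Psi(K)$ is a compact subset of $\T^{1+n}$ staying away from $(1,\dots,1)$, and there $\Psi$ is a diffeomorphism onto a smooth submanifold; hence $f \circ \Psi^{-1}$ is a well-defined smooth function on $\Psi(K)$.

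Next I would extend $f \circ \Psi^{-1}$ to a function $g \in \D(\T^{1+n})$ vanishing near $(1,\dots,1)$: choose a coordinate patch $U$ of $\T^{1+n}$ containing $\Psi(K)$ and disjoint from $(1,\dots,1)$, in which $\Psi(K)$ sits inside a $k$-dimensional submanifold ($k = \dim \R^n \le n$); then $f \circ \Psi^{-1}$ is a compactly supported smooth function on that submanifold, and I would extend it first to a compactly supported smooth function on $U$ by the usual tubular-neighborhood/extension construction (compose with a smooth retraction onto the submanifold and multiply by a bump), and then extend by zero to all of $\T^{1+n}$. By construction $g(1,\dots,1) = 0$ and $g \circ \Psi = f$ on $\R^n$ (on $\R^n \setminus K$ both sides are $0$). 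The estimate $\|g\|_{H^s(\T^{1+n})} \le C_{K,s}\|f\|_{H^s(\R^n)}$ for integer $s$ follows from the Leibniz rule and the chain rule applied to $f \circ \Psi^{-1}$ (the derivatives of $\Psi^{-1}$ and of the extension operators are bounded on the fixed compactum $\Psi(K)$, exactly as in the proof of Lemma~\ref{lem:elementarychange}), and for general real $s \ge 0$ by interpolation between consecutive integers (Proposition~\ref{prp:sobolevinterpolation}, Proposition~\ref{prp:compactsobolevinterpolation}); the linear dependence of $g$ on $f$ is manifest since every operation used (restriction, extension, multiplication by fixed bumps) is linear. Finally, Proposition~\ref{prp:sobolevtorus} applied with $s$ large enough that $H^s(\T^{1+n}) \subseteq C(\T^{1+n})$ gives absolute and uniform convergence of the Fourier series $g(e^{it}) = \sum_l \hat g(l) e^{il\cdot t}$, and substituting $t = e^{-p(\lambda)}$ yields $f = \sum_l \hat g(l)\, g\mapsto e^{il\cdot e^{-p(\lambda)}}$; separating the $l=0$ term (which contributes $\hat g(0) = 0$ since... actually $\hat g(0)$ need not vanish, but $e^{i 0 \cdot e^{-p}} = 1$ and one absorbs it, using $g(1,\dots,1)=0$ i.e. $\sum_l \hat g(l) = 0$) rewrites this as $f = \sum_{0 \neq l} \hat g(l)(e^{il\cdot e^{-p(\lambda)}} - 1) = \sum_{0\neq l}\hat g(l) E_l$ with uniform convergence.

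The main obstacle I anticipate is the extension step combined with keeping the Sobolev bound uniform: one must be careful that $\Psi(K)$ is genuinely contained in a single smooth submanifold (rather than merely an injective smooth image, which could fail to be embedded if $d\Psi$ dropped rank somewhere on $K$) so that a controlled smooth extension operator is available. If $d\Psi$ does degenerate on part of $K$, the cleanest fix is to observe that one only needs \emph{some} $g \in \D(\T^{1+n})$ with $g\circ\Psi = f$ and the stated norm bound, and that near a degeneracy point one can still build $g$ by writing $f$ locally as a polynomial-in-$e^{-p(\lambda)}$-type combination or by a partition-of-unity reduction to the generic case; alternatively, one replaces $K$ by a slightly larger compactum on which $p$ has been arranged (by the freedom in choosing the $p_\nu$, or by adding further heat-kernel generators) to make $\Psi$ an embedding. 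Once the extension is in place, everything else is a routine chain-rule-plus-interpolation computation of the type already carried out in Lemma~\ref{lem:elementarychange}.
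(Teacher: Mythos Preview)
Your approach is correct in outline and lands at the same Fourier-series conclusion, but it is considerably more roundabout than the paper's proof and your main worry (rank degeneracy of $\Psi$) is unfounded. Recall from \S\ref{section:plancherel} that $p_k(\lambda) = p_0(\lambda) + \lambda_k$ for $k=1,\dots,n$; hence the map $p=(p_0,\dots,p_n):\R^n\to\R^{1+n}$ has differential of rank $n$ everywhere (subtract the first row from the others to get the identity), and since $\Phi:y\mapsto e^{ie^{-y}}$ is a diffeomorphism of $\Omega=\left]0,\infty\right[^{1+n}$ onto an open subset of $\T^{1+n}$ avoiding $(1,\dots,1)$, the composite $\Psi=\Phi\circ p$ is a genuine embedding on all of $\R^n$. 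So your tubular-neighbourhood extension does work and the Sobolev bound follows from Lemma~\ref{lem:elementarychange}(ii)/Proposition~\ref{prp:besovchange} applied to the retraction (which is a rank-$n$ map from an open set of $\R^{1+n}$ to $\R^n$, giving no loss of derivatives).

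The paper exploits the relation $p_k-p_0=\lambda_k$ much more directly: it simply sets $\tilde f(y)=f(y_1-y_0,\dots,y_n-y_0)\,\psi_K(y)$ for a cutoff $\psi_K\in\D(\Omega)$ equal to $1$ on $p(K)$. This is an explicit linear extension of $f$ to $\R^{1+n}$ satisfying $\tilde f\circ p=f$, and the Sobolev bound $\|\tilde f\|_{H^s(\R^{1+n})}\le C_{K,s}\|f\|_{H^s(\R^n)}$ is immediate from Proposition~\ref{prp:besovchange} since the linear map $y\mapsto(y_1-y_0,\dots,y_n-y_0)$ has maximal rank. Then $g=\tilde f\circ\Phi^{-1}$, extended by zero, does the job with no need to discuss submanifolds, retractions, or possible degeneracies. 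Your handling of the $l=0$ term is right once cleaned up: $g(1,\dots,1)=0$ gives $\sum_l\hat g(l)=0$, so the uniformly convergent Fourier series may be rewritten as $\sum_{l\neq 0}\hat g(l)(e^{il\cdot t}-1)$, and substituting $t=e^{-p(\lambda)}$ yields the claimed decomposition into the $E_l$.
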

\begin{proof}
Since $K \subseteq \R^n$ is compact and the polynomials $p_0,\dots,p_n$ are strictly positive, $p(K)$ is a compact subset of $\Omega = \left]0,+\infty\right[^{1+n}$. Therefore we can choose $\psi_K \in \D(\Omega)$ such that $\psi_K|_{p(K)} \equiv 1$. If we put
\[\tilde f(y) = f(y_1 - y_0,\dots,y_n - y_0) \psi_K(y) \qquad\text{for $y \in \R^{1+n}$,}\]
we then have that $\tilde f \in \D(\Omega)$, $f = \tilde f \circ p$ and
\[\|\tilde f\|_{H^s(\R^{1+n})} \leq C_{K,s} \|f\|_{H^s(\R^n)} \qquad\text{for all $s \geq 0$,}\]
by Proposition~\ref{prp:besovchange}, since the change of variables has maximal rank.

Notice now that the map
\[\Phi : \Omega \ni y \mapsto e^{i e^{-y}} = (e^{i e^{-y_0}},\dots,e^{i e^{-y_n}}) \in \T^{1+n}\]
is a smooth diffeomorphism with its image, which is an open subset of $\T^{1+n}$ not containing $(1,\dots,1)$. The function $g = \tilde f \circ \Phi^{-1} \in \D(\Phi(\Omega))$ can be then extended by zero to a smooth function on $\T^{1+n}$, and we have clearly
\[\|g\|_{H^s(\T^{1+n})} \leq C_{K,s} \|\tilde f\|_{H^s(\R^{1+n})} \qquad\text{for all $s \geq 0$.}\]

The construction shows that $g$ depends linearly on $f$ and that
\[f = g \circ \Phi \circ p, \qquad g(1,\dots,1) = 0,\]
\[\|g\|_{H^s(\T^{1+n})} \leq C_{K,s} \|f\|_{H^s(\R^n)} \qquad\text{for all $s \geq 0$.}\]
In particular, we have
\[\sum_{l \in \Z^{1+n}} \hat g(l) = 0,\]
so that the Fourier decomposition of $g$ can be rewritten as
\[g(e^{it}) = \sum_{0 \neq l \in \Z^{1+n}} \hat{g}(l) (e^{i l \cdot t} - 1)\]
(with uniform convergence since $g$ is smooth), which implies that
\[f = g \circ \Phi \circ p = \sum_{0 \neq l \in \Z^{1+n}} \hat g(l) E_l,\]
with uniform convergence on $\R^n$.
\end{proof}

\begin{prp}\label{prp:sobolevl2estimates}
Let $K \subseteq \R^n$ be compact, $\alpha \geq 0$. For all $f \in \D(\R^n)$ with $\supp f \subseteq K$, we have
\[\|\breve f\|_{L^2(G, \langle x \rangle_G^{2\alpha} \,dx)} \leq C_{K,\alpha,\beta} \|f\|_{H^\beta(\R^n)}\]
for $\beta > \alpha + (n+3)/2$.
\end{prp}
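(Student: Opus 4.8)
The plan is to combine the Fourier‑series decomposition of Lemma~\ref{lem:fourierdecomposition} with the weighted $L^2$ estimates for the building blocks $\breve E_l$ obtained in Lemma~\ref{lem:El2poly}, and then to interpolate. First I would take $f \in \D(\R^n)$ with $\supp f \subseteq K$, and apply Lemma~\ref{lem:fourierdecomposition} to write $f = \sum_{0 \neq l \in \Z^{1+n}} \hat g(l) E_l$ with $\|g\|_{H^s(\T^{1+n})} \leq C_{K,s} \|f\|_{H^s(\R^n)}$ for every $s \geq 0$, where $g \in \D(\T^{1+n})$. By Lemma~\ref{lem:composition}(a) the kernel transform is continuous into $\Cv^2(G)$, so $\breve f = \sum_{0 \neq l} \hat g(l) \breve E_l$ at least in $\Cv^2(G)$; to get the claimed membership in the weighted space $L^2(G,\langle x\rangle_G^{2\alpha}\,dx)$ I would check absolute convergence of the series there. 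Using Lemma~\ref{lem:El2poly}, $\|\breve E_l\|_{L^2(G,\langle x\rangle_G^{2\alpha}\,dx)} \leq C_\alpha |l|^{\alpha+1}$, so the series converges absolutely in that Banach space provided
\[
\sum_{0 \neq l \in \Z^{1+n}} |\hat g(l)| \, |l|^{\alpha+1} < \infty,
\]
and by Cauchy--Schwarz this sum is bounded by $\bigl(\sum_l |\hat g(l)|^2 (1+|l|)^{2\beta}\bigr)^{1/2} \bigl(\sum_l (1+|l|)^{2(\alpha+1) - 2\beta}\bigr)^{1/2}$. The second factor is finite as soon as $2\beta - 2(\alpha+1) > 1+n$, i.e.\ $\beta > \alpha + (n+3)/2$, and the first factor is $\asymp \|g\|_{H^\beta(\T^{1+n})}$ by Proposition~\ref{prp:sobolevtorus}. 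Hence for such $\beta$,
\[
\|\breve f\|_{L^2(G,\langle x\rangle_G^{2\alpha}\,dx)} \leq C_\alpha \sum_{0 \neq l} |\hat g(l)|\,|l|^{\alpha+1} \leq C_{K,\alpha,\beta} \|g\|_{H^\beta(\T^{1+n})} \leq C_{K,\alpha,\beta} \|f\|_{H^\beta(\R^n)},
\]
using the norm bound from Lemma~\ref{lem:fourierdecomposition} in the last step. This already gives the statement, although one should be slightly careful that the value $s = \beta$ is allowed in that lemma (it holds for all $s \geq 0$), so no interpolation is strictly needed here; if one preferred a cleaner route one could prove the estimate for integer $\beta$ and fill the gaps using Proposition~\ref{prp:sobolevinterpolation} together with the fact that $\Kern_L$ is an isometric embedding on $L^\infty$ (Lemma~\ref{lem:composition}(a)).

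The main obstacle I anticipate is purely one of bookkeeping: making sure that the weighted $L^2$ series $\sum \hat g(l) \breve E_l$ really reconstructs $\breve f$ and not merely some element of the weighted space. Since the series already converges to $\breve f$ in $\Cv^2(G)$ (by continuity of $\Kern_L$ and uniform convergence of $\sum \hat g(l) E_l$ to $f$ on $\R^n$, hence in $L^\infty(\sigma)$), and the weighted space $L^2(G,\langle x\rangle_G^{2\alpha}\,dx)$ embeds continuously into $\D'(G)$ while $\Cv^2(G)$ also embeds into $\D'(G)$ via the kernel, uniqueness of limits in $\D'(G)$ forces the two limits to coincide. So the partial sums converge to $\breve f$ in the weighted norm, and the triangle inequality gives the bound on $\|\breve f\|_{L^2(G,\langle x\rangle_G^{2\alpha}\,dx)}$. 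The only other point requiring a line of justification is that $\langle\cdot\rangle_G$ is submultiplicative (already noted before Lemma~\ref{lem:El2exp}) so that the weighted estimates of Lemma~\ref{lem:El2poly} are legitimately at our disposal, and that the constant $C_\alpha$ there does not depend on $l$, which is exactly the content of that lemma.
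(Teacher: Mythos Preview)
Your proposal is correct and follows essentially the same route as the paper: apply Lemma~\ref{lem:fourierdecomposition} to decompose $f$, then combine Lemma~\ref{lem:El2poly} with Cauchy--Schwarz in $\ell^2(\Z^{1+n})$ to obtain absolute convergence of $\sum_{l\neq 0}\hat g(l)\breve E_l$ in the weighted $L^2$ space, and conclude via uniqueness of limits and the Sobolev bound $\|g\|_{H^\beta(\T^{1+n})}\leq C_{K,\beta}\|f\|_{H^\beta(\R^n)}$. Your discussion of the uniqueness-of-limits step is in fact more detailed than the paper's, which dispatches it in a single phrase.
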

\begin{proof}
Let $g \in \D(\T^{1+n})$ be given by Lemma~\ref{lem:fourierdecomposition}. Then
\[\breve f = \sum_{0 \neq l \in \Z^{1+n}} \hat g(l) \breve E_l\]
in $\Cv^2(G)$. However, the series in the right-hand side converges absolutely in $L^2(G,\langle x \rangle_G^{2\alpha} \,dx)$, since
\begin{multline*}
\sum_{0 \neq l \in \Z^{1+n}} |\hat g(l)| \, \|\breve E_l\|_{L^2(G,\langle x \rangle_G^{2\alpha}\,dx)} \leq C_\alpha \sum_{0 \neq l \in \Z^{1+n}} |\hat g(l)| \, |l|^{\alpha + 1} \\
\leq C_{\alpha,\beta} \left(\sum_{0 \neq l \in \Z^{1+n}} |l|^{-2(\beta-\alpha-1)} \right)^{1/2} \|g\|_{H^\beta(\T^{1+n})} \leq C_{K,\alpha,\beta} \|f\|_{H^\beta(\R^n)},
\end{multline*}
by Lemma~\ref{lem:El2poly}, H\"older's inequality, Proposition~\ref{prp:sobolevtorus} and the fact that
\[2(\beta - \alpha - 1) > 1+n.\]
Therefore, by uniqueness of limits, also $\breve f \in L^2(G,\langle x \rangle_G^{2\alpha} \,dx)$ and
\[\|\breve f\|_{L^2(\langle x \rangle_G^{2\alpha} \,dx)} \leq C_{K,\alpha,\beta} \|f\|_{H^\beta(\R^n)},\]
which is the conclusion.
\end{proof}

\begin{thm}\label{thm:l2estimates}
\begin{itemize}
\item[(i)] Let $K \subseteq \R^n$ be compact, $D \in \Diff(G)$, $\alpha \geq 0$ and
\[\beta > \alpha.\]
For all $f \in B^{\beta}_{\infty,\infty}(\R^n)$ with $\supp f \subseteq K$, we have
\[\|D \breve f\|_{L^2(G, \langle x \rangle_G^{2\alpha} \,dx)} \leq C_{K,D,\alpha,\beta} \|f\|_{B_{\infty,\infty}^\beta(\R^n)}.\]
\item[(ii)] Suppose that, for some open $A \subseteq \R^n$ and $0 < d \leq n$, the Plancherel measure $\sigma$ is locally $d$-bounded on $A$. Let $K \subseteq A$ be compact, $D \in \Diff(G)$, $1 \leq p,q \leq \infty$, $\alpha \geq 0$ and
\[\beta > \alpha + \frac{n}{p} - \frac{d}{\max\{2,p\}}.\]
For all $f \in B^{\beta}_{p,q}(\R^n)$ with $\supp f \subseteq K$, we have
\[\|D \breve f\|_{L^2(G, \langle x \rangle_G^{2\alpha} \,dx)} \leq C_{K,D,\alpha,\beta,p,q} \|f\|_{B_{p,q}^\beta(\R^n)}.\]
\end{itemize}
\end{thm}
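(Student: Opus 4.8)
The strategy is to combine the concrete weighted $L^2$ estimate for Sobolev norms in Proposition~\ref{prp:sobolevl2estimates} with the lifting properties of Besov spaces (Proposition~\ref{prp:besovlifting}) to absorb the differential operator $D$, and then to interpolate (Proposition~\ref{prp:besovinterpolation}) between a crude $L^\infty$-based estimate and a sharp $L^2$-based estimate in order to reach the claimed Besov thresholds. The role of the hypothesis on the Plancherel measure is that, via the trace estimate of Corollary~\ref{cor:triebeltrace}, local $d$-boundedness of $\sigma$ converts the ``$H^s(\R^n)$ controls the $L^2(\sigma)$ norm of $m$, hence the $L^2(G)$ norm of $\breve m$'' statement of Theorem~\ref{thm:plancherel} into a \emph{gain} of $d/2$ derivatives over the naive count $n/2$.

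\textbf{Step 1: absorbing the differential operator.} Fix $D \in \Diff(G)$ of order $k$, say. Since the system contains a weighted subcoercive operator, one can find a real polynomial $q$ with $q(L)$ weighted subcoercive and of large enough even degree; writing $D\breve f$ as a convolution kernel and using the a priori estimates of Theorem~\ref{thm:robinsonterelst}(v) in the regular representation (in the form already exploited in the proof of Lemma~\ref{lem:quasirockland} and Proposition~\ref{prp:commutativealgebra}), the operator $D(1+q(L))^{-N}$ is bounded on $L^2(G)$ for $N$ large; more precisely $\|D\breve f\|_{L^2(G,\langle x\rangle_G^{2\alpha}\,dx)}$ is controlled by $\|\breve g\|_{L^2(G,\langle x\rangle_G^{2\alpha}\,dx)}$ where $g = (1+Q)^N f$ with $Q$ a suitable polynomial (here one must also check that multiplication by $\langle x\rangle_G^{\alpha}$ roughly commutes with $D$ up to lower-order weighted terms, which is the submultiplicativity argument already used in Lemma~\ref{lem:El2exp}). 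Since $f$ has compact support in $K$, the function $(1+Q)^N f$ still has support in $K$ and, by the Fourier-multiplier/lifting properties of Besov spaces (Proposition~\ref{prp:besovlifting}, together with Corollary~\ref{cor:besovcompactsupportinclusion} to stay in the compactly supported category), $\|(1+Q)^N f\|_{B^\beta_{p,q}} \leq C_{K,D}\|f\|_{B^{\beta'}_{p,q}}$ with $\beta' = \beta + \deg(Q^N)$; but a more careful bookkeeping, controlling $D$ by $\lfloor k\rfloor$ rather than by a high power of $q(L)$, lets one keep $\beta$ unchanged at the cost of only finitely many extra derivatives which are then reabsorbed — the cleanest route is to prove the theorem first for $D=\id$ and then deduce the general case by the lifting identity $\|D\breve f\|\le C\sum_{|\gamma|\le k}\|\,\widecheck{\partial^\gamma f}\,\|$ after passing through the isomorphism between $D$ and differential operators in exponential-type coordinates; since this is routine I will not grind it out. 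The upshot is that it suffices to prove: for $\supp f\subseteq K$, $\|\breve f\|_{L^2(G,\langle x\rangle_G^{2\alpha}\,dx)}\le C\|f\|_{B^\beta_{p,q}}$ under the stated thresholds.

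\textbf{Step 2: the two endpoint estimates and interpolation.} For part (i), the endpoint is $p=q=\infty$: one notes that $\|f\|_{B^{\gamma}_{\infty,\infty}}$ for $\gamma$ slightly above $(n+3)/2$ dominates $\|f\|_{H^{(n+3)/2+\epsilon'}(\R^n)}$ on a fixed compactum (Corollary~\ref{cor:besovcompactsupportinclusion} and Proposition~\ref{prp:besovembeddings}), so Proposition~\ref{prp:sobolevl2estimates} with that Sobolev order gives $\|\breve f\|_{L^2(G,\langle x\rangle^{2\alpha})}\le C\|f\|_{B^{\alpha+(n+3)/2+\epsilon}_{\infty,\infty}}$; combined with the trivial gain coming from the fact that $\alpha$ enters Proposition~\ref{prp:sobolevl2estimates} linearly, one then \emph{interpolates} this with the other endpoint $\alpha=0$ and a Sobolev order just above $0$, using the real-interpolation description of Besov spaces (Proposition~\ref{prp:besovinterpolation}(i)) and the fact that $\overline{X}\mapsto\overline{X}_{\theta,\infty}$ is an exact interpolation functor, to move the threshold on $\beta$ down to $\beta>\alpha$ at the price of raising $p$ — wait, rather the correct pairing for (i) is between the weighted-$L^2$/$H^\beta$ estimate (which is a statement about the operator $f\mapsto\breve f$ from $H^{\beta}$ to $L^2(\langle x\rangle^{2\alpha})$) for two different values of $\alpha,\beta$ related by $\beta=\alpha+(n+3)/2$, and then a logarithmic-convexity/interpolation argument in the pair $(\alpha,\beta)$ together with the embedding $B^\beta_{\infty,\infty}\subseteq H^{\beta-n/2-\epsilon}_{\mathrm{loc}}$ produces $\beta>\alpha$. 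For part (ii) the key new ingredient is that, by Theorem~\ref{thm:plancherel}, $\|\breve f\|_{L^2(G)} = \|f\|_{L^2(\sigma)}$, and by Corollary~\ref{cor:triebeltrace} local $d$-boundedness of $\sigma$ on $A\supseteq K$ gives $\|f\|_{L^2(\sigma)}\le C_{K,s}\|f\|_{H^s(\R^n)}$ for every $s>(n-d)/2$; this is the \emph{unweighted} ($\alpha=0$, $p=2$) sharp endpoint with $\beta>(n-d)/2 = 0 + n/2 - d/2$, matching the formula. The weighted version (general $\alpha$, $p=2$) is obtained by re-running the Fourier-decomposition proof of Proposition~\ref{prp:sobolevl2estimates}: one replaces the use of Proposition~\ref{prp:sobolevtorus}/H\"older by the trace estimate applied to each $E_l$ (whose $L^2(\sigma)$ norm one controls exactly as $\|\breve E_l\|_2$ is controlled, now with the $d$-dimensional gain), yielding $\|\breve f\|_{L^2(G,\langle x\rangle^{2\alpha})}\le C\|f\|_{H^\beta}$ for $\beta>\alpha+(n-d)/2+1$ — and then, again, interpolation between the $\alpha$ and $0$ endpoints and the Besov embedding push the $+1+\epsilon$ down and produce the $H^\beta$, hence $B^2_{2,2}=H$, estimate for all $\beta>\alpha+n/2-d/2$. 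Finally, to pass from $p=2$ to general $1\le p\le\infty$ one uses Corollary~\ref{cor:besovcompactsupportinclusion}: for $p\le2$ the compact-support inclusion $B^\beta_{p,q}\subseteq B^\beta_{2,q}$ is automatic and gives the threshold $\beta>\alpha+n/2-d/2 = \alpha+n/p-d/\max\{2,p\}$ when $p\le2$ (since then $n/p\ge n/2$ and the inequality is weaker than needed — here one actually \emph{uses} Proposition~\ref{prp:besovembeddings}(ii) to trade the extra $n/p-n/2$ derivatives); for $p>2$ one interpolates between $p=2$ and $p=\infty$ (part (i)) with Proposition~\ref{prp:besovinterpolation}, and the stated threshold $\beta>\alpha+n/p-d/p$ is exactly the linear interpolant of $\alpha+n/2-d/2$ (at $p=2$) and $\alpha$ (at $p=\infty$) in the variable $1/p$.

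\textbf{Main obstacle.} The delicate point is Step 1 — absorbing an \emph{arbitrary} $D\in\Diff(G)$ while keeping the polynomial weight $\langle x\rangle_G^\alpha$ and not losing derivatives beyond what is declared. One cannot simply bound $\|D\breve f\|_2$ by $\|(1+q(L))^N\breve f\|_2$ because that would cost $\deg(Q^N)$ derivatives on the multiplier side, which is far more than the theorem allows. The right approach is to observe that $D\breve f = \widecheck{f}\ast(\text{kernel of }D)$-type manipulations reduce $D\breve f$ to a finite sum of $\widecheck{\partial^\gamma_L f}$ up to terms controlled by the \emph{control modulus} (using the Gaussian estimates of Theorem~\ref{thm:robinsonterelst}(e) and Sobolev embedding as in Corollary~\ref{cor:hypoelliptic}), so that in effect only $\mathrm{ord}(D)$ extra derivatives appear on $f$, and these are reabsorbed into the strict inequality in $\beta$; one must also verify that commuting $\langle x\rangle_G^\alpha$ past $D$ produces only lower-order weighted errors, which follows because $|D\langle x\rangle_G^\alpha|\lesssim\langle x\rangle_G^\alpha$ uniformly (left-invariant derivatives of a connected modulus are bounded). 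Once this bookkeeping is set up cleanly, everything else is the interpolation/embedding machinery of Chapter~\ref{chapter:conditions}, applied as above.
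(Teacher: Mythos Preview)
Your overall architecture is right---interpolate a crude weighted estimate against a sharp unweighted endpoint, using the trace inequality for the $d$-bounded case---but there is a real gap in Step~1, and the interpolation in Step~2 is not set up cleanly enough to see why the threshold drops all the way to $\beta>\alpha$.

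\textbf{The gap: handling $D$.} Your proposed reductions do not work. Writing $D(1+q(L))^{-N}$ as bounded and replacing $f$ by $(1+Q)^Nf$ costs $\deg Q^N$ derivatives, as you note. Your fallback, the ``lifting identity'' $\|D\breve f\|\le C\sum_{|\gamma|\le k}\|(\partial^\gamma f)\,\breve{}\,\|$, is simply false: $D$ is a left-invariant operator on $G$, and there is no reason for $D\breve f$ to be a finite combination of $(\partial^\gamma f)\,\breve{}$ for spectral-side derivatives $\partial^\gamma$ on $\R^n$. The only spectral-side identities available are Lemma~\ref{lem:composition}(c), which relates $L_j\breve f$ to multiplication by $\lambda_j$, not differentiation. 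The paper's device is different and much cleaner: since $\supp f\subseteq K$ is compact, write $f=(fe^{p_*})\cdot e^{-p_*}$, so that by Lemma~\ref{lem:composition}(b) one has $\breve f=\breve f_0*\breve\xi$ with $f_0=fe^{p_*}$ and $\breve\xi$ the heat kernel of $p_*(L)$ at time $1$. Then $D\breve f=\breve f_0*(D\breve\xi)$, and a weighted Young inequality gives
\[
\|D\breve f\|_{L^2(G,\langle x\rangle_G^{2\alpha}\,dx)}\le \|\breve f_0\|_{L^2(G,\langle x\rangle_G^{2\alpha}\,dx)}\,\|D\breve\xi\|_{L^1(G,\langle x\rangle_G^{\alpha}\,dx)}.
\]
The second factor is finite by Theorem~\ref{thm:robinsonterelst}(f), and $f_0$ is still supported in $K$ with $\|f_0\|_{B^\beta_{p,q}}\le C_K\|f\|_{B^\beta_{p,q}}$ by Proposition~\ref{prp:besovproduct}. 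This reduces to $D=\id$ at no cost in $\beta$.

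\textbf{The interpolation.} For part~(i) with $D=\id$, the two endpoints are: (a) $M:f\mapsto(f\xi)\breve{}$ bounded $B^\beta_{\infty,2}\to L^2(G,\langle x\rangle_G^{2\alpha}\,dx)$ for $\beta>\alpha+(n+3)/2$, from Proposition~\ref{prp:sobolevl2estimates} plus Corollary~\ref{cor:besovcompactsupportinclusion}; and (b) $M$ bounded $B^0_{\infty,1}\to L^2(G)$, which is just $\|(f\xi)\breve{}\,\|_2=\|f\xi\|_{L^2(\sigma)}\le C_K\|f\xi\|_\infty$ via Theorem~\ref{thm:plancherel} and the embedding $B^0_{\infty,1}\subseteq L^\infty$. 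Interpolating with parameter $\theta\in\,]0,1[$ (Stein--Weiss on the target side, Proposition~\ref{prp:besovinterpolation}(i) on the source) gives $M:B^{\theta\beta}_{\infty,2}\to L^2(G,\langle x\rangle_G^{2\theta\alpha}\,dx)$ for $\theta\beta>\theta\alpha+\theta(n+3)/2$; since $\theta$ is free, this is exactly $\beta>\alpha$. The point you were circling around but did not land on is that the additive constant $(n+3)/2$ is killed because \emph{both} the weight exponent and the regularity exponent scale with $\theta$.
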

\begin{proof}
(i) Consider first the case $D = 1$. Let $\xi \in \D(\R^n)$ be such that $\xi|_K \equiv 1$, and let $K' \subseteq \R^n$ be compact with $\supp \xi \subseteq \mathring K'$. By Proposition~\ref{prp:sobolevl2estimates} and Corollary~\ref{cor:besovcompactsupportinclusion} we then have clearly, for $f \in \D(\R^n)$ with $\supp f \subseteq K'$,
\[\|\breve f\|_{L^2(G,\langle x \rangle_G^{2\alpha} \,dx)} \leq C_{K,\alpha,\beta} \|f\|_{H^\beta(\R^n)} \leq C_{K,\alpha,\beta} \|f\|_{B^\beta_{\infty,2}(\R^n)}\]
for $\beta > \alpha + (n+3)/2$. By Proposition~\ref{prp:besovapproximation}, every $f \in B^\beta_{\infty,2}(\R^n)$ with support contained in $\supp \xi$ can be approximated in $B^\beta_{\infty,2}(\R^n)$ by smooth functions with supports contained in $K$, so that the inequality
\[\|\breve f\|_{L^2(G,\langle x \rangle_G^{2\alpha} \,dx)} \leq C_{K,\alpha,\beta} \|f\|_{B^\beta_{\infty,2}(\R^n)},\]
for $\beta > \alpha + (n+3)/2$, extends to all $f \in B^\beta_{\infty,2}(\R^n)$ with $\supp f \subseteq \supp \xi$. In particular, if we consider the linear map
\[M : f \mapsto (f\xi)\breve{\,\,},\]
then, by Proposition~\ref{prp:besovproduct}, we have that
\[\text{$M$ is bounded } B^\beta_{\infty,2}(\R^n) \to L^2(G,\langle x \rangle_G^{2\alpha} \,dx) \text{ for $\beta > \alpha + (n+3)/2$}.\]
On the other hand, in the case $\alpha = 0$, by Theorem~\ref{thm:plancherel} and Propositions~\ref{prp:besovzeroinclusions}, \ref{prp:besovproduct}, we have, for $f \in B^0_{\infty,1}(\R^n)$,
\[\|(f\xi)\breve{\,\,}\|_2 = \|f\xi\|_{L^2(\sigma)} \leq C_K \|f\xi\|_\infty \leq C_K \|f\xi\|_{B^0_{\infty,1}(\R^n)} \leq C_K \|f\|_{B^0_{\infty,1}(\R^n)},\]
that is,
\[\text{$M$ is bounded } B^0_{\infty,1}(\R^n) \to L^2(G).\]
By interpolation (see Proposition~\ref{prp:besovinterpolation} and Theorem~\ref{thm:steinweiss}), we then get that
\[\text{$M$ is bounded } B^{\theta\beta}_{\infty,2}(\R^n) \to L^2(G,\langle x \rangle_G^{2\theta\alpha} \,dx) \text{ for $\theta\beta > \theta\alpha + \theta(n+3)/2$}\]
for all $\theta \in \left]0,1\right[$, that is,
\[\text{$M$ is bounded } B^\beta_{\infty,2}(\R^n) \to L^2(G,\langle x \rangle_G^{2\alpha} \,dx) \text{ for $\beta > \alpha$}.\]
Finally, if $f \in B^\beta_{\infty,\infty}(\R^n)$ for some $\beta > \alpha$ and $\supp f \subseteq K$, then $f = f\xi$ and moreover $f \in B^{\beta'}_{\infty,2}(\R^n)$ for some $\beta' \in \left]\alpha,\beta\right[$, so that
\begin{multline*}
\|\breve f\|_{L^2(G,\langle x \rangle_G^{2\alpha} \,dx)}  = \|M f\|_{L^2(G,\langle x \rangle_G^{2\alpha} \,dx)} \leq C_{K,\alpha,\beta} \|f\|_{B^{\beta'}_{\infty,2}(\R^n)} \\
\leq C_{K,\alpha,\beta} \|f\|_{B^\beta_{\infty,\infty}(\R^n)}
\end{multline*}
by Proposition~\ref{prp:besovembeddings}.

Take now an arbitrary $D \in \Diff(G)$. For $f \in B^\beta_{\infty,\infty}(\R^n)$ with $\supp f \subseteq K$, set
\[f_0 = f e^{p_*}, \qquad \xi = e^{-p_*}.\]
Then, by Lemma~\ref{lem:composition},
\[\breve f = \breve f_0 * \breve\xi,\]
and moreover, by Proposition~\ref{prp:besovproduct}, $f_0 \in B^\beta_{\infty,\infty}(\R^n)$ and $\supp f_0 \subseteq K$, so that
\begin{multline*}
\|D \breve f\|_{L^2(G,\langle x \rangle_G^{2\alpha} \,dx)} \leq \|\breve f_0\|_{L^2(G,\langle x \rangle_G^{2\alpha} \,dx)} \|D\breve\xi\|_{L^1(G,\langle x \rangle_G^{\alpha} \,dx)} \\
\leq C_{K,D,\alpha,\beta} \|f_0\|_{B^\beta_{\infty,\infty}(\R^n)} \leq C_{K,D,\alpha,\beta} \|f\|_{B^\beta_{\infty,\infty}(\R^n)}
\end{multline*}
by Young's inequality, Theorem~\ref{thm:robinsonterelst}(f) and Proposition~\ref{prp:besovproduct}.

(ii) By proceeding as in part (i), but with $K' \subseteq A$, and noticing that, by Proposition~\ref{prp:besovembeddings} and Corollary~\ref{cor:besovcompactsupportinclusion},
\[\|f\|_{H^s(\R^n)} \leq C_{K,s,s'} \|f\|_{B^{s'}_{p,2}(\R^n)}\]
if $\supp f \subseteq K'$ and $s' \geq s + n/2$, we easily obtain that
\[\text{$M$ is bounded } B^\beta_{p,2}(\R^n) \to L^2(G,\langle x \rangle_G^{2\alpha} \,dx) \text{ for $\beta > \alpha + (2n+3)/2$}.\]
If we prove that, for some $q \in [1,\infty]$,
\begin{equation}\label{eq:limitatezzabase}
\text{$M$ is bounded } B^\beta_{p,q}(\R^n) \to L^2(G) \text{ for $\beta > \frac{n}{p} - \frac{d}{\max\{2,p\}}$,}
\end{equation}
then the conclusion follows analogously as in part (i). However, for $p=\infty$, \eqref{eq:limitatezzabase} is proved as in part (i), whereas, for $p=2$, it follows from Corollary~\ref{cor:triebeltrace} and Proposition~\ref{prp:besovembeddings}; therefore, for $2 < p < \infty$, we get \eqref{eq:limitatezzabase} by interpolating the cases $p=2$ and $p=\infty$ (see Proposition~\ref{prp:besovinterpolation}), whereas, for $1 \leq p < 2$, we use the embeddings from Proposition~\ref{prp:besovembeddings}.
\end{proof}

By Sobolev's embedding, it is not difficult to obtain also pointwise estimates:

\begin{cor}\label{cor:pointwiseestimates}
Let $K \subseteq \R^n$ be compact, $D \in \Diff(G)$, $\alpha \geq 0$ and
\[\beta > \alpha.\]
For all $f \in B^{\beta}_{\infty,\infty}(\R^n)$ with $\supp f \subseteq K$, we have
\[\sup_{x \in G} \langle x \rangle_G^\alpha |D \breve f(x)| \leq C_{K,D,\alpha,\beta} \|f\|_{B_{\infty,\infty}^\beta(\R^n)}.\]
If moreover the Plancherel measure $\sigma$ is locally $d$-bounded on some open $A \subseteq \R^n$, $1 \leq p,q \leq \infty$, and we have $K \subseteq A$,
\[\beta > \alpha + \frac{n}{p} - \frac{d}{\max\{2,p\}},\]
then, for all $f \in B^{\beta}_{p,q}(\R^n)$ with $\supp f \subseteq K$,
\[\sup_{x \in G} \langle x \rangle_G^\alpha |D \breve f(x)| \leq C_{K,D,\alpha,\beta,p,q} \|f\|_{B_{p,q}^\beta(\R^n)}.\]
\end{cor}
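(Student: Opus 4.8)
The proof of Corollary~\ref{cor:pointwiseestimates} is a straightforward passage from weighted $L^2$ estimates (Theorem~\ref{thm:l2estimates}) to weighted pointwise estimates, via the "smoothing" property of convolution together with the $L^1$ heat-kernel bounds of Theorem~\ref{thm:robinsonterelst}(f). The plan is the following: given $f$ supported in $K$ with $\supp f \subseteq K$ (and $K \subseteq A$ in the second case), I would first factor the multiplier by writing $f = f_0 \, \xi$ with $f_0 = f e^{p_*}$ and $\xi = e^{-p_*}$, exactly as in the proof of Theorem~\ref{thm:l2estimates}(i). Then by Lemma~\ref{lem:composition}(b) the kernel factors as $\breve f = \breve{f_0} * \breve\xi$, so for any $D \in \Diff(G)$ we have $D\breve f = \breve{f_0} * D\breve\xi$, and hence pointwise
\[
|D\breve f(x)| = \left| \int_G \breve{f_0}(xy^{-1}) \, (D\breve\xi)(y) \,dy \right|.
\]

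The key step is then to insert the weight and split it submultiplicatively: since $|\cdot|_G$ is subadditive, $\langle x \rangle_G \leq \langle xy^{-1}\rangle_G \langle y \rangle_G$, so
\[
\langle x \rangle_G^\alpha |D\breve f(x)| \leq \int_G \langle xy^{-1}\rangle_G^\alpha |\breve{f_0}(xy^{-1})| \cdot \langle y \rangle_G^\alpha |(D\breve\xi)(y)| \,dy \leq \|\breve{f_0}\|_{L^2(G,\langle x\rangle_G^{2\alpha}\,dx)} \, \|D\breve\xi\|_{L^2(G,\langle x\rangle_G^{2\alpha}\,dx)}
\]
by Cauchy--Schwarz (uniformly in $x$). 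The first factor is controlled by $\|f_0\|_{B^\beta_{\infty,\infty}}$ (resp.\ $\|f_0\|_{B^\beta_{p,q}}$) via Theorem~\ref{thm:l2estimates}, using that $\supp f_0 \subseteq K$ and that multiplication by $e^{p_*}$ is bounded on the relevant Besov space of functions supported in $K$ (Proposition~\ref{prp:besovproduct}); in turn $\|f_0\|_{B^\beta} \leq C_{K,\beta} \|f\|_{B^\beta}$. The second factor, $\|D\breve\xi\|_{L^2(G,\langle x\rangle_G^{2\alpha}\,dx)}$, is a fixed finite constant depending only on $D$, $\alpha$: indeed $\breve\xi$ is (up to normalization) the heat kernel of the weighted subcoercive operator $p_*(L)$, and $D\breve\xi \in L^{1;\infty}(G, e^{\rho|x|_*}\,dx)$ for all $\rho \geq 0$ by Theorem~\ref{thm:robinsonterelst}(f); since $\langle x\rangle_G^{2\alpha} \lesssim_\alpha e^{|x|_G} \lesssim e^{C|x|_*}$ (connected moduli being equivalent in the large), this weighted $L^2$ norm is finite — in fact one can get an $L^\infty$ bound for $D\breve\xi$ against an exponential weight by Sobolev embedding on $G$ (Theorem~\ref{thm:robinsonterelst}(d) gives $\breve\xi$, and hence all $D\breve\xi$, in $L^{1;\infty}\cap C_0^\infty$).

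Combining these three estimates gives the claimed bound
\[
\sup_{x\in G} \langle x\rangle_G^\alpha |D\breve f(x)| \leq C_{K,D,\alpha,\beta} \|f\|_{B^\beta_{\infty,\infty}(\R^n)}
\]
and likewise in the locally $d$-bounded case with the threshold $\beta > \alpha + n/p - d/\max\{2,p\}$, since the only place the hypothesis on $\sigma$ enters is through the invocation of Theorem~\ref{thm:l2estimates}(ii) for the factor $\|\breve{f_0}\|_{L^2(G,\langle x\rangle_G^{2\alpha}\,dx)}$. I do not expect any serious obstacle here: the whole argument is bookkeeping on top of Theorem~\ref{thm:l2estimates}. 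The one point requiring a little care is making sure the factorization $f = f_0\xi$ is legitimate at the low Besov regularity allowed in the statement (i.e.\ that multiplication by the fixed smooth function $e^{p_*}$ preserves $B^\beta_{p,q}$ for functions supported in the fixed compactum $K$), but this is exactly Proposition~\ref{prp:besovproduct} (pointwise multiplication estimate), already used in the proof of Theorem~\ref{thm:l2estimates}, so it poses no difficulty.
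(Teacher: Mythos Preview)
Your approach works and takes a different route from the paper's. The paper does not factor through the heat kernel; instead it smooths the weight, setting $w_\alpha = \langle\cdot\rangle_G^\alpha * \zeta$ for some fixed nonnegative $\zeta \in \D(G)$ with $\zeta(e) > 0$, so that $w_\alpha \sim \langle\cdot\rangle_G^\alpha$ and every left-invariant derivative $A^\gamma w_\alpha$ is again dominated by $\langle\cdot\rangle_G^\alpha$. It then applies the Sobolev embedding on $G$ directly to $w_\alpha D\breve f$: for $k$ large enough,
\[
\sup_{x} \langle x\rangle_G^\alpha |D\breve f(x)| \leq C_\alpha \|w_\alpha D\breve f\|_\infty \leq C_\alpha\sum_{|\gamma|\leq k}\|A^\gamma(w_\alpha D\breve f)\|_2,
\]
and the Leibniz rule together with the bound on $A^{\gamma'} w_\alpha$ reduces each term to some $\|A^{\gamma''} D\breve f\|_{L^2(G,\langle x\rangle_G^{2\alpha}dx)}$, which Theorem~\ref{thm:l2estimates} controls. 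Your convolution-plus-Cauchy--Schwarz argument is arguably more direct and invokes Theorem~\ref{thm:l2estimates} only for the identity operator; the paper's route in turn avoids any separate verification that $D\breve\xi$ lies in a weighted $L^2$ space.

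One point needs care: your Cauchy--Schwarz step is uniform in $x$ only when $G$ is unimodular, since the change of variables $y \mapsto xy^{-1}$ in $\int_G |\breve{f_0}(xy^{-1})|^2 \langle xy^{-1}\rangle_G^{2\alpha}\,d\mu_G(y)$ introduces the modular function otherwise, and the corollary is stated for arbitrary connected $G$. The repair is painless: since the two kernels commute under convolution, write instead $D\breve f = \breve\xi * D\breve{f_0}$ and use the form $\int \breve\xi(y^{-1})\,(D\breve{f_0})(yx)\,d\mu_G(y)$; the substitution $z=yx$ is now a right translation, and Cauchy--Schwarz yields $\|D\breve{f_0}\|_{L^2(G,\langle\cdot\rangle_G^{2\alpha}\,d\mu_G)}$ (controlled by Theorem~\ref{thm:l2estimates}) times a weighted $L^2$ norm of $\breve\xi$ against the \emph{left} Haar measure, still finite by the Gaussian estimates.
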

\begin{proof}
If $\zeta \in \D(\R^n)$ is nonnegative and $\zeta(e) > 0$, and if we set
\[w_\alpha = \langle \cdot \rangle_G^\alpha * \zeta\]
for $\alpha \geq 0$, then $w_\alpha$ is smooth and nonnegative,
\[C_\alpha^{-1} \langle x \rangle_G^\alpha \leq w_\alpha(x) \leq C_\alpha \langle x \rangle_G^\alpha\]
and moreover
\[D w_\alpha(x) \leq C_{D,\alpha} \langle x \rangle_G^\alpha\]
for all $D \in \Diff(G)$.

We then have, for $\beta > \alpha$ and $D \in \Diff(G)$,
\begin{multline*}
\sup_{x \in G} \langle x \rangle_G^\alpha |D \breve f(x)| \leq C_{\alpha} \sup_{x \in G} w_\alpha(x) |D \breve f(x)| \\
\leq C_{\alpha} \sum_{|\gamma| \leq k} \| A^\gamma (w_\alpha D \breve f) \|_2 \leq C_{\alpha} \sum_{|\gamma'| + |\gamma''| \leq k} \| (A^{\gamma'} w_\alpha) (A^{\gamma''} D \breve f) \|_2 \\
\leq C_{\alpha} \sum_{|\gamma| \leq k} \| A^{\gamma} D \breve f \|_{L^2(G,\langle x \rangle_G^{2\alpha} \,dx)} \leq C_{K,D,\alpha,\beta,p,q} \|f\|_{B_{p,q}^\beta}
\end{multline*}
by Sobolev's embedding and Theorem~\ref{thm:l2estimates}.
\end{proof}

Finally, in the case of polynomial growth, by H\"older's inequality we obtain immediately weighted $L^1$ estimates.

\begin{cor}\label{cor:polyl1estimates}
Suppose that $G$ has polynomial growth of degree $Q_G$, and let $K \subseteq \R^n$ be compact, $D \in \Diff(G)$, $\alpha \geq 0$ and
\[\beta > \alpha + \frac{Q_G}{2}.\]
For all $f \in B^{\beta}_{\infty,\infty}(\R^n)$ with $\supp f \subseteq K$, we have $\breve f \in L^{1;\infty}(G)$ and
\[\|D \breve f\|_{L^1(G, \langle x \rangle_G^{\alpha} \,dx)} \leq C_{K,D,\alpha,\beta} \|f\|_{B^\beta_{\infty,\infty}(\R^n)}.\]
If moreover the Plancherel measure $\sigma$ is locally $d$-bounded on some open $A \subseteq \R^n$, $1 \leq p,q \leq \infty$, and we have $K \subseteq A$,
\[\beta > \alpha + \frac{Q_G}{2} + \frac{n}{p} - \frac{d}{\max\{2,p\}},\]
then, for all $f \in B^{\beta}_{p,q}(\R^n)$ with $\supp f \subseteq K$,
\[\|D \breve f\|_{L^1(G, \langle x \rangle_G^{\alpha} \,dx)} \leq C_{K,D,\alpha,\beta,p,q} \|f\|_{B^\beta_{p,q}(\R^n)}.\]
\end{cor}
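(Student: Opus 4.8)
The plan is to reduce Corollary~\ref{cor:polyl1estimates} to the weighted $L^2$ estimates of Theorem~\ref{thm:l2estimates} via H\"older's inequality, exploiting the polynomial volume growth of $G$. Since $G$ has polynomial growth of degree $Q_G$ and $|\cdot|_G$ is a connected modulus, the balls $B_r$ satisfy $\mu_G(B_r) \sim r^{Q_G}$ for $r$ large; hence the function $\langle x \rangle_G^{-\gamma}$ is integrable on $G$ precisely when $\gamma > Q_G$. The idea is then: given $D \in \Diff(G)$ and $\alpha \geq 0$, write, for any $\gamma > 0$,
\[
\int_G |D\breve f(x)| \, \langle x \rangle_G^\alpha \,dx = \int_G \left(|D\breve f(x)| \, \langle x \rangle_G^{\alpha + \gamma/2}\right) \langle x \rangle_G^{-\gamma/2} \,dx,
\]
and apply the Cauchy-Schwarz inequality to obtain
\[
\|D\breve f\|_{L^1(G,\langle x \rangle_G^\alpha \,dx)} \leq \|D\breve f\|_{L^2(G,\langle x \rangle_G^{2\alpha+\gamma}\,dx)} \, \left(\int_G \langle x \rangle_G^{-\gamma} \,dx\right)^{1/2}.
\]
Choosing $\gamma$ with $Q_G < \gamma$ makes the last factor finite, and then Theorem~\ref{thm:l2estimates}(i) applied with weight parameter $\alpha' = \alpha + \gamma/2$ bounds the $L^2$ norm by $C \|f\|_{B^\beta_{\infty,\infty}}$ as soon as $\beta > \alpha + \gamma/2$.

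First I would fix $\beta > \alpha + Q_G/2$ and pick $\gamma \in (Q_G, 2(\beta - \alpha))$, which is a non-empty interval exactly because $\beta > \alpha + Q_G/2$; this choice simultaneously guarantees integrability of $\langle \cdot \rangle_G^{-\gamma}$ and the inequality $\beta > \alpha + \gamma/2 =: \alpha'$ needed to invoke Theorem~\ref{thm:l2estimates}(i). The chain of inequalities above then yields $\|D\breve f\|_{L^1(G,\langle x \rangle_G^\alpha\,dx)} \leq C_{K,D,\alpha,\beta}\|f\|_{B^\beta_{\infty,\infty}(\R^n)}$ for all $f \in B^\beta_{\infty,\infty}(\R^n)$ with $\supp f \subseteq K$. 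Applying this with $\alpha = 0$ and with $D$ ranging over $\Diff(G)$ shows that $A^\gamma \breve f \in L^1(G)$ for every $\gamma \in J(d)$, i.e. $\breve f \in L^{1;\infty}(G)$; the assertion $\breve f \in L^{1;\infty}(G)$ in the statement is thus an immediate byproduct. For the locally $d$-bounded case, the argument is verbatim the same, except that one invokes Theorem~\ref{thm:l2estimates}(ii) in place of (i): with $K \subseteq A$, $1 \leq p,q \leq \infty$, and $\beta > \alpha + Q_G/2 + n/p - d/\max\{2,p\}$, one picks $\gamma \in (Q_G, 2(\beta - \alpha - n/p + d/\max\{2,p\}))$ and bounds $\|D\breve f\|_{L^2(G,\langle x\rangle_G^{2\alpha'}\,dx)}$ by $C\|f\|_{B^\beta_{p,q}(\R^n)}$.

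I expect no genuine obstacle here: the result is a routine consequence of the preceding theorem, and the only points requiring minimal care are (a) verifying that the interval for $\gamma$ is non-empty under the stated hypothesis on $\beta$, which is exactly the content of the strict inequality $\beta > \alpha + Q_G/2$ (resp. its $p,q,d$-variant), and (b) observing that the weight $\langle \cdot \rangle_G^{2\alpha + \gamma}$ appearing after Cauchy-Schwarz is of the form $\langle \cdot \rangle_G^{2\alpha'}$ with $\alpha' \geq 0$, so that Theorem~\ref{thm:l2estimates} applies directly. The integrability of $\langle \cdot \rangle_G^{-\gamma}$ for $\gamma > Q_G$ can be seen by decomposing $G$ into the dyadic annuli $B_{2^{k+1}} \setminus B_{2^k}$ and summing the geometric series $\sum_k 2^{-k\gamma} \mu_G(B_{2^{k+1}}) \lesssim \sum_k 2^{k(Q_G - \gamma)} < \infty$, using $\mu_G(B_r) \lesssim r^{Q_G}$; this is the only place where polynomial growth is used. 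Everything else — the constants' dependence on $K, D, \alpha, \beta$ (and $p,q$) — is inherited transparently from Theorem~\ref{thm:l2estimates}.
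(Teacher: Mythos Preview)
Your proof is correct and follows exactly the approach indicated in the paper, which simply states that the weighted $L^1$ estimates follow from Theorem~\ref{thm:l2estimates} ``by H\"older's inequality'' under polynomial growth. You have fleshed out precisely the details the paper leaves implicit: the choice of $\gamma \in (Q_G, 2(\beta-\alpha))$ (resp.\ its $p,q,d$-variant), the integrability of $\langle \cdot \rangle_G^{-\gamma}$ via dyadic decomposition, and the deduction of $\breve f \in L^{1;\infty}(G)$ by letting $D$ range over $\Diff(G)$.
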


\section{Schwartz functions}

As before, let $L_1,\dots,L_n$ be a weighted subcoercive system on a connected Lie group $G$, with joint spectrum $\Sigma$.

By Lemma~\ref{lem:composition}(c) and Proposition~\ref{prp:plancherelpolynomialgrowth}, it follows easily that, if a bounded Borel function $f : \R^n \to \C$ decays at infinity faster than the reciprocal of any polynomial, then $\Kern_L f \in L^{2;\infty}(G)$. In the case $G$ has polynomial growth, this statement can be amplified in terms of the Schwartz class (see \S\ref{subsection:schwartz}).

\begin{prp}
Suppose that $G$ has polynomial growth. If $f \in \Sz(\R^n)$, then $\Kern_L f \in \Sz(G)$, and the map
\[\Sz(\R^n) \ni f \mapsto \Kern_L f \in \Sz(G)\]
is continuous.
\end{prp}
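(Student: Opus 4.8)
The strategy is to reduce the Schwartz estimates for $\Kern_L f$ to the compactly supported weighted estimates already established in Corollary~\ref{cor:polyl1estimates}, by cutting $f$ into dyadic pieces. Fix a connected modulus $|\cdot|_G$ on $G$, with $\langle x \rangle_G = 1 + |x|_G$, and let $Q_G$ be the degree of polynomial growth. Recall that the Schwartz space $\Sz(G)$ is defined (see \S\ref{subsection:schwartz}) by the seminorms $\eta_{p,D,k}(g) = \|\langle \cdot \rangle_G^k D g\|_p$, and that this Fréchet structure does not depend on $p \in [1,\infty]$; it will be convenient to work with $p = 1$, so that one has to bound $\|\langle \cdot \rangle_G^k D \Kern_L f\|_1$ for every $D \in \Diff(G)$ and $k \in \N$ in terms of finitely many Schwartz seminorms of $f$.

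First I would choose a standard dyadic partition of unity on $\R^n$ adapted to some $\delta$-homogeneous norm (here $\delta$ can be taken to be the isotropic dilations, since no homogeneity of the operators is assumed): pick $\phi_0, \phi_1 \in \D(\R^n)$ with $\phi_0$ supported near the origin and $\phi_1$ supported in an annulus, so that $\sum_{j \geq 0} \phi_j \equiv 1$ with $\phi_j = \phi_1 \circ (2^{1-j} \cdot)$ for $j \geq 1$, and write $f = \sum_{j \geq 0} f_j$ where $f_j = f \phi_j$. Then $\Kern_L f = \sum_j \Kern_L f_j$ (using Lemma~\ref{lem:composition}(a) and that the partial sums converge appropriately; one has to check convergence in $\Cv^2(G)$, which is immediate, and then upgrade). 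Each $f_j$ is supported in a fixed dilate of a fixed compact set $K$, namely in $\delta_{2^j}(K)$; rescaling via the dilation structure of $\R^n$, I would apply Corollary~\ref{cor:polyl1estimates} to $f_j \circ \delta_{2^j}$, which is supported in the \emph{fixed} compactum $K$. The point is to track how the constant and the Besov norm scale under $\delta_{2^j}$: by the Fourier-scaling identity for kernels on $\R^n$ and the rapid decay of $f$, the Besov norm $\|f_j \circ \delta_{2^j}\|_{B^\beta_{\infty,\infty}(\R^n)}$ is controlled by $2^{-Nj}$ times a Schwartz seminorm of $f$, for any $N$ (since $f$ and all its derivatives decay rapidly on the annulus $|\lambda| \sim 2^j$), while the rescaling of $\langle x \rangle_G$ on the group side produces at worst polynomial-in-$2^j$ factors. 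Summing the resulting geometric-type series in $j$ gives the bound $\|\langle \cdot \rangle_G^k D \Kern_L f\|_1 \leq C_{D,k} \cdot (\text{finitely many Schwartz seminorms of } f)$, which yields both $\Kern_L f \in \Sz(G)$ and the continuity of the map by the closed graph theorem (or directly, since the bound is explicit).

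The main obstacle I anticipate is bookkeeping the scaling exponents carefully, in particular controlling the interaction between the group modulus weight $\langle x \rangle_G$ and the Euclidean dilations on the spectral side: there is no a priori homogeneity linking $\delta_t$ on $\R^n$ to any dilation on $G$, so one cannot expect the group weight to rescale cleanly. The resolution is that one does not need clean rescaling — one only needs that Corollary~\ref{cor:polyl1estimates}, applied at scale $K$, gives a \emph{fixed} constant $C_{K,D,\alpha,\beta}$, and that the Besov norm of the rescaled bump $f_j \circ \delta_{2^j}$ decays super-polynomially in $2^j$ thanks to $f \in \Sz(\R^n)$; this decay beats any fixed polynomial growth coming from $\langle x \rangle_G^\alpha$ and from $D$ acting through the chain rule under no rescaling. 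A secondary technical point is justifying that the series $\sum_j D \Kern_L f_j$ converges in $L^1(G, \langle \cdot \rangle_G^\alpha\,dx)$ and that its sum is $D \Kern_L f$; this follows from the absolute convergence just established together with uniqueness of limits, exactly as in the proof of Proposition~\ref{prp:sobolevl2estimates}. Finally, one should remark that since each $\Kern_L f_j \in L^{1;\infty}(G)$ by Corollary~\ref{cor:polyl1estimates}, and the weighted $L^{1;\infty}$ norms are summable, the limit lies in $\Sz(G)$, completing the argument.
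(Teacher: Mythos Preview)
There is a genuine gap in your reduction to Corollary~\ref{cor:polyl1estimates}. You write that you will apply the corollary to $g_j := f_j \circ \delta_{2^j}$, which is supported in the fixed compactum $K$, and thereby obtain a uniform constant. But the corollary applied to $g_j$ bounds $\|\langle \cdot\rangle_G^\alpha\, D\,\Kern_L g_j\|_1$, not $\|\langle \cdot\rangle_G^\alpha\, D\,\Kern_L f_j\|_1$. In the present setting nothing relates $\Kern_L g_j$ to $\Kern_L f_j$: the group $G$ is only assumed to have polynomial growth, the operators $L_1,\dots,L_n$ are not assumed homogeneous, and there is no family of dilations on $G$ intertwined with your $\delta_{2^j}$ on $\R^n$. (The identity in Proposition~\ref{prp:plancherelhomogeneous} requires both $G$ homogeneous and the system homogeneous, and even then the spectral dilations $\epsilon_t$ are fixed by the operators, not freely chosen.) So the ``Fourier-scaling identity for kernels'' you invoke simply does not exist here.

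The natural fallback---applying Corollary~\ref{cor:polyl1estimates} directly to $f_j$ on the growing compacta $K_j = 2^j K$ and tracking the dependence of $C_{K_j,D,\alpha,\beta}$ on $j$---also fails with the tools as stated. Tracing the proof back through Proposition~\ref{prp:sobolevl2estimates} and Lemma~\ref{lem:fourierdecomposition}, the constant arises from the change of variables $\lambda \mapsto e^{ie^{-p(\lambda)}}$ composed with its inverse; as $K_j$ grows, $p(K_j)$ escapes to infinity, the image on the torus accumulates at $(1,\dots,1)$, and the derivatives of $\Phi^{-1}$ blow up there. So $C_{K_j}$ grows far faster than any polynomial in $2^j$, and the Schwartz decay of $\|f_j\|_{B^\beta_{\infty,\infty}}$ cannot absorb it. This is exactly the point of the remark following the proposition: the exponential change of variable is tailored to compactly supported multipliers and does not extend to $\Sz(\R^n)$; Hulanicki's argument uses a polynomial change of variable and resolvent kernel estimates instead. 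The paper's proof bypasses all of this by invoking Hulanicki's single-operator result for each $e^{-q(L)}$, then passing to several operators via the nuclear tensor product $\Sz(\R^n) = \Sz(\R)\ptimes\cdots\ptimes\Sz(\R)$, and finally reducing the general system to one with all generators positive weighted subcoercive by the polynomial change of generators $\tilde L_j = p_j(L)$.
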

\begin{proof}
Consider first the case $n = 1$, $L = L_1$ positive and weighted subcoercive. By Theorem~\ref{thm:robinsonterelst}(e), the heat kernel $k_t = \Kern_L(e^{-t\cdot})$ belongs to $\Sz(G)$ for all $t > 0$. The conclusion then follows by a result of Hulanicki \cite{hulanicki_functional_1984}.

Suppose next that $n \in \N$ is arbitrary, but all the $L_j$ are positive and weighted subcoercive. Then we can proceed as in the proof of \cite{astengo_gelfand_2009}, Theorem~5.2. Namely, if $f = f_1 \otimes \cdots \otimes f_n$, then
\[\Kern_L (f_1 \otimes \cdots \otimes f_n) = \Kern_{L_1}(f_1) \cdots \Kern_{L_n}(f_n)\]
by definition of joint spectral resolution, and the conclusion follows by the case $n=1$ since
\[\Sz(\R^n) = \Sz(\R) \ptimes \cdots \ptimes \Sz(\R).\]

Finally, in the general case, let $p_0,p_1,\dots,p_n$ be the polynomials defined in \S\ref{section:plancherel}. Then the operators
\[\tilde L_0 = p_0(L), \quad \tilde L_1 = p_1(L), \quad \dots, \quad \tilde L_n = p_n(L)\]
are all positive and weighted subcoercive, so that in particular they are a weighted subcoercive system. Moreover
\[\Kern_L f = \Kern_{\tilde L}((f \circ \Phi) \eta),\]
where
\[\Phi(\lambda_0,\lambda_1,\dots,\lambda_n) = (\lambda_1 - \lambda_0, \dots, \lambda_n - \lambda_0),\]
\[\eta(\lambda_0,\lambda_1,\dots,\lambda_n) = \psi(\lambda_0 - p_0(\lambda_1-\lambda_0,\dots,\lambda_n-\lambda_0)),\]
and $\psi \in \D(\R)$ is such that $\psi(0) = 1$.
The conclusion then follows by the previous case and by the fact that the map
\[f \mapsto (f \circ \Phi)\eta\]
is continuous $\Sz(\R^n) \to \Sz(\R^{1+n})$.
\end{proof}

\begin{rem}
The proof of the mentioned result of Hulanicki \cite{hulanicki_functional_1984} uses (in the one-variable case) a Fourier series decomposition analogous to the one exploited in \S\ref{section:weightedestimates} in order to obtain weighted $L^2$ estimates. However, since Schwartz functions in general are not compactly supported, the exponential change of variable of Lemma~\ref{lem:fourierdecomposition} does no longer apply. Hulanicki uses instead a polynomial change of variable, and correspondingly he requires, in place of heat kernel estimates, some estimate on the resolvent kernel.
\end{rem}

\begin{rem}
The inverse problem --- i.e., determining if, for each $f : \Sigma \to \C$ such that $\Kern_L f \in \Sz(G)$, $f$ is the restriction to $\Sigma$ of some $\tilde f \in \Sz(\R^n)$ --- is much more difficult. In the context of nilpotent Gelfand pairs, some positive results in this direction are known (see \cite{astengo_gelfand_2007}, \cite{astengo_gelfand_2009}, \cite{fischer_gelfand_2009}, \cite{fischer_nilpotent_2010}).
\end{rem}

\section{Improved weighted estimates}\label{section:metivier}

In the weighted $L^1$ estimates obtained in Corollary~\ref{cor:polyl1estimates}, 
the regularity threshold is half of the degree $Q_G$ of polynomial growth of the Lie group $G$. In the case $G = \R^n$, the degree $Q_G$ coincides with the topological dimension $\dim G = n$. For more general nilpotent Lie groups, however, $Q_G > \dim G$. Nevertheless, for a particular class of $2$-step homogeneous groups --- namely, Heisenberg and related groups --- multiplier theorems have been proved with $(\dim G)/2$ as the regularity threshold. Here we present a technique, due to Hebisch and Zienkiewicz \cite{hebisch_multiplier_1995}, which allows (in some cases, including those just mentioned) to lower the threshold in the weighted $L^1$ estimates, and whose generality is particularly suited to our context.

Let $G$ be a nilpotent Lie group, with Lie algebra $\lie{g}$. Let $\lie{z}$ be the center of $\lie{g}$, and set
\begin{equation}\label{eq:centercenter}
\lie{y} = \{ v \in \lie{g} \tc [v,\lie{g}] \subseteq \lie{z}\};
\end{equation}
$\lie{y}$ is an ideal of $\lie{g}$, which contains $\lie{z}$ and is characteristic, i.e., $\lie{y}$ is invariant by automorphisms of $\lie{g}$. Let moreover $P : \lie{g} \to \lie{g}/\lie{z}$ be the canonical projection. The bilinear map
\[[\cdot,\cdot] : \lie{g} \times \lie{g} \to \lie{g}\]
induces, by restriction, passage to the quotient and transposition, another bilinear map
\[J : \lie{g}/\lie{z} \times \lie{z}^* \to \lie{y}^*,\]
which we will call the \emph{capacity map}\index{capacity map} of $\lie{g}$, and is uniquely determined by
\[J(P(x),\tau)(y) = \tau([x,y])\]
for $x \in \lie{g}$, $y \in \lie{y}$, $\tau \in \lie{z}^*$. The group $G$ is said to be a \emph{Heisenberg-type group} (or \emph{H-type group}\index{Lie group!H-type} for short) if there exists an inner product on $\lie{g}$ such that, for every $\tau \in \lie{z}^*$ of norm $1$, the map
\[J(\cdot,\tau) : \lie{g}/\lie{z} \to \lie{y}^*\]
is an isometric embedding (this implies that $\lie{g} = \lie{y}$, so that $G$ is $2$-step). If $G$ is a H-type group, then in particular we have
\[|J(\bar x,\tau)| \geq |\bar x| |\tau|\]
for suitable norms on $\lie{g}/\lie{z}$, $\lie{z}^*$ and $\lie{y}^*$; the validity of such an inequality defines the class of \emph{M\'etivier groups}\index{Lie group!M\'etivier}, which has been introduced in the study of analytic hypoellipticity of Rockland operators (see \cite{metivier_hypoellipticite_1980}, or \cite{helffer_conditions_1982}); this class is strictly larger than that of H-type groups (see \cite{mller_singular_2004} for an example), but is still contained in the class of $2$-step groups.

In the following, we consider a more general inequality of the form
\[|J(\bar x,\tau)| \geq w(\bar x) \zeta(\tau)\]
for some non-negative functions $w : \lie{g}/\lie{z} \to \R$, $\zeta : \lie{z}^* \to \R$, which can be also rewritten as
\[w(\bar x)^{\beta} \leq |J(\bar x,\tau)|^\beta \zeta(\tau)^{-\beta}\]
for some $\beta > 0$. The idea is to interpret this inequality via the spectral theorem, in order to control a multiplication operator (corresponding to $w(\bar x)^\beta$) with a function of the central derivatives (corresponding to $\zeta(\tau)^{-\beta})$; in this interpretation, 
it turns out that $|J(\bar x,\tau)|^2$ corresponds to a sum of products of left-invariant and right-invariant differential operators on $G$, 
therefore the term $|J(\bar x,\tau)|^\beta$ can be dominated by a clever use of an a priori estimate for a weighted subcoercive operator on the direct product $G \times G$.

In order to fill in the details, it is convenient to introduce some notation.

For every smooth differential operator $D$ on $G$, the identity
\[(Df)^* = D^\dstar f^*\]
defines another differential operator $D^\dstar$ on $G$. It is immediate to check that
\[(\lambda_1 D_1 + \lambda_2 D_2)^\dstar = \overline{\lambda_1} D_1^\dstar + \overline{\lambda_2} D_2^\dstar, \qquad (D_1 D_2)^\dstar = D_1^\dstar D_2^\dstar, \qquad D^{\dstar\dstar} = D,\]
i.e., the map $D \mapsto D^\dstar$ is a conjugate-linear involutive automorphism of the unital algebra of all smooth differential operators on $G$; moreover, since
\[(L_x f)^* = R_x (f^*) \qquad\text{for all $x \in G$,}\]
this correspondence maps left-invariant operators to right-invariant ones, and vice versa.

The Lie algebra $\lie{\tilde g}$ of the direct product $\tilde G = G \times G$ is canonically isomorphic to $\lie{g} \oplus \lie{g}$, where the action of $(X,Y) \in \lie{g} \oplus \lie{g}$ on the smooth functions on $\tilde G$ is determined by
\[(X,Y) (f \otimes g) = (X f) \otimes (Y g).\]
We define the correspondence $D \mapsto D^\dtwist$ on $\Diff(\tilde G)$ as the unique conjugate-linear automorphism of the unital algebra $\Diff(\tilde G) \cong \UEnA(\lie{g} \oplus \lie{g})$ which extends the Lie algebra automorphism
\[(X,Y) \mapsto (Y,X)\]
of $\lie{g} \oplus \lie{g}$.

Let $\xi$ be the unitary representation of $\tilde G$ on $L^2(G)$ given by
\[\xi(x,y) f = R_x L_y f.\]
Then, for every $D \in \Diff(\tilde G)$, $d\xi(D)$ is a smooth differential operator on $G$, which in general is neither left- nor right-invariant. In fact, if $D \in \Diff(\tilde G)$ is an operator along the first factor of $G \times G$, then $d\xi(D)$ is left-invariant, whereas, if $D$ is along the second factor, then $d\xi(D)$ is right-invariant.

\begin{lem}
For every $D \in \Diff(\tilde G)$,
\[d\xi(D^\dtwist) = d\xi(D)^\dstar.\]
\end{lem}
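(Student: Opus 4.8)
The plan is to reduce both sides to their action on a tensor product and verify the identity there, then extend by linearity, continuity and multiplicativity. Since $D \mapsto D^\dtwist$ is the conjugate-linear algebra automorphism of $\Diff(\tilde G) \cong \UEnA(\lie{g} \oplus \lie{g})$ extending $(X,Y) \mapsto (Y,X)$, and since $d\xi$ is a homomorphism of unital algebras from $\Diff(\tilde G)$ to the smooth differential operators on $G$ (by the general theory recalled in the excerpt), while $D \mapsto D^\dstar$ is likewise a conjugate-linear involutive algebra automorphism, it suffices to check the identity $d\xi(D^\dtwist) = d\xi(D)^\dstar$ on a set of generators of $\lie{\tilde g}$, namely on elements of the form $(X,0)$ and $(0,Y)$ with $X,Y \in \lie{g}$. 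Indeed, both $D \mapsto d\xi(D^\dtwist)$ and $D \mapsto d\xi(D)^\dstar$ are conjugate-linear and multiplicative on $\Diff(\tilde G)$, so agreement on algebra generators forces agreement everywhere.

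\textbf{Key steps.} First I would record the elementary facts about $\dstar$ stated just before the lemma: $(Df)^* = D^\dstar f^*$, conjugate-linearity, multiplicativity, involutivity, and the fact that $\dstar$ swaps left-invariance and right-invariance. Next, I would compute $d\xi$ on the generators of $\lie{g} \oplus \lie{g}$. For $(X,0)$, the representation $\xi(x,y)f = \RA_x \LA_y f$ restricted to the first factor is just the right regular representation, so $d\xi((X,0))$ is the left-invariant vector field $X$ (acting on the first slot); likewise $d\xi((0,Y))$ is the right-invariant vector field $Y^\dstar$, since the flow of $X \in \lie g$ as a \emph{left}-invariant field is given by \emph{right} translations and vice versa. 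Concretely, for $f, g$ in suitable spaces one has $\xi(x,y)(f) $ encoding $\RA_x$ on one side and $\LA_y$ on the other, and differentiating gives $d\xi((X,0)) = X$ (left-invariant) and $d\xi((0,X)) = X^\dstar$ (right-invariant, the $\dstar$-image of the left-invariant field $X$). Then: $(X,0)^\dtwist = (0,X)$, so $d\xi((X,0)^\dtwist) = d\xi((0,X)) = X^\dstar = d\xi((X,0))^\dstar$; symmetrically $(0,X)^\dtwist = (X,0)$ gives $d\xi((0,X)^\dtwist) = X = (X^\dstar)^\dstar = d\xi((0,X))^\dstar$ using involutivity of $\dstar$. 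This checks the identity on generators, and the reduction argument above finishes it.

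\textbf{Main obstacle.} The only genuinely delicate point is the careful computation of $d\xi$ on the generators — in particular getting the left/right bookkeeping exactly right, since $\xi(x,y) = \RA_x \LA_y$ mixes the two regular representations, and one must be precise about which differentiation produces a left-invariant versus a right-invariant operator, and about the appearance of $\dstar$ (equivalently, of a sign, since on vector fields $X^\dstar = -X$ composed with the left/right swap). Everything else is formal: once the generator identity is established, the extension to all of $\Diff(\tilde G)$ is immediate from the fact that $d\xi(\cdot^\dtwist)$ and $d\xi(\cdot)^\dstar$ are both conjugate-linear unital algebra homomorphisms agreeing on a generating set. I would also note that no density or closure arguments are needed here, since we are working purely at the level of differential operators acting on smooth functions.
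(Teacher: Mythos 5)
Your proof is correct and follows essentially the same route as the paper: both arguments reduce to Lie algebra elements using that $D \mapsto d\xi(D^\dtwist)$ and $D \mapsto d\xi(D)^\dstar$ are conjugate-linear unital algebra homomorphisms, and then verify the base case by differentiating the representation along one-parameter subgroups. The only difference is cosmetic — the paper treats a general $(X,Y)$ in one stroke via the identity $(\xi(x,y)f)^* = \xi(y,x)f^*$, whereas you split into the generators $(X,0)$ and $(0,Y)$ and identify their images as $X$ and $X^\dstar$ respectively, with the sign/left-right bookkeeping you flag as the delicate point handled correctly.
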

\begin{proof}
Since both sides of the equality to be proved, as functions of $D \in \Diff(\tilde G)$, are conjugate-linear homomorphisms of unital algebras, it is sufficient to check the identity for vector fields. Let then $D = (X,Y) \in \lie{g} \oplus \lie{g}$. We have
\begin{multline*}
d\xi(D^\dtwist) f^* = \left.\frac{d}{dt}\right|_{t=0} \xi(\exp(tY),\exp(tX)) f^* \\
= \left.\frac{d}{dt}\right|_{t=0} (\xi(\exp(tX),\exp(tY)) f)^* = (d\xi(D) f)^*,
\end{multline*}
which gives the conclusion.
\end{proof}

For $D \in \Diff(G)$, let $\tilde D \in \Diff(\tilde G)$ be the differential operator along the first factor of $G \times G$ which corresponds to $D$, i.e.,
\[\tilde D(f \otimes g) = (Df) \otimes g,\]
so that in particular $d\xi(\tilde D) = D$.

\begin{lem}\label{lem:twistrockland}
Let $L = L^+ \in \Diff(G)$ be weighted subcoercive, and set $\Delta = L^2$. Then $\tilde \Delta + \tilde \Delta^\dtwist$ is positive weighted subcoercive on $\tilde G$.
\end{lem}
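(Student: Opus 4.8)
The statement to prove is that if $L = L^+ \in \Diff(G)$ is weighted subcoercive and $\Delta = L^2$, then $\tilde\Delta + \tilde\Delta^\dtwist$ is a positive weighted subcoercive operator on $\tilde G = G \times G$. The natural route is to exhibit a positive weighted subcoercive operator on $\tilde G$ whose ``principal part'' realizes $\tilde\Delta + \tilde\Delta^\dtwist$ as a sum of squares along the two factors, and then invoke Proposition~\ref{prp:productwsub}. Recall that $\tilde\Delta$ is the differential operator along the first factor of $G \times G$ corresponding to $\Delta$, and $\tilde\Delta^\dtwist$ is obtained by swapping the two factors, so $\tilde\Delta^\dtwist$ is the operator along the \emph{second} factor corresponding to $\Delta$. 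Thus, writing $\Delta^{\times_1}$ and $\Delta^{\times_2}$ for the copies of $\Delta$ along the first and second factors in the notation of \S\ref{section:directproducts}, we have $\tilde\Delta = \Delta^{\times_1}$ and $\tilde\Delta^\dtwist = \Delta^{\times_2}$, so that $\tilde\Delta + \tilde\Delta^\dtwist = (\Delta^{\times_1}) + (\Delta^{\times_2})$.

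First I would observe that $\Delta = L^2 = L^+ L$ is itself a positive weighted subcoercive operator on $G$: indeed, by Theorem~\ref{thm:robinsonterelst} (equivalences (i)--(iv)), replacing a weighted subcoercive form by its square doubles the degree and squares the principal part, and $d\RA_{G_*}(P^2 + (P^2)^+) = d\RA_{G_*}(P^2 + (P^+)^2)$ is again positive Rockland since a power of a Rockland operator is Rockland; alternatively, $\Delta = L^2$ with $L$ formally self-adjoint is manifestly formally self-adjoint and positive, and Lemma~\ref{lem:quasirockland} (with $D = 0$) shows $L^r$ is weighted subcoercive for large $r$, in particular for $r = 2$ once one checks the degree condition $m/w_i \in 2\N$ is preserved (passing to $L^2$ only helps, as the degree doubles). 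So $\Delta$ is a self-adjoint weighted subcoercive operator on $G$. Then Proposition~\ref{prp:productwsub}, applied with $\ell = 2$, $G_1 = G_2 = G$, and $D_1 = D_2 = \Delta$, immediately yields that
\[
(\Delta^{\times_1})^2 + (\Delta^{\times_2})^2
\]
is a positive weighted subcoercive operator on $\tilde G$ --- but this is the \emph{square} of what we want, not $\tilde\Delta + \tilde\Delta^\dtwist$ itself. The fix is to apply Proposition~\ref{prp:productwsub} with $D_1 = D_2 = L$ directly: then $L^{\times_1}, L^{\times_2}$ are self-adjoint weighted subcoercive operators along the two factors, and $(L^{\times_1})^2 + (L^{\times_2})^2$ is positive weighted subcoercive on $\tilde G$. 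Since $(L^{\times_j})^2 = (L^2)^{\times_j} = \Delta^{\times_j}$ (squaring commutes with the algebra embedding along a factor), we get exactly $\tilde\Delta + \tilde\Delta^\dtwist = \Delta^{\times_1} + \Delta^{\times_2} = (L^{\times_1})^2 + (L^{\times_2})^2$, which is positive weighted subcoercive.

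The only genuine point to verify carefully is the identification $\tilde\Delta^\dtwist = \Delta^{\times_2}$, i.e.\ that the $\dtwist$ operation really sends the operator along the first factor to the corresponding operator along the second. This follows because $D \mapsto D^\dtwist$ is defined as the algebra automorphism of $\Diff(\tilde G) \cong \UEnA(\lie g \oplus \lie g)$ extending $(X,Y) \mapsto (Y,X)$; if $\Delta = d\RA_G(C)$ for a form $C$ in generators $A_1,\dots,A_d$, then $\tilde\Delta$ is the same polynomial in the elements $(A_i, 0)$, and applying the swap gives the same polynomial in the $(0, A_i)$, which is precisely $\Delta^{\times_2}$; positivity and self-adjointness are preserved because the swap is a $*$-automorphism commuting with formal adjunction. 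I expect the main (minor) obstacle to be purely bookkeeping: checking that the degree-parity hypothesis $m/w_i \in 2\N$ required in the definition of weighted subcoercive form is met by $L$ with respect to a suitable reduced basis of $\lie g$ (this is part of what it means for $L$ to be weighted subcoercive, so it is given), and that Lemma~\ref{lem:concatweighted} correctly matches the reduced basis of $\lie{\tilde g}$ to the concatenation of two copies of the basis of $\lie g$, so that Proposition~\ref{prp:productwsub} applies verbatim. No heat kernel estimates or representation theory beyond what is packaged in Proposition~\ref{prp:productwsub} should be needed.
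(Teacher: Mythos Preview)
Your approach is right in spirit --- the paper also reduces to Proposition~\ref{prp:productwsub} --- but you have overlooked that $D \mapsto D^\dtwist$ is defined as the \emph{conjugate}-linear automorphism of $\Diff(\tilde G)$ extending the swap $(X,Y) \mapsto (Y,X)$. Consequently, if $\Delta = \sum_\alpha C(\alpha) A^\alpha$ on $G$, then $\tilde\Delta^\dtwist = \sum_\alpha \overline{C(\alpha)}\,(0,A)^\alpha$, which is the operator along the second factor corresponding not to $\Delta$ but to $\Delta^\dconj$, where $D^\dconj$ is the operator determined by $\overline{Df} = D^\dconj \overline{f}$ (equivalently, $D^\dconj = d\RA_G(\overline C)$). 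Your identification $\tilde\Delta^\dtwist = \Delta^{\times_2}$ is therefore only correct when $L$ has real coefficients; the hypothesis $L = L^+$ does not force this (e.g.\ $i[X,Y]$ is formally self-adjoint with purely imaginary coefficients).

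The paper's proof plugs this gap by applying Proposition~\ref{prp:productwsub} with $D_1 = L$ and $D_2 = L^\dconj$, after checking that $L^\dconj$ is again weighted subcoercive: since $\Re\langle \phi, d\RA_G(\overline C)\phi\rangle = \Re\langle \overline\phi, d\RA_G(C)\overline\phi\rangle$, the form $\overline C$ satisfies the same G\aa rding inequality as $C$. Once that is in place, $(L^{\times_1})^2 + ((L^\dconj)^{\times_2})^2 = \tilde\Delta + \tilde\Delta^\dtwist$ is positive weighted subcoercive exactly as you intended.
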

\begin{proof}
For $D \in \Diff(G)$, let $D^\dconj \in \Diff(G)$ be the differential operator uniquely determined by the identity
\[\overline{D f} = D^\dconj \overline{f}.\]
Clearly the correspondence $D \mapsto D^\dconj$ defines a conjugate-linear involutive automorphism of the unital algebra $\Diff(G)$. From this, it is not difficult to prove that, for every $D \in \Diff(G)$,
\[\tilde{D}^\dtwist(f \otimes g) = f \otimes (D^\dconj g).\]

In particular, we have that $\tilde \Delta + \tilde \Delta^\dtwist$ is the sum of two left-invariant differential operators on $\tilde G$, of which the first is along the first factor of $G \times G$, and corresponds to $L^2$, whereas the second is along the second factor, and corresponds to $(L^\dconj)^2$. We know from the hypothesis that $L$ is self-adjoint and weighted subcoercive; by Proposition~\ref{prp:productwsub}, in order to conclude it will be sufficient to show that $L^\dconj$ is weighted subcoercive too.

Fix a reduced weighted algebraic basis $A_1,\dots,A_d$ of $\lie{g}$ and a weighted subcoercive form $C$ such that $dR_G(C) = L$. If $\overline{C}$ is the form defined by $\overline{C}(\alpha) = \overline{C(\alpha)}$, then it is easy to see that
\[\overline{dR_G(C) f} = dR_G(\overline{C}) \overline{f},\]
so that, by definition, $L^\dconj = dR_G(\overline{C})$. On the other hand,
\[\Re \langle \phi, dR_G(\overline{C}) \phi \rangle = \Re \langle \overline{\phi}, dR_G(C) \overline{\phi} \rangle,\]
so that, from the definition of weighted subcoercive form, it follows immediately that $\overline{C}$ is weighted subcoercive.
\end{proof}

As in the previous sections, let $L_1,\dots,L_n \in \Diff(G)$ be a 
weighted subcoercive system on the 
nilpotent Lie group $G$, 
and let $\Delta = p(L)^2$, where $p$ is a real polynomial such that $p(L)$ is weighted subcoercive.

We define
\[\tilde A = (\tilde \Delta + \tilde \Delta^\dtwist)/2, \qquad A = d\xi(\tilde A) = (\Delta + \Delta^\dstar)/2.\]
By Lemma~\ref{lem:twistrockland}, $\tilde A$ is a (left-invariant) positive weighted subcoercive operator on $\tilde G$, whereas $A$ is a differential operator on $G$ which in general is neither left- nor right-invariant. Since $\tilde A, \tilde \Delta, \tilde \Delta^\dtwist$ commute pairwise, the corresponding operators $A,\Delta,\Delta^\dstar$ in the representation $\xi$ admit a joint spectral resolution by Corollary~\ref{cor:commutativealgebra}.

Let $h_t$ ($t > 0$) be the convolution kernel of $e^{-t\Delta}$.

\begin{lem}\label{lem:kerneloperations}
Suppose that $u \in L^2(G)$ commutes with all the $h_t$ ($t > 0$). For all Borel $m : \R \to \C$, $u$ is in the domain of $m(\Delta)$ if and only if it is in the domain of $m(A)$, and in this case
\[m(A) u = m(\Delta) u.\]
\end{lem}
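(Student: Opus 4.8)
The plan is to prove that the contraction semigroups $e^{-tA}$ and $e^{-t\Delta}$ agree on the vector $u$, and then to transport this equality through the Borel functional calculus of the two self-adjoint operators.

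First I would record the structural facts. By Lemma~\ref{lem:twistrockland} together with the definitions of $\tilde A$ and $A$ made just after it, $\tilde A=(\tilde\Delta+\tilde\Delta^\dtwist)/2$ is a positive weighted subcoercive operator on $\tilde G=G\times G$, and $\tilde A,\tilde\Delta,\tilde\Delta^\dtwist$ commute pairwise; hence, by Corollary~\ref{cor:commutativealgebra} applied to the representation $\xi$, the operators $A=d\xi(\tilde A)$, $\Delta=d\xi(\tilde\Delta)$ and $\Delta^\dstar=d\xi(\tilde\Delta^\dtwist)$ are self-adjoint, commute strongly, and satisfy $A=(\Delta+\Delta^\dstar)/2$ in the strong sense. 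Moreover $\Delta=p(L)^2\ge 0$; since $G$ is unimodular one has $\langle a^\ast,g\rangle=\overline{\langle a,g^\ast\rangle}$, whence $\langle\Delta^\dstar g,g\rangle=\overline{\langle\Delta g^\ast,g^\ast\rangle}\ge 0$, so $\Delta^\dstar\ge 0$ and therefore $A\ge 0$. Thus the spectral resolutions $E_A$ of $A$ and $E_\Delta$ of $\Delta$ are supported in $[0,\infty)$, and $e^{-tA}$, $e^{-t\Delta}$ are genuine contraction semigroups.

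Next I would identify the two semigroups by their convolution kernels. Since $\Delta=p(L)^2$ is formally self-adjoint ($p$ real, the $L_j$ formally self-adjoint), its heat kernel satisfies $h_t^\ast=h_t$: indeed $e^{-t\Delta}$ is the self-adjoint operator $f\mapsto f\ast h_t$, whose adjoint is $f\mapsto f\ast h_t^\ast$ by the duality relations for convolution. One then checks that $e^{-t\Delta^\dstar}g=(e^{-t\Delta}g^\ast)^\ast$, since the right-hand side is a $C_0$-semigroup with generator $-\Delta^\dstar$; hence $e^{-t\Delta^\dstar}g=(g^\ast\ast h_t)^\ast=h_t\ast g$. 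Using the hypothesis $u\ast h_t=h_t\ast u$ we get $e^{-t\Delta^\dstar}u=h_t\ast u=u\ast h_t=e^{-t\Delta}u$ for all $t>0$, and therefore, by strong commutativity,
\[
e^{-tA}u=e^{-(t/2)\Delta}\bigl(e^{-(t/2)\Delta^\dstar}u\bigr)=e^{-(t/2)\Delta}\bigl(e^{-(t/2)\Delta}u\bigr)=e^{-t\Delta}u\qquad(t>0).
\]
Finally I would propagate this to arbitrary Borel $m$. The maps $\phi\mapsto\phi(A)u$ and $\phi\mapsto\phi(\Delta)u$ are linear and bounded (by $\|u\|_2$) from $C_0([0,\infty))$ into $L^2(G)$, and by the last display they agree on each $e^{-s\cdot}$ ($s>0$), hence on the subalgebra these functions generate, namely the linear span of $\{e^{-s\cdot}:s>0\}$. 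This subalgebra separates points of $[0,\infty)$, vanishes nowhere and is conjugation-stable, so by Stone--Weierstrass it is dense in $C_0([0,\infty))$; thus $\phi(A)u=\phi(\Delta)u$ for every $\phi\in C_0([0,\infty))$. Consequently, for each $v\in L^2(G)$ the finite complex Borel measures $B\mapsto\langle E_A(B)u,v\rangle$ and $B\mapsto\langle E_\Delta(B)u,v\rangle$ pair identically with $C_0([0,\infty))$, so by uniqueness in the Riesz representation theorem they coincide; hence $E_A(B)u=E_\Delta(B)u$ for every Borel $B$. Since $\|E_A(B)u\|_2=\|E_\Delta(B)u\|_2$, the condition $\int|m|^2\,d\|E_A(\cdot)u\|_2^2<\infty$ defining $u\in D(m(A))$ holds if and only if $u\in D(m(\Delta))$, and approximating $m$ by simple functions supported on $\{|m|\le k\}$ and letting $k\to\infty$ yields $m(A)u=m(\Delta)u$. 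The step I expect to be delicate is the identification $e^{-t\Delta^\dstar}g=h_t\ast g$: it requires keeping the $\dstar$-involution straight, using formal self-adjointness of $\Delta$ to obtain $h_t^\ast=h_t$, and the unimodularity of $G$ to rule out modular-function corrections; once $e^{-tA}u=e^{-t\Delta}u$ is in hand, the rest is routine spectral theory.
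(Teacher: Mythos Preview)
Your proof is correct and follows essentially the same approach as the paper: establish $e^{-tA}u=e^{-t\Delta}u$ via the identification $e^{-t\Delta^\dstar}g=h_t\ast g$ (the paper obtains this by observing that $g\mapsto g^\ast$ is an isometric automorphism of $L^2(G)$ intertwining $\Delta$ and $\Delta^\dstar$, which amounts to your semigroup computation together with $h_t^\ast=h_t$), then extend to $C_0([0,\infty))$ by Stone--Weierstrass, and finally to arbitrary Borel $m$. Your last step, passing through the identity $E_A(B)u=E_\Delta(B)u$ via the Riesz representation theorem, is in fact more detailed than the paper's, which simply invokes ``the spectral theorem and dominated convergence''.
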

\begin{proof}
Notice that, for all $f \in \D(G)$,
\[\Delta f^* = (\Delta^\dstar f)^*\]
Since $f \mapsto f^*$ is an isometric automorphism of $L^2(G)$, it intertwines also the spectral resolutions of $\Delta$ and $\Delta^\dstar$, so that in particular,
\[e^{-t\Delta^\dstar} f = (e^{-t \Delta} f^*)^* = (f^* * h_t)^* = h_t * f\]
and then
\[e^{-tA} f = e^{-t\Delta/2} e^{-t\Delta^\dstar/2} f = h_{t/2} * f * h_{t/2},\]
so that, since $u$ commutes with $h_{t/2}$,
\[e^{-tA} u = e^{-t\Delta} u.\]

Let $\xi_t(\lambda) = e^{-t \lambda}$, and set $\JJ_0 = \Span \{\xi_t \tc t > 0\}$. By the previous identity, it is immediate to show that, if $m \in \JJ_0$, then $m(A)u = m(\Delta) u$. 

If $m \in C_0(\R)$, then by the Stone-Weierstrass theorem we can find a sequence $m_k \in \JJ_0$ such that $m_k \to m$ uniformly on $\left[0,+\infty\right[$. Therefore, since $\Delta$ and $A$ are positive operators, by passing to the limit for $k \to +\infty$ in the identity $m_k(A) u = m_k(\Delta) u$, we obtain $m(A)u = m(\Delta)u$.

It is not difficult to extend the previous identity to all Borel $m : \R \to \C$ by the spectral theorem and dominated convergence.
\end{proof}

\begin{lem}\label{lem:leftrightdiff}
Let $X \in \lie{g}$. Then, for all $v \in \lie{g}$,
\[(X + X^\dstar)|_{\exp(v)} = d\exp_v([v,X]).\]
\end{lem}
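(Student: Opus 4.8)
<br>

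The plan is to compute both sides of the identity at a point $\exp(v)$ by evaluating them against a test function, reducing everything to a statement about the exponential map and ordinary derivatives. First I would recall the basic description of left- and right-invariant vector fields via the exponential map: for $X \in \lie{g}$ and $f \in \E(G)$,
\[
(Xf)(g) = \left.\frac{d}{dt}\right|_{t=0} f(g \exp(tX)), \qquad (X^\dstar f)(g) = ?
\]
where the second expression is obtained from the defining relation $(Xf)^* = X^\dstar f^*$. Writing out $(Xf)^*(g) = \overline{(Xf)(g^{-1})} = \overline{\left.\frac{d}{dt}\right|_{t=0} f(g^{-1}\exp(tX))}$ and comparing with $X^\dstar f^*(g) = X^\dstar \overline{f(g^{-1})}$, one sees that $X^\dstar$ is the \emph{right-invariant} vector field with value $X$ at the identity, i.e.\ $(X^\dstar f)(g) = \left.\frac{d}{dt}\right|_{t=0} f(\exp(tX) g)$. (This matches the discussion in the excerpt: the map $D \mapsto D^\dstar$ interchanges left- and right-invariant operators, and for $X \in \lie{g}$ we have $X^+ = -X$ with $X^\dstar$ the right-invariant extension.)

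Next I would evaluate $(X + X^\dstar)f$ at $g = \exp(v)$. We get
\[
\left((X+X^\dstar)f\right)(\exp v) = \left.\frac{d}{dt}\right|_{t=0} f\bigl(\exp(v)\exp(tX)\bigr) + \left.\frac{d}{dt}\right|_{t=0} f\bigl(\exp(tX)\exp(v)\bigr).
\]
The key computation is to identify the tangent vector at $\exp(v)$ obtained by summing the two curves $t \mapsto \exp(v)\exp(tX)$ and $t \mapsto \exp(tX)\exp(v)$. Each of these is a curve through $\exp(v)$; the first has velocity $d(\la_{\exp v})_e(X) = X_{\exp v}$ (the left-invariant field), and the second has velocity $d(\ra_{\exp(-v)^{-1}}... )$, i.e.\ the value of the right-invariant field at $\exp v$. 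Pulling both back through $\exp$ using that $d\exp_v$ is the relevant differential, one reduces to a purely Lie-theoretic identity in $\lie{g}$: the sum of the left-translate and right-translate of $X$ at $\exp(v)$, transported to $T_v\lie{g} \cong \lie{g}$ via $(d\exp_v)^{-1}$, equals $[v,X]$. This is a standard consequence of the formula for the differential of the exponential,
\[
d\exp_v = d(\la_{\exp v})_e \circ \frac{1 - e^{-\ad v}}{\ad v},
\]
combined with the analogous right-invariant formula $d\exp_v = d(\ra_{\exp v})_e \circ \frac{e^{\ad v} - 1}{\ad v}$; adding the two inverse transports gives $\frac{\ad v}{1-e^{-\ad v}} + \frac{\ad v}{e^{\ad v}-1}$ applied to $X$, and since $\frac{z}{1-e^{-z}} + \frac{z}{e^z-1} = \frac{z}{1-e^{-z}} - \frac{z}{1-e^{z}}$... wait, more cleanly: $\frac{z}{1-e^{-z}} - 1 = \frac{z}{e^z-1}$ rearranges so that the two series combine to $z$ exactly, i.e.\ $\frac{z}{1-e^{-z}} + \frac{-z}{1-e^{z}} = z$ reduces the operator to multiplication by $\ad v$, giving $[v,X]$. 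Then applying $d\exp_v$ back yields $d\exp_v([v,X])$ as an element of $T_{\exp v}G$, which is exactly the claimed right-hand side.

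I expect the main obstacle to be the careful bookkeeping of the two differential-of-exponential formulas (left-invariant vs.\ right-invariant trivialization) and verifying the elementary power-series identity that makes the two infinite-order operators collapse to the single term $\ad v$. This is entirely formal but error-prone with signs; I would handle it by working in the universal enveloping algebra / formal power series in $\ad v$, where the identity
\[
\frac{\ad v}{1 - e^{-\ad v}} \;-\; \frac{\ad v}{1 - e^{\ad v}} \;=\; \ad v
\]
(equivalently $\frac{z}{1-e^{-z}} + \frac{z}{1-e^z} = \frac{z}{1-e^{-z}} - \frac{z}{e^z - 1} $, and $\frac{z}{1-e^{-z}} - \frac{z}{e^z-1} = z$) can be checked directly from $\frac{1}{1-e^{-z}} = \frac{e^z}{e^z - 1}$, so $\frac{z}{1-e^{-z}} - \frac{z}{e^z-1} = \frac{z(e^z-1)}{e^z-1} = z$. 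The remaining steps — identifying $X^\dstar$ with the right-invariant field, and that convolution/translation identities in the excerpt are consistent with this — are routine given the material already developed in the chapter.
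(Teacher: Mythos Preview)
Your approach via the differential-of-exponential formula can be made to work, but there is a sign error that propagates through the whole computation. You claim that $X^\dstar$ is the right-invariant field with value $X$ at the identity; in fact it has value $-X$. In your own derivation you write $(Xf)^*(g) = \overline{\frac{d}{dt}\big|_{t=0} f(g^{-1}\exp(tX))}$; but $g^{-1}\exp(tX) = (\exp(-tX)g)^{-1}$, so $(Xf)^*(g) = \frac{d}{dt}\big|_{t=0} f^*(\exp(-tX)g)$, giving $(X^\dstar h)(g) = \frac{d}{dt}\big|_{t=0} h(\exp(-tX)g)$. With your sign, the two pulled-back vectors sum to $\frac{\ad v}{1-e^{-\ad v}}(X) + \frac{\ad v}{e^{\ad v}-1}(X)$, and the corresponding scalar function $\frac{z}{1-e^{-z}} + \frac{z}{e^z-1} = z\coth(z/2) = 2 + \frac{z^2}{6} + \dots$ does \emph{not} collapse to $z$; you would obtain $2X + \frac{1}{6}[v,[v,X]] + \dots$ rather than $[v,X]$. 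The identity you eventually write, $\frac{z}{1-e^{-z}} - \frac{z}{e^z-1} = z$, is correct and is precisely what arises with the right sign for $X^\dstar$ --- but it does not match your setup. (Your intermediate claim $\frac{z}{1-e^{-z}} - 1 = \frac{z}{e^z-1}$ is also false.)

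Once the sign is fixed, your route is valid but considerably more involved than the paper's. The paper observes directly that $(X+X^\dstar)f(\exp v) = \frac{d}{dt}\big|_{t=0} f(\exp(-tX)\exp(v)\exp(tX))$, recognises $\exp(-tX)\exp(v)\exp(tX) = \exp(\Ad(\exp(-tX))v)$, and differentiates $\Ad(\exp(-tX))v = e^{-t\ad X}v$ at $t=0$ to get $[v,X]$ in one line --- no power series in $\ad v$ are needed. Your approach trades this single conjugation identity for two trivialisation formulas plus a functional identity; it works, but the paper's argument is both shorter and less error-prone.
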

\begin{proof}
Notice that the semigroup on $\tilde G$ associated to $\tilde X + \tilde X^\dtwist$ is
\[t \mapsto (\exp(tX),\exp(tX)).\]
This means that, for all $f \in \D(G)$, $v \in \lie{g}$,
\[(X+X^\dstar)|_{\exp(v)} f = \left.\frac{d}{d t}\right|_{t=0} f(\exp(-tX) \exp(v) \exp(tX)).\]
On the other hand,
\[\exp(-tX) \exp(v) \exp(tX) = \exp(\Ad(\exp(-tX))(v)),\]
so that
\[\left.\frac{d}{d t}\right|_{t=0} (\exp(-tX) \exp(v) \exp(tX)) = d\exp_v(\ad(-X)(v)) = d\exp_v([v,X]),\]
which is the conclusion.
\end{proof}

In the following, since $G$ is nilpotent, we will identify $G$ with $\lie{g}$ via the exponential map.
%
Choose a basis $\nu_1,\dots,\nu_r$ of $(\lie{g}/\lie{z})^*$ and a basis $T_1,\dots,T_d$ of $\lie{z}$, and set $P_j = \nu_j \circ P$. The functions $P_j : G \to \R$ can be thought of as multiplication operators on $L^2(G)$, and it is not difficult to show that the operators
\[P_1,\dots,P_r,-iT_1,\dots,-iT_d\]
are (essentially) self-adjoint on $L^2(G)$ and commute strongly pairwise
, so that they admit a joint spectral resolution.

Via the chosen bases, $J$ can be identified with a bilinear map $\R^r \times \R^d \to \lie{y}^*$. Therefore, for every $Y \in \lie{y}$, we have a bilinear form $J(\cdot,\cdot)(Y) : \R^r \times \R^d \to \R$, which in fact is a polynomial; we can then evaluate this particular polynomial in the operators $P_1,\dots,P_r,-iT_1,\dots,-iT_d$, and denote by
\[J(P,-iT)(Y)\]
the resulting operator on $L^2(G)$.

Finally, choose an inner product on $\lie{y}$ --- which induces an inner product on $\lie{y}^*$ --- and an orthonormal basis $\{Y_l\}_l$ of $\lie{y}$. Then also the map
\[|J(\cdot,\cdot)|^2 : \R^r \times \R^d \to \R\]
is a polynomial, thus as before we can consider the operator $|J(P,-iT)|^2$ on $L^2(G)$, and clearly
\[|J(P,-iT)|^2 = \sum_l (J(P,-iT)(Y_l))^2.\]

\begin{lem}\label{lem:jdiffop}
For all $Y \in \lie{y}$, $J(P,-iT)(Y)$ is a differential operator on $G$; more precisely,
\[J(P,-iT)(Y) = -i(Y + Y^\dstar).\]
In particular
\[|J(P,-iT)|^2 = -\sum_l (Y_l + Y_l^\dstar)^2.\]
\end{lem}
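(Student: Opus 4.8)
The statement to be proven identifies the operator $J(P,-iT)(Y)$, built by substituting the (commuting, strongly commuting, essentially self-adjoint) operators $P_1,\dots,P_r,-iT_1,\dots,-iT_d$ into the polynomial $J(\cdot,\cdot)(Y):\R^r\times\R^d\to\R$, with the first-order differential operator $-i(Y+Y^\dstar)$. The natural approach is to compute directly, using the coordinate expression of the capacity map $J$ and Lemma~\ref{lem:leftrightdiff}. First I would recall that, since $G$ is nilpotent and identified with $\lie g$ via $\exp$, the multiplication operators $P_j$ act as the linear coordinate functions $v\mapsto \nu_j(P(v))$, and $-iT_k$ acts as the left-invariant vector field $T_k\in\lie z$. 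Since all $T_k$ are central, the corresponding vector fields are both left- and right-invariant, and in particular they act on a function $f$ by differentiation along straight lines in the central directions; this makes the substitution of the $-iT_k$ into a polynomial unambiguous.

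The heart of the computation is to evaluate $-i(Y+Y^\dstar)$ at a point $\exp(v)=v$ for $Y\in\lie y$ and compare it with the polynomial-substitution operator. By Lemma~\ref{lem:leftrightdiff}, $(Y+Y^\dstar)|_v = d\exp_v([v,Y])$; since $Y\in\lie y$ we have $[v,Y]\in\lie z$ by the definition \eqref{eq:centercenter} of $\lie y$, so $[v,Y]$ is a central element whose value I can read off via the basis $T_1,\dots,T_d$: writing $[v,Y]=\sum_k c_k(v) T_k$, the scalars $c_k(v)$ depend linearly on $v$ and in fact only through $P(v)$, because $[\lie z,\lie g]=0$. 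Thus $(Y+Y^\dstar)|_v f = \sum_k c_k(P(v)) (T_k f)(v)$, i.e.\ $Y+Y^\dstar = \sum_k c_k(P_1,\dots,P_r) T_k$ as an operator. On the other hand, unwinding the definition of the capacity map, $J(P(x),\tau)(Y)=\tau([x,Y])$ for $Y\in\lie y$, so in the chosen bases the polynomial $J(\cdot,\cdot)(Y):\R^r\times\R^d\to\R$ is precisely $(\bar x,\tau)\mapsto \sum_k c_k(\bar x)\,\tau_k$ (linear in each variable). Substituting $P$ for $\bar x$ and $-iT$ for $\tau$ gives $J(P,-iT)(Y)=\sum_k c_k(P_1,\dots,P_r)(-iT_k) = -i(Y+Y^\dstar)$, which is the first assertion; in particular it is a (first-order) differential operator on $G$. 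The second assertion then follows immediately by summing over the orthonormal basis $\{Y_l\}$ of $\lie y$ and using $|J(P,-iT)|^2=\sum_l (J(P,-iT)(Y_l))^2 = \sum_l (-i(Y_l+Y_l^\dstar))^2 = -\sum_l (Y_l+Y_l^\dstar)^2$.

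The one point requiring care — and the main (minor) obstacle — is the legitimacy of substituting the unbounded operators into the polynomial: this needs the joint spectral resolution of $P_1,\dots,P_r,-iT_1,\dots,-iT_d$ (already asserted in the excerpt just before the lemma) together with the fact that the resulting spectral-integral operator, restricted to $\D(G)$, coincides with the naive differential-operator expression. For a polynomial that is linear in the central variables $\tau$ this reduces to checking, on $\D(G)$, the identity $\sum_k c_k(P_1,\dots,P_r)(-iT_k)\phi = -i(Y+Y^\dstar)\phi$, which is exactly the pointwise computation carried out above; the polynomial functions $c_k(P)$ are smooth and multiply $\D(G)$ into $\E(G)$, and the $T_k$ are genuine vector fields, so no domain subtlety arises beyond observing that $\D(G)$ is a core for each operator in sight. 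Once that bookkeeping is in place the proof is the direct calculation; I expect no deeper difficulty.
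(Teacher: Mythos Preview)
Your proposal is correct and follows essentially the same approach as the paper: both use Lemma~\ref{lem:leftrightdiff} to compute $(Y+Y^\dstar)|_v = [v,Y] \in \lie z$, expand this central element in the basis $T_1,\dots,T_d$ with coefficients depending only on $P(v)$, and match this against the bilinear expansion of the polynomial $J(\cdot,\cdot)(Y)$. The paper runs the computation in the opposite direction (starting from the polynomial substitution and arriving at $-i(Y+Y^\dstar)|_x$), but the content is identical; your extra paragraph on domain bookkeeping is a harmless elaboration that the paper leaves implicit.
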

\begin{proof}
Let $\hat T_1,\dots,\hat T_d \in \lie{z}^*$ be the dual basis of $T_1,\dots,T_d$, and $\hat \nu_1,\dots,\hat \nu_r \in \lie{g}/\lie{z}$ be the dual basis of $\nu_1,\dots,\nu_r$. Then, by bilinearity, for every $Y \in \lie{y}$,
\[J(P,-iT)(Y) = -i \sum_{j,k} J(\hat \nu_j,\hat T_k)(Y) P_j T_k.\]
This shows that $J(P,-iT)(Y)$ is a differential operator on $G$. In fact, for all $x \in G = \lie{g}$, we have
\[\sum_j P_j(x) \hat\nu_j = P(x),\]
therefore
\[\begin{split}
J(P,-iT)(Y)|_x  &= -i \sum_k J(P(x),\hat T_k)(Y) T_k \\
&= -i \sum_k \hat T_k([x,Y]) T_k = -i [x,Y] = -i (Y + Y^\dstar)|_x
\end{split}\]
by Lemma~\ref{lem:leftrightdiff} (notice that, since $T_1,\dots,T_d$ are central, they are constant vector fields in exponential coordinates).
\end{proof}

Since $T_1,\dots,T_d$ are central, the left-invariant differential operators
\begin{equation}\label{eq:centralsystem}
L_1,\dots,L_n,-iT_1,\dots,-iT_d
\end{equation}
on $G$ are a weighted subcoercive system
. We can thus consider the Plancherel measure $\sigma'$ on $\R^n \times \lie{z}^*$ associated to this system, which can be shown not to depend on the choice of the basis of $\lie{z}$
.

The core of the technique under discussion is contained in the following

\begin{prp}\label{prp:partialweight}
Let us suppose that, for some nonnegative Borel functions $w : \lie{g}/\lie{z} \to \R$ and $\zeta : \lie{z}^* \to \R$, we have
\[|J(\bar x,\tau)| \geq w(\bar x) \, \zeta(\tau) \qquad \text{for all $\bar x \in \lie{g}/\lie{z}$, $\tau \in \lie{z}^*$.}\]
If $K \subseteq \R^n$ is compact, then for all $m \in \D(\R^n)$ with $\supp m \subseteq K$ and for all $\beta \geq 0$ we have
\[\||w \circ P|^\beta \breve m\|_2^2 \leq C_{K,\beta} \int_{\R^n \times \lie{z}^*} |m(\lambda)|^2 \,\zeta(\tau)^{-2\beta} \,d\sigma'(\lambda,\tau).\]
\end{prp}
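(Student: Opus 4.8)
The plan is to read the hypothesis as an operator inequality through a joint functional calculus, and then to bound the resulting expression by combining the fact that $\breve m$ commutes with the heat semigroup of $\Delta$ with an a priori estimate for a weighted subcoercive operator on $G\times G$, in the spirit of Hebisch--Zienkiewicz. Since $T_1,\dots,T_d$ are central, the self-adjoint operators $P_1,\dots,P_r,-iT_1,\dots,-iT_d$ commute strongly, hence admit a joint functional calculus in which $w\circ P$ is a function of $P$, $\zeta(-iT)$ a function of $-iT$, and $|J(P,-iT)|^2$ a function of both. Raising the hypothesis to the power $2\beta$ yields the pointwise bound $w(\bar x)^{2\beta}\le|J(\bar x,\tau)|^{2\beta}\zeta(\tau)^{-2\beta}$, which I would interpret in this calculus as $(w\circ P)^{2\beta}\le|J(P,-iT)|^{2\beta}\zeta(-iT)^{-2\beta}$. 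The locus $\{\zeta(\tau)=0\}$ requires care: we may assume the right-hand side of the Proposition finite, whence $\sigma'$ gives no mass to $\{(\lambda,\tau):m(\lambda)\neq0,\ \zeta(\tau)=0\}$; and since $\breve m$ is the convolution kernel of the operator of the system~\eqref{eq:centralsystem} attached to $(\lambda,\tau)\mapsto m(\lambda)$, Theorem~\ref{thm:plancherel} identifies the joint spectral measure of $\breve m$ for~\eqref{eq:centralsystem} with the image of $|m(\lambda)|^2\,d\sigma'$, so the relevant spectral measure of $\breve m$ ignores $\{\zeta=0\}$. Pairing against $\breve m$ then gives
\[\||w\circ P|^\beta\breve m\|_2^2=\langle(w\circ P)^{2\beta}\breve m,\breve m\rangle\le\langle|J(P,-iT)|^{2\beta}\,\zeta(-iT)^{-2\beta}\breve m,\breve m\rangle.\]

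Because $-iT_1,\dots,-iT_d$ occur in the weighted subcoercive system~\eqref{eq:centralsystem}, Lemma~\ref{lem:composition} shows $\zeta(-iT)^{-\beta}\breve m=\breve n$, where $\breve n$ is the kernel of the operator of~\eqref{eq:centralsystem} attached to $n(\lambda,\tau)=m(\lambda)\zeta(\tau)^{-\beta}$, an $L^2(\sigma')$ function whose squared kernel norm is exactly the right-hand side of the Proposition. Transferring one factor $\zeta(-iT)^{-\beta}$ across the inner product (it is self-adjoint and commutes with $|J(P,-iT)|^{2\beta}$), the estimate is reduced to
\[\||J(P,-iT)|^\beta\breve n\|_2^2\le C_{K,\beta}\,\|\breve n\|_2^2,\]
i.e.\ to bounding $|J(P,-iT)|^\beta$ by $C_{K,\beta}$ on the range, under the kernel transform of~\eqref{eq:centralsystem}, of functions with $\lambda$-support in $K$.

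For this I would invoke Lemma~\ref{lem:jdiffop}: $|J(P,-iT)|^2=-\sum_l(Y_l+Y_l^\dstar)^2=d\xi(-\mathcal R)$ with $\mathcal R=\sum_l(\tilde Y_l+\tilde Y_l^\dtwist)^2\in\Diff(G\times G)$ and $\xi(x,y)=R_xL_y$. The polynomial $p$ defining $\Delta=p(L)^2$ and $\tilde A=(\tilde\Delta+\tilde\Delta^\dtwist)/2$ may be taken of arbitrarily large degree; by Lemma~\ref{lem:twistrockland}, $\tilde A$ is then a positive weighted subcoercive operator on $G\times G$, and choosing $\deg\tilde A\ge\deg\mathcal R$ one can make, for any even integer $N\ge\beta$, the operator $\mathcal R^{N/2}$ subordinate in weighted degree to the G\aa rding control of $(\tilde A+1)^N$ in Theorem~\ref{thm:robinsonterelst}(v); applied in every unitary representation this gives $d\pi((-\mathcal R)^N)\le C\,(d\pi((\tilde A+1)^N)+1)$ as quadratic forms, hence, in the representation $\xi$, $|J(P,-iT)|^{2N}\le C'(A+1)^N$ with $A=d\xi(\tilde A)$. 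By the L\"owner--Heinz inequality ($x\mapsto x^{\beta/N}$ operator monotone, $\beta/N\le1$) this upgrades to $|J(P,-iT)|^{2\beta}\le(C')^{\beta/N}(A+1)^\beta$. Since the operators of~\eqref{eq:centralsystem} commute strongly, $\breve n$ commutes under convolution with the heat kernels $h_t$ of $\Delta$, so Lemma~\ref{lem:kerneloperations} lets $(A+1)^\beta$ act on $\breve n$ as $(\Delta+1)^\beta$, whence by Theorem~\ref{thm:plancherel}
\[\langle|J(P,-iT)|^{2\beta}\breve n,\breve n\rangle\le(C')^{\beta/N}\!\int(p(\lambda)^2+1)^\beta|n(\lambda,\tau)|^2\,d\sigma'\le C_{K,\beta}\|\breve n\|_2^2,\]
as $p$ is bounded on the compact $K$; chaining the three displayed inequalities proves the Proposition.

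The main obstacle is not the strategy but the unbounded-operator bookkeeping: making the first operator inequality legitimate on $\{\zeta=0\}$ (a truncation in $|J(P,-iT)|$ together with the identification of spectral data above), checking that $\breve m$ and $\breve n$ lie in the domains of — and commute with — the operators in play, and, above all, arranging the reduced basis of $\lie{g}\oplus\lie{g}$ and $\deg\tilde A$ so that $\mathcal R^{N/2}$ is genuinely controlled by the G\aa rding inequality for $(\tilde A+1)^N$. The need to reach fractional powers $\beta>1$ forces the detour through an auxiliary even integer $N$ and operator monotonicity, rather than a bare form inequality, and this is where the argument is least routine.
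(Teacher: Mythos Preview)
Your strategy is exactly the paper's: read the pointwise hypothesis in the joint functional calculus of $(P,-iT)$, bound $|J(P,-iT)|^\beta$ via the a~priori estimate for the weighted subcoercive operator $\tilde A$ on $G\times G$, replace $A$ by $\Delta$ on the kernel using Lemma~\ref{lem:kerneloperations}, and finish with the Plancherel formula for the extended system~\eqref{eq:centralsystem}.

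Where you diverge is in the handling of fractional $\beta$, and here the paper is noticeably simpler. It does not aim at an operator inequality $|J(P,-iT)|^{2\beta}\le C(A+1)^\beta$ between two non-commuting operators; instead it obtains directly the \emph{norm} bound $\||J(P,-iT)|^\beta\psi\|_2\le C_\beta\|q_\beta(A)\psi\|_2$ with $q_\beta$ a genuine polynomial. The point is that, inside the functional calculus of the \emph{single} positive operator $|J(P,-iT)|^2$, one has $\||J|^\beta\psi\|_2\le\|(1+|J|^2)^{N}\psi\|_2$ for any integer $N\ge\beta/2$; the right-hand side is already $\|d\xi\bigl((1-\mathcal R)^N\bigr)\psi\|_2$ for a differential operator $(1-\mathcal R)^N\in\Diff(\tilde G)$, and iterated G\aa rding for $\tilde A$ (Theorem~\ref{thm:robinsonterelst}(v)) bounds it by a polynomial in $A$. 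No L\"owner--Heinz and no tuning of $\deg\tilde A$ are needed. The paper also never introduces $\breve n$: it applies the chain with $f=\breve m$ (which is Schwartz, so domain questions are trivial), commutes $\zeta(-iT)^{-\beta}$ past $q_\beta(A)$ (the central $T_j$ commute strongly with $A$), turns $q_\beta(A)\breve m$ into $q_\beta(\Delta)\breve m$ via Lemma~\ref{lem:kerneloperations}, and only at the last line computes $\|\zeta(-iT)^{-\beta}q_\beta(\Delta)\breve m\|_2^2=\int|m|^2\,q_\beta(p^2)^2\,\zeta^{-2\beta}\,d\sigma'$, bounding $q_\beta(p^2)$ by its supremum on $K$.

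So your argument is correct in outline, but the L\"owner--Heinz detour, the adjustable degree of $p$, and the auxiliary kernel $\breve n$ are unnecessary; they are precisely what generates the unbounded-operator bookkeeping you worry about in your last paragraph, all of which the paper's norm-based route sidesteps.
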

\begin{proof}
From the hypothesis we deduce
\[w(\bar x)^\beta \leq |J(\bar x,\tau)|^\beta \zeta(\tau)^{-\beta}\]
and then also, by the spectral theorem,
\[\| |w \circ P|^\beta f  \|_2 \leq C_\beta \| |J(P,-iT)|^\beta \zeta(-iT)^{-\beta} f\|_2\]
for $f \in L^2(G)$.

By Lemma~\ref{lem:jdiffop}, we have
\[|J(P,-iT)|^2 = - \sum_l (Y_l + Y_l^\dstar)^2 = d\xi\left( - \sum_l (\tilde Y_l + \tilde Y_l^\dtwist)^2 \right).\]
Since $\tilde A$ is weighted subcoercive on $\tilde G$ and $-\sum_l (\tilde Y_l + \tilde Y_j^\dtwist)^2 \in \Diff(\tilde G)$, by Theorem~\ref{thm:robinsonterelst}(v), for some polynomial $q_\beta$ we have, in the representation $\xi$,
\[\| |J(P,-iT)|^\beta \psi \|_2 \leq C_\beta \| q_\beta(A) \psi\|_2,\]
therefore, by putting the two inequalities together, we get
\[\| |w \circ P|^\beta f \|_2 \leq C_\beta \|\zeta(-iT)^{-\beta} q_\beta(A) f \|_2\]
(since the $T_j$ commute strongly with $A$). In particular, if we take $f = \breve m$,
\[\| |w \circ P|^\beta \breve m \|_2 \leq C_\beta \|\zeta(-iT)^{-\beta} q_\beta(A) \breve m \|_2 = C_\beta \|\zeta(-iT)^{-\beta} q_\beta(\Delta) \breve m \|_2\]
by Lemma~\ref{lem:kerneloperations}, since $\breve m$ commutes with all the $h_t$ by Lemma~\ref{lem:composition}.

On the other hand, by Lemma~\ref{lem:composition} and Theorem~\ref{thm:plancherel},
\[\begin{split}
\|\zeta(-iT)^{-\beta} q_\beta(\Delta) \breve m \|_2^2 &= \int_{\R^n \times \lie{z}^*} |\breve m(\lambda)|^2 \, q_\beta(p(\lambda))^2 \, \zeta(\tau)^{-2\beta} \,d\sigma'(\lambda,\tau)\\
&\leq C_{K,\beta} \int_{\R^n \times \lie{z}^*} |\breve m(\lambda)|^2 \, \zeta(\tau)^{-2\beta} \,d\sigma'(\lambda,\tau),
\end{split}\]
where $C_{K,\beta}  = \sup_{\lambda \in K} q_\beta(p(\lambda))^2$.
\end{proof}

By simple manipulations, the previous estimate may take a slightly more general form.

\begin{cor}\label{cor:partialweight}
Let us suppose that, for some nonnegative Borel functions $w_j : \lie{g}/\lie{z} \to \R$ and $\zeta_j : \lie{z}^* \to \R$ ($j=1,\dots,h$), we have
\[|J(\bar x,\tau)| \geq w_j(\bar x) \, \zeta_j(\tau) \qquad \text{for all $\bar x \in \lie{g}/\lie{z}$, $\tau \in \lie{z}^*$,}\]
and set $\tilde w_j(x) = 1 + w_j(P(x))$. If $K \subseteq \R^n$ is compact, then for all $m \in \D(\R^n)$ with $\supp m \subseteq K$ and for all $\vec{\beta} = (\beta_1,\dots,\beta_h) \geq 0$ we have
\[\int_G |\breve m(x)|^2 \,\prod_{j=1}^h \tilde w_j(x)^{2\beta_j} \,dx 
\leq C_{K,\vec\beta} \int_{\R^n \times \lie{z}^*} |m(\lambda)|^2 \,\prod_{j=1}^h (1+\zeta_j(\tau)^{-2\beta_j}) \,d\sigma'(\lambda,\tau).\]
\end{cor}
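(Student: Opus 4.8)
The plan is to derive Corollary~\ref{cor:partialweight} from Proposition~\ref{prp:partialweight} by a purely elementary manipulation of the weights, exploiting the product structure of the bound and the fact that there are only finitely many indices $j$. First I would expand the product $\prod_{j=1}^h \tilde w_j(x)^{2\beta_j}$ on the left-hand side. Since $\tilde w_j(x) = 1 + w_j(P(x)) \leq 2\max\{1, w_j(P(x))\}$, and since for any $a \geq 0$, $\max\{1,a\}^{2\beta_j} \leq 1 + a^{2\beta_j}$, we get
\[
\tilde w_j(x)^{2\beta_j} \leq C_{\beta_j}\bigl(1 + (w_j(P(x)))^{2\beta_j}\bigr).
\]
Multiplying these inequalities over $j = 1,\dots,h$ and expanding, $\prod_j \tilde w_j(x)^{2\beta_j}$ is bounded by $C_{\vec\beta}$ times a finite sum of terms of the form $\prod_{j \in S} (w_j(P(x)))^{2\beta_j}$, where $S$ ranges over subsets of $\{1,\dots,h\}$ (the empty product being $1$).

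Next I would handle a single term $\prod_{j \in S} (w_j(P(x)))^{2\beta_j}$. The key point is that Proposition~\ref{prp:partialweight} is only stated for a single weight function. To reduce to that case, observe that for $x\in G$,
\[
\prod_{j \in S} (w_j(P(x)))^{2\beta_j} = \Bigl( W_S(P(x)) \Bigr)^{2},
\qquad W_S(\bar x) := \prod_{j \in S} (w_j(\bar x))^{\beta_j}.
\]
From the hypotheses $|J(\bar x,\tau)| \geq w_j(\bar x)\,\zeta_j(\tau)$ one gets, raising the $j$-th inequality to the power $\beta_j/(\sum_{k\in S}\beta_k)$ and multiplying (so that the exponents of $|J(\bar x,\tau)|$ add up to $1$), an inequality of the shape $|J(\bar x,\tau)| \geq W_S(\bar x)^{1/\gamma_S}\, Z_S(\tau)^{1/\gamma_S}$ with $\gamma_S = \sum_{k\in S}\beta_k$ and $Z_S(\tau) = \prod_{j\in S}\zeta_j(\tau)^{\beta_j}$; equivalently $|J(\bar x,\tau)|^{\gamma_S} \geq W_S(\bar x)\,Z_S(\tau)$, which is precisely the hypothesis of Proposition~\ref{prp:partialweight} applied to the weight pair $(W_S^{1/\gamma_S}\cdot, Z_S^{1/\gamma_S})$ with exponent $\beta = \gamma_S$ (or, more directly, one applies the proposition with the weight pair $(W_S, Z_S)$ once one checks the needed inequality in the normalized form $|J| \ge W_S^{1/\gamma_S} Z_S^{1/\gamma_S}$, which is what the weighted-AM argument above produces). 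Proposition~\ref{prp:partialweight} then yields, for $m\in\D(\R^n)$ with $\supp m \subseteq K$,
\[
\int_G |\breve m(x)|^2 \prod_{j\in S}(w_j(P(x)))^{2\beta_j}\,dx
\leq C_{K,\vec\beta}\int_{\R^n\times\lie z^*} |m(\lambda)|^2\, Z_S(\tau)^{-2}\,d\sigma'(\lambda,\tau).
\]
Finally, bounding $Z_S(\tau)^{-2} = \prod_{j\in S}\zeta_j(\tau)^{-2\beta_j} \leq \prod_{j=1}^h\bigl(1+\zeta_j(\tau)^{-2\beta_j}\bigr)$ and summing over the finitely many subsets $S$, with the case $S=\emptyset$ contributing the constant term, gives the asserted estimate after adjusting $C_{K,\vec\beta}$.

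I do not expect a genuine obstacle here: the whole content of the corollary is the switch from "one weight" to "a finite family of weights," and this is achieved by the distributive expansion of the product plus a weighted arithmetic--geometric step to combine the several lower bounds on $|J(\bar x,\tau)|$ into a single one of the form required by Proposition~\ref{prp:partialweight}. The only point requiring a little care is making sure the exponents in the combined inequality match the exponent $\beta$ in the statement of Proposition~\ref{prp:partialweight} and that the resulting function of $\tau$ that appears (namely $Z_S$) is controlled by $\prod_j(1+\zeta_j(\tau)^{-2\beta_j})$; both are immediate once the bookkeeping of exponents is written out. One should also note the degenerate cases $\beta_j = 0$ (the corresponding factor is just $1$ and can be dropped) and $S=\emptyset$ (giving the term $\int_G|\breve m|^2 = \int |m|^2\,d\sigma$ by Theorem~\ref{thm:plancherel}, which is dominated by the right-hand side since the extra factors are $\geq 1$), so that the argument covers all $\vec\beta \geq 0$.
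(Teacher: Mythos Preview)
Your proposal is correct and follows essentially the same route as the paper: expand the product $\prod_j \tilde w_j^{2\beta_j}$ into a sum over subsets $S\subseteq\{1,\dots,h\}$, for each $S$ take the weighted geometric mean of the inequalities $|J|\ge w_j\zeta_j$ (with weights $\beta_j/\beta_S$, $\beta_S=\sum_{j\in S}\beta_j$) to obtain a single inequality $|J|\ge w_S\zeta_S$ of the form required by Proposition~\ref{prp:partialweight}, apply that proposition with exponent $\beta_S$, and sum. Your handling of the degenerate cases $S=\emptyset$ and $\beta_j=0$ is also correct.
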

\begin{proof}
If we set, for $I \subseteq \{1,\dots,h\}$,
\[\beta_I = \sum_{j\in I} \beta_j, \qquad w_{\vec{\beta},I}(\bar x) = \prod_{j \in I} w_j(\bar x)^{\beta_j/\beta_I}, \qquad \zeta_{\vec{\beta},I}(\tau) = \prod_{j \in I} \zeta_j(\tau)^{\beta_j/\beta_I},\]
then clearly
\[|J(\bar x,\tau)| \geq w_I(\bar x) \, \zeta_I(\tau)  \qquad \text{for all $\bar x \in \lie{g}/\lie{z}$, $\tau \in \lie{z}^*$,}\]
and moreover
\[\prod_{j=1}^h \tilde w_j(x)^{2\beta_j} \leq C_{\vec{\beta}} \sum_{I \subseteq \{1,\dots,h\}} w_{\vec{\beta},I}(P(x))^{2\beta_I},\]
\[\prod_{j=1}^h (1+\tau_j(\tau)^{-2\beta_j}) = \sum_{I \subseteq \{1,\dots,h\}} \zeta_{\vec{\beta},I}(\tau)^{-2\beta_I},\]
therefore the conclusion follows by repeated application of Proposition~\ref{prp:partialweight}.
\end{proof}

Therefore, under some particular hypotheses, we may control a weighted $L^2$ norm of $\breve m$ in terms of a weighted $L^2$ norm of $m$, where the weight on the spectral side is a function of the central coordinates. The following result gives some information about the latter norm.

\begin{prp}\label{prp:homogeneouspushforward}
Let $\zeta \in L^1_\loc(\lie{z}^*)$ be nonnegative. Then the push-forward $\sigma_\zeta$ of the measure
\begin{equation}\label{eq:measurenearlyproduct}
\zeta(\tau) \,d\sigma'(\lambda,\tau)
\end{equation}
on the first factor of $\R^n \times \lie{z}^*$ is a regular Borel measure on $\R^n$.

Suppose moreover that $G$ is a homogeneous group, with dilations $\delta_t$ and homogeneous dimension $Q_\delta$, and that $L_1,\dots,L_n$ is a homogeneous system, with associated dilations $\epsilon_t$. If the function $\zeta$ is homogeneous of degree $a$, i.e.,
\[\zeta(\tau \circ \delta_t) = t^a \zeta(\tau),\]
then $\sigma_\zeta$ is homogeneous of degree $Q_\delta+a$, i.e.,
\[\sigma_\zeta(\epsilon_t(A)) = t^{Q_\delta+a} \sigma_\zeta(A)\]
for all Borel $A \subseteq \R^n$.
\end{prp}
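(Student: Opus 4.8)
The plan is to follow the same pattern used already in the chapter for the Plancherel measure of a weighted subcoercive system (Theorem~\ref{thm:plancherel}, Proposition~\ref{prp:plancherelhomogeneous}, Corollary~\ref{cor:radialcoordinates}), specialized to the system \eqref{eq:centralsystem} and its Plancherel measure $\sigma'$. First I would observe that the measure \eqref{eq:measurenearlyproduct} is a regular Borel measure on $\R^n \times \lie{z}^*$: indeed $\sigma'$ is regular by Theorem~\ref{thm:plancherel}, and multiplying a regular Borel measure by a nonnegative locally integrable density $\zeta$ yields again a regular Borel measure (this is a standard fact; alternatively one can invoke Proposition~\ref{prp:plancherelpolynomialgrowth} together with the local integrability of $\zeta$ to check finiteness on compacta, and then regularity follows exactly as in the proof of Theorem~\ref{thm:plancherel}, via Theorem~2.18 of \cite{rudin_real_1974}). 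The push-forward of a regular Borel measure under the (continuous) projection $\R^n \times \lie{z}^* \to \R^n$ is then a Borel measure on $\R^n$; for regularity of $\sigma_\zeta$ it again suffices to show $\sigma_\zeta$ is finite on compact subsets of $\R^n$ (outer/inner regularity then follow as usual), and this I would get by bounding, for compact $K \subseteq \R^n$, the quantity $\sigma_\zeta(K) = \int_{K \times \lie{z}^*} \zeta(\tau)\,d\sigma'(\lambda,\tau)$ using the homogeneity and the polynomial growth of $\sigma'$ — in fact in the homogeneous case this finiteness will come out for free from the scaling argument below, so only the first (non-homogeneous) assertion needs a direct check.

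Next, for the homogeneous part, I would use Proposition~\ref{prp:plancherelhomogeneous} applied to the homogeneous weighted subcoercive system \eqref{eq:centralsystem} on the homogeneous group $G$. Here the relevant dilations on the spectral side $\R^n \times \lie{z}^*$ are $\epsilon_t$ on the $\R^n$-factor together with the dual dilations $\tau \mapsto \tau \circ \delta_t$ on the $\lie{z}^*$-factor, since the central derivatives $-iT_k$ scale under $\delta_t$ by the corresponding degrees, which is exactly the action dual to $\delta_t|_{\lie z}$; call this combined system of dilations $\epsilon'_t$. By Proposition~\ref{prp:plancherelhomogeneous}, $\sigma'$ is $\epsilon'_t$-homogeneous of degree equal to the homogeneous dimension $Q_\delta$ of $G$, i.e.\ $\sigma'(\epsilon'_t(B)) = t^{Q_\delta}\sigma'(B)$ for all Borel $B \subseteq \R^n \times \lie{z}^*$. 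Now for a Borel $A \subseteq \R^n$ I would compute
\[
\sigma_\zeta(\epsilon_t(A)) = \int_{\R^n \times \lie{z}^*} \chr_{\epsilon_t(A)}(\lambda)\,\zeta(\tau)\,d\sigma'(\lambda,\tau)
= \int_{\R^n \times \lie{z}^*} \chr_A(\epsilon_{t^{-1}}(\lambda))\,\zeta(\tau)\,d\sigma'(\lambda,\tau),
\]
and then change variables via $\epsilon'_t$: since $\sigma'$ scales with factor $t^{Q_\delta}$ under $\epsilon'_t$ and $\zeta(\tau \circ \delta_t) = t^a \zeta(\tau)$, pulling back the integrand along $(\lambda,\tau) \mapsto \epsilon'_t(\lambda,\tau) = (\epsilon_t(\lambda), \tau\circ\delta_{t^{-1}})$ (or the appropriate direction matching the convention in Proposition~\ref{prp:plancherelhomogeneous}) produces a factor $t^{Q_\delta}$ from the measure and a factor $t^{a}$ from $\zeta$, while $\chr_A(\epsilon_{t^{-1}}(\lambda))$ becomes $\chr_A(\lambda)$. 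This gives $\sigma_\zeta(\epsilon_t(A)) = t^{Q_\delta + a}\sigma_\zeta(A)$, as claimed.

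The one point requiring a little care — and the main (minor) obstacle — is bookkeeping the dual dilations on $\lie{z}^*$ and matching exponent conventions: one must verify that the spectral dilations associated by Proposition~\ref{prp:plancherelhomogeneous} to the system \eqref{eq:centralsystem} act on the $\lie z^*$-coordinates precisely as $\tau \mapsto \tau\circ\delta_t$ (this follows from $\delta_t(-iT_k) = $ (scalar)$\,(-iT_k)$ with scalar the $\delta_t$-weight of $T_k$, hence $\epsilon_t$ on the spectral side has, in the $\lie z^*$ block, exactly the weights of $\lie z$, which is the transpose action), and that the homogeneity of $\sigma'$ of degree $Q_\delta$ together with the homogeneity of $\zeta$ of degree $a$ combine additively. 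Once the conventions are pinned down, the change of variables is routine and the finiteness-on-compacta statement needed for regularity in the homogeneous case is an immediate consequence (take $A$ a ball and let $t \to 0$, using $\sigma_\zeta(\{0\})=0$ which follows from $\sigma'(\{0\}\times\lie z^*)$ having the appropriate scaling). For the general (non-homogeneous) assertion, regularity of $\sigma_\zeta$ reduces, as indicated, to regularity of $\zeta\,\sigma'$ and continuity of the projection, which I would state directly.
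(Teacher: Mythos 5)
Your treatment of the homogeneity assertion is correct and coincides with the paper's: one takes the basis $T_1,\dots,T_d$ of $\lie{z}$ to consist of $\delta_t$-homogeneous elements, so that the extended system \eqref{eq:centralsystem} is homogeneous with spectral dilations $\epsilon'_t(\lambda,\tau)=(\epsilon_t(\lambda),\tau\circ\delta_t)$, invokes Proposition~\ref{prp:plancherelhomogeneous} to get that $\sigma'$ is homogeneous of degree $Q_\delta$, and observes that the density $\zeta$ contributes the extra factor $t^a$ while the canonical projection intertwines $\epsilon'_t$ with $\epsilon_t$.

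The regularity assertion, however, is where the real content lies, and your argument for it has a genuine gap. The hypothesis is $\zeta\in L^1_\loc(\lie{z}^*)$ \emph{with respect to Lebesgue measure} on $\lie{z}^*$, not with respect to the $\tau$-marginal of $\sigma'$. The ``standard fact'' you invoke --- that multiplying a regular Borel measure by a nonnegative Lebesgue-locally-integrable density again gives a regular Borel measure --- is false: take for the measure the Dirac mass at $0\in\lie{z}^*$ and $\zeta(\tau)=|\tau|^{-\gamma}$ with $0<\gamma<\dim\lie{z}$. This objection is not academic, since the intended applications (Proposition~\ref{prp:metivierl1}, Theorem~\ref{thm:improvedl1estimates}) use precisely such singular $\zeta$; and ``homogeneity and polynomial growth of $\sigma'$'' do not help, for the same reason. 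What must actually be proved is that the $\tau$-marginal of $\sigma'$, suitably localized in $\lambda$, is absolutely continuous with bounded density with respect to Lebesgue measure on $\lie{z}^*$. The paper does this in two steps: first, Lemma~\ref{lem:properwsub} (properness of the projection restricted to $\supp\sigma'$) localizes the $\tau$-integration to a compact $K'\subseteq\lie{z}^*$; second, by Lemma~\ref{lem:composition} and Theorem~\ref{thm:plancherel} one identifies $\int_{K\times K'}e^{-2p(\lambda)}\zeta(\tau)\,d\sigma'(\lambda,\tau)$ with $\|(\zeta\chr_{K'})^{1/2}(-iT)h_1\|_2^2$, which is finite because $h_1\in\Sz(G)$ and a partial Euclidean Fourier transform in the central variables converts $(\zeta\chr_{K'})^{1/2}(-iT)$ into multiplication by $(\zeta\chr_{K'})^{1/2}$, with $\zeta\chr_{K'}\in L^1(\lie{z}^*)$ by hypothesis. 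You would need to supply this (or an equivalent) argument; without it the finiteness of $\sigma_\zeta$ on compacta, and hence its regularity, does not follow.
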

\begin{proof}
Let $K \subseteq \R^n$ be compact. By Lemma~\ref{lem:properwsub}, the canonical projection $\R^n \times \lie{z}^* \to \R^n$ is a proper continuous map when restricted to $\supp \sigma'$, therefore there is a compact $K' \subseteq \lie{z}^*$ such that
\[(K \times \lie{z}^*) \cap \supp \sigma' \subseteq K \times K',\]
and consequently
\[\begin{split}
\sigma_\zeta(K) &= \int_{K \times \lie{z}^*} \zeta(\tau) d\sigma'(\lambda,\tau) \\
&\leq C_K \int_{K \times K'} e^{-2p(\lambda)} \,\zeta(\tau) d\sigma'(\lambda,\tau)
= C_K \| (\zeta \chr_{K'})^{1/2}(-iT) h_1\|_2^2,
\end{split}\]
by Lemma~\ref{lem:composition} and Theorem~\ref{thm:plancherel}. On the other hand, since $h_1 \in \Sz(G)$, the last quantity is easily seen to be finite by using the Euclidean Fourier transform and the fact that $(\zeta \chr_{K'})^{1/2} \in L^2(\lie{z}^*)$.

We have thus proved that, if $K\subseteq \R^n$ is compact, then $\sigma_\zeta(K) < \infty$. By Theorem~2.18 of \cite{rudin_real_1974}, this means that $\sigma_\zeta$ is a regular Borel measure on $\R^n$.

Suppose now that $G$ is a homogeneous group and $L_1,\dots,L_n$ a homogeneous system. Without loss of generality, we may take the basis $T_1,\dots,T_d$ of $\lie{z}$ as composed by $\delta_t$-homogeneous elements, so that
\[L_1,\dots,L_n,-iT_1,\dots,-iT_d\]
is a homogeneous weighted subcoercive system, and the associated dilations $\epsilon'_t$ on $\R^n \times \lie{z}^*$ are given by
\[\epsilon'_t(\lambda,\tau) = (\epsilon_t(\lambda), \tau \circ \delta_{t}).\]
By Proposition~\ref{prp:plancherelhomogeneous}, $\sigma'$ is $\epsilon'_t$-homogeneous of degree $Q_\delta$. Therefore, if $\zeta$ is homogeneous of degree $a$, then clearly the measure \eqref{eq:measurenearlyproduct} is homogeneous of degree $Q_\delta+a$; since the canonical projection $\R^n \times \lie{z}^* \to \R^n$ intertwines the two system of dilations, we infer that also $\sigma_\zeta$ is homogeneous of degree $Q_\delta+a$.
\end{proof}

The previous results give, via interpolation, an improvement of the estimates of \S\ref{section:weightedestimates}.

\begin{prp}\label{prp:partialweight2}
Let us suppose that, for some nonnegative Borel functions $w_j : \lie{g}/\lie{z} \to \R$ and $\zeta_j : \lie{z}^* \to \R$ ($j=1,\dots,k$), we have
\[|J(\bar x,\tau)| \geq w_j(\bar x) \, \zeta_j(\tau) \qquad \text{for all $\bar x \in \lie{g}/\lie{z}$, $\tau \in \lie{z}^*$,}\]
and set $\tilde w_j(x) = 1 + w_j(P(x))$. Let us suppose moreover that, for some open $A \subseteq \R^n$, $\vec{\beta} \geq 0$, $\bar\gamma \geq 0$, $1 \leq p,q \leq \infty$, and all compact $K \subseteq A$,
\begin{equation}\label{eq:tracehypothesis}
\int_{\R^n \times \lie{z}^*} |f(\lambda)|^2 \,\prod_{j=1}^h (1+\zeta_j(\tau)^{-2\beta_j}) \, d\sigma'(\lambda,\tau) \leq C_{K,\vec{\beta},\gamma,p,q} \|f\|^2_{B^\gamma_{p,q}(\R^n)}
\end{equation}
for all $\gamma > \bar\gamma$ and $f \in \D(\R^n)$ with $\supp f \subseteq K$. Then
\[\left(\int_G |\breve m(x)|^2 \,\langle x\rangle_G^{2\alpha} \,\prod_{j=1}^h \tilde w_j(x)^{2\beta_j} \,dx\right)^{1/2} \leq C_{K,\alpha,\vec{\beta},\gamma} \|m\|_{B^\gamma_{p,q}(\R^n)}\]
for all $\alpha \geq 0$, $\gamma > \alpha+\bar\gamma$, $K \subseteq A$ compact, $m \in \D(\R^n)$ with $\supp m \subseteq K$.
\end{prp}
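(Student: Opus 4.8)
The plan is to realise the asserted estimate as an operator bound for $M_0\colon m\mapsto(m\xi)\breve{}\,$, where $\xi\in\D(\R^n)$ is fixed with $\xi\equiv 1$ on $K$ and $\supp\xi\subseteq A$; the statement for $m\in\D(\R^n)$ with $\supp m\subseteq K$ then follows at once, since $m=m\xi$ and $\|m\xi\|_{B^\gamma_{p,q}}\lesssim\|m\|_{B^\gamma_{p,q}}$ by Proposition~\ref{prp:besovproduct}. Write $W=\prod_{j=1}^h\tilde w_j^{2\beta_j}$ for the central weight. The first remark is that $W$ grows at most polynomially: the capacity map $J$ is bilinear, so for a fixed $\tau_0$ with $\zeta_j(\tau_0)>0$ the hypothesis $|J(\bar x,\tau_0)|\ge w_j(\bar x)\zeta_j(\tau_0)$ together with the linearity of $J(\cdot,\tau_0)$ gives $w_j(\bar x)\le C|\bar x|$, whence $\tilde w_j(x)\le C\langle x\rangle_G$ and $W(x)\le C\langle x\rangle_G^{2|\vec\beta|_1}$ with $|\vec\beta|_1=\beta_1+\dots+\beta_h$ (the factors with $\beta_j=0$ contribute $1$ to $W$; and for $\beta_j>0$ such a $\tau_0$ must exist, as otherwise $\zeta_j$ would vanish on a $\sigma'$-significant set, contradicting the finiteness in \eqref{eq:tracehypothesis}).

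The proof rests on two endpoint estimates, afterwards interpolated. The first is the case $\alpha=0$: for $m\in\D(\R^n)$ supported in $\supp\xi$, Corollary~\ref{cor:partialweight} combined with \eqref{eq:tracehypothesis} yields $\int_G|\breve m|^2\,W\,dx\le C\|m\|_{B^{\Gamma_0}_{p,q}}^2$ for every $\Gamma_0>\bar\gamma$; extending this to all of $B^{\Gamma_0}_{p,q}(\R^n)$ — approximating by smooth functions via Proposition~\ref{prp:besovapproximation} (for $q<\infty$) and passing to the limit by the lower semicontinuity of the weighted $L^2$-norm along convergence of kernels in $\Sz'(\R^n)$, as in Lemma~\ref{lem:besovfatou} — and composing with multiplication by $\xi$ gives
\[\text{$M_0$ bounded }B^{\Gamma_0}_{p,q}(\R^n)\to L^2(G,W\,dx)\qquad\text{for }\Gamma_0>\bar\gamma.\]
The second endpoint is a crude bound carrying \emph{both} weights at the cost of many derivatives: since $W\langle x\rangle_G^{2\alpha_1}\le C\langle x\rangle_G^{2(\alpha_1+|\vec\beta|_1)}$, Theorem~\ref{thm:l2estimates}(i) and the embeddings of Proposition~\ref{prp:besovembeddings} give
\[\text{$M_0$ bounded }B^{\Gamma_1}_{p,q}(\R^n)\to L^2(G,W\langle x\rangle_G^{2\alpha_1}\,dx)\qquad\text{for }\Gamma_1>\alpha_1+|\vec\beta|_1+n/p.\]

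Finally one interpolates. Stein--Weiss (Theorem~\ref{thm:steinweiss}, with $p_0=p_1=2$ and weights $W$, $W\langle x\rangle_G^{2\alpha_1}$) gives $(L^2(W\,dx),L^2(W\langle x\rangle_G^{2\alpha_1}\,dx))_{[\theta]}=L^2(W\langle x\rangle_G^{2\theta\alpha_1}\,dx)$ — the full power of $W$ surviving — while Proposition~\ref{prp:besovinterpolation}(iii) gives $(B^{\Gamma_0}_{p,q},B^{\Gamma_1}_{p,q})_{[\theta]}=B^{(1-\theta)\Gamma_0+\theta\Gamma_1}_{p,q}$ for $q<\infty$. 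Hence $M_0$ is bounded from $B^{(1-\theta)\Gamma_0+\theta\Gamma_1}_{p,q}(\R^n)$ into $L^2(G,W\langle x\rangle_G^{2\theta\alpha_1}\,dx)$. Setting $\alpha=\theta\alpha_1$ and letting $\theta\to 0^+$ (so $\alpha_1=\alpha/\theta\to\infty$), the smoothness threshold $(1-\theta)\bar\gamma+\alpha+\theta(|\vec\beta|_1+n/p)$ tends to $\alpha+\bar\gamma$; so for any $\gamma>\alpha+\bar\gamma$ a suitable choice of $\theta,\Gamma_0,\Gamma_1$ yields $M_0$ bounded $B^\gamma_{p,q}(\R^n)\to L^2(G,\langle x\rangle_G^{2\alpha}W\,dx)$ (and $q=\infty$ is reduced to $q<\infty$ at the very end via $B^\gamma_{p,\infty}\subseteq b^{\gamma'}_{p,1}$ for $\gamma'\in(\alpha+\bar\gamma,\gamma)$). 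I expect this last step to be the main point: interpolating the $\alpha=0$ estimate naively against the unweighted bound of \S\ref{section:weightedestimates} would recover only a \emph{fractional} power $W^{1-\theta}$ of the central weight, so one must deliberately arrange one endpoint to carry the complete weight $W$ (which is why polynomial growth of $W$, giving the crude endpoint via Theorem~\ref{thm:l2estimates}, is used); the remaining issues — promoting Corollary~\ref{cor:partialweight} to a Besov operator bound, and the bookkeeping for $q=\infty$ — are routine.
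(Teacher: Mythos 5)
Your proof is correct and is essentially the paper's own argument: the same two endpoints (the $\alpha=0$ estimate from Corollary~\ref{cor:partialweight} together with \eqref{eq:tracehypothesis}, and a high-regularity estimate from Theorem~\ref{thm:l2estimates} after absorbing the polynomially bounded weight $W$ into $\langle x\rangle_G$), both endpoints deliberately carrying the full weight $W$ so that Stein--Weiss interpolation preserves it, followed by the degenerate limit of the interpolation parameter. The one slip is the claim $\tilde w_j(x)\le C\langle x\rangle_G$: linearity of $J(\cdot,\tau_0)$ does give $w_j(\bar x)\le C|\bar x|$ for a Euclidean norm on $\lie{g}/\lie{z}$, but coordinates of $\lie{g}/\lie{z}$ may lie in deeper layers of the descending central series and then compare to $\langle x\rangle_G^{\theta}$ with $\theta>1$ by Proposition~\ref{prp:nilpotentgrowth} (e.g.\ on $N_{2,3}$), so in general one only gets $\tilde w_j(x)\le C\langle x\rangle_G^{\theta}$ for some $\theta\ge 1$, exactly as the paper states --- harmless here, since the threshold at the crude endpoint is washed out in the limit anyway.
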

\begin{proof}
By \eqref{eq:tracehypothesis} it follows that the function $\zeta_j$ cannot be everywhere null, therefore the inequality
\[|J(\bar x,\tau)| \geq w_j(\bar x) \, \zeta_j(\tau),\]
together with the bilinearity of $J$ and Proposition~\ref{prp:nilpotentgrowth}, implies that, for some $C,\theta \geq 0$
\[w_j(P(x)) \leq C \langle x \rangle_G^\theta \qquad\text{for all $x \in G$;}\]
thus we also have
\[\prod_{j=1}^h \tilde w_j(x)^{2\beta_j} \leq C_{\vec{\beta}} \langle x \rangle_G^{2\theta(\beta_1 + \dots + \beta_h)}\]
for some $C_{\vec{\beta}} \geq 0$.

Let $\psi \in \D(\R^n)$ such that $\psi|_K = 1$, $K' = \supp \psi \subseteq A$. The operator
\[T : m \mapsto (m \psi)\breve{}\]
is then continuous
\[B^\gamma_{p,q}(\R^n) \to L^2(G, \langle x \rangle_G^{2\alpha} \textstyle\prod_{j=1}^h \tilde w_j(x)^{2\beta_j} \,dx)\]
for $\alpha \geq 0$, $\gamma > \alpha + \theta(\beta_1 + \dots \beta_h) + n$ by Theorem~\ref{thm:l2estimates}, whereas it is continuous
\[B^\gamma_{p,q}(\R^n) \to L^2(G, \textstyle\prod_{j=1}^h \tilde w_j(x)^{2\beta_j} \,dx)\]
for $\gamma > \bar\gamma$ by Corollary~\ref{cor:partialweight} and \eqref{eq:tracehypothesis}. The conclusion then follows by interpolation (see Proposition~\ref{prp:besovinterpolation}).
\end{proof}



\begin{cor}\label{cor:improvedl1}
Under the hypotheses of Proposition~\ref{prp:partialweight2}, suppose moreover that
\begin{equation}\label{eq:compensazione}
\int_{G} \langle x \rangle_G^{-2\alpha} \, \prod_{j=1}^h \tilde w_j(x)^{-2\beta_j} \,dx < \infty
\end{equation}
for $\alpha > \bar\alpha_{\vec{\beta}}$. Then we have
\[\int_G |\breve m(x)| \, \langle x \rangle_G^\alpha \,dx \leq C_{K,\alpha,\vec{\beta},\gamma} \|m\|_{B^\gamma_{p,q}}\]
for all $\alpha \geq 0$, $\gamma > \alpha+\bar\alpha_{\vec{\beta}}+\bar\gamma$, $K \subseteq A$ compact, $m \in \D(\R^n)$ with $\supp m \subseteq K$.
\end{cor}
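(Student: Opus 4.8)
The plan is to derive the weighted $L^1$ estimate from the weighted $L^2$ estimate of Proposition~\ref{prp:partialweight2} by a single application of the Cauchy--Schwarz inequality, using the auxiliary weight $\prod_{j=1}^h \tilde w_j(x)^{\beta_j}$ as a ``pivot''. Concretely, given $m \in \D(\R^n)$ with $\supp m \subseteq K$, I would split the integrand as
\[
|\breve m(x)| \, \langle x \rangle_G^\alpha
= \Bigl( |\breve m(x)| \, \langle x \rangle_G^{\alpha_1} \prod_{j=1}^h \tilde w_j(x)^{\beta_j} \Bigr)
\Bigl( \langle x \rangle_G^{\alpha_2} \prod_{j=1}^h \tilde w_j(x)^{-\beta_j} \Bigr),
\]
where $\alpha_1 + \alpha_2 = \alpha$ and the exponents $\alpha_1 \geq 0$, $\alpha_2 \in \R$ are to be chosen. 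By Cauchy--Schwarz, $\int_G |\breve m(x)| \langle x \rangle_G^\alpha \,dx$ is then bounded by the product of $\bigl( \int_G |\breve m|^2 \langle x \rangle_G^{2\alpha_1} \prod_j \tilde w_j^{2\beta_j} \,dx \bigr)^{1/2}$ and $\bigl( \int_G \langle x \rangle_G^{2\alpha_2} \prod_j \tilde w_j^{-2\beta_j} \,dx \bigr)^{1/2}$.

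The first factor is controlled by $C_{K,\alpha_1,\vec{\beta},\gamma} \|m\|_{B^\gamma_{p,q}}$ whenever $\gamma > \alpha_1 + \bar\gamma$, by Proposition~\ref{prp:partialweight2} (recall that $\breve m \in L^{2;\infty} \cap C_0^\infty(G)$ by Proposition~\ref{prp:compactlysupported}, so that $\breve m$ is a genuine function and all the integrals make sense). The second factor is finite, by hypothesis \eqref{eq:compensazione}, as soon as $-\alpha_2 > \bar\alpha_{\vec{\beta}}$. Since $\gamma > \alpha + \bar\alpha_{\vec{\beta}} + \bar\gamma$ strictly, I can pick $\varepsilon > 0$ with $\gamma > \alpha + \bar\alpha_{\vec{\beta}} + \varepsilon + \bar\gamma$ and set $\alpha_1 = \alpha + \bar\alpha_{\vec{\beta}} + \varepsilon$, $\alpha_2 = -(\bar\alpha_{\vec{\beta}} + \varepsilon)$; then $\alpha_1 + \alpha_2 = \alpha$, $\alpha_1 \geq 0$, $-\alpha_2 = \bar\alpha_{\vec{\beta}} + \varepsilon > \bar\alpha_{\vec{\beta}}$, and $\gamma > \alpha_1 + \bar\gamma$, so both factors are under control and the claimed inequality follows.

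There is essentially no obstacle here: the corollary is a routine Hölder-type combination of the two displayed hypotheses, and the only point requiring any care is the bookkeeping of the exponents $\alpha_1,\alpha_2$ (together with the harmless observation that $\bar\alpha_{\vec{\beta}} \geq 0$ in the situations of interest, so that $\alpha_1 \geq 0$ as required in order to apply Proposition~\ref{prp:partialweight2}). All the genuine analytic content---the weighted $L^2$ bound via the capacity map and the a priori estimate on $G \times G$---has already been packaged into Proposition~\ref{prp:partialweight2}, so the corollary merely records its $L^1$ consequence in the presence of the integrability condition \eqref{eq:compensazione}.
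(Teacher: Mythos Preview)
Your argument is correct and is essentially identical to the paper's proof: the paper also chooses an auxiliary exponent $\alpha'$ strictly between $\bar\alpha_{\vec{\beta}}$ and $\gamma-\alpha-\bar\gamma$ (your $\alpha' = -\alpha_2 = \bar\alpha_{\vec{\beta}}+\varepsilon$), applies Cauchy--Schwarz with the pivot weight $\prod_j \tilde w_j^{\beta_j}$, and bounds the two resulting factors by Proposition~\ref{prp:partialweight2} and by hypothesis~\eqref{eq:compensazione} respectively.
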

\begin{proof}
Choose $\alpha'$ strictly between $\bar\alpha_{\vec{\beta}}$ and $\gamma - \alpha - \bar\gamma$. Then, by H\"older's inequality and Proposition~\ref{prp:partialweight2},
\[\int_G |\breve m(x)| \, \langle x \rangle_G^\alpha \,dx \leq C_{K,\alpha,\gamma} \left(\int_G \langle x \rangle_G^{-2\alpha'} \prod_{j=1}^h \tilde w_j(x)^{-2\beta_j} \,dx\right)^{1/2} \|m\|_{B^\gamma_{p,q}}\]
and the conclusion follows from \eqref{eq:compensazione}.
\end{proof}

As we mentioned at the beginning, the clearest example of application of this machinery is that of M\'etivier groups\index{Lie group!M\'etivier}, i.e., the connected, simply connected groups $G$ such that
\[|J(\bar x,\tau)| \geq |\bar x| |\tau| \qquad\text{for all $\bar x \in \lie{g}/\lie{z}$, $\tau \in \lie{z}^*$,}\]
for some norms on $\lie{g}/\lie{z}$ and $\lie{z}^*$.

\begin{lem}\label{lem:metivier}
If $G$ is a M\'etivier group, then:
\begin{itemize}
\item[(i)] $\lie{z} = [\lie{g},\lie{g}]$, and in particular $G$ is $2$-step nilpotent;
\item[(ii)] $\dim (\lie{g}/\lie{z})$ is even;
\item[(iii)] $\dim \lie{z} \leq \dim (\lie{g}/\lie{z})$.
\end{itemize}
\end{lem}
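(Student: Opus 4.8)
\textbf{Proof plan for Lemma~\ref{lem:metivier}.}

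The plan is to work directly from the defining inequality $|J(\bar x,\tau)| \geq |\bar x| |\tau|$ and the elementary properties of the capacity map $J$ recalled just before the lemma, exploiting that $G$ is nilpotent (hence $\lie z \neq 0$) and the skew-symmetry of the bracket. For part (i), I would first observe that the M\'etivier inequality forces $\lie g = \lie y$: indeed, for $\bar x \neq 0$ and a unit $\tau$, the functional $J(\bar x,\tau)$ on $\lie y$ is nonzero, so the bracket $[x,\cdot]$ does not vanish identically on $\lie y$; but more to the point, if $v \notin \lie y$ then there is $w\in\lie g$ with $[v,w]\notin\lie z$, and one checks that this contradicts the fact that $J$ is defined on all of $\lie g/\lie z \times \lie z^*$ with values in $\lie y^*$ together with nondegeneracy --- the cleaner route is to note that $\lie y$ is a characteristic ideal containing $\lie z$, that $[\lie g,\lie g]\subseteq\lie z$ would follow once $\lie y=\lie g$, and conversely if $[\lie g,\lie g]$ were strictly contained in $\lie z$ one could pick $0\neq\tau\in\lie z^*$ vanishing on $[\lie g,\lie g]$, whence $J(\cdot,\tau)\equiv 0$, contradicting $|J(\bar x,\tau)|\geq|\bar x||\tau| > 0$. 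So $[\lie g,\lie g]$ is not contained in any proper subspace of $\lie z$, i.e.\ $[\lie g,\lie g]=\lie z$. Since $\lie z$ is the center, $[\lie g,[\lie g,\lie g]]=[\lie g,\lie z]=0$, so $G$ is $2$-step.

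For part (ii), fix an inner product on $\lie g$ as in the M\'etivier condition and a unit vector $\tau\in\lie z^*$. The condition says $J(\cdot,\tau):\lie g/\lie z\to\lie y^*=\lie g^*/\dots$ (using $\lie y=\lie g$ from part (i), so the target is essentially $\lie g^*$ restricted appropriately, but concretely $J(\cdot,\tau)$ is injective on $\lie g/\lie z$). Transporting through the inner product, $\tau$ corresponds to a skew-symmetric endomorphism $J_\tau$ of $\lie g/\lie z$ defined by $\langle J_\tau \bar x,\bar y\rangle = \tau([x,y])$; injectivity of $J(\cdot,\tau)$ means $J_\tau$ is invertible. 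A real skew-symmetric invertible matrix exists only in even dimension (its nonzero eigenvalues come in conjugate pairs $\pm i\mu$), so $\dim(\lie g/\lie z)$ is even. This is the step I expect to go most smoothly, as it is the standard H-type/M\'etivier linear algebra.

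For part (iii), the idea is a dimension count on the span of the skew-symmetric forms $\{J_\tau : \tau\in\lie z^*\}$ inside $\mathfrak{so}(\lie g/\lie z)$. The map $\lie z^*\ni\tau\mapsto J_\tau$ is linear and injective (if $J_\tau=0$ then $\tau([\lie g,\lie g])=0$, forcing $\tau=0$ since $[\lie g,\lie g]=\lie z$). Thus $\dim\lie z\leq\dim\mathfrak{so}(m)=\binom{m}{2}$ with $m=\dim(\lie g/\lie z)$, which only gives $\dim\lie z\leq\binom m2$, weaker than claimed. The sharper bound $\dim\lie z\leq m$ should come instead from the M\'etivier inequality itself: for a \emph{fixed} unit $\bar x$, the linear map $\lie z^*\to\lie g^*$, $\tau\mapsto J(\bar x,\tau) = \langle J_\tau\bar x,\cdot\rangle$, is injective (since $|J(\bar x,\tau)|\geq|\bar x||\tau|=|\tau|$), and its image lands in the $m$-dimensional space $(\lie g/\lie z)^*$ (as $J(\bar x,\tau)$ annihilates $\lie z$, because $[x,\lie z]=0$). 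Hence $\dim\lie z\leq\dim(\lie g/\lie z)^* = m$. The main obstacle I anticipate is being careful about the identification $\lie y=\lie g$ from part (i) so that ``$J(\bar x,\tau)\in\lie y^*$'' really is a functional on all of $\lie g$ vanishing on $\lie z$, and checking that $J(\bar x,\tau)$ does annihilate $\lie z$ --- this is immediate from $J(P(x),\tau)(y)=\tau([x,y])$ and centrality of $\lie z$, but it is the linchpin of the argument, so I would state it explicitly.
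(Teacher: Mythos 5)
Parts (ii) and (iii) are correct and are exactly the paper's arguments: non-degeneracy of the skew form $\tau([x,y])$ on $\lie{g}/\lie{z}$ gives even dimension, and injectivity of $\tau\mapsto J(\bar x,\tau)$ for a fixed $\bar x\neq 0$, with image in $(\lie{g}/\lie{z})^*$, gives $\dim\lie{z}\leq\dim(\lie{g}/\lie{z})$. The second half of your (i) — no nonzero $\tau$ can annihilate $[\lie{g},\lie{g}]$, hence $\lie{z}\subseteq[\lie{g},\lie{g}]$ — also matches the paper.

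The gap is the first half of (i): you never actually prove $\lie{y}=\lie{g}$, which is what gives the inclusion $[\lie{g},\lie{g}]\subseteq\lie{z}$ and hence the $2$-step property. Your sketch ("if $v\notin\lie{y}$ there is $w$ with $[v,w]\notin\lie{z}$, and one checks that this contradicts\dots") does not close: the capacity map only records brackets $[x,y]$ with $y\in\lie{y}$, which land in $\lie{z}$ by definition of $\lie{y}$, so the existence of a bracket outside $\lie{z}$ contradicts nothing about $J$ directly. The correct argument is a dimension count you already have all the pieces for: for $\tau\neq 0$ the map $J(\cdot,\tau):\lie{g}/\lie{z}\to\lie{y}^*$ is injective by the M\'etivier inequality, and its image lies in the subspace $(\lie{y}/\lie{z})^*$ because $J(\bar x,\tau)(y)=\tau([x,y])=0$ for central $y$ (the very observation you make in part (iii)). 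Hence $\dim(\lie{g}/\lie{z})\leq\dim(\lie{y}/\lie{z})$, and since $\lie{z}\subseteq\lie{y}\subseteq\lie{g}$ this forces $\lie{y}=\lie{g}$, i.e.\ $[\lie{g},\lie{g}]\subseteq\lie{z}$; combined with your other inclusion this yields $\lie{z}=[\lie{g},\lie{g}]$ and $[\lie{g},[\lie{g},\lie{g}]]=[\lie{g},\lie{z}]=0$.
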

\begin{proof}
(i) From the definition, it follows immediately that, if $\tau \in \lie{z}^* \setminus \{0\}$, then the linear map
\[J(\cdot,\tau) : \lie{g}/\lie{z} \to \lie{y}^*\]
is injective, and moreover it takes its values in the subspace of $\lie{y}^*$ corresponding to $(\lie{y}/\lie{z})^*$. Thus we have $\dim (\lie{y}/\lie{z}) \geq \dim (\lie{g}/\lie{z})$, but $\lie{z} \subseteq \lie{y} \subseteq \lie{g}$, therefore $\lie{y} = \lie{g}$, which means that $[\lie{g},\lie{g}] \subseteq \lie{z}$. If the inclusion were strict, then we would find $\tau \in \lie{z}^* \setminus \{0\}$ such that $\tau|_{[\lie{g},\lie{g}]} = 0$, so that, by the definition of $J$, we would have $J(\bar x,\tau) = 0$ for all $\bar x \in \lie{g}/\lie{z}$, which contradicts the hypothesis.

(ii) Choose $\tau \in \lie{z}^* \setminus \{0\}$. Then, by the hypothesis, the skew-symmetric bilinear form on $\lie{g}/\lie{z}$ induced by
\[\lie{g} \times \lie{g} \ni (x,y) \mapsto \tau([x,y]) \in \R\]
is non-degenerate. In particular, $\dim(\lie{g}/\lie{z})$ must be even.

(iii) Fix $x \in \lie{g} \setminus \lie{z}$. The linear map $\lie{g}/\lie{z} \to \lie{z}$ induced by
\[\lie{g} \ni y \mapsto [x,y] \in \lie{z}\]
determines, by transposition, a linear map $\lie{z}^* \to (\lie{g}/\lie{z})^*$, which is injective by the hypothesis, and in particular $\dim \lie{z} \leq \dim(\lie{g}/\lie{z})$.
\end{proof}

\begin{prp}\label{prp:metivierl1}
Suppose that $G$ is a M\'etivier group. If $\alpha \geq 0$ and
\[\gamma > \alpha + \frac{\dim G}{2},\]
then, for every $K \subseteq \R^n$ compact,
\[\|\breve m\|_{L^1(G,\langle x \rangle_G^\alpha \,dx)} \leq C_{K,\alpha,\gamma} \|m\|_{B^\gamma_{\infty,\infty}}\]
for all $m \in \D(G)$ with $\supp m \subseteq K$.
\end{prp}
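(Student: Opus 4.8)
�The plan is to deduce Proposition~\ref{prp:metivierl1} from the general machinery of Corollary~\ref{cor:improvedl1}, specialized to the M\'etivier inequality $|J(\bar x,\tau)| \geq |\bar x|\,|\tau|$. The single weight to be used is $w(\bar x) = |\bar x|$ (a norm on $\lie g/\lie z$) and $\zeta(\tau) = |\tau|$ (a norm on $\lie z^*$), so that $h = 1$ and $\tilde w(x) = 1 + |P(x)|$. With this choice I first need to verify the hypothesis \eqref{eq:tracehypothesis}, i.e.\ that for every compact $K \subseteq \R^n$ (here $A = \R^n$) and every $\beta \geq 0$,
\[
\int_{\R^n \times \lie z^*} |f(\lambda)|^2\,(1 + |\tau|^{-2\beta})\,d\sigma'(\lambda,\tau) \leq C_{K,\beta,\gamma}\,\|f\|^2_{B^\gamma_{\infty,\infty}(\R^n)}
\]
for $\gamma$ above a suitable threshold $\bar\gamma$ and $f$ supported in $K$. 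The point of the M\'etivier condition enters through Proposition~\ref{prp:homogeneouspushforward}: the push-forward $\sigma_\zeta$ of $|\tau|^{-2\beta}\,d\sigma'$ on the first factor is a regular Borel measure on $\R^n$, and — taking $G$ homogeneous (which, after Proposition~\ref{prp:miller}-type rescaling as in the proof of the homogeneous case, we may assume, since the $2$-step M\'etivier group carries automorphic dilations with the $L_j$ homogeneous) — it is homogeneous of some negative degree; by Corollary~\ref{cor:radialcoordinates} it is locally $1$-bounded on $\R^n\setminus\{0\}$, provided the exponent is in the admissible range. Crucially $|\tau|^{-2\beta}$ is locally integrable on $\lie z^*$ only when $2\beta < \dim\lie z$, which is where the constraint $\dim\lie z \leq \dim(\lie g/\lie z)$ of Lemma~\ref{lem:metivier}(iii) will be used to choose $\beta$ large enough while keeping the integral finite; then Corollary~\ref{cor:triebeltrace} gives \eqref{eq:tracehypothesis} with $\bar\gamma = (n - d_\zeta)/2$ where $d_\zeta$ is the local-boundedness dimension (at least $1$, possibly larger by the homogeneity degree count).

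Next I would verify the compensation hypothesis \eqref{eq:compensazione}: since $G$ is $2$-step with $\lie z = [\lie g,\lie g]$ (Lemma~\ref{lem:metivier}(i)) and $\dim(\lie g/\lie z) = \dim G - \dim\lie z$ is even $=: 2r$, the growth norm $\langle x\rangle_G$ behaves like $|P(x)| + |x_{\lie z}|^{1/2}$ up to equivalence in the large (Proposition~\ref{prp:nilpotentgrowth}), so integrating $\langle x\rangle_G^{-2\alpha}(1+|P(x)|)^{-2\beta}$ over $G \cong \lie g/\lie z \times \lie z$ factorizes: the $\lie z$-integral $\int_{\lie z}(1+|x_{\lie z}|^{1/2})^{-2\alpha}\,dx_{\lie z}$ converges for $\alpha > 2\dim\lie z$ and the $\lie g/\lie z$-integral $\int_{\lie g/\lie z}(1+|\bar x|)^{-2\alpha-2\beta}\,d\bar x$ converges for $2\alpha + 2\beta > \dim(\lie g/\lie z)$. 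Balancing these — I want to choose $\beta$ as large as the local-integrability bound $2\beta < \dim\lie z$ permits, so $\beta$ just below $\dim\lie z / 2$, and then $\bar\alpha_{\vec\beta}$ just above $(\dim(\lie g/\lie z) - 2\beta)/2 = (\dim(\lie g/\lie z) - \dim\lie z)/2 = (\dim G - 2\dim\lie z)/2$ — the total threshold in Corollary~\ref{cor:improvedl1} becomes $\alpha + \bar\alpha_{\vec\beta} + \bar\gamma$ where $\bar\gamma$ accounts for the $d_\zeta$-bounded trace. A careful bookkeeping of these three contributions should collapse to $\alpha + (\dim G)/2$; the key cancellation is that the ``gain'' from the weight $(1+|P(x)|)^{2\beta}$ on the kernel side exactly offsets the ``loss'' $|\tau|^{-2\beta}$ on the spectral side, converting half of $Q_G$ (which for a $2$-step group is $\dim(\lie g/\lie z) + 2\dim\lie z$) down to half of $\dim G = \dim(\lie g/\lie z) + \dim\lie z$.

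The main obstacle I expect is precisely this arithmetic of thresholds: one must run the interpolation in Proposition~\ref{prp:partialweight2} so that the exponents $\alpha$, $\vec\beta$, $\gamma$ are chosen compatibly with (a) local integrability of $|\tau|^{-2\beta}$ near $0$ in $\lie z^*$, (b) the trace inequality \eqref{eq:tracehypothesis} with its own dimensional threshold coming from the homogeneity degree of $\sigma_\zeta$, and (c) the convergence of the compensating integral \eqref{eq:compensazione}; getting all three to be simultaneously satisfiable and to yield exactly $(\dim G)/2$, with no slack, is the delicate part. A secondary technical point is that to invoke Proposition~\ref{prp:homogeneouspushforward} and Corollary~\ref{cor:radialcoordinates} I need $G$ homogeneous with $L_1,\dots,L_n$ a homogeneous system; this is not assumed in the statement, so I would either reduce to that case (every M\'etivier group being $2$-step nilpotent, hence stratifiable, one can equip it with a stratification with respect to which $\lie z = W_2$, but the given operators need not be homogeneous) or — more robustly — prove a version of the local $1$-boundedness of $\sigma_\zeta$ directly from the heat-kernel estimate $\|(\zeta\chi_{K'})^{1/2}(-iT)h_1\|_2^2 < \infty$ as in the proof of Proposition~\ref{prp:homogeneouspushforward}, which only uses $h_1 \in \Sz(G)$ and the Euclidean Fourier transform in the central variables, hence works for any weighted subcoercive system. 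Once these analytic ingredients are in place, the proof is a direct chain: M\'etivier inequality $\Rightarrow$ Corollary~\ref{cor:partialweight} $\Rightarrow$ trace bound via Corollary~\ref{cor:triebeltrace} $\Rightarrow$ Proposition~\ref{prp:partialweight2} $\Rightarrow$ Corollary~\ref{cor:improvedl1}, with the $B^\gamma_{\infty,\infty}$ norm appearing because $p = \infty$ in the last step.
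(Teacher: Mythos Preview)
Your approach is essentially the paper's: Métivier inequality $\Rightarrow$ Corollary~\ref{cor:partialweight} $\Rightarrow$ Proposition~\ref{prp:partialweight2} $\Rightarrow$ Corollary~\ref{cor:improvedl1}, with $h=1$, $w(\bar x)=|\bar x|$, $\zeta(\tau)=|\tau|$. But you over-complicate the verification of~\eqref{eq:tracehypothesis}, and this is what makes your bookkeeping look delicate when it is not.

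Since the target is $B^\gamma_{\infty,\infty}$, take $p=q=\infty$ in Proposition~\ref{prp:partialweight2} from the start. Then~\eqref{eq:tracehypothesis} holds with $\bar\gamma=0$ as soon as the push-forward $\sigma_\zeta$ of $(1+|\tau|^{-2\beta})\,d\sigma'$ is a regular Borel measure on $\R^n$ (finite on compacta): for $f$ supported in $K$ one simply bounds $\int|f|^2\,d\sigma_\zeta\le\|f\|_\infty^2\,\sigma_\zeta(K)\le C_K\|f\|_{B^\gamma_{\infty,\infty}}^2$ for any $\gamma>0$. The regularity of $\sigma_\zeta$ is exactly the first assertion of Proposition~\ref{prp:homogeneouspushforward}, which needs only $|\tau|^{-2\beta}\in L^1_{\loc}(\lie z^*)$, i.e., $2\beta<\dim\lie z$ --- no homogeneity of $G$ or of the system is required. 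So Corollary~\ref{cor:triebeltrace}, local $d$-boundedness, and your entire secondary concern about homogeneity drop out.

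With $\bar\gamma=0$ the arithmetic is then one line. The compensation integral~\eqref{eq:compensazione} converges for $2\alpha>Q_G-2\beta$: split $\alpha=\alpha_1+\alpha_2$ with $\alpha_1,\alpha_2\ge 0$, use $\langle x\rangle_G\gtrsim 1+|P(x)|$ and $\langle x\rangle_G\gtrsim 1+|x_{\lie z}|^{1/2}$, and require $2\alpha_1+2\beta>\dim(\lie g/\lie z)$ and $\alpha_2>\dim\lie z$ (your ``$\alpha>2\dim\lie z$'' is off by a factor of two). Lemma~\ref{lem:metivier}(iii), i.e., $\dim\lie z\le\dim(\lie g/\lie z)$, together with $2\beta<\dim\lie z$, guarantees $\dim(\lie g/\lie z)/2-\beta>0$, so this split is available with $\bar\alpha_{\vec\beta}=Q_G/2-\beta$. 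Corollary~\ref{cor:improvedl1} gives $\gamma>\alpha+Q_G/2-\beta$; letting $\beta\nearrow\dim\lie z/2$ and using $Q_G=\dim(\lie g/\lie z)+2\dim\lie z=\dim G+\dim\lie z$ yields exactly $\gamma>\alpha+(\dim G)/2$.
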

\begin{proof}
We take as $w_1$ and $\zeta_1$ the chosen norms on $\lie{g}/\lie{z}$ and $\lie{z}^*$ respectively. By Proposition~\ref{prp:homogeneouspushforward}, the hypotheses of Proposition~\ref{prp:partialweight2} are satisfied for $h=1$, $A = \R^n$, $\bar\gamma=0$, $p=q=\infty$, if $\zeta_1^{-2\beta_1} \in L^1_\loc(\lie{z}^*)$, i.e., if $2\beta_1 < \dim \lie{z}$. Having fixed such a $\beta_1$, by Proposition~\ref{prp:nilpotentgrowth} it follows easily that
\[\int_{G} (1+|x|_G)^{-2\alpha} \, (1+ w_1(P(x)))^{-2\beta_1} \,dx < \infty\]
if $2\alpha > Q_G - 2\beta_1$ (since $2\beta_1 < \dim(\lie{g}/\lie{z})$ by Lemma~\ref{lem:metivier}). We can then apply Corollary~\ref{cor:improvedl1} with $\bar\alpha_{\vec{\beta}} = Q_G/2 - \beta_1$, thus obtaining the inequality
\[\|\breve m\|_{L^1(G,\langle x \rangle_G^\alpha \,dx)} \leq C_{K,\alpha,\gamma} \|m\|_{B^\gamma_{\infty,\infty}}\]
for $\gamma > \alpha + Q_G/2 - \beta_1$. On the other hand, $Q_G = \dim G + \dim\lie{z}$; since $\beta_1$ can be chosen arbitrarily near $(\dim\lie{z})/2$, we get the conclusion.
\end{proof}

We have thus shown that, for M\'etivier groups, the regularity threshold in weighted $L^1$ estimates may be lowered to half of the topological dimension. The same result can be obtained, by a slight generalization of the previous argument, also for nilpotent groups which are direct products of several M\'etiver and/or abelian groups. In fact, as it is noticed in a remark at the end of \cite{hebisch_multiplier_1995}, there are further cases of groups for which this technique gives an improvement of weighted $L^1$ estimates.

In order to attempt a systematic treatment of these various cases, we introduce the following definition: for $h \in \N$, we say that a homogeneous Lie group $G$ is \emph{$h$-capacious}\index{Lie group!homogeneous!$h$-capacious} if there exist linearly independent homogeneous elements $\omega_1,\dots,\omega_h \in (\lie{g}/\lie{z})^*$ and linearly independent homogeneous elements $z_1,\dots,z_h \in \lie{z}$ such that,
for $j=1,\dots,h$,
\begin{equation}\label{eq:capacious}
|J(\bar x,\tau)| \geq |\omega_j(x)| |\tau(z_j)| \qquad\text{for all $\bar x \in \lie{g}/\lie{z}$, $\tau \in \lie{z}^*$.}
\end{equation}

Clearly, every homogeneous Lie group is $0$-capacious. Here are some criteria which may be of some use in showing that a certain homogeneous group is $h$-capacious. Recall that we denote by
\[\lie{g}_{[1]} = \lie{g}, \qquad \lie{g}_{[r+1]} = [\lie{g},\lie{g}_{[r]}]\]
the descending central series of a Lie algebra $\lie{g}$.

\begin{prp}\label{prp:capacitycriteria}
Let $G$ be a homogeneous Lie group, with dilations $\delta_t$.
\begin{itemize}
\item[(i)] If $G$ is a M\'etivier group (with any family of automorphic dilations), then $G$ is $(\dim \lie{z})$-capacious.
\item[(ii)] Suppose that, for some $r \geq 2$, $\dim \lie{g}_{[r]} = 1$. Then $G$ is $1$-capacious.
\item[(iii)] If $\lie{g}$ admits a $\C$-linear structure which is compatible with its homogeneous Lie algebra structure, and if moreover $\dim_\C \lie{g}_{[r]} = 1$ for some $r \geq 2$, then $\lie{g}$ is $2$-capacious.
\item[(iv)] Suppose that $G = G_1 \times G_2$, where $G_1$ and $G_2$ are homogeneous Lie groups with dilations $\delta_{1,t}$ and $\delta_{2,t}$ respectively, so that $\delta_t = \delta_{1,t} \times \delta_{2,t}$. If $G_1$ is $h_1$-capacious and $G_2$ is $h_2$-capacious, then $G$ is $(h_1+h_2)$-capacious.
\end{itemize}
\end{prp}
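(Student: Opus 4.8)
The plan is to prove parts (i)--(iv) by exhibiting, in each case, linearly independent homogeneous elements $\omega_1,\dots,\omega_h\in(\lie{g}/\lie{z})^*$ and linearly independent homogeneous elements $z_1,\dots,z_h\in\lie{z}$ satisfying \eqref{eq:capacious}. Three observations will be used throughout. First, each term $\lie{g}_{[r]}$ of the descending central series is a characteristic ideal, hence $\delta_t$-invariant, so $\lie{g}_{[r]}$ is a homogeneous subspace (and if it is one-dimensional, its nonzero vectors are all homogeneous of one and the same degree); likewise $\lie{g}/\lie{z}$, $\lie{z}$, $\lie{z}^*$, $\lie{y}^*$ inherit homogeneous structures and admit homogeneous bases. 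Second, multiplying an $\omega_j$ or a $z_j$ by a positive constant changes neither its homogeneity nor the linear independence of the family, so it suffices to establish \eqref{eq:capacious} up to a fixed multiplicative constant. Third, for any fixed $y_0\in\lie{y}$ one has $J(\bar x,\tau)(y_0)=\tau([x,y_0])$, and evaluation at $y_0$ is a bounded functional on $\lie{y}^*$, so $|J(\bar x,\tau)|\ge c_{y_0}\,|\tau([x,y_0])|$ for some $c_{y_0}>0$; this is the mechanism producing all the inequalities below.

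For (i), the M\'etivier inequality $|J(\bar x,\tau)|\ge|\bar x|\,|\tau|$ dominates each single product: since $|\omega(\bar x)|\le C_\omega|\bar x|$ and $|\tau(z)|\le C_z|\tau|$, after rescaling $\omega$ by $(C_\omega C_z)^{-1}$ one gets $|\omega(\bar x)|\,|\tau(z)|\le|J(\bar x,\tau)|$ for any fixed $\omega,z$. By Lemma~\ref{lem:metivier}(iii), $\dim(\lie{g}/\lie{z})\ge\dim\lie{z}=:d$, so $(\lie{g}/\lie{z})^*$ has a homogeneous basis of at least $d$ vectors; choosing $d$ of them (rescaled) together with a homogeneous basis $z_1,\dots,z_d$ of $\lie{z}$ gives $d$-capaciousness. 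For (ii), nilpotency forces $\lie{g}_{[r+1]}=[\lie{g},\lie{g}_{[r]}]$ to be a proper subspace of the line $\lie{g}_{[r]}$ (otherwise the descending central series would stabilize), whence $\lie{g}_{[r+1]}=0$ and $\lie{g}_{[r]}\subseteq\lie{z}$; let $z_1$ span $\lie{g}_{[r]}$. Since $[\lie{g},\lie{g}_{[r-1]}]=\lie{g}_{[r]}\ne0$ and $\lie{g}_{[r-1]}\subseteq\lie{y}$ (because $[\lie{g}_{[r-1]},\lie{g}]=\lie{g}_{[r]}\subseteq\lie{z}$), there is a homogeneous $y_0\in\lie{g}_{[r-1]}$ with $[\,\cdot\,,y_0]\not\equiv0$; the relation $[x,y_0]=\omega_1(\bar x)z_1$ then defines a nonzero homogeneous $\omega_1\in(\lie{g}/\lie{z})^*$, and the third observation above plus a rescaling gives $|J(\bar x,\tau)|\ge|\omega_1(\bar x)|\,|\tau(z_1)|$, i.e.\ $1$-capaciousness.

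Part (iii) is the delicate one, since now $\lie{g}_{[r]}=\C z_1$ is two-dimensional over $\R$ and cancellation must be excluded. Put $z_1'=z_1$, $z_2'=iz_1$ (both homogeneous of the same real degree, $\R$-linearly independent), and as in (ii) choose a homogeneous $y_0\in\lie{g}_{[r-1]}$ with $[\,\cdot\,,y_0]\not\equiv0$; by $\C$-linearity of the bracket, $iy_0\in\lie{g}_{[r-1]}$ as well. Writing $[x,y_0]=\omega_1^{\C}(\bar x)z_1$ with $\omega_1^{\C}=\omega_1+i\omega_2$, $\omega_1,\omega_2$ real, one has $[x,y_0]=\omega_1(\bar x)z_1'+\omega_2(\bar x)z_2'$ and $[x,iy_0]=i[x,y_0]=-\omega_2(\bar x)z_1'+\omega_1(\bar x)z_2'$. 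Evaluating $J(\bar x,\tau)$ at $y_0$ and at $iy_0$ yields two lower bounds, $|J(\bar x,\tau)|\ge c\,|\omega_1(\bar x)\tau(z_1')+\omega_2(\bar x)\tau(z_2')|$ and $|J(\bar x,\tau)|\ge c\,|\omega_1(\bar x)\tau(z_2')-\omega_2(\bar x)\tau(z_1')|$; since the sum of the squares of the two right-hand quantities equals $\bigl(\omega_1(\bar x)^2+\omega_2(\bar x)^2\bigr)\bigl(\tau(z_1')^2+\tau(z_2')^2\bigr)$, taking the larger of the two bounds gives $|J(\bar x,\tau)|^2\gtrsim|\omega_j(\bar x)|^2\,|\tau(z_j')|^2$ for $j=1,2$ at once. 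One then checks that $\omega_1,\omega_2$ are homogeneous (they are the real and imaginary parts of the homogeneous $\C$-linear functional $\omega_1^{\C}$, and $\delta_t$ scales it by a real factor) and $\R$-linearly independent (if $\omega_1=\lambda\omega_2$ with $\lambda\in\R$, the relation $\omega_2=-\omega_1\circ i$ coming from $\C$-linearity forces $(1+\lambda^2)\omega_1=0$, contradicting $\omega_1^{\C}\ne0$); after a final rescaling this gives $2$-capaciousness.

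For (iv), one first computes the capacity map of $G=G_1\times G_2$: from $\lie{z}=\lie{z}_1\oplus\lie{z}_2$, $\lie{y}=\lie{y}_1\oplus\lie{y}_2$ and $[(x_1,x_2),(y_1,y_2)]=([x_1,y_1],[x_2,y_2])$ one gets, under the identification $\lie{y}^*\cong\lie{y}_1^*\oplus\lie{y}_2^*$, that $J(\bar x,\tau)=\bigl(J_1(\bar x_1,\tau_1),J_2(\bar x_2,\tau_2)\bigr)$, so with a product norm $|J(\bar x,\tau)|\gtrsim|J_i(\bar x_i,\tau_i)|$ for $i=1,2$. The $h_i$ pairs witnessing the $h_i$-capaciousness of $G_i$, viewed inside $(\lie{g}/\lie{z})^*$ and $\lie{z}$ through the inclusions of the $i$-th summand (which preserve homogeneity, as the dilations on $G$ are the product of those on the factors), then provide $h_1+h_2$ pairs for $G$; linear independence is automatic because the two groups of vectors lie in complementary summands. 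The main obstacle is the cancellation issue in part (iii); once the $y_0$/$iy_0$ device and the sum-of-squares identity are in place, the remaining verifications in all four parts are routine bookkeeping about homogeneous bases and rescalings.
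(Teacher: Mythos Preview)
Your proof is correct and follows essentially the same approach as the paper's in all four parts. The main difference is in part~(iii): the paper merely says ``the conclusion then follows by taking $\Re\omega,\Im\omega$ and $z,iz$'' after obtaining the complex analogue of \eqref{eq:linearidentity}, whereas you supply the missing mechanism --- evaluating $J(\bar x,\tau)$ at both $y_0$ and $iy_0$ and using the sum-of-squares identity $(\omega_1\tau_1+\omega_2\tau_2)^2+(\omega_1\tau_2-\omega_2\tau_1)^2=(\omega_1^2+\omega_2^2)(\tau_1^2+\tau_2^2)$. This is genuinely needed: the single bound $|J|\gtrsim|\omega_1(\bar x)\tau(z_1')+\omega_2(\bar x)\tau(z_2')|$ alone cannot yield $|J|\gtrsim|\omega_j(\bar x)|\,|\tau(z_j')|$ because of possible cancellation (take $\omega_1=\omega_2=1$, $\tau(z_1')=-\tau(z_2')=1$). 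Your verification that $\omega_1,\omega_2$ are homogeneous and $\R$-linearly independent is also a useful addition.
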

\begin{proof}
(i) Since the $\delta_t$ are automorphisms, $\lie{z}$ is a homogeneous ideal. Therefore, if $h = \dim \lie{z}$, by Lemma~\ref{lem:metivier} we can choose linearly independent elements $z_1,\dots,z_h$ of $\lie{z}$, and linearly independent homogeneous $\omega_1,\dots,\omega_h \in (\lie{g}/\lie{z})^*$. By a suitable renormalization, we then have
\[|\omega_j(\bar x)| |\tau(z_j)| \leq |\bar x| |\tau| \leq |J(\bar x,\tau)| \qquad\text{for all $\bar x \in \lie{g}/\lie{z}$, $\tau \in \lie{z}^*$,}\]
since $G$ is M\'etivier.
%

(ii) Since $G$ is nilpotent, it must be $r$-step, so that $\lie{g}_{[r]} \subseteq \lie{z}$. Notice that the ideal $\lie{g}_{[r-1]}$ is preserved by every automorphism of $\lie{g}$, therefore it is generated by $\delta_t$-homogeneous elements; since $[\lie{g},\lie{g}_{[r-1]}] = \lie{g}_{[r]} \neq 0$, then there must exist a $\delta_t$-homogeneous element $y \in \lie{g}_{[r-1]}$ such that, for some $x_0 \in \lie{g}$, $[x_0,y] = z \neq 0$. In particular $y \neq 0$ and moreover, since the ideal $\lie{g}_{[r]}$ is $\delta_t$-homogeneous and $1$-dimensional, necessarily $z$ is $\delta_t$-homogeneous.

Since $y \in \lie{g}_{[r-1]}$, the linear map $[\cdot,y] : \lie{g} \to \lie{g}$ takes its values in $\lie{g}_{[r]} = \R z$; therefore, there exists $\omega \in (\lie{g}/\lie{z})^*$ such that
\[[x,y] = \omega(P(x)) z \qquad\text{for all $x \in \lie{g}$.}\]
Notice that $\omega(P(x_0)) = 1$, thus $\omega \neq 0$; moreover, since both $y$ and $z$ are homogeneous, also $\omega$ is homogeneous.
Finally
\begin{equation}\label{eq:linearidentity}
J(\bar x,\tau)(y) = \omega(\bar x) \tau(z) \qquad\text{for all $\bar x \in \lie{g}/\lie{z}$, $\tau \in \lie{z}^*$,}
\end{equation}
which implies immediately that $G$ is $1$-capacious.


(iii) Arguing as in part (ii), but with a complex Lie algebra $\lie{g}$, one finds an identity analogous to \eqref{eq:linearidentity}, where now $\omega$ is a $\C$-linear functional on $\lie{g}/\lie{z}$, and $z \in \lie{z}$. The conclusion then follows by taking the $\R$-linearly independent $\R$-linear functionals $\Re \omega, \Im \omega$ on $\lie{g}/\lie{z}$, and the $\R$-linearly independent elements $z,iz \in \lie{z}$.


(iv) Via the canonical identification
\[\lie{g} = \lie{g}_1 \times \lie{g}_2,\]
we have (with the obvious meaning of the notation)
\[\lie{z} = \lie{z}_1 \times \lie{z}_2, \qquad \lie{y} = \lie{y}_1 \times \lie{y}_2,\]
thus also
\[\lie{z}^* = \lie{z}_1^* \times \lie{z}_2^*, \qquad \lie{y}^* = \lie{y}_1^* \times \lie{y}_2^*, \qquad \lie{g}/\lie{z} = (\lie{g}_1/\lie{z}_1) \times (\lie{g}_2 / \lie{z}_2).\]
Moreover it is easy to check that
\[J((\bar x_1,\bar x_2),(\tau_1,\tau_2)) = (J_1(\bar x_1,\tau_1),J_2(\bar x_2,\tau_2)),\]
therefore
\[|J((\bar x_1,\bar x_2),(\tau_1,\tau_2))| \geq \max \{|J_1(\bar x_1,\tau_1)|,|J_2(\bar x_2,\tau_2)|\}\]
and the conclusion follows immediately.
\end{proof}

Notice that the previous proposition is not sufficient to exhaust all the cases of $h$-capacious groups. For instance, one can check that all the nilpotent groups listed in \cite{nielsen_unitary_1983}, except for the free nilpotent groups, are (at least) $1$-capacious, for at least one choice of a homogeneous structure on them; see also the examples of \S\ref{section:examples}.

\begin{lem}\label{lem:homogeneousdualbasis}
Suppose that $G$ is $h$-capacious, and let $\omega_1,\dots,\omega_h \in (\lie{g}/\lie{z})^*$ be as in the definition. Then the functionals $\omega_j \circ P$ are null on $[\lie{g},\lie{g}]$. In particular
\[h \leq \min\{\dim \lie{z}, \dim \lie{g} - \dim(\lie{z} + [\lie{g},\lie{g}])\}.\]
Moreover, we can find a homogeneous basis of of $\lie{g}$ compatible with the descending central series such that the functionals $\omega_1 \circ P,\dots,\omega_h \circ P$ are part of the dual basis.
\end{lem}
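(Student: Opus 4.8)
The plan is to prove the three assertions in order; the first is the substantial one, and the other two are then routine linear algebra with homogeneous subspaces. The starting point for the vanishing statement is the elementary identity $[[\lie{g},\lie{g}],\lie{y}] = 0$: for $y \in \lie{y}$ and $a,b \in \lie{g}$ the Jacobi identity gives $[[a,b],y] = -[[b,y],a]-[[y,a],b]$, and since $[b,y],[y,a] \in \lie{z}$ by the definition \eqref{eq:centercenter} of $\lie{y}$, both terms lie in $[\lie{z},\lie{g}] = 0$. Consequently, for every $\xi \in [\lie{g},\lie{g}]$ one has $J(P(\xi),\tau)(y) = \tau([\xi,y]) = 0$ for all $y \in \lie{y}$ and all $\tau \in \lie{z}^*$, so $J(P(\xi),\tau) = 0$. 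Inserting this into the defining inequality \eqref{eq:capacious} for the pair $\omega_j,z_j$ yields $|\omega_j(P(\xi))|\,|\tau(z_j)| \le |J(P(\xi),\tau)| = 0$ for all $\tau$; choosing $\tau$ with $\tau(z_j) \ne 0$, which is possible since $z_j \ne 0$, forces $\omega_j(P(\xi)) = 0$. As $\xi$ ranges over $[\lie{g},\lie{g}]$ this shows that $\omega_j \circ P$ annihilates $[\lie{g},\lie{g}]$; it also annihilates $\lie{z}$ because it factors through $P$. Hence each $\omega_j \circ P$ annihilates the ideal $\lie{m} := \lie{z} + [\lie{g},\lie{g}]$, and the dimension bound follows at once: the $\omega_j$ are linearly independent in $(\lie{g}/\lie{z})^*$ and the transpose of the surjection $P$ is injective, so $\omega_1 \circ P,\dots,\omega_h \circ P$ are linearly independent in $\lie{g}^*$, hence in $(\lie{g}/\lie{m})^*$, giving $h \le \dim\lie{g} - \dim(\lie{z}+[\lie{g},\lie{g}])$, while $z_1,\dots,z_h$ independent in $\lie{z}$ give $h \le \dim\lie{z}$.

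For the last assertion I would build the basis from homogeneous complements, using that $\lie{z}$, $[\lie{g},\lie{g}]$, $\lie{m}$ and the $\lie{g}_{[r]}$ are characteristic ideals, hence dilation-invariant, hence homogeneous subspaces, so that homogeneous complements and adapted homogeneous bases exist. First fix a homogeneous complement $\lie{c}$ of $\lie{m}$ in $\lie{g}$; via the restriction isomorphism $(\lie{g}/\lie{m})^* \cong \lie{c}^*$, the functionals $\omega_j \circ P|_{\lie{c}}$ are linearly independent and homogeneous, so they extend to a homogeneous basis of $\lie{c}^*$ whose dual basis $a_1,\dots,a_h,b_1,\dots,b_{\dim\lie{c}-h}$ of $\lie{c}$ is again homogeneous and satisfies $(\omega_{j'}\circ P)(a_j) = \delta_{jj'}$ and $(\omega_j \circ P)(b_i) = 0$. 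Next choose a homogeneous complement $\lie{d}$ of $[\lie{g},\lie{g}]$ in $\lie{m}$, a homogeneous basis of $\lie{d}$, and a homogeneous basis of $[\lie{g},\lie{g}]$ adapted to the chain $\lie{g}_{[2]} \supseteq \lie{g}_{[3]} \supseteq \cdots$; the concatenation of the $a_j$, the $b_i$, the basis of $\lie{d}$ and this last basis is a homogeneous basis of $\lie{g}$ compatible with the descending central series (for $r \ge 2$, $\lie{g}_{[r]}$ is spanned by a subset of the $[\lie{g},\lie{g}]$-part, and $\lie{g}_{[1]} = \lie{g}$). Finally $\omega_j \circ P$ coincides with the dual basis functional $a_j^*$: both sides vanish on every other basis vector — on the $b_i$ by construction, on the bases of $\lie{d}$ and $[\lie{g},\lie{g}]$ since these lie in $\lie{m}$, and on $a_{j'}$ for $j' \ne j$ by construction — and both take the value $1$ on $a_j$, so $\omega_1 \circ P,\dots,\omega_h \circ P$ are part of the dual basis.

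The essential content is the opening identity $[[\lie{g},\lie{g}],\lie{y}] = 0$, after which the vanishing of $\omega_j \circ P$ on $[\lie{g},\lie{g}]$ — the only nontrivial claim — is forced directly by the capacity inequality. I expect no real difficulty beyond this; the mildly delicate point is the bookkeeping in the last assertion, namely arranging that one and the same homogeneous basis be simultaneously compatible with the descending central series and have the prescribed functionals in its dual, which is exactly why one inserts the homogeneous complements $\lie{c}$ (containing the $a_j$ and $b_i$) and $\lie{d}$ between the top of the series and $[\lie{g},\lie{g}]$.
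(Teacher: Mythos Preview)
Your proof is correct and follows essentially the same approach as the paper: the key identity $[[\lie{g},\lie{g}],\lie{y}]=0$ forces $J(P(\xi),\tau)=0$ for $\xi\in[\lie{g},\lie{g}]$, and the capacity inequality then yields the vanishing; the basis construction differs only cosmetically, with the paper using the common kernel $W=\bigcap_j\ker(\omega_j\circ P)$ in place of your $\lie{m}=\lie{z}+[\lie{g},\lie{g}]$, but both contain $[\lie{g},\lie{g}]$ and the argument is the same.
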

\begin{proof}
Notice that
\[[[\lie{g},\lie{g}],\lie{y}] \subseteq [\lie{g},[\lie{g},\lie{y}]] \subseteq [\lie{g},\lie{z}] = 0.\]
Therefore, from the definition of $J$ it follows that, for every $x \in [\lie{g},\lie{g}]$,
\[J(P(x),\tau) = 0 \qquad\text{for all $\tau \in \lie{z}^*$.}\]
In particular, by choosing in \eqref{eq:capacious} a $\tau \in \lie{z}^*$ such that $\tau(z_j) \neq 0$, we obtain that the functional $\omega_j \circ P$ is null on $[\lie{g},\lie{g}]$. In particular, the $\omega_j \circ P$ correspond to linearly independent elements of 
$(\lie{g}/([\lie{g},\lie{g}] + \lie{z}))^*$, thus the inequality about $h$ follows.

Let now $W = \ker (\omega_1 \circ P) \cap \dots \cap \ker (\omega_h \circ P)$. Then $W$ is a homogeneous subspace of $\lie{g}$ containing $[\lie{g},\lie{g}]$. Moreover, if $\tilde\omega_j$ is the element of $(\lie{g}/W)^*$ corresponding to $\omega_j$, then $\tilde\omega_1,\dots,\tilde\omega_h$ are a homogeneous basis of $(\lie{g}/W)^*$. We can then choose homogeneous elements $v_1,\dots,v_h \in \lie{g}$ such that the corresponding elements in the quotient $\lie{g}/W$ are the dual basis of $\tilde\omega_1,\dots,\tilde\omega_h$. Finally, we append to $v_1,\dots,v_h$ a homogeneous basis of $W$ compatible with the descending central series (which, apart from $\lie{g}_{[1]}$, is contained in $W$, and is made of homogeneous ideals), and we are done.
\end{proof}

Here is finally the improvement of Corollary~\ref{cor:polyl1estimates} for $h$-capacious groups.

\begin{thm}\label{thm:improvedl1estimates}
For some $h \in \N$, suppose that the homogeneous group $G$ is $h$-capacious, and let $Q_G$ be its degree of polynomial growth. Let moreover
\[L_1,\dots,L_n\]
be a homogeneous weighted subcoercive system on $G$. If $p,q \in [1,\infty]$, $\alpha \geq 0$ and
\[\gamma > \alpha + \frac{Q_G - h}{2} + \frac{n}{p} - \frac{1}{\max\{2,p\}},\]
then, for every $K \subseteq \R^n \setminus \{0\}$ compact,
\[\|\breve m\|_{L^1(G,\langle x \rangle_G^\alpha \,dx)} \leq C_{K,\alpha,\gamma} \|m\|_{B^\gamma_{p,q}}\]
for all $m \in \D(G)$ with $\supp m \subseteq K$.
\end{thm}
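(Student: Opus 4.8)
The plan is to combine the improved weighted $L^2$ estimate of Proposition~\ref{prp:partialweight2} (specialized via the $h$-capacious hypothesis) with a Hölder-type passage to weighted $L^1$, exactly as in Corollary~\ref{cor:improvedl1}, and then interpolate against the already-available estimate of Corollary~\ref{cor:polyl1estimates}. First I would unpack the definition of $h$-capaciousness: it provides homogeneous $\omega_1,\dots,\omega_h \in (\lie{g}/\lie{z})^*$ and homogeneous $z_1,\dots,z_h \in \lie{z}$ with $|J(\bar x,\tau)| \geq |\omega_j(\bar x)|\,|\tau(z_j)|$. Setting $w_j(\bar x) = |\omega_j(\bar x)|$ and $\zeta_j(\tau) = |\tau(z_j)|$, these are precisely the hypotheses needed to invoke Corollary~\ref{cor:partialweight}. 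The point is that each $\zeta_j$ is a homogeneous (degree-$a_j$, say) seminorm on a one-dimensional quotient of $\lie{z}^*$, so by Proposition~\ref{prp:homogeneouspushforward} the push-forwards $\sigma_{\zeta_j^{-2\beta_j}}$ are homogeneous Borel measures on $\R^n$; their local $1$-boundedness off the origin (Corollary~\ref{cor:radialcoordinates}) yields, via Corollary~\ref{cor:triebeltrace} and the embeddings of Proposition~\ref{prp:besovembeddings}, a trace estimate of the form \eqref{eq:tracehypothesis} on $A = \R^n \setminus \{0\}$ with $\bar\gamma = n/p - 1/\max\{2,p\}$, provided $2\beta_j < \dim_{\R}(\text{image of }\zeta_j)$, which here just means $2\beta_j < 1$ for each $j$ since each $\zeta_j$ factors through a one-dimensional space. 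Wait — more carefully: since the $z_j$ are linearly independent, one should track the joint measure $\prod_j(1+\zeta_j(\tau)^{-2\beta_j})$, but by the product/sum manipulation already used in the proof of Corollary~\ref{cor:partialweight} this reduces to finitely many single-$\zeta$ push-forwards, each homogeneous and locally $1$-bounded, so \eqref{eq:tracehypothesis} holds with the stated $\bar\gamma$ for any $\beta_j \in [0,1/2)$.

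Next I would verify the integrability condition \eqref{eq:compensazione}. By Lemma~\ref{lem:homogeneousdualbasis} the functionals $\omega_j \circ P$ vanish on $[\lie{g},\lie{g}]$ and can be taken as part of a homogeneous basis dual compatible with the descending central series; in particular $\tilde w_j(x) = 1 + |\omega_j(P(x))|$ is comparable to $1$ plus (the absolute value of) one of the linear coordinates $x_j$ of weight $1$ in the gradation \eqref{eq:growthnorm}. Choosing the $h$ distinct weight-one coordinates $x_1,\dots,x_h$ corresponding to the $\omega_j$, a direct computation with the polar-coordinate decomposition of Proposition~\ref{prp:radialcoordinates} (applied to Haar measure, which is $\epsilon_t$-homogeneous of degree $Q_G$ with respect to the abelian gradation) gives
\[
\int_G \langle x\rangle_G^{-2\alpha}\,\prod_{j=1}^h \tilde w_j(x)^{-2\beta_j}\,dx < \infty
\]
whenever $2\alpha + 2\beta_1 + \dots + 2\beta_h > Q_G$, since the $2\beta_j$ contributions each "absorb" one unit of homogeneous dimension along their respective weight-one direction (this is exactly where $h$-capaciousness pays off, mirroring the M\'etivier computation in Proposition~\ref{prp:metivierl1}). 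Hence $\bar\alpha_{\vec\beta} = (Q_G - \beta_1 - \dots - \beta_h)/2$ works in Corollary~\ref{cor:improvedl1}. Taking $\beta_1 = \dots = \beta_h$ arbitrarily close to $1/2$ gives $\bar\alpha_{\vec\beta} \to (Q_G - h)/2$, and Corollary~\ref{cor:improvedl1} then yields
\[
\|\breve m\|_{L^1(G,\langle x\rangle_G^\alpha\,dx)} \leq C_{K,\alpha,\gamma}\|m\|_{B^\gamma_{p,q}}
\]
for $\gamma > \alpha + (Q_G - h)/2 + n/p - 1/\max\{2,p\}$, which is the claimed threshold. A density/approximation argument (Proposition~\ref{prp:besovapproximation} together with Corollary~\ref{cor:besovcompactsupportinclusion}) reduces the case of general $m \in \D(\R^n)$ supported in $K$ to the smooth case already handled, and handles the fractional-$\gamma$ values by interpolation (Proposition~\ref{prp:besovinterpolation}).

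I expect the main obstacle to be the careful bookkeeping in the interpolation step inside Proposition~\ref{prp:partialweight2}: one must check that the intermediate Besov-space interpolation (Proposition~\ref{prp:besovinterpolation}(i)) correctly mixes the two endpoint estimates — the crude one from Theorem~\ref{thm:l2estimates} carrying the full weight $\langle x\rangle_G^{2\alpha}\prod_j \tilde w_j^{2\beta_j}$ but costing $\sim n$ extra derivatives, and the sharp one from Corollary~\ref{cor:partialweight} plus \eqref{eq:tracehypothesis} carrying only $\prod_j \tilde w_j^{2\beta_j}$ at cost $\bar\gamma$ — and that the resulting threshold after the Hölder step in Corollary~\ref{cor:improvedl1} is the advertised $\alpha + (Q_G-h)/2 + n/p - 1/\max\{2,p\}$ with no hidden loss. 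A secondary subtlety is the linear independence of the $z_j$: it guarantees that the map $\tau \mapsto (\tau(z_1),\dots,\tau(z_h))$ is a surjection $\lie{z}^* \to \R^h$, which is what makes the several single-variable push-forward estimates combine into the joint estimate \eqref{eq:tracehypothesis} without overcounting — but this is a routine check using Fubini on the complementary directions in $\lie{z}^*$, exactly as in the proof of Proposition~\ref{prp:homogeneouspushforward}. The restriction $K \subseteq \R^n \setminus \{0\}$ is needed precisely because the trace estimate \eqref{eq:tracehypothesis} only holds on $A = \R^n \setminus \{0\}$, the measures $\sigma_{\zeta_j^{-2\beta_j}}$ being only locally $1$-bounded away from the origin.
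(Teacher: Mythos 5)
Your proof follows essentially the same route as the paper's: the $h$-capacious inequality feeds Corollary~\ref{cor:partialweight} and Proposition~\ref{prp:partialweight2} via the homogeneous, locally $1$-bounded push-forward measures (with $2\beta_j<1$ forced by local integrability of $|\tau(z_j)|^{-2\beta_j}$), the dual basis from Lemma~\ref{lem:homogeneousdualbasis} gives the integrability condition $2\alpha+2\sum_j\beta_j>Q_G$, and Corollary~\ref{cor:improvedl1} with $\beta_j\uparrow 1/2$ yields the stated threshold. The only blemish is an arithmetic slip in your intermediate formula $\bar\alpha_{\vec\beta}=(Q_G-\beta_1-\dots-\beta_h)/2$, which should read $Q_G/2-\sum_j\beta_j$ in order to be consistent with the condition you just derived and with the limit $(Q_G-h)/2$ that you correctly state afterwards.
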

\begin{proof}
Let $\omega_1,\dots,\omega_h \in (\lie{g}/\lie{z})^*$ and $z_1,\dots,z_h \in \lie{z}$ be given by the definition of $h$-capacious, and set
\[w_j(\bar x) = |\omega_j(\bar x)|, \qquad \zeta_j(\tau) = |\tau(z_j)|.\]
Notice now that, since the $z_j$ are linearly independent, for every choice of $\beta_1,\dots,\beta_h \in \left[0,1/2\right[$, the push-forward $\sigma_{\vec{\beta}}$ of the measure
\begin{equation}\label{eq:weightedpushforwardedmeasure}
\prod_{j=1}^h (1+\zeta_j(\tau)^{-2\beta_j}) \, d\sigma'(\lambda,\tau)
\end{equation}
via the canonical projection on the first factor of $\R^n \times \lie{z}^*$ is, by Proposition~\ref{prp:homogeneouspushforward}, a regular Borel measure on $\R^n$; in fact, since the $z_j$ are homogeneous, $\sigma_{\vec{\beta}}$ is the sum of $\epsilon_t$-homogeneous positive regular Borel measures on $\R^n$ (with possibly different degrees of homogeneity), hence $\sigma_{\vec{\beta}}$ is locally $1$-bounded on $\R^n \setminus \{0\}$. By Corollary~\ref{cor:triebeltrace}, embeddings and interpolation, this means that the hypotheses of Proposition~\ref{prp:partialweight2} are satisfied for $A = \R^n \setminus \{0\}$ and $\bar\gamma = n/p - 1/\max\{2,p\}$.

By Lemma~\ref{lem:homogeneousdualbasis}, we can find a homogeneous basis $v_1,\dots,v_k$ of $\lie{g}$, compatible with the descending central series, such that, if $\hat v_1,\dots,\hat v_h$ is the dual basis, then $\hat v_j = \omega_j \circ P$ for $j=1,\dots,h$; in particular we have $\tilde w_j(x) = 1 + |\hat v_j(x)|$. If we set
\[\kappa_j = \max \{r \tc v_j \in \lie{g}_{[r]}\},\]
then by Proposition~\ref{prp:nilpotentgrowth} we have
\[Q_G = \sum_{j=1}^k \kappa_j, \qquad \langle x \rangle_G \sim 1 + \sum_{j=1}^k |\hat v_j(x)|^{1/\kappa_j},\]
and in particular
\[\langle x \rangle_G^{-2\alpha_j} \leq C_{\alpha_j} (1 + |\hat v_j(x)|)^{-2\alpha_j/\kappa_j}\]
for $j=1,\dots,k$ and $\alpha_j \geq 0$. Moreover, since the $\omega_j \circ P$ are null on $[\lie{g},\lie{g}]$, then $\kappa_j = 1$ for $j=1,\dots,h$.

Notice now that, for fixed $\beta_1,\dots,\beta_h \in \left[0,1/2\right[$, if $\alpha \geq 0$ satisfies
\[2\alpha > 2\alpha_{\vec{\beta}} = \sum_{j=1}^h (1-2\beta_j) + \sum_{j=h+1}^k \kappa_j,\]
then we may choose $\alpha_1,\dots,\alpha_k \geq 0$ such that
\[\alpha = \sum_{j=1}^k \alpha_j, \qquad 2\alpha_j > \begin{cases}
1-2\beta_j &\text{for $j=1,\dots,h$,}\\
\kappa_j &\text{for $j=h+1,\dots,k$,}
\end{cases}\]
therefore
\begin{multline*}
\int_G \langle x \rangle_G^{-2\alpha} \, \prod_{j=1}^h \tilde w_j(x)^{-2\beta_j} \,dx \\
\leq \int_G \prod_{j=1}^h (1+|\hat v_j(x)|)^{-2(\alpha_j +\beta_j)} \prod_{j=h+1}^k (1+|\hat v_j(x)|)^{-2\alpha_j/\kappa_j} \,dx < \infty
\end{multline*}
We can thus apply Corollary~\ref{cor:improvedl1}, and the conclusion follows because, if the $\beta_j$ tend to $1/2$, then $\alpha_{\vec{\beta}}$ tends to $\sum_{j=h+1}^k \kappa_j = Q_G - h$.
\end{proof}

Notice that, as in Corollary~\ref{cor:polyl1estimates}, the estimates can be improved for $p<\infty$ by results of local $d$-boundedness; in this case, however, the measure to be examined is not the original Plancherel measure $\sigma$, but its ``modified version'' $\sigma_{\vec{\beta}}$ considered in the proof.

\section{Examples}\label{section:examples}

We now see how the techniques and results previously shown can be applied in some particular cases, involving nilpotent Lie groups. Preliminarily, some general computations are performed, which concern, on the one side, the Plancherel measure associated to a weighted subcoercive system and, on the other side, the capacity map of a nilpotent Lie algebra. The results thus obtained and the introduced notation shall be understood in the subsequent specific examples.

\subsection{Computation of the Plancherel measure}\index{Plancherel measure!for a weighted subcoercive system}

Here we present a ``heuristic'' method which can be used in order to determine the Plancherel measure associated to a specific weighted subcoercive system, in connection with the group Plancherel measure.

If $G$ is a connected Lie group which is type I and of polynomial growth, and $L_1,\dots,L_n$ is a weighted subcoercive system on $G$, then we know (see Proposition~\ref{prp:eigenvectordecomposition}) that there exists a generic set $\widehat G_{\mathrm{gen}}$ of (equivalence classes of) irreducible unitary representations of $G$ such that, if $\pi \in \widehat G_{\mathrm{gen}}$, then the Hilbert space $\HH_\pi$ admits a complete orthonormal system of joint eigenvectors $\{v_{\pi,\alpha}\}_{\alpha}$ of $L_1,\dots,L_n$. If $\lambda_{\pi,\alpha} \in \R^n$ denotes the eigenvalues of $L_1,\dots,L_n$ corresponding to the eigenvector $v_{\pi,\alpha}$, then, for every $m \in \D(\R^n)$,
\begin{multline*}
\int_{\R^n} |m(\lambda)|^2 \,d\sigma(\lambda) = \int_G |\breve m(x)|^2 \,dx = \int_{\widehat G_{\mathrm{gen}}} \|\pi(\breve m)\|_{\HS}^2 \,d\pi \\
= \int_{\widehat G_{\mathrm{gen}}} \sum_{\alpha} \|\pi(\breve m) v_{\pi,\alpha} \|_{\HH_\pi}^2 \, d\pi = \int_{\widehat G_{\mathrm{gen}}} \sum_\alpha |m(\lambda_{\pi,\alpha})|^2 \,d\pi.
\end{multline*}
If one is able to determine both the group Plancherel measure and the eigenvectors $v_{\pi,\alpha}$ in such a way that the function $(\pi,\alpha) \mapsto \lambda_{\pi,\alpha}$ is sufficiently regular, then the measure $\sigma$ on $\R^n$ is determined by the previous identity as the push-forward of the product of the group Plancherel measure times a counting measure.

For nilpotent Lie groups, the Kirillov theory gives a fair amount of information about irreducible representations and the group Plancherel measure (see \cite{corwin_representations_1990}, and also \cite{nielsen_unitary_1983}, where irreducible unitary representations and the group Plancherel measure are computed for every nilpotent Lie group of dimension up to $6$). However, except for some particular cases, an ``explicit'' formula for the joint eigenvalues of $L_1,\dots,L_n$ in a generic irreducible representation is not easy to find.

\paragraph{Two-step nilpotent Lie groups.}

In the case of a two-step nilpotent Lie group, we follow the computation of the group Plancherel measure given in \cite{astengo_hardys_2000}.

Namely, let $\lie{g} = \lie{v} \oplus \lie{z}$, where $\lie{z}$ is the center of $\lie{g}$, and let $\langle \cdot,\cdot \rangle$ be an inner product on $\lie{g}$ such that $\lie{v} \perp \lie{z}$. To every $\tau \in \lie{z}^*$, we associate the skew-symmetric endomorphism $B(\tau)$ of $\lie{v}$ defined by
\[\langle B(\tau) v, w \rangle = \tau([v,w]),\]
and set $\lie{r}_\tau = \ker B(\tau)$, $\lie{m}_\tau = \lie{v} \cap \lie{r}_\tau^\perp$. Since $B(\tau)$ is skew-symmetric, $\lie{m}_\tau$ has even dimension. Denote by $\Lambda$ the Zariski-open subset of $\lie{z}^*$ such that $\dim \lie{m}_\tau$ is maximum, and let $m \in \N$ be such that $\dim \lie{m}_\tau = 2m$ for $\tau \in \Lambda$. We can then choose, for $\tau \in \Lambda$, an orthonormal basis
\[E_1(\tau),\dots,E_{m}(\tau),\bar E_1(\tau),\dots,\bar E_m(\tau)\]
of $\lie{m}_\tau$ such that
\[B(\tau) E_j(\tau) = b_j(\tau) \bar E_j(\tau), \qquad B(\tau) \bar E_j(\tau) = -b_j(\tau) E_j(\tau)\]
for some $b_j(\tau) \in \R \setminus \{0\}$ and for $j=1,\dots,m$. For $\tau \in \Lambda$, if
\[\lie{x}_\tau = \Span\{E_1(\tau),\dots,E_m(\tau)\}, \qquad \lie{y}_\tau = \Span\{\bar E_1(\tau),\dots,\bar E_m(\tau)\},\] and we use coordinates $(X,Y,R,Z) \in \lie{x}_\tau \oplus \lie{y}_\tau \oplus \lie{r}_\tau \oplus \lie{z}$ on $G = \lie{g}$, then, for $\mu \in \lie{r}_\tau^*$, an irreducible unitary representation $\pi_{\tau,\mu}$ of $G$ on $L^2(\lie{x}_\tau)$ is defined by
\[\pi_{\tau,\mu}(X,Y,R,Z) \phi(X') = e^{i \tau(Z + [X'+X/2,Y])} e^{i \mu(R)} \phi(X+X').\]
If moreover, for $\tau \in \Lambda$, we denote by $D(\tau) = b_1(\tau)^2 \cdots b_m(\tau)^2$ the determinant of the restriction of $B(\tau)$ to $\lie{m}_\tau$, then $D(\tau)$ is a polynomial function of $\tau$, and moreover, as in \cite{astengo_hardys_2000}, one can show that, for $f \in \Sz(G)$,
\[\int_G |f(x)|^2 \,dx = (2\pi)^{m - \dim G} \int_{\Lambda} \int_{\lie{r}_\tau^*} \|\pi_{\tau,\mu}(f)\|_{\HS}^2 \, D(\tau)^{1/2} \,d\mu \,d\tau.\]

Notice now that, for $Z \in \lie{z}$, $R \in \lie{r}_\tau$,
\[d\pi_{\tau,\mu}(-iZ) = \tau(Z), \qquad d\pi_{\tau,\mu}(-iR) = \mu(R)\]
(these differential operators are represented as constants), whereas
\[d\pi_{\tau,\mu}(-iE_j(\tau)) = -i \frac{\partial}{\partial t_j}, \qquad d\pi_{\tau,\mu}(-i\bar E_j(\tau)) = b_j(\tau) t_j,\]
where $(t_1,\dots,t_m)$ are the coordinates related to the basis $E_1(\tau),\dots,E_m(\tau)$ of $\lie{x}_\tau$. In particular, if $L_j(\tau) = -(E_j(\tau)^2 + \bar E_j(\tau)^2)$, then
\[d\pi_{\tau,\mu}(L_j(\tau)) = - \left(\frac{\partial}{\partial t_j}\right)^2 + b_j(\tau)^2 t_j^2.\]
It can be easily checked that an orthonormal basis of $L^2(\lie{x}_\tau)$ made of joint eigenfunctions of $d\pi_{\tau,\mu}(L_1(\tau)),\dots,d\pi_{\tau,\mu}(L_m(\tau))$ is given by
\[u_{\tau,\mu,\alpha}(t) = \prod_{j=1}^m |b_j(\tau)|^{1/4} \psi_{\alpha_j}\left(|b_j(\tau)|^{1/2} \,t_j\right)\]
for $\alpha = (\alpha_1,\dots,\alpha_m) \in \N^m$, where
\[\psi_k(x) = (k! \, 2^k \sqrt{\pi})^{-1/2} e^{-x^2/2} H_k(x)\]
and
\[H_k(x) = (-1)^k e^{x^2} \left(\frac{d}{dx}\right)^{k} e^{-x^2}\]
is the $k$-th Hermite polynomial; we have in fact
\[d\pi_{\tau,\mu}(L_j(\tau)) u_{\tau,\mu,\alpha} = |b_j(\tau)| (1 + 2\alpha_j) u_{\tau,\mu,\alpha}.\]

The differential operators $L_j(\tau) \in \Diff(G)$ depend on $\tau$, and moreover they do not commute in general. However, in the following we will find several examples of operators that, for every $\tau \in \Lambda$, can be written as polynomials in the $L_j(\tau)$, so that the previous decomposition will give us the eigenvalues of those operators in the representation $\pi_{\tau,\mu}$. For instance, if $\lie{z} = [\lie{g},\lie{g}]$ and $L$ is the orthonormal sublaplacian on $G$ associated to the inner product $\langle \cdot, \cdot \rangle$, then
\[L = L_1(\tau) + \dots + L_m(\tau) - (R_1(\tau)^2 + \dots + R_h(\tau)^2)\]
for every $\tau \in \Lambda$, where $R_1(\tau),\dots,R_h(\tau)$ is an orthonormal basis of $\lie{r}_\tau$, thus
\[d\pi_{\tau,\mu}(L) u_{\tau,\mu,\alpha} = \left( \sum_{j=1}^m |b_j(\tau)| (1 + 2\alpha_j) + |\mu|^2 \right) u_{\tau,\mu,\alpha}.\]

\subsection{Computation of the capacity map}\label{subsection:capacity}\index{capacity map}

Let $\lie{g}$ be a nilpotent Lie algebra, $\lie{z}$ its center and $\lie{y}$ as in \eqref{eq:centercenter}. We now describe a simple algorithm for computing the capacity map $J : \lie{g}/\lie{z} \times \lie{z}^* \to \lie{y}^*$. This is useful for checking if the techniques of \S\ref{section:metivier} can be applied to a given group.

Notice first of all that $J$ takes its values in $(\lie{y}/\lie{z})^*$ (which can be thought of as the subspace of $\lie{y}^*$ made of the functionals which vanish on $\lie{z}$). Choose a basis $T_1,\dots,T_d$ of $\lie{z}$, and complete it to a basis $Y_1,\dots,Y_h,T_1,\dots,T_d$ of $\lie{y}$, and also to a basis $X_1,\dots,X_k,T_1,\dots,T_d$ of $\lie{g}$ (one can always take $Y_1,\dots,Y_h$ to be a subset of $X_1,\dots,X_k$, but this is not necessary here). Correspondingly, we have a basis $\bar Y_1,\dots,\bar Y_h$ of $\lie{y}/\lie{z}$, and a basis $\bar X_1,\dots,\bar X_k$ of $\lie{g}/\lie{z}$. Finally, we take the dual bases $\bar Y_1^*,\dots,\bar Y_h^*$ of $(\lie{y}/\lie{z})^*$ and  $T_1^*,\dots,T_d^*$ of $\lie{z}^*$.

We construct now a matrix $M$, with $h$ rows and $k$ columns, whose entries are homogeneous polynomials of degree $1$ in the indeterminates $t_1,\dots,t_d$. Namely, the $(i,j)$ entry of this matrix is $\sum_{r=1}^d T^*_r([X_j,Y_i]) \, t_r$. Notice that this polynomial is simply the expression of $[X_j,Y_i]$ in terms of the basis $T_1,\dots,T_d$, where the elements $T_r$ have been replaced with the indeterminates $t_r$. Consequently, this matrix is nothing else than a part of the ``multiplication table'' of $\lie{g}$.

Finally, we take the product of $M$ with the column vector $V$ whose entries are the indeterminates $x_1,\dots,x_k$. The result $MV$ is a column vector with $h$ rows, whose entries are polynomials which are separately homogeneous of degree $1$ both in the $t_1,\dots,t_d$ and in the $x_1,\dots,x_k$. Specifically, its $i$-th entry is
\[\sum_{j=1}^k \sum_{r=1}^d T^*_r([X_j,Y_i]) t_r x_j = J\left(\sum_{j=1}^k x_j \bar X_j,\sum_{r=1}^d t_r T_r^*\right)(Y_i),\]
i.e., the $i$-th component of $J(\sum_{j=1}^k x_j \bar X_j,\sum_{r=1}^d t_r T_r^*)$ in the basis $\bar Y_1^*,\dots,\bar Y_h^*$. In particular, by taking the sum of the squares of the entries of $MV$, one obtains the square of the norm of $J(\sum_{j=1}^k x_j \bar X_j,\sum_{r=1}^d t_r T_r^*)$, with respect to the inner product which makes $\bar Y_1^*,\dots,\bar Y_h^*$ into an orthonormal basis; this sum of squares is a polynomial $F(x_1,\dots,x_k,t_1,\dots,t_d)$, and in order to apply the techniques of \S\ref{section:metivier} one looks for inequalities of the form
\[F(x_1,\dots,x_k,t_1,\dots,t_d) \geq w(x_1,\dots,x_k)^2 \zeta(t_1,\dots,t_d)^2\]
for some nonnegative functions $w$ and $\zeta$. For instance, if one finds that
\[F(x_1,\dots,x_k,t_1,\dots,t_d) = (x_1^2+\dots+x_k^2) (t_1^2+\dots+t_d^2),\]
then $\lie{g}$ is H-type.

\subsection{Some $2$-step groups}\label{subsection:example2step}

\paragraph{The Heisenberg group $H_n$} is the nilpotent Lie group determined by the relations
\[[X_1,Y_1] = T, \quad\dots, \quad [X_n,Y_n] = T,\]
where $X_1,Y_1,\dots,X_n,Y_n,T$ is a basis of its Lie algebra $\lie{h}_n$. The group $H_n$ is easily shown to be H-type (see \S\ref{subsection:capacity}).

Automorphic dilations $\delta_t$ on $H_n$ are defined by
\[\delta_t(X_j) = tX_j, \qquad \delta_t(Y_j) = tY_j, \qquad \delta_t(T) = t^2 T,\]
which makes $H_n$ into a stratified Lie group of homogeneous dimension $2n+2$.

If we set $L_j = -(X_j^2 + Y_j^2)$ (the $j$-th partial sublaplacian), then
\[L_1,\dots,L_n,-iT\]
is easily checked to be a system of pairwise commuting, formally self-adjoint, $\delta_t$-homogeneous left-invariant differential operators on $H_n$; in fact, they are a Rockland system, since the algebra generated by them contains the full sublaplacian $L = L_1 + \dots + L_n$.

\begin{figure}
\centering
\includegraphics{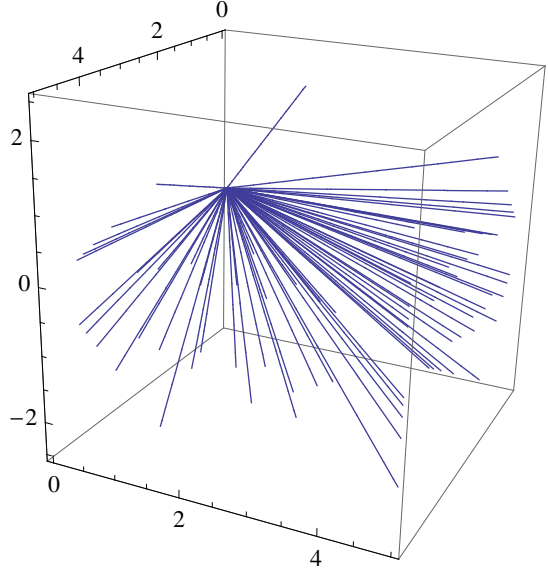}
\caption{The Heisenberg brush for $H_2$}
\end{figure}
The Plancherel measure $\sigma$ on $\R^{n+1}$ associated to that system can be easily determined by the aforementioned techniques: if we choose an inner product on $\lie{h}_n$ which makes $X_1,Y_1,\dots,X_n,Y_n,T$ into an orthonormal basis, and if we identify the dual of the center $\lie{z} = \R T$ with $\R$ via the basis $\{T\}$ of $\lie{z}$, we have
\[B(\tau) X_j = \tau Y_j, \qquad B(\tau) Y_j = - \tau X_j,\]
so that $\Lambda = \R \setminus \{0\}$, $\lie{r}_\tau = 0$, and we can choose $E_j(\tau) = X_j, \bar E_j(\tau) = Y_j$, therefore $b_j(\tau) = \tau$ and, if $I = (1,\dots,1) \in \N^n$, then
\begin{multline*}
\int_{\R^{n+1}} |m|^2 \,d\sigma = (2\pi)^{-(n+1)} \int_{\R \setminus \{0\}} \sum_{\alpha \in \N^n} |m(|\tau|(I+2\alpha),\tau)|^2 \,|\tau|^n \,d\tau \\
= \sum_{\substack{\alpha \in \N^n \\ \varepsilon \in \{-1,1\}}} (2\pi |I + 2\alpha|_1)^{-(n+1)} \int_0^\infty \left|m\left(\lambda \frac{I+2\alpha}{|I + 2\alpha|_1},\frac{\varepsilon \lambda}{|I + 2\alpha|_1}\right)\right|^2 \,\lambda^n \,d\lambda.
\end{multline*}
This shows that the Plancherel measure $\sigma$ is the sum of (suitably weighted) Lebesgue measures on half-lines of $\R^{n+1}$, which are contained in $\left[0,+\infty\right[^n \times \R$ and which accumulate on $\left[0,+\infty\right[^n \times \{0\}$; for this reason, one cannot hope to obtain that $\sigma$ is locally $d$-bounded for some $d$ greater than $1$. The support of $\sigma$ is called the \emph{Heisenberg brush}.

The Plancherel measure $\sigma'$ on $\R^2$ associated to the system
\[L,-iT\]
can be simply obtained as a push-forward of $\sigma$; we have in fact
\[\int_{\R^2} |m|^2 \,d\sigma' = \sum_{\substack{\beta \in \N \\ \varepsilon \in \{-1,1\}}} \frac{\binom{\beta + n - 1}{n - 1}}{(2\pi (n + 2\beta))^{(n+1)}} \int_0^\infty \left|m\left(\lambda,\frac{\varepsilon \lambda}{n+2\beta}\right)\right|^2 \,\lambda^n \,d\lambda.\]
The support of $\sigma'$ is called the \emph{Heisenberg fan}. 
\begin{figure}
\centering
\includegraphics{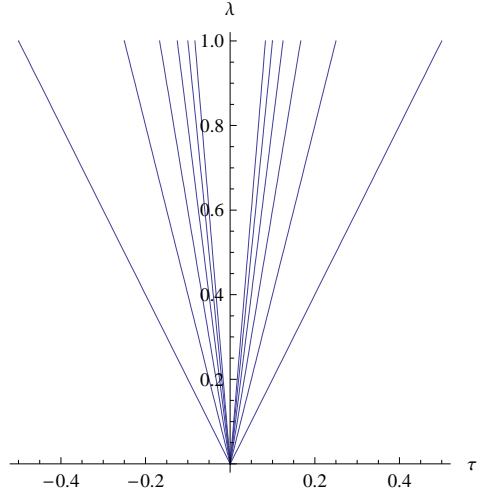}
\caption{The Heisenberg fan for $H_2$}
\end{figure}

\paragraph{The quaternionic Heisenberg group $\mathbb{H} H_n$} is defined by the relations
\[[X_j,Y_{j,1}] = T_1, \quad [X_j,Y_{j,2}] = T_2, \quad [X_j,Y_{j,3}] = T_3,\]
\[[Y_{j,1},Y_{j,2}] = T_3, \quad [Y_{j,2},Y_{j,3}] = T_1, \quad [Y_{j,3},Y_{j,1}] = T_2\]
for $j=1,\dots,n$, where $X_1,Y_{1,1},Y_{1,2},Y_{1,3},\dots,X_n,Y_{n,1},Y_{n,2},Y_{n,3},T_1,T_2,T_3$ is a basis of its $(4n+3)$-dimensional Lie algebra. The group $\mathbb{H} H_n$ is easily seen to be H-type (see \S\ref{subsection:capacity}).

A family of automorphic dilations is given by
\[\delta_t(X_j) = tX_j, \quad \delta_t(Y_{j,k}) = tY_{j,k}, \quad \delta_t(T_k) = t^2 T_k,\]
for $j=1,\dots,n$, $k=1,2,3$, which makes $\mathbb{H} H_n$ into a stratified group of homogeneous dimension $4n+6$.

The partial sublaplacians
\[L_j = -(X_j^2 + Y_{j,1}^2 + Y_{j,2}^2 + Y_{j,3}^2),\]
and also the full sublaplacian $L = L_1 + \dots + L_n$, are $\delta_t$-homogeneous. Moreover, the partial sublaplacians commute pairwise. Therefore, the system
\[L_1,\dots,L_n,-iT_1,-iT_2,-iT_3\]
is a Rockland system on $\mathbb{H} H_n$.

Let us identify the dual of the center with $\R^3$ via the basis $T_1,T_2,T_3$ of the center, and choose an inner product on the Lie algebra of $\mathbb{H} H_n$ which makes $X_1,Y_{1,1},Y_{1,2},Y_{1,3},\dots,X_n,Y_{n,1},Y_{n,2},Y_{n,3},T_1,T_2,T_3$ into an orthonormal basis. Then $\Lambda = \R^3 \setminus \{0\}$, and moreover, by a suitable choice of the $E_j(\tau), \bar E_j(\tau)$, we have
\[L_j = -(E_{2j-1}(\tau)^2 + \bar E_{2j-1}(\tau)^2 + E_{2j}(\tau)^2 + \bar E_{2j}(\tau)^2)\]
and $b_j(\tau) = |\tau|$, therefore the Plancherel measure $\sigma$ on $\R^{n+3}$ associated to the system $L_1,\dots,L_n,-iT_1,-iT_2,-iT_3$ is
\begin{multline*}
\int_{\R^{n+3}} |m|^2 \,d\sigma = \sum_{\alpha \in \N^n} \frac{\prod_{j=1}^n (\alpha_j + 1)}{(2\pi)^{2n+3}} \int_{\R^3 \setminus \{0\}} |m(|\tau|(2I+2\alpha),\tau)|^2 \,|\tau|^{2n} \,d\tau \\
= \sum_{\alpha \in \N^n} \frac{\prod_{j=1}^n (\alpha_j + 1)}{(4\pi |I + \alpha|_1)^{2n+3}} \int_{S^2} \int_0^\infty \left|m\left(\lambda \frac{I+\alpha}{|I + \alpha|_1},\frac{\varepsilon \lambda}{2|I + \alpha|_1}\right)\right|^2 \,\lambda^{2n+2} \,d\lambda \,d\varepsilon.
\end{multline*}
We thus obtain that the Plancherel measure $\sigma$ is the sum of suitably weighted Lebesgue measures on countably many $3$-cones contained in $\left[0,+\infty\right[^n \times \R^3$, whose vertex is the origin and which do not intersect elsewhere, whose axes are half-lines laying in $\left[0,+\infty\right[^n \times \{0\}$ and whose apertures tend to zero.

If we restrict now to the system
\begin{equation}\label{eq:hhcentral}
L,-iT_1,-iT_2,-iT_3,
\end{equation}
then the corresponding Plancherel measure $\sigma'$ on $\R \times \R^3$ is given by
\[
\int_{\R^4} |m|^2 \,d\sigma' 
= \sum_{\beta \in \N} \frac{\binom{\beta+2n-1}{2n-1}}{(4\pi (n+\beta))^{2n+3}} \int_{S^2} \int_0^\infty \left|m\left(\lambda, \frac{\varepsilon \lambda}{2(n+\beta)}\right)\right|^2 \,\lambda^{2n+2} \,d\lambda \,d\varepsilon.
\]
Notice that $\sigma'$ has a spherical symmetry in the central coordinates, due to the invariance by suitable automorphisms of $\mathbb{H} H_n$ of the algebra generated by $L,-iT_1,-iT_2,iT_3$ (see \S\ref{section:automorphisms}). Moreover, it is easily checked that $\sigma'$ is locally $3$-bounded on $\R^4 \setminus \{0\}$. However, since $\mathbb{H} H_n$ is H-type, the techniques of \S\ref{section:metivier} can be applied to get sharper weighted estimates, and correspondingly, in order to take advantage also of local $d$-boundedness properties, one has to look at the ``modified'' measure
\[d\sigma'_\gamma(\lambda,\tau) = (1+|\tau|^{-\gamma}) \, d\sigma'(\lambda,\tau)\]
for $\gamma \in \left[0,3\right[$ arbitrarily near to $3$ (cf.\ Corollary~\ref{cor:partialweight} and Proposition~\ref{prp:metivierl1}). This measure is locally $3$-bounded on $\R \times (\R^3 \setminus \{0\})$, but, for $\gamma$ near $3$, it is only locally $(4-\gamma)$-bounded on $\R^4 \setminus \{0\}$ (which is essentially nothing more than the local $1$-boundedness given by homogeneity). Consequently, we get sharper weighted estimates only for kernels corresponding to multipliers supported away from the line $\R \times \{0\}$, but we do not get a general improvement in the multiplier theorems of Chapter~\ref{chapter:multipliers}.

\paragraph{The free $2$-step nilpotent group $N_{3,2}$ on three generators} is defined by the relations
\[[X_1,X_2] = T_3, \quad [X_2,X_3] = T_1, \quad [X_3,X_1] = T_2,\]
where $X_1,X_2,X_3,T_1,T_2,T_3$ is a basis of its Lie algebra $\lie{n}_{3,2}$.

A family $\delta_t$ of automorphic dilations is given by
\[\delta_t(X_j) = t X_j, \quad \delta_t(T_j) = t^2 T_j\]
for $j=1,2,3$, which makes $N_{3,2}$ into a stratified group of homogeneous dimension $9$.

If $L = -(X_1^2 + X_2^2 + X_3^2)$ is the sublaplacian and $D = -(X_1 T_1 + X_2 T_2 + X_3 T_3)$, then it is easily checked that
\[L,D,-iT_1,-iT_2,-iT_3\]
is a Rockland system.

Let $\langle \cdot, \cdot \rangle$ be the inner product which makes $X_1,X_2,X_3,T_1,T_2,T_3$ into an orthonormal basis, and identify both $\lie{z}^*$ and $\lie{v}$ with $\R^3$ via the chosen bases. Then it is not difficult to see that
\[B(\tau) v = \tau \wedge v\]
for $\tau \in \lie{z}^*$, $v \in \lie{v}$. In particular, $\Lambda = \R^3 \setminus \{0\}$, $\lie{r}_\tau = \tau^\perp$; moreover, for $\tau \in \Lambda$, if $E(\tau)$, $\bar E(\tau)$ are chosen so that $\tau/|\tau|,E(\tau),\bar E(\tau)$ are a positive orthonormal basis of $\R^3$, then $b(\tau) = |\tau|$. Finally, if we identify $\lie{r}_\tau^*$ with $\R$ via the basis $\tau/|\tau|$ of $\lie{r}_\tau$, then we have, for $\tau \in \Lambda$, $\mu \in \lie{r}_\tau^*$,
\[d\pi_{\tau,\mu}(L) = - \left(\frac{d}{dt}\right)^2 + |\tau|^2 t^2 + \mu^2, \quad d\pi_{\tau,\mu}(D) = \mu|\tau|,\]
therefore
\[\begin{split}
\int_{\R^{5}} |m|^2 \,d\sigma 
&= (2\pi)^{-5} \sum_{\alpha \in 2\N+1}  \int_{\R^3 \setminus \{0\}} \int_{\R} |m(|\tau|\alpha + \mu^2, \mu |\tau| , \tau)|^2 \, |\tau| \,d\mu \,d\tau \\
&= (2\pi)^{-5} \sum_{\alpha \in 2\N+1} \int_0^\infty \int_{\R \times \R^3} | m(\lambda, \lambda^{3/2} \xi, \lambda \eta)|^2 \,d\sigma_\alpha(\xi,\eta) \,\lambda^{7/2} \,d\lambda,
\end{split}\]
where $\sigma_\alpha$ is the regular Borel measure on $\R^4 = \R \times \R^3$ given by
\[
\int_{\R^4} f \,d\sigma_\alpha 
= \alpha^{-4} \int_{S^2} \int_{-1}^1 f\left( \textstyle\frac{\theta (1 - \theta^2)}{\alpha}, \frac{(1-\theta^2)\omega}{\alpha} \right) \,(1-\theta^2)^3 \,d\theta \,d\omega.
\]
Each $\sigma_\alpha$ is supported on a compact hypersurface in $\R^4$, and it is not difficult to see that
\[\sigma_1(B(x,r)) \leq C r^3 \leq C_\epsilon r^{3-\varepsilon}\]
for every $x \in \R^4$, $r > 0$ and $\varepsilon > 0$, therefore
\[
\sum_{\alpha \in 2\N+1} \sigma_\alpha(B(x,r)) = \sum_{\alpha \in 2\N+1} \alpha^{-4} \sigma_1(B(\alpha x, \alpha r)) \leq C_\varepsilon r^{3-\varepsilon} \sum_{\alpha \in 2\N+1} \alpha^{-(1+\epsilon)},
\]
i.e., $\sum_{\alpha} \sigma_\alpha$ is locally $3$-bounded on $\R^4$ and consequently $\sigma$ is locally $4$-bounded on $\R^5 \setminus \{0\}$.

Let $\Delta = -(T_1^2 + T_2^2 + T_3^2)$ be the central Laplacian. Then
\[L,D,\Delta\]
is a Rockland system on $N_{3,2}$; in fact (see \cite{fischer_nilpotent_2010}, Theorem~7.5) this is a system of generators of the left-invariant $SO_3$-invariant differential operators relative to the Gelfand pair $(N_{3,2} \rtimes SO_3, SO_3)$. The associated Plancherel measure $\sigma'$ on $\R^3$ is easily obtained as the push-forward of the previously considered $\sigma$:
\[\int_{\R^{3}} |m|^2 \,d\sigma'
= (2\pi)^{-5} \sum_{\alpha \in 2\N+1} \int_0^\infty \int_{\R^2} | m(\lambda, \lambda^{3/2} \xi, \lambda^2 \zeta)|^2 \,d\sigma'_\alpha(\xi,\zeta) \,\lambda^{7/2} \,d\lambda,\]
where $\sigma'_\alpha$ is the regular Borel measure on $\R^2$ given by
\[
\int_{\R^2} f \,d\sigma'_\alpha 
= 4\pi \alpha^{-4} \int_{-1}^1 f\left( \textstyle\frac{\theta (1 - \theta^2)}{\alpha}, \frac{(1-\theta^2)^2}{\alpha^2} \right) \,(1-\theta^2)^3 \,d\theta.
\]
\begin{figure}
\centering
\includegraphics{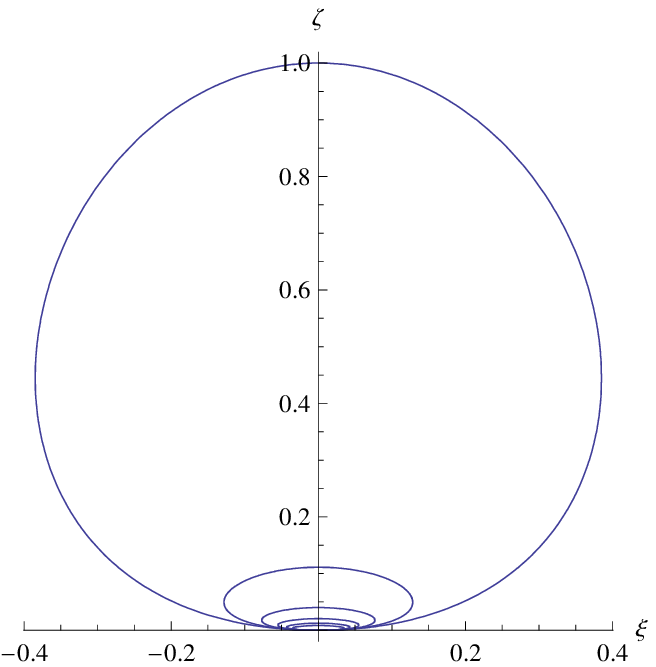}
\caption{Group $N_{3,2}$: supports of the measures $\sigma'_\alpha$}
\end{figure}
Each $\sigma'_\alpha$ is supported on a curve in $\R^2$, and it is not difficult to see that
\[\sigma'_1(B((\xi,\zeta),r)) \leq C r,\]
therefore
\[
\sum_{\alpha \in 2\N+1} \sigma'_\alpha(B((\xi,\zeta),r)) 
\leq \sum_{\alpha \in 2\N+1} \alpha^{-4} \sigma'_1(B((\alpha \xi, \alpha^2 \zeta),  \alpha^2 r)) 
\leq C r \sum_{\alpha \in 2\N+1} \alpha^{-2}\,
\]
which implies that $\sum_\alpha \sigma'_\alpha$ is locally $1$-bounded on $\R^2$, and consequently $\sigma'$ is locally $2$-bounded on $\R^3 \setminus \{0\}$.

Via projections, one can now obtain the Plancherel measures associated to the systems 
\[L, -iT_1,-iT_2,-iT_3, \qquad L,\Delta, \qquad L,D,\]
and estimate with analogous techniques the optimal exponent $d$ for the local $d$-boundedness, which in these cases is half-integer.

For instance, for the first of these systems, the Plancherel measure $\sigma''$ is determined by
\[\int_{\R^{4}} |m|^2 \,d\sigma'' 
= (2\pi)^{-5} \sum_{\alpha \in 2\N+1} \int_0^\infty \int_{\R^3} | m(\lambda, \lambda \eta)|^2 \,d\sigma''_\alpha(\eta) \,\lambda^{7/2} \,d\lambda,\]
where $\sigma''_\alpha$ is the measure on $\R^3$ such that
\[\int_{\R^3} f \,d\sigma''_\alpha 
= \alpha^{-4} \int_{S^2} \int_{0}^1 f\left(\frac{\nu \omega}{\alpha} \right) \,\nu^3 (1-\nu)^{-1/2} \,d\nu \,d\omega.\]
Each of the measures $\sigma''_\alpha$ is supported on a ball of $\R^3$ centered at the origin, and is absolutely continuous with respect to the Lebesgue measure, but the density is not bounded, and diverges along the boundary of the support. Because of this, the $\sigma''_\alpha$ are not locally $3$-bounded on $\R^3$, but we have
\[\sigma''_1(B(\eta,r)) \leq C r^{5/2};\]
thus, as before, it follows that $\sigma''$ is locally $\frac{7}{2}$-bounded on $\R^4 \setminus \{0\}$.

Similarly, one obtains that the Plancherel measure associated to the systems $L,\Delta$ and $L,D$ are both locally $\frac{3}{2}$-bounded on $\R^2 \setminus \{0\}$.

\paragraph{The group $G_{5,2}$ of \cite{nielsen_unitary_1983}} is defined by the relations
\begin{equation}\label{eq:g52relations}
[X_5,X_4] = X_2, \qquad [X_5,X_3] = X_1,
\end{equation}
where $X_5,X_4,X_3,X_2,X_1$ is a basis of its Lie algebra $\lie{g} = \lie{g}_{5,2}$. It is isomorphic to the quotient of the free nilpotent group $N_{3,2}$ by some central element; it is also isomorphic to a semidirect product $\R \ltimes \R^4$.

The following are characteristic ideals of $\lie{g}$:
\begin{alignat*}{4}
\lie{z} = [\lie{g},\lie{g}] & = \Span\{ & & X_2, & X_1 \}, \\
\lie{h} & = \Span\{ X_4, & X_3, & X_2, & X_1 \};
\end{alignat*}
in fact, $\lie{z}$ is the center of $\lie{g}$, whereas $\lie{h}$ is the unique $4$-dimensional abelian subalgebra of $\lie{g}$. The ideal $\lie{y}$ of $\lie{g}$ corresponding to the center of the quotient $\lie{g}/\lie{z}$ in this case coincides with the whole $\lie{g}$.

Let $\delta_t$ be automorphic dilations on $\lie{g}$. Then $\lie{z}$ and $\lie{h}$ are $\delta_t$-homogeneous. Consequently, we can choose a homogeneous element $\tilde X_5 \in X_5 + \lie{h}$ and homogeneous elements $\tilde X_4,\tilde X_3$ in $\lie{h}$ which are linearly independent modulo $\lie{z}$. If we set moreover $\tilde X_2 = [\tilde X_5,\tilde X_4]$ and $\tilde X_1= [\tilde X_5,\tilde X_3]$, then clearly in the (homogeneous) basis $\tilde X_5,\tilde X_4,\tilde X_3,\tilde X_2,\tilde X_1$ the relations of $\lie{g}$ have the same form as in \eqref{eq:g52relations}. Therefore, without loss of generality, we may suppose that the initial basis $X_5,X_4,X_3,X_2,X_1$ is homogeneous.

If $\lambda_j$ is the homogeneity degree of $X_j$ for $j=1,\dots,5$, then it must be
\[\lambda_2= \lambda_5 + \lambda_4, \qquad \lambda_1 = \lambda_5+\lambda_3;\]
conversely, for any choice of $\lambda_5,\lambda_4,\lambda_3 \geq 1$, if $\lambda_2,\lambda_1$ are determined by the previous equalities, then the dilations $\delta_t$ of $\lie{g}$ defined by $\delta_t(X_j) = t^{\lambda_j} X_j$ are automorphic.

With respect to the bases $\bar X_5,\bar X_4,\bar X_3$ of $\lie{g}/\lie{z}$ and $X_2^*,X_1^*$ of $\lie{z}^*$, and to a suitable norm on $\lie{g}^*$, we have (see \S\ref{subsection:capacity})
\[|J(x_5 \bar X_5 + x_4 \bar X_4 + x_3 \bar X_3, t_2 X_2^* + t_1 X_1^*)|^2 = (t_2 x_4 + t_1 x_3)^2 + (t_2^2 + t_1^2) x_5^2.\]
The homogeneous elements $\bar X_5^*$ of $(\lie{g}/\lie{z})^*$ and $X_2$ of $\lie{z}$ then attest that $G_{5,2}$ is $2$-capacious (despite the fact that Proposition~\ref{prp:capacitycriteria} does not apply in this case), with respect to any homogeneous structure on $\lie{g}$.

We fix now the homogeneous structure with $\lambda_5 = \lambda_4 = \lambda_3 = 1$, which makes $G_{5,2}$ into a stratified group of homogeneous dimension $7$. We consider moreover the Rockland system
\[L,-iX_2,-iX_1,\]
where $L = -(X_5^2+X_4^2+X_3^2)$ is the sublaplacian. The computation of the associated Plancherel measure $\sigma$ is performed as in the previous cases, and gives
\[\int_{\R^{3}} |m|^2 \,d\sigma
= (2\pi)^{-4} \sum_{\alpha \in 2\N+1} \int_0^\infty \int_{\R^2} | m(\lambda, \lambda \eta)|^2 \,d\sigma_\alpha(\eta) \,\lambda^{5/2} \,d\lambda,\]
where $\sigma_\alpha$ is the measure on $\R^2$ such that
\[\int_{\R^2} f \,d\sigma_\alpha 
= \alpha^{-3} \int_{S^1} \int_{0}^1 f\left(\frac{\nu \omega}{\alpha} \right) \,\nu^2 (1-\nu)^{-1/2} \,d\nu \,d\omega.\]

It is then not difficult to show, as in the case of the group $N_{3,2}$, that $\sigma_1$ is locally $\frac{3}{2}$-bounded on $\R^2$, and consequently $\sigma$ is locally $\frac{5}{2}$-bounded on $\R^3 \setminus \{0\}$. Here, however, we can also apply the techniques of \S\ref{section:metivier}, since 
\[|J(x_5 \bar X_5 + x_4 \bar X_4 + x_3 \bar X_3, t_2 X_2^* + t_1 X_1^*)| \geq |(t_2,t_1)|_2 \, |x_5|_2;\]
thus, as in the case of $\mathbb{H} H_n$, we consider the ``modified'' measure
\[d\tilde\sigma_\gamma(\lambda,\tau) = (1+|\tau|^{-\gamma}) \,d\sigma(\lambda,\tau)\]
for $\gamma \in \left[0,1\right[$ arbitrarily near to $1$. It is not difficult to show that $\tilde \sigma_\gamma$ is still locally $\frac{5}{2}$-bounded on $\R \times (\R^2 \setminus\{0\})$, but, for $\gamma$ near $1$, it is only locally $(3-\gamma)$-bounded on $\R^3 \setminus\{0\}$. Anyway, all the measures $\tilde\sigma_\gamma$ are locally $2$-bounded on $\R^3 \setminus \{0\}$, and this will yield --- differently from the case of the groups $\mathbb{H}H_n$ --- an improvement in the multiplier theorems of Chapter~\ref{chapter:multipliers}.

\paragraph{The free $2$-step nilpotent group $N_{4,2}$ on four generators} is defined by the relations
\[[X_j,X_k] = T_{jk}\]
for $1 \leq j < k \leq 4$, where $X_1,X_2,X_3,X_4,T_{12},T_{13},T_{14},T_{23},T_{24},T_{34}$ is a basis of its Lie algebra $\lie{n}_{4,2}$.

A family $\delta_t$ of automorphic dilations is given by
\[\delta_t(X_j) = t X_j, \quad \delta_t(T_{jk}) = t^2 T_{jk},\]
which makes $N_{4,2}$ into a stratified group of homogeneous dimension $16$.

If $L = -(X_1^2 + X_2^2 + X_3^2 + X_4^2)$ is the sublaplacian and
\begin{multline*}
D = - (X_2 T_{12} + X_3 T_{13} + X_4 T_{14})^2 - (-X_1 T_{12} + X_3 T_{23} + X_4 T_{24})^2 \\
    - (-X_1 T_{13} - X_2 T_{23} + X_4 T_{34})^2 - (-X_1 T_{14} - X_2 T_{24} - X_3 T_{34})^2,
\end{multline*}
then it is easily checked that
\[L,D,-iT_{12},-iT_{13},-iT_{14},-iT_{23},-iT_{24},-iT_{34}\]
is a Rockland system.

Let us fix on $\lie{n}_{4,2}$ the inner product which makes \[X_1,X_2,X_3,X_4,T_{12},T_{13},T_{14},T_{23},T_{24},T_{34}\]
into an orthonormal basis, and identify $\lie{z}$ and its dual with $\lie{so}_4$, the space of skew-symmetric $4 \times 4$ matrices, where the vector $T_{jk}$ corresponds to the matrix
\[(\delta_{jm} \delta_{kl} - \delta_{jl} \delta_{km})_{l,m}\]
(where $\delta_{rs}$ is the Kronecker delta). If $\lie{v}$ is identified with $\R^4$ via the basis $X_1,X_2,X_3,X_4$, then it is easily seen that, for $\tau \in \lie{so}_4$, the matrix representing the endomorphism $B(\tau)$ of $\lie{v}$ is $\tau$ itself. Moreover, $\Lambda = \{ \tau \tc \det \tau \neq 0\}$, and $\lie{r}_\tau = 0$ for $\tau \in \Lambda$.

For $b = (b_1,b_2) \in \R^2$, let
\[\tau_b = \left(\begin{smallmatrix}
0   & -b_1 & 0   & 0 \\
b_1 & 0    & 0   & 0 \\
0   & 0    & 0   & -b_2 \\
0   & 0    & b_2 & 0
\end{smallmatrix}\right) \in \lie{so}_4.\]
In order to write the Plancherel formula, we will use the following change of variables (cf.\ \cite{helgason_groups_1984}, Theorem~I.5.17):
\[\int_{\lie{so}_4} f(\tau) \,d\tau = \pi^2 \int_{SO_4} \int_{\R^2} f( \rho \tau_b \rho^{-1}) \, (b_1^2 - b_2^2)^2 \,db \,d\rho,\]
where the integration on $SO_4$ is made with respect to the Haar measure of total mass $1$; in fact, if $\tau = \rho \tau_b \rho^{-1}$, then we simply have $b_1(\tau) = b_1$, $b_2(\tau) = b_2$. Therefore, the Plancherel measure $\sigma$ associated to the aforementioned Rockland system on $N_{4,2}$ is given by
\begin{multline*}
\int_{\R^8} |m|^2 \,d\sigma \\
= \frac{1}{2^8 \pi^6} \sum_{\alpha \in (1+2\N)^2} \int_{SO_4} \int_{\R^2} |m(|b_1|\alpha_1 + |b_2|\alpha_2,|b_1|^3 \alpha_1 + |b_2|^3 \alpha_2, \rho \tau_b \rho^{-1})|^2  \\
\times \,|b_1 b_2| (b_1^2 - b_2^2)^2 \,db \,d\rho
\end{multline*}

\begin{figure}
\centering
\includegraphics{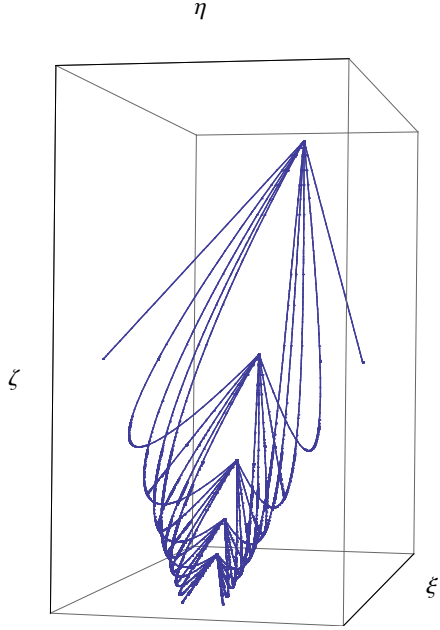}
\caption{Group $N_{4,2}$: supports of the measures $\sigma'_{\alpha,\varepsilon}$}
\end{figure}

In particular, if $\Delta = -(T_{12}^2 + T_{13}^2 + T_{14}^2 + T_{23}^2 + T_{24}^2 + T_{34}^2)$ is the central Laplacian, and $P = - T_{12} T_{34} + T_{13} T_{24} - T_{14} T_{23}$, then also
\[L,D,P,\Delta\]
is a Rockland system (in fact, it is a system of generators of the $SO_4$-invariant differential operators relative to the Gelfand pair $(N_{4,2} \rtimes SO_4,SO_4)$, see Theorem~7.5 of \cite{fischer_nilpotent_2010}), and the associated Plancherel measure $\sigma'$ is given by
\begin{multline*}
\int_{\R^4} |m|^2 \,d\sigma' \\
= \frac{1}{2^8 \pi^6} \sum_{\alpha \in (1+2\N)^2} \int_{\R^2} |m(|b_1|\alpha_1 + |b_2|\alpha_2,|b_1|^3 \alpha_1 + |b_2|^3 \alpha_2, b_1 b_2, b_1^2 + b_2^2)|^2 \\
\times \,|b_1 b_2| (b_1^2 - b_2^2)^2 \,db,
\end{multline*}
i.e.,
\[\int_{\R^4} |m|^2 \,d\sigma' = \frac{1}{2^7 \pi^6} \sum_{\substack{\alpha \in (1+2\N)^2 \\\varepsilon \in \{-1,1\}}} \int_0^\infty \int_{\R^3} |m(\lambda, \lambda^3 \xi, \lambda^2 \eta, \lambda^2 \zeta)|^2 \,d\sigma'_{\alpha,\varepsilon}(\xi,\eta,\zeta) \,\lambda^7 \,d\lambda,\]
where $\sigma'_{\alpha,\varepsilon}$ is the regular Borel measure on $\R^3$ given by
\begin{multline*}
\int_{\R^3} f \,d\sigma'_{\alpha,\varepsilon} = \int_0^1 f \left( \textstyle \frac{(1-\theta)^3}{\alpha_1^2} + \frac{\theta^3}{\alpha_2^2}, \frac{\varepsilon (1-\theta) \theta}{\alpha_1 \alpha_2}, \frac{(1-\theta)^2}{\alpha_1^2} + \frac{\theta^2}{\alpha_2^2} \right) \\
\times \,{\textstyle\frac{(1-\theta) \theta}{\alpha_1^2 \alpha_2^2}} \left(\textstyle \frac{(1-\theta)^2}{\alpha_1^2} - \frac{\theta^2}{\alpha_2^2}\right)^2  \,d\theta.
\end{multline*}
By projection, we also obtain the Plancherel measure $\sigma''$ associated to the Rockland system
\[L,P,\Delta,\]
namely
\[\int_{\R^3} |m|^2 \,d\sigma'' = \frac{1}{2^7 \pi^6} \sum_{\substack{\alpha \in (1+2\N)^2 \\\varepsilon \in \{-1,1\}}} \int_0^\infty \int_{\R^2} |m(\lambda, \lambda^2 \eta, \lambda^2 \zeta)|^2 \,d\sigma''_{\alpha,\varepsilon}(\eta,\zeta) \,\lambda^7 \,d\lambda,\]
where the $\sigma''_{\alpha,\varepsilon}$ are the corresponding projections of the $\sigma'_{\alpha,\varepsilon}$.

Notice now that, if $p_\varepsilon : \R^2 \to \R$ is the linear map defined by
\[p_\varepsilon(\eta,\zeta) = \zeta + 2\epsilon\eta,\]
then the measure $\nu_\alpha = p_1(\sigma_{\alpha,-1}) = p_{-1}(\sigma_{\alpha,1})$ on $\R$ satisfies
\[\begin{split}
\int_\R f \,d\nu_\alpha &= \int_0^1 f\left(\left(\textstyle \frac{1-\theta}{\alpha_1} - \frac{\theta}{\alpha_2} \right)^2 \right) \,{\textstyle\frac{(1-\theta) \theta}{\alpha_1^2 \alpha_2^2}} \left(\textstyle \frac{1-\theta}{\alpha_1} - \frac{\theta}{\alpha_2} \right)^2 \left(\textstyle \frac{1-\theta}{\alpha_1} + \frac{\theta}{\alpha_2} \right)^2 \,d\theta\\
&\leq {\textstyle \frac{1}{4\alpha_1^2 \alpha_2^2} \left(\frac{1}{\alpha_1} + \frac{1}{\alpha_2}\right)^2} \int_0^1 f(s) \,ds,
\end{split}\]
from which we get
\[\sigma_{\alpha,\varepsilon}(B((\eta,\zeta),r)) \leq \nu_\alpha(B(p_{-\varepsilon}(\zeta,\eta)),r\sqrt{5}) \leq \frac{\sqrt{5}}{4 \alpha_1^2 \alpha_2^2} \left(\frac{1}{\alpha_1} + \frac{1}{\alpha_2}\right)^2 r.\]
Consequently, the sum of the $\sigma''_{\alpha,\varepsilon}$ is locally $1$-bounded on $\R^2$, and the measure $\sigma'$ is locally $2$-bounded on $\R^3 \setminus \{0\}$. But then also the sum of the $\sigma'_{\alpha,\varepsilon}$ is locally $1$-bounded on $\R^3$, so that $\sigma'$ is locally $2$-bounded on $\R^4 \setminus \{0\}$.

Analogously, one deduces that the previously considered measure $\sigma$ and the Plancherel measure associated to the system
\[L,-iT_{12},-iT_{13},-iT_{14},-iT_{23},-iT_{24},-iT_{34}\]
are both locally $2$-bounded off the origin.

\subsection{Some $3$-step groups}\label{subsection:example3step}

\paragraph{The free $3$-step nilpotent group $N_{2,3}$ on two generators} is defined by the relations
\[[X_1,X_2] = Y, \quad [X_1,Y] = T_1, \quad [X_2,Y] = T_2,\]
where $X_1,X_2,Y,T_1,T_2$ is a basis of its Lie algebra $\lie{n}_{2,3}$.

A family $\delta_t$ of automorphic dilations is given by
\[\delta_t(X_j) = t X_j, \quad \delta_t(Y) = t^2 Y, \quad \delta_t(T_j) = t^3 T_j,\]
which makes $N_{2,3}$ into a stratified group of homogeneous dimension $10$.

If $L = -(X_1^2 + X_2^2)$ is the sublaplacian and $D = 2X_2 T_1 - 2 X_1 T_2 - Y^2$, then it is easily checked that
\[L,D,-iT_1,-iT_2\]
is a Rockland system. Notice that the group $SO_2$ acts on $N_{2,3}$ by automorphisms given by simultaneous rotations of $\R X_1+ \R X_2$ and $\R T_1 + \R T_2$, and that the algebra generated by $L,D,-iT_1,-iT_2$ is $SO_2$-invariant. As we are going to see, this invariance by rotations gives some information about the Plancherel measure, which yields improvements in the weighted estimates.

We consider first the subsystem
\[L,-iT_1,-iT_2;\]
by homogeneity (cf.\ Propositions~\ref{prp:radialcoordinates} and \ref{prp:spectrum}), we know that the associated Plancherel measure $\sigma_0$ is given by
\[\int_{\R^3} f \,d\sigma_0 = \int_0^\infty \int_{\R^2} f(\lambda,\lambda^{3/2} \xi) \,d\sigma_0'(\xi) \,\lambda^4 \,d\lambda,\]
where $\sigma_0'$ is a compactly supported regular Borel measure on $\R^2$ which is invariant by rotations, so that in turn it can be written in the form
\[\int_{\R^2} f \,d\sigma'_0 = \frac{1}{2\pi} \int_{S^1} \int_{\left[0,+\infty\right[} f(\rho \omega) \,d\sigma''_0(\rho) \,d\omega,\]
where $\sigma''_0$ is the push-forward of $\sigma'_0$ via the map $\xi \mapsto |\xi|_2$.

Notice now that, for $r \leq 1$, $\alpha > 0$,
\[\begin{split}
\sigma''_0(\left[0,r\right[) &= 5 \int_{\R^3} \chr_{\left]0,1\right[}(\lambda) \,\chr_{B(0,r)}(\lambda^{-3/2} \tau) \,d\sigma_0(\lambda,\tau)\\
&\leq 5 r^\alpha \int_{\R^3} \chr_{\left]0,1\right[}(\lambda) \,\chr_{B(0,1)}(\tau) \,|\tau|_2^{-\alpha} \,d\sigma_0(\lambda,\tau).
\end{split}\]
Since, by Proposition~\ref{prp:homogeneouspushforward}, the last integral is finite for $\alpha < 2$, we deduce that
\[\sigma_0'(B(0,r)) = \sigma''_0(\left[0,r\right[) \leq C_\alpha r^\alpha \qquad\text{for $0 < \alpha < 2$}\]
(notice that, for $r \geq 1$, one can use the fact that $\sigma''_0$ is compactly supported).

Consequently, $\sigma_0'$ is locally $1$-bounded on $\R^2$: in fact, for $\xi \in \R^2$ and $r > 0$, if $|\xi|_2 \leq 2 r$ then
\[\sigma_0'(B(\xi,r)) \leq \sigma_0'(B(0,3r)) \leq C r,\]
whereas, for $|\xi|_2 \geq 2r$, the ball $B(\xi,r)$ is seen from the origin in an angle of magnitude less than $\pi r/|\xi|_2$, so that
\[\sigma_0'(B(\xi,r)) \leq \frac{r}{2|\xi|_2} \sigma_0''\left(\left[0, {\textstyle\frac{3}{2}} |\xi|_2\right[\right) \leq C r.\]
Therefore, $\sigma_0$ is locally $2$-bounded on $\R^3 \setminus \{0\}$.

This result has consequences also for the larger system $L,D,-iT_1,-iT_2$. In fact, if $\sigma$ is the Plancherel measure associated to this system, then, as before,
\[\int_{\R^4} f \,d\sigma = \int_0^\infty \int_{\R^3} f(\lambda,\lambda^2 \eta, \lambda^{3/2} \xi) \,d\sigma'(\eta,\xi) \,\lambda^4 \,d\lambda,\]
where $\sigma'$ is a compactly supported regular Borel measure on $\R^3$; since the push-forward of $\sigma'$ via the map $\R^3 \ni (\eta,\xi) \mapsto \xi \in \R^2$ is the measure $\sigma'_0$, then also $\sigma'$ is locally $1$-bounded on $\R^3$, so that $\sigma$ is locally $2$-bounded on $\R^4 \setminus \{0\}$.

It is also possible to consider the systems
\[L,D, \qquad L,\Delta, \qquad L,D,\Delta,\]
where $\Delta = -(T_1^2 + T_2^2)$ is the central Laplacian. In this case, all the mentioned operators are $SO_2$-invariant, so that the action of $SO_2$ on the joint spectrum is trivial, and we do not recover any information on the Plancherel measure.

\paragraph{The group $G_{6,19}$ of \cite{nielsen_unitary_1983}} is defined by the relations
\begin{equation}\label{eq:g619relations}
[X_6,X_5] = X_4, \qquad [X_6,X_3]=X_1, \qquad [X_5,X_4]=X_2,
\end{equation}
where $X_6,X_5,X_4,X_3,X_2,X_1$ is a basis of its Lie algebra $\lie{g} = \lie{g}_{6,19}$. This group also appears, in a different form\footnote{In \cite{jenkins_dilations_1979}, a $6$-dimensional Lie algebra $\lie{g}$ with basis $Y_1,Y_2,Y_3,Y_4,Y_5,Y_6$ and relations
\[[Y_1,Y_2] = Y_4, \qquad [Y_2,Y_3]=Y_5, \qquad [Y_1,Y_3]=Y_6 = [Y_1,Y_4] = [Y_2,Y_4]\]
is introduced, and it is claimed that there exist no dilations on $\lie{g}$ such that the elements of degree $1$ generate the whole algebra (i.e., that $\lie{g}$ is not stratifiable); the argument supporting the claim, however, considers only dilations which make $Y_1,\dots,Y_6$ homogeneous. On the other hand, by setting
\[X_1 = Y_5 - Y_6, \quad X_2 = -Y_6, \quad X_3 = Y_3 - Y_4, \quad X_4= -Y_4, \quad X_5= Y_1, \quad X_6 = Y_2 - Y_1,\]
one can easily check that the relations in the new basis $X_1,\dots,X_6$ coincide with the above \eqref{eq:g619relations}, and in the following we show that $\lie{g}_{6,19}$ admits a stratification.}, in \cite{jenkins_dilations_1979}.

For any ideal $I$ of $\lie{g}$, let $C(I)$ denote the ideal of $\lie{g}$ corresponding to the center of the quotient $\lie{g}/I$, and $Z(I)$ the centralizer of $I$ in $\lie{g}$. We then find quite a rich collection of characteristic ideals of $\lie{g}$:
\begin{alignat*}{6}
[\lie{g},\lie{g}] &= \Span\{ & & X_4,&  & X_2, & X_1 \}, \\
[\lie{g},[\lie{g},\lie{g}]] &= \Span\{ & & &  & X_2 & \}, \\
\lie{z} = C(0) &= \Span\{  & & & & X_2, & X_1 \}, \\
\lie{y} = C(\lie{z}) &= \Span\{ & & X_4, & X_3, & X_2, & X_1\}, \\
\lie{h} = Z([\lie{g},\lie{g}]) &= \Span\{X_6, & & X_4, & X_3, & X_2, & X_1 \}, \\
[\lie{h},\lie{h}] &= \Span\{ & & & & & X_1 \}, \\
\lie{k} = C([\lie{h},\lie{h}]) &= \Span \{ & & & X_3, & X_2, & X_1 \}, \\
Z(\lie{k}) &= \Span \{ & X_5, &X_4, &X_3, &X_2, &X_1\}.
\end{alignat*}

If $\delta_t$ are automorphic dilations on $\lie{g}$, then the previous ideals must be all $\delta_t$-homogeneous. In particular, $\tilde X_2 = X_2$ and $\tilde X_1 = X_1$ are certainly homogeneous. Moreover, in $\lie{h}$ we can find a homogeneous element $\tilde X_6 \in X_6 + \lie{y}$, whereas in $Z(\lie{k})$ we find a homogeneous $\tilde X_5 \in X_5 + \lie{y}$; then $\tilde X_4 = [\tilde X_6,\tilde X_5] \in X_4 + \lie{z}$ is homogeneous. Finally, in $\lie{k}$ we find a homogeneous $\tilde X_3 \in X_3 + \lie{z}$. It is then not difficult to show that, in the basis $\tilde X_6,\tilde X_5,\tilde X_4,\tilde X_3,\tilde X_2,\tilde X_1$, the relations of $\lie{g}$ are the same as in \eqref{eq:g619relations}, with $X_j$ replaced by $\tilde X_j$; therefore, without loss of generality, we may suppose that the initial basis $X_6,X_5,X_4,X_3,X_2,X_1$ is $\delta_t$-homogeneous.

Notice that, if $\lambda_j$ is the homogeneity degree of $X_j$ for $j=1,\dots,6$, then we must have
\[\lambda_4 = \lambda_6 + \lambda_5, \qquad \lambda_2 = \lambda_6 + 2\lambda_5, \qquad \lambda_1= \lambda_6 + \lambda_3 ;\]
in fact, for any choice of $\lambda_6,\lambda_5,\lambda_3 \geq 1$, if $\lambda_4,\lambda_2,\lambda_1$ are determined by the previous equalities, we have a system of automorphic dilations $\delta_t$ on $\lie{g}$ such that $X_j$ has degree $\lambda_j$ for $j=1,\dots,6$. In particular, for $\lambda_6 = \lambda_5 = \lambda_3 = 1$ we have a stratification on $\lie{g}$.

By computing the capacity map $J$ as in \S\ref{subsection:capacity}, one easily obtains that
\[|J(x_6 \bar X_6 + x_5 \bar X_5 + x_4 \bar X_4 + x_3 \bar X_3, t_2 X_2^* + t_1 X_1^*)|^2 = x_6^2 t_1^2 + x_5^2 t_2^2.\]
In particular, the linearly independent elements $\bar X_6^*,\bar X_5^* \in (\lie{g}/\lie{z})^*$ and $X_2,X_1 \in \lie{z}$ attest that $G_{6,19}$ is $2$-capacious (although Proposition~\ref{prp:capacitycriteria} gives only that $G_{6,19}$ is $1$-capacious), for any choice of a homogeneous structure on $G_{6,19}$.

A Rockland system on $G_{6,19}$ is given, e.g., by
\[L,-iX_2,-iX_1,\]
where $L$ is any Rockland operator (with respect to some homogeneous structure) on $G_{6,19}$; for instance, one can take
\[L = (-iX_6)^{2k_6} + (-iX_5)^{2k_5} + (-iX_3)^{2k_3}\]
for any choice of $k_6,k_5,k_3 \in \N \setminus \{0\}$.

\paragraph{The group $G_{6,23}$ of \cite{nielsen_unitary_1983}} is defined by the relations
\begin{equation}\label{eq:g623relations}\begin{split}
&[X_6,X_5]= X_4, \qquad [X_6,X_4]=X_2, \qquad [X_6,X_3]=-X_1, \\
&[X_5,X_4]=X_1, \qquad [X_5,X_3]=X_2,
\end{split}\end{equation}
where $X_6,X_5,X_4,X_3,X_2,X_1$ is a basis of its Lie algebra $\lie{g} = \lie{g}_{6,23}$.

In this case, we have a less rich list of characteristic ideals:
\begin{alignat*}{4}
[\lie{g},\lie{g}] &= \Span\{ X_4,&  & X_2, & X_1 \}, \\
\lie{z} = C(0) = [\lie{g},[\lie{g},\lie{g}]] &= \Span\{ &  & X_2 & X_1\}, \\
\lie{y} = C(\lie{z}) &= \Span\{ X_4, & X_3, & X_2, & X_1\}.
\end{alignat*}
However, since $[\lie{y},\lie{y}] = 0$, the bracket $[\cdot,\cdot] : \lie{g}\times\lie{g} \to \R$ defines a bilinear map
\[B : \lie{g}/\lie{y} \times \lie{y}/\lie{z} \to \lie{z},\]
and it is easily checked that $B(\bar x, \bar y) = 0$ implies $\bar x = 0$ and $\bar y = 0$.

Let $\delta_t$ be automorphic dilations on $\lie{g}$. Then we can find a $\delta_t$-homogeneous basis $v_6,v_5$ of $\lie{g}/\lie{y}$, with degrees $\lambda_6,\lambda_5$ respectively, and a $\delta_t$-homogeneous basis $w_4,w_3$ of $\lie{y}/\lie{z}$, with degrees $\lambda_4,\lambda_3$ respectively. Notice that, by the aforementioned non-degeneracy, $B(v_6,w_4), B(v_5,w_4)$ must be a homogeneous basis of $\lie{z}$, with degrees $\lambda_2 = \lambda_6+\lambda_4,\lambda_1 = \lambda_5+\lambda_4$ respectively. On the other hand, also $B(v_6,w_4), B(v_6,w_3)$ must be a homogeneous basis of $\lie{z}$, thus $B(v_6,w_3)$ must have degree $\lambda_1$, so that $\lambda_1 = \lambda_6+\lambda_3$. Analogously, $B(v_6,w_3),B(w_5,w_3)$ is a basis of $\lie{z}$, thus $B(w_5,w_3)$ must have degree $\lambda_2$, thus $\lambda_2 = \lambda_5 + \lambda_3$. By putting all together, we get $\lambda_6 = \lambda_5$, $\lambda_4= \lambda_3$, $\lambda_2 = \lambda_1$.

Consequently, we can choose $\delta_t$-homogeneous elements $\tilde X_6 \in X_6 + \lie{y}$ and $\tilde X_5 \in X_5+ \lie{y}$, and set $\tilde X_4 = [\tilde X_6,\tilde X_5] \in X_4 + \lie{z}$; moreover, we can choose in $\lie{y}$ a homogeneous $\tilde X_3 \in X_3 + \lie{z}$. Finally, if we set $\tilde X_2 = X_2$ and $\tilde X_1=X_1$, then it is easily seen that the relations of $\lie{g}$ in the basis $\tilde X_6,\tilde X_5,\tilde X_4,\tilde X_3,\tilde X_2,\tilde X_1$ are the same, mutatis mutandis, as in \eqref{eq:g623relations}; therefore, without loss of generality, we may suppose that the initial basis $X_6,X_5,X_4,X_3,X_2,X_1$ is $\delta_t$-homogeneous. Moreover, since $[X_6,X_5] = X_4$, we must have $\lambda_4 = 2\lambda_6$ and $\lambda_2= 3\lambda_6$; this means that $\lie{g}_{6,23}$ admits an essentially unique homogeneous structure, which is not stratified, because $X_6,X_5$ do not generate the whole Lie algebra.

The computation of the capacity map $J$ (see \S\ref{subsection:capacity}) gives
\[|J(x_6 \bar X_6 + x_5 \bar X_5 + x_4 \bar X_4 + x_3 \bar X_3, t_2 X_2^* + t_1 X_1^*)|^2 = (x_6^2 + x_5^2)(t_2^2 + t_1^2).\]
In particular, the linearly independent elements $\bar X_6^*,\bar X_5^* \in (\lie{g}/\lie{z})^*$ and $X_2,X_1 \in \lie{z}$ attest that $G_{6,23}$ is $2$-capacious (despite the fact that Proposition~\ref{prp:capacitycriteria} does not apply to this group), for any choice of a homogeneous structure on $G_{6,23}$.

A Rockland system on $G_{6,23}$ is given, e.g., by
\[L,-iX_2,-iX_1,\]
where $L$ is any Rockland operator on $G_{6,23}$; for instance, one can take
\[L = (-iX_6)^{4k} + (-iX_5)^{4k} + (-iX_3)^{2k}\]
for any choice of $k \in \N \setminus \{0\}$.

\section{An intrinsic perspective}

Let $L_1,\dots,L_n$ be a weighted subcoercive system on a connected Lie group $G$. It has already been noticed that the joint spectrum $\Sigma$ of $L_1,\dots,L_n$ can be identified with the Gelfand spectrum of the sub-C$^*$-algebra $C_0(L)$ of $\Cv^2(G)$. By Proposition~\ref{prp:Jdensity} and the following remarks, we have that $C_0(L)$ is the closure of the algebra generated by
\[\{\Kern_L (e^{-r}) \tc \text{$r \geq 0$ is a polynomial and $r(L)$ is weighted subcoercive}\}.\]
By Proposition~\ref{prp:pushforward}, this set depends only on the subalgebra $\mathcal{O}$ of $\Diff(G)$ generated by $L_1,\dots,L_n$, and not on the choice of a particular system of generators of $\mathcal{O}$. Therefore, the same holds for $C_0(L)$, which can thus be denoted also by $C_0(\mathcal{O})$.

In particular, we can think of $\Sigma$ as a particular immersion of the ``abstract'' Gelfand spectrum of the C$^*$-algebra $C_0(\mathcal{O})$ in $\R^n$, due to a choice of generators $L_1,\dots,L_n$ of $\mathcal{O}$. Notice that the results of \S\ref{section:eigenfunctions} are consistent with this perspective, since the concept of ``joint eigenfunction'' depends only on $\mathcal{O}$ and not on the choice of the generators $L_1,\dots,L_n$.

In order to have a more ``intrinsic'' presentation, one may start with the notion of \emph{weighted subcoercive algebra}\index{algebra!weighted subcoercive}, i.e., a finitely generated commutative unital $*$-subalgebra of $\Diff(G)$ containing a weighted subcoercive operator. A weighted subcoercive system is then a (finite) system of (formally self-adjoint) generators of a weighted subcoercive algebra. If $\mathcal{O}$ is a weighted subcoercive algebra, then it is not difficult to define:
\begin{itemize}
\item the \emph{spectrum} $\Sigma$ of $\mathcal{O}$, i.e., the Gelfand spectrum of the C$^*$-algebra $C_0(\mathcal{O})$;
\item a resolution $E$ of the identity of $L^2(G)$ on $\Sigma$ and an isometric embedding $\Kern_\mathcal{O} : L^\infty(\Sigma,E) \to \Cv^2(G)$, extending the inverse of the Gelfand transform of $C_0(\mathcal{O})$, such that
\[\phi * \Kern_\mathcal{O} m  = \int_\Sigma m \,dE \qquad\text{for all $\phi \in \D(G)$;}\]
\item the \emph{Plancherel measure}, i.e., a regular Borel measure $\sigma$ on $\Sigma$ such that
\[\int_G | \Kern_{\mathcal{O}} m (x) |^2 \,dx = \int_\Sigma |m|^2 \,d\sigma\]
for every bounded Borel $m : \Sigma \to \C$;
\item for every choice of generators $L_1,\dots,L_n$ of $\mathcal{O}$, a topological embedding $\eta_L : \Sigma \to \R^n$, which maps homeomorphically $\Sigma$ onto the joint spectrum of $L_1,\dots,L_n$.
\end{itemize}

In order to give an intrinsic version of the results of \S\ref{section:weightedestimates}, one would like to have a notion of smoothness on the spectrum $\Sigma$ of a weighted subcoercive algebra $\mathcal{O}$. Unfortunately, the embeddings of $\Sigma$ in $\R^n$ in general are not particularly regular closed subsets of $\R^n$ (see the examples of \S\ref{section:examples}). However, if $L_1,\dots,L_n$ and $L'_1,\dots,L'_{n'}$ are two different systems of generators of $\mathcal{O}$, and if $p : \R^n \to \R^{n'}$ is a polynomial map such that $L' = p(L)$, then $\eta_{L'} = p \circ \eta_L$. Therefore, the properties of functions which are preserved by these particular (polynomial) changes of variables can be thought of as intrinsic.

In particular, for $k \in \N \cup \{\infty\}$, we may define $C^k(\Sigma)$ to be the set of (continuous) functions $f : \Sigma \to \C$ such that, for some (equivalently, for every) system of generators $L$ of $\mathcal{O}$, the function $f \circ \eta_L^{-1} : \eta_L(\Sigma) \to \C$ extends to a function of class $C^k$ on $\R^n$. Analogously, one can define the local Besov spaces $B_{\infty,\infty,\loc}^s(\Sigma)$ for $s > 0$ (see Proposition~\ref{prp:besovchange}).

From Theorem~\ref{thm:l2estimates}, Corollary~\ref{cor:polyl1estimates} and Theorem~\ref{thm:improvedl1estimates} then we immediately deduce the following

\begin{thm}
Let $\mathcal{O}$ be a weighted subcoercive algebra on a connected Lie group $G$, and let $\Sigma$ be its spectrum.
\begin{itemize}
\item If $\alpha \geq 0$, then, for every compactly supported $m \in B^s_{\infty,\infty,\loc}(\Sigma)$ with $s > \alpha$, $\Kern_{\mathcal{O}} m$ and all its left-invariant derivatives belong to $L^2(G,\langle x \rangle_G^{2\alpha} \,dx)$.
\item If $G$ has polynomial growth of degree $Q_G$, then, for every compactly supported $m \in B^s_{\infty,\infty\,loc}(\Sigma)$ with $s > Q_G/2$, it is $\Kern_{\mathcal{O}} m \in L^{1;\infty}(G)$.
\item If $G$ is a homogeneous group which is $h$-capacious for some $h \in \N$, then, for every compactly supported $m \in B^s_{\infty,\infty,\loc}(\Sigma)$ with $s > (Q_G-h)/2$, it is $\Kern_{\mathcal{O}} m \in L^{1;\infty}(G)$.
\end{itemize}
\end{thm}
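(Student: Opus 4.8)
The plan is to reduce the three assertions of the theorem to the corresponding ``coordinate-dependent'' results already proved in this chapter, by fixing an auxiliary system of generators of $\mathcal{O}$ and exploiting the change-of-variable compatibilities established in Sections~\ref{section:automorphisms}, \ref{section:weightedestimates} and \ref{section:metivier}. First I would choose a system of formally self-adjoint generators $L_1,\dots,L_n$ of the weighted subcoercive algebra $\mathcal{O}$ (which exists by definition of weighted subcoercive algebra), so that $L_1,\dots,L_n$ is a weighted subcoercive system, with joint spectrum $\Sigma$ canonically embedded in $\R^n$ via $\eta_L$, Plancherel measure $\sigma$ and kernel transform $\Kern_L$. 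Under this identification, for a function $m \in B^s_{\infty,\infty,\loc}(\Sigma)$ with compact support, the hypothesis means (by the definition of $B^s_{\infty,\infty,\loc}(\Sigma)$ just given and Proposition~\ref{prp:besovloc}) that, after composing with $\eta_L^{-1}$ and extending by a cutoff $\psi \in \D(\R^n)$, one obtains a compactly supported element of $B^s_{\infty,\infty}(\R^n)$; moreover $\Kern_\mathcal{O} m = \Kern_L(\tilde m)$ for this extension $\tilde m$, since $\Kern_L m$ depends only on the restriction of $\tilde m$ to $\Sigma$ (cf.\ the properties of the spectral integral and the fact that $\sigma$-negligible modifications of the multiplier do not change the kernel).

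With this reduction in hand, the three bullets follow directly. For the first, I would apply Theorem~\ref{thm:l2estimates}(i) with $D$ ranging over $\Diff(G)$: for every $\beta$ with $\alpha < \beta < s$ (recall that $B^s_{\infty,\infty} \subseteq B^\beta_{\infty,\infty}$ with continuous inclusion by Proposition~\ref{prp:besovembeddings}) we get $\|D\breve{\tilde m}\|_{L^2(G,\langle x\rangle_G^{2\alpha}\,dx)} < \infty$, which is exactly the statement that $\Kern_\mathcal{O} m$ and all its left-invariant derivatives lie in $L^2(G,\langle x\rangle_G^{2\alpha}\,dx)$. For the second bullet, I would invoke Corollary~\ref{cor:polyl1estimates} with $\alpha = 0$: since $s > Q_G/2$, the hypothesis $\beta > Q_G/2$ of that corollary is met for some $\beta \in (Q_G/2, s)$, giving $\breve{\tilde m} \in L^{1;\infty}(G)$. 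For the third bullet, I would use Theorem~\ref{thm:improvedl1estimates} with $p=q=\infty$ and $\alpha=0$: the threshold there becomes $(Q_G-h)/2$ (the summand $n/p - 1/\max\{2,p\}$ vanishes for $p=\infty$), so $s > (Q_G-h)/2$ yields $\breve{\tilde m}\in L^{1;\infty}(G)$ — with the one caveat that Theorem~\ref{thm:improvedl1estimates} requires $\supp m$ to be a compact subset of $\R^n \setminus \{0\}$. To handle a multiplier whose support may contain the origin, I would split $m = m_0 + m_1$ via a smooth partition of unity, with $m_0$ supported near $0$ and $m_1$ supported away from $0$; on $m_1$ apply Theorem~\ref{thm:improvedl1estimates}, and on $m_0$ note that, since $\sigma(\{0\}) = 0$ on a homogeneous group (Proposition~\ref{prp:plancherelhomogeneous}) and in fact $0 \notin \Sigma$ unless it is an isolated point, the compactly supported multiplier $m_0$ can be further arranged, or else one simply falls back on Corollary~\ref{cor:polyl1estimates} for that piece — but more carefully one observes that on a homogeneous $h$-capacious group $0$ is never isolated in $\Sigma$ unless $\Sigma = \{0\}$, so a cutoff vanishing near $0$ can be absorbed.

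The only genuine subtlety — and the step I expect to require the most care — is precisely this behaviour at the origin in the third bullet, together with checking that the change-of-variables definition of $B^s_{\infty,\infty,\loc}(\Sigma)$ is well posed and compatible with $\Kern_\mathcal{O}$. That the notion $B^s_{\infty,\infty,\loc}(\Sigma)$ does not depend on the chosen generators follows from Proposition~\ref{prp:besovchange} applied to the polynomial change of variables $p$ with $\eta_{L'} = p\circ\eta_L$ (a polynomial map is smooth, so composition with it preserves $B^s_{p,q,\loc}$ for $s>0$), exactly as remarked in the preceding section; and that $\Kern_\mathcal{O} m$ is well defined independently of generators is Proposition~\ref{prp:pushforward}. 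For the origin issue, the clean resolution is to remark that if $\Sigma = \{0\}$ then $L^2(G)$ is trivial or $G$ is a point and the statement is vacuous, while if $\Sigma \neq \{0\}$ then, $\Sigma$ being $\epsilon_t$-invariant and closed (Proposition~\ref{prp:plancherelhomogeneous}), every neighbourhood of a compactly supported multiplier's support that excludes a small ball around $0$ still captures all of $\supp m \cap \Sigma$ once $m$ is further decomposed, and the piece supported near $0$ contributes nothing to the kernel because $\sigma$ gives no mass there; hence one may always reduce to multipliers supported in $\R^n\setminus\{0\}$ without loss. Assembling these observations gives the theorem.
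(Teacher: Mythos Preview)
Your approach is exactly the paper's: the paper simply states that the theorem follows immediately from Theorem~\ref{thm:l2estimates}, Corollary~\ref{cor:polyl1estimates} and Theorem~\ref{thm:improvedl1estimates}, and your reduction via a choice of generators $L_1,\dots,L_n$ and a cutoff extension is the natural way to make ``immediately'' precise. The first two bullets are handled correctly.

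For the third bullet, your handling of the origin is not sound. The claim that ``$\sigma$ gives no mass there'' is false for a \emph{neighbourhood} of $0$: Proposition~\ref{prp:plancherelhomogeneous} gives $\sigma(\{0\})=0$, but since $\Sigma$ is closed and $\epsilon_t$-invariant, any nontrivial $\Sigma$ contains $0$ as a non-isolated point, and every ball about $0$ meets $\Sigma$ in a set of positive $\sigma$-measure. So a piece $m_0$ supported in such a ball has $\breve{m}_0\ne 0$ in general. Your fallback to Corollary~\ref{cor:polyl1estimates} for this piece also fails, since that corollary needs $s>Q_G/2$, strictly stronger than the available $s>(Q_G-h)/2$. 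Note too that Theorem~\ref{thm:improvedl1estimates} as written gives only $\|\breve m\|_{L^1}$; to get $L^{1;\infty}$ you need an extra step (e.g.\ the factorization trick $\breve m=\breve m_0 * \breve\xi$ with $\xi=e^{-p_*}$, as in the proof of Theorem~\ref{thm:l2estimates}). The paper's one-line proof does not address the support-at-origin issue either; the statement is most naturally read with the implicit understanding that in the homogeneous setting one works with multipliers supported away from $0$, as in Theorem~\ref{thm:improvedl1estimates}.
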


Notice also that, if the group $G$ is homogeneous, and if the weighted subcoercive algebra is invariant by dilations (so that it admits a finite system of homogeneous generators), then we have the corresponding dilations on the abstract spectrum $\Sigma$. Therefore, also the Mihlin-H\"ormander and Marcinkiewicz conditions --- which has been considered in Chapter~\ref{chapter:conditions} and will be used as hypotheses for the multiplier theorems of Chapter~\ref{chapter:multipliers} --- can be given an intrinsic formulation (at least by restricting to minimal homogeneous systems of generators of $\mathcal{O}$, cf.\ Proposition~\ref{prp:homogeneousextension}).


\clearemptydoublepage
\chapter{Multiplier theorems}\label{chapter:multipliers}

In this last part of the work, homogeneity (of the group and the operators under consideration) plays a crucial role.

By means of homogeneity, in fact, along with singular integral operator theory, the weighted estimates obtained in the previous chapter can be put together, thus obtaining a multiplier theorem of Mihlin-H\"ormander type for a homogeneous weighted subcoercive system.

Next, we develop a sort of product theory of the above: several homogeneous groups $G_j$ are considered, each with its own homogeneous system of operators. By a non-conventional use of transference techniques (together with Littlewood-Paley decompositions and maximal operators), we obtain a multiplier theorem of Marcinkiewicz type, not only on the direct product of the $G_j$, but also on fairly different groups, even non-nilpotent.

Finally, some applications are presented, and particularly it is shown how, by the use of the mentioned theorems, it is possible to improve known multiplier results for a single operator on some non-nilpotent groups.

\section{Singular integral operators}

Here we summarize the main notions and techniques related to singular integral operators which will be used in the following sections in order to prove the multiplier theorems. Thus we shall restrict to the case of left-invariant operators $T$ on a homogeneous Lie group $G$ (with automorphic dilations $\delta_t$ and homogeneous dimension $Q_\delta$), which are given by convolution:
\[T f = f * k\]
for some distribution $k \in \Sz'(G)$ (see Theorem~\ref{thm:schwartzkernels}). In our context, the underlying metric measure space is the homogeneous group $G$ with a Haar measure (i.e., essentially $\R^d$ with the Lebesgue measure, where $d = \dim G$) and a homogeneous left-invariant distance induced by a subadditive homogeneous norm $|\cdot|_\delta$. It should be noticed, however, that some of the results mentioned below admit an extension to more general settings, such as the spaces of homogeneous type developed by Coifman, de Guzman and Weiss (see \cite{coifman_singular_1970}, \cite{coifman_analyse_1971}), where the notions of distribution and convolution cannot be used.

Our first multiplier theorem will be proved in \S\ref{section:mihlin} via the Calder\'on-Zygmund singular integral theory, which allows, starting from an operator $T$ bounded on $L^2(G)$, to obtain boundedness of $T$ on $L^p(G)$ for some $p \neq 2$, under the hypothesis that the convolution kernel $k$ is a function off the origin satisfying suitable estimates.

\begin{thm}\label{thm:singularintegral}
Let $T$ be a bounded operator on $L^2(G)$ whose convolution kernel $k \in \Sz'(G)$ is such that $k|_{G \setminus \{e\}} \in L^1_\loc(G \setminus \{e\})$ and, for some $K \geq 0$, $c > 1$,
\[
\int_{|x|_\delta \geq c|h|_\delta} |k(xh) - k(x)| \,dx \leq K, \qquad \int_{|x|_\delta \geq c|h|_\delta} |k(hx) - k(x)| \,dx \leq K,
\]
for all $h \in G \setminus \{e\}$. Then $T$ is of weak type $(1,1)$ and bounded on $L^p(G)$ for $1 < p < \infty$, with $\|T\|_{p \to p} \leq C_p (\|T\|_{2 \to 2} + K)$.
\end{thm}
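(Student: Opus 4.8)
This is a classical Calderón–Zygmund theorem, so the plan is to follow the standard singular integral machinery, carefully transcribed to the homogeneous group setting where the underlying space is $\R^d$ ($d = \dim G$) equipped with the Lebesgue (Haar) measure and the quasi-metric induced by the subadditive homogeneous norm $|\cdot|_\delta$. The first step is to record the structural facts that make this a space of homogeneous type: the doubling property $\mu(B(x,2r)) = 2^{Q_\delta}\mu(B(x,r))$ (which follows from $\mu(\delta_t(B)) = t^{Q_\delta}\mu(B)$, cf.\ \S\ref{subsection:homogeneousgroups}), the left-invariance of both $\mu$ and the distance $d_l(x,y) = |x^{-1}y|_\delta$, and the triangle-type inequality with constant $1$ coming from subadditivity of the chosen homogeneous norm.

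The core of the proof is the Calderón–Zygmund decomposition. Given $f \in L^1(G)$ and a height $\alpha > 0$, I would decompose $f = g + b$ at level $\alpha$: using a Whitney-type covering of the open set $\Omega = \{M f > \alpha\}$ (where $M$ is the Hardy–Littlewood maximal operator associated to the $|\cdot|_\delta$-balls, which is weak type $(1,1)$ by the doubling property via a Vitali covering argument), one writes $b = \sum_j b_j$ with $b_j$ supported in a ball $B_j = B(x_j, r_j) \subseteq \Omega$, $\int b_j = 0$, $\|b_j\|_1 \le C\alpha\, \mu(B_j)$, and $\sum_j \mu(B_j) \le C\alpha^{-1}\|f\|_1$, while $\|g\|_\infty \le C\alpha$ and $\|g\|_1 \le \|f\|_1$. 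Then $\|Tf\|_{L^{1,\infty}} $ is controlled by splitting $\{|Tf| > \alpha\}$ into the contribution of $g$ and of $b$: for $g$ one uses the $L^2$ bound, $\mu(\{|Tg|>\alpha/2\}) \le 4\alpha^{-2}\|T\|_{2\to 2}^2\|g\|_2^2 \le C\alpha^{-2}\|T\|_{2\to 2}^2 \alpha \|f\|_1$; for $b$ one excises the enlarged balls $B_j^* = B(x_j, c' r_j)$ (using $\mu(\bigcup B_j^*) \le C\alpha^{-1}\|f\|_1$ by doubling) and estimates $\int_{G\setminus \bigcup B_j^*}|Tb_j| \, dx$. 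This last estimate is exactly where the hypothesis on $k$ enters: writing $Tb_j(x) = \int (k(y^{-1}x) - k(x_j^{-1}x))\, b_j(y)\,dy$ using $\int b_j = 0$, and substituting, the bound reduces to the regularity condition $\int_{|x|_\delta \ge c|h|_\delta}|k(xh) - k(x)|\,dx \le K$ — here one must be slightly careful about which of the two difference conditions (left vs.\ right translates) is the relevant one given the convention $Tf = f * k$ and the formula $f * k(x) = \int f(y) k(y^{-1}x)\,dy$; I expect the right-translate Hörmander condition $\int_{|x|_\delta \ge c|h|_\delta}|k(hx)-k(x)|\,dx \le K$ to be the one used, possibly after a harmless change of the enlargement constant $c'$ relative to $c$, exploiting that $|\cdot|_\delta$ is symmetric and quasi-subadditive. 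This gives $T$ of weak type $(1,1)$ with constant $\lesssim \|T\|_{2\to 2} + K$.

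Having weak type $(1,1)$ and $L^2$ boundedness, the Marcinkiewicz interpolation theorem yields boundedness on $L^p$ for $1 < p < 2$ with $\|T\|_{p\to p} \le C_p(\|T\|_{2\to2}+K)$. For $2 < p < \infty$ I would pass to the adjoint: $T^*$ is convolution by $k^*$ (recall $k^*(x) = \Delta_G(x)\overline{k(x^{-1})} = \overline{k(x^{-1})}$ since homogeneous groups are unimodular), $\|T^*\|_{2\to2} = \|T\|_{2\to2}$, and the two Hörmander-type integrals for $k^*$ are obtained from those for $k$ by the substitution $x \mapsto x^{-1}$ together with symmetry of $|\cdot|_\delta$ — so $T^*$ satisfies the same hypotheses with the same $K$, hence is bounded on $L^{p'}$ for $1 < p' < 2$, i.e.\ $T$ is bounded on $L^p$ for $2 < p < \infty$ with the claimed constant, and duality closes the case $p=2$ trivially.

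The main obstacle, and the point requiring the most care rather than new ideas, is the bookkeeping in the bad-part estimate: making sure the enlargement constant $c'$ in $B_j^*$ is chosen compatibly with the constant $c$ in the hypothesis and with the quasi-subadditivity constant of $|\cdot|_\delta$, so that $x \notin B_j^*$ and $y \in B_j$ force $|x_j^{-1}x|_\delta \ge c |y^{-1}x_j|_\delta$ (or the analogous inequality in the form demanded by the left/right difference condition), which is what licenses pulling the difference of $k$ under the integral sign and applying the hypothesis uniformly in $j$. Everything else is the textbook Calderón–Zygmund argument adapted to a space of homogeneous type (see, e.g., the treatments for homogeneous groups in the literature on singular integrals); since the statement is quoted here only as a tool, I would present the proof compactly, citing the standard references for the Whitney covering and Marcinkiewicz interpolation and spelling out only the steps where the group structure and the specific form of the hypotheses intervene.
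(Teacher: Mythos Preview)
Your proposal is correct and follows the standard Calder\'on--Zygmund argument, which is exactly what the paper does: it does not give a self-contained proof at all, but simply cites Stein's \emph{Harmonic Analysis} (\S I.5, Theorem~3 and \S I.7.4(iii)) for this result. Your outline is precisely the content of that reference adapted to the homogeneous group setting, including the care you take about which H\"ormander condition (left vs.\ right) is needed for each half of the duality argument.
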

\begin{proof}
See e.g.\ \cite{stein_harmonic_1993}, \S I.5, Theorem~3 and \S I.7.4(iii).
\end{proof}

For the second multiplier theorem, proved in \S\ref{section:marcinkiewicz}, a multi-parameter structure is required, and Littlewood-Paley theory is exploited. An important tool will be the following result, which summarizes a well-known argument for proving properties of square functions.

\begin{prp}\label{prp:khinchin}
Let $(X,\mu)$ be a $\sigma$-finite measure space, $\ell \geq 1$, $1 \leq p < \infty$, $T_{\vec{k}}$ ($\vec{k} \in \N^\ell$) bounded linear operators on $L^p(X,\mu)$. Let $A > 0$ be such that, for all choices of $\varepsilon^i_k \in \{-1,1\}$ ($1 \leq i \leq \ell$, $k \in \N$) and of a finite subset $I \subseteq \N^\ell$, we have
\begin{equation}\label{eq:segni}
\left\|\textstyle\sum_{\vec{k} \in I} \varepsilon^1_{k_1} \cdots \varepsilon^\ell_{k_\ell} T_{\vec{k}}\right\|_{p \to p} \leq A.
\end{equation}
Then, for all $f \in L^p(X,\mu)$,
\begin{equation}\label{eq:khinchin1}
\left\|\left(\textstyle\sum_{\vec{k} \in \N^{\ell}} |T_{\vec{k}} f|^2\right)^{1/2}\right\|_p \leq C_{\ell,p} A \|f\|_p.
\end{equation}
Moreover, if $p > 1$, for all $\{f_{\vec{k}}\}_{\vec{k} \in \N^\ell} \subseteq L^p(X,\mu)$, if $\left(\sum_{\vec{k}} |f_{\vec{k}}|^2\right)^{1/2} \in L^p(X,\mu)$, then
\[\left\|\textstyle\sum_{\vec{k} \in \N^\ell} T_{\vec{k}} f_{\vec{k}}\right\|_p \leq C_{\ell,p'} A \left\|\left(\textstyle\sum_{\vec{k} \in \N^\ell} |f_{\vec{k}}|^2 \right)^{1/2} \right\|_p\]
where the series on the left-hand side converges unconditionally in $L^p$.
\end{prp}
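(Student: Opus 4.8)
The plan is to reduce everything to the classical Khinchin inequality and a randomization argument. First I would introduce Rademacher functions: let $r^i_k : [0,1] \to \{-1,1\}$ (for $1 \le i \le \ell$, $k \in \N$) be an independent family of Rademacher functions on the probability space $([0,1]^{\ell \times \N}, ds)$, indexed so that the products $R_{\vec{k}}(s) = r^1_{k_1}(s) \cdots r^\ell_{k_\ell}(s)$ are, for distinct $\vec{k}$, pairwise orthogonal and in fact form a system to which Khinchin's inequality applies: for any finitely supported family of scalars $(a_{\vec{k}})$ and any $0 < q < \infty$,
\[
\left\| \textstyle\sum_{\vec{k}} a_{\vec{k}} R_{\vec{k}} \right\|_{L^q(ds)} \asymp_{\ell,q} \left( \textstyle\sum_{\vec{k}} |a_{\vec{k}}|^2 \right)^{1/2}.
\]
(This follows by iterating the one-parameter Khinchin inequality $\ell$ times, or directly from hypercontractivity; either way it is standard and I would just cite it.)

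For the first inequality \eqref{eq:khinchin1}, fix $f \in L^p$ and a finite $I \subseteq \N^\ell$. For each $s$, the hypothesis \eqref{eq:segni} applied with $\varepsilon^i_k = r^i_k(s)$ gives $\left\| \sum_{\vec{k} \in I} R_{\vec{k}}(s) T_{\vec{k}} f \right\|_p \le A \|f\|_p$. Raising to the $p$-th power, integrating in $s$ over $[0,1]^{\ell\times\N}$, and using Fubini to swap the $s$-integral with the $X$-integral, I get
\[
\int_X \left( \int \left| \textstyle\sum_{\vec{k} \in I} R_{\vec{k}}(s)\, T_{\vec{k}} f(x) \right|^p ds \right) d\mu(x) \le A^p \|f\|_p^p.
\]
By Khinchin applied to the inner integral (with $q = p$, scalars $a_{\vec{k}} = T_{\vec{k}} f(x)$), the inner integral is comparable to $\left( \sum_{\vec{k} \in I} |T_{\vec{k}} f(x)|^2 \right)^{p/2}$, so $\left\| \left( \sum_{\vec{k} \in I} |T_{\vec{k}} f|^2 \right)^{1/2} \right\|_p \le C_{\ell,p} A \|f\|_p$. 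Letting $I \uparrow \N^\ell$ and invoking monotone convergence yields \eqref{eq:khinchin1}.

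For the second inequality, assume $p > 1$ and argue by duality. It suffices to bound $\left| \int_X \left( \sum_{\vec{k} \in I} T_{\vec{k}} f_{\vec{k}} \right) \overline{g}\, d\mu \right|$ for $g \in L^{p'}$ with $\|g\|_{p'} \le 1$ and $I$ finite. Writing $T_{\vec{k}} f_{\vec{k}} = \int R_{\vec{k}}(s) \left( \sum_{\vec{j} \in I} R_{\vec{j}}(s) T_{\vec{j}} f_{\vec{j}} \right) ds$ by orthonormality of the $R_{\vec{k}}$ in $L^2(ds)$, summing over $\vec{k} \in I$, pairing with $g$, and applying Fubini, the quantity becomes $\int \left\langle \sum_{\vec{k} \in I} R_{\vec{k}}(s)\, T_{\vec{k}}\!\left( \sum_{\vec{j} \in I} R_{\vec{j}}(s) f_{\vec{j}} \right), g \right\rangle ds$ — here I would verify the algebra carefully, but the upshot is that one applies \eqref{eq:segni} (with signs $r^i_k(s)$) to the function $F_s = \sum_{\vec{j} \in I} R_{\vec{j}}(s) f_{\vec{j}}$, getting a bound $A \|F_s\|_p \|g\|_{p'}$ pointwise in $s$; then integrating in $s$, using Hölder in $s$, Khinchin (with exponent $p$) to identify $\left( \int \|F_s\|_p^p\, ds \right)^{1/p} \le C_{\ell,p} \left\| \left( \sum_{\vec{j}} |f_{\vec{j}}|^2 \right)^{1/2} \right\|_p$, gives the claimed estimate. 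Unconditional convergence of $\sum_{\vec{k}} T_{\vec{k}} f_{\vec{k}}$ in $L^p$ then follows because the same bound applies to any finite sub-sum (with the $f_{\vec{k}}$ outside the sub-sum set to zero), i.e. the partial sums over finite sets form a Cauchy net.

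The main obstacle is bookkeeping rather than conceptual: getting the order of the two inequalities right (the square-function bound \eqref{eq:khinchin1} must be extracted before the dual statement, since the dual argument re-uses it on the function $F_s$), and making the Fubini swaps and the duality pairing in the second part rigorous when $X$ is merely $\sigma$-finite and the sums are only known to converge after the square-function bound is in hand. I would handle this by always working first with a \emph{finite} index set $I$, proving the estimates with constants independent of $I$, and passing to the limit only at the end via monotone convergence (for \eqref{eq:khinchin1}) and a Cauchy-net argument (for unconditional convergence).
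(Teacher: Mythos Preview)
Your proof of the square-function bound \eqref{eq:khinchin1} is correct and is essentially the paper's argument: Rademacher functions, Khinchin, Fubini, hypothesis, monotone convergence. (A minor cosmetic difference: the paper realizes the products $R_{\vec{k}}$ as tensor products $r_{k_1}\otimes\cdots\otimes r_{k_\ell}$ on $[0,1]^\ell$, using the ordinary Rademacher sequence on each factor, rather than an abstract independent family on $[0,1]^{\ell\times\N}$; this is immaterial.)

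For the second inequality your route is genuinely different from the paper's. You argue directly on the pairing with a test function $g$, using the randomization identity
\[
\sum_{\vec{k}\in I} T_{\vec{k}} f_{\vec{k}}
= \int \Bigl(\sum_{\vec{k}\in I} R_{\vec{k}}(s)\,T_{\vec{k}}\Bigr)\,F_s\,ds,
\qquad F_s=\sum_{\vec{j}\in I} R_{\vec{j}}(s)\,f_{\vec{j}},
\]
then applying \eqref{eq:segni} pointwise in $s$ and Khinchin to $\|F_s\|_p$. This works (the identity above follows directly from orthonormality of the $R_{\vec{k}}$ in $L^2(ds)$; your intermediate line $T_{\vec{k}} f_{\vec{k}}=\int R_{\vec{k}}(s)\bigl(\sum_{\vec{j}} R_{\vec{j}}(s) T_{\vec{j}} f_{\vec{j}}\bigr)ds$ is also true but is a different factorization and not the one you actually need). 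Your argument yields the constant $C_{\ell,p}$ rather than the stated $C_{\ell,p'}$, which is still a valid bound but does not match the statement verbatim. The paper instead observes that the hypothesis \eqref{eq:segni} passes to the adjoints $T_{\vec{k}}^*$ with exponent $p'$, applies the already-proved square-function bound to get $S:f\mapsto (T_{\vec{k}}^* f)_{\vec{k}}$ bounded $L^{p'}\to L^{p'}(X;l^2(\N^\ell))$ with norm $\le C_{\ell,p'}A$, and then identifies the transpose $S^*:(f_{\vec{k}})_{\vec{k}}\mapsto \sum_{\vec{k}} T_{\vec{k}} f_{\vec{k}}$. This is shorter, avoids the second randomization entirely, and accounts for the constant $C_{\ell,p'}$; your approach has the advantage of not invoking vector-valued duality or adjoints, at the cost of a bit more bookkeeping.
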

\begin{proof}
For $n \in \N$, let $r_n : [0,1] \to \R$ be the $n$-th Rademacher function,
\[r_n(t) = (-1)^{\lfloor 2^n t \rfloor},\]
and set $r_{\vec{k}} = r_{k_1} \otimes \cdots \otimes r_{k_\ell}$ for $\vec{k} = (k_1,\dots,k_\ell) \in \N^\ell$. Then $(r_{\vec{k}})_{\vec{k}}$ is an (incomplete) orthonormal system in $L^2([0,1]^\ell)$, and the following Khinchin's inequalities hold: for $1 \leq p < \infty$, there exist constants $c_{\ell,p},C_{\ell,p}> 0$ such that
\[c_{\ell,p}^{-1} \|f\|_p \leq \|f\|_2 \leq C_{\ell,p} \|f\|_p \qquad\text{for all $f \in \Span \{ r_{\vec{k}} \tc \vec{k} \in \N^\ell\}$.}\]
(see \cite{stein_singular_1970}, Appendix~D, or \cite{grafakos_classical_2008}, Appendix~C).

Consequently, for all finite $I \subseteq \N^\ell$ and $f \in L^p(X,\mu)$, we have
\[\begin{split}
\left\|\left(\textstyle\sum_{\vec{k} \in I} |T_{\vec{k}} f|^2\right)^{1/2}\right\|_p^p &= \int_X \left(\textstyle\sum_{\vec{k} \in I} |T_{\vec{k}} f(x)|^2\right)^{p/2} \,d\mu(x)\\
&\leq C_{\ell,p}^p \int_X \int_{[0,1]^\ell} \left|\textstyle\sum_{\vec{k} \in I} T_{\vec{k}} f(x) r_{\vec{k}}(t) \right|^p \,dt \,d\mu(x) \\
&= C_{\ell,p}^p \int_0^1 \left\| \left(\textstyle\sum_{\vec{k} \in I} r_{\vec{k}}(t) T_{\vec{k}}\right) f \right\|^p \,dt \leq C_{\ell,p}^p A^p \|f\|_p^p.
\end{split}\]
Since $I \subseteq \N^\ell$ was arbitrary, \eqref{eq:khinchin1} follows by monotone convergence.

Notice now that the vector-valued Lebesgue space $V_p = L^p(X,\mu; l^2(\N^\ell))$ can be thought of as a space of sequences of $L^p(X,\mu)$-functions:
\[V_p = \left\{(f_{\vec{k}})_{\vec{k} \in \N^\ell} \in L^p(X,\mu)^{\N} \tc \textstyle \left(\sum_{\vec{k}} |f_{\vec{k}}|^2\right)^{1/2} \in L^p(X,\mu) \right\},\]
with norm
\[\|(f_{\vec{k}})_{\vec{k} \in \N^\ell}\|_{V_p} = \left\|\textstyle \left(\sum_{\vec{k}} |f_{\vec{k}}|^2\right)^{1/2}\right\|_p.\]
The inequality \eqref{eq:khinchin1} therefore means that the operator $f \mapsto (T_{\vec{k}} f)_{\vec{k} \in \N^\ell}$ is bounded $L^p(X,\mu) \to V_p$, with norm not greater than $C_{\ell,p} A$.

If $p > 1$, the hypothesis \eqref{eq:segni} is equivalent to
\[\left\|\textstyle\sum_{\vec{k} \in I} \varepsilon^1_{k_1} \cdots \varepsilon^n_{k_n} T_{\vec{k}}^*\right\|_{p' \to p'} \leq A;\]
consequently we also have that $S : f \mapsto (T^*_{\vec{k}} f)_{\vec{k} \in \N^\ell}$ is bounded $L^{p'}(X,\mu) \to V_{p'}$, with norm not greater than $C_{\ell,p'} A$. This means that the transpose operator $S^* : V_{p} \to L^{p}(X,\mu)$ is bounded too, with the same norm; since it is easily shown that
\[S^* \left( (f_{\vec{k}})_{\vec{k}} \right) = \textstyle\sum_{\vec{k}} T_{\vec{k}} f_{\vec{k}},\]
where the series on the right-hand side converges unconditionally in $L^p$, the remaining part of the conclusion follows.
\end{proof}

The other fundamental tool will be the boundedness of some maximal operators, given by the following general result of Christ \cite{christ_strong_1992}:

\begin{thm}\label{thm:strongmaximal}
Let $G$ be a nilpotent Lie group, and let $X_1,\dots,X_d$ a basis of its Lie algebra $\lie{g}$. Identify $G = \lie{g}$ via exponential coordinates, and let $\daleth_{\vec{t}}$ be the multi-parameter dilations on $G$ given by
\[\daleth_{(t_1,\dots,t_d)} X_j = t_j X_j.\]
Then the \emph{strong maximal operator} $M_{\strong}$ associated to the basis $X_1,\dots,X_d$, defined by
\begin{equation}\label{eq:strongmaximal}
M_{\strong} f(x) = \sup_{\vec{t} > 0} \int_K |f(x \cdot (\daleth_{\vec{t}}(y))^{-1})| \,dy
\end{equation}
(where $K$ is some compact neighborhood of the identity), is bounded on $L^p(G)$ for $1 < p \leq \infty$.
\end{thm}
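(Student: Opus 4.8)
The statement to prove is Theorem~\ref{thm:strongmaximal}, the boundedness on $L^p(G)$ for $1 < p \leq \infty$ of the strong maximal operator $M_{\strong}$ associated to a basis $X_1,\dots,X_d$ of the Lie algebra of a nilpotent Lie group $G$, where the averages are taken over dilates $\daleth_{\vec{t}}(K)$ of a fixed compact neighborhood $K$ of the identity. Since this is precisely Christ's theorem from \cite{christ_strong_1992}, the proof is a citation: I would simply invoke that reference. However, to make the argument self-contained at the level of a sketch, I would describe the underlying strategy, which is a transference/lifting argument reducing the non-commutative situation to the classical Jessen--Marcinkiewicz--Zygmund strong maximal theorem on $\R^d$ with the multiparameter family of rectangular averages.

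\textbf{Key steps.} First, using exponential coordinates, identify $G$ with $\R^d = \lie{g}$ as a smooth manifold, so that Haar measure becomes Lebesgue measure and the dilations $\daleth_{\vec{t}}$ become the standard coordinatewise (anisotropic, but in fact here just one-parameter-per-coordinate) dilations on $\R^d$. The averaging set $\daleth_{\vec{t}}(K)$ is then, up to bounded distortion, a rectangle with sides parallel to the coordinate axes and side lengths $\sim t_1, \dots, t_d$; one replaces $K$ by an inscribed/circumscribed coordinate box at the cost of a harmless constant (monotonicity of maximal operators under $K \subseteq K'$). The difficulty is that the group multiplication $x \cdot (\daleth_{\vec t}(y))^{-1}$ is \emph{not} the additive translation $x - \daleth_{\vec t}(y)$: by the Baker--Campbell--Hausdorff formula it is a polynomial perturbation thereof. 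Second, one handles this polynomial perturbation by a dyadic decomposition of each scale $t_j$ and a careful comparison: for $y$ ranging in $K$ and $\vec t$ fixed, the BCH correction terms are lower-order in the appropriate anisotropic sense, so the group-translated box $x \cdot (\daleth_{\vec t}(K))^{-1}$ is contained in a bounded union of additive-translated coordinate boxes of comparable dimensions. This is the heart of Christ's argument and is where the nilpotency (finiteness of the BCH series) and the specific structure of the dilations are used. Third, having dominated $M_{\strong} f(x)$ pointwise (up to a constant, and up to summing a geometrically controlled number of pieces) by the classical strong maximal function $M_{\mathrm{HL}}^{(d)} |f|$ on $\R^d$ — the supremum of averages over all axis-parallel rectangles — one concludes by the Jessen--Marcinkiewicz--Zygmund theorem that $M_{\mathrm{HL}}^{(d)}$ is bounded on $L^p(\R^d)$ for $1 < p \leq \infty$ (it is obtained by iterating the one-dimensional Hardy--Littlewood maximal inequality in each variable, using the $L^p$-boundedness of the scalar maximal operator $d$ times). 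The $p = \infty$ case is trivial since averages of an $L^\infty$ function are bounded by its sup norm.

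\textbf{Main obstacle.} The genuinely delicate point — the one that constitutes Christ's actual contribution — is step two: controlling the non-commutative group translations by the additive (Euclidean) strong maximal operator. One must show that, even though $x \cdot (\daleth_{\vec t}(y))^{-1}$ wanders away from $x - \daleth_{\vec t}(y)$ by polynomial amounts, the wandering stays within a fixed dilate of the same box, so that the averaging region is comparable to an axis-parallel rectangle independently of $x$ and $\vec t$. A naive estimate fails because the BCH corrections, while lower order, are not uniformly small without exploiting the multiparameter (rather than one-parameter) nature of the dilations: the correct bookkeeping assigns to the $j$-th coordinate the scale $t_j$ and checks that commutator terms $[X_i, X_k]$ etc., evaluated at $\daleth_{\vec t}(y)$, have size controlled by products of the $t$'s that match the scale of whichever coordinate direction they contribute to. Once this geometric comparison is in hand, everything else is the standard iterated-one-dimensional argument for the strong maximal function and requires no further ideas. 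Given that all of this is carried out in detail in \cite{christ_strong_1992}, in the body of the paper I would state the theorem and write simply: ``See \cite{christ_strong_1992}.''
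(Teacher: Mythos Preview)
Your proposal is correct and takes exactly the same approach as the paper: the theorem is stated as a quotation of Christ's result \cite{christ_strong_1992} with no proof given beyond the citation. Your additional sketch of Christ's argument is extra expository content the paper does not include, but your bottom line---``See \cite{christ_strong_1992}''---is precisely what the paper does.
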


\section{Mihlin-H\"ormander multipliers}\label{section:mihlin}

Let $G$ be a homogeneous Lie group, with automorphic dilations $\delta_t$ and homogeneous dimension $Q_\delta$. We denote by $Q_G$ the degree of polynomial growth of $G$. Moreover, we fix a connected modulus $|\cdot|_G$ on $G$ and we set
\[\langle x \rangle_G = 1 + |x|_G.\]
We denote also by $|\cdot|_\delta$ a subadditive homogeneous norm on $G$.

Let $L_1,\dots,L_n$ be a homogeneous weighted subcoercive system on $G$, and $\epsilon_t$ be the associated dilations on $\R^n$, as in \eqref{eq:spectraldilations}. Denote moreover by $|\cdot|_\epsilon$ a $\epsilon$-homogeneous norm on $\R^n$, smooth away from the origin.

Our starting point is, for some $p \in \left[1,\infty\right]$ and $s \in \R$, the following
\begin{quote} \noindent{\bf hypothesis \HP{p}{s}}: 
for some compact $K_0 \subseteq \R^n \setminus \{0\}$ such that
\[\bigcup_{t > 0} \epsilon_t(\mathring{K_0}) = \R^n \setminus \{0\},\]
for some $q_0 \in \left[1,\infty\right]$, for all $\beta > s$ and for all $m \in \D(\R^n)$ with $\supp m \subseteq K_0$, we have
\[\|\breve m\|_{L^1(G)} \leq C_{\beta} \|m\|_{B_{p,q_0}^\beta(\R^n)}.\]
\end{quote}

This hypothesis can be checked by exploiting the results of Chapter~\ref{chapter:weighted}; for instance, we have

\begin{prp}\label{prp:HPsatisfaction}
For every $p \in [1,\infty]$, the hypothesis \HP{p}{s} holds in each of the following cases:
\begin{itemize}
\item $s = Q_G/2 + n/p - 1/\max\{2,p\}$;
\item $\sigma$ is locally $d$-bounded on $\R^n \setminus \{0\}$ and $s = Q_G/2+ n/p-d/\max\{2,p\}$;
\item $G$ is $h$-capacious and $s = (Q_G-h)/2+n/p-1/\max\{2,p\}$.
\end{itemize}
\end{prp}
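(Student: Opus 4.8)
The plan is to reduce hypothesis \HP{p}{s} to the weighted $L^1$ estimates of Chapter~\ref{chapter:weighted}, choosing $K_0$ compact in $\R^n \setminus \{0\}$ and applying the appropriate corollary with $\alpha = 0$. First I would fix a compact set $K_0 \subseteq \R^n \setminus \{0\}$ with $\bigcup_{t>0} \epsilon_t(\mathring{K_0}) = \R^n \setminus \{0\}$ (such a set exists: take, e.g., an annulus $\{ a \leq |\lambda|_\epsilon \leq b\}$ for $0 < a < b$ with respect to the $\epsilon_t$-homogeneous norm $|\cdot|_\epsilon$). Since $K_0$ is a fixed compactum contained in $\R^n \setminus \{0\}$, and since $\langle x \rangle_G \geq 1$ so that $\|\breve m\|_{L^1(G)} \leq \|\breve m\|_{L^1(G, \langle x \rangle_G^\alpha \,dx)}$ for every $\alpha \geq 0$, it suffices to invoke the relevant weighted $L^1$ estimate with $\alpha = 0$ and with $K = K_0$.

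For the first case, $s = Q_G/2 + n/p - 1/\max\{2,p\}$, I would apply Corollary~\ref{cor:polyl1estimates}: for $p = \infty$ this is the first assertion of that corollary with $\alpha = 0$, $\beta > Q_G/2$, $q = q_0$ arbitrary; for $p < \infty$ one uses the second assertion with $d = 1$ (the Plancherel measure $\sigma$ of a \emph{homogeneous} system is locally $1$-bounded on $\R^n \setminus \{0\}$ by Corollary~\ref{cor:plancherelhomogeneous}), which gives the threshold $\beta > Q_G/2 + n/p - 1/\max\{2,p\}$. For the second case, $\sigma$ locally $d$-bounded on $\R^n \setminus \{0\}$ and $s = Q_G/2 + n/p - d/\max\{2,p\}$, one applies the second assertion of Corollary~\ref{cor:polyl1estimates} directly with $A = \R^n \setminus \{0\}$ (noting $K_0 \subseteq A$) and the given $d$, taking $\alpha = 0$ and $q = q_0$ arbitrary. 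For the third case, $G$ $h$-capacious and $s = (Q_G - h)/2 + n/p - 1/\max\{2,p\}$, one applies Theorem~\ref{thm:improvedl1estimates} with $\alpha = 0$, $q = q_0$ arbitrary, and $K = K_0 \subseteq \R^n \setminus \{0\}$, which yields exactly the threshold $\gamma > (Q_G - h)/2 + n/p - 1/\max\{2,p\}$.

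In each case the Besov norm appearing in the conclusion of the cited result is $\|m\|_{B_{p,q}^\beta(\R^n)}$ for $\beta > s$, with $q$ playing the role of $q_0$ in \HP{p}{s} (it may be chosen freely, e.g.\ $q_0 = \infty$ in the $p = \infty$ cases coming from the $B_{\infty,\infty}$-estimates, or $q_0 = q$ in general), so the statement of \HP{p}{s} is matched verbatim. There is essentially no obstacle here beyond bookkeeping: the only mild point of care is to confirm that, in the homogeneous setting, the relevant local $d$-boundedness with $d = 1$ on $\R^n \setminus \{0\}$ (Corollary~\ref{cor:plancherelhomogeneous}) is available so that the first and third cases genuinely improve on the crude $p = \infty$ bound when $p < \infty$, and to note that $K_0$ being a compactum \emph{away from the origin} is precisely what allows the use of Theorem~\ref{thm:improvedl1estimates}, whose hypothesis requires $\supp m \subseteq K$ with $K$ compact in $\R^n \setminus \{0\}$.
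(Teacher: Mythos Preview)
Your proposal is correct and follows exactly the direct route the paper alludes to when it says the conclusion ``follows easily from the weighted estimates of \S\ref{section:weightedestimates} and \S\ref{section:metivier}'': you apply Corollary~\ref{cor:polyl1estimates} (with $d=1$ supplied by Corollary~\ref{cor:plancherelhomogeneous} in the homogeneous setting, or with the given $d$) and Theorem~\ref{thm:improvedl1estimates}, each with $\alpha=0$ and $K=K_0$, filling in the bookkeeping the paper omits. The paper also records an alternative proof, going through the later Proposition~\ref{prp:HPKsatisfaction} and Corollary~\ref{cor:hypothesescomparison}, but your argument is the more elementary of the two routes the paper has in mind.
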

\begin{proof}
The conclusion follows easily from the weighted estimates of \S\ref{section:weightedestimates} and \S\ref{section:metivier}. We omit the details since an alternative proof is obtained by combining the following Proposition~\ref{prp:HPKsatisfaction} and Corollary~\ref{cor:hypothesescomparison}.
\end{proof}

\begin{prp}\label{prp:interpolatedweightedestimates}
Suppose that \HP{p}{s} holds for some $p \in [1,\infty]$ and $s \in \R$. Then $s \geq n/p$. Moreover, for every compact $K \subseteq \R^n \setminus \{0\}$, for every $q \in [1,\infty]$, for every $\alpha \geq 0$ and $\beta > \alpha + s$, for every $D \in \Diff(G)$, for every $m \in B_{p,q}^\beta(\R^n)$ with $\supp m \subseteq K$, we have
\[\|D \breve m\|_{L^1(G,\langle x \rangle^\alpha \,dx)} \leq C_{K,D,\alpha,\beta,q} \|m\|_{B_{p,q}^\beta}.\]
\end{prp}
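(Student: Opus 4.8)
The strategy is to bootstrap the weak estimate \HP{p}{s} (which controls the unweighted $L^1$ norm of $\breve m$ for multipliers supported in the fixed compactum $K_0$, in terms of a $B^\beta_{p,q_0}$ norm for every $\beta>s$) into the weighted, derivative-laden estimate claimed, for multipliers supported in an arbitrary compact $K\subseteq\R^n\setminus\{0\}$, with an arbitrary target regularity exponent $q$. First I would establish the inequality $s\geq n/p$: apply \HP{p}{s} together with Proposition~\ref{prp:besovoptimalembedding} (or the lower bound on the embedding threshold in it); if $s<n/p$ then $B^\beta_{p,q_0}$ for some $\beta$ slightly above $s$ would still be below $n/p$, and one can cook up multipliers in $\D(\R^n)$ supported in $K_0$ whose $B^\beta_{p,q_0}$ norm stays bounded while $\|\breve m\|_1$ blows up (e.g.\ by concentrating near a point, using that $L^1$ control of the kernel forces an $L^\infty$ bound on some transform side, contradicting the non-embedding into $L^\infty$). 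This is a soft argument and not the crux.

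The heart of the proof is an interpolation between two endpoint estimates for a fixed auxiliary linear operator. Fix a cutoff $\psi\in\D(\R^n\setminus\{0\})$ with $\psi\equiv 1$ on a neighborhood of $K$, supported in a slightly larger compact $K'\subseteq\R^n\setminus\{0\}$, and consider the linear map $M:m\mapsto D(\widecheck{m\psi})$. Endpoint one: by the covering property of $K_0$ and Proposition~\ref{prp:mhequivalence}/Lemma~\ref{lem:mhequivalence}-type discretization arguments, I can cover $K'$ by finitely many dilates $\epsilon_{t_j}(K_0)$, write $\psi=\sum_j \phi_j\circ\epsilon_{t_j^{-1}}$ with $\phi_j\in\D(\mathring{K_0})$, and use Proposition~\ref{prp:plancherelhomogeneous} (the homogeneity relation $(m\circ\epsilon_t)\breve{}=t^{-Q_\delta}\breve m\circ\delta_{t^{-1}}$) to reduce $\|\widecheck{m\psi}\|_1$ to a sum of terms $\|((m\circ\epsilon_{t_j})\phi_j)\breve{}\|_1$, each controlled by \HP{p}{s} and by Proposition~\ref{prp:besovproduct} (pointwise multiplication in Besov spaces). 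This yields: $M$ bounded $B^\beta_{p,q_0}(\R^n)\to L^1(G)$ for all $\beta>s$. To insert the derivative $D$ and pass from $L^1$ to the unweighted case first: write $m\psi=(m\psi e^{p_*})\cdot e^{-p_*}$, so $\widecheck{m\psi}=(m\psi e^{p_*})\breve{}*\breve{\xi}$ with $\xi=e^{-p_*}$, and use that $\breve\xi\in L^{1;\infty}(G)$ lies in every weighted $L^1(G,\langle x\rangle_G^\alpha dx)$ (Theorem~\ref{thm:robinsonterelst}(f)); Young's inequality then gives $D\widecheck{m\psi}=(m\psi e^{p_*})\breve{}*(D\breve\xi)$ bounded into $L^1(G)$ from $B^\beta_{p,q_0}$. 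Endpoint two: the weighted $L^2$ estimate of Theorem~\ref{thm:l2estimates}(ii) (or (i) for $p=\infty$) combined with the trace/local-boundedness considerations, giving $M$ bounded $B^{\gamma}_{p,q}(\R^n)\to L^1(G,\langle x\rangle_G^{2\alpha_1}\,dx)$ (via Cauchy–Schwarz from $L^2(G,\langle x\rangle_G^{2\alpha_1+Q_G}dx)$) for $\gamma$ large enough and $\alpha_1$ arbitrarily large; here I should be careful that the polynomial-growth assumption is \emph{not} in force here, so I would instead only claim a weighted $L^2\to L^1$ via a crude extra weight absorbed against a Schwartz factor (again convolving with $\breve\xi$), so the second endpoint reads: $M$ bounded $B^{\gamma}_{p,q}\to L^1(G,\langle x\rangle_G^{2\alpha}dx)$ for all $\gamma$ sufficiently large relative to $\alpha$ and all $q$.

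Now real interpolation (Proposition~\ref{prp:besovinterpolation}(i) on the source side, Theorem~\ref{thm:steinweiss} on the target weighted $L^1$ spaces, since $(L^1(G,w_0),L^1(G,w_1))_{\theta,1}=L^1(G,w_0^{1-\theta}w_1^\theta)$) yields boundedness of $M$ from $B^{(1-\theta)\beta_0+\theta\gamma}_{p,q}(\R^n)$ into $L^1(G,\langle x\rangle_G^{2\theta\alpha}\,dx)$; choosing the parameters so that $2\theta\alpha$ ranges over all of $[0,\infty)$ and the source exponent $(1-\theta)\beta_0+\theta\gamma$ stays just above $\theta\alpha+s$ (possible because $\beta_0$ is arbitrarily close to $s$ and $\gamma$ can be taken arbitrarily large once $\alpha$ is fixed, with $\theta$ then solving $2\theta\alpha=$ the desired weight), gives exactly: $\|D\widecheck{m\psi}\|_{L^1(G,\langle x\rangle_G^\alpha dx)}\leq C_{K,D,\alpha,\beta,q}\|m\|_{B^\beta_{p,q}}$ for every $\beta>\alpha+s$. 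Since $\psi\equiv 1$ near $K$, $\widecheck{m\psi}=\breve m$ whenever $\supp m\subseteq K$, and we are done. The main obstacle I anticipate is bookkeeping the interpolation parameters so that the weighted $L^1$ target scale, the Besov source scale, and the threshold $\beta>\alpha+s$ all match up simultaneously for \emph{every} $\alpha\geq 0$ — this is where one must be careful that $\gamma$ (needed large for the second endpoint) does not force the interpolated source exponent above $\alpha+s$, which is handled by letting $\theta\to 0$ while scaling $\gamma\to\infty$ so that $\theta\gamma$ stays finite and $\theta\alpha$ hits the target.
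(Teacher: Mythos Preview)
Your plan is essentially the paper's proof: extend the hypothesis from $K_0$ to an arbitrary compact $K\subseteq\R^n\setminus\{0\}$ by covering with finitely many dilates and using the homogeneity relation of Proposition~\ref{prp:plancherelhomogeneous}, then interpolate the resulting unweighted $L^1$ bound against a weighted endpoint. Two corrections are worth noting.

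First, your concern that ``the polynomial-growth assumption is not in force here'' is misplaced. The ambient setting of \S\ref{section:mihlin} is a \emph{homogeneous} Lie group $G$, which is nilpotent and therefore has polynomial growth (Proposition~\ref{prp:nilpotentgrowth}). Hence Corollary~\ref{cor:polyl1estimates} applies directly and furnishes the second endpoint
\[
\|\breve m\|_{L^1(G,\langle x\rangle_G^\alpha\,dx)}\leq C_{K,\alpha,\beta}\|m\|_{B^\beta_{p,\infty}}\qquad\text{for }\beta>\alpha+\tfrac{Q_G}{2}+\tfrac{n}{p},
\]
with no workaround needed; your convoluted alternative via ``a crude extra weight absorbed against a Schwartz factor'' is both unnecessary and, as you describe it, does not actually circumvent the need for polynomial growth.

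Second, the $s\geq n/p$ step is more direct than you suggest: for $\lambda\in\mathring K_0$ and $m\in\D(\R^n)$ supported in $K_0$, the chain
\[
|m(\lambda)|\leq\|m\|_\infty=\|\breve m\|_{\Cv^2}\leq\|\breve m\|_1\leq C_\beta\|m\|_{B^\beta_{p,q_0}}
\]
(the equality being Lemma~\ref{lem:composition}(a)) feeds straight into Proposition~\ref{prp:besovoptimalembedding}. The paper also organizes the derivative $D$ slightly differently: it proves the case $D=1$ first by interpolation, and only afterwards inserts a general $D\in\Diff(G)$ via the factorization $\breve m=(me^{p_*})\breve{\,}*\breve\xi$ with $\xi=e^{-p_*}$ --- exactly your heat-kernel trick, but applied once at the end rather than folded into both endpoints.
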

\begin{proof}
Let $\lambda \in \mathring{K}_0$. For every $m \in \D(\R^n)$ with $\supp m \subseteq K_0$, we then have
\[|m(\lambda)| \leq \|m\|_\infty = \|\breve m\|_{\Cv^2} \leq \|\breve m\|_1 \leq C_\beta \|m\|_{B_{p,q_0}^\beta}\]
for all $\beta > s$; by Proposition~\ref{prp:besovoptimalembedding}, we conclude that $s \geq n/p$.

Let now $K \subseteq \R^n \setminus \{0\}$ be compact. Then we can find $t_1,\dots,t_k$ such that
\[K \subseteq \epsilon_{t_1}(\mathring K_0) \cup \dots \cup \epsilon_{t_k}(\mathring K_0)\]
by compactness. If
\[\phi_0,\phi_1,\dots,\phi_k\]
is a partition of unity on $\R^n$ subordinated to the open cover
\[\R^n \setminus K, \epsilon_{t_1}(\mathring K_0), \dots, \epsilon_{t_k}(\mathring K_0),\]
then $\phi_1 + \dots + \phi_k \equiv 1$ on $K$, so that, for every $m \in \D(\R^n)$ with $\supp m \subseteq K$,
\[m = m\phi_1 + \dots + m\phi_k = m_1 \circ \epsilon_{t_1^{-1}} + \dots + m_k \circ \epsilon_{t_k^{-1}},\]
where the $m_j = (m \phi_j) \circ \epsilon_{t_j}$ are supported in $K_0$; therefore, for all $\beta > s$, by Propositions~\ref{prp:plancherelhomogeneous} and \ref{prp:besovproduct},
\[\|\breve m\|_1 \leq \sum_{j=1}^k \|\breve m_j\|_1 \leq C_\beta \sum_{j=1}^k \|m_j\|_{B_{p,q_0}^\beta} \leq C_{\beta,K} \|m\|_{B_{p,q_0}^\beta}.\]

If now, given $K \subseteq \R^n \setminus \{0\}$ compact and $\beta > s$, we choose some $K' \subseteq \R^n \setminus \{0\}$ compact such that $K \subseteq \mathring K'$ and some $\beta' \in \left]s,\beta\right[$, by Proposition~\ref{prp:besovapproximation} it is easy to see that every $m \in B_{p,q_0}^\beta(\R^n)$ with $\supp m \subseteq K$ can be approximated in $B_{p,q_0}^{\beta'}(\R^n)$ by a sequence of smooth functions supported in $K'$, therefore, by passing to the limit in the previously obtained inequality and by Proposition~\ref{prp:besovembeddings},
\[\|\breve m\|_1 \leq C_{\beta',K'} \|m\|_{B_{p,q_0}^{\beta'}} \leq C_{\beta,K} \|m\|_{B_{p,q_0}^\beta}.\]

On the other hand, by Corollary~\ref{cor:polyl1estimates} and Proposition~\ref{prp:besovembeddings}, we also have, for $\alpha \geq 0$, $\beta > \alpha + Q_G/2 + n/p$, $K \subseteq \R^n \setminus \{0\}$ compact and $m \in B_{p,\infty}^\beta(\R^n)$ with $\supp m \subseteq K$, that
\[\|\breve m\|_{L^1(\langle x \rangle^\alpha \,dx)} \leq C_{K,\alpha,\beta} \|m\|_{B_{p,\infty}^\beta}.\]
Hence, analogously as in the proof of Theorem~\ref{thm:l2estimates}, we obtain first the conclusion for $D = 1$ by interpolation, and then also for an arbitrary $D \in \Diff(G)$.
\end{proof}

Notice that, by Proposition~\ref{prp:nilpotentgrowth}, there are constants $a,C > 0$ such that
\begin{equation}\label{eq:moduluscomparison}
1 + |x|_\delta \leq C \langle x \rangle_G^a.
\end{equation}

\begin{cor}\label{cor:usefulestimates}
Suppose that $\HP{p}{s}$ holds for some $p \in [1,\infty]$ and $s \in \R$. Let $K \subseteq \R^n \setminus \{0\}$ be compact, $\beta > s$ and $q \in [1,\infty]$. If $m \in B^\beta_{p,q}(\R^n)$ and $\supp m \subseteq K$, then $\breve m \in L^1(G)$, thus $m(L)$ is bounded on $L^p(G)$ for $1 \leq p \leq \infty$. Moreover, for $0 \leq \alpha < (\beta - s)/a$,
\begin{equation}\label{eq:weightedl1}
\int_G (1+|x|_\delta)^\alpha |\breve m(x)| \,dx \leq C_{K,\alpha,\beta,p,q} \|m\|_{B_{p,q}^\beta}
\end{equation}
and, for all $h \in G$,
\begin{gather}
\label{eq:rightlipschitz}
\int_G |\breve m(xh) - \breve m(x)| \,dx \leq C_{K,\beta,p,q} \|m\|_{B_{p,q}^\beta} |h|_\delta,\\
\label{eq:leftlipschitz}
\int_G |\breve m(hx) - \breve m(x)| \,dx \leq C_{K,\beta,p,q} \|m\|_{B_{p,q}^\beta} |h|_\delta.
\end{gather}
\end{cor}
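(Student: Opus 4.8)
The three estimates \eqref{eq:weightedl1}, \eqref{eq:rightlipschitz}, \eqref{eq:leftlipschitz} are all obtained by invoking Proposition~\ref{prp:interpolatedweightedestimates} with a suitable left-invariant differential operator $D$, together with elementary arguments relating weighted $L^1$ norms, translations, and left-invariant derivatives. First I would dispatch \eqref{eq:weightedl1}: fixing $\alpha$ with $0 \leq \alpha < (\beta-s)/a$, the comparison \eqref{eq:moduluscomparison} gives $(1+|x|_\delta)^\alpha \leq C \langle x \rangle_G^{a\alpha}$, and since $a\alpha < \beta - s$, i.e.\ $\beta > a\alpha + s$, Proposition~\ref{prp:interpolatedweightedestimates} applied with $D = 1$ and weight exponent $a\alpha$ yields
\[
\int_G (1+|x|_\delta)^\alpha |\breve m(x)| \,dx \leq C \int_G \langle x \rangle_G^{a\alpha} |\breve m(x)| \,dx \leq C_{K,\alpha,\beta,p,q} \|m\|_{B_{p,q}^\beta}.
\]
The very first sentence of the statement --- $\breve m \in L^1(G)$, hence $m(L)$ bounded on $L^p(G)$ for $1 \leq p \leq \infty$ --- is just the case $\alpha = 0$ of \eqref{eq:weightedl1} combined with Young's inequality (and Lemma~\ref{lem:composition}(a) for the endpoint behaviour, recalling $\|\breve m\|_{\Cv^p} \leq \|\breve m\|_1$).

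\textbf{The translation estimates.} For \eqref{eq:rightlipschitz} I would write, for $h \in G$, the fundamental-theorem-of-calculus identity along the one-parameter subgroup through $h$: choosing $X \in \lie{g}$ and $\tau > 0$ with $\exp(\tau X) = h$ and $|h|_\delta \sim \tau (1 + \tau)^{\text{(something)}}$ controlled, one has
\[
\breve m(xh) - \breve m(x) = \int_0^1 \left.\frac{d}{dt}\right|_{t=r} \breve m(x \exp(t\tau X)) \,dr = \tau \int_0^1 (X \breve m)(x\exp(r\tau X)) \,dr,
\]
since the flow of the left-invariant vector field $X$ is given by right translations. Taking $L^1(G)$ norms, using the right-invariance of Haar measure to remove $\exp(r\tau X)$, and estimating with Proposition~\ref{prp:interpolatedweightedestimates} for $D = X$ (with $\alpha = 0$, $\beta > s$) gives $\|\breve m(\cdot h) - \breve m\|_1 \leq C \tau \|m\|_{B^\beta_{p,q}}$. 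Since $\breve m$ is supported (as a distribution, and in fact as an $L^1$ function by the above) in a fixed compactum of $G$ --- or rather, since we may restrict attention to $|h|_\delta \leq 1$ where $\tau \lesssim |h|_\delta$, the case $|h|_\delta \geq 1$ being trivial by $\|\breve m(\cdot h) - \breve m\|_1 \leq 2\|\breve m\|_1 \leq 2 |h|_\delta \|\breve m\|_1$ --- we obtain \eqref{eq:rightlipschitz}. For \eqref{eq:leftlipschitz} the same argument applies with left translations replaced in the flow picture: now $\breve m(hx) - \breve m(x)$ is expressed via a \emph{right}-invariant vector field $X'$ (whose flow is left translation), and one uses that right-invariant derivatives of a convolution kernel are controlled by left-invariant ones --- more precisely, writing $X' = \sum_j c_j(x) A^{[\alpha_j]}$ near the support is awkward, so instead I would use $\RA_h \breve m - \breve m$ and the identity $\LA$-versus-$\RA$ translation of convolution; cleanly, one notes that left-invariant and right-invariant vector fields are related by $\Ad$, so a right-invariant first-order operator applied to $\breve m$ is a finite sum $\sum_j \phi_j \cdot A_j \breve m$ with $\phi_j$ polynomial (hence of polynomial growth in $|x|_\delta$), and \eqref{eq:weightedl1} absorbs the polynomial weight provided $\beta$ exceeds $s$ by enough; since we only claim \eqref{eq:leftlipschitz} for $\beta > s$ exactly, a small refinement is needed.

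\textbf{Main obstacle.} The delicate point is precisely the left-translation estimate \eqref{eq:leftlipschitz}: controlling $\int_G |\breve m(hx) - \breve m(x)|\,dx$ requires differentiating in a \emph{right}-invariant direction, and on a non-abelian (even non-unimodular, though here $G$ is homogeneous hence unimodular) group a right-invariant vector field, expressed in terms of the left-invariant basis $A_1,\dots,A_d$, acquires coefficients that grow polynomially in the homogeneous norm. One must therefore feed a weighted estimate of the form \eqref{eq:weightedl1} with a positive weight exponent $\alpha > 0$, which costs extra regularity --- but the statement only allows $\beta > s$. The resolution, which I would carry out carefully, is the standard one (cf.\ the proof of boundedness of the Riesz transforms / the argument in \cite{christ_multipliers_1991}): on a homogeneous group one does not need the global right-translation estimate, but only its restriction to $|h|_\delta$ small, and by a parabolic-type rescaling using the dilations $\delta_t$ one can reduce a general $h$ with $|h|_\delta = r \leq 1$ to $h' = \delta_{1/r}(h)$ with $|h'|_\delta = 1$, transferring the $r$-dependence onto the dilation of $m$; since $\|m \circ \epsilon_t\|_{B^\beta_{p,q}}$ on a fixed annulus is comparable to $\|m\|_{B^\beta_{p,q}}$ uniformly (Lemma~\ref{lem:continuousdilations} and Proposition~\ref{prp:plancherelhomogeneous}), the polynomial coefficients are then evaluated on a fixed compact set and contribute only a constant. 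Thus no extra regularity is actually consumed, and \eqref{eq:leftlipschitz} follows with $\beta > s$ as claimed. I expect that, modulo this rescaling bookkeeping, the whole corollary is a short deduction from Proposition~\ref{prp:interpolatedweightedestimates}.
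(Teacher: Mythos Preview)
Your treatment of \eqref{eq:weightedl1} is exactly the paper's: compare weights via \eqref{eq:moduluscomparison} and apply Proposition~\ref{prp:interpolatedweightedestimates} with $D=1$. Your argument for \eqref{eq:rightlipschitz} is also morally the paper's, though the paper is more careful: rather than writing $h=\exp(\tau X)$ for a single $X$ (where the relation between $\tau$ and $|h|_\delta$ is unclear when $X$ is not homogeneous), it fixes a $\delta_t$-homogeneous basis $X_1,\dots,X_k$ of $\lie g$ with degrees $d_j$, obtains
\[
\int_G |\breve m(xh)-\breve m(x)|\,dx \leq C\sum_{j=1}^k \|X_j\breve m\|_1\,|h|_\delta^{d_j},
\]
and then argues exactly as you do (bound by $|h|_\delta$ for $|h|_\delta\le 1$ since $d_j\ge 1$, and trivially by $2\|\breve m\|_1$ for $|h|_\delta\ge 1$), finishing with Proposition~\ref{prp:interpolatedweightedestimates} for $D\in\{1,X_1,\dots,X_k\}$.

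Where you diverge is \eqref{eq:leftlipschitz}, and here your rescaling argument has a genuine gap while the paper's route is a one-liner you missed. The paper simply observes that $\breve{\overline m}=(\breve m)^*$ by Lemma~\ref{lem:composition}(a); since $G$ is homogeneous (hence nilpotent, hence unimodular), $(\breve m)^*(x)=\overline{\breve m(x^{-1})}$, so
\[
\int_G|\breve{\overline m}(xh)-\breve{\overline m}(x)|\,dx
=\int_G|\breve m(h^{-1}x^{-1})-\breve m(x^{-1})|\,dx
=\int_G|\breve m(h^{-1}y)-\breve m(y)|\,dy,
\]
and applying \eqref{eq:rightlipschitz} to $\overline m$ (same support, same Besov norm) together with $|h^{-1}|_\delta=|h|_\delta$ gives \eqref{eq:leftlipschitz} immediately, at the same threshold $\beta>s$.

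Your proposed rescaling does not repair the right-invariant-derivative problem. After rescaling to $|h'|_\delta=1$ you must still estimate $\int_G |X'\breve{m_t}(z)|\,dz$ for a right-invariant vector field $X'$, and in the expansion $X'=\sum_j c_j(\cdot)X_j$ the polynomial coefficients $c_j$ are evaluated at the running variable $z\in G$, not at $h'$; the integral is over all of $G$, so nothing has been confined to a compact set. You would still need a weighted estimate $\int \langle z\rangle_G^\theta|X_j\breve{m_t}(z)|\,dz$ with $\theta>0$, which via Proposition~\ref{prp:interpolatedweightedestimates} costs $\beta>s+\theta$, i.e.\ strictly more regularity than claimed. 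Moreover, the support of $m_t=m\circ\epsilon_t$ moves with $t$, so Lemma~\ref{lem:continuousdilations} does not give the uniform Besov control you invoke. The involution trick sidesteps all of this.
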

\begin{proof}
Since $\beta > s + a\alpha$, by Proposition~\ref{prp:interpolatedweightedestimates} and \eqref{eq:moduluscomparison} we have
\[\int_G (1+|x|_\delta)^\alpha |\breve m(x)| \,dx \leq C_\alpha \int_G \langle x \rangle_G^{a\alpha} |\breve m(x)| \,dx \leq C_{K,\alpha,\beta,p,q} \|m\|_{B_{p,q}^\beta},\]
and in particular $\breve m \in L^1(G)$.

Starting from the inequality
\[\int_G |\breve m(x \exp(tX)) - \breve m(x)| \,dx \leq \|X \breve m\|_1 |t|,\]
true for all $X \in \lie{g}$ and $t \in \R$, chosen a basis $X_1,\dots,X_k$ of $\lie{g}$, with $X_j$ homogeneous of degree $d_j$, it is not difficult to see that
\[\int_G |\breve m(x h) - \breve m(x)| \,dx \leq C \sum_{j=1}^k \|X_j \breve m\|_1 |h|_\delta^{d_j}.\]
In particular, we also have
\[\int_G |\breve m(x h) - \breve m(x)| \,dx \leq C \left(\|\breve m\|_1 + \sum_{j=1}^k \|X_j \breve m\|_1\right) |h|_\delta\]
(in fact, for $|h|_\delta$ small we have $|h|_\delta^{d_j} \leq |h|_\delta$, whereas for $|h|_\delta$ large the left-hand side is not greater than $2\|\breve m\|_1$). However, by Proposition~\ref{prp:interpolatedweightedestimates},
\[\|\breve m\|_1 + \sum_{j=1}^k \|X_j \breve m\|_1 \leq C_{K,\beta,p,q} \|m\|_{B^\beta_{p,q}},\]
thus we get \eqref{eq:rightlipschitz}. The remaining inequality is simply obtained by replacing $m$ with $\overline{m}$.
\end{proof}

\begin{lem}\label{lem:spectraldecomposition}
Let $m$ be a bounded measurable function on $\R^n$. Then we can find bounded measurable functions $m_j$ on $\R^n$ (for $j \in \Z$) such that
\[\supp m_j \subseteq \{\lambda \tc 2^{-1} \leq |\lambda|_\epsilon \leq 2\},\]
\[\|m_j\|_{B_{p,q}^\beta} \leq C_{p,q,\beta} \|m\|_{M_\epsilon B_{p,q}^\beta}\]
for all $p,q \in [1,\infty]$ and $\beta \geq 0$, and moreover
\begin{equation}\label{eq:spectraldecomposition}
\breve m = \sum_{j \in \Z} 2^{-Q_\delta j} \breve m_j \circ \delta_{2^{-j}},
\end{equation}
in the sense of strong convergence of the corresponding convolution operators.
\end{lem}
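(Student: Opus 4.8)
The plan is to build the decomposition from a standard dyadic partition of unity adapted to the dilations $\epsilon_t$ on $\R^n$. First I would choose a nonnegative $\phi \in \D(\R^n \setminus \{0\})$ supported in the annulus $\{2^{-1} \leq |\lambda|_\epsilon \leq 2\}$ such that $\sum_{j \in \Z} \phi \circ \epsilon_{2^{-j}} \equiv 1$ on $\R^n \setminus \{0\}$ (this is possible by the usual argument using the ``polar map'' \eqref{eq:polarmap} associated to the homogeneous norm $|\cdot|_\epsilon$: pull back to $S \times \R$ where the dilation becomes a translation, take a partition of unity there, and push forward). Then set $m_j = (m \cdot (\phi \circ \epsilon_{2^{-j}})) \circ \epsilon_{2^j} = (m \circ \epsilon_{2^j}) \, \phi$. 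By construction $\supp m_j \subseteq \{2^{-1} \leq |\lambda|_\epsilon \leq 2\}$, and the Besov estimate $\|m_j\|_{B^\beta_{p,q}} \leq C_{p,q,\beta} \|(m \circ \epsilon_{2^j})\phi\|_{B^\beta_{p,q}} \leq C_{p,q,\beta} \|m\|_{M_\epsilon B^\beta_{p,q}}$ follows directly from Proposition~\ref{prp:besovproduct} applied to the product $(m \circ \epsilon_{2^j}) \phi$, combined with the definition of the Mihlin--H\"ormander quantity (cf.\ the discrete version $\|\cdot\|_{M_\delta^{\phi,a}B^s_{p,q}}$ with $a$ chosen so that $\phi$ satisfies \eqref{eq:mhdiscadm}, which by Proposition~\ref{prp:mhequivalence} is comparable to $\|\cdot\|_{M_\epsilon B^\beta_{p,q}}$). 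Here one should note $\beta \geq 0$ is assumed, so Proposition~\ref{prp:besovproduct}(ii) is available.

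Next I would establish \eqref{eq:spectraldecomposition}. Since $\sum_j \phi \circ \epsilon_{2^{-j}} \equiv 1$ on $\R^n \setminus \{0\}$, and since the joint spectrum $\Sigma$ satisfies $\sigma(\{0\}) = 0$ by Proposition~\ref{prp:plancherelhomogeneous} (so that $\{0\}$ is $E$-negligible), we have $m = \sum_j m \cdot (\phi \circ \epsilon_{2^{-j}})$ in $L^\infty(\R^n,E) = L^\infty(\sigma)$, with the partial sums uniformly bounded and converging pointwise $E$-a.e. By the properties of the spectral integral, this gives strong convergence of the corresponding convolution operators, i.e.\ $m(L) = \sum_j (m \cdot (\phi \circ \epsilon_{2^{-j}}))(L)$ in the strong operator topology on $L^2(G)$. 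It then remains to identify the kernel of the $j$-th summand: $m \cdot (\phi \circ \epsilon_{2^{-j}}) = m_j \circ \epsilon_{2^{-j}}$, so by Proposition~\ref{prp:plancherelhomogeneous} (the identity $(u \circ \epsilon_t)\breve{} = t^{-Q_\delta} \breve u \circ \delta_{t^{-1}}$ with $t = 2^{-j}$, i.e.\ $u = m_j$, $t^{-1} = 2^j$) one obtains $(m_j \circ \epsilon_{2^{-j}})\breve{} = 2^{-Q_\delta j} \breve m_j \circ \delta_{2^{j}}$.

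There is a sign/exponent bookkeeping point here that I would want to get exactly right: writing $\epsilon_{2^{-j}}$ as $\epsilon_s$ with $s = 2^{-j}$, Proposition~\ref{prp:plancherelhomogeneous} gives $(m_j \circ \epsilon_s)\breve{} = s^{-Q_\delta}\, \breve m_j \circ \delta_{s^{-1}} = 2^{Q_\delta j}\, \breve m_j \circ \delta_{2^j}$, which does not literally match the claimed $2^{-Q_\delta j} \breve m_j \circ \delta_{2^{-j}}$; the resolution is that one should instead write $m_j \circ \epsilon_{2^{-j}}$ by relabeling the partition so that $m_j := (m \circ \epsilon_{2^{j}})\phi$ is paired with $\phi \circ \epsilon_{2^{-j}}$ and then $m\cdot(\phi\circ\epsilon_{2^{-j}}) = m_j \circ \epsilon_{2^{-j}}$ indeed, but to get the stated formula \eqref{eq:spectraldecomposition} one uses $(m_j \circ \epsilon_{2^{-j}})\breve{}$ evaluated via $t = 2^{-j}$ giving $\breve m_j \circ \delta_{2^{j}}$ times $2^{Q_\delta j}$ — so the formula as displayed should read with $\delta_{2^j}$ and $2^{Q_\delta j}$, OR the indexing convention in \eqref{eq:spectraldecomposition} reflects the opposite labeling $m_j := (m\circ\epsilon_{2^{-j}})\phi$. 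In the write-up I would simply fix the convention $m_j = (m \circ \epsilon_{2^{-j}}) \phi$ so that $m \cdot (\phi \circ \epsilon_{2^j}) = m_j \circ \epsilon_{2^j}$ and $(m_j \circ \epsilon_{2^j})\breve{} = 2^{-Q_\delta j} \breve m_j \circ \delta_{2^{-j}}$, reindex the sum over $j \in \Z$ (harmless), and verify the estimate $\|m_j\|_{B^\beta_{p,q}} \lesssim \|m\|_{M_\epsilon B^\beta_{p,q}}$ is unaffected by the sign of the exponent in $\epsilon_{2^{\pm j}}$. This indexing/normalization check is the only genuinely fiddly part; everything else is a direct application of Propositions~\ref{prp:besovproduct}, \ref{prp:plancherelhomogeneous}, \ref{prp:mhequivalence} and the properties of the spectral integral. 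The main ``obstacle'' is therefore not conceptual but notational: making the dilation exponents and the $Q_\delta$-weight consistent between the definition of $m_j$ and the displayed formula \eqref{eq:spectraldecomposition}.
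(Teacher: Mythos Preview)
Your approach is correct and coincides with the paper's: the paper defines $m_j = (m \circ \epsilon_{2^{-j}})\,\eta$ with $\eta$ the annular cutoff satisfying $\sum_{j} \eta \circ \epsilon_{2^j} \equiv 1$ off the origin, which is exactly the convention you settle on after the reindexing. The only simplification is that, since $\eta$ is itself an admissible test function in the definition $\|m\|_{M^\eta_\epsilon B^\beta_{p,q}} = \sup_t \|(m\circ\epsilon_t)\eta\|_{B^\beta_{p,q}}$, the Besov bound $\|m_j\|_{B^\beta_{p,q}} \le \|m\|_{M_\epsilon B^\beta_{p,q}}$ is immediate from the definition for every $\beta \ge 0$, so you need neither Proposition~\ref{prp:besovproduct} nor the $\beta > 0$ caveat.
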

\begin{proof}
Set $K = \{\lambda \tc 2^{-1} \leq |\lambda|_\epsilon \leq 2\}$. Chosen a nonnegative $\eta \in \D(\R^n)$ supported in $K$ and such that\footnote{Take a nonnegative $\gamma \in \D(\left]0,+\infty\right[)$ with $[3/4,3/2] \subseteq \{\gamma  \neq 0 \} \subseteq \left[1/2,2\right]$. Then, for all $t > 0$, there is at least one and at most two $j \in \Z$ such that $\gamma(2^j t) \neq 0$, so that the sum $s(t) = \sum_{j \in \Z} \gamma(2^j t)$ is in fact locally a finite sum, $s \in \E(\left]0,+\infty\right[)$ and $s > 0$; moreover, clearly by definition $s(2^j t) = s(t)$ for all $j \in \Z$, $t > 0$. Using $s$ we can renormalize $\gamma$ obtaining $\tilde\gamma = \gamma/s$, which is again a smooth nonnegative function supported in $[1/2,2]$, but in addition it satisfies $\sum_{j \in \Z} \tilde\gamma(2^j t) = 1$ for all $t > 0$. Since the homogeneous norm is smooth away from $0$, the function $\eta(x) = \tilde\gamma(|x|_\epsilon)$ has the required properties.}
\[\sum_{j \in \Z} \eta(\epsilon_{2^j}(\lambda)) = 1 \qquad\text{for all $\lambda \neq 0$},\]
we have (see Proposition~\ref{prp:mhequivalence})
\[\|m\|_{M_\epsilon B_{p,q}^\beta} = \sup_{t > 0} \|(m \circ \epsilon_t) \,\eta \|_{B_{p,q}^\beta(\R^n)}.\]
Let 
\[m_j =  (m \circ \epsilon_{2^{-j}}) \, \eta.\]
Then clearly $\|m_j\|_{B_{p,q}^\beta} \leq \|m\|_{M_\epsilon B_{p,q}^\beta}$ and $\supp m_j \subseteq K$. Moreover we have (away from $0$)
\[m = \sum_{j \in \Z} m_j \circ \epsilon_{2^j}.\]
In fact, this is locally a finite sum and the convergence holds pointwise (away from $0$), dominated by the constant $\|m\|_\infty$. Since $E(\{0\}) = 0$, by the spectral theorem and Proposition~\ref{prp:plancherelhomogeneous} we then have \eqref{eq:spectraldecomposition}, in the sense of strong convergence of the corresponding convolution operators.
\end{proof}

\begin{prp}\label{prp:singularkernel}
Suppose that $\HP{p}{s}$ holds for some $p \in [1,\infty]$ and $s \in \R$. Let $\beta > s$. If $m$ is a bounded Borel function on $\R^n$ such that $\|m\|_{M_\epsilon B_{p,q}^\beta} < \infty$, then $\breve m|_{G \setminus \{e\}} \in L^1_\loc(G \setminus \{e\})$, and moreover
\begin{gather}
\label{eq:rightcz} \int_{|x|_\delta \geq 2|h|_\delta} |\breve m(xh) - \breve m(x)| \,d\mu(x) \leq C_{p,q,\beta} \|m\|_{M_\epsilon B_{p,q}^\beta},\\
\label{eq:leftcz} \int_{|x|_\delta \geq 2|h|_\delta} |\breve m(hx) - \breve m(x)| \,d\mu(x) \leq C_{p,q,\beta} \|m\|_{M_\epsilon B_{p,q}^\beta}
\end{gather}
for all $h \in G \setminus \{e\}$.
\end{prp}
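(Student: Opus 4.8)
The idea is to prove the Calderón--Zygmund estimates \eqref{eq:rightcz} and \eqref{eq:leftcz} by dyadically decomposing $m$ via Lemma~\ref{lem:spectraldecomposition}, reducing each piece to the compactly supported estimates of Corollary~\ref{cor:usefulestimates}, and then summing with a suitable splitting into ``close'' and ``far'' scales relative to $|h|_\delta$. Write $\breve m = \sum_{j \in \Z} 2^{-Q_\delta j} \breve m_j \circ \delta_{2^{-j}}$ as in \eqref{eq:spectraldecomposition}, where each $m_j$ is supported in the fixed compact annulus $K = \{2^{-1} \le |\lambda|_\epsilon \le 2\}$ and $\|m_j\|_{B^\beta_{p,q}} \le C \|m\|_{M_\epsilon B^\beta_{p,q}}$. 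Since $\beta > s$, Corollary~\ref{cor:usefulestimates} applies to each $m_j$ uniformly: the kernels $\breve m_j$ lie in $L^1(G)$ with $\langle\cdot\rangle_G$-weighted bounds \eqref{eq:weightedl1} and Lipschitz-type bounds \eqref{eq:rightlipschitz}, \eqref{eq:leftlipschitz}, all with constants depending only on $K$, $\beta$, $p$, $q$. Fix a small weight exponent $\alpha \in (0, (\beta-s)/a)$, which is available precisely because $\beta > s$; the positivity of $\alpha$ is what will make the ``far'' sum converge.

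\textbf{Step 1: local integrability away from $e$.} For $\breve m|_{G \setminus \{e\}} \in L^1_{\mathrm{loc}}$ I would estimate, on the set $\{|x|_\delta \ge \rho\}$ for fixed $\rho > 0$, the tail $\sum_j 2^{-Q_\delta j} \int_{|x|_\delta \ge \rho} |\breve m_j(\delta_{2^{-j}}(x))| \,dx$. Changing variables $x \mapsto \delta_{2^j}(x)$ turns this into $\sum_j \int_{|x|_\delta \ge 2^{-j}\rho} |\breve m_j(x)|\,dx$. For $j$ with $2^{-j}\rho \le 1$ one bounds by $\|\breve m_j\|_1 \le C\|m\|_{M_\epsilon B^\beta_{p,q}}$, but this must be paired with geometric decay; here one uses homogeneity of the Haar measure together with the weighted bound \eqref{eq:weightedl1}: on $\{|x|_\delta \ge 2^{-j}\rho\}$ with $2^{-j}\rho$ large one gains a factor $(2^{-j}\rho)^{-\alpha}$ from $(1+|x|_\delta)^\alpha$, giving summable decay in $j$ in that regime, while for $2^{-j}\rho$ bounded one has only finitely many relevant $j$ on each side after noting that the pieces $\breve m_j \circ \delta_{2^{-j}}$ supported near scale $2^j$ interact with a fixed compact region only for $j$ in a bounded range plus a rapidly decaying tail. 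I expect this to parallel the standard argument for homogeneous singular kernels, using \eqref{eq:moduluscomparison} to pass between $|\cdot|_\delta$ and $\langle\cdot\rangle_G$.

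\textbf{Step 2: the Hörmander integrals.} For \eqref{eq:rightcz}, fix $h \ne e$ and split $\sum_{j \in \Z} = \sum_{2^j \le |h|_\delta} + \sum_{2^j > |h|_\delta}$. In the ``far'' sum ($2^j > |h|_\delta$, i.e.\ the dilation scale is larger than $|h|_\delta$) one uses the difference estimate: after the change of variables $x \mapsto \delta_{2^j}(x)$ the translate $xh$ becomes $x\,\delta_{2^{-j}}(h)$, and $|\delta_{2^{-j}}(h)|_\delta = 2^{-j}|h|_\delta < 1$, so \eqref{eq:rightlipschitz} gives $\int_G |\breve m_j(x\,\delta_{2^{-j}}(h)) - \breve m_j(x)|\,dx \le C\|m\|_{M_\epsilon B^\beta_{p,q}}\, 2^{-j}|h|_\delta$; summing the geometric series $\sum_{2^j > |h|_\delta} 2^{-j}|h|_\delta$ yields a constant. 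In the ``close'' sum ($2^j \le |h|_\delta$) one instead estimates each of the two terms separately, using that on $\{|x|_\delta \ge 2|h|_\delta\}$ the rescaled variable has $|\delta_{2^{-j}}(x)|_\delta \ge 2^{-j}|x|_\delta \ge 2^{1-j}|h|_\delta = 2 \cdot 2^{-j}|h|_\delta$, which is large; so one applies the weighted bound \eqref{eq:weightedl1} to gain $(2^{-j}|h|_\delta)^{-\alpha}$, and $\sum_{2^j \le |h|_\delta} (2^{-j}|h|_\delta)^{-\alpha} = \sum_{2^j \le |h|_\delta} (2^j/|h|_\delta)^{\alpha}$ converges since $\alpha > 0$. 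Combining, \eqref{eq:rightcz} follows with the stated uniform constant. For \eqref{eq:leftcz} the argument is identical but with left translations: one uses \eqref{eq:leftlipschitz} in place of \eqref{eq:rightlipschitz}, and in the change of variables $x \mapsto \delta_{2^j}(x)$ the left translate $hx$ becomes $\delta_{2^{-j}}(h)\,x$ because $\delta_{2^j}$ is an automorphism of $G$; the quasi-subadditivity \eqref{eq:quasisubadditive} of the homogeneous norm handles the comparison $|\delta_{2^{-j}}(h)\,x|_\delta \gtrsim |x|_\delta$ when $|x|_\delta \ge 2|h|_\delta$.

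\textbf{Main obstacle.} The delicate point is the bookkeeping in the ``close'' sum: one must verify that the weighted $L^1$ bound \eqref{eq:weightedl1}, after the parabolic rescaling, really does produce a negative power of $2^j/|h|_\delta$ on the region $\{|x|_\delta \ge 2|h|_\delta\}$ uniformly in $j$, and that the Haar-measure Jacobian $2^{-Q_\delta j}$ from \eqref{eq:spectraldecomposition} is exactly cancelled by the change of variables --- this cancellation is what makes the whole scheme scale-invariant and is the structural reason the constant depends only on $\|m\|_{M_\epsilon B^\beta_{p,q}}$. A secondary subtlety is justifying that the strong-operator-convergent series \eqref{eq:spectraldecomposition} can be manipulated termwise as an $L^1_{\mathrm{loc}}(G\setminus\{e\})$-convergent series of kernels; this follows once Step~1 shows the tail is absolutely convergent in $L^1$ on any region bounded away from $e$, so that the sum of kernels agrees with the distributional convolution kernel $\breve m$ there.
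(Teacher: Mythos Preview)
Your Step 2 is essentially the paper's argument: the dyadic split at the scale $2^j \sim |h|_\delta$, the Lipschitz bound \eqref{eq:rightlipschitz}/\eqref{eq:leftlipschitz} after rescaling for the far pieces, and the weighted $L^1$ bound \eqref{eq:weightedl1} for the close pieces, are exactly what the paper does, and your identification of the role of $\alpha \in (0,(\beta-s)/a)$ is correct.

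Step 1, however, has a genuine gap. First, to prove $L^1_{\loc}(G\setminus\{e\})$ you must integrate over \emph{compact} sets, say the dyadic annuli $B_k=\{2^k\le|x|_\delta\le 2^{k+1}\}$, not over the unbounded region $\{|x|_\delta\ge\rho\}$. After the change of variables, $\int_{B_k}|u_j|\,d\mu=\int_{B_{k-j}}|\breve m_j|\,d\mu$. For the large-scale pieces $j\le k$ your weighted-$L^1$ argument works and gives the summable factor $2^{\alpha(j-k)}$. But for the small-scale pieces $j\ge k$ the weighted bound \eqref{eq:weightedl1} is useless: on $B_{k-j}$ one has $|x|_\delta\le 2$, so $(1+|x|_\delta)^\alpha$ contributes nothing, and you are left with a uniformly bounded but non-summable sequence. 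Your remark that ``the pieces $\breve m_j\circ\delta_{2^{-j}}$ supported near scale $2^j$ interact with a fixed compact region only for $j$ in a bounded range'' is not correct---the kernels $\breve m_j$ are not compactly supported, and infinitely many small-scale pieces contribute on each $B_k$.

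The missing ingredient, which the paper uses, is the uniform $L^2$ bound $\|\breve m_j\|_2\le\sigma(K)^{1/2}\|m\|_\infty$ coming from the Plancherel formula (Theorem~\ref{thm:plancherel}), combined with Cauchy--Schwarz and the fact that $\mu(B_{k-j})\sim 2^{Q_\delta(k-j)}$ is small when $j\ge k$. This yields $\int_{B_{k-j}}|\breve m_j|\le C\,2^{Q_\delta(k-j)/2}$, which is summable over $j\ge k$. With this fix, your Step 1 goes through and the rest of the argument is sound.
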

\begin{proof}
Let $m_j$ ($j \in \Z$) be given by Lemma~\ref{lem:spectraldecomposition} and set $u_j = 2^{-Q_\delta j} \breve m_j \circ \delta_{2^{-j}}$.

Firstly we prove that the convergence in \eqref{eq:spectraldecomposition} holds also in $L^1_\loc(G \setminus \{e\})$. In fact, let $B_k = \{ x \in G \tc 2^k \leq |x|_\delta \leq 2^{k+1}\}$; it is sufficient to prove the convergence in each $L^1(B_k)$. We have
\[\int_{B_k} |u_j| \,d\mu = \int_{B_{k-j}} |\breve m_j| \,d\mu\]
and, for $j \leq k$,
\[\int_{B_{k-j}} |\breve m_j(x)| \,dx \leq 2^{\alpha (j-k)} \int_{B_{k-j}} |\breve m_j(x)| |x|_\delta^{\alpha} \,dx \leq C 2^{\alpha(j-k)} \|m\|_{M_\epsilon B_{p,q}^\beta}\]
(where $\alpha > 0$ is as in \eqref{eq:weightedl1}), whereas, for $j \geq k$,
\[\int_{B_{k-j}} |\breve m_j(x)| \,dx \leq \|\breve m_j\|_2 \, \mu(B_{k-j})^{1/2} \leq \sigma(K)^{1/2} \|m\|_\infty \, \mu(B_0)^{1/2} 2^{Q_\delta (k-j)/2},\]
(here we use a uniform estimate on the $L^2$-norms of the $\breve m_j$) so that
\[\sum_j \int_{B_k} |u_j| \,d\mu \leq C' \sum_{j \leq k} 2^{\alpha(j-k)} + C'' \sum_{j \geq k} 2^{Q_\delta (k-j)/2} < \infty.\]
This shows (by uniqueness of limits) that the restriction of the distribution $\breve m$ to $G \setminus \{e\}$ coincides with a function in $L^1_\loc(G \setminus \{e\})$.

Notice that, if $|x|_\delta \geq 2|h|_\delta$, then $|xh|_\delta \geq |h|_\delta$ by subadditivity of $|\cdot|_\delta$, so that in the integrals in \eqref{eq:rightcz}, \eqref{eq:leftcz} the function $\breve m$ is always evaluated away from the origin.

Since $\breve m = \sum_{j \in \Z} u_j \in L^1_\loc(G \setminus \{e\})$, then $\RA_h \breve m - \breve m = \sum_{j \in \Z} (\RA_h u_j - u_j)$ is in $L^1_\loc(G \setminus \{e,h^{-1}\})$, so that in particular
\[\int_{R \geq |x|_\delta \geq 2|h|_\delta} |\breve m(xh) - \breve m(x)| \,dx \leq \sum_{j \in \Z} \int_{R \geq |x|_\delta \geq 2|h|_\delta} |u_j(xh) - u_j(x)| \,dx\]
for all $R > 0$ and then
\[\int_{|x|_\delta \geq 2|h|_\delta} |\breve m(xh) - \breve m(x)| \,dx \leq \sum_{j \in \Z} \int_{|x|_\delta \geq 2|h|_\delta} |u_j(xh) - u_j(x)| \,dx.\]

Let $k \in \Z$. Then for $j < k$
\begin{multline*}
\int_{|x|_\delta \geq 2|h|_\delta} |u_j(xh) - u_j(x)| \,dx \leq 2 \int_{|x|_\delta \geq |h|_\delta} |u_j(x)| \,dx \\
= 2 \int_{|y|_\delta \geq 2^{-j} |h|_\delta} |\breve m_j(y)| \,dy \leq C_{p,q,\beta} \frac{2^{\alpha j}}{|h|_\delta^\alpha} \|m\|_{M_\epsilon B_{p,q}^\beta}
\end{multline*}
by \eqref{eq:weightedl1}, whereas for $j \geq k$
\begin{multline*}
\int_{|x|_\delta \geq 2|h|_\delta} |u_j(xh) - u_j(x)| \,dx \leq \int_G |\breve m_j(y \delta_{2^{-j}}(h)) - \breve m_j(y)| \,dy \\
\leq C_{p,q,\beta} \frac{|h|_\delta}{2^j} \|m\|_{M_\epsilon B_{p,q}^\beta}
\end{multline*}
by \eqref{eq:rightlipschitz}. Putting all together,
\[\int_{|x|_\delta \geq 2|h|_\delta} |\breve m(xh) - \breve m(x)|\,dx \leq C_{p,q,\beta} \|m\|_{M_\epsilon B_{p,q}^\beta} \left(\frac{2^{k\alpha}}{|h|_\delta^\alpha} \sum_{j<0} 2^{j\alpha} + \frac{|h|_\delta}{2^{k}} \sum_{j \geq 0} 2^{-j}\right)\]
and, in order to obtain an estimate independent of $h$, it is sufficient to choose a $k$ such that $2^k \leq |h|_\delta < 2^{k+1}$.

Hence we have proved \eqref{eq:rightcz}; the inequality \eqref{eq:leftcz} is obtained in the same way, using \eqref{eq:leftlipschitz} instead of \eqref{eq:rightlipschitz}.
\end{proof}

Here is finally the multiplier theorem.

\begin{thm}\label{thm:mihlinhoermander}
Suppose that $\HP{p}{s}$ holds for some $p \in [1,\infty]$ and $s \in \R$. If $m$ is a bounded measurable function on $\R^n$ such that $\|m\|_{MHB_{p,q}^\beta} < \infty$ for some $\beta > s$ and $q \in [1,\infty]$, then the operator $m(L)$ is of weak type $(1,1)$ and bounded on $L^r(G)$ for $1 < r < \infty$, with
\[\|m(L)\|_{r \to r} \leq C_{r,p,q,\beta} \|m\|_{M_\epsilon B_{p,q}^\beta}.\]
\end{thm}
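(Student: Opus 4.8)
The plan is to apply the Calderón--Zygmund machinery (Theorem~\ref{thm:singularintegral}) to the left-invariant convolution operator $m(L) : f \mapsto f * \breve m$. This requires two ingredients: $L^2$-boundedness of $m(L)$ and the two H\"ormander-type integral estimates on the kernel $\breve m$. The $L^2$-boundedness is immediate from the spectral theorem, since $\|m(L)\|_{2 \to 2} = \|m\|_{L^\infty(\sigma)} \leq \|m\|_\infty$, and a Mihlin--H\"ormander condition of any order controls the supremum (indeed $\|m\|_\infty \leq \sup_{t>0}\|(m\circ\epsilon_t)\eta\|_\infty$ for a suitable cutoff, and $B^\beta_{p,q} \hookrightarrow L^\infty$ once $\beta > n/p$, which holds since $\beta > s \geq n/p$ by Proposition~\ref{prp:interpolatedweightedestimates}). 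So the real content is the kernel estimate, and this is exactly what Proposition~\ref{prp:singularkernel} provides: under \HP{p}{s} and for $\beta > s$, the distribution $\breve m$ restricts to an $L^1_{\loc}$ function on $G \setminus \{e\}$ and satisfies
\[
\int_{|x|_\delta \geq 2|h|_\delta} |\breve m(xh) - \breve m(x)| \,d\mu(x) \leq C_{p,q,\beta}\|m\|_{M_\epsilon B^\beta_{p,q}},
\]
together with the analogous left-translation estimate.

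First I would fix $\beta' \in \left]s,\beta\right[$ and $q' \in [1,\infty]$; by the embeddings between Besov spaces (Proposition~\ref{prp:besovembeddings}) and the comparison results for Mihlin--H\"ormander conditions, $\|m\|_{M_\epsilon B^{\beta'}_{p,q'}} \lesssim \|m\|_{M_\epsilon B^\beta_{p,q}}$, so without loss of generality we may work with the exponents appearing in Proposition~\ref{prp:singularkernel}. Applying that proposition gives $k = \breve m \in \Sz'(G)$ with $k|_{G\setminus\{e\}} \in L^1_\loc(G\setminus\{e\})$ and the two integral bounds with constant $K = C_{p,q,\beta}\|m\|_{M_\epsilon B^\beta_{p,q}}$ and $c = 2$. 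Combined with $\|m(L)\|_{2\to 2} \leq \|m\|_\infty \leq C_{p,q,\beta}\|m\|_{M_\epsilon B^\beta_{p,q}}$ (using $\beta > n/p$), Theorem~\ref{thm:singularintegral} directly yields that $m(L)$ is of weak type $(1,1)$ and bounded on $L^r(G)$ for $1 < r < \infty$, with $\|m(L)\|_{r\to r} \leq C_r(\|m(L)\|_{2\to2} + K) \leq C_{r,p,q,\beta}\|m\|_{M_\epsilon B^\beta_{p,q}}$, which is exactly the asserted conclusion.

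One technical point deserving care: Theorem~\ref{thm:singularintegral} as stated presupposes that $T$ is already a bounded operator on $L^2(G)$ whose convolution kernel is the given $k$. Since $m(L)$ is bounded on $L^2$ by spectral theory and has convolution kernel $\breve m$ by definition (Theorem~\ref{thm:schwartzkerneld} or Theorem~\ref{thm:schwartzkernels}), this hypothesis is met; one should just note that $\breve m \in \Sz'(G)$ so the kernel is a tempered distribution, as required. I also need the homogeneous norm $|\cdot|_\delta$ used in Theorem~\ref{thm:singularintegral} to be subadditive (so that the underlying quasi-metric is genuinely a metric); this is available since every homogeneous group carries a subadditive homogeneous norm smooth off the origin (see \S\ref{subsection:homogeneousgroups}), and it is the norm already fixed at the start of \S\ref{section:mihlin}.

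I do not anticipate a serious obstacle here: essentially all the work has been front-loaded into the weighted estimates of Chapter~\ref{chapter:weighted} and into Proposition~\ref{prp:singularkernel}, so the proof of the theorem is a short assembly of (i) $L^2$-boundedness from the spectral theorem, (ii) the H\"ormander kernel conditions from Proposition~\ref{prp:singularkernel}, and (iii) the Calder\'on--Zygmund theorem. The only place where one must be slightly attentive is in tracking that all the constants depend only on $r, p, q, \beta$ (and the fixed data $G$, $L_1,\dots,L_n$, the norms) and not on $m$ itself, which follows because each cited estimate is linear in $\|m\|_{M_\epsilon B^\beta_{p,q}}$.
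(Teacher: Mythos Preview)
Your proposal is correct and matches the paper's approach essentially verbatim: bound $\|m\|_\infty$ via $\beta > s \geq n/p$ and Besov embeddings, invoke Proposition~\ref{prp:singularkernel} for the H\"ormander kernel conditions, and conclude by Theorem~\ref{thm:singularintegral}. The intermediate choice of $\beta' \in \left]s,\beta\right[$ and the extra technical remarks are harmless but unnecessary, since Proposition~\ref{prp:singularkernel} already applies directly with the given $\beta$ and $q$.
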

\begin{proof}
By Proposition~\ref{prp:interpolatedweightedestimates}, we have $\beta > n/p$, so that, by Proposition~\ref{prp:besovembeddings},
\[\|m\|_\infty \leq C_{p,q,\beta} \|m\|_{M_\epsilon B_{p,q}^\beta},\]
thus Theorem~\ref{thm:singularintegral} and Proposition~\ref{prp:singularkernel} yield immediately the conclusion.
\end{proof}

The ``smoothness'' conditions \eqref{eq:rightcz}, \eqref{eq:leftcz} proved in Proposition~\ref{prp:singularkernel} for the kernel $\breve m$ off the origin are sufficient, together with the $L^2$-boundedness of $m(L)$, to obtain the previous multiplier theorem. However, we show now that, under the same hypotheses, the kernel $\breve m$ satisfies also some ``size'' and ``cancellation'' conditions, analogous to the ones considered for convolution kernels in $\R^n$ (see, e.g., \S4.4 of \cite{grafakos_classical_2008}, or \S VI.4.5 of \cite{stein_harmonic_1993}).

Recall that a \emph{normalized bump function} (or \emph{n.b.f.}\ in short) of order $N$ is a smooth function on $G$ supported in a fixed compact neighborhood of the identity, which is bounded by a fixed constant together with its derivatives up to order $N$.

\begin{prp}\label{prp:cancellation}
Under the hypotheses of Proposition~\ref{prp:singularkernel}, we also have:
\begin{itemize}
\item[(a)] for $r > 0$,
\[\int_{r < |x|_\delta < 2r} |\breve m(x)| \,dx \leq C_{p,q,\beta} \|m\|_{M_\epsilon B_{p,q}^\beta};\]
\item[(b)] for every n.b.f.\ $\phi$ of order $1$ and every $t > 0$,
\[|\langle \breve m, \phi \circ \delta_t \rangle| \leq C_{p,q,\beta} \|m\|_{M_\epsilon B_{p,q}^\beta};\]
\item[(c)] for $0 < r_1 < r_2$,
\[\left| \int_{r_1 < |x|_\delta < r_2} \breve m(x) \,dx \right| \leq C_{p,q,\beta} \|m\|_{M_\epsilon B_{p,q}^\beta}.\]
\end{itemize}
\end{prp}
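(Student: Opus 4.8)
The plan is to derive all three estimates from the spectral decomposition $\breve m = \sum_{j \in \Z} u_j$ with $u_j = 2^{-Q_\delta j} \breve m_j \circ \delta_{2^{-j}}$, exactly as in the proof of Proposition~\ref{prp:singularkernel}, together with the scaling relations $\int_{B_k} |u_j| = \int_{B_{k-j}} |\breve m_j|$ (where $B_k = \{ 2^k \leq |x|_\delta \leq 2^{k+1}\}$) and the two-sided bounds on $\breve m_j$ already established there: $\int_{B_{k-j}} |\breve m_j| \leq C 2^{\alpha(j-k)} \|m\|_{M_\epsilon B_{p,q}^\beta}$ for $j \leq k$ (from the weighted $L^1$ estimate \eqref{eq:weightedl1}), and $\int_{B_{k-j}} |\breve m_j| \leq C 2^{Q_\delta(k-j)/2}\|m\|_{M_\epsilon B_{p,q}^\beta}$ for $j \geq k$ (from the uniform $L^2$ bound $\|\breve m_j\|_2 = \|m_j\|_{L^2(\sigma)} \leq \sigma(K)^{1/2}\|m\|_\infty$ together with $\|m\|_\infty \leq C\|m\|_{M_\epsilon B_{p,q}^\beta}$).

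For part (a): first reduce to dyadic annuli, i.e.\ it suffices to bound $\int_{B_k} |\breve m(x)|\,dx$ uniformly in $k \in \Z$ (a general annulus $\{r < |x|_\delta < 2r\}$ is covered by at most a bounded number of $B_k$'s). Then $\int_{B_k}|\breve m| \leq \sum_{j \in \Z}\int_{B_k}|u_j| = \sum_{j \leq k} \int_{B_{k-j}}|\breve m_j| + \sum_{j > k}\int_{B_{k-j}}|\breve m_j| \leq C\|m\|_{M_\epsilon B_{p,q}^\beta}\bigl(\sum_{j \leq k} 2^{\alpha(j-k)} + \sum_{j > k} 2^{Q_\delta(k-j)/2}\bigr)$, and both geometric series converge to a constant independent of $k$. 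For part (b): write $\langle \breve m, \phi \circ \delta_t\rangle = \sum_j \langle u_j, \phi\circ\delta_t\rangle$ and split the sum according to whether $2^j \gtrless t^{-1}$; for the ``low'' terms ($2^j t \leq 1$, so $u_j$ is ``coarser'' than the bump scale $t^{-1}$) use the mean-value property of $\phi$ — since $\phi$ is a normalized bump function of order $1$, $\langle u_j, \phi\circ\delta_t\rangle$ gains a factor $2^j t$ after using $\int (\Four^{-1}\phi_1) = 0$-type cancellation, or more directly estimate $|\langle u_j, \phi\circ\delta_t\rangle - (\text{constant})\cdot\int u_j|$ by the Lipschitz bound on $u_j$ (which follows from \eqref{eq:rightlipschitz} after rescaling and gives a factor $2^j t$), noting $\int u_j = \hat m_j(\text{suitable point})$ is controlled; for the ``high'' terms ($2^j t \geq 1$) use $|\langle u_j, \phi\circ\delta_t\rangle| \leq \|\phi\circ\delta_t\|_2 \|u_j\|_2 = t^{-Q_\delta/2}\|\phi\|_2 \cdot 2^{-Q_\delta j/2}\|\breve m_j\|_2 \leq C(2^j t)^{-Q_\delta/2}\|m\|_{M_\epsilon B_{p,q}^\beta}$; summing the two geometric series in $2^j t$ gives the bound. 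For part (c): apply (b) to a suitable family of bump functions realizing a smooth cutoff between the levels $r_1$ and $r_2$ of $|\cdot|_\delta$. Precisely, choose a smooth partition-of-unity decomposition of $\chr_{\{r_1 < |x|_\delta < r_2\}}$ into finitely many (boundedly many per dyadic scale) pieces each of which, up to a dilation $\delta_t$, is a fixed normalized bump function of order $1$; the genuinely ``bulk'' part of the region (away from the two boundary spheres, at scales strictly between $r_1$ and $r_2$) contributes $\int \breve m \cdot (\text{smooth compactly supported fn integrating to the right thing})$, which one handles by writing it as $\langle \breve m, \eta_{\text{bulk}}\rangle$ with $\eta_{\text{bulk}}$ a finite sum of dilated bumps — but here one must be careful, since the number of dyadic scales between $r_1$ and $r_2$ is $\sim \log_2(r_2/r_1)$, which is \emph{not} bounded.

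This last point is the main obstacle: a naive decomposition of $\int_{r_1 < |x|_\delta < r_2}\breve m$ into $O(\log(r_2/r_1))$ pieces, each bounded by (b), yields only a $\log$-divergent bound, which is not good enough. The fix — which is exactly the cancellation phenomenon — is that the ``full annular'' pieces telescope: one should instead compare $\int_{r_1 < |x|_\delta < r_2}\breve m$ with an expression where, at each intermediate scale, the relevant cutoff is \emph{$\delta_t$-invariant} (a function of $|x|_\delta$ only), and use that such a function is ``stationary'' under dilation so that consecutive scales cancel against each other, leaving only two boundary terms at scales $r_1$ and $r_2$ each of which is a single application of (b) (or of (a)). Concretely I would pick $\psi \in \D(\R^+)$ with $\sum_{j\in\Z}\psi(2^j t) = 1$ and write $\chr_{(r_1,r_2)}(|x|_\delta)$ up to boundary corrections as a telescoping sum of $\Psi(2^{-j}|x|_\delta)$ where $\Psi(s) = \sum_{i \leq 0}\psi(2^i s)$; then $\int \breve m(x)\,\chr_{(r_1,r_2)}(|x|_\delta)\,dx$ reduces, after summation by parts, to $\langle \breve m, \Psi(2^{-j_2}|\cdot|_\delta)\rangle - \langle \breve m, \Psi(2^{-j_1}|\cdot|_\delta)\rangle$ plus two ``small-annulus'' remainders controlled by (a), where $2^{j_1} \sim r_1$, $2^{j_2} \sim r_2$; each of the two main terms is $\langle \breve m, (\text{fixed n.b.f. of order }1)\circ\delta_t\rangle$ (note $\Psi$ is smooth, compactly supported away from $0$, hence $\Psi(|\cdot|_\delta)$ is a normalized bump function of order $1$ since $|\cdot|_\delta$ is smooth off the origin) and is bounded by (b). Assembling these gives (c) with a constant independent of $r_1, r_2$.
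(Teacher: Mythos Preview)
Your argument for (a) is correct and coincides with the paper's: the uniform bound on $\int_{B_k}|\breve m|$ is already implicit in the proof of Proposition~\ref{prp:singularkernel}.

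For (b) there is a gap in the ``low'' regime $2^j t \leq 1$. The bound \eqref{eq:rightlipschitz} is an $L^1$ modulus of continuity of $\breve m_j$; it does not control the pairing $\langle u_j, \phi\circ\delta_t\rangle$, which requires the regularity of $\phi$, not of $u_j$ (this is precisely where ``order $1$'' enters). Moreover, saying $\int u_j$ is merely ``controlled'' is insufficient: a uniformly bounded nonzero sequence summed over infinitely many $j$ diverges. What actually makes your low part work is the cancellation $\int_G u_j = \int_G \breve m_j = m_j(0) = 0$ (since $\supp m_j$ avoids the origin, and $G$ is amenable so Proposition~\ref{prp:Jl1} applies), combined with $|\phi(\delta_s(x)) - \phi(0)| \lesssim s|x|_\delta$ on the support of $\phi\circ\delta_s$ and the weighted estimate \eqref{eq:weightedl1}. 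The paper avoids this decomposition entirely: it first proves (b) for n.b.f.\ of order $N = \lfloor \dim G/2\rfloor + 1$ by writing $\langle \breve m,\phi\circ\delta_t\rangle = (\phi^* * (m\circ\epsilon_t)\breve{\,})(e)$ and using Sobolev embedding together with $\|m(L)\|_{2\to2}\leq\|m\|_\infty$; then it reduces from order $1$ to order $N$ via the coordinate trick of \cite{nagel_singular_2001}, showing that the distributions $k_j = x_j\cdot\breve m$ are in $L^1_\loc$ on all of $G$, so that the decomposition $\phi(x)=\phi(0)\eta(x)+\sum_j x_j\phi_j(x)$ with $\eta$ a fixed n.b.f.\ of order $N$ and $\phi_j$ merely bounded suffices.

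For (c) your telescoping idea is correct but more elaborate than needed. The paper simply fixes one smooth bump $\phi$ equal to $1$ on $\{|x|_\delta\leq 1/2\}$ and supported in $\{|x|_\delta\leq 1\}$, and observes that $\chr_{\{r_1<|x|_\delta<r_2\}}$ differs from $\phi\circ\delta_{2^{-(k_2+1)}}-\phi\circ\delta_{2^{-(k_1+1)}}$ (with $2^{k_i}\sim r_i$) only on the two boundary annuli $B_{k_1}$, $B_{k_2}$; one application of (b) to each dilated bump and of (a) to each boundary annulus finishes. No partition of unity or summation by parts is required, and one avoids your implicit assumption that $|\cdot|_\delta$ is smooth off the origin.
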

\begin{proof}
(a) From the proof of Proposition~\ref{prp:singularkernel}, we get that, if
\[B_k = \{ x \in G \tc 2^k \leq |x|_\delta \leq 2^{k+1}\},\]
then
\begin{equation}\label{eq:size}
\int_{B_k} |\breve m(x)| \,dx \leq C_{p,q,\beta} \|m\|_{M_\epsilon B_{p,q}^\beta},
\end{equation}
where the constant $C_{p,q,\beta}$ is independent of $k \in \Z$, and we are done.

(b) Set $N = \lfloor \frac{\dim G}{2} + 1 \rfloor$. The estimate for a n.b.f.\ $\phi$ of order $N$ follows simply from boundedness of $m$, by the use of Sobolev's embedding:
\[\begin{split}
|\langle \breve m, \phi \circ \delta_t \rangle | &= |\langle (m \circ \epsilon_t)\breve{}\, , \phi \rangle| \\
&= |(\phi^* * (m \circ \epsilon_t)\breve{}\,)(e)| \\
&\leq C \max_{|\alpha| \leq N} \| (X^\alpha)^\dstar (\phi^* * (m \circ \epsilon_t)\breve{}\,) \|_2 \\
&= C \max_{|\alpha| \leq N} \| (X^\alpha \phi)^* * (m \circ \epsilon_t)\breve{}\, \|_2 \\
&\leq C \max_{|\alpha| \leq N} \| (X^\alpha \phi)^* \|_2 \|(m \circ \epsilon_t)(L)\|_{2 \to 2} \leq C \|m\|_\infty.
\end{split}\]
Observe that, by Propositions~\ref{prp:interpolatedweightedestimates} and \ref{prp:besovembeddings}, $\|m\|_{M_\epsilon B_{p,q}^\beta}$ majorizes $\|m\|_\infty$.

In order to extend the result to n.b.f.\ functions of order $1$, we follow Remark 2.1.7 of \cite{nagel_singular_2001}. Fix exponential coordinates $x = (x_1,\dots,x_d)$ on $G$ such that
\[\delta_t(x_1,\dots,x_d) = (t^{\lambda_1} x_1,\dots, t^{\lambda_d} x_d),\]
and let $k_j \in \D'(G)$ bet the product of the distribution $\breve m$ by the smooth function $x \mapsto x_j$ (for $j=1,\dots,d$).
Let $\eta \in \D(G)$, and let $\eta_j$ be the product of $\eta$ by $x \mapsto x_j$. Then for every $f \in \D(G)$ and every $t > 0$ we have
\[\langle k_j, f \rangle = \langle k_j, (1 - \eta \circ \delta_{t}) f \rangle + t^{\lambda_j} \langle \breve m, f (\eta_j \circ \delta_{t}) \rangle.\]
Notice now that, for $t \geq 1$, the functions $(f \circ \delta_{t^{-1}}) \eta_j$ are n.b.f.\ of order $N$ (modulo a constant factor not depending on $t \geq 1$), thus, taking the limit for $t \to +\infty$, we get
\begin{equation}\label{eq:distributionlimit}
\langle k_j, f \rangle = \lim_{t \to +\infty} \langle k_j, (1 - \eta \circ \delta_{t}) f \rangle.
\end{equation}
By choosing an $\eta$ equal to $1$ in a neighborhood of the origin, we conclude that $k_j$ is determined by its restriction to $G \setminus \{e\}$.

Since the restriction of $\breve m$ off the origin is an $L^1_\loc$ function, this holds also for the $k_j$. Moreover, from \eqref{eq:size}, by a simple dyadic decomposition we get
\[\int_{0 < |x|_\delta \leq M} |x_j \, \breve m(x)| \,dx \leq C_{p,q,\beta} \, M^{\lambda_j} \|m\|_{M_\epsilon B_{p,q}^\beta} < \infty\]
for all $M > 0$. Therefore \eqref{eq:distributionlimit} gives
\[\langle k_j, f \rangle = \int_{G \setminus \{e\}} \overline{f(x)} \, x_j \, \breve m(x) \,dx\]
for all $f \in \D(G)$; in other words, $k_j$ is an $L^1_\loc$ function on the whole of $G$, which coincides with the pointwise product of $\breve m|_{G \setminus \{e\}}$ by $x \mapsto x_j$.

Fix a n.b.f.\ $\eta$ of order $N$ such that $\eta(0) = 1$. For every n.b.f.\ $\phi$ of order $1$, we may decompose
\[\phi(x) = \phi(0) \eta(x) + \sum_{j=1}^d x_j \phi_j(x)\]
for some $\phi_j \in \D(G)$ which are bounded by a fixed constant and supported in a fixed compact neighborhood of the origin. In particular
\[\langle \breve m, \phi \rangle = \phi(0) \langle \breve m, \eta \rangle + \sum_{j=1}^d \langle k_j , \phi_j \rangle,\]
thus from the previous estimates we deduce
\[|\langle \breve m, \phi \rangle| \leq C_{p,q,\beta} \|m\|_{M_\epsilon B_{p,q}^\beta},\]
but then, for an arbitrary $t > 0$,
\[|\langle \breve m, \phi \circ \delta_t \rangle| = |\langle (m \circ \epsilon_t)\breve{}\, , \phi \rangle| \leq C_{p,q,\beta} \|m \circ \epsilon_t\|_{M_\epsilon B_{p,q}^\beta} = C_{p,q,\beta} \|m\|_{M_\epsilon B_{p,q}^\beta}.\]

(c) For $0 < r_1 < r_2$, if we choose $k_1,k_2 \in \Z$ such that
\[2^{k_1} \leq r_1 < 2^{k_1+1}, \qquad 2^{k_2} \leq r_2 < 2^{k_2+1},\]
and if we fix a nonnegative $\phi \in \D(G)$ supported in $\{ x \tc |x|_\delta \leq 1\}$ and equal to $1$ on $\{ x \tc |x|_\delta \leq 1/2\}$, then
\[\left| \int_{r_1 < |x|_\delta < r_2} \breve m(x) \,dx \right| \leq C \sum_{j=1}^2 \left(\int_{B_{k_j}} |\breve m| \,d\mu + |\langle \breve m, \phi \circ \delta_{2^{-(k_j+1)}}\rangle|\right),\]
and the previously proved inequalities give the conclusion.
\end{proof}

If we require a greater regularity on the function $m$, we obtain a more precise characterization of the corresponding convolution kernel $\breve m$.

\begin{prp}\label{prp:kernelinfiniteorder}
Suppose that $m : \R^n \to \C$ satisfies a Mihlin-H\"ormander condition of infinite order, adapted to the dilations $\delta_t$. Then $\breve m|_{G \setminus \{e\}}$ is a smooth function and, for every $\delta_t$-homogeneous $D \in \Diff(G)$, we have
\begin{equation}\label{eq:pointwisecz}
|D \breve m(x)| \leq C_{m,D} |x|_\delta^{-(Q_\delta + w)},
\end{equation}
where $w$ is the homogeneity degree of $D$.
\end{prp}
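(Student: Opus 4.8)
The plan is to follow the same spectral-decomposition strategy used in the proof of Proposition~\ref{prp:singularkernel}, but now exploiting the fact that a Mihlin-H\"ormander condition of infinite order gives control of \emph{all} Besov norms $\|m\|_{M_\epsilon B_{p,q}^\beta}$, with any $\beta$ we wish. First I would fix a $\delta_t$-homogeneous $D\in\Diff(G)$ of homogeneity degree $w$, and apply Lemma~\ref{lem:spectraldecomposition} to obtain the decomposition $\breve m = \sum_{j\in\Z} 2^{-Q_\delta j}\,\breve m_j\circ\delta_{2^{-j}}$, where each $m_j$ is supported in the fixed compact annulus $K=\{\lambda : 2^{-1}\le|\lambda|_\epsilon\le 2\}$ and satisfies $\|m_j\|_{B_{p,q}^\beta}\le C_{p,q,\beta}\|m\|_{M_\epsilon B_{p,q}^\beta}$ for every $\beta\ge 0$. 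Applying $D$ term by term and using $\delta_t$-homogeneity of $D$ gives $D\breve m = \sum_{j\in\Z} 2^{-(Q_\delta+w)j}\,(D\breve m_j)\circ\delta_{2^{-j}}$, at least formally; the point is then to show this series converges locally uniformly on $G\setminus\{e\}$ together with all its derivatives, and to estimate it pointwise.

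The key estimate is a pointwise bound on $D\breve m_j$ that does not depend on $j$. Since the pointwise Mihlin-H\"ormander conditions of all orders are satisfied, hypothesis \HP{\infty}{s} holds for suitable $s$ (e.g.\ by Proposition~\ref{prp:HPsatisfaction}), and Corollary~\ref{cor:pointwiseestimates} (applied with the operator $D$, weight exponent $\alpha$ arbitrarily large, and $\beta>\alpha+s$) yields, for any $\alpha\ge 0$,
\[
\sup_{x\in G}\langle x\rangle_G^{\alpha}\,|D\breve m_j(x)| \le C_{D,\alpha,\beta}\,\|m_j\|_{B_{\infty,\infty}^\beta}\le C_{D,\alpha}\,\|m\|_{M_\epsilon B_{\infty,\infty}^\beta},
\]
using Propositions~\ref{prp:besovembeddings} and \ref{prp:pointwisebesovlmihlin} to bound the Besov norm of $m_j$ by a finite-order Mihlin-H\"ormander norm of $m$. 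Combining this with the comparison \eqref{eq:moduluscomparison} between $|\cdot|_\delta$ and $\langle\cdot\rangle_G$, one gets $|D\breve m_j(x)|\le C_{D,N}(1+|x|_\delta)^{-N}$ for every $N$, uniformly in $j$. Evaluating $(D\breve m_j)\circ\delta_{2^{-j}}$ at a point $x$ with $|x|_\delta=2^k r$, $r\in[1,2)$, we obtain $|(D\breve m_j)(\delta_{2^{-j}}(x))|\le C_{D,N}\min\{1,(2^{k-j})^{-N}\}$ (using $N$ large when $j<k$ and $N=0$, or rather a positive power, when $j\ge k$; for $j\ge k$ one instead uses an $L^2$-type or small-$|x|$ bound, exactly as in the proof of Proposition~\ref{prp:singularkernel}, noting $|D\breve m_j|$ is bounded on the ball $|x|_\delta\le 1$). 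Summing the geometric-type series in $j$ for fixed $k$ then produces a bound of the form $|D\breve m(x)|\le C_{m,D}\,2^{-(Q_\delta+w)k}\asymp C_{m,D}|x|_\delta^{-(Q_\delta+w)}$, which is \eqref{eq:pointwisecz}. Smoothness of $\breve m$ away from $e$ follows because the same argument, applied with $D$ replaced by arbitrary (not necessarily homogeneous) left-invariant operators, shows that the differentiated series converge locally uniformly on $G\setminus\{e\}$, so the limit is $C^\infty$ there.

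The main technical obstacle, I expect, is bookkeeping the two regimes $j<k$ and $j\ge k$ so that the series in $j$ is summable with a constant independent of $k$, and in particular handling the near-diagonal terms $j\approx k$ where neither the decay-at-infinity estimate nor the small-ball estimate is by itself strong enough — one needs a uniform sup bound on $D\breve m_j$ on a fixed annulus (again from Corollary~\ref{cor:pointwiseestimates}) to close the gap. A secondary point requiring care is that Corollary~\ref{cor:pointwiseestimates} and the weighted estimates are stated for $\breve m_j$ with $m_j\in B_{\infty,\infty}^\beta$ supported in a compact subset of $\R^n\setminus\{0\}$, so I must check that the fixed annulus $K$ qualifies and that the constants genuinely depend only on $K$, $D$, $\alpha$, $\beta$ and not on $j$; this is exactly the uniformity already used in Proposition~\ref{prp:singularkernel}. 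Once these points are settled, \eqref{eq:pointwisecz} and the smoothness assertion follow, and one also recovers as a byproduct that $\breve m$ is a Calder\'on--Zygmund kernel in the classical pointwise sense on the homogeneous group $G$.
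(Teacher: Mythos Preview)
Your approach is correct and essentially the same as the paper's: decompose via Lemma~\ref{lem:spectraldecomposition}, apply Corollary~\ref{cor:pointwiseestimates} to each $\breve m_j$ uniformly in $j$ (this is where the infinite-order hypothesis enters, allowing $\beta$ as large as needed), then split the sum at $j=k$ for $x\in B_k$ and sum two geometric series.

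Two small remarks. First, you do not need to invoke \HP{\infty}{s} or Proposition~\ref{prp:HPsatisfaction}: Corollary~\ref{cor:pointwiseestimates} already gives the pointwise bound $\sup_x\langle x\rangle_G^\alpha|D\breve m_j(x)|\le C_{D,\alpha,\beta}\|m_j\|_{B_{\infty,\infty}^\beta}$ directly, with no reference to those hypotheses. Second, your hesitation about the regime $j\ge k$ is unnecessary: the uniform sup bound on $D\breve m_j$ (the case $\alpha=0$ of the same corollary) is all you need there, and the factor $2^{-(Q_\delta+w)j}$ already sums to $C\,2^{-(Q_\delta+w)k}\asymp |x|_\delta^{-(Q_\delta+w)}$. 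No $L^2$ estimate is required. The paper streamlines both regimes into a single inequality
\[
|Du_j(x)|\le C_{D,\alpha,\beta}\,\|m\|_{M_\epsilon B_{\infty,\infty}^\beta}\,2^{-j(Q_\delta+w-\alpha)}(2^j+|x|_\delta)^{-\alpha},
\]
choosing $\alpha=Q_\delta+w$ for $j\ge k$ and $\alpha>Q_\delta+w$ for $j<k$, which is exactly your argument with cleaner bookkeeping.
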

\begin{proof}
As before, let $m_j$ ($j \in \Z$) be given by Lemma~\ref{lem:spectraldecomposition} and set
\[u_j = 2^{-Q_\delta j} \breve m_j \circ \delta_{2^{-j}}, \qquad B_k = \{ x \in G \tc 2^k \leq |x|_\delta \leq 2^{k+1}\}\]
for $j,k \in \Z$.

Let $D \in \Diff(G)$ be $\delta_t$-homogeneous of degree $w$. Let $\alpha \geq 0$, $\beta > a\alpha$; by Corollary~\ref{cor:pointwiseestimates} and \eqref{eq:moduluscomparison}, we have
\[\sup_{x \in G} (1+|x|_\delta)^\alpha |D \breve m_j(x)| \leq C_{D,\alpha,\beta} \|m\|_{M_\epsilon B_{\infty,\infty}^\beta},\]
which can be easily manipulated to obtain
\begin{equation}\label{eq:uestimate}
|D u_j(x)| \leq C_{D,\alpha,\beta} \|m\|_{M_\epsilon B_{\infty,\infty}^\beta} 2^{-j(Q_\delta + w - \alpha)} (2^{j} + |x|_\delta)^{-\alpha}.
\end{equation}
In particular, for $x \in B_k$, we have
\[|D u_j(x)| \leq C_{D,\alpha,\beta} \|m\|_{M_\epsilon B_{\infty,\infty}^\beta} |x|_\delta^{-(Q_\delta+w)} 2^{(k-j)(Q_\delta + w - \alpha)} (1 + 2^{j-k})^{-\alpha}.\]
For $j \geq k$, we choose $\alpha = Q_\delta + w$, so that
\[\sum_{j \geq k} |D u_j(x)| \leq C_{m,D} |x|_\delta^{-(Q_\delta + w)} \sum_{j \geq 0} 2^{-j(Q_\delta + w)},\]
whereas, for $j < k$, we choose $\alpha > Q_\delta + w$, so that
\[\sum_{j < k} |D u_j(x)| \leq C_{m,D} |x|_\delta^{-(Q_\delta + w)} \sum_{j > 0} 2^{j(Q_\delta + w-\alpha)}\]
(notice that $\beta$ can be taken arbitrarily large, since $m$ satisfies a Mihlin-H\"ormander condition of infinite order). In any case, the quantities on the right-hand side do not depend on $k$. Putting all together, we obtain that the series $\sum_{j \in \Z} D u_j$ converges uniformly on the compacta of $G \setminus \{e\}$, and that its sum, which coincides with $D \breve m$ by uniqueness of limits, satisfies \eqref{eq:pointwisecz}.
\end{proof}

Kernels satisfying this kind of estimates on a homogeneous Lie group are considered, e.g., in \S6.A of \cite{folland_hardy_1982}.

\section{Marcinkiewicz multipliers}\label{section:marcinkiewicz}

Let $G$ be a homogeneous Lie group, with automorphic dilations $\delta_t$ and homogeneous dimension $Q_\delta$. For $w \in L^1(G)$, we define the \emph{maximal operator} associated to $w$:
\[M_w \phi(x) = \sup_{t > 0} |\phi * (t^{-Q_\delta} w \circ \delta_{t^{-1}})(x)| = \sup_{t > 0} \left| \int_G \phi(x \,\delta_t(y)^{-1}) \, w(y) \,dy \right|.\]
We say that the function $w$ is \emph{M-admissible}\index{function!M-admissible} if the associated maximal operator $M_w$ is bounded on $L^p(G)$ for $1 < p < \infty$.

In terms of maximal operators, we formulate the following hypothesis about the homogeneous group $G$ and a chosen homogeneous weighted subcoercive system $L_1,\dots,L_n$ on it:
\begin{quote}
{\bf hypothesis \HPK{s}{d}:} for every $\beta > s$ there exist
\begin{itemize}
\item a Borel function $u_\beta$ on $G$ with $u_\beta = u_\beta^*$ and $u_\beta \geq c \langle \cdot \rangle_G^{-\theta}$ for some $c,\theta > 0$,
\item a positive regular Borel measure $\sigma_\beta$ on $\R^n$, which is locally $d$-bounded on $\R^n \setminus \{0\}$,
\item a non-negative real number $\gamma_\beta < 2\beta$,
\end{itemize}
such that
\begin{itemize}
\item the function $\langle \cdot \rangle_G^{-\gamma_\beta} u_\beta$ is M-admissible, and
\item for every compact $K \subseteq \R^n \setminus \{0\}$ and every $m \in \D(\R^n)$ with $\supp m \subseteq K$, we have
\[\|\breve m\|_{L^2(G, u_\beta^{-1}(x) \,dx)} \leq C_{K,\beta} \|m\|_{L^2(\sigma_\beta)}.\]
\end{itemize}
\end{quote}

The next proposition shows how this quite technical hypothesis can be verified by the techniques and results of the previous chapters.

\begin{prp}\label{prp:HPKsatisfaction}
Let $G$ be a homogeneous group, with degree of polynomial growth $Q_G$, and let $L_1,\dots,L_n$ be a homogeneous weighted subcoercive system on $G$.
\begin{itemize}
\item[(i)] The hypothesis \HPK{Q_G/2}{1} holds. More generally, if the Plancherel measure $\sigma$ is locally $d$-bounded on $\R^n \setminus \{0\}$, then \HPK{Q/2}{d} holds.
\item[(ii)] If $G$ is $h$-capacious, then \HPK{(Q_G-h)/2}{1} holds.
\end{itemize}
\end{prp}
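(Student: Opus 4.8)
The statement to prove is Proposition~\ref{prp:HPKsatisfaction}, which asserts that the technical hypothesis \HPK{s}{d} is satisfied in three cases, each of which is obtained by specializing the weighted $L^2$ estimates of Chapter~\ref{chapter:weighted} together with the maximal function theory just recalled. The overall strategy is, for each prescribed $\beta > s$, to make a concrete choice of the auxiliary data $u_\beta$, $\sigma_\beta$, $\gamma_\beta$ appearing in the hypothesis, verify the two required properties (M-admissibility of $\langle \cdot \rangle_G^{-\gamma_\beta} u_\beta$, and the weighted $L^2$-to-$L^2$ estimate for the kernel transform), and check the side conditions ($u_\beta = u_\beta^*$, $u_\beta \geq c \langle\cdot\rangle_G^{-\theta}$, $\sigma_\beta$ locally $d$-bounded, $\gamma_\beta < 2\beta$).

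\textbf{Part (i): the case \HPK{Q_G/2}{1} and its local-$d$-bounded refinement.} Fix $\beta > Q_G/2$. The natural choice is $u_\beta(x) = w_{2\alpha}(x)$, a smooth symmetric weight comparable to $\langle x \rangle_G^{2\alpha}$ as constructed in the proof of Corollary~\ref{cor:pointwiseestimates}, where $\alpha \geq 0$ is chosen so that $\beta > \alpha + Q_G/2$ (hence $\alpha$ can be taken strictly positive when $\beta > Q_G/2$) — actually for the M-admissibility we will want $u_\beta$ itself to be a weight for which the maximal operator $M_{\langle\cdot\rangle_G^{-\gamma_\beta} u_\beta}$ is bounded, so I would instead take $u_\beta \equiv 1$ (which is symmetric and $\geq \langle\cdot\rangle_G^{-\theta}$ trivially), $\gamma_\beta = 0$, and $\sigma_\beta = \sigma$ the Plancherel measure. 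Then the weighted estimate $\|\breve m\|_{L^2(G, u_\beta^{-1}\,dx)} = \|\breve m\|_{L^2(G)} = \|m\|_{L^2(\sigma)}$ is exactly the Plancherel identity of Theorem~\ref{thm:plancherel}, with $\sigma_\beta = \sigma$ locally $1$-bounded on $\R^n \setminus \{0\}$ by Corollary~\ref{cor:plancherelhomogeneous} (for the general statement, by hypothesis $\sigma$ is locally $d$-bounded). For M-admissibility of $\langle\cdot\rangle_G^{-\gamma_\beta} u_\beta = 1$: this is not integrable, so one actually needs a genuine choice — take $u_\beta(x) = w_{2\alpha}(x) \sim \langle x\rangle_G^{2\alpha}$ with $0 < \alpha$ small enough that $\beta > \alpha + Q_G/2$, and $\gamma_\beta = 2\alpha + \varepsilon$ with $\varepsilon > 0$ small (so $\gamma_\beta < 2\beta$ since $2\alpha < 2\beta$), so that $\langle\cdot\rangle_G^{-\gamma_\beta} u_\beta \sim \langle\cdot\rangle_G^{-\varepsilon}$, which is integrable on $G$ (polynomial growth!) and hence M-admissible by Proposition~\ref{prp:approximateidentity}-type reasoning via the Hardy–Littlewood maximal inequality on spaces of homogeneous type. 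The weighted estimate $\|\breve m\|_{L^2(G, \langle x\rangle_G^{-2\alpha}\,dx)} \leq \|\breve m\|_{L^2(G)} = \|m\|_{L^2(\sigma)}$ follows trivially since $\langle x\rangle_G^{-2\alpha} \leq 1$, so $\sigma_\beta = \sigma$ again works; but then the first condition of \HPK{}{} reads $\|\breve m\|_{L^2(G, u_\beta^{-1})}$ with $u_\beta^{-1} \sim \langle x\rangle_G^{-2\alpha}$, consistent. In the refined case we keep $\sigma_\beta = \sigma$ and use its local $d$-boundedness directly.

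\textbf{Part (ii): the $h$-capacious case.} Fix $\beta > (Q_G - h)/2$. Here the key input is the improved weighted estimate behind Theorem~\ref{thm:improvedl1estimates}, namely Proposition~\ref{prp:partialweight2} applied with the functions $w_j(\bar x) = |\omega_j(\bar x)|$, $\zeta_j(\tau) = |\tau(z_j)|$ coming from the definition of $h$-capacious, and with $\beta_j \in [0,1/2[$ chosen close to $1/2$. Following the proof of Theorem~\ref{thm:improvedl1estimates}, one sets $\tilde w_j(x) = 1 + |\hat v_j(x)|$ for part of a homogeneous basis (Lemma~\ref{lem:homogeneousdualbasis}), takes $u_\beta(x) = \langle x\rangle_G^{2\alpha_0} \prod_j \tilde w_j(x)^{2\beta_j}$ for a suitable small $\alpha_0 \geq 0$ (making it symmetric by replacing it with $(u_\beta u_\beta^*)^{1/2}$ or noting $\hat v_j$ restricted to $[\lie g,\lie g]^{\perp}$ gives symmetry), and takes $\sigma_\beta = \sigma_{\vec\beta}$, the push-forward of $\prod_j(1 + \zeta_j(\tau)^{-2\beta_j})\,d\sigma'(\lambda,\tau)$, which by Proposition~\ref{prp:homogeneouspushforward} is a sum of $\epsilon_t$-homogeneous measures and hence locally $1$-bounded on $\R^n \setminus \{0\}$. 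The weighted $L^2$ estimate $\|\breve m\|_{L^2(G, u_\beta^{-1}\,dx)} \leq C_K \|m\|_{L^2(\sigma_{\vec\beta})}$ is then exactly Corollary~\ref{cor:partialweight} (after interpolating in $\alpha$, as in Proposition~\ref{prp:partialweight2}). Finally $\gamma_\beta$ must be chosen so that $\langle\cdot\rangle_G^{-\gamma_\beta} u_\beta$ is integrable — the integrability computation at the end of the proof of Theorem~\ref{thm:improvedl1estimates} shows $\int_G \langle x\rangle_G^{-2\alpha}\prod_j \tilde w_j(x)^{-2\beta_j}\,dx < \infty$ for $\alpha > \alpha_{\vec\beta}$ with $\alpha_{\vec\beta} \to (Q_G - h)/2$ as $\beta_j \to 1/2$ — so one takes $\gamma_\beta$ slightly above $2\alpha_{\vec\beta} + 2\alpha_0$ arranged to be $< 2\beta$ (possible since $(Q_G-h)/2 < \beta$), and M-admissibility of the resulting integrable, positive weight follows from the maximal inequality on the homogeneous space $G$.

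\textbf{Main obstacle.} The only genuinely delicate point is the M-admissibility clause: one must verify that the maximal operator $M_w$ for $w = \langle\cdot\rangle_G^{-\gamma_\beta} u_\beta$ is bounded on $L^p(G)$, $1 < p < \infty$. When $w \in L^1(G)$ is radially decreasing with respect to a homogeneous norm this is classical (domination by the Hardy–Littlewood maximal function on the space of homogeneous type $(G, |\cdot|_\delta, \mu_G)$); but the weights arising in the $h$-capacious case are products of powers of coordinate functions $\hat v_j$, which are not radial, so one needs an anisotropic majorant — here the strong maximal operator $M_{\strong}$ of Theorem~\ref{thm:strongmaximal} (Christ) provides exactly the required bound, since $w$ can be dominated by a dilate-invariant combination adapted to the grading. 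Making this domination precise, and checking that $\gamma_\beta$ can simultaneously be kept below $2\beta$ while $w$ stays integrable/M-admissible, is where the bookkeeping of exponents must be done carefully; the rest is a direct assembly of Theorem~\ref{thm:plancherel}, Corollary~\ref{cor:partialweight}, Proposition~\ref{prp:homogeneouspushforward} and Corollary~\ref{cor:plancherelhomogeneous}.
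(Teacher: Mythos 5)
Your overall route is the paper's: Plancherel plus homogeneity of $\sigma$ for (i), Corollary~\ref{cor:partialweight} and the pushed-forward measure for (ii), and Christ's strong maximal theorem for the M-admissibility. But part (i) as written contains a concrete error. You take $u_\beta \sim \langle\cdot\rangle_G^{2\alpha}$ and $\gamma_\beta = 2\alpha+\varepsilon$ with $\varepsilon>0$ \emph{small}, and claim that $\langle\cdot\rangle_G^{-\varepsilon}$ is integrable ``by polynomial growth''. It is not: since $\mu_G(B_r)\sim r^{Q_G}$ for $r$ large, one has $\int_G\langle x\rangle_G^{-\varepsilon}\,dx<\infty$ only for $\varepsilon>Q_G$, and M-admissibility presupposes $w\in L^1(G)$. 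The bookkeeping goes the opposite way: take $u_\beta\equiv 1$ and $\gamma_\beta\in\left]Q_G,2\beta\right[$, a nonempty interval precisely because $\beta>Q_G/2$; the weighted $L^2$ estimate is then literally Theorem~\ref{thm:plancherel}, and $\sigma_\beta=\sigma$ is locally $1$-bounded (resp.\ $d$-bounded) off the origin by Corollary~\ref{cor:plancherelhomogeneous}. Relatedly, your fallback for M-admissibility in case (i) --- domination by the Hardy--Littlewood maximal function on the space of homogeneous type $(G,|\cdot|_\delta,\mu_G)$ --- does not apply: $\langle\cdot\rangle_G^{-\gamma_\beta}$ decays in the \emph{connected modulus}, not in the $\delta_t$-homogeneous norm that governs the dilations inside $M_w$, and the two are inequivalent unless $G$ is stratified (Proposition~\ref{prp:quasiequivalence}). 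One must already in case (i) dominate $M_w$ by the strong maximal operator of Theorem~\ref{thm:strongmaximal}, taken with respect to a basis of simultaneous eigenvectors of the automorphic dilations $\delta_t$ and of the non-automorphic ``growth'' dilations adapted to the descending central series, the condition $\gamma_\beta>Q_G$ being exactly what makes the dyadic sum converge.

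In part (ii) there is a sign inconsistency: you set $u_\beta=\langle\cdot\rangle_G^{2\alpha_0}\prod_j\tilde w_j^{2\beta_j}$ but then invoke Corollary~\ref{cor:partialweight}, whose left-hand weight $\prod_j\tilde w_j(x)^{2\beta_j}$ must be $u_\beta^{-1}$, not $u_\beta$; and your concluding integrability computation uses the \emph{decaying} product $\prod_j\tilde w_j^{-2\beta_j}$. The correct choice is $u_\beta=\prod_{j\le h}(1+|\hat v_j(\cdot)|)^{-\eta_j}$ with $\eta_j<1$, so that $u_\beta^{-1}$ grows (this is where the genuine improvement enters) and $\gamma_\beta$ only needs to supply the residual decay $\sum_{j\le h}(1-\eta_j)+(Q_G-h)$, which stays below $2\beta$ as $\eta_j\to1$. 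With your sign the $L^2$ estimate becomes trivial while the M-admissibility becomes impossible under the constraint $\gamma_\beta<2\beta$. These are repairable slips rather than a wrong strategy, but as stated both parts fail at the exponent count.
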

\begin{proof}
(i) For $\beta > Q_G/2$, we choose $u_\beta \equiv 1$, $\sigma_\beta = \sigma$. By Proposition~\ref{prp:plancherelhomogeneous}, $\sigma$ is $\delta_t$-homogeneous, so that it is locally $1$-bounded on $\R^n \setminus \{0\}$. Therefore, by Theorem~\ref{thm:plancherel}, in order to conclude, it is sufficient to show that, for $\gamma_\beta \in \left]Q_G,2\beta\right[$, the function $w_\beta = \langle \cdot \rangle_G^{-\gamma_\beta}$ is M-admissible.

Set
\[\lie{g}_1 = \lie{g}, \qquad \lie{g}_{k+1} = [\lie{g},\lie{g}_k].\]
The ideals $\lie{g}_k$ are characteristic, i.e., they are invariant by any automorphism of $\lie{g}$. In particular, they are $\delta_t$-invariant, so that we can find $\delta_t$-invariant subspaces $V_k$ such that $\lie{g}_k = V_k \oplus \lie{g}_{k+1}$. Let $\tilde\delta_t : \lie{g} \to \lie{g}$ be the linear map defined by
\[\tilde\delta_t(x) = t^k x \qquad\text{for $x \in V_k$.}\]
The $\tilde\delta_t$ are dilations of the vector space $\lie{g}$, which commute with the $\delta_t$, but in general the $\tilde\delta_t$ are not automorphism of the Lie algebra $\lie{g}$. However, by Proposition~\ref{prp:nilpotentgrowth}, if $|\cdot|_{\tilde\delta}$ is a homogeneous norm with respect to the dilations $\tilde\delta_t$, then
\[c^{-1} (1+|\cdot|_{\tilde\delta}) \leq \langle \cdot \rangle_G \leq c (1+|\cdot|_{\tilde\delta})\]
for some $c > 0$. We then have
\[\begin{split}
\left| \int_G \phi(x \, \delta_t(y)^{-1} ) \, w_\beta(y) \,dy \right| &= \left|\int_{|y|_{\tilde\delta} \leq 1} + \sum_{h \geq 1} \int_{2^{h-1} < |y|_{\tilde\delta} \leq 2^h}\right|\\
&\leq C_{\gamma_\beta} \sum_{h \geq 0} 2^{-h\gamma_\beta} \int_{|y|_{\tilde\delta} \leq 2^h} |\phi(x \,\delta_t(y)^{-1})| \,dy \\
&\leq C_{\gamma_\beta} \sum_{h \geq 0} 2^{-h(\gamma_\beta-Q_G)} \int_{|y|_{\tilde\delta} \leq 1} |\phi(x \,\delta_t(\tilde\delta_{2^h}(y))^{-1})| \,dy.
\end{split}\]
Therefore, since $\gamma_\beta > Q_G$, if $M_\strong$ is the strong maximal function \eqref{eq:strongmaximal} on $G$ associated to a basis of simultaneous eigenvectors of the $\delta_t$ and the $\tilde\delta_t$, we conclude that
\[M_{w_\beta} \phi \leq C_{\gamma_\beta} M_{\strong} \phi,\]
which gives the conclusion by Theorem~\ref{thm:strongmaximal}.

(ii) Let $\lie{z}$ be the center of $\lie{z}$, and $P : \lie{g} \to \lie{g}/\lie{z}$ be the canonical projection. Let $\omega_1,\dots,\omega_h \in (\lie{g}/\lie{z})^*$ and $z_1,\dots,z_h \in \lie{z}$ be given by the definition of $h$-capacious. By Lemma~\ref{lem:homogeneousdualbasis}, there exists a homogeneous basis $v_1,\dots,v_k$ of $\lie{g}$ compatible with the descending central series such that, if $\hat v_1,\dots,\hat v_k$ is the dual basis, then $\omega_j \circ P = \hat v_j$ for $j=1,\dots,h$. Moreover, if we set
\[\kappa_j = \max \{r : v_j \in \lie{g}_{[r]}\},\]
then $\kappa_j = 1$ for $j=1,\dots,h$ and
\begin{equation}\label{eq:nilpotentgrowth}
Q_G = \sum_{j=1}^k \kappa_j, \qquad \langle x \rangle_G \sim 1 + \sum_{j=1}^k |\hat v_j(x)|^{1/\kappa_j}
\end{equation}
by Proposition~\ref{prp:nilpotentgrowth}.

Let $\daleth_{\vec{t}}$ be the $k$-parameter family of dilations on $\lie{g}$ given by
\[\daleth_{\vec{t}}(v_j) = t_j v_j.\]
Clearly the $\daleth_{\vec{t}}$ are in general not automorphisms, but the automorphic dilations $\delta_t$ can be obtained as a particular case:
\[\delta_t = \daleth_{(t^{b_1},\dots,t^{b_k})},\]
where $b_j$ is the $\delta_t$-homogeneous degree of $v_j$.

If $\beta > (Q_G-h)/2$, then
\[2\beta > Q_G - h = \sum_{j=h+1}^k \kappa_j,\]
so that we can find $\eta_{\beta,1},\dots,\eta_{\beta,h} \in \left[0,1\right[$ and $\gamma_{\beta,1},\dots,\gamma_{\beta,k} > 0$ such that
\[2\beta > \gamma_\beta = \sum_{j=1}^k \gamma_{\beta,j}, \qquad \gamma_{\beta_j} > \begin{cases}
1 - \eta_{\beta,j} &\text{for $j=1,\dots,h$,}\\
\kappa_j &\text{for $j=h+1,\dots,k$.}
\end{cases}\]

Let now $\sigma'$ be the Plancherel measure on $\R^n \times \lie{z}^*$ associated to the system $L_1,\dots,L_n$ extended with the central derivatives, as in \S\ref{section:metivier}, and let $\sigma_\beta$ be the push-forward of the measure
\[\prod_{j=1}^h (1+|\tau(z_j)|^{-\eta_{\beta,j}}) \,d\sigma'(\lambda,\tau)\]
via the canonical projection on the first factor of $\R^n \times \lie{z}^*$. By Proposition~\ref{prp:homogeneouspushforward}, since $\eta_{\beta,1},\dots,\eta_{\beta,h} < 1$, the measure $\sigma_{\beta}$ is a regular Borel measure on $\R^n$; moreover, since the $z_j$ are $\delta_t$-homogeneous, $\sigma_{\beta}$ is the sum of $\epsilon_t$-homogeneous regular Borel measures of different degrees (where $\epsilon_t$ are the dilations associated to the system $L_1,\dots,L_n$), and consequently $\sigma_\beta$ is locally $1$-bounded on $\R^n \setminus \{0\}$. Finally, if we set
\[u_\beta(x) = \prod_{j=1}^h (1+|\omega_j(P(x))|)^{-\eta_j} = \prod_{j=1}^h (1+|\hat v_j(x)|)^{-\eta_j},\]
then $u_{\beta} = u_{\beta}^*$ and, by \eqref{eq:nilpotentgrowth}, $u_\beta^{-1}$ is dominated by some power of $\langle \cdot \rangle_G$; moreover, by Corollary~\ref{cor:partialweight}, for every compact $K \subseteq \R^n \setminus \{0\}$ and every $m \in \D(\R^n)$ with $\supp m \subseteq K$, we have
\[\|\breve m\|_{L^2(G, u_\beta^{-1}(x) \,dx)} \leq C_{K,\beta} \|m\|_{L^2(\sigma_\beta)}.\]

In order to conclude, we have to show that $w_\beta = \langle \cdot \rangle_G^{-\gamma_\beta} u_\beta$ is M-admissible. In fact, again by \eqref{eq:nilpotentgrowth},
\[w_\beta(x) \leq C_\beta \prod_{j=1}^h (1+|\hat v_j(x)|)^{-(\gamma_{\beta,j} + \eta_{\beta,j})} \prod_{j=h+1}^k (1+|\hat v_j(x)|)^{-\gamma_{\beta,j}/\kappa_j},\]
and the exponents $\gamma_{\beta,j} + \eta_{\beta,j}$, $\gamma_{\beta,j}/\kappa_j$ are all greater than $1$ by construction. The conclusion then follows as in part (i), but with a multi-variate decomposition, by Theorem~\ref{thm:strongmaximal} applied to the multi-parameter dilations $\daleth_{\vec t}$.
\end{proof}

Suppose now that, for $l=1,\dots,\ell$, $G_l$ is a homogeneous Lie group, with dilations $(\delta_{l,t})_{t > 0}$, and that $L_{l,1},\dots,L_{l,n_l}$ is a homogeneous weighted subcoercive system on $G_l$. Let
\[G^\times = G_1 \times \dots \times G_\ell,\]
and let $L^\times_{l,j}$ be the left-invariant differential operator on $G^\times$ along the $l$-th factor, corresponding to $L_{l,j}$ on $G_l$, for $l=1,\dots,\ell$, $j=1,\dots,n_l$. By the results of \S\ref{section:directproducts}, we know than that
\[L_{1,1}^\times,\dots,L_{1,n_1}^\times,\dots,L_{\ell,1}^\times,\dots,L_{\ell,n_\ell}^\times\]
is a homogeneous weighted subcoercive system on $G^\times$.

We then show how the hypotheses on the factor groups $G_l$ can be put together in order to obtain weighted estimates on the product group $G^\times$.

\begin{prp}\label{prp:productestimates}
Suppose that, for $l=1,\dots,\ell$, the homogeneous group $G_l$, with the system $L_{l,1},\dots,L_{l,n_l}$, satisfies \HPK{s_l}{d_l}. For $p,q \in [1,\infty]$, if
\[\vec{\beta} > \vec{s} + \frac{\vec{n}}{p} - \frac{\vec{d}}{\max\{2,p\}},\]
where $\vec{s} = (s_1,\dots,s_\ell)$, $\vec{d} = (d_1,\dots,d_\ell)$, then there exists
\[w_{\vec{\beta}} = w_{\vec{\beta},1} \otimes \dots \otimes w_{\vec{\beta},\ell} \in L^1(G^\times),\]
with $w_{\vec{\beta}} > 0$, $w_{\vec{\beta}}^* = w_{\vec{\beta}}$, such that $w_{\vec{\beta},l}$ is M-admissible on the group $G_l$ for $l=1,\dots,\ell$, and moreover, for every compact
\[K = \prod_{l=1}^\ell K_l \subseteq \prod_{l=1}^\ell (\R^{n_l} \setminus \{0\}),\]
and for every $m \in S^{\vec{\beta}}_{p,q}B(\R^{\vec{n}})$ with $\supp m \subseteq K$, we have
\[\|\breve m\|_{L^2(G^\times, w_{\vec{\beta}}^{-1}(x) \,dx)} \leq C_{K,\vec{\beta},p,q} \|m\|_{S^{\vec{\beta}}_{p,q}B(\R^{\vec{n}})}.\]
\end{prp}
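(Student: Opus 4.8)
The strategy is to reduce the multi-parameter weighted estimate on $G^\times$ to the single-factor hypotheses \HPK{s_l}{d_l} by tensoring, then interpolating. The key structural observation is that everything factorizes: by Proposition~\ref{prp:productfunctional}, if $m = m_1 \otimes \dots \otimes m_\ell$ then $\breve m = \breve m_1 \otimes \dots \otimes \breve m_\ell$, and by Proposition~\ref{prp:productplancherel} the Plancherel measure $\sigma^\times$ on $\R^{\vec n}$ is the product $\sigma_1 \times \dots \times \sigma_\ell$ of the factor Plancherel measures. So the natural candidate for the weight is $w_{\vec\beta} = w_{\vec\beta,1} \otimes \dots \otimes w_{\vec\beta,\ell}$, where each $w_{\vec\beta,l}$ is the M-admissible weight $\langle\cdot\rangle_{G_l}^{-\gamma_{\beta_l}} u_{\beta_l}$ produced by \HPK{s_l}{d_l} applied with exponent $\beta_l$. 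M-admissibility on each factor is given directly by the hypothesis; positivity and the $*$-symmetry $w_{\vec\beta}^* = w_{\vec\beta}$ follow from the corresponding properties of the factors (using that on a nilpotent group convolution involution acts componentwise on a tensor product, cf.\ \S\ref{subsection:directproducts}).

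First I would establish the case $p = q = 2$. Here $S^{\vec\beta}_{2,2}B(\R^{\vec n}) = S^{\vec\beta}H(\R^{\vec n}) \cong H^{\beta_1}(\R^{n_1}) \htimes \dots \htimes H^{\beta_\ell}(\R^{n_\ell})$ by \eqref{eq:sobolevtensorproduct}, and the weighted $L^2$ space $L^2(G^\times, w_{\vec\beta}^{-1})$ is the Hilbert tensor product $L^2(G_1, w_{\vec\beta,1}^{-1}) \htimes \dots \htimes L^2(G_\ell, w_{\vec\beta,\ell}^{-1})$. The linear map $m \mapsto \breve m$, restricted to functions supported in $K = \prod_l K_l$, is the tensor product of the single-factor maps $m_l \mapsto \breve m_l$; since each of these is bounded $H^{\beta_l'}(\R^{n_l}) \to L^2(G_l, w_{\beta_l}^{-1})$ for $\beta_l' > s_l + n_l/2 - d_l/2$ — this is exactly the content of \HPK{s_l}{d_l} combined with Corollary~\ref{cor:triebeltrace} (local $d_l$-boundedness of $\sigma_{\beta_l}$ turns the $L^2(\sigma_{\beta_l})$ bound into an $H^{\beta_l'}$ bound on $K_l$) together with Proposition~\ref{prp:besovproduct} to handle the cutoff to $K_l$ — the tensor product is bounded on the Hilbert tensor products with norm the product of the norms. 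Thus the claim holds with $q = 2$ and threshold $\beta_l > s_l + n_l/2 - d_l/2$.

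Next I would do $p = q = \infty$ (equivalently $p = \infty$ in the statement, where the term $d_l/\max\{2,p\}$ becomes $0$). Here I would follow the single-variable argument underlying \HPK{s_l}{d_l} directly in the product setting: the key is that a compactly-supported $S^{\vec\beta}_{\infty,\infty}B$ bound can be localized factor-by-factor using Proposition~\ref{prp:multibesovproduct}, and the $L^2(G^\times, w_{\vec\beta}^{-1})$ norm of $\breve m$ is controlled via the tensor-product structure of $\sigma^\times$ and the trivial inclusion $\|m\|_{L^2(\sigma^\times)} \lesssim \|m\|_\infty \cdot \sigma^\times(K)^{1/2}$, using that $w_{\vec\beta}^{-1}$ is dominated by a product of powers of the $\langle\cdot\rangle_{G_l}$. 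Once the two endpoint cases $(p,q) = (2,2)$ and $(\infty,\infty)$ are in hand, the intermediate cases $2 \le p \le \infty$ with $q = p$ follow by complex interpolation: the map $m \mapsto \breve m$ (composed with multiplication by a fixed cutoff $\eta_1 \otimes \dots \otimes \eta_\ell$ adapted to $K$) is a morphism of the relevant Banach couples, and by Proposition~\ref{prp:multibesovinterpolation}(iii) the complex interpolation space between $S^{\vec\beta_0}_{\infty,\infty}B$ and $S^{\vec\beta_1}_{2,2}B$ is $S^{\vec\beta_\theta}_{p,p}B$ for the appropriate $\theta$, while by Theorem~\ref{thm:steinweiss} the weighted $L^2$ spaces interpolate to weighted $L^p$ spaces — here I need the target to be $L^2(G^\times, w_{\vec\beta}^{-1})$ on both ends, so I would interpolate the weights $w_{\vec\beta_0}^{-1}, w_{\vec\beta_1}^{-1}$ appropriately so the geometric mean matches $w_{\vec\beta}^{-1}$, as in the proof of Theorem~\ref{thm:l2estimates}. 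The remaining cases $1 \le p < 2$ and general $q$ are recovered from the embeddings of Proposition~\ref{prp:multibesovembeddings} and Corollary~\ref{cor:multibesovcompactsupportinclusion}, exactly as in Theorem~\ref{thm:l2estimates}(ii).

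The main obstacle I anticipate is the bookkeeping in the interpolation step: I need the \emph{same} weight $w_{\vec\beta}$ to appear on the target side for all $(p,q)$, which forces me to choose the family of weights $(u_{\beta_l}, \gamma_{\beta_l})$ from \HPK{s_l}{d_l} coherently as $\beta_l$ varies near the threshold, and to verify that the geometric-mean weight of Stein--Weiss interpolation indeed reproduces $w_{\vec\beta}^{-1}$ rather than some nearby weight — this is where the freedom ``for every $\beta > s$'' in the hypothesis is essential, since I can shrink the thresholds slightly to absorb the discrepancy. A secondary technical point is ensuring that M-admissibility is preserved: since M-admissibility is a property of each \emph{factor} weight individually (not of $w_{\vec\beta}$ as a function on $G^\times$), and since the interpolation only changes the exponents, I need to confirm that the interpolated exponents still lie in the admissible range $\gamma > 1$ componentwise — again guaranteed by choosing $\vec\beta$ close enough to the threshold. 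Once these consistency choices are fixed, the rest is the routine tensor-product-and-interpolate machinery already used for Theorem~\ref{thm:l2estimates}.
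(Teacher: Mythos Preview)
There is a genuine gap in your reading of the hypothesis \HPK{s_l}{d_l}. The weighted $L^2$ estimate it provides is
\[\|\breve m_l\|_{L^2(G_l,\, u_{\beta_l}^{-1}(x_l)\,dx_l)} \leq C_{K_l,\beta_l}\,\|m_l\|_{L^2(\sigma_{\beta_l})},\]
with the weight $u_{\beta_l}^{-1}$, \emph{not} $w_{\vec\beta,l}^{-1} = \langle\cdot\rangle_{G_l}^{\gamma_{\beta_l}}\, u_{\beta_l}^{-1}$. Since $\gamma_{\beta_l} > 0$, the weight $w_{\vec\beta,l}^{-1}$ dominates $u_{\beta_l}^{-1}$, so the $w^{-1}$-weighted bound you claim is strictly stronger and does not follow from the hypothesis plus Corollary~\ref{cor:triebeltrace} alone. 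Your $p=\infty$ endpoint has the same defect: the Plancherel identity $\|\breve m\|_2 = \|m\|_{L^2(\sigma^\times)}$ cannot control $\|\breve m\|_{L^2(G^\times,\,w_{\vec\beta}^{-1})}$, because $w_{\vec\beta}^{-1}$ grows polynomially at infinity.

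The paper closes this gap by a second interpolation which your plan omits. After tensoring to obtain
\[\|\breve m\|_{L^2(G^\times,\, u_{\vec\alpha}^{-1})} \lesssim \|m\|_{S^{\vec\eta}_{p,p}B} \qquad\text{for } \vec\eta > \frac{\vec n}{p} - \frac{\vec d}{\max\{2,p\}},\]
one also needs a crude high-threshold estimate carrying the full polynomial weight, namely $\|\breve m\|_{L^2(G^\times,\,\langle x_1\rangle^{\gamma_1}\cdots\langle x_\ell\rangle^{\gamma_\ell}\, u_{\vec\alpha}^{-1})} \lesssim \|m\|_{S^{\vec\eta'}_{p,p}B}$ for large $\vec\eta'$, obtained factor-by-factor from Theorem~\ref{thm:l2estimates} together with the lower bound $u_{\beta_l} \geq c_l\langle\cdot\rangle_{G_l}^{-\theta_l}$. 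Interpolating these two estimates (Stein--Weiss on the weighted-$L^2$ side, Proposition~\ref{prp:multibesovinterpolation} on the Besov side) introduces the intermediate weight $\langle\cdot\rangle^{\vec\gamma_{\vec\alpha}} u_{\vec\alpha}^{-1} = w_{\vec\beta}^{-1}$ at the price of raising the threshold by $\vec\gamma_{\vec\alpha}/2 < \vec\alpha$; choosing $\vec\alpha$ close to $\vec s$ yields the stated condition. This is exactly where the summand $\vec s$ in the threshold $\vec s + \vec n/p - \vec d/\max\{2,p\}$ comes from---it is the cost of upgrading from the $u^{-1}$-weight (given by the hypothesis) to the $w^{-1}$-weight (needed for M-admissibility)---and your plan, as written, has no mechanism to pay it. The interpolation you describe between $p=2$ and $p=\infty$ is orthogonal to this and does not substitute for it.
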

\begin{proof}
Take $\vec{\alpha}$ such that
\[\vec{\alpha} > \vec{s}, \qquad \vec{\beta} > \vec{\alpha} + \frac{\vec{n}}{p} - \frac{\vec{d}}{\max\{2,p\}}.\]
For $l=1,\dots,\ell$, since $\alpha_l > s_l$, by \HPK{s_l}{d_l} we can find a function $u_{\vec{\alpha},l} = u_{\vec{\alpha},l}^* > 0$ on $G_l$ such that $u_{\vec{\alpha},l} \geq c_l \langle \cdot \rangle_{G_l}^{-\theta_l}$ for some $c_l,\theta_l > 0$, a positive regular Borel measure $\sigma_{\vec{\alpha},l}$ on $\R^{n_l}$ locally $d_l$-bounded on $\R^{n_l} \setminus \{0\}$, and a positive real number $\gamma_{\vec{\alpha},l} < 2\alpha_l$ such that the function $w_{\vec{\beta},l} = \langle \cdot \rangle_{G_l}^{-\gamma_{\vec{\alpha},l}} u_{\vec{\alpha},l}$ is M-admissible on $G_l$ and
\begin{equation}\label{eq:l2weightmeasure}
\|\breve m_l\|_{L^2(G_l,u_{\vec{\alpha},l}^{-1}(x_l) \,dx_l)} \leq C_{K_l,\alpha_l} \|m\|_{L^2(\sigma_{\vec{\alpha},l})}
\end{equation}
for every compact $K_l \subseteq \R^{n_l} \setminus \{0\}$ and every $m_l \in \D(\R^{n_l})$ with $\supp m_l \subseteq K_l$.

Set
\[u_{\vec{\alpha}} = u_{\vec{\alpha},1} \otimes \dots \otimes u_{\vec{\alpha},\ell}, \qquad \sigma_{\vec{\alpha}} = \sigma_{\vec{\alpha},1} \times \dots \times \sigma_{\vec{\alpha},\ell}.\]
By ``taking the Hilbert tensor product'' (see \S\ref{subsection:directproducts}) of the inequalities \eqref{eq:l2weightmeasure}, from Proposition~\ref{prp:productfunctional} we deduce that
\[\|\breve m\|_{L^2(G^\times,u_{\vec{\alpha}}^{-1}(x) \,dx)} \leq C_{K,\vec{\alpha}} \|m\|_{L^2(\sigma_{\vec{\alpha}})}\]
for every compact $K = \prod_{l=1}^\ell K_l \subseteq \prod_{l=1}^\ell (\R^{n_\ell} \setminus \{0\})$ and every $m \in \D(\R^{\vec{n}})$ with $\supp m \subseteq K$.

Notice now that, again by taking Hilbert tensor products, Corollary~\ref{cor:triebeltrace} and \eqref{eq:sobolevtensorproduct} give
\[\|m\|_{L^2(\sigma_{\vec{\alpha}})} \leq C_{K,\vec{\alpha},\vec{\gamma}} \|m\|_{S^{\vec{\eta}}_{2,2}B(\R^{\vec{n}})}\]
for $\vec{\eta} > (\vec{n} - \vec{d})/2$, whereas, by Proposition~\ref{prp:multibesovinclusions},
\[\|m\|_{L^2(\sigma_{\vec{\alpha}})} \leq C_{K,\vec{\alpha}} \|m\|_{S^{0}_{\infty,1}B(\R^{\vec{n}})},\]
so that, by embeddings and interpolation (Propositions~\ref{prp:multibesovembeddings} and \ref{prp:multibesovinterpolation}), we deduce that
\[\|m\|_{L^2(\sigma_{\vec{\alpha}})} \leq C_{K,\vec{\alpha},\vec{\gamma},p} \|m\|_{S^{\vec{\eta}}_{p,p}B(\R^{\vec{n}})}\]
for $\vec{\eta} > \vec{n}/p - \vec{d}/\max\{2,p\}$.

In particular, we have
\[\|\breve m\|_{L^2(G^\times,u_{\vec{\alpha}}^{-1}(x) \,dx)} \leq C_{K,\vec{\alpha},\vec{\eta},p} \|m\|_{S^{\vec{\eta}}_{p,p}B(\R^{\vec{n}})}\]
for $\vec{\eta} > \vec{n}/p - \vec{d}/\max\{2,p\}$. On the other hand, for $l=1,\dots,\ell$, by Theorem~\ref{thm:l2estimates}, we have
\[\|\breve m_l\|_{L^2(G_l,\langle x_l \rangle_{G_l}^{\gamma_l} u_{\vec{\alpha},l}^{-1}(x_l) \,dx_l)} \leq C_{K_l,\vec{\alpha},\gamma_l,\eta_l} \|m_l\|_{B^{\eta_l}_{2,2}(\R^{n_l})}\]
for $\eta_l > \gamma_l/2 + \theta_l/2 + n_l/2$, so that, by Hilbert tensor products and embeddings,
\[\|\breve m\|_{L^2(G^\times,\langle x_1 \rangle_{G_1}^{\gamma_1} \cdots \langle x_{\ell} \rangle_{G_\ell}^{\gamma_\ell} u_{\vec{\alpha}}^{-1}(x) \,dx)} \leq C_{K,\vec{\alpha},\vec{\gamma},\vec{\eta},p} \|m\|_{S^{\vec{\eta}}_{p,p}B(\R^{\vec{n}})}\]
for $\vec{\eta} > \vec{\gamma}/2 + \vec{\theta}/2 + \vec{n}$. By interpolation, we obtain that
\[\|\breve m\|_{L^2(G^\times,\langle x_1 \rangle_{G_1}^{\gamma_1} \cdots \langle x_{\ell} \rangle_{G_\ell}^{\gamma_\ell} u_{\vec{\alpha}}^{-1}(x) \,dx)} \leq C_{K,\vec{\alpha},\vec{\gamma},\vec{\eta},p} \|m\|_{S^{\vec{\eta}}_{p,p}B(\R^{\vec{n}})}\]
for $\vec{\eta} > \vec{\gamma}/2 + \vec{n}/p - \vec{d}/\max\{2,p\}$.

In particular, if we take $\vec{\gamma} = (\gamma_{\vec{\alpha},1},\dots,\gamma_{\vec{\alpha},\ell})$, $\vec{\eta} = \vec{\alpha} + \vec{n}/p - \vec{d}/\max\{2,p\}$ and set $w_{\vec{\beta}} = w_{\vec{\beta},1} \otimes \dots \otimes w_{\vec{\beta},\ell}$, we get
\[\|\breve m\|_{L^2(G^\times,w_{\vec{\beta}}^{-1}(x) \,dx)} \leq C_{K,\vec{\beta},p} \|m\|_{S^{\vec{\eta}}_{p,p}B(\R^{\vec{n}})},\]
for every compact $K = \prod_{l=1}^\ell K_l \subseteq \prod_{l=1}^\ell (\R^{n_\ell} \setminus \{0\})$ and every $m \in \D(\R^{\vec{n}})$ with $\supp m \subseteq K$. The conclusion then follows by approximations and embeddings (Propositions~\ref{prp:multibesovapproximation} and \ref{prp:multibesovembeddings}).
\end{proof}

Notice that, in the particular case $\ell = 1$, the previous proposition, together with H\"older's inequality and Proposition~\ref{prp:interpolatedweightedestimates}, gives the following

\begin{cor}\label{cor:hypothesescomparison}
If a homogeneous weighted subcoercive system $L_1,\dots,L_n$ on a homogeneous Lie group $G$ satisfies the hypothesis \HPK{s}{d}, then, for $p \in [1,\infty]$, it satisfies also \HP{p}{s + n/p - d/\max\{2,p\}}. In particular, $s \geq d/2$.
\end{cor}

The weighted estimate on $G^\times$ given by Proposition~\ref{prp:productestimates} will be the starting point for the following multi-variate multiplier results. In fact, we are going to consider a setting which is more general than the product group $G^\times$.

Let $G$ be a connected Lie group, endowed with Lie group homomorphisms
\[\upsilon_l : G_l \to G \qquad\text{for $l=1,\dots,\ell$.}\]
Then, for $l=1,\dots,\ell$, the operators $L_{l,1},\dots,L_{l,n_l}$ correspond (via the derivative $\upsilon_l'$ of the homomorphism) to operators $L^\flat_{l,1}, \dots L^\flat_{l,n_l} \in \Diff(G)$, which are essentially self-adjoint by Corollary~\ref{cor:commutativealgebra}. Since we want to give a meaning to joint functions of these operators on $G$, we suppose in the following that
\[L^\flat_{1,1}, \dots L^\flat_{1,n_1},\dots,L^\flat_{\ell,1}, \dots, L^\flat_{\ell,n_\ell}\]
commute strongly, i.e., they admit a joint spectral resolution on $L^2(G)$.

In order to obtain multiplier results on $G$, we would like to ``transfer'' to $G$ the estimates obtained on the product group $G^\times$. However, we cannot apply directly the classical transference results, since the map
\[\upsilon^\times : G^\times \ni (x_1,\dots,x_n) \mapsto \upsilon_1(x_1) \cdots \upsilon_\ell(x_\ell) \in G\]
in general is not a group homomorphism --- because the elements of $\upsilon_l(G_l)$ are not supposed to commute in $G$ with the elements of $\upsilon_{l'}(G_{l'})$ for $l \neq l'$ --- and consequently it does not yield an action of $G^\times$ on $L^p(G)$ by translations. Nevertheless, under the sole assumption of (strong) commutativity of the differential operators $L^\flat_{l,j}$ on $G$, we are able to express the operator $m(L^\flat)$ on $G$ by a sort of convolution with the kernel $\Kern_{L^\times} m$ of the operator $m(L^\times)$ on $G^\times$.

\begin{prp}\label{prp:convolutionproduct}
For every $m \in \D(\R^{\vec{n}})$, we have
\[m(L^\flat) \phi(x) = \int_{G^\times} \phi(x \,\upsilon^\times(y)^{-1} ) \, \Kern_{L^\times} m(y) \,dy\]
for all $\phi \in L^2 \cap C_0(G)$.
\end{prp}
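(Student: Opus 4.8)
The identity to prove relates the operator $m(L^\flat)$ on $G$ to the kernel of $m(L^\times)$ on the product group $G^\times$, via the (non-homomorphic) map $\upsilon^\times$. The natural strategy is to reduce everything to the joint functional calculus and the abstract representation-theoretic machinery of Chapter~\ref{chapter:plancherel}. First I would observe that the homomorphisms $\upsilon_l : G_l \to G$ induce, by composition with the right regular representation of $G$ on $L^2(G)$, representations $\pi_l$ of $G_l$ on $L^2(G)$ (as in \S\ref{subsection:regularrepresentations}), given by $\pi_l(g) = \RA_{\upsilon_l(g)}$; these are unitary since $\mu_G$ is right-invariant. The crucial point is that the hypothesis that $L^\flat_{1,1},\dots,L^\flat_{\ell,n_\ell}$ commute strongly means precisely that the operators $d\pi_l(L_{l,j})$ (which equal $\overline{\upsilon_l'(L_{l,j})} = \overline{L^\flat_{l,j}}$, up to closure, by the discussion after Proposition on smooth vectors) admit a joint spectral resolution $E^\flat$ on $\R^{\vec n}$, so $m(L^\flat) = \int_{\R^{\vec n}} m \, dE^\flat$ is well-defined and bounded.

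\textbf{Key steps.} The heart of the argument is to compute $m(L^\flat)\phi$ by disintegrating the joint functional calculus over the factors. Since $m \in \D(\R^{\vec n})$ and $\Sz(\R^{n_1}) \ptimes \cdots \ptimes \Sz(\R^{n_\ell}) = \Sz(\R^{\vec n})$ (hence density of the algebraic tensor product in $\D(\R^{\vec n})$ in a suitable topology), it suffices by linearity and continuity to treat the case $m = m_1 \otimes \dots \otimes m_\ell$ with $m_l \in \D(\R^{n_l})$. For such a product multiplier, the strong commutativity gives
\[
m(L^\flat) = m_1(L^\flat_1) \cdots m_\ell(L^\flat_\ell),
\]
where $L^\flat_l = (L^\flat_{l,1},\dots,L^\flat_{l,n_l})$, by the multiplicativity of the joint spectral integral over commuting systems (cf.\ the argument in Proposition~\ref{prp:productfunctional}). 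Now each factor $m_l(L^\flat_l)$ is, by Proposition~\ref{prp:Jl1} applied to the representation $\pi_l$ of $G_l$ (together with Proposition~\ref{prp:gelfandkernel}-type reasoning, or more directly the observation that $\pi_l(f) = f$ acts as right-convolution composed with $\upsilon_l$), equal to $\pi_l(\Kern_{L_l} m_l)$, i.e., to the operator $\phi \mapsto \int_{G_l} \phi(x\,\upsilon_l(y_l)^{-1})\,\Kern_{L_l} m_l(y_l)\,dy_l$; here one uses that $\Kern_{L_l} m_l \in \Sz(G_l) \subseteq L^1(G_l)$ when $G_l$ has polynomial growth (which homogeneous groups do), so that $\pi_l(\Kern_{L_l} m_l)$ is the integrated operator \eqref{eq:representationoffunctions}. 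Composing the $\ell$ factors and using $\pi_l(\Kern_{L_l} m_l) \pi_{l'}(\Kern_{L_{l'}} m_{l'}) = \int\int \phi(x\,\upsilon_{l'}(y_{l'})^{-1}\upsilon_l(y_l)^{-1}) \cdots$, which by a change of order (legitimate since all these operators commute as operators on $L^2(G)$, being functions of the strongly commuting $L^\flat$) telescopes into a single integral over $G^\times$ against $\Kern_{L_1} m_1 \otimes \dots \otimes \Kern_{L_\ell} m_\ell = \Kern_{L^\times} m$ (Proposition~\ref{prp:productfunctional}), one obtains
\[
m(L^\flat)\phi(x) = \int_{G^\times} \phi(x\,\upsilon^\times(y)^{-1})\,\Kern_{L^\times} m(y)\,dy
\]
for $\phi$ in a dense subspace, and hence for all $\phi \in L^2 \cap C_0(G)$ by continuity (the right-hand side makes sense as an absolutely convergent integral since $\Kern_{L^\times} m \in \Sz(G^\times) \subseteq L^1(G^\times)$ and $\phi$ is bounded).

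\textbf{Main obstacle.} The delicate point is the order-swapping in the composition of the integrated operators $\pi_l(\Kern_{L_l} m_l)$: because the $\upsilon_l(G_l)$ need not commute pointwise in $G$, the map $\upsilon^\times$ is not a homomorphism, so one cannot simply invoke that $\pi^\times := \pi_1 \ast \cdots$ is a representation of $G^\times$. What saves the argument is that the \emph{operators} $m_l(L^\flat_l)$, being Borel functions of the strongly commuting family $\{L^\flat_{l,j}\}$, commute among themselves on $L^2(G)$; this commutativity must be leveraged carefully to justify rewriting the iterated convolution-type integral $\int_{G_\ell}\cdots\int_{G_1}$ as a single integral over $G^\times$ with the product kernel — essentially Fubini plus the identification of the composed ``twisted convolution'' with ordinary integration against the tensor-product kernel. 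I expect the bookkeeping here (tracking the order of the $\upsilon_l(y_l)^{-1}$ factors and confirming it matches the definition $\upsilon^\times(y) = \upsilon_1(y_1)\cdots\upsilon_\ell(y_\ell)$) to be the only real subtlety; everything else is a routine application of Corollary~\ref{cor:commutativealgebra}, Proposition~\ref{prp:Jl1}, Proposition~\ref{prp:productfunctional}, and density.
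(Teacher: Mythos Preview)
Your proposal is correct and follows essentially the same route as the paper: decompose $m$ via the projective tensor product structure of $\D(\R^{\vec n})$, apply Proposition~\ref{prp:Jl1} to each factor to identify $m_l(L^\flat_l)$ with $\pi_l(\Kern_{L_l} m_l)$, and iterate the resulting integrals using Proposition~\ref{prp:productfunctional} for the kernel on $G^\times$.

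One clarification: the ``main obstacle'' you anticipate is not actually an obstacle. Once strong commutativity gives $m(L^\flat) = m_\ell(L^\flat_\ell)\cdots m_1(L^\flat_1)$ (this is where commutativity is used), the iterated application of the $\pi_l(\Kern_{L_l} m_l)$ to $\phi$ --- applying $m_1(L^\flat_1)$ first, then $m_2(L^\flat_2)$, and so on --- produces the integrand $\phi(x\,\upsilon_\ell(y_\ell)^{-1}\cdots\upsilon_1(y_1)^{-1})$ directly, and this equals $\phi(x\,(\upsilon_1(y_1)\cdots\upsilon_\ell(y_\ell))^{-1}) = \phi(x\,\upsilon^\times(y)^{-1})$ by the group inversion law. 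No further order-swapping or appeal to operator commutativity is needed at that stage; the bookkeeping is automatic. The paper carries this out exactly, summing over the terms of the tensor decomposition and passing to the limit in $L^2(G)$ on the left and pointwise on the right (the latter justified since $\phi$ is bounded and $\Kern_{L^\times} m \in L^1(G^\times)$).
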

\begin{proof}
If $m \in \D(\R^{\vec{n}}) \cong \D(\R^{n_1}) \ptimes \cdots \ptimes \D(\R^{n_\ell})$, then we can decompose
\[m = \sum_{k \in \N} g_{k,1} \otimes \cdots \otimes g_{k,\ell},\]
where $g_{k,l} \in \D(\R^{n_l})$ for $k \in \N$, $l = 1,\dots,\ell$, and the convergence is in $\D(\R^{\vec{n}})$. In particular, by applying Corollary~\ref{cor:polyl1estimates} and Proposition~\ref{prp:productfunctional} to the group $G^\times$, we obtain that
\[\Kern_{L^\times} m = \sum_{k \in \N} \Kern_{L_1} g_{k,1} \otimes \cdots \otimes \Kern_{L_\ell} g_{k,\ell}\]
in $L^1(G^\times)$.

On the other hand, for all $\phi \in L^2 \cap C_0(G)$, by Proposition~\ref{prp:Jl1} we have
\[g_{k,l}(L^\flat_l) \phi(x) = \int_{G_l} \phi(x \,\upsilon_l(y_l)^{-1}) 	\,\Kern_{L_l} g_{k,l}(y_l) \,dy_l,\]
and in particular (being $\Kern_{L_l} g_{k,l} \in L^1(G_l)$) also $g_{k,l}(L^\flat_l)\phi \in L^2 \cap C_0(G)$, so that, by iterating,
\begin{multline*}
(g_{k,1} \otimes \dots \otimes g_{k,\ell})(L^\flat) \phi(x) = g_{k,\ell}(L^\flat_\ell) \cdots g_{k,1}(L^\flat_1) \phi(x) \\
= \int_{G^\times} \phi(x \,\upsilon_\ell(y_\ell)^{-1} \cdots \upsilon_1(y_1)^{-1}) 	\,\Kern_{L_1} g_{k,1}(y_1) \cdots \Kern_{L_\ell} g_{k,\ell}(y_\ell) \,dy.
\end{multline*}
Summing over $k \in \N$, the left-hand side converges in $L^2(G)$ to $m(L^\flat)\phi$, whereas (since $y \mapsto \phi(x \,\upsilon^\times(y)^{-1})$ is bounded) the right-hand side converges pointwise to
\[\int_{G^\times} \phi(x \,\upsilon^\times(y)^{-1}) \, \Kern_{L^\times} m(y) \,dy\]
and we get the conclusion.
\end{proof}

\begin{cor}\label{cor:convolutionproduct}
Under the hypotheses of Proposition~\ref{prp:productestimates}, if $p,q \in [1,\infty]$ and
\[\vec{\beta} > \vec{s} + \frac{\vec{n}}{p} - \frac{\vec{d}}{\max\{2,p\}},\]
then the conclusion of Proposition~\ref{prp:convolutionproduct} holds also for every $m \in S^{\vec{\beta}}_{p,q}B(\R^{\vec{n}})$ with compact support $\supp m \subseteq \prod_{l=1}^\ell (\R^{n_l} \setminus \{0\})$.
\end{cor}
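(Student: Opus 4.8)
\textbf{Proof plan for Corollary~\ref{cor:convolutionproduct}.}
The statement we want is that, under the running hypotheses, for every $m \in S^{\vec{\beta}}_{p,q}B(\R^{\vec{n}})$ with compact support contained in $\prod_{l=1}^\ell (\R^{n_l} \setminus \{0\})$ and every $\phi \in L^2 \cap C_0(G)$,
\[m(L^\flat) \phi(x) = \int_{G^\times} \phi(x \,\upsilon^\times(y)^{-1}) \, \Kern_{L^\times} m(y) \,dy.\]
Both sides of this identity are already known to make sense and to agree when $m \in \D(\R^{\vec n})$ with support in $\prod_l(\R^{n_l}\setminus\{0\})$, by Proposition~\ref{prp:convolutionproduct}. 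So the plan is a density/approximation argument: I would fix such an $m$, pick a compact $K = \prod_{l=1}^\ell K_l \subseteq \prod_{l=1}^\ell(\R^{n_l}\setminus\{0\})$ with $\supp m \subseteq \mathring{K}$ and a slightly larger compact $K'$ with $K \subseteq \mathring{K'} \subseteq K' \subseteq \prod_l(\R^{n_l}\setminus\{0\})$, choose an exponent $\vec\beta' \in \R^\ell$ with $\vec s + \vec n/p - \vec d/\max\{2,p\} < \vec\beta' < \vec\beta$ componentwise, and invoke Proposition~\ref{prp:multibesovapproximation}: since $S^{\vec\beta}_{p,q}B \subseteq S^{\vec{\beta}'}_{p,q}b(\R^{\vec n})$ (strictly smaller order, cf.\ Proposition~\ref{prp:multibesovembeddings}), convolving $m$ with an approximate identity $u_h \in \Sz(\R^{\vec n})$ and then multiplying by a fixed cutoff $\eta \in \D(\R^{\vec n})$ with $\eta|_{K}\equiv 1$, $\supp\eta \subseteq \mathring{K'}$, produces smooth compactly supported functions $m_h = (m*u_h)\eta$ with $\supp m_h \subseteq K'$ and $m_h \to m$ in $S^{\vec{\beta}'}_{p,q}B(\R^{\vec n})$. (Here I use Proposition~\ref{prp:multibesovproduct} to see that multiplication by $\eta$ is continuous on $S^{\vec{\beta}'}_{p,q}B$, and the fact that $m\eta = m$.)

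Next I would pass to the limit $h \to \infty$ in Proposition~\ref{prp:convolutionproduct} applied to each $m_h$, i.e.\ in
\[m_h(L^\flat)\phi(x) = \int_{G^\times} \phi(x\,\upsilon^\times(y)^{-1})\, \Kern_{L^\times} m_h(y)\,dy.\]
For the left-hand side: $m_h \to m$ uniformly on $\R^{\vec n}$ (the embedding $S^{\vec{\beta}'}_{p,q}B(\R^{\vec n}) \hookrightarrow C_b(\R^{\vec n})$ holds because $\beta'_l > n_l/p$ for each $l$; this forces $\vec\beta' > \vec n/p$, which is exactly the content of Corollary~\ref{cor:hypothesescomparison}, ensuring $s_l \geq d_l/2$ so that $\beta'_l > s_l + n_l/p - d_l/\max\{2,p\} \geq n_l/p$ whenever $p\le 2$; for $p>2$ one uses $d_l\le n_l$ and again $\beta_l' > n_l/p - d_l/p + s_l \ge n_l/p - d_l/p + d_l/2 \ge$... in any case one checks $\beta'_l > n_l/p$ directly from the hypothesis together with $s_l\ge d_l/2$). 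Uniform convergence of multipliers gives convergence of the operators $m_h(L^\flat) \to m(L^\flat)$ in operator norm on $L^2(G)$ by the spectral theorem, hence $m_h(L^\flat)\phi \to m(L^\flat)\phi$ in $L^2(G)$, and thus pointwise a.e.\ along a subsequence. For the right-hand side: by Proposition~\ref{prp:productestimates} applied on $G^\times$ with the exponent $\vec{\beta}'$, the map $g \mapsto \Kern_{L^\times} g$ is bounded from $\{g \in S^{\vec{\beta}'}_{p,q}B : \supp g \subseteq K'\}$ into $L^2(G^\times, w_{\vec{\beta}'}^{-1}(y)\,dy)$; since $w_{\vec{\beta}'}$ is an integrable weight, H\"older's inequality shows $\Kern_{L^\times}$ is in fact bounded from this subspace into $L^1(G^\times)$. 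Therefore $\Kern_{L^\times} m_h \to \Kern_{L^\times} m$ in $L^1(G^\times)$, and since $y \mapsto \phi(x\,\upsilon^\times(y)^{-1})$ is a fixed bounded continuous function (uniformly in $x$), the integral on the right converges, pointwise in $x$, to $\int_{G^\times}\phi(x\,\upsilon^\times(y)^{-1})\Kern_{L^\times}m(y)\,dy$. Matching the two limits gives the identity for a.e.\ $x$; but both sides are continuous in $x$ — the left by Proposition~\ref{prp:productestimates}/Corollary~\ref{cor:hypothesescomparison} (the kernel $\Kern_{L^\flat} m$ is integrable, so $m(L^\flat)\phi = \phi * \Kern_{L^\flat}m$ is continuous, or directly because $m(L^\flat)$ maps $C_0\cap L^2$ to $C_0$), the right by dominated convergence in $x$ — so the identity holds everywhere.

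The main obstacle I anticipate is not conceptual but bookkeeping: making sure that the intermediate smoothing $m_h = (m*u_h)\eta$ really has support inside the product region $\prod_l(\R^{n_l}\setminus\{0\})$ (so that Propositions~\ref{prp:convolutionproduct} and~\ref{prp:productestimates} both apply to it) while still converging to $m$ in the right space — this is why the cutoff $\eta$ must be reinserted after convolution, since $m*u_h$ need not be compactly supported. A secondary point requiring care is the passage from the strong $S^{\vec{\beta}'}_{p,q}B$-convergence $m_h \to m$ to the two modes of convergence actually used (uniform convergence of multipliers for the operator side, $L^1$-convergence of kernels for the integral side); both follow from the embedding theorems of Chapter~\ref{chapter:conditions} (Propositions~\ref{prp:multibesovembeddings}, \ref{prp:multibesovinclusions}) combined with Proposition~\ref{prp:productestimates}, but one should verify that the strict inequality $\vec\beta > \vec s + \vec n/p - \vec d/\max\{2,p\}$ leaves enough room to choose $\vec{\beta}'$ strictly between, which it does. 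Everything else is a routine dominated-convergence argument.
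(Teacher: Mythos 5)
Your proof is correct and follows essentially the same route as the paper's: choose $\vec{\beta}'$ strictly between the threshold and $\vec{\beta}$, approximate $m$ in $S^{\vec{\beta}'}_{p,q}B$ by test functions supported in a fixed compact subset of $\prod_l(\R^{n_l}\setminus\{0\})$ via Proposition~\ref{prp:multibesovapproximation}, and pass to the limit using uniform convergence of the multipliers (from $\beta'_l>n_l/p$, via Corollary~\ref{cor:hypothesescomparison}) on the operator side and $L^1(G^\times)$-convergence of the kernels (from Proposition~\ref{prp:productestimates} and H\"older) on the integral side. The extra bookkeeping you supply (the cutoff $\eta$ after mollification, and upgrading the a.e.\ identity to an everywhere identity by continuity) is consistent with what the paper leaves implicit.
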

\begin{proof}
Choose $\beta'$ such that
\[\vec{\beta} > \vec{\beta}' > \vec{s} + \frac{\vec{n}}{p} - \frac{\vec{d}}{\max\{2,p\}}.\]
By Proposition~\ref{prp:multibesovapproximation}, we can find a compact $K = \prod_{l=1}^\ell K_l \subseteq \prod_{l=1}^\ell (\R^{n_l} \setminus \{0\})$ and a sequence $m_k \in \D(\R^{\vec{n}})$ with $\supp m_k \subseteq K$ such that $m_k \to m$ in $S^{\vec{\beta}'}_{p,q}(\R^{\vec{n}})$. By Proposition~\ref{prp:productestimates} and H\"older's inequality, we then have $\Kern_{L^\times} m_k \to \Kern_{L^\times} m$ in $L^1(G^\times)$; moreover, by Corollary~\ref{cor:hypothesescomparison}, $\beta'_l > n_l/p$ for $l=1,\dots,\ell$, so that, by Propositions~\ref{prp:multibesovembeddings} and \ref{prp:multibesovinclusions}, $m_k \to m$ uniformly. Therefore, the conclusion follows by applying Proposition~\ref{prp:convolutionproduct} to the functions $m_k$ and passing to the limit.
\end{proof}

For $l=1,\dots,\ell$, let $\epsilon_{l,t}$ be the dilations on $\R^{n_l}$ associated to the weighted subcoercive system $L_{l,1},\dots,L_{l,n_l}$, and fix a $\epsilon_l$-homogeneous norm $|\cdot|_{\epsilon_l}$ on $\R^{n_l}$, smooth off the origin. Choose a non-negative $\xi \in \D(\R)$ with $\supp \xi \subseteq [1/2,2]$ and such that, if $\xi_k(t) = \xi(2^{-k} t)$, then
\begin{equation}\label{eq:sumofsquares}
\sum_{k \in \Z} \xi_k^2(t) = 1 \qquad\text{for $t > 0$,}
\end{equation}
and set, for $l=1,\dots,\ell$ and $k \in \Z$,
\[\chi_{l,k}(\lambda) = \xi(|\epsilon_{l,2^{-k}}(\lambda)|_{\epsilon_l}) = \xi_k(|\lambda|_{\epsilon_l}) \qquad\text{for $\lambda \in \R^{n_l}$.}\]
Finally, for $\vec{k} = (k_1,\dots,k_\ell) \in \Z^\ell$, let
\[\chi_{\vec{k}} = \chi_{1,k_1} \otimes \cdots \otimes \chi_{\ell,k_\ell}, \qquad T_{\vec{k}} = \chi_{\vec{k}}(L^\flat) = \chi_{1,k_1}(L^\flat_1) \cdots \chi_{\ell,k_\ell}(L^\flat_\ell).\]

\begin{lem}\label{lem:littlewoodpaley}
For $1 < p < \infty$ and for all $\phi \in L^2 \cap L^p(G)$, 
\[c_p \|\phi\|_p \leq \left\| \left( \sum_{\vec{k} \in \Z^\ell} |T_{\vec{k}} \phi|^2 \right)^{1/2} \right\|_p \leq C_p \|\phi\|_p.\]
\end{lem}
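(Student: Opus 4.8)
The plan is to prove the square-function equivalence in Lemma~\ref{lem:littlewoodpaley} by a standard Littlewood--Paley argument resting on Proposition~\ref{prp:khinchin}, with the required uniform bounds coming from the Mihlin--H\"ormander theorem (Theorem~\ref{thm:mihlinhoermander}) applied on the product group $G^\times$ and then transferred to $G$ via Proposition~\ref{prp:convolutionproduct} and its Corollary~\ref{cor:convolutionproduct}. First I would fix $1<p<\infty$ and, for each choice of signs $\varepsilon^l_{k}\in\{-1,1\}$ ($1\le l\le\ell$, $k\in\Z$) and each finite subset $I\subseteq\Z^\ell$, consider the multiplier
\[
m_{\vec\varepsilon,I}(\lambda_1,\dots,\lambda_\ell)=\sum_{\vec k\in I}\varepsilon^1_{k_1}\cdots\varepsilon^\ell_{k_\ell}\,\chi_{1,k_1}(\lambda_1)\cdots\chi_{\ell,k_\ell}(\lambda_\ell),
\]
which is a joint multiplier for the system $L^\times_{l,j}$ on $G^\times$. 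The key point is that, because each factor $\sum_{k}\varepsilon^l_k\chi_{l,k}$ has, on every dyadic annulus in $\R^{n_l}$, at most boundedly many non-zero terms and these are uniformly bounded in every $C^N$-norm after rescaling by $\epsilon_{l,t}$, the function $m_{\vec\varepsilon,I}$ satisfies a Marcinkiewicz condition $\|m_{\vec\varepsilon,I}\|_{M_\daleth S^{\vec s}_{p,p}B}\le C$ with a constant independent of $\vec\varepsilon$ and $I$; equivalently, via Proposition~\ref{prp:mihlinmarcinkiewicz}, it satisfies a single-parameter Mihlin--H\"ormander condition on $\R^{\vec n}$ of any order, again uniformly. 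Hence, choosing $\vec\beta$ large enough that \HPK{s_l}{d_l} (valid by Proposition~\ref{prp:HPKsatisfaction}) together with Corollary~\ref{cor:hypothesescomparison} puts us in the range of Theorem~\ref{thm:mihlinhoermander} on $G^\times$, the operator $m_{\vec\varepsilon,I}(L^\times)$ is bounded on $L^p(G^\times)$ with norm $\le A$ independent of $\vec\varepsilon,I$.

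Next I would transfer this to $G$. Since the operators $L^\flat_{l,j}$ commute strongly by hypothesis, Proposition~\ref{prp:convolutionproduct} (and Corollary~\ref{cor:convolutionproduct} to handle the truncation at the origin, noting $\supp\chi_{\vec k}$ avoids the coordinate hyperplanes) gives
\[
m_{\vec\varepsilon,I}(L^\flat)\phi(x)=\int_{G^\times}\phi\bigl(x\,\upsilon^\times(y)^{-1}\bigr)\,\Kern_{L^\times}m_{\vec\varepsilon,I}(y)\,dy .
\]
Because $\Kern_{L^\times}m_{\vec\varepsilon,I}\in L^1(G^\times)$ and each $\upsilon_l$ preserves (a multiple of) Haar measure, a Minkowski/averaging argument --- exactly the transference mechanism of \S\ref{subsection:amenability}, here used ``by hand'' since $\upsilon^\times$ is not a homomorphism --- yields $\|m_{\vec\varepsilon,I}(L^\flat)\|_{p\to p}\le c_\upsilon\,\|m_{\vec\varepsilon,I}(L^\times)\|_{p\to p}\le c_\upsilon A$, where $c_\upsilon$ collects the measure-distortion constants of the $\upsilon_l$. (For this step one uses that convolution against a fixed $L^1$ kernel on $G^\times$, pushed through $\upsilon^\times$, is dominated in $L^p(G)$-operator-norm by the corresponding operator on $L^p(G^\times)$; the integrand is controlled pointwise and one integrates.) With this uniform bound in hand, Proposition~\ref{prp:khinchin} applied to the family $\{T_{\vec k}\}_{\vec k\in\Z^\ell}$ gives immediately
\[
\left\|\left(\sum_{\vec k\in\Z^\ell}|T_{\vec k}\phi|^2\right)^{1/2}\right\|_p\le C_p\,\|\phi\|_p ,
\]
which is the upper estimate.

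For the lower estimate I would dualize. Relation \eqref{eq:sumofsquares} gives $\sum_{\vec k}\chi_{\vec k}^2=1$ on $\prod_l(\R^{n_l}\setminus\{0\})$, and since the joint spectral resolution of $L^\flat$ assigns zero mass to the coordinate hyperplanes (each $L^\flat_{l,j}$ arising from a Rockland-type system has no point mass contribution forcing a coordinate to vanish, by the injectivity built into Corollary~\ref{cor:commutativealgebra} and the density arguments of \S\ref{section:plancherel}; more precisely $\sum_{\vec k}T_{\vec k}^2\phi=\phi$ in $L^2$ for $\phi\in L^2\cap L^p$), we get, for $\phi\in L^2\cap L^p$ and $\psi\in L^2\cap L^{p'}$,
\[
\langle\phi,\psi\rangle=\sum_{\vec k\in\Z^\ell}\langle T_{\vec k}\phi,T_{\vec k}\psi\rangle ,
\]
whence by Cauchy--Schwarz and H\"older $|\langle\phi,\psi\rangle|\le\bigl\|(\sum_{\vec k}|T_{\vec k}\phi|^2)^{1/2}\bigr\|_p\bigl\|(\sum_{\vec k}|T_{\vec k}\psi|^2)^{1/2}\bigr\|_{p'}$. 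Applying the already-proved upper bound with exponent $p'$ to the second factor and taking the supremum over $\psi$ with $\|\psi\|_{p'}\le1$ gives $\|\phi\|_p\le C_{p'}\bigl\|(\sum_{\vec k}|T_{\vec k}\phi|^2)^{1/2}\bigr\|_p$, i.e.\ the constant $c_p$. I expect the main obstacle to be the rigorous justification of the transference step: verifying that the kernel identity of Proposition~\ref{prp:convolutionproduct} does extend to the (non-smooth, merely $L^1$-kernel) multipliers $m_{\vec\varepsilon,I}$ uniformly, and that the pointwise domination by a fixed $L^1(G^\times)$ function legitimately passes to an $L^p(G)\to L^p(G)$ operator-norm bound with a constant independent of $\vec\varepsilon$ and $I$ --- this is where the non-homomorphism nature of $\upsilon^\times$ must be handled carefully, presumably by a direct Minkowski-integral estimate rather than an appeal to Theorem~\ref{thm:transference} as stated. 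A secondary technical point is confirming the spectral decomposition $\sum_{\vec k}T_{\vec k}^2=\mathrm{id}$ strongly on $L^2\cap L^p$, which requires knowing the joint spectrum meets $\prod_l(\R^{n_l}\setminus\{0\})$ up to $E$-null sets.
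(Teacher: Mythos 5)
There is a genuine gap in your upper-bound argument, and it sits exactly where you suspected, plus one step earlier. First, the multiplier $m_{\vec\varepsilon,I}$ does \emph{not} satisfy a single-parameter Mihlin--H\"ormander condition on $\R^{\vec n}$ uniformly in $I$: Proposition~\ref{prp:mihlinmarcinkiewicz} goes in the opposite direction to the one you use (a Mihlin--H\"ormander condition \emph{implies} a Marcinkiewicz condition, not conversely), and for, say, $I=\{(k,-k)\}$ the piece $\chi_{1,k}\otimes\chi_{2,-k}$ lives on a product-annulus whose two factors are at scales $2^{k}$ and $2^{-k}$, so after rescaling by the joint dilations $\delta_t$ its derivatives in the second block of variables blow up like $2^{2k}$. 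Hence Theorem~\ref{thm:mihlinhoermander} on $G^\times$ is not applicable with a uniform constant, and the only theorem that would handle such genuinely multi-parameter multipliers is Theorem~\ref{thm:marcinkiewicz} itself, whose proof rests on this very lemma --- so that route is circular. Second, even granting a uniform $L^p(G^\times)$ bound, the transfer to $L^p(G)$ cannot be done as you propose: since $\upsilon^\times$ is not a homomorphism, it induces no action of $G^\times$ on $L^p(G)$, so Theorem~\ref{thm:transference} does not apply, and a direct Minkowski estimate on the identity of Proposition~\ref{prp:convolutionproduct} only dominates $\|m_{\vec\varepsilon,I}(L^\flat)\|_{p\to p}$ by $\|\Kern_{L^\times}m_{\vec\varepsilon,I}\|_{L^1(G^\times)}$, which is not uniformly bounded in $I$ (it grows with the number of scales involved), not by the $L^p(G^\times)$ operator norm.

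The paper's proof sidesteps both problems by never forming the product multiplier: it fixes one factor $l$, observes that $\sum_{|k|\le N}\varepsilon^l_k\chi_{l,k}$ satisfies a \emph{one-parameter} Mihlin--H\"ormander condition on $\R^{n_l}$ of every order uniformly in the signs and in $N$ (via Proposition~\ref{prp:uniformhomogeneouschange}), applies Theorem~\ref{thm:mihlinhoermander} on the single group $G_l$, and then transfers to $G$ through the genuine homomorphism $\upsilon_l$ using the classical Theorem~\ref{thm:transference} together with Proposition~\ref{prp:Jl1}. The uniform bound for the full sum $\sum\varepsilon^1_{k_1}\cdots\varepsilon^\ell_{k_\ell}T_{\vec k}$ is then obtained by multiplying the $\ell$ commuting one-parameter bounds, after which Proposition~\ref{prp:khinchin} applies. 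Your lower-bound argument by duality (using $\sum_{\vec k}T_{\vec k}^2=\mathrm{id}$ on $L^2$, which does hold by \eqref{eq:sumofsquares} and $E(\{0\})=0$ on each factor) is correct and is essentially what the second half of Proposition~\ref{prp:khinchin} encodes. To repair your proof, replace the product-multiplier step by the factor-by-factor argument above.
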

\begin{proof}
For every $s \in \N$, $(\varepsilon^l_k)_{k \in \Z} \in \{-1,0,1\}^\Z$ and $N \in \N$, it is easy to prove that
\[\left\|\sum_{|k| \leq N} \varepsilon^l_k \xi_k\right\|_{M_* C^s} \leq C_s,\]
where $C_s > 0$ does not depend on $(\varepsilon^l_k)_k$ or $N$; therefore, by Proposition~\ref{prp:uniformhomogeneouschange}, for $l=1,\dots,\ell$, also
\[\left\|\sum_{|k| \leq N} \varepsilon^l_k \chi_{l,k}\right\|_{M_{\epsilon_l} C^s} \leq C_{l,s},\]
where $C_{l,s} > 0$ does not depend on $(\varepsilon^l_k)_k$ or $N$.

By Theorem~\ref{thm:mihlinhoermander} applied to the group $G_l$, and by transference to the group $G$ (see Theorem~\ref{thm:transference} and Proposition~\ref{prp:Jl1}), we then have
\[\left\|\sum_{|k| \leq N} \varepsilon^l_k \chi_{l,k}(L^\flat_l) \right\|_{p \to p} \leq \left\|\sum_{|k| \leq N} \varepsilon^l_k \chi_k(L_l) \right\|_{p \to p} \leq C_{l,p}\]
for $1 < p < \infty$, $l = 1,\dots,\ell$, where $C_{l,p} > 0$ does not depend on $(\varepsilon^l_k)_k$ or $N$, and consequently also
\[\left\|\sum_{|k_1|,\dots,|k_\ell| \leq N} \varepsilon^1_{k_1} \cdots \varepsilon^\ell_{k_\ell} T_{\vec{k}} \right\|_{p \to p} \leq C_{1,p} \cdots C_{\ell,p}.\]
Moreover, by \eqref{eq:sumofsquares} and the properties of the spectral integral, $\sum_{\vec{k} \in \Z^\ell} T_{\vec{k}}^2$ converges strongly to the identity of $L^2(G)$. The conclusion follows then immediately by Proposition~\ref{prp:khinchin}.
\end{proof}

In the following, we will consider Marcinkiewicz conditions on $\R^{\vec{n}}$ adapted to the system
\[\gimel_{\vec{t}} = \epsilon_{1,t_1} \times \dots \times \epsilon_{\ell,t_\ell}\]
of multi-variate dilations.

\begin{thm}\label{thm:marcinkiewicz}
Suppose that, for $l=1,\dots,\ell$, the homogeneous group $G_l$, with the system $L_{l,1},\dots,L_{l,n_l}$, satisfies the hypothesis \HPK{s_l}{d_l}. If $p,q \in [1,\infty]$ and
\[\vec{\beta} > \vec{s} + \frac{\vec{n}}{p} - \frac{\vec{d}}{\max\{2,p\}},\]
then, for every bounded measurable function $m$ on $\R^{\vec{n}}$ with $\|m\|_{M_\gimel S^{\vec{\beta}}_{p,q}B} < \infty$, the operator $m(L^\flat)$ is bounded on $L^r(G)$ for $1 < r < \infty$ and
\[\|m(L^\flat)\|_{r \to r} \leq C_{\vec{\beta},p,q,r} \|m\|_{M_\gimel S^{\vec{\beta}}_{p,q}B}.\]
\end{thm}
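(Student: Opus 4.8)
The strategy is a Littlewood--Paley decomposition on the target group $G$, combined with the weighted $L^2$ estimates on the product group $G^\times$ (Proposition~\ref{prp:productestimates}) transferred to $G$ via the convolution identity of Proposition~\ref{prp:convolutionproduct}/Corollary~\ref{cor:convolutionproduct}. The key observation is that, thanks to Lemma~\ref{lem:littlewoodpaley}, the $L^r$-norm of $\phi$ is equivalent to the $L^r$-norm of the square function $(\sum_{\vec k} |T_{\vec k}\phi|^2)^{1/2}$, where $T_{\vec k} = \chi_{\vec k}(L^\flat)$ and $\chi_{\vec k}$ is supported in a fixed dyadic annular box $\prod_l \{2^{k_l-1}\le |\lambda_l|_{\epsilon_l}\le 2^{k_l+1}\}$, in particular away from the coordinate hyperplanes. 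So it suffices to bound $m(L^\flat)$ ``annulus by annulus''.

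\textbf{First steps.} First I would split $m = \sum_{\vec j\in\Z^\ell} m\,\tilde\chi_{\vec j}$, where $\tilde\chi_{\vec j}$ is a fattened version of $\chi_{\vec j}$ (so that $\tilde\chi_{\vec j}\chi_{\vec j} = \chi_{\vec j}$ and each $\tilde\chi_{\vec j}$ is supported in an annular box disjoint from the coordinate hyperplanes); write $m_{\vec j} = m\,\tilde\chi_{\vec j} = (m\circ\gimel_{\vec{2^{\vec j}}})\tilde\chi_{\vec 0}\circ\gimel_{\vec 2^{-\vec j}}$, so that $\|m_{\vec j}\|_{S^{\vec\beta}_{p,q}B}\le C\|m\|_{M_\gimel S^{\vec\beta}_{p,q}B}$ uniformly in $\vec j$ (by the definition of the Marcinkiewicz norm, cf.\ the one-variable reasoning in Lemma~\ref{lem:spectraldecomposition}). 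By the support properties, $m(L^\flat)T_{\vec k} = \sum_{\vec j} m_{\vec j}(L^\flat)T_{\vec k}$ is a finite sum with $|j_l - k_l|\le 2$ for each $l$, i.e.\ $m(L^\flat)T_{\vec k} = \sum_{\vec\delta\in\{-2,\dots,2\}^\ell} m_{\vec k+\vec\delta}(L^\flat)T_{\vec k}$. Then, writing $S_{\vec j} = m_{\vec j}(L^\flat)$, the operator $m(L^\flat) = \sum_{\vec j} S_{\vec j}T_{\vec j'}$ (sum over finitely shifted $\vec j'$), and by Lemma~\ref{lem:littlewoodpaley} and Proposition~\ref{prp:khinchin} it is enough to establish the vector-valued bound
\[
\Bigl\| \Bigl(\sum_{\vec k} |S_{\vec k+\vec\delta}\, g_{\vec k}|^2\Bigr)^{1/2} \Bigr\|_r \le C\,\|m\|_{M_\gimel S^{\vec\beta}_{p,q}B}\, \Bigl\| \Bigl(\sum_{\vec k} |g_{\vec k}|^2\Bigr)^{1/2}\Bigr\|_r
\]
for $1<r<\infty$ and each fixed shift $\vec\delta$, which in turn (by Proposition~\ref{prp:khinchin} again, or a standard randomization) reduces to a uniform bound $\|\sum_{\vec k}\varepsilon_{\vec k}S_{\vec k+\vec\delta}T_{\vec k}\|_{r\to r}\le C$ over all sign choices and finite truncations; equivalently, to boundedness on $L^r(G)$ of convolution operators whose kernels are of the form $\sum_{\vec k}\varepsilon_{\vec k}\,\Kern_{L^\flat}(m_{\vec k+\vec\delta})* (\text{Littlewood--Paley piece})$.

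\textbf{The transference / singular-integral core.} For each $\vec j$, Corollary~\ref{cor:convolutionproduct} writes $S_{\vec j}\phi(x) = \int_{G^\times}\phi(x\,\upsilon^\times(y)^{-1})\,\Kern_{L^\times}m_{\vec j}(y)\,dy$; I would then dominate $|S_{\vec j}\phi|$ pointwise by a maximal-type operator built from the product kernel. Concretely, by Cauchy--Schwarz with the weight $w_{\vec\beta} = w_{\vec\beta,1}\otimes\cdots\otimes w_{\vec\beta,\ell}$ from Proposition~\ref{prp:productestimates}, one gets
\[
|S_{\vec j}\phi(x)|^2 \le \Bigl(\int_{G^\times}|\phi(x\,\upsilon^\times(y)^{-1})|^2\, w_{\vec\beta}(y)\,dy\Bigr) \cdot \|\Kern_{L^\times}m_{\vec j}\|_{L^2(G^\times,w_{\vec\beta}^{-1})}^2 ,
\]
and the second factor is $\le C\|m\|_{M_\gimel S^{\vec\beta}_{p,q}B}^2$, uniformly in $\vec j$, by Proposition~\ref{prp:productestimates} (using $\vec\beta > \vec s + \vec n/p - \vec d/\max\{2,p\}$ and the fact that $\supp m_{\vec j}$ is a compact subset of $\prod_l(\R^{n_l}\setminus\{0\})$). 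The first factor, after rescaling $y_l\mapsto\delta_{l,2^{j_l}}(y_l)$, is exactly (a constant times) the composition $M_{w_{\vec\beta,1}}^{(1)}\circ\cdots\circ M_{w_{\vec\beta,\ell}}^{(\ell)}$ applied to $|\phi|^2$ in the variables $\upsilon_1,\dots,\upsilon_\ell$ — here I use that each $M$-admissible weight $w_{\vec\beta,l}$ on $G_l$ has, via $\upsilon_l$ and transference (Theorem~\ref{thm:transference}), a maximal operator bounded on $L^{r/2}(G)$ for $r/2>1$ — hence the square-function sum over $\vec k$ collapses, via the $\{-1,1\}$-randomization and Proposition~\ref{prp:khinchin}, to a bounded operator on $L^r(G)$ for $2<r<\infty$. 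The case $1<r\le 2$ then follows by duality, since $m(L^\flat)^* = \bar m(L^\flat)$ and $\|\bar m\|_{M_\gimel S^{\vec\beta}_{p,q}B} = \|m\|_{M_\gimel S^{\vec\beta}_{p,q}B}$.

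\textbf{Main obstacle.} The delicate point is making the Cauchy--Schwarz / maximal-function domination rigorous in the non-commutative, non-homomorphic setting: $\upsilon^\times$ is not a group homomorphism, so $\phi(x\,\upsilon^\times(y)^{-1})$ does not split as a genuine iterated translation, and I must instead iterate the single-factor transference (Theorem~\ref{thm:transference}, and for the maximal estimates a suitable vector-valued version, cf.\ Theorem~\ref{thm:transferencemaximal}) one variable at a time, controlling at each stage the maximal operator $M_{w_{\vec\beta,l}}$ lifted to $G$ through $\upsilon_l$. Checking that the iterated composition of these lifted maximal operators is still bounded on $L^{r/2}(G)$ — and that the constants do not blow up with $\ell$ or with the dyadic index $\vec k$ — is where the bulk of the technical work lies; the uniformity in $\vec j$ of the weighted $L^2$ bound (Proposition~\ref{prp:productestimates}), together with the annular localization forcing the sum over $\vec j$ to be ``diagonal'' modulo a bounded shift, is what ultimately makes the square-function argument close.
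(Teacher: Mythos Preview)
Your overall architecture matches the paper's: Littlewood--Paley via Lemma~\ref{lem:littlewoodpaley}, localization of $m$ to annular boxes, the convolution formula of Corollary~\ref{cor:convolutionproduct}, Cauchy--Schwarz against the weight $w_{\vec\beta}$ from Proposition~\ref{prp:productestimates}, transference of the maximal operators $M_{w_{\vec\beta,l}}$ via Theorem~\ref{thm:transferencemaximal}, and duality for $1<r<2$. The paper does exactly this, writing $T_{\vec k}m(L^\flat)=f_{\vec k}(L^\flat)T_{\vec k}$ with $f_{\vec k}=(m\circ\gimel_{2^{\vec k}})\eta\circ\gimel_{2^{-\vec k}}$ and obtaining the pointwise bound
\[
|f_{\vec k}(L^\flat)T_{\vec k}\phi(x)|^2 \le C\|m\|_{M_\gimel S^{\vec\beta}_{p,q}B}^2\,\pi_1(w_{\vec\beta,1,k_1})\cdots\pi_\ell(w_{\vec\beta,\ell,k_\ell})\bigl(|T_{\vec k}\phi|^2\bigr)(x).
\]

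The gap is in how you close the square-function estimate after this step. You propose to dominate $\pi_1(w_{\vec\beta,1,k_1})\cdots\pi_\ell(w_{\vec\beta,\ell,k_\ell})(h)$ pointwise by $M_1\cdots M_\ell(h)$ and then sum over $\vec k$. That would require
\[
\Bigl\|\textstyle\sum_{\vec k} M_1\cdots M_\ell\bigl(|T_{\vec k}\phi|^2\bigr)\Bigr\|_{r/2}\le C\,\Bigl\|\textstyle\sum_{\vec k}|T_{\vec k}\phi|^2\Bigr\|_{r/2},
\]
i.e.\ an $\ell^1$-vector-valued inequality for the (sublinear, nonlinear) composed maximal operator, which is not available and is not what the scalar $L^{r/2}$-boundedness of $M_1\cdots M_\ell$ gives. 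The randomization you invoke does not help here either: Proposition~\ref{prp:khinchin} converts between square functions and sign-averaged scalar bounds for \emph{linear} operators, but the operators $h\mapsto\pi_1(w_{1,k_1})\cdots\pi_\ell(w_{\ell,k_\ell})(|h|^2)$ are not linear in $h$.

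The paper's device is a linearization by duality, and it hinges on the symmetry $w_{\vec\beta}=w_{\vec\beta}^*$ built into hypothesis \HPK{s}{d}. For $\psi\in L^{(r/2)'}$, one pairs and uses that each $\pi_l(w_{\vec\beta,l,k_l})$ is self-adjoint (because $w_{\vec\beta,l}=w_{\vec\beta,l}^*$) to move the averaging onto $|\psi|$:
\[
\int \sum_{\vec k}\pi_1(w_{1,k_1})\cdots\pi_\ell(w_{\ell,k_\ell})\bigl(|T_{\vec k}\phi|^2\bigr)\,|\psi|
=\sum_{\vec k}\int |T_{\vec k}\phi|^2\,\pi_\ell(w_{\ell,k_\ell})\cdots\pi_1(w_{1,k_1})(|\psi|),
\]
and now the sup over $k_l$ yields a \emph{single} application of $M_{\vec\beta,\ell}\cdots M_{\vec\beta,1}$ to $|\psi|$, whence H\"older and the scalar $L^{(r/2)'}$-boundedness of each $M_{\vec\beta,l}$ (transferred from $G_l$) finish the job for $r>2$. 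This is precisely the missing idea in your ``main obstacle'' paragraph: the issue is not the scalar boundedness of the iterated maximal operator, but avoiding the vector-valued inequality altogether by dualizing, and the self-adjointness $w_{\vec\beta}=w_{\vec\beta}^*$ is what makes that possible.
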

\begin{proof}
Choose a non-negative $\zeta \in \D(\R)$ with $\supp \zeta \subseteq [1/4,4]$ and such that $\zeta \equiv 1$ on $[1/2,2]$. Set, for $l=1,\dots,\ell$,
\[\eta_l(\lambda) = \zeta(|\lambda|_{\epsilon_l}) \qquad\text{for $\lambda \in \R^{n_l}$,}\]
and let
\[\eta = \eta_1 \otimes \dots \otimes \eta_\ell.\]
If we set
\[m_{\vec{k}} = (m \circ \gimel_{(2^{k_1},\dots,2^{k_\ell})}) \eta, \qquad f_{\vec{k}} = m_{\vec{k}} \circ \gimel_{(2^{-k_1},\dots,2^{-k_\ell})}\]
for $\vec k \in \Z^{\ell}$, then we have $\chi_{\vec{k}} m = f_{\vec{k}} \chi_{\vec{k}}$, so that
\[T_{\vec{k}} m(L^\flat) = f_{\vec{k}}(L^\flat) T_{\vec{k}}.\]

Let $w_{\vec{\beta}} = w_{\vec{\beta},1} \otimes \dots \otimes w_{\vec{\beta},\ell} \in L^1(G^\times)$ be given by Proposition~\ref{prp:productestimates}. Set
\[w_{\vec{\beta},l,k} = 2^{-kQ_{\delta_l}} w_{\vec{\beta},l} \circ \delta_{l,2^{-k}}\]
for $k \in \Z$, $l =1,\dots,\ell$, and let
\[w_{\vec{\beta},\vec{k}} = w_{\vec{\beta},1,k_1} \otimes \dots \otimes w_{\vec{\beta},\ell,k_\ell}\]
for $\vec{k} \in \Z^\ell$. For $l=1,\dots,\ell$, if $\pi_l$ denotes the unitary representation of $G_l$ on $L^2(G)$ induced by the homomorphism $\upsilon_l$, since $w_{\vec{\beta},l}$ is M-admissible on $G_l$, then the maximal function $M_{\vec{\beta},l}$ on $G$ defined by
\[M_{\vec{\beta},l} \phi(x) = \sup_{k \in \Z} |\pi_l(w_{\vec{\beta},l,k}) \phi(x)|\]
is bounded on $L^r(G)$ for $1 < r < \infty$, by transference (see Theorem~\ref{thm:transferencemaximal}).

If $\phi \in L^2 \cap C_0(G)$, then we have, by Corollary~\ref{cor:convolutionproduct} and H\"older's inequality,
\begin{multline*}
|f_{\vec{k}}(L^\flat) T_{\vec{k}} \phi(x)|^2 \leq \left(\int_{G^\times} |T_{\vec{k}} \phi(x \,\upsilon^\times(y)^{-1} )| |\Kern_{L^\times} f_{\vec{k}}(y)| \,dy\right)^2 \\
\leq \int_{G^\times} |T_{\vec{k}} \phi(x \,\upsilon^\times(y)^{-1} )|^2 w_{\vec{\beta},\vec{k}}(y) \,dy  \int_{G^\times} |\Kern_{L^\times} m_{\vec{k}}(y)|^2 w_{\vec{\beta}}^{-1}(y) \,dy\\
\leq C_{\vec{\beta},p,q} \|m_{\vec{k}}\|_{S^{\vec{\beta}}_{p,q}B(\R^{\vec{n}})}^2 \pi_1(w_{\vec{\beta},1,k_1}) \cdots \pi_\ell(w_{\vec{\beta},\ell,k_\ell}) (|T_{\vec{k}} \phi|^2)
\end{multline*}
thus
\begin{multline*}
\left\| \left( \sum_{\vec{k} \in \Z^\ell} |T_{\vec{k}} m(L^\flat)\phi|^2 \right)^{1/2} \right\|_r \\
\leq C_{\vec{\beta},p,q} \|m\|_{M_\gimel S^{\vec{\beta}}_{p,q}B} \left\| \sum_{\vec{k} \in \Z^\ell} \pi_1(w_{\vec{\beta},1,k_1}) \cdots \pi_\ell(w_{\vec{\beta},\ell,k_\ell}) (|T_{\vec{k}} \phi|^2) \right\|^{1/2}_{r/2}
\end{multline*}
for $2 \leq r < \infty$.

On the other hand, since $w_{\vec{\beta}} = w_{\vec{\beta}}^*$, for every $\psi \in L^{(r/2)'}(G)$ we have
\[\begin{split}
&\left|\int_G \left(\sum_{\vec{k} \in \Z^\ell} \pi_1(w_{\vec{\beta},1,k_1}) \cdots \pi_\ell(w_{\vec{\beta},\ell,k_\ell}) (|T_{\vec{k}} \phi|^2) \right) \psi \,d\mu_G\right| \\
&\phantom{MMMM}\leq \sum_{\vec{k} \in \Z^\ell} \int_G \left( \pi_1(w_{\vec{\beta},1,k_1}) \cdots \pi_\ell(w_{\vec{\beta},\ell,k_\ell}) (|T_{\vec{k}} \phi|^2) \right) |\psi| \,d\mu_G\\
&\phantom{MMMM}= \sum_{\vec{k} \in \Z^\ell} \int_G |T_{\vec{k}} \phi|^2 \left( \pi_\ell(w_{\vec{\beta},\ell,k_\ell}) \cdots \pi_1(w_{\vec{\beta},1,k_1}) (|\psi|)  \right) \,d\mu_G\\
&\phantom{MMMM}\leq \int_G \left(\sum_{\vec{k} \in \Z^\ell} |T_{\vec{k}} \phi|^2 \right) M_{\vec{\beta},\ell} \cdots M_{\vec{\beta},1} (|\psi|) \,d\mu_G \\
&\phantom{MMMM}\leq C_{\vec{\beta},r} \left\|\sum_{\vec{k} \in \Z^\ell} |T_{\vec{k}} \phi|^2 \right\|_{r/2} \|\psi\|_{(r/2)'},
\end{split}\]
that is,
\[\left\| \sum_{\vec{k} \in \Z^\ell} \pi_\ell(w_{\vec{\beta},\ell,k_\ell}) \cdots \pi_1(w_{\vec{\beta},1,k_1}) (|T_{\vec{k}} \phi|^2) \right\|_{r/2} \leq C_{\vec{\beta},r} \left\| \sum_{\vec{k} \in \Z^\ell} |T_{\vec{k}} \phi|^2 \right\|_{r/2},\]
and finally
\[\left\| \left( \sum_{\vec{k} \in \Z^\ell} |T_{\vec{k}} m(L^\flat)\phi|^2 \right)^{1/2} \right\|_r \leq C_{\vec{\beta},p,q,r} \|m\|_{M_\gimel S^{\vec{\beta}}_{p,q}B} \left\| \left(\sum_{\vec{k} \in \Z^\ell} |T_{\vec{k}} \phi|^2 \right)^{1/2}\right\|_r\]
which, by Lemma~\ref{lem:littlewoodpaley}, is equivalent to
\[\|m(L^\flat)\phi \|_r \leq C_{\vec{\beta},p,q,r} \|m\|_{M_\gimel S^{\vec{\beta}}_{p,q}B} \| \phi\|_r.\]

This gives the conclusion for $2 \leq r < \infty$. In order to obtain the same result for $1 < r < 2$, it should be noticed that, if $m$ is real-valued, then $m(L^\flat)$ is self-adjoint, so that boundedness for $2 < r < \infty$ implies boundedness for $1 < r < 2$. In the general case, one can decompose $m$ in its real and imaginary parts and then apply the previous result to each part.
\end{proof}

\begin{rem}
A multi-variate analogue of Propositions~\ref{prp:cancellation} and \ref{prp:kernelinfiniteorder} can be proved for the kernel $\Kern_{L^\times} m$ on the product $G^\times$, thus showing that, if $m$ satisfies a Marcinkiewicz condition of infinite order, then $\Kern_{L^\times} m$ is a \emph{smooth Calder\'on-Zygmund product kernel} (see \cite{nagel_singular_2001}, \S2.1, and \cite{mller_marcinkiewicz_1995}, Theorem~1.4).
\end{rem}

\begin{rem}
Although the convolution kernel $\Kern_{L^\flat} m$ of the operator $m(L^\flat)$ on the group $G$ is never used directly in the previous arguments, from Proposition~\ref{prp:convolutionproduct} it follows that this kernel is the push-forward of the kernel $\Kern_{L^\times} m$ of $m(L^\times)$ on $G^\times$ via the map $\upsilon^\times : G^\times \to G$, i.e.,
\[\langle \Kern_{L^\flat} m, \phi \rangle = \langle \Kern_{L^\times} m, \phi \circ \upsilon^\times \rangle\]
for $\phi \in \D(G)$ (cf.\ \S17.4.5 of \cite{dieudonn_treatise_analysis_1972} and Corollary~2.5.3 of \cite{nagel_singular_2001}).
\end{rem}

\section{Applications}

The previous Theorems~\ref{thm:mihlinhoermander} and \ref{thm:marcinkiewicz} can be applied, e.g., to the groups and systems of operators considered in \S\ref{section:examples}; the corresponding results are collected in Table~\ref{tbl:applications}. Here we focus on some few cases, which are probably the most interesting, also because they permit a comparison with results already present in the literature.

\begin{table}[p]%
\centering
\begin{tabular}{l|l|@{}c@{}|c|c|c|c@{ $+$ }c}
\multirow{2}{*}{group} & \multirow{2}{*}{operators} & \multirow{2}{*}{\phantom{.}$\dim G$\phantom{.}} & \multirow{2}{*}{$Q_G$} & \multicolumn{2}{c|}{\HPK{s}{d}}  & \multicolumn{2}{@{}c@{}}{required} \\
                     &                         &          &        & \phantom{..}$s$\phantom{..}             & $d$           & \multicolumn{2}{@{}c@{}}{smoothness} \\
 \hline
\TS\BS $H_n$           & $L_1,\dots,L_n,-iT$                 & \clap{$\scriptstyle 2n+1$}   & \clap{$\scriptstyle 2n+2$} & \clap{$\frac{2n+1}{2}$} & $1$           & $\frac{2n+1}{2}$ & $\frac{n}{p}$ \\
\TS\BS $H_n$           & $L,-iT$                 & \clap{$\scriptstyle 2n+1$}   & \clap{$\scriptstyle 2n+2$} & \clap{$\frac{2n+1}{2}$} & $1$           & $\frac{2n+1}{2}$ & $\frac{1}{p}$ \\
\TS\BS $\mathbb{H}H_n$ & $L_1,\dots,L_n,-iT_1,-iT_2,-iT_3$  & \clap{$\scriptstyle 4n+3$}   & \clap{$\scriptstyle 4n+6$} & \clap{$\frac{4n+3}{2}$} & $1$           & $\frac{4n+3}{2}$ & $\frac{n+2}{p}$ \\
\TS\BS $\mathbb{H}H_n$ & $L,-iT_1,-iT_2,-iT_3$  &  \clap{$\scriptstyle 4n+3$}   & \clap{$\scriptstyle 4n+6$} & \clap{$\frac{4n+3}{2}$} & $1$           & $\frac{4n+3}{2}$ & $\frac{3}{p}$ \\
\TS\BS $N_{3,2}$  & $L,D,-iT_1,-iT_2,-iT_3$ & $6$      & $9$    & $\frac{9}{2}$    & $4$           & $\frac{9}{2}$ & $\frac{1}{p}$    \\
\TS\BS $N_{3,2}$  & $L,D,\Delta$            & $6$      & $9$    & $\frac{9}{2}$    & $2$           & $\frac{9}{2}$ & $\frac{1}{p}$    \\
\TS\BS $N_{3,2}$  & $L,-iT_1,-iT_2,-iT_3$   & $6$      & $9$    & $\frac{9}{2}$    & $\frac{7}{2}$ & $\frac{9}{2}$ & $\frac{1}{2p}$   \\
\TS\BS $N_{3,2}$  & $L,\Delta$              & $6$      & $9$    & $\frac{9}{2}$    & $\frac{3}{2}$ & $\frac{9}{2}$ & $\frac{1}{2p}$   \\
\TS\BS $N_{3,2}$  & $L,D$                   & $6$      & $9$    & $\frac{9}{2}$    & $\frac{3}{2}$ & $\frac{9}{2}$ & $\frac{1}{2p}$   \\
\TS\BS $G_{5,2}$  & $L,-iX_2,-iX_1$         & $5$      & $7$    & $\frac{6}{2}$    & $2$           & $\frac{6}{2}$ & $\frac{1}{p}$    \\
\TS\BS $N_{4,2}$  & $L,D,-iT_{12},\dots,-iT_{34}$ & $10$ & $16$ & $\frac{16}{2}$   & $2$           & $\frac{16}{2}$ & $\frac{6}{p}$    \\
\TS\BS $N_{4,2}$  & $L,-iT_{12},\dots,-iT_{34}$   & $10$ & $16$ & $\frac{16}{2}$   & $2$           & $\frac{16}{2}$ & $\frac{5}{p}$    \\
\TS\BS $N_{4,2}$  & $L,D,P,\Delta$                & $10$ & $16$ & $\frac{16}{2}$   & $2$           & $\frac{16}{2}$ & $\frac{2}{p}$    \\
\TS\BS $N_{4,2}$  & $L,P,\Delta$                  & $10$ & $16$ & $\frac{16}{2}$   & $2$           & $\frac{16}{2}$ & $\frac{1}{p}$    \\
\TS\BS $N_{2,3}$  & $L,D,-iT_1,-iT_2$       & $5$      & $10$   & $\frac{10}{2}$   & $2$           & $\frac{10}{2}$ & $\frac{2}{p}$   \\
\TS\BS $N_{2,3}$  & $L,-iT_1,-iT_2$         & $5$      & $10$   & $\frac{10}{2}$   & $2$           & $\frac{10}{2}$ & $\frac{1}{p}$   \\
\TS\BS $N_{2,3}$  & $L,D,\Delta$            & $5$      & $10$   & $\frac{10}{2}$   & $1$           & $\frac{10}{2}$ & $\frac{2}{p}$   \\
\TS\BS $G_{6,19}$ & $L,-iX_2,-iX_1$         & $6$      & $10$   & $\frac{8}{2}$    & $1$           & $\frac{8}{2}$ & $\frac{2}{p}$    \\
\TS\BS $G_{6,23}$ & $L,-iX_2,-iX_1$         & $6$      & $11$   & $\frac{9}{2}$    & $1$           & $\frac{9}{2}$ & $\frac{2}{p}$    
\end{tabular}
\caption{Multiplier theorems applied to the groups and operators of \S\ref{section:examples}\\ \footnotesize(the required smoothness is expressed in terms of $L^p$ Besov norms with $p \geq 2$)}
\label{tbl:applications}
\end{table}

\subsection{The abelian case}

As a nilpotent Lie group, one can certainly take $\R^n$. In this case, commutativity of translation-invariant differential operators is automatically guaranteed.

For $j=1,\dots,n$, let $X_j = -\frac{\partial}{\partial x_j}$ be the partial derivative in the $j$-th component. Then the operators
\[-iX_1, \dots, -iX_n\]
form a weighted subcoercive system, since, e.g., the Laplacian $-(X_1^2 + \dots + X_n^2)$ is contained in the algebra generated by them (which in fact is the full algebra $\Diff(\R^n)$ of translation-invariant differential operators).

One should notice that, with respect to the abelian group structure of $\R^n$, every family of (possibly non-isotropic) dilations
\[\delta_t(x_1,\dots,x_n) = (t^{\lambda_1} x_1,\dots,t^{\lambda_n} x_n)\]
is automorphic. Moreover, the operators $-iX_1,\dots,-iX_n$ are homogeneous with respect to such a family of dilations.

Notice that, by the properties of the Fourier transform,
\[m(-iX_1,\dots,-iX_n) f = \Four^{-1} ( m (\Four f)) = f * \Four^{-1} m,\]
so that $\breve m = \Four^{-1} m$, and then, by the classical Plancherel formula,
\[\int_{\R^n} |m|^2 \,d\sigma = \int_{\R^n} |\breve m(x)|^2 \,dx = (2\pi)^{-n} \int_{\R^n} |m(\xi)|^2 \,d\xi,\]
i.e., the Plancherel measure $\sigma$ associated to the system $-iX_1,\dots,-iX_n$ is (apart from a constant factor) the Lebesgue measure on $\R^n$, which is clearly locally $n$-bounded on the whole $\R^n$.

This means that the aforementioned system satisfies the hypothesis \HPK{n/2}{n} (in this case, the degree of polynomial growth clearly coincides with the topological dimension) and we recover from Theorem~\ref{thm:mihlinhoermander} the classical Mihlin-H\"ormander theorem for Fourier multipliers on $\R^n$, i.e., Theorem~\ref{thmi:euclmihlin} of the introduction.

One can also think of each $-iX_j$ as a self-adjoint Rockland operator on the $j$-th factor of $\R^n$, i.e., as a Rockland system by itself, satisfying the hypothesis \HPK{1/2}{1}; by applying Theorem~\ref{thm:marcinkiewicz} to these systems, we recover Theorem~\ref{thmi:euclmarcinkiewicz} of the introduction.

\subsection{H-type groups}

Let $G$ be a H-type group, with a fixed stratification. Let $L$ be a homogeneous sublaplacian on $G$, and let $T_1,\dots,T_d$ be a basis of the center of the Lie algebra $\lie{g}$ of $G$. Then we have a Rockland system
\begin{equation*}
L,-iT_1,\dots,-iT_d,
\end{equation*}
to which our multiplier theorems can be applied, at least in two different ways.

Namely, this Rockland system on the H-type group $G$ satisfies the hypothesis \HPK{(\dim G)/2}{1} by Proposition~\ref{prp:HPKsatisfaction}, so that, by Theorem~\ref{thm:mihlinhoermander}, we get that, for $p \geq 2$, the condition
\[\|m\|_{M_\epsilon B_{p,p}^s} < \infty \quad\text{for some $s > \frac{\dim G}{2} + \frac{d}{p}$},\]
where $\epsilon_t$ are the dilations associated to the Rockland system, implies that the operator $m(L,-iT_1,\dots,-iT_d)$ is of weak type $(1,1)$ and is bounded on $L^r(G)$ for $1 < r < \infty$. 

On the other hand, we can consider each of the operators $L,-iT_1,\dots,-iT_d$ as forming a Rockland system by itself: the sublaplacian $L$ is Rockland on $G$, so that it satisfies the hypothesis \HPK{(\dim G)/2}{1}, whereas each central derivative $-iT_j$ is Rockland on a $1$-dimensional subgroup of the center of $G$, thus it satisfies the hypothesis \HPK{1/2,1}. Therefore, by Theorem~\ref{thm:marcinkiewicz}, for $p \geq 2$, the condition
\[\|m\|_{M_* S_{p,p}^{\vec{s}}B} < \infty \quad\text{for some $\vec{s} > \left(\frac{\dim G}{2},\frac{1}{2},\dots,\frac{1}{2}\right)$}\]
implies that $m(L,-iT_1,\dots,-iT_d)$ is bounded on $L^r(G)$ for $1 < r < \infty$.

If we compare these two results in terms of the required smoothness, we see that, for $p=\infty$, neither of the conditions is contained in the other: in fact, in the former conditions, only derivatives up to (something more than) the order $\frac{\dim G}{2}$ are required, but any kind of derivative; in the latter, instead, the order $\frac{\dim G + d}{2}$ is reached, but only with mixed derivatives, and in particular pure derivatives with respect to the spectral variables corresponding to $-iT_1,\dots,-iT_d$ are only required up to the order $\frac{1}{2}$. This comparison is illustrated in Figure~\ref{fig:graficoHtype} in the case of the Heisenberg groups $H_n$ (for which $d = 1$).

We notice that the result of M\"uller, Ricci and Stein \cite{mller_marcinkiewicz_1996} involving this system of operators is sharper than ours. In fact their condition, which is of Marcinkiewicz type, is expressed in terms of a ``mixed'' multi-parameter $L^2$ Sobolev norm
\[\|f\|_{S_{\mathrm{mix}}^{\vec{t}}H(\R^{1+n})}^2 = \int_{\R^{1+n}} (1+|\xi_0|)^{2t_0} \prod_{j=1}^n (1 + |\xi_0| + |\xi_j|)^{2t_j} |\Four f(\xi)|^2 \,d\xi,\]
with regularity threshold
\[\vec{t} > \left(\frac{\dim G-d}{2},\frac{1}{2},\dots,\frac{1}{2}\right);\]
therefore, with respect to the required smoothness, the condition of \cite{mller_marcinkiewicz_1996} is exactly the ``intersection'' of our two. On the other hand, our results are more general, since, for instance, the sublaplacian $L$ can be replaced by any Rockland operator (with respect to any homogeneous structure) on $G$, without any change in the condition on the multiplier.

On the Heisenberg group $H_n$ we can also consider the Rockland system
\[L_1,\dots,L_n,-iT\]
made of the partial sublaplacians and the central derivative. There are several ways in which our multiplier theorems can be applied to this system, since every subfamily of partial sublaplacians form (with or without the central derivative) a Rockland system on some subgroup of $H_n$. Analogously as before, the ``intersection'' of all our conditions corresponds to the sharper result of Veneruso \cite{veneruso_marcinkiewicz_2000} about this system of operators.

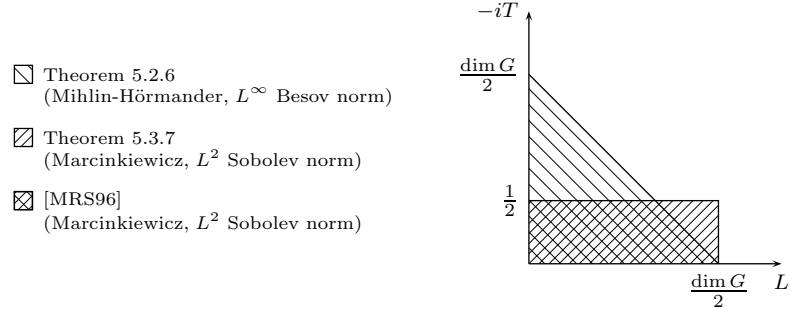
\begin{figure}%
\centering
\begin{pdfpic}
\psset{unit=12mm,linewidth=.5pt,hatchwidth=.2pt,hatchsep=2.5pt,labelsep=4pt}
\begin{pspicture}(-5.7,-.2)(2.9,2.8)
\pspolygon[fillstyle=vlines,hatchcolor=black,linestyle=none,hatchsep=3.9pt,hatchwidth=.3pt](2.1,0)(0,2.1)(0,0)
\psline(2.1,0)(0,2.1)
\uput[180](0,2.1){$\frac{\dim G}{2}$}\uput[-90](2.1,0){$\frac{\dim G}{2}$}
\pspolygon[fillstyle=vlines,hatchcolor=black,hatchsep=3.9pt,hatchwidth=.3pt](-5.7,2.2)(-5.7,2.0)(-5.5,2.0)(-5.5,2.2)
\uput[0](-5.5,2.1){\scriptsize Theorem~\ref{thm:mihlinhoermander}}
\uput[0](-5.5,1.85){\scriptsize (Mihlin-H\"ormander, $L^\infty$ Besov norm)}
\pspolygon[fillstyle=hlines,hatchcolor=black,linestyle=none](0,0)(2.1,0)(2.1,.7)(0,.7)
\psline(2.1,0)(2.1,.7)(0,.7)
\uput[180](0,.7){$\frac{1}{2}$}
\pspolygon[fillstyle=hlines,hatchcolor=black](-5.7,1.5)(-5.7,1.3)(-5.5,1.3)(-5.5,1.5)
\uput[0](-5.5,1.4){\scriptsize Theorem~\ref{thm:marcinkiewicz}}
\uput[0](-5.5,1.15){\scriptsize (Marcinkiewicz, $L^2$ Sobolev norm)}
\pspolygon[fillstyle=vlines,hatchcolor=black,hatchsep=3.9pt,hatchwidth=.3pt](-5.7,0.8)(-5.7,0.6)(-5.5,0.6)(-5.5,0.8)
\pspolygon[fillstyle=hlines,hatchcolor=black](-5.7,0.8)(-5.7,0.6)(-5.5,0.6)(-5.5,0.8)
\uput[0](-5.5,0.7){\scriptsize \cite{mller_marcinkiewicz_1996}}
\uput[0](-5.5,0.45){\scriptsize (Marcinkiewicz, $L^2$ Sobolev norm)}
\psaxes[ticks=none,labels=none]{->}(0,0)(2.8,2.8)\uput[180](0,2.8){\footnotesize $-iT$}\uput[-90](2.8,0){\footnotesize $L$}
\end{pspicture}
\end{pdfpic}
\caption{Comparison of multiplier results on the Heisenberg groups}
\label{fig:graficoHtype}
\end{figure}

\subsection{Non-nilpotent groups}

Theorem~\ref{thm:marcinkiewicz} allows one to obtain spectral multiplier theorems also on groups which are not homogeneous, even not nilpotent. Clearly, the case $\ell = 1$ gives transference of multiplier results from a homogeneous group to a larger, possibly non-homogeneous one. A less trivial example comes by considering an action of a torus $\T^d = \R^d/\Z^d$ on a homogeneous group $N$ by automorphisms which commute with dilations, and the corresponding semidirect product $\T^d \ltimes N$ (or alternatively its universal covering group $\R^d \ltimes N$).

Take for instance a diamond group $G = \T^d \ltimes H_n$ (see \cite{ludwig_dual_1995}). If $L$ is a $\T^d$-invariant homogeneous sublaplacian on $H_n$ (which always exists), and if $U_1,\dots,U_d$ are the partial derivatives on the torus $\T^d$, then
\[L,-iU_1,\dots,-iU_d\]
is a weighted subcoercive system on $G$, since these operators commute on $G$, and the algebra generated by them contains the operator
\[\Delta = L + (-iU_1)^2 + \dots + (-iU_d)^2,\]
which is a sublaplacian and therefore is weighted subcoercive on $G$. Each of the operators $L,-iU_1,\dots,-iU_d$ can be considered as a Rockland system in itself: $L$ is Rockland on $H_n$, and therefore satisfies \HPK{(\dim H_n)/2}{1}, whereas $-iU_j$ comes from the corresponding derivative on the $j$-th factor of $\R^d$, which satisfies \HPK{1/2,1}. By applying Theorem~\ref{thm:marcinkiewicz}, we then obtain that, if
\[\|m\|_{M_* S^{\vec{s}}_{2,2}B(\R^{1+d})} < \infty \qquad\text{for } \vec{s} > \left(\frac{\dim H_n}{2},\frac{1}{2},\dots,\frac{1}{2}\right),\]
then $m(L,-iU_1,\dots,-iU_d)$ is bounded on $L^r(G)$ for $1 < r < \infty$.

Thanks to Proposition~\ref{prp:homogeneousextension} and Corollary~\ref{cor:mihlinmarcinkiewiczsum}, this result in turn yields a multiplier theorem for the sublaplacian $\Delta$: if
\[\|m\|_{M_* B^{s}_{2,2}(\R)} < \infty \qquad\text{for } s > \frac{\dim H_n + d}{2} = \frac{\dim G}{2},\]
then $m(\Delta)$ is bounded on $L^r(G)$ for $1 < r < \infty$. We remark that:
\begin{itemize}
\item this condition is sharper than the one following by the general result of Alexopoulos \cite{alexopoulos_spectral_1994}, which instead requires $s > \frac{\dim G + 1}{2}$ in terms of an $L^\infty$ Besov norm (in fact, $\dim G + 1$ is the local dimension associated to the sublaplacian $\Delta$);
\item this is an example of a group in which the regularity threshold for a multiplier theorem can be lowered to half the topological dimension, which is neither a M\'etivier group (or a direct product of Euclidean and M\'etivier groups), nor $SU_2$;
\item by Proposition~\ref{prp:homogeneousextension}, Corollary~\ref{cor:mihlinmarcinkiewiczsum} and Proposition~\ref{prp:mihlinmarcinkiewiczproduct}, the sublaplacian $\Delta$ can be replaced by any operator of the form
\[L^{k_0} + -(iU_1)^{2k_1} + \dots + (-iU_d)^{2k_d} \qquad\text{or}\qquad L^{k_0} (-iU_1)^{k_1} \cdots (-iU_d)^{k_d}\]
for some $k_0,k_1,\dots,k_d \in \N \setminus \{0\}$, obtaining an analogous multiplier result with identical smoothness requirement.
\end{itemize}

Operators such as the complete Laplacian
\[\Delta_c = L + (-iT)^2 + (-iU_1)^2 + \dots + (-iU_d)^2,\]
where $T$ is the central derivative on $H_n$, can also be studied. By considering $L,-iT$ together as a Rockland system on $H_n$, and each of the $-iU_j$ separately as before, one obtains, by Theorem~\ref{thm:marcinkiewicz}, Proposition~\ref{prp:homogeneousextension} and Corollary~\ref{cor:mihlinmarcinkiewiczsum}, a multiplier theorem for the Laplacian $\Delta_c$, with a regularity threshold of order $\frac{\dim G}{2}$ in terms of an $L^\infty$ Besov norm.

Analogous considerations hold if one replaces $H_n$ by any H-type (or M\'etivier) group, and also if one takes the universal covering group $\R^d \ltimes H_n$; this last case comprises, for $d=1$, the oscillator groups. Notice that the previous result about the Laplacian $\Delta_c$, when stated on the universal covering group, is sharper than \cite{alexopoulos_spectral_1994}, since the degree of growth of the group is greater than its topological dimension.

Further examples include the plane motion group $\T \ltimes \R^2$, and also the semidirect product $\T \ltimes N_{2,3}$ determined by the action of $SO_2$ on the free $3$-step nilpotent group $N_{2,3}$ considered in \S\ref{subsection:example3step}. In these last cases, for some distinguished sublaplacians, we still get a sharpening of the result by Alexopoulos: although the required order of smoothness is the same, our condition is expressed in terms of an $L^2$ instead of an $L^\infty$ Besov norm.

\appendix
\clearemptydoublepage
\phantomsection
\addcontentsline{toc}{chapter}{Appendix}
\chapter*{Appendix}
\markboth{Appendix}{}
\renewcommand{\thechapter}{A}

In the following we give a brief account of the theory of spectral integrals, including the spectral theorem for multiple operators and the connections with the Gelfand theory of commutative C$^*$-algebras. In doing so, we also summarize some results from the theory of Banach $*$-algebras which are used throughout the work.

\section{\texorpdfstring{Banach $*$-algebras and C$^*$-algebras}{Banach *-algebras and C*-algebras}}\label{section:staralgebras}

The following results can be found, together with a more extensive presentation, in \cite{bonsall_complete_1973}, \cite{dixmier_algebras_1982}, \cite{palmer_banach_1994}, \cite{palmer_banach_2001}.

\subsection{$*$-algebras}
A \emph{$*$-algebra}\index{algebra!$*$-algebra} is an associative algebra over $\C$ (i.e., a complex vector space $\Alg$ with a bilinear, associative product) endowed with a conjugate-linear \emph{involution}
\[\Alg \ni a \mapsto a^* \in \Alg\]
such that $(a^*)^* = a$, $(ab)^* = b^* a^*$. $\Alg$ is \emph{unital} if the product has an identity.

If $\Alg_1$ and $\Alg_2$ are $*$-algebras, a linear map $T : \Alg_1 \to \Alg_2$ which preserves multiplication and involution is called a \emph{$*$-homomorphism}; if moreover $\Alg_1$ and $\Alg_2$ are unital $*$-algebras, and $T$ maps the identity of $\Alg_1$ to the identity of $\Alg_2$, then $T$ is called a \emph{unital $*$-homomorphism}.

A \emph{Banach $*$-algebra} (with isometric involution) is a $*$-algebra $\Alg$ endowed with a norm $\|\cdot\|$ which makes it into a Banach space and such that
\[\|a_1 a_2\| \leq \|a_1\| \, \|a_2\|, \qquad \|a^*\| = \|a\|,\]
\[\|e\| = 1 \qquad\text{if $e \in \Alg$ is an identity.}\]
If moreover
\[\|a^* a\| = \|a\|^2,\]
then $\Alg$ is called a C$^*$-algebra\index{algebra!C$^*$-algebra}.

If $\HH$ is a Hilbert space, then the space $\Bdd(\HH)$ of bounded linear operators on $\HH$, with composition, adjunction and the operator norm, is a unital C$^*$-algebra. In fact, by the Gelfand-Naimark theorem, every C$^*$-algebra is (isometrically) $*$-isomorphic to a closed $*$-subalgebra of $\Bdd(\HH)$ for some Hilbert space $\HH$.

If a $*$-algebra $\Alg$ is not unital, then we embed it in a unital $*$-algebra
\[\Alg_U = \Alg \oplus \C,\]
called the \emph{unitization} of $\Alg$, with operations
\[(a_1,\lambda_1) (a_2,\lambda_2) = (a_1 a_2 + \lambda_1 a_2 + \lambda_2 a_1, \lambda_1 \lambda_2), \qquad (a,\lambda)^* = (a^*, \overline{\lambda});\]
in fact, the element $(0,1)$ is an identity for $\Alg_U$, and the embedding of $\Alg$ into $\Alg_U$ is given by $a \mapsto (a,0)$. If $\Alg$ is a Banach $*$-algebra, then $\Alg_U$ can be made into a Banach $*$-algebra with the norm
\[\|(a,\lambda)\| = \|a\| + |\lambda|;\]
if moreover $\Alg$ is a C$^*$-algebra, then the norm
\[\|(a,\lambda)\|' = \sup_{\substack{x \in A \\ \| x\| \leq 1}} \|ax + \lambda x\|\]
makes $\Alg_U$ into a C$^*$-algebra.

\subsection{Spectrum of an element}\label{subsection:spectrum}
For a unital $*$-algebra $\Alg$, we define the \emph{spectrum}\index{spectrum!of an element of a $*$-algebra} $\spec_\Alg(a)$ of an element $a \in \Alg$ as the set of the $\lambda \in \C$ such that $a - \lambda$ is not invertible in $\Alg$; if $\Alg$ is not unital, then the spectrum $\spec_\Alg(a)$ of an element $a \in \Alg$ is defined as its spectrum in the unitization $\Alg_U$ of $\Alg$. The \emph{spectral radius} $\rho_\Alg(a)$ of an element $a \in \Alg$ is given by
\[\rho_\Alg(a) = \sup \{ |\lambda| \tc \lambda \in \spec_\Alg(a)\}.\]
If $\Alg$ is a Banach $*$-algebra, then $\spec_\Alg(a)$ is a non-empty compact subset of $\C$ for every $a \in \Alg$, and
\[\rho_\Alg(a) \leq \|a\|;\]
moreover the \emph{spectral radius formula} holds:
\[\rho_\Alg(a) = \lim_{n \to \infty} \|a^n\|^{1/n} = \inf_n \|a^n\|^{1/n}.\]
If $\Alg'$ is a closed $*$-subalgebra of a Banach $*$-algebra $\Alg$, then, for every $b \in \Alg'$,
\[\partial \spec_{\Alg'}(b) \subseteq \partial \spec_{\Alg}(b) \subseteq \spec_{\Alg}(b) \subseteq \spec_{\Alg'}(b) \cup \{0\},\]
(see \cite{bonsall_complete_1973}, Proposition~I.5.12), and in particular
\[\rho_{\Alg}(b) = \rho_{\Alg'}(b).\]
If $\Alg$ is a C$^*$-algebra, then, for every $a \in \Alg$,
\[\|a\| = \rho_{\Alg}(a^* a)^{1/2}\]

\subsection{$*$-representations}
If $\Alg$ is a $*$-algebra, a \emph{$*$-representation}\index{representation!$*$-representation} $T$ of $\Alg$ on a Hilbert space $\HH$ is a $*$-homomorphism $T : \Alg \to \Bdd(\HH)$. For $a \in \Alg$, we set
\[\gamma_\Alg(a) = \sup \{\|T(a)\| \tc T \text{ is a $*$-representation of } \Alg\}.\]
The function $\gamma_{\Alg}$, which is called the \emph{Gelfand-Naimark seminorm} of $\Alg$, is a (possibly infinite-valued) submultiplicative seminorm on $\Alg$, which satisfies
\[\gamma_{\Alg}(a^* a) = \gamma_{\Alg}(a)^2, \qquad \gamma_{\Alg}(a) \leq \rho_{\Alg}(a^* a)^{1/2}.\]
In particular, if $\Alg$ is a Banach $*$-algebra, then
\[\gamma_{\Alg}(a) \leq \|a\|,\]
so that every $*$-representation is automatically continuous.

\subsection{Hermitian $*$-algebras}\label{subsection:hermitianalgebras}
An element $a$ of a $*$-algebra $\Alg$ is called \emph{hermitian} if $a = a^*$. A $*$-algebra $\Alg$ is called \emph{hermitian}\index{algebra!$*$-algebra!hermitian} if $\spec_{\Alg}(a) \subseteq \R$ for every hermitian $a \in \Alg$. In fact, for a Banach $*$-algebra $\Alg$, the following conditions are equivalent:
\begin{itemize}
\item $\Alg$ is hermitian;
\item $\spec_{\Alg}(a^* a) \subseteq \left[0,+\infty\right[$ for every $a \in \Alg$;
\item $\gamma_{\Alg}(a) = \rho_{\Alg}(a^* a)^{1/2}$ for every $a \in \Alg$.
\end{itemize}
The equivalence of the first two conditions is the \emph{Shirali-Ford theorem} (see \cite{bonsall_complete_1973}, Theorem~41.5), whereas the third condition is called \emph{Raikov's criterion} (cf.\ \cite{palmer_banach_2001}, Theorem~11.4.1).

Every C$^*$-algebra is hermitian.

\subsection{Commutative $*$-algebras}
A \emph{character} of a commutative $*$-algebra $\Alg$ is a non-null linear functional $\phi : \Alg \to \C$ which is multiplicative, i.e., such that $\phi(a_1 a_2) = \phi(a_1) \phi(a_2)$; the set $\GS(\Alg)$ of the characters of $\Alg$ is called the \emph{Gelfand spectrum}\index{spectrum!Gelfand spectrum} of $\Alg$.

If $\Alg$ is a commutative $*$-algebra, then the \emph{Gelfand transform} of $a$ is the function $\hat a : \GS(\Alg) \to \C$ defined by
\[\hat a(\phi) = \phi(a).\]
The \emph{Gelfand topology} is the weakest topology on $\GS(\Alg)$ such that all the maps $\hat a$ (for $a \in \Alg$) are continuous.

If $\Alg$ is a commutative Banach $*$-algebra, then, for all $a \in \Alg$,
\[\hat a(\GS(\Alg)) \subseteq \spec_\Alg(a) \subseteq \hat a(\GS(\Alg)) \cup \{0\},\]
and in particular
\[\|\hat a\|_\infty = \rho_\Alg(a) \leq \|a\|;\]
moreover $\GS(\Alg)$ is a locally compact Hausdorff topological space, and $\hat a \in C_0(\GS(\Alg))$ for every $a \in \Alg$. In fact, if $A$ is unital, then $\Gamma(\Alg)$ is compact and
\[\hat a(\GS(\Alg)) = \spec_\Alg(a)\]
for every $a \in \Alg$.

For a commutative Banach $*$-algebra $\Alg$, the map
\[\Alg \ni a \mapsto \hat a \in C_0(\GS(\Alg)),\]
which is called the \emph{Gelfand transform}\index{transform!Gelfand}, is a homomorphism of algebras, i.e., it is a linear map which preserves the product. In fact, we have:
\begin{itemize}
\item $\Alg$ is a hermitian Banach $*$-algebra if and only if the Gelfand transform is a $*$-homomorphism;
\item $\Alg$ is a C$^*$-algebra if and only if the Gelfand transform is an (isometric) $*$-isomorphism.
\end{itemize}

\section{Linear operators on Hilbert spaces}

While bounded operators on a Hilbert space form a C$^*$-algebra, and consequently are included in the theory set forth in \S\ref{section:staralgebras}, unbounded operators require a specific treatment. Moreover, among bounded operators, specific subclasses with peculiar properties can be considered. General references for the following are \cite{dunford_linear_1963}, \cite{rudin_functional_1973}, \cite{reed_methods_1980}.

\subsection{Operators as graphs}
Let $\HH_1, \HH_2$ be (complex) Hilbert spaces. A (possibly not everywhere defined) linear operator $L$ from $\HH_1$ to $\HH_2$ can be identified with its graph, which is a linear subspace of the direct product $\HH_1 \times \HH_2$. In particular, the \emph{domain} $D(A)$ and the \emph{range} $R(A)$ of $A$ can be simply thought of as the projections of $A$ on the two factors $\HH_1$ and $\HH_2$.

\subsection{Sum and product}
If $A_1,A_2$ are linear operators from $\HH_1$ to $\HH_2$, then their \emph{sum} $A_1 + A_2$ is defined pointwise on the intersection of their domains:
\[D(A_1 + A_2) = D(A_1) \cap D(A_2).\]
If $A$ is an operator from $\HH_1$ to $\HH_2$, and $B$ is an operator from $\HH_2$ to another Hilbert space $\HH_3$, then their \emph{product} (or \emph{composition}) $BA$ is an operator from $\HH_1$ to $\HH_3$ defined as a composition of relations, so that
\[D(BA) = \{ x \in D(A) \tc Ax \in D(B)\}.\]

\subsection{Closable and closed operators}
An operator $A$ from $\HH_1$ to $\HH_2$ is called \emph{closable} if its closure $\overline{A}$ (as a subspace of $\HH_1 \times \HH_2$) is still (the graph of) an operator, i.e., if $\overline{A}$ is univocal. $A$ is said to be \emph{closed} if $A = \overline{A}$. Clearly, if an operator $A$ admits a closed extension, i.e., a closed operator $B$ such that $A \subseteq B$, then $A$ is closable. By the closed map theorem, a closed operator which is everywhere defined is bounded.

\subsection{Adjoint of an operator}
Let $J_{\HH_1,\HH_2} : \HH_1 \times \HH_2 \to \HH_2 \times \HH_1$ be the isometric isomorphism defined by
\[J_{\HH_1,\HH_2}(x_1,x_2) = (-x_2,x_1).\]
We define the \emph{adjoint} $A^*$ of an operator $A$ from $\HH_1$ to $\HH_2$ as
\[A^* = (J_{\HH_1,\HH_2}(A))^\perp = J_{\HH_1,\HH_2}(A^\perp).\]
Then one can see that $A^*$ is an operator from $\HH_2$ to $\HH_1$ if and only if $D(A)$ is dense in $\HH_1$. Moreover, the domain of the adjoint $A^*$ of $A$ is given by
\[D(A^*) = \{ x_2 \in \HH_2 \tc x_1 \mapsto \langle A x_1, x_2 \rangle_{\HH_2} \text{ is continuous} \},\]
and for $x_2 \in D(A^*)$ we have that $A^* x_2$ is the unique element of $\HH_1$ such that
\[\langle x_1, A^* x_2 \rangle_{\HH_1} = \langle A x_1, x_2 \rangle_{\HH_2}.\]
From the definition, one has immediately $A^*$ is closed, and that $A^{**} = \overline{A}$. In particular, if $A^*$ is a densely defined operator, then $A$ is closable. Moreover
\[\ker A^* = R(A)^\perp.\]

\subsection{Symmetric and positive operators}
From now on, let us suppose that $\HH_1 = \HH_2 = \HH$. An operator $A$ on $\HH$ is said to be \emph{symmetric} if $A \subseteq A^*$. In particular, a densely defined symmetric operator is closable. Notice that $A$ is symmetric if and only if
\[\langle A x, y \rangle_{\HH} = \langle x, Ay \rangle_{\HH} \qquad\text{for all $x,y \in D(A)$}\]
if and only if
\[\langle A x, x \rangle_{\HH} \in \R \qquad\text{for all $x \in D(A)$.}\]
We say that an operator $A$ is \emph{positive} if
\[\langle A x, x \rangle_{\HH} \geq 0 \qquad\text{for all $x \in D(A)$.}\]
A closable operator $A$ is symmetric or positive if and only if $\overline{A}$ is.

\subsection{Self-adjoint and essentially self-adjoint operators, normal operators}
An operator $A$ on $\HH$ is said to be \emph{self-adjoint} if $A^* = A$; in particular, a self-adjoint operator is closed and densely defined.

A closable operator is said to be \emph{essentially self-adjoint} if its closure $\overline{A}$ is self-adjoint, i.e., if $A^* = \overline{A}$.

A closed and densely defined operator is said to be \emph{normal} if $A^* A = A A^*$.

\subsection{Spectrum}
If $A$ is a densely defined operator on $\HH$, the \emph{resolvent set} of $A$ is the set of the $\lambda \in \C$ such that $A - \lambda$ is injective and its inverse $(A - \lambda)^{-1}$ is densely defined and bounded; the complementary $\spec(A)$ of the resolvent set is called the \emph{spectrum}\index{spectrum!of a linear operator} of $A$.

A subset of $\spec(A)$ is the \emph{point spectrum} of $A$, which is the set of the eigenvalues of $A$, i.e., the $\lambda \in \C$ such that $A - \lambda$ is not injective. Clearly, the point spectrum coincides with the whole spectrum when $\HH$ is finite-dimensional.

The spectrum $\spec(A)$ of a densely defined operator $A$ on $\HH$ is a closed subset of $\C$. If $A$ is self-adjoint, then $\spec(A) \subseteq \R$. 

If $A$ is bounded, then $\spec(A) = \spec_{\Bdd(\HH)}(A)$.

\subsection{Compact operators, Hilbert-Schmidt operators and trace class operators}

A bounded operator $A$ on $\HH$ is said to be:
\begin{itemize}
\item \emph{compact}, if $A$ maps bounded subsets of $\HH$ to relatively compact ones;
\item \emph{Hilbert-Schmidt}, if
\[\|A\|_{\HS} = \left(\sum_{\alpha \in I} \|A v_\alpha\|^2\right)^{1/2} < \infty\]
for any (hence for every) complete orthonormal system $(v_\alpha)_{\alpha \in I}$ in $\HH$;
\item \emph{trace class}, if
\[\tr |A| = \sum_{\alpha \in I} \langle (A^* A)^{1/2} v_\alpha, v_\alpha \rangle < \infty\]
for any (hence for every) complete orthonormal system $(v_\alpha)_{\alpha \in I}$ in $\HH$.
\end{itemize}
The quantities $\|A\|_{\HS}$ and $\tr |A|$ are independent on the choice of the system $(v_\alpha)_{\alpha \in I}$, and define complete norms on the spaces of Hilbert-Schmidt operators and trace class operators respectively.

The trace class and the Hilbert-Schmidt operators form $*$-ideals of $\Bdd(\HH)$. The compact operators form a closed $*$-ideal of $\Bdd(\HH)$. Moreover, for a bounded operator on $\HH$, the following implications hold:
\[\text{finite rank} \,\Rightarrow\, \text{trace class} \,\Rightarrow\, \text{Hilbert-Schmidt} \,\Rightarrow\, \text{compact}.\]

For a trace class operator $A$, the \emph{trace}
\[\tr A = \sum_{\alpha \in I} \langle A v_\alpha, v_\alpha \rangle\]
is well defined (and finite). Moreover, the expression
\[\tr (B^* A) = \sum_{\alpha \in I} \langle A v_\alpha, B v_\alpha \rangle\]
defines an inner product, inducing the Hilbert-Schmidt norm $\|\cdot\|_{\HS}$.

\section{Spectral integration}\label{section:spectralintegral}

For the present and the next section, we refer mainly to \cite{berberian_notes_1966}, \cite{rudin_functional_1973},  \cite{fell_representations_1988} for detailed expositions and proofs.

\subsection{Orthogonal projections}
Let $\HH$ be a Hilbert space. An \emph{orthogonal projection} of $\HH$ is a bounded linear operator $P$ on $\HH$ such that $P^2 = P = P^*$.

An orthogonal projection $P$ is uniquely determined by its range $R(P)$, which is a closed subspace of $\HH$, and in fact every closed subspace of $\HH$ is the range of an orthogonal projection.

We define a (partial) order on the orthogonal projections of $\HH$ by setting
\[P \leq Q \quad \iff \quad R(P) \subseteq R(Q).\]
With respect to this order, the orthogonal projections of $\HH$ form a complete lattice; in particular, the \emph{supremum} of a family $\mathcal{P}$ of orthogonal projections is the orthogonal projection $\sup \mathcal{P}$ with range
\[R(\sup \mathcal{P}) = \overline{\Span \bigcup \{ R(P) \tc P \in \mathcal{P} \}}.\]

\subsection{Resolutions of the identity}
Let $X$ be a locally compact Hausdorff topological space. A \emph{resolution of the identity}\index{resolution of the identity} of $\HH$ on $X$ is a correspondence $E$ which maps Borel subsets $A$ of $X$ to orthogonal projections $E(A)$ of $\HH$, and which satisfies the following properties:
\begin{itemize}
\item $E(X) = \id_\HH$;
\item $E(A_1 \cup A_2) = E(A_1) + E(A_2)$ whenever $A_1 \cap A_2 = \emptyset$;
\item $E(A) = \lim_n E(A_n)$ whenever $\{A_n\}_{n \in \N}$ is an increasing sequence with union $A$, and where the limit is meant to be in the strong sense;
\item $E(A_1 \cap A_2) = E(A_1) E(A_2) = E(A_2) E(A_1)$.
\end{itemize}
A resolution of the identity is also called a \emph{projection-valued measure}. A resolution $E$ of the identity of $\HH$ on $X$ is said to be \emph{regular} if, for every Borel $A \subseteq X$,
\[E(A) = \sup \{ E(K) \tc K \text{ is a compact subset of } A\}.\]

Let $E$ be a resolution of the identity of $\HH$ on $X$. For every $u,v \in \HH$, the equality
\[E_{u,v}(A) = \langle E(A) u, v \rangle\]
defines a complex-valued Borel measure $E_{u,v}$ on $X$, which satisfies
\[|E_{u,v}|(X) \leq \|u\| \|v\|.\]
In fact, in the case $u = v$, we have that $E_{u,u}$ is a positive Borel measure, with
\[E_{u,u}(X) = \|u\|^2.\]
It can be shown that $E$ is regular if and only if $E_{u,u}$ is regular for every $u \in \HH$; in particular, if $X$ is second-countable, then every resolution of the identity on $X$ is regular (see \cite{fell_representations_1988}, Proposition II.11.10).

The \emph{support} $\supp E$ of a projection-valued measure $E$ on $X$ is defined by
\[\supp E = X \setminus \bigcup \{A \subseteq X \tc A \text{ is open and } E(A) = 0\}.\]
In fact, $\supp E$ is the closure of the union of the supports of the measures $E_{u,u}$ for $u \in \HH$. If $E$ is regular, then $E(\supp E) = \id_\HH$.

\subsection{Integral of bounded functions}
Let $B(X)$ be the set of (complex-valued) bounded Borel functions on a locally compact Hausdorff topological space $X$; in fact, $B(X)$ is a C$^*$-algebra with pointwise operations and the supremum norm.

If $E$ is a resolution of the identity of a Hilbert space $\HH$ on $X$, then there exists a unique continuous linear map $B(X) \to \Bdd(\HH)$ which maps $\chr_A$ to $E(A)$ for every Borel $A \subseteq X$; the image of a function $f \in B(X)$ via this map is denoted by
\[\int_X f \,dE \qquad\text{or}\qquad E[f],\]
and is called the \emph{integral} of the function $f$ with respect to the projection-valued measure $E$. We clearly have
\[\langle E[f] u, v\rangle = \int_X f \,dE_{u,v}\]
for every $f \in B(X)$, $u,v \in \HH$, and in particular
\[\|E[f] u\|^2 = \int_X |f|^2 \,dE_{u,u}.\]

In fact, the map $E[\cdot]$ is a $*$-homomorphism, so that
\[\|E[f]\| \leq \|f\|_\infty.\]
More precisely, if
\[N_E = \{f \in B(X) \tc E(\{f \neq 0\}) = 0\},\]
then $N_E$ is a closed $*$-ideal of $B(X)$, and the map $E[\cdot] : B(X) \to \Bdd(\HH)$ induces an isometric embedding of the quotient C$^*$-algebra
\[L^\infty(X,E) = B(X)/N_E\]
into the C$^*$-algebra $\Bdd(\HH)$; moreover, the norm on $L^\infty(X,E)$ is given by the $E$-essential supremum:
\[\|f\|_{L^\infty(X,E)} = \min \{\lambda \in \left[0,+\infty\right[ \tc E(\{|f| > \lambda\}) = 0\}\]
(see \cite{rudin_functional_1973}, \S12.20).

\subsection{Spectral integrals and Gelfand transform}\label{subsection:integralgelfand}
If $X$ is a locally compact Hausdorff topological space and $E$ is a resolution of the identity of a Hilbert space $\HH$ on $X$, then we can restrict the map $E[\cdot]$ to the sub-C$^*$-algebra $C_0(X)$ of $B(X)$, thus obtaining a $*$-representation $T$ of $C_0(X)$ on $\HH$. If $E$ is a regular resolution of the identity, then the $*$-representation $T$ is \emph{regular}, in the sense that
\[\bigcap \{\ker T(f) \tc f \in C_0(X)\} = \{0\}.\]
In fact, if $T$ is a regular $*$-representation of $C_0(X)$ on a Hilbert space $\HH$, then there exists a unique regular resolution $E$ of the identity of $\HH$ on $X$ such that $T(f) = E[f]$ for every $f \in C_0(X)$ (see \cite{fell_representations_1988}, Theorem~II.12.8).

In particular, if $\HH$ is a Hilbert space and $A$ is a closed commutative $*$-subalgebra of $\Bdd(\HH)$ containing the identity of $\HH$, then the inverse of the Gelfand transform is a regular $*$-representation of $C(\GS(A))$ on $\HH$, so that there exists a unique regular resolution $E$ of the identity of $\HH$ on the compact space $\GS(A)$ such that $E[\hat a] = a$ for every $a \in A$; this resolution $E$ will be called the \emph{spectral resolution} of the algebra $A$.

On the other hand, if $E$ is a regular resolution of the identity of a Hilbert space $\HH$ on a locally compact Hausdorff topological space $X$, then the C$^*$-algebra $C_0(\supp E)$, identified with a sub-C$^*$-algebra of $B(X)$ via extension by zero, is isometrically embedded in the quotient $L^\infty(X,E)$; moreover, by the Tietze-Urysohn extension theorem (see \cite{bourbaki_topology2}, \S IX.4.2), its image in $\Bdd(\HH)$ via the map $E[\cdot]$ coincides with the image $E[C_0(X)]$ of $C_0(X)$. Consequently, the Gelfand spectrum of $E[C_0(X)]$ can be identified with $\supp E$ via a homeomorphism $\Psi : \GS(E[C_0(X)]) \to \supp E$ such that
\[\phi(E[f])  = f(\Psi(\phi))\]
for every $f \in C_0(X)$, $\phi \in \GS(E[C_0(X)])$.

\subsection{Integral of unbounded functions}\label{subsection:spectralunbounded}
Let $E$ be a resolution of the identity of $\HH$ on $X$. We extend the definition of $E[f]$ to general Borel functions $f : X \to \C$, so that $E[f]$ is the (possibly unbounded) linear operator on $\HH$, with domain
\[D(E[f]) = \left\{ u \in \HH \tc \int_X |f|^2 \,dE_{u,u} < \infty\right\},\]
which is uniquely determined by the condition
\[\langle E[f] u, v \rangle = \int_X f \,dE_{u,v}\]
for all $u \in D(E[f])$, $v \in \HH$; in fact, it can be shown that, for every Borel $f : X \to \C$, the set $D(E[f])$ is a dense subspace of $\HH$, and that
\[\| E[f] u \|^2 = \int_X |f|^2 \,dE_{u,u}\]
for every $u \in D(E[f])$. Moreover:
\begin{itemize}
\item $\overline{E[f_1] + E[f_2]} = E[f_1 + f_2]$;
\item $\overline{E[f_1] E[f_2]} = E[f_1 f_2]$, and $D(E[f_1] E[f_2]) = D(E[f_1 f_2]) \cap D(E[f_2])$;
\item $E[\overline{f}] = E[f]^*$, and $E[f]^* E[f] = E[|f|^2] = E[f] E[f]^*$.
\end{itemize}
(cf.\ \cite{rudin_functional_1973}, Theorem~13.24). In particular, for every Borel $f : X \to \C$, $E[f]$ is a (closed) normal operator, and in fact $E[f]$ is self-adjoint if $f$ is real-valued; moreover, the spectrum of $E[f]$ is given by the $E$-essential image of $f$, i.e.,
\[\spec(E[f]) = \{\lambda \in \C \tc E(\{ |f - \lambda| < \epsilon \}) \neq 0 \text{ for all } \epsilon > 0\},\]
and, for every $\lambda \in \C$, the eigenspace of $E[f]$ relative to the eigenvalue $\lambda$ is characterized by
\[\{ u \in D(E[f]) \tc E[f]u = \lambda u \} = R(E\{ f = \lambda \})\]
(see \cite{rudin_functional_1973}, Theorem~13.27).

\subsection{Push-forward measure}\label{subsection:pushforwardspectral}
Let $X_1,X_2$ be locally compact Hausdorff topological spaces and $p : X_1 \to X_2$ be a Borel map. If $\HH$ is a Hilbert space and $E$ is a resolution of the identity of $\HH$ on $X_1$, then we can consider the \emph{push-forward} of $E$ via $p$, which is the resolution $p(E)$ of the identity of $\HH$ on $X_2$ defined by
\[p(E)(A) = E(p^{-1}(A)) \qquad\text{for all Borel $A \subseteq X_2$.}\]
The support of $p(E)$ is the $E$-essential image of $p$:
\[\supp p(E) = \{x \in X_2 \tc E(p^{-1}(U)) \neq 0 \text{ for all neighborhoods $U$ of $x$}\};\]
in particular, if $E$ is regular, then
\[\supp p(E) \subseteq \overline{p(\supp E)},\]
with equality if $p$ is continuous. Moreover, the following change-of-variable theorem holds: for every Borel function $f : X_2 \to \C$,
\[p(E)[f] = E[f \circ p]\]
(see \cite{rudin_functional_1973}, Theorem~13.28).

\subsection{Product measure}\label{subsection:productspectral}
Let $X_1,X_2$ be locally compact Hausdorff topological spaces; suppose moreover that $X_1,X_2$ are second-countable (thus metrizable by the Urysohn metrization theorem, see \S IX.9 of \cite{dugundji_topology_1966}). Let $\HH$ be a Hilbert space, and let $E_j$ be a (necessarily regular) resolution of the identity of $\HH$ on $X_j$, for $j=1,2$. If $E_1$ and $E_2$ commute, i.e.,
\[E_1(A_1) E_2(A_2) = E_2(A_2) E_1(A_1) \quad\text{for all Borel $A_1 \subseteq X_1$, $A_2 \subseteq X_2$,}\]
then there exists a unique (regular) resolution $E$ of the identity of $\HH$ on the product $X_1 \times X_2$ such that
\[E(A_1 \times A_2) = E_1(A_1) E_2(A_2) \quad\text{for all Borel $A_1 \subseteq X_1$, $A_2 \subseteq X_2$,}\]
which is called the \emph{product} of $E_1$ and $E_2$ (see \cite{berberian_notes_1966}, Theorem~33 and Corollary). Clearly, if $p_j : X_1 \times X_2 \to X_j$ is the canonical projection, then
\[p_j(E) = E_j,\]
for $j=1,2$. Moreover (see \cite{berberian_notes_1966}, Theorem~35)
\[\supp E \subseteq \supp E_1 \times \supp E_2.\]

\section{The spectral theorem}\label{section:spectraltheory}
\subsection{Spectral decomposition for a single normal operator}
Let $T$ be a (possibly unbounded) normal operator on a Hilbert space $\HH$. Then there exists (see \cite{rudin_functional_1973}, Theorem~13.33) a unique resolution $E$ of the identity of $\HH$ on $\C$ such that
\begin{equation}\label{eq:spectraltheorem}
T = \int_E \lambda \,dE(\lambda),\
\end{equation}
and in particular $\supp E = \spec(T)$. The projection-valued measure $E$ is called the \emph{spectral resolution}\index{spectral resolution} (or \emph{spectral measure}) of $T$. 

In terms of the spectral measure, a functional calculus is defined for the operator $T$: for every Borel function $f : \C \to \C$, we set
\[f(T) = \int_\C f \,dE.\]

\subsection{Compact normal operators}\label{subsection:spectralcompact}

Suppose that the normal operator $T$ on the Hilbert space $\HH$ is (bounded and) compact.
Then there exist a complete orthonormal system $(v_\alpha)_{\alpha \in I}$ in $\HH$ and a sequence $(\lambda_\alpha)_{\alpha \in I}$ of complex numbers such that
\[T v_\alpha = \lambda_\alpha v_\alpha \qquad\text{for all $\alpha \in I$,}\]
and
\[\text{for all $\varepsilon > 0$, the set } \{ \alpha \in I \tc |\lambda_\alpha| > \varepsilon\} \text{ is finite}\]
(see \cite{fell_representations_1988}, Theorem~VI.15.10); notice that the last condition implies that at most countably many $\lambda_\alpha$ are not null. This means that, if $E$ is the spectral resolution of $T$, then
\[R(E(\{\lambda\})) = \overline{\Span\{v_\alpha \tc \alpha \in I, \, \lambda_\alpha = \lambda\}},\]
so that
\[\dim R(E(\{\lambda\})) \text{ is finite for $\lambda \neq 0$,}\]
and
\[\{\lambda_\alpha\}_{\alpha \in I} \subseteq \spec T \subseteq \{\lambda_\alpha\}_{\alpha \in I} \cup \{0\}.\]
Therefore, for a compact normal operator, the spectral decomposition \eqref{eq:spectraltheorem} becomes a sum
\[T = \sum_{\substack{\alpha \in I \\ \lambda_\alpha \neq 0}} \lambda_\alpha E(\{\lambda_\alpha\})\]
(in the sense of strong convergence of operators).

\subsection{Strong commutativity}
Two normal operators $T_1,T_2$ are said to \emph{commute strongly} if their spectral resolutions commute: more precisely, if $E_1,E_2$ are the spectral resolutions of $T_1,T_2$ respectively, then we ask that
\[E_1(A) E_2(B) = E_2(B) E_1(A) \qquad\text{for all Borel $A,B \subseteq \C$.}\]
Notice that, in the case of bounded normal operators, strong commutativity is equivalent to commutativity (cf.\ \cite{rudin_functional_1973}, Theorem~12.16).

\subsection{Joint spectral theory for strongly commuting normal operators}
Suppose that $T_1,\dots,T_n$ are normal operators on $\HH$ which commute strongly pairwise, and let $E_1,\dots,E_n$ be their respective spectral resolutions. Then we can consider the product of $E_1,\dots,E_n$, i.e., the unique resolution $E$ of the identity of $\HH$ on $\C^n$ such that
\[E(A_1 \times \dots \times A_n) = E_1(A_1) \cdots E_n(A_n)\]
(see \S\ref{subsection:productspectral}), and we have
\[T_j = \int_{\C^n} \lambda_j \,dE(\lambda_1,\dots,\lambda_n) \qquad\text{for $j=1,\dots,n$.}\]
$E$ is called the \emph{joint spectral resolution}\index{spectral resolution!joint} of $T_1,\dots,T_n$ and its support, which is contained in $\spec(T_1) \times \dots \times \spec(T_n)$, is called the \emph{joint spectrum}\index{spectrum!joint spectrum} of $T_1,\dots,T_n$ and denoted by $\spec(T_1,\dots,T_n)$. 

Via the joint spectral resolution, a joint functional calculus for the operators $T_1,\dots,T_n$ is defined: for a Borel function $f : \C^n \to \C$, we set
\[f(T_1,\dots,T_n) = \int_{\C^n} f \,dE.\]

\clearemptydoublepage
\phantomsection
\pagestyle{plain}
\bibliographystyle{abbrvalpha}
\fontsize{10}{12}\selectfont
\addcontentsline{toc}{chapter}{Bibliography}
\bibliography{../multipliers}

\clearemptydoublepageeven
\phantomsection
\index{convolution!kernel|see{Schwartz kernel theorem}}
\addcontentsline{toc}{chapter}{Index}
\printindex

\end{document}